\renewenvironment{cases}[1][l]{\matrix@check\cases\env@cases{#1}}{\endarray\right.}
\def\env@cases#1{%
  \let\@ifnextchar\new@ifnextchar
  \left\lbrace\def\arraystretch{1.2}%
  \array{@{}#1@{\quad}l@{}}}
   \def\marginparright{\@mparswitchfalse}
   \def\marginparoutside{\@mparswitchtrue}
\begin{document}
\mainmatter
\marginparright

\FINALVERSION 


\begin{centering}
  \thispagestyle{empty}
   
    \vspace*{0.6in}
    \bgroup
    \Huge\bfseries Contributions to the Problems of Recognizing and Coloring Gammoids \par
    \egroup
    \vspace{0.5in}
    \bgroup
    \normalsize Dissertation \\[0.1in]
    zur Erlangung des Grades\\[0.1in]
    \textsc{Doktor der Naturwissenschaften}\\[0.1in]
    \textsc{(Dr. rer. nat.)}\\[0.1in]
    \egroup
    \vspace{0.5in}
    \bgroup
    \normalsize an der Fakultät für \\[0.1in]
    Mathematik und Informatik der \\[0.1in] FernUniversität in Hagen\\[0.1in]
    \egroup
    \vspace{0.5in}
    \bgroup
    \normalsize verfasst von Herrn\\[0.1in]
    \textsc{Diplommathematiker}\\[0.1in]
    \Large Immanuel Albrecht\\[0.1in]
    \normalsize aus Dresden\\[0.1in]
    \egroup
    \vspace{0.7in}
    Hagen, 2018\\[0.1in]
\end{centering}

\cleardoublepage
\begin{centering}
  \thispagestyle{empty}
   
    \vspace*{1.2in}
    
    \begin{flushleft}
    \textit{``That may be impossible, sir.'' \\ ~~~~ --- Data. \hfill{~}}
    \textit{``Things are only impossible until they're not!'' ~~~~~ \\ \hfill{~} --- Jean-Luc Picard.}
    \end{flushleft}
  
\end{centering}


\cleardoublepage
\thispagestyle{empty}
\phantomsection
\addcontentsline{toc}{chapter}{Abstract}

\bigskip
\newcommand{\mytwocols}{\vspace{2mm}\begin{parcolumns}[colwidths={1=.42\textwidth,2=.48\textwidth},nofirstindent,sloppy]{2}}
\mytwocols
\colchunk{\ignorespaces
\Large\ignorespaces \textbf{Abstract}}
\colchunk{\Large\textbf{Zusammenfassung}}
\end{parcolumns}
\mytwocols
\colchunk{This work provides a thorough introduction to the field of gammoids
and presents new results that are considered helpful for solving the problems of recognizing and coloring gammoids.
}
\colchunk{Diese Arbeit gibt eine gründliche Einführung in das Gebiet der Gammoide,
und legt neue Ergebnisse dar, die für die Probleme des Erkennens und des Färbens von Gammoiden dienlich sind.  }
\end{parcolumns}
\mytwocols
\colchunk{Matroids are set systems that generalize the concept of linear independence between sets of rows of a matrix over a field. Gammoids are those matroids that may be represented by directed graphs where the corresponding
independence is modeled as the existence of certain families of pair-wise vertex disjoint paths.
The seminal papers in gammoid theory have been written by J.H.~Mason \cite{M72}, A.W.~Ingleton and M.J.~Piff \cite{IP73}.
Natural applications of gammoids can be found within the realms of connectivity of both directed 
and undirected graphs.}
\colchunk{Matroide sind Mengensysteme, welche den Begriff der linearen Unabhängigkeit zwischen Mengen von Zeilen einer Matrix über einem Körper verallgemeinern. Gammoide sind jene Matroide, welche so durch gerichtete Graphen dargestellt werden können, dass ihre zugehörige Unabhängigkeit durch die Existenz gewisser Familien von paarweise knotendisjunkten Pfaden beschreibbar ist. Die grundlegenden Arbeiten zur Theorie der Gammoide wurden von J.H.~Mason \cite{M72}, A.W.~Ingleton und M.J.~Piff \cite{IP73}
verfasst. Natürliche Anwendung finden Gammoide im Bereich des Zusammenhangs sowohl von gerichteten als auch von ungerichteten
Graphen.}
\end{parcolumns}
\mytwocols
\colchunk{In this work, we introduce our concept of the complexity of a gammoid, which may
be used to define subclasses of the class of gammoids that inherit the most notable
properties of the class of gammoids: being closed under minors, duality, and direct sums.
Furthermore, we provide a comprehensive method for deciding
whether a given matroid is a gammoid. We give a new procedure for obtaining 
an $\Rbm$-matrix, that represents a gammoid given by the means of a directed graph,
which avoids using power series.
We present the first purely combinatorial way of obtaining orientations of gammoids.
We prove that every lattice path matroid is $3$-colorable.
}
\colchunk{In dieser Arbeit führen wir unseren Begriff der Komplexität eines Gammoids ein,
welcher verwendet werden kann, um Unterklassen der Klasse der Gammoide zu definieren.
Diese Unterklassen erben  die bedeutendsten Eigenschaften der Klasse der Gammoide,
nämlich die Abgeschlossenheit unter Minoren, unter Dualität sowie unter direkten Summen.
Des Weiteren stellen wir eine umfassende Methode bereit, mit der entschieden werden kann,
ob ein gegebenes Matroid ein Gammoid ist. 
Wir geben eine neue Vorgehensweise an,
die eine $\Rbm$-Matrix-Darstellung eines Gammoids, welches mittels eines gerichteten Graphen
gegeben ist, findet, ohne auf Potenzreihen zurückzugreifen. 
Wir stellen das erste rein kombinatorische Verfahren vor, dass Orientierungen eines Gammoids liefert.
Wir zeigen, dass alle Lattice-Path-Matroide $3$-färbbar sind.
}
\end{parcolumns}
\mytwocols
\colchunk{In Chapter~1 we give a brief introduction to matroid theory:
we present axiomatizations of matroids most relevant to this work,
the concepts of minors and duality as well as representability over fields
and properties of extensions.
The same chapter also contains a brief introduction to the theory of transversals,
including the Theorems of Hall, Rado, Ore, and Perfect,
and an introduction to transversal matroids. 
Also, we provide a short introduction to directed graphs, we introduce
the concept of a routing in a directed graph and we close the chapter with Menger's Theorem
and its consequences.
}
\colchunk{Im ersten Kapitel geben wir eine kurze Einführung in die Matroidtheorie:
Wir stellen die Axiomatisierungen von Matroiden, welche am besten zu dieser Arbeit passen,
den Minorenbegriff, die Dualität sowie die Darstellbarkeit über Körpern und Eigenschaften von Erweiterungen vor.
Das Kapitel enthält außerdem eine Einführung in die Transversaltheorie, welche die Sätze von Hall, Rado, Ore und Perfect
sowie eine Einführung der Transversalmatroide umfasst.
Außerdem stellen wir gerichtete Graphen kurz vor, erläutern den Begriff des Routings in gerichteten Graphen und
beenden das Kapitel mit dem Satz von Menger sowie Schlussfolgerungen aus diesem.
} 
\end{parcolumns}
\mytwocols
\colchunk{In Chapter~2 we define gammoids as matroids that may be obtained from routings in directed graphs.
We explore the properties of their directed graph representations and along that we define our notion of a duality respecting representation
which correlates the duality-like notion of opposite directed graphs with the notion of duality with respect to gammoids.
Furthermore, we introduce our three complexity measures for gammoids that yield
subclasses of gammoids which are closed under minors and duality.
We present Mason's $\alpha$-criterion for strict gammoids, and we examine the properties of
strict gammoids and transversal matroids. We analyze the problem of recognizing gammoids,
we develop the notion of an $\alpha$-violation,
and we present our best approach for deciding instances of the recognition problem.
At the end of Chapter~2, we present our method for determining an $\Rbm$-matrix representing a gammoid from
a given representation in terms of a directed graph.
}
\colchunk{Im zweiten Kapitel definieren wir Gammoide als Matroide, die durch Routings in gerichteten Graphen beschrieben werden können. Wir untersuchen die Eigenschaften ihrer Darstellungen mit gerichteten Graphen und definieren dabei unseren Begriff einer dualitätsachtenden Darstellung, welche den dualitätsnahen Begriff gegenläufig gerichteter Graphen und den Dualitätsbegriff von Gammoiden in Wechselbeziehung stellt. Weiterhin stellen wir drei Komplexitätsmaße für Gammoide vor, welche Unterklassen von Gammoiden liefern, die unter Minoren und Dualität abgeschlossen sind. Wir stellen Mason’s $\alpha$-Kriterium für strikte Gammoide vor, und wir untersuchen die Eigenschaften von strikten Gammoiden und von Transversalmatroiden. Wir analysieren das Problem des Erkennens von Gammoiden, wir entwickeln den Begriff der $\alpha$-Verletzung
und wir präsentieren unseren besten Ansatz zum Entscheiden, ob ein Matroid ein Gammoid ist. Zum Schluß des zweiten Kapitels stellen wir unsere Methode vor, eine $\Rbm$-Matrix-Darstellung eines Gammoids aus einer Darstellung vermöge gerichteter Graphen zu erhalten. 
}
\end{parcolumns}
\mytwocols
\colchunk{In Chapter~3 we shortly introduce oriented matroids and their associated concept of colorings. We show that all orientations of
lattice path matroids have $3$-colorings. Then we introduce our concept of a heavy arc orientation of a gammoid that
yields a purely combinatorial way to obtain representable orientations of gammoids. In Chapter~4 we summarize our new results and give an
overview of new and old open problems.
}
\colchunk{Im dritten Kapitel geben wir eine kurze Einführung in orientierte Matroide und den damit verbundenen Begriff der Färbung. Wir zeigen, dass alle Orientierungen von Lattice-Path-Matroiden eine $3$-Färbung besitzen. Danach stellen wir unseren Begriff der Heavy-Arc-Orientierung eines Gammoids vor, welcher eine rein kombinatorische Vorgehensweise,
eine repräsentierbare Orientierung eines Gammoids zu finden, liefert. Im vierten Kapitel resümieren wir unsere neuen Resultate und geben einen Überblick über neue und alte offene Fragestellungen.
}
\end{parcolumns}

\cleardoublepage
\phantomsection
\thispagestyle{empty}
\addcontentsline{toc}{chapter}{Acknowledgments}

\section*{Acknowledgments}

I would like to offer my sincere gratitude to my advisor Prof. Dr. Winfried Hochstättler for
always providing me with the necessary input and feedback during all stages of my studies and research. 
His profound knowledge and open-hearted support have been a great resource throughout the whole process of preparing this work.

Furthermore, I would like to thank all my coworkers and colleagues both at the chair for Discrete Mathematics and Optimization
and at the Department of Mathematics and Computer Science of the FernUniversität in Hagen, as well as those doing related research
who are scattered around the world, for the fruitful discussions with them and for sharing their knowledge with me. 

I am indebted to the FernUniversität that granted me a scholarship for the last six months, which allowed me to focus 
entirely on my thesis.

Finally, I would like to thank my wife, my parents, my family, and my friends for their moral support,
especially during the more intensive days of preparation.





\tableofcontents






\cleardoublepage

\chapter{Preliminaries}

\PRFRA
In this chapter, we  introduce those aspects of matroid theory that
are most important to the comprehension of the later chapters. For a thorough
introduction to matroid theory, we would like to redirect the reader to the
following books, in no particular order.

\begin{itemize}
\item {\em Matroid Theory} 
		by J.G.~Oxley \cite{Ox11} is a comprehensive resource on matroid theory covering
		most of the current state of the art. Matroids are introduced using a variety
		of cryptomorphic axiom systems    starting from independence axioms and base
		axioms. This book is the authoritative standard    reference for matroid
		theory and we guarantee that all definitions made in this work are compatible
		with those found in J.G.~Oxley's book.
\item {\em Matroid Theory}
		by D.J.A.~Welsh \cite{We76} is an introduction to matroid theory that also covers
		the greedy algorithm, transversal theory, Menger's Theorem and gammoids,
		polymatroids, and infinite generalizations of matroids. Although this book is not the
		most recent one on this topic, it is the book that we would like to recommend 
		to anyone
		who wants to read {\em only one} book on matroid theory, as it presents the theory in
		remarkable clarity.
\item {\em On the Foundations of Combinatorial Theory: Combinatorial Geometries} 
		by H.H.~Crapo and G.-C.~Rota  \cite{CR70} is a remarkably well structured
		introduction to matroid theory with lattice theory as a starting point.
		Unfortunately, a regular edition never followed the preliminary edition.
\end{itemize}

\needspace{4\baselineskip}
\section*{Notation}

\PRFRA
All notation used in this work is either standard mathematical notation,  or
declared in the corresponding definitions. We would like to point out one less
common notational detail: If we denote a set $X=\SET{a,b,c}$ we are stating
that the set $X$ consists of the elements $a$, $b$, and $c$; but we do not
require any two or all three of $a$,$b$,$c$ to be distinct elements. Thus
$|X|=1$, $|X|=2$, and $|X|=3$ are possibly true assertions with this
notation. But if we denote a set $Y=\dSET{a,b,c}$,\label{n:dset} then we are stating that
$Y$ consists of the elements $a$, $b$, and $c$; and that no two of these
elements are equal, therefore $|Y|=3$ is the only possibility here.

We will denote the set of non-negative integers by $\N=\SET{0,1,2,\ldots}$,
the set of integers by $\Z = \SET{0,1,-1,2,-2,\ldots}$, the field of the rational numbers by $\Qbm$, and
the field of the real numbers by $\R$.
 The cardinality of a set $X$ is denoted by $\left| X \right|$, the power set of $X$ is denoted by $2^X$.
 The set of subsets of $X$ with cardinality $n$ is denoted by $\binom{X}{n}$. The set of all maps $f\colon X \maparrow Y$ is denoted by $Y^X$.

If $f\colon X\maparrow Y$ is a map and $X'\subseteq X$, then we denote the set of images of $x'\in X'$ under $f$ by
$f[X'] = \SET{f(x')\mid x'\in X'}$.\label{n:fsquareX} We denote the restriction of $f$ to $X'$ by $f\restrict_{X'}$.\label{n:frestrictX'}

Whenever $\Acal \subseteq 2^X$ is a family of sets, we denote
the union of all those sets by \( \bigcup \Acal = \bigcup_{A\in\Acal} A \).\label{n:bigcup}
If $\Acal\not= \emptyset$, we denote the intersection of all sets in $\Acal$ by
\(\bigcap \Acal = \bigcap_{A\in\Acal} A\). For $\Acal=\emptyset$, we set $\bigcap \Acal = \bigcap_X \emptyset = X$.\label{n:bigcap}

\PRFR{Mar 7th}
We use the $O$-notation in the usual way: If $f,g,h \colon \N \maparrow \R$ are maps,
we write $f = O(g)$ in order to denote that $\limsup_{x\rightarrow \infty} \left| \frac{f(x)}{g(x)}  \right| < \infty$.
We write $O(g) = O(h)$ if the implication $f = O(g) \Rightarrow f = O(h)$ holds for all $f\in \R^\N$. Please keep in mind that
$O(g) = O(h)$ is not
equivalent to $O(h) = O(g)$. (!) Instead, the $O$-notation is asymmetric and has to be read from left-to-right.
We also use the straight-forward generalization of the $O$-notation to several non-negative integer variables in
an informal way, for instance we would write $O(x^2 y^3) = O(2^x y^4)$. Similarly, we write $f = \Omega(g)$ in order to
denote that $\limsup_{x\rightarrow \infty} \left| \frac{f(x)}{g(x)}  \right| > 0$.



\section
{Canonical Preliminaries}
\PRFRA

This section contains canonical definitions,
which are most unrelated to matroid theory. The authors know that it is quite uncommon
to have a canonical preliminaries section within the preliminaries of a work. We
are certain that any person who did study mathematics to some extent
knows the contents of this section by heart, yet we
include it in order to maintain a higher level of self-sufficiency of this work 
as well as to fix certain formal aspects of the common basic definitions.

\needspace{6\baselineskip}
\begin{definition}\PRFRA
	Let $X$ be any set. The \deftext[multi-sets]{multi-sets over $\bm X$}
	are the elements of the set\label{n:multiset}
	\[ \N^X = \SET{f\colon X\maparrow \N}. \]
	The \deftextX{finite multi-sets over $\bm X$}
	are defined to be\label{n:fin-multiset}
	\[ \N^{(X)} = \SET{\left. f\in \N^X \quad\right|\quad \left| \SET{x\in X\mid f(x) \not= 0} \right| < \infty}.\qedhere\]
\end{definition}

\begin{notation}\PRFRA
	Let $X$ be a set, $\Kbm$ be a field. The \deftext{vectors} of the $X$-dimensional vector space $\Kbm^X$ over $\Kbm$ 
	are identified with the maps $v\colon X\maparrow \Kbm$. If $X$ is finite, then 
	the canonical basis of $\Kbm^X$ is the set $\SET{e_i\mid i\in X}$
	where \[ e_i\colon X\maparrow \Kbm, \quad x\mapsto \begin{cases} 1 & \quad \text{if~} x=i,\\ 0& \quad \text{otherwise.} \end{cases}\]
	For $\alpha\in \Kbm$ and $v\in \Kbm^X$, we shall denote the scalar multiplication of $\alpha$ and $v$ both by $\alpha\cdot v$
	and by
	\[ \alpha v\colon X\maparrow \Kbm, \quad x\mapsto \alpha \cdot v(x).\]
	For $X$ finite and $\alpha,\beta\in \Kbm^X$ we denote the scalar product of $\alpha$ and $\beta$ by
	\[ \langle \alpha,\beta \rangle = \sum_{x\in X} \alpha(x)\cdot \beta(x). \qedhere\]
\end{notation}

\begin{definition}\PRFRA
	Let $K$, $R$, and $C$ be any sets.
	An \index{matrix over K@matrix over $K$}\deftext[RxC-matrix over K@$R\times C$-matrix over $K$]{$\bm R\bm\times \bm C$-matrix over $\bm K$} is a map
    $ \mu \colon R\times C \maparrow K$.\label{n:matrix}
    Every $r\in R$ is a \deftextX{row-index of $\bm \mu$}, and every $c\in C$ is a \deftextX{column-index of $\bm \mu$}.
    For every $r\in R$, the map
    \[ \mu_r \colon C\maparrow K,\,c\mapsto \mu(r,c) \]
    is the \deftext[row of mu@row of $\mu$]{$\bm r$-th row of $\bm \mu$}.
    Analogously, for every $c\in C$, the map
    \[ \mu^\top_c \colon R\maparrow K,\,r\mapsto \mu(r,c) \]
    is the \deftext[column of mu@column of $\mu$]{$\bm c$-th column of $\bm \mu$}.
    The class of $R\times C$-matrices over $K$ shall be denoted by $K^{R\times C}$.
    If $R=\SET{1,2,\ldots,n}\subseteq \N$ and $C=\SET{1,2,\ldots,m}\subseteq \N$,
    then we also write $K^{n\times m}$ for $K^{R\times C}$.
    For every matrix $\mu\in K^{R\times C}$, we define the \deftext[transposed matrix $\mu^\top$]{transposed matrix $\bm \mu^{\bm \top}$} to be the map\label{n:transposed}
    \( \mu^\top\colon C\times R\maparrow K,\,(c,r)\mapsto \mu(r,c) \).
\end{definition}

\begin{definition}\PRFRA
	Let $X$ be any set, $\Kbm$ be a field or ring with zero and one. The \deftext[identity matrix]{identity matrix for $\bm X$ over $\bm K$}
	is the map 
	 $$ \mathrm{id}_\Kbm(X) \colon X\times X\maparrow \Kbm, \, (r,c)\mapsto \begin{cases} 1 &\quad \text{if~} r=c,\\
	 																					0 &\quad \text{otherwise.} \end{cases}$$
\end{definition}

\begin{definition}\PRFR{Mar 7th}
	Let $X,Y,Z$ be sets, $Y$ finite, $\Rbm$ a ring. Let further $\mu\in \Rbm^{X\times Y}$ and $\nu \in \Rbm^{Y\times Z}$ be matrices.
	Then the \deftextX{matrix multiplication of $\bm \mu$ with $\bm \nu$} shall be the matrix 
	\[ \mu\ast\nu \colon X\times Z \maparrow \Rbm,\, (x,z) \mapsto \sum_{y\in Y} \mu(x,y)\cdot \nu(y,z).\]
	Let $\alpha\in \Rbm^Y$. Analogously, the \deftextX{vector-matrix multiplication of $\bm \alpha$ with $\bm \nu$}
	shall be the vector
	\[ \alpha\ast \nu \colon Z\maparrow \Rbm,\, z\mapsto \sum_{y\in Y} \alpha(y)\cdot\nu(y,z),\]
	and the \deftextX{matrix-vector multiplication of $\bm \mu$ with $\bm \alpha$}
	shall be
	\[ \mu\ast\alpha \colon X\maparrow \Rbm,\, x\mapsto \sum_{y\in Y} \mu(x,y)\cdot \alpha(y) . \qedhere\]
\end{definition}

\begin{definition}\PRFRA
	Let $\mu \in K^{R\times C}$ be an $R\times C$-matrix over $K$, $R_0\subseteq R$, and $C_0\subseteq C$.
	The \deftext[restriction of mu@restriction of $\mu$]{restriction of $\bm \mu$ to $\bm R_0$} is defined to be the map\label{n:matrestrict}
	\[ \mu\restrict R_0 \,\colon\,R_0\times C\maparrow K,\,(r,c)\mapsto \mu(r,c).\]
	The \deftextX{restriction of $\bm \mu$ to $\bm R_0 \bm\times \bm C_0$} is defined to be the map
	\[ \mu\restrict R_0\times C_0 \,\colon\,R_0\times C_0\maparrow K,\,(r,c)\mapsto \mu(r,c). \qedhere\]
\end{definition}

\needspace{6\baselineskip}
\begin{definition}\label{def:det}\PRFRA
	Let $\Kbm$ be a field or a commutative ring, $X=\dSET{x_1,x_2,\ldots,x_m}$ and $Y=\dSET{y_1,y_2,\ldots,y_m}$
	 be finite sets of equal cardinality that have implicit linear orders given by the indexes, 
	and let $\mu\in \Kbm^{X\times Y}$ be a square matrix over $\Kbm$.
	The \deftext[determinant of mu@determinant of $\mu$]{determinant of $\bm \mu$}
	is defined to be
	\[ \det \mu = \sum_{\sigma\in \mathfrak{S}_m} \,\, \sgn(\sigma) \prod_{i=1}^{m} \mu \left( x_i,y_{\sigma(i)}  \right)  \]
	where $\mathfrak{S}_m$ consists of all permutations $\sigma\colon \SET{1,2,\ldots,m}\maparrow \SET{1,2,\ldots,m}$.
\end{definition}

\begin{definition}\label{def:idet}\PRFRA
	Let $R$ and $C$ be finite sets, $\mu\in\Kbm^{R\times C}$, and $n = \min\SET{\left| R \right|,\left| C \right|}$.
	The \deftext[determinant-indicator of mu@determinant-indicator of $\mu$]{determinant-indicator of $\bm \mu$} is defined to be\label{n:idet}
	\[ \idet \mu = \begin{cases}
					1 & \quad \text{if~} n = 0,\\
					1 & \quad \text{if for some~} R_n \in \binom{R}{n},\,C_n\in \binom{C}{n}  \colon\,\,\det \left( \mu \restrict R_n\times C_n \right) \not= 0, \\
					0 & \quad \text{otherwise.}
	\end{cases} \]
\end{definition}

\begin{notation}\PRFR{Mar 7th}
	Let $\R$ be a commutative ring, $X$ be a set. \label{n:polynomring}The \deftext[polynomial ring]{polynomial ring over $\bm \R$ with variables $\bm X$} 
	shall be denoted by $\R[X]$. The \deftext[unit monomials]{unit monomials of $\bm \R \bm[\bm X\bm]$}, 
	i.e. polynomials of the form $x_1^{n_1} x_2^{n_2} \ldots x_k^{n_k}$ where $\dSET{x_1,x_2,\ldots,x_k}\subseteq X$,
	may be identified
	with the finite multi-sets $\N^{(X)}$ and thus they shall be denoted by $\N^{(X)}$, too. It is also customary to identify the
	polynomial ring $\R[\emptyset]$ with the ring $\R$ itself, and to write $\R[x_1,x_2,\ldots,x_k]$ for $\R[\SET{x_1,x_2,\ldots,x_k}]$.
	Furthermore, for every polynomial $p\in \R[X]$, $Y\subseteq X$, and every $\eta \in \R^Y$, 
	we obtain a polynomial $p[Y=\eta]\in \R[X\BS Y]$ by setting $y = \eta(y)$ in $p$ for every $y\in Y$.
	For $Y=\dSET{x_1,x_2,\ldots,x_i}$, we also write $p[x_1=\eta(x_1),x_2=\eta(x_2),\ldots,x_i=\eta(x_i)]$ in order to denote $p[Y=\eta]$.
	For $p\in \R[x]$ and $r\in \R$, we denote $p[x=r]$ by $p(r)$.
\end{notation}

\begin{definition}\PRFR{Mar 7th}\label{def:Zindependent}
	Let $X\subseteq \R$ be a set of reals. Then $X$ shall be called 
	\deftext[Z-independent@$\Z$-independent]{ $\Z$-independent},
	if for the injection $\xi\colon X\maparrow \R$ with $\xi(x) = x$ and for all $p\in \Z[X]$
	the equivalency \[ p[X=\xi] =\, 0 \,\,\, \Longleftrightarrow\,\,\,  p \,=\, 0 \]
	holds.
\end{definition}



\begin{lemma}\label{lem:enoughZindependents}\PRFR{Mar 7th}
	Let $n\in \N$. There is a set $X = \dSET{x_1,x_2,\ldots,x_n}\subseteq \R$ such that
	$X$ is $\Z$-independent, where $\R$ denotes the set of reals.
\end{lemma}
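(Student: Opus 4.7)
The plan is to argue by induction on $n$, using the fact that $\R$ is uncountable while the polynomial ring $\Z[y_1,\ldots,y_n]$ is countable, so only countably many real values can be ``forbidden'' at each stage.

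For the base case $n=0$, the set $X = \emptyset$ works trivially, since $\Z[\emptyset]$ is identified with $\Z$ itself and an integer equals zero under the empty substitution precisely when it is the zero polynomial. For the inductive step, assume that $X_{n-1} = \dSET{x_1,\ldots,x_{n-1}} \subseteq \R$ is $\Z$-independent, and let $\xi_{n-1}\colon X_{n-1} \maparrow \R$ be the natural injection. I want to exhibit $x_n \in \R$ such that $X_{n-1} \cup \SET{x_n}$ is still $\Z$-independent, with $x_n$ distinct from $x_1,\ldots,x_{n-1}$.

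The key observation is this: given any nonzero polynomial $p \in \Z[y_1,\ldots,y_n]$, write $p = \sum_{k=0}^d q_k(y_1,\ldots,y_{n-1}) \cdot y_n^k$ with $q_k \in \Z[y_1,\ldots,y_{n-1}]$. Since $p \neq 0$, some $q_k \neq 0$ as an element of $\Z[y_1,\ldots,y_{n-1}]$, and by the inductive hypothesis $q_k[X_{n-1} = \xi_{n-1}] \neq 0$ in $\R$. Hence the univariate polynomial
\[
\tilde{p}(y_n) \,=\, \sum_{k=0}^d q_k[X_{n-1} = \xi_{n-1}] \cdot y_n^k \;\in\; \R[y_n]
\]
is nonzero, so it has at most $d$ real roots. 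Let $B_p \subseteq \R$ be this finite set of roots. Since $\Z[y_1,\ldots,y_n]$ is countable, the union $B = \bigcup_{p \neq 0} B_p$ is a countable subset of $\R$, hence $\R \setminus B \neq \emptyset$. Pick any $x_n \in \R \setminus B$.

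To finish, I would check the two required properties. First, $\Z$-independence of $X_{n-1} \cup \SET{x_n}$: if $p \in \Z[y_1,\ldots,y_n]$ vanishes at $(x_1,\ldots,x_n)$ but $p \neq 0$, then $x_n$ is a root of the nonzero polynomial $\tilde{p}$, contradicting $x_n \notin B_p$. Second, distinctness: for $i < n$ the polynomial $y_i - y_n \in \Z[y_1,\ldots,y_n]$ is nonzero, so by $\Z$-independence $x_i - x_n \neq 0$; combined with the inductive hypothesis giving distinctness among $x_1,\ldots,x_{n-1}$, the full set has $n$ distinct elements. The only nontrivial step is the coefficient argument showing $\tilde p \neq 0$, which requires the inductive hypothesis applied coefficient-wise; beyond that, the proof is a straightforward cardinality argument.
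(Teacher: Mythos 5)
Your proof is correct and follows essentially the same route as the paper: induction on $n$, combined with the observation that the "bad" values for the new element form a countable union of finite root sets of nonzero real univariate polynomials, which cannot exhaust $\R$. Your coefficient-wise argument for why $\tilde p \neq 0$ is exactly the detail the paper leaves implicit in its claim that $p_0 \neq 0$ unless $p = 0$, and deriving distinctness from $\Z$-independence of $y_i - y_n$ is a harmless variant of the paper's choice of $x \in \R \setminus X'$.
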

\begin{proof}\PRFR{Mar 7th}
	By induction on $\N$. The base case is clear. For the induction step, let $X' = \dSET{x_1,x_2,\ldots,x_{n-1}}\subseteq \R$ be $\Z$-independent.
	Then for $x\in \R$, $X'\cup\SET{x}$ is not $\Z$-independent, if
	and only if there is a non-zero polynomial $p \in \Z[x_1,x_2,\ldots,x_{n-1},x]$ 
	such that the polynomial $p_0$ has the root $p_0(x) = 0$,
	where $p_0\in \R[x]$ arises from $p$
	--- which can be interpreted as a polynomial over $\R$ ---
	by setting $$p_0 = p[x_1=x_1,x_2=x_2,\ldots,x_{n-1}=x_{n-1}] \in \R[x].$$
	In other words, $p_0$ arises from $p$ by identification of the monomials $x'\in X'$ with their natural real value.
	Since $X'$ is $\Z$-independent,
	we obtain $p_0 \not= 0$ unless $p = 0$. Thus each polynomial $p_0$ obtained in this way has only finitely many roots. Furthermore, the set $\Z[x_1,x_2,\ldots,x_{n-1},x]$ is countable, therefore there are only countably many real numbers $x\in \R$
	such that the set $X'\cup\SET{x}$ is not $\Z$-independent. But $\R$ is uncountably infinite, so there is some $x\in \R\BS X'$,
	such that $X'\cup\SET{x}$ is $\Z$-independent.
\end{proof}

\begin{definition}[\cite{Bi67}, p.1]\PRFR{Feb 15th}
	Let $(P,\leq)$ be a pair, where
	$P$ is any set -- called the \deftextX{support set of $\bm (\bm P\bm,\bm\leq\bm)$} -- and $\leq$ is a binary relation on $P$.
	Then $(P,\leq)$ is a \deftext{poset}, if the following properties hold for all $p,q,r\in P$:
	\begin{enumerate}\ROMANENUM
	\item $p \leq p$;
	\item if $p \leq q$ and $q\leq p$ holds, then $p = q$; and
	\item if $p \leq q$ and $q\leq r$ holds, then $p \leq r$ holds, too.
	\end{enumerate}
	If the poset $(P,\leq)$ is clear from the context,
	we also denote $(P,\leq)$ by its support set $P$, or by its binary relation symbol $\leq$.
	Furthermore, we shall write $p < q$ -- where we may use an analogue symbol corresponding to the symbol used to denote the binary relation of the poset in question -- whenever $p\leq q$ and $p\not= q$ holds.
	A poset $(P,\leq)$ is called \deftextX{finite}, if $P$ is finite.
	For every poset $(P,\leq)$ and every $y\in P$, the \deftextX{$\bm (\bm P\bm,\bm\leq\bm)$-down-set of $\bm y$} shall be the set
	\label{n:Pdownset}
	\[ \downarrow_{(P,\leq)} y = \SET{x\in P ~\middle|~ x \leq y} . \qedhere\]
\end{definition}

\begin{example}\PRFR{Feb 15th}
	Let $X$ be a finite set, and $P\subseteq 2^X$. Then $(P,\subseteq)$ is a poset, where $\subseteq$ denotes the usual set-inclusion.
\end{example}

\needspace{8\baselineskip}

\begin{definition}[\cite{Bi67}, pp.101f]\label{n:zeta}\label{def:zetaMatrix}\label{def:moebiusFunction}\PRFR{Feb 15th}
	Let $(P,\leq)$ be a finite poset. The \deftext[zeta-matrix of a poset]{zeta-matrix of $\bm(\bm P\bm,\bm\leq\bm)$} shall be the map
	\[ \zeta_{(P,\leq)} \colon P\times P\maparrow \Z,\,(p,q) \mapsto \begin{cases}[r] 1& \quad \text{if~} p \leq q,\\
																   0& \quad \text{otherwise.} \end{cases} \]
	If the poset is clear from the context, we shall denote $\zeta_{(P,\leq)}$ by $\zeta_P$ or $\zeta$.
	The \deftext[Möbius function]{Möbius-function of $\bm(\bm P\bm,\bm\leq\bm)$} is defined as\label{n:moebius}
	\[ \mu_{(P,\leq)} \colon P\times P\maparrow \Z, \,(p,q) \mapsto \begin{cases}[r] 0 & \quad\text{if~} p\not\leq q,\\
				1 & \quad\text{if~} p=q,\\
				- \displaystyle\sum_{q'\in P,\,p\leq q' < q} \mu (p,q') & \quad \text{otherwise.}
				\end{cases}\]
	Again, if the poset is clear from the context, we shall denote $\mu_{(P,\leq)}$ by $\mu_P$ or $\mu$.
\end{definition}

\begin{lemma}[\cite{Ro64}, Proposition 1]\label{lem:moebiusInversion}\PRFR{Feb 15th}
	Let $(P,\leq)$ be a finite poset. Then \[\mu_P \ast \zeta_P = \id_\Z(P).\]
\end{lemma}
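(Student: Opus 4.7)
The plan is to verify equality of the two $P\times P$-matrices entry-by-entry. Fix $p,r\in P$; by the definition of the matrix product, I would expand
\[ (\mu_P \ast \zeta_P)(p,r) \;=\; \sum_{q\in P} \mu_P(p,q)\cdot\zeta_P(q,r). \]
Since $\zeta_P(q,r) = 1$ precisely when $q\leq r$ and vanishes otherwise, and since $\mu_P(p,q) = 0$ whenever $p\not\leq q$, the sum effectively ranges over the closed interval $\SET{q\in P \mid p\leq q\leq r}$, which is empty when $p\not\leq r$.

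I would then split into three cases. First, if $p\not\leq r$, the index set is empty, so the sum is $0$, which equals $\id_\Z(P)(p,r)$. Second, if $p = r$, the only admissible $q$ is $p$ itself, and the sum reduces to $\mu_P(p,p) = 1 = \id_\Z(P)(p,p)$. Third, and this is the only case that actually uses the recursion, suppose $p < r$. Then I would separate the $q=r$ term from the rest to obtain
\[ (\mu_P \ast \zeta_P)(p,r) \;=\; \mu_P(p,r) \;+\; \sum_{\substack{q\in P\\ p\leq q < r}} \mu_P(p,q), \]
and substitute the defining recursion $\mu_P(p,r) = -\sum_{q\in P,\, p\leq q < r} \mu_P(p,q)$, which makes the two terms cancel and yields $0 = \id_\Z(P)(p,r)$.

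There is no real obstacle here: the argument is a direct unfolding of the recursive definition of the Möbius function, and the finiteness of $P$ guarantees that every sum involved is finite and that the recursion terminates. The only point requiring a small amount of care is to justify truncating the summation range from all of $P$ down to the interval $[p,r]$, which follows cleanly from the vanishing conventions built into $\mu_P$ and $\zeta_P$.
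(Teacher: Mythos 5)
Your proposal is correct and follows essentially the same route as the paper's proof: expand the matrix product, truncate the sum to the interval $\SET{q\in P\mid p\leq q\leq r}$ using the vanishing of $\mu_P$ and $\zeta_P$, and let the defining recursion of $\mu_P$ cancel the off-diagonal entries. Your explicit separation of the case $p\not\leq r$ is a minor presentational refinement; the paper absorbs it into the $p\not=r$ computation since all terms there vanish anyway.
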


\noindent In other words, the Möbius-function of a poset is the inverse matrix of the zeta-matrix of that poset, and thus all $\zeta_P$
are invertible in the ring of integer matrices.

\begin{proof}
	Let $(P,\leq)$ be a finite poset, and let $\mu = \mu_P$ and $\zeta = \zeta_P$ be defined as in Definition~\ref{def:moebiusFunction}.
	Let $p,r\in P$, then we have
	\begin{align*}
		\left( \mu\ast \zeta \right)(p,r) = & \sum_{q\in P} \mu(p,q)\cdot\zeta(q,r) 
		 =  \sum_{q\in P,\, p \leq q\leq r} \mu(p,q)\cdot\zeta(q,r) 
	\end{align*}
	because if $p\not\leq q$, then $\mu(p,q) = 0$, and if $q\not\leq r$, then $\zeta(q,r) = 0$.
	Therefore
	we obtain that for all $p\in P$, $$\left( \mu\ast \zeta \right)(p,p) = \sum_{q\in P,\,p\leq q \leq p} \mu(p,q)\cdot\zeta(q,p) = \mu(p,p)\cdot\zeta(p,p) = 1\cdot 1 = 1 = \id_\Z(P)(p,p). $$
	Now let $p,r\in P$ with $p\not= r$.
	Since $\zeta(p,q) = 1$ whenever $p\leq q$, we have
	\begin{align*} \sum_{q\in P,\, p \leq q\leq r} \mu(p,q)\cdot\zeta(q,r) = & \left( \sum_{q\in P,\, p \leq q < r} \mu(p,q)\cdot\zeta(q,r)  \right) + \mu(p,r)\cdot\zeta(r,r) \\
	   = & \left( \sum_{q\in P,\, p \leq q < r} \mu(p,q)  \right) + \mu(p,r) \\
	   = & \left( \sum_{q\in P,\, p \leq q < r} \mu(p,q)  \right) - \left( \sum_{q\in P,\, p \leq q < r} \mu(p,q)  \right) \\ = & \hphantom{(}0 = \id_\Z(P)(p,r).
	\end{align*}
	Therefore $\mu\ast\zeta = \id_\Z(P)$.
\end{proof}

\begin{lemma}[Principle of Inclusion-Exclusion, \cite{Ro64}]\PRFR{Feb 15th}\label{lem:inclusionExclusion}
 	Let $X$ be a finite set.
 	Then for all $A,B\subseteq X$
 	\[ \mu_{\left(2^X,\subseteq\right)}(A,B) = \begin{cases}[r] (-1)^{\left| B \right| - \left| A \right|} &\quad \text{if~} A\subseteq B,\\
 												0 & \quad \text{otherwise.} \end{cases} \]
\end{lemma}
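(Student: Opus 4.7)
The plan is to split into the two cases dictated by the statement and then handle the nontrivial case by induction on $|B|-|A|$, using only the recursive definition of $\mu_{(P,\leq)}$ from Definition~\ref{def:moebiusFunction} together with the elementary identity $\sum_{S\subseteq T}(-1)^{|S|} = 0$ for $T \neq \emptyset$.

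First I would dispose of the easy case: when $A \not\subseteq B$, the claim $\mu_{(2^X,\subseteq)}(A,B)=0$ is immediate from the first branch of the definition of the Möbius function. So from now on assume $A\subseteq B$, and set $n = |B|-|A|$; the target value is $(-1)^n$.

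The induction is on $n$. For the base case $n=0$ we have $A=B$, and the second branch of the definition gives $\mu(A,A) = 1 = (-1)^0$. For the inductive step with $n\geq 1$, the third branch gives
\[
  \mu_{(2^X,\subseteq)}(A,B) \,=\, -\sum_{C\in 2^X,\, A\subseteq C \subsetneq B} \mu_{(2^X,\subseteq)}(A,C).
\]
The subsets $C$ satisfying $A\subseteq C\subsetneq B$ are exactly the sets of the form $C = A \cup S$ for a proper subset $S \subsetneq B\setminus A$, and for each such $C$ the induction hypothesis (applicable because $|C|-|A| = |S| < n$) yields $\mu(A,C) = (-1)^{|S|}$. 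Substituting gives
\[
  \mu_{(2^X,\subseteq)}(A,B) \,=\, -\sum_{S\subsetneq B\setminus A} (-1)^{|S|}.
\]

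The last step is the elementary computation: since $B\setminus A$ is nonempty (because $n\geq 1$), the binomial identity $\sum_{S\subseteq B\setminus A}(-1)^{|S|} = (1-1)^n = 0$ holds, so isolating the missing term $S = B\setminus A$ gives $\sum_{S\subsetneq B\setminus A}(-1)^{|S|} = -(-1)^n$, and therefore $\mu_{(2^X,\subseteq)}(A,B) = (-1)^n = (-1)^{|B|-|A|}$, as required. There is no real obstacle here; the only point requiring care is bookkeeping the bijection between proper subsets $C$ of the interval $[A,B]$ and proper subsets $S$ of $B\setminus A$, so that the induction hypothesis is applied to every $C$ in the sum (which is valid precisely because $C \subsetneq B$ forces $|C|-|A|<n$).
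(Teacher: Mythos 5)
Your proof is correct. The paper itself gives no proof of this lemma (it only cites Rota's 1964 paper), and your argument — case split on $A\subseteq B$, induction on $|B|-|A|$ via the recursive definition of $\mu$, the bijection $C\mapsto C\setminus A$ between the interval $[A,B)$ and the proper subsets of $B\setminus A$, and the identity $(1-1)^n=0$ — is exactly the standard derivation one would supply here.
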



\cleardoublepage

\section{Matroid Basics}
\PRFRA

In this section, we give a quick and incomplete review of some axiomatizations
of matroids.  A more complete picture as well as some proofs\footnote{Some
axiomatizations can be found in the exercise sections, where, of cause, the
proofs are left for the reader.} of cryptomorphy can be obtained from J.G.~Oxley's
book \cite{Ox11}.


\subsection{Independence Axioms}
\PRFRA

All definitions, lemmas, theorems, and proofs in this subsection are canonical and 
can be found in \cite{Ox11}.  
Readers familiar with matroid theory may safely skip this section.

\begin{definition}\label{def:indepAxioms}\label{n:Is}\PRFRA
	Let $E$ be a finite set, $\Ical \subseteq 2^{E}$. Then the  pair $(E,\Ical)$\label{n:EI} is
	an \deftext{independence matroid}, or shorter \deftext{matroid}, if
	the following properties hold:
	\begin{enumerate}
		\item[(I1)] $\emptyset \in \Ical$,
		\item[(I2)] for $I\in\Ical$ and every $J\subseteq I$, we have $J\in\Ical$.
		\item[(I3)] If $J,I\in\Ical$ and $|J|<|I|$, then there is some $i\in I\BS J$, such that $J\cup\SET{i}\in \Ical$.
	\end{enumerate}
	Let $X\subseteq E$, we say that $X$ is \deftext{independent} in the matroid $M=(E,\Ical)$,
	 if $X\in\Ical$. Otherwise, we say that $X$ is \deftext{dependent} in $M$.
\end{definition}

\begin{example}\label{ex:freematroid}\PRFRA
	Let $E$ be any finite set, then the \deftext{free matroid} on the ground set $E$
	shall be the matroid $M=(E,\Ical)$ where all subsets of $E$ are independent, i.e. where $\Ical = 2^{E}$.
\end{example}

\noindent
Matroids have the natural concept of isomorphy.

\begin{definition}\PRFR{Feb 15th}
	Let $M=(E,\Ical)$ and $N=(E',\Ical')$ be matroids. A bijective map
	$$\phi\colon E\maparrow E'$$ is called \deftext[matroid isomorphism]{matroid isomorphism between $\bm M$ and $\bm N$}, if
	for all $X\subseteq E$
	\[ X\in \Ical \quad\Longleftrightarrow\quad \phi[X] \in \Ical' \]
	holds. As usual, an \deftext[M-automorphism@$M$-automorphism]{$\bm M$-automorphism} is a matroid isomorphism between $M$ and itself.
\end{definition}

\noindent For now, we will stick to the independence axioms of matroids and define the typical matroid
 concepts in terms of their independence systems.

\begin{definition}\label{def:directSum}\PRFR{Mar 7th}
	Let $M=(E,\Ical)$ and $N=(E',\Ical')$ be matroids such that $E\cap E' = \emptyset$.
	Then the \deftext[direct sum of matroids]{direct sum of $\bm M$ and $\bm N$} is the matroid $M \oplus N = (E\cup E', \Ical_\oplus)$ 
	where
	\[ \Ical_\oplus = \SET{X\cup X' ~\middle|~ X\in \Ical,\,X'\in \Ical'}. \qedhere\]
\end{definition}

\begin{lemma}\PRFR{Mar 7th}
	Let $M=(E,\Ical)$ and $N=(E',\Ical')$ be matroids such that $E\cap E' = \emptyset$. Then $M\oplus N$ is indeed a matroid.
\end{lemma}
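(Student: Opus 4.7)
The plan is to verify directly that $\Ical_\oplus$ satisfies the three independence axioms (I1), (I2), (I3) from Definition~\ref{def:indepAxioms}. The crucial ingredient throughout is that the hypothesis $E\cap E' = \emptyset$ forces every $Y\in\Ical_\oplus$ to have a \emph{unique} decomposition $Y = Y_E \cup Y_{E'}$ with $Y_E = Y\cap E$ and $Y_{E'} = Y\cap E'$; moreover this decomposition is disjoint, so $|Y| = |Y_E| + |Y_{E'}|$. I would establish this observation first, since all three axiom verifications rest on it.

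For (I1), the decomposition $\emptyset = \emptyset\cup\emptyset$ with $\emptyset\in\Ical$ and $\emptyset\in\Ical'$ (both by (I1) applied to $M$ and $N$) witnesses $\emptyset\in\Ical_\oplus$. For (I2), take $Y\in\Ical_\oplus$ and $Z\subseteq Y$, split both using the unique decomposition above, note $Z_E\subseteq Y_E\in\Ical$ and $Z_{E'}\subseteq Y_{E'}\in\Ical'$, and apply (I2) to $M$ and $N$ separately to conclude $Z = Z_E\cup Z_{E'}\in\Ical_\oplus$.

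The augmentation axiom (I3) is the only step requiring a small case analysis. Given $J, I\in\Ical_\oplus$ with $|J|<|I|$, I would split $J = J_E\cup J_{E'}$ and $I = I_E\cup I_{E'}$. Since $|J_E|+|J_{E'}| = |J| < |I| = |I_E|+|I_{E'}|$, at least one of $|J_E|<|I_E|$ or $|J_{E'}|<|I_{E'}|$ holds. In the first case, (I3) applied to $M$ yields some $i\in I_E\setminus J_E$ with $J_E\cup\SET{i}\in\Ical$; then $J\cup\SET{i} = (J_E\cup\SET{i})\cup J_{E'}\in\Ical_\oplus$, and the disjointness $E\cap E'=\emptyset$ ensures $i\notin J_{E'}$, so $i\in I\setminus J$ as required. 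The second case is symmetric with the roles of $M$ and $N$ exchanged.

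I do not anticipate a genuine obstacle, as the argument is essentially bookkeeping: the real work is the uniqueness and disjointness of the $E/E'$-decomposition of elements of $\Ical_\oplus$, which makes cardinalities add and lets each axiom for $M\oplus N$ be reduced componentwise to the corresponding axiom for $M$ and $N$.
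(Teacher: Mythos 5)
Your proof is correct and follows essentially the same route as the paper: verify (I1)--(I3) componentwise using the disjoint decomposition $Y = (Y\cap E)\cup(Y\cap E')$, with the augmentation step reduced by a cardinality comparison to one of the two summands. The only difference is cosmetic — you make the uniqueness of the decomposition and the membership $i\in I\setminus J$ explicit, which the paper leaves implicit.
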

\begin{proof}\PRFR{Mar 7th}
	Each matroid axiom may be easily deduced from the fact that every summand satisfies that axiom:
	$\emptyset\in \Ical_\oplus$ since $\emptyset \in \Ical$ and $\emptyset \in \Ical'$, {\em(I1)} holds.
	 Let $X\cup X' \in \Ical_\oplus$ for
	some $X\in \Ical$ and $X'\in \Ical'$. Let $Y\subseteq X\cup X'$, then $Y = (Y\cap X) \cup (Y\cap X')$, and since $(Y\cap X) \subseteq X$
	and $(Y\cap X')\subseteq X'$, we have $(Y\cap X) \in \Ical$ and $(Y\cap X')\in \Ical'$, therefore $Y\in \Ical_\oplus$, 
	{\em(I2)} holds. Let $X\cup X'\in \Ical_\oplus$ and $Y\cup Y'\in \Ical_\oplus$ with
	$\left| X\cup X' \right| < \left| Y\cup Y' \right|$, i.e. $X,Y\in \Ical$ and $X',Y'\in \Ical'$, and
	$\left| X \right| + \left| X' \right| < \left| Y \right| + \left| Y' \right|$. By symmetry we may assume without loss of generality that $\left| X \right| < 	\left| Y \right|$. Then there is some $y\in Y\BS X$ such that $X\cup\SET{y} \in \Ical$,
	therefore $X\cup\SET{y}\cup X' \in \Ical_\oplus$, thus {\em(I3)} holds.
\end{proof}

\begin{definition} \PRFRA
	Let $M=(E,\Ical)$ be a matroid. Every maximal element of $\Ical$ is called 
	a \deftext{base} of $M$. For $F\subseteq E$, every maximal element of
	$\SET{I\in\Ical \mid I\subseteq F}$ is called a \deftext[base of F@base of $F$]{base of $\bm F$} in $M$.
	The family of all bases of $M$ shall be denoted by \label{n:BcalM} $\Bcal(M)$, and the family of all bases of $F$ in $M$
	shall be denoted by\label{n:BcalMF} $\Bcal_M(F)$.
\end{definition}

\noindent  It is an important property of matroids, that for every
$F\subseteq E$, the bases of $F$ have the same cardinality; and that every
independent subset of $F$ can be augmented to a base of $F$. Likewise, any
set independent in a matroid $M$ can be augmented to a base of $M$.

\needspace{7\baselineskip}
\begin{lemma}\label{lem:augmentation}\PRFRA
	Let $M=(E,\Ical)$ be a matroid, and let $F\subseteq H\subseteq E$ with $F\in\Ical$.
	Then there is a subset $G\in \Ical$ with $F\subseteq G\subseteq H$, such that
	\( \left| G\right| = \max \SET{\vphantom{A^A} \left| I\right| ~\middle|~ I\in \Ical,\,I\subseteq H} \).
\end{lemma}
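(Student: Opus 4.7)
The plan is to apply the augmentation axiom (I3) iteratively, starting from $F$ and pushing towards any maximum-size independent subset of $H$, while taking care that each newly added element stays inside $H$.

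First I would set $k = \max\SET{|I| \mid I \in \Ical,\, I \subseteq H}$; this maximum exists because $\Ical \cap 2^H$ is a nonempty finite family (it contains $\emptyset$ by (I1), and $E$ is finite). Pick some witness $I^\ast \in \Ical$ with $I^\ast \subseteq H$ and $|I^\ast| = k$. Note that $|F| \leq k$ automatically, since $F$ itself is an independent subset of $H$.

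Next I would construct a chain $F = G_0 \subseteq G_1 \subseteq \ldots \subseteq G_{k-|F|}$ of independent sets inside $H$ by induction. At stage $j$, assume $G_j \in \Ical$, $F \subseteq G_j \subseteq H$, and $|G_j| = |F| + j < k$. Then $|G_j| < |I^\ast|$, so by (I3) applied to $G_j$ and $I^\ast$ there is some $i \in I^\ast \BS G_j$ with $G_j \cup \SET{i} \in \Ical$. Setting $G_{j+1} = G_j \cup \SET{i}$ preserves all the invariants: independence is immediate, $F \subseteq G_{j+1}$ is immediate, and $G_{j+1} \subseteq H$ follows because $i \in I^\ast \subseteq H$. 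The cardinality condition $|G_{j+1}| = |F| + j + 1$ is also clear.

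After $k - |F|$ steps, the resulting set $G := G_{k-|F|}$ satisfies $G \in \Ical$, $F \subseteq G \subseteq H$, and $|G| = k$, which is exactly what is required. I do not anticipate any real obstacle: the only thing one must not forget is to choose the augmenting element from a fixed witness $I^\ast \subseteq H$ (rather than from an arbitrary larger independent set) so that the inclusion $G \subseteq H$ is preserved throughout the induction.
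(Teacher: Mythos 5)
Your proposal is correct and uses the same key idea as the paper: augment via (I3) while drawing the new element from an independent subset of $H$ so that the inclusion in $H$ is preserved. The only cosmetic difference is that you build $G$ up by explicit induction from $F$, whereas the paper takes a $\subseteq$-maximal element of $\SET{I\in\Ical \mid F\subseteq I\subseteq H}$ and applies (I3) once to derive a contradiction if its cardinality were not maximal.
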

\begin{proof}\PRFRA
	Let $\Ical' = \SET{I\in \Ical\mid F\subseteq I \subseteq H}$. Clearly, $F\in \Ical'$ and $\Ical'$ is finite,
	 therefore there is an element $G\in \Ical'$ which is maximal with respect to set-inclusion $\subseteq$. Now assume that $\left| G \right| < \left| I \right|$ for some $I\in \Ical$ with $I\subseteq H$. By {\em (I3)} there is an element $i\in I\BS G$
	such that $G\cup\SET{i}\in \Ical$. But $i\in I\subseteq H$, therefore $G\cup\SET{i} \in \Ical'$, which contradicts the choice of $G$ as $\subseteq$-maximal element of $\Ical'$. Thus
	$\left|G\right| = \max \SET{\vphantom{A^A} \left| I\right| ~\middle|~ I\in \Ical,\,I\subseteq H}$.
\end{proof}

\needspace{4\baselineskip}
\begin{corollary}\label{cor:equicardinality}\PRFRA
Let $M=(E,\Ical)$ be a matroid, $H\subseteq E$. Let $F,G$ be maximal elements in $\SET{X\in\Ical\mid X\subseteq H}$ with respect to set-inclusion. Then $\left| F \right|=\left| G\right|$.
\end{corollary}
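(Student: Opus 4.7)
The plan is to derive this corollary as an immediate consequence of Lemma~\ref{lem:augmentation}, which has already done all the heavy lifting. The key observation is that the quantity $m := \max\SET{\vphantom{A^A} \left| I\right| ~\middle|~ I\in \Ical,\,I\subseteq H}$ is a single number that depends only on $M$ and $H$, so it suffices to show that both $\left|F\right|$ and $\left|G\right|$ equal $m$.

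First I would apply Lemma~\ref{lem:augmentation} with the independent set $F$ in the role of the lemma's ``$F$'' and with the same $H$. Since $F\in \Ical$ and $F\subseteq H$, the hypotheses are met, so there exists $F'\in \Ical$ with $F\subseteq F'\subseteq H$ and $\left|F'\right| = m$. Now I invoke the hypothesis that $F$ is $\subseteq$-maximal in $\SET{X\in \Ical\mid X\subseteq H}$: since $F'$ lies in this family and $F\subseteq F'$, maximality forces $F=F'$, whence $\left|F\right| = m$.

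Second, repeat the same argument verbatim with $G$ in place of $F$: Lemma~\ref{lem:augmentation} produces $G'\in \Ical$ with $G\subseteq G'\subseteq H$ and $\left|G'\right| = m$, then maximality of $G$ yields $G=G'$ and hence $\left|G\right|=m$. Combining the two equalities gives $\left|F\right| = m = \left|G\right|$, which is the claim.

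There is essentially no obstacle here; the only thing to be careful about is to actually use $\subseteq$-maximality of $F$ and $G$ (rather than cardinality-maximality, which is what Lemma~\ref{lem:augmentation} delivers) to conclude that the augmented sets coincide with the originals. Once this little sleight of hand is acknowledged, the statement is a two-line corollary.
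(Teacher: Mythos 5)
Your proof is correct and rests on exactly the same ingredient as the paper's: Lemma~\ref{lem:augmentation} combined with the $\subseteq$-maximality hypothesis. The paper phrases it contrapositively (if $|F|<|G|$ then $F$ has a proper independent superset in $H$, contradicting maximality), while you argue directly that both $|F|$ and $|G|$ equal the common maximum; this is a purely presentational difference.
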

\begin{proof}\PRFRA
	If, without loss of generality, $\left| F\right| < \left| G\right|$, then $F$ cannot
	be maximal with respect to set-inclusion, because then Lemma~\ref{lem:augmentation} gives
	a proper independent superset of $F$ in $H$.
\end{proof}

\begin{corollary}\label{cor:basisexchange}\PRFRA
	Let $M=(E,\Ical)$ be a matroid, $F\subseteq E$ and $B_{1},B_{2}\subseteq F$ be bases of $F$ in $M$. Then the following property is satisfied:
	\begin{enumerate}[align=parleft,leftmargin=2cm,labelsep=1.5cm]\label{n:B3p}
	\item[(B3')] For every element $x\in B_{1}\BS B_{2}$ there is an element $y\in B_{2}\BS B_{1}$, such that
	$\left(B_{1}\BSET{x}\right)\cup\SET{y}$ is a base of $F$ in $M$.
\end{enumerate}
\end{corollary}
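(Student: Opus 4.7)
The plan is to reduce the claim directly to the independence exchange axiom (I3), using Corollary~\ref{cor:equicardinality} to conclude that the augmented set is in fact a base of $F$ and not merely independent.

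First I would fix bases $B_1,B_2$ of $F$ in $M$ and an element $x\in B_1\setminus B_2$, and consider the set $B_1':=B_1\setminus\{x\}$. Since $B_1\in\Ical$, axiom (I2) gives $B_1'\in\Ical$, and clearly $|B_1'|=|B_1|-1$. By Corollary~\ref{cor:equicardinality}, applied inside $F$, we have $|B_1|=|B_2|$, so $|B_1'|<|B_2|$. Axiom (I3) then yields some $y\in B_2\setminus B_1'$ with $B_1'\cup\{y\}\in\Ical$.

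The mildly delicate step — really the only one worth spelling out — is to verify that this $y$ actually lies in $B_2\setminus B_1$, rather than merely in $B_2\setminus B_1'$. Here the choice $x\in B_1\setminus B_2$ pays off: since $x\notin B_2$, we have $B_2\setminus B_1'=B_2\setminus(B_1\setminus\{x\})=B_2\setminus B_1$, so automatically $y\neq x$ and $y\in B_2\setminus B_1$. In particular $y\in B_2\subseteq F$, and together with $B_1'\subseteq B_1\subseteq F$ this gives $(B_1\setminus\{x\})\cup\{y\}\subseteq F$.

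Finally I would argue that this independent set of size $|B_1|$ inside $F$ must be a base of $F$. It is independent and has the maximal possible cardinality for an independent subset of $F$, namely $|B_1|=|B_2|$, the common size guaranteed by Corollary~\ref{cor:equicardinality}; so it is $\subseteq$-maximal among independent subsets of $F$ (any proper superset would contradict the equicardinality of bases of $F$). Thus $(B_1\setminus\{x\})\cup\{y\}\in\Bcal_M(F)$, establishing (B3'). I do not anticipate any real obstacle here; the only thing to be careful about is the set-theoretic bookkeeping that ensures $y\neq x$, which is handled by the hypothesis $x\notin B_2$.
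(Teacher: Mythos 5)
Your proof is correct. It rests on the same underlying mechanism as the paper's: delete $x$, augment from $B_2$, and use the fact that $x\notin B_2$ to see that the augmenting element lies in $B_2\BS B_1$; the equicardinality of bases (Corollary~\ref{cor:equicardinality}) then upgrades the independent set to a base of $F$. The only difference is logical packaging: you apply {\em (I3)} directly to the pair $B_1\BSET{x}$, $B_2$, whereas the paper argues indirectly, assuming no suitable $y$ exists, concluding that $B_1\BSET{x}$ would be a base of the auxiliary set $B' = \left(B_1\BSET{x}\right)\cup\left(B_2\BS B_1\right)\supseteq B_2$, and contradicting equicardinality there. Your direct version is, if anything, slightly cleaner, and your care over the identity $B_2\BS\left(B_1\BSET{x}\right) = B_2\BS B_1$ addresses exactly the point that makes the statement nontrivial.
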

\begin{proof}\PRFRA

	Since $\left| B_1 \right| = \left| \left(B_{1}\BSET{x}\right)\cup\SET{y}
	\right|$ for any $x\in B_{1}\BS B_{2}$ and $y\in B_{2}\BS B_{1}$, it
	suffices to show, that for each such $x$, there is a corresponding $y$ with
	$\left(B_{1}\BSET{x}\right)\cup\SET{y}\in \Ical$. We give an indirect
	argument. Assume that for $x\in B_{1}\BS B_{2}$, there is no $y\in
	B_{2}\BS B_{1}$ with $\left(B_{1}\BSET{x}\right)\cup\SET{y}$ independent
	in $M$. Then $B_{1}\BSET{x}$ is a base of $B' =
	\left(B_1\BSET{x}\right) \cup \left(B_2\BS B_1 \right)$. Clearly, $B' =
	\left(B_{1} \cup B_{2}\right)\BSET{x}$, but $x\notin B_{2}$, therefore
	$B_{2} \subseteq B'$. Now $B_{2}\in \Ical$ together with $\left| B_2 \right| > \left| B_1\BSET{x}  \right|$ contradicts that
	$B_{1}\BSET{x}$ is a base of $B'$. Therefore, there is some $y\in
	B_{2}\BS B_{1}$ such that $\left(B_{1}\BSET{x}\right)\cup\SET{y}$ is a base of
	$F$ in $M$.
\end{proof}

\begin{lemma}\label{lem:basisexchangesymmetric}\PRFRA
	Let $M=(E,\Ical)$ be a matroid, $F\subseteq E$ and $B_{1},B_{2}\subseteq F$ be bases of $F$ in $M$. For every element $y\in B_{2}\BS B_{1}$ there is an element $x\in B_{1}\BS B_{2}$, such that
	$\left(B_{1}\BSET{x}\right)\cup\SET{y}$ is a base of $F$ in $M$.
\end{lemma}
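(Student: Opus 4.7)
The plan is to augment a carefully chosen initial independent set, rather than starting from $\SET{y}$ alone, so that the augmentation is forced to miss exactly one element of $B_1\BS B_2$.

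First, I would set $I_0 = (B_1\cap B_2)\cup\SET{y}$. Since $I_0\subseteq B_2$ and $B_2\in\Ical$, property {\em (I2)} gives $I_0\in\Ical$. Next I would apply Lemma~\ref{lem:augmentation} with the independent set $I_0$ and the superset $H := B_1\cup\SET{y}\subseteq F$ (using that $B_1\subseteq F$ and $y\in B_2\subseteq F$), obtaining some $G\in\Ical$ with $I_0\subseteq G\subseteq H$ of maximum cardinality among independent subsets of $H$. Since $B_1\subseteq H$ is independent of size $\ABS{B_1}$ and Corollary~\ref{cor:equicardinality} forbids any independent subset of $F$ from exceeding $\ABS{B_1}$, this maximum equals exactly $\ABS{B_1}$, so $\ABS{G} = \ABS{B_1}$.

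From $\ABS{G}=\ABS{B_1}$, $y\in G$, and $G\subseteq B_1\cup\SET{y}$, the set $G$ must have the form $(B_1\BSET{x})\cup\SET{y}$ for a unique $x\in B_1$. Because $G\supseteq I_0\supseteq B_1\cap B_2$, this excluded element $x$ cannot lie in $B_2$, so $x\in B_1\BS B_2$, as required. Finally, $G\subseteq F$ is independent with cardinality equal to the common size of bases of $F$ in $M$, so $G$ is a base of $F$ in $M$.

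There is no real obstacle here; the only subtle point is the choice of starting set. A naive augmentation of $\SET{y}$ inside $B_1\cup\SET{y}$ would still produce some base of the form $(B_1\BSET{x})\cup\SET{y}$, but it would give no control over whether $x\in B_2$. The key idea is to preload the augmentation with $B_1\cap B_2$, which forces the single element dropped from $B_1$ to lie in $B_1\BS B_2$.
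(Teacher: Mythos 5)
Your proof is correct, and it follows the same basic route as the paper: augment an independent set inside $B_1\cup\SET{y}$ using Lemma~\ref{lem:augmentation} and observe that the resulting base must have the form $(B_1\BSET{x})\cup\SET{y}$. The one genuine difference is your choice of starting set. The paper augments $\SET{y}$ alone to a base $B'$ of $B_1\cup\SET{y}$ and then takes $x\in B_1\BS B'$; as written, that argument gives no reason why the dropped element $x$ should lie in $B_1\BS B_2$ rather than in $B_1\cap B_2$ (for instance, in $U_{2,3}$ with $B_1=\SET{a,b}$, $B_2=\SET{b,c}$, $y=c$, the augmentation could return $\SET{a,c}$, dropping $b\in B_1\cap B_2$). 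Your preloading with $(B_1\cap B_2)\cup\SET{y}$ — which is independent because it sits inside $B_2$ — forces the unique missing element of $B_1$ to avoid $B_2$, so your version actually supplies a detail the paper's proof leaves implicit. Everything else (the cardinality count via Corollary~\ref{cor:equicardinality} and the identification of $G$ as a base of $F$) is sound.
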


\noindent D.J.A.~Welsh gives the following nice and short proof of
 this lemma in \cite{We76}.

\begin{proof}\PRFRA
  Let $y\in B_{2}\BS B_{1}$, thus $\SET{y}\in\Ical$. From Lemma~\ref{lem:augmentation} we
  obtain that there is a basis $B'$ of $F' = B_1\cup\SET{y}$ with $\SET{y}\subseteq B'$.
   Since $B_1$ is a base of $F$ and a proper subset of $F'\subseteq F$, $F'$ is dependent. Thus $B'$ is a proper subset of $F'$ and therefore there is an element
    $x\in B_{1}\BS B'$. Since $B_{1}$ and $B'$ are bases of $F'=B_{1}\cup\SET{y}=B'\cup\SET{x}$ in $M$,
    and $B_{1}$ and $B_{2}$ are bases of $F$ in $M$,
     we have $\left| B' \right| = \left| B_{1}\right| = \left| B_{2}\right|$,
     so $B'=(B_{1}\BSET{x})\cup\SET{y}$ is a base of $F$ in $M$, too.
\end{proof}

\begin{definition} \PRFRA
	Let $M=(E,\Ical)$ be a matroid. A set $C\subseteq E$ is called \deftext{circuit} of $M$, if $C$ is dependent, yet any proper subset of $C$ is independent in $M$. The set of circuits of $M$ is denoted by\label{n:CM}
	\[ \Ccal(M) = \SET{\vphantom{A^A}C\subseteq E ~\middle|~C\notin \Ical,\,\forall c\in C\colon \,C\BSET{c}\in \Ical}. \qedhere\]
\end{definition}

\noindent Obviously, we may restore $\Ical$ from $\Ccal(M)$ since the independent sets of $M$ are those subsets of $E$,
 which do not contain a circuit. The following property of $\Ccal(M)$
  is called \deftext{strong circuit elimination} and also plays a role
 in axiomatizing matroids using axioms governing its family of circuits.

\begin{lemma}[\cite{Ox11}, Proposition~1.4.12]\label{lem:strongCircuitElimination}
	Let $M=(E,\Ical)$ be a matroid, and let $C_1, C_2\in\Ccal(M)$ be circuits of $M$.
	Furthermore, let $e\in C_1\cap C_2$ and $f\in C_1\BS C_2$.
	Then there is a circuit $C'\in \Ccal(M)$ such that
	$f\in C'$ and $C' \subseteq \left( C_1\cup C_2 \right)\BSET{e}$.
\end{lemma}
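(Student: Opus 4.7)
The plan is to argue by contradiction: suppose there is no circuit $C' \in \Ccal(M)$ with $f \in C' \subseteq T$, where $T := (C_1 \cup C_2) \BS \SET{e}$. I will derive a contradiction by producing exactly such a $C'$.

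First, I would observe that $(C_2 \BS \SET{e}) \cup \SET{f}$ is independent: otherwise, any minimal dependent subset must contain $f$ (since $C_2 \BS \SET{e}$ is independent as a proper subset of the circuit $C_2$), yet such a subset would be a circuit of $M$ lying in $T$ and containing $f$, contradicting the assumption. Using Lemma~\ref{lem:augmentation}, I would then extend $(C_2 \BS \SET{e}) \cup \SET{f}$ to a maximum-cardinality independent subset $B$ of $T$. Since $B \cup \SET{e} \supseteq C_2$ is dependent, $B$ is in fact also a maximum-cardinality independent subset of $C_1 \cup C_2$.

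The key step is then to show $r(T \BS \SET{f}) = r(T)$, where $r(S) := \max\SET{|J| \mid J \in \Ical,\, J \subseteq S}$. Granting this equality, any maximum-cardinality independent $I \subseteq T \BS \SET{f}$ satisfies $|I| = r(T)$, so $I \cup \SET{f} \subseteq T$ has cardinality $r(T) + 1$ and must be dependent; any minimal dependent subset of $I \cup \SET{f}$ is then a circuit contained in $T$ and containing $f$ (since $I$ is independent), contradicting the initial assumption.

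The principal obstacle is proving the rank equality $r(T \BS \SET{f}) = r(T)$ using only the independence axioms (I1)--(I3) and the basis-related lemmas available so far. The underlying intuition is the transitivity of dependence: $f$ depends on $\SET{e} \cup (C_1 \BS \SET{e, f})$ via the circuit $C_1$, while $e$ depends on $C_2 \BS \SET{e} \subseteq T \BS \SET{f}$ via the circuit $C_2$, so composing these places $f$ in the ``span'' of $T \BS \SET{f}$. Formalizing this chain requires a careful augmentation argument: starting with a maximum independent subset of $T \BS \SET{f}$ containing $C_1 \BS \SET{e, f}$, one applies Lemma~\ref{lem:augmentation} inside $C_1 \cup C_2$ and then analyzes which of $\SET{e, f}$ can be adjoined, invoking Corollary~\ref{cor:equicardinality} and the fact that any independent set containing $C_2 \BS \SET{e}$ cannot absorb $e$.
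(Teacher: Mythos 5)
The paper does not actually prove this lemma; it defers to Oxley's Proposition~1.4.12, whose proof is a minimal-counterexample induction on $\left| C_1\cup C_2 \right|$ using only weak circuit elimination. Your route is genuinely different: a rank/augmentation argument built on Lemma~\ref{lem:augmentation} and Corollary~\ref{cor:equicardinality}, which is more in keeping with the tools this chapter has already developed. Its skeleton is sound. Once one knows $\rk\left(\left( C_1\cup C_2 \right)\BSET{e}\right) = \rk\left(\left( C_1\cup C_2 \right)\BSET{e,f}\right)$, adjoining $f$ to a maximal independent subset $I$ of $\left( C_1\cup C_2 \right)\BSET{e,f}$ yields a dependent subset of $\left( C_1\cup C_2 \right)\BSET{e}$, any minimal dependent subset of which is the required circuit through $f$; note that this closing step needs no contradiction hypothesis at all, so your opening paragraph and the set $B$ are dispensable.

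The genuine gap is in the step you yourself flag as the obstacle. A maximal independent subset $J$ of $\left( C_1\cup C_2 \right)\BSET{e,f}$ containing $C_1\BSET{e,f}$ cannot block $f$: the set $\left( C_1\BSET{e,f} \right)\cup\SET{f} = C_1\BSET{e}$ is independent, so nothing you have listed prevents $J\cup\SET{f}$ from being independent. Your cited ingredients (equicardinality, and that an independent superset of $C_2\BSET{e}$ cannot absorb $e$) only rule out adjoining $e$, and by themselves they give merely $\rk\left(\left( C_1\cup C_2\right)\BSET{e,f}\right) \geq \rk\left( C_1\cup C_2 \right)-1$, which is not enough. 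The ``transitivity of dependence'' needs its second leg made explicit, and that leg must pass through $e$: take a maximal independent $K\supseteq C_2\BSET{e}$ inside $\left( C_1\cup C_2 \right)\BSET{e,f}$, and separately a maximal independent $L\supseteq C_1\BSET{f}$ inside $\left( C_1\cup C_2 \right)\BSET{f}$ --- crucially $e\in C_1\BSET{f}$, so $L$ lives in a set containing $e$, not in your $T\BSET{f}$. Since $L\cup\SET{f}\supseteq C_1$, the set $L$ is already maximal independent in $C_1\cup C_2$; if $\left| K \right|<\left| L \right|$, axiom {\em (I3)} would let $K$ absorb some $x\in L\BS K$, and maximality of $K$ in $\left( C_1\cup C_2 \right)\BSET{e,f}$ forces $x=e$, which is impossible because $K\cup\SET{e}\supseteq C_2$. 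Hence $\left| K \right|=\rk(C_1\cup C_2)$ and the rank identity follows. Equivalently, one computes $e\in\cl\left( C_2\BSET{e} \right)$, hence $\cl\left( \left( C_1\cup C_2 \right)\BSET{e,f} \right)=\cl\left( \left( C_1\cup C_2 \right)\BSET{f} \right)\ni f$. With that repair your proof is complete; without it, the case ``$f$ gets adjoined to $J$'' is not excluded.
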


\noindent For a proof, see \cite{Ox11}, p.29.

\begin{definition}\label{def:matroidLoop}\label{def:parallelEdgesMatroid}\PRFRA
	Let $M=(E,\Ical)$ be a matroid, $l\in E$. Then $l$ is called a \deftext[loop (matroid)]{loop in $\bm M$}, if
	the singleton
	$\SET{l}$ is a circuit of $M$.
	Let $p_1,p_2\in E$ such that $p_1\not= p_2$. Then $p_1$ and $p_2$ are called \deftext[parallel edges (matroid)]{parallel edges in $\bm M$},
	if $\SET{p_1,p_2}$ is a circuit of $M$.
	Let $c\in E$ such that for all bases $B$ of $M$, $c\in B$. Then $c$ is called a \deftext[coloop]{coloop in $\bm M$}.
\end{definition}

\begin{definition}\label{def:rank}\PRFRA
	Let $M=(E,\Ical)$ be a matroid. The \deftext{rank function} of $M$ shall be
	 the map\label{n:rkM}
	\[ \rk_M \colon 2^{E} \maparrow \N,\,X\mapsto \max \SET{\vphantom{A^A}\left|Y\right| ~\middle|~ Y\subseteq X,\,Y\in \Ical} .\]
	If the matroid $M$ is clear from the context, we denote $\rk_{M}$ by $\rk$.
\end{definition}

\noindent Again, $\Ical$ may be retrieved from $\rk_{M}$ since the independent sets are precisely those elements of the 
domain $2^E$ of $\rk_M$, for which the cardinality and the image under the rank function coincide.

\begin{lemma}\label{lem:rankMonotone}\PRFRA
	Let $M=(E,\Ical)$ be a matroid, and $X\subseteq Y\subseteq E$. Then $\rk(X) \leq \rk(Y)$.
\end{lemma}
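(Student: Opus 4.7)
The plan is to unpack the definition of the rank function and exploit the transitivity of subset inclusion. Specifically, $\rk(X)$ is defined as the maximum cardinality $|I|$ taken over independent sets $I \in \Ical$ with $I \subseteq X$, and similarly for $\rk(Y)$; so the statement reduces to showing that the set over which we maximise for $X$ is contained in the set over which we maximise for $Y$.

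First I would invoke Lemma~\ref{lem:augmentation} or, more simply, just Definition~\ref{def:rank} directly: pick an independent set $I_X \in \Ical$ with $I_X \subseteq X$ realising the maximum, so that $\rk(X) = |I_X|$. The existence of such an $I_X$ is guaranteed because $\Ical$ is finite and non-empty (it contains $\emptyset$ by {\em(I1)}, which is a valid candidate at worst).

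Next I would observe that since $X \subseteq Y$, the chain $I_X \subseteq X \subseteq Y$ yields $I_X \subseteq Y$, while $I_X \in \Ical$ still holds. Hence $I_X$ is a candidate in the set
\[ \SET{\vphantom{A^A} |J| ~\middle|~ J \subseteq Y,\,J \in \Ical}, \]
so the maximum of this set is at least $|I_X|$, i.e.\ $\rk(Y) \geq |I_X| = \rk(X)$.

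There is essentially no obstacle here: the argument is two lines and uses only the definition of $\rk$ together with transitivity of $\subseteq$; neither the independence augmentation axiom {\em(I3)} nor any of the deeper exchange properties (Corollary~\ref{cor:basisexchange}, Lemma~\ref{lem:basisexchangesymmetric}) is needed. The only subtlety worth mentioning explicitly in the write-up is that the maximum in Definition~\ref{def:rank} is attained, which is immediate from finiteness of $E$.
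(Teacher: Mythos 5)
Your proof is correct and is essentially the paper's own argument: both observe that $\SET{I\in \Ical\mid I\subseteq X} \subseteq \SET{I\in \Ical\mid I\subseteq Y}$, so the maximum defining $\rk(Y)$ ranges over a superset of the one defining $\rk(X)$ and cannot be smaller. Your explicit choice of a maximiser $I_X$ is just a slightly more verbose phrasing of the same two-line observation.
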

\begin{proof}\PRFRA
	Since $\SET{I\in \Ical\mid I\subseteq X} \subseteq \SET{I\in \Ical\mid I\subseteq Y}$
	the maximum expression for $\rk(Y)$ ranges over a superset of the expression for $\rk(X)$
	and therefore cannot be smaller.
\end{proof}

\begin{definition}\label{def:clM}\label{def:FcalM}\PRFRA
	Let $M=(E,\Ical)$ be a matroid. A set $F\subseteq E$ is called \deftext{flat} of $M$,
	if for all $x\in E\BS F$, the equality $\rk(F\cup\SET{x}) = \rk(F) + 1$
	holds. The family of all flats of $M$ is denoted by\label{n:FM}
	\[ \Fcal(M) = \SET{ \vphantom{A^A}X\subseteq E ~\middle|~ \forall y\in E\BS X\colon\, \rk_M(X) < \rk_M(X\cup\SET{y})}.\]
	The \deftext[closure operator of M@closure operator of $M$]{closure operator of $\bm M$} is defined to be the map\label{n:clM}
	\[ \cl_{M}\colon 2^{E} \maparrow 2^{E},\, X\mapsto \bigcap\SET{F\in \Fcal(M)\mid X\subseteq F}.\]
	If the matroid $M$ is clear from the context, we denote $\cl_{M}$ by $\cl$.
\end{definition}

\noindent
	Clearly, for every matroid $M=(E,\Ical)$, the ground set $E\in\Fcal(M)$ is a flat, and therefore the defining expression of $\cl(X)$ is well-defined, as it is never an intersection of an empty family. 
 The following properties are easy consequences from the definition of
the closure operator.

\begin{lemma}\label{lem:clFlips}\PRFRA
   Let $M=(E,\Ical)$ be a matroid, $X\subseteq Y\subseteq E$.
   Then $X\subseteq \cl(X) \subseteq \cl(Y)$.
\end{lemma}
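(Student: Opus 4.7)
The statement splits into two inclusions, both of which follow directly from the definition of $\cl$ as an intersection of flats (Definition~\ref{def:clM}), so my plan is really just to unpack that definition twice.

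For the first inclusion $X \subseteq \cl(X)$, I would argue as follows. Pick any $x\in X$. By definition, $\cl(X) = \bigcap\SET{F\in\Fcal(M)\mid X\subseteq F}$, and as noted in the remark following Definition~\ref{def:clM} this family is non-empty since $E\in\Fcal(M)$ is itself a flat containing $X$. For every $F$ appearing in this intersection we have $X\subseteq F$, so in particular $x\in F$. Hence $x$ lies in every set of the family, which means $x\in \bigcap\SET{F\in\Fcal(M)\mid X\subseteq F} = \cl(X)$. Since $x$ was arbitrary, $X\subseteq \cl(X)$.

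For the monotonicity inclusion $\cl(X) \subseteq \cl(Y)$, I would use the elementary fact that the intersection of a subfamily of sets is at least as large as the intersection of the larger family. Concretely, assuming $X\subseteq Y$, any flat $F\in\Fcal(M)$ with $Y\subseteq F$ automatically satisfies $X\subseteq Y\subseteq F$, so
\[ \SET{F\in\Fcal(M)\mid Y\subseteq F} \,\subseteq\, \SET{F\in\Fcal(M)\mid X\subseteq F}. \]
Intersecting a smaller collection of subsets of $E$ produces a superset, so
\[ \cl(X) = \bigcap \SET{F\in\Fcal(M)\mid X\subseteq F} \,\subseteq\, \bigcap \SET{F\in\Fcal(M)\mid Y\subseteq F} = \cl(Y). \]

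There is essentially no obstacle here; the proof is a two-step unpacking of the definition, and no appeal to the augmentation lemma, the rank function, or any structural property of matroids is required. The only small point that needs to be mentioned is the non-emptiness of the intersected family (so that $\cl$ is well-defined), and that has already been recorded in the text right after Definition~\ref{def:clM}.
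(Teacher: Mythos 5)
Your proof is correct and follows exactly the same route as the paper: both observe that $\SET{F\in\Fcal(M)\mid Y\subseteq F}$ is a non-empty subfamily of $\SET{F\in\Fcal(M)\mid X\subseteq F}$ and conclude by comparing the intersections. Nothing is missing.
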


\begin{proof}
   Since $\emptyset \not= \SET{F\in \Fcal(M)\mid Y\subseteq F} \subseteq \SET{F\in \Fcal(M)\mid X\subseteq F}$, we have 
    \[ X\subseteq \cl(X) = \bigcap\SET{F\in \Fcal(M)\mid X\subseteq F} \subseteq \bigcap\SET{F\in \Fcal(M)\mid Y\subseteq F} = \cl(Y).
    \qedhere \]
\end{proof}

\begin{lemma}\label{lem:clKeepsRank}\PRFRA
	Let $M=(E,\Ical)$ be a matroid, $X\subseteq E$. Then $\rk(X) = \rk(\cl(X))$.
\end{lemma}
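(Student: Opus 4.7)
The plan is to prove both inequalities $\rk(X) \leq \rk(\cl(X))$ and $\rk(\cl(X)) \leq \rk(X)$ separately. The first inequality is immediate: by Lemma~\ref{lem:clFlips} we have $X \subseteq \cl(X)$, and Lemma~\ref{lem:rankMonotone} then yields $\rk(X) \leq \rk(\cl(X))$.

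For the reverse inequality, the key idea is to exhibit a flat $F$ containing $X$ with the same rank as $X$; then $\cl(X) \subseteq F$ by the definition of the closure, and monotonicity gives the bound. I would define
\[ F = \SETL{y\in E}{\rk(X\cup\SET{y}) = \rk(X)}. \]
By construction $X\subseteq F$, since adding an element of $X$ to $X$ cannot change its rank.

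Next I would check $\rk(F) = \rk(X)$. Pick any base $B$ of $X$ in $M$, so $B\in\Ical$, $B\subseteq X\subseteq F$, and $|B| = \rk(X)$. For any $y\in F\BS B$, if $B\cup\SET{y}\in\Ical$, then $\rk(X\cup\SET{y})\geq |B\cup\SET{y}| = \rk(X)+1$, contradicting $y\in F$. Hence $B$ is a maximal independent subset of $F$: were there some $I\in\Ical$ with $I\subseteq F$ and $|I|>|B|$, Lemma~\ref{lem:augmentation} (or axiom \emph{(I3)}) would furnish $y\in I\BS B$ with $B\cup\SET{y}\in\Ical$, a contradiction. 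Therefore $\rk(F) = |B| = \rk(X)$.

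Finally, I would show $F\in \Fcal(M)$. Let $z\in E\BS F$, so $\rk(X\cup\SET{z}) = \rk(X)+1$. Choose an independent subset $B'\subseteq X\cup\SET{z}$ of cardinality $\rk(X)+1$. Since $\rk(X) = |B|$, we must have $z\in B'$, and $B'\subseteq F\cup\SET{z}$ because $B'\BSET{z}\subseteq X\subseteq F$. Hence $\rk(F\cup\SET{z})\geq |B'| = \rk(X)+1 = \rk(F)+1 > \rk(F)$, so $F$ is a flat. Consequently $\cl(X) = \bigcap\SET{F'\in\Fcal(M) \mid X\subseteq F'}\subseteq F$, and by Lemma~\ref{lem:rankMonotone}, $\rk(\cl(X)) \leq \rk(F) = \rk(X)$.

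The only delicate step is verifying that $F$ really is a flat; the rest follows mechanically from augmentation and monotonicity. The crux is making sure that the rank increase witnessed by an element $z\notin F$ survives the enlargement from $X\cup\SET{z}$ to $F\cup\SET{z}$, which is handled above by locating a witnessing base inside $F\cup\SET{z}$ itself.
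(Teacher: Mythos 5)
Your proof is correct and follows essentially the same strategy as the paper: exhibit a flat $F\supseteq X$ with $\rk(F)=\rk(X)$, so that $\cl(X)\subseteq F$ and monotonicity finishes the argument. The only cosmetic difference is that the paper obtains $F$ as a maximal element of $\SET{Y\supseteq X \mid \rk(Y)=\rk(X)}$ and reads off flatness from maximality, whereas you define $F$ explicitly as $\SET{y\in E\mid \rk(X\cup\SET{y})=\rk(X)}$ and verify its rank and flatness directly; both constructions yield the same set and both verifications are sound.
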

\begin{proof}
	By Lemma~\ref{lem:clFlips} we have $X\subseteq \cl(X)$ and by Lemma~\ref{lem:rankMonotone} we obtain that $\rk(X) \leq \rk(\cl(X))$. Now consider the
	family $\Ecal = \SET{Y\subseteq E\mid X\subseteq Y \txtand \rk(X) = \rk(Y)}.$
	Since $X\in \Ecal$ and $E$ is finite, there is a maximal element $F\in \Ecal$ with respect to set-inclusion. Since $F$ is maximal, we have that $F\in\Fcal(M)$.
	Thus $\cl(X) \subseteq F$ and so $\rk(\cl(X)) \leq \rk(F) = \rk(X)$ holds, and
	consequently $\rk(X) = \rk(\cl(X))$.
\end{proof}

\begin{lemma}\label{lem:flatsintersectinflat}\PRFRA
	Let $M=(E,\Ical)$ be a matroid, $X\subseteq E$. Then for every $\Fcal' \subseteq \Fcal(M)$, $\bigcap_E \Fcal' \in \Fcal(M)$. Furthermore, for all $X\subseteq E$
	 $$\cl(X) \in \Fcal(M) \text{ ~~and~~ } \cl(\cl(X)) = \cl(X).$$
\end{lemma}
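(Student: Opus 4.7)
The plan is to establish the three assertions in sequence: (i) the intersection of any family of flats is a flat, (ii) $\cl(X)$ is a flat, and (iii) $\cl$ is idempotent.

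First, I would handle the intersection claim. Set $F = \bigcap_E \Fcal'$. The case $\Fcal' = \emptyset$ gives $F = E$ by the convention on empty intersections from the notation section, and $E$ is trivially in $\Fcal(M)$ since the universal quantifier runs over the empty set $E\BS E$. For the non-trivial case, I would pick an arbitrary $y \in E\BS F$ and aim to show $\rk(F) < \rk(F\cup\SET{y})$. Since $y\notin\bigcap \Fcal'$, some $F'\in \Fcal'$ avoids $y$. The strategy is now: take a basis $B$ of $F$ (via Lemma~\ref{lem:augmentation}), extend it to a basis $B'$ of $F'$ (again by Lemma~\ref{lem:augmentation}, using $B\subseteq F\subseteq F'$), and finally extend $B'$ to a basis $B''$ of $F'\cup\SET{y}$.

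The crucial step, and the main obstacle, is to verify that $B''= B'\cup\SET{y}$, i.e. that $B'\cup\SET{y}$ itself is independent. Since $F' \in \Fcal(M)$ and $y\notin F'$, we have $|B''| > |B'|$, so $B''\setminus B' \neq \emptyset$. But any element of $B''\setminus B'$ lying in $F'$ would, together with $B'$, give a strictly larger independent subset of $F'$, contradicting maximality of $B'$; therefore $B''\setminus B' \subseteq \SET{y}$, forcing $B'' = B'\cup\SET{y}$. By {\em (I2)} the subset $B\cup\SET{y}$ is independent too, yielding $\rk(F\cup\SET{y})\geq |B|+1 > \rk(F)$, so $F\in \Fcal(M)$.

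Claim (ii) then follows immediately: the family $\SET{F\in \Fcal(M)\mid X\subseteq F}$ is nonempty because $E\in\Fcal(M)$, so its intersection $\cl(X)$ is a flat by (i). For claim (iii), note that $\cl(X)$ itself belongs to $\SET{F\in\Fcal(M)\mid \cl(X)\subseteq F}$, whence $\cl(\cl(X)) = \bigcap\SET{F\in \Fcal(M)\mid \cl(X)\subseteq F} \subseteq \cl(X)$, while the reverse inclusion $\cl(X)\subseteq \cl(\cl(X))$ is exactly Lemma~\ref{lem:clFlips} applied to $X\subseteq \cl(X)$. No further calculation is required.
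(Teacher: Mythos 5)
Your proof is correct and follows essentially the same route as the paper's: pick a flat $F'\in\Fcal'$ avoiding the chosen element, relate a base of the intersection to a base of $F'$ via Lemma~\ref{lem:augmentation}, use the flat property of $F'$ to adjoin the new element independently, and pass back down with {\em (I2)}; the closure statements then follow just as in the paper from $\cl(X)$ being an intersection of flats. The only difference is cosmetic — you explicitly justify the step $B''=B'\cup\SET{y}$ that the paper merely asserts from $\rk(F'\cup\SET{y})>\rk(F')$.
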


\begin{proof}\PRFRA	
 Let $\Fcal' \subseteq \Fcal(M)$, and let     $F' = \bigcap_E
\Fcal' = \SET{x\in E\mid \forall F\in \Fcal'\colon\, x\in F}$.     Let $e\in
E\BS F'$, then there is some $F\in \Fcal'$ with $e\notin F$. Since
$\rk(F\cup\SET{e}) > \rk(F)$ holds, for every base $B$ of $F$, we must have
$B\cup\SET{e} \in \Ical$. Now let $B'\subseteq F'$ be a base of $F'$, then by
Lemma~\ref{lem:augmentation}, there is a base $B$ of $F$ with $B'\subseteq B$.
Since $B'\cup\SET{e}\subseteq B\cup\SET{e}$, we obtain that
$\rk(F'\cup\SET{e}) \geq \left| B'\cup\SET{e} \right| > \left| B' \right| =
\rk(F')$. Thus $F'\in \Fcal(M)$.

\noindent Let $X\subseteq E$,
since the closure operator $\cl$ is defined to be the intersection of a family of flats of $M$,
we have
$\cl(X)\in \Fcal(M)$. Therefore $\cl(X)$ is the unique minimal element of
 $\SET{F\in\Fcal(M)\mid X\subseteq F}$ with respect to set-inclusion $\subseteq$. Thus we have the following equality between subfamilies of $\Fcal(M)$ $$\SET{F\in\Fcal(M)\mid X\subseteq F}=\SET{F\in\Fcal(M)\mid \cl(X)\subseteq F},$$
which yields $\cl(\cl(X))=\cl(X)$.
\end{proof}

\needspace{4\baselineskip}

\begin{lemma}\label{lem:clofbase}\PRFRA
	Let $M=(E,\Ical)$ be a matroid, $X \subseteq Y \subseteq E$.
	Then $\cl(X) = \cl(Y)$ if and only if there is a base $B$ of $Y$ with
	$B\subseteq X$.
\end{lemma}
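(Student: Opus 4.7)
The plan is to prove both implications by exploiting the fact that the closure operator preserves rank (Lemma~\ref{lem:clKeepsRank}) together with the flat characterization built into Definition~\ref{def:FcalM}.

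For the forward direction, assume $\cl(X) = \cl(Y)$. I would first derive the chain of equalities
\[ \rk(X) \,=\, \rk(\cl(X)) \,=\, \rk(\cl(Y)) \,=\, \rk(Y) \]
using Lemma~\ref{lem:clKeepsRank}. Then I would pick any base $B$ of $X$ in $M$ (which exists by Lemma~\ref{lem:augmentation} applied to $\emptyset\subseteq X$); it satisfies $B\subseteq X\subseteq Y$ and $\left|B\right|=\rk(X)=\rk(Y)$. Applying Lemma~\ref{lem:augmentation} once more to extend $B$ within $Y$ produces an independent set of size $\rk(Y)=\left|B\right|$, so $B$ itself is already a base of $Y$.

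For the backward direction, assume a base $B$ of $Y$ with $B\subseteq X$ is given. From $B\subseteq X\subseteq Y$ and the monotonicity of rank (Lemma~\ref{lem:rankMonotone}) I obtain $\left|B\right|\le\rk(X)\le\rk(Y)=\left|B\right|$, hence $\rk(X)=\rk(Y)$, and therefore $\rk(\cl(X))=\rk(\cl(Y))$ by Lemma~\ref{lem:clKeepsRank}. The inclusion $\cl(X)\subseteq\cl(Y)$ is immediate from Lemma~\ref{lem:clFlips}, so only the reverse inclusion remains.

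The main obstacle is proving $\cl(Y)\subseteq\cl(X)$ from just the rank equality; the key idea is to exploit the defining property of flats. Suppose for contradiction that there is some $y\in \cl(Y)\BS\cl(X)$. Since $\cl(X)\in\Fcal(M)$ by Lemma~\ref{lem:flatsintersectinflat}, the flat definition yields $\rk(\cl(X)\cup\SET{y})=\rk(\cl(X))+1$. But $\cl(X)\cup\SET{y}\subseteq\cl(Y)$, so Lemma~\ref{lem:rankMonotone} forces
\[ \rk(\cl(X))+1 \,\le\, \rk(\cl(Y)) \,=\, \rk(\cl(X)), \]
a contradiction. Hence $\cl(Y)\subseteq \cl(X)$, and the two flats coincide.
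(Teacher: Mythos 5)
Your proof is correct and follows essentially the same route as the paper: the forward direction via rank preservation under closure, and the converse via the fact that $\cl(X)$ is a flat, so any $y\in\cl(Y)\BS\cl(X)$ would force a rank increment contradicting $\rk(\cl(X))=\rk(\cl(Y))$. The only difference is cosmetic — the paper phrases the second direction contrapositively while you argue it directly by contradiction.
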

\begin{proof}\PRFRA
	Assume that $\cl(X) = \cl(Y)$, then $\rk(X) = \rk(\cl(X)) = \rk(\cl(Y)) = \rk(Y)$ by Lemma~\ref{lem:clKeepsRank}. Let $B$ be a base of $X$, then $\rk(B) = \rk(Y)$, so $B\subseteq X\subseteq Y$ is also a base of $Y$. Now assume that $\cl(X)\not=\cl(Y)$, thus there is some $y\in \cl(Y) \BS \cl(X)$ such that for some base $B$ of $\cl(X)$ in $M$, $B\cup\SET{y}\in\Ical$ is independent. Thus $\rk(Y) = \rk(\cl(Y)) > \rk(X)$ and therefore no base $B'$ of $Y$ is a subset of $X$.
\end{proof}

\studyremark{
\remred{I AM REMOVING THIS}

\begin{definitionX}
  Let $M=(E,\Ical)$ be a matroid, $Z\subseteq E$ is called \deftext{cyclic flat} of $M$,
  if $Z\in\Fcal(M)$ is a flat and if it is a union of circuits, i.e. if
  \( F = \displaystyle \bigcup_{C\in\Ccal(M),\,C\subseteq F} C\).
  The family of all cyclic flats of $M$ is denoted by\label{n:ZM}
  \(\Zcal (M)\).
\end{definitionX}

\begin{lemmaX}
  Let $M=(E,\Ical)$ be a matroid, $F\in\Fcal(M)$.
  Then $F$ is a cyclic flat if and only if for all $f\in F$, $\rk(F) = \rk(F\BSET{f})$.
\end{lemmaX}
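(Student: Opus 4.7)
The plan is to prove the two directions of the biconditional separately; both amount to careful bookkeeping with the augmentation lemma (Lemma~\ref{lem:augmentation}) and the rank monotonicity from Lemma~\ref{lem:rankMonotone}.

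For the forward direction, suppose $F\in\Fcal(M)$ is a cyclic flat and fix $f\in F$. By definition there exists a circuit $C\in\Ccal(M)$ with $f\in C\subseteq F$, and since $C$ is a circuit, $C\BSET{f}\in\Ical$. Applying Lemma~\ref{lem:augmentation} with the independent set $C\BSET{f}$ inside $H=F$ yields a base $B$ of $F$ with $C\BSET{f}\subseteq B$. Now $f\notin B$: otherwise $C\subseteq B\in\Ical$ would contradict that $C$ is a circuit. Hence $B\subseteq F\BSET{f}$, so $\rk(F\BSET{f})\geq |B| = \rk(F)$, and Lemma~\ref{lem:rankMonotone} supplies the reverse inequality, giving $\rk(F)=\rk(F\BSET{f})$.

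For the reverse direction, assume $F\in\Fcal(M)$ satisfies $\rk(F)=\rk(F\BSET{f})$ for every $f\in F$; we must show every $f\in F$ lies in some circuit contained in $F$. Fix $f$ and pick any base $B$ of $F\BSET{f}$. By hypothesis $|B|=\rk(F\BSET{f})=\rk(F)$, while $f\notin B$ forces $|B\cup\SET{f}|=\rk(F)+1$, so $B\cup\SET{f}$ is dependent as a subset of $F$. Therefore $B\cup\SET{f}$ contains a circuit $C\in\Ccal(M)$; because $B$ is independent, $C$ must contain $f$. Thus $f\in C\subseteq F$, and ranging over $f\in F$ shows $F$ is the union of the circuits it contains, i.e., a cyclic flat.

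There is no real obstacle here. The only subtle point is ensuring that the augmentation in the forward direction does not accidentally pull $f$ into the base, and this is ruled out automatically by the dependence of $C$. Nothing beyond the already-proved basic lemmas of this section is needed.
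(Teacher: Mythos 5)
Your proof is correct and takes essentially the same route as the paper's: both directions rest on Lemma~\ref{lem:augmentation} together with extracting a circuit through $f$, exactly as in the original argument. The only difference is cosmetic — the paper casts both directions as contradictions (augmenting inside $F\BSET{f}$ first), whereas you argue directly by augmenting $C\BSET{f}$ to a base of $F$ and, conversely, adjoining $f$ to a base of $F\BSET{f}$; the underlying ideas coincide.
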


\begin{proof}
	Let $F$ be a cyclic flat, and let $f\in F$. Then there is a circuit
	$C\subseteq F$ with $f\in C$. Therefore, $C\BSET{f}\in\Ical$ and by
	Lemma~\ref{lem:augmentation} there is a subset $X\subseteq F\BSET{f}$ with
	$C\BSET{f}\subseteq X$ and $\left| X \right| = \rk(F\BSET{f})$. Assume
	that $\rk(F\BSET{f}) < \rk(F)$, then again by Lemma~\ref{lem:augmentation}
	there is a set $X'\subseteq F\cup\SET{f}$ with $X\subseteq X'$ and $\left|
	X' \right| = \rk(F) > \left| X \right|$. But $X$ is maximally independent
	in $F\BSET{f}$, thus the only possibility is $X'=X\cup\SET{f}$, but then
	$C\subseteq X'$ which contradicts that $X'\in\Ical$. Thus $\rk(F) = \left|
	X \right| = \rk(F\BSET{f})$.  Conversely, let $F\in\Fcal(M)$ such that for
	all $f\in F$, $\rk(F) = \rk(F\BSET{f})$. Assume that for some $f\in F$,
	there is no circuit $C\subseteq F$ with $f\in C$. Let $X\subseteq
	F\BSET{f}$ be of maximal cardinality, such that $X\in\Ical$, then $\left|
	X \right| = \rk(F\BSET{f})$. Then $X\cup\SET{f}\in \Ical$ because
	$X\cup\SET{f}\subseteq F$ which has no circuit containing $f$. But then
	$\rk(F) \geq \left|X \right| + 1 > \left| X \right| = \rk(F\BSET{f})$
	yields a contradiction. Therefore, for every $f\in F$ there is a circuit
	$C\subseteq F$ with $f\in C$.
\end{proof}
}


\subsection{Rank Axioms}

There are at least two natural ways to axiomatize matroids through their corresponding rank functions.

\begin{theorem}\label{thm:rankAxioms}\PRFRA
  Let $E$ be a finite set, $\rho\colon 2^{E}\maparrow \N$ a map. The following are
  equivalent:
  \begin{enumerate}\ROMANENUM
  \item There is a matroid $M=(E,\Ical)$ with $\rk_{M} = \rho$,
  \item $\rho$ satisfies the properties {\em (R1') -- (R3')}, and
  \item $\rho$ satisfies the properties {\em (R1) -- (R3)};
  \end{enumerate}
  where
  \begin{itemize}\label{n:Rxp}
  	\item[(R1')] $\rho(\emptyset) = 0$,
    \item[(R2')] $\rho(X) \leq \rho(X\cup\SET{y}) \leq \rho(X) + 1$ for all $X\subseteq E$ and all $y\in E$,
    \item[(R3')] if $\rho(X) = \rho(X\cup\SET{y})=\rho(X\cup\SET{z})$, then $\rho(X) = \rho(X\cup\SET{y,z})$, for all $X\subseteq E$ and all $y,z\in E$;
  \end{itemize}
  \begin{itemize}\label{n:Rx}
  	\item[(R1)] $0 \leq \rho(X) \leq \left|X\right|$ for all $X\subseteq E$,
    \item[(R2)] if $X\subseteq Y$, then $\rho(X) \leq \rho(Y)$ for all $X,Y\subseteq E$,
    \item[(R3)] $\rho(X\cup Y) + \rho(X\cap Y) \leq \rho(X) + \rho(Y)$ for all $X,Y\subseteq E$.
  \end{itemize}
 \end{theorem}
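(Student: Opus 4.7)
The plan is to prove the cycle of implications (i) $\Rightarrow$ (iii) $\Rightarrow$ (ii) $\Rightarrow$ (i). The implication (i) $\Rightarrow$ (iii) verifies that the rank function of a matroid enjoys the standard boundedness, monotonicity, and submodularity properties; (iii) $\Rightarrow$ (ii) extracts the local axioms from the global ones; and (ii) $\Rightarrow$ (i) reconstructs the matroid from $\rho$ by declaring a set independent exactly when its rank equals its cardinality.

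For (i) $\Rightarrow$ (iii), assume $\rho = \rk_M$ for a matroid $M=(E,\Ical)$. Then (R1) is immediate from $\SET{I \in \Ical \mid I \subseteq X}$ being a family of subsets of $X$ containing $\emptyset$, and (R2) is exactly Lemma~\ref{lem:rankMonotone}. For submodularity (R3), I take a base $B$ of $X \cap Y$ in $M$, extend it via Lemma~\ref{lem:augmentation} first to a base $B'$ of $X \cup Y$. Then $B' \cap X$ and $B' \cap Y$ are independent so $\left| B' \cap X \right| \le \rho(X)$ and $\left| B' \cap Y \right| \le \rho(Y)$, while $B \subseteq B' \cap X \cap Y$ is a base of $X \cap Y$; the maximality of $B$ forces $\left| B' \cap X \cap Y \right| = \rho(X \cap Y)$. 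Inclusion-exclusion on $B' = (B' \cap X) \cup (B' \cap Y)$ then yields (R3).

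For (iii) $\Rightarrow$ (ii), (R1') is (R1) at $\emptyset$; the lower bound of (R2') is (R2), and the upper bound follows from submodularity applied to $X$ and $\SET{y}$ together with (R1) on $\SET{y}$. For (R3'), assume $\rho(X) = \rho(X \cup \SET{y}) = \rho(X \cup \SET{z})$; submodularity on $X \cup \SET{y}$ and $X \cup \SET{z}$ gives $\rho(X \cup \SET{y,z}) + \rho(X) \le 2\rho(X)$, and combined with monotonicity this forces equality.

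The main obstacle is (ii) $\Rightarrow$ (i). I define $\Ical = \SET{X \subseteq E \mid \rho(X) = \left| X \right|}$ and must verify the three independence axioms and recover $\rho$ as $\rk_M$. Axiom (I1) is immediate from (R1'). For (I2), note that iterating (R2') from $\emptyset$ gives $\rho(Y) \le \left| Y \right|$ for all $Y$; conversely, if $Y \subseteq X \in \Ical$ and $X \setminus Y = \SET{y_1, \ldots, y_k}$, iterating (R2') gives $\left| X \right| = \rho(X) \le \rho(Y) + k$, so $\rho(Y) \ge \left| Y \right|$, hence $Y \in \Ical$. The real work is (I3): given $J, I \in \Ical$ with $\left| J \right| < \left| I \right|$, I argue by contradiction that some $i \in I \setminus J$ satisfies $\rho(J \cup \SET{i}) = \left| J \right| + 1$. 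Otherwise (R2') forces $\rho(J \cup \SET{i}) = \rho(J)$ for every $i \in I \setminus J$, and a finite induction using (R3') propagates this equality to $\rho(J \cup I) = \rho(J) = \left| J \right|$; monotonicity then gives $\left| I \right| = \rho(I) \le \rho(J \cup I) = \left| J \right|$, contradicting $\left| J \right| < \left| I \right|$. Finally, to check $\rk_M = \rho$, I pick $X \subseteq E$, let $Y \subseteq X$ be a maximal element of $\Ical$ contained in $X$, so $\rk_M(X) = \left| Y \right| = \rho(Y)$; maximality means that for every $x \in X \setminus Y$ we have $\rho(Y \cup \SET{x}) = \rho(Y)$, and iterating (R3') along an enumeration of $X \setminus Y$ yields $\rho(X) = \rho(Y) = \rk_M(X)$. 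The delicate point throughout is that each use of (R3') must be justified by first re-deriving the hypothesis $\rho(Y') = \rho(Y' \cup \SET{x}) = \rho(Y' \cup \SET{y})$ for the growing set $Y'$, which is handled by a straightforward induction on the enumeration.
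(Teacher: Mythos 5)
Your proof is correct, but it traverses the cycle of implications in the opposite direction from the paper: you prove (i) $\Rightarrow$ (iii) $\Rightarrow$ (ii) $\Rightarrow$ (i), whereas the paper proves (i) $\Rightarrow$ (ii) $\Rightarrow$ (iii) $\Rightarrow$ (i). The practical difference is where the work lands. The paper's hardest step is (ii) $\Rightarrow$ (iii), deriving global submodularity from the local axioms via the auxiliary property {\em (R2'')} and a double induction on $(\left|X\BS Y\right|,\left|Y\BS X\right|)$; in exchange, its (iii) $\Rightarrow$ (i) step can lean on submodularity to establish the closure property {\em (R4)}. You avoid the local-to-global derivation entirely: you get (R3) directly from the matroid by the classical base-extension argument (extend a base of $X\cap Y$ to a base of $X\cup Y$ and count), the step (iii) $\Rightarrow$ (ii) is then immediate, and you reconstruct the matroid from the \emph{local} axioms, deriving the analogue of {\em (R4)} straight from (R3'). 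Your route is arguably more economical, and your submodularity proof is more transparent than the paper's contrapositive argument for (R3'). The one step you correctly flag as delicate deserves a slightly sharper formulation: the propagation from ``$\rho(Y\cup\SET{x})=\rho(Y)$ for all $x\in Z$'' to ``$\rho(Y\cup Z)=\rho(Y)$'' cannot be run as a linear induction along a single enumeration of $Z$, because (R3') applied at the stage $Y'=Y\cup\SET{z_1,\ldots,z_{m-1}}$ needs \emph{both} $\rho(Y'\cup\SET{z_m})=\rho(Y')$ and $\rho(Y'\cup\SET{z_{m+1}})=\rho(Y')$, and the latter is not an initial segment of the chain. The standard fix is to induct on $\left|S\right|$ over \emph{all} subsets $S\subseteq Z$, applying the induction hypothesis to $S\BSET{x}$ and $S\BSET{y}$ before invoking (R3'); with that strengthening your argument closes. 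A last cosmetic point: when you set $\rk_M(X)=\left|Y\right|$ for a maximal independent $Y\subseteq X$, you are implicitly using the equicardinality of maximal independent subsets (Corollary~\ref{cor:equicardinality}), which is legitimate since you have already verified (I1)--(I3) at that stage.
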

\noindent We named the rank axioms coherent with J.G.~Oxley's book \cite{Ox11}; D.J.A.~Welsh's {\em Matroid Theory} \cite{We76} denotes {\em (R1)--(R3)} with {\em (R1')--(R3')}, and vice-versa, yet the proof is more along the lines of section~1.6 in D.J.A.~Welsh's book \cite{We76}.

\begin{proof}
\underline{The implication {\em (i) $\Rightarrow$ (ii)}.}\PRFRA
\goldstar{Marc, Oct 27, whole proof}
\bSep By {\em (I1)} we
obtain $\rk(\emptyset) = \left|\emptyset\right| = 0$, thus {\em (R1')} holds
for $\rk$. 

\bSep \PRFRA
Let $X'\in \Ical$ with $X'\subseteq X\cup\SET{y}$ such that
$\rk(X\cup\SET{y}) = \left|X'\right|$. By {\em (I2)} $X'\BSET{y}\in\Ical$,
therefore $\rk(X\cup\SET{y}) \leq \rk(X) + 1$. Since every subset of $X$ is a
subset of $X\cup\SET{y}$, too, we obtain {\em (R2')} for $\rk$: $\rk(X)\leq
\rk(X\cup\SET{y}) \leq \rk(X) + 1$.

\bSep \PRFRA
We prove {\em (R3')} via contraposition and show that $\rho(X) \not= \rho(X\cup\SET{x,y})$ implies that $\rho(X)\not=\rho(X\cup\SET{x})$
or $\rho(X)\not=\rho(X\cup\SET{y})$.
We may assume the non-trivial case $y,z\notin X$.
If $\rk(X\cup\SET{y,z}) > \rk(X)$, then every $X'\subseteq X\cup\SET{y,z}$,
which has maximal cardinality such that $X'\in\Ical$, must have a non-empty
intersection $X'\cap \SET{y,z}\not= \emptyset$, because $X'\not\subseteq X$.
Without loss of generality we may assume that $y\in X'$. If $y=z$ or $z\notin X'$ or $\rk(X) =
\rk(X\cup\SET{y,z})-2$, we obtain that $\rk(X\cup\SET{y})=\rk(X)+1$. The
remaining case is that $\dSET{y,z} \subseteq X'$ and $\rk(X) = |X'|-1$. Let
$\tilde{X}\subseteq X$ be a subset with maximal cardinality such that it is still independent, i.e. $\tilde{X}\in\Ical$.
Since $X'\BSET{y,z}\in\Ical$, {\em (I3)} yields that there is an $x\in\tilde{X}\BS X'$
such that $\left( X'\BSET{y,z}\right) \cup \SET{x}\in \Ical$. Applying {\em
(I3)} again yields that either $\left(X'\BSET{y}\right) \cup \SET{x}\in \Ical$
or $\left( X'\BSET{z}\right) \cup \SET{x} \in \Ical$, therefore either $\rk(X)
< \rk(X\cup\SET{y})$ or $\rk(X) < \rk(X\cup\SET{z})$. This establishes {\em (R3')}.

\bigskip
\noindent \underline{The implication {\em (ii) $\Rightarrow$ (iii)}:} \PRFRA

\bSep We show {\em (R1)} by
induction on $|X|$. From {\em (R1')} we obtain $0 \leq \rho(\emptyset) = 0
\leq \left|\emptyset\right|$. Now, let $X\subseteq E$ and $x\in X$. By
induction hypothesis, we have $0\leq \rho(X\BSET{x}) \leq
\left|X\BSET{x}\right|=\left|X\right| - 1$. {\em (R2')} yields
$\rho(X\BSET{x}) \leq \rho(X) \leq \rho(X\BSET{x}) + 1$, which combines with
the previous inequality to the desired $ 0 \leq \rho(X\BSET{x}) \leq \rho(X) \leq \left(
\left|X\right| - 1\right) + 1 = \left| X \right|$.

\bSep \PRFRA
In order to show {\em (R2)}
it suffices to consider $X\subseteq Y\subseteq E$. We prove $\rho(X) \leq
\rho(Y)$ by induction on $\left| Y\BS X \right|$. The base case implies $X=Y$
thus $\rho(X)\leq \rho(Y)$ holds trivially. Now let $y\in Y\BS X$. By
induction hypothesis, $\rho(X) \leq \rho(Y\BSET{y})$ holds. From {\em (R2')}
we obtain $\rho(Y\BSET{y}) \leq \rho(Y)$, and thus $\rho(X) \leq
\rho(Y\BSET{y}) \leq \rho(Y)$ holds.

\PRFRA
\bSep We prove that the following auxiliary property ...
\begin{enumerate}[align=parleft,leftmargin=2cm,labelsep=1.5cm]\label{n:R2pp}
  \item[\em (R2'')] If $\rho(X\cup\SET{y})=\rho(X)+ 1$ and $X'\subseteq X$, then $\rho(X'\cup\SET{y})=\rho(X')+ 1$; \\ for all $X\subseteq E$, $y\in E$.
\end{enumerate}
... follows from {\em (ii)} by induction on
 $\left| X\BS X'\right|$. The base case $X=X'$ is trivial. For the induction step, let $x\in X\BS X'$, and
 assume that the implication is not vacuously true. By induction hypothesis
 $\rho(X'\cup\SET{x,y}) = \rho(X'\cup\SET{x}) + 1$. Using {\em (R2')} we obtain the inequalities $\rho(X') \leq \rho(X'\cup\SET{x}) \leq \rho(X') + 1$, similarly
 $\rho(X') \leq \rho(X'\cup\SET{y}) \leq \rho(X') + 1$, and furthermore
 $\rho(X'\cup\SET{y})\leq \rho(X'\cup\SET{x,y})\leq \rho(X'\cup\SET{y}) + 1$. 
 We establish {\em (R2'')} by the following case analysis:
 \begin{enumerate}
  \item[{\em (a)}] $\rho(X'\cup\SET{x}) = \rho(X') + 1$, by induction hypothesis $\rho(X'\cup\SET{x,y})=\rho(X')+2$ and as a consequence of the last inequality $\rho(X'\cup \SET{y}) = \rho(X') + 1$.
  \item[{\em (b)}] $\rho(X'\cup\SET{x}) = \rho(X')$. If we assume that $\rho(X'\cup\SET{y})=\rho(X')$, we could use {\em (R3')} in order to deduce
  $\rho(X'\cup\SET{x,y})=\rho(X')$, which would contradict the induction hypothesis. Therefore,
  $\rho(X'\cup\SET{y})=\rho(X')+1$.
\end{enumerate} 

\PRFRA
\bSep
In order to show that {\em (R3)} holds for all $X,Y\subseteq E$, we may use an
inductive argument over $(\left|X\BS Y\right|,\left|Y\BS X\right|)$ with
respect to the well-founded natural coordinate-wise partial order. The base case
$\left|X\BS Y\right| = 0 = \left|Y\BS X\right|$ implies that $X=Y$ and
therefore $\rho(X\cap Y)+\rho(X\cup Y)= 2\rho(X) = \rho(X) + \rho(Y)$ holds.
Due to the commutativity of the operations $\cap$, $\cup$, and $+$, it suffices to proof the
induction step from $(X\BSET{x},Y)$ to $(X,Y)$ for $x\in X\BS Y$, as the step
from $(X,Y\BSET{y})$ to $(X,Y)$ for $y\in Y\BS X$ follows symmetrically. By
induction hypothesis, we may assume that $\rho\left(\left(X\BSET{x}\right)\cup
Y\right) + \rho\left(\left(X\BSET{x}\right)\cap Y\right) \leq \rho(X\BSET{x})
+ \rho(Y)$ holds. Since $x\in X\BS Y$, we see that $x\notin Y$ and thus
$\left(X\BSET{x}\right)\cap Y = X\cap Y$ as well as
$\left(X\BSET{x}\right)\cup Y = \left(X\cup Y\right)\BSET{x}$, so we may
write the induction hypothesis as $\rho\left(\left(X\cup Y\right)\BSET{x}
\right) + \rho(X\cap Y) \leq \rho\left(X\BSET{x}\right) + \rho(Y)$. Property
{\em (R2')}  implies that \linebreak $\rho(X\cup Y) = \rho\left(\left(X\cup
Y\right)\BSET{x} \right) + \alpha$ and $\rho(X) = \rho(X\BSET{x})+ \beta$ for
some $\alpha,\beta\in\SET{0,1}$. The desired inequality $\rho(X\cup
Y)+\rho(X\cap Y)\leq \rho(X)+ \rho(Y)$ follows from the fact that $\alpha \leq
\beta$, which is a consequence of {\em (R2'')} where $X\BSET{x}$ takes the
role of $X'$, $\left(X\BSET{x}\right)\cup Y$ takes the role of $X$ and $x$
takes the role of $y$.

\bigskip
\noindent \underline{The implication {\em (iii) $\Rightarrow$ (i)}:} 

\PRFRA
\bSep First, we prove that {\em (iii)} implies property {\em (R2')} that
$\rho$ is unit-increasing, let $X\subseteq E$ and $y\in E$. If $y\in X$ the
property holds trivially, let $y\notin X$. The first inequality $\rho(X)\leq
\rho(X\cup\SET{y})$ holds due to {\em (R2)}. With {\em (R3)} we obtain
$\rho(X\cup\SET{y}) + \rho(X\cap \SET{y}) \leq \rho(X) + \rho(\SET{y})$, and
since $X\cap\SET{y}=\emptyset$ we may use {\em (R1)} twice to obtain
$\rho(\SET{y}) \leq 1$ and $\rho(\emptyset) = 0$, from which we may infer the
second inequality of {\em (R2')}, namely $\rho(X\cup\SET{y}) \leq \rho(X) +
1$.

\PRFRA
\bSep We prove that {\em (iii)} implies property
\begin{enumerate}[align=parleft,leftmargin=2cm,labelsep=1.5cm]\label{n:R4}
  \item[\em (R4)] $\left(\forall y\in Y\colon\,\rho(X\cup\SET{y}) = \rho(X)\right) \Rightarrow \rho(X\cup Y)= \rho(X)$ for all $X,Y\subseteq E$.
\end{enumerate}
By induction on $|Y\BS X|$. The base cases $|Y\BS X|\in \SET{0,1}$ are trivial. Now
let $v,w\in Y\BS X$. By induction hypothesis, $\rho(X) = \rho(X\cup Y \BSET{v}) = \rho(X\cup Y \BSET{w}) = \rho(X\cup Y\BSET{v,w})$. Using {\em (R3)} we obtain
$\rho(X\cup Y\BSET{v,w}) + \rho(X\cup Y) \leq \rho(X\cup Y\BSET{v}) + \rho(X\cup Y\BSET{w})$. Together with the induction hypothesis we get $\rho(X\cup Y) \leq \rho(X)$ and the property {\em (R2)} that $\rho$ is isotone yields $\rho(X\cup Y)=\rho(X)$.

\PRFRA
\bSep
Next, we prove that {\em (iii)} also implies the following property:
\begin{enumerate}[align=parleft,leftmargin=2cm,labelsep=1.5cm]\label{n:R5}
\item[\em (R5)] For every $X\subseteq E$ there is a subset $X'\subseteq X$, such that
  $\left|X'\right| = \rho(X') = \rho(X)$.
\end{enumerate}
By induction on $|X|$.  The base case $\rho(\emptyset) = 0 =
\left|\emptyset\right|$ is clear. Now let $x\in X$ and by induction
hypothesis, there is a subset $X'\subseteq X\BSET{x}$ such that
$\left|X'\right| = \rho(X') = \rho(X\BSET{x})$. From {\em (R2')} we conclude
that $\rho(X)=\rho(X\BSET{x}) + \alpha$ for some $\alpha\in\SET{0,1}$. The
case $\alpha=0$ is trivial. For the case $\alpha=1$ we give an indirect
argument: Assume that $\rho(X'\cup\SET{x}) = \rho(X') = \rho(X\BSET{x})$. Then
$\rho(X) = \rho(X\BSET{x})$ follows from {\em (R4)}, because for every $y\in
X\BS X'$ we have $\rho(X'\cup\SET{y}) = \rho(X)$. Yet, this is a
contradiction to $\rho(X) = \rho(X\BSET{x}) + 1$, therefore
$\rho(X'\cup\SET{x}) = \rho(X') + 1$ follows from {\em (R2')}, thus
$|X'\cup\SET{x}|=\rho(X'\cup\SET{x}) = \rho(X)$.

\PRFRA
\bSep From $\rho$, we define the set system $\Ical = \SET{X\subseteq E\mid \rho(X) = \left|X\right|}$. For now, let us assume that $M=(E,\Ical)$ is indeed a matroid. An immediate consequence of property {\em (R5)} is that $\rho(X) \leq \rk_{M}(X)$ for all $X\subseteq E$.
By definition of $\rk_{M}$, there is a subset $X'\subseteq X$ such that  $\rk_{M}(X) = \left|X'\right| = \rho(X') \leq \rho(X)$ due to {\em (R2)}. Thus $\rho=\rk_{M}$.

\PRFRA
\bSep By {\em (R1)} we have $\rho(\emptyset) = 0 = \left| \emptyset \right|$, thus $\emptyset \in \Ical$, so {\em (I1)} holds.

\PRFRA
\bSep Let $X\in \Ical$. We show that $X'\in\Ical$
for all $X'\subseteq X$ by induction on $\left|X\BS X'\right|$. The base case
$X'=X$ is trivial. Now let $x\in X\BS X'$. By induction hypothesis,
$X'\cup\SET{x}\in \Ical$, therefore $\rho(X'\cup\SET{x}) = \left|X'\right| + 1$.
 From {\em (R1)} we get the inequality $\rho(X') \leq \left| X' \right|$, and from
 {\em (R2')} we get the inequality $\rho(X'\cup\SET{x}) \leq \rho(X') + 1$. 
 Thus $\rho(X') = \left|X'\right|$ follows, consequently $X'\in \Ical$, so {\em (I2)} holds.

\PRFRA
 \bSep We give an indirect argument for {\em (I3)}. Let $X,Y\in \Ical$ with $\left|X\right| < \left|Y\right|$, and assume that for all $y\in Y$, $X\cup\SET{y}\notin \Ical$.
 Since $\left| X\right| = \rho(X)$ and by {\em (R2)} $\rho$ is isotone, we can infer that
 $\rho(X\cup\SET{y}) = \rho(X)$ for all $y\in Y$. With {\em (R4)} we see that
 $\rho(X\cup Y)=\rho(X)$, and together with {\em (R2)} we obtain $\rho(Y) \leq \rho(X\cup Y) = \rho(X) = \left| X \right| < \left| Y \right|$, a contradiction to $Y\in\Ical$.
 We may now conclude that $M=(E,\Ical)$ is a matroid.
\end{proof}

\studyremark{{\em (R1)}, {\em (R2)}, {\em (R2')}, {\em (R5)} is not sufficient for $\rho$ to
be the rank function of a matroid! $X\mapsto \max \SET{\left| X\cap A \right|, \left|X \cap B\right|}$ with $A = \SET{a,b}$, $B = \SET{c,d,e}$; $\SET{a,c}$ and $\SET{a,d}$ violate submodularity}

\needspace{4\baselineskip}
\subsection{Matroids Induced From Submodular Functions}

\begin{definition}\PRFRB
  Let $E$ be any set, $R\subseteq \R$, and let $f\colon 2^{E} \maparrow R$.
  We call the map $f$ \deftext{non-decreasing}, if for every $X\subseteq Y\subseteq E$,
  the inequality $f(X) \leq f(Y)$ holds.
  We call $f$ \deftext{submodular}, if for all $X,Y\subseteq E$
  the inequality
  \( f(X\cap Y) + f(X\cup Y) \leq f(X) + f(Y) \)
  holds. 
\end{definition}

\begin{example}
Let $M=(E,\Ical)$ be a matroid. Then $\rk_{M}\colon 2^{E}\maparrow \N$ is a non-decreasing and submodular function.
\end{example}

\needspace{3\baselineskip}

\noindent
The following theorem is the independent-sets version of Proposition~11.1.1 in \cite{Ox11},
which is attributed to J.~Edmonds and G.C.~Rota.

\begin{theorem}\label{thm:fromsubmodular}\PRFRB
  Let $E$ be a finite set, and let $f\colon 2^{E} \maparrow \Z$ be
  a non-decreasing, submodular function. Then $M=(E,\Ical)$
  where \[
      \Ical = \SET{X\subseteq E ~~\middle|~~ \forall X'\subseteq X\colon\, X'\not=\emptyset \Rightarrow f(X') \geq \left| X' \right|}
  \] is a matroid.
\end{theorem}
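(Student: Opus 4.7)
The plan is to verify the three independence axioms {\em (I1)}, {\em (I2)}, {\em (I3)} from Definition~\ref{def:indepAxioms} directly for the family $\Ical$.

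Axioms {\em (I1)} and {\em (I2)} are immediate: {\em (I1)} holds vacuously because $\emptyset$ has no non-empty subset; and for {\em (I2)}, if $X \in \Ical$ and $Y \subseteq X$, then every non-empty $Y' \subseteq Y$ is a non-empty subset of $X$, so $f(Y') \geq |Y'|$, whence $Y \in \Ical$.

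For {\em (I3)} I would argue by contradiction. Let $I, J \in \Ical$ with $|J| < |I|$ and suppose $J \cup \SET{i} \notin \Ical$ for every $i \in I \BS J$. For each such $i$ there exists a non-empty $Y_i \subseteq J \cup \SET{i}$ with $f(Y_i) < |Y_i|$, hence $f(Y_i) \leq |Y_i| - 1$ as $f$ is $\Z$-valued. Since $J \in \Ical$ already certifies $f \geq |\cdot|$ on every non-empty subset of $J$, we must have $i \in Y_i$, and writing $Y_i = Z_i \cup \SET{i}$ with $Z_i \subseteq J$, the case $Z_i = \emptyset$ would force $f(\SET{i}) \leq 0$, contradicting $\SET{i} \subseteq I \in \Ical$ (which requires $f(\SET{i}) \geq 1$). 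Hence $Z_i$ is non-empty, and combining $f(Z_i) \geq |Z_i|$ (from $Z_i \subseteq J \in \Ical$) with $f(Z_i) \leq f(Y_i) \leq |Z_i|$ (from monotonicity) forces $f(Z_i) = |Z_i|$, so that each $Z_i$ is ``tight''.

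I would then combine the witnesses by iterated submodularity. A single application of submodularity to $J$ and $Y_i$ yields $f(J \cup \SET{i}) + f(Z_i) \leq f(J) + f(Y_i) \leq f(J) + |Z_i|$, hence $f(J \cup \SET{i}) \leq f(J)$, and monotonicity upgrades this to equality. For the contradiction I sharpen the estimate: enumerating $I \BS J = \dSET{i_1,\ldots,i_m}$ and setting $U_n = \bigcup_{k \leq n} Y_{i_k}$, the intersection $U_{n-1} \cap Y_{i_n}$ automatically lies in $J \in \Ical$, so submodularity together with the tightness of the $Z_{i_k}$ gives the inductive bound $f(U_n) \leq |U_n| - n$. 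Consequently $f\bigl(\bigcup_i Y_i\bigr) \leq \bigl|\bigcup_i Z_i\bigr| \leq |J|$, and one more submodularity step enlarging this union by $I \cap J$ (using $I \cap J \subseteq J \in \Ical$) produces the bound $f(I) \leq |J|$. Combined with $|I| \leq f(I)$ from $I \in \Ical$, this gives $|I| \leq |J|$, contradicting $|J| < |I|$.

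The main obstacle will be the bookkeeping in the iterative submodularity, in particular the edge case of an empty intersection in some submodularity step (handled by the implicit assumption $f(\emptyset) \geq 0$ that is standard in the Edmonds--Rota construction cited here) and the final extension that adjoins $I \cap J$ to the union in order to reach $f(I) \leq |J|$ rather than the weaker $f(I) \leq f(J)$.
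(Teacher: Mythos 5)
Your handling of {\em (I1)}, {\em (I2)}, and the setup of the witnesses is fine: each $Y_i=Z_i\cup\SET{i}$ with $Z_i\subseteq J$ non-empty and tight ($f(Z_i)=|Z_i|$) is correctly established. The argument fails at the last step. Writing $U=\bigcup_i Y_i$, submodularity applied to $U$ and $I\cap J$ gives $f(U\cup(I\cap J))\leq f(U)+f(I\cap J)-f(U\cap I\cap J)$, and the fact $I\cap J\subseteq J\in\Ical$ supplies only the \emph{lower} bound $f(I\cap J)\geq|I\cap J|$; to reach $f(I)\leq|J|$ you would need the \emph{upper} bound $f(I\cap J)\leq|I\cap J|$, i.e.\ tightness of $I\cap J$, which is not available (already $f=2|\cdot|$ is non-decreasing and submodular, so independence does not cap $f$ by cardinality). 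Hence $f(I)\leq|J|$ does not follow from what you have proved. The approach can be closed, but differently: apply submodularity to $U$ and $I$ itself. Since $U\cap I\supseteq I\BS J\not=\emptyset$ is a subset of $I\in\Ical$, one gets $f(U\cup I)\leq f(U)+f(I)-|U\cap I|\leq f(I)+|(\bigcup_i Z_i)\BS I|-|I\BS J|<f(I)$ because $|(\bigcup_i Z_i)\BS I|\leq|J\BS I|<|I\BS J|$, contradicting monotonicity.

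The second gap is the appeal to an ``implicit assumption $f(\emptyset)\geq 0$''. There is no such assumption: the theorem is stated for $f\colon 2^E\maparrow\Z$ with no condition on $f(\emptyset)$, and the paper derives the $\N$-valued case afterwards as Theorem~\ref{thm:submodularIndependent} precisely because the present statement is more general. Your induction genuinely needs $f(\emptyset)\geq 0$, since the sets $Z_{i_k}$ can be pairwise disjoint (e.g.\ three distinct singletons inside a three-element $J$), so $U_{n-1}\cap Y_{i_n}$ is empty at every step after the first. The paper's proof is built to avoid exactly this: each $Z_i$ must meet $J\BS I$ (otherwise $Y_i$ would be a non-empty deficient subset of $I$), and since $|I\BS J|>|J\BS I|$ a pigeonhole argument produces two witnesses $Y_{i_1},Y_{i_2}$ whose $Z$-parts share an element $x\in J\BS I$. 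A \emph{single} submodularity application to this pair — whose intersection is then guaranteed non-empty — followed by deleting $x$ and passing to a minimal deficient subset yields a deficient subset of $J$, with no iteration, no reference to $f(\emptyset)$, and no bound on $f(I\cap J)$ required.
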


\begin{proof}\PRFRB
  From the definition, it is clear that $\emptyset \in \Ical$ {\em (I1)} as well as that 
  for every $X\in \Ical$ and every $Y\subseteq X$, $Y\in \Ical$ {\em (I2)}. We have to show that {\em (I3)} holds for $\Ical$, too. Let $X,Y\in \Ical$ with $\left| X \right| < \left| Y \right|$. 
  We give an indirect argument. 
  Assume that for all $y\in Y\BS X$,\linebreak $X\cup \SET{y}\notin \Ical$. Since $
   \left| X\BS Y \right| + \left|X\cap Y \right| = \left| X \right| < \left| Y \right| = \left| Y\BS X\right| + \left|X\cap Y\right|$, we have\linebreak $\left| Y \BS X \right| > \left| X \BS Y \right|$. For every $y\in Y\BS X$, there is a subset $X_{y}\subseteq X$ with minimal cardinality
   such that $f\left(X_y \cup \SET{y}\right) < \left| X_{y} \right| + 1$. Since $Y\in \Ical$
   we obtain that $X_{y} \cap \left(X\BS Y\right) \not= \emptyset$. Therefore by a simple counting argument, there are $y_{1},y_{2}\in Y\BS X$ such that there is some $x\in X\BS Y$ with  $x \in X_{y_1}\cap X_{y_2}$. 
   Below, we first use that $f$ is non-decreasing, then that $f$ is submodular, and then the fact that $X_{y_1}\cap X_{y_2}\in \Ical$ from {\em (I2)} and $X\in \Ical$; finally, we use the fact that neither $X\cup\SET{y_1}\in \Ical$ nor
   $X\cup\SET{y_2}\in\Ical$ and that $f$ is integer-valued:
    \begin{align*}
      f\left(
       \left(X_{y_1}\cup X_{y_2}\cup\SET{y_1,y_2}
       \right) \BSET{x}
       \right) & \leq f\left(X_{y_1}\cup X_{y_2}\cup\SET{y_1,y_2}
                      \right) \\
      & \leq f\left(X_{y_1}\cup \SET{y_1}
             \right) + f\left(X_{y_2}\cup \SET{y_2}
                       \right) 
      - f\left(X_{y_1}\cap X_{y_2}
         \right) \\
      & \leq f\left(X_{y_1}\cup \SET{y_1}\right) + f\left(X_{y_2}\cup \SET{y_2}\right) - \left| X_{y_1}\cap X_{y_2}\right| \\
      & \leq \left| X_{y_1} \right| + \left| X_{y_2} \right| - \left| X_{y_1} \cap X_{y_2}\right| \\ & = \left| X_{y_1} \cup X_{y_2} \right| 
     \\& = \left|\left(X_{y_1}\cup X_{y_2}\cup\SET{y_1,y_2}\right) \BSET{x}\right| - 1.
    \end{align*}
   Thus there must be a subset of minimal cardinality $C \subseteq \left( X_{y_1}\cup X_{y_2}\cup \SET{y_1,y_2} \right)\BSET{x}$ such that $f(C) < \left| C \right|$. Then $C \cap \SET{y_1,y_2} = \emptyset$ because otherwise $C$ would contradict the minimality of the cardinalities of $X_{y_1}$ and $X_{y_2}$, respectively. But then the fact that $C\subseteq X_{y_1}\cup X_{y_2}\subseteq X$ would contradict $X\in \Ical$. Therefore there must be some $y\in Y\BS X$ such that $X\cup\SET{y}\in\Ical$.
\end{proof}


\noindent If we restrict $f$ to be a map into the non-negative integers, we may simplify the 
expression that gives $\Ical$ analogously to Corollary~8.1 \cite{We76}.

\begin{theorem}\label{thm:submodularIndependent}\PRFRB
   Let $E$ be a finite set, and let $f\colon 2^{E} \maparrow \N$ be
  a non-decreasing, submodular function. Then $M=(E,\Ical)$
  where \[
      \Ical = \SET{\vphantom{A^A}X\subseteq E ~\middle|~ \forall X'\subseteq X\colon f(X') \geq \left| X' \right|}
  \] is a matroid. If furthermore $f(\emptyset) = 0$, then its rank function is given by
  \[ \rk(X) = \min \SET{\vphantom{A^A}f(Y) + \left| X \BS Y \right| ~\middle|~ Y\subseteq X}.\]
\end{theorem}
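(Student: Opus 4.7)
The first assertion — that $M = (E, \Ical)$ is a matroid — follows immediately from Theorem~\ref{thm:fromsubmodular}. Indeed, since $f$ is $\N$-valued, the condition $f(\emptyset) \geq 0 = |\emptyset|$ is automatic, so the definition of $\Ical$ given here coincides with the one in that theorem: dropping the quantifier ``$X' \neq \emptyset$'' does nothing when $f \geq 0$. I would dispose of this in one line.

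For the rank formula, set $g(X) := \min \SET{f(Y) + |X \BS Y| \mid Y \subseteq X}$ and prove $\rk(X) = g(X)$ via two inequalities. The easy direction $\rk(X) \leq g(X)$: given any $I \in \Ical$ with $I \subseteq X$ and any $Y \subseteq X$, property {\em (I2)} yields $I \cap Y \in \Ical$, hence $|I \cap Y| \leq f(I \cap Y) \leq f(Y)$, where the first inequality is the defining property of $\Ical$ applied to $I \cap Y$ itself and the second is the non-decreasingness of $f$. Combining with $|I \BS Y| \leq |X \BS Y|$ gives $|I| = |I \cap Y| + |I \BS Y| \leq f(Y) + |X \BS Y|$; choosing $I$ a base of $X$ and minimizing over $Y$ yields $\rk(X) \leq g(X)$.

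The other direction $g(X) \leq \rk(X)$ is the heart of the proof, and my plan is to exhibit a concrete witness $Y$ built from a maximal independent set. Let $I$ be a base of $X$ in $M$ and enumerate $X \BS I = \SET{x_1, \ldots, x_k}$. For each $i$, since $I \cup \SET{x_i} \notin \Ical$, there exists a minimal $C_i \subseteq I \cup \SET{x_i}$ with $f(C_i) < |C_i|$; minimality together with $I \in \Ical$ forces $x_i \in C_i$ and, using non-decreasingness of $f$ applied to $C_i \supseteq C_i \BS \SET{x_i}$, even $f(C_i) = |C_i| - 1$. Set $Y := C_1 \cup \cdots \cup C_k$; since $X \BS I \subseteq Y$, we have $X \BS Y = I \BS Y$ and $|Y| = k + |I \cap Y|$.

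The decisive technical step I expect to be the main obstacle is showing $f(Y) \leq |Y| - k$ by induction on $k$ via submodularity. Writing $U_j := C_1 \cup \cdots \cup C_j$ and applying $f(U_j) \leq f(U_{j-1}) + f(C_j) - f(U_{j-1} \cap C_j)$, the key observation is that $U_{j-1} \cap C_j \subseteq I$: for each $i < j$ one has $C_i \subseteq I \cup \SET{x_i}$ and $x_j \notin I \cup \SET{x_i}$ because $x_j \neq x_i$ and $x_j \notin I$, so $x_j \notin U_{j-1}$. Hence $U_{j-1} \cap C_j \in \Ical$ and $f(U_{j-1} \cap C_j) \geq |U_{j-1} \cap C_j|$; combining this with the induction hypothesis $f(U_{j-1}) \leq |U_{j-1}| - (j-1)$ and $f(C_j) = |C_j| - 1$ gives $f(U_j) \leq |U_j| - j$ after the standard inclusion-exclusion on cardinalities. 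The bound $f(Y) \leq |Y| - k$ then combines with $|X \BS Y| = |I| - |I \cap Y|$ and $|Y| = k + |I \cap Y|$ to yield $f(Y) + |X \BS Y| \leq (|Y| - k) + (|I| - |I \cap Y|) = |I| = \rk(X)$, which completes the proof. The subtle point is choosing each $C_i$ genuinely minimally so that $f(C_i) = |C_i| - 1$ holds exactly and so that the intersection $U_{j-1} \cap C_j$ lands inside $I$ on every inductive step.
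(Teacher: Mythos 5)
Your proof is correct, and for the crucial inequality it takes a genuinely different route from the paper. The reduction of the matroid claim to Theorem~\ref{thm:fromsubmodular} is exactly what the paper does, and your bound $\rk(X)\le f(Y)+\left|X\BS Y\right|$ is the same computation, except that the paper routes it through a preliminary lemma that the right-hand side is non-decreasing in $X$ and then only evaluates it on independent sets; your direct estimate $\left|I\right|=\left|I\cap Y\right|+\left|I\BS Y\right|\le f(Y)+\left|X\BS Y\right|$ makes that lemma unnecessary. The divergence is in the reverse inequality. The paper fixes a maximal independent $Z\subseteq X$, extracts for each $x\in X\BS Z$ a set $Z_x\subseteq Z$ with $f(Z_x\cup\SET{x})=f(Z_x)=\left|Z_x\right|$, proves by submodular induction that $f(Z\cup X')=f(Z)$ for all $X'\subseteq X\BS Z$, and then plugs $Y=X$ into the minimum, closing with the assertion $f(Z)=\left|Z\right|$. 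You instead take the union $Y=C_1\cup\dots\cup C_k$ of the dependence witnesses themselves and prove $f(Y)\le\left|Y\right|-k$ by induction on the partial unions, so that $Y$ is an explicit minimizer satisfying $f(Y)+\left|X\BS Y\right|\le\left|I\right|$. Your version buys two things: it exhibits the optimizing $Y$ concretely (it need not be all of $X$), and it closes cleanly, since independence of $Z$ only yields $f(Z)\ge\left|Z\right|$ and your accounting never needs the opposite inequality that the paper's last line invokes. One small remark: the minimality of the $C_i$ is not actually needed — any $C_i\subseteq I\cup\SET{x_i}$ with $f(C_i)<\left|C_i\right|$ automatically satisfies $x_i\in C_i$, $f(C_i)=\left|C_i\right|-1$, and $U_{j-1}\cap C_j\subseteq I$, by the same two observations you already make.
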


\begin{proof}\PRFRB
  Let $\Ical' =  \SET{X\subseteq E \mid \forall X'\subseteq X\colon X'\not=\emptyset \Rightarrow f(X') \geq \left| X' \right|}$ corresponding to Theorem~\ref{thm:fromsubmodular}.
  From the definitions, it is clear that $\Ical \subseteq \Ical'$. From inspection we obtain that if $X \in \Ical' \BS \Ical$, then $f(\emptyset) < \left| \emptyset \right| = 0$. But this is impossible for $f(\emptyset)\in\N$. Thus $\Ical = \Ical'$. 
%

 The second part of the proof follows the ideas from \cite{Dun76} where a more general statement is proved.\footnote{Both D.J.A.~Welsh and F.D.J.~Dunstan cite a conference abstract of the {\em Waterloo Conference on Combinatorics 1968} by J.~Edmonds and G.C.~Rota who proved that for submodular, non-decreasing, integer-valued $f$ the rank function is given by $\rk(X) = \min\SET{\left| X \right|, f(Y)-\left| X\BS Y \right|~\middle|~ \vphantom{A^A} Y\subseteq X}$. Unfortunately, we were not able to get a copy of that abstract.}
Now let us assume that we have the further property $f(\emptyset) = 0$, we shall now prove the rank formula. We will denote the formula given in the statement of the theorem by $\rk$, whereas we
are denoting the rank formula from Definition~\ref{def:rank} by $\rk_{M}$.
First, we want to show that $\rk$ is non-decreasing. Let $X'\subseteq X\subseteq E$,
we do induction on $\left| X\BS X' \right|$. The base case is trivial. Now,
let $X\subseteq E$, and $x\in X$, the induction hypothesis yields that $\rk(X') \leq \rk(X\BSET{x})$. If there is a subset $Y\subseteq X$ with $x\in Y$, such that
 $\rk(X) = f(Y) + \left| X\BS Y \right|$,
 then since  $f(Y\BSET{x}) \leq f(Y)$ we obtain that $$\rk(X\BSET{x}) \leq f(Y\BSET{x}) + \left| (X\BSET{x})\BS(Y\BSET{x}) \right|
 \leq f(Y) + \left| X\BS Y \right| = \rk(X).$$
 Otherwise let $Y\subseteq X\BSET{x}$ be a subset such that
 $\rk(X) = f(Y) + \left| X\BS Y \right|$. 
 Then $$\rk(X\BSET{x}) \leq f(Y) + \left| (X\BSET{x})\BS Y \right| < f(Y) + \left| X\BS Y \right| = \rk(X),$$
 thus in any case $\rk(X\BSET{x}) \leq \rk(X)$, so $\rk$ is non-decreasing.
 Now, in order to show that $\rk_M(X) \leq \rk(X)$ for all $X\subseteq E$, it suffices to show that $\rk(X) = \left| X \right|$ for all independent $X\subseteq E$.
 Let $X\in\Ical$. By definition of $\Ical$, for all
$Y\subseteq X$, $\left| Y \right| \leq f(Y)$ holds. Thus for any $Y\subseteq X$, we have
\[ \left| X \right| = \left| Y \right| + \left|X\BS Y \right| \leq f(Y) + \left|X\BS Y\right|.\]
Therefore the minimum in the expression for $\rk(X)$ is attained for $Y=\emptyset$, i.e.
  \[ \rk(X) = \min
\SET{\vphantom{A^A}f(Y) + \left| X \BS Y \right| ~\middle|~ Y\subseteq X} = f(\emptyset) + \left|
X\BS \emptyset \right| = \left| X \right|.\] 
To complete the proof that $\rk = \rk_M$, we have to show that for every $X\subseteq E$,
there is a subset $Y\subseteq X$ such that $Y\in \Ical$ and $\rk(X) = \left| Y \right|$.
Let $Z\subseteq X$ such that $Z$ has maximal cardinality with $Z\in \Ical$.
We give an indirect argument and assume that $\rk_M(X) = \left| Z \right| < \rk(X) \leq \left| X \right|$. Since $Z$ is maximally independent in $X$, for every
$x\in X\BS Z$ there must be a subset $Z_x\subseteq Z$ such that
$f(Z_x\cup\SET{x}) < \left| Z_x\cup\SET{x} \right| = \left| Z_x \right| + 1$. From $Z\in\Ical$ we may infer that $f(Z_x) \geq \left| Z_x \right|$, thus we have
$f(Z_x\cup\SET{x}) = \left| Z_x \right| = f(Z_x)$ due to $f$ being non-decreasing and integer-valued.
We show the auxiliary claim that for all $X'\subseteq X\BS Z$, $f(Z) = f(Z\cup X')$, by
induction on $\left| X' \right|$. The base case is trivial. For the induction step, 
let $x'\in X'$, and by induction hypothesis we may assume that $f(Z) = f(Z\cup (X'\BSET{x'}))$. Let $Z_{x'}\subseteq Z$ such that $f(Z_{x'}\cup\SET{x'}) = \left| Z_{x'} \right| = f(Z_{x'})$ as above. Since $f$ is submodular, we obtain that $f(Z\cup(X'\BSET{x'})) + f(Z_{x'}\cup\SET{x'}) \geq f(Z_{x'}) + f(Z\cup X')$, and along with the previous equation this yields $f(Z\cup(X'\BSET{x'})) \geq f(Z\cup X')$. So, together with the property that $f$ is
 non-decreasing and with the induction hypothesis, 
 we obtain the desired equation $f(Z\cup X') = f(Z\cup(X'\BSET{x'})) = f(Z)$.
 But now, let $X'=X\BS Z$, then $Z\cup X' = X$. We obtain from the auxiliary claim above, that $f(Z)=f(X)$, so
 that, by construction as a minimum, $\rk(X) \leq f(X) + \left| X\BS X \right| = f(Z) = \left| Z \right|$. Yet this
 contradicts $\left| Z \right| < \rk(X)$. Thus there is an independent subset of $X$ with cardinality $\rk(X)$ for every $X\subseteq E$, and therefore $\rk = \rk_M$.
 \end{proof}


\subsection{Dual Matroids}

\begin{definition}\PRFRB
	Let $M=(E,\Ical)$ be a matroid. We call $X\subseteq E$ \deftext{spanning} in $M$, if
	there is a base $B$ of $M$, such that $B \subseteq X$.
\end{definition}

\begin{lemma}\label{lem:basesminimalspanning}\PRFRB
	Let $M=(E,\Ical)$ be a matroid, $X\subseteq E$.
	Then $X$ is a base if and only if $X$ is spanning in $M$, yet for all $x\in X$,
	$X\BSET{x}$ is not spanning in $M$.
\end{lemma}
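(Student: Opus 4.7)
The plan is to prove both implications directly from the definition of ``spanning'' and the equicardinality of bases (Corollary~\ref{cor:equicardinality}).

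For the forward direction, suppose $X$ is a base of $M$. Since $X\subseteq X$ and $X$ itself is a base, $X$ is spanning by definition. Now fix any $x\in X$ and suppose, for contradiction, that $X\BSET{x}$ is spanning. Then there is a base $B$ of $M$ with $B\subseteq X\BSET{x}$. Corollary~\ref{cor:equicardinality} applied to $H=E$ gives $|B|=|X|$; but $B\subseteq X\BSET{x}$ forces $|B|\leq|X|-1$, a contradiction. Hence no $X\BSET{x}$ is spanning.

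For the reverse direction, assume $X$ is spanning while no $X\BSET{x}$ is spanning. By definition there is a base $B$ with $B\subseteq X$. If $B\subsetneq X$, pick $x\in X\BS B$; then $B\subseteq X\BSET{x}$, so $X\BSET{x}$ would be spanning, contradicting the minimality hypothesis. Therefore $B=X$, and $X$ is a base.

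Neither direction has any real obstacle, since both rely only on the definition of spanning (existence of a contained base) plus the equicardinality of bases, both already established. The only thing to be careful about is to avoid circular appeal: we use Corollary~\ref{cor:equicardinality} (bases of $E$ have equal cardinality) in the forward direction, and only set-theoretic manipulation in the reverse direction.
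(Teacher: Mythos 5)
Your proof is correct. It differs from the paper's argument in a way worth noting: the paper routes both directions through rank and closure (it shows $\cl(B)=E$ but $\cl(B\BSET{b})\neq E$ for a base $B$, and handles the converse by contraposition, splitting on whether $\rk(X)<\rk(E)$ and invoking Lemma~\ref{lem:clofbase} to extract a proper base $B\subsetneq X$ from a spanning non-base). You instead work directly from the definition of ``spanning'' as containing a base, using only Corollary~\ref{cor:equicardinality} for the forward direction and pure set inclusion for the converse. Your version is more elementary and shorter, since it avoids the closure operator entirely; the paper's version has the side benefit of making explicit the equivalence between ``spanning'' and ``full rank / closure equals $E$,'' which it tacitly relies on elsewhere. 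Both are complete proofs, and your appeal to Corollary~\ref{cor:equicardinality} is not circular, since that corollary is established well before the notion of spanning sets is introduced.
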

\begin{proof}\PRFRB
	Let $B\in\Ical$ be a base of $M$,
	then $\rk(B)=\left| B \right|$ is maximal, so $\cl(B) = E$.
	On the other hand, for every $b\in B$, $\rk(B\BSET{b}) < \left| B \right|$,
	thus $b\notin \cl(B\BSET{b})$, so $\cl(B\BSET{b})\not= E$.
	Let $X\subseteq E$ such that $X\notin\Bcal(M)$.
	If further $\rk(X) < \rk(E)$, then $X$ clearly is not a spanning set in $M$. 
	Now assume that $\rk(X) = \rk(E)$, so $X$ is spanning in $M$, and because it is not a base, $X\notin \Ical$. But then there is a base $B\subsetneq X$ with $\cl(B) = \cl(X)$ (Lemma~\ref{lem:clofbase}). So there is some $x\in X\BS B$, such that $X\BSET{x}$ still contains the base $B$ and therefore $X\BSET{x}$ is spanning in $M$.
\end{proof}

\noindent Matroids allow to be axiomatized cryptomorphically by characterizing the set of bases of $M$. For full disclosure on this topic we would like to refer the reader the first chapters in \cite{We76} and \cite{Ox11}.

\needspace{6\baselineskip}

\begin{theorem}\label{thm:frombases}\PRFRB
	Let $E$ be a finite set, $\Ical\subseteq 2^E$. Let further
	\[ \Bcal = \SET{\vphantom{A^A} X\in\Ical ~\middle|~ \nexists Y\in \Ical\colon\, X \subsetneq Y} \]
	be the family of maximal elements of $\Ical$ with regard to set-inclusion.
	 If 
	\begin{enumerate}[align=parleft,leftmargin=2cm,labelsep=1.5cm]\label{n:Bx}
		\item[(B1)] $\Bcal\not= \emptyset$,
		\item[(B2)] $\forall X,Y\in\Bcal\colon\,\left| X \right| = \left| Y \right|$, and
		\item[(B3)] for all $X,Y\in \Bcal$ and all $x\in X\BS Y$, there is an element $y\in Y\BS X$, such that $(X\BSET{x})\cup\SET{y} \in \Bcal$
	\end{enumerate}
	holds, and if $\Ical = \SET{\vphantom{A^A}X\subseteq E~\middle|~ \exists B\in \Bcal\colon\,X\subseteq B}$, then $M=(E,\Ical)$ is a matroid.
\end{theorem}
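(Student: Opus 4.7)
The plan is to verify the three independence axioms (I1), (I2), (I3) from Definition~\ref{def:indepAxioms} for the set system $\Ical$ as defined. The first two fall out of the definition almost for free: (B1) supplies some $B\in\Bcal$ together with $\emptyset\subseteq B$, giving $\emptyset\in\Ical$; and if $Y\subseteq X\subseteq B$ for some $B\in\Bcal$, then $Y\subseteq B$ as well, yielding (I2). All of the work therefore goes into the augmentation axiom (I3).

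For (I3), fix $J,I\in\Ical$ with $|J|<|I|$ and, among all pairs $(B_J,B_I)\in\Bcal\times\Bcal$ with $J\subseteq B_J$ and $I\subseteq B_I$, choose one that minimizes the symmetric difference $|B_J\symdiff B_I|$; such a pair exists because $E$, and hence $\Bcal$, is finite. If $B_J=B_I$, then $J\cup I\subseteq B_J$, and any $i\in I\BS J$ (which exists since $|I|>|J|$) furnishes $J\cup\SET{i}\in\Ical$. Otherwise (B2) forces $|B_J\BS B_I|=|B_I\BS B_J|\geq 1$, and I would apply (B3) separately to each asymmetric difference to conclude
\[
B_J\BS B_I\subseteq J\qquad\text{and}\qquad B_I\BS B_J\subseteq I.
\]
The first inclusion is obtained by contradiction: if $x\in(B_J\BS B_I)\BS J$, then (B3) produces some $y\in B_I\BS B_J$ with $\bar B_J:=(B_J\BSET{x})\cup\SET{y}\in\Bcal$; this new base still contains $J$, and $|\bar B_J\symdiff B_I|=|B_J\symdiff B_I|-2$, contradicting the minimality of the chosen pair. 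The second inclusion follows symmetrically by modifying $B_I$ instead of $B_J$.

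A brief cardinality count then finishes the argument. Combining $J\subseteq B_J$ with $B_J\BS B_I\subseteq J$ yields $J\BS B_I=B_J\BS B_I$, and analogously $I\BS B_J=B_I\BS B_J$. Invoking (B2) once more to equate $|B_J\BS B_I|=|B_I\BS B_J|$ gives
\[
|J|-|J\cap B_I|=|I|-|I\cap B_J|.
\]
If some $i\in I\BS J$ already lies in $B_J$, then $J\cup\SET{i}\subseteq B_J$ is the desired extension. Otherwise $I\cap B_J\subseteq J\cap I\subseteq J\cap B_I$, where the last inclusion uses $I\subseteq B_I$; this forces $|I\cap B_J|\leq|J\cap B_I|$, and substituting into the identity above yields $|I|\leq|J|$, contradicting the hypothesis $|J|<|I|$.

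The only real obstacle is (I3), and its technical heart is the double minimization of $|B_J\symdiff B_I|$ over both coordinates, which allows (B3) to be leveraged symmetrically from each side and pins the two asymmetric differences inside $J$ and $I$ respectively. Once both inclusions are established, the remaining bookkeeping is routine.
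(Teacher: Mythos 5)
Your proof is correct, but it reaches the augmentation axiom (I3) by a genuinely different route than the paper. The paper fixes $B_Y\supseteq Y$ and repeatedly modifies only $B_X\supseteq X$: it performs a (B3)-exchange trading some $x'\in B_X\BS X$ for some $y\in B_Y\BS B_X$ and runs an induction on the potential $\alpha(B_X)=\left|\left(B_Y\BS B_X\right)\BS\left(Y\BS X\right)\right|$, which drops by one per step unless the incoming element $y$ already lies in $Y\BS X$ and hence augments $X$; here (B2) is used only to guarantee $B_X\BS X\not=\emptyset$. You instead take an extremal pair $(B_J,B_I)$ minimizing $\left|B_J\symdiff B_I\right|$, exploit (B3) symmetrically in both coordinates to pin $B_J\BS B_I\subseteq J$ and $B_I\BS B_J\subseteq I$, and close with a cardinality comparison in which (B2) enters by equating $\left|B_J\BS B_I\right|$ with $\left|B_I\BS B_J\right|$. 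The paper's iteration is the more constructive of the two, exhibiting the augmenting element as the terminal output of an exchange process, while your extremal argument replaces the explicit induction by a single minimality contradiction and isolates cleanly where equicardinality is actually needed; the treatment of (I1) and (I2) is identical in both.
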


\begin{proof}\PRFRB
	From {\em (B1)} we obtain $B\in\Bcal$, and clearly $\emptyset \subseteq B$, so
	$\emptyset\in \Ical$ {\em (I1)}. Let $X\in\Ical$, then there is some $B\in\Bcal$ with $X\subseteq B$. For $Y\subseteq X$ we have $Y\subseteq B$ and therefore $Y\in\Ical$ {\em (I2)}.
	Let $X,Y\in\Ical$ with $\left| X \right| < \left| Y \right|$. 
	There are $B_X\supseteq X$ and $B_Y\supseteq Y$ with $B_X,B_Y\in \Bcal$.
	\linebreak
	If $Y' = B_X\cap(Y\BS X)\not= \emptyset$, then let $y\in Y'$ be an arbitrary choice,
	and we obtain
	\linebreak
	 $X\cup\SET{y}\subseteq B_X$ therefore $X\cup\SET{y}\in\Ical$.
	If $Y'=\emptyset$, then let $\alpha(B_X) = \left| (B_Y\BS B_X) \BS (Y\BS X)\right|$,
	we prove that we can augment $X$ by induction on $\alpha(B_X)$.
	 Since $\left| X \right| < \left| Y \right| \leq \left| B_Y \right| = \left| B_X \right|$, there is an element $x'\in B_X\BS X$.
	 We may use {\em (B3)} in order to obtain the base
	$B_X'=(B_X\BSET{x'})\cup\SET{y} \in \Bcal$ where $y\in B_Y\BS B_X$. If $y\in Y\BS X$, then $X\cup\SET{y}\subseteq B_X'$ and therefore $X\cup\SET{y}\in \Ical$. Otherwise $y\in (B_Y\BS B_X)\BS(Y \BS X)$, then $\alpha(B_X') = \alpha(B_X) - 1$ and thus there is some $y\in Y\BS X$ with $X\cup\SET{y}\in \Ical$ by the induction hypothesis. Thus
	$\Ical$ has the property {\em (I3)}.
\end{proof}

\begin{definition}\PRFRB
	Let $M=(E,\Ical)$ be a matroid. The \deftext{dual matroid} of $M$ shall be the pair
	 \( M^{\ast} = (E,\Ical^\ast)\) where\label{n:Mdual}
	 \[ \Ical^{\ast} = \SET{\vphantom{A^A} E\BS X ~\middle|~ X\subseteq E,\, \text{such that~}X\text{~is spanning in~}M}. \qedhere\]
\end{definition}

\needspace{7\baselineskip}

\begin{lemma}\label{lem:spanningdual}\PRFRB
  Let $M=(E,\Ical)$ be a matroid. Then  $M^{\ast}=(E,\Ical^\ast)$ is indeed a matroid.
\end{lemma}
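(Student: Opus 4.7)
The plan is to apply Theorem~\ref{thm:frombases} with the collection $\Bcal^\ast := \SET{E\BS B \mid B \in \Bcal(M)}$ playing the role of the base family, and to show that $\Ical^\ast$ is precisely the downward closure of $\Bcal^\ast$. So my first step would be to unfold the definition of spanning: $X$ is spanning in $M$ iff $X$ contains some $B\in\Bcal(M)$, hence $E\BS X \in \Ical^\ast$ iff $E\BS X \subseteq E\BS B$ for some base $B$ of $M$. This immediately identifies the $\subseteq$-maximal elements of $\Ical^\ast$ as exactly the sets in $\Bcal^\ast$, and shows $\Ical^\ast = \SET{Y\subseteq E \mid \exists B\in\Bcal(M)\colon Y\subseteq E\BS B}$, which is the shape required by Theorem~\ref{thm:frombases}.

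Next I would verify axioms (B1), (B2), (B3) for $\Bcal^\ast$. Axiom (B1) is immediate because $\emptyset\in\Ical$ by (I1), so $\Bcal(M)\not=\emptyset$, hence $\Bcal^\ast\not=\emptyset$. Axiom (B2) follows from Corollary~\ref{cor:equicardinality}: all bases of $M$ have the same cardinality, so all complements have cardinality $|E|-|B|$. The main step is (B3). Given $X = E\BS B_X$ and $Y = E\BS B_Y$ in $\Bcal^\ast$ and an element $x\in X\BS Y$, note that $x\in B_Y\BS B_X$. I would then invoke Lemma~\ref{lem:basisexchangesymmetric} applied with $B_1 = B_X$, $B_2 = B_Y$, and the role of ``$y$'' played by our $x$: this produces some $y\in B_X\BS B_Y$ with $(B_X\BSET{y})\cup\SET{x}\in\Bcal(M)$. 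Taking complements, $(X\BSET{x})\cup\SET{y}\in\Bcal^\ast$, and since $y\in B_X\BS B_Y$ lies in $Y\BS X$, (B3) is established.

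The hardest conceptual point here is picking the right form of the symmetric basis exchange; the asymmetric version (Corollary~\ref{cor:basisexchange}) exchanges an element of $B_1\BS B_2$, but for (B3) on $\Bcal^\ast$ we need to start from a prescribed element $x\in X\BS Y = B_Y\BS B_X$, i.e.\ from $B_2\BS B_1$, which is exactly what Lemma~\ref{lem:basisexchangesymmetric} provides. Everything else is a bookkeeping translation between $M$ and $M^\ast$ via complements inside $E$. Once (B1)--(B3) are verified and the description $\Ical^\ast = \SETL{Y\subseteq E}{\exists B^\ast\in\Bcal^\ast\colon Y\subseteq B^\ast}$ is in place, Theorem~\ref{thm:frombases} delivers that $M^\ast$ is a matroid.
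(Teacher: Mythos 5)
Your proposal is correct and follows essentially the same route as the paper's own proof: identify $\Ical^\ast$ as the downward closure of $\Bcal^\ast = \SET{E\BS B \mid B\in\Bcal(M)}$, verify {\em (B1)}--{\em (B3)} (with {\em (B3)} obtained from the symmetric exchange Lemma~\ref{lem:basisexchangesymmetric} applied to the complementary bases), and conclude via Theorem~\ref{thm:frombases}. Your remark that the symmetric form of base exchange, rather than Corollary~\ref{cor:basisexchange}, is the one needed here matches exactly what the paper does.
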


\begin{proof}\PRFRB
	First, observe that for $\Bcal^\ast = \SET{E\BS B ~\middle|~\vphantom{A^A} B\text{ is a base of } M}$
	we have the set equation
	\[ \Ical^\ast = \SET{X\subseteq E~\middle|~\vphantom{A^A} \exists B'\in\Bcal^\ast\colon\,X\subseteq B'}, 
	\]
	because the minimal spanning sets of $M$ are precisely the bases of $M$, which in turn have complements in $E$ with maximal cardinality. Since $\emptyset\in \Ical$ implies that $M$ has at least one base, 
	we have $\Bcal^\ast \not= \emptyset$ {\em (B1)}.
	From Corollary~\ref{cor:equicardinality} we obtain that for any two $B,B'\in \Bcal^\ast$,
	we have $B_0,B_0'$ that are bases of $M$ with $B=E\BS B_0$ and $B'=E\BS B_0'$, therefore
	 $\left| B \right| =  \left| E \right|-\left| B_0 \right| = \left| E \right| - \left| B_0' \right| = \left| B' \right|$, so {\em (B2)} holds.
	 Now let $x\in B\BS B' = (E\BS B_0)\BS(E\BS B_0') = B_0'\BS B_0$, then there is a
	 $y\in B_0\BS B_0' = (E\BS B_0')\BS(E\BS B_0) = B'\BS B$ such that $(B_0\BSET{y})\cup\SET{x}$ is a base of $M$ (Lemma~\ref{lem:basisexchangesymmetric}). But then
	 \begin{align*}
	  E\BS \left( (B_0\BSET{y})\cup\SET{x}\right) & = E\BS \left( \left( B_0\cup\SET{x} \right) \BSET{y} \right)\\  & =
	  \left( E\BS(B_0\cup\SET{x}) \right)\cup\SET{y} \\ & = (B\BSET{x})\cup\SET{y} \in \Bcal^\ast.
	  \end{align*}
	  So {\em (B3)} holds for $\Bcal^\ast$, too, and from Theorem~\ref{thm:frombases}
	  we obtain that $M^\ast=(E,\Ical^\ast)$ is a matroid.
\end{proof}

\begin{corollary}\label{cor:dualbase}\PRFRB
	Let $M=(E,\Ical)$ be a matroid, $B\subseteq E$. Then
	$B$ is a base of $M$ if and only if $E\BS B$ is a base of $M^\ast$.
\end{corollary}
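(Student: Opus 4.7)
The plan is to read off the corollary almost directly from the definition of $M^\ast$ combined with Lemma~\ref{lem:basesminimalspanning}, using the elementary observation that set-complementation in $E$ is an inclusion-reversing involution on $2^E$. First I would recall that the bases of any matroid are exactly the $\subseteq$-maximal independent sets (this is immediate from Lemma~\ref{lem:augmentation}, applied with $H = E$). Applying this principle to $M^\ast$, a set $B' \subseteq E$ is a base of $M^\ast$ if and only if $B'$ is a $\subseteq$-maximal element of $\Ical^\ast = \SET{E\BS X \mid X\subseteq E,\,X\text{~is spanning in~}M}$.

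Since the map $X \mapsto E\BS X$ is an inclusion-reversing bijection on $2^E$, the $\subseteq$-maximal elements of $\Ical^\ast$ correspond under this map precisely to the $\subseteq$-minimal spanning sets of $M$. By Lemma~\ref{lem:basesminimalspanning}, the $\subseteq$-minimal spanning sets of $M$ are exactly the bases of $M$. Hence $B'$ is a base of $M^\ast$ iff $E\BS B'$ is a base of $M$; setting $B = E\BS B'$ and using that complementation is involutive yields the biconditional: $B$ is a base of $M$ iff $E\BS B$ is a base of $M^\ast$. I don't anticipate any obstacle here, since essentially the same identification $\Bcal(M^\ast) = \SET{E\BS B \mid B\in \Bcal(M)}$ was already established in passing at the start of the proof of Lemma~\ref{lem:spanningdual}; the corollary just packages that observation as an equivalence.
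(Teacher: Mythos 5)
Your proof is correct and follows essentially the same route as the paper's: both arguments combine Lemma~\ref{lem:basesminimalspanning} (bases are the minimal spanning sets) with the fact that complementation in $E$ reverses inclusion, so that maximal elements of $\Ical^\ast$ are exactly the complements of minimal spanning sets of $M$. The paper merely carries out the element-by-element verification of maximality more explicitly, while you phrase it via the order-reversing bijection; the content is the same.
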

\begin{proof}\PRFRB
	Let $(E,\Ical') = M^\ast$.
	If $B$ is a base of $M$, then for all $b\in B$, $B\BSET{b}$ is not spanning $M$ (Lemma~\ref{lem:basesminimalspanning}),
	 therefore $E\BS B\in \Ical'$, yet $(E\BS B)\cup\SET{b}\notin \Ical'$, therefore $E\BS B$ is maximally independent with respect to set-inclusion, 
	and thus it is an independent set of $M^\ast$ 
	with maximal cardinality (Corollary~\ref{cor:equicardinality}), so $E\BS B$ is a base of $M^\ast$.
	Conversely, if $E\BS B$ is a base of $M^\ast$, then $E\BS(E\BS B) = B$ must be minimally spanning in $M$, since otherwise
	$E\BS \left( B\BSET{x} \right) \in \Ical'$ for some $x\in B$ contradicting the maximality of $E\BS B$ in $\Ical'$.
	Thus $B$ is a base of $M$ (Lemma~\ref{lem:basesminimalspanning}).
\end{proof}

\begin{corollary}\label{cor:doubleast}\PRFRB
	Let $M=(E,\Ical)$ be a matroid. Then $M = \left(M^{\ast}\right)^{\ast}$.
\end{corollary}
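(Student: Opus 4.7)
The plan is to reduce the claim $M = (M^{\ast})^{\ast}$ to an equality of base families, and then invoke the double application of Corollary~\ref{cor:dualbase}. Both matroids have the same ground set $E$ by definition, so it suffices to show that they share the same family of independent sets; and since the independent sets of any matroid are recoverable from its bases (a set is independent if and only if it is contained in some base, as used in the proof of Lemma~\ref{lem:spanningdual}), it in fact suffices to show $\Bcal(M) = \Bcal((M^{\ast})^{\ast})$.

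For this, I would apply Corollary~\ref{cor:dualbase} twice. First, for any $B \subseteq E$, the corollary says that $B$ is a base of $M$ if and only if $E \setminus B$ is a base of $M^{\ast}$. Second, applying the same corollary now to the matroid $M^{\ast}$ (in the role of $M$) and the set $E \setminus B$ (in the role of $B$), we conclude that $E \setminus B$ is a base of $M^{\ast}$ if and only if $E \setminus (E \setminus B) = B$ is a base of $(M^{\ast})^{\ast}$. Chaining these two equivalences yields $\Bcal(M) = \Bcal((M^{\ast})^{\ast})$.

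I do not anticipate any substantive obstacle: the legwork was done in Lemma~\ref{lem:spanningdual} and Corollary~\ref{cor:dualbase}. The only mildly subtle point is that Corollary~\ref{cor:dualbase} is being applied to $M^{\ast}$, which is legitimate precisely because Lemma~\ref{lem:spanningdual} established that $M^{\ast}$ is itself a matroid; everything else is set-theoretic bookkeeping with the involution $B \mapsto E \setminus B$ on $2^E$.
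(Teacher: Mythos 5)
Your proof is correct and is essentially identical to the paper's: both reduce the claim to equality of base families (justified via {\em (I2)}) and then apply Corollary~\ref{cor:dualbase} twice, using that complementation is an involution on $2^E$.
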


\begin{proof}\PRFRB
	By property {\em (I2)}, the family of independent sets of a matroid is determined by its maximal elements, which are the bases of $M$.
	By Corollary~\ref{cor:dualbase}, $B$ is base of $M$, if and only if $E\BS B$ is a base of $M^\ast$, if and only if $E\BS(E\BS B) = B$
	is a base of $\left( M^\ast \right)^\ast$. Thus $M=\left( M^\ast \right)^\ast$.
\end{proof}

\noindent The next two lemmas can be found in J.G.~Oxley's book (\cite{Ox11}, p.67) and yield an elegant way to characterize the rank function of the dual matroid in terms of the rank function of the primal matroid.

\begin{lemma}\label{lem:dualAug}\PRFRB
	Let $M=(E,\Ical)$ be a matroid, $X,Y\subseteq E$ with $X\cap Y = \emptyset$ such that
	$X\in \Ical$ is independent in $M$ and $Y\in\Ical^\ast$ is independent in $M^\ast$.
	Then there is a base $B\subseteq E$ of $M$ such that $X\subseteq B$ and $Y\subseteq E\BS B$.
\end{lemma}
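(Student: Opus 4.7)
The plan is to use the augmentation result (Lemma~\ref{lem:augmentation}) applied to $X$ inside the subset $E\BS Y$, and then argue that $E\BS Y$ is spanning in $M$ so that any maximal independent subset of $E\BS Y$ is automatically a base of $M$.

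First, I would unpack the hypothesis $Y\in\Ical^\ast$. By the definition of the dual matroid, $Y$ independent in $M^\ast$ means $Y \subseteq E\BS B_0$ for some base $B_0$ of $M$, equivalently $B_0\subseteq E\BS Y$, so $E\BS Y$ is spanning in $M$ and therefore $\rk_M(E\BS Y) = \rk_M(E)$. Next, since $X\cap Y=\emptyset$ we have $X\subseteq E\BS Y$, and $X\in\Ical$, so Lemma~\ref{lem:augmentation} applied with $F = X$ and $H = E\BS Y$ yields a set $B\in\Ical$ with
\[ X \subseteq B \subseteq E\BS Y \quad\text{and}\quad |B| = \max\SET{|I| ~\middle|~ I\in\Ical,\,I\subseteq E\BS Y}. \]
That maximum equals $\rk_M(E\BS Y)=\rk_M(E)$ by definition of the rank, so $B$ is an independent set of $M$ of maximum possible cardinality, hence a base of $M$. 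Finally, $B\subseteq E\BS Y$ is equivalent to $Y\subseteq E\BS B$, which gives the two inclusions demanded by the statement.

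There is no real obstacle here; the only step that requires a touch of care is translating independence in $M^\ast$ to the spanning condition in $M$, which is immediate from the definition of $\Ical^\ast$ together with the observation that among the complements of spanning sets, the maximal ones are the complements of the bases of $M$ (this is exactly what Corollary~\ref{cor:dualbase} records). Everything else is a direct application of Lemma~\ref{lem:augmentation}.
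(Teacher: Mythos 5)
Your proof is correct and follows essentially the same route as the paper: augment $X$ to a maximal independent subset $B$ of $E\BS Y$ via Lemma~\ref{lem:augmentation}, then use the fact that $Y\in\Ical^\ast$ makes $E\BS Y$ spanning in $M$ to conclude that $B$ is a base. The paper phrases the last step as a short indirect argument, but the content is identical.
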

\begin{proof}\PRFRB
	Let $B$ be a base of $E\BS Y$ in $M$ such that $X\subseteq B$ (Lemma~\ref{lem:augmentation}).
	Then $Y\subseteq E\BS B$. It remains to show that $B$ is a base of $M$.
	 Assume that $B$ is not a base of $M$, then $\rk_M(E\BS Y) < \rk_M(E)$. 
	 But $Y\in\Ical^\ast$, therefore $E\BS Y$ is spanning in $M$ -- a contradiction. 
	 Thus $B$ is the desired base of $M$.
\end{proof}

\needspace{6\baselineskip}
\begin{lemma}\label{lem:rankDual}\PRFRB
	Let $M=(E,\Ical)$ be a matroid and $X\subseteq E$.
	Then
	\[ \rk_{M^\ast}(X) = \left| X\right| + \rk_M(E\BS X) - \rk_M(E) .\]
\end{lemma}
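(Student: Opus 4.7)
The plan is to extract a single base $B$ of $M$ that is simultaneously "tight" on both sides of the partition $E = X \cup (E \BS X)$, so that both $\rk_M(E\BS X)$ and $\rk_{M^\ast}(X)$ can be read off from $B$. The key tool is Lemma~\ref{lem:dualAug}, which lets us glue an independent set of $M$ and an independent set of $M^\ast$ into a single base of $M$ provided the two are disjoint.

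Concretely, first I would pick a base $Z$ of $E \BS X$ in $M$, so that $Z \in \Ical$, $Z \subseteq E \BS X$, and $|Z| = \rk_M(E \BS X)$. Next I would pick a base $Y$ of $X$ in $M^\ast$, so that $Y \in \Ical^\ast$, $Y \subseteq X$, and $|Y| = \rk_{M^\ast}(X)$. Since $Z \subseteq E \BS X$ and $Y \subseteq X$, the sets $Z$ and $Y$ are disjoint, so Lemma~\ref{lem:dualAug} applied to $Z$ and $Y$ produces a base $B$ of $M$ with $Z \subseteq B$ and $Y \subseteq E \BS B$.

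Now I would compute the two pieces of $B$ separately. On the one hand, $B \cap (E \BS X)$ is an independent subset of $E \BS X$ in $M$ that contains $Z$, so $|B \cap (E \BS X)| = \rk_M(E \BS X)$ by maximality of $Z$ combined with $Z \subseteq B \cap (E \BS X)$. Hence
\[ |B \cap X| \;=\; |B| - |B \cap (E \BS X)| \;=\; \rk_M(E) - \rk_M(E \BS X). \]
On the other hand, $Y \subseteq (E \BS B) \cap X = X \BS B$, so $|X \BS B| \geq \rk_{M^\ast}(X)$. Conversely, $E \BS B$ is a base of $M^\ast$ by Corollary~\ref{cor:dualbase}, so the subset $X \BS B \subseteq E \BS B$ is independent in $M^\ast$ by (I2), giving $|X \BS B| \leq \rk_{M^\ast}(X)$. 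Thus $|X \BS B| = \rk_{M^\ast}(X)$, and the result follows from
\[ \rk_{M^\ast}(X) \;=\; |X \BS B| \;=\; |X| - |B \cap X| \;=\; |X| + \rk_M(E \BS X) - \rk_M(E). \]

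The only real obstacle is making sure that the base $B$ furnished by Lemma~\ref{lem:dualAug} is simultaneously optimal for both rank computations; once $Z$ and $Y$ are chosen as bases on the two sides, the lemma forces $B$ to contain a maximum independent set of $E \BS X$ and to have its complement contain a maximum $M^\ast$-independent subset of $X$, and cardinality bookkeeping closes the argument.
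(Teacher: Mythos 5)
Your proof is correct and follows essentially the same route as the paper's: both arguments choose a base of $E\BS X$ in $M$ and a base of $X$ in $M^\ast$, glue them via Lemma~\ref{lem:dualAug} into a single base $B$ of $M$, and finish by cardinality bookkeeping. Your extra justification that $|X\BS B| = \rk_{M^\ast}(X)$ via Corollary~\ref{cor:dualbase} and {\em (I2)} is a slightly more explicit version of the step the paper dispatches with ``analogously''.
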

\begin{proof}\PRFRB
	Let $B'_X\subseteq X$ be a base of $X$ in $M^\ast$,
	and $B_{E\BS X}\subseteq E\BS X$ be a base of $E\BS X$ in $M$.
	Then $\rk_{M^\ast}(X) = \left| B'_X \right|$ and $\rk_{M}(E\BS X) = \left| B_{E\BS X} \right|$.
	Clearly $B'_X\cap B_{E\BS X} = \emptyset$, therefore there is a base $B$ of $M$ such that $B_{E\BS X} \subseteq B$ and $B'_X \subseteq E\BS B$ (Lemma~\ref{lem:dualAug}). Since $B_{E\BS X}$ is a base of $E\BS X $ in $M$, we have that $B\cap \left( E\BS X \right) = B_{E\BS X}$, and analogously,
	$\left( E\BS B \right)\cap X = B'_X$. We obtain $B\cap X = X\BS B'_X$ and therefore $B= B_{E\BS X} \disunion \left( X\BS B'_X \right)$, so
	\[ \rk_M(E) = \left| B \right| = \left| B_{E\BS X}  \right| + \left| X \right| - \left| B'_X \right| = \rk_M(E\BS X) + \left| X \right| - \rk_{M^\ast}(X), \]
	and as a consequence, $\rk_{M^\ast}(X) = \left| X \right| + \rk_M(E\BS X) - \rk_M(E)$.
\end{proof}

\noindent The following fact will be of interest for oriented matroids in Chapter~\ref{ch:OMs}.
It can be found as Proposition~2.1.11 in J.G.~Oxley's book (\cite{Ox11}, p.68), together with the proof we present here.

\begin{lemma}\label{lem:CircuitCocircuitOrthogonality}\PRFRB
	Let $M=(E,\Ical)$ be a matroid and $M^\ast=(E,\Ical^\ast)$ be its dual matroid. Then for every $C\in \Ccal(M)$ and $D\in \Ccal(M^\ast)$,
	we have $\left| C\cap D \right|\not= 1$.
\end{lemma}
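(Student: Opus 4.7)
The plan is a short proof by contradiction, using Lemma~\ref{lem:dualAug} as the key tool. Suppose for the sake of contradiction that there are a circuit $C\in\Ccal(M)$ and a cocircuit $D\in\Ccal(M^\ast)$ with $|C\cap D|=1$, say $C\cap D=\SET{e}$.

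First I would exploit the defining property of circuits to extract independent sets on both sides of the duality: since $C$ is a circuit of $M$, the set $C\BSET{e}$ is independent in $M$, and since $D$ is a circuit of $M^\ast$, the set $D\BSET{e}$ is independent in $M^\ast$. The assumption $C\cap D=\SET{e}$ guarantees that $C\BSET{e}$ and $D\BSET{e}$ are disjoint, so Lemma~\ref{lem:dualAug} applies and yields a base $B\subseteq E$ of $M$ with $C\BSET{e}\subseteq B$ and $D\BSET{e}\subseteq E\BS B$.

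Next I would perform the obvious case analysis on where $e$ lands. If $e\in B$, then $C\subseteq B$, contradicting that $B\in\Ical$ while $C$ is dependent in $M$. If instead $e\in E\BS B$, then $D\subseteq E\BS B$; but by Corollary~\ref{cor:dualbase} the set $E\BS B$ is a base of $M^\ast$, and in particular independent in $M^\ast$, contradicting that $D$ is a circuit of $M^\ast$. Either alternative being impossible, the assumption $|C\cap D|=1$ must fail, which is exactly the claim.

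I do not expect any real obstacle: the whole argument is structural and rests entirely on results already established in the excerpt (Lemma~\ref{lem:dualAug} and Corollary~\ref{cor:dualbase}). The only subtlety worth double-checking is that Lemma~\ref{lem:dualAug} is invoked with $X=C\BSET{e}$ and $Y=D\BSET{e}$, which requires these two sets to be disjoint and respectively independent in $M$ and $M^\ast$ — both consequences of the circuit/cocircuit assumption together with $C\cap D=\SET{e}$.
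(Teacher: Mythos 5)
Your proof is correct, and it takes a genuinely different route from the one in the paper. You reduce everything to Lemma~\ref{lem:dualAug}: the two independent sets $C\BSET{e}$ (in $M$) and $D\BSET{e}$ (in $M^\ast$) are disjoint precisely because $C\cap D=\SET{e}$, the separating base $B$ exists, and then $e$ cannot live in $B$ (else $C\subseteq B$) nor in $E\BS B$ (else $D\subseteq E\BS B$, a base of $M^\ast$ by Corollary~\ref{cor:dualbase}). All hypotheses of Lemma~\ref{lem:dualAug} are verified, and the case analysis is exhaustive, so there is no gap.

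The paper instead argues via ranks and closures: using Lemma~\ref{lem:rankDual} it shows that $H=E\BS D$ is a flat of rank $\rk_M(E)-1$ (i.e.\ a hyperplane), then observes that $C\cap H=C\BSET{x}$ spans $C$, so $C\subseteq\cl_M(C\cap H)\subseteq\cl_M(H)=H$, contradicting $x\in C\BS H$. Your argument is shorter, symmetric in $M$ and $M^\ast$, and leans on the separation lemma the paper has just proved; the paper's computation is longer but deliberately extracts a structural byproduct --- that cocircuits are exactly complements of hyperplanes --- which the author explicitly reuses later (in the discussion of colorings of oriented matroids with parallel elements). So both proofs are sound; the paper's choice is motivated by wanting that intermediate fact on record, while yours is the more economical derivation of the orthogonality statement itself.
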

\begin{proof}\PRFRB
	We give an indirect proof and assume that $\SET{x} = C\cap D$ for some $C\in\Ccal(M)$ and $D\in \Ccal(M^\ast)$.
	Since $D\in \Ccal(M^\ast)$, we have $\rk_{M^\ast}(D) = \left| D \right| - 1$.
	We set $H = E\BS D$, then by Lemma~\ref{lem:rankDual}, we get
 $$
		\rk_{M^\ast}(D) = \left| D \right| - 1  = \left| D \right| + \rk_M(H) - \rk_M(E),
	$$
	and therefore $\rk_M(H) = \rk(E) - 1$ follows. Clearly, $\cl_M(H) = H$, since otherwise there would be an element $d\in D$ such that $d\in \cl_M(H)\BS H$,
	which would imply that $$\rk_{M^\ast}(D\BSET{d}) = \left| D\BSET{d} \right| + \rk_M \left(  H\cup\SET{d} \right) - \rk_M(E) =  \left| D\BSET{d} \right| - 1,$$ contradicting that $D\in\Ccal(M^\ast)$ is a minimally dependent set of $M^\ast$ with respect to set-inclusion. But now we arrive at another contradiction:
	We have $x\in C\cap D$, $x\notin H=E\BS D$, and thus $C\not\subseteq H$, yet 
	 $\left| C\cap H \right| = \left| C \right| - \left| C\cap D \right| = \left| C \right| - 1$, and therefore $\cl_M(C\cap H) = C$, so we obtain the contradiction $C \subseteq \cl_M(C\cap H) \subseteq \cl_M(H) = H$ (Lemma~\ref{lem:clFlips}). Therefore $\left| C\cap D \right|\not= 1$ must be the case.
\end{proof}

\begin{lemma}\label{lem:CircuitCocircuitIntersectInTwo}\PRFRB
Let $M=(E,\Ical)$ be a matroid and $M^\ast=(E,\Ical^\ast)$ be its dual matroid. Let further $C\in \Ccal(M)$ be a circuit and $c,d\in C$ with $c\not = d$.
There there is some $D\in \Ccal(M^\ast)$ such that $C\cap D = \SET{c,d}$.
\end{lemma}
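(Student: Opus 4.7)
The plan is to construct $D$ as a fundamental circuit in the dual matroid $M^\ast$, chosen so that its intersection with $C$ is forced into $\SET{c,d}$, and then invoke Lemma~\ref{lem:CircuitCocircuitOrthogonality} to upgrade this containment to equality.

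First, I exploit the circuit $C$ to build a good basis of $M$. Since $C$ is a circuit, $C\BSET{d}$ is independent in $M$, so by Lemma~\ref{lem:augmentation} I can extend $C\BSET{d}$ to a basis $B$ of $M$. Note that $d\notin B$, because otherwise $C\subseteq B$ would contradict that $B$ is independent. Thus $c\in B$ while $d\notin B$, and in particular $C\BS B = \SET{d}$.

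Next, I pass to $M^\ast$. By Corollary~\ref{cor:dualbase}, $B^\ast := E\BS B$ is a basis of $M^\ast$, and since $c\in B$ we have $c\notin B^\ast$. Hence $B^\ast\cup\SET{c}$ is dependent in $M^\ast$, so it contains some circuit $D\in\Ccal(M^\ast)$. As $B^\ast$ itself is independent in $M^\ast$, the extra element $c$ must lie in $D$; this $D$ is our candidate cocircuit. By construction $D \subseteq B^\ast\cup\SET{c} = (E\BS B)\cup\SET{c}$, so
\[ C\cap D \,\subseteq\, (C\BS B)\cup \SET{c} \,=\, \SET{d}\cup \SET{c} \,=\, \SET{c,d}. \]

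Finally, $c\in C\cap D$ already gives $\left| C\cap D \right|\geq 1$, and Lemma~\ref{lem:CircuitCocircuitOrthogonality} says $\left| C\cap D \right|\not= 1$. Therefore $\left| C\cap D \right|\geq 2$, and combined with the containment above this forces $C\cap D = \SET{c,d}$, as desired. The only place any cleverness is required is in choosing $B$ so that $C\BS B$ consists of precisely the one ``unwanted'' element $d$; after that, the orthogonality lemma does all the remaining work and no delicate circuit elimination argument is needed.
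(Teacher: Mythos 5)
Your proof is correct and is essentially the paper's own argument with the roles of $c$ and $d$ interchanged: the paper extends $C\BSET{c}$ to a base $B_c$, passes to the dual base $E\BS B_c$, adjoins $d$ and takes the fundamental circuit, then closes with Lemma~\ref{lem:CircuitCocircuitOrthogonality} exactly as you do. No substantive difference.
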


\noindent The proof presented here is along the lines of the proof of Lemma 2.2.3 in \cite{BlV78}.

\begin{proof}\PRFRB
	Since $C\in\Ccal(M)$, we have $C\BSET{c}\in \Ical$. There is a base $B_c$ of $M$ with $C\BSET{c}\subseteq B_c$ (Lemma~\ref{lem:augmentation}),
	 and since $C\notin \Ical$, $c\notin B_c$. Then $B'_c = E\BS B_c$ is a base of $M^\ast$ with $c\in B'_c$ (Corollary~\ref{cor:dualbase}).
	 Let $D'= B'_c \cup\SET{d}$, then $\rk_{M^\ast}(D') = \rk_{M^\ast}(E) = \left| D' \right| - 1$, and therefore there is a unique circuit $D\subseteq D'$.
	 Clearly, $d\in D$ is an element of that circuit. Therefore $d\in C\cap D$. Furthermore $C\subseteq B_c\disunion\SET{c}$ and $D\subseteq B'_c\cup\SET{d}=(E\BS B_c)\cup\SET{d}$
	 yield $C\cap D\subseteq \SET{c,d}$. Since $\left| C\cap D \right|\not= 1$ (Lemma~\ref{lem:CircuitCocircuitOrthogonality}), we obtain that
	 $C\cap D = \SET{c,d}$.
\end{proof}

\needspace{3\baselineskip}
\subsection{Minors}

\PRFR{Jan 15th}
\noindent
In this section, we introduce the natural substructures for matroids.

\begin{definition}\label{def:Mrestriction}\PRFR{Jan 15th}
 Let $M=(E,\Ical)$ be a matroid, and let $R\subseteq E$.
 The \deftext[restriction of M to R@restriction of $M$ to $R$]{restriction of $\bm M$ to $\bm R$} is the pair\label{n:MR}
 $ M\restrict R = (R,\Ical')$ where
 \[ \Ical' = \SET{X\in \Ical\mid X\subseteq R }. \qedhere\]
\end{definition}

\begin{lemma}\PRFR{Jan 15th}
	Let $M=(E,\Ical)$ be a matroid, and let $R\subseteq E$. Then
	$ M\restrict R = (R,\Ical')$ is a matroid.
\end{lemma}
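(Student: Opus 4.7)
The plan is to verify directly that $\Ical' = \SET{X \in \Ical \mid X \subseteq R}$ satisfies the three independence axioms {\em (I1)}, {\em (I2)}, {\em (I3)} from Definition~\ref{def:indepAxioms}. Each verification is routine and inherits from the corresponding axiom for $M$; the key observation is that the subset-of-$R$ condition is preserved by both taking subsets and by the particular augmentation element chosen in {\em (I3)}.

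For {\em (I1)}, I would note that $\emptyset \in \Ical$ since $M$ is a matroid, and trivially $\emptyset \subseteq R$, so $\emptyset \in \Ical'$. For {\em (I2)}, if $X \in \Ical'$ and $J \subseteq X$, then $X \in \Ical$ gives $J \in \Ical$ by {\em (I2)} applied to $M$, and $J \subseteq X \subseteq R$ yields $J \in \Ical'$.

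The only step requiring slightly more thought is {\em (I3)}. Given $J, I \in \Ical'$ with $|J| < |I|$, we have $J, I \in \Ical$, so {\em (I3)} applied to $M$ furnishes some $i \in I \BS J$ such that $J \cup \SET{i} \in \Ical$. The point is that this same $i$ works in $M \restrict R$: since $i \in I \subseteq R$ and $J \subseteq R$, we have $J \cup \SET{i} \subseteq R$, and therefore $J \cup \SET{i} \in \Ical'$.

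There is no real obstacle here; the lemma is essentially a sanity check that restriction plays well with the axiomatization, and the whole argument fits in a few lines of bookkeeping. I would not expect the proof to require invoking any result beyond Definition~\ref{def:indepAxioms} itself.
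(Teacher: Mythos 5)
Your proposal is correct and follows exactly the same route as the paper: verify {\em (I1)}--{\em (I3)} directly for $\Ical'$, with the only non-trivial observation being that the augmenting element supplied by {\em (I3)} for $M$ lies in $I\subseteq R$, so the augmented set stays inside $R$. Nothing is missing.
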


\begin{proof}\PRFR{Jan 15th}
   $\emptyset\subseteq R$ and $\emptyset \in \Ical$ thus $\emptyset \in \Ical'$
	{\em (I1)}. 
	Let $X\subseteq Y\in \Ical'$, then $Y\subseteq R$ and $Y\in \Ical$, therefore
	$X\subseteq R$ and
	$X\in \Ical$, so $X\in \Ical'$ {\em (I2)}.
	Let $X,Y\in\Ical'$ with $\left| X \right| < \left| Y \right|$. There is some
	$y\in Y\BS X$ with $X\cup\SET{y}\in \Ical$, and since $X\cup\SET{y}\subseteq R$,
	$X\cup\SET{y}\in\Ical'$ {\em (I3)}.
\end{proof}

\begin{corollary}\label{cor:rkRestrict}\PRFR{Jan 15th}
	Let $M=(E,\Ical)$ and $R\subseteq E$. Then for all $X\subseteq R$ we have
	$$ \rk_{M\restrict R}(X) = \rk_M(X).$$
\end{corollary}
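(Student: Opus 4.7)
The plan is to unfold the definitions on both sides and observe that the extra constraint imposed by the restriction is automatically satisfied once we assume $X \subseteq R$. Concretely, writing $M \restrict R = (R, \Ical')$ with $\Ical' = \SET{Y \in \Ical \mid Y \subseteq R}$, we have by Definition~\ref{def:rank}
\[ \rk_{M\restrict R}(X) = \max\SET{|Y| ~\middle|~ Y \subseteq X,\, Y \in \Ical'} = \max\SET{|Y| ~\middle|~ Y \subseteq X,\, Y \in \Ical,\, Y \subseteq R}. \]
The key observation is that the hypothesis $X \subseteq R$ forces $Y \subseteq R$ whenever $Y \subseteq X$, so the constraint $Y \subseteq R$ is redundant in the maximization. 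Removing it yields exactly the defining expression $\rk_M(X) = \max\SET{|Y| \mid Y \subseteq X,\, Y \in \Ical}$.

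There is no real obstacle here; the proof is a one-line verification from the definitions and requires no appeal to any of the matroid axioms beyond the form of the rank function. The only care needed is to state the inclusion $\SET{Y \mid Y \subseteq X,\, Y \in \Ical'} = \SET{Y \mid Y \subseteq X,\, Y \in \Ical}$ explicitly, so that the two maxima agree.
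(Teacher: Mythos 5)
Your proof is correct and is exactly what the paper intends: the paper's own proof is just ``Clear from Definition~\ref{def:rank},'' and your one-line verification that the constraint $Y\subseteq R$ is redundant under the hypothesis $X\subseteq R$ is precisely the content being elided. Nothing further is needed.
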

\begin{proof}\PRFR{Jan 15th}
	Clear from Definition~\ref{def:rank}.
\end{proof}

\begin{lemma}\label{lem:directSumAndRestrictionCommute}\PRFR{Mar 7th}
	Let $M=(E,\Ical)$ and $N=(E',\Ical')$ be matroids with $E\cap E' = \emptyset$.
	Let $X\subseteq E\cup E'$, then
	\[ (M\oplus N)\restrict X = (M\restrict X\cap E)\oplus (N\restrict X\cap E').\]
\end{lemma}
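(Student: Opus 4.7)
The plan is to verify that both matroids in the claimed equation have the same ground set and the same family of independent sets, invoking Definition~\ref{def:directSum} and Definition~\ref{def:Mrestriction} on each side.

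First I would check the ground sets. The ground set of $(M\oplus N)\restrict X$ is $X$ by Definition~\ref{def:Mrestriction}, and the ground set of $(M\restrict X\cap E)\oplus (N\restrict X\cap E')$ is $(X\cap E)\cup(X\cap E')=X\cap(E\cup E')=X$, where the last equality uses $X\subseteq E\cup E'$. Note that the direct sum on the right is well-defined since $(X\cap E)\cap(X\cap E')\subseteq E\cap E'=\emptyset$.

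Next I would show that the independence systems coincide, using the fact that the disjointness $E\cap E'=\emptyset$ makes every $Y\subseteq E\cup E'$ decompose uniquely as $Y=(Y\cap E)\disunion(Y\cap E')$. Unfolding the left-hand side: $Y$ is independent in $(M\oplus N)\restrict X$ iff $Y\subseteq X$ and $Y\in\Ical_\oplus$, i.e.\ $Y=Y_1\cup Y_2$ for some $Y_1\in\Ical$ and $Y_2\in\Ical'$. Because of the disjointness of $E$ and $E'$, this decomposition is forced to be $Y_1=Y\cap E$ and $Y_2=Y\cap E'$. Unfolding the right-hand side: a set $Z$ is independent in $(M\restrict X\cap E)\oplus (N\restrict X\cap E')$ iff $Z=Z_1\cup Z_2$ with $Z_1\in\Ical$, $Z_1\subseteq X\cap E$, $Z_2\in\Ical'$, $Z_2\subseteq X\cap E'$. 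Matching up $Z_1=Y\cap E$ and $Z_2=Y\cap E'$, the two conditions are plainly equivalent: $Y\subseteq X$ together with $Y\cap E\in\Ical$ and $Y\cap E'\in\Ical'$ on the left, versus $Y\cap E\in\Ical$ with $Y\cap E\subseteq X\cap E$ and $Y\cap E'\in\Ical'$ with $Y\cap E'\subseteq X\cap E'$ on the right.

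There is no real obstacle here; the proof is a bookkeeping check that the two set-theoretic unfoldings agree. The only point that deserves an explicit sentence is the observation that the disjointness $E\cap E'=\emptyset$ turns the existential decomposition $Y=Y_1\cup Y_2$ in Definition~\ref{def:directSum} into a unique decomposition with $Y_1=Y\cap E$ and $Y_2=Y\cap E'$, which is what makes the two sides match up without any side conditions on $X$ beyond $X\subseteq E\cup E'$.
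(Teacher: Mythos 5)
Your proof is correct and follows the same route as the paper, which simply asserts the claim as clear from Definitions~\ref{def:directSum} and \ref{def:Mrestriction}; you have merely written out the bookkeeping that the paper leaves implicit. The key observation you highlight — that disjointness of $E$ and $E'$ forces the decomposition $Y=(Y\cap E)\disunion(Y\cap E')$ — is exactly the point the paper's one-sentence argument relies on.
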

\begin{proof}\PRFR{Mar 7th}
	Clear from Definitions~\ref{def:directSum} and \ref{def:Mrestriction}: the independent sets of the direct sum $\Ical_\oplus$ are
	disjoint unions of independent sets of its parts, therefore the restriction of the family $\Ical_\oplus$ to subsets of $X$
	consists of those
	disjoint unions of the subsets of $X$, that are independent with respect to its parts.
\end{proof}

\needspace{4\baselineskip}


\begin{definition}\label{def:Mcontraction}\PRFR{Jan 15th}
	Let $M=(E,\Ical)$ be a matroid, and let $C\subseteq E$.
	The \deftext[contraction of M to C@contraction of $M$ to $C$]{contraction of $\bm M$ to $\bm C$} is the pair\label{n:MC}
	$ M\contract C = (C,\Ical')$ where
	\[ \Ical' = \SET{X\subseteq C ~\middle|~\vphantom{A^A} \forall B\subseteq E\BS C\colon\,B\in \Ical \Rightarrow B\cup X \in \Ical }. \qedhere\]
\end{definition}


\begin{lemma}\label{lem:contractionBchoice}\PRFR{Jan 15th}
	Let $M=(E,\Ical)$ be a matroid, $C\subseteq E$, and let $B$ be a base of $E\BS C$ in $M$.
	If further
	\begin{align*}
	  \Ical_B & = \SET{X\subseteq C ~\middle|~\vphantom{A^A} B\cup X \in \Ical } \text{ and }\\
	  \Ical' & =  \SET{X\subseteq C ~\middle|~\vphantom{A^A} \forall B'\subseteq E\BS C\colon\,B'\in \Ical \Rightarrow B'\cup X \in \Ical }, 
	 \end{align*}
	  then $\Ical' = \Ical_B$.
\end{lemma}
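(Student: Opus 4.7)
The plan is to prove both inclusions of the set equality $\Ical'=\Ical_B$ separately. The inclusion $\Ical' \subseteq \Ical_B$ is immediate: by Lemma~\ref{lem:augmentation} (applied with $F=\emptyset$ and $H=E\BS C$) the base $B$ of $E\BS C$ is independent in $M$, i.e. $B\in \Ical$, so taking $B'=B$ in the defining condition of $\Ical'$ shows that every $X\in\Ical'$ satisfies $B\cup X\in\Ical$ and therefore $X\in\Ical_B$.

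For the harder inclusion $\Ical_B \subseteq \Ical'$, let $X\in\Ical_B$ and let $B'\subseteq E\BS C$ with $B'\in\Ical$. First I would extend $B'$ to a \emph{base} $B''$ of $E\BS C$ in $M$ via Lemma~\ref{lem:augmentation}. By monotonicity (I2), it is enough to show $B''\cup X\in \Ical$, because then $B'\cup X\subseteq B''\cup X$ forces $B'\cup X\in\Ical$. So the real content reduces to the following \textbf{key claim}: for any two bases $B_1,B_2$ of $E\BS C$ in $M$, and any $X\subseteq C$, we have $B_1\cup X\in \Ical$ if and only if $B_2\cup X\in \Ical$.

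To prove the key claim, I would work purely in terms of the rank function and exploit submodularity (R3) from Theorem~\ref{thm:rankAxioms}. Both $B_1$ and $B_2$ are bases of $E\BS C$, hence $\rk(B_1\cup B_2)=\rk(E\BS C)=|B_1|=|B_2|=\rk(B_1)=\rk(B_2)$. The decisive observation (a small auxiliary lemma I would state once and reuse) is: if $\rk(A\cup D)=\rk(A)$, then $\rk(A\cup D\cup X)=\rk(A\cup X)$ for every $X$. This follows from one application of submodularity,
\[ \rk(A\cup D\cup X)+\rk(A) \;\leq\; \rk(A\cup X)+\rk(A\cup D), \]
combined with monotonicity (R2). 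Applying this with $A=B_1$, $D=B_2$ gives $\rk(B_1\cup B_2\cup X)=\rk(B_1\cup X)$, and applying it symmetrically with $A=B_2$, $D=B_1$ gives $\rk(B_1\cup B_2\cup X)=\rk(B_2\cup X)$. Hence $\rk(B_1\cup X)=\rk(B_2\cup X)$; since $|B_1\cup X|=|B_2\cup X|$, one side attains $|B_i|+|X|$ (i.e. is independent) if and only if the other does.

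The main obstacle I anticipate is avoiding circular reasoning: one might be tempted to invoke closure-based arguments (Lemma~\ref{lem:clofbase}, Lemma~\ref{lem:clKeepsRank}) to say ``$B_1$ and $B_2$ have the same closure, so they behave identically.'' Those lemmas certainly apply, but they operate inside $M$ and one still has to justify that the rank of $B_i\cup X$ depends only on $\cl(B_i)$ and $X$, which is essentially the auxiliary submodularity step above. Doing it directly via (R2) and (R3) is cleaner and keeps the proof self-contained within the material already developed.
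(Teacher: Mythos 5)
Your proposal is correct, and its overall architecture coincides with the paper's: both establish $\Ical'\subseteq\Ical_B$ trivially, both reduce the converse to the claim that membership of $B\cup X$ in $\Ical$ does not depend on which base of $E\BS C$ is chosen, and both handle an arbitrary independent $B'\subseteq E\BS C$ by passing to a base of $E\BS C$ containing it and invoking {\em (I2)}. Where you genuinely diverge is in the proof of that base-independence claim. The paper argues inside the independence structure: it applies Lemma~\ref{lem:augmentation} twice to the set $F=B\cup B'\cup X$, producing a base of $F$ containing $B\cup X$ and one containing $B'$, and then uses the maximality of $B$ and $B'$ in $E\BS C$ (any $(\left|B\right|+1)$-element subset of $B\cup B'$ is dependent) together with equicardinality of bases to force these two bases of $F$ to be exactly $B\cup X$ and $B'\cup X$. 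You instead pass to the rank function and use submodularity {\em (R3)} via the auxiliary identity that $\rk(A\cup D)=\rk(A)$ implies $\rk(A\cup D\cup X)=\rk(A\cup X)$; applied with $A=B_1$, $D=B_2$ and then with the roles swapped, this gives $\rk(B_1\cup X)=\rk(B_2\cup X)$, and equicardinality of $B_1\cup X$ and $B_2\cup X$ (valid since $B_i\subseteq E\BS C$ is disjoint from $X\subseteq C$) finishes the argument. Both routes are sound and rely only on material developed before this lemma, so there is no circularity. Your version isolates a reusable rank identity — essentially the statement that rank only depends on the closure of the spanning part — at the cost of invoking the rank axioms, while the paper stays entirely at the level of independent-set manipulations. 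One cosmetic point: you do not need Lemma~\ref{lem:augmentation} to see that $B\in\Ical$; a base of $E\BS C$ is by definition a maximal element of $\SET{I\in\Ical\mid I\subseteq E\BS C}$ and is therefore independent.
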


\begin{proof}\PRFR{Jan 15th}
	From the definition it is clear that $\Ical' \subseteq \Ical_B$.
	First, we show that
	$\Ical_B$ does not depend on the choice of the base of $E\BS C$ in $M$.
	Let $B, B'\subseteq E$ be any two bases of $E\BS C$ in $M$, and let $\Ical_B$
	be defined as in the lemma, and let
	$\Ical_{B'} = \SET{X\subseteq C \mid B'\cup X \in \Ical }$.
	If $X\in\Ical_B$ then $B\cup X \in \Ical$. Let $F= B\cup B'\cup X$, then
	there is a base $B_X$ of $F$ with $B\cup X\subseteq B_X$ (Lemma~\ref{lem:augmentation}).
	Furthermore, we already have $B_X= B\cup X$, because both $B$ and $B'$ are independent subsets of $E\BS C$ with maximal cardinality, so any $\left| B \right| + 1$ elementary subset of $B\cup B'$ must be dependent and therefore cannot be a subset of $B_X$. Again by Lemma~\ref{lem:augmentation}, we obtain a base $B_X'$ of $F$ with $B'\subseteq B_X'$.
	Since $\left| B_X \right| = \left| B_X' \right|$ (Corollary~\ref{cor:equicardinality}) and the previous argument about subsets of $B\cup B'$, we have $B_X' = B'\cup X$, therefore $X\in \Ical_B'$. This proves $\Ical_B\subseteq \Ical_B'$ for any two bases $B$ and $B'$ of $E\BS C$ in $M$, and therefore $\Ical_B = \Ical_B'$ for any two such bases.

	\needspace{6\baselineskip}
	\noindent Let $X\subseteq E$ and let $I\subseteq E\BS C$ such that $I\in\Ical$. 
	Then there is a base $B'$ of $E\BS C$ in $M$ with $I\subseteq B'$.
	If $X\cup I\notin \Ical$, then clearly $X\cup B'\notin \Ical$. Therefore we may write
	\begin{align*}
		\Ical' \,\,\,\,& =\,\, \bigcap_{B'\subseteq E\BS C,\,B'\in\Ical} \SET{X\subseteq C~\middle|~\vphantom{A^A} X\cup B'\in \Ical} \\
		& =\,\, \bigcap_{B'\,\in\, \Bcal_M(E\BS C)} \SET{X\subseteq C~\middle|~\vphantom{A^A} X\cup B'\in \Ical} 
		 \,\,\,\, = \,\,\,\,\Ical_B
	\end{align*}
	where $B$ is any fixed base of $E\BS C$ in $M$.
\end{proof}

\begin{lemma}\PRFR{Jan 15th}
	Let $M=(E,\Ical)$ be a matroid, and let $C\subseteq E$. Then
	$ M\contract C = (C,\Ical')$ is a matroid.
\end{lemma}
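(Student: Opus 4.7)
The plan is to invoke Lemma~\ref{lem:contractionBchoice} so that we only need to verify the matroid axioms for the simpler description $\Ical_B = \SET{X\subseteq C \mid B\cup X \in \Ical}$, where $B$ is any fixed base of $E\BS C$ in $M$ (such a $B$ exists because $\emptyset \in \Ical$ and $\Ical$ is finite, so we can apply Lemma~\ref{lem:augmentation}). Once $\Ical' = \Ical_B$ is used, each of the independence axioms becomes an immediate consequence of the corresponding axiom for $M$, with the only subtlety being the cardinality bookkeeping for (I3), which rests on the disjointness $B\cap C = \emptyset$.

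First I would dispatch (I1): $B\cup \emptyset = B\in \Ical$, so $\emptyset \in \Ical_B$. Next, for (I2), let $Y\subseteq X\in \Ical_B$; then $B\cup Y\subseteq B\cup X\in\Ical$, and (I2) for $M$ gives $B\cup Y\in \Ical$, so $Y\in \Ical_B$.

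The one step that deserves explicit care is (I3). Given $X,Y\in \Ical_B$ with $\left| X \right| < \left| Y \right|$, the sets $B\cup X$ and $B\cup Y$ both lie in $\Ical$. Because $X,Y\subseteq C$ while $B\subseteq E\BS C$, the unions are disjoint, so
\[ \left| B\cup X \right| = \left| B \right| + \left| X \right| < \left| B \right| + \left| Y \right| = \left| B\cup Y \right|. \]
Applying (I3) for $M$ yields some $z\in (B\cup Y)\BS (B\cup X)$ with $(B\cup X)\cup\SET{z}\in \Ical$. Since $z\notin B\cup X$ forces $z\notin B$, and $z\in B\cup Y$ together with $B\cap Y = \emptyset$ forces $z\in Y\subseteq C$. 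Thus $X\cup\SET{z}\subseteq C$ and $B\cup(X\cup\SET{z})\in \Ical$, i.e.\ $X\cup\SET{z}\in \Ical_B$, giving (I3).

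The main obstacle is really just conceptual rather than technical: one must notice that working directly from the universal quantifier in the definition of $\Ical'$ makes the augmentation step (I3) awkward, whereas Lemma~\ref{lem:contractionBchoice} reduces the problem to a single fixed $B$ and lets the matroid axioms for $M$ do all the work. With that reduction in hand, the proof is essentially a three-line verification.
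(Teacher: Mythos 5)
Your proposal is correct and follows exactly the paper's own argument: fix a base $B$ of $E\BS C$, invoke Lemma~\ref{lem:contractionBchoice} to replace $\Ical'$ by $\Ical_B$, and then check (I1)--(I3) using the disjointness $B\cap C=\emptyset$ for the cardinality count in (I3). Nothing is missing.
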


\begin{proof}\PRFR{Jan 15th}
	Let $B$ be an arbitrarily fixed base of $E\BS C$ in $M$, then
	$\Ical' = \SET{X\subseteq C\mid X\cup B\in \Ical}$ (Lemma~\ref{lem:contractionBchoice}).
	Clearly $B\cup\emptyset = B\in \Ical$, thus $\emptyset \in \Ical'$ {\em (I1)}. Furthermore,
	if $X\in\Ical'$,then $B\cup X \in \Ical$, therefore for any $Y\subseteq X$, we have
	$B\cup Y \in \Ical$ {\em (I2)}. Now let $X,Y\in\Ical'$ with $\left| X \right| < \left| Y \right|$. Thus $B\cup X \in \Ical$ and $B\cup Y \in \Ical$ with $\left| B\cup X \right| = \left| B \right| + \left| X \right| < \left| B \right| + \left| Y \right| = \left| B\cup Y \right|$. There is $y\in (B\cup Y)\BS (B\cup X) = Y\BS X$ such that
	$B\cup X\cup\SET{y} \in \Ical$, and therefore $X\cup\SET{y}\in \Ical'$ {\em (I3)}.
	Thus $M\contract C$ is a matroid.
\end{proof}

\begin{corollary}\label{cor:rkContract}\PRFR{Jan 15th}
	Let $M=(E,\Ical)$ be a matroid and $C\subseteq E$. Then for all $X\subseteq C$
	\[ \rk_{M\contract C} (X) = \rk_M(X\cup\left( E\BS C \right)) - \rk_M(E\BS C).\]
\end{corollary}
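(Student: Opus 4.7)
The plan is to compute $\rk_{M\contract C}(X)$ directly by exhibiting a base of $X\cup(E\BS C)$ in $M$ that is the disjoint union of a base of $X$ in $M\contract C$ and a base of $E\BS C$ in $M$. The key tool is Lemma~\ref{lem:contractionBchoice}, which tells us that the independent sets of $M\contract C$ can be described using any single fixed base of $E\BS C$.

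First, I would fix a base $B$ of $E\BS C$ in $M$, so that $\rk_M(E\BS C)=|B|$. By Lemma~\ref{lem:contractionBchoice}, a set $Y\subseteq C$ is independent in $M\contract C$ if and only if $B\cup Y\in \Ical$. I would then choose $Y\subseteq X$ to be a base of $X$ in $M\contract C$, so that $|Y|=\rk_{M\contract C}(X)$ and $B\cup Y\in \Ical$. Since $B\subseteq E\BS C$ and $Y\subseteq X\subseteq C$, the union is disjoint and $B\cup Y\subseteq X\cup(E\BS C)$.

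The main step is to show that $B\cup Y$ is in fact a base of $X\cup(E\BS C)$ in $M$. Independence has already been noted; for maximality I would argue by contradiction. Suppose there is some $z\in (X\cup(E\BS C))\BS(B\cup Y)$ with $B\cup Y\cup\SET{z}\in \Ical$. Either $z\in (E\BS C)\BS B$ or $z\in X\BS Y$. In the first case, {\em (I2)} yields $B\cup\SET{z}\in \Ical$ with $B\cup\SET{z}\subseteq E\BS C$, contradicting the maximality of $B$ as a base of $E\BS C$ in $M$. In the second case, $B\cup(Y\cup\SET{z})\in \Ical$ with $Y\cup\SET{z}\subseteq X$, so Lemma~\ref{lem:contractionBchoice} gives $Y\cup\SET{z}$ independent in $M\contract C$, contradicting the maximality of $Y$ as a base of $X$ in $M\contract C$. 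This is really the only nontrivial obstacle, but it is quite straightforward.

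Once $B\cup Y$ is known to be a base of $X\cup(E\BS C)$ in $M$, I would conclude
\[ \rk_M\!\left(X\cup(E\BS C)\right) = |B\cup Y| = |B|+|Y| = \rk_M(E\BS C) + \rk_{M\contract C}(X), \]
and rearranging gives the claimed formula. The whole argument rests on the decomposition of independent sets in $M$ containing $B$ into $B$ plus a piece in $C$ that is independent in the contraction, which is precisely what Lemma~\ref{lem:contractionBchoice} encodes.
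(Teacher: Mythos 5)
Your argument is correct and is precisely the approach the paper intends: the paper's proof of this corollary is just the one-line remark that it follows immediately from Lemma~\ref{lem:contractionBchoice} and Definition~\ref{def:rank}, and your write-up fills in exactly that argument (fix a base $B$ of $E\BS C$, take a base $Y$ of $X$ in $M\contract C$, and check that $B\disunion Y$ is a maximal independent subset of $X\cup(E\BS C)$). No gaps; the case analysis for maximality is complete since $X\subseteq C$ and $E\BS C$ are disjoint.
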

\begin{proof}
	Immediate consequence from Lemma~\ref{lem:contractionBchoice} and Definition~\ref{def:rank}.
\end{proof}

\begin{lemma}\label{lem:directSumAndContraction}\PRFR{Mar 7th}
	Let $M=(E,\Ical)$ and $N=(E',\Ical')$ be matroids with $E\cap E' = \emptyset$.
	Let $C \subseteq E\cup E'$. Then
	\[ (M\oplus N)\contract C = (M\contract C\cap E) \oplus (N\contract C\cap E').\]
\end{lemma}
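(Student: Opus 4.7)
The plan is to use Lemma~\ref{lem:contractionBchoice} so that both sides are described via a fixed base, and then exploit the fact that independent sets in a direct sum are precisely disjoint unions of independent sets in the summands.

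First, I would observe that both sides have the same ground set: since $E$ and $E'$ are disjoint, $C = (C\cap E)\disunion (C\cap E')$, so the ground set of $(M\oplus N)\contract C$ is $C$, and the ground set of $(M\contract C\cap E)\oplus (N\contract C\cap E')$ is the disjoint union $(C\cap E)\cup(C\cap E')=C$ as well. It thus remains to compare the families of independent sets.

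Next, I would choose a base $B_M$ of $E\BS (C\cap E)$ in $M$ and a base $B_N$ of $E'\BS (C\cap E')$ in $N$. Since $E\cap E'=\emptyset$, we have $(E\cup E')\BS C = (E\BS(C\cap E))\disunion(E'\BS(C\cap E'))$, and the definition of the direct sum together with Lemma~\ref{lem:directSumAndRestrictionCommute} shows that $B := B_M\cup B_N$ is a base of $(E\cup E')\BS C$ in $M\oplus N$. By Lemma~\ref{lem:contractionBchoice} applied to $M\oplus N$,
\[ (M\oplus N)\contract C = \left(C,\SET{X\subseteq C ~\middle|~ X\cup B \in \Ical_\oplus}\right).\]
Applying the same lemma to $M$ with base $B_M$ and to $N$ with base $B_N$, the independent sets of the right-hand side are
\[ \SET{X_1\cup X_2 ~\middle|~ X_1\subseteq C\cap E,\,X_2\subseteq C\cap E',\, B_M\cup X_1\in \Ical,\, B_N\cup X_2\in \Ical'}.\]

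Finally, I would show equality of these two families. Any $X\subseteq C$ decomposes uniquely as $X=X_1\disunion X_2$ with $X_1=X\cap E\subseteq C\cap E$ and $X_2 = X\cap E'\subseteq C\cap E'$. By the definition of $\Ical_\oplus$, the set $X\cup B = (X_1\cup B_M)\disunion (X_2\cup B_N)$ lies in $\Ical_\oplus$ if and only if $X_1\cup B_M\in \Ical$ and $X_2\cup B_N\in \Ical'$, which is precisely the membership condition for the right-hand side. Hence both families coincide, and the two matroids are equal.

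I do not expect any substantial obstacle: everything reduces to the observation that a base of the complement of $C$ in the direct sum splits as a disjoint union of bases in the two summands, which in turn follows cleanly from Lemma~\ref{lem:directSumAndRestrictionCommute} and Lemma~\ref{lem:contractionBchoice}. The only minor care needed is the bookkeeping that $X\subseteq C$ decomposes uniquely along $E$ and $E'$, which is immediate from $E\cap E'=\emptyset$.
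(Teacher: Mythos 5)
Your proof is correct. The only point worth double-checking is the identification $E\BS(C\cap E)=E\BS C$ (and likewise for $E'$), which holds because $E\cap E'=\emptyset$; with that, $B_M\cup B_N$ is indeed a base of $(E\cup E')\BS C$ in $M\oplus N$, and Lemma~\ref{lem:contractionBchoice} applies exactly as you use it.

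Your route differs from the paper's in the choice of which description of the contraction to work with. The paper argues entirely through rank functions: it first notes $\rk_{M\oplus N}(X)=\rk_M(X\cap E)+\rk_N(X\cap E')$ and then applies the contraction rank formula of Corollary~\ref{cor:rkContract} to both sides, so equality of the matroids follows from equality of their rank functions. You instead fix a base of the complement of $C$ and use Lemma~\ref{lem:contractionBchoice} to describe the independent sets of each contraction directly, reducing everything to the observation that membership in $\Ical_\oplus$ splits along $E$ and $E'$. The rank-function argument is slightly more compact and mirrors the paper's proof of the companion statement for restriction; your base-centred argument is more hands-on with the independence structure and makes explicit where the disjointness $E\cap E'=\emptyset$ enters. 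Both are equally valid derivations from the lemmas already established in the paper.
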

\begin{proof}\PRFR{Mar 7th}
	Direct consequence of Definition~\ref{def:directSum} and Corollary~\ref{cor:rkContract}:
	Since the independent sets of $M\oplus N$ are the disjoint unions of the independent sets of
	$M$ and $N$, it is clear that $\rk_{M\oplus N}(X) = \rk_M(X\cap E) + \rk_N(X\cap E')$
	holds for all $X\subseteq E\cup E'$
	(Definition~\ref{def:rank}).
	Thus 
	\begin{align*}
		 \rk_{(M\oplus N)\contract C}(X)& \,\,\, =\,\,\, \rk_M\left(\left( X\cap E\right)\cup \left( E\BS C \right)\right)
	+ \rk_N\left(( X\cap E')\cup ( E'\BS C )\right) \\& \,\,\, \,\,\, \quad- \rk_M(E\BS C) - \rk_N(E'\BS C) 
	\\& \,\,\, =\,\,\, \rk_{M\contract C\cap E}(X\cap E) + \rk_{N\contract C\cap E}(X\cap E'). \qedhere
	\end{align*}
\end{proof}

\PRFR{Jan 15th}
\noindent The operations of restriction and contraction are related by duality,
if you do one of these operations on the dual of $M$ and then dualize the result,
you get the matroid you would have obtained from the other operation on $M$.

\begin{lemma}\label{lem:restrictcontractdual}\PRFR{Jan 15th}
	Let $M=(E,\Ical)$ be a matroid, and let $C\subseteq E$. Then
	$ M\contract C = \left( M^\ast \restrict C \right)^\ast $.
\end{lemma}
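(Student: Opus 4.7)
The plan is to show that both matroids have the same ground set $C$ and the same rank function, whence they coincide (since $\Ical$ is determined by $\rk_M$).

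First I would note that both $M\contract C$ and $\left(M^\ast\restrict C\right)^\ast$ have ground set $C$, so only the rank functions remain to be compared. Let $X\subseteq C$ be arbitrary. Applying Corollary~\ref{cor:rkContract} gives
\[
\rk_{M\contract C}(X) \,=\, \rk_M\bigl(X\cup(E\BS C)\bigr) - \rk_M(E\BS C).
\]
On the other side, writing $N = M^\ast\restrict C$, two successive applications of Lemma~\ref{lem:rankDual} (first for the outer dual, then for $M^\ast$) together with Corollary~\ref{cor:rkRestrict} yield
\[
\rk_{N^\ast}(X) \,=\, \left|X\right| + \rk_N(C\BS X) - \rk_N(C)
\,=\, \left|X\right| + \rk_{M^\ast}(C\BS X) - \rk_{M^\ast}(C).
\]

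Next I would expand the two $M^\ast$-rank terms by Lemma~\ref{lem:rankDual}:
\begin{align*}
\rk_{M^\ast}(C\BS X) &= \left|C\BS X\right| + \rk_M\bigl(E\BS(C\BS X)\bigr) - \rk_M(E), \\
\rk_{M^\ast}(C) &= \left|C\right| + \rk_M(E\BS C) - \rk_M(E).
\end{align*}
Using $E\BS(C\BS X) = (E\BS C)\cup X$ for $X\subseteq C$ and $\left|C\BS X\right| = \left|C\right|-\left|X\right|$, substitution into the formula for $\rk_{N^\ast}(X)$ makes the $\left|X\right|$, $\left|C\right|$, and $\rk_M(E)$ terms cancel, leaving exactly $\rk_M\bigl((E\BS C)\cup X\bigr) - \rk_M(E\BS C)$, which equals $\rk_{M\contract C}(X)$.

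There is no serious obstacle; the argument is just bookkeeping of cardinalities and ranks, relying on the already-proved rank formulas for restriction (Corollary~\ref{cor:rkRestrict}), contraction (Corollary~\ref{cor:rkContract}), and duality (Lemma~\ref{lem:rankDual}). The only point deserving care is the identity $E\BS(C\BS X) = (E\BS C)\cup X$, which requires the hypothesis $X\subseteq C$; since both matroids under comparison have ground set $C$, this assumption is automatic when verifying their rank functions agree on every subset of the common ground set. Once the two rank functions coincide on $2^C$, equality of the matroids follows because a matroid is uniquely determined by its rank function.
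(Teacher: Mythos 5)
Your proof is correct and follows essentially the same route as the paper's: both arguments reduce the claim to an identity of rank functions and verify it by combining Lemma~\ref{lem:rankDual} with Corollaries~\ref{cor:rkContract} and \ref{cor:rkRestrict}. The only cosmetic difference is that the paper first applies Corollary~\ref{cor:doubleast} to rewrite the claim as $\left( M\contract C \right)^\ast = M^\ast \restrict C$ and expands the left-hand side, whereas you expand $\left( M^\ast\restrict C \right)^\ast$ directly; the bookkeeping, including the complementary set identity $E\BS(C\BS X) = (E\BS C)\cup X$, is the same.
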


\begin{proof}\PRFR{Jan 15th}
	Clearly, 	$ M\contract C = \left( M^\ast \restrict C \right)^\ast $ holds if and only if
		$ \left( M\contract C \right)^\ast = M^\ast \restrict C$ holds (Corollary~\ref{cor:doubleast}).
	Since the family of independent sets of a matroid can be reconstructed from the values of its rank function, it suffices to show that for any $X\subseteq C$ the equation
	\[ \rk_{\left( M\contract C \right)^\ast} (X) = \rk_{M^\ast \restrict C} (X) \]
	holds. First observe that for $X\subseteq C \subseteq E$ the set equation
	$\left( E\BS C \right)\cup \left( C\BS X \right) = E\BS X$ holds. Now from Lemma~\ref{lem:rankDual}, and the Corollaries \ref{cor:rkContract} and \ref{cor:rkRestrict} we obtain
	\begin{align*}
		\rk_{\left( M\contract C \right)^\ast} (X) & = \left| X \right| + \rk_{M\contract C} \left( C\BS X \right) - \rk_{M\contract C} (C) \\
		& = \left| X \right| + \rk_M\left( \left( E\BS C \right) \cup \left( C\BS X \right)  \right) -
		\rk_M \left( E\BS C \right) - \rk_M (E) + \rk_M \left( E\BS C \right) \\
		& = \left| X \right| + \rk_M \left( E\BS X \right) - \rk_M(E) = \rk_{M^\ast} (X) = \rk_{M^\ast \restrict C} (X). \qedhere
	\end{align*}
\end{proof}

\begin{lemma}\label{lem:contractrestrictcommutes}\PRFR{Jan 15th}
	Let $M=(E,\Ical)$ be a matroid, and let $C\subseteq E$ and $R\subseteq E$ such that $\left( E\BS C \right)\cap \left( E\BS R \right) = \emptyset$. Then
	\[ \left( M\restrict R \right)\contract \left( C\cap R \right) = \left( M\contract C \right)\restrict (C\cap R).\]
\end{lemma}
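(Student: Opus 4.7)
My plan is to show that both sides of the claimed equation are matroids on the same ground set $C\cap R$ with the same rank function; since a matroid is determined by its rank function, this suffices.

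First, I would unpack the hypothesis $(E\BS C)\cap(E\BS R)=\emptyset$. This is equivalent to $E=C\cup R$, which in turn means $E\BS C\subseteq R$ and $E\BS R\subseteq C$. The useful consequence for the computation is the set identity $R\BS(C\cap R)=R\cap(E\BS C)=E\BS C$. Both the matroid $(M\restrict R)\contract (C\cap R)$ and the matroid $(M\contract C)\restrict(C\cap R)$ have ground set $C\cap R$, so it is meaningful to compare them.

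Next, for any $X\subseteq C\cap R$ I would compute the two rank functions. Using Corollary~\ref{cor:rkContract} applied to the matroid $M\contract C$ restricted via Corollary~\ref{cor:rkRestrict} (noting that $X\subseteq C$), the right-hand side gives
\[ \rk_{(M\contract C)\restrict(C\cap R)}(X) \;=\; \rk_{M\contract C}(X) \;=\; \rk_M\bigl(X\cup(E\BS C)\bigr)-\rk_M(E\BS C). \]
For the left-hand side, applying Corollary~\ref{cor:rkContract} inside the matroid $M\restrict R$ (whose ground set is $R$) and using $R\BS(C\cap R)=E\BS C$, together with Corollary~\ref{cor:rkRestrict} (valid because both $X\cup(E\BS C)$ and $E\BS C$ are subsets of $R$), yields
\[ \rk_{(M\restrict R)\contract(C\cap R)}(X) \;=\; \rk_{M\restrict R}\bigl(X\cup(E\BS C)\bigr)-\rk_{M\restrict R}(E\BS C) \;=\; \rk_M\bigl(X\cup(E\BS C)\bigr)-\rk_M(E\BS C). \]
Both expressions agree, hence the two rank functions are equal on the common ground set $C\cap R$, and therefore the two matroids coincide.

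The main obstacle is bookkeeping the set-theoretic identities: one must verify carefully that $E\BS C\subseteq R$ so that the restriction rank equals the ambient rank at the relevant sets, and that $R\BS(C\cap R)=E\BS C$ so the two contractions remove the same elements. Once these identities are in place, the argument is a direct calculation with the rank formulas established earlier.
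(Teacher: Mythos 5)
Your proof is correct and follows essentially the same route as the paper's: establish the set identity $R\BS(C\cap R)=R\BS C=E\BS C$ from the hypothesis, then compute both rank functions on $C\cap R$ via Corollaries~\ref{cor:rkRestrict} and \ref{cor:rkContract} and observe they agree. No gaps.
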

\begin{proof}\PRFR{Jan 15th}
	First, we want to establish the fact that $R\BS C = E\BS C$. Since $R\subseteq E$, it remains to show that
	$\left( E\BS C \right)\BS \left( R\BS C \right) = \emptyset$.
	For all $x\in \left(E\BS C\right) \BS \left( R\BS C \right)$ we have $x\in E$, $x\notin C$ and $x\notin R$, thus $x\in E\BS C$ and $x\in E\BS R$. Since $\left( E\BS C \right)\cap \left( E\BS R \right)= \emptyset$, we conclude $\left( E\BS C \right)\BS \left( E\BS R \right)=\emptyset$, so $E\BS C = R\BS C$. Furthermore, it is clear that $R\BS \left( C\cap R \right) = R\BS C$ for all sets $C$ and $R$.
	We give a proof of the statement of the lemma using the rank formulae from Corollaries~\ref{cor:rkRestrict} and \ref{cor:rkContract}.
	Let $X\subseteq C\cap R$, then
	\begin{align*}
		\rk_{ \left( M\restrict R \right)\contract \left( C\cap R \right)} & = \rk_{M\restrict R} \left( X\cup \left( R\BS C \right) \right) - \rk_{M\restrict R}(R\BS C) \\
		& = \,\,\,\,\,\,\,\rk_{M} \left( X\cup \left( R\BS C \right) \right) - \rk_{M}(R\BS C) \\
		& = \,\,\,\,\,\,\,\rk_{M} \left( X\cup \left( E\BS C \right) \right) - \rk_{M}(E\BS C) \\
		& = \,\,\,\,\,\,\,\rk_{M\contract C} (X) \quad \quad \,\,\,\,\,\,\, =  \rk_{ \left( M\contract C \right)\restrict (C\cap R)}(X). \qedhere
	\end{align*}
\end{proof}

\begin{definition}\PRFR{Mar 7th}
	Let $M=(E,\Ical)$ and $N=(E',\Ical')$ be matroids.
	We shall call $N$ a \deftext[minor of $M$]{minor of $\bm M$},
	if there are sets $X \subseteq Y \subseteq E$ such that
	\[ N = \left( M \contract Y \right)\restrict X \]
	holds.
\end{definition}

\begin{remark}\label{rem:contractRestrictCommutingFormula}\PRFR{Mar 7th}
	For $M=(E,\Ical)$ and $X\subseteq Y \subseteq E$ we have $Y \cap \left( E\BS\left( Y\BS X \right) \right)=X$ and
	 $\left( E\BS Y \right) \cap \left( Y\BS X \right) = \emptyset$, 
	so Lemma~\ref{lem:contractrestrictcommutes} yields that
	\[ \left( M \contract Y \right)\restrict X  = \left( M \restrict E\BS\left( Y\BS X \right) \right)\contract X
	\,\,\txtand\,\,
	\left( M \restrict Y \right)\contract X  = \left( M \contract E\BS\left( Y\BS X \right) \right)\restrict X
	. \qedhere
	 \] 
\end{remark}

\begin{definition}\PRFR{Jan 15th}
	Let $\Mcal$ be a class of matroids. Then $\Mcal$ shall be called a \deftext{minor-closed class},
	if for every $M=(E,\Ical)\in \Mcal$ and every $X\subseteq E$,
	also $M\restrict X \in \Mcal$ and $M\contract X\in \Mcal$ holds.
\end{definition}

\begin{example}\PRFR{Jan 15th}
	Let $\Mcal$ be the class where $M\in \Mcal$ if and only if $M=\left( E,2^E \right)$ for any set $E$,
	i.e. $\Mcal$ is the class of all free matroids. Clearly, for every $M=\left( E,2^E \right)$ and every $X\subseteq E$
	we have
	$M\restrict X = M\contract X = \left(X,2^X\right) \in \Mcal$.
\end{example}

\needspace{5\baselineskip}
\begin{definition}\PRFR{Mar 7th}
	Let $\Mcal$ be a minor-closed class of matroids. A matroid $M=(E,\Ical)$ is called
	\deftext[excluded minor]{excluded minor for $\bm \Mcal$}
	if $M\notin \Mcal$ and if for every $X\subsetneq E$ we have both $M\restrict X \in \Mcal$ and $M\contract X \in \Mcal$.
	Furthermore, a minor-closed class of matroids $\Mcal$ is called \deftextX{characterized by finitely many excluded minors}
	if there are only finitely many pair-wise non-isomorphic excluded minors for $\Mcal$.
\end{definition}

\begin{example}\PRFR{Mar 7th}
A matroid is representable over the $2$-elementary field $\Fbb_2$ (Definition~\ref{def:representableMoverK})
 if and only if it has no minor isomorphic to
the rank-$2$ uniform matroid $\left( \SET{a,b,c,d},\SET{X\subseteq \SET{a,b,c,d}~\middle|~\vphantom{A^A}\left| X \right|\leq 2} \right)$.
(Theorem 6.5.4 \cite{Ox11}, p.193). Thus the class of all matroids representable over $\Fbb_2$ is characterized by finitely many,
or in this case, a single excluded minor.
\end{example}

\begin{remark}
	If $\Mcal$ is a minor-closed class of matroids with the property that $M\in \Mcal \Leftrightarrow M^\ast \in \Mcal$ holds for all matroids $M$,
	i.e. $\Mcal$ is closed under duality;
	then $N$ is an excluded minor of $\Mcal$ if and only if $N^\ast$ is an excluded minor of $\Mcal$ (see also Lemma~\ref{lem:contractrestrictcommutes}).
\end{remark}

\PRFR{Mar 7th}
\noindent The excluded minors for matroids representable over fields with $2$, $3$, and $4$ elements are known (\cite{Ox11}, p.193),
and the famous {\em Rota's Conjecture} states that for every finite field $\Fbb$, the class of matroids representable over $\Fbb$ is
characterized by finitely many excluded minors. J.~Geelen, B.~Gerards and G.~Whittle claim to have proven Rota's Conjecture and published
 an overview of their proof in \cite{GGW14}. Furthermore, it has been shown
  that both the class of matroids representable over the field of the
  reals $\R$ and the class of gammoids have the property, that
   every matroid $M$ in each respective class is a minor of an excluded minor of that class, therefore those classes cannot be characterized by
    finitely many excluded minors, because both classes are non-empty and closed under direct sums, thus they are infinite.
     The result for the class of matroids representable over $\R$ 
    has been proven by D.~Mayhew, M.~Newman, and G.~Whittle in \cite{MNW09},
    and the result for the class of gammoids can be found in a paper by D.~Mayhew \cite{Ma16}, where the excluded minor constructed for
    an arbitrary gammoid is also an excluded minor for the class of matroids representable over $\R$.
    

\subsection{Matroids Representable Over a Field}

\PRFR{Jan 15th}
\noindent A quite natural class of matroids arises from the notion of linear independence. We only give 
a short introduction here. Those readers, who are interested in the classes of matroids representable over some given field $\Fbm$, shall hereby be referred to J.G.~Oxley's book \cite{Ox11}.

\begin{definition}\label{def:Mmu}\PRFR{Jan 15th}
	Let $\Kbm$ be a field, $E$ and $C$ be finite sets. Let $\mu\in \Kbm^{E\times C}$ be an $E\times C$-matrix over $\Kbm$. The \deftext[matroid represented over a field]{matroid represented by $\bm \mu$ over $\bm\Kbm$} is the pair\label{n:matMmu} $M(\mu) = (E,\Ical)$ where
	\[ \Ical = \SET{X\subseteq E~\middle|~\vphantom{A^A} \idet \left( \mu \restrict X \right) = 1}. \qedhere\]
\end{definition}

\begin{lemma}\PRFR{Jan 15th}
Let $\Kbm$ be a field, $E$ and $C$ be finite sets. Let $\mu\in \Kbm^{E\times C}$. Then
$M(\mu)$ is a matroid.
\end{lemma}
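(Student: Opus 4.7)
The plan is to reduce the statement to the classical fact that the family of linearly independent subsets of a set of vectors in $\Kbm^C$ forms a matroid, and then verify each axiom directly from Definition~\ref{def:indepAxioms}.

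First, I would translate the condition $\idet(\mu\restrict X) = 1$ into linear-algebraic terms: for finite $X \subseteq E$, a determinantal rank argument shows that $\idet(\mu\restrict X) = 1$ holds if and only if the rows $\{\mu_x \mid x \in X\} \subseteq \Kbm^C$ are linearly independent over $\Kbm$ (interpreting them as vectors indexed by $C$). Indeed, for $n = \min\{|X|,|C|\}$, the existence of some $R_n \in \binom{X}{n}$ and $C_n \in \binom{C}{n}$ with $\det(\mu\restrict R_n \times C_n) \neq 0$ is equivalent to saying that $\mu\restrict X$ has full row rank, i.e.\ that its rows are linearly independent; and for $n = 0$ the empty family of rows is vacuously independent, matching the convention $\idet = 1$ in that case.

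Next I would verify the three independence axioms. For (I1), the case $X = \emptyset$ falls under the $n = 0$ clause of Definition~\ref{def:idet}, so $\emptyset \in \Ical$. For (I2), if $X \in \Ical$ and $J \subseteq X$, then $\{\mu_j \mid j \in J\}$ is a subset of a linearly independent family and hence itself linearly independent, so $J \in \Ical$. For (I3), suppose $J, I \in \Ical$ with $|J| < |I|$; then $\{\mu_j \mid j \in J\}$ and $\{\mu_i \mid i \in I\}$ are linearly independent in $\Kbm^C$. By the Steinitz exchange lemma applied to the subspace $V = \SPAN\{\mu_x \mid x \in J \cup I\}$, which has dimension at least $|I| > |J|$, the family $\{\mu_j \mid j \in J\}$ cannot span $V$, so there exists $i \in I$ with $\mu_i \notin \SPAN\{\mu_j \mid j \in J\}$. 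Such an $i$ necessarily lies in $I \setminus J$, and then $\{\mu_j \mid j \in J\} \cup \{\mu_i\}$ is linearly independent, giving $J \cup \{i\} \in \Ical$.

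I expect no serious obstacles: the argument is essentially a rephrasing of the classical theorem that the column matroid (or here, row matroid) of a matrix over a field is a matroid. The only minor care needed is the correct handling of edge cases in Definition~\ref{def:idet}, namely the $n = 0$ case and the equivalence between $\idet(\mu\restrict X) = 1$ and linear independence of the rows $\mu_x$; both are routine, and once established, the three axioms fall out from elementary linear algebra.
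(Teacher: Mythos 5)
Your proof is correct and follows essentially the same route as the paper's: translate $\idet(\mu\restrict X)=1$ into linear independence of the rows $\mu_x$ in $\Kbm^C$, then check (I1)--(I3) by elementary linear algebra, with (I3) obtained from the dimension/exchange argument that a smaller independent family cannot span the space spanned by a larger one. The only difference is cosmetic (you invoke Steinitz exchange by name where the paper compares spans directly), so there is nothing further to add.
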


\noindent The proof is essentially elementary linear algebra.

\begin{proof} Let $(E,\Ical) = M(\mu)$. \PRFR{Jan 15th}
	It is clear from Definition~\ref{def:idet} that for $X\subseteq E$, the equality $\idet \left( \mu\restrict X \right) = 1$ holds if and only if the set
	$V_X = \SET{\mu_x\mid x\in X}$ is linear independent in the vector space $\Kbm^C$ with the further property
	 $\left| V_X \right| = \left| X \right|$. Thus $\emptyset\in \Ical$ {\em (I1)}. 
	 For every $Y\subseteq X\in\Ical$, we
	 have that $V_Y = \SET{\mu_y\mid y\in Y}$ is linear independent in $\Kbm^C$ with $\left| V_Y \right| = \left| Y \right|$, thus $Y\in\Ical$ {\em (I2)}. Let $X,Y\in\Ical$ with $\left| X \right| < \left| Y \right|$, and let $V_X$, $V_Y$ be defined as above. Since $\left| V_Y \right| > \left| V_X \right|$ and $V_Y$ is linear independent in $\Kbm^C$, we have that
	 $\SPAN_{\Kbm^C}(V_X) \subsetneq \SPAN_{\Kbm^C}\left( V_X\cup V_Y \right)$. Therefore, there is some $\mu_y\in V_Y$ with $\mu_y\notin \SPAN_{\Kbm^C}(V_X)$, and consequently, $V' = V_X\cup\SET{\mu_y}$
	 is linear independent in $\Kbm^C$ with $\left| V' \right| = \left| X \right| + 1$, thus
	 $X\cup\SET{y}\in \Ical$ {\em (I3)}.
\end{proof}

\begin{corollary}\PRFR{Jan 15th}
	Let $M=(E,\Ical)$ be a matroid, $\Kbm$ be a field, $E$ and $C$ be finite sets, and
	$\mu\in \Kbm^{E\times C}$ be a matrix. For all $R\subseteq E$,
	\[ M(\mu)\restrict R = M\left( \mu \restrict R\right).\]
\end{corollary}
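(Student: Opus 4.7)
The plan is to observe that both matroids appearing in the equation have the same ground set $R$, so it suffices to show that the two associated independence systems coincide. I will unpack each side using only the definitions.

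First, by Definition~\ref{def:Mrestriction}, the matroid $M(\mu)\restrict R$ has independent sets
\[ \Ical_1 = \SET{X\in \Ical \mid X\subseteq R} = \SET{X\subseteq R \mid \idet(\mu\restrict X) = 1}, \]
where the second equality uses Definition~\ref{def:Mmu} applied to $\mu$. On the other hand, $\mu\restrict R \in \Kbm^{R\times C}$ is itself a matrix, so by Definition~\ref{def:Mmu} the independent sets of $M(\mu\restrict R)$ are
\[ \Ical_2 = \SET{X\subseteq R \mid \idet((\mu\restrict R)\restrict X) = 1}. \]

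The only step to verify is that for every $X\subseteq R$, the matrices $(\mu\restrict R)\restrict X$ and $\mu\restrict X$ are equal. This is immediate from the definition of the restriction of a matrix: both are the map $X\times C\maparrow \Kbm$, $(r,c)\mapsto \mu(r,c)$. Consequently their determinant-indicators agree, which gives $\Ical_1 = \Ical_2$, and hence $M(\mu)\restrict R = M(\mu\restrict R)$.

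There is essentially no obstacle here; the only mild care required is keeping distinct the two uses of the symbol $\restrict$ (matroid restriction on the outside, matrix restriction on the inside) and noting that matrix restriction composes as $(\mu\restrict R)\restrict X = \mu\restrict X$ whenever $X\subseteq R$. Accordingly, the proof will be a three- or four-line unfolding of definitions and does not require any of the augmentation or exchange lemmas developed earlier in the chapter.
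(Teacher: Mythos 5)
Your proof is correct and is exactly the definition-unfolding argument the paper intends: the corollary is stated without proof precisely because, as you show, $(\mu\restrict R)\restrict X = \mu\restrict X$ for $X\subseteq R$ makes the two independence systems literally identical. No further comment is needed.
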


\begin{remark}\label{rem:colops}\PRFR{Jan 15th} 
	It is a well-known fact from linear algebra that the following operations on $\mu\colon E\times C\maparrow \K$ do not
	change linear dependency between rows, and therefore do not alter the matroid $M(\mu)$:
	\begin{enumerate}\ROMANENUMEM
		\item Interchanging two columns $c_1,c_2\in C$, i.e. if $\nu(e,c) = \begin{cases} \mu(e,c) &\quad \text{if~} c\notin\SET{c_1,c_2},\\
																						\mu(e,c_2) &\quad \text{if~} c=c_1,\\
																						\mu(e,c_1) &\quad \text{if~} c=c_2,\end{cases}$
					then $M(\mu) = M(\nu)$.
		\item Adding a multiple of one column to another column, i.e. for $c_1,c_2\in C$ with $c_1\not= c_2$ and $\alpha\in \K$, i.e.
		\hfill
		if  $\nu(e,c) = \begin{cases} \mu(e,c) &\quad \text{if~} c\not=c_2,\\
									   \mu(e,c_2) + \alpha\cdot\mu(e,c_1) &\quad \text{if~} c=c_2,\end{cases}^{\vphantom{X}}$\linebreak
		then $M(\mu) = M(\nu)$.
		\item Multiplying a column $c_1\in C$ with $\alpha\in \K\BSET{0}$, i.e. \\
		if  $\nu(e,c) = \begin{cases} \mu(e,c) &\quad \text{if~} c\not=c_1,\\
									  \alpha\cdot\mu(e,c_1) &\quad \text{if~} c=c_1,\end{cases}^{\vphantom{X}}\quad\quad$
		then $M(\mu) = M(\nu)$.
	\end{enumerate}
	Furthermore, if $B\subseteq E$ is a base of $M(\mu)$, then we can use Gauß-Jordan elimination steps\footnote{This procedure is commonly refered to as ''pivoting in the ordered basis $B$`` in the context of linear programming. Careful pivoting is the foundation of the simplex algorithm for solving linear optimization problems.}  in order to obtain an injective map
	$\iota\colon B\maparrow C$ and a matrix $\nu$, which has the properties $M(\nu) = M(\mu)$ and for all $b\in B$ and all $c\in C$,
	$\nu(b,c) = \begin{cases} 1 & \text{if~} c = \iota(b),\\ 0 & \text{otherwise}. \end{cases}^{\vphantom{X}}_{\vphantom{X}}$

	\noindent
	From the matrix $\nu$, we can easily read some important properties of $M(\mu)$. Let $e\in E\BS B$, 
	then the unique circuit contained in $B\cup\SET{e}$ consists of $e$ and the elements $b'\in B$ where $\nu(e,\iota(b'))\not= 0$.
	If $B'\subseteq B$, then $\cl_{M(\nu)}(B')$ consists of $B'$ and all $e\in E\BS B$ which have the property that $\nu(e,c)=0$ holds for all $c\in C\BS\left(  \iota[B'] \right)$.
\end{remark}

\begin{definition}\label{def:representableMoverK}\PRFR{Jan 15th}
	Let $M=(E,\Ical)$ be a matroid, $\Kbm$ be a field. We say that $M$ is \deftext[matroid representable over a field]{representable over $\bm \Kbm$}, if there is a finite set $C$
	and a matrix $\mu\in \Kbm^{E\times C}$, such that $M = M(\mu)$.
\end{definition}

\begin{lemma}\label{lem:contractequalspivot}\PRFR{Jan 15th}
 Let $E,C$ be finite sets, and $\mu\in \Kbm^{E\times C}$ be a matrix.
 Let $e\in E$ and $c\in C$, such that $\mu(e,c) \not= 0$.
 Let
 \[ \nu \colon \left(  E\BSET{e} \right)\times \left( C\BSET{c} \right) \maparrow \K,\,(f,d)\mapsto \mu(f,d) - \frac{\mu(e,d)}{\mu(e,c)}\cdot \mu(f,c)\]
 be the matrix obtained by carrying out a Gauß-Jordan elimination step with the pivot index $(e,c)$ and then deleting the corresponding row and column.
 Then $$M(\mu)\contract\left( E\BSET{e}  \right) = M(\nu).$$
\end{lemma}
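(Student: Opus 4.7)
The plan is to apply Lemma~\ref{lem:contractionBchoice} in order to rewrite $M(\mu)\contract(E\BSET{e})$ as a linear-algebra condition on the rows of $\mu$, and then to reduce that condition to the analogous one on the rows of $\nu$ via a pivot-driven bijection between dependence relations.

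First, since $\mu(e,c)\not= 0$, the row $\mu_e$ is a nonzero vector of $\Kbm^C$, hence $\SET{e}$ is independent in $M(\mu)$ and is the unique base of $E\BS \left( E\BSET{e} \right)=\SET{e}$ in $M(\mu)$. Lemma~\ref{lem:contractionBchoice} then asserts that a set $X\subseteq E\BSET{e}$ is independent in $M(\mu)\contract(E\BSET{e})$ exactly when $X\cup\SET{e}$ is independent in $M(\mu)$, i.e.\ when the family consisting of $\mu_e$ together with $\mu_x$ for $x\in X$ is linearly independent in $\Kbm^C$. Since $M(\mu)\contract(E\BSET{e})$ and $M(\nu)$ share the ground set $E\BSET{e}$, the task reduces to showing that this condition is equivalent to the family $(\nu_x)_{x\in X}$ being linearly independent in $\Kbm^{C\BSET{c}}$.

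To establish the equivalence, I would rewrite the defining formula for $\nu$ as the row identity $\nu_f=\mu_f\restrict_{C\BSET{c}}-\frac{\mu(f,c)}{\mu(e,c)}\cdot\mu_e\restrict_{C\BSET{c}}$ and pair each tuple of scalars $(\alpha_x)_{x\in X}$ with the scalar $\beta=-\frac{1}{\mu(e,c)}\sum_{x\in X}\alpha_x\mu(x,c)$. A coordinate-wise inspection then shows that $\sum_{x\in X}\alpha_x\nu_x=0$ in $\Kbm^{C\BSET{c}}$ if and only if $\sum_{x\in X}\alpha_x\mu_x+\beta\mu_e=0$ in $\Kbm^C$: the $c$-coordinate of the second equation vanishes by the very definition of $\beta$, and for every $d\in C\BSET{c}$ the two equations become identical after substituting that formula. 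Moreover, the $c$-coordinate equation uniquely determines $\beta$ from the $\alpha_x$, so nontrivial vanishing relations on one side correspond to nontrivial vanishing relations on the other, proving that the two linear-independence conditions coincide.

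I do not foresee any real obstacle; the argument is the standard bookkeeping attached to a single Gauß--Jordan pivot step, rephrased for the combinatorial definition of $M(\mu)$ via $\idet$. The only care required is to keep the correspondence $(\alpha_x)_{x\in X}\leftrightarrow\left((\alpha_x)_{x\in X},\beta\right)$ self-consistent, which is precisely what the coordinate check at $d=c$ guarantees.
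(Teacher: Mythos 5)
Your proof is correct. Both you and the paper ultimately reduce the claim to the same equivalence — $X\cup\SET{e}$ is independent in $M(\mu)$ if and only if $X$ is independent in $M(\nu)$ — but you establish it by a different mechanism. The paper first passes to a column-reduced matrix $\nu'$ (clearing the row $\mu_e$ outside column $c$ via elementary column operations, which leave $M(\mu)$ unchanged by Remark~\ref{rem:colops}) and then uses a cofactor expansion to show $\idet\left( \nu'\restrict\left( X\cup\SET{e} \right) \right) = \idet\left( \nu\restrict X \right)$; it thus stays entirely at the level of determinants of submatrices, matching the $\idet$-based definition of $M(\mu)$. You instead work with the rows directly and exhibit an explicit bijection between vanishing linear combinations of $(\nu_x)_{x\in X}$ and vanishing linear combinations of $(\mu_x)_{x\in X}$ together with $\mu_e$, the coefficient $\beta$ of $\mu_e$ being forced by the $c$-coordinate. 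Your key observation — that $\beta$ is uniquely determined by the $\alpha_x$, so nontriviality is preserved in both directions — is exactly what is needed and is stated explicitly, so there is no gap. What each approach buys: yours avoids both the determinant machinery and the appeal to invariance under column operations, making the argument more self-contained at the cost of a translation step from $\idet$ to linear independence of row families (which the paper has already justified in its proof that $M(\mu)$ is a matroid); the paper's version generalizes more mechanically to tracking signs of determinants, which it later exploits (e.g.\ in Lemma~\ref{lem:CRAMERsrule}). Your explicit use of Lemma~\ref{lem:contractionBchoice} to identify the independent sets of the contraction is also slightly cleaner than the paper's implicit final step.
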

\begin{proof}
	Let $\nu'\in \K^{E\times C}$ where for all $(f,d)\in E\times C$
	\[ \nu'(f,d) = \begin{cases} \mu(f,d) - \frac{\mu(e,d)}{\mu(e,c)}\cdot \mu(f,c) & \quad \text{if~} d\not= c, \\
	 							\frac{\mu(f,c)}{\mu(e,c)} & \quad \text{if~} d=c. \\
					\end{cases}
	\]
	Since $\nu'$ arises from $\mu$ by elementary column operations, we have $M(\mu) = M(\nu')$ (Remark~\ref{rem:colops}). Furthermore,
	$\nu'(e,c) = 1$ and $\nu'(e,d) = 0$ for all $d\in C\BSET{c}$. Let $E'\subseteq E\BSET{e}$ and $C'\subseteq C\BSET{c}$ with $\left| E' \right| = \left| C' \right|$. Then
	\begin{align*}
	\det \left( \nu'\restrict \left( E'\cup\SET{e} \right)\times \left( C'\cup\SET{c} \right) \right)&\,\, =\,\,
		\sigma \cdot \det \left( \nu' \restrict E'\times C' \right)
		\,\, = \,\, \sigma \cdot \det \left( \nu \restrict E'\times C' \right)
	\end{align*}
	for some $\sigma\in \SET{-1,1}$. Thus for $X\subseteq E\BSET{e}$ $$\idet \left( \nu'\restrict \left( X\cup\SET{e} \right)\right) = \idet \left( \nu \restrict X \right),$$
	and consequently $X$ is independent in $M(\nu)$, if and only if $X\cup\SET{e}$ is independent in $M(\nu') = M(\mu)$. Therefore
	$M(\nu) = M(\mu) \contract \left( E\BSET{e} \right)$.
\end{proof}

\begin{remark}\label{rem:contraction}\PRFR{Feb 15th}
	Let $M(\mu)$ be a matroid for some $\mu\in \R^{E\times C}$. A straightforward consequence of Lemma~\ref{lem:contractequalspivot} is
	that for $X\subseteq E$, we can pivot in a base $B$ of $M(\mu)$ with the property that $B\BS X$ is a base of $E\BS X$ --
	which exists due to Lemma~\ref{lem:augmentation} -- and then restrict the resulting matrix $\nu$ to $X\times C_0$ where
	$C_0 = \SET{c\in C\mid \forall b'\in B\BS X\colon\, \nu(b',c) = 0}$. Then $M(\mu)\contract X = M\left( \nu\restrict X\times C_0 \right)$.
\end{remark}

\begin{lemma}\label{lem:standardMatrixNu}\PRFR{Feb 15th}
	Let $M=(E,\Ical)$ be a matroid that is representable over $\Kbm$, such that for some $n,r\in \N$, $E=\dSET{e_1,e_2,\ldots,e_n}$
	and $B_0 = \dSET{e_1,e_2,\ldots,e_r}$ is a base of $M$. Then there is a matrix $\nu\in \Kbm^{E\times B_0}$ such that
	$\nu\restrict B_0\times B_0$ is the identity matrix for $B_0$ over $\Kbm$.
\end{lemma}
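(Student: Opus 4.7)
The plan is to start from an arbitrary matrix representing $M$ and transform it via elementary column operations into a matrix of the required shape. Since $M$ is representable over $\Kbm$, there is a finite set $C$ and a matrix $\mu\in\Kbm^{E\times C}$ with $M(\mu)=M$. Because $B_0$ is a base of $M$, Remark~\ref{rem:colops} yields, after finitely many Gauß--Jordan column operations on $\mu$, an injective map $\iota\colon B_0\maparrow C$ and a matrix $\mu'\in\Kbm^{E\times C}$ with $M(\mu')=M$ such that $\mu'(b,c)=1$ if $c=\iota(b)$ and $\mu'(b,c)=0$ otherwise, for every $b\in B_0$ and $c\in C$.

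The crucial step is to show that every row of $\mu'$ has support contained in $\iota[B_0]$. Since $B_0$ is a base, the rows $\SET{\mu'_b\mid b\in B_0}$ are $\Kbm$-linearly independent, and for every $e\in E\BS B_0$ the set $B_0\cup\SET{e}$ is dependent in $M(\mu')$, so $\mu'_e$ lies in the $\Kbm$-span of $\SET{\mu'_b\mid b\in B_0}$. Each $\mu'_b$ is a standard basis vector of $\Kbm^C$ supported on the single index $\iota(b)$, so any linear combination of them, and in particular $\mu'_e$, is supported on $\iota[B_0]$.

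Consequently, restricting $\mu'$ to the columns in $\iota[B_0]$ preserves all $\Kbm$-linear relations among rows, so $\mu''=\mu'\restrict E\times\iota[B_0]$ satisfies $M(\mu'')=M$. To finish, I relabel the columns along $\iota$: set $\nu(e,b)=\mu''(e,\iota(b))$ for all $e\in E$ and $b\in B_0$. Column relabeling is merely a bijection on column indices and does not affect row dependencies, so $M(\nu)=M$. By construction $\nu(b,b')=\mu'(b,\iota(b'))$, which equals $1$ if $b=b'$ and $0$ otherwise, hence $\nu\restrict B_0\times B_0=\id_\Kbm(B_0)$, as desired.

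The main obstacle is justifying the column restriction in the second and third paragraphs: one must verify that discarding the columns outside $\iota[B_0]$ does not alter the matroid. This reduces to the observation that a rank-$r$ matrix already containing $r$ standard basis vectors among its columns has column space equal to the coordinate subspace they index, forcing every row to vanish on the remaining coordinates; the rest of the argument is routine bookkeeping.
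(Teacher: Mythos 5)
Your proof is correct and rests on the same core fact as the paper's: since $B_0$ is a base, the rows $\SET{\mu_b\mid b\in B_0}$ form a basis of the row space and every row $\mu_e$ lies in their span, so $\nu$ is just the coordinate matrix of the rows with respect to that basis. The paper defines $\nu(e,b)$ directly as the coefficient of $\mu_b$ in the unique expansion of $\mu_e$, whereas you realize the same change of coordinates constructively via Gauß--Jordan column operations, column deletion, and relabeling; the two arguments are essentially identical.
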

\begin{proof}\PRFR{Feb 15th}
	This is basic linear algebra.
	Let $\mu\in \Kbm^{E\times C}$ be a matrix with $M=M(\mu)$. Then the row vectors $\SET{\mu_b \mid b\in B_0}$ form 
	a basis of a sub-vector space $V\subseteq \Kbm^C$, and since $B_0$ is a base of $M$, we have that $\mu_e\in V$ for all
	$e\in E$. Thus every $\mu_e$ has a unique representation as linear combination of vectors from  $\SET{\mu_b \mid b\in B_0}$,
	and we can set $\nu(e,b)$ to be the coefficient of $\mu_{b}$ with respect to the linear combination representing $\mu_e$,
	for all $e\in E$ and $b\in B_0$. Since a change of basis in a vector space does not affect linear dependency, we have
	$M(\mu) = M(\nu)$.
\end{proof}




\begin{remark}\label{rem:stdRep}\PRFR{Feb 15th}
	An immediate consequence of Lemma~\ref{lem:standardMatrixNu} is, that
	every matroid $M=(E,\Ical)$ representable over $\Kbm$ with $r=\rk_M(E)$ has a matrix $\mu\in \Kbm^{E\times \SET{1,2,\ldots,r}}$
	 such that $M=M(\mu)$ and such that for some base $B\subseteq E$ of $M$,
	the matrix $\mu\restrict(B\times \SET{1,2,\ldots,r})$ resembles an identity matrix --- up to renaming of the rows.
	Thus we may consider $\mu^\top$ to be that identity matrix in apposition with a matrix $A^\top \in \Kbm^{\SET{1,2,\ldots,r}\times (E\BS B)}$,
	i.e. that $\mu = \left( I_r ~~ A^\top \right)^\top$. A matrix of this form is called \deftext[standard representation over $\Kbm$]{standard representation}. If $\mu$ is a standard representation, then
	 $\nu = \left( -A ~~ I_{|E|-r}  \right)^\top$ has the property that $M^\ast = M(\nu)$
	and further, that for all $e,f\in E$, $\langle \mu_e, \nu_f \rangle = 0$ (Corollary~1, \cite{We76}, p.~143).
	Thus for every field $\Kbm$ the family of matroids representable over $\Kbm$ is closed under duality.
\end{remark}
\clearpage

\section{Single Element Extensions}

H.H.~Crapo has exhaustively studied extensions of matroids by single elements in \cite{C65}.

\begin{definition}\PRFR{Feb 15th}
  Let $M=(E,\Ical)$ be a matroid, $A,B\subseteq E$.
  Then $A$ and $B$ are a \deftext[modular pair in M@modular pair in $M$]{modular pair in $\bm M$},
  whenever $$\rk(A\cap B) + \rk(A\cup B) = \rk(A) + \rk(B)$$ holds.
  A modular pair is called \deftext[trivial modular pair]{trivial},
  if $A\subseteq B$ or $B\subseteq A$.
\end{definition}

\begin{example}\label{ex:indepModPairs}\PRFR{Feb 15th}
	Let $M=(E,\Ical)$ be a matroid and $A,B\subseteq E$ such that $A\cup B \in \Ical$.
	Then $A$ and $B$ are a modular pair in $M$, since
	\[ \rk(A\cup B) + \rk(A\cap B) = \left| A\cup B \right| + \left| A\cap B \right| = \left| A \right| + \left| B \right| = \rk(A) + \rk(B).
	\qedhere \]
\end{example}

\begin{lemma}
	Let $M=(E,\Ical)$ be a matroid, $A,B\subseteq E$.
	Then $A$ and $B$ are a modular pair in $M$ if and only if there is a base $X$ of $A\cup B$
	such that $X\cap A$ is a base of $A$ and $X\cap B$ is a base of $B$. 
\end{lemma}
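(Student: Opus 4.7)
The plan is to handle both directions using three ingredients: the inclusion--exclusion identity $|X\cap A|+|X\cap B|=|X|+|X\cap A\cap B|$, which holds whenever $X\subseteq A\cup B$; monotonicity of rank (Lemma~\ref{lem:rankMonotone}); and submodularity (Theorem~\ref{thm:rankAxioms}~{\em (R3)}).

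For the ``if'' direction I would start from the observation that $X\cap A\cap B$ is independent, being a subset of the base $X\cap A$ of $A$, which yields the lower bound $\rk(A\cap B)\geq |X\cap A\cap B|$. Combining the assumed equalities $\rk(A)=|X\cap A|$, $\rk(B)=|X\cap B|$, $\rk(A\cup B)=|X|$ with submodularity gives the matching upper bound $\rk(A\cap B)\leq \rk(A)+\rk(B)-\rk(A\cup B)=|X\cap A|+|X\cap B|-|X|=|X\cap A\cap B|$, where the last equality is inclusion--exclusion applied to $X\subseteq A\cup B$. Hence $\rk(A\cap B)=|X\cap A\cap B|$, and adding $\rk(A\cup B)=|X|$ to both sides produces the modular identity.

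For the converse I would construct $X$ by iterated augmentation via Lemma~\ref{lem:augmentation}: first pick a base $Y$ of $A\cap B$, then extend it to a base $Y_A$ of $A$, and finally extend $Y_A$ to a base $X$ of $A\cup B$. Two equalities are then forced by cardinality, since an independent subset of a fixed set cannot properly extend a base of that set: $X\cap A=Y_A$ (so $X\cap A$ is itself a base of $A$) and $Y_A\cap B=Y$ (so $X\cap A\cap B=Y$ has cardinality $\rk(A\cap B)$). A single inclusion--exclusion step then yields $|X\cap B|=|X|-|X\cap A|+|X\cap A\cap B|=\rk(A\cup B)-\rk(A)+\rk(A\cap B)$, and the assumed modularity rewrites the right-hand side as $\rk(B)$; since $X\cap B$ is independent, it must be a base of $B$. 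The only step with any real content is the recognition of this pair of forced equalities; everything else is arithmetic, so I do not expect a genuine obstacle.
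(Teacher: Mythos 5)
Your proof is correct and follows essentially the same route as the paper's: the identical submodularity-plus-inclusion--exclusion computation for the ``if'' direction (showing $\rk(A\cap B)=|X\cap A\cap B|$ and adding $\rk(A\cup B)=|X|$), and the same augmentation chain (a base of $A\cap B$, extended to a base of $A$, extended to a base $X$ of $A\cup B$) for the converse. The only difference is organizational — you run the converse as a direct construction, using modularity to force $|X\cap B|=\rk(B)$, whereas the paper phrases it contrapositively — and your forced equalities $X\cap A=Y_A$ and $Y_A\cap B=Y$ are precisely the facts the paper's argument relies on as well.
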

\begin{proof}
	Let $X$ be a base of $A\cup B$ such that $X\cap A$ is a base of $A$ and $X\cap B$ is a base of $B$.
	Then $X\cap A \cap B$ is a base of $A\cap B$: Since $\rk(A) + \rk(B) \geq \rk(A\cup B) + \rk(A\cap B)$
	holds by submodularity of the rank function,
	and since $\left| X\cap A\cap B \right| \leq \rk(A\cap B)$ holds by {\em (I2)} and Definition~\ref{def:rank},
	we obtain
	\begin{align*}
	\left| X\cap A \cap B \right| & \leq \rk(A\cap B) \leq \rk(A) + \rk(B) - \rk(A\cup B) \\
	& = \left| X\cap A \right| + \left| X\cap B \right| - \left| X \right| = \left| X\cap A\cap B \right|.
	\end{align*}
	It has been shown in Example~\ref{ex:indepModPairs} that $X\cap A$ and $X\cap B$ are a modular pair in $M$.
	Thus $A$ and $B$ are a modular pair in $M$, because $\rk(A) = \rk(X\cap A)$, $\rk(B) = \rk(X\cap B)$,
	$\rk(A\cap B) = \rk(X\cap A\cap B)$, and $\rk(A\cup B) = \rk(X)$ holds.
	Now let $A,B\subseteq E$ such that there is no base $X$ of $A\cup B$, for which both $X\cap A$ is a base of $A$
	and $X\cap B$ is a base of $B$. Then for all bases $X$ of $A\cup B$, for which $X\cap A$ is a base of $A$,
	and for which $X\cap A \cap B$ is a base of $A\cap B$,
	there is some $b\in B\BS \cl(X\cap B)$, i.e. $\rk(B) > \rk(X\cap B)$. Lemma~\ref{lem:augmentation} guarantees that there is
	a base $X$ of $A\cup B$ with $\rk(X\cap A) = \rk(A)$ and $\rk(X\cap A\cap B) = \rk(A\cap B)$. Thus we obtain that
	\( \rk(A) + \rk(B) > \left| X\cap A \right| + \left| X\cap B \right| = \left| X \right| + \left| X\cap A\cap B \right| = \rk(A\cup B) + \rk(A\cap B) \)
	holds, which
	implies that $A$ and $B$ are not a modular pair in $M$.
\end{proof}

\begin{definition}\PRFR{Jan 15th}
	Let $M=(E,\Ical)$ be a matroid, and let $C \subseteq \Fcal(M)$ be a set of flats of $M$.
	We call $C$ a \deftext[modular cut of M@modular cut of $M$]{modular cut of $\bm M$}, if $C$ has the properties
	that
	\begin{enumerate}\ROMANENUM
	\item for all $A,B\in\Fcal(M)$ the implication
	$$\rk(A) + \rk(B) = \rk(A\cap B) + \rk(A\cup B) \quad \Longrightarrow \quad A\cap B\in C$$
	holds, and
	\item for all $X,Y\in \Fcal(M)$ with $X\subseteq Y$, the implication $X\in C \Rightarrow Y\in C$ holds.
\end{enumerate}
    The \deftextX{class of all modular cuts of $\bm M$} shall be denoted by\label{n:MM}
    \[ \Mcal(M) = \SET{C\subseteq \Fcal(M)\mid C\text{ is a modular cut of }M}. \qedhere\]
\end{definition}

\begin{definition}\PRFR{Jan 15th}
	Let $M=(E,\Ical)$ be a matroid and let $e\notin E$. The \deftextX{class of single element extensions of $\bm M$ by $\bm e$}
	 is defined to be\label{n:XMe}
	\[ \Xcal(M,e) = \SET{(E\cup\SET{e},\Ical') ~\middle|~ \Ical' \subseteq 2^{E\cup\SET{e}}\colon\,\,
	\Ical'\cap 2^E = \Ical 
	\txtand (E\cup\SET{e},\Ical')\text{ is a matroid}}.\]
	Let $N=(F,\Jcal)$ be a matroid. $N$ shall be called \deftext[single element extension of M@single element extension of $M$]{single element extension of $\bm M$}, if $F\BS E = \SET{f}$ and $N\in \Xcal(M,f)$.
\end{definition}

\noindent We convince ourselves that $N$ is indeed an extension of $M$ in the sense that $N$ behaves exactly like $M$ with respect to subsets of $E$.

\needspace{3\baselineskip}
\begin{lemma}\label{lem:extension_rk}\PRFR{Jan 15th}
	Let $M=(E,\Ical)$ be a matroid, $e\notin E$, $N\in \Xcal(M,e)$, and let $X\subseteq E$.
	Then $\rk_{M}(X) = \rk_{N}(X)$.
\end{lemma}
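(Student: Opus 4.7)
The plan is to observe that the defining condition $\Ical' \cap 2^E = \Ical$ in the definition of $\Xcal(M,e)$ forces the independent subsets of $E$ to be exactly the same in $M$ and in $N$, and then to unwind the definition of the rank function to conclude.

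More concretely, let $N = (E\cup\SET{e},\Ical')$ with $\Ical'\cap 2^E = \Ical$. First I would note that for any $X\subseteq E$, a subset $Y\subseteq X$ automatically satisfies $Y\subseteq E$. Hence $Y\in \Ical'$ is equivalent to $Y\in \Ical'\cap 2^E$, which by the extension hypothesis is equivalent to $Y\in \Ical$. This yields the set equality
\[ \SET{Y\subseteq X~\middle|~ Y\in \Ical} = \SET{Y\subseteq X~\middle|~ Y\in \Ical'}. \]

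Applying Definition~\ref{def:rank} to both matroids, the rank $\rk_M(X)$ and the rank $\rk_N(X)$ are defined as the maximum cardinality of an element in the respective family above. Since the two families coincide, their maxima coincide as well, giving $\rk_M(X) = \rk_N(X)$. There is no real obstacle here; the statement is essentially a direct unpacking of the definition of a single element extension, included as a sanity-check lemma before developing the theory of extensions further.
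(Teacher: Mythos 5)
Your proof is correct and follows exactly the same route as the paper: both observe that for $X\subseteq E$ every $Y\subseteq X$ lies in $2^E$, so the condition $\Ical'\cap 2^E=\Ical$ makes the two families $\SET{Y\subseteq X\mid Y\in\Ical}$ and $\SET{Y\subseteq X\mid Y\in\Ical'}$ coincide, whence the maxima defining the two ranks agree. Nothing further is needed.
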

\begin{proof}
	Let $N=(E\cup\SET{e},\Ical')$. Since $\Ical' \cap 2^{E} = \Ical$,  $\SET{Y\subseteq X\mid Y\in \Ical} = \SET{Y\subseteq X\mid Y\in \Ical'}$. Thus
	\[ \rk_{M}(X) = \max\SET{\,\left| Y \right|\,\, \vphantom{A^A}~\middle|~ Y\subseteq X,\, Y\in\Ical} =
	\max\SET{\,\left| Y \right|\,\, \vphantom{A^A}~\middle|~ Y\subseteq X,\, Y\in\Ical'} = \rk_N(X). \qedhere\]
\end{proof}

\noindent Now we have all the definitions that we need in order to present H.H.~Crapo's results
 \cite{C65}:

 \needspace{6\baselineskip}

\begin{theorem}\label{thm:crapo}\PRFR{Jan 15th}
	Let $M=(E,\Ical)$ be a matroid and $e\notin E$. Then there is a bijection
	\[ \phi \colon \Xcal(M,e) \maparrow \Mcal(M) \]
	which maps the single element extension $N$ to
	the modular cut 
	\[ \phi(N) = \SET{F\in\Fcal(M)\mid e \in \cl_N(F)}. \]
\end{theorem}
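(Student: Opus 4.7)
The plan is to prove this by showing $\phi$ is well-defined and then constructing an explicit inverse $\psi\colon \Mcal(M)\maparrow \Xcal(M,e)$. For well-definedness, I first check that for $N\in\Xcal(M,e)$ the set $\phi(N)$ is a modular cut of $M$. Upward closure is immediate: if $F\subseteq F'$ are flats of $M$ with $e\in\cl_N(F)$, then monotonicity of $\cl_N$ (Lemma~\ref{lem:clFlips}) gives $e\in \cl_N(F')$. For the modular-pair condition, I consider flats $A,B\in \phi(N)$ forming a modular pair in $M$. Lemma~\ref{lem:extension_rk} ensures $\rk_N$ agrees with $\rk_M$ on subsets of $E$, and $e\in\cl_N(A)$ translates to $\rk_N(A\cup\SET{e})=\rk_N(A)$, and analogously for $B$. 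Applying submodularity of $\rk_N$ to $A\cup\SET{e}$ and $B\cup\SET{e}$ and combining with the modular-pair equality from $M$, I expect to deduce $\rk_N((A\cap B)\cup\SET{e})=\rk_N(A\cap B)$, i.e.\ $e\in\cl_N(A\cap B)$; closing $A\cap B$ up to the $M$-flat $\cl_M(A\cap B)$ and invoking upward closure yields $\cl_M(A\cap B)\in\phi(N)$.

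For the inverse $\psi$, given $C\in\Mcal(M)$ I would define a candidate rank function $\rho\colon 2^{E\cup\SET{e}}\maparrow \N$ by the rules
\begin{align*}
\rho(X) &= \rk_M(X)\quad\text{if~} e\notin X,\\
\rho(Y\cup\SET{e}) &= \rk_M(Y)\quad\text{if~} \cl_M(Y)\in C,\\
\rho(Y\cup\SET{e}) &= \rk_M(Y)+1\quad\text{if~} \cl_M(Y)\notin C,
\end{align*}
for $Y\subseteq E$, and then set $\psi(C) = N_C$ to be the matroid induced by $\rho$ via Theorem~\ref{thm:rankAxioms}. The verification of (R1) and (R2) is routine. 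The bulk of the work is checking (R3): for all $X,Y\subseteq E\cup\SET{e}$ I need $\rho(X\cap Y)+\rho(X\cup Y)\leq \rho(X)+\rho(Y)$. The case $e\notin X\cup Y$ is submodularity of $\rk_M$; otherwise I split on whether $e\in X$, $e\in Y$, or both, and on which of the $M$-closures $\cl_M(X\BSET{e})$, $\cl_M(Y\BSET{e})$ lie in $C$. The only case not immediate from $\rk_M$-submodularity is when $e\in X\cap Y$ and both these closures lie in $C$; here I have to force $\cl_M((X\cap Y)\BSET{e})\in C$. Passing to the flats $A=\cl_M(X\BSET{e})$ and $B=\cl_M(Y\BSET{e})$ and using Lemma~\ref{lem:clKeepsRank} to preserve rank values, any violation of (R3) would pin $A$ and $B$ down as a modular pair in $M$, and the modular-cut axiom would then place $A\cap B$, and hence by upward closure the flat $\cl_M((X\cap Y)\BSET{e})$, in $C$.

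The principal obstacle is precisely this submodularity case: extracting a genuine modular pair of $M$-flats from a would-be submodularity failure for $\rho$, and handling the interaction of $C$ with closures of non-flats via Lemma~\ref{lem:clKeepsRank}. Once $\rho$ is confirmed to be the rank function of a matroid $N_C$, the induced independent sets automatically satisfy $\Ical_{N_C}\cap 2^E = \Ical$ from the first clause of the definition of $\rho$, so $N_C\in \Xcal(M,e)$. Tracing the definitions, a flat $F\in\Fcal(M)$ lies in $\phi(N_C)$ exactly when $\rho(F\cup\SET{e})=\rho(F)$, which by the definition of $\rho$ together with $F=\cl_M(F)$ is equivalent to $F\in C$; hence $\phi\circ\psi=\id_{\Mcal(M)}$. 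Conversely, for $N\in\Xcal(M,e)$ the rank function of $N$ is completely determined by $\phi(N)$ through the three clauses above, on $E$ by Lemma~\ref{lem:extension_rk} and on sets containing $e$ by whether the $M$-closure of the remainder lies in $\phi(N)$, which gives $\psi\circ\phi=\id_{\Xcal(M,e)}$ and establishes bijectivity.
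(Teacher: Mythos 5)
Your proposal is correct and, in its two substantive components, coincides with the paper's proof: the well-definedness of $\phi$ is obtained from exactly the same submodularity computation applied to $A\cup\SET{e}$ and $B\cup\SET{e}$, and the construction of an extension from a modular cut $C$ rests on the same three-clause rank formula $\rho$ whose submodularity is the real work. The packaging differs in two places. First, you verify the rank axioms of Theorem~\ref{thm:rankAxioms} directly, whereas the paper defines the extension by its independent sets and feeds $\rho$ into Theorem~\ref{thm:submodularIndependent}; this is cosmetic, since the non-decreasing and submodular checks are identical. Second, and more genuinely, you replace the paper's separate injectivity argument (which exhibits, for $N\neq N'$, a set independent in one but not the other and turns it into a flat distinguishing $\phi(N)$ from $\phi(N')$) by the observation that $\rk_N$ is already determined by $\phi(N)$ via $\cl_N(Y)=\cl_N(\cl_M(Y))$, giving $\psi\circ\phi=\id$ outright; this buys a cleaner bijectivity statement at the cost of one extra appeal to Lemma~\ref{lem:clofbase}. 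Two small justifications need repair. In the critical submodularity case ($e\in X\cap Y$, both $A=\cl_M(X\BSET{e})$ and $B=\cl_M(Y\BSET{e})$ in $C$, no slack in $\rk_M$-submodularity), the modular cut axiom gives $A\cap B\in C$, but $\cl_M((X\cap Y)\BSET{e})$ is contained in $A\cap B$, not the other way around, so ``upward closure'' is the wrong tool; what saves you is that the no-slack hypothesis forces $\rk_M((X\cap Y)\BSET{e})=\rk_M(A\cap B)$, hence $\cl_M((X\cap Y)\BSET{e})=A\cap B$. Similarly, the monotonicity of $\rho$ across the clause boundary is not quite ``routine'': for $Y'\subseteq Y$ with $\cl_M(Y')\notin C$ but $\cl_M(Y)\in C$ one needs $\rk_M(Y')<\rk_M(Y)$, which again follows from Lemma~\ref{lem:clofbase} since equal ranks would force equal closures. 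Both fixes are one line, so there is no genuine gap.
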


\begin{proof}\PRFR{Jan 15th}
	First, we show that $\phi$ is well-defined.
	Let $N \in \Xcal(M,e)$ be a single element extension of $M$. 
	We have to prove that $\phi(N)$ is indeed a modular cut of $M$.
	Let $F\in \phi(N)$, and $G\in \Fcal(M)$ with $F\subseteq G$, then $e\in \cl_{N}(F) \subseteq \cl_{N}(G)$, thus $G\in \phi(N)$.
	Now, let $A,B\in \phi(N)$ such that
	\[ \rk_M(A) + \rk_M(B) = \rk_M(A\cap B) + \rk_M(A\cup B). \]
	We give an indirect argument for $A\cap B\in \phi(N)$.
	Assume that $A\cap B \notin \phi(N)$. Then $e\notin \cl_{N}(A\cap B)$ thus
	$\rk_{N}( (A\cap B) \cup \SET{e}) > \rk_{N}(A\cap B)$. By Lemma~\ref{lem:extension_rk},
	we have the equation \[ \rk_N(A) + \rk_N(B) = \rk_N(A\cap B) + \rk_N(A\cup B). \]
	Furthermore, $A,B\in \phi(N)$,
	therefore $e\in \cl_{N}(A)$, $e\in \cl_{N}(B)$, and $e\in \cl_{N}(A\cup B)$, so
	$\rk_{N}(A\cup\SET{e}) = \rk_{N}(A)$, $\rk_{N}(B\cup\SET{e}) = \rk_{N}(B)$, and
	$\rk_{N}(A\cup B\cup\SET{e}) = \rk_{N}(A\cup B)$. This yields
	\begin{align*}
  \rk_N(\left(A\cap B\right) \cup \SET{e}) & >
	 \rk_{N}(A\cap B) \\&
	 	= \rk_{N}(A) + \rk_N(B) - \rk_{N}(A\cup B)
	  \\ & = \rk_{N}(A\cup\SET{e}) + \rk_{N}(B\cup\SET{e}) - \rk_{N}(A\cup B\cup\SET{e}),\\
	\end{align*}
	which contradicts {\em (R3)}, the submodularity of $\rk_{N}$, which guarantees
	\[ \rk_N\left(A\cup\SET{e}\right) + \rk_N\left(B\cup\SET{e}\right) \geq \rk_{N}(A\cup B\cup\SET{e}) + \rk_{N}(\left(A\cap B\right)\cup\SET{e}).\]
	 Thus $A\cap B\in \phi(N)$, so $\phi(N)$ is indeed a modular cut of $M$.

	\bSep\PRFR{Jan 15th}
	Now, we show that $\phi$ is injective. Let $N,N'\in \Xcal(M,e)$ with $N\not= N'$. 
	Without loss of generality we may assume that there is a set $X\subseteq E\cup\SET{e}$ 
	which is independent in $N$, yet dependent in $N'$.
	 Since $N$ coincides 
	with $M$ on $2^{E}$, we obtain that $e\in X$ and that $X\BSET{e}\in \Ical$ is independent in $N$, $N'$ and $M$.
	Let $F=\cl_{M}(X\BSET{e})\supseteq X\BSET{e}$.
	 Then $e\in\cl_{N'}(X\BSET{e}) \subseteq \cl_{N'}(F)$ so $F\in\phi(N')$, but
	 $e\notin \cl_{N}(F) = \cl_{M}(F) = F$, so $F\notin\phi(N)$. Thus $\phi(N)\not= \phi(N')$.

    \bSep \PRFR{Jan 15th}
        It remains to show that $\phi$ is surjective. Let $C$ be a
modular cut of $M$. We define $N=(E\cup\SET{e},\Ical')$ such that     \[
\Ical' = \Ical \cup \SET{X\cup \SET{e} \mid X\in\Ical,\,\cl_M(X)\notin C}.\]
Assume for a moment that $N$ is a matroid, then    $\phi(N) = C$: Let $F\in
\Fcal(M)$ and let $X\subseteq F$ be a base of $F$ in $M$.     If $F\in C$,
then $X\cup\SET{e}\notin \Ical'$, thus $e\in \cl_{N}(F)$, and therefore
$F\in\phi(N)$. If $F\notin C$, then $X\cup\SET{e}\in \Ical'$ and $e\notin
\cl_{N}(F)$, therefore $F\notin \phi(N)$.

\bSep \PRFR{Jan 15th}
We show that $N$ is indeed a
matroid by explicating D.J.A.~Welsh's sketch on p.~319 \cite{We76}: Observe that the map \[ \rho\colon 2^{E\cup\SET{e}} \maparrow \N,\,X\mapsto \max\SET{ \left| Y \right| ~\middle|~ Y\subseteq X,\, Y\in \Ical'} \]
satisfies the equation 
\[
	\rho(X) = \begin{cases}
			\rk_M(X) & \quad \text{if } e\notin X,\\
			\rk_M(X\BSET{e}) + 1 & \quad \text{if } e\in X,\,\cl_M(X\BSET{e})\notin C,\\
			\rk_M(X\BSET{e}) & \quad \text{if }e\in X,\,\cl_M(X\BSET{e})\in C.
	\end{cases} 
\] Furthermore, we see that \[
\Ical' = \SET{X\subseteq E\cup\SET{e}\vphantom{A^A}~\middle|~ \forall Y\subseteq X\colon \rho(Y) \geq \left| Y \right|}, \]
which is the family of independent sets of a matroid obtained from $\rho$ by Theorem~\ref{thm:submodularIndependent}, whenever $\rho$ is non-negatively integer-valued, non-decreasing, and submodular. Clearly, $\rho$ is non-negatively integer-valued. Furthermore, $\rho$ restricted to $2^E$ 
is $\rk_M$, thus $\rho$ is non-decreasing and submodular on $2^E$. Let $X,Y\subseteq E\cup\SET{e}$ such that $e\in X$ and $e\in Y$.
 If $\cl_M(X\BSET{e})\in C \Leftrightarrow \cl_M(Y\BSET{e})\in C$, then
$\rho(X) = \rk_M(X\BSET{e}) + \alpha$ and $\rho(Y) = \rk_M(Y\BSET{e}) + \alpha$ for the same value of $\alpha\in \SET{0,1}$, thus $\rho$ is non-decreasing because $\rk_M$ is non-decreasing. Otherwise, if $X\subseteq Y$
 and $\cl_M(X\BSET{e})\in C \not\Leftrightarrow \cl_M(Y\BSET{e})\in C$, then $\cl_M(X)\notin C$ whereas $\cl_M(Y)\in C$, because $C$ is closed under super-flats and $\cl_M$ preserves set-inclusion 
(Lemma~\ref{lem:clFlips}). 
 But then $\cl_M(X\BSET{e})\not= \cl_M(Y\BSET{e})$, so $\rk_M(\cl_M(X)) < \rk_M(\cl_M(Y))$,
thus $\rho(X) = \rk_M(X\BSET{e}) + 1 \leq \rk_M(Y\BSET{e}) = \rho(Y)$.
Therefore $\rho$ is non-decreasing on its whole domain.
 Now let $A,B\subseteq E\cup\SET{e}$, we have to show that the submodular inequality 
 \begin{align}
  \rho(A) + \rho(B) \geq \rho(A\cap B) + \rho(A\cup B)\label{eq:rhoSubmod}
 \end{align}
 holds. Clearly $$\rho(A) + \rho(B) = \rk_M(A\BSET{e}) + \rk_M(B\BSET{e}) + \alpha$$
for some $\alpha\in\SET{0,1,2}$ and analogously $$\rho(A\cap B) + \rho(A\cup B) = \rk_M((A\cap B)\BSET{e}) + \rk_M((A\cup B)\BSET{e}) + \beta$$ for some $\beta\in \SET{0,1,2}$. 
Since $\rk_M$ is a submodular function,
we may deduce inequality~(\ref{eq:rhoSubmod}) from $\alpha \geq \beta$
as well as from $$\beta - \alpha \leq \rk_M(A\BSET{e}) + \rk_M(B\BSET{e}) - \rk_M((A\cap B)\BSET{e}) - \rk_M((A\cup B)\BSET{e}).$$
 If $\beta=2$, then $\cl_M((A\cup B )\BSET{e})\notin C$, therefore $\cl_M(A\BSET{e})\notin C$ and $\cl_M(B\BSET{e})\notin C$ because $C$ is closed under super-flats. So in this case, $\alpha = 2 \geq \beta$. If $\beta = 0$ then clearly $\alpha \geq \beta$, too.
 So the submodular inequality (\ref{eq:rhoSubmod}) holds due to $\alpha \geq \beta$ unless
 $\beta=1$ and $\alpha = 0$.
 In this case, if $$\rk_M(A\BSET{e}) + \rk_M(B\BSET{e}) - \rk_M((A\cap B)\BSET{e}) - \rk_M((A\cup B)\BSET{e}) \geq 1,$$ then 
(\ref{eq:rhoSubmod}) follows as mentioned above.
We close our argumentation by showing that for $\beta = 1$ and $\alpha = 0$,
 $$\rk_M(A\BSET{e}) + \rk_M(B\BSET{e}) - \rk_M((A\cap B)\BSET{e}) - \rk_M((A\cup B)\BSET{e}) = 0$$ can never be the case.
 There are two ways that lead to $\beta =1$. Assume that $e\notin A\cap B$, then $\cl_M((A\cup B)\BSET{e})\notin C$.
 If $e\in A$, then $\cl_M(A\BSET{e})\notin C$ follows, thus $\alpha \geq 1$; similarly if $e\in B$. Thus $e\in A\cap B$
 is implied by $\beta = 1$ and $\alpha = 0$. Consequently, $\cl_M((A\cap B) \BSET{e})\notin C$ is necessary for $\beta = 1$.
 Furthermore, for $\alpha = 0$ it is necessary that $\cl_M(A\BSET{e})\in C$
 and $\cl_M(B\BSET{e})\in C$. But then $\cl_M(A\BSET{e})$ and $\cl_M(B\BSET{e})$ cannot be a modular pair in $M$,
 because $C$ is closed under intersections of modular pairs yet $C$ does not contain the intersection of these two flats.
 This yields $$\rk_M(A\BSET{e}) + \rk_M(B\BSET{e}) - \rk_M((A\cap B)\BSET{e}) - \rk_M((A\cup B)\BSET{e})\not= 0 .$$
%
%
%
So all premises for Theorem~\ref{thm:submodularIndependent} are satisfied, $N$ is a matroid and therefore $\phi$ is surjective.
\end{proof}

\noindent The next lemma summarizes how the family of flats behaves when a matroid is extended.

\begin{figure}[htb]
\begin{center}
%
	\includegraphics[width=\textwidth]{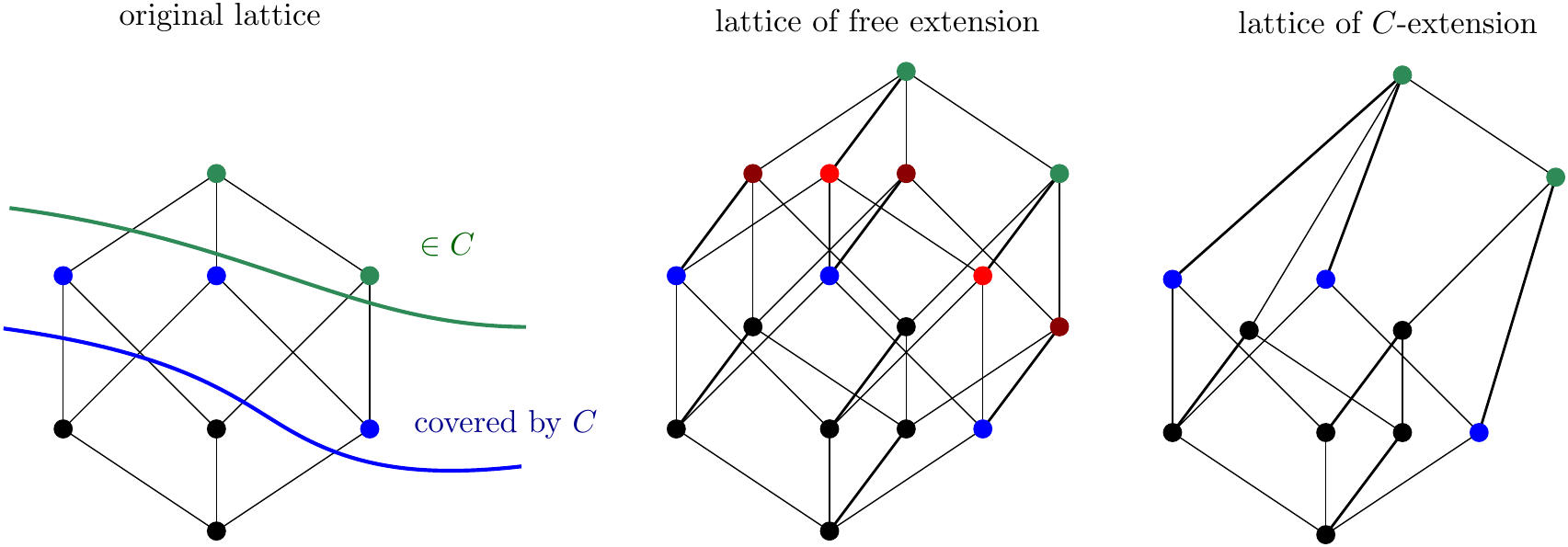}
\end{center}
\caption{Construction of the lattice of flats of a single element extension from the lattice of flats of the original matroid and the corresponding modular cut (Lemma~\ref{lem:flatsOfExtension}).} 
\end{figure}

\begin{lemma}\label{lem:flatsOfExtension}\PRFR{Feb 15th}
	Let $M=(E,\Ical)$ be a matroid, $e\notin E$, and $N\in \Xcal(M,e)$.
	Furthermore, let $C\in\Mcal(M)$ be the modular cut of $M$ where
	\[ C = \SET{F\in \Fcal(M) ~\middle|~ e\in \cl_N(F)}. \]
	Then 
	\begin{align*}
	 \Fcal(N)\,\,  =\,\, &\left( \Fcal(M) \BS C \right) \,\, \cup \,\, \SET{F\cup\SET{e} ~\middle|~ F\in C} 
	 		\,\,
	 		\\ & \cup \,\, \SET{F\cup\SET{e} ~\middle|~ F\in\Fcal(M)\BS C,\, \forall x\in E\BS F\colon\,\cl_M(F\cup\SET{x})\notin C}
		\\ = \,\, & \left( \Fcal(M) \BS C \right) \,\, \cup \,\, \SET{F\cup\SET{e} ~\middle|~ F\in\Fcal(M),\,F\in C \Leftrightarrow 
		\cl_N(F\cup\SET{e})\BSET{e} \in C} 
	.\end{align*}
\end{lemma}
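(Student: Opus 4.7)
The plan is to compute the closure operator $\cl_N$ directly from the rank formula for $N$ set up in the proof of Theorem~\ref{thm:crapo}, and then read the flats of $N$ off the fixed-point condition $F' = \cl_N(F')$. First I would restate the rank formula: $\rk_N(X) = \rk_M(X)$ whenever $e\notin X$, and for $Y\subseteq E$ the rank $\rk_N(Y\cup\SET{e})$ equals $\rk_M(Y) + 1$ if $\cl_M(Y)\notin C$ and $\rk_M(Y)$ otherwise.

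From this I would derive three closure identities by a short case analysis of the test $\rk_N(X\cup\SET{y}) = \rk_N(X)$: (a) for $X\subseteq E$, $\cl_N(X)$ equals $\cl_M(X)$ if $\cl_M(X)\notin C$ and $\cl_M(X)\cup\SET{e}$ if $\cl_M(X)\in C$; (b) for $Y\subseteq E$ with $\cl_M(Y)\in C$, $\cl_N(Y\cup\SET{e}) = \cl_M(Y)\cup\SET{e}$; and (c) for $Y\subseteq E$ with $\cl_M(Y)\notin C$,
\[ \cl_N(Y\cup\SET{e}) = \cl_M(Y)\cup\SET{y\in E\BS\cl_M(Y) : \cl_M(Y\cup\SET{y})\in C}\cup\SET{e}. \]
The key point of identity (c) is that an element $y\notin \cl_M(Y)$ fails to enter $\cl_N(Y\cup\SET{e})$ precisely when adjoining $y$ strictly increases the rank in $N$, which by the rank formula happens exactly when $\cl_M(Y\cup\SET{y})\notin C$.

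Next I would apply $F'\in\Fcal(N)\Leftrightarrow F' = \cl_N(F')$ to subsets $F'\subseteq E\cup\SET{e}$, split by whether $e\in F'$ and, in the case $e\in F'$ with $F' = F\cup\SET{e}$, by whether $\cl_M(F)\in C$. Identity (a) produces the summand $\Fcal(M)\BS C$; identity (b) forces $F = \cl_M(F)\in C$ and produces $\SET{F\cup\SET{e} : F\in C}$; identity (c) forces $F = \cl_M(F)\notin C$ together with the condition that no $y\in E\BS F$ satisfies $\cl_M(F\cup\SET{y})\in C$, which yields the third summand of the first description.

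For the equivalence with the second description, fix any flat $F\in\Fcal(M)$ and set $G = \cl_N(F\cup\SET{e})\BSET{e}$, which is itself a flat of $M$ by the classification above. If $F\in C$, identity (b) yields $G = F\in C$, so the biconditional $F\in C\Leftrightarrow G\in C$ holds trivially and $F\cup\SET{e}\in\Fcal(N)$. If $F\notin C$, identity (c) gives $G = F\cup\SET{y\in E\BS F : \cl_M(F\cup\SET{y})\in C}$; should some such $y$ exist, then $G\supseteq \cl_M(F\cup\SET{y})\in C$, and the upward-closure axiom of the modular cut forces $G\in C$. Otherwise $G = F\notin C$ and $F\cup\SET{e}$ belongs to the third summand. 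Thus in both cases the biconditional $F\in C\Leftrightarrow G\in C$ reduces to $G = F$, which is exactly the condition for $F\cup\SET{e}$ to be a flat of $N$, reconciling the two formulas. The main technical obstacle is executing the case split behind identity (c) correctly, because the interaction of the rank jump with membership in $C$ has to be tracked on both sides of $y\in \cl_M(Y)$; once identity (c) is in place, the rest is bookkeeping.
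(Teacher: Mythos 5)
Your proof is correct, and it reaches the result by a different decomposition than the paper. The paper argues directly about flats: it first establishes the equivalence of the two descriptions by a cover-based argument, and then proves the two inclusions of the first equation with ad hoc rank computations in $N$ for each case ($e\notin X$; $e\in X$ with $e\in\cl_N(X\BSET{e})$ or not). You instead isolate, as a reusable intermediate step, the complete description of $\cl_N$ on \emph{all} subsets of $E\cup\SET{e}$ — your identities (a), (b), (c), which follow mechanically from the rank formula in the proof of Theorem~\ref{thm:crapo} together with the single-element test $y\in\cl_N(X)\Leftrightarrow\rk_N(X\cup\SET{y})=\rk_N(X)$ — and then read off $\Fcal(N)$ as the fixed points of $\cl_N$. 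This buys a more systematic derivation: both summand decompositions and the equivalence of the two descriptions fall out of the same three formulas, and the only point needing care (which you handle) is that in the case $F\notin C$ with some $y\in E\BS F$ satisfying $\cl_M(F\cup\SET{y})\in C$, one must know that $G=\cl_N(F\cup\SET{e})\BSET{e}$ is a flat of $M$ before invoking $G\supseteq\cl_M(F\cup\SET{y})$ and the upward-closure property of the modular cut; this is legitimate since the first description is already established at that stage. The paper's route avoids stating the closure formulas explicitly but pays for it with longer case-by-case rank arguments; yours front-loads the computation of $\cl_N$ and makes the remainder bookkeeping.
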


\needspace{3\baselineskip}
\begin{proof}\PRFR{Feb 15th}
	First, we show that the second equation holds, which is implied by the equation
	\begin{align*} \SET{F\cup\SET{e} ~\middle|~ F\in C} 
	 		\,\,
	 		& \cup \,\, \SET{F\cup\SET{e} ~\middle|~ F\in\Fcal(M)\BS C,\, \forall x\in E\BS F\colon\,\cl_M(F\cup\SET{x})\notin C}
	 		\\&
	 	\!= \, \SET{F\cup\SET{e} ~\middle|~ F\in\Fcal(M),\,F\in C \Leftrightarrow 
		\cl_N(F\cup\SET{e})\BSET{e} \in C} . 
	\end{align*}
	If $F\in C$, then $F\in\Fcal(M)$ and $e\in \cl_N(F)$, so $\cl_N(F\cup\SET{e}) = F\cup\SET{e}$ 
	and therefore $\cl_N(F\cup\SET{e})\BSET{e} = F \in C$.
	If $F\notin C$ and for all $x\in E\BS F$ we have $\cl_M(F\cup\SET{x})\notin C$, i.e. whenever $F\notin C$ is not covered by a flat $G\in C$ in
	 $\Fcal(M)$, then $\cl_N(F\cup\SET{e}) = F\cup\SET{e}$, since otherwise $G = \cl_N(F\cup\SET{e})\BSET{e}$ would be
	  a maximal subset of $E$ with rank $\rk_N(G) = \rk_N(F) + 1 = \rk_M(F) + 1$, and therefore we would have $G\in\Fcal(M)$.
	 Furthermore, there would have to be some element
	 $g\in G\BS F$, so we would have found a flat $G\in C$ and with $\cl_M(F\cup\SET{g}) = G$, contradicting the assumption.
	 Thus we have $\cl_N(F\cup\SET{e})\BSET{e} = \left( F\cup\SET{e} \right)\BSET{e} = F \notin C$.
	 Therefore we obtain
	 \begin{align*} \SET{F\cup\SET{e} ~\middle|~ F\in C} 
	 		\,\,
	 		& \cup \,\, \SET{F\cup\SET{e} ~\middle|~ F\in\Fcal(M)\BS C,\, \forall x\in E\BS F\colon\,\cl_M(F\cup\SET{x})\notin C}
	 		\\&
	 	\!\subseteq  \, \SET{F\cup\SET{e} ~\middle|~ F\in\Fcal(M),\,F\in C \Leftrightarrow 
		\cl_N(F\cup\SET{e})\BSET{e} \in C} . 
	\end{align*}
	Now let $F'\in \SET{F\cup\SET{e} ~\middle|~ F\in\Fcal(M),\,F\in C \Leftrightarrow 
		\cl_N(F\cup\SET{e})\BSET{e} \in C}$ and $F= F'\BSET{e}$.
	If $F\in C$, then clearly $F'\in \SET{F\cup\SET{e} ~\middle|~ F\in C}$.
	If $F\notin C$, we give an indirect argument and assume that
	$$F'\notin\SET{F\cup\SET{e} ~\middle|~ F\in\Fcal(M)\BS C,\, \forall x\in E\BS F\colon\,\cl_M(F\cup\SET{x})\notin C}.$$
	So there is some $x\in E\BS F$ such that $\cl_M(F\cup\SET{x}) \in C$. Let $G = \cl_M(F\cup\SET{x})$,
	then $\cl_N(G) = G\cup\SET{e}$, thus $\rk_N(G\cup\SET{e}) = \rk_N(G) = \rk_M(G) = \rk_M(F) + 1 = \rk_N(F) + 1$.
	Thus $\cl_N(F\cup\SET{e}) = \cl_N(G) = G\cup\SET{e}$, so $\cl_N(F\cup\SET{e})\BSET{e} = G \in C$, contradicting the assumption
	that $F'$ is a member of the right-hand side set. Consequently, the second equation of the lemma holds.

	\noindent
	We show the inclusion of the left-hand side of the first equation in the right-hand side of the first equation.
	Let $X\in\Fcal(N)$, we have to treat two cases. If $e\notin X$, then the defining property of $X\in\Fcal(N)$ is that
	the strict inequality
	 $\rk_N(X\cup\SET{y}) > \rk_N(X)$ holds for all $y\in \left( E\cup\SET{e} \right)\BS X$. 
	 This together with $\rk_M = \rk_N\restrict_{2^E}$
	 implies that $X\in\Fcal(M)$. Furthermore, $X = \cl_N(X)$ implies $e\notin X$ and therefore $X\notin C$, so $X\in\Fcal(M)\BS C$.
%
	Now assume that $e\in X$, and let $X' = X\BSET{e}$. If $e\in \cl_N(X')$, then clearly $X'\in C$ and
	so \linebreak
	 $X\in \SET{F\cup\SET{e} ~\middle|~ F\in C}$.
	  If otherwise $e\notin \cl_N(X')$, then we must have $\rk_N(X') = \rk_N(X) - 1$,
	  and $X'\in \Fcal(N)$ because $\cl_N(X') \subsetneq \cl_N(X) = X = X'\cup\SET{e}$.
	  As a consequence, we obtain that $X'\in \Fcal(M)$ and $X'\notin C$. Assume that for some $y\in E\BS X'$,
	  $\cl_M(X'\cup\SET{y})\in C$, then $e\in \cl_N(X'\cup\SET{y})$ so $X = \cl_N(X')\cup\SET{e}$ would be a proper subset of the flat
	  $\cl_N(X'\cup\SET{y})$ of rank $\rk_N(X') + 1$, but $\rk_N(X) = \rk_N(X') + 1$, which is impossible. Therefore, 
	  for all $y\in E\BS X'$ we have $\cl_M(X'\cup\SET{y})\notin C$. Thus we obtain
	  $$ X\in \SET{F\cup\SET{e} ~\middle|~ F\in\Fcal(M)\BS C,\, \forall x\in E\BS F\colon\,\cl_M(F\cup\SET{x})\notin C}.$$

	 \noindent
	 Finally, we show the inclusion of the right-hand side of the first equation in the left-hand side of the first equation.
	 Let $X\in \Fcal(M) \BS C$, then $e\notin \cl_N(X)$, so $\cl_N(X) = \cl_M(X) = X$, thus $X\in\Fcal(N)$.
	 Let $X\in C$, and let $X' = X\cup\SET{e}$. Then $\cl_N(X) =$\linebreak$ \cl_M(X) \cup\SET{e} = X\cup\SET{e}$, 
	 and therefore $X\cup\SET{e}\in\Fcal(N)$. Now let $X\notin C$ and for all $y\in E\BS X$,
	 $\cl_M(X\cup\SET{y})\notin C$. Let $G = \cl_N(X\cup\SET{e})$.
	  Assume that we have the proper inclusion $X\cup\SET{e} \subsetneq G$,
	 then $\rk_N(G) = \rk_N(X) + 1$ yields that there is some \linebreak $g\in G\BS\left( X\cup\SET{e} \right)$ such that
	 $\cl_N(X\cup\SET{g}) = G$. This leads us to the contradiction $G\BSET{e} = \cl_M(X\cup\SET{g}) \in C$. 
	 Therefore we must have $X\cup\SET{e} = G\in\Fcal(N)$.
\end{proof}

\clearpage

\section{Theorems of Hall, Rado, Ore, and Perfect}

\PRFR{Jan 15th}
D.J.A.~Welsh gives the following very elegant generalization of the 
theorems of Rado and Hall in \cite{We71}. From this generalization, the theorems of Hall, Rado, Ore, and Perfect follow as an easy corollary each. Before we present the theorem, we need some definitions.

\begin{definition}\PRFR{Jan 15th}
	Let $I$ and $E$ be sets. A \deftext{family of subsets} of $E$ indexed by $I$ is
	a map $A_{\bullet}\colon I\maparrow 2^{E}$ with domain $I$, such that for every
	$i\in I$ the image $A_{i}$ is a subset of $E$.
	We denote such a family by writing $(A_i)_{i\in I} \subseteq E$, or shorter\label{n:Afam}
	$(A_i)_{i\in I}$ whenever $E$ is clear from the context. We call $(A_i)_{i\in I}$
	\deftext{finite} if $I$ is finite. Further, we call $(A_i)_{i\in I}$ a \deftext{family of non-empty subsets}, if for all $i\in I$, $A_{i} \not= \emptyset$.
\end{definition}

\begin{definition}\PRFR{Jan 15th}
	Let $I$, $E$ be sets, and let $\Acal = (A_i)_{i\in I} \subseteq E$ be a family of subsets of $E$. A \deftext{system of representatives} is a map $x_{\bullet}\colon I\maparrow E$ such that there is a bijection $\sigma \colon I\maparrow I$ with 
	$x_{i} \in A_{\sigma(i)}$ for all $i\in I$. We will denote such a family
	by writing 
	\linebreak
	$(x_i)_{i\in I} \in \Acal$. A system of representatives is called
	\deftext{system of distinct representatives}, if $x_{\bullet}$ is an injective map.
	A \deftext{transversal} of $\Acal$ is a subset $T\subseteq E$ such that 
	there is a bijection $\sigma\colon T\maparrow I$ with $t\in A_{\sigma(t)}$ for all $t\in T$. A \deftext{partial transversal} of $\Acal$ is a subset $P\subseteq E$ such that
	there is an injection $\iota \colon P\maparrow I$ with $t\in A_{{\iota(t)}}$ for all $t\in P$.
	If $P$ is a partial transversal of $\Acal$, we define the \deftext[defect of a partial transversal]{defect} of $P$ to be $\left| I \right| - \left| P \right|$,
	i.e. the cardinality of those indices in $I$ that are not in the image of the corresponding $\iota$.
\end{definition}

\begin{theorem}\label{thm:radohall}\PRFR{Jan 15th}
 Let $\Acal=(A_i)_{i\in I} \subseteq E$ be a finite family of non-empty subsets of $E$,
 and let $\mu\colon 2^{E} \maparrow \N$ be a map with the properties that
 \begin{enumerate}\ROMANENUM
 	\item for all $X \subseteq Y\subseteq E$, $\mu(X) \leq \mu(Y)$, and
 	\item for all $X,Y\subseteq E$, $\mu(X) + \mu(Y) \geq \mu(X\cap Y) + \mu(X\cup Y)$.
 \end{enumerate}
 Then there is a system of representatives $(x_i)_{i\in I} \in \Acal$ with the property that
 \begin{enumerate}
 	\item[(1)] for all $J\subseteq I$, $\mu\left(\SET{x_i \mid i\in J}\right) \geq \left| J\right|$
\end{enumerate}
if and only if $\Acal$ has the property that
\begin{enumerate}
	\item[(2)] for all $J\subseteq I$, $\mu \left(\bigcup_{i\in J} A_i\right) \geq \left| J\right|$.
\end{enumerate}
\end{theorem}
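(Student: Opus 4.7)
The forward direction is immediate by monotonicity: given a system of representatives $(x_i)_{i\in I}\in\Acal$ satisfying (1) with associated bijection $\sigma\colon I\maparrow I$, and given any $J'\subseteq I$, set $J=\sigma^{-1}[J']$; then $x_i\in A_{\sigma(i)}$ for all $i$ forces $\SET{x_i\mid i\in J}\subseteq \bigcup_{j\in J'} A_j$, whence property (i) of $\mu$ yields
\[ \mu\!\left(\bigcup_{j\in J'} A_j\right)\,\geq\, \mu\!\left(\SET{x_i\mid i\in J}\right)\,\geq\, |J|\,=\,|J'|. \]

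For the converse direction, the plan is to induct on $\sum_{i\in I}|A_i|$. In the base case $|A_i|=1$ for every $i\in I$, write $A_i=\SET{a_i}$; then $x_i := a_i$ together with $\sigma=\id_I$ gives $\SET{x_i\mid i\in J}=\bigcup_{i\in J}A_i$, so (2) literally becomes (1). For the inductive step, fix $i_0\in I$ with $|A_{i_0}|\geq 2$ and distinct elements $a,b\in A_{i_0}$. The crux will be the claim that at least one of the two reduced families, obtained by replacing $A_{i_0}$ with $A_{i_0}\BSET{a}$ or with $A_{i_0}\BSET{b}$, still satisfies (2). Once this is established, the induction hypothesis supplies a system of representatives for the reduced family; since each modified $A'_i\subseteq A_i$, the same system remains valid for $\Acal$, and (1) is inherited verbatim from the reduced family.

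The delicate step is proving this claim, and it is where submodularity of $\mu$ is essential. Suppose for contradiction that both reductions fail. Failure of the first gives some $J_a\subseteq I$ with $i_0\in J_a$ (otherwise the relevant union is unchanged) and $a\notin\bigcup_{i\in J_a\BSET{i_0}}A_i$ (otherwise removing $a$ from $A_{i_0}$ again leaves the union unaffected) such that $\mu(Y_a)\leq |J_a|-1$, where $Y_a := \left(\bigcup_{i\in J_a}A_i\right)\BSET{a}$; define $J_b$ and $Y_b$ analogously. An element-wise check --- using that $a\in A_{i_0}\subseteq\bigcup_{i\in J_b}A_i$ and $a\neq b$, so $a\in Y_b$, and symmetrically $b\in Y_a$ --- yields $Y_a\cup Y_b=\bigcup_{i\in J_a\cup J_b}A_i$. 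Furthermore, since neither $a$ nor $b$ lies in $\bigcup_{i\in(J_a\cap J_b)\BSET{i_0}}A_i$ (by the failure conditions), one gets $Y_a\cap Y_b\supseteq\bigcup_{i\in(J_a\cap J_b)\BSET{i_0}}A_i$. Monotonicity of $\mu$ combined with (2) then gives $\mu(Y_a\cup Y_b)\geq |J_a\cup J_b|$ and $\mu(Y_a\cap Y_b)\geq |J_a\cap J_b|-1$, the subtraction because $i_0\in J_a\cap J_b$. Feeding these lower bounds and the failure upper bounds into submodularity (ii) produces
\[ |J_a|+|J_b|-2 \,\geq\, \mu(Y_a)+\mu(Y_b) \,\geq\, \mu(Y_a\cup Y_b)+\mu(Y_a\cap Y_b) \,\geq\, |J_a|+|J_b|-1, \]
a contradiction. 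Hence the claim holds and the induction carries through. The main obstacle to anticipate is precisely the bookkeeping of the two containments for $Y_a\cup Y_b$ and $Y_a\cap Y_b$: one must carefully exploit the assumption $a\notin\bigcup_{J_a\BSET{i_0}}A_i$ to keep $a,b$ out of the intersection bound while the presence of $A_{i_0}$ in both unions pulls them back into $Y_a\cup Y_b$.
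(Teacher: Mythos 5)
Your proof is correct and follows essentially the same route as the paper: the forward direction via monotonicity, and the converse by induction on the total size of the family, with the key claim that one of the two one-element reductions of $A_{i_0}$ still satisfies (2), established by feeding the two hypothetical failure sets into submodularity and deriving a counting contradiction. The only difference is cosmetic bookkeeping (you keep $i_0$ inside $J_a,J_b$ and note $a\notin\bigcup_{i\in J_a\BSET{i_0}}A_i$ explicitly, while the paper writes the failing sets as $(A_{i'}\BSET{x})\cup\bigcup_{i\in J_x}A_i$ with $J_x\subseteq I\BSET{i'}$), so the arguments are interchangeable.
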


\noindent This proof of the theorem follows the course of \cite{We71} --- a very nice version of which can be found on p.100 of \cite{We76} --- and it focuses more on details than brevity.

\begin{proof}\PRFR{Jan 15th}
	Let $(x_{i})_{i\in I} \in \Acal$ be such a system of representatives, that
	{\em(1)}  holds, and let $\sigma\colon I\maparrow I$ be a permutation
	that has the property $x_{i}\in A_{\sigma(i)}$ for all $i\in I$. Let $J\subseteq I$, then
	$\SET{x_{\sigma^{-1}(i)} ~\middle|~ i \in J} \subseteq \bigcup_{{i\in J}} A_{i}$.
	By {\em (i)} $\mu$ is non-decreasing, therefore
	\[ \left| J \right| = \left| \sigma^{{-1}}[J] \right| \leq 
	\mu\left(\SET{x_i  ~\middle|~  i\in \sigma^{-1}[J]}\right) \leq \mu \left(\bigcup_{i\in J} A_i\right).\]
	For the converse implication, we employ induction on the integer vector $v = \left( \left|A_{i}\right| \right)_{i\in I}$. The base case is $v_{i} = 1$ for all $i\in I$ where every $A_{i}$ is a singleton set, thus for any system of representatives $(x_i)_{i\in I} \in \Acal$, we have $A_{i} = \SET{x_{\sigma^{-1}(i)}}$ for all $i\in I$.
	Therefore, $\SET{x_i ~\middle|~  i\in \sigma^{-1}[J]} = \bigcup_{i\in J} A_{i}$ and the equivalence is obvious. For the induction step, let $i'\in I$ such that $\left| A_{i'} \right| > 1$. In this case, we claim that there is some $x\in A_{i'}$, such that
	the derived family
	$\Acal' = (A'_i)_{i\in I}$ where $A'_{i} = A_{i}$ if $i\not= i'$, and $A'_{i'} = A_{i'}\BSET{x}$
	still has the property {\em (2)}. Assume that this claim is false, then for any
	$\dSET{x,y}\subseteq A_{i'}$ there are $J_{x},J_{y}\subseteq I\BSET{i'}$ such that
	\begin{align*}
		\mu\left( \left(A_{i'}\BSET{x}\right) \cup \bigcup_{i\in J_x} A_i \right) &
		\leq \left|J_x \right| < \left|J_x\right| + 1 \text{, and}\\
		\mu\left( \left(A_{i'}\BSET{y}\right) \cup \bigcup_{i\in J_y} A_i \right) &
		\leq \left|J_y \right| <  \left|J_y\right| + 1 .\\
	\end{align*}
	We use the submodularity  {\em (ii)} of $\mu$ in order to obtain that
	\begin{align*}
		\mu\left( \left(A_{i'}\BSET{x}\right) \cup \bigcup_{i\in J_x} A_i \right) + \mu\left( \left(A_{i'}\BSET{y}\right) \cup \bigcup_{i\in J_y} A_i \right) & \geq  \mu(B_{\cap}) + 
		\mu\left( A_{i'} \cup \bigcup_{i\in J_x\cup J_y} A_i \right)
	\end{align*}
	where \[ B_{\cap} = \left( \left(A_{i'}\BSET{x}\right) \cup \bigcup_{i\in J_x} A_i \right) \cap 
	\left( \left(A_{i'}\BSET{y}\right) \cup \bigcup_{i\in J_y} A_i \right).\]
	Clearly, $\bigcup_{i\in J_x \cap J_y} A_{i} \subseteq B_{\cap}$, and since $\mu$ is non-decreasing due to property {\em (i)},
	we obtain that
	\begin{align*}
	\mu(B_{\cap}) + 
		\mu\left( A_{i'} \cup \bigcup_{i\in J_x\cup J_y} A_i \right) & \geq
		\mu\left(\bigcup_{i\in J_x\cap J_y} A_i\right) + \mu\left( A_{i'} \cup \bigcup_{i\in J_x\cup J_y} A_i \right).
	\end{align*}
	We now may use property {\em (2)} with $J = J_{x}\cup J_{y} \cup \SET{i'}$, and  $J=J_{x}\cap J_{y}$, respectively. We add the respective inequalities and obtain
	\begin{align*}
	\mu\left( A_{i'} \cup \bigcup_{i\in J_x\cup J_y} A_i \right) + \mu\left( \bigcup_{i\in J_x\cap J_y} A_i \right) & \geq \left( \left| J_x \cup J_y \right| + 1\right) +
	\left| J_x \cap J_y \right| = \left| J_x \right| + \left| J_y \right| + 1.
	\end{align*}
	Yet, this yields
	\begin{align*}
	\mu\left( \left(A_{i'}\BSET{x}\right) \cup \bigcup_{i\in J_x} A_i \right) + \mu\left( \left(A_{i'}\BSET{y}\right) \cup \bigcup_{i\in J_y} A_i \right) & \geq \left| J_x \right| + \left|J_y\right| + 1
	\end{align*}
	which contradicts
	\begin{align*}
	\mu\left( \left(A_{i'}\BSET{x}\right) \cup \bigcup_{i\in J_x} A_i \right) + \mu\left( \left(A_{i'}\BSET{y}\right) \cup \bigcup_{i\in J_y} A_i \right) & \leq \left| J_x \right| + \left|J_y\right|.
	\end{align*}
	Thus the claim holds, and since $\left|A'_{i'}\right| < v_{i'}$, we may use the induction hypothesis on $\Acal'$ which guarantuees the existence of a system of representatives $(x_i)_{i\in I}$ with
	property {\em (1)}. Every such $(x_i)_{i\in I}$ is also a system of representatives of
	$\Acal$, therefore $(x_i)_{i\in I}$ with {\em (1)} exists.
\end{proof}

\begin{corollary}[Hall]\label{cor:Hall}\PRFR{Jan 15th}
 Let $\Acal = (A_i)_{i\in I}$ be a finite family of sets, then $\Acal$ has a transversal if and only if for all $J\subseteq I$,
$$\left| \bigcup_{{i\in J}} A_{i} \right| \geq \left| J \right|.$$
\end{corollary}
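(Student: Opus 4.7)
The plan is to deduce Hall's theorem as a direct application of Theorem~\ref{thm:radohall} with $\mu$ taken to be the cardinality function $|\cdot|\colon 2^{E}\maparrow \N$, which is clearly non-decreasing and satisfies the sharper equality $|X\cap Y|+|X\cup Y| = |X|+|Y|$, so it obeys the submodularity required in property~(ii). First, I would dispose of the degenerate case: if some $A_{i_0}=\emptyset$, then with $J=\SET{i_0}$ we have $\left|\bigcup_{i\in J}A_i\right|=0<1=\left|J\right|$, so Hall's condition fails; on the other hand, a transversal $T$ with associated bijection $\sigma\colon T\maparrow I$ would require some $t\in T$ with $t\in A_{i_0}=\emptyset$, which is impossible. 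Thus the equivalence of the corollary holds trivially in that case, and I may assume that all $A_i$ are non-empty so that Theorem~\ref{thm:radohall} applies.

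Applied to $\Acal$ with $\mu = \left|\cdot\right|$, the theorem states that a system of representatives $(x_i)_{i\in I}\in\Acal$ with
\[ \left|\SET{x_i ~\middle|~ i\in J}\right| \geq \left|J\right| \quad\text{for all }J\subseteq I \tag{$\ast$} \]
exists if and only if $\left|\bigcup_{i\in J} A_i\right|\geq \left|J\right|$ holds for all $J\subseteq I$, which is exactly Hall's condition. The key observation is that $(\ast)$ is equivalent to the map $i\mapsto x_i$ being injective: specializing $(\ast)$ to $J=I$ combined with the trivial upper bound $\left|\SET{x_i\mid i\in I}\right|\leq \left|I\right|$ forces equality, hence injectivity; and conversely, if $i\mapsto x_i$ is injective, then $\left|\SET{x_i\mid i\in J}\right| = \left|J\right|$ for every $J\subseteq I$. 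Thus Theorem~\ref{thm:radohall} with $\mu=\left|\cdot\right|$ produces precisely a system of \emph{distinct} representatives whenever Hall's condition is met.

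It remains to bridge the gap between systems of distinct representatives and transversals, which is routine. Given distinct representatives $(x_i)_{i\in I}$ with bijection $\sigma\colon I\maparrow I$ such that $x_i\in A_{\sigma(i)}$, the set $T=\SET{x_i\mid i\in I}$ satisfies $\left|T\right|=\left|I\right|$ by injectivity, and the map $\tau\colon T\maparrow I,\,x_i\mapsto \sigma(i)$ is a well-defined bijection with $t\in A_{\tau(t)}$ for every $t\in T$; so $T$ is a transversal of $\Acal$. Conversely, a transversal $T$ with accompanying bijection $\tau\colon T\maparrow I$ yields, via any enumeration $\pi\colon I\maparrow T$, a system of distinct representatives $x_i=\pi(i)$ with permutation $\tau\circ\pi$. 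Combining these three steps gives the corollary. No real obstacle is expected — the content of Hall's theorem is completely packaged into the main theorem, and the only substantive work is recognizing that property~$(\ast)$ with cardinality encodes injectivity of the representative map.
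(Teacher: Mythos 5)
Your proof is correct and takes essentially the same route as the paper, which simply applies Theorem~\ref{thm:radohall} with $\mu(X)=\left|X\right|$ and $E=\bigcup_{i\in I}A_i$ in a single line. The extra care you take with the empty-set case (since the theorem is stated for families of non-empty subsets) and with translating property~(1) into injectivity is exactly the routine detail the paper leaves implicit.
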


\begin{proof} \PRFR{Jan 15th}
	Apply Theorem~\ref{thm:radohall} with $\mu(X) = \left| X \right|$ and $E = \bigcup_{{i\in I}} A_{i}$.
\end{proof}

\begin{corollary}[Rado]\label{cor:Rado}\PRFR{Jan 15th}
 Let $M=(E,\Ical)$ be a matroid, and let $\Acal = (A_i)_{i\in I}$ be a finite family of subsets of $E$, then $\Acal$ has a transversal which is independent in $M$ if and only if for all $J\subseteq I$,
$$\rk_M \left( \bigcup_{{i\in J}} A_{i}\right) \geq \left| J \right|.$$
\end{corollary}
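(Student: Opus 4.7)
The plan is to apply Theorem~\ref{thm:radohall} with $\mu = \rk_M$ and $E$ replaced by $\bigcup_{i\in I} A_i$. First I would verify the hypotheses: monotonicity of $\rk_M$ is exactly Lemma~\ref{lem:rankMonotone}, and submodularity is property \emph{(R3)} established in Theorem~\ref{thm:rankAxioms} via the implication $(i)\Rightarrow(iii)$. A preliminary step handles the degenerate case: if some $A_i=\emptyset$, then $\rk_M\bigl(\bigcup_{i\in\{i\}} A_i\bigr)=0<1$, so condition $(2)$ fails; and obviously there is no transversal (no representative can be chosen for $A_i$), so the equivalence holds vacuously. Therefore I may assume each $A_i\neq\emptyset$, which is exactly the setting of Theorem~\ref{thm:radohall}.

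For the easy direction, assuming $T\in \Ical$ is a transversal with bijection $\sigma\colon T\maparrow I$ such that $t\in A_{\sigma(t)}$, fix $J\subseteq I$ and let $T'=\sigma^{-1}[J]\subseteq T$. Then $T'\subseteq \bigcup_{i\in J}A_i$ and $|T'|=|J|$; since $T'\subseteq T\in\Ical$, Definition~\ref{def:rank} yields $\rk_M\bigl(\bigcup_{i\in J}A_i\bigr)\geq |T'|=|J|$.

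For the converse, suppose condition $(2)$ holds. Theorem~\ref{thm:radohall} produces a system of representatives $(x_i)_{i\in I}\in\Acal$ together with a bijection $\sigma\colon I\maparrow I$ such that $x_i\in A_{\sigma(i)}$ for every $i\in I$, and such that $\rk_M\bigl(\{x_i\mid i\in J\}\bigr)\geq |J|$ for all $J\subseteq I$. Taking $J=I$ gives $\rk_M\bigl(\{x_i\mid i\in I\}\bigr)\geq |I|$, while \emph{(R1)} in Theorem~\ref{thm:rankAxioms} forces $\rk_M\bigl(\{x_i\mid i\in I\}\bigr)\leq |\{x_i\mid i\in I\}|\leq |I|$. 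Hence equality holds throughout: the map $i\mapsto x_i$ is injective, so $T=\{x_i\mid i\in I\}$ has cardinality $|I|$, and $\rk_M(T)=|T|$ means $T\in\Ical$. Defining $\tau\colon T\maparrow I$ by $\tau(x_i)=\sigma(i)$ yields a bijection with $t\in A_{\tau(t)}$ for every $t\in T$, so $T$ is the desired independent transversal of $\Acal$.

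There is no serious obstacle here: the work has already been invested in Theorem~\ref{thm:radohall}. The only point requiring a sentence of care is the step extracting distinctness of the $x_i$ from the rank inequality, together with the reformulation of ``system of representatives with $\rk_M$-bound'' as ``independent transversal''.
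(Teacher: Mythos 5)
Your proposal is correct and follows exactly the paper's route: the paper's entire proof is ``Apply Theorem~\ref{thm:radohall} with $\mu(X)=\rk_M(X)$.'' You have merely spelled out the details the paper leaves implicit (verifying the hypotheses on $\rk_M$, the empty-set case, and the translation between a rank-bounded system of representatives and an independent transversal), all of which check out.
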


\begin{proof}\PRFR{Jan 15th}
	Apply Theorem~\ref{thm:radohall} with $\mu(X) = \rk_M(X)$.
\end{proof}

\begin{corollary}[Ore]\PRFR{Jan 15th} Let $\Acal = (A_i)_{i\in I}$ be a finite family of sets, and $d\in \N$, then $\Acal$ has a partial transversal $T$ with defect $\leq d$ if and only if for all $J\subseteq I$,
$$\left| \bigcup_{{i\in J}} A_{i} \right| \geq \left| J \right| - d.$$
\end{corollary}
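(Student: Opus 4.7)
The plan is to reduce Ore's theorem directly to Hall's theorem (Corollary~\ref{cor:Hall}) via a standard padding trick: augment the ground set with $d$ artificial elements that belong to every $A_i$, find a transversal of the padded family, and then strip the artificial elements back out to obtain a partial transversal of $\Acal$ of the desired defect.

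More precisely, I would pick any set $Z = \dSET{z_1, z_2, \ldots, z_d}$ disjoint from $E = \bigcup_{i\in I} A_i$, and define the padded family $\Acal' = (A_i')_{i\in I}$ by $A_i' = A_i \cup Z$. The key observation is that for every nonempty $J \subseteq I$ we have
\[
 \left| \bigcup_{i\in J} A_i' \right| \,=\, \left| \bigcup_{i\in J} A_i \right| + d,
\]
so the hypothesis ``$\left| \bigcup_{i\in J} A_i \right| \geq \left| J \right| - d$ for every $J \subseteq I$'' is equivalent to Hall's condition ``$\left| \bigcup_{i\in J} A_i' \right| \geq \left| J \right|$ for every $J \subseteq I$'' for the padded family. (The $J = \emptyset$ case is trivial on both sides.)

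From here the forward direction is immediate: Hall's theorem yields a transversal $T'$ of $\Acal'$, and then $T = T' \BS Z$ is a subset of $E$ with $|T| \geq |T'| - d = |I| - d$; the bijection $\sigma'\colon T' \maparrow I$ restricts to an injection $\iota\colon T\maparrow I$ witnessing that $T$ is a partial transversal of $\Acal$ with defect at most $d$. For the reverse direction, suppose $T$ is a partial transversal of $\Acal$ of defect at most $d$, witnessed by an injection $\iota\colon T\maparrow I$. For any $J\subseteq I$, the set $T_J = \iota^{-1}[J]$ consists of distinct elements of $\bigcup_{i\in J} A_i$, and $\left| T_J \right| \geq \left| J \right| - d$ because at most $d$ indices of $I$ lie outside $\iota[T]$. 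Hence $\left| \bigcup_{i\in J} A_i \right| \geq \left| J \right| - d$.

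The only thing to be careful about is the degenerate case $d \geq |I|$, where the statement is vacuous (the empty partial transversal already has defect $\leq d$), and the case in which $E$ is infinite or $Z$ might collide with $E$ --- both are handled by choosing $Z$ disjoint from $E$ and noting that $Z$'s finiteness is enough for the cardinality identity above. There is no genuine obstacle; the content of the argument lies entirely in Hall's theorem, which has already been established.
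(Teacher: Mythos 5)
Your argument is correct, but it takes a genuinely different route from the paper. The paper derives Ore's theorem as a one-line specialization of Welsh's general Theorem~\ref{thm:radohall}: it takes $\mu(X) = \left| X \right| + d$, which is still non-decreasing and submodular, so condition (2) of that theorem becomes exactly the deficiency condition $\left| \bigcup_{i\in J} A_i \right| \geq \left| J \right| - d$, and the resulting system of representatives yields the partial transversal. You instead use the classical padding reduction to Hall's theorem (Corollary~\ref{cor:Hall}): adjoin $d$ fresh elements to every $A_i$, observe that Hall's condition for the padded family is equivalent to the deficiency condition, and strip the padding from the resulting transversal. Both are sound. The paper's approach buys uniformity --- Hall, Rado, Ore, and Perfect all fall out of the same theorem by varying $\mu$ --- whereas yours is more elementary and self-contained, needing only Hall's theorem and no submodularity machinery. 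One small bonus of your route: you also supply the (easy) converse direction explicitly, and the padded sets $A_i' = A_i \cup Z$ are automatically non-empty for $d \geq 1$, sidestepping the non-emptiness hypothesis that Theorem~\ref{thm:radohall} formally requires. Your cardinality identity $\left| \bigcup_{i\in J} A_i' \right| = \left| \bigcup_{i\in J} A_i \right| + d$ does require $Z$ disjoint from $\bigcup_{i\in I} A_i$ and $J \neq \emptyset$, both of which you handle correctly.
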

\begin{proof}\PRFR{Jan 15th}
	Apply Theorem~\ref{thm:radohall} with $\mu(X) = \left| X \right| + d$ and $E = \bigcup_{{i\in I}} A_{i}$.
\end{proof}

\needspace{9\baselineskip}
\begin{corollary}[Perfect]\PRFR{Jan 15th} Let $M=(E,\Ical)$ be a matroid, $d\in \N$, and let $\Acal = (A_i)_{i\in I}$ be a finite family of subsets of $E$, then $\Acal$ has a partial transversal $T$ with defect $ \leq d$ which is independent in $M$ if and only if for all $J\subseteq I$,
$$\rk_M \left( \bigcup_{{i\in J}} A_{i}\right) \geq \left| J \right| - d.$$
\end{corollary}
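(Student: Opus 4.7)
The plan is to apply Theorem~\ref{thm:radohall} to the map $\mu\colon 2^E \maparrow \N$ defined by $\mu(X) = \rk_M(X) + d$. Non-negativity, monotonicity, and submodularity of $\mu$ follow at once from the corresponding properties of $\rk_M$ (Lemma~\ref{lem:rankMonotone} and Theorem~\ref{thm:rankAxioms}), since the constant translation by $d$ cancels on both sides of the submodular inequality. Condition~{\em (2)} of Theorem~\ref{thm:radohall} then reads $\rk_M\left( \bigcup_{i\in J} A_i \right) + d \geq \left| J \right|$ for every $J\subseteq I$, which is equivalent to the condition stated in the corollary.

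For the ``if'' direction, Theorem~\ref{thm:radohall} supplies a system of representatives $(x_i)_{i\in I} \in \Acal$ satisfying $\rk_M\left(\SET{x_i ~\middle|~ i\in J}\right) + d \geq \left| J \right|$ for all $J\subseteq I$. Specialising to $J = I$, the image set $T = \SET{x_i ~\middle|~ i\in I}$ satisfies $\rk_M(T) \geq \left| I \right| - d$, so Lemma~\ref{lem:augmentation} provides an independent subset $T_0 \subseteq T$ with $\left| T_0 \right| = \rk_M(T) \geq \left| I \right| - d$. To witness $T_0$ as a partial transversal, let $\sigma\colon I\maparrow I$ be the bijection accompanying the system of representatives; for each $t\in T$ pick a single index $i_t \in I$ with $x_{i_t} = t$, and set $\iota(t) = \sigma(i_t)$. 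The indices $i_t$ are distinct as $t$ varies over $T$, so injectivity of $\sigma$ makes $\iota\colon T\maparrow I$ an injection, and $t = x_{i_t} \in A_{\sigma(i_t)} = A_{\iota(t)}$. Restricting $\iota$ to $T_0$ exhibits $T_0$ as a partial transversal of $\Acal$ of defect $\left| I \right| - \left| T_0 \right| \leq d$ that is independent in $M$.

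For the ``only if'' direction, suppose $P \subseteq E$ is an independent partial transversal of $\Acal$ of defect $\leq d$, and let $\iota\colon P\maparrow I$ be the associated injection with $p \in A_{\iota(p)}$ for every $p\in P$. For any $J\subseteq I$, the set $P_J = \SET{p\in P ~\middle|~ \iota(p)\in J}$ is contained in $\bigcup_{i\in J} A_i$ and, as a subset of the independent set $P$, is itself independent, so $\rk_M(P_J) = \left| P_J \right|$. A counting argument yields $\left| P_J \right| \geq \left| P \right| - \left| I\BS J \right| \geq (\left| I \right| - d) - (\left| I \right| - \left| J \right|) = \left| J \right| - d$, whence monotonicity of $\rk_M$ (Lemma~\ref{lem:rankMonotone}) gives $\rk_M\left(\bigcup_{i\in J} A_i\right) \geq \left| J \right| - d$.

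The main obstacle is the gap between the system of representatives delivered by Theorem~\ref{thm:radohall} --- which may involve repetitions and is not automatically injective into $E$ --- and the partial transversal required by the statement, which must moreover be independent in $M$. Both difficulties dissolve simultaneously once one extracts an independent subset $T_0$ of the image set via Lemma~\ref{lem:augmentation} and then uses the bijection $\sigma$ bundled with the system to manufacture the required injection back into $I$.
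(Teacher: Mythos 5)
Your proof is correct and takes the same route as the paper, which simply says ``apply Theorem~\ref{thm:radohall} with $\mu(X)=\rk_M(X)+d$'' and leaves the rest to the reader. The additional work you do --- translating between the system of representatives produced by the theorem and an independent partial transversal of defect $\leq d$ via Lemma~\ref{lem:augmentation} and the bijection $\sigma$, and proving the converse directly from an injection witnessing the partial transversal --- is exactly the bookkeeping the paper's one-line proof suppresses, and it is carried out correctly.
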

\begin{proof}\PRFR{Jan 15th}
	Apply Theorem~\ref{thm:radohall} with $\mu(X) = \rk_M(X) + d$.
\end{proof}


\subsection{Matroids Induced by Bipartite Graphs}

\PRFR{Jan 15th}
In this section, we describe how matchings in bipartite graphs can be used to induce
a matroid on one color-class from a matroid given on the other color-class. The class of
transversal matroids consists of those matroids, which can be obtained from a free matroid by bipartite matroid induction.

\begin{definition}\PRFR{Jan 15th}
    Let $D = (V,A)$ be a digraph, and let $M\subseteq \binom{V}{2}$ be a set of unordered pairs of vertices of $D$. We call $M$ a \deftext[matching in D@matching in $D$]{matching in $\bm D$},
    if the sets in $M$ are pair-wise disjoint, and if for every $\dSET{x,y} \in M$, there is an arc
    $(x,y)\in A$ or $(y,x)\in A$.
\end{definition}

\begin{definition}\PRFR{Jan 15th}
	Let $A,B$ be sets with $A\cap B = \emptyset$ and $\Delta\subseteq A\times B$.
	We call the digraph \label{n:DABD}$D=(A\disunion B, \Delta)$ the \index{bipartite graph}\deftext{directed bipartite graph} for
	$\Delta$ from $A$ to $B$.
\end{definition}

\PRFR{Jan 15th}
\noindent If there are no isolated vertices in the directed bipartite graph
 $(A\disunion B,\Delta)$, then the partition of its vertices into $A$ and $B$ 
 can be deduced from $\Delta$. Thus it is reasonable to identify the directed bipartite
 graph  $(A\disunion B,\Delta)$ with its arcs $\Delta$.

 \begin{definition}\PRFR{Jan 15th}\label{def:arcSystemDelta}
 	Let $A,B$ be finite sets with $A\cap B = \emptyset$,
 	 and let $\Delta\subseteq A\times B$. The
 	\deftext[arc system of ABD@arc system of $(A\disunion B,\Delta)$]{arc system of $\bm{(A\disunion B,\Delta)}$} shall be 
 	denoted by $\Acal_{\Delta}$. It is defined to be the family\label{n:ArcSystem} $\Acal_{\Delta} = (A_i)_{i\in B} \subseteq A$ where
 	\[
 		A_b = \SET{a\in A \mid (a,b) \in \Delta}
 	\]
 	for every $b\in B$.
 \end{definition}

 \begin{theorem}\label{thm:bipartiteInduction}\PRFR{Jan 15th}
 	Let $D,E$ be finite sets with $D\cap E = \emptyset$, $M=(E,\Ical)$ be a matroid,
 	and let $\Delta\subseteq D\times E$. Furthermore, let $N=(D,\Ical')$ be 
 	such that $\Ical'\subseteq 2^{D}$ with the defining property
 	that for all $X\subseteq D$, $X\in \Ical'$ if and only if $X$ is a partial transversal of the arc system
 	$\Acal_\Delta$ such that there is an injective map $\iota\colon X\maparrow E$ with
 	$x\in A_{\iota(x)}$ for all $x\in X$ with the additional property that
 	$\SET{\iota(x)\mid x\in X}$ is independent in $M$.
 	Then $N$ is a matroid.
 \end{theorem}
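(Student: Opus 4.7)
The plan is to deduce the theorem as a quick combination of Rado's Theorem (Corollary~\ref{cor:Rado}) and the matroid-from-submodular-function construction (Theorem~\ref{thm:submodularIndependent}). For every $d\in D$ write $\Delta(d) = \SET{e\in E \mid (d,e)\in \Delta}$; note that, via the arc system $\Acal_\Delta$ indexed by $E$, one has $d\in A_e$ iff $e\in \Delta(d)$, so that a partial transversal $X\subseteq D$ together with its witnessing injection $\iota\colon X\maparrow E$ is exactly a choice of representatives for the family $(\Delta(x))_{x\in X}$.

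First I would introduce the candidate rank-like function
\[ f\colon 2^D \maparrow \N,\qquad f(X) = \rk_M\!\left(\bigcup_{x\in X} \Delta(x)\right). \]
Then I would verify that $f$ is non-decreasing and submodular: monotonicity follows because $X\subseteq Y$ implies $\bigcup_{x\in X}\Delta(x)\subseteq \bigcup_{x\in Y}\Delta(x)$ and $\rk_M$ is monotone by Lemma~\ref{lem:rankMonotone}; submodularity follows from the set identity $\bigcup_{x\in X\cup Y}\Delta(x) = \bigcup_{x\in X}\Delta(x) \cup \bigcup_{x\in Y}\Delta(x)$, the inclusion $\bigcup_{x\in X\cap Y}\Delta(x) \subseteq \bigcup_{x\in X}\Delta(x)\cap \bigcup_{x\in Y}\Delta(x)$, and the submodularity of $\rk_M$ from Theorem~\ref{thm:rankAxioms}\,{\em (R3)} (using monotonicity to handle the intersection inclusion).

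Next I would apply Theorem~\ref{thm:submodularIndependent} to $f$: since $f$ is non-decreasing, submodular and $\N$-valued, the set system
\[ \Jcal = \SET{X\subseteq D ~\middle|~\vphantom{A^A} \forall X'\subseteq X\colon f(X') \geq \left| X'\right|} \]
is the family of independent sets of a matroid on $D$. It remains to identify $\Jcal$ with $\Ical'$. Given $X\subseteq D$, Rado's Theorem (Corollary~\ref{cor:Rado}), applied to the finite family $(\Delta(x))_{x\in X}$ of subsets of $E$ and to the matroid $M$, asserts that $(\Delta(x))_{x\in X}$ has a transversal that is independent in $M$ if and only if $\rk_M\!\left(\bigcup_{x\in X'}\Delta(x)\right) \geq \left| X'\right|$ holds for every $X'\subseteq X$. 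A transversal of $(\Delta(x))_{x\in X}$ independent in $M$ is precisely an injection $\iota\colon X\maparrow E$ with $x\in A_{\iota(x)}$ and $\iota[X]\in\Ical$, which is exactly the condition defining $X\in\Ical'$. Thus $\Ical' = \Jcal$, and $N=(D,\Ical')$ is a matroid.

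The only genuinely subtle point is the correct bookkeeping of the arc system: one must be careful that $\Acal_\Delta$ is indexed by $E$ while the independent sets of $N$ live in $D$, so the family used in Rado's Theorem is the \emph{dual} indexing $(\Delta(x))_{x\in X}$ with base set $E$. Once this is set up correctly, both cited theorems apply verbatim, and no further work is needed.
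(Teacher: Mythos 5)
Your proposal is correct and follows essentially the same route as the paper: the paper also sets $\mu(X)=\rk_M\bigl(\SET{e\in E\mid \exists x\in X\colon x\in A_e}\bigr)$ (which equals your $f$), invokes Theorem~\ref{thm:submodularIndependent}, and identifies the resulting independence system with $\Ical'$ by ``flipping'' the arc system to the family $(B_x)_{x\in X}$ with $B_x=\SET{e\in E\mid x\in A_e}=\Delta(x)$ and applying Rado's Theorem. The only cosmetic difference is that the paper verifies the easy inclusion $\Ical'\subseteq\Ical''$ by a direct rank computation rather than by citing the easy direction of Rado, which changes nothing of substance.
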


 \noindent The following proof is based on the proof in \cite{We76}, p.119.
 \begin{proof}\PRFR{Jan 15th}
 	Let $\mu\colon 2^{D}\maparrow \N$ be the map where \[ \mu(X) =
 	\rk_{M}\left( \SET{e\in E\mid \exists x\in X\colon\, x\in A_e} \right) \]
 	for every $X\subseteq D$. Clearly, $\mu(\emptyset) =\rk_M (\emptyset) =
 	0$ and $\mu$ is non-decreasing and submodular. By
 	Theorem~\ref{thm:submodularIndependent}, the set \[ \Ical'' =
 	\SET{X\subseteq D \mid \forall X'\subseteq X\colon \mu(X') \geq \left| X'
 	\right|}\] defines a matroid $N' = (D,\Ical'')$. We show that $\Ical' =
 	\Ical''$. Let $X\in\Ical'$, and let $\iota\colon X\maparrow E$ an
 	injective map such that $\iota[X]\in \Ical$ and for all $x\in X$, $x\in
 	A_{\iota(x)}$. Then, clearly, for all $X'\subseteq X$, $\iota[X']
 	\subseteq  \SET{e\in E\mid \exists x'\in X'\colon\, x'\in A_e}$.
 	Therefore for all $X'\subseteq X$, $$ \left| X' \right| =
 	\rk_{M}\left(\iota[X']\right) \leq \rk_{M}\left(  \SET{e\in E\mid \exists
 	x'\in X'\colon\, x'\in A_e}\right) = \mu(X'), $$ thus $X\in \Ical''$.
 	Conversely, assume that $X\in\Ical''$.  We flip around the arc system and and consider the
 	following family of subsets of $E$: Let $\Bcal_{X} = (B_i)_{{i\in X}}
 	\subseteq E$ be the family of subsets of $E$ where for $x\in X$, the
 	subset \( B_{x} = \SET{e\in E\mid x\in A_e} \) consists of all elements
 	of $E$ that $x$ can pair with in $\Delta$. By Rado's Theorem
 	(Corollary~\ref{cor:Rado}), the family $\Bcal_{X}$  has a transversal
 	$Y\subseteq E$ that is independent in $M$, if and only if for all
 	$X'\subseteq X$ we have the inequality \[ \rk_M \left( \bigcup_{{x'\in X'}} B_{x'}\right) \geq
 	\left| X' \right|. \] But \(  \bigcup_{x'\in X'} B_{x'} = \bigcup_{x' \in
 	X'} \SET{e\in E\mid x'\in A_e}  = \SET{e\in E\mid \exists x'\in
 	X'\colon\, x'\in A_e} \), which gives that $\rk_M \left( \bigcup_{{x'\in
 	X'}} B_{x'}\right) = \mu(X')$.  By definition, $X\in \Ical''$ implies that
 	for all $X'\subseteq X$, $\mu(X') \geq \left| X' \right|$. Thus we may
 	infer that there is an $M$-independent transversal $Y$ of $\Bcal_{X}$.
 	This gives rise to a bijective map $\sigma\colon Y\maparrow X$ such that for every
 	$y\in Y$ we have $y\in B_{\sigma(y)}$. Yet $y\in B_{\sigma(y)}$ implies that
 	$\sigma(y) \in A_{y}$. Therefore there is an injective map
 	$\tilde\iota\colon X\maparrow E$ with $\tilde\iota(x) =
 	\sigma^{-1}(x)$ which witnesses that $X$ is a partial transversal of
 	$\Acal_{\Delta}$, and therefore $X\in \Ical'$.
 \end{proof}

 \begin{remark}\PRFR{Jan 15th}
 Obviously, the premise $D\cap E = \emptyset$ in Theorem~\ref{thm:bipartiteInduction} may be dropped,
 since we may give the elements of $D$ distinct names $D'$, apply the construction, and then rename the elements of the so obtained matroid back to $D$.
 \end{remark}

 \begin{definition}\PRFR{Jan 15th}
   Let $D,E$ be finite sets, $\Delta\subseteq D\times E$, and let $M_0=(E,\Ical)$ be a matroid. The \deftext[matroid induced by D from M@matroid induced by $\Delta$ from $M$]{matroid induced by $\bm\Delta$ from $\bm{M_{\bm 0}}$} shall be the pair\label{n:MDM0}
   $M(\Delta,M_0) = (D,\Ical_{\Delta,M_0})$ where $\Ical_{\Delta,M_0} \subseteq 2^{D}$
consists of all partial transversals $X$ of the family $\Acal_{\Delta}$ that can be admissibly injected into an independent set of $M_{0}$, i.e. there is an injective map $\iota\colon X\maparrow E$ such that for all $x\in X$, $x\in A_{\iota(x)}$ and $\iota[X] \in \Ical$.
 \end{definition}

\subsection{Transversal Matroids}\label{sec:shortTransversalMatroids}

\PRFR{Jan 15th}
\noindent In this section, we provide the definition of transversal matroids and forestall
those important properties of transversal matroids, that we need in order to develop the theory of
routings in directed graphs --- which, in turn, is essential for defining gammoids. Further properties of transversal matroids are treated in Section~\ref{sec:TransversalMatroids}.

\begin{definition}\PRFR{Jan 15th}
	Let $E,I$ be a finite sets, and $\Acal = (A_i)_{i\in I} \subseteq E$ be a family of subsets. The \deftext[transversal matroid presented by A@transversal matroid presented by $\Acal$]{transversal matroid presented by $\bm{\Acal}$} shall be the pair\label{n:MAEIA}
	$M(\Acal) = (E,\Ical_\Acal)$ where $\Ical_\Acal \subseteq 2^{E}$ with
	the property that for all $X\subseteq E$, $X\in \Ical_{\Acal}$ if and only if
	$X$ is a partial transversal of $\Acal$.
\end{definition}

\begin{corollary}\label{cor:transversalMatroid}\PRFR{Jan 15th}
	Let $E,I$ be a finite sets, and $\Acal = (A_i)_{i\in I} \subseteq E$ be a family of subsets.
	Then $M(\Acal) = (E,\Ical_\Acal)$ is a matroid.
\end{corollary}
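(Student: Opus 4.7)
The plan is to deduce this as an immediate consequence of Theorem~\ref{thm:bipartiteInduction}, by realizing the transversal matroid as the matroid induced by a bipartite graph from a free matroid. First, I would introduce a disjoint copy $I'$ of $I$ (using the remark after Theorem~\ref{thm:bipartiteInduction} that the disjointness hypothesis $D\cap E=\emptyset$ can be dropped by renaming) and form the free matroid $M_0 = (I', 2^{I'})$ on $I'$, which is a matroid by Example~\ref{ex:freematroid}. Let $\sigma\colon I\maparrow I'$ be the renaming bijection.

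Next, define $\Delta \subseteq E\times I'$ by $(e,\sigma(i))\in \Delta$ if and only if $e \in A_i$, so that the arc system $\Acal_\Delta$ of Definition~\ref{def:arcSystemDelta} coincides, up to the renaming $\sigma$, with the original family $\Acal$. By Theorem~\ref{thm:bipartiteInduction}, the pair $M(\Delta, M_0) = (E, \Ical_{\Delta,M_0})$ is a matroid. A subset $X\subseteq E$ lies in $\Ical_{\Delta,M_0}$ precisely when there is an injection $\iota\colon X\maparrow I'$ with $x\in A_{\sigma^{-1}(\iota(x))}$ for all $x\in X$ and $\iota[X]\in 2^{I'}$. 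Since every subset of $I'$ is independent in the free matroid, the last condition is automatic, so $X \in \Ical_{\Delta,M_0}$ if and only if $X$ is a partial transversal of $\Acal$.

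Thus $\Ical_{\Delta,M_0} = \Ical_\Acal$, which means $M(\Acal) = M(\Delta,M_0)$, and consequently $M(\Acal)$ is a matroid. No obstacle is expected; the only care needed is to verify carefully that the partial-transversal definitions used in Theorem~\ref{thm:bipartiteInduction} and in the statement of $\Ical_\Acal$ agree once the free matroid assumption renders the independence constraint vacuous, and that the renaming $\sigma$ is handled cleanly so that $\Acal_\Delta$ faithfully represents $\Acal$.
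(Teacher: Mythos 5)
Your proposal is correct and follows essentially the same route as the paper: the paper also sets $M_0=(I,2^I)$, defines $\Delta=\SET{(e,i)\in E\times I\mid e\in A_i}$, and invokes Theorem~\ref{thm:bipartiteInduction}, with the disjointness issue handled exactly by the renaming remark you cite. Your extra care in checking that the free-matroid condition is vacuous is fine but not a departure from the paper's argument.
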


\begin{proof}\PRFR{Jan 15th}
	Let $M_0 = (I,2^I)$ be the free matroid on $I$, and let
	$\Delta = \SET{(e,i)\in E\times I\mid e\in A_i}$.
	Then $M(\Acal) = M(\Delta,M_0)$ is the matroid induced by $\Delta$ from $M_{0}$,
	which is a matroid by Theorem~\ref{thm:bipartiteInduction}.
\end{proof}

\noindent
We just proved that the maximal partial transversals are bases of a matroid $M(\Acal)$.

\begin{corollary}\label{cor:maximalpartialtransversals}\PRFR{Jan 15th}
	Let $E,I$ be finite sets, and let  $\Acal = (A_i)_{i\in I} \subseteq E$ be a family of subsets.
	Two maximal partial transversals $S,T\subseteq E$ of $\Acal$ have
	$\left| S \right| = \left| T \right|$.
\end{corollary}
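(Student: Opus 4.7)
The plan is to derive the corollary immediately from the fact that we already know $M(\Acal) = (E, \Ical_\Acal)$ is a matroid (Corollary~\ref{cor:transversalMatroid}), combined with the general equicardinality property for $\subseteq$-maximal independent sets in a matroid (Corollary~\ref{cor:equicardinality}).

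First I would unfold the definitions: by construction of $M(\Acal)$, a set $X \subseteq E$ lies in $\Ical_\Acal$ if and only if $X$ is a partial transversal of $\Acal$. Hence the partial transversals of $\Acal$ are exactly the independent sets of $M(\Acal)$, and the $\subseteq$-maximal partial transversals of $\Acal$ are exactly the $\subseteq$-maximal elements of $\Ical_\Acal$.

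Next I would apply Corollary~\ref{cor:equicardinality} with $H = E$ to the matroid $M(\Acal)$. That corollary states that any two $\subseteq$-maximal elements of $\SET{X \in \Ical_\Acal \mid X \subseteq E} = \Ical_\Acal$ have the same cardinality. Applied to the two given maximal partial transversals $S$ and $T$, this yields $\left| S \right| = \left| T \right|$, which is the desired conclusion.

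There is no genuine obstacle here: the entire content has already been established by the two cited results. The only thing to verify is the dictionary between partial transversals of $\Acal$ and independent sets of $M(\Acal)$, and between ``maximal partial transversal'' and ``$\subseteq$-maximal independent set of $M(\Acal)$ contained in $E$'', both of which are immediate from the definitions.
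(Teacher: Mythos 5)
Your proof is correct and matches the paper's intended argument: the paper states the corollary as an immediate consequence of the fact that maximal partial transversals are precisely the bases of the matroid $M(\Acal)$, and equicardinality of bases (Corollary~\ref{cor:equicardinality}) finishes it, exactly as you do.
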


\begin{definition}\PRFR{Jan 15th}
	Let $M=(E,\Ical)$ be a matroid. We call $M$ \deftext{transversal matroid}, if
	there is a finite family of subsets $\Acal = (A_i)_{i\in I} \subseteq E$, such that
	$M = M(\Acal)$, i.e. $M$ is the transversal matroid presented by $\Acal$.
\end{definition}

\clearpage

\section{Directed Graphs}

\PRFR{Jan 22nd}
In this section, we present the basic definitions and properties of directed
graphs used in the course of this work. We aim to be consistent regarding terminology with 
the monograph {\em Digraphs: Theory, Algorithms, and Applications} by J.~Bang-Jensen
and G.~Gutin \cite{BJG09}, although we may divert from it in technical details since we do not need 
the full
generality of \cite{BJG09}. 

\begin{definition}\label{def:directedGraph}\PRFR{Jan 22nd}
    A pair $D = (V,A)$\label{n:DVA} is called \deftext{directed graph}, or shorter
    \deftext{digraph}, whenever $V$ is a finite set and $A\subseteq V\times V$.
    Every $v\in V$ is called \deftext{vertex} of $D$ and every $a = (u,v)\in A$ is
    called \deftext{arc} of $D$. Furthermore, $u$ is called the \deftext{tail} of
    the arc $a$ and $v$ is called the \deftext{head} of $a$. We also say that $a=(u,v)$ is an arc that \deftextX{leaves} $\bm u$ 
    and \deftextX{enters} $\bm v$, or shorter that $a$ \deftextX{goes from} $\bm u$ \deftextX{to} $\bm v$.
    Furthermore, $u$ and $v$ are the \deftext[end vertices of an arc]{end vertices} of $a$, and we say that $u$ and $v$ are \deftext{incident} with $a$.
    Two vertices that are incident with the same arc are called \deftext{adjacent}.
    An arc $a = (u,v)$
    with $u=v$ is called a \deftext[loop (digraph)]{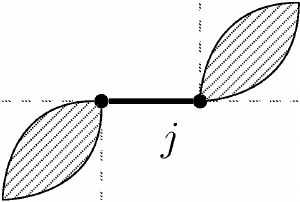}.
\end{definition}


\vspace*{-\baselineskip} 
\begin{wrapfigure}{l}{5cm}
\vspace{\baselineskip}
\begin{centering}~~
\begin{tikzpicture}
\NodeB{U}{label={left:{$u$}}}  at (-1,1) {};
\NodeB{V}{label={below:{$v$}}} at ( 1,1) {};
\NodeB{W}{label={above:{$w$}}}  at ( 1,2) {};
\NodeB{S}{label={left:{$s$}}}  at ( 0,1) {};
\NodeB{T}{label={right:{$t$}}} at ( 2,2) {};
\draw [->] (S) .. controls (.5,2) and (.5,2) .. (W);
\draw [->] (S)->(V);
\draw [->] (V) .. controls (1.5,1) and (1.5, 1) .. (T);
\draw [->] (W)->(T);
\draw [<-] (V) .. controls (.7,1.2) and (.7,1.8) .. (W);
\draw [->] (V) .. controls (1.3,1.2) and (1.3,1.8) .. (W);
\path
    (U) edge [loop above,looseness=15,->,out=120,in=40] (U);
\end{tikzpicture}
\end{centering}%
\vspace*{-2\baselineskip} 
\end{wrapfigure}
~ 

\begin{example} \label{ex:131}\PRFR{Jan 22nd}
	Consider $V=\dSET{u,v,w,s,t}$ and $A=\{(u,u),$ $(v,w),$ $(w,v),$ $(s,v),$
	$(s,w),$  $(w,t),$ $(v,t)\}$. Then $D = (V,A)$ is a directed graph. We can
	represent $D$ by a figure, where each vertex is represented by a small circle
	which may or may not have a name next to it,  and where each arc is
	represented by an arrow which points from the tail vertex circle of the arc to
	the head vertex circle of the arc. The figure on the left represents $D$ as
	above.
\end{example}

\PRFR{Jan 22nd}
\noindent Clearly, we can construct another directed graph $D^\opp$ from any
directed graph $D$ by swapping heads and tails of all arcs of $D$, thus
effectively reorienting all arcs to their opposite direction.

\begin{definition} \PRFR{Jan 22nd}
    Let $D = (V,A)$ be a digraph. The \deftext{opposite digraph} is defined to
    be the unique directed graph \( D^\opp = (V, A^\opp) \)\label{n:Dopp} with the property
    \[ (u,v)\in A^\opp \Leftrightarrow (v,u)\in A.\]
\end{definition}

\noindent It is easy to see that $(D^\opp)^\opp = D$.


\needspace{4\baselineskip}

\vspace*{-\baselineskip} 
\begin{wrapfigure}{r}{5cm}
\vspace{\baselineskip}
\begin{centering}~~
\begin{tikzpicture}
\NodeB{U}{label={left:{$u$}}}  at (-1,1) {};
\NodeB{V}{label={below:{$v$}}} at ( 1,1) {};
\NodeB{W}{label={above:{$w$}}}  at ( 1,2) {};
\NodeB{S}{label={left:{$s$}}}  at ( 0,1) {};
\NodeB{T}{label={right:{$t$}}} at ( 2,2) {};
\draw [<-] (S) .. controls (.5,2) and (.5,2) .. (W);
\draw [<-] (S)->(V);
\draw [<-] (V) .. controls (1.5,1) and (1.5, 1) .. (T);
\draw [<-] (W)->(T);
\draw [->] (V) .. controls (.7,1.2) and (.7,1.8) .. (W);
\draw [<-] (V) .. controls (1.3,1.2) and (1.3,1.8) .. (W);
\path
    (U) edge [loop above,looseness=15,->,out=120,in=40] (U);
\end{tikzpicture}
\end{centering}%
\vspace*{-2\baselineskip} 
\end{wrapfigure}
~ 

\begin{example} \PRFR{Jan 22nd}
	Consider the digraph $D$ from Example~\ref{ex:131}.  Its opposite digraph
	$D^\opp$ has the same vertex set as $D$, whereas the arcs are reversed to
	$A^\opp=\{(u,u),$ $(w,v),$ $(v,w),$ $(v,s),$   $(w,s),$  $(t,w),$ $(t,v)\}$.
\end{example}

\begin{definition} \PRFR{Jan 22nd}
    Let $D = (V,A)$ be a digraph, $x\in V$. We call $x$ a \deftext{source} in
    $D$ if $x$ is never the head of an arc in $D$. Analogously, we call $x$ a
    \deftext{sink} in $D$ if $x$ is never the tail of an arc in $D$.
\end{definition}

\noindent From this definition it is clear that $x$ is a source in $D$ if and
only if $x$ is a sink in $D^\opp$, and analogously, $x$ is a sink in $D$
if and only if $x$ is a source in $D^\opp$.


\begin{example} \PRFR{Jan 22nd}
	Consider the digraph $D$ from Example~\ref{ex:131}.  The vertex $s$ is a
	source in $D$ and a sink in $D^\opp$, the vertex $t$ is a sink in $D$ and a
	source in $D^{\opp}$, whereas the vertices $u,v,w$ are neither sinks nor
	sources in both $D$ and $D^{\opp}$.
\end{example}

\begin{definition}\PRFR{Jan 22nd}
    Let $D = (V,A)$ be a digraph, the \deftextX{outer-extension operator in $\bm D$}
    shall be the map\label{n:DclD}
    \[ \DclD{\bullet}{D}\colon 2^V \maparrow 2^V,\,X\mapsto \DclD{X}{D} \]
    where
    \[ \DclD{X}{D} = X \cup \SET{v\in V\mid \exists x\in X\colon\,(x,v)\in A}.\]
    We call $\DclD{X}{D}$ the \deftext[outer extension of X in D@outer extension of $X$ in $D$]{outer extension of $\bm X$ in $\bm D$}.
    If the digraph is clear from the context, we write $\Dcl{\bullet}$ for $\DclD{\bullet}{D}$. The \deftextX{outer-margin operator in $\bm D$} is defined to be the map\label{n:partD}
    \[ \partial_D \bullet \colon 2^V \maparrow 2^V,\,X\mapsto \partial_D X\]
    where \[\partial_D X = \left.\DclD{X}{D} \right\BS X.\] We call $\partial_D X$ the \deftext[outer margin of X in D@outer margin of $X$ in $D$]{outer margin of $\bm X$ in $\bm D$}. Again, if no confusion can occur, we write $\partial \bullet$ as a shorthand for $\partial_D \bullet$.
\end{definition}


\needspace{6\baselineskip}
\vspace*{-\baselineskip} 
\begin{wrapfigure}{l}{8.3cm}
\vspace{\baselineskip}
\begin{centering}~~
\includegraphics[scale=1.2]{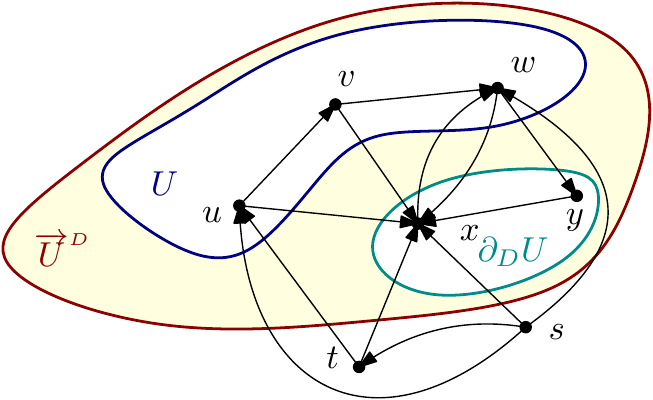}
\end{centering}%
\vspace*{-6\baselineskip} 
\end{wrapfigure}
~ 

\begin{example} \PRFR{Jan 19th}
	Consider the digraph shown on the left. Let $U=\SET{u,v,w}$. The outer extension of $U$ is $\DclD{U}{D} = \SET{u,v,w,x,y}$ and the outer margin of $U$ is $\partial_D U = \SET{x,y}$.
\end{example}

~\vspace*{1.5cm}

\needspace{6\baselineskip}
\begin{definition}\PRFR{Jan 22nd}
    Let $D = (V,A)$ be a digraph, $\N \ni n>0$, and \label{n:walk}$w=(w_i)_{i=1}^n \in
    V^{n}$.  Then $w$ is a \deftext{walk} in $D$, if for all
    $i\in\SET{1,2,\ldots, n-1}$ there is an arc $(w_i,w_{i+1})\in A$. The
    \deftext{start vertex} --- or \deftext{initial vertex} --- of $w$ is $w_{1}$, and the \deftext[end vertex of a walk]{end vertex}
    --- or \deftext{terminal vertex} --- of
    $w$ is denoted by $w_{-1} = w_{n}$. The set of vertices \deftext[visited by w@visited by $w$]{visited
    by $\bm w$} is denoted by \( \left|w\right| = \SET{w_1,w_2,\ldots,w_n}.\) 
    The set of all arcs \deftext[traversed by w@traversed by $w$]{traversed by $\bm w$} is denoted by
    \( \left| w\right|_A = \SET{(w_i,w_{i+1})\mid i=1,2,\ldots,n-1}.\)
    The set of all
    walks in $D$ is denoted by\label{n:PbfD}
    \[ \Wbf(D) = 
      \SET{w \in \bigcup_{n=1}^{\infty} V^n ~\middle|~ w\text{~is a walk in~}D}.\]
    The \deftext[length of $w$]{length} of the walk $w=(w_i)_{i=1}^n$ is $n$. A walk $w$ is
    called \deftext[trivial walk]{trivial}, if its length is $1$.  We say that a walk $w$ is
    a \deftext[path]{path}, if no vertex is visited twice by $w$, i.e. if
    $w_{i}=w_{j}$ already implies $i=j$. The family of paths in $D$ is denoted by
    \label{n:simplePath}
    \[ \Pbf (D) = \SET{p\in \Wbf(D) \mid p \text{~is a path}}.\]
    Furthermore, for all $u,v\in V$, we shall denote the set of all walks from $u$ to $v$ in $D$ by\label{n:PathUV}
    \[ \Wbf(D; u,v) = \SET{w\in \Wbf(D) \mid w_1=u \txtand w_{-1}=v} \]
    and the set of all paths from $u$ to $v$ in $D$ by\label{n:SPathUV}
    \[ \Pbf(D; u,v) = \SET{p\in \Pbf(D) \mid p_1 = u \txtand p_{-1} = v}. \qedhere\]
\end{definition}

\noindent Instead of $w=(w_i)_{i=1}^n=(w_1,w_2,\ldots,w_n)$ we shall also write $w_{1}w_{2}\ldots w_{n}$. Furthermore, we set the convention that $(w_{1}w_{2}\ldots w_{n})^i$ shall denote the walk consisting of $i$-iterations of the visited-vertex sequence $w_{1}w_{2}\ldots w_{n}$, i.e. $(abc)^{3}$ shall denote the non-path walk $abcabcabc$. 


\begin{definition}\PRFR{Jan 22nd}
    Let $D= (V,A)$ be a digraph, and let $w=(w_i)_{{i=1}}^{n}\in \Wbf(D)$ and $q=(q_i)_{i=1}^{m}\in\Wbf(D)$ be walks.
    Then we say that \deftext[compatible walks]{$\bm w$ is compatible with $\bm q$}, if $w_{n}=q_{1}$.
    In that case, we define the \deftext[concatenation of w and q@concatenation of $w$ and $q$]{concatenation of $\bm w$ and $\bm q$} to be the walk\label{n:pdotq}
    \( w.q = w_{1}w_{2}\ldots w_{n}q_{2}q_{3}\ldots q_{m} \).
\end{definition}


\needspace{4\baselineskip}

\vspace*{-\baselineskip} 
\begin{wrapfigure}{r}{5cm}
\vspace{\baselineskip}
\begin{centering}~~
\begin{tikzpicture}
\NodeB{U}{label={left:{$u$}}}  at (-1,1) {};
\NodeB{V}{label={below:{$v$}}} at ( 1,1) {};
\NodeB{W}{label={above:{$w$}}}  at ( 1,2) {};
\NodeB{S}{label={left:{$s$}}}  at ( 0,1) {};
\NodeB{T}{label={right:{$t$}}} at ( 2,2) {};
\draw [->] (S) .. controls (.5,2) and (.5,2) .. (W);
\draw [->] (S)->(V);
\draw [->] (V) .. controls (1.5,1) and (1.5, 1) .. (T);
\draw [->] (W)->(T);
\draw [<-] (V) .. controls (.7,1.2) and (.7,1.8) .. (W);
\draw [->] (V) .. controls (1.3,1.2) and (1.3,1.8) .. (W);
\path
    (U) edge [loop above,looseness=15,->,out=120,in=40] (U);
\end{tikzpicture}
\end{centering}%
\vspace*{-2\baselineskip} 
\end{wrapfigure}
~ 

\begin{example}\PRFR{Jan 22nd}
	Consider again the digraph $D$ from Example~\ref{ex:131}.  The trivial walks
	in $D$ are $u$, $v$, $w$, $s$, and $t$. The paths in $D$ of length
	greater than $1$ are $sw$, $sv$, $vw$, $vt$, $wv$, $wt$, $svw$, $svt$, $swt$,
	$svwt$, $swvt$. The non-path walks in $D$ are $u^{i}$ $(i\geq 2)$,
	$w(vw)^{j}$, $v(wv)^{j}$, $w(vw)^{j}v$, $v(wv)^{j}w$, $sw(vw)^{j}$,
	$sv(wv)^{j}$, $sw(vw)^{j}v$, $sv(wv)^{j}w$, $w(vw)^{j}t$, $v(wv)^{j}t$,
	$w(vw)^{j}vt$, $v(wv)^{j}wt$, $sw(vw)^{j}t$, $sv(wv)^{j}t$, $sw(vw)^{j}vt$,
	and $sv(wv)^{j}wt$  $(j\geq 1)$.
\end{example}

\begin{definition}\PRFR{Jan 22nd}
    Let $D=(V,A)$ be a digraph. A walk $w=(w_i)_{i=1}^n \in \Wbf(D)$ is called \deftext{cycle walk}, or shorter, \deftextX{cycle},
    if $w_1 = w_n$ and $w_2 w_3 \ldots w_n$ is a path.
\end{definition}

\noindent Observe that there is no {\em ``empty walk''}, thus the trivial walks are not considered to be cycles.

\begin{definition}\PRFR{Jan 22nd}
    Let $D=(V,A)$ be a digraph. $D$ shall be called \deftext{acyclic digraph}, if every walk $w\in\Wbf(D)$ is a path, i.e. whenever
    \( \Wbf(D) = \Pbf(D) \).
\end{definition}

\begin{corollary}\PRFR{Jan 22nd}
    Let $D=(V,A)$ be a digraph. Then $D$ is acyclic if and only if there is no cycle
     walk $w\in\Wbf(D)$.
\end{corollary}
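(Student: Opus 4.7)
The plan is to prove the two implications separately, with the forward direction being essentially immediate and the backward direction requiring a minimality argument to extract a cycle walk from an arbitrary non-path walk.

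For the forward direction (acyclic implies no cycle walk), I would argue by contradiction. Suppose $D$ is acyclic, so $\Wbf(D) = \Pbf(D)$, yet there exists a cycle walk $w = w_1 w_2 \ldots w_n \in \Wbf(D)$. By the definition of a cycle walk, $w_2 w_3 \ldots w_n$ is a path (so in particular $n \geq 2$) and $w_1 = w_n$. Hence $w$ visits the vertex $w_1 = w_n$ at two distinct positions $1$ and $n$, so $w$ fails the defining property of a path. This contradicts $w \in \Pbf(D)$.

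For the backward direction (no cycle walk implies acyclic), I would prove the contrapositive: if some walk $w = w_1 w_2 \ldots w_n \in \Wbf(D)$ fails to be a path, then some cycle walk exists. Since $w$ is not a path, the set $S = \SET{(i,j) \mid 1 \leq i < j \leq n \text{ and } w_i = w_j}$ is non-empty. Choose $(i,j) \in S$ minimizing $j - i$. I claim the subwalk $w' = w_i w_{i+1} \ldots w_j$ is a cycle walk. Indeed, $w'$ is a walk in $D$ because its consecutive pairs are consecutive pairs of $w$, and $w'_1 = w_i = w_j = w'_{-1}$ by choice of $(i,j)$. It remains to check that $w_{i+1} w_{i+2} \ldots w_j$ is a path; this follows from the minimality of $j - i$, since any repetition $w_k = w_l$ with $i+1 \leq k < l \leq j$ would yield a pair $(k,l) \in S$ with $l - k < j - i$, contradicting the choice of $(i,j)$.

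The main obstacle, such as it is, is purely bookkeeping around the definition of a cycle walk, which insists that $w_2 w_3 \ldots w_n$ (and not merely the vertex sequence with both endpoints identified) be a path. The minimality-of-$j-i$ trick sidesteps this cleanly: it is precisely the condition that guarantees no further repetitions occur strictly inside the cut-out subwalk. No case analysis on whether $w_i$ is a loop (the case $j = i+1$) is needed, since loops $(w_i, w_i) \in A$ do yield valid cycle walks $w_i w_i$ under the stated definition.
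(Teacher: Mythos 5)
Your proof is correct, and it is essentially the intended argument: the paper states this as an immediate corollary of the definitions (the extraction of a cycle sub-walk from a non-path walk already appears in Remark~\ref{rem:straighteningWalks}), and your minimality-of-$j-i$ argument is a clean way to make that extraction precise. Both directions check out, including the edge cases ($n\geq 2$ for cycle walks, and loops giving cycle walks of length $2$).
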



\subsection{Routings and Transversals}

\PRFR{Jan 22nd}
In this section, we introduce a correspondence between families of pair-wise vertex disjoint paths in digraphs and
transversals of certain families of sets, which will be valuable for the study of gammoids.

\needspace{6\baselineskip}
\begin{definition}\PRFR{Jan 22nd}
	Let $D = (V,A)$ be a digraph, and $X,Y\subseteq V$. A \deftext{routing} from $X$ to $Y$ in $D$ is a family of paths $R\subseteq \Pbf(D)$ such that
	\begin{enumerate}\ROMANENUM
		\item for each $x\in X$ there is some $p\in R$ with $p_{1}=x$,
		\item for all $p\in R$ the end vertex $p_{-1}\in Y$, and
		\item for all $p,q\in R$, either $p=q$ or $\left|p\right|\cap \left|q\right| = \emptyset$.
	\end{enumerate}
	We will write $R\colon X\routesto Y$ in $D$ as a shorthand for ``$R$ is a routing from $X$ to $Y$
    in $D$'', and if no confusion is possible, \label{n:routing}
    we just write $X\routesto Y$ instead of $R$ and $R\colon X\routesto Y$.
    A routing $R$ is called \deftext{linking} from $X$ to $Y$, if it is a routing onto $Y$, i.e. whenever $Y = \SET{p_{-1}\mid p\in R}$.
\end{definition}


\begin{figure}[t]
\begin{center}
\includegraphics[scale=1]{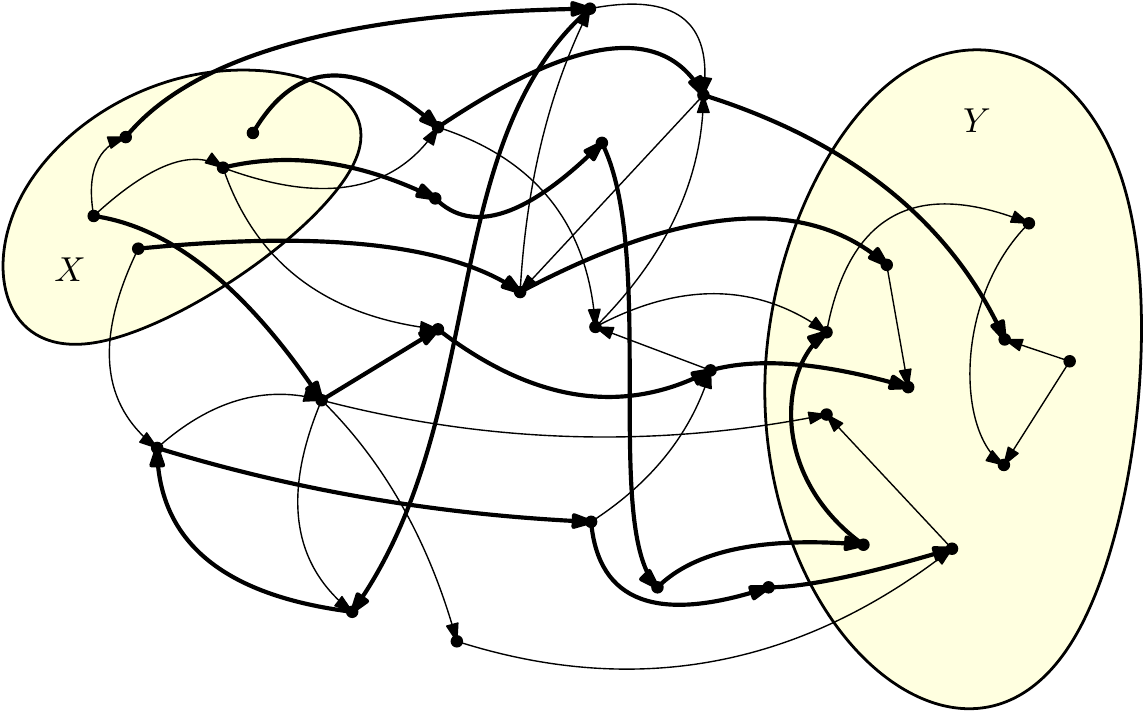}
\end{center}
\caption{Example of a routing $R\colon X\routesto Y$ in a digraph. The paths that belong to $R$ are depicted by bold arrows.}
\end{figure}

\begin{remark}\label{rem:straighteningWalks}\PRFR{Jan 22nd}
    We defined a routing to consist of paths only, but for most of what we are concerned with when using the concept of a routing in $D$,
     the property that the walks of $R$ are indeed paths is not crucial. Let $R'\subseteq \Wbf(D)$ be a family of walks such that 
     for all $w,q\in R'$ the implication $\left| w \right|\cap \left| q \right|\not= \emptyset \Rightarrow w=q$ holds.
      Now let $w\in R' \BS \Pbf(D)$
    be a non-path walk in $R'$. Then there is a vertex $v\in \left| w \right|$ such that
    $w = w_0 v p' v w_1$ where $w_0,w_1,p'\in \Wbf(D)$ such that $vp'v$ is a cycle walk.
     Clearly $\hat{w} = w_0 v w_1$ has less such cycle sub-walks than $w$ and $\left| w_0 v w_1 \right|\subseteq \left| w \right|$,
     thus we may iteratively straighten out any cycles in the walks from $R'$ without changing the 
     start and the end vertices. The property, that the family of walks consists of pair-wise vertex 
     disjoint walks, remains intact throughout the procedure.
      The result of this process is a family of paths which is a linking
       from $\SET{w_1\mid w\in R'}$ onto $\SET{w_{-1}\mid w\in R'}$ in $D$.
\end{remark}

\noindent The straightening out of cycle walks in routings is a special case of the following construction.

\needspace{4\baselineskip}
\begin{definition}\PRFR{Mar 7th}
    Let $D=(V,A)$ be a digraph, $w \in \Wbf(D)$ be a walk. Then $w$ is called \deftext[essential path in $D$]{essential path in $\bm D$}, if for all $w'\in \Wbf(D; w_1,w_{-1})$ with $\left| w' \right| \subseteq \left| w \right|$ we have
    $\left| w' \right| = \left| w \right|$.
    Let $R\subseteq \Pbf(D)$ be a routing, then $R$ is called \deftext[essential routing in $D$]{essential routing in $\bm D$},
    if  $p$ is an essential path in $D$ for all $p\in R$.
\end{definition}

\begin{lemma}\PRFR{Mar 7th}
    Let $D=(V,A)$ be a digraph and let $R\subseteq \Pbf(D)$ be a routing from $X$ to $Y$ in $D$. Then
    there is an essential routing from $X$ to $Y$ in $D$.
\end{lemma}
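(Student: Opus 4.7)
The plan is to construct the essential routing by replacing each path in $R$ with an essential path that uses only vertices from the original path, so that pairwise vertex-disjointness is preserved automatically.

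First I would fix an arbitrary $p \in R$ and consider the family
\[ \Wbf_p = \SET{w \in \Wbf(D; p_1, p_{-1}) ~\middle|~ \left| w \right| \subseteq \left| p \right|}. \]
Since $p \in \Wbf_p$ this family is non-empty, and since $\left| p \right|$ is finite, the subfamily of walks from $\Wbf_p$ whose vertex set is minimal with respect to set-inclusion is non-empty. I would pick any such $w^{(p)}$. Applying the straightening procedure from Remark~\ref{rem:straighteningWalks} to $w^{(p)}$ does not introduce new visited vertices and preserves the start and end vertices, so after straightening we may assume $w^{(p)} \in \Pbf(D; p_1, p_{-1})$. By minimality of $\left|w^{(p)}\right|$ among walks in $\Wbf_p$, any walk $w' \in \Wbf(D; p_1, p_{-1})$ with $\left| w' \right| \subseteq \left| w^{(p)} \right|$ satisfies $\left| w' \right| = \left| w^{(p)} \right|$; indeed $\left| w' \right| \subseteq \left| w^{(p)} \right| \subseteq \left| p \right|$ places $w'$ in $\Wbf_p$, and minimality forces equality. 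Thus $w^{(p)}$ is an essential path in $D$.

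Next I would set $R' = \SET{w^{(p)} ~\middle|~ p \in R}$ and verify that $R'$ is a routing from $X$ to $Y$ in $D$. For each $x \in X$ there is some $p \in R$ with $p_1 = x$, and the corresponding $w^{(p)} \in R'$ has $w^{(p)}_1 = x$, so condition (i) holds. For every $w^{(p)} \in R'$ the end vertex is $w^{(p)}_{-1} = p_{-1} \in Y$, so condition (ii) holds. For pairwise vertex-disjointness, let $p, q \in R$ with $p \neq q$; then $\left| p \right| \cap \left| q \right| = \emptyset$, and since $\left| w^{(p)} \right| \subseteq \left| p \right|$ and $\left| w^{(q)} \right| \subseteq \left| q \right|$, we get $\left| w^{(p)} \right| \cap \left| w^{(q)} \right| = \emptyset$. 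If $p \neq q$ were to produce the same $w^{(p)} = w^{(q)}$, then the common start vertex would lie in both $\left| p \right|$ and $\left| q \right|$, contradicting disjointness; hence distinct paths in $R$ yield distinct paths in $R'$, and condition (iii) holds.

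The only subtle point is checking that the essentiality witness for $w^{(p)}$ is genuinely against all walks from $w^{(p)}_1$ to $w^{(p)}_{-1}$ in $D$ with vertex set contained in $\left| w^{(p)} \right|$, not merely those in $\Wbf_p$. This is automatic because $\left| w^{(p)} \right| \subseteq \left| p \right|$, so any such competitor walk automatically lies in $\Wbf_p$, and the minimality argument applies. The main obstacle — if any — is purely notational: ensuring that the iterative straightening from Remark~\ref{rem:straighteningWalks} does not enlarge the vertex set, which is immediate from its description since each straightening step deletes a cycle sub-walk. Thus $R'$ is the desired essential routing from $X$ to $Y$ in $D$.
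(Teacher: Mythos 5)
Your proof is correct and uses essentially the same idea as the paper's: replace a non-essential path $p$ by a path from $p_1$ to $p_{-1}$ whose vertex set is $\subseteq$-minimal inside $\left|p\right|$, which is automatically essential and preserves disjointness. The paper merely packages this as an induction on the number of non-essential paths, replacing one at a time, whereas you perform all replacements simultaneously; the substance is identical.
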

\begin{proof}\PRFR{Mar 7th}
    We show this by induction on the number of paths in $R$ that are not essential.
    In the base case, $R$ itself is an essential routing from $X$ to $Y$.
    Now let $p\in R$ be a path that is not essential in $D$. Then there is a path $p'\in \Pbf(D,p_1,p_{-1})$ 
    with $\left| p' \right|\subsetneq \left| p \right|$, such that $\left| p' \right|$ is $\subseteq$-minimal. 
    Such $p'$ is an essential path. 
    Then $\left( R\BSET{p} \right)\cup\SET{p'}$ is a routing from $X$ to $Y$ in $D$ with fewer non-essential paths,
    so by induction hypothesis there is an essential routing from $X$ to $Y$ in $D$.
\end{proof}


\needspace{12\baselineskip}

\vspace*{-\baselineskip} 
\begin{wrapfigure}{r}{4.5cm}
\vspace{\baselineskip}
\begin{centering}~~
\begin{tikzpicture}
\NodeB{A1}{label={left:{$a_{1}$}}}  at (-1,0) {};
\NodeB{A2}{label={left:{$a_{2}$}}}  at (-1,-1) {};
\NodeB{A3}{label={left:{$a_{3}$}}}  at (-1,-2) {};
\NodeB{A4}{label={left:{$a_{4}$}}}  at (-1,-3) {};
\NodeB{A5}{label={left:{$a_{5}$}}}  at (-1,-4) {};
\NodeB{A6}{label={left:{$a_{6}$}}}  at (-1,-5) {};
\NodeB{A7}{label={left:{$a_{7}$}}}  at (-1,-6) {};

\NodeB{B1}{label={right:{$b_{1}$}}}  at (1,-1) {};
\NodeB{B2}{label={right:{$b_{2}$}}}  at (1,-2) {};
\NodeB{B3}{label={right:{$b_{3}$}}}  at (1,-3) {};
\NodeB{B4}{label={right:{$b_{4}$}}}  at (1,-4) {};

\draw[->] (A1) -- (B1);

\draw[->] (A1) -- (B2);

\draw[->,ultra thick] (A1) -- (B3);

\draw[->] (A2) -- (B1);
\draw[->] (A2) -- (B2);

\draw[->,ultra thick] (A3) -- (B1);

\draw[->] (A3) -- (B3);

\draw[->] (A3) -- (B4);

\draw[->] (A5) -- (B1);

\draw[->] (A5) -- (B2);

\draw[->,ultra thick] (A5) -- (B4);

\draw[->] (A6) -- (B4);
\draw[->] (A7) -- (B4);

\end{tikzpicture}
\end{centering}%
\vspace*{-1\baselineskip} 
\end{wrapfigure}
~ 

\begin{example}
\PRFR{Jan 22nd}
 	Let $A,B$ be finite disjoint sets, and let $\Delta \subseteq A\times B$.
 	Then $D=(A\disunion B, \Delta)$ is a directed bipartite graph.  Let
 	$R\colon X\routesto Y$ be a linking in $D$ with $X\subseteq A$ and $Y\subseteq B$.
    Then the set $M = \SET{\left|
 	p\right| \vphantom{A^A}~\middle|~ p\in R}$ is a matching in $D$. Conversely, if $M'$ is a
 	matching in $D$, we can construct an induced linking in $D$ from $M'$:
 	$R' = \SET{ ab \mid a\in A,\,b\in B,\,\SET{a,b}\in M}$. Furthermore, let
 	$\Acal_{\Delta} = (A_i)_{i\in B}$ be the family of subsets of $A$ where
 	$A_{b} = \SET{a\in A\mid (a,b)\in \Delta}$ for all $b\in B$. The linking $R$ then induces
 	a partial transversal $P=\SET{p_1 \mid p\in R}$ of $\Acal_{\Delta}$.
 	Conversely, if $P'$ is a partial transversal of $\Acal_{\Delta}$, then
 	there is an injective map $\iota\colon P\maparrow B$ such that for
 	all $p\in P$, $p \in A_{\iota(p)}$. Thus $R'' = \SET{ ab \mid a\in P,\,b=\iota(a)}$
 	is a linking in $D$.
\end{example}

\noindent The following connection between the linkings in directed graphs and the transversals of a set system, which define linkings in a bipartite graph that can be deduced from the digraph, has first been pointed out by A.W.~Ingleton and M.J.~Piff in \cite{IP73}. But first, we need to clarify how to deduce the correct family of sets given a digraph and a set of targets.

\needspace{7\baselineskip}
\begin{definition}\label{def:linkageSystem}\PRFR{Jan 22nd}
    Let $D=(V,A)$ be a digraph, and let $T\subseteq V$ be a set of vertices.
    The \deftext[linkage system of D to T@linkage system of $D$ to $T$]{linkage system of $\bm D$ to $\bm T$} -- denoted by
    $\Acal_{D,T}$ -- is defined to be the family\label{n:ADT}
    \[ \Acal_{D,T} = \left(A^{(D,T)}_i\right)_{i\in V\BS T} \subseteq V\]
    where for $v\in V\BS T$
    \[ A^{(D,T)}_{v} = \SET{w\in V\mid (v,w)\in A} \cup \SET{v}. \qedhere\]
\end{definition}

\begin{lemma}\label{lem:ADTtransversals}\PRFR{Jan 22nd}
        Let $D=(V,A)$ be a digraph, $T \subseteq V$.
        Every maximal partial transversal of $\Acal_{D,T}$ is a transversal of $\Acal_{D,T}$.
\end{lemma}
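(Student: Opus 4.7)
The plan is to exploit the reflexive nature of the family $\Acal_{D,T}$: by construction, $v \in A^{(D,T)}_v$ for every $v \in V \BS T$. This gives an immediate witness transversal, and then Corollary~\ref{cor:maximalpartialtransversals} forces every maximal partial transversal to have the same size, which turns out to be the size of the index set.

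First I would observe that the set $V\BS T$ is itself a transversal of $\Acal_{D,T}$: the identity map $\id\colon V\BS T \maparrow V\BS T$ is a bijection, and for every $v\in V\BS T$ we have $v\in A^{(D,T)}_v$ directly from Definition~\ref{def:linkageSystem}. In particular, $V\BS T$ is a partial transversal of $\Acal_{D,T}$ whose cardinality equals $\left|V\BS T\right|$, i.e.\ the cardinality of the index set of the family.

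Next I would invoke Corollary~\ref{cor:maximalpartialtransversals}, which states that any two maximal partial transversals of a finite family of subsets have equal cardinality. Since $V\BS T$ is already a (maximal) partial transversal of size $\left|V\BS T\right|$, every maximal partial transversal $P$ of $\Acal_{D,T}$ must satisfy $\left|P\right| = \left|V\BS T\right|$.

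Finally, let $P\subseteq V$ be any maximal partial transversal of $\Acal_{D,T}$ and let $\iota\colon P\maparrow V\BS T$ be an injection witnessing the partial transversal property, so that $p\in A^{(D,T)}_{\iota(p)}$ for all $p\in P$. Because $\left|P\right| = \left|V\BS T\right|$ and $\iota$ is injective between finite sets of equal cardinality, $\iota$ is a bijection. Hence $P$ is a transversal of $\Acal_{D,T}$, as required. There is no real obstacle here; the only subtlety is recognising that the diagonal element $v\in A^{(D,T)}_v$ built into the definition immediately produces a full transversal, so the statement reduces to the equicardinality of maximal partial transversals.
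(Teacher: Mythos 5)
Your proof is correct and follows essentially the same route as the paper: observe that the diagonal element $v\in A^{(D,T)}_v$ makes $V\BS T$ itself a transversal, deduce that every maximal partial transversal has cardinality $\left|V\BS T\right|$ (the paper phrases this via the rank of $M(\Acal_{D,T})$ and axiom {\em (B2)}, which is exactly what Corollary~\ref{cor:maximalpartialtransversals} packages), and conclude that the witnessing injection is a bijection. No substantive difference.
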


\begin{proof}\PRFR{Jan 22nd}
    Clearly, $V\BS T$ is a transversal of $\Acal_{D,T}$, and therefore 
    $\rk_{M(\Acal_{D,T})} = \left| V\BS T \right|$.
    Let $P$ be a maximal partial transversal of $\Acal_{D,T}$,
    then $P$ is a base of $M(\Acal_{D,T})$ and thus $\left| P \right| = \left| V\BS T \right|$ due to the equicardinality of bases
     {\em(B2)}. Therefore
    every injective map $\iota\colon P\maparrow V\BS T$ with $p\in A_{\iota(p)}^{(D,T)}$ for all $p\in P$ is a bijection that witnesses that $P$ is a transversal of $\Acal_{D,T}$.
\end{proof}

\noindent The following lemma has been named {\em The Fundamental Lemma} by A.W.~Ingleton and M.J.~Piff \cite{IP73}, who used it as the key to proving that strict gammoids are precisely the duals of transversal matroids. We are going to use it in order to show augmentation properties of routings in digraphs, too.

\begin{lemma}\label{lem:linkage}\PRFR{Jan 22nd}
    Let $D=(V,A)$ be a digraph, $S,T \subseteq V$.
    Then there is a linking from $S$ to $T$ in $D$, if and only if
    $V\BS S$ is a transversal of the linkage system $\Acal_{D,T}$.
\end{lemma}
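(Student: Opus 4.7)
The plan is to establish both directions by building an explicit correspondence between linkings $R\colon S\routesto T$ in $D$ and bijections $\tau\colon V\BS T\maparrow V\BS S$ witnessing that $V\BS S$ is a transversal of $\Acal_{D,T}$. Unravelling the definition of $\Acal_{D,T}$, such a $\tau$ satisfies for every $v\in V\BS T$ either $\tau(v)=v$ or $(v,\tau(v))\in A$. A preliminary observation I will use in both directions is that any linking $R\colon S\routesto T$ satisfies $\left|S\right|=\left|T\right|$, and that the existence of a bijection $V\BS T\maparrow V\BS S$ likewise forces $\left|S\right|=\left|T\right|$; in either direction this will upgrade injectivity to bijectivity.

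For the forward direction, I define $\tau$ by a case distinction. If $v\in V\BS T$ lies on some path $p\in R$, then since $p_{-1}\in T$ we have $v\not= p_{-1}$, so $p$ prescribes an immediate successor $v^+$ of $v$, and I set $\tau(v)=v^+$; otherwise $v$ lies on no path of $R$, hence $v\notin S$ (every $s\in S$ is the start of a path in $R$), and I set $\tau(v)=v$. To verify that $\tau$ takes values in $V\BS S$, a successor vertex $v^+$ on $p$ cannot itself be a starting vertex $s\in S$, because the vertex-disjointness of $R$ would force the path $p'\in R$ starting at $s$ to coincide with $p$, making $s$ appear twice on $p$, contradicting that $p$ is a path. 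Injectivity is immediate from the construction together with the injectivity of each path, and bijectivity then follows from the cardinality observation. Finally $\tau(v)\in A^{(D,T)}_v$ holds either because $(v,v^+)\in A$ or trivially from $\tau(v)=v$.

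For the converse, let $\sigma\colon V\BS S\maparrow V\BS T$ witness the transversal property and set $\tau=\sigma^{-1}$. For each $s\in S$ I inductively define $v_0^{(s)}=s$ and $v_{i+1}^{(s)}=\tau(v_i^{(s)})$ for as long as $v_i^{(s)}\in V\BS T$; if $s\in T$ already, the corresponding path is the trivial walk $s$. The main obstacle is to show that this sequence terminates in $T$ and is a path. Its key lemma is that $v_m^{(s)}\in V\BS S$ for every $m\geq 1$, since these vertices lie in the image of $\tau$. Consequently, any repetition $v_i^{(s)}=v_j^{(s)}$ with $0\leq i<j$ could be rolled back by applying the bijection $\tau^{-1}$ a total of $i$ times, yielding $s=v_{j-i}^{(s)}\in V\BS S$, contradicting $s\in S$. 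The same pullback argument precludes $\tau(v_i^{(s)})=v_i^{(s)}$, so each step picks up a genuine arc $(v_i^{(s)},v_{i+1}^{(s)})\in A$; finiteness of $V$ then forces the iteration to halt at some $v_k^{(s)}\in T$, giving a path $p^{(s)}=v_0^{(s)}v_1^{(s)}\ldots v_k^{(s)}$ from $s$ to $T$ in $D$.

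Finally I collect these into $R=\SET{p^{(s)}\mid s\in S}$ and verify that it is a linking. A shared vertex $v_i^{(s)}=v_j^{(s')}$ with $s\not= s'$, taken without loss of generality with $i\leq j$, would by the same pullback yield $s=v_{j-i}^{(s')}$: either $j-i=0$ forces $s=s'$, or $j-i\geq 1$ places $s$ in the image of $\tau$ and hence in $V\BS S$, both contradictions. Hence the paths in $R$ are pair-wise vertex-disjoint, so the $\left|S\right|$ terminal vertices are pair-wise distinct, lie in $T$, and by the cardinality observation $\left|S\right|=\left|T\right|$ they exhaust $T$; therefore $R\colon S\routesto T$ is a linking.
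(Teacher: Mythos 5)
Your proof is correct and follows essentially the same route as the paper's: the forward direction extracts the transversal bijection from the successor relation along the paths of the linking, and the converse iterates the bijection to trace out pairwise disjoint paths, using injectivity and finiteness to rule out cycles, fixed points, and collisions. The only difference is cosmetic — you work with the successor map $\tau=\sigma^{-1}\colon V\BS T\maparrow V\BS S$ and grow the paths forward from $S$, whereas the paper uses the predecessor map $\sigma\colon V\BS S\maparrow V\BS T$ and traces each path backward from its endpoint in $T$.
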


\noindent The proof presented here can be found on p.217 \cite{We76}, where the lemma is called
 {\em The Linkage Lemma}.

\begin{proof}\PRFR{Jan 22nd}
    Assume that $R\colon S\routesto T$ is a linking in $D$. We construct the bijective map
    $\sigma\colon V\BS S \maparrow V\BS T$ such that for $v\in V\BS S$, the image
    \[ \sigma(v) = \begin{cases} u & \quad \textit{(a)}
    \,\, \text{if }\exists p\in R\colon\,(u,v)\in\left| p\right|_A, \text{ and} \\
       v & \quad \textit{(b)} \,\, \text{otherwise.}
    \end{cases}\]
    The map $\sigma$ is well-defined because $R$ consists of pair-wise vertex
    disjoint paths in $D$; and whenever $v\in T$, then either $v\in S$
    in which case $v$ is not part of the domain of $\sigma$, or there is a
    non-trivial path $p\in R$ that ends in $v$. Then $\sigma(v) \notin T$
    since otherwise $R$ could not be onto $T$ as every path has precisely one
    end vertex. From the definition of $\Acal_{D,T}$ and the construction of
    $\sigma$ it is clear, that for every $v\in V\BS S$, $v \in
    A^{(D,T)}_{\sigma(v)}$. Assume that $\sigma$ is not injective, thus there
    are $v,w\in V\BS S$ with $v\not=w$, yet $\sigma(v) = \sigma(w)$. This is
    not possible if $v$ and $w$ are in the same case of $\sigma$. Thus without
    loss of generality we may assume that $\sigma$ maps $v$ through case {\em
    (a)} and $w$ through case {\em (b)}. Thus $\sigma(v) = \sigma(w) = w$, and
    $(w,v)\in \left| p\right|_{A}$ for some $p\in R$. Since for $w$ case {\em
    (b)} holds, we can infer that $w = p_{1}$ is the initial vertex of a path in $R$.
    But then $w\in S$ which is not part of the domain of $\sigma$. Therefore
    no such $v,w\in V$ exist and $\sigma$ is an injective map. Since $R$ is a
    linking, $\left| S \right| = \left| T \right|$ and $\left| V\BS S \right|
    = \left| V \BS T\right| < \infty$, thus $\sigma$ is a bijection and $V\BS
    S$ is indeed a transversal of $\Acal_{D,T}$.

\PRFR{Jan 22nd}
    \noindent 
    Conversely, assume that $V\BS S$ is a transversal of $\Acal_{D,T}$. Thus
    there is a bijection $\sigma\colon V\BS S \maparrow V\BS T$ such that for
    all $v\in V\BS S$, $v\in A^{(D,T)}_{\sigma(v)}$. We can construct a
    linking  $R\colon X\routesto Y$ from $\sigma$ in the following way:  for
    $v\in S\cap T$, we can let the trivial path $v \in R$.  For $v\in T\BS S$,
    there is some $k\in \N$ such that  $\sigma^{k}(v) \notin V\BS S$:
    assume that for every $k\in \N$, $\sigma^{k}(v)\in V\BS S$, then
    $\SET{\sigma^{k}(v)\vphantom{A^A}~\middle|~ k\in \N} \subseteq V\BS S$, yet $V\BS S$ is
    finite. Thus there must be some $k_0,k_1\in \N$ with $k_0 < k_{1}$ and
    $\sigma^{k_0}(v) = \sigma^{k_1}(v)$. Now let $k_{0},k_{1}\in \N$ be
    integers with $k_{0} < k_{1}$ and   $\sigma^{k_0}(v) = \sigma^{k_1}(v)$
    such that $k_{0}$ is smallest possible.    Clearly $v\notin V\BS T$, so
    $k_0 > 0$. But then $\sigma$ is a bijection, therefore   the pre-images of
    $\sigma^{k_0}(v)$ and $\sigma^{k_1}(v)$ coincide. Now we have
    $\sigma^{k_0-1}(v) = \sigma^{k_1-1}(v)$ which contradicts the minimality
    of $k_{0}$.   Thus the trajectory of $v$ under repetitions of $\sigma$ has
    no cycle and   therefore must be finite.   Let $k\in \N$ such that
    $\sigma^{k}(v) \notin V\BS S$. The range of $\sigma$ yields that $\sigma^{k}(v) \in V\BS
    T$ and therefore $\sigma^{k}(v)\in S\BS T$.   The construction of
    $\Acal_{D,T}$ guarantees that for every $i\in\SET{0,1,\ldots,k-1}$   there
    is an arc $(\sigma^i(v),\sigma^{i+1}(v))\in A$. Since $\sigma$-trajectories have no cycles, we can add the path
    $\sigma^{k}(v)\sigma^{k-1}(v)\ldots\sigma(v)v \in R$. All paths obtained
    from the above constructions are pair-wise vertex disjoint, 
    because $\sigma$ is a bijection of finite sets, and so $R$ is indeed a
    linking from $S$ to $T$ in $D$.
\end{proof}

\needspace{2\baselineskip}
\noindent We can extend Lemma~\ref{lem:linkage} to routings in the natural way.

\begin{lemma}\label{lem:routage}\PRFR{Jan 22nd}
    Let $D=(V,A)$ be a digraph, $S,T \subseteq V$.
    Then there is a routing from $S$ to $T$ in $D$, if and only if
    there is some $T'\subseteq T$ such that
    $V\BS (S\cup T')$ is a transversal of the linkage system $\Acal_{D,T}$.
\end{lemma}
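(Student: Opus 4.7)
The strategy will be to reduce to the Linkage Lemma (Lemma~\ref{lem:linkage}) by exploiting the fact that a routing $R\colon S\routesto T$ is exactly a linking from $S$ onto the subset $T^{*} := \SET{p_{-1}\mid p\in R}\subseteq T$. The subset $T'\subseteq T$ promised by the lemma will essentially be the complement $T\BS T^{*}$; the nontrivial task is to align cardinalities so that the transversal of $\Acal_{D,T}$ has precisely $\left|V\BS T\right|$ elements.

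For the forward direction, I would first argue that any routing $R$ may be \emph{trimmed}: replace each $p\in R$ by its initial subpath ending at the first vertex that lies in $T$. The trimmed family $R'$ is still a routing from $S$ to $T$ (the trimmed paths remain pair-wise vertex disjoint as subpaths of pair-wise disjoint paths), but now enjoys two convenient properties: (a) the only $T$-vertex on any $p'\in R'$ is its terminal $p'_{-1}$, and (b) every $v\in S\cap T$ is the terminal of the trivial path $v\in R'$. Let $T^{*}=\SET{p'_{-1}\mid p'\in R'}$ and $T'=T\BS T^{*}$; property (b) gives $S\cap T\subseteq T^{*}$, hence $S\cap T'=\emptyset$ and thus $\left|V\BS(S\cup T')\right|=\left|V\BS T\right|$. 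Applying Lemma~\ref{lem:linkage} to the linking $R'\colon S\routesto T^{*}$ yields a bijection $\sigma\colon V\BS S\maparrow V\BS T^{*}$ with $v\in A^{(D,T^{*})}_{\sigma(v)}$ for all $v$. I would then verify that $\sigma$ restricts to the required bijection $V\BS(S\cup T')\maparrow V\BS T$: for $v\in T'$, property (a) forces $v$ not to lie on any path of $R'$, so case $(b)$ of the $\sigma$-construction applies and $\sigma(v)=v\in T'$; for $v\in V\BS(S\cup T')$, the image $\sigma(v)$ is either $v$ itself (hence outside $T$ since $v\notin T$) or the predecessor of $v$ on some trimmed path, which by (a) also lies outside $T$. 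Since $A^{(D,T^{*})}_{w}=A^{(D,T)}_{w}$ as sets for every $w\in V\BS T$, the restricted bijection certifies that $V\BS(S\cup T')$ is a transversal of $\Acal_{D,T}$.

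For the converse, suppose $V\BS(S\cup T')$ is a transversal of $\Acal_{D,T}$ witnessed by a bijection $\tau$. Replacing $T'$ by $T'\BS S$ if necessary does not change $S\cup T'$ and therefore preserves the transversal property, so I would assume $S\cap T'=\emptyset$. Set $T^{*}:=T\BS T'\subseteq T$; a direct cardinality computation using $\left|V\BS(S\cup T')\right|=\left|V\BS T\right|$ gives $\left|S\right|=\left|T^{*}\right|$, equivalently $\left|V\BS S\right|=\left|V\BS T^{*}\right|$. Extend $\tau$ to $\tilde\tau\colon V\BS S\maparrow V\BS T^{*}$ by declaring $\tilde\tau(v):=v$ for every $v\in T'$; since $v\in A^{(D,T^{*})}_{v}$ by the definition of the linkage system, $\tilde\tau$ is a bijection witnessing that $V\BS S$ is a transversal of $\Acal_{D,T^{*}}$. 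Lemma~\ref{lem:linkage} then provides a linking from $S$ to $T^{*}$ in $D$, and since $T^{*}\subseteq T$ this linking is, in particular, a routing from $S$ to $T$.

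The principal obstacle is the cardinality bookkeeping on the forward side: a routing need not visit all of $T$, and arbitrary routings may pass through interior vertices of $T$, which would destroy the identification of $T'$ with $T\BS T^{*}$. The trimming step resolves this by guaranteeing both the disjointness $S\cap T'=\emptyset$ and the fact that predecessors on any path stay away from $T$, so that the bijection supplied by the Linkage Lemma can be cleanly restricted onto $V\BS T$.
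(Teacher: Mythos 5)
Your proof is correct, and both directions ultimately rest on the Linkage Lemma~\ref{lem:linkage}, but you route through it differently than the paper does. The paper keeps the target set $T$ fixed and enlarges the \emph{source}: given a routing $R\colon S\routesto T$ with used targets $T_R$ and $T'=T\BS T_R$, it adjoins the trivial paths at $T'$ to obtain a linking $S\cup T'\routesto T$ \emph{onto all of} $T$, so Lemma~\ref{lem:linkage} immediately hands back a transversal of $\Acal_{D,T}$ with the right index set; conversely it applies Lemma~\ref{lem:linkage} to get a linking $S\cup T'\routesto T$ and simply discards the paths not starting in $S$. You instead shrink the \emph{target} to $T^{*}=T\BS T'$, apply Lemma~\ref{lem:linkage} with respect to $T^{*}$, and then translate between transversals of $\Acal_{D,T^{*}}$ and $\Acal_{D,T}$ by restricting (respectively extending by the identity on $T'$) the witnessing bijection — which works because $A^{(D,T)}_{v}$ does not actually depend on $T$ for $v\in V\BS T$. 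The paper's route is shorter because the index set of the linkage system never changes; yours requires the explicit bookkeeping with $\sigma$ and $\tilde\tau$. On the other hand, your trimming step is a genuine improvement in rigor: the paper's forward direction silently adds the trivial path $t'$ for each unused target $t'\in T'$, which is only a legitimate routing if $t'$ is not an interior vertex of some path of $R$ — exactly the situation your trimming (and the paper's own Lemma~\ref{lem:TwlogSinks}, which it does not invoke here) rules out. The one small gap in your write-up is that a routing, as defined in the paper, may contain paths whose initial vertices lie outside $S$; you should discard those before asserting that the trimmed family is a \emph{linking from $S$} onto $T^{*}$, just as the paper implicitly does.
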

 
\begin{proof}\PRFR{Jan 22nd}
Every routing $R\colon S\routesto T$ in $D$ consists of a linking from
$S$ to $T_R = \SET{p_{-1}\mid p\in R}$ and a set of unused targets
$T'=T\BS T_{R}$, and thus for every $t'\in T'$, we may add the trivial path $t'$
to $R$ and obtain the linking $R'\colon S\cup T' \routesto T$ 
where $R' = R\cup \SET{t'\in \Pbf(D)\mid t'\in T'}$. Therefore, $R$ induces the 
transversal $V\BS \left( S \cup T' \right)$ of $\Acal_{D,T}$ by Lemma~\ref{lem:linkage}.
Conversely, let $T'\subseteq T$ such that $V\BS(S\cup T')$ is a transversal of $\Acal_{D,T}$. By Lemma~\ref{lem:linkage} there is a linking $R\colon S\cup T' \routesto T$ in $D$. Then
$R' = \SET{p\in R \mid p_1 \in S}$ is a routing from $S$ to $T$ in $D$.
\end{proof}


\subsection{Menger's Theorem}

\PRFR{Jan 22nd}
F.~Göring published an intriguingly short and beautiful proof of Menger's Theorem \cite{Go00}. 
In this section,
we present a slightly more verbose variant of this proof, which is transformed 
into the context
of this work, along with two required yet straightforward definitions.

%
%
%

\begin{definition}\PRFR{Jan 22nd}
    Let $D=(V,A)$ be a digraph, $S,T\subseteq V$. 
    A set $X\subseteq V$ is called \deftext[S-T-separator@$S$-$T$-separator]{$\bm{S}$-$\bm{T}$-separator} in $D$, if for
    every $p\in\Pbf(D)$ with $p_{1}\in S$ and $p_{-1}\in T$, $\left|p\right|\cap X\not= \emptyset$.
\end{definition}

\PRFR{Jan 22nd}
\noindent It is easy to see that straightening out cycle paths from walks (Remark~\ref{rem:straighteningWalks}) yields paths using a subset of the original vertices,
thus if $X$ is an $S$-$T$-separator, then for all $w\in \Wbf$ with $w_1 \in S$ and $w_{-1}\in T$ we also have $\left| w \right|\cap X\not=\emptyset$.

\begin{example}\label{ex:outermarginseparates}\PRFR{Jan 22nd}
    Let $D=(V,A)$ be a digraph, $S\subseteq V$. Then $\partial S$ is a minimal
    $S$-$(V\BS S)$-separator in $D$: Since $S\cap (V\BS S) = \emptyset$, 
    any walk from $s\in S$ to $t\in V\BS S$ must use an
    arc that starts in $S$ but ends outside of $S$, and therefore it must visit an element of the outer margin $\partial S$. 
    Now let $v\in \partial S$, then there is some $u\in S$ such that $(u,v)\in A$. So $uv\in \Pbf(D)$ 
    is a path from $S$ to $V\BS S$, yet $\partial S \cap \left| uv \right| = \SET{v}$, therefore
     $\partial S\BSET{v}$ is not an $S$-$(V\BS S)$-separator; thus $\partial S$ is a minimal $S$-$(V\BS S)$-separator in $D$.
\end{example}

\PRFR{Jan 22nd}
\noindent Clearly, both $S$ and $T$ are $S$-$T$-separators in every digraph $D$. Furthermore,
every $S$-$T$-separator in $D$ is an $S'$-$T'$-separator for every $S'\subseteq S$ and $T'\subseteq T$.

\begin{definition}\PRFR{Jan 22nd}
    Let $D=(V,A)$ be a digraph, $S,T\subseteq V$. A routing $Y\routesto T$
    in $D$ is called \index{connector}\deftext[S-T-connector@$S$-$T$-connector]{$\bm{S}$-$\bm{T}$-connector} in $D$,
    whenever $Y\subseteq S$.
\end{definition}

\begin{theorem}[Menger's Theorem \cite{Me27,Go00}]\label{thm:MengerGoering}\index{Menger's Theorem}\PRFR{Jan 22nd}
Let $D=(V,A)$ be a digraph, $S,T\subseteq V$ subsets of vertices of $D$,
and $k \in \N$ the minimal cardinality of an $S$-$T$-separator in $D$.
There is an $S$-$T$-connector $R\colon Y\routesto T$ that consists of
$k$ paths.
\end{theorem}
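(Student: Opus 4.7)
The plan is to prove this by induction on $|A|$, following F.~Göring's elegant argument. For the base case $|A|=0$, every walk in $D$ is trivial, so the $S$-$T$-paths are exactly the singletons $v$ with $v\in S\cap T$; hence $S\cap T$ itself is the minimum $S$-$T$-separator, $k=|S\cap T|$, and the family of trivial paths indexed by $S\cap T$ is an $S$-$T$-connector of $k$ paths.

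For the inductive step, I would pick an arbitrary arc $e=(u,v)\in A$ and set $D'=(V,A\setminus\{e\})$ with minimum $S$-$T$-separator size $k'$. Since $X'\cup\{u\}$ is an $S$-$T$-separator of $D$ whenever $X'$ is one of $D'$, we have $k-1\leq k' \leq k$. In the case $k'=k$, the induction hypothesis applied to $D'$ yields $k$ vertex-disjoint $S$-$T$-paths in $D'$, which are automatically disjoint $S$-$T$-paths in $D$. In the case $k'=k-1$, pick a minimum $S$-$T$-separator $X'$ of $D'$; then both $X_1:=X'\cup\{u\}$ and $X_2:=X'\cup\{v\}$ are minimum $S$-$T$-separators of $D$, each of size $k$. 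Fix $X:=X_1$ and split $D$ along $X$: let $V_S$ consist of $X$ together with every vertex reachable from $S$ by a walk whose intersection with $X$ lies in its terminal vertex, and let $V_T$ be defined symmetrically using walks to $T$. Form the induced subdigraphs $D_S$ and $D_T$ on $V_S$ and $V_T$, respectively. In $D_S$ the set $X$ is a minimum $S$-$X$-separator of size $k$, and in $D_T$ it is a minimum $X$-$T$-separator of size $k$. Provided both subdigraphs have strictly fewer arcs than $D$, the induction hypothesis delivers $k$ vertex-disjoint $S$-$X$-paths in $D_S$ and $k$ vertex-disjoint $X$-$T$-paths in $D_T$, which concatenate along their shared endpoints in $X$ into the desired $S$-$T$-connector of $k$ paths in $D$.

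The main obstacle is ruling out the degenerate situation in which one of the two subproblems inherits all the arcs of $D$, so that the induction hypothesis cannot be invoked directly. I expect to handle this by alternating between the candidate separators $X_1$ and $X_2$ according to which endpoint of $e=(u,v)$ is absorbed into $X$, exploiting that the arc $e$ itself cannot lie inside both $D_S$ and $D_T$ simultaneously once the endpoint belonging to $X$ is fixed. In the remaining borderline configurations, I can invoke the symmetric argument on the opposite digraph $D^{\opp}$, which swaps the roles of $S$ and $T$ (connectors and separators transform covariantly under this reversal, as is already implicit in the construction of Lemma~\ref{lem:linkage}). Combining these two flips guarantees that in at least one configuration both subproblems have strictly smaller arc sets, and the induction closes.
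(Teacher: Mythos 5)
Your argument diverges from the paper's at the crucial point: after producing the two candidate separators $X_1=X'\cup\{u\}$ and $X_2=X'\cup\{v\}$, you apply the induction hypothesis to the two \emph{induced subdigraphs} $D_S$ and $D_T$ of $D$ obtained by splitting along the separator. The paper (following G\"oring) instead applies the induction hypothesis twice to $D'=D\setminus\{e\}$ itself — once for the terminal pair $(S,\,X'\cup\{v\})$ and once for $(X'\cup\{u\},\,T)$ — and then glues the two resulting connectors inside $D$, using the deleted arc $e$ to join the path ending at $v$ with the path starting at $u$. Since $D'$ always has strictly fewer arcs than $D$, that version of the induction closes unconditionally; yours does not, and the degenerate case you flag is a genuine gap, not a technicality that your two ``flips'' repair.

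Concretely, your claim that switching between $X_1$ and $X_2$, or passing to $D^{\opp}$, always yields a configuration in which \emph{both} $D_S$ and $D_T$ have strictly fewer arcs than $D$ is false. Take $D=(\{s,t\},\{(s,t)\})$ with $S=\{s\}$, $T=\{t\}$, so $k=1$; deleting the only arc gives $X'=\emptyset$, $X_1=\{s\}$, $X_2=\{t\}$. For $X=X_1$ one finds $V_T=\{s,t\}$, so $D_T=D$; for $X=X_2$ one finds $V_S=\{s,t\}$, so $D_S=D$; and $D^{\opp}$ with $S$ and $T$ exchanged reproduces the same situation by symmetry. (Your observation that $e$ cannot lie in both $D_S$ and $D_T$ is correct but insufficient: it only guarantees that \emph{one} of the two subdigraphs is proper.) The underlying phenomenon is the well-known obstruction to the ``split at a minimum separator'' proof of Menger's theorem: whenever every minimum separator is essentially contained in $S\cup T$, the split returns the original problem on one side, and this case needs a separate argument. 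To repair your proof you would either have to supply that separate argument, or restructure the induction as the paper does, so that both applications of the hypothesis are made to $D\setminus\{e\}$ with modified terminal sets rather than to induced subdigraphs of $D$.
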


\needspace{7\baselineskip}

\vspace*{-\baselineskip} 
\begin{wrapfigure}{r}{7.5cm}
\vspace{1.25\baselineskip}
\begin{centering}~~
\includegraphics{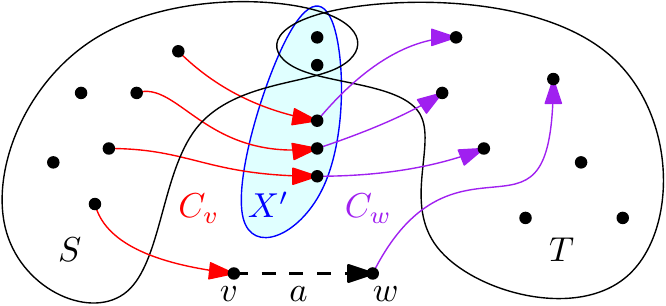}
\end{centering}%
\vspace*{-.5\baselineskip} 
\end{wrapfigure}
~ 

\begin{proof}\PRFR{Jan 22nd}
By induction on $\left|A\right|$. If $A=\emptyset$, then there are only trivial paths in
$\Pbf(D)$. Thus $S\cap T$ is a minimal $S$-$T$-separator. Clearly,
$\SET{v \in \Pbf(D) ~\middle|~ v\in S\cap T}$ is a routing from $S\cap T$ to $T$ in $D$.

\PRFR{Jan 22nd}
\noindent
For the induction step, let $(v,w)=a\in A$. The theorem holds for
$D'=(V,A\BSET{a})$ by induction hypothesis, if $D'$ has no $S$-$T$-separator
$X'$ with $\left|X'\right|<k$, the claim follows directly from the induction hypothesis. Now assume
that there is an $S$-$T$-separator $X'$ with $\left|X'\right|<k$ in $D'$. Then
$X'\cup\SET{v}$ as well as $X'\cup\SET{w}$ are $S$-$T$-separators in $D$, therefore
$k\leq \left|X'\right|+1$. Furthermore, every $\left( X'\cup\SET{w} \right)$-$T$-separator in $D'$ and every
$S$-$\left( X'\cup\SET{v} \right)$-separator in $D'$ is an $S$-$T$-separator in $D$. 
By induction
hypothesis there is an $\left( X'\cup\SET{w} \right)$-$T$-connector $C_{w}\subseteq \Pbf(D')\subseteq \Pbf(D)$
with $\left|C_{w}\right|=k$ and $p\in C_{w}$ where $p_{1}=w$ and there is an $S$-$\left( X'\cup\SET{v} \right)$-connector $C_{v}\subseteq \Pbf(D')\subseteq\Pbf(D)$ with $\left|C_{v}\right|=k$ and $q\in C_{v}$ where $q_{-1}=v$.
Then \[ R = \SET{a.b \mid a\in C_{v}, b\in C_{w},\,a_{-1}=b_{1} } \cup \SET{qp}\]
is an $S$-$T$-connector in $D$ with $\left|R\right|=k$: For any two $r\in C_{v}$ and $s\in C_{w}$, we have $\left|r\right|\cap\left|s\right|\subseteq X'$, because otherwise there would be a walk from $S$ to $T$ in $D'$ that does not hit the $S$-$T$-separator $X'$ -- a contradiction. Therefore, for any two walks $x,y\in R$ with $x\not= y$, we obtain $\left|x\right|\cap \left|y\right|=\emptyset$. The walks in $R$ are paths because the concatenation $p.q$ of two compatible paths $p,q$ is a non-path walk if and only if $\left| p \right| \cap \left| q \right| \supsetneq \SET{q_{1}}$  and thus $R$ is indeed a routing.
\end{proof}

\PRFR{Jan 22nd}
\noindent It is immediate from the respective definitions that every
$S$-$T$-separator in $D$ must hit every path of  every $S$-$T$-connector in
$D$ at least once,  therefore Menger's Theorem is the non-trivial part of
the following strong duality\footnote{Strong duality is a notion from linear
programming, stating that if the primal and the dual linear optimization
problems are both feasible, then their optimal values are attained and equal.}
theorem.

\begin{corollary}\label{cor:MengerA}\PRFR{Jan 22nd}
    Let $D=(V,A)$ be a digraph, $S,T\subseteq V$. The maximal cardinality of an $S$-$T$-connector in $D$ equals the minimal cardinality of an $S$-$T$-separator in $D$.
\end{corollary}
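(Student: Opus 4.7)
The plan is to establish this as a two-direction inequality, with one direction being a direct quotation of Theorem~\ref{thm:MengerGoering} and the other being an elementary pigeonhole-style argument.

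First I would address the easy inequality: every $S$-$T$-connector in $D$ has cardinality at most the cardinality of every $S$-$T$-separator in $D$. Let $R\colon Y\routesto T$ be an $S$-$T$-connector and let $X$ be an $S$-$T$-separator. For each $p\in R$ we have $p_1\in Y\subseteq S$ and $p_{-1}\in T$, so by the defining property of an $S$-$T$-separator, $\left| p \right|\cap X\not=\emptyset$. Thus I can pick (for instance, via the axiom of choice on the finite sets $\left| p \right| \cap X$) some vertex $x_p\in \left| p \right|\cap X$ for every $p\in R$. Since the paths in $R$ are pair-wise vertex disjoint, the assignment $p\mapsto x_p$ is an injective map from $R$ into $X$, yielding $\left| R \right|\leq \left| X \right|$. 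In particular, the maximal cardinality of an $S$-$T$-connector in $D$ is at most the minimal cardinality of an $S$-$T$-separator in $D$.

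For the reverse inequality, I would invoke Theorem~\ref{thm:MengerGoering} directly. Let $k$ denote the minimal cardinality of an $S$-$T$-separator in $D$. Then that theorem yields an $S$-$T$-connector $R\colon Y\routesto T$ with $\left| R \right| = k$. Consequently, the maximal cardinality of an $S$-$T$-connector in $D$ is at least $k$, and combined with the inequality from the previous paragraph, equality is forced.

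I do not anticipate any genuine obstacles here, since the hard work is already contained in Theorem~\ref{thm:MengerGoering}. The only small care needed is in handling the case where $S\cap T\not= \emptyset$, but this is already accommodated both by the definition of a connector (which allows trivial paths $v$ for $v\in S\cap T$, contributing to both $S$ and $T$ sides) and by the separator definition (which forces $S\cap T\subseteq X$ for every $S$-$T$-separator $X$, because each $v\in S\cap T$ gives a trivial path $v\in\Pbf(D)$ with $p_1,p_{-1}=v$).
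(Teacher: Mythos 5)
Your proof is correct and follows the same route as the paper: the paper likewise observes that the inequality ``connector size $\leq$ separator size'' is immediate from the definitions (each separator must hit every path of a connector, and the paths are pair-wise vertex disjoint), and that Theorem~\ref{thm:MengerGoering} supplies the non-trivial direction. You have merely written out the injective assignment $p\mapsto x_p$ that the paper leaves implicit.
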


\noindent Another immediate consequence is that every vertex of a minimal separator is hit by every maximal connector.

\begin{corollary}\label{cor:Menger}\PRFR{Jan 22nd}
    Let $D=(V,A)$ be a digraph, $S,T,X\subseteq V$, such that $X$ is an $S$-$T$-separator of minimal cardinality. Every $S$-$T$-connector $R$ with maximal cardinality in $D$ has the property
    \[ \forall x\in X\colon\,\exists p\in R\colon\,x\in \left| p\right| .\]
\end{corollary}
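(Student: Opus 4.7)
The plan is to combine Corollary~\ref{cor:MengerA} with the disjointness of the paths in a routing and a one-line counting argument, so the proof will be short and essentially forced.

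First I would apply Corollary~\ref{cor:MengerA} to the hypotheses: since $X$ is an $S$-$T$-separator of minimal cardinality, we have $\left| X \right| = k$, and since $R$ is an $S$-$T$-connector of maximal cardinality, we likewise have $\left| R \right| = k$. Thus $\left| R \right| = \left| X \right|$, which is the key numerical fact driving the argument.

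Next I would observe that every $p \in R$ satisfies $\left| p \right| \cap X \neq \emptyset$. Indeed, by the definition of an $S$-$T$-connector, $p \in \Pbf(D)$ with $p_1 \in S$ (since the routing starts inside $S$) and $p_{-1} \in T$; and $X$ is, by definition of a separator, hit by every such path. This gives a well-defined assignment $p \mapsto \left| p \right| \cap X \subseteq X$ from $R$ into the non-empty subsets of $X$.

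The main (and only nontrivial) step is then a counting/disjointness argument. Because $R$ is a routing, the paths in $R$ are pairwise vertex disjoint, so the sets $\left| p \right| \cap X$ for $p \in R$ are pairwise disjoint subsets of $X$. Consequently
\[
\left| X \right| \;\geq\; \sum_{p \in R} \left| \, \left| p \right| \cap X \, \right| \;\geq\; \sum_{p \in R} 1 \;=\; \left| R \right| \;=\; \left| X \right|,
\]
where the first inequality uses disjointness and the second uses that each intersection is non-empty. Equality throughout forces $\left| \, \left| p \right| \cap X \, \right| = 1$ for every $p \in R$ and $\bigcup_{p \in R} \left( \left| p \right| \cap X \right) = X$. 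In particular every $x \in X$ lies in $\left| p \right|$ for some (in fact unique) $p \in R$, which is exactly the conclusion. I do not expect any real obstacle here; the only subtlety is being careful to invoke Corollary~\ref{cor:MengerA} for both the separator and the connector side so that the cardinalities coincide.
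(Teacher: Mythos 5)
Your proof is correct, and it is exactly the counting argument the paper has in mind: the paper states Corollary~\ref{cor:Menger} without proof as an ``immediate consequence'' of Corollary~\ref{cor:MengerA}, and your argument (equal cardinalities from Corollary~\ref{cor:MengerA}, each path meets $X$ since $X$ separates $S$ from $T$, pairwise vertex-disjointness of the paths in a routing, then the sandwich $\left| X \right| \geq \sum_{p\in R} \left|\,\left| p \right|\cap X\,\right| \geq \left| R \right| = \left| X \right|$) is precisely the standard way to make that explicit. No gaps.
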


\subsection{Augmentation of $S$-$T$-Connectors}

\PRFR{Jan 22nd}
Menger's Theorem states that if $R\colon X\routesto Y$ is an $S$-$T$-connector in $D$ with $\left| R \right| < \left| C \right|$ for every $S$-$T$-separator $C$, then there must be some
bigger $S$-$T$-connector in $D$.
In this section, we  prove that Menger's Theorem is still true if 
we consider only those $S$-$T$-connectors 
$R'\colon X'\routesto Y'$ with $X\subsetneq X'$.

\begin{theorem}\label{thm:augmentationCons}\PRFR{Jan 22nd}
    Let $D=(V,A)$ be a digraph, $S,T\subseteq V$, and let $A,B$ be two $S$-$T$-connectors
    in $D$ with $\left| A \right| < \left| B \right|$.
    Then there is an $S$-$T$-connector $C$ in $D$, such that $\left| C \right| = \left| A \right| + 1$ and $\SET{p_1\mid p\in A} \subseteq \SET{p_1\mid p\in C} \subseteq \SET{p_1 \mid p\in A\cup B}$.
\end{theorem}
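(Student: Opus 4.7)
The plan is to prove the statement by an augmenting-path argument in an auxiliary flow-like network. Let $X_A=\{p_1\mid p\in A\}$, $X_B=\{p_1\mid p\in B\}$, $Y_A=\{p_{-1}\mid p\in A\}$, and $Y_B=\{p_{-1}\mid p\in B\}$; vertex-disjointness of the paths inside a routing forces $|X_A|=|A|$ and $|X_B|=|B|$. I would construct an auxiliary digraph $\tilde D$ by splitting every $v\in V$ into $v^-$ and $v^+$ joined by a bottleneck arc $v^-\to v^+$, replacing each original arc $(u,v)$ of $D$ by $u^+\to v^-$, and attaching a super-source $\sigma$ with arcs $\sigma\to x^-$ for every $x\in X_A\cup X_B$, together with a super-sink $\tau$ with arcs $y^+\to\tau$ for every $y\in T$. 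Since the bottleneck arcs can be used at most once, arc-disjoint $\sigma$-$\tau$ path systems in $\tilde D$ correspond bijectively to vertex-disjoint $S$-$T$-connectors in $D$ whose start vertices lie in $X_A\cup X_B$. The connectors $A$ and $B$ lift under this correspondence to arc-disjoint $\sigma$-$\tau$ path systems $\tilde A$ and $\tilde B$ in $\tilde D$ of cardinalities $|A|$ and $|B|$, respectively.

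Next, I would produce an augmenting path in the residual network of $\tilde A$. Viewing $\tilde A$ and $\tilde B$ as integral $\sigma$-$\tau$ flows, the difference $\tilde B-\tilde A$ is a feasible flow in the residual network with net value $|B|-|A|>0$. By the standard flow decomposition, $\tilde B-\tilde A$ splits into a finite family of $\sigma$-$\tau$ augmenting paths and cycles in the residual; strict positivity of the net value forces the existence of at least one augmenting $\sigma$-$\tau$ path $Q$. The only forward residual arcs leaving $\sigma$ are $\sigma\to x^-$ with $x\in(X_A\cup X_B)\setminus X_A=X_B\setminus X_A$, so $Q$ must begin with an arc $\sigma\to x_0^-$ for some $x_0\in X_B\setminus X_A$; similarly $Q$ must terminate at $\tau$ via $y_0^+\to\tau$ for some $y_0\in Y_B\subseteq T$.

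Finally, the augmented system $\tilde A^+=\tilde A\oplus Q$ (arc-wise symmetric difference) is an arc-disjoint family of $|A|+1$ paths from $\sigma$ to $\tau$ in $\tilde D$. Because the residual contains no reverse arcs at $\sigma$, the start arcs used by $\tilde A^+$ are exactly $\{\sigma\to x^-\mid x\in X_A\cup\{x_0\}\}$. Pulling $\tilde A^+$ back to $D$ and straightening any non-path walks that may arise (Remark~\ref{rem:straighteningWalks}) yields an $S$-$T$-connector $C$ with $|C|=|A|+1$ and $X_A\subseteq\{p_1\mid p\in C\}\subseteq X_A\cup X_B$, as required. The main obstacle is making the existence of $Q$ fully rigorous in a purely combinatorial manner: one has to verify that $\tilde B-\tilde A$ genuinely decomposes into $\sigma$-$\tau$ paths and cycles in the residual, that positivity of net value forces at least one such path, and that the swap $\tilde A\oplus Q$ produces an arc-disjoint family of paths rather than a messier union once one pulls back to $D$. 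Careful bookkeeping is needed when arcs of $\tilde A$ and $\tilde B$ overlap, which is exactly what the residual formalism is designed to handle.
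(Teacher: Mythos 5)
Your argument is correct, but it takes a genuinely different route from the paper. You prove the augmentation property by the classical network-flow method: vertex-splitting to turn vertex-disjointness into arc-disjointness, a super-source attached to $X_A\cup X_B$ and a super-sink attached to $T$, and then an augmenting path obtained from the path/cycle decomposition of the difference flow $\tilde B-\tilde A$ in the residual network of $\tilde A$. Your bookkeeping at $\sigma$ (only forward residual arcs leave $\sigma$, and a simple augmenting path never returns to $\sigma$, so the symmetric difference adds exactly one new start vertex $x_0\in X_B\setminus X_A$ while preserving all of $X_A$) is exactly what is needed to get $\SET{p_1\mid p\in A}\subseteq\SET{p_1\mid p\in C}\subseteq\SET{p_1\mid p\in A\cup B}$, and Remark~\ref{rem:straighteningWalks} covers the pull-back to $D$. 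The paper instead stays entirely inside the transversal-matroid framework it has already built: by the Linkage Lemma~\ref{lem:linkage}, $V\BS A_1$ and $V\BS B_1$ are transversals of the linkage system $\Acal'=\Acal_{D,A_{-1}\cup B_{-1}}$, so $A_1$ and $B_1$ are independent in the dual matroid $M(\Acal')^\ast$ (Lemma~\ref{lem:spanningdual}); the independence augmentation Lemma~\ref{lem:augmentation} applied in that dual produces a set $X$ with $A_1\subseteq X\subseteq A_1\cup B_1$ and $\left|X\right|=\left|A_1\right|+1$ inside a base, and Lemmas~\ref{lem:ADTtransversals} and \ref{lem:linkage} convert this back into the desired linking. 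The trade-off: your proof is self-contained modulo standard max-flow facts (flow decomposition, residual augmentation) that the paper never develops — it proves Menger via Göring's argument, not via flows — so you would have to supply that machinery or replace it by a hand-rolled symmetric-difference argument on the two path systems; the paper's proof leans on transversal-matroid theory already established, which is also exactly the form that generalizes to matroid induction along a digraph (as the paper notes at the start of Chapter~2), whereas your flow argument generalizes naturally in the capacitated direction instead.
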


\noindent This proof is based on the argumentation found on p.~220 in \cite{We76}.

\begin{proof}
    \PRFR{Jan 22nd}
    Let $A_{1} = \SET{p_1\mid p\in A}$ and $B_{1} = \SET{p_1\mid p\in B}$ be
    the initial vertices of paths in $A$ and $B$, and let  $A_{-1} = \SET{p_{-1}\mid
    p\in A}$ and $B_{-1} = \SET{p_{-1}\mid p\in B}$ be the terminal vertices of paths in
    $A$ and $B$. By Lemma~\ref{lem:linkage} we see that $V\BS A_{1}$ is a
    transversal of the linkage system $\Acal_{D,A_{-1}}$ and $V\BS B_{1}$ is a
    transversal of the linkage system $\Acal_{D,B_{-1}}$. Consider the linkage
    system $\Acal' = \Acal_{D,A_{-1}\cup B_{-1}}$, clearly $\Acal'$ is both a
    subfamily of $\Acal_{D,A_{-1}}$ and $\Acal_{D,B_{-1}}$. Therefore, $V\BS
    A_{1}$ and $V \BS B_{1}$ both contain a maximal partial transversal of
    $\Acal'$, thus $V\BS A_{1}$ and $V\BS B_{1}$ each contain a base of
    $M(\Acal')$. By Lemma~\ref{lem:spanningdual}, $A_{1}$ and $B_{1}$ are
    independent sets of the dual matroid $M(\Acal')^\ast$, and by
    Lemma~\ref{lem:augmentation} there is a base $X$ of $A_{1}\cup B_{1}$ in
    $M(\Acal')^{\ast}$, such that $A_{1}\subseteq X \subseteq A_{1}\cup
    B_{1}$. But then $V\BS X$ is a spanning set of $M(\Acal')$, therefore it
    contains a maximal partial transversal $V\BS P$ of $\Acal'$ where
    $X\subseteq P$. By Lemma~\ref{lem:ADTtransversals} we obtain that $V\BS P$
    is also a transversal of $\Acal'$.  Again it follows from
    Lemma~\ref{lem:linkage} that there is a linking $L\colon P\routesto
    (A_{-1}\cup B_{-1})$ in $D$. Now $A_{1} \subseteq X$, furthermore $X\cap
    \left(B_{1}\BS A_1\right)\not=\emptyset$ and $X\subseteq P$, thus $A_1
    \subseteq P$ and  $P\cap (B_{1}\BS A_1)\not= \emptyset$. Therefore there is an element $b\in
    P\cap (B_{1}\BS A_{1})$ which can be used to filter the augmented $S$-$T$-connector from $P$:
    The linking $ C = \SET{p\in L\mid p_1\in A\cup\SET{b}}$
    is the desired augmented $S$-$T$-connector.
\end{proof}

\cleardoublepage

\chapter{Gammoids}

\PRFR{Jan 22nd}
J.H.~Mason first introduced the notions of a gammoid and of a strict gammoid.
 Both are matroids that arise from free matroids through matroid
induction \cite{M72}: Given a digraph $D=(V,A)$ and a matroid $N=(E,\Ical)$, the set of
vertices, from which there is a routing onto some $T\subseteq V$  in $D$
with $T\in
\Ical$, forms a family of independent sets of a matroid on the ground set $V$.
The resulting matroid is called the matroid induced by $D$ from $N$. The general case of
matroids induced by $D$ from $N$ is connected to the special case of gammoids, where $N$ is a free matroid,
 through the following generalization:
 The augmentation theorem for $S$-$T$-connectors in $D$ still holds
if we restrict the class of all $S$-$T$-connectors in $D$ to the class of $S$-$T$-connectors in $D$ that
link onto an independent set of a given matroid on $T$ --- a proof may be obtained by replacing the transversal matroid $M(\Acal')$ 
presented by the linkage system $\Acal' = \Acal_{D,A_{-1}\cup B_{-1}}$ in the proof of Theorem~\ref{thm:augmentationCons}
 with a suitable matroid $M(\Delta',N)$ obtained through bipartite matroid induction with respect to the directed bipartite graph $\Delta'$ associated with the linkage system $\Acal'$ through Definition~\ref{def:arcSystemDelta}. But since we are most interested
in a certain special case of matroid induction by directed graphs, we omit this concept
for now and give a direct definition of gammoids instead.

\needspace{10\baselineskip}
\section{Definition and Representations}

\begin{definition}\label{def:gammoid}\PRFR{Jan 22nd}
    Let $D = (V,A)$ be a digraph, $E\subseteq V$,
    and $T\subseteq V$. 
    The \deftext[gammoid represented by DTE@gammoid represented by $(D,T,E)$]{gammoid represented by $\bm{(D,T,E)}$} is defined to be the matroid $\Gamma(D,T,E)=(E,\Ical)$\label{n:GTDE}
     where
    \[ \Ical = \SET{X\subseteq E \mid \text{there is a routing } X\routesto T \text{ in D}}. \]
    The elements of $T$ are usually called \deftextX{sinks} in this context, although they are not required to be actual sinks of the digraph $D$. To avoid confusion, 
    we shall call the elements of $T$ \deftext{targets} in this work. A matroid $M'=(E',\Ical')$ is called \deftextX{gammoid}, if there is a digraph $D'=(V',A')$ and a set $T'\subseteq V'$ such that $M' = \Gamma(D',T',E')$.
\end{definition}

\needspace{4\baselineskip}
\begin{lemma}\PRFR{Jan 22nd}
    Let $D = (V,A)$ be a digraph, $E\subseteq V$,
    and $T\subseteq V$. Then  $\Gamma(D,T,E)$ is a matroid.
\end{lemma}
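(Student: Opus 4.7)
The plan is to verify the three independence axioms (I1), (I2), (I3) of Definition~\ref{def:indepAxioms} for the pair $(E,\Ical)$.

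For (I1), I would observe that the empty family of paths trivially satisfies the three defining conditions of a routing from $\emptyset$ to $T$ in $D$, so $\emptyset\in \Ical$.

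For (I2), suppose $I\in\Ical$ and $J\subseteq I$. Let $R\colon I\routesto T$ be a witnessing routing. Define $R' = \SET{p\in R \mid p_1\in J}$. For every $j\in J\subseteq I$ there is some $p\in R$ with $p_1=j$, so the first routing condition holds for $R'$ with respect to $J$; the remaining conditions are inherited from $R$. Thus $J\in\Ical$.

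The main obstacle is (I3), and this is where I would invoke Theorem~\ref{thm:augmentationCons}. Let $J,I\in\Ical$ with $\left|J\right|<\left|I\right|$, and fix routings $A\colon J\routesto T$ and $B\colon I\routesto T$ in $D$. Since $J,I\subseteq E\subseteq V$, both $A$ and $B$ are $V$-$T$-connectors in $D$ (with $V$ playing the role of $S$ in Theorem~\ref{thm:augmentationCons}). Note that $\left|A\right|=\left|J\right|<\left|I\right|=\left|B\right|$, because the paths in a routing from a set $X$ are in bijection with their initial vertices in $X$ by the disjointness condition. Theorem~\ref{thm:augmentationCons} now produces a $V$-$T$-connector $C$ in $D$ with
\[ \left|C\right| = \left|A\right|+1 \quad\text{and}\quad \SET{p_1\mid p\in A} \,\subseteq\, \SET{p_1\mid p\in C} \,\subseteq\, \SET{p_1\mid p\in A\cup B}. \]

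Setting $J_C = \SET{p_1\mid p\in C}$, we have $J\subseteq J_C$, $\left|J_C\right|=\left|J\right|+1$, and $J_C\subseteq J\cup I$. Hence there is a unique element $i\in J_C\setminus J$, and that element lies in $I\setminus J$. Since $C$ is a routing from $J_C=J\cup\SET{i}$ to $T$ in $D$ and $J\cup\SET{i}\subseteq E$, we conclude $J\cup\SET{i}\in\Ical$, establishing (I3). This completes the verification that $\Gamma(D,T,E)$ is a matroid.
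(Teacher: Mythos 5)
Your proof is correct and follows essentially the same route as the paper: the empty routing for (I1), restricting a routing to the paths starting in the subset for (I2), and Theorem~\ref{thm:augmentationCons} for (I3). The only cosmetic difference is that you take $S=V$ where the paper takes $S=X\cup Y$ when applying the augmentation theorem; since the theorem's conclusion already confines the new initial vertices to $\SET{p_1\mid p\in A\cup B}$, this changes nothing.
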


\begin{proof}\PRFR{Jan 22nd}
	Let $\Gamma(D,T,E) = (E,\Ical)$.
	Clearly, the empty routing $\emptyset \subseteq \Pbf(D)$ routes $\emptyset$ to $T$,
	therefore $\emptyset \in \Ical$, so {\em (I1)} holds.
	Also, if $R\colon X\routesto T$ is a routing from $X$ to $T$ in $D$, and if $Y\subseteq X$, then $\SET{p\in R \mid p_1\in Y}$ is a routing from $Y$ to $T$ in $D$, therefore
	{\em (I2)} holds, too.
	Now let $X,Y\in \Ical$ with $\left| X \right| < \left| Y \right|$. Then there are routings $R\colon X\routesto T$ and $S\colon Y\routesto T$ in $D$. We may
	regard $R$ and $S$ as $(X\cup Y)$-$T$-connectors in $D$. Thus by 
	Theorem~\ref{thm:augmentationCons} there is a routing $C$ from $X'$ to $T$ such that
	$X\subsetneq X' \subseteq X\cup Y$, so there is an element $y\in X'\BS X \subseteq Y$ such that $X\cup\SET{y}\in \Ical$.
	Therefore {\em (I3)} holds and, consequently, $\Gamma(D,T,E)$ is a matroid.
\end{proof}

\begin{lemma} \label{lem:rkEqMaxConnector}\PRFR{Jan 22nd}
	Let $D=(V,A)$, $E\subseteq V$, $T\subseteq V$, $M = \Gamma(D,T,E)$, and $X\subseteq E$.
	Then $\rk_M(X)$ equals the size of a maximal $X$-$T$-connector in $D$.
\end{lemma}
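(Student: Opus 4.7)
The plan is to derive the equality almost directly from the definitions. Let $k$ denote the maximum size of an $X$-$T$-connector in $D$. By definition, such a connector is a routing $R\colon Y\routesto T$ in $D$ with $Y\subseteq X$. Because the paths of $R$ are pairwise vertex-disjoint by condition \emph{(iii)} of the routing definition, distinct paths in $R$ have distinct initial vertices; combining this with condition \emph{(i)} shows that $|R|=|Y|$, so the ``size'' of the connector coincides with $|Y|$.

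Next, I would unfold the definition of the gammoid: by Definition~\ref{def:gammoid}, $M=\Gamma(D,T,E)=(E,\Ical)$ with $\Ical=\SET{Y\subseteq E\mid \text{there is a routing } Y\routesto T \text{ in } D}$. Consequently,
\[
	k \,=\, \max\SET{|Y| \mid Y\subseteq X,\, \exists \text{ routing } Y\routesto T \text{ in } D} \,=\, \max\SET{|Y| \mid Y\subseteq X,\, Y\in\Ical}.
\]
The right-hand side is precisely $\rk_M(X)$ by Definition~\ref{def:rank}, so the two quantities agree.

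The main (minor) thing to verify is the claim that an $X$-$T$-connector of size $n$ corresponds bijectively to a set $Y\in\Ical$ of cardinality $n$ with $Y\subseteq X$. In one direction, given $R\colon Y\routesto T$ with $Y\subseteq X$, restricting to the paths of $R$ whose initial vertices lie in $Y$ gives a routing witnessing $Y\in\Ical$, and the restriction does not decrease cardinality since by vertex-disjointness each $y\in Y$ contributes exactly one path. In the converse direction, any routing $Y\routesto T$ witnessing $Y\in\Ical$ is itself an $X$-$T$-connector of size $|Y|$ whenever $Y\subseteq X$. There is no genuine obstacle here; the lemma is essentially a rephrasing of Definitions~\ref{def:rank} and~\ref{def:gammoid}, and the entire proof should fit into a few lines.
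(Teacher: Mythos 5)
Your proof is correct and follows essentially the same route as the paper's: both arguments reduce the claim to the observation that, by Definition~\ref{def:gammoid}, the independent subsets of $X$ are exactly the sets $Y\subseteq X$ admitting a routing $Y\routesto T$, so the rank formula of Definition~\ref{def:rank} and the maximum connector size are the same maximum. If anything, your version is slightly more streamlined, since the paper fixes one maximal connector $C\colon X_0\routesto T$ and bounds $\rk_M(X)$ from above and below by $\left|C\right|$ (the upper bound tacitly using augmentation of independent sets), whereas you identify the two maxima outright; both proofs rely on the same tacit convention that the size of a connector $R\colon Y\routesto T$ equals $\left|Y\right|$.
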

\begin{proof}\PRFR{Jan 22nd}
	Let $(E,\Ical) = M$ and let $C\colon X_0\routesto T$ be a maximal-cardinality $X$-$T$-connector, then clearly for all $x\in X\BS X_0'$, there is no routing $X_0\cup\SET{x}\routesto T$ in $D$, therefore $X_0\cup\SET{x}\notin \Ical$ for all $x\in X\BS X_0$. Thus $\rk_M(X) < \left| C \right| + 1$. We have $X_0 \in \Ical$ since $\Gamma(D,T,E)$ is defined that way. Thus $\left| C \right| = \left| X_0 \right| \leq \rk_M(X)$, which yields $\rk_M(X) = \left| C \right|$.
\end{proof}


\subsection{Switching Between Representations}

\PRFR{Jan 22nd}
From the definition of a gammoid $M$, it is clear that any given representation $(D,T,E)$ of a gammoid
cannot be unique for $M$, because the number of vertices of $D$ is not constrained. Therefore every
gammoid has a myriad of representations, and some of these representations are nicer than others, also depending on the purpose.
In this section, we deal with operations on representations $(D,T,E)$ that leave the
represented gammoid fixed. 

\bigskip
\noindent 
Without loss of generality we may always assume that a gammoid is presented by some $(D,T,E)$ where $T$ consists only of sinks of $D$.

\begin{lemma}\label{lem:TwlogSinks}\PRFR{Jan 22nd}
	Let $D = (V,A)$ be a digraph, $E\subseteq V$, and $T\subseteq V$.
	Furthermore, let $D' = (V,A')$ where $A' = A \BS \left( T\times V \right)$.
	Then $\Gamma(D,T,E) = \Gamma(D',T,E)$.
\end{lemma}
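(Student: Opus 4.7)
The plan is to prove the two set-equalities of independent sets directly. Write $(E,\Ical) = \Gamma(D,T,E)$ and $(E,\Ical') = \Gamma(D',T,E)$, so the goal is $\Ical = \Ical'$.

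For the inclusion $\Ical' \subseteq \Ical$, I would simply observe that $A' \subseteq A$, hence every path in $D'$ is also a path in $D$. Therefore any routing $R \colon X \routesto T$ in $D'$ is automatically a routing $X \routesto T$ in $D$, so every $X \in \Ical'$ lies in $\Ical$.

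For the reverse inclusion $\Ical \subseteq \Ical'$, I would take $X \in \Ical$ witnessed by a routing $R \colon X \routesto T$ in $D$, and truncate each path at the first moment it enters $T$. Concretely, for every $p = p_1 p_2 \ldots p_n \in R$ let $k(p) = \min \SET{i \in \SET{1,\ldots,n}~\middle|~ p_i \in T}$; this minimum exists because $p_{-1} = p_n \in T$. Define $\tilde p = p_1 p_2 \ldots p_{k(p)}$ and set $R' = \SET{\tilde p ~\middle|~ p \in R}$. The key claim is that $R' \colon X \routesto T$ is a routing in $D'$. The start vertex of $\tilde p$ equals $p_1$, hence every $x \in X$ is an initial vertex of some path in $R'$; the end vertex $p_{k(p)}$ lies in $T$ by choice of $k(p)$; and pair-wise vertex disjointness of $R'$ follows from that of $R$ since $\left|\tilde p\right| \subseteq \left|p\right|$. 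It only remains to verify that each $\tilde p$ is actually a path in $D'$, i.e.\ that every arc it traverses still belongs to $A'$. By construction, for $i < k(p)$ we have $p_i \notin T$ by minimality of $k(p)$, so the arc $(p_i, p_{i+1}) \in A$ does not leave a vertex of $T$, hence $(p_i, p_{i+1}) \in A' = A \BS (T \times V)$. Thus $\tilde p \in \Pbf(D')$, which shows $R' \subseteq \Pbf(D')$ and completes the proof that $X \in \Ical'$.

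The argument involves no real obstacle: the only subtlety is choosing the \emph{first} visit to $T$ (rather than some arbitrary visit) to ensure the truncated initial segment uses no arc leaving $T$. This guarantees that removing $T \times V$ from the arc set does not invalidate the truncated paths, which is precisely the content that makes passing from $D$ to $D'$ harmless with respect to the gammoid.
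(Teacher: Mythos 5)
Your proof is correct and follows essentially the same route as the paper: the easy inclusion from $A'\subseteq A$, and for the converse, truncating each path of a routing in $D$ at its first visit to $T$ so that no traversed arc leaves a vertex of $T$. Your write-up merely spells out the verification of the routing properties in more detail than the paper does.
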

\begin{proof}\PRFR{Jan 22nd}
	Let $M = \Gamma(D,T,E) = (E,\Ical)$ and $M' = \Gamma(D',T,E) = (E,\Ical')$.
	Clearly every routing $R$ in $D'$ is also a routing in $D$, thus $\Ical' \subseteq \Ical$. Now let $X\in \Ical$ and let $R\colon X\routesto T$ be a routing in $D$.
	Then for every $p\in R$ there is a minimal integer $i(p)$ such that $p_{i(p)}\in T$.
	Let $R' = \SET{p_1 p_2 \ldots p_{i(p)} ~\middle|~ p\in R}$. $R'$ is a routing from $X$ to $T$ in $D'$. Thus $X\in \Ical'$ and therefore $\Ical \subseteq \Ical'$, so $M=M'$.
\end{proof}

\noindent Without loss of generality, we may always assume that the cardinality of the target set equals the rank of the gammoid.

\begin{lemma}\label{lem:rankequalsTcard}\PRFR{Jan 22nd}
	Let $M=(E,\Ical)$ be a gammoid. Then there is a digraph $D=(V,A)$ and a subset $T\subseteq V$, such that $\left| T \right|=\rk_M(E)$ and $M = \Gamma(D,T,E)$.
\end{lemma}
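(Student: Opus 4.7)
Let $M=(E,\Ical)$ be a gammoid, and fix any representation $(D_0,T_0,E)$ with $D_0=(V_0,A_0)$ and $M=\Gamma(D_0,T_0,E)$. Set $k=\rk_M(E)$. By Lemma~\ref{lem:rkEqMaxConnector}, $k$ equals the size of a maximum $E$-$T_0$-connector in $D_0$, so in particular $k\leq |T_0|$; if $|T_0|=k$ we are already done. Otherwise, the plan is to augment $D_0$ by attaching $k$ fresh ``terminal'' vertices that will play the role of the new target set.

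Concretely, I would pick $k$ pairwise distinct vertices $m_1,\ldots,m_k\notin V_0$, set $V=V_0\cup\SET{m_1,\ldots,m_k}$ and $A=A_0\cup\SET{(t,m_j)\mid t\in T_0,\,j=1,\ldots,k}$, and let $D=(V,A)$ and $T=\SET{m_1,\ldots,m_k}$, so that $|T|=k$. Note $T\cap E=\emptyset$ by construction, and every $m_j$ is a sink of $D$ whose only in-arcs originate from $T_0$.

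The heart of the proof is to show that $\Gamma(D,T,E)=M$; since both matroids share the ground set $E$, it is enough to match their independent sets. For the inclusion $\Ical\subseteq \Ical(\Gamma(D,T,E))$, I would take any routing $R_0\colon X\routesto T_0$ in $D_0$, note that $|R_0|=|X|\leq k$ (because $X\in\Ical$ and $k=\rk_M(E)$), and for each $p\in R_0$ append the single arc $(p_{-1},m_{j(p)})$ using a distinct $m_{j(p)}$ per path; vertex-disjointness is preserved because the $m_j$ are fresh and pairwise distinct, giving a routing from $X$ onto a subset of $T$ in $D$. For the reverse inclusion, I would take a routing $R^*\colon X\routesto T$ in $D$ and observe that each path $p^*\in R^*$ must terminate with an arc of the form $(t,m_j)$ with $t\in T_0$ (the only arcs entering any $m_j$); deleting the final vertex of every path yields a family of paths in $D_0$ ending in $T_0$ whose pairwise vertex-disjointness is inherited from $R^*$, hence a routing $X\routesto T_0$ in $D_0$, witnessing $X\in\Ical$.

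I do not expect a genuine obstacle: the construction is purely combinatorial, and the key mini-facts (namely $|X|\leq k$ for $X\in\Ical$, and the fact that the penultimate vertices of the paths of $R^*$ lie in $T_0$ and are pairwise distinct) follow directly from Lemma~\ref{lem:rkEqMaxConnector} and from vertex-disjointness of routings. The one point that deserves a line of care is verifying that the extension $p\mapsto p.(p_{-1},m_{j(p)})$ really is a path, which holds because $m_{j(p)}\notin V_0\supseteq \left|p\right|$, so no vertex is repeated.
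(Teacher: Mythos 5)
Your construction is exactly the one the paper uses: adjoin $\rk_M(E)$ fresh sink vertices, connect every old target to each of them, and check both inclusions by appending or truncating the final arc of each path. The argument is correct and complete, so there is nothing further to add.
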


\needspace{8\baselineskip}
\vspace*{-\baselineskip} 
\begin{wrapfigure}{r}{9.2cm}
\vspace{\baselineskip}
\begin{centering}~~
\includegraphics[width=9cm]{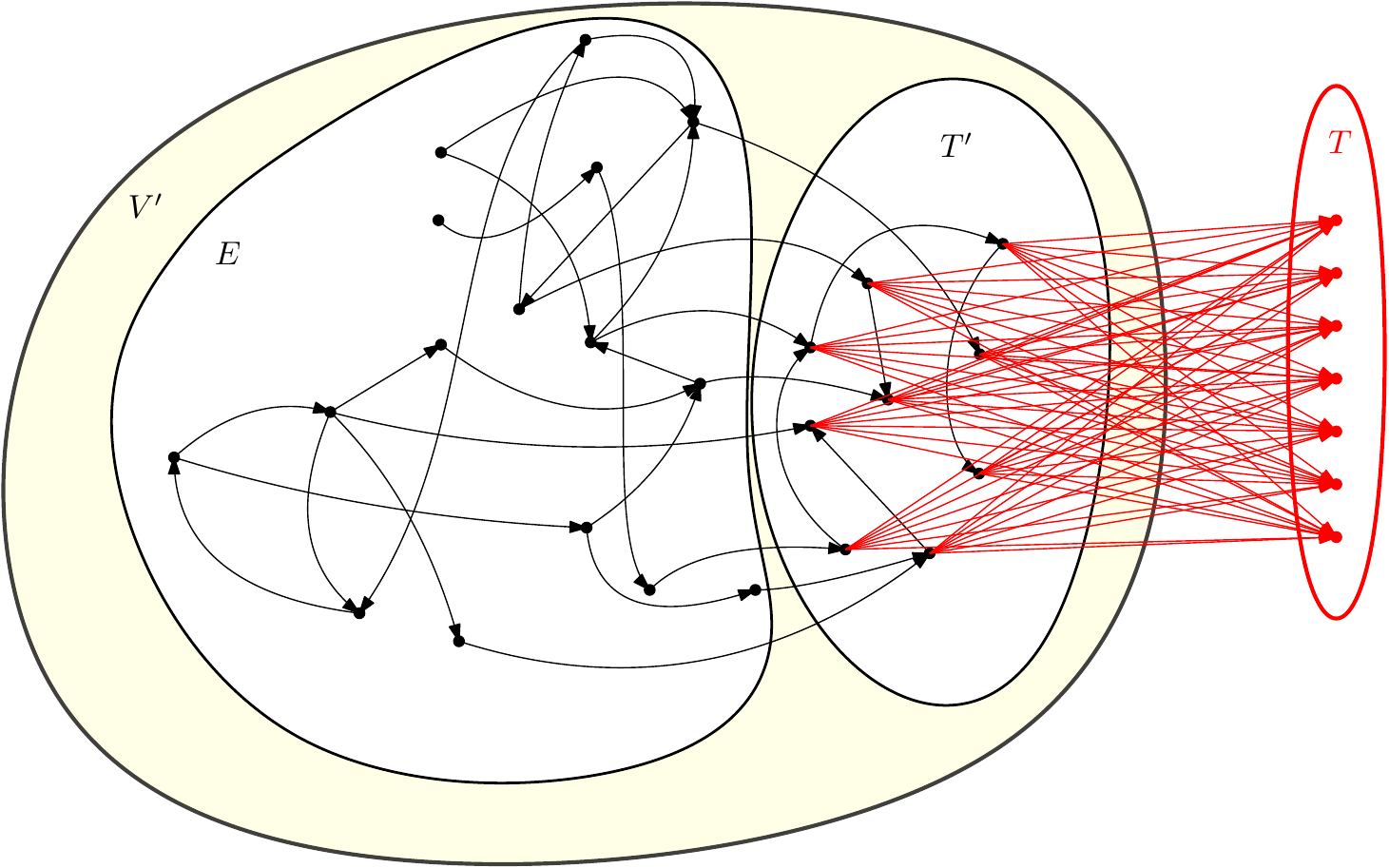}
\end{centering}%
\vspace*{-2\baselineskip} 
\end{wrapfigure}
~ 

\begin{proof}\PRFR{Jan 22nd}
	Let $(D',T',E)$ be a representation of $M$ where \linebreak $D'=(V',A')$. 
	There is an easy construction that achieves the claim: 
	Remember that $X\subseteq E$ is independent in $\Gamma(D',T',E)$ 
	if and only if there is a routing \linebreak $X\routesto T'$ in $D'$. 
	Since $\left| X \right| \leq \rk(E)$, 
	at most $\rk(E)$ vertices of $T'$ are visited by paths that belong to $X\routesto T'$. 
	Thus we may extend the digraph $D'$ to a digraph $D=(V,A)$ 
	by adding $\rk(E)$ new vertices $T = \dSET{t_{1},\ldots,t_{\rk(E)}}$
	in such a way, that there is an arc $(v,t)\in A$ for $v\in V'$ and $t \in T$ 
	if and only if $v\in T'$. Formally, we let $V=V'\disunion T$ and $A=A'\cup (T'\times T)$.
	By construction,
	every routing $X\routesto T'$ in $D'$ can be extended to a routing $X\routesto T$ in $D$,
	as there are sufficient elements in $T$ and arcs between $T'$ and $T$ in D.
	On the other hand, a routing $X\routesto T$ in $D$ implies that there is a routing $X\routesto T'$
	because every non-trivial path ending in $T$ must visit some $t'\in T'$.
	Therefore, $X\subseteq E$ is independent in $\Gamma(D',T',E)$
	if and only if $X$ is independent in $\Gamma(D,T,E)$. 
	Thus $M = \Gamma(D,T,E)$, so $(D,T,E)$ represents $M$ with $\left| T \right| = \rk(E)$.
\end{proof}

\noindent We obtain the following from the previous proof:
\begin{corollary}\label{cor:wlogNiceDigraph}\PRFR{Jan 22nd}
	Let $M=(E,\Ical)$ be a gammoid. Then there is a digraph $D=(V,A)$ and a subset $T\subseteq V$ with $T\cap E =\emptyset$,
	 such that $\left| T \right|=\rk_M(E)$ and $M = \Gamma(D,T,E)$. Furthermore, every $t\in T$ is a sink in $D$.
\end{corollary}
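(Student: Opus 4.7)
The plan is to observe that the construction already carried out in the proof of Lemma~\ref{lem:rankequalsTcard} essentially delivers everything the corollary asks for, provided we choose the fresh target vertices carefully. I will make this explicit.

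First, I would invoke Lemma~\ref{lem:rankequalsTcard} to obtain some representation $(D_0,T_0,E)$ of $M$ with $D_0=(V_0,A_0)$ and $\left| T_0 \right|=\rk_M(E)$. However, a closer inspection of the proof of Lemma~\ref{lem:rankequalsTcard} shows that the target set $T$ constructed there is obtained by adjoining $\rk_M(E)$ \emph{brand-new} vertices to the original vertex set. The plan is simply to insist, when carrying out that construction, that these new vertices are chosen outside of $V_0\cup E$. Concretely, starting from any representation $(D',T',E)$ of $M$ with $D'=(V',A')$, I would fix a set $T=\dSET{t_1,t_2,\ldots,t_{\rk_M(E)}}$ of fresh labels disjoint from $V'\cup E$, set $V=V'\disunion T$ and $A=A'\cup(T'\times T)$, and let $D=(V,A)$.

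The two extra properties are then immediate. Since the elements of $T$ were chosen disjoint from $E$, we have $T\cap E=\emptyset$. Since every arc incident with an element $t\in T$ has $t$ as its head (as $T$ appears only on the right-hand factor of the Cartesian product $T'\times T$ added to $A'$, and $T$ does not occur in $V'$ which is the vertex set in which $A'\subseteq V'\times V'$ lives), no arc in $A$ leaves a vertex in $T$, so every $t\in T$ is a sink of $D$. The equality $M=\Gamma(D,T,E)$ and the cardinality $\left| T\right|=\rk_M(E)$ were established in the proof of Lemma~\ref{lem:rankequalsTcard} and carry over verbatim.

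There is no real obstacle here: the entire content of the corollary is packaging of what is already proved, together with the trivial observation that the freshly introduced vertices can be taken disjoint from $E$ and are automatically sinks. One could alternatively give a two-step proof by first applying Lemma~\ref{lem:rankequalsTcard} to obtain $|T|=\rk_M(E)$ and then invoking Lemma~\ref{lem:TwlogSinks} to delete all arcs leaving $T$, thereby turning the targets into sinks without altering the represented gammoid; a subsequent relabelling of the (now isolated-as-tails) targets by fresh symbols outside $E$ yields $T\cap E=\emptyset$. Either route is routine.
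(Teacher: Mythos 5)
Your proposal is correct and is essentially the paper's own argument: the paper derives the corollary directly from the construction in the proof of Lemma~\ref{lem:rankequalsTcard}, where the freshly adjoined target vertices are disjoint from the old vertex set (hence can be taken disjoint from $E$) and are only ever heads of the new arcs, so they are sinks. Your explicit bookkeeping and the alternative route via Lemma~\ref{lem:TwlogSinks} are fine but add nothing beyond what the paper intends.
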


\needspace{4\baselineskip}
\begin{definition}\label{def:rspivot}\PRFR{Jan 22nd}
	Let $D=(V,A)$ be a digraph, $s\in V$ be a vertex of $D$, and $r \in V$ be a vertex such that
	$(r,s)\in A$ is an arc of $D$. The \deftext[pivot of a digraph]{$\bm r$-$\bm s$-pivot of $\bm D$} 
	shall be the digraph $D_{r\leftarrow s} = (V,A_{r\leftarrow s})$\label{n:digraphpivot} where the arc set
	\[ A_{r\leftarrow s} = \left( A \BS \left( \SET{r}\times V \right) \right) \,\, \cup\,\, \left(  \left( \SET{s}\times \DclD{\SET{r}}{D} \right) \BSET{(s,s)} \right)\]
	consists of arcs leaving $s$ and entering $x$ for every 
	$x\in \DclD{\SET{r}}{D}\BSET{s}$, i.e. for every $x\not=s$ with either $x=r$ or such that there is an arc from $r$ to $x$ in $D$,
	and	all arcs $(u,v)\in A$ of $D$ which have a tail $u\not= r$.
\end{definition}


\vspace*{-\baselineskip} 
\begin{wrapfigure}{r}{6cm}
\vspace{1.6\baselineskip}
\begin{centering}
\includegraphics{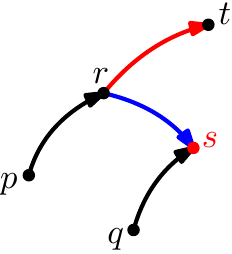} $\begin{array}{c}\leadsto \\~ \\~\\~\\ \end{array}$ \includegraphics{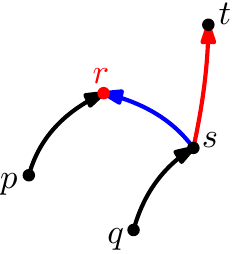}
\end{centering}%
\vspace*{-3\baselineskip} 
\end{wrapfigure}
~ 

\begin{example} \PRFR{Feb 15th}
	Consider the digraph $D=(V,A)$ where $V=\dSET{p,q,r,s,t}$ and 
	$A = \{(p,r),$ $(q,s),$ $(r,s),$ $(r,t)\}$.
	Clearly, $s$ is a sink in $D$ and $(r,s)\in A$, and thus the $r$-$s$-pivot of $D$
	is 
	$D_{r\leftarrow s}=(V,A_{r\leftarrow s})$ with
	$A_{r\leftarrow s} = \{(p,r),$ $(q,s),$ $(s,r)$, $(s,t)\}$.
	Let us examine the paths in $D$ and $D_{r\leftarrow s}$:
	%
	$\Pbf(D) = \{p,$ $pr,$ $prs,$ $prt,$ $q,$ $qs,$ $r,$ $rs,$ $rt,$ $s,$ $t\}$,
	whereas $\Pbf(D_{r\leftarrow s}) = \{p,$ $pr,$ $q,$ $qs,$ $qsr,$ $qst,$ $r,$ $s,$ $sr,$ $st,$ $t\}$.
	The maximal routings in $D$ with respect to set-inclusion,
	which are also maximal routings in $D_{r\leftarrow s}$ are
	$\SET{p,q,r,s,t},$ 
    $\SET{p,qs,r,t},$ and 
    $\SET{pr,q,s,t};$ 
	the maximal routings in $D$ which are not in $D_{r\leftarrow s}$ are
 	$\SET{p,q,rs,t},$ 
    $\SET{p,q,rt,s},$ 
    $\SET{prs,q,t},$ and 
    $\SET{prt,q,s}$; 
	and those only in $D_{r\leftarrow s}$ are
 	$\SET{p,q,r,st},$ 
 	$\SET{p,q,sr,t},$ 
 	$\SET{p,qsr,t},$ and 
 	$\SET{p,qst,r}.$ 
 \end{example}

\noindent The next lemma is called {\em the fundamental theorem} by J.H.~Mason in \cite{M72}.
\begin{lemma}\label{lem:MasonsFundamental}\PRFR{Jan 22nd}
	Let $D=(V,A)$ be a digraph, $T\subseteq V$, $s\in T$ a sink of $D$, $r\in V\BS T$ with $(r,s)\in A$, and $X\subseteq V$.
	Then there is a routing $X\routesto T$ in $D$ if and only if there is a routing
	$X\routesto \left( T\BSET{s} \right)\cup\SET{r}$ in $D_{r\leftarrow s}$.
\end{lemma}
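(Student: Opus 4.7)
The plan is to prove both implications by directly manipulating the given routing. A useful preliminary observation is that the pivot operation is an \emph{involution} in the relevant sense: in $D_{r\leftarrow s}$ the vertex $r$ has no outgoing arcs (hence is a sink of $D_{r\leftarrow s}$), the arc $(s,r)$ lies in $A_{r\leftarrow s}$, and setting $T'=(T\BSET{s})\cup\SET{r}$ a direct arc-set computation---using that $s$ is a sink in $D$, so the newly added arcs leaving $s$ are the only arcs leaving $s$ in $A_{r\leftarrow s}$---yields $(D_{r\leftarrow s})_{s\leftarrow r}=D$ and $(T'\BSET{r})\cup\SET{s}=T$. Hence the backward implication will follow from the forward implication applied to the pivoted data, and only the forward direction requires a constructive argument.

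For the forward direction, suppose $R\colon X\routesto T$ is a routing in $D$; I will construct a routing $R'\colon X\routesto T'$ in $D_{r\leftarrow s}$ by case analysis on how $R$ meets $r$ and $s$. Since $s$ is a sink in $D$, any path of $R$ visiting $s$ must end at $s$; and since $r\notin T$, no path of $R$ ends at $r$. The straightforward cases are: (i) some $p\in R$ uses the arc $(r,s)$---truncate $p$ at $r$, and note that all other paths of $R$ avoid both $r$ and $s$ by vertex-disjointness and so remain valid in $D_{r\leftarrow s}$; (ii) $(r,s)$ is unused and neither $r$ nor $s$ is visited by $R$---then $R$ itself already routes $X$ to $T'$ in $D_{r\leftarrow s}$; (iii) $(r,s)$ unused, $r$ appears as an internal vertex of some $p\in R$ but no path visits $s$---truncate $p$ at $r$; (iv) $(r,s)$ unused, some $q\in R$ ends at $s$ but $r$ is unvisited---append $r$ to $q$ via the new arc $(s,r)\in A_{r\leftarrow s}$. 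The nontrivial case is~(v), in which both a path $p\in R$ with $r$ as an internal vertex and a path $q\in R$ ending at $s$ are present. Writing $u,v$ for the predecessor and successor of $r$ in $p$ (so $v\neq s$, since $(r,s)$ is unused) and $u'$ for the predecessor of $s$ in $q$ (so $u'\neq r$, for the same reason), I replace $\SET{p,q}$ in $R$ by two new paths: the first is the prefix of $p$ up to and including $r$, while the second follows $q$ up to $s$, then uses the newly introduced arc $(s,v)$, and continues along the suffix of $p$ from $v$ to $p$'s original endpoint in $T\BSET{s}$ (which is in $T\BSET{s}$ because $p$ does not visit $s$, by vertex-disjointness with $q$).

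The technical heart of the argument is verifying correctness of the splice in case~(v). Three things must be checked. First, every arc of the spliced second path lies in $A_{r\leftarrow s}$: the prefix of $q$ together with the arc $(u',s)$ consists of arcs of $A$ that do not leave $r$ (since $q$ avoids $r$) and hence is retained by the pivot; the arc $(s,v)$ lies in $A_{r\leftarrow s}$ precisely because $v\in\DclD{\SET{r}}{D}\BSET{s}$; and the suffix of $p$ from $v$ onwards consists of arcs of $A$ not leaving $r$ (as $p$ visits $r$ only once) and is likewise retained. Second, the spliced path visits no vertex twice: the through-$s$ portion has pairwise distinct vertices because $q$ is a path, the after-$v$ portion has pairwise distinct vertices because $p$ is a path, and the two portions are disjoint by $\left|p\right|\cap\left|q\right|=\emptyset$. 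Third, the new family is vertex-disjoint overall: this is immediate, since the two replacement paths visit exactly $\left|p\right|\cup\left|q\right|$ between them, while all remaining paths of $R$ avoid both $r$ and $s$ by vertex-disjointness and stay unchanged.
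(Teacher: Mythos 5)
Your proof is correct and follows essentially the same route as the paper's: a case analysis on whether the arc $(r,s)$ is traversed and on which of $r,s$ are visited, with the identical truncate/splice construction in the hard case, and the backward direction obtained from the involution $\left(D_{r\leftarrow s}\right)_{s\leftarrow r}=D$. Your verification of the splice and of the hypotheses needed to re-apply the forward direction is somewhat more explicit than the paper's, but the argument is the same.
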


\begin{proof}\PRFR{Jan 22nd}
	First, we prove that a routing $X\routesto T$ in $D$ implies a routing
	$X\routesto \left( T\BSET{s} \right)\cup\SET{r}$ in $D_{r\leftarrow s}$.
	Let $R\colon X\routesto T$ be a routing in $D$.
	If $(r,s) \in \bigcup_{p\in R} \left| p \right|_A$, i.e. the routing $R$ has a path $p=(p_i)_{i=1}^n$ that
	traverses the arc $(r,s)$; then $n>1$ and $\SET{r,s}\cap \left| p' \right| = \emptyset$ for
	all $p'\in R\BSET{p}$. Let $q = p_1p_2\ldots p_{n-1}$ be the path that arises when the vertex $s$ is chopped off of $p$.
	 Then $R' = R\BSET{p}\cup\SET{q}$ is a routing from $X$ to $(T\BSET{s})\cup \SET{r}$  in $D_{r\leftarrow s}$.
	Otherwise, we have that $(r,s) \notin \bigcup_{p\in R} \left| p \right|_A$. Let $Q=\SET{r,s}\cap \left( \bigcup_{p\in R} \left| p \right| \right)$ be 
	the criterion for a case analysis. If $Q=\emptyset$, then $R$ is obviously a routing in $D_{r\leftarrow s}$, 
	because $D$ and $D_{r\leftarrow s}$ coincide on $V\BSET{r,s}$. 
	Then no path $p\in R$ has $p_{-1}=s$, thus $R$ even is a routing from $X$ 
	to $T\BSET{s}$ in $D_{r\leftarrow s}$. If $Q=\SET{s}$, then there is a path $p\in R$ with $p_{-1}=s$, yet no path of $R$ visits $r$, 
	therefore $R\BSET{p}\cup\SET{pr}$ is the desired routing in $D_{r\leftarrow s}$.
	If $Q=\SET{r}$, then no path in $R$ visits $s$, and there is a path $p=(p_i)_{i=1}^{n}$ that visits $r = p_j$ with $j\in\SET{1,2,\ldots,n}$. Then $R\BSET{p}\cup\SET{p_1p_2\ldots p_j}$
	is the desired routing in $D_{r\leftarrow s}$.
	If $Q=\SET{r,s}$, then there are two paths $p,q\in R$ with $p\not= q$ such that $s\in \left| p \right|$ and $r\in \left| q \right|$.
	Let $q=(q_i)_{i=1}^m$, and let $1\leq j \leq m$ such that $q_j = r$. 
	Since $s$ is a sink in $D$, we have $p_{-1}=s$. Let $p' = pq_{j+1}q_{j+2}\ldots q_m$ be
	the path in $D_{r\leftarrow s}$ that first follows $p$ and then follows the end of $q$. We have
	$\left| p' \right|_A\subseteq A_{r\leftarrow s}$ since $(r,q_{j+1})\in \left| q \right|_A \subseteq A$
	thus $(s,q_{j+1})\in A_{r\leftarrow s}$, and the digraphs $D$ and $D_{r\leftarrow s}$ have the same arcs on $V\BSET{r,s}$. 
	Furthermore, let $q'= q_1q_2\ldots q_j$, clearly $q'\in \Pbf(D_{r\leftarrow s})$, 
	thus $(R\BSET{p,q})\cup\SET{p',q'}$ is the desired routing in $D_{r\leftarrow s}$.

\noindent
	The second implication of the lemma follows from the first implication together with the fact, that in the situation of the lemma where the operand $s$ is a sink of $D$,
	$\left( D_{r \leftarrow s} \right)_{s\leftarrow r} = D$ holds.
\end{proof}

\begin{theorem}\label{thm:gammoidRepresentationWithBaseTerminals}\PRFR{Jan 22nd}
	Let $M=(E,\Ical)$ be a gammoid, and $B$ a base of $M$.
	Then there is a digraph $D=(V,A)$, 
	such that \[ M = \Gamma(D,B,E)\]
	and every $b\in B$ is a sink in $D$.
\end{theorem}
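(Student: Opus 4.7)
The plan is to start from a convenient representation $(D_0, T_0, E)$ of $M$ with $|T_0| = \rk_M(E)$ in which every $t \in T_0$ is a sink, which exists by Corollary~\ref{cor:wlogNiceDigraph}, and then repeatedly apply Mason's fundamental lemma (Lemma~\ref{lem:MasonsFundamental}) to gradually rewrite the target set into $B$, while maintaining the invariant that all current targets are sinks.

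Since $B$ is a base of $M$ and $|B| = \rk_M(E) = |T_0|$, there is a routing $R_0\colon B \routesto T_0$ in $D_0$; its $|B|$ pair-wise vertex disjoint paths must end at $|B|$ distinct elements of $T_0$, so $R_0$ is in fact a linking onto $T_0$. I would then induct on the total arc count $N(R) = \sum_{p \in R}(\mathrm{len}(p)-1)$ of the current linking $R$. In the base case $N(R)=0$, so every $p \in R$ is a trivial walk; hence the current target set coincides with $B$ and the claim is proved.

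For the inductive step, pick any non-trivial path $p = v_0 v_1 \ldots v_k \in R$, so that $v_0 \in B$, $v_k \in T$ and $k \geq 1$. Set $s := v_k$, which is a sink by the invariant, and $r := v_{k-1}$. The invariant together with the existence of the arc $(v_{k-1},v_k)$ forces $r \notin T$, so Lemma~\ref{lem:MasonsFundamental} applies and yields
\[ \Gamma(D,T,E) \,=\, \Gamma\!\left(D_{r\leftarrow s},\,(T\BSET{s})\cup\SET{r},\,E\right). \]
Writing $D' := D_{r\leftarrow s}$ and $T' := (T\BSET{s})\cup\SET{r}$, I would verify that $T'$ consists of sinks in $D'$: the vertex $r = v_{k-1}$ loses all its outgoing arcs by the definition of the pivot, while the elements of $T\BSET{s}$ remain sinks because the pivot only alters arcs with tail $r$ or $s$. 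I would also check that $R' := (R\BSET{p})\cup\SET{v_0 v_1 \ldots v_{k-1}}$ is a linking from $B$ onto $T'$ in $D'$: the truncated prefix of $p$ is a path in $D'$ because none of its arcs has tail $r$, and every other $q \in R\BSET{p}$ is vertex disjoint from $p$ and therefore avoids $r$ and $s$, so $q$ neither uses a removed arc (all such arcs have tail $r$) nor a newly created arc (all such arcs have tail $s$). Since $N(R') = N(R) - 1$, the induction closes.

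The main obstacle is simply arranging the pivots along the paths of $R$ so that $R$ itself, after an obvious truncation, survives as a valid linking in the pivoted digraph, thereby guaranteeing that the arc-count induction actually terminates. Choosing $r$ and $s$ to be the last two vertices of a non-trivial path in $R$ is precisely what keeps the truncated prefix intact and leaves all other paths of $R$ undisturbed, so the combinatorics of pivoting and the combinatorics of the witnessing linking stay in step throughout the reduction.
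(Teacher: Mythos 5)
Your proof is correct and follows essentially the same route as the paper: start from a representation with $|T_0|=\rk_M(E)$ whose targets are all sinks, fix a linking $R\colon B\routesto T_0$, and repeatedly pivot at the last arc of a non-trivial path of $R$ using Lemma~\ref{lem:MasonsFundamental}, inducting on a size measure of $R$ (your arc count $N(R)$ is equivalent to the paper's $\left|\bigcup_{p\in R}|p|\right|$ since the paths are vertex disjoint). Your explicit verification that the new target set still consists of sinks and that the truncated linking survives the pivot is exactly what the paper delegates to the proof of Lemma~\ref{lem:MasonsFundamental}.
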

\begin{proof}\PRFR{Jan 22nd}
	Let $D'=(V,A')$ be a digraph and $T' \subseteq A'$ such that $M = \Gamma(D',T',E)$.
	We may assume that $\left| T' \right| = \left| B \right| = \rk_M(E)$ and that all $t'\in T'$ are
	sinks in $D'$ (Corollary~\ref{cor:wlogNiceDigraph}).
	Let $R\colon B \routesto T'$ a linking of $B$ onto $T'$ in $D'$.
	We prove the statement by induction on $\left| \bigcup_{p\in R} \left| p \right| \right|$.
	In the base case we have $\left| \bigcup_{p\in R} \left| p \right| \right| = \left| B \right|$, and
	therefore every path $p\in R$ is trivial. Thus $B = T$ and $D=D'$ is the desired digraph.
	Now let $\left| \bigcup_{p\in R} \left| p \right| \right| > \left| B \right|$,
	thus there is a non-trivial path $p=(p_i)_{i=1}^{n} \in R$ where $n>1$. Let $s = p_{n}$ and let
	$r = p_{n-1}$. The vertex $s$ is a sink in $D'$ since $s\in T'$, and clearly
	 $(r,s)\in \left| p \right|_A\subseteq A'$. Since \linebreak
	  $\left| B \right| = \left| T' \right|$, $r\notin T'$.
	   The proof of Lemma~\ref{lem:MasonsFundamental} yields
	    that $R' = R\BSET{p}\cup\SET{p_1p_2\ldots p_{n-1}}$ is
	a linking of $B$ onto $\left( T'\BSET{s}  \right)\cup\SET{r}$
	 in $D'_{r\leftarrow s}$ with $\left| \bigcup_{p\in R'} \left| p \right| \right| < \left| \bigcup_{p\in R} \left| p \right| \right|$.
	  Furthermore Lemma~\ref{lem:MasonsFundamental} implies that 
	  $\Gamma(D',T',E) = \Gamma(D'_{r\leftarrow s},\left( T'\BSET{s}  \right)\cup\SET{r}, E)$
	   and the existence of the digraph $D$ follows
	from the induction hypothesis for the linking $R'$ with respect to the representation 
	 $(D'_{r\leftarrow s}, \left( T'\BSET{s}  \right)\cup\SET{r},E)$.
\end{proof}

\needspace{8\baselineskip}

\subsection{Number of Vertices Needed to Represent a Gammoid}

\PRFR{Mar 7th}
\noindent
In the paper {\em Representative Sets and Irrelevant Vertices: New Tools for Kernelization} \cite{KW12}, S.~Kratsch and M.~Wahlström proved 
the following upper bound result regarding the number of vertices in a given digraph, that suffice to be considered in order to find certain $S$-$T$-separators of minimal cardinality.
This bound may be used to derive a bound on the number of vertices needed
in order to represent a gammoid on a ground set of given cardinality.

\needspace{4\baselineskip}

\begin{theorem}[\cite{KW12}, Theorem~3]\label{thm:upperBoundSizeOfV}\PRFR{Mar 7th}
	Let $D=(V,A)$ be a digraph, $E,T\subseteq V$, and $r>0$ be the cardinality of a minimal $E$-$T$-separator in $D$.
	There is a set $Z\subseteq V$ with $E\cup T \subseteq Z$ and
	 $\left| Z \right| = O(\left| E \right|\cdot\left| T \right|\cdot r)$
	such that for all $X\subseteq E$ and $Y\subseteq T$ there is a minimal $X$-$Y$-separator $S$ in $D$ with $S\subseteq Z$.
	The set $Z$ can be found in randomized polynomial time with failure probability $O(2^{-n})$.
\end{theorem}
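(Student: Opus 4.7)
The plan is to reduce the claim to a representative-family argument inside gammoids naturally associated with $D$, and then invoke the matroid compression machinery behind modern kernelization. By Menger's Theorem (Corollary~\ref{cor:MengerA}) a minimum $X$-$Y$-separator of cardinality $k$ corresponds to a maximum $X$-$Y$-connector consisting of $k$ pairwise vertex-disjoint paths, which, by Lemma~\ref{lem:rkEqMaxConnector}, is nothing but a size-$k$ independent set of the gammoid $\Gamma(D,Y,X)$. Consequently, for the set $Z$ to do its job it suffices to retain, for every pair $(X,Y)$, the vertex set of at least one maximum $X$-$Y$-connector. The first step is to localize this to pairs of singletons: fix $(e,t)\in E\times T$, look at the gammoid $\Gamma(D,\SET{t},V)\restrict V$ restricted to paths ending at $t$, and focus on its independent sets of size at most $r$ that contain $e$. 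Every minimum $X$-$Y$-separator decomposes, via the Menger flow structure of a maximum $X$-$Y$-connector, into contributions indexed by source-sink pairs $(e,t)$, so a good per-pair compression will combine globally.

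The second step uses the matroid representative-family lemma (Lovász, with algorithmic refinement by Marx and Kratsch-Wahlström): any family $\Fcal$ of independent sets of size $p$ in a linearly representable matroid of rank $p+q$ admits a subfamily $\Fcal'\subseteq\Fcal$ of size at most $\binom{p+q}{p}$ that is $q$-representative, i.e.\ for every $U$ with $|U|=q$ and $F\cup U$ independent for some $F\in \Fcal$, there exists $F'\in\Fcal'$ with $F'\cup U$ still independent. Applied in the per-pair gammoid with both $p$ and $q$ bounded by $r$, this yields a subfamily of $O(1)$ size-$r$ independent sets whose vertex union has size $O(r)$. Defining $Z$ to be $E\cup T$ together with the vertices from all these representative families across the $|E|\cdot|T|$ pairs gives $|Z|=O(|E|\cdot|T|\cdot r)$. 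Correctness comes from a local-to-global swapping argument: given $X\subseteq E$, $Y\subseteq T$ and a minimum $X$-$Y$-connector $R$ in $D$, each path $p\in R$ is controlled by the pair $(p_1,p_{-1})\in E\times T$, and the representative property allows the local segment of $p$ outside $Z$ to be replaced by a segment from a representative family while preserving vertex-disjointness with the rest of $R$.

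For the randomised polynomial time bound, one needs the gammoid in a linear representation. Every gammoid is representable over $\Rbm$ by the Lindström-Ingleton-Piff construction; instantiating the symbolic matrix entries with integers drawn uniformly from a set of size $2^{\Omega(n)}$ produces a valid linear representation of the gammoid over $\Qbm$ except with probability $O(2^{-n})$, by the Schwartz-Zippel lemma applied to the determinantal polynomials that certify independence. The representative-family algorithm then runs in polynomial time on this representation. The hardest part of the plan will be the swapping argument of step two: the representative families are produced per source-target pair, while $X$ and $Y$ are arbitrary subsets, so one must carefully iterate the replacement path-by-path without destroying the vertex-disjointness among already-replaced paths; the key is that the $q$-representative condition, applied with $U$ equal to the union of vertices already committed to $Z$ from earlier replacements, guarantees that each step extends consistently, provided one orders the replacements along the Menger decomposition and keeps $q$ of order $r$.
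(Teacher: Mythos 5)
First, a point of reference: the thesis does not actually prove this theorem; it defers entirely to \cite{KW12} and only remarks that enlarging $Z$ by $E\cup T$ is harmless because $\left| E\cup T \right| = O(\left| E \right|\cdot\left| T \right|\cdot r)$ for $r\geq 1$. So your attempt has to be measured against the Kratsch--Wahlström proof itself. You have correctly identified their toolbox --- representative families in a linearly represented gammoid, with the representation obtained in randomized polynomial time via a Schwartz--Zippel argument (this last part matches Proposition~\ref{prop:randompolytimeTransversalMatroid} and is fine). But the architecture of your argument diverges from theirs in a way that opens real gaps. Their proof is a \emph{single global} application of the representative-family lemma: they build one matroid as a direct sum of three pieces (a gammoid-type piece of rank $r$ together with pieces of ranks $\left| E \right|$ and $\left| T \right|$), attach to \emph{each vertex} $v\in V$ one small candidate set with one element per summand, and discard every vertex whose candidate set is non-representative; the product bound $\binom{r}{1}\binom{\left| E \right|}{1}\binom{\left| T \right|}{1}$ quoted in Remark~\ref{rem:boundForZ} is exactly the representative-family bound for such a direct sum. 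Nothing in their proof is done ``per source--sink pair'' and then glued.

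The gaps in your per-pair version are concrete. (1) Your reduction in the first paragraph --- ``it suffices to retain the vertex set of at least one maximum $X$-$Y$-connector for every pair $(X,Y)$'' --- is sound as a sufficient condition (by Corollary~\ref{cor:Menger} every minimum separator lies inside the vertex set of every maximum connector), but it is unachievable within the claimed budget: a single maximum connector can already use $\Omega(\left| V \right|)$ vertices, whereas $Z$ must have only $O(\left| E \right|\cdot\left| T \right|\cdot r)$ vertices in total. The object you can afford to keep per pair is the $\leq r$-element separator, not the connector, and your argument never isolates it. (2) Relatedly, you conflate independent sets of the gammoid (which are subsets of the \emph{ground set}, i.e.\ path start vertices) with the \emph{interior} separator vertices that actually have to be placed into $Z$; a representative subfamily of independent sets tells you nothing directly about which internal vertices of $D$ may be deleted. (3) The ``swapping argument'' that is supposed to globalize the per-pair families cannot work with $q = O(r)$: to preserve vertex-disjointness when replacing one path, the set $U$ in the $q$-representativity condition must absorb all vertices already committed by the other $r-1$ paths, and that set has size up to $\Omega(\left| V \right|)$, not $O(r)$, so the bound $\binom{p+q}{p}$ explodes. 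This is precisely the difficulty that the global direct-sum construction of \cite{KW12} is designed to avoid, by encoding the constraints from $X$, $Y$ and the cut simultaneously in the three summands rather than sequentially along a Menger decomposition.
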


\PRFR{Mar 7th}
\noindent
For the proof, see \cite{KW12}.\footnote{The actual proof starts on page 24.} 
The statement that $E\cup T \subseteq Z$ is not part of the original theorem in \cite{KW12}, as well as the condition $r>0$,
but it is easy to see that these modifications are valid, since $\left| E\cup T \right| = O(\left| E \right|\cdot\left| T \right|\cdot r)$ for $r\geq 1$.

\begin{remark}\label{rem:boundForZ}\PRFR{Mar 7th}
	 In \cite{KW12}, the authors only give the $O$-behavior of the size of $Z$ in Theorem~\ref{thm:upperBoundSizeOfV},
	 but it is possible to derive the factor hidden in the $O$-notation by inspecting their proof and the proof of
	 Lemma~4.1 \cite{Marx09} by D.~Marx.
	We obtain

	 \[ \left| Z \right| \leq \binom{r}{1} \cdot \binom{\left| E \right|}{1}  \cdot \binom{\left| T \right|}{1} + \left| E \right| + \left| T \right| = r\cdot \left| E \right|\cdot \left| T \right| + \left| E \right| + \left| T \right|. \qedhere \]
\end{remark}

\begin{corollary}\label{cor:upperBoundOnV}\PRFR{Mar 7th}
	Let $M=(E,\Ical)$ be a gammoid.
	There is a representation $(D,T,E)$ of $M$ with $D=(V,A)$, such that
	\[ \left| V \right| = O\left( \left| E \right| \cdot \rk_M(E)^2 \right) \leq O\left(  \left| E \right|^3 \right).\]
\end{corollary}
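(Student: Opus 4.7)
The plan is to apply Theorem~\ref{thm:upperBoundSizeOfV} to a suitably chosen initial representation of $M$ and then obtain a representation on the bounded vertex set $Z$ by a ``torso-style'' construction that contracts away the vertices outside $Z$ while preserving all $X$-$T$-connectivities for $X\subseteq E$.

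First, I would invoke Corollary~\ref{cor:wlogNiceDigraph} to fix a representation $(D_0,T_0,E)$ of $M$ with $D_0=(V_0,A_0)$, $T_0\cap E=\emptyset$, every $t\in T_0$ a sink in $D_0$, and $\left| T_0\right|=\rk_M(E)$. By Lemma~\ref{lem:rkEqMaxConnector} and Menger's Theorem (Corollary~\ref{cor:MengerA}), the minimum cardinality of an $E$-$T_0$-separator in $D_0$ equals $r=\rk_M(E)$. Applying Theorem~\ref{thm:upperBoundSizeOfV} with ground sets $E$ and $T_0$ then yields a set $Z\subseteq V_0$ with $E\cup T_0\subseteq Z$ of cardinality $|Z|=O(|E|\cdot|T_0|\cdot r)=O(|E|\cdot\rk_M(E)^2)$ such that for every $X\subseteq E$ and $Y\subseteq T_0$ there is a minimal $X$-$Y$-separator in $D_0$ contained in $Z$. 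Remark~\ref{rem:boundForZ} provides the explicit constant, and $\rk_M(E)\leq|E|$ immediately gives the second inequality $|Z|\leq O(|E|^3)$.

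Next, I would define the digraph $D=(Z,A)$ where $(u,v)\in A$ iff there is a path in $D_0$ from $u$ to $v$ all of whose internal vertices lie in $V_0\BS Z$. I expect $(D,T_0,E)$ to be the desired representation; this is where the work lies. To verify $\Gamma(D,T_0,E)=M$, it suffices to show that for every $X\subseteq E$ the maximum size of an $X$-$T_0$-connector in $D$ equals $\rk_M(X)$ (Lemma~\ref{lem:rkEqMaxConnector}). For the lower bound, any maximum $X$-$T_0$-connector in $D_0$ can be ``compressed'' by retaining only the $Z$-vertices of each of its paths; by the definition of $A$ consecutive $Z$-vertices are joined by an arc in $D$, and pairwise vertex disjointness is preserved since any shared vertex in $Z$ would already witness a collision in $D_0$. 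Hence the maximum $X$-$T_0$-connector size in $D$ is at least $\rk_M(X)$. For the upper bound, Theorem~\ref{thm:upperBoundSizeOfV} furnishes a minimal, hence minimum, $X$-$T_0$-separator $S\subseteq Z$ in $D_0$ with $|S|=\rk_M(X)$; any $X$-$T_0$-walk in $D$ lifts to an $X$-$T_0$-walk in $D_0$ through vertices of $Z$ interleaved with non-$Z$ vertices, so if such a walk avoided $S$ it would contradict $S$ being a separator in $D_0$. Thus $S$ remains an $X$-$T_0$-separator in $D$, and Menger's Theorem applied inside $D$ caps the maximum connector size by $|S|=\rk_M(X)$.

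The main obstacle I expect is the verification that the torso construction preserves connector sizes --- especially the subtle point that $S\subseteq Z$, chosen to be a separator in the big graph $D_0$, automatically remains a separator in $D$ because every path of $D$ lifts back to a walk of $D_0$. Once this lemma is in place, combining it with the compression argument yields $\rk_{\Gamma(D,T_0,E)}=\rk_M$, hence $M=\Gamma(D,T_0,E)$, and the vertex bound $|V(D)|=|Z|=O(|E|\cdot\rk_M(E)^2)\leq O(|E|^3)$ follows at once.
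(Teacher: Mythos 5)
Your proposal is correct and follows essentially the same route as the paper: normalize the representation so that $\left| T \right| = \rk_M(E)$, invoke Theorem~\ref{thm:upperBoundSizeOfV} to obtain the bounded set $Z$, form the torso digraph on $Z$ whose arcs encode paths with no internal $Z$-vertices, and verify both inclusions by compressing routings and by noting that the minimum separators guaranteed to lie in $Z$ survive the passage to the torso. The only differences are cosmetic (citing Corollary~\ref{cor:wlogNiceDigraph} rather than Lemma~\ref{lem:rankequalsTcard}, and phrasing the matroid equality via ranks and Menger rather than directly via independent sets).
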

\begin{proof}\PRFR{Mar 7th}
	Let $(D,T,E)$ be a representation of $M$ where $\left| T \right| = \rk_M(E)$ and $D=(V,A)$ (Lemma~\ref{lem:rankequalsTcard}).
	Let $Z'\subseteq V$ be a subset of $V$ as in the consequent 
	of Theorem~\ref{thm:upperBoundSizeOfV}.
	Let $D'=(Z',A')$ be the digraph,
	where for all $x,y\in Z'$, there is an arc
	\[ (x,y)\in A' \quad\Longleftrightarrow\quad \exists p\in \Pbf(D; x,y) \colon\, \left| p \right|\cap Z' = \SET{x,y}.\]
	Thus there is an arc leaving $y\in Z'$ and entering $z\in Z'$ in $D'$ if there is a path from $y$ to $z$ in $D$ that never visits
	another vertex of $Z'$.
	Let $p=(p_i)_{i=1}^n \in \Pbf(D)$ be a path of length $n$ from $p_1\in E$ to $p_{-1}\in T$.
	Let  $I' = \SET{i\in \N ~\middle|~ 1\leq i\leq n \txtand p_i \in Z'} = \dSET{i_1,i_2,\ldots,i_k}$ with $i_1 < i_2 < \ldots < i_k$.
	Then let $p' = p_{i_1}p_{i_2}\ldots p_{i_k}$, i.e. $p'$ is the path consisting of the vertices visited by $p$ that are in $Z'$.
	Observe that
	$p_1 = p'_1$, $p_{-1} = p'_{-1}$, and $p'\in \Pbf(D')$ holds.
	Let $R\colon X\routesto T$ be a routing from $X\subseteq E$ to $T$ in $D$,
	and let
	 $R' = \SET{p'\in \Pbf(D') ~\middle|~ p\in R}$ be the set of paths in $D'$ that consists of all
	$p'$ derived from $p\in R$ as described above. By construction of $D'$, we see that $R'$ is a 
	routing from $X$ to $T$ in $D'$. Thus every independent set $X\subseteq E$
	of $M = \Gamma(D,T,E)$ is also an independent set of $N = \Gamma(D',T,E)$.
	Now assume that there is some $X\subseteq E$ that is independent in $N$, but not in $M$.
	Then $D$ would have an $X$-$T$-separator $S$ with $\left| S \right| < \left| X \right|$, and by Theorem~\ref{thm:upperBoundSizeOfV}
	we may assume that $S\subseteq Z'$ holds. Thus $S$ would be an $X$-$T$-separator of $D'$, too, contradicting the assumption 
	that $X$ is independent with respect to $N$. Therefore every independent set of $N$ is also independent with respect to $M$.
	Consequently, $M = N$. Thus $(D',T,E)$ is a representation of $M$ using 
	only $O\left(  \left| E \right| \cdot \rk_M(E)^2  \right)$ vertices.
\end{proof}

\begin{remark}\label{rem:upperBoundForV}\PRFR{Mar 7th}
	In the light of Remark~\ref{rem:boundForZ} we obtain that if $M=(E,\Ical)$ is a gammoid,
	there is a representation $(D,T,E)$ where $D=(V,A)$ such that $\left| T \right| = \rk_M(E)$
	and such that \[ \left| V \right| \,\,\, \leq \,\,\, \rk_M(E)^2 \cdot \left| E \right| + \rk_M(E) + \left| E \right| \,\,\, \leq \,\,\, 2 \left| E \right|^3. \qedhere\]
\end{remark}


\needspace{6\baselineskip}
\subsection{Duality Respecting Representations}

\begin{definition}\label{def:dualityRespectingRepr}\PRFR{Jan 22nd}
	Let $(D,T,E)$ be a representation of a gammoid. We say that $(D,T,E)$ is a \deftext{duality respecting representation},
	if 
	\[ \Gamma(D^\opp,E\BS T,E) = \left( \Gamma(D,T,E) \right)^\ast . \qedhere \]
\end{definition}

\begin{figure}[h]
\begin{center}
\includegraphics[scale=1.1]{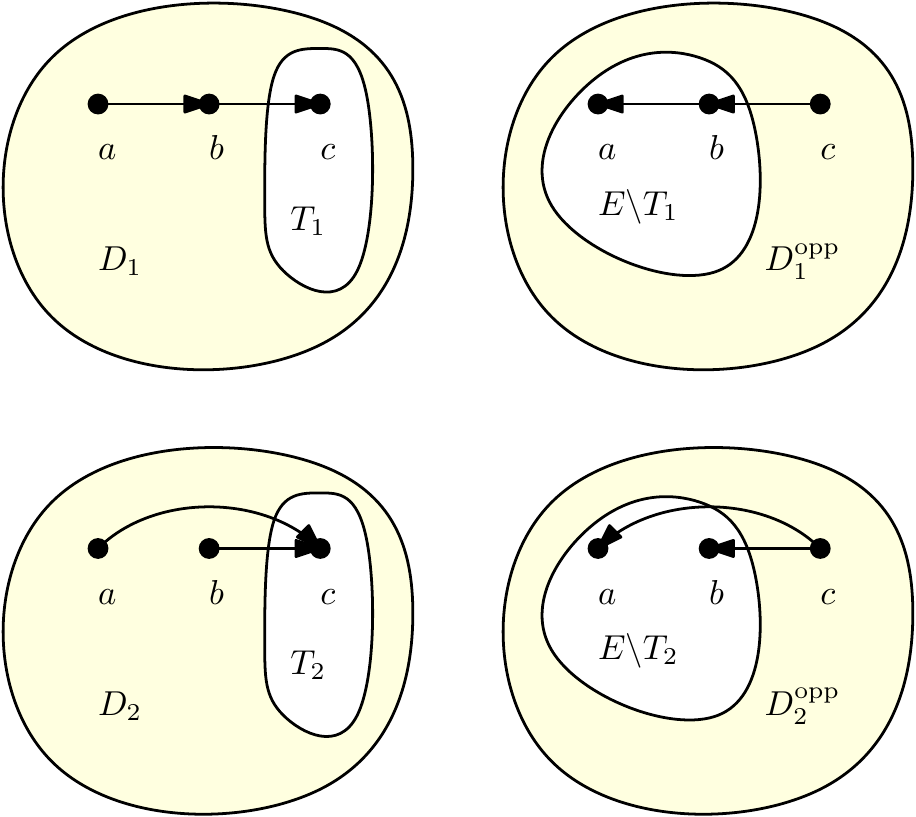}
\end{center}
\caption{Non-duality respecting and duality respecting representations of $U$. \label{fig:drnondr}}
\end{figure}
\begin{example}
Consider the uniform matroid $U=\left( \SET{a,b,c}, \SET{\emptyset,\SET{a},\SET{b},\SET{c}} \right)$
and the digraphs $D_1=	\left( \SET{a,b,c},\SET{(a,b),(b,c)} \right)$ and $D_2=\left( \SET{a,b,c},\SET{(a,c),(b,c)} \right)$
(Fig.~\ref{fig:drnondr}).
Clearly $\Gamma(D_1,\SET{c},\SET{a,b,c}) = U = \Gamma(D_2,\SET{c},\SET{a,b,c})$,
but $\Gamma(D_1^{\opp}, \SET{a,b}, \SET{a,b,c}) \not= U^\ast$, since there is no routing from $\SET{b,c}$ to $\SET{a,b}$ in $D_1^\opp$.
On the other hand, such a routing exists in $D_2^\opp$, and indeed we have $U^\ast = \Gamma(D_2^\opp,\SET{a,b},\SET{a,b,c})$. Therefore duality respecting
representations exist, but not all representations have this property.
\end{example}

\needspace{6\baselineskip}
\begin{lemma}\label{lem:sourcesinkrepresentation}\PRFR{Jan 22nd}
	Let $M=(E,\Ical)$ be a gammoid, and $B\subseteq E$ a base of $M$.
	There is a digraph $D=(V,A)$ such that the sinks of $D$ are precisely the elements of $B$,
	the sources of $D$ are precisely the elements of $E\BS B$, and
	such that $M = \Gamma(D,B,E)$.
\end{lemma}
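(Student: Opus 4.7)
The plan is to start from the representation $(D',B,E)$ provided by Theorem~\ref{thm:gammoidRepresentationWithBaseTerminals}, in which $B$ already consists of sinks, and modify $D'=(V',A')$ in two stages so that the sources become exactly $E\BS B$ and the sinks become exactly $B$, without altering the gammoid.

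First, for every $e\in E\BS B$ that has an incoming arc in $D'$, I would ``split'' $e$ into a pure source $e$ and a new internal vertex $e^\bullet$: redirect every $(u,e)\in A'$ to $(u,e^\bullet)$, redirect every $(e,w)\in A'$ to $(e^\bullet,w)$, and add the single arc $(e,e^\bullet)$. The resulting digraph $D''$ represents the same gammoid: a routing from $X\subseteq E$ to $B$ in $D'$ is converted to $D''$ by replacing any internal visit of $e$ by $e^\bullet$ and by inserting $e^\bullet$ immediately after a visit starting at $e$; a routing in $D''$ is converted back by deleting the inserted $e^\bullet$ or by collapsing $e^\bullet$ to $e$ at internal positions. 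Vertex-disjointness is preserved in both directions because two distinct paths cannot share $e^\bullet$ in $D''$ without sharing $e$ in $D'$, and crucially, because $e$'s only outgoing arc in $D''$ is to $e^\bullet$, so any path starting at $e$ must pass through $e^\bullet$, which prevents a starts-at-$e$ path from coexisting with a passes-through-$e^\bullet$ path.

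Second, I would add two vertex-disjoint ``dummy'' $2$-cycles: new vertices $q_1^{src},q_2^{src}$ with arcs $(q_1^{src},q_2^{src})$ and $(q_2^{src},q_1^{src})$, and likewise $q_1^{snk},q_2^{snk}$. Then, for every vertex $v$ of the current digraph that is still a source but does not lie in $E\BS B$ (for instance isolated base elements or auxiliary vertices without incoming arcs in $D'$), I add the arc $(q_1^{src},v)$; for every vertex $v$ that is still a sink but does not lie in $B$ (for instance any $e^\bullet$ arising from an $e\in E\BS B$ that was a sink in $D'$, auxiliary sinks of $D'$, and isolated elements of $E\BS B$), I add the arc $(v,q_1^{snk})$. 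Call the result $D$. The dummy cycles never contribute to any routing from $X\subseteq E$ onto $B$: the vertices $q_i^{src}$ are unreachable from $E$ since all their incoming arcs lie within the cycle, and from $q_i^{snk}$ no path reaches $B$ since all their outgoing arcs lie within the cycle. Therefore $\Gamma(D,B,E)=\Gamma(D'',B,E)=M$, while every $e\in E\BS B$ still has no incoming arc, every other vertex has acquired an incoming arc, every $b\in B$ still has no outgoing arc, and every other vertex has acquired an outgoing arc.

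The main obstacle will be the correctness of the first stage: one has to ensure that turning $e\in E\BS B$ into a source cannot introduce new independent sets. Naively deleting the incoming arcs of $e$, or duplicating $e$ while keeping both copies' outgoing arcs intact, both fail, because a routing could then pass through $e^\bullet$ on one path while simultaneously starting another path at $e$, yielding a set that was dependent in $D'$. Coupling the arc $(e,e^\bullet)$ with the relocation of all of $e$'s outgoing arcs to $e^\bullet$ forces $e$ and $e^\bullet$ to be used together whenever $e$ starts a path, which is precisely what matches the single-use nature of $e$ in the original digraph.
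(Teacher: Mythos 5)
Your proof is correct, and its key step --- turning each $e\in E\setminus B$ into a pure source by renaming it to an internal copy $e^{\bullet}$ and adding the single private arc $(e,e^{\bullet})$, with the routing correspondence argued exactly as you do --- is precisely the paper's construction. The only divergence is cosmetic: where you neutralize leftover wrong sources and sinks by attaching unreachable/dead-end dummy $2$-cycles, the paper simply deletes such vertices up front, observing that sources outside $E$ and sinks outside $B$ can never lie on a routing from a subset of $E$ onto $B$; both cleanups are valid.
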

\begin{proof}\PRFR{Jan 22nd}
	There is a digraph $D'=(V',A')$ such that $M=\Gamma(D',B,E)$ (Theorem~\ref{thm:gammoidRepresentationWithBaseTerminals}),
	and without loss of generality we may assume that the only sinks in $D'$ are the elements of $B$, and that all sources in $D'$ are
	elements of $E$ --- since sources not in $E$ and sinks not in $B$ cannot be part of a path that belongs to any routing 
	from $X\subseteq E$ to $B$ in $D'$, and
	therefore may be dropped from $D'$ without changing the represented gammoid.
	Clearly, we can give each $e \in V'\cap \left( E\BS B \right)$ a new name -- say $e''$ -- in $D'$, yielding a digraph \linebreak $D'' =(V'',A'')$
	where $V''\cap E = B$. Then we can add the elements $E\BS B$ back to $D''$ as isolated vertices, and after that,
	we add an arc leaving $e$ and entering its renamed copy $e''$ for every $e\in E\BS B$.
	Let us denote the digraph that we just constructed by \linebreak $D=(V,A)$
	where $V = V''\disunion \left( E\BS B \right)$ and $A = A'' \cup \SET{(e,e'')\mid e\in E\BS B}$.
	Clearly, each routing $R\colon X\routesto B$ with $X\subseteq E$ in $D'$ induces the routing
	$R'' = \SET{ p_1p_1''p_2\ldots p_n \mid p_1p_2\ldots p_n\in R}$ from $X$ to $B$ in $D$; and conversely,
	each routing $Q''\colon X\routesto B$ with $X\subseteq E$ in $D$ induces the routing
	$Q = \SET{p_1p_3p_4\ldots p_n\mid p_1p_2\ldots p_n\in Q''}$ from $X$ to $B$ in $D'$. Therefore, $\Gamma(D,B,E) = \Gamma(D',B,E) = M$.
\end{proof}

\begin{lemma}\label{lem:dualityrespectingrepresentation}\PRFR{Jan 22nd}
	Let $(D,T,E)$ be a representation of a gammoid with $T\subseteq E$,
	and such that every $e\in E\BS T$ is a source of $D$, and every $t\in T$ is a sink of $D$.
    Then $(D,T,E)$ is a duality respecting representation.
\end{lemma}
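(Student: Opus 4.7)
The plan is to show $\Gamma(D^\opp, E\BS T, E) = \left(\Gamma(D,T,E)\right)^\ast$ by verifying that the two matroids have the same bases. Write $M=\Gamma(D,T,E)$ and $M' = \Gamma(D^\opp, E\BS T, E)$. Since $T\subseteq E$ and each $t\in T$ is a sink of $D$, the set of trivial paths $\SET{t\mid t\in T}$ is a routing from $T$ to $T$ in $D$, so $T$ is independent in $M$; since every routing in $D$ terminates in $T$ via pair-wise vertex-disjoint paths, no independent set of $M$ has cardinality exceeding $|T|$, so $T$ is a base of $M$. By Corollary~\ref{cor:dualbase}, every base of $M^\ast$ has cardinality $|E\BS T|$. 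The symmetric argument applied to $D^\opp$ shows that $E\BS T$ is a base of $M'$, so every base of $M'$ also has cardinality $|E\BS T|$.

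Fix $B\subseteq E$ with $|B|=|E\BS T|$, equivalently $|E\BS B|=|T|$. The hypotheses force a canonical decomposition of any routing $E\BS B\routesto T$ in $D$: a start vertex lying in $(E\BS B)\cap T$ is a sink, so the path rooted there can only be trivial (which already terminates in $T$); and a non-trivial path ending in $T$ has an interior that avoids $T$ entirely (sinks cannot be traversed) and avoids $E\BS T$ after its start (sources admit no incoming arc), hence its start lies in $(E\BS B)\BS T$, its end lies in $T\cap B$, and it never visits a vertex of $(E\BS B)\cap T$. Since the trivial sub-linking on $(E\BS B)\cap T$ and the non-trivial sub-linking from $(E\BS B)\BS T$ to $T\cap B$ therefore cannot clash, a linking $E\BS B\routesto T$ in $D$ exists if and only if a linking $(E\BS B)\BS T\routesto T\cap B$ exists in $D$.

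The same analysis applied to $D^\opp$ — where now $E\BS T$ consists of sinks and $T$ of sources — shows that a linking $B\routesto E\BS T$ in $D^\opp$ exists if and only if a linking $B\cap T\routesto (E\BS T)\BS B$ exists in $D^\opp$. Since $B\cap T=T\cap B$ and $(E\BS T)\BS B = (E\BS B)\BS T$, reversing every path is a bijection between linkings of the required type in $D$ and in $D^\opp$. Hence $E\BS B$ is a base of $M$ iff $B$ is a base of $M'$, and by Corollary~\ref{cor:dualbase} the former is equivalent to $B$ being a base of $M^\ast$; since matroids on a common ground set are determined by their families of bases, $M^\ast = M'$. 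The main obstacle is justifying the decomposition in the middle paragraph: once one checks carefully that the sink/source conditions on $T$ and $E\BS T$ force every routing of the prescribed cardinality into the claimed trivial-plus-non-trivial form without vertex conflicts, the reversal bijection between $D$ and $D^\opp$ finishes the argument routinely.
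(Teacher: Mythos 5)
Your proof is correct and follows essentially the same route as the paper's: both arguments hinge on the observation that the source/sink hypotheses force every path of the relevant routing to meet $E$ only at its two endpoints, so the routing splits into trivial paths and non-trivial paths whose reversals yield the dual linking in $D^\opp$. The only cosmetic difference is that you reduce both sides symmetrically to a common core linking between $T\cap B$ and $(E\BS B)\BS T$, whereas the paper constructs the linking $E\BS B\routesto E\BS T$ in $D^\opp$ directly by reversing the non-trivial paths of a linking $B\routesto T$ and adjoining trivial paths on $E\BS(T\cup B)$.
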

\begin{proof}\PRFR{Jan 22nd}
	We have to show that the bases of $N = \Gamma(D^\opp, E\BS T, E)$ are precisely the complements of the
	bases of $M = \Gamma(D,T,E)$ (Corollary~\ref{cor:dualbase}).
	Let $B\subseteq E$ be a base of $M$, then there is a linking $L\colon B\routesto T$ in $D$, and since $T$ consists of sinks,
	we have $\SET{x\in\Pbf(D)~\middle|~x\in T\cap B} \subseteq L$.
	Further, let $L^\opp = \SET{ p_n p_{n-1} \ldots p_1 \mid p_1 p_2 \ldots p_n \in L}$.
	Then $L^\opp$ is a linking
	from $T$ to $B$ in $D^\opp$ which routes $T\BS B$ to $B\BS T$. The special property of $D$, that $E\BS T$ consists of sources and that $T$ consists of sinks,
	implies, that for all $p\in L$, we have $\left| p \right|\cap E = \SET{p_1,p_{-1}}$.
	Observe that thus
 $$R = \SET{p\in L^\opp \mid p_{1}\in T \BS B} \cup \SET{x\in \Pbf(D^\opp)\mid x\in E\BS \left( T\cup B \right)}$$
	is a linking from $E\BS B=\left( T\disunion \left( E\BS T \right) \right)\BS B$ onto $E\BS T$ in $D^\opp$, thus $E\BS B$ is a base of $N$. An analog argument yields that for every base $B'$ of $N$,
	$E\BS B'$ is a base of $M$. Therefore $\Gamma(D^\opp, E\BS T, E) = \left( \Gamma(D,T,E) \right)^\ast$.
\end{proof}

\begin{corollary}\label{cor:dualityrespectingrepresentation}\PRFR{Jan 22nd}
	Let $M=(E,\Ical)$ a gammoid. Then there is a duality respecting representation $(D,T,E)$ with
	$\Gamma(D,T,E) = M$. Consequently, $M^\ast$ is a gammoid if and only if $M$ is a gammoid.
\end{corollary}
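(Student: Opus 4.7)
The plan is to combine Lemmas~\ref{lem:sourcesinkrepresentation} and \ref{lem:dualityrespectingrepresentation} directly, with the observation that the first produces exactly a representation of the form required as hypothesis of the second.

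First, I would fix a base $B$ of $M$, which exists because $\emptyset \in \Ical$ by {\em (I1)} and $\Ical$ is finite, so by iterated application of {\em (I3)} there is a maximal element of $\Ical$. Applying Lemma~\ref{lem:sourcesinkrepresentation} with this base yields a digraph $D=(V,A)$ with $M=\Gamma(D,B,E)$ in which the sinks are precisely the elements of $B$ and the sources are precisely the elements of $E\BS B$. In particular $B\subseteq E$, every $t\in B$ is a sink in $D$, and every $e\in E\BS B$ is a source in $D$, so the triple $(D,B,E)$ satisfies the hypotheses of Lemma~\ref{lem:dualityrespectingrepresentation} with $T=B$. Hence $(D,B,E)$ is a duality respecting representation of $M$, which establishes the first assertion of the corollary.

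For the ``consequently'' part I would argue as follows. Assume $M$ is a gammoid, and let $(D,B,E)$ be the duality respecting representation constructed above. By Definition~\ref{def:dualityRespectingRepr} we have
\[
  M^\ast \,=\, \left( \Gamma(D,B,E) \right)^\ast \,=\, \Gamma(D^\opp, E\BS B, E),
\]
and the right-hand side is a gammoid by Definition~\ref{def:gammoid}. Conversely, if $M^\ast$ is a gammoid, then applying the same reasoning to $M^\ast$ yields that $(M^\ast)^\ast$ is a gammoid, and by Corollary~\ref{cor:doubleast} we have $(M^\ast)^\ast = M$, so $M$ is a gammoid.

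There is essentially no obstacle here, as both substantive ingredients have already been established; the only subtlety worth checking is the degenerate case $B=\emptyset$, in which Lemma~\ref{lem:sourcesinkrepresentation} still applies (every element of $E$ being a source and there being no sinks forces $M$ to be the rank-zero matroid on $E$), and Lemma~\ref{lem:dualityrespectingrepresentation} with $T=\emptyset$ goes through since all the ``for all $t\in T$'' and ``for all $e\in E\BS T$'' conditions either hold vacuously or reduce to the source-only situation, producing $M^\ast$ as the free matroid on $E$, which is indeed a gammoid via any digraph on $E$ without arcs and with $T=E$.
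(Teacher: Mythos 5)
Your proof is correct and follows exactly the route the paper takes: the paper's own proof is the one-line remark that the corollary is an immediate consequence of Lemmas~\ref{lem:sourcesinkrepresentation} and \ref{lem:dualityrespectingrepresentation}, with the ``consequently'' part supplied by Definition~\ref{def:dualityRespectingRepr} and Corollary~\ref{cor:doubleast} just as you argue. Your additional check of the degenerate case $B=\emptyset$ is harmless extra care but not needed.
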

\begin{proof}
	Immediate consequence of Lemmas~\ref{lem:sourcesinkrepresentation} and \ref{lem:dualityrespectingrepresentation}.
\end{proof}




\PRFR{Mar 7th}
\noindent Unfortunately, the property of a representation to be duality respecting is not preserved by the digraph pivot operation.
Thus we cannot take a duality respecting representation, pivot in a base as in the proof of Theorem~\ref{thm:gammoidRepresentationWithBaseTerminals} and then expect that the resulting representation is still duality respecting.

\needspace{9\baselineskip}

\vspace*{-\baselineskip} 
\begin{wrapfigure}{r}{5cm}
\vspace{1.6\baselineskip}
\vspace*{-\baselineskip} 
\begin{centering}
\includegraphics[width=5cm]{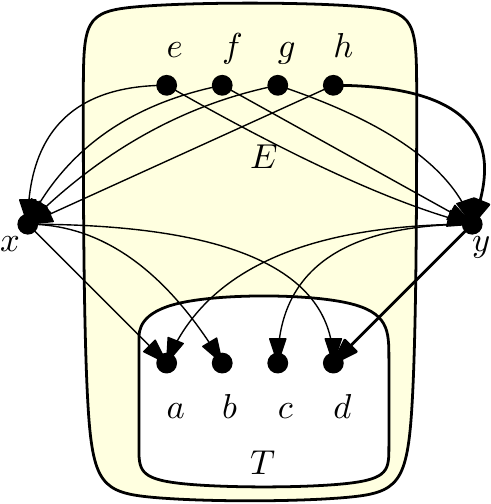}
\end{centering}%
\vspace*{-2\baselineskip} 
\end{wrapfigure}
~ 

\begin{longexample}\label{ex:pivotBreaksDualityRespecting}\PRFR{Mar 7th}
Consider the gammoid $M$ on the ground set $E=\dSET{a,b,c,d,e,f,g,h}$ represented by the digraph $D=(V,A)$ with the vertex set
 $V=E\disunion\dSET{x,y}$
and the arcs $A = $ $\left( \SET{e,f,g,h}\times\SET{x,y} \right) \cup\left(  \SET{x}\times\SET{a,b,d} \right) \cup \left( \SET{y}\times \SET{a,c,d} \right)$
 together with the target set $T=\SET{a,b,c,d}$,
i.e. we have $M=\Gamma(D,T,E)$. The bases of $M$ are the set $T=\SET{a,b,c,d}$, the sets of the form $X\cup\SET{y}$ where
$X\subseteq T$ with $\left| X \right| = 3$ and $y\in\SET{e,f,g,h}$, and the sets of the form $X\cup Y$ where $X\subseteq T$ with
$\left| X \right|= 2$ and $Y\subseteq \SET{e,f,g,h}$ with $\left| Y \right| = 2$. Clearly, $(D,T,E)$ is duality respecting (Lemma~\ref{lem:dualityrespectingrepresentation}). Observe that there is only
one routing  that links $\SET{a,b,c,h}$ to $T$ -- up to symmetries of $D$ that stabilize $E$ -- namely $R=\SET{a,b,c,hxd} \subseteq \Pbf(D)$.
\end{longexample}

\needspace{9\baselineskip}

\vspace*{-\baselineskip} 
\begin{wrapfigure}{l}{5cm}
\vspace{1.6\baselineskip}
\vspace*{-\baselineskip} 
\begin{centering}
\includegraphics[width=5cm]{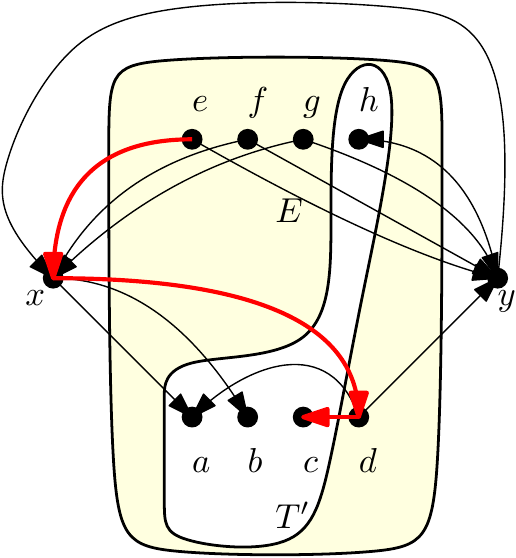}
\end{centering}%
\vspace*{-2\baselineskip} 
\end{wrapfigure}
~ 

\noindent If we use the routing $R$ together with the procedure 
described in the proof of Theorem~\ref{thm:gammoidRepresentationWithBaseTerminals}, 
we obtain the digraph $D'$ depicted to the left. The vertex $h$ is now a sink in $D'$, but $d$ is not a source in $D'$, 
therefore Lemma~\ref{lem:dualityrespectingrepresentation} is not applicable to $D'$.
There are two routings that link the base $B' = \SET{a,b,e,h}$ to the target set $T'=\SET{a,b,c,h}$ in $D'$,
$R_1' = \SET{a,b,exdc,h}$ and $R_2' = \SET{a,b,eyxdc,h}$. Therefore every routing from $B$ to $T'$
uses the vertex $d$ as an inner vertex of some path, so the construction from the proof of 
Lemma~\ref{lem:dualityrespectingrepresentation} breaks at this point.
 Let $B^\ast = E\BS B' = \SET{c,d,f,g}$ and $T^\ast = E\BS T' = \SET{d,e,f,g}$. There is no routing from $B^\ast$ to $T^\ast$ in $\left( D' \right)^\opp$ because $c$ can only be linked to $d$, and 
 therefore $\rk_{\Gamma\left( \left( D' \right)^\opp, T^\ast, E \right)}\left( \SET{c,d} \right) = 1$, thus $B^\ast$ is 
 not independent in $\Gamma\left( \left( D' \right)^\opp, T^\ast, E \right)$. Consequently, $(D',T',E)$ is not a
 duality respecting representation of $M$. There are two obvious ways to modify $D'$ such that the resulting digraph is again duality 
 respecting, but both methods introduce another arc. If we would like to use Lemma~\ref{lem:dualityrespectingrepresentation} as it is 
 stated, we could rename $d$ with $x$, add a new $d$-vertex and the arc $(d,x)$ to $D'$, effectively forcing $d$ to be a source again.
 Or we could add the arc $(x,c)$ to $D'$ --- which corresponds to adding the arc $(x,c)$ to $D$ --- then
 $d$ is no longer on any essential path from $x$ to any $t\in T'$. This would imply that for every $X\subseteq E$ and
  every routing from $X$ to $T'$
  that uses $d$ as
 an inner vertex there is a routing $R_{-d}$ from $X$ to $T'$ that omits $d$ entirely. This routing $R_{-d}$ could be used
 in the construction from the proof of Lemma~\ref{lem:dualityrespectingrepresentation}, which yields a routing $R^\ast$
 linking $E\BS X$ to $T^\ast$
 in the opposite digraph.

\noindent ~
  \strut\hfill $\blacktriangle$


\needspace{6\baselineskip}
\subsection{Complexity-Bounded Classes of Gammoids}

\PRFR{Mar 27th}
\noindent
In this section, we introduce three measures of complexity for gammoids that are related to a class of certain
representations, and examine the corresponding classes of matroids
with a bounded complexity measure.


\begin{definition}\label{def:standardRepresentation}\PRFR{Mar 27th}
	Let $M$ be a gammoid and $(D,T,E)$ with $D=(V,A)$ be a representation of $M$.
	Then $(D,T,E)$ is a \deftext[standard representation of a gammoid]{standard representation of $\bm M$},
	if $(D,T,E)$ is a duality respecting representation, $T\subseteq E$, every $t\in T$ is a sink in $D$,
	and every $e\in E\BS T$ is a source in $D$.
\end{definition}

\begin{remark}\PRFR{Mar 27th}\label{rem:standardRepresentation}
	Lemmas~\ref{lem:sourcesinkrepresentation} and \ref{lem:dualityrespectingrepresentation} guarantee that every gammoid $M$ has
	a standard representation.
\end{remark}

\begin{definition}\PRFR{Mar 27th}
	Let $M$ be a gammoid. The \deftext[arc-complexity of a gammoid]{arc-complexity of $\bm M$}
	is defined to be \label{n:ArcCompl}
	\[ \arcC(M) = \min\SET{\vphantom{A^A} {\left| A \right|} ~\middle|~ \left( (V,A), T, E \right)\text{~is a standard representation of~}M}.\]
	The \deftext[vertex-complexity of a gammoid]{vertex-complexity of $\bm M$}
	is defined to be \label{n:VertexCompl}
	\[ \vK(M) = \min\SET{\vphantom{A^A} {\left| V \right|} ~\middle|~ \left( (V,A), T, E \right)\text{~is a standard representation of~}M}.
	\qedhere \]
\end{definition}

\begin{lemma}\label{lem:arcCkVDualityAndMinors}\PRFR{Mar 27th}
	Let $M=(E,\Ical)$ be a gammoid, $X\subseteq E$. Then
	\[ \arcC(M\restrict X) \leq \arcC(M), \quad \arcC(M\contract X) \leq \arcC(M), \quad \arcC(M) = \arcC(M^\ast)\]
    as well as
    \[ \vK(M\restrict X) \leq \vK(M), \quad \vK(M\contract X) \leq \vK(M), \quad \vK(M) = \vK(M^\ast).\]
\end{lemma}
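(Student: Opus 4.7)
The plan is as follows. For the duality equalities, given any standard representation $(D,T,E)$ of $M$ with $D=(V,A)$, I claim that $(D^\opp, E\BS T, E)$ is a standard representation of $M^\ast$. In $D^\opp$ the sinks and sources of $D$ are swapped, so every $t\in T$ becomes a source of $D^\opp$ and every $f\in E\BS T$ becomes a sink; thus $E\BS T\subseteq E$, each element of $E\BS T$ is a sink in $D^\opp$, and each element of $E\BS(E\BS T)=T$ is a source. Lemma~\ref{lem:dualityrespectingrepresentation} then certifies that this representation is duality respecting, and $\Gamma(D^\opp, E\BS T, E)=M^\ast$ follows from the fact that $(D,T,E)$ itself is duality respecting. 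Since $|V(D^\opp)|=|V|$ and $|A(D^\opp)|=|A|$, this gives $\arcC(M^\ast)\leq\arcC(M)$ and $\vK(M^\ast)\leq\vK(M)$. Applying the same argument to $M^\ast$, together with $(M^\ast)^\ast=M$ from Corollary~\ref{cor:doubleast}, yields the reverse inequalities, hence the claimed equalities.

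For the minor inequalities, my plan is first to observe that the contraction inequalities follow from the restriction inequalities together with the duality equalities: by Lemma~\ref{lem:restrictcontractdual} applied to $M^\ast$ we have $M\contract X=(M^\ast\restrict X)^\ast$, so
\[
\arcC(M\contract X)=\arcC((M^\ast\restrict X)^\ast)=\arcC(M^\ast\restrict X)\leq\arcC(M^\ast)=\arcC(M),
\]
and analogously for $\vK$. Next I reduce the restriction inequality by iteration to single-element restriction, i.e.\ to $X=E\BSET{e}$ for some $e\in E$; since $M\restrict X$ is obtained from $M$ by $|E\BS X|$ such single deletions, the general bound follows from the single-element bound applied repeatedly.

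For single-element restriction I distinguish two sub-cases based on a fixed standard representation $(D,T,E)$ of $M$ with $D=(V,A)$. If $e\in E\BS T$, so that $e$ is a source of $D$, let $D'=(V\BSET{e},\,A\BS(\SET{e}\times V))$; then $(D',T,E\BSET{e})$ is a standard representation of $M\restrict(E\BSET{e})$. Indeed, for $Y\subseteq E\BSET{e}$ no path of any routing $Y\routesto T$ in $D$ can visit $e$ (since $e\notin Y$ and $e$ has no incoming arcs), so the routings in $D$ and in $D'$ coincide; and the three conditions ``$T\subseteq E\BSET{e}$, $T$ consists of sinks, $(E\BSET{e})\BS T$ consists of sources'' transfer verbatim, so Lemma~\ref{lem:dualityrespectingrepresentation} delivers a standard representation with $|V'|<|V|$ and $|A'|\leq|A|$.

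If $e\in T$, so that $e$ is a sink of $D$, my plan is to reduce to the previous sub-case by pivoting. If $e$ is isolated in $D$, then $e$ is a coloop in $M$ and the pair $(D-e,\,T\BSET{e},\,E\BSET{e})$ is already a standard representation of $M\restrict(E\BSET{e})$ with strictly fewer vertices and no additional arcs. Otherwise there is an arc $(r,e)\in A$ with $r\in V\BS T$ (since $T$ consists of sinks), and by Lemma~\ref{lem:MasonsFundamental} the pivot $(D_{r\leftarrow e},(T\BSET{e})\cup\SET{r},E)$ represents $M$; a direct inspection of Definition~\ref{def:rspivot} shows that pivoting preserves both $|V|$ and $|A|$, because the $|\SETL{v}{(r,v)\in A}|$ outgoing arcs of $r$ are removed and the same number of outgoing arcs from $e$ are introduced. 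The main obstacle, as Example~\ref{ex:pivotBreaksDualityRespecting} warns, is that the pivot need not preserve the standard-representation property: the element $e$ may retain incoming arcs from other vertices, and former sources among the neighbours of $r$ may acquire incoming arcs from $e$. The plan is to choose a sequence of pivots --- following an essential path from $e$ back towards an element of $E\BS T$ that can take over $e$'s role as an endpoint of a linking --- and then to re-establish the standard-representation conditions by removing the vertex $e$ (now a non-target source after the sequence) together with its outgoing arcs via the first sub-case. The delicate bookkeeping of arc counts through this sequence is the main technical hurdle, and it is carried out by verifying that each pivot either preserves standardness or leaves a ``defect'' that is resolved without a net increase in vertices or arcs when the final deletion is performed.
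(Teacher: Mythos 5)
Your handling of the duality equalities, your reduction of the contraction inequalities to the restriction inequalities via Lemma~\ref{lem:restrictcontractdual} and Corollary~\ref{cor:doubleast}, and your deletion of a single source $e\in E\BS T$ are all correct and match the paper's strategy; the paper merely processes all of $E\BS X$ in one batch (pivoting along a single maximal routing $B_0\routesto T\BS X$) rather than one element at a time, which is immaterial. The genuine gap is in the only hard case, $e\in T$, where your argument stops exactly where the content of the lemma lies. First, your description of the digraph after the pivot sequence is wrong: the first pivot $D_{r\leftarrow e}$ installs the arcs $(e,x)$ for every $x\in\DclD{\SET{r}}{D}\BSET{e}$ while leaving every other arc entering $e$ untouched, so afterwards $e$ is neither a sink nor a source. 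Your planned final step --- ``remove $e$ via the first sub-case'' --- is therefore inapplicable; it is also unnecessary, since $e$ belongs neither to the new ground set nor to the new target set and may simply remain as an auxiliary vertex. Second, your case split (``$e$ isolated'' versus ``$e$ has an in-arc'') is not the right dichotomy: $e$ may have in-arcs yet be unreachable by any path starting in $E\BSET{e}$, and the correct distinction is whether the maximal $B_0\subseteq E\BSET{e}$ routable to $\SET{e}$ is empty (drop $e$ from the targets) or a singleton $\SET{b}$ (pivot along the path from $b$ to $e$).

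Most seriously, the assertion that ``each pivot either preserves standardness or leaves a defect that is resolved without a net increase'' is precisely the claim that must be proved, and no argument is given. What closes the case --- and what the paper actually verifies --- is the following: the path from $b$ to $e$ can meet a source only at its initial vertex $b$ (sources have no in-arcs) and can never pass through a sink of $T\BSET{e}$ (sinks have no out-arcs), and each pivot $D_{r\leftarrow s}$ creates new in-arcs only at $r$ and at the out-neighbours of $r$, all of which already had in-arcs. Hence every element of $E\BS(T\cup\SET{b})$ remains a source, every element of $T\BSET{e}$ remains a sink, and $b$ becomes a sink; Lemma~\ref{lem:MasonsFundamental} then gives $\Gamma(D',(T\BSET{e})\cup\SET{b},E\BSET{e})=M\restrict(E\BSET{e})$ with no change in $\left|A\right|$ or $\left|V\right|$, and Lemma~\ref{lem:dualityrespectingrepresentation} certifies standardness. (In your one-element-at-a-time version the paper's additional maximality argument is not needed, since the single target $e$ is entirely replaced by $b$; but the source/sink preservation above is indispensable.) Without this verification the inequalities $\arcC(M\restrict X)\le\arcC(M)$ and $\vK(M\restrict X)\le\vK(M)$ are not established.
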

\begin{proof}\PRFR{Mar 27th}
	Let $(D,T,E)$ be a standard representation of $M$. Then $(D^\opp,E\BS T, E)$ is a standard representation of $M^\ast$:
	By Definition~\ref{def:dualityRespectingRepr} we have
	 $M^\ast = \Gamma(D^\opp, E\BS T, E)$, and since every sink of $D$ is a source of $D^\opp$
	and every source of $D$ is a sink of $D^\opp$, the set $E\BS T$ consists of sinks of $D^\opp$, 
	and the set $T= E\BS\left( E\BS T \right)$ consists of sources of $D^\opp$.
	Consequently, $\arcC(M^\ast) \leq \arcC(M)$ and $\vK(M^\ast) \leq \vK(M)$. It follows that $\arcC(M) = \arcC(M^\ast)$,
	as well as $\vK(M) = \vK(M^\ast)$, 
	since $M=\left( M^\ast \right)^\ast$ (Corollary~\ref{cor:doubleast}).

	\noindent
	Now let $(D,T,E)$ be a standard representation of $M$ where $D=(V,A)$ such that $\left| A \right| = \arcC(M)$.
	If $T\subseteq X$, then $(D,T,X)$ is a standard representation of $M\restrict X$ and therefore
	$\arcC(M\restrict X) \leq \arcC(M)$.
	Otherwise let $Y = T\BS X$, and let $B_0\subseteq X$ be a set of maximal cardinality such that
	there is a routing $R_0\colon B_0\routesto Y$ in $D$. Let $D'=(V,A')$ be the digraph that arises from $D$ 
	by a sequence of pivot operations as they are described in the proof of Theorem~\ref{thm:gammoidRepresentationWithBaseTerminals}
	with respect to the routing $R_0$.
	Observe that every $b\in B_0$ is a sink in $D'$ and that $\left| A' \right| = \left| A \right|$.
	We argue that $(D',(T\cap X) \cup B_0, X)$ is a standard representation of $M\restrict X$:
	Let $Y_0 = \SET{p_{-1}~\middle|~ p\in R_0}$ be the set of targets that are entered by the routing $R_0$.
	It follows from the proof of Theorem~\ref{thm:gammoidRepresentationWithBaseTerminals} that the triple
	$(D',(T\cap X) \cup B_0 \cup \left( Y\BS Y_0 \right), E)$ is a representation of $M$. The chain of pivot operations 
	we carried out on $D$ preserves all those sources and sinks of $D$, which are not visited by a path $p\in R_0$.
	So we obtain that every $e\in E\BS\left( T\cup B_0 \right)$ is a source in $D'$, and that every $t\in T\cap X$
	is a sink in $D'$. Thus the set $T' = (T \cap X)\cup B_0$ consists of sinks in $D'$, and the set
	$X\BS T' \subseteq E\BS\left( T\cup B_0 \right)$ consists of sources in $D'$. Therefore $(D',(T\cap X) \cup B_0, X)$ is a standard
	representation, and we give an indirect argument that $(D',(T\cap X) \cup B_0, X)$ represents $M\restrict X$.
	Clearly, $(D',(T\cap X) \cup B_0 \cup \left( Y\BS Y_0 \right), X)$ is a representation of $M\restrict X$.
	Since we assume that $(D',(T\cap X) \cup B_0, X)$ does not represent $M\restrict X$, there must be a set $X_0\subseteq X$
	such that there is a routing $Q_0\colon X_0\routesto (T\cap X) \cup B_0 \cup \left( Y\BS Y_0 \right)$ and such that
	there is no routing $X_0\routesto (T\cap X) \cup B_0$. Thus there is a path $q\in Q_0$ with $q_{-1} \in Y\BS Y_0$
	and $q_{1}\in X$. Consequently we have a routing $Q'_1 = \SET{q}\cup\SET{b\in \Pbf(D') ~\middle|~ b\in B_0}$ in $D'$.
	This implies that there is a routing $B_0\cup\SET{q_{1}} \routesto Y$ in $D$, a contradiction to the maximal cardinality of the choice of $B_0$
	above. Thus our assumption is wrong and $(D',(T\cap X) \cup B_0, X)$ is a standard representation of $M\restrict X$, so
	$\arcC(M\restrict X) \leq \arcC(M)$ holds.  
	%
	Finally, let $(D,T,E)$ be a standard representation of $M$ with $D=(V,A)$ such that $\left| V \right| = \vK(M)$. By an analogue argument 
	we obtain that $\vK(M\restrict X) \leq \vK(M)$ holds.
	The previous results combined with Lemma~\ref{lem:restrictcontractdual} yield that the dual inequalities \linebreak $\arcC(M\contract X) \leq \arcC(M)$ and $\vK(M\contract X) \leq \vK(M)$ hold, too.
\end{proof}

\begin{lemma}\label{lem:kVgeqE}\PRFR{Mar 27th}
	Let $M=(E,\Ical)$ be a gammoid.
	Then $\vK(M) \geq \left| E \right|$.
\end{lemma}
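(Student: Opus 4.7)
The plan is to observe that the inequality follows directly from the definition of a representation of a gammoid. Recall from Definition~\ref{def:gammoid} that for any representation $(D,T,E)$ of a gammoid with $D=(V,A)$, we have $E \subseteq V$ by definition, since the ground set $E$ consists of vertices of $D$.

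First I would fix an arbitrary standard representation $(D,T,E)$ of $M$ with $D=(V,A)$, which exists by Remark~\ref{rem:standardRepresentation}. Since $(D,T,E)$ is in particular a representation of $M$ in the sense of Definition~\ref{def:gammoid}, we have $E\subseteq V$, and consequently $\left| V \right| \geq \left| E \right|$.

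Taking the minimum of $\left| V \right|$ over all standard representations of $M$ then yields $\vK(M) \geq \left| E \right|$. I do not anticipate any obstacles; the statement is an immediate structural consequence of the fact that ground set elements are vertices of the digraph in the representation.
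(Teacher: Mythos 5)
Your proof is correct and matches the paper's argument exactly: the paper's own proof is the one-liner ``Clear, since $E\subseteq V$ for every representation $(D,T,E)$ with $D=(V,A)$.'' Your version simply spells out the same observation in slightly more detail.
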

\begin{proof}\PRFR{Mar 27th}
	Clear, since $E\subseteq V$ for every representation $(D,T,E)$ with $D=(V,A)$.
\end{proof}

\begin{remark}\label{rem:vKeasy}\PRFR{Mar 27th}
	Let $k\in \N$. Clearly, the class of gammoids $M$ with $\vK(M)\leq k$ is closed under duality and arbitrary minors,
	but Lemma~\ref{lem:kVgeqE} shows that this class has only a finite number of pair-wise non-isomorphic matroids. Thus such a class of gammoids is trivially characterized
	by a finite number of excluded minors,
	because there are only finitely many non-isomorphic matroids with $k+1$ elements.
\end{remark}

\begin{lemma}\label{lem:arcCSubAdditive}\PRFR{Mar 27th}
	Let $M=(E,\Ical)$ and $N =(E',\Ical')$ be gammoids with $E\cap E' = \emptyset$.
	Then $M\oplus N$ is a gammoid,
	\[ \arcC(M\oplus N) \leq \arcC(M) + \arcC(N), \txtand
	 \vK(M\oplus N) \leq \vK(M) + \vK(N). \]
\end{lemma}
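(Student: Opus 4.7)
The plan is to construct a representation of $M\oplus N$ by taking a disjoint union of standard representations of $M$ and $N$ that achieve the corresponding complexity minima, and then to verify that this disjoint union is again a standard representation whose arc count and vertex count are the sums of those of the parts.

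First I would fix standard representations $(D_M,T_M,E)$ and $(D_N,T_N,E')$ of $M$ and $N$ (such representations exist by Remark~\ref{rem:standardRepresentation}), chosen for the arc-complexity inequality so that $\lvert A_M\rvert = \arcC(M)$ and $\lvert A_N\rvert = \arcC(N)$, where $D_M=(V_M,A_M)$ and $D_N=(V_N,A_N)$. By renaming inner (non-ground-set) vertices if necessary, I may assume $V_M\cap V_N = E\cap E' = \emptyset$. Then I form the digraph $D=(V_M\disunion V_N,\,A_M\disunion A_N)$ and take $T=T_M\cup T_N$. By construction, every $t\in T$ is a sink of $D$ (it was a sink in its component and has no incident arcs in the other component), and every $e\in (E\cup E')\BS T$ is a source of $D$ for the same reason; moreover $T\subseteq E\cup E'$.

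Next I would argue $\Gamma(D,T,E\cup E')=M\oplus N$. The key observation is that a path in $D$ stays entirely within one of the components $D_M$ or $D_N$, because there are no arcs between $V_M$ and $V_N$. Hence any routing $R\colon X\routesto T$ in $D$ splits uniquely as a routing $R_M\colon X\cap E\routesto T_M$ in $D_M$ and a routing $R_N\colon X\cap E'\routesto T_N$ in $D_N$; conversely, any pair of such routings in the components assembles to a routing in $D$ with pair-wise disjoint paths. Therefore $X\subseteq E\cup E'$ is independent in $\Gamma(D,T,E\cup E')$ iff $X\cap E\in\Ical$ and $X\cap E'\in\Ical'$, which is precisely the defining condition for independence in $M\oplus N$ (Definition~\ref{def:directSum}). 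Since $(D,T,E\cup E')$ is duality respecting by Lemma~\ref{lem:dualityrespectingrepresentation}, it is a standard representation of $M\oplus N$. In particular, $M\oplus N$ is a gammoid.

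Finally, the arc-complexity inequality follows from $\arcC(M\oplus N)\leq \lvert A_M\rvert + \lvert A_N\rvert = \arcC(M)+\arcC(N)$, and the vertex-complexity inequality is obtained from the exact same construction with representations chosen to minimize $\lvert V_M\rvert$ and $\lvert V_N\rvert$, giving $\vK(M\oplus N)\leq \lvert V_M\rvert+\lvert V_N\rvert = \vK(M)+\vK(N)$. The only slightly subtle point — and the one I would state carefully rather than view as an obstacle — is the disjointness of inner vertices, since duality-respecting representations require $T\subseteq E\cup E'$ and we cannot identify internal vertices across the components without risking spurious paths; keeping the component vertex sets disjoint resolves this automatically and preserves both the source/sink structure and the arc/vertex counts.
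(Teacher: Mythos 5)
Your proposal is correct and follows essentially the same route as the paper: take minimum standard representations with disjoint vertex sets, form the disjoint-union digraph, observe that routings split componentwise so the result represents $M\oplus N$, and check the source/sink structure (via Lemma~\ref{lem:dualityrespectingrepresentation}) to conclude it is a standard representation with the summed arc and vertex counts.
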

\begin{proof}\PRFR{Mar 27th}
	Let $(D,T,E)$ and $(D',T',E')$ be standard representations of $M$ and $N$, respectively, such that $D =(V,A)$ and $D'=(V',A')$
	with $\left| A \right| = \arcC(M)$ and $\left| A' \right| = \arcC(N)$, and
	such that $V\cap V'=\emptyset$.
	Let $D_\oplus = (V\disunion V', A\disunion A')$. Then \mbox{$M\oplus N = \Gamma(D_\oplus, T\disunion T', E\disunion E')$}
	because there are no arcs in $D_\oplus$ connecting vertices from $V$ with $V'$ or vice versa.
	Thus every routing $R_\oplus\colon X_\oplus \routesto T\disunion T'$ in $D_\oplus$ is the disjoint union of the routings
	\linebreak $R = \SET{p\in R_\oplus ~\middle|~ \left| p \right|\subseteq V}$ 
	and $R' = \SET{p'\in R_\oplus ~\middle|~ \vphantom{A^A}\left| p' \right|\subseteq V'}$, and conversely every pair of routings
	$R\colon X\routesto T$ and $R'\colon X'\routesto T'$ yields a routing $R_\oplus = R\disunion R'$ since $V\cap V'=\emptyset$.
	Thus a set $X\subseteq E\disunion E'$ is independent in $\Gamma(D_\oplus, T\disunion T', E\disunion E')$ if and only if 
	$X\cap E$ is independent in $M$ and $X\cap E'$ is independent in $N$.  \mbox{$(D_\oplus, T\disunion T', E\disunion E')$} is a 
	standard representation of $M\oplus N$ with $\arcC(M) + \arcC(N)$ arcs, therefore
	 $$\arcC(M\oplus N) \leq \arcC(M) + \arcC(N)$$ holds.
	The same construction applied to representations $(D,T,E)$ and $(D',T',E')$ with $D=(V,A)$, $D'=(V',A')$,
	$V\cap V'=\emptyset$, $\left| V \right| = \vK(M)$, and $\left| V' \right| = \vK(N)$ yields that \(\vK(M\oplus N) \leq \vK(M) + \vK(N)\)
	holds, too.
\end{proof}

\begin{corollary}\label{cor:extWithLoopCoLoop}\PRFR{Mar 27th}
	Let $M=(E,\Ical)$ be a gammoid, $F$ and $L$ finite sets such that $F\cap L = E\cap F = E\cap L = \emptyset$.
	Then $M\oplus (F,2^F) \oplus (L,\SET{\emptyset})$ is a gammoid and
	\[ \arcC\left( M\oplus (F,2^F) \oplus (L,\SET{\emptyset}) \right) = \arcC(M) \]
\end{corollary}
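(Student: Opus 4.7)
The plan is to prove the equality by establishing both inequalities, exploiting the fact that the free matroid and the matroid consisting entirely of loops each admit a standard representation with zero arcs.

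First I would verify that $\arcC((F,2^F)) = 0$ by exhibiting the standard representation $((F,\emptyset),F,F)$: here $T=F=E$, so every element of $T$ is a sink (vacuously in the absence of arcs) and $E\BS T=\emptyset$ vacuously consists of sources. Every $X\subseteq F$ is routed to $F$ via trivial paths, so $\Gamma((F,\emptyset),F,F) = (F,2^F)$; and the opposite representation $((F,\emptyset)^\opp,\emptyset,F)$ yields the zero matroid on $F$, which is indeed the dual of the free matroid. An analogous check shows that $((L,\emptyset),\emptyset,L)$ is a duality respecting standard representation of $(L,\SET{\emptyset})$ — no subset other than $\emptyset$ can be routed to $T=\emptyset$, and the opposite representation yields the free matroid on $L$, which is dual to $(L,\SET{\emptyset})$. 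Hence $\arcC((F,2^F)) = \arcC((L,\SET{\emptyset})) = 0$.

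For the upper bound, two applications of Lemma~\ref{lem:arcCSubAdditive} give
\[ \arcC\left( M\oplus (F,2^F) \oplus (L,\SET{\emptyset}) \right) \leq \arcC(M) + \arcC((F,2^F)) + \arcC((L,\SET{\emptyset})) = \arcC(M). \]

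For the lower bound, note that $M = \left( M \oplus (F,2^F) \oplus (L,\SET{\emptyset}) \right) \restrict E$ by Lemma~\ref{lem:directSumAndRestrictionCommute} (restriction of a direct sum to the ground set of one summand recovers that summand, since both $(F,2^F)\restrict \emptyset$ and $(L,\SET{\emptyset})\restrict \emptyset$ are the empty matroid). Then the minor-monotonicity statement of Lemma~\ref{lem:arcCkVDualityAndMinors} yields
\[ \arcC(M) \leq \arcC\left( M \oplus (F,2^F) \oplus (L,\SET{\emptyset}) \right), \]
and combining the two inequalities finishes the proof.

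The only nontrivial step is the first one, where one has to be careful that the trivial digraphs $(F,\emptyset)$ and $(L,\emptyset)$ really do satisfy all four clauses in the definition of a standard representation — in particular that the duality respecting condition is not vacuously violated. Once that is checked, the rest is a straightforward assembly of Lemmas~\ref{lem:arcCSubAdditive}, \ref{lem:arcCkVDualityAndMinors}, and \ref{lem:directSumAndRestrictionCommute}.
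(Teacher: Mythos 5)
Your proof is correct and follows essentially the same route as the paper: subadditivity of $\arcC$ under direct sums combined with the zero-arc representations $\Gamma((F,\emptyset),F,F)=(F,2^F)$ and $\Gamma((L,\emptyset),\emptyset,L)=(L,\SET{\emptyset})$. You are in fact slightly more complete than the paper, which states only the upper bound explicitly and leaves the lower bound $\arcC(M)\leq\arcC(M\oplus(F,2^F)\oplus(L,\SET{\emptyset}))$ (via restriction to $E$ and Lemma~\ref{lem:arcCkVDualityAndMinors}) implicit.
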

\begin{proof}\PRFR{Mar 27th}
This is a direct consequence of Lemma~\ref{lem:arcCSubAdditive}
and the fact that \[\Gamma((F,\emptyset),F,F) = (F,2^F) \,\,\txtand\,\,\Gamma((L,\emptyset),\emptyset,L) = (L,\SET{\emptyset}).
\qedhere \]
\end{proof}

\begin{lemma}\label{lem:disunionArcCZero}\PRFR{Mar 27th}
	Let $M=(E,\Ical)$ be a gammoid with $\arcC(M) = 0$.
	Then there is a subset $X\subseteq E$ such that
		\[ M = (X,2^X) \oplus (E\BS X, \SET{\emptyset}).\]
\end{lemma}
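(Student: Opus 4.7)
The plan is to exploit the standard representation of $M$ directly. Since $\arcC(M) = 0$, there exists a standard representation $(D,T,E)$ of $M$ with $D = (V,A)$ and $|A| = 0$, i.e.\ $A = \emptyset$. I take $X := T$ and aim to show that the independence family of $M$ is precisely $2^T$, which immediately yields the desired direct sum decomposition since the independent sets of $(X, 2^X) \oplus (E\BS X, \SET{\emptyset})$ are exactly the subsets of $X$.

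First I would observe that in a digraph with no arcs, the only walks are trivial walks consisting of a single vertex; in particular $\Pbf(D) = \SET{v \in V^1 ~\middle|~ v \in V}$. Consequently, for any $Y \subseteq E$, a routing $R\colon Y \routesto T$ in $D$ must consist entirely of trivial paths, and each such path $y \in R$ must satisfy $y = p_1 = p_{-1} \in T$ by the definition of a routing. It follows that if $Y$ is independent in $M = \Gamma(D,T,E)$, then $Y \subseteq T$.

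Conversely, for any $Y \subseteq T$, the family $R_Y = \SET{y \in \Pbf(D) ~\middle|~ y \in Y}$ is a routing from $Y$ to $T$ in $D$, since its paths are pair-wise vertex disjoint (being distinct singletons) and each ends in $T$. Therefore every subset of $T$ is independent in $M$, so the independence family of $M$ equals $2^T$.

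Setting $X = T$, the independent sets of $(X, 2^X) \oplus (E\BS X, \SET{\emptyset})$ are, by Definition~\ref{def:directSum}, exactly the sets of the form $Y \cup \emptyset$ with $Y \subseteq X$, i.e.\ all subsets of $X = T$. This coincides with $\Ical$, so $M = (X, 2^X) \oplus (E\BS X, \SET{\emptyset})$. There is no real obstacle here; the only thing to be careful about is invoking the definition of a standard representation to guarantee $T \subseteq E$, so that $X = T \subseteq E$ is a valid choice for the decomposition.
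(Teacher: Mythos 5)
Your proof is correct and follows essentially the same route as the paper's: take the arc-free (standard) representation, observe that only trivial paths exist, conclude $\Ical = 2^T$, and set $X = T$. The only difference is that you spell out the two inclusions and the use of $T\subseteq E$ explicitly, which the paper leaves implicit.
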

\begin{proof}\PRFR{Mar 27th}
	Since there is a representation $(D,T,E)$ of $M$ with $D = (V,\emptyset)$, we obtain that the sets $X\subseteq E$ that
	are linked to $T$ are precisely the subsets of $T$. An element of $E\BS T$ can never be linked to $T$ since $\Pbf(D)$ only consists of
	trivial paths. Thus $\Ical = 2^T$ and obviously $M = (T,2^T) \oplus (E\BS T, \SET{\emptyset})$.
\end{proof}

\begin{theorem}\PRFR{Mar 27th}
	Let $\Gcal_0$ be the class of gammoids $M$ with $\arcC(M) = 0$. Then $\Gcal_0$ is closed under duality, minors, and direct sums;
	and $\Gcal_0$ is characterized by the excluded minor $U=(E,2^E\BSET{E})$ with $E=\dSET{a,b}$.
\end{theorem}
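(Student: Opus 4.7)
The plan is to derive the three closure properties directly from the established lemmas and then verify $U$ is the unique excluded minor by a short rank-function computation on a cleverly chosen minor.

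Closure is immediate from the preceding results. For duality, Lemma~\ref{lem:arcCkVDualityAndMinors} gives $\arcC(M) = \arcC(M^\ast)$. For minors, the same lemma yields $\arcC(M\restrict X), \arcC(M\contract X) \leq \arcC(M)$, and every minor is obtained as a restriction composed with a contraction (Remark~\ref{rem:contractRestrictCommutingFormula}). For direct sums, Lemma~\ref{lem:arcCSubAdditive} gives $\arcC(M\oplus N) \leq \arcC(M) + \arcC(N)$, which is $0$ whenever both $M,N\in\Gcal_0$.

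Next I would verify $U$ is an excluded minor. Observe first that $U$ is itself a gammoid: the triple $((\SET{a,b},\SET{(b,a)}), \SET{a}, \SET{a,b})$ is a standard representation (Definition~\ref{def:standardRepresentation}), so $\arcC(U)\leq 1$. To see $U\notin\Gcal_0$, apply Lemma~\ref{lem:disunionArcCZero}: any $M\in\Gcal_0$ decomposes as $(X,2^X)\oplus(E\BS X,\SET{\emptyset})$, so every element is a loop or a coloop. But in $U$ neither $a$ nor $b$ is a loop ($\SET{a},\SET{b}$ are independent) nor a coloop (each is a base of $U$ not containing the other). Every proper minor of $U$ has ground set of cardinality at most $1$, and such a matroid is trivially of the decomposed form above, hence in $\Gcal_0$. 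Thus $U$ is an excluded minor.

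The main step is the converse: every gammoid $M=(E,\Ical)$ with $M\notin\Gcal_0$ admits a minor isomorphic to $U$. Since $\arcC(M)>0$, Lemma~\ref{lem:disunionArcCZero} forces the existence of an element $e\in E$ that is neither a loop nor a coloop. As $e$ is not a coloop, there is a base $B$ of $M$ avoiding $e$; then $B\cup\SET{e}$ is dependent, so it contains a circuit $C\ni e$ with $C\BSET{e}\subseteq B$ independent, hence $\left|C\right|\geq 2$. Pick any $f\in C\BSET{e}$ and form the minor $N:=(M\restrict C)\contract\SET{e,f}$, which is a legitimate minor of $M$ by Remark~\ref{rem:contractRestrictCommutingFormula}. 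Using Corollaries~\ref{cor:rkRestrict} and \ref{cor:rkContract}, together with the fact that every proper subset of $C$ is independent in $M$, one computes
\[ \rk_N(\SET{e,f}) \,=\, \rk_M(C) - \rk_M(C\BS\SET{e,f}) \,=\, (\left|C\right|-1)-(\left|C\right|-2) \,=\, 1, \]
and analogously $\rk_N(\SET{e}) = \rk_N(\SET{f}) = 1$. Hence the independent sets of $N$ are exactly $\SET{\emptyset, \SET{e}, \SET{f}}$, so $N\isomorph U$. Combining these facts yields that $\Gcal_0$ is exactly the class of gammoids with no $U$-minor.

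The argument is straightforward once Lemma~\ref{lem:disunionArcCZero} is in hand; the only mild obstacle is confirming that the composition $(M\restrict C)\contract\SET{e,f}$ conforms to the paper's formal definition of a minor, which is precisely the content of Remark~\ref{rem:contractRestrictCommutingFormula}.
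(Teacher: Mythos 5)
Your proof follows the paper's own route almost exactly: closure via Lemmas~\ref{lem:arcCkVDualityAndMinors} and \ref{lem:arcCSubAdditive}, and the $U$-minor extracted as $(M\restrict C)\contract\SET{e,f}$ from a circuit $C$ with $\left|C\right|\geq 2$ --- this is literally the paper's construction. Two points deserve attention. First, you invoke Lemma~\ref{lem:disunionArcCZero} in the converse direction: the lemma gives $\arcC(M)=0\Rightarrow M=(X,2^X)\oplus(E\BS X,\SET{\emptyset})$, whereas what you need in order to pass from $\arcC(M)>0$ to the existence of an element that is neither a loop nor a coloop is the implication ``$M$ is a direct sum of a free matroid and loops $\Rightarrow \arcC(M)=0$''. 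That implication is immediate (Corollary~\ref{cor:extWithLoopCoLoop}, or the empty digraph), but it is not what the cited lemma states, so it should be said explicitly. Second, and more substantively, your converse is stated and proved only for gammoids, so what you establish is that $\Gcal_0$ consists of the gammoids without a $U$-minor. The paper's definition of an excluded minor for $\Gcal_0$ quantifies over all matroids, and its proof accordingly takes an arbitrary matroid $M$ and shows that it either lies in $\Gcal_0$ or has a minor isomorphic to $U$; this is what rules out further, possibly non-gammoid, excluded minors. Your argument repairs itself at no cost: the circuit--restriction--contraction step never uses that $M$ is a gammoid, so you need only replace the entry point ``$\arcC(M)>0$'' by ``some circuit of $M$ has more than one element'', observing that otherwise every element is a loop or a coloop, hence $M$ is a direct sum of a free matroid and loops and lies in $\Gcal_0$.
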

\begin{proof}\PRFR{Mar 27th}
	Lemma~\ref{lem:arcCkVDualityAndMinors} yields that $\Gcal_0$ is closed under duality and minors.
	Let $M_1,M_2\in \Gcal_0$ with disjoint ground sets.
	By Lemma~\ref{lem:arcCSubAdditive} we have $$\arcC(M_1\oplus M_2) \leq \arcC(M_1) + \arcC(M_2) = 0,$$
	so $\arcC(M_1\oplus M_2) = 0$, thus $\Gcal_0$ is closed under direct sums.

	\noindent
	Now let $X\subsetneq E$, then $U\contract X = (X,\SET{\emptyset})$ and $U\restrict X = (X,\SET{X,\emptyset})$. 
	Clearly, $\arcC(U\contract X) = 0$ and $\arcC(U\restrict X) = 0$. Thus every proper minor of $U$ is in $\Gcal_0$.
	Now let $M\in \Gcal_0$, then $M=(F,2^F)\oplus(L,\SET{\emptyset})$ for some finite sets $F$ and $L$.
	Therefore $\Ccal(M) = \SET{\SET{l} ~\middle|~ l\in L}$, so every circuit of a matroid $M\in \Gcal_0$ has cardinality $1$.
	But $\Ccal(U) = \SET{\SET{a,b}}$, thus $U\notin \Gcal_0$.
	Now let $M=(Q,\Ical)$ be any matroid. If there is some $C\in\Ccal(M)$ with $\left| C \right| > 1$, then $M\notin\Gcal_0$
	and	$M\restrict C = (C,2^C\BSET{C})$ is a uniform matroid. Now, let $c_1,c_2\in C$ with $c_1\not= c_2$,
	then $(M\restrict C)\contract \SET{c_1,c_2} = (\SET{c_1,c_2},\SET{\emptyset,\SET{c_1},\SET{c_2}})$ is a rank-$1$ uniform matroid on a
	 $2$-elementary ground set. Therefore $(M\restrict C)\contract \SET{c_1,c_2}$ is a minor of $M$ that is isomorphic to $U$. 
	If there is no $C\in \Ccal(M)$ with $\left| C \right| > 1$,
	 then let $L_Q = \SET{q\in Q ~\middle|~ \SET{q}\in\Ccal(M)}$ and $F_Q = Q\BS L$.
	Clearly, $M = (F_Q,2^{F_Q})\oplus (L_Q,\SET{\emptyset})$ and it is easy to see that $\arcC(M) = 0$, thus $M\in\Gcal_0$.
	Therefore the class $\Gcal_0$ is characterized by the single excluded minor $U$.
\end{proof}

\begin{lemma}\label{lem:oplusReprOfGk}\PRFR{Mar 27th}
	Let $k\in \N$ and $M=(E,\Ical)$ be a gammoid with $\arcC(M) = k$.
	Then there is a partition $E_1\disunion E_2\disunion E_3$ of $E$ such that
	\[ M = \left( M\restrict E_1 \right) \oplus (E_2,2^{E_2}) \oplus (E_3,\SET{\emptyset}),\]
	and such that $\arcC(M\restrict E_1) = k$.
	Furthermore, $\left| E_1 \right| \leq 2k$, $\rk_M(E_1) \leq k$,
	 and there is a set $X_0\subseteq E_1$ with cardinality at most $\rk_M(E_1)$
	such that for every $X\subsetneq E_1$ with $X_0\subseteq X$
	\[ \arcC(M\restrict X) < k .\]
\end{lemma}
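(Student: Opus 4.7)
The plan is to exploit a standard representation of $M$ realizing the arc-complexity and to read off the decomposition directly from its digraph. I would choose a standard representation $(D,T,E)$ of $M$ with $D=(V,A)$ and $\left|A\right|=k$ (such a representation exists by Remark~\ref{rem:standardRepresentation} and the definition of $\arcC$). Then set
\[ E_3 \,=\, \SET{e\in E\BS T ~\middle|~ e \text{ is isolated in } D}, \quad E_2 \,=\, \SET{t\in T ~\middle|~ t \text{ is isolated in } D}, \]
and $E_1 = E\BS(E_2\cup E_3)$. Since a source with no outgoing arc cannot be routed anywhere, the elements of $E_3$ are loops of $M$; dually, the elements of $E_2$ are coloops. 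As $D$ decomposes into its restriction to $V\BS(E_2\cup E_3)$ together with the isolated vertices in $E_2\cup E_3$, the matroid factors as $M = (M\restrict E_1)\oplus(E_2,2^{E_2})\oplus(E_3,\SET{\emptyset})$.

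Next I would verify the numerical claims. Lemma~\ref{lem:arcCkVDualityAndMinors} gives $\arcC(M\restrict E_1)\leq k$; conversely, any standard representation of $M\restrict E_1$ using fewer than $k$ arcs would, by Corollary~\ref{cor:extWithLoopCoLoop}, yield one of $M$ with fewer than $k$ arcs, contradicting $\arcC(M)=k$. The bound $\left|E_1\right|\leq 2k$ is immediate, since each element of $E_1$ is incident with some arc and each of the $k$ arcs has exactly two endpoints. For the rank bound, note that in any standard representation the trivial paths at the targets link $T$ onto $T$, so $T$ is independent and $\left|T\right|=\rk_M(E)$. Combining $\left|T\right|=\left|T\cap E_1\right|+\left|E_2\right|$ with the additivity $\rk_M(E)=\rk_M(E_1)+\left|E_2\right|$ forced by the direct-sum decomposition yields $\rk_M(E_1)=\left|T\cap E_1\right|\leq k$, where the inequality holds because every $t\in T\cap E_1$ is the head of at least one of the $k$ arcs.

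The key new step is the choice of $X_0$. I would take $X_0 := T\cap E_1$, so $\left|X_0\right|=\rk_M(E_1)$. Given $X\subsetneq E_1$ with $X_0\subseteq X$, pick $u\in E_1\BS X$. Because $X_0\subseteq X$, the element $u$ lies in $E_1\BS(T\cap E_1)=(E\BS T)\cap E_1$, so $u$ is a source in $D$ with at least one outgoing arc. Form $D''$ by deleting from $D$ the vertex $u$ (together with its outgoing arcs) and the isolated sinks in $T\BS X=E_2$. Then $(D'',T\cap E_1,X)$ is a standard representation of $M\restrict X$: its targets lie in $X$ and are sinks, its non-target ground-set elements remain sources, and because $u$ is a source in $D$ it can occur only as the initial vertex of a path, so for every $Y\subseteq X$ there is a routing $Y\routesto T$ in $D$ iff there is a routing $Y\routesto T\cap E_1$ in $D''$. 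Since at least the outgoing arcs of $u$ have been dropped, $D''$ has strictly fewer than $k$ arcs, and hence $\arcC(M\restrict X)<k$.

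The main obstacle is the last paragraph: one has to verify carefully that the deletion of $u$ preserves all routings of subsets of $X$, and that the resulting triple genuinely satisfies every clause of Definition~\ref{def:standardRepresentation} — including that it remains duality-respecting — after also pruning the isolated coloop-targets $E_2$ from the vertex set. The remaining reasoning is bookkeeping, reducing each claim to the already-established behaviour of standard representations together with Corollary~\ref{cor:extWithLoopCoLoop}.
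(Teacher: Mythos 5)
Your proof is correct and follows essentially the same route as the paper's: both split $E$ according to whether an element is incident with an arc in a minimal standard representation, take $X_0 = T\cap E_1$, and delete a source $u\in E_1\BS X$ (together with its outgoing arcs) to obtain a representation of $M\restrict X$ with fewer than $k$ arcs. The one worry you flag --- that the pruned triple remains duality-respecting --- is automatic by Lemma~\ref{lem:dualityrespectingrepresentation}, since after the deletion the targets are still sinks and the non-target ground-set elements are still sources.
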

\begin{proof}\PRFR{Mar 27th}
	Let $(D,T,E)$ be a standard representation of $M$ with $D=(V,A)$ and $\left| A \right| = k$.
	We may partition $V$ into $V_1 = \SET{v\in V ~\middle|~ \exists u\in V\colon\,(u,v)\in A \txtor (v,u)\in A}$,
	the set of vertices incident with an arc,
	and $V_2 = V\BS V_1$, the set of isolated vertices. 
	There are no arcs in the induced digraph $D'=(V_2,A\cap \left( V_2\times V_2 \right))=(V_2,\emptyset)$,
	thus we obtain that 
	\[ M' = M\restrict \left( E\cap V_2 \right) = \Gamma(D,T,E\cap V_2) = \Gamma(D',T\cap V_2, E\cap V_2) \] and consequently
	we have $\arcC(M\restrict V_2) = 0$. Therefore there are disjoint $F,L\subseteq V_2$ such that
	$M\restrict V_2 = (F,2^F)\oplus (L,\SET{\emptyset})$ (Lemma~\ref{lem:disunionArcCZero}).
	Now let $X\subseteq V_1 \cap E$ with $X\in\Ical$. 
	Then there is a routing $R\colon X\routesto T$ in $D$,
	and since no arc of $D$ is incident with $v\in V_2$, we obtain that $\left| p \right|\subseteq V_1$ for all $p\in R$.
	Therefore we may conclude that $X$ is independent in $M'' = \Gamma(D'',T\cap V_1, E\cap V_1)$ for
	$D'' = (V_1,A)$, thus $M'' = M\restrict \left( E\cap V_1 \right)$. Therefore, for all $X\subseteq E$ with $X\in\Ical$, 
	we have that $X\cap V_1$ is independent in $M''$,
	and that $X\cap V_2$ is independent in $M'$. Thus
	\[ M = M''\oplus M' = \left(\vphantom{2^{T\cap V_2}} M\restrict \left( E\cap V_1 \right) \right)
		\oplus \left( T\cap V_2, 2^{T\cap V_2} \right) \oplus (\vphantom{2^{T\cap V_2}}V_2\BS T, \SET{\emptyset}).\]
	Assume that $\arcC(M\restrict \left( E \cap V_1 \right)) < \arcC(M)$, then we could take a standard representation of
	$M\restrict \left( E\cap V_1 \right)$ and augment it with isolated vertices in order to obtain a standard representation of $M$ 
	with fewer than $\arcC(M)$ arcs --- yielding a contradiction. Therefore $\arcC(M\restrict \left( E \cap V_1 \right)) = \arcC(M)$.
	Since every element of $E_1=E\cap V_1$ must be incident with at least one arc in $D$, and every arc is incident with two vertices,
	and since $\left| A \right|=k$, we obtain that $\left| E_1 \right| \leq \left| V_1 \right| \leq 2k$. Furthermore,
	every arc in $D$ is incident with at most one source and at most one sink, thus $\left| T\cap V_1  \right| \leq k$,
	and therefore $\rk_M(E_1) \leq k$.

	\noindent
	Now we show that  $\arcC(M\restrict X) < \arcC(M)$ holds for every $X\subsetneq E_1$ with $T\cap V_1 \subseteq X$ 
	by constructing a smaller representation.
	Let $X\subsetneq E_1$ such that $\arcC(M\restrict X) = \arcC(M)$ and $T\cap V_1 \subseteq X$, 
	and let $x\in E_1\BS X$.
	Since $x\notin T\cap V_1$ we have
	 $x\notin T$. Let
	  $$D_x=\left(\vphantom{A^A}V_1\BSET{x},A\cap\left(\left( V_1\BSET{x} \right)\times \left( V_1\BSET{x} \right)\right)\right)$$
	be the digraph induced from $D$ by removing the source $x$.
	Clearly, $D_x$ has fewer arcs than $D$ because at least one arc in $D$ is incident with $x$.
	But then the contraction of $M$ to $X$ satisfies the equation $M\restrict X = \Gamma(D_x,T,X)$, 
	which implies $\arcC(M\restrict X) < \arcC(M)$. 
\end{proof}

\begin{theorem}\label{thm:arcCquiteEasy}\PRFR{Mar 27th}
	Let  $k\in \N$, $k \geq 1$, and let $\Gcal_k$ be the class of gammoids $M$ with $\arcC(M) \leq k$. 
	Then $\Gcal_k$ is closed under duality and minors, but not under direct sums;
	and $\Gcal_k$ is characterized by finitely many excluded minors.
\end{theorem}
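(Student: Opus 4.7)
The plan is as follows. Closure under duality and minors is immediate from Lemma~\ref{lem:arcCkVDualityAndMinors}: the identity $\arcC(M^\ast)=\arcC(M)$ and the inequalities $\arcC(M\restrict X)\leq \arcC(M)$, $\arcC(M\contract X)\leq \arcC(M)$ preserve membership in $\Gcal_k$. For the failure of closure under direct sums I will exhibit $N = U^{\oplus(k+1)}$, the $(k+1)$-fold disjoint direct sum of the circuit $U=(\dSET{a,b},\SET{\emptyset,\SET{a},\SET{b}})$. Here $\arcC(U)=1$ (achieved by the representation $(\SET{a,b},\SET{(a,b)})$ with target $\SET{b}$, and nonzero by Lemma~\ref{lem:disunionArcCZero}), and in any standard representation of $N$ the target set must be a basis meeting each copy in exactly one element; the remaining element of each copy is therefore a source that must emit at least one outgoing arc in order to link its singleton independent set to the target set, giving $k+1$ arcs with pairwise distinct tails, hence $\arcC(N)\geq k+1 > k$.

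For the bounded-excluded-minors characterization, let $N=(E,\Ical)$ be an excluded minor of $\Gcal_k$. I will first argue that $N$ has neither loops nor coloops: a loop $\ell$ would yield the decomposition $N = (N\restrict(E\BSET{\ell}))\oplus (\SET{\ell},\SET{\emptyset})$ with $N\restrict(E\BSET{\ell})\in\Gcal_k$, so that Corollary~\ref{cor:extWithLoopCoLoop} would give $\arcC(N)\leq k$, contradicting $N\notin\Gcal_k$; by closure of $\Gcal_k$ under duality, Lemma~\ref{lem:restrictcontractdual} makes $N^\ast$ an excluded minor as well, and the same argument applied to $N^\ast$ shows that $N$ has no coloops. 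In particular, $\rk_N(E\BSET{e})=\rk_N(E)$ for all $e\in E$, and the bases of $N\restrict(E\BSET{e})$ are precisely the bases of $N$ avoiding $e$.

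Next, for each $e\in E$ I will apply Lemma~\ref{lem:oplusReprOfGk} to the proper minor $N\restrict(E\BSET{e})\in\Gcal_k$, obtaining a decomposition $N\restrict(E\BSET{e}) = M_e\oplus(F_e,2^{F_e})\oplus(L_e,\SET{\emptyset})$ with $\left|E(M_e)\right|\leq 2\arcC(N\restrict(E\BSET{e}))\leq 2k$. Looplessness of $N$ forces $L_e=\emptyset$, so $|F_e|\geq |E|-1-2k$, and the coloop set $F_e$ coincides with $\bigcap\SETR{B\in\Bcal(N)}{e\notin B}$. Setting $g(x)=\SETR{e\in E}{x\in F_e}$ and double-counting the pairs $(e,x)$ with $x\in F_e$ gives $|E|(|E|-1-2k)\leq \sum_{x\in E} |g(x)|$. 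The identity $g(x)=\bigcap\SETR{B\in\Bcal(N)}{x\notin B}$, together with the absence of coloops in $N$ (which guarantees that this intersection is indexed over a non-empty family), yields $|g(x)|\leq \rk_N(E)$ for each $x\in E$. Therefore $\rk_N(E)\geq |E|-1-2k$; applying the same bound to $N^\ast$ and using $\rk_N(E)+\rk_{N^\ast}(E)=|E|$ from Lemma~\ref{lem:rankDual} forces $|E|\leq 4k+2$. Since there are only finitely many isomorphism classes of matroids on at most $4k+2$ elements, $\Gcal_k$ admits only finitely many excluded minors.

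The main obstacle will be recognizing $F_e$ as an intersection of bases of $N$ itself (rather than of $N\restrict(E\BSET{e})$). This identification rests on the fact that $N$ has no coloops, so that deleting $e$ does not reduce the rank and the bases of $N\restrict(E\BSET{e})$ are exactly the bases of $N$ avoiding $e$; once this identification is in place, the double-counting argument packages the local structural constraints of Lemma~\ref{lem:oplusReprOfGk} into a global rank bound, which duality converts into the desired size bound on $E$.
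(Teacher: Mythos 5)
Your proof is correct. The duality/minor closure and the direct-sum counterexample coincide with the paper's (the paper uses the same matroid $\bigoplus_{i=1}^{k+1}U_{1,2}$ and the same "each non-target element needs its own leaving arc" count). Where you genuinely diverge is the finiteness of the excluded minors. The paper, after ruling out loops and coloops in an excluded minor $M$, applies Lemma~\ref{lem:oplusReprOfGk} to $M\restrict(E\BSET{e})$ and reads off $\left|E\BSET{e}\right|\leq 2k$ directly, which implicitly assumes that the free summand $(E_2,2^{E_2})$ in the decomposition of the deletion is empty; but deleting an element of a coloop-free matroid can create coloops (e.g. $U_{k+1,k+2}$, which is an excluded minor for $\Gcal_k$, becomes a free matroid after one deletion), so that step is not justified as written. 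Your double-counting of the coloop sets $F_e$ is precisely the repair: you correctly identify $F_e$ with $\bigcap\SETR{B\in\Bcal(N)}{e\notin B}$ and $g(x)$ with $\bigcap\SETR{B\in\Bcal(N)}{x\notin B}$ (both intersections over nonempty families because $N$ has no coloops), deduce $\rk_N(E)\geq\left|E\right|-1-2k$, and close with duality via Lemma~\ref{lem:rankDual}. The price is a weaker cardinality bound ($4k+2$ rather than the paper's claimed $2k+1$), but any finite bound suffices for the statement, and your argument is complete where the paper's is not. All intermediate identities I checked (bases of the deletion being the bases of $N$ avoiding $e$, the equivalence $e\in g(x)\Leftrightarrow x\in F_e$, the estimate $\left|g(x)\right|\leq\rk_N(E)$) are sound.
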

\begin{proof}\PRFR{Mar 27th}
	Let $k\in \N$ be arbitrarily fixed from now on.
	Lemma~\ref{lem:arcCkVDualityAndMinors} yields that $\Gcal_k$ is closed under duality and minors.
	Now let $M_i = (\dSET{a_i,b_i},\SET{\emptyset,\SET{a_i},\SET{b_i}})$ for
	\linebreak
	 $i\in \SET{1,2,\ldots,k+1}$,
	such that $\dSET{a_i,b_i}\cap \dSET{a_j,b_j}= \emptyset$ for all $i,j\in \SET{1,2,\ldots,k+1}$ with $i\not= j$.
	Then $\arcC(M_i) = 1$, because $M_i$ is neither free nor does $M_i$ consist of loops, and it can be represented
	by $((\SET{a_i,b_i},\SET{(a_i,b_i)}),\SET{b_i},\SET{a_i,b_i})$. 
	Now let $N = \bigoplus_{i=1}^{k+1} M_i$, and let $(D,T,E)$  be a standard representation of $N$ with $D=(V,A)$. 
	Then $\left| T \right| = \rk_N(E) = k+1$ and $\left| E \right| = 2k +2$. Now assume that $\left| A \right| \leq k$,
	i.e. that $N\in\Gcal_k$. There is some $e\in E\BS T$ such that $e$ is not incident with an arc from $A$, thus $\SET{e}$ cannot
	be linked to $T$ in $D$.
	But then $\rk_N(\SET{e}) = 0$ follows, which is a contradiction to the fact that $\rk_{M_i}(\SET{e})= 1$ for the appropriate
	index $i$. Thus $N\notin \Gcal_k$, and consequently, $\Gcal_k$ is not closed under direct sums.

	\noindent
	Now let $M =(E,\Ical)$ be a matroid.
	If $M\in\Gcal_k$, then Lemma~\ref{lem:oplusReprOfGk} yields that there is a partition $E_1\disunion E_2\disunion E_3=E$
	with\[ M = \left( M\restrict E_1 \right) \oplus (E_2,2^{E_2}) \oplus (E_3,\SET{\emptyset}) \]
	and $\left| E_1 \right| \leq 2k$ such that $\arcC(M\restrict E_1) \leq k$.
	Now let $M=(E,\Ical)$ be an excluded minor for $\Gcal_k$. Then for all $e\in E$ the restriction
	$M\restrict\left( E\BSET{e} \right)\in \Gcal_k$. Thus Lemma~\ref{lem:arcCSubAdditive}
	 yields that for all $e\in E$
	\[ M\restrict\left( E\BSET{e} \right)\oplus(\SET{e},\SET{\emptyset,\SET{e}}) 
	\not= M \not= M\restrict\left( E\BSET{e} \right)\oplus(\SET{e},\SET{\emptyset}), \]
	i.e. $M$ has neither a loop nor a coloop. In this case, 
	Lemma~\ref{lem:oplusReprOfGk} implies that $\left| E\BSET{e}  \right| \leq 2k$,
	so $\left| E \right| \leq 2k+ 1$, thus every excluded minor for $\Gcal_k$ has at most $2k+1$ elements. But up to isomorphism, 
	there are only finitely many
	matroids on ground sets with at most $2k+1$ elements, so $\Gcal_k$ is characterized by finitely many excluded minors.
\end{proof}

\noindent We have seen that subclasses of gammoids, that are defined by limiting the number of arcs or the number of vertices 
available in a standard representation, merely consist of a finite number of matroids which may be extended with an arbitrary amount of
loops and coloops. Moreover, except for $\Gcal_0$, those classes are not closed under direct sums.

\needspace{6\baselineskip}
\begin{definition}\label{def:arcWf}\PRFR{Mar 27th}
	Let $f\colon \N\maparrow \N\BSET{0}$ be a non-decreasing function, and let
	\linebreak
	 $M=(E,\Ical)$ be a gammoid. The \deftext[width of a gammoid]{$\bm f$-width of $\bm M$}
	shall be \label{n:arcWfM}
	\[ \arcW_f(M) = \max\SET{\frac{\arcC\left( \left( M\contract Y \right)\restrict X \right))}{f\left( \left| X \right|
	 \right) }
		 ~\middle|~ X\subseteq Y\subseteq E}. \]
	Let $k\in \N$, then the \deftextX{$\bm k$-width of $\bm M$} shall be
	\[ \arcW^k(M) = \arcW_{f_k}(M) \] where 
	\[ f_k\colon \N\maparrow \N\BSET{0},\,n\mapsto \max\SET{1, k\cdot n} . \qedhere\]
\end{definition}

\noindent Clearly $\arcW^{0}(M) = \arcC(M)$ for all gammoids $M$.

\begin{corollary}\label{cor:WfClosedUnderMinors}\PRFR{Mar 27th}
	Let $M=(E,\Ical)$ be a gammoid, $X\subseteq Y\subseteq E$.
	Then \[ \arcW_f(M) = \arcW_f(M^\ast) \,\,\txtand\,\, \arcW_f\left( \left( M\contract Y \right)\restrict X \right) \leq \arcW_f(M) .\]
\end{corollary}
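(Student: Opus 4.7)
The plan is to reduce both claims to bookkeeping: the first to the fact that $\arcC$ is invariant under duality (Lemma~\ref{lem:arcCkVDualityAndMinors}), the second to the fact that minors compose, i.e.\ that any minor of a minor of $M$ is itself a minor of $M$. In both cases I would show that the maxima defining the two sides of the (in)equality are indexed by essentially the same pairs $(X'',Y^*)$ and have equal numerators and denominators term-by-term.

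For the duality equality, fix $X\subseteq Y\subseteq E$ and set $N=(M\contract Y)\restrict X$. Two applications of Lemma~\ref{lem:restrictcontractdual} give
\[ N^\ast \,=\, \bigl((M\contract Y)\restrict X\bigr)^\ast \,=\, (M\contract Y)^\ast\contract X \,=\, (M^\ast\restrict Y)\contract X, \]
and Remark~\ref{rem:contractRestrictCommutingFormula} rewrites the right-hand side as $(M^\ast\contract Y')\restrict X$ with $Y' = E\BS(Y\BS X)$. Clearly $X\subseteq Y'\subseteq E$, and the assignment $(X,Y)\mapsto (X,Y')$ is self-inverse, hence a bijection on the index set $\SET{(X,Y)\mid X\subseteq Y\subseteq E}$. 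Since $\arcC(N)=\arcC(N^\ast)$ by Lemma~\ref{lem:arcCkVDualityAndMinors} and the denominator $f(\left| X \right|)$ depends only on $X$, every ratio occurring in the max for $\arcW_f(M)$ occurs in the max for $\arcW_f(M^\ast)$ with the same value, proving the equality.

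For the inequality, let $N=(M\contract Y)\restrict X$ and pick any $X''\subseteq Y''\subseteq X$. Setting $Y^* = (Y\BS X)\cup Y''$, one has $X''\subseteq Y^*\subseteq E$ and $E\BS Y^* = (X\BS Y'')\cup(E\BS Y)$. Using Corollaries~\ref{cor:rkRestrict} and \ref{cor:rkContract} to unfold both sides, one checks that for every $A\subseteq X''$
\[ \rk_{(N\contract Y'')\restrict X''}(A) \,=\, \rk_M\bigl(A\cup (E\BS Y^*)\bigr)-\rk_M(E\BS Y^*) \,=\, \rk_{(M\contract Y^*)\restrict X''}(A), \]
so $(N\contract Y'')\restrict X'' = (M\contract Y^*)\restrict X''$. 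Consequently every ratio contributing to $\arcW_f(N)$ also appears as a ratio in the max defining $\arcW_f(M)$, giving $\arcW_f(N)\leq \arcW_f(M)$.

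The only delicate step is the set-theoretic bookkeeping required to identify the correct substitute sets $Y'$ (for duality) and $Y^*$ (for minors); I do not expect any deeper combinatorial obstacle, since everything else is an immediate consequence of results proved earlier in this chapter.
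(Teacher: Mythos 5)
Your proposal is correct and follows essentially the same route as the paper: the duality equality rests on the identity $(M^\ast\contract Y)\restrict X = \left((M\contract E\BS(Y\BS X))\restrict X\right)^\ast$ (via Lemma~\ref{lem:restrictcontractdual} and Remark~\ref{rem:contractRestrictCommutingFormula}) together with $\arcC(N)=\arcC(N^\ast)$, while the minor inequality is the composition of minors, which the paper dismisses as a direct consequence of Definition~\ref{def:arcWf}. Your explicit verification that $(N\contract Y'')\restrict X'' = (M\contract Y^*)\restrict X''$ with $Y^*=(Y\BS X)\cup Y''$ is correct and merely spells out the bookkeeping the paper leaves implicit.
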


\begin{proof}\PRFR{Mar 27th}
	The second inequality is a direct consequence of the Definition~\ref{def:arcWf}.
		Let $M=(E,\Ical)$ be a gammoid and $X\subseteq Y\subseteq E$,
	  then 
	 \[ (M^\ast\contract Y)\restrict X = \left( (M\restrict Y)\contract X \right)^\ast = \left( (M\contract E\BS\left( Y\BS X \right))\restrict X \right)^\ast  \]
	 holds due to Lemmas~\ref{lem:restrictcontractdual} and \ref{lem:contractrestrictcommutes}, 
	 and Remark~\ref{rem:contractRestrictCommutingFormula}.
	 Since $N$ and $N^\ast$ share the same ground set and
	  $\arcC(N) = \arcC(N^\ast)$ for all gammoids $N$ (Lemma~\ref{lem:arcCkVDualityAndMinors}), we obtain that
	 \[ \arcW_f(M) = \arcW_f(M^\ast). \qedhere\]
\end{proof}


\begin{definition}\PRFR{Mar 27th}
	Let $f\colon \N\maparrow \N\BSET{0}$ be a non-decreasing function. We say that $f$ is \deftext{super-additive},
	if for all $n,m\in \N\BSET{0}$
	\[ f(n+m) \geq f(n) + f(m) \]
	holds.
\end{definition}

\needspace{3\baselineskip}
\begin{lemma}\PRFR{Mar 27th}
	Let $f\colon \N\maparrow \N\BSET{0}$ be a non-decreasing and super-additive function, let $k\in \N$,
	and let $\Wcal_{f,k}$ denote the class of gammoids $M$ with $\arcW_f(M) \leq k$.
	Then $\Wcal_{f,k}$ is closed under duality, minors, and direct sums.
\end{lemma}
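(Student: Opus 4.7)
The plan is to dispatch the three closure properties in turn, reusing Corollary~\ref{cor:WfClosedUnderMinors} for the first two and invoking super-additivity of $f$ together with Lemma~\ref{lem:arcCSubAdditive} for the third.

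Closure under duality and closure under minors are essentially immediate from Corollary~\ref{cor:WfClosedUnderMinors}: the first equality there gives $\arcW_f(M)=\arcW_f(M^\ast)$, so $M\in\Wcal_{f,k}$ iff $M^\ast\in\Wcal_{f,k}$; and the inequality $\arcW_f((M\contract Y)\restrict X)\leq \arcW_f(M)$ shows that every minor of a member of $\Wcal_{f,k}$ lies again in $\Wcal_{f,k}$.

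For closure under direct sums, let $M=(E,\Ical)$ and $N=(E',\Ical')$ be gammoids in $\Wcal_{f,k}$ with $E\cap E'=\emptyset$, and consider an arbitrary minor $((M\oplus N)\contract Y)\restrict X$ with $X\subseteq Y\subseteq E\cup E'$. Setting $X_1=X\cap E$, $X_2=X\cap E'$, $Y_1=Y\cap E$, $Y_2=Y\cap E'$, I would apply Lemmas~\ref{lem:directSumAndRestrictionCommute} and \ref{lem:directSumAndContraction} to rewrite this minor as
\[ ((M\contract Y_1)\restrict X_1) \,\oplus\, ((N\contract Y_2)\restrict X_2). \]
Lemma~\ref{lem:arcCSubAdditive} then bounds its arc-complexity by the sum of the arc-complexities of the two summands, and the hypothesis $\arcW_f(M),\arcW_f(N)\leq k$ bounds each summand by $k\cdot f(|X_1|)$ and $k\cdot f(|X_2|)$, respectively.

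The key step is to close the gap between $k\cdot(f(|X_1|)+f(|X_2|))$ and the target bound $k\cdot f(|X|)=k\cdot f(|X_1|+|X_2|)$; this is precisely what super-additivity of $f$ provides (with the small caveat that if either $X_i$ is empty the corresponding summand has no arcs, so the inequality still holds since $f$ takes values in $\N\BSET{0}$ and is non-decreasing). Combining these estimates yields $\arcC(((M\oplus N)\contract Y)\restrict X)\leq k\cdot f(|X|)$ for every admissible pair $(X,Y)$, hence $\arcW_f(M\oplus N)\leq k$, which completes the proof. I do not anticipate a serious obstacle here: the whole argument is a bookkeeping exercise once one notices that super-additivity of $f$ is exactly the compatibility condition needed to marry sub-additivity of $\arcC$ under direct sums with the normalization by $f(|X|)$.
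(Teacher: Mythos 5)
Your proposal is correct and follows essentially the same route as the paper's proof: duality and minor-closure via Corollary~\ref{cor:WfClosedUnderMinors}, then the decomposition of $((M\oplus N)\contract Y)\restrict X$ via Lemmas~\ref{lem:directSumAndRestrictionCommute} and \ref{lem:directSumAndContraction}, sub-additivity of $\arcC$ from Lemma~\ref{lem:arcCSubAdditive}, and super-additivity of $f$ to absorb $f(|X\cap E|)+f(|X\cap E'|)$ into $f(|X|)$. Your explicit handling of the case where one of the intersections is empty (where super-additivity, being stated only for positive arguments, does not directly apply) is a small point the paper glosses over, and it is resolved exactly as you say.
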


\begin{proof}\PRFR{Mar 27th}
	It is clear from Corollary~\ref{cor:WfClosedUnderMinors} that $\Wcal_{f,k}$ is closed under minors and duality.
	 Now, let $M=(E,\Ical)$ and $N=(E',\Ical')$ with $E\cap E' =\emptyset$ and $M,N\in \Wcal_{f,k}$.
	 Furthermore, let $X\subseteq Y\subseteq E\cup E'$. Then, by Lemmas~\ref{lem:directSumAndRestrictionCommute} and \ref{lem:directSumAndContraction}, we have that
	 \begin{align*}
	 	 \left( (M\oplus N)\contract Y \right)\restrict X &  \,\,\,=\,\,\,
	 	 \left(\left( M\contract Y\cap E \right) \oplus ( N\contract Y'\cap E )\right)\restrict X
	 	 \\& \,\,\,=\,\,\,
	 	 	\left(\vphantom{A^A}\left( M\contract Y\cap E \right)\restrict X\cap E\right)  \oplus 
	 	 	\left( ( N\contract Y\cap E' )\restrict X\cap E'\right).
	 \end{align*}
	 holds.
	 With Lemma~\ref{lem:arcCSubAdditive} we obtain
	 \begin{align*}
	 	\arcC\left(\vphantom{A^A} \left( (M\oplus N)\contract Y \right)\restrict X  \right) & \,\,\,\leq \,\,\,
	 	\arcC\left(\vphantom{A^A} \left( M\contract Y\cap E \right)\restrict X\cap E\right) 
	 	+ \arcC\left(\vphantom{A^A} ( N\contract Y\cap E' )\restrict X\cap E'\right) .
	 \end{align*}
	 We use the super-additivity of $f$ in order to derive
	 \begin{align*}
	 	\frac {\arcC\left(\vphantom{A^A} \left( (M\oplus N)\contract Y \right)\restrict X  \right)}{f\left( \left| X \right|
	 	\right)} &
	 	\,\,\, \leq\,\,\,
	 	\frac { \arcC\left(\vphantom{A^A} \left( M\contract Y\cap E \right)\restrict X\cap E\right) 
	 	+ \arcC\left(\vphantom{A^A} ( N\contract Y\cap E' )\restrict X\cap E'\right) }{f\left( \left| X \right|
	 	\right) } \\
	 	&
	 	\,\,\, \leq\,\,\, \frac{k\cdot f\left( \left| X\cap E \right|
	 	\right) + k\cdot f\left( \left| X\cap E' \right|
	 	 \right)}{f\left( \left| X \right|
	 	 \right) } \\
	&
	 	\,\,\, =\,\,\, \frac{k\cdot \left(\vphantom{A^A} f\left( \left| X\cap E \right|
	 	\right) + f\left( \left| X\cap E' \right|
	 	\right) \right)}
	 	{f\left( \left| X \right|
	 	 \right)} \\ &
	 	\,\,\, \leq\,\,\, k\cdot \frac{f\left( \left| X \right|
	 	\right)}{f\left( \left| X \right|
	 	\right)} \,\,\, = \,\,\, k	 	,
	 \end{align*}
	 where the second inequality follows from
	  the fact that \[
	   \frac{\arcC(G)}{f\left( \left| F \right|
	   \right)} \leq \arcW_f(G) \leq k\] holds
	 for every $G=(F,\Jcal)\in \Wcal_{f,k}$, thus it holds for all minors of $M$ and $N$ 
	 (Corollary~\ref{cor:WfClosedUnderMinors}).
	 As a consequence, $\arcW_f(M\oplus N) \leq k$, and therefore
	 $M\oplus N\in \Wcal_{f,k}$ holds.
\end{proof}

\PRFR{Apr 1st}
\noindent We may consider a class of matroids, that is closed under direct sums, and that contains a matroid, that is
neither trivial nor free, to be truly infinite, as opposed to a class that consists of matroids, that are 
direct sums of free matroids, trivial matroids, and one matroid that is isomorphic to a member of a finite family of
matroids.

\begin{theorem}\label{ref:infiniteChainOfSubclasses}\PRFR{Apr 1st}
	Let $\left(M_k\right)_{k\in \N}$ with $M_k = (E_k,\Ical_k)$ be a sequence of gammoids 
	with \[ \arcC(M_k) \geq k\cdot \left| E_k \right|.\]
	Then there is an infinite chain of strictly bigger classes of gammoids that are closed under
	duality, minors, and direct sums in the family of classes
	\[ \Wcal^\N = \SET{ \Wcal^k ~\middle|~ \Wcal^k \text{~is the class of all gammoids $M$ with~} \arcW^k(M) \leq 1,\,k\in\N }. \]
\end{theorem}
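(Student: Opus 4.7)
The plan is to exhibit, for each $k \geq 1$, a gammoid from the given sequence that lies in $\Wcal^{k'}$ for some $k' \geq k$ but not in $\Wcal^k$, thereby showing that the already-nondecreasing chain $\Wcal^1 \subseteq \Wcal^2 \subseteq \cdots$ is not eventually constant and hence contains infinitely many strict inclusions.

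First I would verify that each $\Wcal^k$ for $k \geq 1$ is closed under duality, minors, and direct sums. By the preceding lemma, it suffices to confirm that $f_k$ is non-decreasing --- obvious from $f_k(n) = \max\SET{1, k\cdot n}$ --- and super-additive: for $n,m \geq 1$ and $k \geq 1$ one has $f_k(n+m) = k(n+m) = kn + km = f_k(n) + f_k(m)$. The chain inclusion $\Wcal^k \subseteq \Wcal^{k+1}$ then follows immediately from the pointwise inequality $f_k \leq f_{k+1}$, which gives $\arcW^{k+1}(M) \leq \arcW^k(M)$ for every gammoid $M$.

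Next I would use the sequence $(M_k)_{k\in \N}$ to establish separation. Fix $k \geq 1$ and choose any $m > k$ with $\left| E_m \right| \geq 1$. Taking $X = Y = E_m$ in the definition of $\arcW^k$ yields
\[ \arcW^k(M_m) \,\,\geq\,\, \frac{\arcC(M_m)}{f_k\left(\left| E_m \right|\right)} \,\,\geq\,\, \frac{m \cdot \left| E_m \right|}{k\cdot \left| E_m \right|} \,\,=\,\, \frac{m}{k} \,\,>\,\, 1, \]
so $M_m \notin \Wcal^k$. Conversely, Lemma~\ref{lem:arcCkVDualityAndMinors} gives $\arcC\left( \left( M_m\contract Y \right)\restrict X \right) \leq \arcC(M_m)$ for every $X \subseteq Y \subseteq E_m$, while $f_j\left(\left| X \right|\right) \geq j$ whenever $\left| X \right|\geq 1$ and $j\geq 1$; hence $\arcW^j(M_m) \leq \arcC(M_m)/j$, and therefore $M_m \in \Wcal^j$ for every $j \geq \arcC(M_m)$.

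Combining these observations, the chain $\Wcal^1 \subseteq \Wcal^2 \subseteq \cdots$ cannot be eventually constant: if it were equal from some $\Wcal^{k_0}$ onward, then every $M_m$ --- which belongs to some $\Wcal^{j(m)}$ by the second observation --- would lie in $\Wcal^{k_0}$, contradicting the first observation as soon as $m > k_0$. Hence there must exist infinitely many indices $k_1 < k_2 < \cdots$ with $\Wcal^{k_i} \subsetneq \Wcal^{k_{i+1}}$, yielding the desired infinite chain of strictly bigger classes within $\Wcal^\N$, each class being closed under duality, minors, and direct sums by the first step. The only subtle point is ensuring that each $M_m$ eventually enters some $\Wcal^j$, but this is immediate from the monotonicity of $\arcC$ under minors; the rest is forced by the growth rate $\arcC(M_k) \geq k \cdot \left| E_k \right|$ assumed of the sequence.
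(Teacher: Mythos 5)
Your proposal is correct and follows essentially the same route as the paper: show $\Wcal^k \subseteq \Wcal^{k'}$ for $k < k'$ via monotonicity of $f_k$, use $\arcW^k(M_m) \geq m/k > 1$ to exclude $M_m$ from $\Wcal^k$ for $m > k$, and note $M_m \in \Wcal^j$ once $j \geq \arcC(M_m)$, so the chain cannot stabilize. Your version is slightly more careful than the paper's (explicitly checking super-additivity of $f_k$ for $k\geq 1$ to invoke the closure lemma, and spelling out the bound $\arcW^j(M_m) \leq \arcC(M_m)/j$), but the argument is the same.
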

\begin{proof}\PRFR{Apr 1st}
	Clearly, we have that $\arcW^k(M) > \arcW^{k'}(M)$ and
	 $\arcW^k(M_{k'}) \geq \frac{k'}{k} > 1$ for all $k,k'\in \N$ with $k' > k$,
	so every class $\Wcal^k$ contains at most $k$ elements of the matroid sequence $\left( M_k \right)_{k\in \N}$,
	and every class $\Wcal^{k'}$ contains the class $\Wcal^k$ if $k' > k$.
	Furthermore, $M_{k'}$ is contained in $\Wcal^{\arcC(M_{k'})}$, therefore every matroid of the sequence is
	eventually contained in some $\Wcal^k$. Consequently, $\Wcal^\N$ must contain a countable chain of strictly
	bigger subclasses of gammoids.
\end{proof}

\noindent Conjecture~\ref{conj:uniformArcs} would imply that there is a strict chain of truly 
infinite subclasses of gammoids that are closed under
minors and duality, and that  $\Wcal^i$ is a proper subclass of $\Wcal^{i+1}$ for all $i\in \N$.

\begin{lemma}\label{lem:uniformArcs}\PRFR{Apr 1st}
	Let $E$ be a finite set, $r\in \N$ with $r \leq \left| E \right|$, and let $$U = \left( E, \SET{X\subseteq E ~\middle|~
	\vphantom{A^A} \left| X \right| \leq r} \right)$$
	be the uniform matroid of rank $r$ on $E$.
	Then \[ \vK(U) = \left| E \right| \,\,\,\txtand\,\,\, \arcC(U) \leq r\cdot \left( \left| E \right| - r \right) .\]
\end{lemma}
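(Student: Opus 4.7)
The plan is to exhibit a single standard representation of $U$ that simultaneously witnesses both bounds, and then invoke Lemma~\ref{lem:kVgeqE} for the matching lower bound on $\vK(U)$.

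Concretely, I would fix any subset $T\subseteq E$ with $\left| T \right| = r$ and set $V = E$, $A = \left( E\BS T \right)\times T$. Let $D = (V,A)$. By construction every $t\in T$ is a sink in $D$ (no arc leaves $T$), every $e\in E\BS T$ is a source in $D$ (all incident arcs leave $e$ toward $T$), and $T\subseteq E$, so by Lemma~\ref{lem:dualityrespectingrepresentation} the triple $(D,T,E)$ is already a duality respecting representation. Hence it is a standard representation in the sense of Definition~\ref{def:standardRepresentation}. The arc count is $\left| A \right| = r\cdot(\left| E \right| - r)$, and $\left| V \right| = \left| E \right|$.

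Next I would verify $\Gamma(D,T,E) = U$. For $X\subseteq E$ with $\left| X \right|\leq r$, split $X = (X\cap T)\disunion (X\BS T)$; route each $t\in X\cap T$ trivially and, since $\left| T\BS X \right| = r - \left| X\cap T \right|\geq \left| X\BS T \right|$, pick an arbitrary injection $\iota\colon X\BS T\maparrow T\BS X$ and use the length-two paths $e\,\iota(e)$. The arcs of these paths exist because they all belong to $(E\BS T)\times T = A$, and the paths are vertex disjoint because $\iota$ is injective and disjoint from $X\cap T$. This yields a routing $X\routesto T$, so $X$ is independent in $\Gamma(D,T,E)$. Conversely, any $X$ with $\left| X \right| > r$ cannot be routed since $\left| T \right| = r$ is an upper bound on the size of any $X$-$T$-connector by Lemma~\ref{lem:rkEqMaxConnector} (or a direct pigeonhole). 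Thus the independent sets of $\Gamma(D,T,E)$ are exactly the subsets of $E$ of cardinality at most $r$, i.e.\ those of $U$.

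Combining these observations gives $\vK(U)\leq \left| V \right| = \left| E \right|$ and $\arcC(U)\leq \left| A \right| = r(\left| E \right|-r)$, while Lemma~\ref{lem:kVgeqE} supplies $\vK(U)\geq \left| E \right|$, so $\vK(U) = \left| E \right|$. There is no real obstacle here: the only subtle point is checking that the chosen $D$ is duality respecting, which is immediate from Lemma~\ref{lem:dualityrespectingrepresentation} once one observes the source/sink partition of $E$. The degenerate cases $r = 0$ and $r = \left| E \right|$ are handled uniformly, since $A=\emptyset$ in both and the claimed arc bound is $0$, matching $\arcC(U)=0$.
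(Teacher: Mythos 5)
Your proposal is correct and follows exactly the same construction as the paper: the digraph on vertex set $E$ with arc set $\left( E\BS T \right)\times T$ for a fixed $r$-element target set $T$, combined with the trivial lower bound $\vK(U)\geq\left| E \right|$. The only difference is that you spell out the verification that this is a standard representation of $U$ (which the paper dismisses with ``clearly''), and your details are all sound.
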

\begin{proof}\PRFR{Apr 1st}
	Let $T \subseteq E$ with $\left| T \right| = r$ and let $D=(E,A)$ be the digraph on the vertex set $E$
	 where $A = \SET{(e,t)~\middle|~ e\in E\BS T,\, t\in T}$.
	Clearly, $(D,T,E)$ is a standard representation with $U = \Gamma(D,T,E)$. 
	Therefore $\vK(U) \leq \left| E \right|$ and $\arcC(U) \leq r\cdot \left( \left| E \right| - r \right)$.
	Obviously, the vertex complexity is bounded from below by the size of the ground set, thus $\vK(U) = \left| E \right|$.
\end{proof}



\PRFR{Apr 1st}
\noindent The following kind of matroids is usually defined as matroids, whose ground sets consist of edges of undirected 
graphs, such that subsets of these edges are independent, if they contain no subgraph that consists of {\em (i)}
two cycles with a single common vertex ($\infty$-graph), {\em (ii)} two cycles which
share a common line segment ($\Theta$-graph),
or {\em (iii)} two cycles each of which has a special vertex and those special vertices are connected by a line (hand-cuffs graph).
We use L.R.~Matthews's characterization in order to define bicircular matroids.
\begin{definition}[\cite{Ma77}, Corollary~3.3 and Theorem~3.5]\PRFR{Apr 1st}
	Let $M=(E,\Ical)$ be a matroid. Then $M$ is a \deftext{bicircular matroid}, if there is a family $\Acal=(A_i)_{i=1}^{\rk_M(E)}$ of subsets of $E$ with the property that $\left| \SET{i\in I~\middle|~e\in A_i}\right| \in \SET{1,2}$ holds
	 for all $e\in E$, and such that $M = M(\Acal)$.
\end{definition}

\noindent It is clear that bicircular matroids are special gammoids.

\begin{lemma}\PRFR{Apr 1st}
	Let $M=(E,\Ical)$ be a bicircular matroid. Then
	\[ \arcC(M) \leq 2\cdot \left| E \right| \quad\txtand\quad \vK(M) \leq \left| E \right| + \rk_M(E) .\]
\end{lemma}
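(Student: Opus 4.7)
The plan is to start from the natural bipartite gammoid representation of the transversal matroid $M = M(\Acal)$ and transform it into a standard representation through a sequence of digraph pivots, which I will show preserve the favourable source/sink structure.

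Let $\Acal = (A_i)_{i=1}^{r}$, with $r = \rk_M(E)$, be a presentation exhibiting $M$ as bicircular, so $|\SET{i \mid e \in A_i}| \leq 2$ holds for every $e \in E$. Since bases of a transversal matroid are precisely its transversals, a base $B$ of $M$ may be chosen and relabelled so that $B = \dSET{b_1, \ldots, b_r}$ with $b_i \in A_i$ for every $i$. Consider the bipartite digraph $D_0 = (E \cup I, A_0)$ where $I = \dSET{t_1, \ldots, t_r}$ is a fresh copy of the index set, disjoint from $E$, and $A_0 = \SET{(e, t_i) ~\middle|~ e \in A_i}$. Every routing $X \routesto I$ in $D_0$ consists of paths of length at most one and so corresponds to a partial transversal of $\Acal$; consequently $\Gamma(D_0, I, E) = M(\Acal) = M$. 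Moreover $|A_0| = \sum_{e \in E} |\SET{i \mid e \in A_i}| \leq 2|E|$ and $|E \cup I| = |E| + r$.

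Now apply the pivots $b_1 \leftarrow t_1, b_2 \leftarrow t_2, \ldots, b_r \leftarrow t_r$ in order, writing $D_j$ for the digraph after the first $j$ of them. At step $i$ the pivot is legal: $(b_i, t_i)$ still lies in $D_{i-1}$ since only outgoing arcs of $b_1, \ldots, b_{i-1}$ have been removed so far, and $t_i$ is still a sink in $D_{i-1}$ since earlier pivots introduce outgoing arcs only from $t_1, \ldots, t_{i-1}$. By Lemma~\ref{lem:MasonsFundamental} each pivot preserves the represented gammoid, so $\Gamma(D_r, B, E) = M$. The total arc count never increases: pivot $i$ deletes all outgoing arcs of $b_i$ and creates at most the same number of new outgoing arcs at $t_i$, namely one to $b_i$ together with one to each of the other previous out-neighbours of $b_i$.

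The central technical point is to verify that $(D_r, B, E)$ is a \emph{standard} representation. The inclusion $B \subseteq E$ is immediate; each $b_i$ is a sink in $D_r$ because pivot $i$ strips all of its outgoing arcs and no later pivot ever creates outgoing arcs from any $b_j$, only from some $t_j$; and each $e \in E \BS B$ is a source in $D_r$ because $D_0$ has no arcs entering $E$ at all and every pivot introduces only arcs whose heads lie in $B \cup I$, never in $E \BS B$. Lemma~\ref{lem:dualityrespectingrepresentation} then makes $(D_r, B, E)$ duality respecting, hence a standard representation of $M$, yielding $\arcC(M) \leq |A_0| \leq 2|E|$ and $\vK(M) \leq |E \cup I| = |E| + \rk_M(E)$. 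The hardest part is the combinatorial bookkeeping around the pivots, confirming that no earlier pivot ever spoils the applicability of a later one and that the required source and sink structure is maintained throughout.
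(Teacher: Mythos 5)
Your proof is correct and follows essentially the same route as the paper: take the bipartite digraph $D_0=(E\disunion I,A_0)$ with $|A_0|\leq 2|E|$ representing $M=M(\Acal)$, pivot in a base as in the proof of Theorem~\ref{thm:gammoidRepresentationWithBaseTerminals}, and observe that this neither adds arcs nor vertices and yields a standard representation via Lemma~\ref{lem:dualityrespectingrepresentation}. The paper simply cites the pivoting construction rather than re-verifying the source/sink bookkeeping, which you carry out explicitly.
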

\begin{proof}\PRFR{Apr 1st}
	Let $I$ be a set with $\left| I \right| = \rk_M(E)$ and $\Acal =(A_i)_{i\in I}$ be a family of subsets of $E$
	such that $M= M(\Acal)$ and such that $\left| \SET{i\in I~\middle|~e\in A_i}\right| \in \SET{1,2}$ for all $e\in E$.
	For technical reasons, let us further assume that $I\cap E = \emptyset$. Let $D_0 = (V,A)$ with $V=E\disunion I$ and
	$A = \SET{(e,i)~\middle|~e\in E,\,i\in I,\,e\in A_i}$.
	 Then $M = \Gamma(D_0,I,E)$ and $\left| A \right| \leq 2\cdot \left| E \right|$.
	We obtain a standard representation of $M$ by pivoting in an 
	arbitrary base $T\in \Bcal(M)$ as it is done in the proof of Theorem~\ref{thm:gammoidRepresentationWithBaseTerminals}.
	This operation does not introduce any new arcs or vertices,
	 therefore $\arcC(M) \leq \left| A \right| \leq 2\cdot \left| E \right|$ 
	 and $\vK(M) \leq \left| E \right| + \left| I \right| = \left| E \right| + \rk_M(E)$ holds.
\end{proof}

%


\needspace{6\baselineskip}

\subsection{Essential Arcs and Vertices}

\PRFR{Apr 1st}
\noindent Let $(D,T,E)$ be a representation of a gammoid, and let $D=(V,A)$. In this section, we are concerned with the
question when an arc $a\in A$ or a vertex $v\in V$ is
essential for the representation of $\Gamma(D,T,E)$. It turns out that this kind of question may be answered by
inspection of the family of independent sets of
a derived gammoid.

\begin{definition}\label{def:ACD}\PRFR{Apr 1st}
	Let $(D,T,E)$ with $D=(V,A)$ be a representation of the gammoid $\Gamma(D,T,E)=(E,\Ical)$,
	and let $a\in A$ be an arc of $D$.
	The arc $a$ shall be called \deftext[essential arc of DTE@essential arc of $(D,T,E)$]{essential arc of $\bm(\bm D\bm, \bm T\bm, \bm E\bm)$},
	if there is some $X\in \Ical$ such that $X$ is not independent with respect to $\Gamma(D_a,T,E)$ where
	$D_a = (V,A\BSET{a})$.
\end{definition}

\begin{remark}\PRFR{Apr 1st}
 If $(D,T,E)$ with $D=(V,A)$ is a representation of $M= \Gamma(D,T,E)$ such that $\left| A \right| = \arcC(M)$,
 then every arc $a\in A$ is essential. Also, the converse is not true: Let $(D,T,E)$
 be a representation of a gammoid
 such that every arc of $D=(V,A)$ is essential. If we subdivide an arc of $D$ 
 with a newly introduced auxiliary vertex, 
 then the resulting digraph $D'$ still consists only of essential arcs
 with respect to $(D',T,E)$ --- but $(D',T,E)$ can no longer have an arc set of minimal cardinality.
 \end{remark}

\begin{lemma}\label{lem:essentialArcsC}\PRFR{Apr 1st}
	Let $M=(E,\Ical)$ be a gammoid, and let $(D,T,E)$ be a representation of $M$ with $D=(V,A)$.
	Let $(u,v)\in A$ be an essential arc of $(D,T,E)$,
	and let 
	\linebreak
	 $N = \Gamma(D,T,V)$ and $N' = \Gamma(D',T,V)$ where
	$D'=(V,A\BSET{(u,v)})$.
	There is a circuit $C\in \Ccal(N')$ with $u\in C$ such that $C$ is independent in $N$.
\end{lemma}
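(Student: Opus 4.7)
The plan is to exhibit a set $Y \subseteq V$ with $u \in Y$ that is independent in $N$, dependent in $N'$, and has $Y \setminus \{u\}$ independent in $N'$. Any minimally dependent subset of $Y$ in $N'$ is then a circuit of $N'$ (contained in $Y$), it must contain $u$ (otherwise it would be a dependent subset of the independent set $Y \setminus \{u\}$), and it is independent in $N$ as a subset of the independent set $Y$.

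First I would pick a minimal witness $X_0 \subseteq E$ of the essentiality of $(u,v)$: an element of $\Ical$ of smallest cardinality such that $X_0$ is not independent in $\Gamma(D',T,E) = N' \restrict E$. By minimality, $X_0$ is a circuit of $N'$ contained in $E$, and it is independent in $N$ since $M = N \restrict E$. If $u \in X_0$, we are done with $C := X_0$. So assume $u \notin X_0$.

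Fix any routing $R \colon X_0 \routesto T$ in $D$. Since $X_0$ is not routable in $D'$, $R$ must traverse the arc $(u,v)$; let $p \in R$ be the unique path using it, and decompose $p = q.(u,v).r$, where $q$ runs from $x := p_1 \in X_0$ to $u$ and $r$ runs from $v$ to $t := p_{-1} \in T$. Set $Y := (X_0 \setminus \{x\}) \cup \{u\}$. The family $R'' := (R \setminus \{p\}) \cup \{u.v.r\}$ --- with $u.v.r$ starting at $u$, traversing $(u,v)$, and following $r$ to $t$ --- is a routing $Y \routesto T$ in $D$ because $\left|u.v.r\right| \subseteq \left|p\right|$ is vertex-disjoint from every other path of $R$. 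Hence $Y$ is independent in $N$, and $Y \setminus \{u\} = X_0 \setminus \{x\}$ is independent in $N'$ by the minimality of $X_0$.

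The final --- and technically most delicate --- step is to verify that $Y$ is dependent in $N'$; I expect this to be the main obstacle. The approach is to argue by contradiction: if $Y$ were independent in $N'$, there would exist a routing $L \colon Y \routesto T$ in $D'$, with some $u$-path $\rho \in L$ ending at $\tilde t \in T$. Since $q$ traverses only arcs of $D'$ (the arc $(u,v)$ is used strictly after $q$ in $p$), the concatenation $q.\rho$ is a walk in $D'$ from $x$ to $\tilde t$; straightening (Remark~\ref{rem:straighteningWalks}) yields a $D'$-path $\pi$ with $\left|\pi\right| \subseteq \left|q\right| \cup \left|\rho\right|$. The plan is then to combine $\pi$ with a suitable $(X_0 \setminus \{x\})$-$T$-connector of size $|X_0| - 1$ in $D'$ --- obtained from $R \setminus \{p\}$ and $L$ by invoking the augmentation theorem for $S$-$T$-connectors (Theorem~\ref{thm:augmentationCons}), aligned so that the sub-routing avoids $\left|q\right|$ --- in order to produce a full routing of $X_0$ in $D'$, contradicting the dependence of $X_0$ in $N'$. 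Achieving this alignment so that $\left|\pi\right|$ remains vertex-disjoint from the chosen sub-routing is where the bulk of the technical work will be concentrated.
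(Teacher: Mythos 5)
Your construction of $Y=(X_0\BSET{x})\cup\SET{u}$, and the reduction of the lemma to the single claim that $Y$ is dependent in $N'$, are both correct, and this is a genuinely different route from the paper's: the paper extracts its circuit from $S=S'\cup\SET{u}$, where $S'$ is a minimum-cardinality $X$-$T$-separator in $D'$ for an arbitrary (not minimal) witness $X$ of essentiality, rather than from the witness itself. The gap is exactly where you place it. The dependence of $Y$ in $N'$ is never established, and the completion you sketch does not follow from the tool you cite: Theorem~\ref{thm:augmentationCons} only controls the set of \emph{initial} vertices of the augmented connector and gives no handle on which internal vertices its paths visit, so there is no way to ``align'' an $(X_0\BSET{x})$-$T$-connector in $D'$ so that it avoids $\left|q\right|$ or $\left|\pi\right|$. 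Without that, $(L\BSET{\rho})\cup\SET{\pi}$ need not consist of disjoint paths and the intended contradiction does not materialize.

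The claim is nevertheless true, but the right tool is Menger's Theorem~\ref{thm:MengerGoering}, not connector augmentation. Let $S'$ be a minimum-cardinality $X_0$-$T$-separator in $D'$; since $X_0\in\Ccal(N')$, we have $\left|S'\right|=\left|X_0\right|-1$. The $\left|X_0\right|-1$ pairwise vertex-disjoint paths of $R\BSET{p}$ all lie in $D'$ and each starts in $X_0$ and ends in $T$, so each meets $S'$; by counting, $S'\subseteq\bigcup_{p'\in R\BSET{p}}\left|p'\right|$, whence $S'\cap\left|q\right|\subseteq S'\cap\left|p\right|=\emptyset$. Now if $Y$ were independent in $N'$, then $S'$ could not be a $Y$-$T$-separator in $D'$ (its cardinality is too small), and the only element of $Y$ that can reach $T$ in $D'$ while avoiding $S'$ is $u$, because $S'$ separates $X_0\supseteq Y\BSET{u}$ from $T$ in $D'$. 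Prepending $q$ to such an escaping path and straightening yields a path from $x\in X_0$ to $T$ in $D'$ whose vertices lie in $\left|q\right|$ together with the escaping path, hence avoiding $S'$ --- contradicting that $S'$ is an $X_0$-$T$-separator in $D'$. Substituting this for your final paragraph completes the proof; note that this separator mechanism is essentially the one the paper itself uses (there in combination with Corollary~\ref{cor:Menger}).
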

\begin{proof}\PRFR{Apr 1st}
	Clearly, if $N = N'$, then $(u,v)$ is not an essential arc of $(D,T,E)$.
	Therefore there is a subset $X\subseteq E \subseteq V$ that is independent in $N$ yet dependent in $N'$.
	Since every routing in $D$ is a routing in $D'$ unless it traverses the arc $(u,v)$,
	we observe that every routing $R\colon X\routesto T$ in $D$ must traverse the arc $(u,v)$.
	Since $X$ is dependent in $N'$, there is an minimum-cardinality $X$-$T$-separator
	 $S'$ in $D'$ with $\left| S' \right| < \left| X \right|$.
	With the previous observation we obtain that $S = S'\cup\SET{u}$ is
	an $X$-$T$-separator in $D$ with $\left| S \right| = \left| X \right|$.
	Furthermore, we see that $u\notin S'$, because otherwise $S'$ would be an $X$-$T$-separator in $D$,
	which would lead us to the contradiction $\rk_{N}(X) \leq \left| S' \right| < \left| X \right| = \rk_N(X)$ ---
	as $X$ is an independent set of $N$.
	Corollary~\ref{cor:Menger} yields that we may cut off the initial parts of the paths of a maximal $X$-$T$-connector in $D$
	and thereby obtain a routing from $S$ to $T$ in $D$, so $S$ is independent in $N$.
	We give an indirect argument that $S$ is dependent in $N'$.
	 Assume that $S$ is independent in $N'$. $S'$ is a minimal cardinality $X$-$T$-separator in $D'$,
	 thus $S'\subseteq \cl_{N'}(X)$ (Corollary~\ref{cor:Menger}).
	 If there is a path $p\in \Pbf(D)$ with 
	 $p_1\in X\BS S'$ and $p_{-1}=u$
	 that does not visit a vertex $s\in S'$, then $S'\cup\SET{p_{1}}$ is independent,
	 and so we obtain 
	 \[\rk_{N'}(X) = \rk_{N'}\left( \cl_{N'}(X) \right) 
	 \geq \rk_{N'}(S'\cup\SET{p_{1}}) = \left| S' \right| + 1 = \left| X \right|.\]
	 Thus $X$ would be independent in $N'$ --- a contradiction.
	 To avoid this contradiction, every path $p\in \Pbf(D)$ with $p_1 \in X$ and $p_{-1}=u$
	 must visit a vertex $s\in S'$. But then $S'$ is an $X$-$S$-separator in $D$,
	  and since $S$ is an $X$-$T$-separator in $D$,
	 we have that $S'$ is an \linebreak $X$-$T$-separator in $D$.
	 Again, this yields the contradiction
	 $\rk_N(X) \leq \left| S' \right| < \left| X \right| = \rk_N(X)$.
	 Therefore
	 we may dismiss our assumption and we conclude that $S$ is dependent in $N'$.
	 Remember that $S'$ is independent in $N'$ because it is a minimal-cardinality $X$-$T$-separator in $D'$,
	 thus there is a circuit $C\in\Ccal(N')$ with $C\subseteq S$ and $C\not\subseteq S'$, so $u\in C$;
	 and since $S$ is independent in $N$, we obtain that $C$ is independent in $N$, too.
\end{proof}


\begin{definition}\PRFR{Apr 1st}
	Let $D=(V,A)$ be a digraph
	with $V\cap \left( \left( V\times V  \right)\times \SET{1,2} \right) = \emptyset$.
	The \deftext[arc-cut digraph]{arc-cut digraph for $\bm D$} shall be the \label{n:arcCutDigraph}
	digraph $\mathrm{AC}(D) = (V_{D},A_{D})$ where
	\begin{align*}
	 V_D \,\,=\,\,\hphantom{\cup\,\,} & V\disunion \left( \SET{(u,v)\in V\times V~\middle|~ u\not= v}\times \SET{1,2} \right)
	  \quad\txtand\\
	 A_D \,\,=\,\, \hphantom{\cup\,\,}& \SET{\left(u, \left( (u,v),1 \right) \right),\left( \left( (u,v),1 \right), v \right)
	  ~\middle|~ (u,v)\in A,\,u\not=v} 
	\\  \cup\,\, &\SET{\left( \left( (u,v),1 \right),\left( (u,v),2 \right) \right) ~\middle|~ u,v\in V,\,u\not= v}.
	\end{align*}
	In other words, for all $u,v\in V$ with $u\not=v$ we do the following in order to obtain $\mathrm{AC}(D)$ from $D$:
	 If there is an arc $(u,v)$ in $D$, 
	we add two new vertices and turn it into a top-left-to-bottom-right-oriented $\top$-shaped-junction. If there is no arc $(u,v)$ in $D$,
	we add two new vertices and connect one with the other.
\end{definition}

\needspace{6\baselineskip}
\vspace*{-\baselineskip} 
\begin{wrapfigure}{l}{8.3cm}
\vspace{\baselineskip}
\begin{centering}~~
\includegraphics{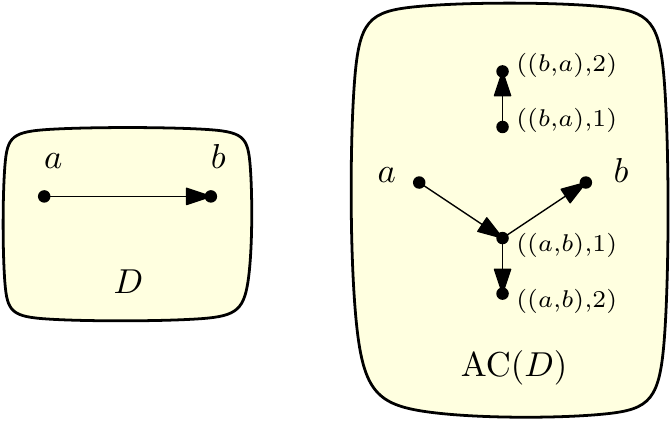}
\end{centering}%
\vspace*{-1\baselineskip}
\end{wrapfigure}
~ 
\begin{example}\PRFR{Apr 1st}
	Consider the digraph $D = \left( \SET{a,b},\SET{(a,b)} \right)$.
	Then $\mathrm{AC}(D)=(V_D,A_D)$
	 is the digraph where $V_D = \{a,$ $b,$ $\left( (a,b),1 \right),$ $\left( (a,b),2 \right),$ $\left( (b,a),1 \right),$
	$ \left( (b,a),2 \right)\}$ and where $A_D =$ $ \{ \left( a, \left( (a,b),1 \right)  \right),$ 
	 $\left(  \left( (a,b),1 \right), b \right),$ \hfill{~} \linebreak
	  $\hphantom{\{} \left( \left( (a,b),1 \right), \left( (a,b),2 \right) \right),$
	 \linebreak
	 $\hphantom{\{}\left(  \left( (b,a),1 \right), \left( (b,a),2 \right) \right)\}$.
\end{example}

\needspace{6\baselineskip}
\begin{definition}\label{def:ACDTE}\PRFR{Apr 1st}
	Let $(D,T,E)$ be a representation of a gammoid where $D=(V,A)$, and such that
	 $V\cap \left( \left( V\times V \right) \times \SET{1,2} \right) = \emptyset$. \label{n:ACDTE}
	 The \deftext[arc-cut matroid]{arc-cut matroid for $\bm(\bm D\bm, \bm T\bm, \bm E\bm)$}
	 shall be the matroid $\mathrm{AC}(D,T,E) = \Gamma(\mathrm{AC}(D),T',E')$
	 where $$E' = E \cup \SET{\left( (u,v), i \right) ~\middle|~ u,v\in V,\,u\not= v,\,i\in\SET{1,2}}$$
	 and where 
	 \[ T' = T\cup \SET{\left( (u,v), 2 \right) ~\middle|~ u,v\in V,\,u\not= v}. \qedhere\]
\end{definition}

\needspace{5\baselineskip}
\begin{lemma}\label{lem:indepACDTE}\PRFR{Apr 1st}
	Let $(D,T,E)$ be a representation of a gammoid where $D=(V,A)$, and such that
	 $V\cap \left( \left( V\times V \right) \times \SET{1,2} \right) = \emptyset$.
	 Then $X\subseteq E$ is independent with respect to $\Gamma(D,T,E)$, if and only if
	 \(X' = X\cup\SET{\left( (u,v),2 \right)~\middle|~ u,v\in V,\,u\not=v} \)
	 is independent with respect to $\mathrm{AC}(D,T,E)$.
\end{lemma}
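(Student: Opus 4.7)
The plan is to use the subdivision structure of $\mathrm{AC}(D)$ to set up a one-to-one correspondence between routings $X\routesto T$ in $D$ and routings $X'\routesto T'$ in $\mathrm{AC}(D)$.

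The decisive structural observation, which drives both directions, is that in $\mathrm{AC}(D)$ every vertex of the form $((u,v),2)$ is a sink (inspection of $A_D$ shows it has no outgoing arcs), and every type-$1$ vertex $((u,v),1)$ has at most one incoming arc (from $u$, and only if $(u,v)\in A$) and at most two outgoing arcs (to $v$, again only if $(u,v)\in A$, and always to $((u,v),2)$). Since $((u,v),2)\in X'\cap T'$ for every pair $(u,v)$ with $u\ne v$, the path in any routing $R'\colon X'\routesto T'$ starting at $((u,v),2)$ must be the trivial path. By the vertex-disjointness clause of a routing, no other path in $R'$ may then visit any type-$2$ vertex.

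For the forward direction ($\Rightarrow$), given a routing $R\colon X\routesto T$ in $D$, I would define $R'$ by replacing each path $p=p_1p_2\ldots p_n\in R$ with the subdivided path $p_1,((p_1,p_2),1),p_2,((p_2,p_3),1),p_3,\ldots,p_n$ in $\mathrm{AC}(D)$, and then adjoining the trivial path $((u,v),2)$ for every pair $(u,v)$ with $u\ne v$. Each subdivided walk is a genuine path: the $V$-vertices $p_i$ are distinct (as $p$ is a path), the inserted type-$1$ vertices correspond to the pairwise distinct consecutive pairs of $p$ and are therefore distinct from each other, and the two types live in disjoint universes by the hypothesis $V\cap((V\times V)\times\SET{1,2})=\emptyset$. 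Vertex-disjointness of the subdivided paths reduces to the disjointness of the original $V$-vertex sets of $R$ (which forces disjointness of the associated consecutive-arc sets and hence of the corresponding type-$1$ vertices); the trivial paths $((u,v),2)$ are trivially disjoint from each other and from the subdivided paths, which never visit type-$2$ vertices. Thus $R'$ is a routing $X'\routesto T'$ in $\mathrm{AC}(D)$, so $X'$ is independent in $\mathrm{AC}(D,T,E)$.

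For the backward direction ($\Leftarrow$), given $R'\colon X'\routesto T'$ in $\mathrm{AC}(D)$ and $x\in X$, let $p^{(x)}\in R'$ be the unique path with start vertex $x$. By the key observation, $p^{(x)}$ visits no type-$2$ vertex. From a $V$-vertex, one can move only to a type-$1$ vertex, and from a type-$1$ vertex one can move only to a $V$-vertex or a type-$2$ vertex; excluding type-$2$ vertices forces $p^{(x)}$ to alternate between $V$-vertices and type-$1$ vertices. Since type-$1$ vertices are not in $T'$, the path terminates in $T'\BS\SET{((u,v),2)\mid u\ne v}=T$, and whenever a type-$1$ vertex $((u,v),1)$ occurs between two $V$-vertices of $p^{(x)}$, these must be $u$ and $v$ with $(u,v)\in A$. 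Stripping the type-$1$ vertices therefore yields a walk in $D$ from $x$ to some $t\in T$. The walks for distinct $x\in X$ are pairwise $V$-disjoint because the $p^{(x)}$ are pairwise vertex-disjoint in $\mathrm{AC}(D)$, so after straightening them to paths via Remark~\ref{rem:straighteningWalks} one obtains a routing $X\routesto T$ in $D$, establishing independence of $X$ in $\Gamma(D,T,E)$. The argument is entirely structural; the only mildly delicate point is the backward direction's need to exclude that $p^{(x)}$ could ``leak'' into a type-$2$ vertex, and this is precisely the purpose of placing every $((u,v),2)$ into $X'$, which reserves each type-$2$ vertex as a protected starting point unavailable to the other paths.
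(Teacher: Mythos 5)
Your proof is correct and takes essentially the same route as the paper's: subdividing each arc of a routing in $D$ to get a routing in $\mathrm{AC}(D)$, and using the fact that every type-$2$ vertex lies in $X'$ (and is a sink) to force the paths starting in $X$ to terminate in $T$ for the converse. One minor remark: since each $p^{(x)}$ is already a path, stripping its type-$1$ vertices yields a sequence of distinct $V$-vertices joined by arcs of $D$, i.e.\ a path, so the appeal to the straightening remark is superfluous.
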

\begin{proof}\PRFR{Apr 1st}
	Let $X$ be independent with respect to $M = \Gamma(D,T,E)$. There is a routing $R\colon X\routesto T$ in $D$.
	Thus we have a routing $R' = \SET{p'~\middle|~ p\in R} \cup \SET{\left( (u,v), 2 \right) ~\middle|~ u,v\in V,\,u\not= v}$ in $D'$
	where
	 $$p' = p_1\left( (p_1,p_2),1 \right)p_2\left( (p_2,p_3),1 \right) \ldots
	p_{n-1}\left( (p_{n-1},p_{n}),1 \right)p_n$$ denotes the path in $\mathrm{AC}(D)$ that is obtained from $p=(p_i)_{i=1}^n$ by
	subdividing every arc $(u,v)$ traversed by $p$ with $\left( (u,v),1 \right)$.
	Consequently, the derived set $X'$ is independent in $N = \mathrm{AC}(D,T,E)$.
	Now let $X$ be dependent in $M$, therefore there is no routing from $X$ to $T$ in $D$. Now assume that there is a routing $R'$
	from the derived set $X'$ to $T'=T\cup\SET{\left( (u,v),2 \right)~\middle|~ u,v\in V,\,u\not=v} $ in $\mathrm{AC}(D)$, i.e. that
	$X'$ is independent with respect to $N$. Then $R'$ routes every $x\in X$ to some element $t_x\in T'\BS \left( X' \BS X \right) = T$
	in $\mathrm{AC}(D)$. By omitting the subdivision vertices in the corresponding paths $p'\in R$, we obtain a routing from $X$ to
	$T$ in $D$ --- a contradiction. Therefore $X'$ is dependent in $N$ if $X$ is dependent in $M$.
\end{proof}

\begin{lemma}\PRFR{Apr 1st}
	Let $(D,T,E)$ be a representation of a gammoid where $D=(V,A)$, and such that
	 $V\cap \left( \left( V\times V \right) \times \SET{1,2} \right) = \emptyset$.
	 Furthermore, let $a\in A$. The arc $a$ is an essential arc of $(D,T,E)$
	if and only if there is a circuit $C\in \Ccal\left( \mathrm{AC}(D,T,E) \right)$ with 
	\[ \left( a,1 \right) \in C \subseteq E \cup \SET{\left( a,1 \right)} \cup \SET{\left( (u,v),2 \right)~\middle|~ u,v\in V,\,u\not= v,\,a\not=(u,v)} .\]
\end{lemma}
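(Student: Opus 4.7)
Write $a = (u_0, v_0)$ and $T^* = \SET{((u,v),2) ~\middle|~ u,v\in V,\,u\not=v}$, so that the target set of $\mathrm{AC}(D,T,E)$ is $T' = T\cup T^*$. The plan hinges on Lemma~\ref{lem:indepACDTE} -- which equates $\Gamma(D,T,E)$-independence of $X\subseteq E$ with $\mathrm{AC}(D,T,E)$-independence of $X\cup T^*$ -- together with the local structure around $a$: in $\mathrm{AC}(D)$, the vertex $(a,2)$ is a sink whose only in-neighbor is $(a,1)$, while $(a,1)$ has only $u_0$ as an in-neighbor and only $v_0$ and $(a,2)$ as out-neighbors (the arcs $u_0\to(a,1)$ and $(a,1)\to v_0$ being the subdivision of $a$).

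For the forward direction, assume $a$ is essential and pick $X\subseteq E$ that is independent in $\Gamma(D,T,E)$ but dependent in $\Gamma(D_a,T,E)$, where $D_a = (V, A\BSET{a})$. Set $Y = X\cup\SET{(a,1)}\cup(T^*\BSET{(a,2)})$. Since $X\cup T^*$ is independent in $\mathrm{AC}(D,T,E)$ by Lemma~\ref{lem:indepACDTE}, its subset $Y\BSET{(a,1)}$ is independent as well. I would argue $Y$ is dependent by a case analysis on any hypothetical routing $R\colon Y\routesto T'$ in $\mathrm{AC}(D)$. The sinks in $T^*\BSET{(a,2)}$ route trivially to themselves and exhaust those targets, so $X\cup\SET{(a,1)}$ must route vertex-disjointly into $T\cup\SET{(a,2)}$. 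Because $(a,2)$ has $(a,1)$ as its unique in-neighbor, at most one path of $R$ ends at $(a,2)$ and that path must traverse $(a,1)$. Either $(a,1)$'s own path is the trivial extension $(a,1)\to (a,2)$, so the $X$-paths end in $T$ while avoiding $(a,1)$ and hence correspond (after contracting subdivision vertices) to a routing $X\routesto T$ in $D_a$; or $(a,1)$'s own path exits via $v_0$, in which case the $X$-paths still cannot use $(a,1)$ as an inner vertex and again yield a routing in $D_a$; or some $x\in X$ routes to $(a,2)$ through $(a,1)$, forbidding $(a,1)$ from beginning its own disjoint path. All three subcases contradict dependence of $X$ in $\Gamma(D_a,T,E)$, so $Y$ is dependent. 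Any circuit $C\subseteq Y$ therefore contains $(a,1)$ and is contained in $E\cup\SET{(a,1)}\cup(T^*\BSET{(a,2)})$, as required.

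For the reverse direction, let $C$ be such a circuit and set $X = C\cap E$, $T_C = C\cap T^*$, so $C = X\cup T_C\cup\SET{(a,1)}$ with $(a,2)\notin T_C$. The plan is to show $X$ witnesses essentiality of $a$. The dependence of $X$ in $\Gamma(D_a,T,E)$ comes out cleanly: if $X$ were independent there, Lemma~\ref{lem:indepACDTE} applied to $(D_a,T,E)$ would make $X\cup T^*$ independent in $\mathrm{AC}(D_a,T,E)$, and since $(a,1)$ in $\mathrm{AC}(D_a)$ has only the single out-arc to $(a,2)$, appending the trivial path $(a,1)\to(a,2)$ (legal because $(a,2)\notin T_C$) would yield a routing of $C$ itself in $\mathrm{AC}(D_a)\subseteq \mathrm{AC}(D)$, contradicting that $C$ is a circuit. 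The harder half is to show $X$ is independent in $\Gamma(D,T,E)$, equivalently that $X\cup T^*$ is independent in $\mathrm{AC}(D,T,E)$. Starting from the known independent set $X\cup T_C$ and its routings, the idea is to invoke strong circuit elimination (Lemma~\ref{lem:strongCircuitElimination}) against any hypothetical circuit $C'\subseteq X\cup T^*$ witnessing dependence: combining $C$ and $C'$ while eliminating an element of $C\cap C'\subseteq X\cup T_C$ yields a new circuit containing $(a,1)$ but with strictly smaller $E$-part, and an induction on $\left|C\cap E\right|$ reduces to the case in which no such $C'$ exists. The main obstacle will be controlling the elimination so that $(a,2)$ stays outside the derived circuit; this is exactly where the asymmetry $(a,2)\notin C$ is used, since it ensures the augmenting target remains available throughout the reduction.
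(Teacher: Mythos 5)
Your forward direction and the first half of your reverse direction are correct and essentially the paper's argument: every routing of $X\cup\left(T^*\BSET{(a,2)}\right)$ onto $T'$ must visit $(a,1)$, so the circuit created by adjoining $(a,1)$ contains $(a,1)$; and if $C\cap E$ could be routed onto $T$ in $D_a$, then $C$ itself could be routed onto $T'$ in $\mathrm{AC}(D)$, contradicting that $C$ is a circuit.

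The ``harder half'' you defer --- that $C\cap E$ is independent in $\Gamma(D,T,E)$ --- is not merely hard, it is false, and with it the ``if'' direction of the lemma as stated. Take $V=\dSET{x,y,s,t}$, $A=\SET{(x,t),(y,s)}$, $T=\SET{t}$, $E=\SET{x,y}$, and $a=(y,s)$. Here $y$ is a loop of $\Gamma(D,T,E)$ and deleting $a$ changes nothing, so $a$ is not essential; yet the only path from $y$ into $T'$ in $\mathrm{AC}(D)$ is $y\,(a,1)\,(a,2)$ and the only path from $(a,1)$ into $T'$ is $(a,1)\,(a,2)$, so $\SET{y}$ and $\SET{(a,1)}$ are independent in $\mathrm{AC}(D,T,E)$ while $\SET{y,(a,1)}$ is dependent: $C=\SET{y,(a,1)}$ is a circuit of exactly the prescribed form whose trace $C\cap E$ is dependent in $\Gamma(D,T,E)$. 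The root cause is that a subset of $E$ that is dependent in $\Gamma(D,T,E)$ may still be independent in $\mathrm{AC}(D,T,E)$, since its elements can be routed into auxiliary targets $((u,v),2)$ instead of into $T$; Lemma~\ref{lem:indepACDTE} only controls $X$ with the \emph{full} set of auxiliary targets attached. Your elimination scheme cannot escape this: in the example, $C'=\SET{y,(a,2)}$ is the circuit witnessing that dependence, and eliminating $y$ from $C$ and $C'$ while retaining $(a,1)$ forces the circuit $\SET{(a,1),(a,2)}$, which contains $(a,2)$. (The paper's own proof of this direction stalls at the same spot: it shows that $C\cap E$ is dependent in $\Gamma(D,T,E)$ and concludes ``therefore $(a,1)\notin C$'', a step that would need dependence in $\mathrm{AC}(D,T,E)$.) The statement does become a correct equivalence --- and your argument a complete proof --- if one additionally requires $C\cap E$ to be independent in $\Gamma(D,T,E)$, a condition the circuit produced in the forward direction automatically satisfies.
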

\begin{proof}\PRFR{Apr 1st}
First, let us assume that $a$ is an essential arc of $(D,T,E)$.
	Let $X\subseteq E$ be independent with respect to $\Gamma(D,T,E)$,
	 such that every routing $R\colon X\routesto T$ in $D$ traverses the arc $a$.
	Then every routing from $X$ to $T$ in $\mathrm{AC}(D)$ visits the vertex $(a,1)$.
	Therefore every routing from $X' = X\cup\SET{\left( (u,v),2 \right)~\middle|~ u,v\in V,\,u\not= v,\,a\not=(u,v)}$
	to $T'=T\cup\SET{\left( (u,v),2 \right)~\middle|~ u,v\in V,\,u\not=v} $  in $\mathrm{AC}(D)$ also
	has to visit the vertex $(a,1)$. This implies that $X'\cup\SET{(a,1)}$ must be dependent. From Lemma~\ref{lem:indepACDTE} we obtain that
	 $X'$ is independent 
	in $\mathrm{AC}(D,T,E)$, and consequently there is a circuit $C\subseteq X'\cup\SET{(a,1)}$ such that $\left( a,1 \right)\in C$.
	Now assume that $a$ is not an essential arc of $(D,T,E)$.
	Let $X\subseteq E$ be independent with respect to $\Gamma(D,T,E)$, then there is a routing $R\colon X\routesto T$ in $D$
	such that the arc $a$ is not traversed by $R$.
	Thus there is a routing $R'$ from $X' = X\cup\SET{\left( (u,v),2 \right)~\middle|~ u,v\in V,\,u\not= v,\,a\not=(u,v)}$ 
	to $T'$ in $\mathrm{AC}(D)$
	that does not visit the vertex $(a,1)$. It is clear from Definition~\ref{def:ACD} that such a routing $R'$ cannot visit $(a,2)$ either.
	Therefore $R'\cup\SET{(a,1)(a,2)}$ is a routing in $\mathrm{AC}(D)$ and $X'\cup\SET{(a,1)}$ is independent with
	respect to $\mathrm{AC}(D,T,E)$.  Consequently, if $ C \subseteq E \cup \SET{\left( a,1 \right)} \cup \SET{\left( (u,v),2 \right)~\middle|~ u,v\in V,\,u\not= v,\,a\not=(u,v)} $ is a circuit of $\mathrm{AC}(D,T,E)$, then $C\cap E$ is dependent,
	therefore $\left( a,1 \right) \notin C$.
\end{proof}

\noindent A.W.~Ingleton and M.J.~Piff showed the following nice theorem about representations of strict gammoids
where every arc is essential, which they call {\em minimal presentation of $\Gamma(D,T,V)$}.

\begin{theorem}[\cite{IP73}, Theorem~3.12]\label{thm:IPEssentialStars}\PRFR{Apr 1st}
	Let $(D,T,V)$ be a representation of a gammoid where $D=(V,A)$ and where all $a\in A$ are essential arcs of $(D,T,V)$,
	and let $u\in V\BS T$.
	Then 
	\[ S_u = \SET{v\in V~\middle|~ (u,v)\in A} \cup \SET{u} \in \Ccal(\Gamma(D,T,V)).\]
\end{theorem}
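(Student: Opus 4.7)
My plan is to establish both defining conditions of a circuit: that $S_u$ is dependent in $N = \Gamma(D,T,V)$, and that every proper subset $S_u\BSET{w}$ is independent.

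Dependence of $S_u$ follows from a direct vertex-disjointness argument. If $R\colon S_u\routesto T$ were a routing, some $p\in R$ would satisfy $p_1 = u$. Since $u\notin T$, the path $p$ has length at least $2$, so $p_2 \in \partial\SET{u} \subseteq S_u$. The path $q\in R$ with $q_1 = p_2$ would then share the vertex $p_2$ with $p$, forcing $p = q$ and contradicting $p_1 = u \neq p_2 = q_1$.

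For minimality, I plan to handle the case $w = u$ first and then derive the case $w = v \in \partial\SET{u}$ from it. Any routing $R_1\colon \partial\SET{u}\routesto T$ must avoid the vertex $u$ entirely: no $p\in R_1$ can start at $u$ (since $u\notin \partial\SET{u}$) or end at $u$ (since $u\notin T$), and if some $p\in R_1$ had $p_j = u$ for $j>1$, the successor $p_{j+1}\in\partial\SET{u}$ would either be the start of another path in $R_1$ (breaking vertex-disjointness) or coincide with $p_1$ (giving a repeated vertex in $p$). With such a $u$-avoiding routing $R_1$, I can prepend $u$ (via the arc $(u,v)$) to the path $p_v\in R_1$ starting at $v$, obtaining a routing $\SET{u}\cup(\partial\SET{u}\BSET{v}) = S_u\BSET{v} \routesto T$ in $D$, so $S_u\BSET{v}$ is independent.

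The main work is therefore to show $\partial\SET{u}$ is independent. My plan is an argument by contradiction using essentiality together with the augmentation theorem. Assume $\partial\SET{u}$ is dependent. By Corollary~\ref{cor:MengerA}, a maximum $\partial\SET{u}$-$T$-connector $R_0$ has fewer than $|\partial\SET{u}|$ paths, so some $v^*\in\partial\SET{u}$ is not the start of any $p\in R_0$. Applying Lemma~\ref{lem:essentialArcsC} to the essential arc $(u,v^*)$ produces a circuit $C$ of $N'_{v^*} = \Gamma((V,A\BSET{(u,v^*)}),T,V)$ with $u\in C$ and $C$ independent in $N$. Any routing $R^*\colon C\routesto T$ in $D$ must use the arc $(u,v^*)$; truncating the path $p^*\in R^*$ containing this arc at its position $v^*$ yields a $v^*$-to-$T$ path $\tilde q$ in $D$ whose interior avoids $u$. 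I plan to apply Theorem~\ref{thm:augmentationCons} to $R_0$ together with the routing $(R^*\BSET{p^*})\cup\SET{\tilde q}$, viewed as two connectors over an expanded source set containing $\partial\SET{u}$, to produce a $\partial\SET{u}$-$T$-connector of size $|R_0|+1$, contradicting the maximality of $R_0$. The principal obstacle lies precisely here: the second connector's sources include vertices of $C\setminus\partial\SET{u}$, and one must ensure the augmentation's added starting vertex lies in $\partial\SET{u}$ rather than in $C\setminus\partial\SET{u}$. Handling this likely requires iterating the essential-arc machinery, or carefully choosing the sources fed into Theorem~\ref{thm:augmentationCons} so that the preserved starts cover $\partial\SET{u}$ while any new start is forced, by the structure of $C$ relative to $R_0$'s separator, into $\partial\SET{u}$.
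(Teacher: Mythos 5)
Your argument for the dependence of $S_u$ and your reduction of the minimality claim to the single statement that $\partial\SET{u}=S_u\BSET{u}$ is independent are both correct: a routing $\partial\SET{u}\routesto T$ must avoid $u$, so prepending $u$ to its path starting at $v$ yields a routing $S_u\BSET{v}\routesto T$, and the circuit property follows once $\partial\SET{u}$ is known to be independent. The gap is that this one remaining statement is the entire content of the theorem --- it is the only place where the hypothesis that \emph{every} arc is essential can enter --- and your plan for it does not go through. Concretely: first, Theorem~\ref{thm:augmentationCons} requires the second connector to be strictly larger than $R_0$, i.e.\ $\left| C \right| > \rk\left(\partial\SET{u}\right)$, and nothing in Lemma~\ref{lem:essentialArcsC} guarantees this; the circuit $C$ it produces lies inside $S'\cup\SET{u}$ for a minimum separator $S'$ of some unrelated independent set $X$ witnessing essentiality of $(u,v^*)$, and its size and location bear no a priori relation to $\partial\SET{u}$. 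Second, even when the augmentation applies, the single new initial vertex it adds is only guaranteed to come from the starts of the second connector, i.e.\ from $\left(C\BSET{u}\right)\cup\SET{v^*}$; whenever it lands in $C\BS S_u$ the resulting routing is not a $\partial\SET{u}$-$T$-connector, so the maximality of $R_0$ is not contradicted. You acknowledge exactly this obstacle, so the proof is incomplete at its decisive step.

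What is actually needed is the contrapositive: if $\partial\SET{u}$ is dependent, then some arc leaving $u$ is inessential. Taking a circuit $C\subseteq\partial\SET{u}$ and $v\in C$, one has $v\in\cl\left(\partial\SET{u}\BSET{v}\right)$, and one must show that every routing traversing $(u,v)$ can be rerouted so as to enter $\partial\SET{u}\BSET{v}$ directly from $u$ instead --- a linking-exchange argument in the spirit of Lemma~\ref{lem:MasonsFundamental} or Lemma~\ref{lem:presentsStrictGammoid}. This exchange step is the substance of the proof in \cite{IP73}, to which the present text itself defers, and it is missing from your proposal; without it the independence of $S_u\BSET{u}$, and hence the claim that $S_u$ is a circuit rather than merely a dependent set, remains unproven.
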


\noindent For a proof, see \cite{IP73} p.60.

\begin{corollary}\label{cor:arcEstimatesDTE}
	Let $D=(V,A)$ be a digraph, $T\subseteq V$, and $E\subseteq V$.
	Furthermore, let $M=\Gamma(D,T,E)$ and $N=\Gamma(D,T,V)$.
	Then \begin{align*}
	 	 \arcC(M) \leq \arcC(N)& \leq
		 \left| V\BS T \right| + \sum_{u\in V\BS T} \rk_N\left(\vphantom{A^A} \SET{v\in V~\middle|~(u,v)\in V} \right)
		\\& \leq \left( \left| V \right| - \rk_N(V) \right)\cdot \left( \rk_N(V) + 1 \right).
	\end{align*}
\end{corollary}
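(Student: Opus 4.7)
The first inequality is immediate: a subset $X\subseteq E$ is independent in $M=\Gamma(D,T,E)$ precisely when there exists a routing $X\routesto T$ in $D$, which is the same condition that defines independence in $N\restrict E$. Hence $M=N\restrict E$, and Lemma~\ref{lem:arcCkVDualityAndMinors} (monotonicity of $\arcC$ under restriction) yields $\arcC(M)\leq\arcC(N)$.

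For the middle inequality write $S_u^D:=\SET{v\in V:(u,v)\in A}$. The plan is to construct a representation of $N$ whose arc count is at most $|V\BS T|+\sum_{u\in V\BS T}\rk_N(S_u^D)$, and then pass to a standard representation. First, by Lemma~\ref{lem:TwlogSinks}, I may discard every arc leaving $T$ without changing $N$, so that $T$ consists of sinks. Next, I iteratively remove any non-essential arc of the current representation (Definition~\ref{def:ACD}); this produces a subgraph $D''=(V,A'')$ of $D$ with $\Gamma(D'',T,V)=N$ in which every arc is essential. Theorem~\ref{thm:IPEssentialStars} then applies directly: for each $u\in V\BS T$ the star $S_u^{D''}:=\SET{v\in V:(u,v)\in A''}\cup\SET{u}$ is a circuit of $N$, so $|S_u^{D''}|=\rk_N(S_u^{D''})+1$. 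A short case distinction on whether $(u,u)\in A''$ holds shows that in either case $|\SET{v:(u,v)\in A''}|\leq\rk_N(S_u^D)+1$ (the $+1$ absorbing the self-loop case, while $A''\subseteq A$ handles rank monotonicity). Summing out-degrees over the non-sinks and using that $T$ consists of sinks gives
\[
|A''|\;=\;\sum_{u\in V\BS T}\bigl|\SET{v:(u,v)\in A''}\bigr|\;\leq\; |V\BS T|+\sum_{u\in V\BS T}\rk_N(S_u^D).
\]
To finish this inequality I need to promote $(D'',T,V)$ to a standard representation of $N$ without inflating the arc count; this can be done via a vertex-split in the spirit of Lemma~\ref{lem:sourcesinkrepresentation}, followed by a further prune of any arcs rendered non-essential by the split.

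The third inequality reduces to elementary counting. By Lemma~\ref{lem:rkEqMaxConnector}, $\rk_N(V)$ equals the cardinality of a maximal $V$-$T$-connector in $D$, which is at most $|T|$; hence $|V\BS T|\leq |V|-\rk_N(V)$. Since rank is monotone, $\rk_N(S_u^D)\leq\rk_N(V)$ for every $u$. Combining,
\begin{align*}
|V\BS T|+\sum_{u\in V\BS T}\rk_N(S_u^D)
&\leq (|V|-\rk_N(V))+(|V|-\rk_N(V))\cdot\rk_N(V)\\
&=(|V|-\rk_N(V))(\rk_N(V)+1).
\end{align*}

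The main obstacle I anticipate is the standardisation step in the second paragraph. The essential-arc pruning delivers a representation $(D'',T,V)$ with the right arc count, but vertices in $V\BS T$ may still receive arcs in $D''$ and therefore fail to be sources, so $(D'',T,V)$ is not automatically a standard representation in the sense of Definition~\ref{def:standardRepresentation}. Showing that the conversion to standard form can be performed within the derived arc budget---rather than naively paying one extra arc per split of a non-source vertex---is the delicate technical point of the argument.
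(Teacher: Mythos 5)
Your overall route is the paper's: restriction monotonicity of $\arcC$ for the first inequality, pruning to essential arcs plus Theorem~\ref{thm:IPEssentialStars} for the middle one, and rank monotonicity together with $\rk_N(V)\leq\left| T \right|$ for the last (that final step you actually justify more carefully than the paper does, via Lemma~\ref{lem:rkEqMaxConnector}). But the gap you flag at the end is real, and your bookkeeping makes it unclosable as written: you spend the entire $\left| V\BS T \right|$ term absorbing possible loop arcs $(u,u)$ in the essential stars, so that after pruning you have only shown $\left| A'' \right| \leq \left| V\BS T \right| + \sum_{u}\rk_N(S_u^D)$ for a representation that need not be standard. Any subsequent splitting construction to make the non-target vertices sources costs further arcs, and the total then overshoots the claimed bound.

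The repair is twofold. First, a loop arc is never traversed by any path (a path visits no vertex twice), hence never by any routing, hence is never essential; so after pruning there are no loops, $O_u=S_u^{D''}\BSET{u}$ is a proper subset of the circuit $S_u^{D''}$ and therefore independent, giving $\left| O_u \right| = \rk_N(O_u)\leq\rk_N(S_u^D)$ with no $+1$. This frees the $\left| V\BS T \right|$ budget entirely. Second, the standardization is the construction of Lemma~\ref{lem:sourcesinkrepresentation} specialized to this situation: since no arc leaving a target is essential, $T$ consists of sinks of $D''$; now rename every $v\in V\BS T$ to $v'$ and add a fresh vertex $v$ together with the single arc $(v,v')$. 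Every target is then a sink and every non-target ground-set element a source, so by Lemma~\ref{lem:dualityrespectingrepresentation} the result is a standard representation of $N$, obtained at a cost of exactly $\left| V\BS T \right|$ additional arcs --- which is precisely the first summand of the bound you need.
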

\begin{proof}
	Since $M$ is a minor of $N$, we have $\arcC(M) \leq \arcC(N)$ (Lemma~\ref{lem:arcCkVDualityAndMinors}).
	The last inequality follows from Lemma~\ref{lem:rankMonotone}.
	Let $D'=(V,A')$ be a digraph obtained from $D$ by successively removing one non-essential arc of $(D',T,V)$
	after another from $A'$ until every remaining arc $a\in A'$ is an essential arc of $(D',T,V)$. Let $u\in V\BS T$, then
	Theorem~\ref{thm:IPEssentialStars} yields that $S_u = \SET{v\in V~\middle|~ (u,v)\in A} \cup \SET{u} \in \Ccal(N)$,
	thus the process of removing non-essential arcs stops no sooner than
	when $O_u = \SET{v\in V~\middle|~ (u,v)\in A'}$ is independent 
	in $N$ for all $u\in V\BS T$. Clearly, no arc leaving a vertex $t\in T$ is essential for $(D',T,V)$. Thus
	$$\left| A' \right| = \sum_{u\in V\BS T} \rk_N\left(\vphantom{A^A} \SET{v\in V~\middle|~(u,v)\in V} \right)$$
	holds. We may obtain a standard representation of $N$ from $(D',T,V)$ by first renaming all $v\in V\BS T$
	to $v'$ and then adding a new source $v\in V\BS T$ and a new arc $(v,v')$ to $D'$. 
	Consequently, 
	\[ \arcC(N) \leq
		 \left| V\BS T \right| + \sum_{u\in V\BS T} \rk_N\left(\vphantom{A^A} \SET{v\in V~\middle|~(u,v)\in V} \right)
		 \leq \left| V\BS T \right| + \sum_{u\in V\BS T} \rk_N(V). \qedhere\]
\end{proof}

\noindent Corollary~\ref{cor:arcEstimatesDTE} together with Remark~\ref{rem:upperBoundForV} implies that
every gammoid $M=(E,\Ical)$ may be represented on a digraph with at most $k = \rk_M(E)^2 \cdot \left| E \right| + \rk_M(E) + \left| E \right|$ vertices and with at most $(k-\rk_M(E))\cdot \left( 1+\rk_M(E) \right)$ arcs. 

\begin{lemma}\label{lem:uniformStrictStdRep}\PRFR{Apr 1st}
	Let $r\in \N$, $U=(E,\Ical)$ be a uniform matroid with $r \leq \left| E \right|$,\linebreak i.e.
	$\Ical = \SET{\vphantom{A^A}X\subseteq E~\middle|~ \left| X \right| \leq r}$, and
	let $(D,T,E)$ with $D=(E,A)$ be a strict representation of $U$.
	Then $$\left| A \right| \geq r\cdot\left( \left| E \right| - r \right).$$
\end{lemma}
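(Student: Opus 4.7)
The plan is to derive the lower bound from the characterization of minimal-arc representations of strict gammoids given by Theorem~\ref{thm:IPEssentialStars}, combined with the fact that every circuit of the rank-$r$ uniform matroid on $E$ has exactly $r+1$ elements.

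First I would dispose of the trivial case $r=\left|E\right|$, in which the claimed bound is $0$; so assume $r < \left|E\right|$. Next I would pin down $\left|T\right| = r$: the trivial routing $\SET{t\in\Pbf(D)\mid t\in T}$ links $T$ to itself, so $T\in \Ical$, forcing $\left|T\right|\leq r$; conversely, $U$ has rank $r$, so $U$ possesses an independent set of size $r$, and its routing uses $r$ pair-wise distinct elements of $T$, giving $\left|T\right|\geq r$.

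The core step is to pass to a sub-representation in which Theorem~\ref{thm:IPEssentialStars} applies directly. Starting from $(D,T,E)$, iteratively remove non-essential arcs one at a time, re-evaluating essentiality with respect to the current representation; each removal preserves the represented matroid by Definition~\ref{def:ACD}. The procedure terminates at a strict representation $(D',T,E)$ of $U$ with $D'=(E,A')$, $\left|A'\right|\leq\left|A\right|$, and such that every arc of $D'$ is essential for $(D',T,E)$. Since the vertex set of $D'$ is still exactly $E$, we have $\Gamma(D',T,E)=\Gamma(D',T,V)=U$, so Theorem~\ref{thm:IPEssentialStars} yields, for every $u\in E\BS T$, that
\[ S_u \,=\, \SET{v\in E \mid (u,v)\in A'} \cup \SET{u} \,\in\, \Ccal(U). \]

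Finally, because $U$ is uniform of rank $r < \left|E\right|$, every circuit of $U$ has cardinality exactly $r+1$. Hence $\left|S_u\right| = r+1$ for each $u\in E\BS T$, which forces $u$ to have $r$ distinct out-neighbours in $D'$ (and in particular excludes a loop at $u$, since a loop would make $u$ redundant in the union defining $S_u$). The arcs leaving distinct vertices are distinct, so summing over $u\in E\BS T$ gives
\[ \left|A\right| \,\geq\, \left|A'\right| \,\geq\, \sum_{u\in E\BS T} r \,=\, r\cdot\left(\left|E\right|-r\right). \]

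The only mildly subtle point is the reduction to a representation consisting only of essential arcs: one must remove arcs one at a time and re-check essentiality after each removal, since essentiality is a property of the current arc set. All the heavy combinatorial work is absorbed into Theorem~\ref{thm:IPEssentialStars}, so after that reduction the argument is purely a counting exercise.
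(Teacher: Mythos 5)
Your proof is correct and follows essentially the same route as the paper's: both reduce to a strict representation consisting only of essential arcs (the paper by taking one with a minimal number of arcs, you by deleting non-essential arcs one at a time), invoke Theorem~\ref{thm:IPEssentialStars} together with the fact that every circuit of $U$ has $r+1$ elements, and count the arcs leaving each $u\in E\BS T$. The only cosmetic difference is that you settle for the inequality $\left|A'\right|\geq\sum_{u\in E\BS T}r$ where the paper observes that every $t\in T$ is a sink and gets equality, and your parenthetical about loops is slightly imprecise (a loop at $u$ would not contradict $\left|S_u\right|=r+1$, it would simply give $u$ out-degree $r+1$, which is harmless for the bound).
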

\begin{proof}\PRFR{Apr 1st}
	Without loss of generality we may assume that no digraph occurring in this proof contains a loop arc $(v,v)$.
	Let $(D,T,E)$ with $D=(E,A)$ be a strict representation of $U$ 
	with a minimal number of arcs among all such representations.
	Due to that minimality, every $t\in T$ is a sink of $D$, and every arc $a\in A$ is an essential arc.
	Observe that $\left| C \right| = r+1$ for all $C\in\Ccal(U)$.
	Thus we obtain from Theorem~\ref{thm:IPEssentialStars}
	that 
	\begin{align*}
	 \left| A \right| & = \left| \bigcup_{u\in E\BS T} \SET{(x,v)\in A~\middle|~ x=u}  \right| 
	 \\& = \sum_{u\in E\BS T}
	\left| \SET{(x,v) \in A~\middle|~ x=u}  \right| \\ 
		& = \sum_{u\in E\BS T} \left| \SET{v\in V~\middle|~ (u,v)\in A} \right| \\
		& \geq \left| E\BS T \right|\cdot \min \SET{\vphantom{A^A} \left| C \right| - 1 ~\middle|~ C\in\Ccal(U)} = \left( \left| E \right| - r \right) \cdot r.
		\end{align*}
	Thus, every strict representation of $U$ has at least $r\cdot \left( \left| E \right| - r \right)$ arcs.
	The strict standard representation constructed in Lemma~\ref{lem:uniformArcs} yields that this bound is attained.
\end{proof}
\noindent
	In general, strict gammoids that are not transversal matroids exist and such matroids cannot
	have a standard representation
	that is also a strict representation, because their duals do not have a strict representation.

\needspace{4\baselineskip}
\begin{definition}\PRFR{Apr 1st}
	Let $(D,T,E)$ with $D=(V,A)$ be a representation of the gammoid $\Gamma(D,T,E)=(E,\Ical)$,
	and let $q\in V$ be a vertex of $D$.
	Then $q$ shall be called \deftext[essential vertex of DTE@essential vertex of $(D,T,E)$]{essential vertex of $\bm(\bm D\bm, \bm T\bm, \bm E\bm)$},
	if either $q\in E$ or
	if $q\in V\BS E$ and there is some $X\in \Ical$ such that $X$ is not independent with respect to $\Gamma(D_q,T,E)$ where
	\[ D_q = (V\BSET{q},\SET{(u,v)\in A~\middle|~u\not= q\txtand v\not=q}). \qedhere \]
\end{definition}

\needspace{6\baselineskip}

\vspace*{-\baselineskip} 
\begin{wrapfigure}{r}{3cm}
\vspace{\baselineskip}
~~~\includegraphics{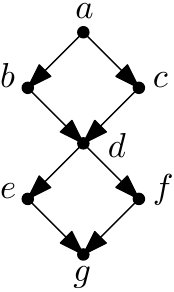}
\vspace*{-\baselineskip} 
\end{wrapfigure}
~ 

\begin{remark}\PRFR{Apr 1st}
	Clearly, if $(u,v)$ is an essential arc of $(D,T,E)$, then $u$ and $v$ are essential vertices of $(D,T,E)$. 
	On the other hand, not every essential vertex $v$ of $(D,T,E)$ is incident with an essential arc of $(D,T,E)$.
	For instance, let\linebreak $D=(V,A)$ be the digraph where $V=\dSET{a,b,c,d,e,f,g}$ and
	\linebreak $A = \SET{(a,b),(a,c),(b,d),(c,d),(d,e),(d,f),(e,g),(f,g)}$.
	Then $d$ is an essential vertex of $(D,\SET{g},\SET{a})$, but $(D,\SET{g},\SET{a})$ has no essential arcs.
\end{remark}

\begin{lemma}\label{lem:vtxInMinReprs}\PRFR{Apr 1st}
	Let $M=(E,\Ical)$ be a gammoid, and let $(D,T,E)$ be a representation of $M$, and let  $D=(V,A)$.
	Let $q\in V\BS E$ be an essential vertex of $(D,T,E)$,
	and let $N = \Gamma(D,T,V\BSET{q})$ and $N' = \Gamma(D_q,T,V\BSET{q})$
	where $$D_q = \left(\vphantom{A^A} V\BSET{q}, \SET{(u,v)\in A~\middle|~u\not= q \txtand v\not=q} \right).$$
	Then there is a circuit $C\in \Ccal(N')$ with $C\cap \SET{u \in V~\middle|~ (u,q)\in A} \not= \emptyset$
	such that \linebreak $C$ is independent in $N$.
\end{lemma}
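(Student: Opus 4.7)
The plan is to mirror the argument of Lemma~\ref{lem:essentialArcsC}, treating the vertex $q$ as the analog of a removed arc and choosing a predecessor $u^{*}\in P := \SET{u\in V~\middle|~(u,q)\in A}$ of $q$ to play the role of the arc's tail.

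First, from the essentialness of $q$, I would extract a subset $X\subseteq E\subseteq V\BSET{q}$ with $X\in \Ical$ and $X\notin \Ical(\Gamma(D_q,T,E))$; because restrictions of $N$ and $N'$ to $E$ recover $M$ and $\Gamma(D_q,T,E)$ respectively, this gives $X\in \Ical(N)$ and $X\notin\Ical(N')$. Pick a minimum $X$-$T$-separator $S'\subseteq V\BSET{q}$ in $D_q$; Menger's theorem applied in $N'$ yields $\left|S'\right|<\left|X\right|$. The set $S = S'\cup\SET{q}$ is an $X$-$T$-separator in $D$ (any $X$-$T$-path in $D$ either avoids $q$ and is thus a path in $D_q$ meeting $S'$, or visits $q$), and Menger applied in $N$ forces $\left|S\right|\geq \left|X\right|$, hence $\left|S\right|=\left|X\right|$, $\left|S'\right|=\left|X\right|-1$, and $q\notin S'$. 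Fixing a maximum $X$-$T$-connector $R$ in $D$ of size $\left|X\right|$, Corollary~\ref{cor:Menger} ensures that every element of $S$ is visited by $R$; by counting and disjointness each $R$-path meets $S$ in exactly one vertex, so the unique path $p\in R$ visiting $q$ is disjoint from $S'$. Let $u^{*}$ be the predecessor of $q$ on $p$; then $u^{*}\in P$ and $u^{*}\notin S'$. Set $\tilde S := S'\cup\SET{u^{*}}\subseteq V\BSET{q}$, which has size $\left|X\right|$.

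Truncating $R$ at the first $\tilde S$-vertex on each path (cutting $p$ at $u^{*}$ and every other $R$-path at its unique $S'$-vertex) yields a linking $X\routesto \tilde S$ in $D_q$, while the terminal portions of $R$ from $\tilde S$ onward — using the suffix of $p$ from $u^{*}$ through $q$ for the $u^{*}$-path — give a linking $\tilde S\routesto T$ in $D$, so $\tilde S\in\Ical(N)$. The main obstacle is to show that $\tilde S\notin \Ical(N')$. Suppose for contradiction that a linking $M\colon \tilde S\routesto T$ exists in $D_q$. Since $\tilde S\supseteq S'$, the set $\tilde S$ is itself an $X$-$T$-separator in $D_q$; given any $X$-$T$-separator $C\subseteq V\BSET{q}$ in $D_q$, for each $x\in X$ the concatenation of the $X\routesto \tilde S$ path starting at $x$ with the $M$-path starting at its endpoint produces an $X$-$T$-walk in $D_q$ that must meet $C$. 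Any common vertex of an $X\routesto \tilde S$-path with an $M$-path must lie in $\tilde S$ itself, for otherwise concatenation at that vertex would yield an $X$-$T$-walk in $D_q$ bypassing $\tilde S$, contradicting the separator property. This extra structural fact makes the assignment $x\mapsto c(x)\in C$ injective via the Menger-style argument (paths of the same linking are pairwise disjoint, and cross-hits are forced into $\tilde S$, where $\sigma$ from the linking $X\routesto\tilde S$ is already injective), giving $\left|C\right|\geq \left|X\right|$ and hence contradicting $\left|S'\right|<\left|X\right|$.

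To conclude, $S'\in \Ical(N')$ (a minimum $X$-$T$-separator in $D_q$ is linkable to $T$ in $D_q$ by Menger), and since $\tilde S = S'\cup\SET{u^{*}}\notin\Ical(N')$, every circuit $C\in\Ccal(N')$ contained in $\tilde S$ must contain $u^{*}$. Any such $C$ satisfies $C\subseteq \tilde S\in \Ical(N)$, so $C\in\Ical(N)$, and $u^{*}\in C\cap P$ establishes $C\cap \SET{u\in V~\middle|~(u,q)\in A}\neq\emptyset$. The delicate point throughout is that linkings do \emph{not} compose through an arbitrary intermediate set in general (a shared bottleneck vertex can collapse two paths into one); it is precisely the separator property of $\tilde S$ in $D_q$ that rescues the composition argument and forces $\tilde S\notin\Ical(N')$.
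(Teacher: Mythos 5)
Your proof is correct, but it takes a genuinely different route from the paper's. The paper argues by induction on the number of arcs entering $q$: if exactly one arc $(u,q)$ enters $q$, that arc is essential and Lemma~\ref{lem:essentialArcsC} applies directly; if some entering arc is non-essential it is deleted and the induction hypothesis invoked; and if all entering arcs are essential, all but one are discarded (which only shrinks the family of independent sets) and the single-arc case is applied to the modified representation. You instead redo the Menger analysis of Lemma~\ref{lem:essentialArcsC} at the vertex level: you take a minimum $X$-$T$-separator $S'$ of $D_q$, observe that $S'\cup\SET{q}$ is a minimum $X$-$T$-separator of $D$ met exactly once by each path of a maximum connector, locate the in-neighbour $u^{*}$ of $q$ on the unique connector path through $q$, and show $\tilde S=S'\cup\SET{u^{*}}$ is independent in $N$ but dependent in $N'$. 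The delicate step --- that a path of the linking $X\routesto\tilde S$ and a path of a hypothetical linking $\tilde S\routesto T$ in $D_q$ can only meet at a matched endpoint --- is handled correctly: your truncation guarantees the first family meets $\tilde S$ only at terminal vertices, and since the second family starts at \emph{all} of $\tilde S$ with pairwise disjoint paths, its members meet $\tilde S$ only at their initial vertices, so any other intersection would splice into an $X$-$T$-walk of $D_q$ avoiding the separator $\tilde S\supseteq S'$. What your approach buys is a self-contained argument that identifies exactly which in-neighbour of $q$ the circuit contains and avoids the case analysis on essential versus non-essential entering arcs; what it costs is re-deriving, inside this proof, machinery the paper already packaged in the arc lemma.
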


\begin{proof}\PRFR{Apr 1st}
Without loss of generality we may assume
	that $(q,q)\notin A$, and we do induction on the number of arcs entering $q$ in $D$. 
	There has to be at least one arc $(u,q)\in A$ with $u\in V$,
	because there is a subset $X\subseteq E$, such that every routing $R\colon X\routesto T$
	visits the vertex $q\notin E$. So $q$ cannot be a source in $D$.
	Therefore, the base case of the induction is the case where precisely one arc $(u,q)$ enters $q$ in $D$.
	This arc is essential
	with respect to $(D,T,E)$,
	since it is traversed by every routing from $X$ to $T$ in $D$.
	Lemma~\ref{lem:essentialArcsC} yields a desired circuit $C$ with $u\in C$.
	If there is a non-essential arc entering $q$, then $\Gamma(D,T,E) = \Gamma(D',T,E)$
	where $D'=(V,A\BSET{(u,q)})$ for an arbitrarily chosen non-essential arc $(u,q)\in A$,
	and then the existence of $C$ follows by induction hypothesis on $(D',T,E)$.
	If all arcs entering $q$ are essential for $(D,T,E)$, then we pick an arbitrary choice $(u,q)\in A$,
	and throw away all other arcs entering $q$. Let $D'' = (V,A'')$
	 where $A'' = \SET{(x,y)\in A~\middle|~ y\not= q}\cup\SET{(u,q)}$,
	 and let $M' = \Gamma(D'',T,E)$. Clearly, every independent set of $M'$ is also independent in $M$,
	 and if we delete $q$ and all incident arcs from $D''$ we obtain $D_q$.
	 Furthermore, there is exactly one arc
	 entering $q$ in $D''$,
	 and therefore we obtain a circuit $C\in\Ccal(N')$ with $u\in C$ 
	 that is independent in $\Gamma(D'',T,V\BSET{q})$ -- and therefore independent in $N$ -- 
	 from the induction hypothesis applied to $(D'',T,E)$.
\end{proof}


\needspace{6\baselineskip}
\subsection{Digraphs as Black Boxes}

\PRFR{Mar 7th}
\noindent Lemma~\ref{lem:sourcesinkrepresentation} states that each gammoid $M$ may be represented by
a triple $(D,T,E)$, where $T\subseteq E$ is a base of $M$ and where $D=(V,A)$ is a digraph, such that
all $t\in T$ are sinks and all $e\in E\BS T$ are sources of $D$. Given such a representation, we may
disregard the structure of $D$. Instead, we may regard $D$ merely as a function, which assigns
to each pair $(X,S)$ --- where $X\subseteq E$ and where $S\subseteq T$ 
such that
$X\cap T \subseteq S$ holds --- the
minimal cardinality of an $X$-$S$-separator in $D$. Clearly, the value of this function with respect to $D$ equals the rank of $X$
with respect to the contraction $M\contract\left( V\BS T \right) \cup  S$, and therefore the function derived from $D$
does not depend on
the choice of the representation $(D,T,E)$ of $M$, it is already determined by $M$ alone.
In this section, we will elaborate this idea.

\begin{definition}\label{def:Mblackbox}\PRFR{Mar 7th}
	Let $M=(E,\Ical)$ be a matroid, $B\subseteq E$, $\rho\colon 2^E \times 2^B \maparrow \N$ a map.
	 The pair $(B,\rho)$\label{n:Brho} shall be called \deftext[M-black box@$M$-black box]{$\bm M$-black box},
	if $B$ is a base of $M$ and if for all $X\subseteq E$ and all $S\subseteq B$
	the equation
	\[ \rho(X,S) = \rk_{M\contract\left( E\BS B \right)\cup S} (X\BS (B\BS S)) \]
	is satisfied. If $B$ is clear from the context, we also denote the $M$-black box $(B,\rho)$ by $\rho$ alone.
\end{definition}

\noindent Clearly, for every $B\in\Bcal(M)$, there is a unique $M$-black box $(B,\rho)$.

\begin{definition}\label{def:digraphBlackbox}\PRFR{Mar 7th}
	Let $D=(V,A)$ be a digraph, and $X,Y\subseteq V$.
	The \deftext[black box for $(X,D,Y)$]{black box for $\bm (\bm X\bm,\bm D\bm,\bm Y\bm)$} shall be the map
	\[ \lambda_{(X,D,Y)} \colon 2^X \times 2^Y \maparrow \N \]
	where for all $S\subseteq X$ and all $T\subseteq Y$
	\[ \lambda_{(X,D,Y)}(S,T) = \min \SET{\left| C \right| ~\middle|~\vphantom{A^A} C\subseteq V \text{~s.t.~} C \text{~is an }S-T-\text{separator in~} D} .\]
	If $(X,D,Y)$ is clear from the context, we may denote $\lambda_{(X,D,Y)}$ by $\lambda$, too.
\end{definition}

\begin{definition}\PRFR{Mar 7th}
	Let $X,Y$ be finite sets with $X\cap Y = \emptyset$,
	and let
	 $\lambda\colon 2^X\times 2^Y\maparrow \N$ be a map.
	 Then $\lambda$ shall be called a \deftext[D-black box@$D$-black box]{$\bm D$-black box},
	if there is a digraph $D=(V,A)$ with $X\cup Y\subseteq V$ such that for all $X'\subseteq X$ and $Y'\subseteq Y$
	with $X'\cap Y \subseteq Y'$
	\[\lambda(X',Y') = \lambda_{(X,D,Y)}(X',Y').\] 
	In this case, we say that $\lambda$ is a $D$-black box represented by $(X,D,Y)$.
\end{definition}

\begin{corollary}\PRFR{Mar 7th}
	Let $M=(E,\Ical)$ be a matroid, $B\in\Bcal(M)$ a base of $M$, and let $(B,\rho)$ be the corresponding $M$-black box.
	 Then $M$ is a gammoid if and only if $\rho$ is a $D$-black box.
\end{corollary}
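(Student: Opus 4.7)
The plan is to prove both implications directly, bridging the matroid side (contraction and rank) with the digraph side (connectors and separators) via Menger's Theorem, then reading off the key identity $\rho(X,B)=\rk_M(X)$ to clinch the converse.

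For the forward direction I would start from a representation $(D,B,E)$ of $M$ with $D=(V,A)$ and every $b\in B$ a sink in $D$, which exists by Theorem~\ref{thm:gammoidRepresentationWithBaseTerminals}. Fix any admissible pair $(X,S)$, that is, $X\subseteq E$ and $S\subseteq B$ with $X\cap B\subseteq S$; under this precondition $X\cap(B\setminus S)=\emptyset$, so $X\setminus(B\setminus S)=X$. Applying Corollary~\ref{cor:rkContract} to $C=(E\setminus B)\cup S$ (for which $E\setminus C=B\setminus S$) together with the fact that $B\setminus S\subseteq B$ is independent yields
\[
  \rho(X,S)=\rk_{M\contract C}(X)=\rk_M\!\bigl(X\cup(B\setminus S)\bigr)-|B\setminus S|.
\]
By Lemma~\ref{lem:rkEqMaxConnector}, $\rk_M(X\cup(B\setminus S))$ equals the maximum cardinality of an $(X\cup(B\setminus S))$-$B$-connector in $D$. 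Since every vertex of $B\setminus S$ is a sink, it cannot be an interior vertex of any path and it contributes only the trivial path ending at itself; hence every maximum connector splits as $|B\setminus S|$ trivial paths plus a maximum $X$-$S$-connector in $D$. Combining with Corollary~\ref{cor:MengerA} (Menger), we obtain $\rho(X,S)=\lambda_{(E,D,B)}(X,S)$, so $\rho$ is the $D$-black box represented by $(E,D,B)$.

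For the converse, suppose $\rho$ is a $D$-black box, witnessed by some digraph $D$ with $E\cup B$ among its vertices. Specializing to $S=B$, the precondition $X\cap B\subseteq S$ is trivially satisfied, and direct unfolding of Definition~\ref{def:Mblackbox} gives $\rho(X,B)=\rk_{M\contract E}(X)=\rk_M(X)$ for every $X\subseteq E$. On the digraph side, Menger's Theorem together with Lemma~\ref{lem:rkEqMaxConnector} yields $\lambda_{(E,D,B)}(X,B)=\rk_{\Gamma(D,B,E)}(X)$. The hypothesis $\rho(X,B)=\lambda_{(E,D,B)}(X,B)$ therefore forces $\rk_M(X)=\rk_{\Gamma(D,B,E)}(X)$ for all $X\subseteq E$, which determines the matroid uniquely, so $M=\Gamma(D,B,E)$ and $M$ is a gammoid.

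The main obstacle is the bookkeeping step in the forward direction: one must verify that a maximum $(X\cup(B\setminus S))$-$B$-connector really does decompose into $|B\setminus S|$ trivial paths plus a maximum $X$-$S$-connector. This rests squarely on the sink property guaranteed by Theorem~\ref{thm:gammoidRepresentationWithBaseTerminals}, which both forces paths starting at $B\setminus S$ to be trivial and prevents $B\setminus S$ from being used as interior vertices of other paths. Once that sink condition is pinned down, the rest of the argument is essentially arithmetic with the rank formulas already established.
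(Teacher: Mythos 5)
Your proof is correct and follows essentially the same route as the paper's: a representation with $B$ consisting of sinks, the contraction rank formula, and Menger's Theorem in the forward direction, and reading off $M=\Gamma(D,B,E)$ from $\rho(\cdot,B)=\rk_M$ in the converse. The only quibble is that not \emph{every} maximum $(X\cup(B\setminus S))$-$B$-connector literally contains the $\left| B\setminus S \right|$ trivial paths (a maximum connector may instead route some $x\in X$ to a sink in $B\setminus S$); what you actually need, and what does hold by the augmentation lemma together with the sink property, is that \emph{some} maximum connector splits in this way, which suffices for the cardinality identity.
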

\begin{proof}\PRFR{Mar 7th}
	There is a standard representation $(D,T,E)$ of the gammoid $M$ such that $D=(V,A)$ and $T=B$ (Remark~\ref{rem:standardRepresentation}).
	Then $(E, D, T)$ 
	represents $\rho$ due to Menger's Theorem~\ref{thm:MengerGoering}, Definition~\ref{def:gammoid}, and the fact that 
	for all $T'\subseteq T$, we have the equality $\Gamma(D,T,E)\contract(E\BS T') = \Gamma(D',T\BS T',E\BS T')$ 
	where $D'=(V\BS T', A\cap\left( (V\BS T')\times  (V\BS T')\right)$ --- this is a special case of the construction used in the proof of
	Lemma~\ref{lem:contractionStrictGammoid}. Therefore every $M$-black box is a $D$-black box.
	Conversely, if the $M$-black box $\rho$ is represented by $(X,D,Y)$,
	then $M = \Gamma(D,Y,X)$ and so $M$ is a gammoid.
\end{proof}

\begin{definition}\label{def:cascadeDigraph}\PRFR{Mar 7th}
	Let $D=(V,A)$ be a digraph. Then $D$ shall be called \deftext{cascade digraph},
	if there is a partition $V_1\disunion V_2 \disunion \cdots \disunion V_k = V$
	such that $A\subseteq \bigcup_{i=1}^{k-1}\left(  V_i \times V_{i+1}  \right)$.
\end{definition}

\begin{definition}[\cite{M72}]\PRFR{Mar 7th}
	Let $M=(E,\Ical)$ be a matroid.
	Then $M$ is a \deftext{cascade},
	if there is a digraph $D=(V,A)$, such that there is a partition $V_1\disunion V_2 \disunion \cdots \disunion V_k = V$
	with  $A\subseteq \bigcup_{i=1}^{k-1} \left(  V_i \times V_{i+1}  \right)$, and such that
	\[ M = \Gamma(D,V_k,V_1). \qedhere\]
\end{definition}

\begin{proposition}[\cite{M72}, \cite{Ma70thesis}\textsuperscript{\ref{ftn:Ma70}}]\label{prop:cascadesNonDual}\PRFR{Mar 7th}
Let $\Ccal\Mcal$ be the class of all cascades. Then
\[ \SET{M^\ast ~\middle|~ M\in \Ccal\Mcal} \not\subseteq \Ccal\Mcal \quad\txtand\quad
   \SET{M\contract E' ~\middle|~ M=(E,\Ical)\in \Ccal\Mcal,\,E'\subseteq E} \not\subseteq \Ccal\Mcal.\]
In other words, the class of cascades is neither closed under taking duals nor under contraction.
\end{proposition}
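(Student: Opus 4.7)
The strategy is to exhibit explicit counterexamples for both claims, and to rule out cascade representations of each candidate by exploiting the rigidity of the layered digraph.

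First I would isolate the structural features forced by Definition~\ref{def:cascadeDigraph} on any cascade $M = \Gamma(D, V_k, V_1)$ of depth $k \geq 2$: the ground set $V_1$ and target set $V_k$ are disjoint, $V_1$ consists entirely of sources of $D$, $V_k$ consists entirely of sinks of $D$, no arc has both endpoints in the same layer, and every path in $D$ from $V_1$ to $V_k$ has length exactly $k - 1$. Iterating Theorem~\ref{thm:bipartiteInduction} one layer at a time, this forces $M$ to arise as a $(k-1)$-fold bipartite matroid induction from the free matroid on $V_k$. In particular, depth-$2$ cascades are precisely the transversal matroids (Corollary~\ref{cor:transversalMatroid}), which is the invariant I would use to dismiss candidates.

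For the failure of closure under duality, I would search for a small cascade $M_1$ such that $M_1^\ast$ has a distinguished element $e^\ast$ with the property that every gammoid representation $(D', T', E')$ of $M_1^\ast$ is forced to include an arc entering $e^\ast$ from some $f \in E' \BS \SET{e^\ast}$. The source-layer invariant then precludes $e^\ast$ from lying in $V_1$ of any layered representation, so $M_1^\ast$ admits no cascade representation. Natural candidates are depth-$\geq 3$ cascades whose duals are strict gammoids that fail to be transversal matroids, since the two-layer shortcut available for duals of transversal matroids is no longer in reach. For the failure of closure under contraction, I would take a cascade $M_2$ and an element $e \in V_1$ whose contraction introduces a ``through-path'' dependence between the remaining ground-set elements; in any purported cascade representation of $M_2 \contract \SET{e}$, reinstating that dependence would require either an arc with both endpoints in the first layer or an incoming arc at one of its sources, each violating Definition~\ref{def:cascadeDigraph}. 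A depth-$3$ cascade in which $e$ is the unique bridge through a specific intermediate vertex of $V_2$ should suffice.

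The principal obstacle in both parts is ruling out \emph{every} possible cascade representation of the candidate, not merely the obvious representation inherited from the original cascade. My plan is to proceed by contradiction: assume that the candidate matroid $N$ admits a cascade representation $\Gamma(D, V_k, V_1)$, and derive a contradiction from the iterated bipartite induction characterisation above, using the known circuit structure of $N$ to rule out each of the consecutive bipartite graphs in turn. To ensure the argument terminates, I would observe that without loss of generality the depth $k$ may be taken minimal --- any deeper representation arises from a depth-minimal one by subdividing arcs, a transformation that preserves both the cascade property and the gammoid represented --- and that the total vertex count is polynomially bounded in $\left| E \right|$ and $\rk_N(E)$ via an adaptation of Corollary~\ref{cor:upperBoundOnV} to layered digraphs. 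Establishing the existence of the distinguished elements $e^\ast$ and $e$ with their incompatibility properties is the technically deepest step; once the right small candidates are pinned down, the proposition reduces to two finite case analyses on the sequence of bipartite graphs between consecutive layers.
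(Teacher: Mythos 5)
The first thing to say is that the paper offers no proof of this proposition: it is quoted from J.H.~Mason \cite{M72} and his thesis, and the footnote attached to it states that the authors were unable to obtain the thesis in which the proof appears. Your attempt therefore has to stand entirely on its own, and as written it is a search plan rather than a proof. No concrete cascade $M_1$ or $M_2$ is produced, and the step you yourself flag as the technically deepest one --- exhibiting the distinguished elements $e^\ast$ and $e$ together with their incompatibility properties --- is precisely the content of the proposition and is left open.

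Beyond incompleteness, the obstruction you propose for the duality half cannot be realised by any gammoid. You ask for an element $e^\ast$ of $M_1^\ast$ such that \emph{every} representation contains an arc entering $e^\ast$ from the ground set. But Lemma~\ref{lem:sourcesinkrepresentation} yields, for each base $B$, a representation in which every element of $E\BS B$ is a source; choosing $B$ with $e^\ast\notin B$ (possible unless $e^\ast$ is a coloop, and a coloop may be realised as an isolated target vertex) gives a representation with no arc entering $e^\ast$ whatsoever. The property that actually distinguishes cascades inside the class of gammoids is global rather than local: by Corollary~\ref{cor:acyclicDigraphIsCascade}, a gammoid is a cascade if and only if it admits an acyclic representation, so refuting cascadeness means proving that every representation of the candidate contains a directed cycle (this is exactly how Remark~\ref{rem:weNeedCycles} uses the proposition), and your argument never engages with acyclicity. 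Two of your observations are sound and would streamline a correct proof: cascades of depth two are exactly the transversal matroids, so the duality half reduces to finding a transversal matroid whose dual strict gammoid forces cycles; and combining the bipartite presentation in the proof of Lemma~\ref{lem:dualtransversalstrictgammoid} with Corollary~\ref{cor:acyclicDigraphIsCascade} and Remark~\ref{rem:contractRestrictCommutingFormula} shows that every gammoid is a contraction of a cascade, so the contraction half follows immediately once a single gammoid that is not a cascade is known. The entire burden of the proposition is that explicit example, and your proposal does not supply it.
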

\stepcounter{footnote}
\footnotetext[\thefootnote]{\label{ftn:Ma70}Unfortunately, we were not able to acquire a copy of J.H.~Mason's thesis from within Europe before the printing deadline. The thesis contains the proof that the dual of a certain cascade is not a cascade itself, which is only cited in \cite{M72}. It appears to be available at the Memorial Library,
UW Madison Theses Basement North
AWB M411 J655.}

\PRFR{Mar 7th}
\noindent Clearly, every cascade digraph is acyclic, and it is also clear that
 the \deftext{transitive triple digraph} $\left(\SET{x,y,z},\SET{(x,y),(y,z),(x,z)}\right)$ is an acyclic digraph but not a cascade digraph. But regarding $D$-black boxes, the class of cascade digraphs and the class of acyclic digraphs have the same expressiveness.

\needspace{6\baselineskip}
\begin{lemma}\label{lem:DblackBoxArcSubdivision}\PRFR{Mar 7th}
	Let $D=(V,A)$ be a digraph and let $X,Y\subseteq V$, and $a=(u,w) \in A$. Furthermore, let $v\notin V$ be a new element.
	Then 
	$$\lambda_{(X,D,Y)} = \lambda_{(X,D',Y)}$$ where
	$$ D' = (V\disunion\SET{v}, A\BSET{a} \cup \SET{(u,v),(v,w)}) $$
	denotes the digraph obtained from $D$ by subdividing the arc $a$ with the new vertex $v$.
\end{lemma}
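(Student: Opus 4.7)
The plan is to invoke Menger's theorem (Corollary~\ref{cor:MengerA}) and exhibit a size-preserving bijection between $S$-$T$-connectors in $D$ and in $D'$ for every admissible pair $(S,T)$ with $S\subseteq X$, $T\subseteq Y$. Since both minima in the definition of $\lambda_{(X,D,Y)}$ and $\lambda_{(X,D',Y)}$ equal the corresponding maxima of connector sizes, the two functions will coincide.

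First, I would define a forward map $\Phi\colon R\mapsto R'$ taking an $S$-$T$-connector $R$ in $D$ to a set $R'$ of paths in $D'$: each path $p=(p_i)_{i=1}^n\in R$ is replaced by the path obtained from $p$ by inserting the new vertex $v$ between every pair of consecutive entries $(p_i,p_{i+1})$ equal to $(u,w)$ (using the arcs $(u,v),(v,w)\in A'$); all other paths are left unchanged. Since $v\notin V$, the inserted vertex cannot collide with any vertex already visited by some path of $R$, so $R'$ is again a family of pairwise vertex-disjoint paths, hence an $S$-$T$-connector in $D'$ with $|R'|=|R|$.

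Next, I would define the reverse map $\Psi\colon R'\mapsto R$: in $D'$, the only arcs incident with $v$ are $(u,v)$ and $(v,w)$, so any path $q\in R'$ visiting $v$ must contain the sub-walk $u,v,w$. Replace each such occurrence in $q$ by the single arc $(u,w)\in A$ to obtain a path in $D$; leave other paths unchanged. Pairwise vertex-disjointness is preserved because we only delete the vertex $v$, and no other vertex is altered. Thus $R=\Psi(R')$ is an $S$-$T$-connector in $D$ with $|R|=|R'|$, and clearly $\Psi\circ\Phi=\mathrm{id}$ on connectors of $D$.

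Combining both directions, the maximum cardinality of an $S$-$T$-connector is the same in $D$ and in $D'$, so by Menger's theorem the minimum cardinality of an $S$-$T$-separator agrees as well, giving $\lambda_{(X,D,Y)}(S,T)=\lambda_{(X,D',Y)}(S,T)$ for all admissible pairs $(S,T)$. There is no genuine obstacle here; the only point requiring care is checking vertex-disjointness after inserting or deleting $v$, which is immediate from $v\notin V$ and from the fact that $(u,v),(v,w)$ are the only arcs of $D'$ incident with $v$.
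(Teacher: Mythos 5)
Your proof is correct, but it takes the dual route compared to the paper. You construct a cardinality-preserving bijection between $S$-$T$-connectors of $D$ and of $D'$ (splitting and contracting the subdivided arc inside the at most one path of a routing that traverses $a$), and then invoke Menger's theorem (Corollary~\ref{cor:MengerA}) to pass from equality of maximum connector sizes to equality of minimum separator sizes. The paper instead sets up the bijection at the level of individual paths, $\phi\colon \Pbf(D;x,y)\maparrow\Pbf(D';x,y)$, and argues directly about separators: every $S$-$T$-separator of $D$ is one of $D'$ because $|p|\subseteq|\phi(p)|$, and every separator of $D'$ yields one of $D$ of no larger cardinality after replacing $v$ by $u$ if necessary — so no appeal to Menger is needed. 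Your argument buys a cleaner "obvious bijection of routings" picture at the cost of importing strong duality; the paper's buys self-containedness at the cost of the small case analysis on whether $v$ lies in the separator. Two points you handle correctly but should make explicit: at most one path of a routing can traverse $a=(u,w)$ (two such paths would share $u$), and in the reverse direction $v$ cannot be an endpoint of any path of an $S$-$T$-connector because $S\cup T\subseteq X\cup Y\subseteq V$ and $v\notin V$, which is what forces the sub-walk $u,v,w$.
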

\begin{proof}\PRFR{Mar 7th} Clearly, $v\notin X\cup Y \subseteq V$.
	The statement of the lemma follows from the fact that for all $x\in X$ and $y\in Y$ there is an obvious bijection
	\[ \phi \colon \Pbf(D;x,y) \maparrow \Pbf(D';x,y), \, p\mapsto \begin{cases}[r]
				p & \quad \text{if~}a\notin \left| p \right|_A,\\
				qvr & \quad \text{otherwise,}
			\end{cases}.\]
	where $q = (p_1,p_2,\ldots, p_j)$ and $r = (p_j,p_{j+1},\ldots, p_n)$ for
	$p = (p_i)_{i=1}^n$ and $j\in \SET{1,2,\ldots,n}$ such that $p_j = u$, and consequently, $p_{j+1} = w$.
	Let $X'\subseteq X$ and $Y'\subseteq Y$. The map $\phi$ yields that
	 every $X'$-$Y'$-separator in $D$ is also an $X'$-$Y'$-separator in $D'$, as well as
	every $X'$-$Y'$-separator $S$ in $D'$ with $v\notin S$ is an $X'$-$Y'$-separator in $D$.
	Furthermore, if $S$ is an $X'$-$Y'$-separator in $D'$ with $v\in S$, then $S\BSET{v}\cup\SET{u}$
	is an $X'$-$Y'$-separator in $D$ of the same or less cardinality. Therefore $\lambda_{(X,D,Y)} = \lambda_{(X,D',Y)}$,
	since the values of those maps only depend on the cardinality of their respective minimal separators.
\end{proof}

\begin{lemma}\label{lem:acyclicToCascade}\PRFR{Mar 7th}
	Let $D=(V,A)$ be an acyclic digraph, and let $X,Y\subseteq V$.
	Then there is a cascade digraph $D'$ such that
	\[ \lambda_{(X,D,Y)} = \lambda_{(X,D',Y)}.\]
\end{lemma}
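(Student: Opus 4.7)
The plan is to exploit acyclicity to obtain a topological level function on $V$, and then use Lemma~\ref{lem:DblackBoxArcSubdivision} to subdivide every arc whose endpoints lie on non-adjacent levels, so that the resulting digraph becomes a cascade while preserving the black box. No deep argument is needed beyond combining these two ingredients.

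First, since $D=(V,A)$ is acyclic, a standard topological sorting argument produces a map $\ell\colon V\maparrow \SET{1,2,\ldots,k}$ with $k=\left|V\right|$ such that $\ell(u) < \ell(w)$ holds for every arc $(u,w)\in A$. This map partitions $V$ into the (possibly empty) level sets $V_j = \ell^{-1}(j)$ for $j\in \SET{1,2,\ldots,k}$. If every arc satisfied $\ell(w) = \ell(u) + 1$, we would already have a cascade digraph with partition $V_1 \disunion V_2 \disunion \cdots \disunion V_k$, and we would be done by taking $D' = D$.

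To handle the remaining arcs, I would iteratively apply the following step. Pick any arc $a=(u,w)\in A$ with $d = \ell(w) - \ell(u) > 1$. Choose a fresh vertex $v\notin V$, and subdivide $a$ as in Lemma~\ref{lem:DblackBoxArcSubdivision}, replacing $a$ by the two arcs $(u,v)$ and $(v,w)$. Extend $\ell$ by setting $\ell(v) = \ell(u) + 1$; then the new arc $(u,v)$ has level difference~$1$, and the arc $(v,w)$ has level difference $d-1$. Lemma~\ref{lem:DblackBoxArcSubdivision} guarantees that this operation does not alter the black box, i.e.\ $\lambda_{(X,D,Y)}$ is preserved. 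Since each subdivision strictly decreases the total sum $\sum_{(u,w)\in A}\left( \ell(w) - \ell(u) \right)$ by $1$ and this sum is bounded below by $\left|A\right|$, the process terminates after finitely many steps with a digraph $D' = (V',A')$ equipped with a level function $\ell'\colon V'\maparrow \SET{1,2,\ldots,k}$ such that $\ell'(w) - \ell'(u) = 1$ holds for every arc $(u,w)\in A'$.

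Setting $V'_j = \left(\ell'\right)^{-1}(j)$ for $j\in\SET{1,2,\ldots,k}$ yields a partition $V'_1 \disunion V'_2 \disunion \cdots \disunion V'_k = V'$ with $A' \subseteq \bigcup_{i=1}^{k-1} \left( V'_i \times V'_{i+1} \right)$, so $D'$ is a cascade digraph in the sense of Definition~\ref{def:cascadeDigraph}. Since $X\cup Y \subseteq V \subseteq V'$ and no subdivision step modifies the endpoints of any arc or removes any vertex of $V$, repeated application of Lemma~\ref{lem:DblackBoxArcSubdivision} gives $\lambda_{(X,D,Y)} = \lambda_{(X,D',Y)}$, as desired. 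The only subtlety, and the nearest thing to an obstacle, is the bookkeeping that each freshly introduced subdivision vertex must be distinct from all previously used names; this is easily ensured by choosing them from any infinite reservoir disjoint from~$V$.
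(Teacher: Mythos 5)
Your argument follows essentially the same route as the paper: fix a topological order on $V$ and subdivide arcs via Lemma~\ref{lem:DblackBoxArcSubdivision} until every arc joins consecutive levels. The paper performs all subdivisions at once, inserting into each arc $(u,v)$ exactly $\left|\SET{x\in V~\middle|~ u\prec x\prec v}\right|$ new vertices, whereas you subdivide one arc at a time; both are fine, and for the lemma as stated you are also right to skip the paper's preliminary normalisation making $X$ sources and $Y$ sinks (that is only needed for Corollary~\ref{cor:acyclicDigraphIsCascade}, where one additionally wants $X=V'_1$ and $Y=V'_k$).

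One small slip in your termination argument: the quantity $\sum_{(u,w)\in A}\left(\ell(w)-\ell(u)\right)$ does \emph{not} decrease under a subdivision step, since an arc of level difference $d$ is replaced by two arcs of differences $1$ and $d-1$, which sum to $d$ again. The correct decreasing potential is the total excess $\sum_{(u,w)\in A}\left(\ell(w)-\ell(u)-1\right)$, a nonnegative integer that drops by exactly $1$ per step; with that substitution the proof is complete.
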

\begin{proof}\PRFR{Mar 7th}
	Without loss of generality we may assume that $X\cap Y =\emptyset$, since otherwise we could introduce a copy $v'$
	for every vertex $v\in X\cap V$ and add a single arc leaving $v$ and entering $v'$ to $D$, and then
	continue with $Y' = Y\BS X \cup \SET{v'~\middle|~v\in X\cap Y}$. Using similar constructions, 
	we may also assume without loss of generality, that $X$ consists
	of sources of $D$ and $Y$ consists of sinks of $D$, as well as that $D$ has no sources and no sinks in $V\BS\left( X\cup Y \right)$.
	Possibly renaming elements from $V$, we may further assume that $V\cap \left( A\times \N \right) = \emptyset$.
	Since $D$ is acyclic, there is a strict linear order\footnote{That is a binary relation, which is irreflexive, 
		antisymmetric, and transitive.} $\prec$ on $V$ such that $u \prec v$ holds for all $(u,v)\in A$.
	Let $D' = (V', A')$ be the digraph where
	 \begin{align*} V' = & V \disunion \SET{((u,v),i) \in A\times \N ~\middle|~ \vphantom{A^A}1 \leq i \leq \left| \SET{x\in V~\middle|~ u \prec x \prec v} \right| } \text{~and}\\
	 A' =& \hphantom{\cup}\,\,\SET{(u,v)\in A ~\middle|~\vphantom{A^A} \SET{x\in V~\middle|~ u \prec x \prec v} = \emptyset} \\
	 & \cup \SET{(u,((u,v),1)) ~\middle|~\vphantom{A^A}(u,v)\in A,\, \SET{x\in V~\middle|~ u \prec x \prec v} \not= \emptyset}\\
	 & \cup \SET{(((u,v),k),v) ~\middle|~\vphantom{A^A}(u,v)\in A,\, k = \left| \SET{x\in V~\middle|~ u \prec x \prec v}  \right| \not= 0}\\
	 & \cup \SET{(((u,v),k),((u,v),k+1)) ~\middle|~\vphantom{A^A}(u,v)\in A, k\in \N,\, 1 \leq k < \left| \SET{x\in V~\middle|~ u \prec x \prec v}  \right|}.
	 \end{align*}
	In words, every arc $(u,v)\in A$ is subdivided 
	by $k$ new vertices, where $k$ equals the number of vertices that the arc $(u,v)$ skips with
	respect to the strict linear order $\prec$ on $V$. For instance, if $(u,v)\in A$ and $u \prec x \prec y \prec v$ is a maximal chain
	connecting $u$ with $v$ in $\prec$, then the arc $(u,v)$ is subdivided by the new vertices $((u,v),1)$ and $((u,v),2)$.
    Repeated application of Lemma~\ref{lem:DblackBoxArcSubdivision} yields that
	\[ \lambda_{(X,D,Y)} = \lambda_{(X,D',Y)} .\]
	We define the map 
	\begin{align*} \phi\colon V' \maparrow &\SET{\vphantom{A^A}1,2,\ldots,\left| V\BS\left( X\cup Y \right) \right|+1},
	\\v'\mapsto &\begin{cases}[r]
					1 & \quad \text{if~}v'\in X,\\
					1 + \left| \SET{u\in V~\middle|~ u \prec v'} \right| & \quad \text{if~}v'\in V\BS X,\\
					1 + i + \left| \SET{x\in V~\middle|~ x \prec u} \right| & \quad \text{if~}v'=((u,v),i)\in A\times \N.\\
				\end{cases}
	\end{align*}
	Let $k = \left| V\BS\left( X\cup Y \right) \right|+1$, then there is a partition $V'_1,V'_2,\ldots,V'_k$ of $V'$
	with \[ V'_i = \SET{v'\in V'~\middle|~ \phi(v') = i} \] for all $i\in\SET{1,2,\ldots,k}$ that has the property that
	\[ A' \subseteq \bigcup_{i=1}^{k-1}\left(  V'_i \times V'_{i+1}  \right).\]
	Therefore, $D'$ is a cascade digraph where the set $X = V_1$ and the set $Y = V_k$.
\end{proof}

\needspace{6\baselineskip}
\begin{example}\PRFR{Mar 7th}
	The construction from Lemma~\ref{lem:acyclicToCascade} applied to
	\begin{center}
	\includegraphics{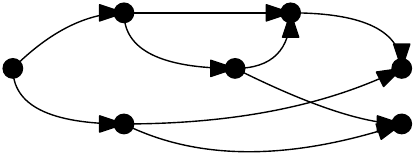}
	\end{center}
	yields
	\begin{center}
	\includegraphics{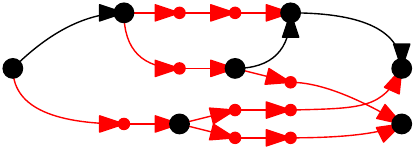},
	\end{center}
	where the vertices and arcs that do not belong to the original digraph are depicted red.
\end{example}

\needspace{4\baselineskip}
\begin{corollary}\label{cor:acyclicDigraphIsCascade}\PRFR{Mar 7th} Let $D=(V,A)$ be an acyclic digraph, $E,T\subseteq V$. Then there is a cascade digraph $D'=(V',A')$
with a partition $V'_1 \disunion V'_2\disunion\cdots\disunion V'_k = V'$ such that $A'\subseteq \bigcup_{i=1}^{k-1}\left(  V'_i\times V'_{i+1}  \right)$
and such that
	\[\Gamma(D,T,E) = \Gamma(D',V'_k, V'_1).\]
Every gammoid that can be represented using an acyclic digraph is a cascade.
\end{corollary}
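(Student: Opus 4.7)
The plan is to lift Lemma~\ref{lem:acyclicToCascade} from the level of black boxes to the level of gammoid representations. Let $(D, T, E)$ be an acyclic representation of a gammoid $M$. The goal is to produce a cascade digraph $D'$ with a layer partition $V'_1 \disunion \cdots \disunion V'_k$ such that $V'_1 = E$, $V'_k = T$, and $\Gamma(D', V'_k, V'_1) = M$.

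First I would normalize $(D, T, E)$ to an acyclic representation $(D_0, T, E)$ of the same gammoid in which $E$ is precisely the set of sources of $D_0$ and $T$ is precisely the set of its sinks. This uses the copy tricks that justify the WLOG assumptions in the proof of Lemma~\ref{lem:acyclicToCascade}: if $e \in E$ is not a source, rename the internal occurrence of $e$ by a fresh symbol $\tilde e$ (inheriting all of $e$'s arcs) and reintroduce $e$ as a fresh source with the single new arc $(e, \tilde e)$; dually for $t \in T$ that is not a sink; handle the case $e \in E \cap T$ by the same trick; and delete every source or sink lying in $V \BS (E \cup T)$, since such vertices are never traversed by any path that both starts in $E$ and ends in $T$. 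Each operation preserves acyclicity and, by the evident lift/restrict correspondence on routings, preserves $\Gamma(\cdot, T, E)$ literally.

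Second, I would apply the arc-subdivision construction from the proof of Lemma~\ref{lem:acyclicToCascade} to $D_0$: choose a strict linear order $\prec$ on $V(D_0)$ compatible with the arcs of $D_0$, and subdivide each arc $(u, v)$ by one fresh vertex per element of $\SET{x \in V(D_0) \mid u \prec x \prec v}$ at the intermediate levels dictated by the level function $\phi$ defined there. This yields a cascade digraph $D'$ with a partition $V'_1 \disunion V'_2 \disunion \cdots \disunion V'_k$ satisfying $V'_1 = E$ and $V'_k = T$, as established in that proof. The equality $\Gamma(D_0, T, E) = \Gamma(D', V'_k, V'_1)$ then follows because arc subdivision induces a bijection between routings $X \routesto T$ in $D_0$ and routings $X \routesto T$ in $D'$, obtained by splicing in (or deleting) the subdivision vertices; vertex-disjointness is preserved because each subdivision vertex is fresh and lies on exactly one subdivided path. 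Combined with the first step, this gives the corollary, and its final sentence is then immediate.

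The main obstacle is the bookkeeping in the normalization step: one must verify that the WLOG reductions literally preserve the ground set $E$ and the target set $T$, rather than merely preserving them up to renaming, and that acyclicity survives the introduction of the fresh source and sink copies. Once the normalization is in place, the cascade construction is a direct translation of Lemma~\ref{lem:acyclicToCascade} into the gammoid setting.
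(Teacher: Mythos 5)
Your proof is correct and follows the paper's intended route exactly: the paper disposes of this corollary with the single line ``Direct consequence of the proof of Lemma~\ref{lem:acyclicToCascade},'' and what you have written is precisely the elaboration of that remark -- normalizing so that $E$ and $T$ are literally the sources and sinks, then observing that the arc-subdivision construction induces a vertex-disjointness-preserving bijection on routings rather than merely preserving separator cardinalities. Your attention to keeping $E$ and $T$ fixed as sets (rather than up to renaming) addresses the one point the paper glosses over.
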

\begin{proof}
	Direct consequence of the proof of Lemma~\ref{lem:acyclicToCascade}.
\end{proof}

\begin{remark}\label{rem:weNeedCycles}\PRFR{Mar 7th}
	Corollary~\ref{cor:acyclicDigraphIsCascade}, together with Proposition~\ref{prop:cascadesNonDual} stating that cascades are not closed under duality, implies that cycle walks are inevitable in representations of some gammoids, since the class of gammoids is closed under duality (Lemma~\ref{lem:dualityrespectingrepresentation}).
\end{remark}

\clearpage

\section{Strict Gammoids}\label{sec:StrictGammoids}

\begin{definition}\PRFR{Jan 22nd}
	Let $M=(E,\Ical)$ be a matroid. $M$ is a \deftext{strict gammoid} if there is a digraph $D=(V,A)$ and a set $T\subseteq V$ such that\label{n:GDTV} \( M = \Gamma(D,T,V) \).\footnote{Note that this implies that $V=E$, so $D=(E,A)$ is a digraph where the ground set of $M$ is the vertex set of $D$.}
\end{definition}

\PRFR{Jan 22nd}
\noindent It is clear from this definition, that every gammoid is a deletion-minor of a strict gammoid, as $\Gamma(D,T,E)$ for $D=(V,A)$ is a deletion-minor of $\Gamma(D,T,V)$.
Given a strict gammoid representation $(D,T,V)$, then $T\subseteq V$ is a base of $M$ and $\rk_M(V) = \left| T \right|$.
%
The following characterization of the rank function of a strict gammoid was given by C.~McDiarmid in \cite{McD72}, where it is used in order to proof that gammoids are indeed matroids.

\begin{theorem}\PRFR{Jan 22nd}
	Let $D=(V,A)$ and $T\subseteq V$. Let $M=\Gamma(D,T,V)$ be the strict gammoid represented by $(D,T,V)$. Then for $X\subseteq V$,
	\[ \rk(X) = \min_{U\subseteq V\BS T} \left(  \left| X\BS \Dcl{U} \right| + \left| \partial U \right|  \right). \]
\end{theorem}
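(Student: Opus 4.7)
My plan is to reduce the claim to Menger's theorem via the identification of $\rk_M(X)$ with the size of a maximum $X$-$T$-connector. First I would invoke Lemma~\ref{lem:rkEqMaxConnector} to write $\rk_M(X)$ as the cardinality of a maximal $X$-$T$-connector in $D$, and then apply Corollary~\ref{cor:MengerA} to conclude that $\rk_M(X)$ equals the minimum cardinality $k$ of an $X$-$T$-separator in $D$. Hence it suffices to prove
\[ k \;=\; \min_{U\subseteq V\BS T}\bigl(|X\BS \Dcl{U}| + |\partial U|\bigr). \]

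For the easy inequality ``$k\leq$,'' I would show that for \emph{every} $U\subseteq V\BS T$ the set $C_U := (X\BS \Dcl{U}) \cup \partial U$ is an $X$-$T$-separator. Take any path $p\in\Pbf(D)$ with $p_1\in X$ and $p_{-1}\in T$. If $p_1\notin \Dcl{U}$, then $p_1\in C_U$. Otherwise $p_1\in U\cup \partial U$; if $p_1\in\partial U$ we are done, and if $p_1\in U$ then since $T\cap U=\emptyset$ the path must leave $U$, and the first vertex of $p$ outside $U$ lies in $\partial U\subseteq C_U$. Thus $|C_U|\leq |X\BS\Dcl{U}|+|\partial U|$ yields the inequality.

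The main work is the reverse inequality; here I would fix a minimum $X$-$T$-separator $S$ and construct a suitable $U$. Define
\[ R := \SET{v\in V \mid \exists \text{ walk } v=v_0v_1\ldots v_\ell \text{ in } D \text{ with } v_\ell \in T \text{ and } v_1,\ldots,v_\ell\notin S}, \]
and set $U := V\BS(T\cup R)$, so automatically $U\subseteq V\BS T$. The crucial observation is that allowing $v=v_0$ to lie in $S$ (while requiring $v_1,\ldots,v_\ell\notin S$) is what makes the shadow computation tight; this is the step I expect to be trickiest to state cleanly. From this definition I would prove two containments: $\partial U \subseteq S$ and $X\BS \Dcl{U}\subseteq S$. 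For the first, a vertex $w\in\partial U$ is reached by an arc $(u,w)$ with $u\in U$ and $w\in T\cup R$; if $w\notin S$ then prepending $u$ to either the trivial walk $w$ (when $w\in T$) or to the witnessing walk for $w\in R$ shows $u\in R$, contradicting $u\in U$. For the second, if $x\in X$ and $x\notin\Dcl{U}$ then $x\notin U$, so $x\in T\cup R$, and the same argument exhibits a walk from $x$ to $T$ whose internal and terminal vertices avoid $S$; the separator property of $S$ then forces $x\in S$.

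Finally, because $\partial U\subseteq \Dcl{U}$ the sets $X\BS\Dcl{U}$ and $\partial U$ are disjoint, so
\[ |X\BS\Dcl{U}| + |\partial U| \;=\; |(X\BS\Dcl{U})\cup\partial U| \;\leq\; |S| \;=\; k, \]
which together with the easy inequality completes the proof. The only subtlety I foresee is bookkeeping around the trivial-walk case $x\in X\cap T$ in the separator condition, and around whether $v_0\in S$ is permitted in the definition of $R$; getting that convention right is exactly what forces the containment $\partial U\subseteq S$ to work.
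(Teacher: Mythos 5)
Your proof is correct, and its overall architecture matches the paper's: both directions are obtained by identifying $\rk(X)$ with the minimum size of an $X$-$T$-separator via Menger, showing that every $U\subseteq V\BS T$ yields the separator $(X\BS\Dcl{U})\cup\partial U$, and conversely extracting a suitable $U$ from a minimum separator $S$. The one genuine difference is the witness in the hard direction: the paper takes $U$ to be the set of vertices \emph{forward-reachable from $X$} by paths avoiding $S$, and then proves the exact identities $\partial U=S\cap\partial U$ and $X\BS\Dcl{U}=S\BS\Dcl{U}$, so that $|S|=|X\BS\Dcl{U}|+|\partial U|$ holds with equality; you instead take $U$ to be the complement of the set of vertices from which $T$ is \emph{backward-reachable} avoiding $S$ (except possibly at the start vertex), and only prove the containments $\partial U\subseteq S$ and $X\BS\Dcl{U}\subseteq S$ together with disjointness, which yields the inequality $\le|S|$ — and that is all that is needed once the easy direction is in place. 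Your version is slightly leaner for exactly this reason, and your convention of allowing $v_0\in S$ in the definition of the reachable set is indeed the point that makes both containments go through; the trivial-walk case $x\in X\cap T$ is also handled correctly.
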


\begin{proof}\PRFR{Jan 22nd}
	Let $X\subseteq V$ and $U\subseteq V\BS T$.
	Trivially, $\partial U$ separates $X\cap \Dcl{U}$ from $T$ 
	 in $D$.
	Let $S = \partial U \cup \left( X\BS \Dcl{U} \right)$, then $S$ is an
	$X$-$T$-separator in $D$: let $x\in X \BS S$, then $x\in \Dcl{U}$. Since $U\cap T=\emptyset$, every path $p\in\Pbf(D)$ with $p_1 = x$ and $p_{-1}\in T$ must leave the
	 set $U$ at some point. Consequently, it must visit a vertex from $\partial U$, 
	 so $\left| p \right|\cap S\not=\emptyset$. It follows that
	 \[ \rk(X) \leq \left| S \right| = \left| \partial U \cup \left( X\BS \Dcl{U} \right) \right|
	 \leq \left| X\BS \Dcl{U} \right| + \left| \partial U \right| ,\]
	 i.e. that the right-hand side of the equation in the lemma is an upper bound for the left-hand side of that equation.

	 \needspace{3\baselineskip}
   
	 \noindent
	 Now let $S\subseteq V$ be an $X$-$T$-separator in $D$ 
	 with $\left| S \right| = \rk(X)$, its existence is guaranteed by 
	 Menger's Theorem~\ref{thm:MengerGoering}. 
\begin{figure}[!t]
	\begin{center}
	\includegraphics[scale=1.1]{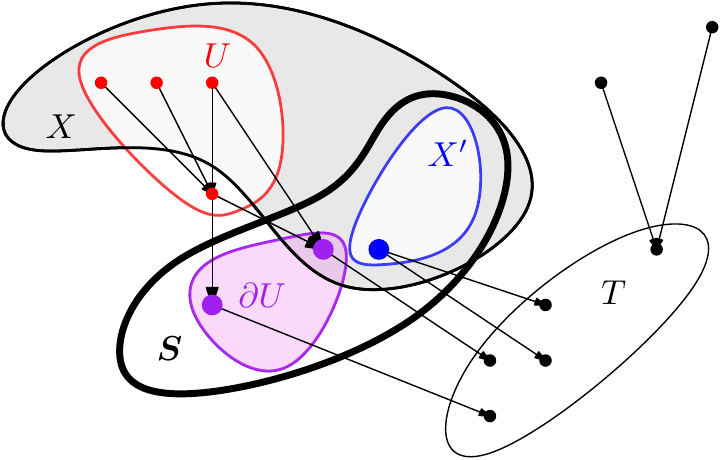}
	\end{center}
	\caption{Construction of $U$ from an $X$-$T$-separator $S$.}
\end{figure}	 
	 Let
	 \[ U = \SET{p_{-1} \in V ~\middle|~\vphantom{A^A}  p\in \Pbf(D)\colon\, p_1\in X \txtand \left| p \right|\cap S=\emptyset}\]
	 denote the set of vertices, that can be reached from any vertex $x\in X$ by a path not visiting $S$. 
	 Clearly, $U\subseteq V\BS T$ holds because $S$ is an $X$-$T$-separator in $D$, and the outer margin
	 $\partial U$ is a subset of $S$ whereas $S\cap U = \emptyset$ by construction. Let $S' = S\cap \partial U$ and let
	 $X' = S\BS \Dcl{U}$. $X'$ is indeed a subset of $X$: Let $s\in S\BS X$,
	 since $S$ is a minimal $X$-$T$-separator, every maximal $X$-$T$-connector $R$ has
	 a path $p\in R$ with $S\cap \left| p \right|=\SET{s}$.
	 We obtain that $s = p_k$ for some $k\in\N$,
	 and since $s\notin X$ but $p_1\in X$, we have $k > 1$. Therefore $p_{k-1}\in U$ and
	 then $s\in \partial U \subseteq \Dcl{U}$, so $s\notin X'$ --- this establishes $X'\subseteq X$. This yields $X' = X\cap X' = X\cap \left( S \BS \Dcl{U} \right) = \left( X\cap S \right)\BS \Dcl{U}$, and since $X\BS S\subseteq U \subseteq \Dcl{U}$, we have $X' = X\BS \Dcl{U}$.
	 Since $S\cap \Dcl{U} = S\cap\left( U\cup \partial U \right) = \left( S\cap U \right)\cup \left( S\cap\partial U \right) = S'$, we have
	 \( \left| S \right| = \left| X' \right| + \left| S' \right|
	 	 \).
	 Now assume that $S'\subsetneq \partial U$, then there is a vertex $u\in\partial U$
	 with $u\notin S$, such that there is a path $p\in \Pbf(D)$ with $p_1\in X$, $\left| p \right|\cap S =\emptyset$, and there is an arc $(p_{-1},u)\in A$. But then we obtain that $u\in U$, and therefore $u\notin \partial U$: 
	 Since $pu\in \Pbf(D)$ is a path with $\left| pu \right| = \left| p \right|\cup\SET{u}$,
	 and since $u\notin S$, we have $\left| pu \right|\cap S = \emptyset$, and so $u$ qualifies as a member of $U$.
	 Therefore $u\notin S$ cannot be the case and so $S' = \partial U$ holds.
	 Thus we obtain
	 \[ \rk(X) = \left| S \right| = \left| X' \right| + \left| S' \right| = \left| X\BS \Dcl{U}  \right| + \left| \partial U \right| \]
	 and therefore, on the right-hand side of the equation in the lemma, the minimum expression ranges over an upper bound that is equal to
	 $\rk(X)$, 
	 therefore both sides of the equation must be equal.
\end{proof}

\PRFR{Jan 22nd}
\noindent
J.H.~Mason gives a necessary and sufficient condition for when a matroid $M$ is a strict
gammoid in \cite{M72}. In order to present the proof, we need the Lemma~2.1 
from \cite{M72}. Here, we give a slightly more detailed version of J.H.~Mason's proof.
But first, we want to introduce the following notion of a special $X$-$T$-separator.

\begin{definition}\label{def:rightmost-separator}\PRFR{Jan 22nd}
	Let $D=(V,A)$ be a digraph, and let $X\subseteq V$ and $T\subseteq V$ be sets of vertices.
	The \deftext[barrier between X and T in D@barrier between $X$ and $T$ in $D$]{barrier between $\bm X$ and $\bm T$ in $\bm D$}
	is defined to be the set\label{n:barrier}
	\[ \delta_D(X,T) = \SET{x\in X~\middle|~\vphantom{A^A} \left( \partial_D \SET{x}\right) \cap \left( V\BS X \right) \not=\emptyset} \,\cup\, \left( X\cap T \right). \qedhere\]
\end{definition}

\begin{lemma}\label{lem:barrier}\PRFR{Jan 22nd}
	Let $D=(V,A)$ be a digraph,  and let $X\subseteq V$ and $T\subseteq V$ be sets of vertices.
	Then the barrier $\delta_D(X,T)$ is an $X$-$T$-separator in $D$.
\end{lemma}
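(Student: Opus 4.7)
The plan is to argue directly from the definitions by a simple case analysis on an arbitrary path from $X$ to $T$. Fix a path $p = (p_i)_{i=1}^n \in \Pbf(D)$ with $p_1 \in X$ and $p_{-1} = p_n \in T$; I need to exhibit an index $j$ with $p_j \in \delta_D(X,T)$.

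First, I would dispose of the degenerate case: if $p_1 \in T$, then $p_1 \in X \cap T \subseteq \delta_D(X,T)$ by the second component of the union in Definition~\ref{def:rightmost-separator}, so $|p| \cap \delta_D(X,T) \neq \emptyset$ and there is nothing more to show. So I may assume $p_1 \notin T$, which in particular forces $n \geq 2$.

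Next, I would consider the largest index $j \in \SET{1,2,\ldots,n}$ with the property that $p_1,p_2,\ldots,p_j \in X$; such a $j$ exists because $p_1 \in X$. There are two sub-cases. If $j = n$, then $p_n \in X \cap T$, and again $p_n \in \delta_D(X,T)$ by the second component. Otherwise $j < n$, so by the choice of $j$ we have $p_{j+1} \in V \BS X$, while $p_j \in X$ and the arc $(p_j,p_{j+1}) \in A$ belongs to $D$. Consequently $p_{j+1} \in \partial_D \SET{p_j}$, so $\left( \partial_D \SET{p_j} \right) \cap \left( V \BS X \right)$ contains $p_{j+1}$ and is therefore non-empty, which places $p_j$ in the first component of $\delta_D(X,T)$.

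In every case we found some vertex of $p$ in $\delta_D(X,T)$, so $|p| \cap \delta_D(X,T) \neq \emptyset$, and since $p$ was an arbitrary path from $X$ to $T$, the set $\delta_D(X,T)$ is an $X$-$T$-separator in $D$. There is no genuine obstacle here: the argument is a one-step walk-along-the-path exit argument, and the only bookkeeping is ensuring that the degenerate case $p_1 \in T$ is handled by the extra term $X \cap T$ in the definition of $\delta_D(X,T)$, which explains precisely why that term is part of the definition.
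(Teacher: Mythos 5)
Your proof is correct and follows essentially the same route as the paper's: both arguments locate an exit point of the path from $X$ (the paper uses the last index in $X$, you use the end of the initial segment in $X$, which makes no difference) and handle paths terminating in $X\cap T$ via the second component of the barrier. No gaps.
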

\begin{proof}\PRFR{Jan 22nd}
 	Let $R$ be an $X$-$T$-connector, and let 
	$p=(p_i)_{i=1}^k \in R$ be a path that does not end in a vertex from $X\cap T$. Then 
	there is a maximal integer $1\leq j < k$ such that $p_j\in X$. Then $p_{j+1}\notin X$, 
	yet $(p_j,p_{j+1})\in A$, thus $p_{j+1}\in \partial\SET{p_j}$ and so $p_j\in \delta_D(X,T)$.
	 If otherwise $p\in R$ is a path that ends in $X\cap T$ we clearly have $p_{-1}\in \delta_D(X,T)$ --- thus $\delta_D(X,T)$
	  is an $X$-$T$-separator.
\end{proof}

\begin{lemma}\label{lem:minFTSeparatorInFStrictGammoid}\PRFR{Jan 22nd}
	Let $D=(V,A)$ be a digraph, $T\subseteq V$, $F\in \Fcal(\Gamma(D,T,V))$, and let $S\subseteq V$ be an $F$-$T$-separator in $D$
	with minimal cardinality. Then $S\subseteq F$.
\end{lemma}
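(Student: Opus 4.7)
The plan is to argue by contradiction, assuming some $s \in S \setminus F$ and deriving a contradiction from the fact that $F$ is a flat together with Menger-type results (Theorem~\ref{thm:MengerGoering} and Corollary~\ref{cor:Menger}).

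First I would set $k = \rk_M(F)$ where $M = \Gamma(D,T,V)$. By Lemma~\ref{lem:rkEqMaxConnector}, $k$ equals the maximum size of an $F$-$T$-connector in $D$, and by Menger's Theorem~\ref{thm:MengerGoering} (or rather Corollary~\ref{cor:MengerA}) $k$ equals the minimum size of an $F$-$T$-separator in $D$, so $|S| = k$.

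Now suppose, for contradiction, that there exists $s \in S \setminus F$. Since $F \in \Fcal(M)$ and $s \notin F$, the defining property of a flat gives
\[ \rk_M(F \cup \{s\}) = \rk_M(F) + 1 = k+1. \]
By Lemma~\ref{lem:augmentation} applied to the independent $\emptyset \subseteq F\cup\{s\}$, there is an independent set $I \subseteq F \cup \{s\}$ with $|I| = k+1$; since $\rk_M(F) = k$ we cannot have $I \subseteq F$, so $s \in I$. Write $I_0 = I \setminus \{s\} \subseteq F$, so $|I_0| = k$. Because $I$ is independent in the strict gammoid $M$, there is a routing $R \colon I \routesto T$ in $D$; let $q \in R$ be the (unique) path with $q_1 = s$, and let $R_0 = R \setminus \{q\}$, which is a routing from $I_0$ onto some $k$-subset of $T$.

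The key step, and the main obstacle, is to recognize $R_0$ as a \emph{maximum} $F$-$T$-connector, which it is: $R_0$ is an $F$-$T$-connector because $I_0 \subseteq F$, and $|R_0| = k$ equals the maximum possible connector size in $D$ from $F$ to $T$. Since $S$ is a minimum $F$-$T$-separator of size $k$, Corollary~\ref{cor:Menger} applies: every vertex of $S$ is visited by some path of $R_0$. In particular the assumed vertex $s \in S$ lies in $|p|$ for some $p \in R_0$. But then $s \in |p| \cap |q|$ with $p \neq q$, contradicting the pairwise vertex-disjointness of the routing $R$. Hence no such $s$ exists and $S \subseteq F$.
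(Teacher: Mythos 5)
Your proof is correct and is essentially the paper's argument run in the contrapositive: both hinge on Corollary~\ref{cor:Menger} applied to a maximum $F$-$T$-connector, observing that a path starting at a separator vertex must collide with that connector. The paper phrases this directly — a maximum $F$-$T$-connector from a base of $F$ cannot be extended by a path starting in $S$, so $\rk(F\cup S)=\rk(F)$ and $S\subseteq\cl(F)=F$ — whereas you assume $s\in S\setminus F$, use the flat property to manufacture the extending path, and derive the collision as a contradiction.
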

\begin{proof}\PRFR{Jan 22nd}
	Let $M= \Gamma(D,T,V)$,
	and let $R\colon B_F\routesto T$ be a maximal $F$-$T$-connector in $D$ where
	$B_F$ is a base of $F$ in $M$.
	Then every $s\in S$ is visited by a path $p\in R$ (Corollary~\ref{cor:Menger}), thus any path
	$q\in \Pbf(D)\BS R$ with $q_1\in S$ has the property that $\SET{s}\subseteq \left| q \right|\cap \left| p \right|$,
	so $R\cup\SET{q}$ can never be a routing in $D$.
	 Therefore $R$ cannot be extended by a path starting in $s$, so $R$ is also a maximal $F\cup S$-$T$-connector in $D$.
	  Therefore $\rk(F) = \rk(F\cup S)$, so $S\subseteq \cl(F) = F$.
\end{proof}

\needspace{6\baselineskip}

\begin{lemma}\label{lem:flatofstrictgammoidisstrictgammoid}\PRFR{Jan 22nd}
	Let $M=(E,\Ical)$ be a strict gammoid, and $F\in\Fcal(M)$ be a flat of $M$.
	Then the restriction $M\restrict F$ is a strict gammoid.
	Furthermore, if $D=(E,A)$ and $T\subseteq E$ with $M=\Gamma(D,T,E)$, then
	the barrier $\delta_D(F,T)$ is an $F$-$T$-separator of minimal cardinality in $D$.
\end{lemma}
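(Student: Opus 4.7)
The plan is to show in one stroke that $M\restrict F = \Gamma(D',T',F)$, where $D' = (F,A\cap (F\times F))$ is the sub-digraph of $D$ induced on $F$ and $T' = \delta_D(F,T)$ is the barrier. Both claims of the lemma follow immediately: the matroid equation witnesses that $M\restrict F$ is a strict gammoid, and because $T'\subseteq F$ the trivial paths show that $T'$ itself is an $F$-$T'$-separator of size $|T'|$ in $D'$, forcing $\rk_M(F) = \rk_{\Gamma(D',T',F)}(F) = |T'|$, which combined with the fact (Lemma~\ref{lem:barrier}) that $\delta_D(F,T)$ is an $F$-$T$-separator in $D$ makes it a minimum one.

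To establish the matroid equation it suffices, by Lemma~\ref{lem:rkEqMaxConnector} and Menger's Theorem~\ref{thm:MengerGoering}, to check for every $X\subseteq F$ that the minimum $X$-$T$-separator in $D$ and the minimum $X$-$T'$-separator in $D'$ have the same cardinality. For $\leq$, I would take a minimum $X$-$T'$-separator $S'$ in $D'$ and verify it remains an $X$-$T$-separator in $D$ by the following truncation trick: given an $X$-$T$-walk $w$ in $D$, let $j$ be the largest index with $w_1,\ldots,w_j\in F$; if $j$ is the length of $w$, then $w$ itself is an $X$-$(T\cap F)$-walk in $D'$ and meets $S'$, and otherwise $w_{j+1}\notin F$ so $w_j\in\delta_D(F,T)=T'$ and the prefix $w_1\ldots w_j$ is an $X$-$T'$-walk in $D'$ which meets $S'$.

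For the reverse inequality $\geq$, since $\cl_M(X)\subseteq F$ is itself a flat of $M$ (by Lemmas~\ref{lem:clFlips} and \ref{lem:flatsintersectinflat}), Lemma~\ref{lem:minFTSeparatorInFStrictGammoid} applied to $\cl_M(X)$ supplies a minimum $X$-$T$-separator $S\subseteq \cl_M(X)\subseteq F$ in $D$ of size $\rk_M(X)$. I would show $S$ is also an $X$-$T'$-separator in $D'$. An $X$-$T'$-walk $w'$ in $D'$ ending in $T\cap F$ is an $X$-$T$-walk in $D$, so it meets $S$. The delicate case is $w'$ ending at $y\in T'\setminus T$, where $y$ has an arc $(y,u)\in A$ with $u\notin F$: using that $F$ is a flat so that $\rk_M(F\cup\{u\}) = \rk_M(F)+1$, I would produce a path $q$ from $y$ to $T$ in $D$ witnessing that $\{y\}$ is independent in $M$, and then argue by a straightening-of-walks contradiction (Remark~\ref{rem:straighteningWalks}) that assuming $|w'|\cap S = \emptyset$ would produce a path from $y$ to $T$ avoiding $S$, whence $w'.q$ straightens to an $X$-$T$-path in $D$ avoiding $S$, contradicting that $S$ is an $X$-$T$-separator.

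The main obstacle will be the last subcase: carrying out the extension so that the avoidance of $S$ is transferred back to $w'$. The clean way to package this is through an augmentation argument built on Theorem~\ref{thm:augmentationCons} applied to a maximum $F$-$T$-connector $R$: the first-exit map $\phi\colon R\to T'$, $p\mapsto p_j$ where $j$ is minimal with $p_j\in T$ or $p_{j+1}\notin F$, is an injection, and by swapping in $y$ via base exchange on $F$ one obtains a new maximum connector that realises $y$ as a first-exit vertex, delivering the disjoint extension of $w'$ through $V\setminus F$ needed to drive the contradiction.
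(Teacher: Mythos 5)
Your skeleton is essentially the paper's: the same target identity $M\restrict F=\Gamma(D',\delta_D(F,T),F)$ with the same induced digraph and the same target set, the same truncation-at-first-exit device, and the same use of Lemma~\ref{lem:minFTSeparatorInFStrictGammoid} to place a minimum $X$-$T$-separator inside $F$; the paper merely phrases the two inequalities in terms of routings rather than separators, which is equivalent by Menger. Your ``$\leq$'' direction and the derivation of the barrier's minimality from the matroid identity are sound.

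The gap sits exactly in the subcase you flag as the main obstacle, and the mechanism you propose does not close it. For $y\in T'\setminus T$ with $(y,u)\in A$ and $u\notin F$, you must produce a continuation from $u$ to $T$ in $D$ that avoids $S$. Realising $y$ as a first-exit vertex of some maximum $F$-$T$-connector $R^*$ (via Theorem~\ref{thm:augmentationCons} and base exchange) does not give this: $S$ is a minimum $X$-$T$-separator, not a minimum $F$-$T$-separator, so Corollary~\ref{cor:Menger} says nothing about the paths of $R^*$ missing $S$, and the suffix of the relevant path of $R^*$ beyond $y$ may re-enter $F$ and pass through $S$. The property you actually need is stronger and cheaper to get: the continuation from $u$ can be chosen to avoid $F$ \emph{entirely}. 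Take a base $B_F$ of $F$; since $F=\cl_M(F)$ and $u\notin F$, the set $B_F\cup\SET{u}$ is independent, so there is a routing $R\colon B_F\cup\SET{u}\routesto T$ in $D$. Let $q$ be its path starting at $u$; it is vertex-disjoint from $R\BSET{q}$, which is a maximum $F$-$T$-connector. If $q$ visited some $f\in F$, replacing $q$ by its suffix starting at $f$ would yield an $F$-$T$-connector of cardinality $\rk_M(F)+1$, contradicting Lemma~\ref{lem:rkEqMaxConnector}. Hence $\left| q \right|\cap F=\emptyset$, so $q$ avoids $S\subseteq F$, and extending $w'$ by the arc $(y,u)$ and then by $q$ gives (after straightening) an $X$-$T$-path in $D$ that misses $S$ unless $w'$ already meets $S$. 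This disjointness-plus-maximality argument is precisely what the paper uses in both halves of its proof; without it, your base-exchange detour never transfers the avoidance of $S$ back to $w'$.
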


\begin{proof}\PRFR{Jan 22nd}
	Let $M=\Gamma(D,T,E)$ for suitable $D=(E,A)$ and $T\subseteq E$.
	\begin{figure}[t]
	\begin{center}
	\includegraphics[scale=1.2]{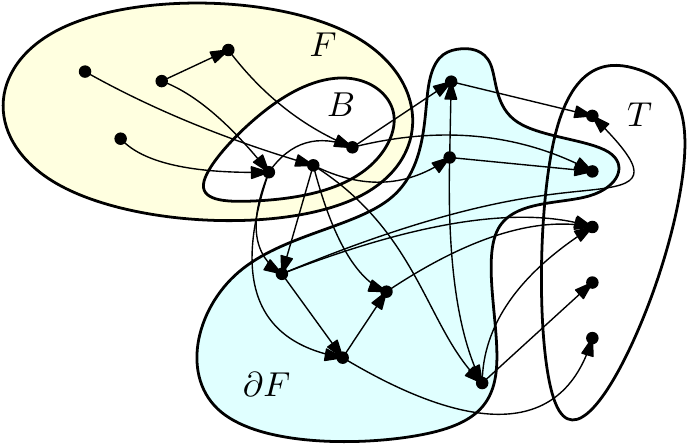}
	\end{center}
	\caption{Situation of the ``right-most'' $F$-$T$-separator $B$ in $D$.\label{fig:FTBD}}
	\end{figure}
	Now let
	\[ B = \delta_D(F,T) = \SET{f\in F~\middle|~ \vphantom{A^A}\left( \partial \SET{f} \right) \cap \left(E \BS F\right) \not= \emptyset} \cup \left( F\cap T \right) \]
	be the barrier between $F$ and $T$ in $D$ (Fig.~\ref{fig:FTBD}), i.e. the set
	which consists of those $f\in F$, that are either targets of the representation $(D,T,E)$, or that
	have an out-arc which leaves the flat $F$. Clearly, $B$ is an $F$-$T$-separator in $D$ (Lemma~\ref{lem:barrier}).
	We give an indirect argument that $B$ is a
	minimal $F$-$T$-separator in $D$:
	Assume that $B$ is not a minimal $F$-$T$-separator, then there is a set $S\subseteq F$, which is
	a minimal $F$-$T$-separator (Lemma~\ref{lem:minFTSeparatorInFStrictGammoid}), and there is an element $b\in B\BS S$, since $\left| B \right| > \left| S \right|$.
	 Clearly, $b\notin T$ since $F\cap T$ is a subset of every $F$-$T$-separator.
	  Further, there is an element $e\in E\BS F$ such that $(b,e)\in A$ according to the definition of $B$, 
	  and since $e\notin F=\cl_M(F)$, 
	  there is a maximal $F$-$T$-connector $R$ and a path $p\in \Pbf(D)$ with $p_1 = e$ 
	  and $p_{-1}\in T$, such that $R\cup\SET{p}$ is a routing in $D$. 
	  Thus the path $p$ does not visit any vertex that belongs to a minimal $F$-$T$-separator in $D$, 
	  and therefore the path $bp$ does not visit any 
	  vertex of $S$, too --- which contradicts the assumption that $S$ is an $F$-$T$-separator, 
	  thus $B$ must be a minimal $F$-$T$-separator.

	 \bSep \PRFR{Jan 22nd}
	Let $D'=\left( F,A\cap\left( F\times F \right) \right)$ be the restriction of $D$ to $F$, and let $M' = \Gamma(D',B,F)$ be the strict gammoid presented by the 
	restriction of $D$ and the target set $B$.
	Let $R$ be a routing from $X_0\subseteq F$ to $T$ in $D$,
	then every path $p=(p_i)_{i=1}^k \in R$ has a smallest integer $1\leq j(p) \leq k$,
	such that $p_{j(p)} \in B$. By construction of $B$, we have that $\SET{p_1,p_2,\ldots,p_{j(p)}} \subseteq F$. Thus $R$ induces a routing $R'=\SET{p_1 p_2 \ldots p_{j(p)} ~\middle|~p\in R}$ in $D'$ which routes $X_0$ to $B$. So $\rk_{M'}(X_0) \geq \rk_M(X_0)$.
	We give an indirect argument that the inequality $\rk_{M'}(X_0) \leq \rk_M(X_0)$ holds, too. 
	Let $S\subseteq E$ be a minimal $X_0$-$T$-separator in $D$, then we have $S\subseteq \cl_M(X_0) \subseteq F$ (Lemma~\ref{lem:minFTSeparatorInFStrictGammoid}). 
	Assume that there is a routing $R'$ from $X_0$ to $B$ in $D'$, such that $\left| R' \right| > \left| S \right|$. 
	Then there must be some $p\in R'$ such that $\left| p \right|\cap S = \emptyset$ and $p_{-1}\in B$. If $p_{-1}\in T$, then $p$ 
	is a contradiction to $S$ being an $X_0$-$T$-separator in $D$. If otherwise $p_{-1}\in B\BS T$, we have again the situation 
	where there is some $e\in E\BS F$ with $(p_{-1},e)\in A$, such that there is a path $q\in \Pbf(D)$ with $q_1=e$ and
	 $q_{-1}\in T$, that avoids every $F$-$T$-separator. So, consequently, $\left| q \right|\cap F=\emptyset$.
	The path $pq\in \Pbf(D)$ contradicts $S$ being an $X_0$-$T$-separator in $D$. 
	Therefore $\left| R' \right| \leq \left| S \right|$, and
	we just proved, that for any $X\subseteq F$ the equation $\rk_M(X) = \rk_{M'}(X)$ holds. 
	Thus $M\restrict F = \Gamma(D',B,F)$ is a strict gammoid.
\end{proof}

\needspace{4\baselineskip}

\begin{corollary}\label{cor:gammoidrestrictionsamerank}\PRFR{Jan 22nd}
	Let $M=(E,\Ical)$ be a gammoid. Then there is a strict gammoid $M'=(V,\Ical')$ such that
	$\rk_M(E) = \rk_{M'}(V)$ and $M = M'\restrict E$.
\end{corollary}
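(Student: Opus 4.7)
My plan is to take the obvious strict gammoid extending the given representation, and then verify the two required properties.

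First I would invoke Lemma~\ref{lem:rankequalsTcard} to obtain a digraph $D=(V,A)$ and a target set $T\subseteq V$ with $M = \Gamma(D,T,E)$ and $\left|T\right| = \rk_M(E)$. Define $M' = \Gamma(D,T,V)$; by construction this is a strict gammoid on the vertex set $V$.

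Next I would check $M = M'\restrict E$. By Definition~\ref{def:Mrestriction}, the independent sets of $M'\restrict E$ are exactly those $X\subseteq E$ that are independent in $M'$. But $X\subseteq V$ is independent in $M' = \Gamma(D,T,V)$ precisely when there is a routing $X\routesto T$ in $D$, which by Definition~\ref{def:gammoid} is exactly the condition for $X\subseteq E$ to be independent in $M = \Gamma(D,T,E)$. So the two matroids have the same family of independent sets.

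Finally I would compute the ranks. Using Lemma~\ref{lem:rkEqMaxConnector}, $\rk_{M'}(V)$ equals the maximal cardinality of a $V$-$T$-connector in $D$. On one hand, $T$ itself is an $T$-$T$-connector of size $\left|T\right|$ (take the trivial paths $\{t\in \Pbf(D)\mid t\in T\}$), so $\rk_{M'}(V)\geq \left|T\right|$. On the other hand, $T$ is itself a $V$-$T$-separator (every path ending in $T$ visits $T$), so by Corollary~\ref{cor:MengerA} the maximal $V$-$T$-connector has at most $\left|T\right|$ paths. Hence $\rk_{M'}(V) = \left|T\right| = \rk_M(E)$, completing the proof. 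There is no real obstacle here; the only subtle point is remembering to start from a representation with $\left|T\right| = \rk_M(E)$ so that the target set is automatically a base of the resulting strict gammoid.
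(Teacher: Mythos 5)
Your proof is correct, but it takes a genuinely different route from the paper's. The paper starts from an \emph{arbitrary} representation $(D,T,E)$ with $D=(V,A)$, forms $M_0=\Gamma(D,T,V)$, and then restricts $M_0$ to the flat $F=\cl_{M_0}(E)$, invoking Lemma~\ref{lem:flatofstrictgammoidisstrictgammoid} to conclude that $M_0\restrict F$ is again a strict gammoid; the rank condition is then automatic because closure preserves rank. You instead normalize the representation first via Lemma~\ref{lem:rankequalsTcard} so that $\left|T\right|=\rk_M(E)$, and then the full strict gammoid $\Gamma(D,T,V)$ already has the right rank, which you verify with the trivial-path connector on $T$ and the observation that $T$ is a $V$-$T$-separator, via Lemma~\ref{lem:rkEqMaxConnector} and Corollary~\ref{cor:MengerA}. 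Your argument is the more elementary one: it avoids the comparatively heavy fact that restrictions of strict gammoids to flats are strict gammoids, at the cost of first modifying the digraph. The paper's version leaves the given representation untouched and yields a strict gammoid on the typically smaller ground set $\cl_{M_0}(E)$ rather than on all of $V$, but since the corollary only asserts existence, both constructions settle it. Your closing remark about needing $\left|T\right|=\rk_M(E)$ correctly identifies the one point where your route would otherwise fail.
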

\begin{proof} \PRFR{Jan 22nd}
	Let $(D,T,E)$ be a representation of $M$, where $D=(V,A)$. Let \linebreak $M_0=\Gamma(D,T,V)$ be the strict gammoid arising naturally from the representation of $M$, and let $F=\cl_{M_0}(E)$ be the smallest flat in $M_0$ that contains $E$. Then $\rk_{M_0}(F) = \rk_{M_0}(E) = \rk_{M}(E)$ since $M= M_0\restrict E$. Now, let $M'= M_0\restrict F$. 
	$M'$ is a strict gammoid (Lemma~\ref{lem:flatofstrictgammoidisstrictgammoid}), and since $E\subseteq F$, we have $M = M_0 \restrict E = M'\restrict E$, thus $M$ is the
	restriction of a strict gammoid of the same rank.
\end{proof}

\begin{lemma}\label{lem:contractionStrictGammoid}\PRFR{Jan 26th}
	Let $M=(E,\Ical)$ be a strict gammoid, $C\subseteq E$.
	 Then $M \contract C$ is a strict gammoid.
\end{lemma}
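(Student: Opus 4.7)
The plan is to construct an explicit strict representation of $M \contract C$ on the vertex set $C$. Start with a representation $M = \Gamma(D, T, V)$ where $D = (V,A)$ and, by Lemma~\ref{lem:TwlogSinks}, every $t \in T$ is a sink of $D$. Because sinks admit trivial self-paths, $T \cap (V \BS C)$ is independent in $M$, so by Lemma~\ref{lem:augmentation} we may extend it to a base $B$ of $V \BS C$ in $M$ with $T \cap (V\BS C) \subseteq B$. Pick a linking $L\colon B \routesto T'$ of $B$ onto some $T' \subseteq T$ in $D$. The inclusion $T \cap (V \BS C) \subseteq B$ forces the corresponding paths in $L$ to be trivial, so $T \cap (V \BS C) \subseteq T'$, which yields the key containment $T \BS T' \subseteq T \cap C \subseteq C$.

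Next, apply the pivoting procedure from the proof of Theorem~\ref{thm:gammoidRepresentationWithBaseTerminals} along the paths of $L$; by iterated use of Lemma~\ref{lem:MasonsFundamental}, this gives a digraph $D' = (V, A')$ with $M = \Gamma(D', T^*, V)$ where $T^* = B \cup (T \BS T')$ and every $b \in B$ is a sink of $D'$. Now let $D''' = (C, A''')$ with $A''' = A' \cap (C \times C)$, i.e. the induced subgraph on $C$, and set $T''' = T \BS T' \subseteq C$. I claim $M \contract C = \Gamma(D''', T''', C)$.

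To prove the claim, fix $X \subseteq C$. By Lemma~\ref{lem:contractionBchoice}, $X$ is independent in $M \contract C$ iff $B \cup X$ is independent in $M$, iff there is a routing $R\colon B \cup X \routesto T^*$ in $D'$. Since every $b \in B$ is a sink in $D'$, each $b$ can only be used as a trivial self-path, so such a routing exists iff there is a routing $R_0\colon X \routesto T \BS T'$ in $D'$ whose paths avoid $B$ entirely. This reduces the problem to routings in $D' \BS B$; what remains is to push the paths inside $C$, i.e. to show they can be chosen to avoid the set $W = V \BS (C \cup B)$. This is the main obstacle, and it is handled by the maximality of $B$: for every $v \in W$, the set $B \cup \SET{v}$ is dependent in $M$, hence admits no routing to $T^*$ in $D'$; using again that $B$ consists of sinks, this forces the absence of any path from $v$ to $T \BS T'$ in $D' \BS B$. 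Consequently, if a path in a routing $R_0$ entered some $v \in W$, its suffix would witness such a forbidden path; truncating and discarding $W$-vertices shows that $R_0$ may be chosen entirely inside $C$, i.e. as a routing in $D'''$.

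Conversely, any routing $X \routesto T'''$ in $D'''$ together with the trivial self-paths on $B$ is a routing $B \cup X \routesto T^*$ in $D'$, establishing the equivalence. Thus $M \contract C = \Gamma(D''', T''', C)$ is a strict gammoid, as required. The routine parts are the pivot bookkeeping in step two and the trivial-path argument in the first direction of the equivalence; the only delicate step is the truncation argument that eliminates vertices of $W$, which relies crucially on $B$ being a \emph{base} of $V \BS C$, not merely an independent subset.
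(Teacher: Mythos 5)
Your proof is correct and follows essentially the same route as the paper's: reduce via Lemma~\ref{lem:contractionBchoice} to routing $X$ together with a base of the complement onto a target base whose part outside $C$ consists of sinks, rule out visits to $(V\BS C)\BS B$ by the suffix-path argument against the maximality of $B$, and restrict to the induced digraph on $C$. The only (cosmetic) difference is in how the representation is prepared --- the paper invokes Theorem~\ref{thm:gammoidRepresentationWithBaseTerminals} to pivot a full base of $M$ containing a base of $E\BS C$ into the target set, whereas you pivot only the base of $V\BS C$ and keep the portion of $T$ inside $C$ fixed.
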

\begin{proof}\PRFR{Jan 26th}
	Let $B_0$ be a base of $E\BS C$ in $M$, and let $B$ be a base of $M$ with $B_0\subseteq B$ 
	(Lemma~\ref{lem:augmentation}). Let further $D=(E,A)$ be a digraph, such that $M=\Gamma(D,B,E)$
	and such that $B$ consists only of sinks in $D$
	 (Theorem~\ref{thm:gammoidRepresentationWithBaseTerminals}). We denote the family of independent sets of $M\contract C$ by $\Ical'$.
	Then for every $X\subseteq C$, we have $X\in \Ical'$ if and only if \linebreak $X\disunion B_0 \in \Ical$ 
	(Lemma~\ref{lem:contractionBchoice}). But $X\disunion B_0\in \Ical$
	 if and only if there is a routing \linebreak
	$R\colon X\disunion B_0 \routesto B$ in $D$. Since $B_0\subseteq B$ consists of sinks in $D$,
	for every $b_0\in B_0$, the trivial path $b_0\in\Pbf(D)$ is a member of $R$.
	We give an indirect argument, that for every $e\in \left( E\BS C \right)\BS B_0$ 
	and every $p\in R$, $e\notin \left| p \right|$ holds: If there would be such a path $p=(p_i)_{i=1}^n\in R$, 
	then for some $j\in \N$, $p_j = e$. But then the path $q = p_j p_{j+1}\ldots p_n \in \Pbf(D)$ yields 
	a routing $\SET{q}\cup\SET{b_0\in \Pbf(D)\mid b_0\in B_0}$ which implies that
	$\SET{e}\cup B_0\in \Ical$ -- a contradiction to the maximality of the 
	base $B_0$ of $E\BS C$ in $M$. Thus the routing $R'=\SET{p\in R\mid p_1\notin B_0}$ routes $X$ to $B\BS B_0$ 
	in $D'=(C,A\cap \left( C\times C \right))$, the sub-digraph of $D$ induced by $C$.
	Conversely, every routing $S\colon Y\routesto B\BS B_0$ in $D'$ induces the
	routing $S\cup\SET{b_0\in \Pbf(D)\mid b_0\in B_0}$ from $Y\disunion B_0$ to $B$ in $D$,
	so $M\contract C = \Gamma(D',B\BS B_0,C)$: the contraction is again a strict gammoid.
\end{proof}


\subsection{Mason's $\alpha$-Criterion}

\PRFR{Jan 30th}
\noindent In the proof of Lemma~\ref{lem:flatofstrictgammoidisstrictgammoid}
we have seen that the elements of a flat $F$ of a strict gammoid $M=\Gamma(D,T,V)$ fall into two disjoint categories: for some $f\in F$, we have $\partial \SET{f} \subseteq F$,
and for an independent subset $I\subseteq F$, we have $\partial \SET{i} \not\subseteq F$
for all $i\in I$ -- more precisely, there is a base $B$ of $F$ such that $I=B\BS T$. Before we present Mason's criterion, we need one last definition.

\begin{notation}\PRFR{Feb 15th}
	Let $M=(E,\Ical)$ be a matroid and $X\subseteq E$. The family of those flats of $M$, which are proper subsets of $X$, shall
	be denoted by\label{n:FcalMX}
	\[ \Fcal(M,X) = \SET{F\in\Fcal(M)~\middle|~ F\subsetneq X}. \qedhere\]
\end{notation}

\needspace{6\baselineskip}
\begin{definition}\label{def:alphaM}\PRFR{Jan 30th}
	Let $M=(E,\Ical)$ be a matroid. The \deftext[a-invariant of M@$\alpha$-invariant of $M$]{$\bm \alpha$-invariant of $\bm M$} shall be the map\label{n:alphaM}
	\[ \alpha_M\colon 2^E \maparrow \Z \]
	that is uniquely characterized by the recurrence relation
	\[ \alpha_M(X) = \left| X \right| - \rk_M(X) - \sum_{F\in \Fcal(M,X)} \alpha_M(F).\]
	If the matroid $M$ is clear from the context, we also write $\alpha(X)$ for $\alpha_M(X)$.
\end{definition}

\begin{remark}\label{ref:AlphaIndependent}\PRFR{Jan 30th}
Clearly, $\alpha(\emptyset) = 0$ for any matroid $M$. Furthermore,
the value $\alpha(X)$ for $X\subseteq E$ may be calculated from the values $\alpha(X')$
corresponding to proper subsets $X'\subsetneq X$ and the rank of $X$, so $\alpha$ is well-defined.
\end{remark}

\PRFR{Jan 30th}
\noindent Just like it is the case for the rank function, the family of bases, and the family of circuits of a matroid,
 we can use $\alpha_M$ to reconstruct the matroid $M$; thus $M$ is already uniquely determined by $\alpha_M$.

\begin{definition}\label{def:FcalAlpha}\PRFR{Jan 30th}
	Let $E$ be a finite set and let $\alpha\colon 2^E \maparrow \Z$ be a map. 
	The \deftext[zero-family of a@zero-family of $\alpha$]{zero-family of $\bm \alpha$} \label{n:Ialpha}
	shall be
	\[ \Ical_\alpha = \SET{X\subseteq E ~\middle|~\vphantom{A^A} \forall Y\subseteq X\colon\, \alpha(Y) = 0}.\]
	The family of \deftext[a-flats@$\alpha$-flats]{$\bm \alpha$-flats}
	shall be defined as\label{n:FcalAlpha}
	 \[ \Fcal(\alpha) = \SET{F\subseteq E ~\middle|~\vphantom{A^A} \forall e\in E\BS F,\,X\subseteq F\colon\,\,\, X\in \Ical_\alpha \txtand \SET{e}\in\Ical_\alpha \Rightarrow X\cup\SET{e}\in\Ical_\alpha}. \]
	Furthermore, we define the pair \label{n:MAlpha}
	\[ M(\alpha) = (E,\Ical_\alpha). \qedhere\]
\end{definition}

\begin{lemma}\label{lem:alphaIndependent}\PRFR{Jan 30th}
	Let $M=(E,\Ical)$ be a matroid and let $\alpha=\alpha_M$ be its $\alpha$-invariant.
	Then $\Ical=\Ical_\alpha$, $\alpha(X) = 0$ for all $X\in \Ical$, and $\alpha(C) = 1$ for all $C\in\Ccal(M)$.
\end{lemma}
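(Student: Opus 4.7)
The plan is to prove the three assertions in the order $\alpha(X)=0$ for independent $X$, then $\alpha(C)=1$ for circuits $C$, and finally $\Ical=\Ical_\alpha$ as an immediate consequence.

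\textbf{Step 1 ($\alpha\restrict \Ical = 0$).} I would argue by induction on $|X|$ where $X\in\Ical$. The base case $X=\emptyset$ is clear from Remark~\ref{ref:AlphaIndependent}. For the induction step, let $X\in\Ical$ with $X\neq\emptyset$. Any $F\in\Fcal(M,X)$ is by definition a proper subset of $X$, hence a subset of the independent set $X$, hence independent by~(I2), and strictly smaller than $X$. The induction hypothesis yields $\alpha(F)=0$ for each such $F$. Since $X$ is independent, $|X|-\rk_M(X)=0$, so the defining recurrence in Definition~\ref{def:alphaM} gives
\[ \alpha(X) = |X|-\rk_M(X) - \sum_{F\in\Fcal(M,X)}\alpha(F) = 0. \]

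\textbf{Step 2 ($\alpha(C)=1$ for circuits).} Let $C\in\Ccal(M)$. Since $C$ is a circuit, $\rk_M(C)=|C|-1$, so
\[ \alpha(C) = 1 - \sum_{F\in\Fcal(M,C)}\alpha(F), \]
and it suffices to show that every flat $F\in\Fcal(M,C)$ is independent, which then forces $\alpha(F)=0$ by Step 1. The main point is this: if $F\subsetneq C$ is a flat, then $F$ lies in a proper subset of $C$, hence $F$ is independent (proper subsets of circuits are independent by the definition of a circuit). But I should also exclude that $|F|=|C|-1$, since the only subsets of $C$ of size $|C|-1$ are the sets $C\BSET{c}$ and none of these is a flat: for any such $F=C\BSET{c}$ we have $c\in E\BS F$ yet $\rk_M(F\cup\{c\})=\rk_M(C)=|C|-1=|F|=\rk_M(F)$, contradicting the definition of a flat (Definition~\ref{def:clM}). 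Hence every $F\in\Fcal(M,C)$ has $|F|\le|C|-2$ and is a proper subset of some $C\BSET{c}\in\Ical$, so $F\in\Ical$. Step 1 then gives $\alpha(F)=0$, and therefore $\alpha(C)=1$.

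\textbf{Step 3 ($\Ical=\Ical_\alpha$).} For $\Ical\subseteq\Ical_\alpha$, let $X\in\Ical$; by (I2) every $Y\subseteq X$ is independent, so Step 1 yields $\alpha(Y)=0$, hence $X\in\Ical_\alpha$ by Definition~\ref{def:FcalAlpha}. For the reverse inclusion, suppose $X\notin\Ical$. Then $X$ contains a circuit $C$ (since the dependent sets are precisely those containing a circuit), and by Step 2 we have $\alpha(C)=1\neq 0$; thus $X$ has a subset $Y=C$ with $\alpha(Y)\neq 0$, so $X\notin\Ical_\alpha$. The two inclusions combine to $\Ical=\Ical_\alpha$.

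The only subtle point is the verification in Step 2 that no flat in $\Fcal(M,C)$ has cardinality $|C|-1$; everything else is straightforward induction and bookkeeping with the recurrence.
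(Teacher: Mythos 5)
Your proof is correct and follows essentially the same route as the paper: induction on $|X|$ for the independent case, then the circuit computation, then the two inclusions for $\Ical=\Ical_\alpha$. The only difference is that your extra verification in Step~2 that no flat in $\Fcal(M,C)$ has cardinality $|C|-1$ is superfluous — every proper subset of a circuit is independent by definition, whatever its cardinality, so Step~1 already gives $\alpha(F)=0$ for all $F\in\Fcal(M,C)$ without that case distinction.
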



\begin{proof}\PRFR{Jan 30th}
	Let $X\in\Ical$, we show $\alpha(X) = 0$ by induction on $\left| X \right|$.
	In the base case, we have $$\alpha(\emptyset) = \left| \emptyset \right| - \rk(\emptyset) = 0 - 0 = 0.$$
	For the induction step, we may assume by induction hypothesis that for all $Y\subsetneq X$ the equality $\alpha(Y) = 0$ holds.
	Thus
	$$ \alpha(X) = \left| X \right| - \rk(X) - \sum_{F\in\Fcal(M,X)} \alpha(F) = \left| X \right| - \left| X \right| - \sum_{F\in\Fcal(M,X)} 0 = 0.$$
	Therefore we obtain $\Ical \subseteq \Ical_\alpha$.
	Now let $X\subseteq E$ with $X\notin \Ical$. Then there is a circuit $C\in\Ccal(M)$ such that $C\subseteq X$.
	For all $D\subsetneq C$, we have $D\in \Ical$, therefore $\alpha(D) = 0$.
	So clearly
	$$ \alpha(C) = \left| C \right| - \rk(C) - \sum_{F\in\Fcal(M,C)} \alpha(F) = \left| C \right| - \left( \left| C \right| - 1 \right) - \sum_{F\in\Fcal(M,C)} 0 = 1,$$
	which implies $X\notin \Ical_\alpha$, and we obtain that $\Ical = \Ical_\alpha$.
\end{proof}

\begin{corollary}\PRFR{Jan 30th}
	Let $M=(E,\Ical)$ and $N=(E,\Ical')$ be two matroids defined on the same ground set $E$.
	Then $M = N$ if and only if $\alpha_M = \alpha_N$.
\end{corollary}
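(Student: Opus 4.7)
The plan is to observe that one direction is immediate from the definition: if $M = N$, then $\alpha_M$ and $\alpha_N$ are defined by the same recurrence (Definition~\ref{def:alphaM}) over the same ranks and flats, so $\alpha_M = \alpha_N$ follows by a trivial induction on $\left| X \right|$.

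For the converse direction, the key point is that the map $\alpha_M$ already determines the family of independent sets of $M$. This is exactly the content of Lemma~\ref{lem:alphaIndependent}, which asserts that $\Ical = \Ical_{\alpha_M}$, where $\Ical_{\alpha_M}$ is the zero-family of $\alpha_M$ as introduced in Definition~\ref{def:FcalAlpha}. Applying this lemma to both $M$ and $N$ yields $\Ical = \Ical_{\alpha_M}$ and $\Ical' = \Ical_{\alpha_N}$. If we now assume $\alpha_M = \alpha_N$ as maps $2^E \maparrow \Z$, then their zero-families coincide, so $\Ical_{\alpha_M} = \Ical_{\alpha_N}$, and therefore $\Ical = \Ical'$. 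Since a matroid is determined by its ground set together with its family of independent sets, we conclude $M = N$.

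There is essentially no obstacle here; the corollary is a purely formal consequence of Lemma~\ref{lem:alphaIndependent}, which did the real work by showing that the independent sets of $M$ are precisely the sets $X \subseteq E$ all of whose subsets $Y$ satisfy $\alpha_M(Y) = 0$. The proof would therefore occupy only a few lines: state the forward direction as immediate, invoke Lemma~\ref{lem:alphaIndependent} twice for the backward direction, and conclude $\Ical = \Ical'$.
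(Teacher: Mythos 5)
Your proof is correct and follows exactly the route the paper intends: the corollary is stated as an immediate consequence of Lemma~\ref{lem:alphaIndependent}, whose assertion $\Ical = \Ical_{\alpha_M}$ shows that $\alpha_M$ determines the independent sets, and the forward direction is trivial from Definition~\ref{def:alphaM}. Nothing is missing.
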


\begin{remark}\PRFR{Jan 30th}
	We may express the rank of a matroid $M=(E,\Ical)$ in terms of the $\alpha$-invariant of $M$ in different ways.
	Let $X\subseteq E$, then
	\begin{align*}
		 \rk(X) & = \left| X \right| - \sum_{F\in\Fcal(M),\,F\subseteq X} \alpha(F) \\
		 		& = \max \SET{\left| I \right|\vphantom{A^A} ~\middle|~ I\subseteq X,\,\forall J\subseteq I\colon\, \alpha(J) = 0} \\
		 		& = \max \SET{\left| I \right|\vphantom{A^A} ~\middle|~ I\subseteq X,\,I\in \Ical_\alpha }.
	\end{align*}
	We may use this equation in order to give an axiomatization of matroids in terms of its $\alpha$-invariant,
	which admittedly appears to be not very helpful.
	\begin{enumerate}
		\item[\em (A1)] $\alpha(\emptyset) = 0$.
		\item[\em (A2)] For all $X,Y\subseteq E$ with $\left| X \right| < \left| Y \right|$ and for which
			the restrictions
			 $\alpha\restrict_{2^X}$ and $\alpha\restrict_{2^Y}$ are constantly zero, there is an element
			 $y\in Y\BS X$, such that $\alpha\restrict_{2^{X'}}$ is constantly zero, where $X' = X\cup\SET{y}$.
		\item[\em (A3)] For all $X\subseteq E$ 
			\[ \alpha(X) = \left| X \right| - \max \SET{\left| I \right| \vphantom{A^A}~\middle|~ I\subseteq X,\,I\in\Ical_\alpha} - \sum_{F\in \Fcal(\alpha),\,F\subsetneq X} \alpha(F) .\]
	\end{enumerate}
	Clearly, {\em (A2)} resembles the augmentation axiom {\em (I3)} for $\Ical_\alpha$ and {\em (A1)} 
	guarantees that $\emptyset\in\Ical_\alpha$. {\em (I2)} trivially holds for $\Ical_\alpha$  by construction,
	and so $M(\alpha) = (E,\Ical_\alpha)$ is a matroid. Then {\em (A3)} guarantees that $\alpha = \alpha_{M(\alpha)}$,
	i.e. that $\alpha$ behaves like the $\alpha$-invariant for $M(\alpha)$ on the dependent sets.
\end{remark}

\needspace{6\baselineskip}
\begin{theorem}[\cite{M72}, Theorem~2.2]\label{thm:AlphaCriterion}\PRFR{Jan 30th}
	Let $D=(V,A)$, $T\subseteq V$, and $M=\Gamma(D,T,V)$ be a strict gammoid.
	Then for all $X\subseteq V$, we have $\alpha(X) \geq 0$.
	Furthermore, if $F\in\Fcal(M)$ 
	then $\alpha(F)$ is the number of elements of $F\BS T$
	with the property,
	that $\partial \SET{f}$ is a subset of $F$ but not of any proper sub-flat $F'\subsetneq F$.
\end{theorem}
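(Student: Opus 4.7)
The plan is to prove the combinatorial description of $\alpha(F)$ on flats first (which yields $\alpha(F)\geq 0$ immediately) and to extend non-negativity to arbitrary subsets afterwards. Throughout, I would work with a fixed strict representation $M=\Gamma(D,T,V)$ with $D=(V,A)$ and, for each $f\in V\BS T$, introduce the flat $F(f):=\cl_M(\partial_D\SET{f})$ together with its fibres $J(F):=\SET{f\in V\BS T\mid F(f)=F}$ indexed by $F\in\Fcal(M)$. A first technical observation is that $f\in F(f)$ for every $f\in V\BS T$: because $\partial_D\SET{f}$ is trivially an $\SET{f}$-$T$-separator in $D$, Menger's Theorem~\ref{thm:MengerGoering} forces $\rk_M(\partial_D\SET{f}\cup\SET{f})=\rk_M(\partial_D\SET{f})$, hence $f\in\cl_M(\partial_D\SET{f})=F(f)$.

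The pivotal counting identity I would establish is that, for every flat $F\in\Fcal(M)$,
\[ |F|-\rk_M(F)\,=\,\sum_{F'\in\Fcal(M),\,F'\subseteq F}|J(F')|. \]
By Lemma~\ref{lem:flatofstrictgammoidisstrictgammoid} the barrier $\delta_D(F,T)$ is a minimum $F$-$T$-separator in $D$, so Menger's Theorem gives $|\delta_D(F,T)|=\rk_M(F)$; consequently the interior $I(F):=F\BS\delta_D(F,T)=\SET{f\in F\BS T\mid\partial_D\SET{f}\subseteq F}$ has cardinality $|F|-\rk_M(F)$. Because $F$ is a flat, for $f\in V\BS T$ the condition $\partial_D\SET{f}\subseteq F$ is equivalent to $F(f)\subseteq F$ (and then automatically $f\in F$), exhibiting $I(F)$ as the disjoint union $\bigsqcup_{F'\in\Fcal(M),\,F'\subseteq F}J(F')$. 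Substituting this identity into the recurrence of Definition~\ref{def:alphaM} and inducting on $\rk_M(F)$, the telescoping difference collapses to $\alpha(F)=|J(F)|$; this count is manifestly non-negative and equals the number of $f\in F\BS T$ whose out-neighbourhood $\partial_D\SET{f}$ lies in $F$ but in no proper sub-flat $F'\subsetneq F$, matching the theorem's description.

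For arbitrary $X\subseteq V$ that is not itself a flat, setting $F=\cl_M(X)$ and using $\rk_M(X)=\rk_M(F)$ together with the flat identity allows me to rewrite the recurrence as
\[ \alpha(X)\,=\,\sum_{\substack{F'\in\Fcal(M)\\F'\subseteq F,\,F'\not\subseteq X}}|J(F')|\,-\,|F\BS X|. \]
Non-negativity then reduces to producing an injection $\phi\colon F\BS X\longrightarrow\bigsqcup_{F'\subseteq F,\,F'\not\subseteq X}J(F')$. For $c\in(F\BS X)\cap I(F)$ the assignment $\phi(c):=c$ works because $F(c)\subseteq F$ while $c\in F(c)\BS X$ forces $F(c)\not\subseteq X$. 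The main obstacle, which I expect to be the most delicate step, is handling those $c\in F\BS X$ lying on the barrier $\delta_D(F,T)$ — in particular vertices in $T$ and vertices whose out-neighbourhood escapes $F$ — since such $c$ belong to no $J(F')$ with $F'\subseteq F$ and must therefore be matched to unused interior witnesses. I would accomplish this by tracing each such $c$ backwards along in-neighbour chains inside $D$ until reaching an interior vertex whose associated flat is not contained in $X$, and verify that distinct $c$'s can be assigned distinct witnesses through a Hall-style matching argument built on the minimality of $\delta_D(F,T)$ as an $F$-$T$-separator.
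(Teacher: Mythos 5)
Your treatment of flats is correct and is, up to notation, the paper's own argument: your fibres $J(F')=\SET{f\in V\BS T \mid \cl_M(\partial\SET{f})=F'}$ are exactly the sets the paper calls $F'_2$, your identity $\left| F \right|-\rk(F)=\sum_{F'\subseteq F}\left| J(F') \right|$ is the partition $F=\delta_D(F,T)\disunion F_2\disunion F_3$ combined with $\left| \delta_D(F,T) \right|=\rk(F)$ from Lemma~\ref{lem:flatofstrictgammoidisstrictgammoid}, and the telescoping induction is the same.

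The general case, however, contains a genuine gap. Your identity $\alpha(X)=\sum_{F'\subseteq F,\,F'\not\subseteq X}\left| J(F') \right|-\left| F\BS X \right|$ with $F=\cl_M(X)$ is correct, but the injection $F\BS X\longrightarrow\bigsqcup_{F'\subseteq F,\,F'\not\subseteq X}J(F')$ \emph{is} the statement $\alpha(X)\geq 0$; you have only constructed it on $(F\BS X)\cap I(F)$ and left the barrier vertices to an unexecuted ``trace back along in-neighbours and apply Hall'' sketch. That sketch is not obviously realizable: a vertex $c\in (F\BS X)\cap\delta_D(F,T)$ may be a source of $D$, or all of its in-neighbour chains inside $F$ may terminate in vertices $f$ with $F(f)\subseteq X$, and no Hall condition has been identified, let alone verified from the minimality of $\delta_D(F,T)$. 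The difficulty is self-inflicted by passing to the closure. The paper's route avoids it entirely: partition $X$ \emph{itself} into $X_1=\delta_D(X,T)$, $X_2$, and $X_3=\bigsqcup_{F'\in\Fcal(M,X)}F'_2$. Lemma~\ref{lem:barrier} says the barrier of an \emph{arbitrary} set $X$ is still an $X$-$T$-separator, so $\left| X_1 \right|\geq\rk(X)$ (it need not be minimal when $X$ is not a flat), and then
\[ \alpha(X)=\left| X \right|-\rk(X)-\left| X_3 \right|\,\geq\,\left| X \right|-\left| X_1 \right|-\left| X_3 \right|=\left| X_2 \right|\geq 0 \]
with no matching argument. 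I recommend you replace your closure-comparison step with this observation.
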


 \noindent We present a slightly polished version of the proof in \cite{M72}.

\begin{proof}\PRFR{Jan 30th}
	For every $X\subseteq E$ we define the subsets
	\begin{align*}
	 X_1 & = \SET{x\in X\BS T~\middle|~\vphantom{A^A}\partial \SET{x} \not\subseteq X} \cup \left( X\cap T \right) = \delta_D(X,T),\\
	 X_2 & = \SET{x\in X\BS T~\middle|~\vphantom{A^A} \partial \SET{x} \subseteq X\txtand \forall F\in\Fcal(M)\colon\,F\subsetneq X \Rightarrow \partial \SET{x} \not \subseteq F},\text{ and} \\
	 X_3 & = \SET{x\in X\BS T~\middle|~\vphantom{A^A} \exists F\in\Fcal(M)\colon\,F \subsetneq X \txtand \partial\SET{x}\subseteq F}.
	 \end{align*}
	Then $X= X_1\disunion X_2 \disunion X_3$ is the disjoint union of $X_1$, $X_2$, and $X_3$.
	Furthermore $X_3$ is the disjoint union of the sets $F_2$, where $F$ ranges over all flats in $M$ that are proper subsets of $X$, 
	because if $\partial\SET{x}\subseteq F\in \Fcal(M)$ and $x\notin T$, then every path from $x$ to some $t\in T$ must visit a vertex 
	of $F$. Therefore every $F$-$T$-separator in $D$ is also an $\left( F\cup\SET{x} \right)$-$T$-separator in $D$, 
	so $\rk(F) = \rk(F\cup\SET{x})$, thus $x\in F$, so $x\in F_2$ for some flat $F\subsetneq X$.
	Now assume that $x\in F_2\cap G_2$ for some $F,G\in\Fcal(M)$. 
	Then $F\cap G\in \Fcal(M)$ (Lemma~\ref{lem:flatsintersectinflat}) and $x\in F\cap G$. But $x\notin F_3$, 
	so $F\cap G$ is not a proper subset of $F$, thus $F=F\cap G$. Analogously $G=F\cap G$, thus
	$F = G$ whenever $x\in F_2\cap G_2$. Therefore $F_2\cap G_2 = \emptyset$ for every $F,G\in \Fcal(M)$ with $F\not= G$.

	\bSep
	First, we prove the second claim by induction on the rank of the flat. Let $O=\cl(\emptyset)$
	be the unique rank $0$ flat of $M$. Then
	$\alpha(O) = \left| O \right| - \rk(O) = \left| O \right|$. We have
	$$O=\SET{v\in E~\middle|~\vphantom{A^A}\nexists p\in \Pbf(D)\colon\,p_1=v\txtand p_{-1}\in T},$$ because
	 $O$ must consist precisely of those vertices of $D$, which cannot reach any target $t\in T$.
	 Therefore $\partial O = \emptyset$, which implies that for every $o\in O$,
	  $\partial\SET{o} \subseteq O$. Consequently, $O = O_2$ as defined above, 
	  so $\alpha(O) = \left| O_2 \right|$ follows and the induction base is established.
	
	\bSep
	Now let $F\in\Fcal(M)$ be a flat, and by induction hypothesis we may assume that $\alpha(F') = \left| F'_2 \right|$ for all $F'\in\Fcal(M)$ with $F'\subsetneq F$.
	Thus we may assume the equation
	 $$\left| F_3 \right| = \sum_{F'\in\Fcal(M,F)} \alpha(F').$$
	Furthermore, $F_1 = \delta_D(F,T)$ is a minimal $F$-$T$-separator in $D$ (Lemma~\ref{lem:flatofstrictgammoidisstrictgammoid}), therefore $F_1$ is a base of $F$, and so
	 $\left| F_1 \right| = \rk(F)$. We obtain
	 \begin{align*}
	 	\left| F \right| & = \left| F_1 \right| + \left| F_2 \right| + \left| F_3 \right| \\
	 	& = \rk(F) + \left| F_2 \right| +  \sum_{F'\in\Fcal(M,F)} \alpha(F'), \text{ and so}\\
	 	\left| F_2 \right| & = \left| F \right| - \rk(F) -   \sum_{F'\in\Fcal(M,F)} \alpha(F') = \alpha(F).
	 \end{align*}

	 \bSep
	 Now let $X$ be a subset of $E$ that is not necessarily a flat of $M$. Then $X_1 = \delta_D(X,T)$ is still an $X$-$T$-separator in $D$
	  (Lemma~\ref{lem:barrier}), albeit not necessarily minimal.
	 Therefore $\left| X_1 \right| \geq \rk(X)$. Thus we obtain
	 \begin{align*}
	 	\alpha(X) & = \left| X \right| - \rk(X) -  \sum_{F\in\Fcal(M,X)} \alpha(F) \\
	 			    & \geq \left| X \right| - \left| X_1 \right| - \left| X_3 \right| \\
	 			     & = \left| X_2 \right| \geq 0. \qedhere
	 \end{align*}
\end{proof}



\vspace*{-\baselineskip} 
\begin{wrapfigure}{r}{5cm}
\vspace{\baselineskip}
\begin{centering}~~
\includegraphics{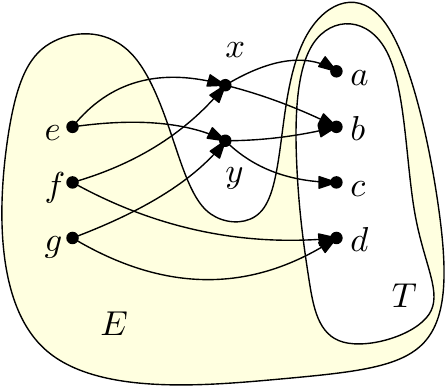}
\end{centering}%
\vspace*{-2\baselineskip} 
\end{wrapfigure}
~ 

\begin{example} \label{ex:nonStrictGammoid}\PRFR{Jan 30th}
	Consider the digraph $D=(V,A)$ with $V=\dSET{a,b,c,d,e,f,g,x,y}$ and $A$ as depicted on the right.
	Let $E=\SET{a,b,c,d,e,f,g}$, $T=\SET{a,b,c,d}$ and $M=\Gamma(D,T,E)=(E,\Ical)$.
	We argue that $M$ -- which is obviously a gammoid -- is not a strict gammoid. In order to show this,
	we calculate some values of $\alpha$. Since $\alpha(X) = 0$ for every $X\in\Ical$,
	we only have
	to consider summands in the recurrence relation of $\alpha$, that correspond to dependent flats of $M$.
	Let \[\Fcal' = \Fcal(M) \BS \Ical = \SET{E, \SET{a,b,c,e}, \SET{a,b,d,f},\SET{b,c,d,g},\SET{d,e,f,g}}.\]
	For any $F\in\Fcal'\BSET{E}$, we have $\alpha(F) = \left| F \right| - \rk(F) = 4 - 3 = 1$.
	Therefore
	\[ \alpha(E) = \left| E \right| - \rk(E) - \sum_{F\in\Fcal'\BSET{E}}\alpha(F) = 7 - 4 - 4\cdot 1 = -1,\]
	so $M$ cannot be a strict gammoid (Theorem~\ref{thm:AlphaCriterion}).
\end{example}

\begin{definition}\label{def:alphaSystem}\PRFR{Jan 30th}
	Let $M=(E,\Ical)$ be a matroid.
	The \deftext[a-system of M@$\alpha$-system of $M$]{$\bm \alpha$-system of $\bm M$} is defined to be
	the family\label{n:alphaSystem} $\Acal_M = (A_i)_{i\in I} \subseteq E$, where 
	\[ I = \SET{(F,n)\in \Fcal(M)\times \N ~\middle|~\vphantom{A^A}  1\leq n \leq \alpha_M(F)} \]
	and $A_{(F,n)} = F$ for all $(F,n)\in I$.
\end{definition}

\noindent J.H.~Mason also proved that the condition $\alpha_M \geq 0$ is sufficient for $M$ to be a strict gammoid. First, we need a sufficient condition that allows us to recognize that a 
triple $(D,T,V)$ satisfies the equality $M=\Gamma(D,T,V)$.

\needspace{8\baselineskip}

\begin{lemma}[\cite{M72}, Lemma~2.3]\label{lem:presentsStrictGammoid}\PRFR{Jan 30th}
	Let $M=(V,\Ical)$ be a matroid, $D=(V,A)$ be a digraph and $T\subseteq V$.
	If for all $X\subseteq V$, the barriers $\delta_D(X,T)$ have the property
		\[ \rk_M\left( \delta_D(X,T) \right) = \rk_M(X) \] and
	if the barriers of flats are independent, i.e. for all $F\in \Fcal(M)$
	\[ \left| \delta_D(F,T) \right| = \rk_M(F), \]
	then $M = \Gamma(D,T,V)$.
\end{lemma}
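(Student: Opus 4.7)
My plan is to establish $M=N$, where $N=\Gamma(D,T,V)$, by proving $\rk_M=\rk_N$; since a matroid is determined by its rank function, this suffices. The bridge to the digraph is Menger's theorem (Corollary~\ref{cor:MengerA}), which identifies $\rk_N(X)$ with the minimum cardinality of an $X$-$T$-separator in $D$.

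For the inequality $\rk_N(X)\leq\rk_M(X)$, I would pass to the flat $F=\cl_M(X)$ and note that the barrier $\delta_D(F,T)$, which is an $F$-$T$-separator by Lemma~\ref{lem:barrier}, is also an $X$-$T$-separator. The second hypothesis applied to the flat $F$ gives $|\delta_D(F,T)|=\rk_M(F)=\rk_M(X)$, so Menger yields $\rk_N(X)\leq\rk_M(X)$ directly.

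For the reverse inequality, let $S$ be an arbitrary $X$-$T$-separator; it suffices to show $|S|\geq\rk_M(X)$. The key construction is the set
\[ U=\SET{v\in V\vphantom{A^A} ~\middle|~ \forall\, p\in\Pbf(D)\colon p_1=v \text{ and } p_{-1}\in T \,\Rightarrow\, \left|p\right|\cap S\neq\emptyset}, \]
for which $X\subseteq U$ holds automatically since $S$ separates $X$ from $T$. I would then verify $\delta_D(U,T)\subseteq S$ by a short case check: a vertex $u\in U\cap T$ must lie in $S$ because its trivial path is itself a path from $u$ to $T$; and a vertex $u\in\delta_D(U,T)\BS T$ has an arc $(u,w)$ with $w\notin U$, yielding a path from $w$ to $T$ avoiding $S$ which, prepended by $u$ and straightened as in Remark~\ref{rem:straighteningWalks}, would contradict $u\in U$ unless $u\in S$. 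Now the first hypothesis applied to $U$, combined with monotonicity of $\rk_M$ along $X\subseteq U$ and $\delta_D(U,T)\subseteq S$, yields the chain
\[ \rk_M(X)\,\leq\,\rk_M(U)\,=\,\rk_M(\delta_D(U,T))\,\leq\,\rk_M(S)\,\leq\,|S|. \]
Taking the minimum over $S$ gives $\rk_M(X)\leq\rk_N(X)$, and combining with the first part finishes the proof. The main obstacle is the verification $\delta_D(U,T)\subseteq S$; once that identity is in hand the rest is a routine computation chaining the two hypotheses.
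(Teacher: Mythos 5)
Your proof is correct and follows essentially the same route as the paper: the upper bound $\rk_N(X)\leq\rk_M(X)$ via the barrier of the flat $\cl_M(X)$ and the second hypothesis, and the lower bound by constructing from an arbitrary $X$-$T$-separator $S$ a superset of $X$ whose barrier lies in $S$ and invoking the first hypothesis. The only (harmless) differences are that you argue directly rather than by contradiction and only need $\delta_D(U,T)\subseteq S$ where the paper establishes equality for its analogous reachability set.
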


\begin{proof}\PRFR{Jan 30th}
	Let $N = \Gamma(D,T,V)$ throughout this proof.
	Let $B\subseteq V$ be a base of $M$, and assume that $B$ is not independent in $N$, i.e. that 
	there is a $B$-$T$-separator $S$ in $D$,
	such that $\left| S \right| < \left| B \right|$. Let
	 $$ X = \SET{p_{-1}\in V~\middle|~\vphantom{A^A}
		 p\in \Pbf(D)\colon\,p_1\in B \txtand \left| p \right| \cap S = \emptyset} \cup S.$$
	By construction, $X$ consists of $S$ together with all vertices, that can be reached from some $b\in B\BS S$ 
	without traversing an element from $S$.
	Since $S$ separates $B$ from $T$ in $D$, $S$ is 
	an $X$-$T$-separator in $D$, too. By construction of $S$, we have $\partial \left(  X\BS S\right) \subseteq S$ 
	and $X\cap T = S\cap T$. 
	Therefore, for all $x\in X$ we have that $\partial\SET{x}\not\subseteq X$ implies that $x\in S$.
	Together with the fact that $S$ is a minimal $B$-$T$-separator and $B\subseteq X$, we arrive at
	 $\delta_D\left( X,T \right) = S$. Using the premise of the lemma we arrive at the contradiction to {\em (R1)},
	 $$
	\rk_M\left(\delta_D(X,T)\right) = \rk_M(X) \geq \rk_M(B) = \left| B \right| > \left| S \right| = \left| \delta_D(X,T) \right|.$$
	In other words, if we assume that the base $B$ of $M$ is dependent in $N$, we can construct a set $X$ that spans $M$,
	but that has a barrier in $D$ which is smaller than $\left| B \right|$, and therefore the barrier property
	$\rk_M\left( \delta_D(X,T) \right) = \rk_M(X)$ cannot hold. Consequently,
	 $B$ must be independent in $N$ and we have $\rk_N(X) \geq \rk_M(X)$.

	 \noindent
	 Now let $X\subseteq V$, then let $F = \cl_M(X)$ be the smallest flat containing $X$ in $M$.
	 By Lemma~\ref{lem:barrier}, $\delta_D(F,T)$ is an $F$-$T$-separator in $D$, therefore
	 $$\rk_N(X) \leq \rk_N(F) \leq \left| \delta_D(F,T) \right| = \rk_M(F) = \rk_M(X).$$
	 Consequently, $\rk_M = \rk_N$, and so $M = N = \Gamma(D,T,V)$.
\end{proof}

\begin{theorem}[\cite{M72}, Theorem~2.4]\label{thm:AlphaCriterion2}\PRFR{Jan 30th}
	Let $M=(E,\Ical)$ be a matroid.
	If $\alpha(X) \geq 0$ holds for all $X\subseteq E$, then $M$ is a strict gammoid.
\end{theorem}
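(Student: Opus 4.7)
The plan is to construct a digraph $D = (E,A)$ together with a target set $T \subseteq E$ and to verify the hypotheses of Lemma~\ref{lem:presentsStrictGammoid}, yielding $M = \Gamma(D,T,E)$. The blueprint comes from Theorem~\ref{thm:AlphaCriterion}: in a strict gammoid, for each flat $F$ exactly $\alpha_M(F)$ vertices in $F \setminus T$ have their outer margin contained in $F$ but in no proper sub-flat. I would mimic this picture, using the $\alpha$-system $\Acal_M = (A_i)_{i \in I}$ (Definition~\ref{def:alphaSystem}) to decide which vertex receives out-arcs and to which flat.

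First I would record the identity $|I| = |E| - \rk_M(E)$, obtained by applying the defining recurrence of $\alpha_M$ at $F = \cl_M(E) = E \in \Fcal(M)$ and telescoping over all flats. Using Rado's Corollary~\ref{cor:Rado} applied to $M^\ast$ and the family $\Acal_M$, I would then produce a system of distinct representatives $(w_i)_{i \in I}$ whose image is independent in $M^\ast$; since $|I| = \rk_{M^\ast}(E)$, this image is a base of $M^\ast$ and hence $T := E \setminus \{w_i : i \in I\}$ is a base of $M$. The Rado condition $\rk_{M^\ast}\bigl(\bigcup_{i \in J} A_i\bigr) \geq |J|$ for every $J \subseteq I$ is precisely the place where the hypothesis $\alpha_M \geq 0$ is consumed: each $A_i$ is a flat, so writing $U = \bigcup_{i \in J} A_i$ one has $|J| \leq \sum\{\alpha_M(F) : F \in \Fcal(M),\, F \subseteq U\}$; when $U$ is itself a flat, the telescoping identity $\sum_{F \subseteq U} \alpha_M(F) = |U| - \rk_M(U)$ combined with the dual-rank formula of Lemma~\ref{lem:rankDual} and submodularity of $\rk_M$ gives the required bound, and the general case reduces to $\cl_M(U)$ by using $\alpha_M \geq 0$ on the extra sub-flats.

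With $T$ and the representatives in hand, I would define
\[
A = \bigl\{ (w_i, v) \;:\; i \in I,\, v \in A_i \setminus \{w_i\}\bigr\}.
\]
For any $i = (F, n) \in I$ this forces $\DclD{\{w_i\}}{D} = F$. Consequently, for every flat $F \in \Fcal(M)$, the barrier $\delta_D(F, T)$ decomposes as $(T \cap F)$ together with those $w_i \in F$ whose assigned flat $A_i$ is not contained in $F$, while the representatives $w_i \in F$ with $A_i \subseteq F$ (i.e., those indexed by $(F', n)$ with $F' \in \Fcal(M)$, $F' \subseteq F$) are excluded from the barrier. These excluded representatives are $\sum\{\alpha_M(F') : F' \in \Fcal(M),\, F' \subseteq F\} = |F| - \rk_M(F)$ in number, so $|\delta_D(F, T)| = \rk_M(F)$, and a closer inspection using Lemma~\ref{lem:barrier} shows the barrier is in fact $M$-independent. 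For an arbitrary $X \subseteq E$ the same count applied to $F := \cl_M(X)$, together with $\delta_D(X, T) \subseteq \delta_D(F, T)$, yields $\rk_M(\delta_D(X, T)) = \rk_M(X)$. Lemma~\ref{lem:presentsStrictGammoid} then certifies $M = \Gamma(D, T, E)$, so $M$ is a strict gammoid.

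The principal obstacle will be the Rado verification in the second step: when $U = \bigcup_{i \in J} A_i$ is not itself a flat, matching the combinatorial bound on $|J|$ against the required lower bound on $\rk_{M^\ast}(U)$ requires careful separation of sub-flats contained in $U$ from those contained only in $\cl_M(U)$, and it is here that the non-negativity of $\alpha_M$ on every subflat of $\cl_M(U)$ must be exploited. The remaining ingredients — the arc construction and the barrier count — are straightforward bookkeeping once the representatives have been secured.
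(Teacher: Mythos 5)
Your construction is the same as the paper's (the $\alpha$-system, a transversal $T_0$ of it, $T=E\BS T_0$, and arcs from each representative into its assigned flat, verified via Lemma~\ref{lem:presentsStrictGammoid}); the Rado/$M^\ast$ detour is harmless but unnecessary — Hall's theorem on $\Acal_M$ suffices, since the fact that $T$ is a base of $M$ falls out of the conclusion anyway. The problem is that the final step, the verification of the two hypotheses of Lemma~\ref{lem:presentsStrictGammoid}, is where essentially all of the work lies, and your sketch of it contains two genuine gaps.

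First, the cardinality count $\left|\delta_D(F,T)\right|=\rk_M(F)$ for a flat $F$ is correct, but it does not give $\rk_M(\delta_D(F,T))=\rk_M(F)$ unless the barrier is independent in $M$, and Lemma~\ref{lem:barrier} cannot deliver that: it only says the barrier is an $F$-$T$-separator \emph{in the digraph}, a statement with no a priori bearing on $M$-independence. Proving independence of the barrier requires a separate circuit-counting argument (if $C\subseteq\delta_D(F,T)$ were a circuit, then every representative assigned to a subflat of $\cl_M(C)$ lies in $\cl_M(C)\BS\delta_D(F,T)$, and counting $\left|\cl_M(C)\right|$ two ways yields $\left|\cl_M(C)\right|\geq\rk_M(C)+1+\sum_{F'\subseteq\cl_M(C)}\alpha_M(F')=\left|\cl_M(C)\right|+1$); this cannot be waved away as ``closer inspection.'' Second, the reduction of an arbitrary $X$ to $F=\cl_M(X)$ via the inclusion $\delta_D(X,T)\subseteq\delta_D(F,T)$ is false: an element $x\in X\BS T$ whose assigned flat $\sigma'(x)$ is contained in $\cl_M(X)$ but not in $X$ belongs to $\delta_D(X,T)$ and not to $\delta_D(F,T)$. (Already for $U_{2,3}$ on $\SET{a,b,c}$ with $T_0=\SET{c}$, $T=\SET{a,b}$ and $X=\SET{a,c}$ one gets $\delta_D(X,T)=\SET{a,c}\not\subseteq\SET{a,b}=\delta_D(E,T)$.) Because neither inclusion between the two barriers holds in general, the rank identity $\rk_M(\delta_D(X,T))=\rk_M(X)$ for non-flats needs its own argument; the standard one is an induction/extremal argument over subsets $Y$ of $\cl_M(X)$ with $\rk_M(\delta_D(Y,T))<\rk_M(Y)$, showing a maximal such $Y$ would have an independent barrier of size $\left|Y\right|-\sum_{F'\subseteq Y}\alpha_M(F')\geq\rk_M(Y)+\alpha_M(Y)$, contradicting $\alpha_M\geq 0$. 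Until both of these points are supplied, the proof is incomplete precisely where the hypothesis $\alpha_M\geq 0$ does its real work.
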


\PRFR{Jan 30th}
\noindent J.H.~Mason's proof \cite{M72} uses the following line of arguments: 
First, observe that $\alpha \geq 0$ is a sufficient 
condition for the $\alpha$-system of $M$ to have a transversal
 $T_0$. Let $T_0$ be such a transversal, and the map $\sigma' \colon T_0 \maparrow \Fcal(M)$ shall be the projection on the first coordinate of the bijection 
  $\sigma\colon T_0\maparrow I$ witnessing the transversal property of $T_0$ with respect to $\Acal_M$. Then
let $T=E\BS T_0$ be the target set, and let $D=(E,A)$ be the digraph where $(u,v)\in A$ if and only if
$u\in T_0$ and
$v \in \sigma'(u)$. We have $M= \Gamma(D,T,E)$. Now, let us see this proof in detail.

\begin{proof}\PRFR{Jan 30th}
	Let $I$ and $\Acal_M$ be as in Definition~\ref{def:alphaSystem}.
It follows from Hall's Theorem (Corollary~\ref{cor:Hall}) that $\Acal_M$
	has a transversal $T_0$ if and only if for all $J\subseteq I$ the inequality
	\( \left| \bigcup_{i\in J} A_i \right| \geq \left| J \right| \) holds. For $\Acal_M$, this is the case if and only if for all $\Gcal \subseteq \Fcal(M)$, the inequality
	\begin{align}
	\left| \bigcup_{F\in \Gcal} F \right| \geq \sum_{F\in \Gcal} \alpha(F)\label{ineq:Thm24}
	\end{align}
	holds. 
	 For every $X\subseteq E$, the recurrence relation of $\alpha$ (Definition~\ref{def:alphaM}) can be written as the
	equation \[ \alpha(X) + \sum_{F'\in \Fcal(M,F)} \alpha(F') = \left| X \right| - \rk(X).\]
	Consequently, we obtain the inequality
	\[ \left| X \right| - \rk(X) =
	\alpha(X) + \sum_{F'\in \Fcal(M,X)} \alpha(F') \geq 
	 \sum_{F'\in \Fcal(M),\,F'\subseteq X} \alpha(F'),\]
	 where equality holds whenever $X\in\Fcal(M)$ or $\alpha(X)=0$.
	 Now let $\Gcal\subseteq \Fcal(M)$, then we can use the last inequality
	 together with the property that $\alpha \geq 0$, and we obtain
	 \begin{align*}
	 	\left| \bigcup_{G\in\Gcal} G \right| \,\,\geq\,\, \rk\left( \bigcup_{G\in\Gcal} G \right) 
	 	+ \sum_{F'\in \Fcal(M),\, F'\subseteq\,\bigcup\Gcal} \alpha(F') \,\,\geq\,\, \sum_{G\in\Gcal} \alpha(G),
	 \end{align*}
	  therefore the inequality~\ref{ineq:Thm24} holds for $\Gcal$. Consequently, $\Acal_M$ has a transversal. 
\bSep
	Let $T_0$ be a transversal of $\Acal_M$, and let $\sigma\colon T_0 \maparrow I$ be a bijective map
	with the property that for all $t\in T_0$, $t\in A_{\sigma(t)}$. We set $T= E\BS T_0$ and define the map
	  \[ \sigma' \colon T_0 \maparrow \Fcal(M),\, t\mapsto F_t \]
	  where $F_t\in \Fcal(M)$ such that there is some $i_t\in \N$ with $\sigma(t) = (F_t,i_t)$.
	Now, define $D=(E,A)$ to be the digraph on $E$ where
	$(u,v)\in A$ if and only if $u\in T_0$ and $v\in \sigma'(u)$.
	Let $N= \Gamma(D,T,E)$ be the strict gammoid represented by $(D,T,E)$. We want to use Lemma~\ref{lem:presentsStrictGammoid} in order to show that $M=N$.
	Let $X\subseteq E$ be a subset of $E$. We have to show that the set
	\[ B_X = \delta_D(X,T) = \SET{x\in X\BS T ~\middle|~\vphantom{A^A} \sigma'(x) \not\subseteq X} \cup \left( X\cap T \right) \]
	contains a base of $X$ with respect to $M$, i.e. that $\rk_M(B_X) = \rk_M(X)$; and further, if $X\in\Fcal(M)$, we
	have to show that $\rk_M(B_X) = \left| B_X \right|$ holds, too. 
	Assume for now, that $B_X$ contains a base of $X$, and that $X\in \Fcal(M)$. Then
	 $$\left| X \right| = \rk_M(X) + \sum_{F\in\Fcal(M),\,F\subseteq X} \alpha_M(F)$$ and the set
	 $X\BS B_X$ consists of all elements $t\in X \cap T_0$, such that the flat $\sigma'(t)$ is a subflat of $X$,
	 therefore $$\left| X\BS B_X \right| = \sum_{F\in\Fcal(M),\,F\subseteq X} \alpha_M(F),$$
	 and consequently $\left| B_X \right| = \rk_M(B_X)$.

	 \bSep We give an indirect argument that indeed
	 $\rk_M(B_X) = \rk_M(X)$, so let us assume that \linebreak $\rk_M(B_X) < \rk_M(X)$.
	 Let $Y\subseteq \cl_M(X)$ be a subset that is maximal with respect to set-inclusion among all
	 subsets of $\cl_M(X)$ with the property, that $\rk_M(B_Y) < \rk_M(Y)$,
	 where
	 $B_Y = \delta_D(Y,T)$.
	 We show that $\cl_M(B_Y)\subseteq Y$ holds for the maximal choice $Y$.
	 Let $Y' = Y\cup \cl_M(B_Y)$, then 
	 \[ B_{Y'} = \delta_D(Y',T) =
	  \SET{y'\in Y'\BS T~\middle|~\vphantom{A^A}\sigma'(y')\not\subseteq Y'}\cup 
	  \left( Y'\cap T \right) \subseteq \cl_M(B_Y),\]
	  because $Y'\cap T = \left( Y\cap T \right) \cup \left( \cl_M(B_Y) \cap T \right) \subseteq \cl_M(B_Y)$
	  for the reason that $$Y\cap T\subseteq \delta_D(Y,T) = B_Y;$$
	  and because
	  $$ \SET{y'\in Y' \BS T ~\middle|~ y'\notin \cl_M(B_Y) \txtand \sigma'(y')\not \subseteq Y'} \subseteq \SET{y\in Y \BS T ~\middle|~  \sigma'(y)\not \subseteq Y} \subseteq B_Y. $$
	  This holds since for every $y\in Y'\BS \cl_M(B_Y) \subseteq Y$,
	   the inequality  $\partial \SET{y} \cap \left(T_0\BS Y'\right) \not= \emptyset$
	   implies the inequality $\partial \SET{y} \cap \left(T_0\BS Y\right) \not= \emptyset$ due to $Y\subseteq Y'$ --- so the left-most
	   set above is actually empty, because $B_Y\subseteq \cl_M(B_Y)$.
	 Since $\cl_M$ does not change the rank, we obtain $$\rk_M(B_{Y'}) \leq \rk_M(B_Y) < \rk_M(Y) = \rk_M(Y'),$$
	 that means $Y'= Y\cup\cl_M(B_Y)$ also satisfies $\rk_M(B_{Y'}) < \rk_M(Y')$,
	 and consequently,
	  $\cl_M(B_Y)\subseteq Y$ for the $\subseteq$-maximal subset $Y$.

	  \bSep
	  Now, we want to show that for the $\subseteq$-maximal choice $Y$, the barrier $B_Y$ is independent in $M$.
	  We give an indirect argument.
	  Assume that $B_Y$ is not independent, therefore there is a circuit $C\subseteq B_Y$. Clearly,
	  $\cl_M(C) \subseteq \cl_M(B_Y) \subseteq Y$. Let $F\in\Fcal(M)$ such that $F\subseteq \cl_M(C)$.
	  From the definition of $B_Y$, it is clear, that any $e\in E$ with $\sigma'(e) = F$ has the property, that
	  $e\in F\BS B_Y \subseteq  Y\BS B_Y$. So $\cl_M(C)$ has at least as many elements as the sum of the $\alpha$-values of all (not necessarily proper)
	  subflats of $\cl_M(C)$ plus the number of elements of $B_Y\cap \cl_M(C)$.
	  Thus we obtain
	  \begin{align*}
	  	\left| \cl_M(C) \right| 
	  	 & \,\,\geq\,\,
	  	\left| \cl_M(C) \cap B_Y \right| + \sum_{F\in\Fcal(M), F\subseteq \cl_M(C)} \alpha_M(F) \\
	  	& \,\, \geq\,\, \hphantom{|} \rk_M(C) + 1 \,\,\,\,\,\,\,\, + \sum_{F\in\Fcal(M), F\subseteq \cl_M(C)} \alpha_M(F) \\
	  	& \,\,=\,\, \left| \cl_M(C) \right| + 1,
	  \end{align*}
	  and arrive at a contradiction, where the second inequality is due to the fact that $C\subseteq B_Y$ and $\rk_M(C) = \left| C \right| - 1$. Therefore $B_Y \in \Ical$ is independent in $M$.

	  \bSep Now, observe that with $\alpha_M \geq 0$, we obtain
	   \begin{align*} \rk_M(Y)  > \rk_M(B_Y) & =\, \left| B_Y \right| 
	   \\ & =\, \left| Y \right| - \sum_{F\in\Fcal(M),\,F\subseteq Y} \alpha_M(Y)
	   \\ & \geq\, \rk_M(Y) + \alpha_M(Y) 
	   \\ & \geq\, \rk_M(Y). \end{align*}
	   	This  contradiction yields, that the assumption, that there is a maximal subset $Y$ of $\cl_M(X)$ 
	   with $\rk_M(B_Y) < \rk_M(Y)$, is wrong. Consequently, $\rk_M(B_X) < \rk_M(X)$ cannot be the case.
	   Thus $\rk_M(B_X) = \rk_M(X)$ and all premises of Lemma~\ref{lem:presentsStrictGammoid} are met. We just established $M=N=\Gamma(D,T,E)$, so $M$ is a strict gammoid.
\end{proof}

\begin{corollary}\label{cor:MasonAlpha}\PRFR{Jan 30th}
	Let $M=(E,\Ical)$ be a matroid. Then $M$ is a strict gammoid if and only if for all $X\subseteq E$
	the inequality $\alpha(X) \geq 0$ holds.
\end{corollary}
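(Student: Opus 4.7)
The plan is to observe that Corollary~\ref{cor:MasonAlpha} is essentially a packaging of the two preceding theorems into a biconditional statement, so the proof reduces to citing both directions.

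First I would handle the forward implication: assume $M$ is a strict gammoid. Then there is a digraph $D=(V,A)$ and a set $T\subseteq V$ with $M = \Gamma(D,T,V)$. By Theorem~\ref{thm:AlphaCriterion}, for every $X\subseteq V = E$ we have $\alpha(X) \geq 0$, which is exactly what we need. No further work is required for this direction.

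For the converse, I would assume that $\alpha_M(X) \geq 0$ holds for every $X\subseteq E$. Then Theorem~\ref{thm:AlphaCriterion2} immediately gives that $M$ is a strict gammoid. So the entire proof can be written in essentially two sentences, each citing the relevant theorem.

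There is no real obstacle here, since the hard work — the nonnegativity property of $\alpha$ for strict gammoids and the construction of a digraph representation from a transversal of the $\alpha$-system via Hall's theorem together with Lemma~\ref{lem:presentsStrictGammoid} — has already been carried out in Theorems~\ref{thm:AlphaCriterion} and \ref{thm:AlphaCriterion2}. The only thing worth emphasising in the write-up is that the quantification ``for all $X\subseteq E$'' in the corollary matches verbatim the quantification appearing in both theorems, so no translation between different formulations of the invariant is needed.
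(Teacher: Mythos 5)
Your proposal is correct and is exactly the paper's proof: the paper's own argument reads ``Combine the Theorems~\ref{thm:AlphaCriterion} and \ref{thm:AlphaCriterion2}.'' Your two-directional citation, with Theorem~\ref{thm:AlphaCriterion} giving necessity and Theorem~\ref{thm:AlphaCriterion2} giving sufficiency, matches it precisely.
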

\begin{proof}\PRFR{Jan 30th}
	Combine the Theorems \ref{thm:AlphaCriterion} and \ref{thm:AlphaCriterion2}.
\end{proof}

\noindent We just saw that we obtain a 
strict representation of a strict gammoid from every transversal of its $\alpha$-system.
The converse holds, too, in the sense that every representation of a strict gammoid yields a transversal
of its $\alpha$-system.

\needspace{6\baselineskip}
\begin{lemma}\label{lem:alphaTransversalFromD}\PRFR{Apr 4th}
	Let $D=(V,A)$ be a digraph, $T\subseteq V$, $M=\Gamma(D,T,V)$ be a strict gammoid,
	and $\Acal_M=(A_i)_{i\in I}$ be the $\alpha$-system of $M$.
	Let further $X = V\BS T$ and
	\[ \phi\colon X \maparrow \Fcal(M),\,u\mapsto \cl\left(\vphantom{A^A} \SET{v\in V~\middle|~ (u,v)\in A} \right) .\]
	Then $X$ is a transversal of $\Acal_M$, and there is a bijection $\psi\colon X\maparrow I$
	such that   $$x\in A_{\psi(x)} = \phi(x)$$ for all $x\in X$.
\end{lemma}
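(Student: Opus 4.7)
The plan is to read off the bijection $\psi$ directly from the explicit description of $\alpha_M(F)$ provided by the second assertion of Theorem~\ref{thm:AlphaCriterion}. Recall that theorem says that for every flat $F\in\Fcal(M)$, the value $\alpha_M(F)$ equals the cardinality of
\[
F_2 \,=\, \SET{f\in F\BS T ~\middle|~ \partial\SET{f}\subseteq F \txtand \forall F'\in\Fcal(M,F)\colon\,\partial\SET{f}\not\subseteq F'},
\]
and moreover the proof of that theorem shows that the sets $F_2$ for $F\in\Fcal(M)$ partition $V\BS T = X$.

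First I would observe that for each $u\in X$ the flat $\phi(u) = \cl\left(\partial\SET{u}\right)$ is, by definition of closure, the unique smallest flat of $M$ containing $\partial\SET{u}$. This is exactly the characterising condition in the defining property of $F_2$: it says $\partial\SET{u}\subseteq F = \phi(u)$, but $\partial\SET{u}\not\subseteq F'$ for every proper subflat $F'\subsetneq \phi(u)$. I would then verify that $u\in\phi(u)$, so that $u\in(\phi(u))_2$: this requires only a short exchange argument, replacing any path $u,v,\ldots$ starting at $u$ in a maximal routing by its tail $v,\ldots$ (where $v\in\partial\SET{u}$), to see that $\rk_M(\partial\SET{u}\cup\SET{u}) = \rk_M(\partial\SET{u})$, hence $u\in\cl(\partial\SET{u})$. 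The degenerate case $\partial\SET{u}=\emptyset$ is handled in the same way: then $u$ is a loop of $M$ and $\phi(u)=\cl(\emptyset)$ contains $u$.

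Combining these two observations, the map $\phi\colon X\maparrow \Fcal(M)$ has the property that $u\in(\phi(u))_2$ for every $u\in X$, and together with the partition $X = \bigsqcup_{F\in\Fcal(M)} F_2$ from Theorem~\ref{thm:AlphaCriterion}, we obtain for every $F\in\Fcal(M)$
\[
\phi^{-1}[\SET{F}] \,=\, F_2, \qquad \left|\phi^{-1}[\SET{F}]\right| \,=\, \alpha_M(F).
\]
For every flat $F$ I would then fix an arbitrary enumeration $\phi^{-1}[\SET{F}] = \dSET{u^F_1,u^F_2,\ldots,u^F_{\alpha_M(F)}}$ and set $\psi(u^F_n) := (F,n)\in I$. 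Since the $F_2$'s partition $X$ and $I = \SET{(F,n)\mid F\in\Fcal(M),\,1\leq n\leq \alpha_M(F)}$, this defines a bijection $\psi\colon X\maparrow I$. By construction $A_{\psi(u)} = A_{(\phi(u),n)} = \phi(u)$ for every $u\in X$, and $u\in\phi(u) = A_{\psi(u)}$, so $X$ is a transversal of $\Acal_M$ witnessed by $\psi$.

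The only non-routine step is the inclusion $u\in\cl(\partial\SET{u})$, everything else is bookkeeping once Theorem~\ref{thm:AlphaCriterion} is in hand. In particular, the partition statement $X=\bigsqcup_{F\in\Fcal(M)}F_2$ is not stated explicitly as a separate lemma, so I would either cite it from the proof of Theorem~\ref{thm:AlphaCriterion} or, equivalently, re-derive it here from the fact that $\phi(u)$ is the unique smallest flat containing $\partial\SET{u}$.
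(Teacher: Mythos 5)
Your proof is correct, but it takes a genuinely different route from the paper's. The paper re-runs the separator machinery: after the common first step (every $u\in V\BS T$ satisfies $u\in\cl(\partial\SET{u})$, so $\phi(u)$ is a dependent flat containing $u$), it proves by induction on the nullity $\left|F\right|-\rk(F)$ that $X_F=\SET{x\in X\mid \phi(x)\subseteq F}$ satisfies $\left|X_F\right|\leq\left|F\right|-\rk(F)$ — the induction step being that a surplus element of $X_F$ would have to lie on a minimum $F$-$T$-separator while all its out-neighbours stay inside $F$, forcing every path through it to hit the separator twice — and then closes with the cardinality count $\left|X\right|=\left|V\right|-\rk(V)=\left|I\right|$. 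You instead treat the lemma as a corollary of the second assertion of Theorem~\ref{thm:AlphaCriterion}, namely $\alpha_M(F)=\left|F_2\right|$, combined with the decomposition $V\BS T=\bigcup_{F\in\Fcal(M)}F_2$ (disjointly) established inside that theorem's proof; the only genuinely new work is $u\in\cl(\partial\SET{u})$ and the observation that $\phi(u)=\cl(\partial\SET{u})$ is the unique smallest flat containing $\partial\SET{u}$, whence $u\in\left(\phi(u)\right)_2$. Your route is shorter and yields the sharper conclusion $\phi^{-1}[\SET{F}]=F_2$, which immediately forces $\left|\phi^{-1}[\SET{F}]\right|=\alpha_M(F)$ for every flat — a pointwise statement that the paper's final counting step ("$\left|X_F\right|\leq\left|I_F\right|$ for all $F$ plus $\left|X\right|=\left|I\right|$") delivers only after an additional M\"obius-type argument that the paper leaves implicit. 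The price is that you lean on an intermediate claim that is buried in the proof of Theorem~\ref{thm:AlphaCriterion} rather than stated as a lemma; as you note, you would need to cite or restate it. One cosmetic point: with a loop arc $(u,u)\in A$ the set $\SET{v\in V\mid (u,v)\in A}$ differs from $\partial\SET{u}$ by the element $u$ itself, but since $u\in\cl(\partial\SET{u})$ the two sets have the same closure, so your identification of $\phi(u)$ with $\cl(\partial\SET{u})$ is harmless (the paper sidesteps this by assuming no loop arcs).
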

\begin{proof}\PRFR{Apr 4th} Without loss of generality we may assume that there is no loop arc $(v,v)\in A$ in $D$.
	Let $M=(V,\Ical)$, $x\in V\BS T$, and let $S_x = \SET{y\in V~\middle|~(x,y)\in A}$.
	Clearly $S_x$ is an $\SET{x}\cup S_x$-$T$-separator in $D$, because every path from $x$ to $t\in T$
	must use an arc that leaves $x$, and thus this arc visits a vertex from $S_x$. Consequently,
	$x\in \cl(S_x)$. Since $x\notin S_x$, we obtain that 
	$$\rk(S_x) \leq \left| S_x \right| < \left| S_x\cup\SET{x} \right| \leq \left| \cl(S_x) \right|.$$
	Therefore $\phi(x) = \cl(S_x)$ is a dependent flat of $M$ with $x\in \phi(x)$.
	Let $F\in \Fcal(M)$, let $I_F = \SET{(F',k) \in I~\middle|~F' \subseteq F}$,
	and let $X_F = \SET{x\in X~\middle|~ \phi(x) \subseteq F}$.
	We show that $X_F$ is a partial transversal of the subfamily $\Acal_F = (A_i)_{i\in I_F}$ of $\Acal_M$,
	by induction on the nullity $\left| F \right| - \rk(F)$ of $F$.
	If $F\in \Ical$ then $X_F = \emptyset$, since $\cl(S_x)$ is dependent for all $x\in V\BS T$.
	We give an indirect argument for the induction step and assume that
	$\left| X_F  \right| > \left| F \right| - \rk(F)$.
	There is an $F$-$T$-separator $S_F$ in $D$ with minimal cardinality $\left| S_F \right| = \rk(F)$.
	Clearly, $S_F\in \Ical$ and $S_F\subseteq \cl(F) = F$ (Lemma~\ref{lem:minFTSeparatorInFStrictGammoid}).
	Since $\left| X_F \right| > \left| F \right| - \rk(F)$ and $X_F \subseteq F$, we obtain that
	$X_F \cap S_F \not= \emptyset$. Let $f\in X_F\cap S_F$, 
	then $S_f = \SET{g\in V~\middle|~(f,g)\in A} = \phi(f) \subseteq F$. Since $f\in X_F \subseteq V\BS T$
	we have 
	$f\notin T$. Now let $f'\in F$ and $t\in T$, then every path $p\in\Pbf(D;f',t)$
	with $f\in \left| p \right|$ must also visit
	another element $f''\in S_f \subseteq F$ as it continues to $t$. Thus every such $p$ 
	must visit an element from $S_F\BSET{f}$ after visiting $f$
	 --- a contradiction to the fact that $S_F$ is an $F$-$T$-separator 
	with minimal cardinality in $D$. Thus $\left| X_F \right| \leq \left| F \right| - \rk(F)$.
	Consequently, $X_F$ is a partial transversal of $\Acal_F$.
	Observe that
	$$\left| X \right| = \left| V\BS T \right| = \left| V \right| - \rk(V) = \sum_{F\in \Fcal(M)} \alpha_M(F) = \left| I \right|.$$
	So $X$ is a transversal of $\Acal_M$ with the property, that
	$\left| \SET{x\in X~\middle|~\phi(x) = F}  \right| = \alpha_M(F)$ holds for all $F\in \Fcal(M)$,
	thus $X$ has the claimed property.
\end{proof}


\clearpage

\section{Transversal Matroids}\label{sec:TransversalMatroids}

\PRFR{Jan 30th}
The notion of transversal matroids has been introduced in section \ref{sec:shortTransversalMatroids}.
In this section, we  develop the theory of transversal matroids a little further.

\begin{lemma}\label{lem:transersalMatroidsAreGammoids}\PRFR{Jan 30th}
	Let $E$ be a finite set, $\Acal=(A_i)_{i\in I}\subseteq E$ be a finite family of subsets of $E$,
	and $M=M(\Acal)$ be the transversal matroid presented by $\Acal$.
	Then $M$ is a gammoid.
\end{lemma}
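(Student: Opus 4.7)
The plan is to exhibit an explicit digraph representation of $M(\Acal)$. Without loss of generality I will assume $E\cap I = \emptyset$ (otherwise rename the indices of $I$ via a bijection $\iota'\colon I\maparrow I'$ with $I'\cap E = \emptyset$, and replace $\Acal$ by the correspondingly indexed family). Define the bipartite digraph
\[ D = (V,A) \quad\text{with}\quad V = E \disunion I \quad\text{and}\quad A = \SET{(e,i)\in E\times I ~\middle|~ e\in A_i}, \]
and set $T = I$. I then claim that $M(\Acal) = \Gamma(D,T,E)$.

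To prove the claim, I would show that $X\subseteq E$ is a partial transversal of $\Acal$ if and only if there is a routing $X\routesto T$ in $D$. For the forward direction, assume $X$ is a partial transversal, so there is an injective map $\iota\colon X\maparrow I$ with $x\in A_{\iota(x)}$ for all $x\in X$. Then the family of paths $R = \SET{x\,\iota(x) \in \Pbf(D) ~\middle|~ x\in X}$ consists of length-$2$ paths that are pair-wise vertex disjoint (because $\iota$ is injective and the initial vertices lie in $E$ while the terminal vertices lie in $I$), and each path uses an arc of $D$ by construction. Hence $R\colon X\routesto T$ is a routing in $D$. For the reverse direction, assume $R\colon X\routesto T$ is a routing in $D$. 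Every path $p\in R$ with $p_1\in X\subseteq E$ must either be trivial with $p_1 = p_{-1}\in E\cap T = \emptyset$ — which is impossible — or have length at least $2$. Since $E$ consists of sources in $D$ and $I$ consists of sinks, every non-trivial path in $D$ has length exactly $2$ and is of the form $e\,i$ with $(e,i)\in A$. Thus each $p\in R$ has the form $p = x\,i_p$ with $x = p_1\in X$ and $i_p = p_{-1}\in I$, and $(x,i_p)\in A$ gives $x\in A_{i_p}$. Setting $\iota(x) = i_p$ defines a map $\iota\colon X\maparrow I$ which is injective because the paths of $R$ are pair-wise vertex disjoint (so their terminal vertices in $I$ are pair-wise distinct). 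Hence $X$ is a partial transversal of $\Acal$.

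Combining both directions yields $\Ical_\Acal = \SET{X\subseteq E ~\middle|~ \exists R\colon X\routesto T \text{ in } D}$, which is precisely the family of independent sets of $\Gamma(D,T,E)$. Therefore $M(\Acal) = \Gamma(D,T,E)$ is a gammoid. There is no real obstacle here: the construction is the standard directed-bipartite-graph encoding of the set system $\Acal$, and the argument amounts to the trivial observation that in this digraph, routings are in bijection with the injective assignments witnessing partial transversals.
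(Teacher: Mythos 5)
Your proof is correct and follows essentially the same route as the paper: both construct the bipartite digraph $D=(E\disunion I, \SET{(e,i)\mid e\in A_i})$ with target set $I$ and identify routings $X\routesto I$ with the injections witnessing that $X$ is a partial transversal. Your write-up merely spells out the path-length and disjointness details that the paper leaves implicit.
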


\begin{proof}\PRFR{Jan 30th}
	Without loss of generality, we may assume that $E\cap I=\emptyset$.
	Let $D=(V,A)$ be the digraph where $V=E\disunion I$ and
	$(e,i)\in A$ if and only if $e\in E$, $i\in I$ and $e\in A_i$.
	Then $M(\Acal)= \Gamma(D,I,E)$: The routings $R\colon X_0\routesto I$ in $D$ with $X_0\subseteq E$
	are in correspondence to the injections $\iota\colon X_0\maparrow I$ that have the property $x\in A_{\iota(x)}$ for
	all $x\in X_0$, where $R(\iota)=\SET{x\iota(x) \in \Pbf(D)\mid x\in X_0}$ is the routing induced by a partial transversal $X_0$ of $\Acal$ with injective map $\iota$; and $P(R) = \SET{p_1\in E\mid p\in R}$ is the partial transversal of $\Acal$ induced from a routing $R\colon X_0\routesto I$ with $X_0\subseteq E$ in $D$.
\end{proof}

\begin{lemma}\label{lem:dualOfStrictGammoidIsTransversal}\PRFR{Jan 30th}
	Let $M=(E,\Ical)$ be a matroid.
	 If $M$ is a strict gammoid, then $M^\ast$ is a transversal matroid.
\end{lemma}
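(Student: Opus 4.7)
The plan is to use the Linkage Lemma (Lemma~\ref{lem:linkage}) to identify the bases of $M^\ast$ with the transversals of the linkage system of the strict representation. Concretely, since $M$ is a strict gammoid, fix a representation with $D=(V,A)$ and $T\subseteq V$ such that $V=E$ and $M=\Gamma(D,T,V)$. I will consider the linkage system $\Acal_{D,T}=(A_v^{(D,T)})_{v\in V\BS T}$ from Definition~\ref{def:linkageSystem} and claim that the transversal matroid $M(\Acal_{D,T})$ equals $M^\ast$.

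The first step is to identify the bases of $M$ in terms of linkings: a set $B\subseteq V$ with $|B|=\rk_M(V)=|T|$ is a base of $M$ if and only if there is a routing $B\routesto T$ in $D$, which in the case $|B|=|T|$ is automatically a linking $B\routesto T$ onto $T$. Applying Lemma~\ref{lem:linkage}, such a linking exists if and only if $V\BS B$ is a transversal of $\Acal_{D,T}$. Hence the complements (in $V$) of the bases of $M$ are precisely the transversals of $\Acal_{D,T}$.

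The second step is to see that these complements are also precisely the bases of the transversal matroid $N=M(\Acal_{D,T})$. By Lemma~\ref{lem:ADTtransversals}, every maximal partial transversal of $\Acal_{D,T}$ is in fact a full transversal; and by Corollary~\ref{cor:maximalpartialtransversals} the maximal partial transversals of a family are exactly the bases of the transversal matroid it presents. Thus $\Bcal(N)$ coincides with the set of transversals of $\Acal_{D,T}$, and combining with the previous step, $\Bcal(N)=\SET{V\BS B\mid B\in\Bcal(M)}$. By Corollary~\ref{cor:dualbase}, the right-hand side is exactly $\Bcal(M^\ast)$, and two matroids on the same ground set $V=E$ with the same bases are equal (by {\em (I2)}, the family of independent sets is determined by the bases). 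Therefore $M^\ast=M(\Acal_{D,T})$, a transversal matroid.

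The only slightly delicate point, which deserves a careful sentence in the write-up rather than a deep argument, is that the cardinality bookkeeping really does line up: the linkage system $\Acal_{D,T}$ is indexed by $V\BS T$, so any transversal has cardinality $|V\BS T|=|V|-\rk_M(V)$, which matches $\rk_{M^\ast}(E)$ via Lemma~\ref{lem:rankDual}. This ensures that the complementation bijection between $\Bcal(M)$ and the transversals of $\Acal_{D,T}$ is cardinality-consistent with $\Bcal(M^\ast)$, so no extra argument is required beyond invoking Lemmas~\ref{lem:linkage} and \ref{lem:ADTtransversals} together with Corollary~\ref{cor:dualbase}.
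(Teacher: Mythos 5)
Your proof is correct and follows the same route as the paper: the paper's proof likewise fixes a strict representation $(D,T,V)$, invokes the Linkage Lemma (Lemma~\ref{lem:linkage}) to identify complements of bases of $\Gamma(D,T,V)$ with transversals of $\Acal_{D,T}$, and concludes $M^\ast = M(\Acal_{D,T})$. You have merely spelled out the intermediate steps (Lemma~\ref{lem:ADTtransversals}, Corollary~\ref{cor:dualbase}, the cardinality bookkeeping) that the paper leaves implicit.
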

\begin{proof}\PRFR{Jan 30th}
	It is an immediate consequence from the fact that
	there is a linking from $X$ onto $T$ in $D$ if and only if $V\BS X$ is a transversal of
 $\Acal_{D,T}$ (Lemma~\ref{lem:linkage}), that the bases of $\Gamma(D,T,V)$ are precisely those subsets of $V$, for which their complement in $V$ is a base of the transversal matroid $M(\Acal_{D,T})$
defined by the linkage system of $D$ to $T$. Thus $M^\ast = M(\Acal_{D,T})$.
\end{proof}

\noindent The converse statement holds, too.

\begin{lemma}\label{lem:dualtransversalstrictgammoid}\PRFR{Jan 30th}
	Let $\Acal=(A_i)_{i\in I} \subseteq E$ be a family of sets, and 
	$M = M(\Acal)$ the transversal matroid presented by $\Acal$. Then
	$M^\ast$ is a strict gammoid.
\end{lemma}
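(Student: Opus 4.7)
The plan is to invoke Lemma~\ref{lem:dualOfStrictGammoidIsTransversal} in reverse: I will produce a digraph $D=(E,A)$ on the ground set of $M$ together with a set $T\subseteq E$ such that the linkage system $\Acal_{D,T}$ coincides with $\Acal$ (up to discarding redundant index copies). Then by Lemma~\ref{lem:dualOfStrictGammoidIsTransversal} we have $\Gamma(D,T,E)^{\ast}=M(\Acal_{D,T})=M$, which gives $\Gamma(D,T,E)=M^{\ast}$ and exhibits $M^{\ast}$ as a strict gammoid.

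First I would carry out a standard presentation cleanup. The idea is to reduce to the situation $|I|=\rk_{M}(E)$ and that $\Acal$ admits a system of distinct representatives which is simultaneously a base of $M$. Any $A_{i}$ which is not contained in the union over a base-realizing subfamily of $\Acal$ can be dropped without altering $M(\Acal)$ (this is the standard Hall/Rado-type observation, Corollary~\ref{cor:Hall}); shrinking $I$ in this way yields a subfamily of cardinality exactly $r=\rk_{M}(E)$ which still presents $M$. Now applying Hall's theorem once more to the reduced $\Acal$ produces an SDR $B=\{b_{i}\mid i\in I\}$ which is a base of $M$, because $B$ is a transversal of a base-presenting subfamily and therefore a maximal partial transversal by Corollary~\ref{cor:maximalpartialtransversals}.

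Next I would construct the digraph. Reindex $\Acal$ by identifying each $i\in I$ with $b_{i}$, so $\Acal=(A_{b})_{b\in B}$ with $b\in A_{b}$ for every $b\in B$. Define $D=(E,A)$ by
\[ A = \SET{(b,e)\in B\times E ~\middle|~ e\in A_{b}\setminus\SET{b}} \]
and set $T=E\setminus B$. Then for every $b\in B=E\setminus T$ we have
\[ A^{(D,T)}_{b} \,=\, \SET{w\in E~\middle|~(b,w)\in A} \cup \SET{b} \,=\, (A_{b}\setminus\SET{b})\cup\SET{b} \,=\, A_{b}, \]
so $\Acal_{D,T}=\Acal$ on the nose.

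Finally I would identify $\Gamma(D,T,E)$ with $M^{\ast}$ by matching bases. A subset $Y\subseteq E$ is a base of $\Gamma(D,T,E)$ iff it is a maximal $X\subseteq E$ admitting a routing to $T$ iff $Y$ is linked onto $T$ in $D$; by Lemma~\ref{lem:linkage} this holds iff $E\setminus Y$ is a transversal of $\Acal_{D,T}=\Acal$, i.e.\ iff $E\setminus Y$ is a base of $M$. Since by Corollary~\ref{cor:dualbase} the bases of $M^{\ast}$ are precisely the complements in $E$ of the bases of $M$, we conclude $\Bcal(\Gamma(D,T,E))=\Bcal(M^{\ast})$ and hence $\Gamma(D,T,E)=M^{\ast}$. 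The main obstacle is the first step: carefully arguing that after pruning $\Acal$ we can arrange for an SDR that is also a base of $M$, so that the identification $i\leftrightarrow b_{i}$ legitimately embeds the index set of $\Acal$ into $E$ as a subset disjoint from the target set of the digraph we build.
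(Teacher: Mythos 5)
Your digraph construction and the final base-matching via the Linkage Lemma are sound, and the route is genuinely different from the paper's: you build a strict representation directly on the ground set $E$, whereas the paper enlarges the ground set to $E\disunion I$, sets $\hat A_i = A_i\disunion\SET{i}$ so that the index set itself is automatically a transversal, reads off $M(\hat\Acal)^\ast = \Gamma(D,E,E\disunion I)$ from the Linkage Lemma, and then gets $M(\Acal)^\ast = \left(M(\hat\Acal)^\ast\right)\contract E$ from closure of strict gammoids under contraction (Lemmas~\ref{lem:restrictcontractdual} and~\ref{lem:contractionStrictGammoid}).

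However, your first step has a genuine gap. The claim that all $A_i$ outside a base-realizing subfamily ``can be dropped without altering $M(\Acal)$'' is exactly the statement that a rank-$r$ transversal matroid is presented by some $r$-member subfamily of any presentation. This does not follow from Hall's theorem (Corollary~\ref{cor:Hall}): Hall gives existence of transversals, but you must show that \emph{every} independent set of $M$ --- not just the chosen base --- remains a partial transversal of the pruned subfamily, which needs an augmenting-path/exchange argument in the bipartite graph. Worse, within this paper that reduction is Corollary~\ref{cor:transversalrepresentation}, which is itself deduced from the lemma you are proving, so invoking it would be circular. You must either supply an independent proof of the presentation reduction (the classical Bondy/Mason argument) or avoid needing a full SDR of the index family --- which is precisely what the paper's extension-and-contraction detour achieves. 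Note also that the ``main obstacle'' is not where you place it: once the family is pruned to the indices matched to a base $B$, obtaining an SDR that is a base is immediate; the hard part is that the pruned family still presents $M$, which your last step genuinely needs because it identifies $\Bcal(M)$ with the set of transversals of the pruned family.
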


\begin{proof}\PRFR{Jan 30th}
	Without loss of generality, we may assume that $E\cap I = \emptyset$.
	We define the family $\hat\Acal = (\hat A_i)_{i\in I} \subseteq E\disunion I$
	by setting $\hat A_i = A_i \disunion \SET{i}$ for all $i\in I$.
	Further, let $D=(E\disunion I, A)$ where 
	\[ A = \SET{(e,i)\in E\times I\mid e\in A_i}. \]
	It is easy to see that the linkage system $\Acal_{D,E}$ of $D$ to $E$ is
	precisely the family $\hat \Acal$. Therefore $M(\hat\Acal)^\ast = \Gamma(D,E,E\disunion I)$ is
	a strict gammoid.
	On the other hand, $M(\hat \Acal)\restrict E = M(\Acal)$ is evident from the construction.
	With Lemma~\ref{lem:restrictcontractdual} we obtain
	\[
		M(\Acal)^\ast = \left( M(\hat\Acal)\restrict E\right)^\ast = \left( M(\hat\Acal)^\ast \right)\contract E,
	\]
	where the last term is the contraction of a strict gammoid, therefore $M^\ast$ is a strict gammoid (Lemma~\ref{lem:contractionStrictGammoid}).
\end{proof}

\begin{corollary}\label{cor:transversalstrictdual}\PRFR{Jan 30th}
	Let $M$ be a matroid. Then $M$ is a transversal matroid if and only if $M^\ast$ is a strict gammoid.
\end{corollary}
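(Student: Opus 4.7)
The proof is essentially a one-line deduction from the two immediately preceding lemmas together with the involutive nature of matroid duality. My plan is to simply unpack the biconditional into its two directions and invoke the appropriate lemma for each.

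For the forward implication, I would assume $M$ is a transversal matroid and invoke Lemma~\ref{lem:dualtransversalstrictgammoid} directly to conclude that $M^\ast$ is a strict gammoid. There is nothing more to do in this direction.

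For the backward implication, I would assume $M^\ast$ is a strict gammoid. Applying Lemma~\ref{lem:dualOfStrictGammoidIsTransversal} with $M^\ast$ in place of $M$ yields that $(M^\ast)^\ast$ is a transversal matroid. By Corollary~\ref{cor:doubleast}, we have $(M^\ast)^\ast = M$, so $M$ itself is a transversal matroid.

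There is no real obstacle here; the corollary is purely a bookkeeping statement combining Lemmas~\ref{lem:dualOfStrictGammoidIsTransversal} and \ref{lem:dualtransversalstrictgammoid} with the fact that duality is an involution on the class of matroids. The only thing to be a little careful about is applying Lemma~\ref{lem:dualOfStrictGammoidIsTransversal} to the correct matroid (namely $M^\ast$, not $M$) in the backward direction, so that the double dual collapses via Corollary~\ref{cor:doubleast}.
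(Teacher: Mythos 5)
Your proof is correct and is exactly the intended argument: the paper leaves this corollary without an explicit proof because it follows immediately from Lemmas~\ref{lem:dualOfStrictGammoidIsTransversal} and \ref{lem:dualtransversalstrictgammoid} together with $M=(M^\ast)^\ast$ (Corollary~\ref{cor:doubleast}), which is precisely what you wrote.
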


\begin{corollary}\label{cor:transversalrepresentation}\PRFR{Jan 30th}
	Let $E,I$ be finite sets and $\Acal = (A_i)_{i\in I} \subseteq E$ be a family of subsets.
	If $M=M(\Acal)$ is the transversal matroid presented by $\Acal$ and $r = \rk_M(E)$,
	then there is a family of subsets $\Acal' = (A'_i)_{i=1}^r \subseteq E$, such that
	$M=M(\Acal')$.
\end{corollary}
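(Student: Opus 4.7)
The plan is to dualize and exploit the tight relationship between transversal matroids and strict gammoids developed in Corollaries~\ref{cor:transversalstrictdual} and Lemma~\ref{lem:dualOfStrictGammoidIsTransversal}. Since $M=M(\Acal)$ is a transversal matroid, Corollary~\ref{cor:transversalstrictdual} tells us that $M^\ast$ is a strict gammoid, so there exist a digraph $D=(V,A)$ with $V=E$ and a subset $T\subseteq E$ such that $M^\ast = \Gamma(D,T,E)$.

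Next, I would pin down the size of $T$. In any strict gammoid representation $\Gamma(D,T,V)$, the target set $T$ is itself a base: the family of trivial paths $\{t : t\in T\}$ is a routing $T\routesto T$, so $T$ is independent, and $T$ is an $E$-$T$-separator, so no independent set can be larger. Combining with Lemma~\ref{lem:rankDual}, we get $|T| = \rk_{M^\ast}(E) = |E| - \rk_M(E) = |E|-r$, hence $|E\BS T| = r$.

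Now I apply Lemma~\ref{lem:dualOfStrictGammoidIsTransversal} to $M^\ast$: the dual of the strict gammoid $\Gamma(D,T,E)$ is the transversal matroid presented by the linkage system $\Acal_{D,T} = (A^{(D,T)}_v)_{v\in E\BS T}$, whose members $A^{(D,T)}_v = \{w\in E \mid (v,w)\in A\}\cup\{v\}$ are subsets of $E$. Since $(M^\ast)^\ast = M$ by Corollary~\ref{cor:doubleast}, we obtain $M = M(\Acal_{D,T})$. The index set $E\BS T$ has exactly $r$ elements, so choosing any bijection $\sigma\colon\{1,2,\ldots,r\}\maparrow E\BS T$ and setting $A'_i = A^{(D,T)}_{\sigma(i)}$ yields the desired family $\Acal' = (A'_i)_{i=1}^r \subseteq E$ with $M = M(\Acal')$; reindexing a family of sets does not affect the transversal matroid it presents.

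There is no genuinely difficult step here; the only thing to be careful about is the cardinality count $|T|=|E|-r$, which requires noting both that $T$ must be a base in a strict gammoid representation and that the dual rank formula gives $\rk_{M^\ast}(E) = |E|-r$. Everything else is a direct application of the already-established correspondence between strict gammoids and transversal matroids via linkage systems.
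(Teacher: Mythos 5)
Your proof is correct and follows essentially the same route as the paper: dualize to a strict gammoid $M^\ast=\Gamma(D,T,E)$ with $T$ a base of $M^\ast$ of size $|E|-r$, recover $M$ as the transversal matroid of the linkage system $\Acal_{D,T}$ indexed by the $r$-element set $E\BS T$, and reindex. The extra care you take in justifying $|T|=|E|-r$ is a detail the paper states without elaboration, but the argument is the same.
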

\begin{proof}\PRFR{Jan 30th}
	By Lemma~\ref{lem:dualtransversalstrictgammoid} the dual $M^\ast$ is a strict gammoid
	of rank $\left| E \right| - r$. Thus there is a digraph $D=(E,A)$ and a base $T\subseteq E$ of $M^\ast$ such that $M^\ast = \Gamma(D,T,E)$. Let $\Acal_{D,T}$ be the linkage system of $D$ to $T$,
	then $M(\Acal_{D,T})^\ast = M^\ast$, and $\Acal_{D,T} = (A^{(D,T)}_i)_{i\in E\BS T}$ consists of $\left| E \right| - \left| T \right| = r$ sets $A_i$, which may be renumbered by the integers from $1$ through $r$ yielding the desired $\Acal'$.
\end{proof}

\begin{example}\label{ex:transversalMatroidDualityRespecting}\PRFR{Feb 15th}
It is particularly easy to obtain a duality respecting representation of a transversal matroid from representations that are in the form of Corollary~\ref{cor:transversalrepresentation}.
Let $M=(E,\Ical)$ be a transversal matroid, $r = \rk(E)$, and $\Acal = (A_i)_{i=1}^r \subseteq E$ be a representation of $M$, i.e. $M = M(\Acal)$.
Then there is a base $B\in\Bcal(M)$ and a bijective map $\phi \colon B\maparrow \SET{1,2,\ldots,r}$ such that 
$b\in A_{\phi(b)}$ holds for all $b\in B$. Furthermore, if $I=\dSET{i_1,i_2,\ldots,i_r}$ is a set with $I\cap E = \emptyset$, 
then the digraph $D=(E\disunion I, A)$ with
	 \begin{align*} A = \SET{\left(i_{\phi(b)},b\right) ~\middle|~ b\in B} & \cup \SET{\left(i_{\phi(b)},i_{k}\right) ~\middle|~b\in B,\, k\in\SET{1,2,\ldots,r}\BSET{\phi(b)}\colon\, b\in A_k} \\ &
	 \cup \SET{\left(e,i_k\right) ~\middle|~
		\vphantom{A^A} k\in \SET{1,2,\ldots,r},\,e\in A_k\BS B}
\end{align*}
has the property, that $M = \Gamma(D,B,E)$, because it is the digraph that arises from the 
digraph described in Lemma~\ref{lem:transersalMatroidsAreGammoids} and the construction from the proof 
of Theorem~\ref{thm:gammoidRepresentationWithBaseTerminals} with respect to the basis $B$.
 Since the premises of Lemma~\ref{lem:dualityrespectingrepresentation} are satisfied,
$(D,B,E)$ is a duality respecting representation of $M$.
\end{example}

\begin{corollary}\PRFR{Feb 15th}
	Let $M$ be a transversal matroid. There is a representation $(D,T,E)$ where $D=(V,A)$ with $M = \Gamma(D,T,E)$ and $\left| V \right| \leq \left| E \right| + \rk(E)$. There even is a representation that uses a digraph with $\left| V \right| < 2\cdot \left| E \right|$.
\end{corollary}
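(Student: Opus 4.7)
The plan is to assemble the representation directly from the tools that have already been developed in this section. First, I would invoke Corollary~\ref{cor:transversalrepresentation} to obtain a presentation $\Acal = (A_i)_{i=1}^{r}$ of $M$ as a transversal matroid, where $r = \rk_M(E)$; the key point is that the index set can be assumed to have exactly $r = \rk_M(E)$ elements. Then the construction from Lemma~\ref{lem:transersalMatroidsAreGammoids} (or, essentially equivalently, Example~\ref{ex:transversalMatroidDualityRespecting}) takes a fresh index set $I = \dSET{i_1,i_2,\ldots,i_r}$ disjoint from $E$ and produces a digraph $D=(V,A)$ on vertex set $V = E\disunion I$ in which the arcs encode the incidences of $\Acal$. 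That lemma already yields $M=\Gamma(D,I,E)$, and since $|I|=r$ we get
\[ |V| \,=\, |E| + r \,=\, |E| + \rk_M(E), \]
which proves the first bound.

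For the strict bound $|V| < 2\cdot |E|$ I would distinguish two cases. If $\rk_M(E) < |E|$, then the representation produced above already satisfies
\[ |V| \,=\, |E| + \rk_M(E) \,<\, |E| + |E| \,=\, 2\cdot |E|, \]
so nothing further is needed. If instead $\rk_M(E) = |E|$, then every subset of $E$ is independent in $M$, so $M$ is the free matroid on $E$. In that case, one represents $M$ directly as $M = \Gamma(D_0, E, E)$ where $D_0 = (E,\emptyset)$: every $X\subseteq E$ is routed to $E$ by the family of trivial paths $\SET{x\in\Pbf(D_0)~\middle|~x\in X}$, so the independent sets of $\Gamma(D_0,E,E)$ are exactly $2^E$. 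For this representation we have $|V| = |E| < 2\cdot |E|$ (assuming $|E|\geq 1$; if $E=\emptyset$ the statement is vacuously understood, as the matroid is trivial).

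The only real content of the proof is the bookkeeping; the heavy lifting was already done by Corollary~\ref{cor:transversalrepresentation} (which lets us trim the presenting family down to exactly $\rk_M(E)$ sets) and by the bipartite-digraph construction encoding a family of sets as a gammoid. There is no genuine obstacle here — the proof is essentially a counting argument on top of those two ingredients, together with a separate handling of the free-matroid boundary case needed to get strict inequality.
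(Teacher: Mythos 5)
Your proof is correct and follows essentially the same route as the paper: the bound $|V| \leq |E| + \rk_M(E)$ comes from the bipartite construction applied to a presenting family with exactly $\rk_M(E)$ sets (Corollary~\ref{cor:transversalrepresentation} together with Example~\ref{ex:transversalMatroidDualityRespecting}), and the strict bound $|V| < 2\cdot|E|$ is obtained by noting that the only remaining case $\rk_M(E) = |E|$ forces $M$ to be the free matroid, which is represented by the arcless digraph on $E$. The case split and the handling of the free-matroid boundary case match the paper's argument exactly.
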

\begin{proof}\PRFR{Feb 15th}
	A representation with $\left| V \right| \leq \left| E  \right| + \rk(E)$ has been constructed in Example~\ref{ex:transversalMatroidDualityRespecting}. If $\rk(E) = \left| E \right|$, then $M = (E,2^E)$, i.e. $M$ is the free matroid on $E$, and therefore
	the digraph $D' = \left( E, \emptyset \right)$ yields a representation $M = \Gamma(D',E,E)$ with strictly fewer than $2\cdot \left| E \right|$ elements.
\end{proof}

\clearpage

\section{Constructions within the Class of Gammoids}

\noindent \PRFR{Mar 27th}
In this section, we explore methods of obtaining new gammoids from old ones. The main application of this section is the following:
If we know that a matroid $M$ may be constructed from a matroid $N$ using a construction that does not leave the class of gammoids,
then we may conclude that $M$ is a gammoid whenever $N$ is a gammoid.

\noindent
Let us start with a well-known result of J.H.~Mason \cite{M72}.

\begin{theorem}\label{thm:GammoidsClosedMinorsDuality}\PRFR{Mar 27th}
	The class consisting of all gammoids is closed under minors, duality, and direct sums.
\end{theorem}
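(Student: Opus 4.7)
The plan is to verify closure under each of the three operations separately, leveraging results already established in this chapter; so this theorem is essentially a summary corollary of the earlier work. Closure under duality is immediate from Corollary~\ref{cor:dualityrespectingrepresentation}, which states that $M^\ast$ is a gammoid precisely when $M$ is, since every gammoid admits a duality respecting representation (Lemmas~\ref{lem:sourcesinkrepresentation} and \ref{lem:dualityrespectingrepresentation}).

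Closure under direct sums I would handle exactly as in the proof of Lemma~\ref{lem:arcCSubAdditive}: given representations $(D_1,T_1,E_1)$ of $M$ and $(D_2,T_2,E_2)$ of $N$ on disjoint vertex sets (renaming vertices if necessary), form the disjoint-union digraph $D_\oplus = (V_1\disunion V_2, A_1\disunion A_2)$. Since $D_\oplus$ has no arc between $V_1$ and $V_2$, every routing $R\colon X\routesto T_1\disunion T_2$ in $D_\oplus$ splits uniquely into a routing in $D_1$ from $X\cap V_1$ to $T_1$ and a routing in $D_2$ from $X\cap V_2$ to $T_2$; conversely any pair of such routings combines into a routing in $D_\oplus$. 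Hence $X\subseteq E_1\disunion E_2$ is independent in $\Gamma(D_\oplus, T_1\disunion T_2, E_1\disunion E_2)$ iff $X\cap E_i$ is independent in $M$ and in $N$ respectively, which is exactly the definition of independence in $M\oplus N$.

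The remaining step is closure under minors, which splits into restriction and contraction. For restriction, given $M = \Gamma(D,T,E)$ and $X\subseteq E$, I would observe directly from Definitions~\ref{def:gammoid} and \ref{def:Mrestriction} that $M\restrict X = \Gamma(D,T,X)$: a subset $Y\subseteq X$ is independent in $M\restrict X$ iff $Y$ is independent in $M$, iff there is a routing $Y\routesto T$ in $D$, which is the defining condition for independence in $\Gamma(D,T,X)$. For contraction, I would appeal to Lemma~\ref{lem:restrictcontractdual}, which yields $M\contract C = (M^\ast\restrict C)^\ast$; combining this with the already-established closure under duality and restriction, contraction preserves the gammoid property as well.

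There is no serious obstacle: all the substantive work (existence of duality respecting representations, the compatibility of duality with restriction and contraction, disjoint digraph unions) has been done previously. What remains is merely to assemble these ingredients into the three verifications described above.
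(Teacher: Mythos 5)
Your proposal is correct and follows essentially the same route as the paper: restriction directly from the definition via $(D,T,X)$, duality via Corollary~\ref{cor:dualityrespectingrepresentation}, contraction via $M\contract C = (M^\ast\restrict C)^\ast$ from Lemma~\ref{lem:restrictcontractdual}, and direct sums via the disjoint-union construction of Lemma~\ref{lem:arcCSubAdditive}. The only difference is that you spell out the disjoint-union argument where the paper simply cites the lemma; nothing is missing.
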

\begin{proof}\PRFR{Mar 27th}
	Let $M=(E,\Ical)$ be a matroid. It is clear from Definition~\ref{def:gammoid}
	that the representation $(D,T,E)$ of $M$ yields the representation $(D,T,X)$ of
	$M\restrict X$ for all $X\subseteq E$. Thus the class of all gammoids is closed under restriction.
	Corollary~\ref{cor:dualityrespectingrepresentation} yields that if $M$ is a gammoid, then so is
	its dual $M^\ast$. Consequently, the class of all gammoids is closed under duality.
	It follows with Lemma~\ref{lem:restrictcontractdual} that the class of gammoids is also closed under
	contraction. We showed in Lemma~\ref{lem:arcCSubAdditive} that the class of gammoids is closed under direct sums.
\end{proof}

\noindent \PRFR{Mar 27th}
	Remember that Corollary~\ref{cor:extWithLoopCoLoop} established, that
	every extension of a gammoid $M$ by a loop or a coloop is again a gammoid.
	Therefore $M$ is a gammoid if and only if $M\restrict X$ is a gammoid, where $X$ consists of all elements of the ground set of $M$,
	that
	are neither loops nor coloops.

\begin{lemma}\label{lem:principalExtOfStrictGammoidIsStrict}\PRFR{Mar 27th}
	Let $M=(E,\Ical)$ be a gammoid, $e\notin E$, and let $N\in \Xcal(M,e)$ such that
	\[ C = \SET{F\in \Fcal(M)~\middle|~e\in \cl_N(F)} \]
	has at most one $\subseteq$-minimal element.
	Then $N$ is a gammoid. If $M$ is a strict gammoid, then $N$ is a strict gammoid.
\end{lemma}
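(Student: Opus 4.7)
The plan is to identify $N$ via the Crapo bijection (Theorem~\ref{thm:crapo}), writing the independent sets of $N$ as $\Ical' = \Ical \cup \SET{X\cup\SET{e} \mid X\in\Ical,\,\cl_M(X)\notin C}$, and then constructing an explicit gammoid representation in each of three cases for the shape of $C$. If $C = \emptyset$, then every extension of an independent set by $e$ is again independent, so $e$ is a coloop and $N = M \oplus (\SET{e},2^{\SET{e}})$, a (strict) gammoid by Corollary~\ref{cor:extWithLoopCoLoop}. If the unique minimum of $C$ equals $\cl_M(\emptyset)$, then upward closure forces $C = \Fcal(M)$, so $e$ is a loop and $N = M \oplus (\SET{e},\SET{\emptyset})$, again a (strict) gammoid. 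Henceforth let $F_0$ denote the unique minimum of $C$, with $\rk_M(F_0) \geq 1$, and fix a base $B_0 \subseteq F_0$ of $F_0$ in $M$.

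The next step is to prove the strict gammoid part of the claim, so assume $M = \Gamma(D,T,E)$ with $D = (E,A)$. The proposal is to build $D' = (E\cup\SET{e}, A\cup \SET{(e,b)\mid b\in B_0})$, making $e$ a new source with out-arcs exactly to $B_0$, and to show $N = \Gamma(D',T,E\cup\SET{e})$. For $X\subseteq E$, routings $X\routesto T$ in $D'$ coincide with routings in $D$ because $e$ has no incoming arcs and is not in $T$, so independence of $X$ is unchanged. The nontrivial step is: a routing $R\colon X\cup\SET{e} \routesto T$ in $D'$ must contain a path $e,b,\ldots,t$ with $b\in B_0$, and vertex-disjointness forces $b \notin X$; deleting the prefix $e$ turns $R$ into a routing $X\cup\SET{b} \routesto T$ in $D$, and this operation is reversible. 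Hence $X\cup\SET{e} \in \Ical(N')$ iff some $b\in B_0\BS X$ satisfies $X\cup\SET{b}\in \Ical$, which is equivalent to $X\in\Ical$ together with $B_0 \not\subseteq \cl_M(X)$, and since $\cl_M(B_0) = F_0$, to $F_0\not\subseteq \cl_M(X)$, i.e., $\cl_M(X)\notin C$. This matches Crapo's formula, so $N' = N$.

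For a general gammoid $M$, the idea is to reduce to the strict case via Corollary~\ref{cor:gammoidrestrictionsamerank}: write $M = \tilde M\restrict E$ for a strict gammoid $\tilde M$ on a ground set $V \supseteq E$ with $\rk_{\tilde M}(V) = \rk_M(E)$. Set $\tilde F_0 = \cl_{\tilde M}(F_0)\in \Fcal(\tilde M)$ and $\tilde C = \SET{G\in\Fcal(\tilde M)\mid \tilde F_0\subseteq G}$; the latter is trivially upward closed and closed under arbitrary pairwise intersections of flats, so it is a principal modular cut of $\tilde M$ with unique minimum $\tilde F_0$. By the strict gammoid case just treated, the single-element extension $\tilde N$ of $\tilde M$ corresponding to $\tilde C$ is a strict gammoid. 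The final verification is $N = \tilde N \restrict (E\cup\SET{e})$: rank equality on $2^E$ yields $\cl_M(X) = \cl_{\tilde M}(X)\cap E$ for $X\subseteq E$, and from $F_0 \subseteq E$ together with $\tilde F_0 = \cl_{\tilde M}(F_0)$ one deduces $F_0\subseteq \cl_M(X) \Longleftrightarrow \tilde F_0 \subseteq \cl_{\tilde M}(X)$, which aligns the Crapo formulas for $N$ and $\tilde N\restrict(E\cup\SET{e})$. Since $\tilde N$ is a strict gammoid and gammoids are closed under restriction (Theorem~\ref{thm:GammoidsClosedMinorsDuality}), $N$ is a gammoid.

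The main obstacle is the bookkeeping in the strict gammoid construction, particularly ensuring that the vertex $b$ chosen on the $e$-path does not already belong to $X$ and that the existence of such a $b$ is logically equivalent to the closure condition $F_0\not\subseteq \cl_M(X)$; the rest is a matter of unwinding definitions. The restriction-based reduction to the strict case, though conceptually clean, also requires careful verification that $\tilde F_0$ defines a modular cut whose Crapo extension restricts to the one prescribed for $M$.
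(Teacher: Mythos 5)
Your proof is correct, and its core is the same digraph surgery as the paper's: adjoin $e$ as a new source whose out-arcs point into the $\subseteq$-minimal flat $F_0$ of the cut, then check by splitting and re-gluing routings at the first vertex after $e$ that the resulting gammoid has exactly Crapo's independent sets. Two points differ. First, you aim the new arcs only at a base $B_0$ of $F_0$, while the paper uses all of $F_0$; both work, since $\cl_M(B_0)=F_0$ makes the conditions $B_0\not\subseteq \cl_M(X)$ and $F_0\not\subseteq \cl_M(X)$ equivalent, and your version is marginally more economical (it also absorbs the loop case $F_0=\cl_M(\emptyset)$ via $B_0=\emptyset$, so your separate treatment of it is harmless but unnecessary). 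Second, for a non-strict $M$ the paper simply runs the identical construction on an arbitrary representation $(D,T,E)$ with $D=(V,A)$, $V\supseteq E$ --- nothing in the routing argument uses $V=E$, only that $F_0\subseteq E\subseteq V$ and $e\notin V$ --- whereas you first realize $M$ as a restriction of a strict gammoid $\tilde M$ (Corollary~\ref{cor:gammoidrestrictionsamerank}), extend $\tilde M$ by the principal cut generated by $\cl_{\tilde M}(F_0)$, and restrict back. That detour is sound: the identity $\cl_M(X)=\cl_{\tilde M}(X)\cap E$ does align the two Crapo formulas as you claim. But it costs an extra layer of verification (that the lifted cut is modular and that the restriction of the lifted extension is the prescribed extension of $M$) which the paper's uniform treatment of strict and non-strict representations avoids entirely.
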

\begin{proof}\PRFR{Mar 27th}
	Let $(D,T,E)$ with $D=(V,A)$ be a strict representation of $M$ if $M$ is a strict gammoid, otherwise let $(D,T,E)$
	be a representation of $M$.
	If $C=\emptyset$, then $e$ is a coloop in $N$. So $(D',T\cup\SET{e},E\cup\SET{e})$
	with
	$D' = (V\cup\SET{e},A)$ is a representation of $N$, which is a strict representation if $M$ is a strict gammoid.
	Otherwise let $F_0 = \bigcap C$ be the unique $\subseteq$-minimal element of $C$.
	 Then
	$D''=(V\cup\SET{e},A'')$ with
	\[ A'' = A \cup \left( \SET{e} \times F_0 \right)\]
	yields the representation $(D'',T,E\cup\SET{e})$ of $N$ -- which is strict if $M$ is a strict gammoid:
	Let $N' = \Gamma(D'',T,E\cup\SET{e})$, and let $X\subseteq E\cup\SET{e}$ be independent in $N'$.
	If $X\subseteq E$, then $X$ is independent in $N$, because by construction, 
	no path $p\in \Pbf(D'';x,t)$ for any $x\in E$ and any $t\in T$
	visits $e$. Thus every routing $X\routesto T$ in $D''$ is also a routing with respect to $D$. If $e\in X$ for $X$ independent in $N'$,
	 then the fact that $F_0\not\subseteq \cl_M(X\BSET{e})$  follows
	 from the way $D''$ is constructed from $D$: Every path from $e$ to any $t\in T$ visits some element from $f\in F_0$.
	 Thus every routing $X\routesto T$ in $D''$ induces a routing $(X\BSET{e})\cup\SET{f}\routesto T$ in $D$ for some $f\in F_0\BS X$.
	  Therefore there is some
	$f\in F_0\BS X$ such that $(X\BSET{e})\cup\SET{f}$ is independent in $M$, consequently $f\notin \cl_M(X\BSET{e})$.
	We obtain $\rk_N(X) = \rk_M(X\BSET{e}) + 1$ and so $X$ is independent in $N$.
	Now let $X\subseteq E\cup\SET{e}$ be independent in $N$. If $e\notin X$ holds,
	 then $X$ is independent in $M$. So $X$ is independent 
	in $N'$, too, because $A\subseteq A''$. If $e\in X$ and $X$ is independent in $N$, then $X' = X\BSET{e}$ is independent in $M$ 
	and $F_0\not\subseteq \cl_M(X')$.
	But then there is some $f\in F_0\BS X'$ such that $\rk_M(X'\cup\SET{f}) > \rk_M(X')$, thus $X'\cup\SET{f}$ is independent in $M$, too.
	Now let $R\colon X'\cup\SET{f}\routesto T$ be a corresponding routing in $D$, and let $p^{(f)}\in R$ be the path of that routing 
	where $p_1^{(f)} = f$. Then $R' = \left(R\BSET{p^{(f)}}\right)\cup\SET{ep^{(f)}}$ is a routing from $X'\cup\SET{e}$ to $T$ in $D''$. 
	It follows that $X$ is independent in $N'$, and consequently $N = N'$.
\end{proof}

\begin{definition}\label{def:deflateOfM}\PRFR{Mar 27th}
	Let $M=(E,\Ical)$ be a matroid, $X\subseteq E$. The restriction $N=M\restrict X$ shall be a \deftext[deflate of a matroid]{deflate of $\bm M$},
	if $E\BS X = \dSET{e_1,e_2,\ldots,e_m}$ can be ordered naturally,
	such that for all $i\in \SET{1,2,\ldots,m}$ the modular cut
	\[ C_i = \SET{F\in \Fcal\left( \vphantom{A^A} M\restrict \left( X\cup\SET{e_1,e_2,\ldots,e_{i-1}} \right)  \right)~\middle|~ e_i \in \cl_M(F)} \]
	has precisely one $\subseteq$-minimal element.
\end{definition}

\begin{definition}\label{def:deflated}\PRFR{Mar 27th}
	Let $M=(E,\Ical)$ be a matroid. $M$ shall be called \deftext{deflated}, if the only deflate of $M$ is $M$ itself.
\end{definition}

\needspace{4\baselineskip}

\begin{lemma}\label{lem:excludedMinorsForGammoidsAreDeflated}\PRFR{Mar 27th}
	Let $M$ be an excluded minor for the class of gammoids. Then $M$ is deflated.
\end{lemma}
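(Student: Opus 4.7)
The plan is to prove the contrapositive: if $M=(E,\Ical)$ has a proper deflate, then $M$ is not an excluded minor for the class of gammoids. So suppose $N = M \restrict X$ is a deflate of $M$ with $X \subsetneq E$, and let $E \BS X = \dSET{e_1, e_2, \ldots, e_m}$ be the ordering guaranteed by Definition~\ref{def:deflateOfM}, so that for each $i \in \SET{1,2,\ldots,m}$ the associated modular cut $C_i$ of $M \restrict (X \cup \SET{e_1,\ldots,e_{i-1}})$ has a unique $\subseteq$-minimal element.

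Since $X \subsetneq E$, if $M$ were an excluded minor for the class of gammoids then $N = M\restrict X$ would itself be a gammoid. Now I would form the chain of restrictions $N_0 = N$, $N_i = M\restrict(X \cup \SET{e_1,\ldots,e_i})$ for $i = 1, \ldots, m$, so that $N_m = M$ and each $N_i$ is a single element extension of $N_{i-1}$ by $e_i$. By Theorem~\ref{thm:crapo} (the bijection $\phi$ between single element extensions and modular cuts), the modular cut of $N_{i-1}$ corresponding to the extension $N_i$ is precisely $C_i$ from the deflate definition, so it has a unique $\subseteq$-minimal element.

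Applying Lemma~\ref{lem:principalExtOfStrictGammoidIsStrict} inductively along this chain then yields that each $N_i$ is a gammoid whenever $N_{i-1}$ is. Starting with the gammoid $N_0 = N$, we conclude that $N_m = M$ is a gammoid, contradicting the fact that $M$ is an excluded minor for the class of gammoids (which requires $M \notin \Mcal$). Hence no proper deflate of $M$ can exist, i.e.\ the only deflate of $M$ is $M$ itself, so $M$ is deflated by Definition~\ref{def:deflated}. The argument is essentially immediate once Lemma~\ref{lem:principalExtOfStrictGammoidIsStrict} is in place; the only delicate point is verifying that the modular cut prescribed by Definition~\ref{def:deflateOfM} really is the modular cut associated with the extension $N_{i-1} \to N_i$, which follows directly from the characterization $\phi(N_i) = \SET{F \in \Fcal(N_{i-1}) \mid e_i \in \cl_{N_i}(F)}$ of Theorem~\ref{thm:crapo}.
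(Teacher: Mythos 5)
Your proof is correct and rests on the same key ingredient as the paper's, namely Lemma~\ref{lem:principalExtOfStrictGammoidIsStrict} applied to the principal modular cuts guaranteed by Definition~\ref{def:deflateOfM}, together with the fact that proper restrictions of an excluded minor are gammoids. The only cosmetic difference is that the paper applies that lemma just once, to the single element $e_m$ ending the deflation chain (since $M\restrict(E\BSET{e_m})$ is already a proper deflate), whereas you run the induction along the whole chain --- an argument the paper packages separately as Lemma~\ref{lem:deflationLemma}.
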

\begin{proof}\PRFR{Mar 27th}
	We give an indirect proof and
	assume that $M=(E,\Ical)$ is an excluded minor for the class of gammoids, and $M$ is not deflated.
	Then there is an element $e\in E$ such that 
	\[ C = \SET{F\in \Fcal\left( M\restrict (E\BSET{e}) \right) \vphantom{A^A}~\middle|~ e\in \cl_M(F)} \]
	has the property that
	\[ C = \SET{F\in \Fcal\left( M\restrict (E\BSET{e}) \right) \vphantom{A^A}~\middle|~ F_0\subseteq F} \]
	where $F_0 = \bigcap C$.
	Since $M$ is an excluded minor, the restriction $N = M\restrict (E\BSET{e})$ is a gammoid.
	In this situation, Lemma~\ref{lem:principalExtOfStrictGammoidIsStrict} yields that $M$ is a gammoid --- a contradiction. Thus every
	excluded minor for the class of gammoids is deflated.
\end{proof}

\needspace{4\baselineskip}
\begin{lemma}\label{lem:deflationLemma}\PRFR{Mar 27th}
	Let $M=(E,\Ical)$ be a matroid, $X\subseteq E$ and let  $N = M\restrict X$ be a deflate of $M$.
	Then $M$ is a gammoid if and only if $N$ is a gammoid.
\end{lemma}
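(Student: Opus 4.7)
The plan is to prove the two implications separately, with the forward direction being immediate from closure of gammoids under minors and the backward direction being a straightforward induction along the deflation sequence witnessing the hypothesis.

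For the forward direction, assume $M$ is a gammoid. Since $N = M\restrict X$ is a restriction of $M$, Theorem~\ref{thm:GammoidsClosedMinorsDuality} yields that $N$ is a gammoid as well. No further work is required here.

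For the backward direction, assume $N = M\restrict X$ is a gammoid. By the definition of a deflate (Definition~\ref{def:deflateOfM}) there is an enumeration $E\BS X = \dSET{e_1,e_2,\ldots,e_m}$ such that for every $i\in\SET{1,2,\ldots,m}$ the modular cut
\[ C_i = \SET{F\in \Fcal\left( \vphantom{A^A} M\restrict \left( X\cup\SET{e_1,e_2,\ldots,e_{i-1}} \right)  \right)~\middle|~ e_i \in \cl_M(F)} \]
possesses a unique $\subseteq$-minimal element. I would do induction on $i\in\SET{0,1,\ldots,m}$ with the statement: $M_i := M\restrict(X\cup\SET{e_1,\ldots,e_i})$ is a gammoid. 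The base case $i=0$ is exactly the hypothesis that $N=M_0$ is a gammoid. For the induction step, observe that $M_i$ is a single element extension of $M_{i-1}$ by the element $e_i$, hence $M_i\in\Xcal(M_{i-1},e_i)$. By Theorem~\ref{thm:crapo}, the modular cut of $M_{i-1}$ corresponding to this extension is precisely $C_i$ (the definition of $C_i$ matches the characterization from the Crapo bijection). Since by hypothesis $C_i$ has exactly one $\subseteq$-minimal element, Lemma~\ref{lem:principalExtOfStrictGammoidIsStrict} applies and yields that $M_i$ is a gammoid. After $m$ steps we conclude that $M_m = M$ is a gammoid.

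I do not foresee a genuine obstacle: the only subtle point is to verify that the modular cut used in the application of Lemma~\ref{lem:principalExtOfStrictGammoidIsStrict} at step $i$ is indeed the cut $C_i$ prescribed by the deflation data, which is immediate from Theorem~\ref{thm:crapo} together with the observation that $\cl_{M_i}(F) = \cl_M(F)\cup\SET{e_i}$ when $F\in C_i$ and $\cl_{M_i}(F)=\cl_M(F)$ otherwise.
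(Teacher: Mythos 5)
Your proof is correct and follows essentially the same route as the paper: the forward direction by closure of gammoids under minors, and the backward direction by induction along the deflation ordering, applying Lemma~\ref{lem:principalExtOfStrictGammoidIsStrict} at each step. (One cosmetic remark: the closing identity should read $\cl_{M_i}(F)=\cl_M(F)\cap\left(X\cup\SET{e_1,\ldots,e_i}\right)$ rather than $\cl_M(F)\cup\SET{e_i}$, but all you actually need is that $e_i\in\cl_{M_i}(F)\Leftrightarrow e_i\in\cl_M(F)$, which holds since closure in a restriction is the intersection of the closure with the restricted ground set.)
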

\begin{proof}\PRFR{Mar 27th}
	If $M$ is a gammoid, then $N$ is a gammoid, too (Theorem~\ref{thm:GammoidsClosedMinorsDuality}).
	Now let $N$ be a gammoid, and let $E\BS X = \dSET{e_1,e_2,\ldots,e_m}$ be implicitly ordered with the properties required in Definition~\ref{def:deflateOfM}.
	Lemma~\ref{lem:principalExtOfStrictGammoidIsStrict} yields that $M\restrict \left( X\cup\SET{e_1,e_2,\ldots,e_i} \right)$ is a gammoid whenever
	$M\restrict \left( X\cup\SET{e_1,e_2,\ldots,e_{i-1}} \right)$ is a gammoid. 
	Thus, by induction on $m$, we obtain that $M$ is a gammoid whenever $N$ is a gammoid -- a fact that we assumed.
\end{proof}

\noindent The former situation is a special case of the following situation:

\begin{definition}\PRFR{Mar 27th}
	Let $T=(T_0,\Tcal)$ be a matroid, $D=(V,A)$ be a digraph with $T_0\subseteq V$, and let $E\subseteq V$ be any set.
	The \deftext[matroid induced by D from R@matroid induced by $D$ from $T$]{matroid on $\bm E$ induced by $\bm D$ from $\bm T$}
	shall be the pair \label{n:IDTE}
	\( I(D,T,E) = (E,\Ical), \)
	where $X\in \Ical$ if and only if there is a routing $R\colon X\routesto T_0$ in $D$, such that $\SET{p_{-1}~\middle|~p\in R}\in \Tcal$.
	In other words $X$ is independent in $I(D,T,E)$ if and only if there is a linking from $X$ onto an independent set of $T$ in $D$.
\end{definition}

\noindent It is a result of J.H.~Mason that this generalization of Definition~\ref{def:gammoid} always produces a matroid.

\begin{theorem}[\cite{M72}, Theorem 1.1]\label{thm:mason11}\PRFR{Mar 27th}
	Let $T=(T_0,\Tcal)$ be a matroid, $D=(V,A)$ be a digraph with $T_0\subseteq V$, and let $E\subseteq V$ be any set. Then $I(D,T,E)$ is indeed a matroid.
\end{theorem}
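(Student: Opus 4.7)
My plan is to verify the three matroid axioms (I1)--(I3) of Definition~\ref{def:indepAxioms} for the pair $I(D,T,E) = (E,\Ical)$. Axioms (I1) and (I2) follow directly from the definitions. The empty routing links $\emptyset$ onto $\emptyset$, and $\emptyset \in \Tcal$ by (I1) applied to $T$, proving (I1). For (I2), given $X \in \Ical$ witnessed by a routing $R \colon X \routesto T_0$ with $J = \SET{p_{-1} \mid p \in R} \in \Tcal$, and any $Y \subseteq X$, the sub-family $R_Y = \SET{p \in R \mid p_1 \in Y}$ is a routing from $Y$ onto $J_Y \subseteq J$; since $\Tcal$ is closed under taking subsets by (I2) applied to $T$, $J_Y \in \Tcal$, hence $Y \in \Ical$.

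Axiom (I3) contains the real content. Let $X, Y \in \Ical$ with $|X| < |Y|$, witnessed by linkings $R_X \colon X \routesto J_X$ and $R_Y \colon Y \routesto J_Y$ with $J_X, J_Y \in \Tcal$ and $|J_X| = |X| < |Y| = |J_Y|$. Interpreting $R_X$ and $R_Y$ both as $(X \cup Y)$-$T_0$-connectors in $D$, Theorem~\ref{thm:augmentationCons} produces an $(X \cup Y)$-$T_0$-connector $C$ of size $|X|+1$ with $X \subseteq \SET{p_1 \mid p \in C} \subseteq X \cup Y$; thus $\SET{p_1 \mid p \in C} = X \cup \SET{y_0}$ for some $y_0 \in Y \setminus X$. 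Setting $K = \SET{p_{-1} \mid p \in C}$, we have $|K| = |X|+1$, and if $K \in \Tcal$ then $C$ immediately witnesses $X \cup \SET{y_0} \in \Ical$ and we are done.

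The case $K \notin \Tcal$ is the main obstacle, since Theorem~\ref{thm:augmentationCons} gives no control over the terminal-vertex set of the augmented connector. My plan is to iteratively reduce the $T$-nullity $|K| - \rk_T(K) \geq 1$ of the terminal set by coupled exchanges: pick a $T$-circuit $C_T \subseteq K$ and an element $t_1 \in C_T \setminus J_X$ (such a $t_1$ exists since $J_X \in \Tcal$ implies $C_T \not\subseteq J_X$), together with the unique path $q \in C$ ending at $t_1$. The symmetric base-exchange property (Lemma~\ref{lem:basisexchangesymmetric}) applied inside $T$ yields an element $t_2 \in J_X \setminus K$ such that $(K \setminus \SET{t_1}) \cup \SET{t_2}$ is closer to being independent in $T$. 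Realizing this swap at the level of $C$ amounts to rerouting the path $q$ so that it ends at $t_2$ instead of $t_1$: since $t_2 \in J_X$ is the terminal of a path of $R_X$, a Menger-style path exchange (along the lines of the proof of Theorem~\ref{thm:augmentationCons}, applied to a suitable pair of connectors in $D$) produces a new $(X \cup Y)$-$T_0$-connector $C'$ of the same size, with the same initial-vertex set $X \cup \SET{y_0}$, and with terminal-vertex set $(K \setminus \SET{t_1}) \cup \SET{t_2}$. Iterating drives the $T$-nullity to zero. The technical crux --- and the step I expect to be the hardest to write down precisely --- is justifying the Menger-style exchange in the situation where the path $q$ and the paths of $R_X$ may share vertices: showing that the rerouting can be executed while both preserving vertex-disjointness and pinning down the initial set to $X \cup \SET{y_0}$. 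A natural alternative, if this direct route proves too cumbersome, is to instead define $\rho(X) = \max\SET{|R| \mid R \text{ routes some } X' \subseteq X \text{ onto some } J \in \Tcal}$ and verify the submodularity axiom (R3) of Theorem~\ref{thm:rankAxioms} via a simultaneous exchange in $D$ and $T$; the matroid structure on $I(D,T,E)$ then follows from Theorem~\ref{thm:rankAxioms} together with the observation $\Ical = \SET{X \subseteq E \mid \rho(X) = |X|}$.
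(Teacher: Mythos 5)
Your handling of (I1) and (I2) is fine, but the repair loop you propose for (I3) has a genuine gap that I do not believe can be closed as written. Theorem~\ref{thm:augmentationCons} only asserts the \emph{existence} of some augmented connector $C$ with $X\subseteq\SET{p_1\mid p\in C}\subseteq X\cup Y$; it gives you no control over which $y_0$ it selects, and once $y_0$ is fixed the terminal set may be unrepairable. Concretely: let $V=\dSET{x,y_1,y_2,a,b,c}$, $T_0=\SET{a,b,c}$, $E=\SET{x,y_1,y_2}$, $A=\SET{(x,a),(y_1,b),(y_2,c)}$, and let $T$ be the rank-$2$ matroid on $T_0$ in which $a$ and $b$ are parallel. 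Then $X=\SET{x}$ and $Y=\SET{y_1,y_2}$ are independent in $I(D,T,E)$, and $\SET{xa,y_1b}$ is a perfectly legitimate output of Theorem~\ref{thm:augmentationCons}; its terminal set $\SET{a,b}$ is dependent in $T$, yet no connector with initial set $\SET{x,y_1}$ and any other terminal set exists (the only vertex $y_1$ can reach is $b$), and indeed $\SET{x,y_1}\notin\Ical$ --- the correct augmentation is by $y_2$. So the exchange you describe, which keeps the initial set $X\cup\SET{y_0}$ fixed and swaps only terminals, cannot work in general: the augmenting element of $Y$ and the target independent set of $T$ must be chosen \emph{simultaneously}. (In the same example your setup already breaks down earlier, since $J_X\BS K=\emptyset$.) Your fallback via a rank function $\rho$ and axiom (R3) faces exactly the same simultaneous-exchange difficulty and is not worked out.

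For comparison, the paper does not write out the proof either, but it records Mason's route, which is precisely the device for making that simultaneous choice: form the linkage system of $D$ to the union of the two terminal sets (Definition~\ref{def:linkageSystem}), pass to the associated directed bipartite graph (Definition~\ref{def:arcSystemDelta}), and replace the transversal matroid $M(\Acal')$ used in the proof of Theorem~\ref{thm:augmentationCons} by the matroid induced from $T$ through that bipartite graph via Theorem~\ref{thm:bipartiteInduction}. Bipartite induction --- proved there by way of Rado's theorem --- is exactly what couples the path exchange in $D$ with the base exchange in $T$; with that substitution the dual-augmentation argument of Theorem~\ref{thm:augmentationCons} goes through unchanged, and (I3) for $I(D,T,V)$, hence for its restriction $I(D,T,E)$, follows. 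If you want a self-contained proof, that is the step to import rather than an ad hoc rerouting.
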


\noindent \PRFR{Mar 27th}
For a proof\footnote{We chose to omit the full proof, because for pure logical reasons,
we do not need this theorem for our purposes in this work: the construction in
Lemma~\ref{lem:digraphInducedGammoidIfTisGammoid} works if we define a triple $(M,D,T)$ to be a digraph induction whenever $M$ and $T$ are matroids, 
such that $X$ is independent in $M$ if and only if it can be linked to an independent set of $T$ in $D$. Theorem~\ref{thm:mason11} states that for every matroid $T$ and every digraph $D$, there is a matroid $M$ on every subset of the vertex set of $D$ such that $(M,D,T)$ is such a digraph induction.}, see \cite{M72}, p.58; J.H.~Mason constructs the linkage system with respect to two routings from $X$, and $Y$, respectively,
onto independent subsets of $T_0$
and then uses Theorem~\ref{thm:bipartiteInduction} in order to show that if $\left| X \right| < \left| Y \right|$ the augmentation axiom {\em (I3)} holds 
for $I(D,T,V)$. The axioms {\em (I1)} and {\em (I2)} follow easily from the definition, thus $I(D,T,V)$ is a matroid, 
and consequently $I(D,T,E) = I(D,T,V)\restrict E$ is a matroid, too. Now let us present the general form of the non-trivial implication of
 Lemma~\ref{lem:deflationLemma}.

\begin{lemma}\label{lem:digraphInducedGammoidIfTisGammoid}\PRFR{Mar 27th}
	Let $T=(T_0,\Tcal)$ be a matroid, $D=(V,A)$ be a digraph with $T_0\subseteq V$, and let $E\subseteq V$. 
	If $T$ is a gammoid, then $I(D,T,E)$ is a gammoid.
\end{lemma}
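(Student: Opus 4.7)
Since $T$ is a gammoid, Remark~\ref{rem:standardRepresentation} supplies a standard representation $(D_T, T_1, T_0)$ of $T$, so $D_T = (V_T, A_T)$, $T_1 \subseteq T_0$ is a base of $T$, every $t \in T_1$ is a sink of $D_T$, and every $v \in T_0 \setminus T_1$ is a source of $D_T$. I will take a disjoint copy on fresh vertices: $V_T' = \{v' : v \in V_T\}$ with $V_T' \cap V = \emptyset$, let $D_T' = (V_T', A_T')$ be the copy of $D_T$, and let $T_0', T_1'$ be the copies of $T_0, T_1$, so that the copy map $y \mapsto y'$ identifies $T$ with $\Gamma(D_T', T_1', T_0')$. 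Then glue along bridge arcs:
\[
D'' \;=\; \bigl(V \cup V_T',\; A \;\cup\; A_T' \;\cup\; \{(t, t') : t \in T_0\}\bigr).
\]
My claim is $I(D,T,E) = \Gamma(D'', T_1', E)$; since the right-hand side is a gammoid by Definition~\ref{def:gammoid}, this proves the lemma.

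For the inclusion $\Gamma(D'', T_1', E) \subseteq I(D,T,E)$, I start from a routing $R'' \colon X \routesto T_1'$ in $D''$. Each $p \in R''$ begins in $V$ and ends in $V_T'$, so it must traverse at least one bridge arc. The source/sink structure of the standard representation forces uniqueness: once inside $V_T'$ the path cannot return to $V$, and it cannot re-enter $T_0' \setminus T_1'$ from $V_T'$ because these are sources of $D_T'$, so $p$ uses a unique bridge $(t_p, t_p')$. Decomposing $p = p^{(D)} \cdot p^{(D_T')}$ along this bridge, the families $\{p^{(D)}\}$ and $\{p^{(D_T')}\}$ inherit pairwise vertex-disjointness from $R''$, the endpoints $t_p \in T_0$ are pairwise distinct, so $\{p^{(D)}\}$ is a linking $X \routesto Y$ in $D$ with $Y := \{t_p : p \in R''\}$, and $\{p^{(D_T')}\}$ is a routing $Y' \routesto T_1'$ in $D_T'$, witnessing (via the copy identification) that $Y \in \Tcal$.

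For the converse, let $X$ be independent in $I(D,T,E)$, and fix a linking $R_1 \colon X \routesto Y$ in $D$ with $Y \in \Tcal$; the identification $T \cong \Gamma(D_T', T_1', T_0')$ then yields a routing $R_2' \colon Y' \routesto T_1'$ in $D_T'$. For each $p \in R_1$ ending at $y$, I concatenate $p$, the bridge $(y, y')$, and the path $q' \in R_2'$ starting at $y'$. Because $V \cap V_T' = \emptyset$ and $|p| \subseteq V$, $|q'| \subseteq V_T'$, each such concatenation is a path, and pairwise vertex-disjointness of the resulting family reduces immediately to disjointness of $R_1$ on the $V$-side and of $R_2'$ on the $V_T'$-side. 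This produces a routing $X \routesto T_1'$ in $D''$, so $X$ is independent in $\Gamma(D'', T_1', E)$.

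The crux of the argument is the unique-bridge decomposition of paths in $R''$: without the source/sink structure of a standard representation of $T$, a path in $D''$ could in principle zigzag between $V$ and $V_T'$ through several bridges, and the correspondence between routings in $D''$ and (linking in $D$, routing in $D_T'$) pairs would collapse. Using that $T$ is a gammoid is precisely what supplies this structure via Remark~\ref{rem:standardRepresentation}, after which both containments are direct bookkeeping.
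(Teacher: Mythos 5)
Your proof is correct and follows essentially the same route as the paper: a vertex-disjoint copy of a representation of $T$ glued to $D$ by bridge arcs $(t,t')$ for $t\in T_0$, with the routing/linking correspondence obtained by splitting and joining paths at the unique bridge arc each path traverses. One small remark: the standard representation is not actually needed --- the paper works with an arbitrary representation of $T$, because the unique-bridge decomposition already follows from $V\cap V_T'=\emptyset$ together with the fact that every arc between $V$ and $V_T'$ points from $V$ into $V_T'$ (so no path can return to $V$ after crossing a bridge); the source/sink structure you single out as the crux plays no role in preventing zigzagging.
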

\begin{proof}\PRFR{Mar 27th}
	If $T$ is a gammoid, then there is a representation $(D',S',T_0)$ with $D'=(V',A')$ 
	such that $T = \Gamma(D',S,T_0)$. Let $v\mapsto \tilde{v}$ denote a renaming scheme such that the renamed vertices are disjoint from $V$,
	i.e.
	$\tilde V' \cap V = \emptyset$ where $\tilde V' = \SET{\tilde{v} ~\middle|~ v\in V'}$. 
	Let $\tilde{X} = \SET{\tilde{x}~\middle|~ x\in X}$ for all $X\subseteq V'$.
    We define the digraph $D_I = (V\disunion \tilde{V'}, A_I)$ where 
    \[ A_I = A\disunion \SET{\left(\tilde u,\tilde v\right)~\middle|~ (u,v)\in A'} \disunion \SET{\left(t,\tilde t\right)~\middle|~t\in T_0}.
    \]
    We show that $I(D,T,E) = \Gamma(D_I,\tilde{S},E)$. First, observe that $T_0$ is a $V$-$\tilde S$-separator in $D_I$, because every arc between $V$ and
     $\tilde V'$ leaves $t$ and enters $\tilde t$ for some $t\in T_0$. Therefore there is a routing $R_I\colon X\routesto \tilde S$ in $D_I$ with $X\subseteq E$
     if and only if there are a linking $R\colon X\routesto T_X$ with $T_X \subseteq T_0$ in $D$ and a routing $R'\colon T_X\routesto S$ in $D'$ ---
     we may construct $R$ and $R'$ from $R_I$ by splitting every $p_I\in R_I$ into its $V$- and $\tilde V'$-components. Conversely, if we have a
     pair of routings $R$ and $R'$ such that $\SET{p_{-1}~\middle|~p\in R} = \SET{p'_1 ~\middle|~p'\in R'}$,
     then we may obtain $R_I$ by joining the corresponding paths in $D_I$.
      The latter routing $R'$ exists if and only if $T_X\in\Tcal$,
      therefore $X\subseteq E$ is independent in $\Gamma(D_I,\tilde S, E)$ if and only if $X$ may be linked onto an independent subset of $T_0$ with respect to $T$, i.e. if and only if $X$ is independent in $I(D,T,E)$.
\end{proof}

\noindent The following corollary is the corresponding generalization of Lemma~\ref{lem:deflationLemma}.

\begin{corollary}\PRFR{Mar 27th}
	Let $T=(T_0,\Tcal)$ be a matroid, $D=(V,A)$ be a digraph with $T_0\subseteq V$, such that every vertex $t\in T_0$ is a sink in $D$.
	 Let further $E\subseteq V$ such that $T_0\subseteq E$.
	Then $T$ is a gammoid if and only if $I(D,T,E)$ is a gammoid.
\end{corollary}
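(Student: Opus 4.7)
The plan is to prove the two implications separately. The forward direction is immediate: if $T$ is a gammoid, then $I(D,T,E)$ is a gammoid by Lemma~\ref{lem:digraphInducedGammoidIfTisGammoid}, which requires no additional hypothesis on $D$ or the relationship between $T_0$ and $E$.

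For the reverse direction, my plan is to show the identity $I(D,T,E)\restrict T_0 = T$, after which the conclusion follows because the class of gammoids is closed under restriction (Theorem~\ref{thm:GammoidsClosedMinorsDuality}). The key observation enabling this identity is the sink hypothesis: since every $t\in T_0$ is a sink in $D$, the only paths $p\in \Pbf(D)$ with $p_1\in T_0$ are the trivial paths $p=t$ for $t\in T_0$. Concretely, if $X\subseteq T_0$ and $R\colon X\routesto T_0$ is a routing in $D$, then every $p\in R$ has $p_1\in X\subseteq T_0$, so $p$ is trivial and $p_{-1}=p_1$; consequently $\SET{p_{-1}~\middle|~p\in R}=X$. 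Since $T_0\subseteq E$, the set $X$ lies in the ground set of $I(D,T,E)$, and by definition $X$ is independent in $I(D,T,E)$ precisely when such a routing $R$ exists with $\SET{p_{-1}~\middle|~p\in R}\in\Tcal$, which by the above reduces to $X\in\Tcal$. This establishes $I(D,T,E)\restrict T_0=T$.

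With this identity in hand, assuming $I(D,T,E)$ is a gammoid, Theorem~\ref{thm:GammoidsClosedMinorsDuality} immediately yields that $T=I(D,T,E)\restrict T_0$ is a gammoid. The proof therefore amounts to combining Lemma~\ref{lem:digraphInducedGammoidIfTisGammoid} with this one-line restriction argument; there is no real obstacle, as the sink condition on $T_0$ together with the inclusion $T_0\subseteq E$ makes the identification $I(D,T,E)\restrict T_0=T$ essentially a direct unfolding of Definitions of routing and of $I(D,T,E)$.
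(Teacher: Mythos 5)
Your proposal is correct and follows the same route as the paper: the forward direction is exactly Lemma~\ref{lem:digraphInducedGammoidIfTisGammoid}, and the reverse direction uses the sink hypothesis to show that only trivial routings start in $T_0$, hence $I(D,T,E)\restrict T_0 = T$, after which closure of gammoids under restriction (Theorem~\ref{thm:GammoidsClosedMinorsDuality}) finishes the argument. No gaps.
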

\begin{proof}\PRFR{Mar 27th}
	If $T$ is a gammoid, then $I(D,T,E)$ is a gammoid (Lemma~\ref{lem:digraphInducedGammoidIfTisGammoid}).
	Since $\Pbf(D;t,v) =\emptyset$ for all $t\in T_0$ and all $v\in V\BSET{t}$, we obtain that
	$X$ is independent in $I(D,T,E)$ if and only if $X$ is independent in $T$
	for all $X\subseteq T_0$. Thus $I(D,T,E)\restrict T_0 = T$, and, consequently, if $I(D,T,E)$ is a gammoid, then so is $T$ 
	(Theorem~\ref{thm:GammoidsClosedMinorsDuality}).
\end{proof}
\clearpage

\section{The Recognition Problem}

First, we give a formal definition of what we mean when we talk about the problem of {\em recognizing a gammoid}.

\begin{definition}\PRFR{Feb 15th}
	Let $\Mcal$ be a class of matroids.
	The \deftext{gammoid recognition problem for $\bm \Mcal$} -- or shorter $\mathrm{Rec}\Gamma_\Mcal$\label{n:RecGM} --
	is the problem of computing the image of $M\in \Mcal$ under the class-map\label{n:GammaM}
	\[ \Gamma_\Mcal \colon \Mcal \maparrow \SET{0,1},\quad M\mapsto \begin{cases}[r] 1 & \text{if~} M\text{~is a gammoid,}\\
																					 0 & \text{otherwise.} \end{cases} \]
	The elements $M\in \Mcal$ are called the \deftext[instances of RecGM@instances of $\mathrm{Rec}\Gamma_\Mcal$]{instances of $\bm{\mathbf{Rec}\Gamma_\Mcal}$}.
\end{definition}

\PRFR{Mar 7th}
\noindent In less formal words, the gammoid recognition problem is the problem that given an instance of a matroid $M$,
determine whether $\Gamma_\Mcal(M) = 1$ or $\Gamma_\Mcal(M) = 0$ by application of some algorithm. Thus we are interested in
algorithms that compute $\Gamma_\Mcal$ and naturally we are also interested in the run-time complexity of 
those algorithms as well as lower bounds for the complexity of these algorithms.
Obviously, there is no constant-time algorithm for the computation of $\Gamma_\Mcal(M)$, therefore we would like to fix
a certain way to encode a matroid $M$ --- up to renaming elements of its ground set,
yet preserving the implicit linear order of its ground set $E=\dSET{e_1,e_2,\ldots, e_n}$.

\begin{definition}\PRFR{Mar 7th}
	Let $n,r\in \N$ with $n\geq r$. We fix the bijection\label{n:kth}
	\begin{align*}
		 \kth(n,r)\colon \SET{1,2,\ldots,\binom{n}{r}} \maparrow &\binom{\SET{1,2,\ldots,n}}{r}
	\end{align*}
	with the defining property that for all $i,j\in \N$ with $1\leq i,j \leq \binom{n}{r}$ 
	we have
	\[ \min\left(\vphantom{A^A} \kth(n,r)(i) \bigtriangleup  \kth(n,r)(j)  \right) \,\,\in\,\, \kth(n,r)(i) \quad\Longleftrightarrow\quad i < j,\]
	where $\bigtriangleup$ denotes the symmetric difference of sets.
	In words, we enumerate all $r$-elementary subsets of $\SET{1,2,\ldots,n}$ in ascending order with respect to the linear order that
	relates a subset $A$ with a subset $B$ whenever the smallest element in $\left( A\cup B \right) \BS \left( A\cap B \right)$
	belongs to $A$.
\end{definition}

\noindent For $n \geq r+1$ we have $\kth(n,r)(1) = \SET{1,2,\ldots,r}$, $\kth(n,r)(2) = \SET{1,2,\ldots,r-1,r+1}$,
and $\kth(n,r)\left( \binom{n}{r} \right) = \SET{n-r, n-r+1,\ldots,n}$.

\needspace{4\baselineskip}

\begin{definition}\label{def:bitsM}\PRFR{Mar 7th}
	Let $M=(E,\Ical)$ be a matroid and let $E = \dSET{e_1,e_2,\ldots,e_n}$ bear an implicit linear order.
	We define the \deftext[binary encoding of $M$]{binary encoding of $\bm M$} to \label{n:bMenc}
	be the vector \[ \bits(M) = (\bits(M,i))_{i=1}^{N} \in \SET{0,1}^{N} \]
	where $N = \left| E \right| + 2 + \binom{\left| E \right|}{\rk_M(E)}$ is the encoding length of $M$
	and where
	\[ \bits(M,i) = \begin{cases}[r]
						1 & \quad \text{if~} i \leq \rk_M(E),\\
						0 & \quad \text{if~} i = \rk_M(E) + 1, \\
						1 & \quad \text{if~} \rk_M(E) + 1 < i \leq \left| E \right| + 1, \\
						0 & \quad \text{if~} i = \left| E \right| + 2, \\
						1 & \quad \text{if~} i > \left| E \right| + 2 \txtand \kappa\left( i - \left| E \right| - 2 \right)\in \Ical,\\
						0 & \quad \text{otherwise,}
					\end{cases}\]
	where $\kappa(k) = \SET{e_i\in E ~\middle|~ i\in \kth(\left| E \right|, \rk_M(E))(k)}$.
	In other words, $\bits(M)$ consists of a unary encoding of $\rk_M(E)$, followed by a unary encoding of $\left| E \right| - \rk_M(E)$,
	followed by $\binom{\left| E \right|}{\rk_M(E)}$ bits encoding which of the $\rk_M(E)$-elementary subsets of $E$ are bases of $M$,
	in the ascending order with respect to the implicit linear order on $E$.
	Furthermore, the \deftext[encoding length of $M$]{encoding length of $\bm M$} shall be denoted by \label{n:encM}
	\[ \Nbf(M) = \left| E \right| + 2 + \binom{\left| E \right|}{\rk_M(E)}. \qedhere \]
\end{definition}

\begin{remark}\PRFR{Mar 7th}
	Clearly, we can restore a matroid isomorphic to $M=(E,\Ical)$ from $\bits(M)$.
	Furthermore, the laws of the binomial coefficients yield that
	$$\Nbf(M) = \Nbf(M^\ast)$$ since $\binom{n}{k} = \binom{n}{n-k}$, and for all $X\subsetneq E$
	\[\Nbf(M\restrict X) = \Nbf(M\contract X) < \Nbf(M)\]
	since $\binom{n}{k} = \binom{n-1}{k-1} + \binom{n-1}{k}$.
	When $\left| E \right|\geq 4$, a rather rough estimate is \( \Nbf(M) \leq 2^{\left| E \right|}. \)
\end{remark}

\needspace{4\baselineskip}
\begin{remark}\PRFR{Mar 7th}
	R.~Pendavingh and J.~van~der~Pol give the following
	 lower bound for the number of matroids of rank $r$ on an $n$-elementary ground set in \cite{PP17}, 
	 let $s_{r,n}$ denote this lower bound. Then 
	 \[ \log \left( s_{n,r} \right) \geq \frac{1}{n-r+1}\cdot \binom{n}{r}\cdot \log\left( c^{1-r}(n-r+1)(1+o(1)) \right) \]
	 for some constant $c$ independent of $r$ and $n$
	 (Lemma~9~(3), \cite{PP17} p.4). Thus, if we write a big list of all matroids with $n$ elements and rank $r$, and then
	 use the corresponding list index, encoded as a binary number, in order to represent the base vector of the matroid, 
	 we would still have the binomial $\binom{\left| E \right|}{\rk_M(E)}$ as a factor in
	 the encoding length. Therefore our encoding $\bits(M)$ from Definition~\ref{def:bitsM} may be considered not excessively-bloated.
\end{remark}




\PRFR{Mar 7th}
\noindent First, we shall examine how easy it is to extract matroid information from an encoded matroid.
Throughout this work, we assume that checking, whether a set of the correct cardinality is a base of $M$, can be
done in $O(1)$ time by reading the corresponding bit from $\bits(M)$.

\needspace{3\baselineskip}
\begin{algorithm}\PRFR{Mar 7th}\label{alg:indep}\index{algorithm!independence}  \textbf{Check For Independence}\\

\noindent 
\begin{tabularx}{\textwidth}{rl}
	\textbf{Input}& {\em(1)} A matroid $M=(E,\Ical)$ given by $\bits(M)$.\\
	&{\em(2)} A subset $X\subseteq E$, given by a vector of $2^{\SET{1,2,\ldots,\left| E \right|}}$.\\
	\textbf{Output}& $1$ if $X\in \Ical$, $0$ otherwise.\\
	& \\
	& \ttfamily for $i = 1 \ldots \binom{\left| E \right|}{\rk_M(E)}$ do\\*
	& \ttfamily  $\quad $ if $\bits(M,\left| E \right|+1+i) = 1 \text{\rmfamily ~and~} X \subseteq \kth\left( \left| E \right|, \rk_M(E) \right)(i)$ 
		then do \\*
	& \ttfamily  $\quad $ $\quad $ return $1$ and stop\\*
	& \ttfamily  $\quad $ end if $\bits(M,\left| E \right|+1+i) = 1 \text{\rmfamily ~and~} X \subseteq \kth\left( \left| E \right|, \rk_M(E) \right)(i)$ \\
	& \ttfamily $\quad $ next $i$ \\*
	& \ttfamily end for $i$ \\*
	& \ttfamily return $0$.
\end{tabularx}

\noindent
In order to check for independence we have to test whether $X$ is the subset of a base of $M$.
Therefore we iterate over at most $O(\Nbf(M))$ base candidates $B$ and for each  candidate we check whether $B\in\Bcal(M)$ and
$B \subseteq X$, which can be done in $O(\left| E \right|)$ bit-comparisons.
Thus the overall run-time is $O(\left| E \right|\cdot \Nbf(M)) = O(n^2)$ 
where $n= \Nbf(M) + \left| E \right|$ is the total bit-length of the input.
\end{algorithm}

\begin{proof}[Proof of correctness]\PRFR{Mar 7th}
Lemma~\ref{lem:augmentation} states that every independent set $X\in \Ical$ can be extended to a base $B\in \Bcal(M)$.
The execution invariant at the ``{\ttfamily next $i$}'' instruction is that
 none of the bases in $\Bcal(M)\cap \SET{\SET{e_k ~\middle|~ k\in \kth(\left| E \right|,\rk_M(E)(j)} ~\middle|~ j \leq i}$ contains $X$.
 Thus the invariant at the ``{\ttfamily end for $i$}''-instruction is
 that there is no $B\in\Bcal(M)$ with $X\subseteq B$. And then the return value is $0$.
 Otherwise, if the algorithm returns $1$, then the set $\SET{e_k ~\middle|~ k\in \kth\left( \left| E \right|, \rk_M(E) \right)(i)}$
 is a base of $M$ that proves $X\in \Ical$.
\end{proof}

\noindent \textbf{In order to reduce the technicalities of notation}, we identify the ground set $E=\dSET{e_1,e_2,\ldots,e_n}$ with
the set $\dSET{1,2,\ldots,n}$ through the bijection $i\mapsto e_i$; therefore we identify $\SET{e_i~\middle|~i\in \kth(n,r)(j)}$ with $\kth(n,r)(j)$ for the treatise of algorithms.

\needspace{3\baselineskip}
\begin{algorithm}\PRFR{Mar 7th}\index{algorithm!rank}  \textbf{Compute the Rank}\\

\noindent
\begin{tabularx}{\textwidth}{rl}
	\textbf{Input}& {\em(1)} A matroid $M=(E,\Ical)$ given by $\bits(M)$.\\
	&{\em(2)} A subset $X\subseteq E$, given by a vector $2^{\SET{1,2,\ldots,\left| E \right|}}$.\\
	\textbf{Output}& $\rk_M(X)$.\\
	& \\
	& $r := 0$ \\*
	& \ttfamily for $i = 1 \ldots \binom{\left| E \right|}{\rk_M(E)}$ do\\*
	& \ttfamily $ \quad $ $r := \max\SET{\vphantom{A^A}r,\,\bits(M, \left| E \right| + 1 + i)\cdot 
	\left| \kth\left( \left| E \right|, \rk_M(E) \right)(i) \cap X \right|}$\\*
	& \ttfamily $\quad $ next $i$ \\*
	& \ttfamily end for $i$\\*
	& \ttfamily return $r$.
\end{tabularx}

\PRFR{Mar 7th}
\noindent
In order to compute the rank, we have to do $O(\Nbf(M))$ iterations of the main loop which consists of one multiplication, 
one comparison of values bounded above by $\left| E \right|$,
 possibly a copy operation of these values,
 and possibly a calculation of the intersection between 
$\kth(\left| E \right|, \rk_M(E))(i)$ and $X$, which can be done in $O(\left| E \right|)$. 
Thus the overall run-time is $O(\left| E \right|\cdot \Nbf(M)) = O(n^2)$ 
where $n= \Nbf(M) + \left| E \right|$ is the total bit-length of the input.
\end{algorithm}

\begin{proof}[Proof of correctness]\PRFR{Mar 7th}
Lemma~\ref{lem:augmentation} and Definition~\ref{def:rank} yield that the rank of $X\subseteq E$ equals the maximum cardinality of
the intersection of $X$ with a base $B\in\Bcal(M)$.
Clearly, the invariant for the value of $r$ at the ``{\ttfamily next $i$}'' instruction is
 $$r = \max\SET{\vphantom{A^A}\left| X\cap B \right| ~\middle|~ B\in \Bcal(M) \cap \SET{\kth(\left| E \right|,\rk_M(E)(j) ~\middle|~ j \leq i}}.$$
Therefore the invariant at the ``{\ttfamily end for $i$}'' instruction is
$$r = \max\SET{\vphantom{A^A}\left| X\cap B \right| ~\middle|~ B\in \Bcal(M)},$$
 thus the returned value $r = \rk_M(X)$ is correct.
\end{proof}

\needspace{3\baselineskip}
\begin{algorithm}\PRFR{Mar 7th}\index{algorithm!closure}  \textbf{Compute the Closure}\\

\noindent
\begin{tabularx}{\textwidth}{rl}
	\textbf{Input}& {\em(1)} A matroid $M=(E,\Ical)$ given by $\bits(M)$.\\
	&{\em(2)} A subset $X\subseteq E$, given by a vector of $2^{\SET{1,2,\ldots,\left| E \right|}}$.\\
	\textbf{Output}& $\cl_M(X)$.\\
	& \\
	& $C := X$ \\*
	& $r := \rk_M(X)$ \\*
	& \ttfamily for $e \in E\BS X$ do \\*
	& \ttfamily $\quad $ if $\rk_M(X\cup\SET{e}) = r$ then $C := C \cup \SET{e}$ \\*
	& \ttfamily $\quad $ next $e$\\*
	& \ttfamily end for $e$\\*
	& \ttfamily return $C$.
\end{tabularx}

\PRFR{Mar 7th}
\noindent
In order to compute the closure, we have to compute the ranks of subsets of $E$ precisely $\left| E\BS X \right| + 1$ times,
therefore we accumulate a running time of $O(\left| E \right|\cdot n^2) = O(n^3)$ where $n= \Nbf(M) + \left| E \right|$ is the total bit-length of the input.
\end{algorithm}

\begin{proof}[Proof of correctness]\PRFR{Mar 7th}
	First, we show that $$\cl(X) = X\cup \SET{e\in E\BS X ~\middle|~\vphantom{A^A} \rk(X\cup\SET{e}) = \rk(X)}.$$
	The closure of $X$ is the intersection
	of all flats $F\in \Fcal(M)$ with $X\subseteq F$ (Definition~\ref{def:clM}),
	and flats are those sets $F\subseteq E$, such that $\rk(F\cup\SET{e}) > \rk(F)$
	holds for all $e\in E\BS F$ (Definition~\ref{def:FcalM}).
	Since $\rk(X) = \rk(\cl(X))$ (Lemma~\ref{lem:clKeepsRank}), we obtain that for all $x\in E\BS X$ with $\rk(X\cup\SET{x})=\rk(X)$,
	we have the implication $X \subseteq F \Rightarrow x\in F$ for all flats $F\in\Fcal(M)$. Consequently, $x\in \cl(X)$ whenever
	$\rk(X\cup\SET{x})=\rk(X)$. On the other hand, if $\rk(X\cup\SET{x}) > \rk(X)$, then $\cl(X\cup\SET{x})\in \Fcal(M)$ is the
	smallest flat that contains $X\cup\SET{x}$, but $\rk(\cl(X\cup\SET{x})) > \rk(\cl(X))$, thus $x\notin \cl(X)$ whenever 
	$\rk(X\cup\SET{x})\not= \rk(X)$.

	\noindent
	Let $E\BS X = \dSET{e_1,e_2,\ldots,e_k}$ with the implicit order of occurrence in the ``{\ttfamily for $e$}''-loop,
	and let $i\in \SET{1,2,\ldots,k}$ be the index of the iteration of the loop corresponding to $e$, i.e. $e_i = e$.
	 The invariant at the 
	``{\ttfamily next $e$}''-instruction with regard to $C$ is $C= \cl_M(X) \cap \left( X\cup\SET{e_1,e_2,\ldots,e_i} \right)$.
	Thus the invariant at the ``{\ttfamily end for $e$}''-instruction is
	 $C = \cl_M(X) \cap \left( X\cup \left( E\BS X \right) \right) = \cl_M(X)$.
\end{proof}

\needspace{3\baselineskip}
\begin{algorithm}\index{algorithm!test for strict gammoid}\label{alg:strictGammoidTest}\PRFR{Mar 7th}  \textbf{Naive Test for Strict Gammoids}\\

\noindent
\begin{tabularx}{\textwidth}{rl}
	\textbf{Input}& {\em(1)} A matroid $M=(E,\Ical)$ given by $\bits(M)$.\\
	\textbf{Output}& $1$, if $\alpha_M \geq 0$, or $0$ otherwise.\\
	& \\
	& \ttfamily initialize $\alpha_M \in \Z^{2^E}$ with $\alpha_M \equiv 0$ \\
	& \ttfamily initialize $\Fcal \in \SET{0,1}^{2^E}$ with $\Fcal \equiv 0$ \\
	& \ttfamily for $k = 0 \ldots \left| E \right|$ do\\*
	& \ttfamily $\quad $ for $X \in \binom{E}{k}$ do\\*
	& \ttfamily $\quad $$\quad $ $a := k - \rk_M(X)$\\
	& \ttfamily $\quad $$\quad $ for $Y \subsetneq X$ do\\*
	& \ttfamily $\quad $$\quad $ $\quad $ if $\Fcal(Y) = 1$ then do\\*
	& \ttfamily $\quad $$\quad $ $\quad $ $\quad $ $a := a - \alpha_M(Y)$\\*
	& \ttfamily $\quad $$\quad $ $\quad $ $\quad $ if $a < 0$ then do \\*
	& \ttfamily $\quad $$\quad $ $\quad $ $\quad $ $\quad $ return $0$ and stop\\*
	& \ttfamily $\quad $$\quad $ $\quad $ $\quad $ end if $a < 0$\\*
	& \ttfamily $\quad $$\quad $ $\quad $ end if $\Fcal(Y) = 1$\\*
	& \ttfamily $\quad $$\quad $ $\quad $ next $Y$\\*
	& \ttfamily $\quad $$\quad $ end for $Y$\\
	& \ttfamily $\quad $$\quad $ $\alpha_M(X) := a$\\*
	& \ttfamily $\quad $$\quad $ if $X = \cl_M(X)$ then $\Fcal(X) := 1$\\*
	& \ttfamily $\quad $$\quad $ next $X$\\*
	& \ttfamily $\quad $ end for $X$\\*
	& \ttfamily $\quad $ next $k$\\*
	& \ttfamily end for $k$\\
	& \ttfamily return $1$.
\end{tabularx}

\noindent
The algorithm calculates $\alpha_M$ bottom-up using the recurrence relation and simultaneously keeping track of the family of flats of $M$.
If $\alpha_M < 0$ at some point, then the algorithm stops early with a negative answer. Otherwise we have to calculate all values of $\alpha_M$ in order to be sure that $M$ is a strict gammoid, therefore iterating $2^{\left| E \right|}$ different values of $X$.
All values that are assigned to $\alpha_M(X)$ are non-negative integers that are bounded by $\left| X \right| - \rk_M(X)$, because
the algorithm stops if $a<0$ before assigning the negative value to $\alpha_M(X)$. For the same reason we 
have $\left| a \right| \leq \left| X \right|$. Thus the calculation of the correct value of $\alpha_M(X)$ 
needs at most $2^{\left| X \right|}$ subtractions, each with a run-time in $O(\log(\left| X \right|))$, 
and $2^{\left| X \right|}$ tests of the flat property in $O(1)$. In order to 
determine the value of $\alpha_M(X)$ for a single instance $X$,
 we need $O\left( \left( \log (\left| X \right|) + 1  \right) \cdot 2^{\left| X \right|} \right) = O\left(\log(\left| E \right|)\cdot 2^{\left| E \right|}\right)$-time. 
 The flat book-keeping needs one closure operation that takes
$O(\left| E \right|^2\cdot \Nbf(M))$, and $\left| E \right|$ bit-comparisons, thus the book-keeping is in $O\left( \left| E \right|^2\cdot \Nbf(M) \right)$. If $M$ is a strict gammoid, 
this has to be done for all $2^{\left| E \right|}$ subsets $X\subseteq E$, 
thus the total run-time is 
\[O\left(\log(\left| E \right|)\cdot 2^{2 \left| E \right|} + \left| E \right|^2\cdot \Nbf(M)\cdot 2^{\left| E \right|}\right) = O\left(\log(n)\cdot 2^{2n}\right)\] where $n= \Nbf(M)$ is the bit-length of the input. 
\end{algorithm}

\begin{proof}[Proof of correctness]\PRFR{Mar 7th}
It suffices to show that the algorithm correctly computes the values $\alpha_M(X)$, and that the algorithm returns $1$, if $\alpha_M(X)\geq 0$ holds for all $X\subseteq E$, and $0$ otherwise (Corollary~\ref{cor:MasonAlpha}).
Let $\Xcal = \dSET{X_1,X_2,\ldots, X_{2^{\left| E \right|}}}$ be the family of all subsets of $E$ in the order of occurrence with respect to the
``{\ttfamily for $X$}''-instruction, and let $i\in \SET{1,2,\ldots,2^{\left| E \right|}}$ such that $X = X_i$.
 The invariant at the ``$ a := k - \rk_M(X)$''-instruction is, that for all $j\in\SET{1,2,\ldots,i-1}$ the value of $\alpha_M(X_j)$
 is correctly assigned and non-negative,
 and that we have that $\Fcal(X_j) = 1$ if and only if $X_j$ is a flat. Furthermore, $a = \left| X_i \right| - \rk_M(X_i)$.
 Now let $\Ycal = \dSET{Y_1,Y_2,\ldots,Y_K}$ be the proper subsets of $X$ in their order of occurrence with respect to the
``{\ttfamily for $Y$}''-instruction, and let $k\in\SET{1,2,\ldots,K}$ be the index such that $Y = Y_k$.
The invariant at the ``{\ttfamily next $Y$}''-instruction is
 $$a = \left| X_i \right| - \rk_M(X_i) - \sum_{F\in\Fcal(M)\cap \SET{Y_1,Y_2,\ldots,Y_k},\, F\subsetneq X} \alpha_M(F).$$
Thus the invariant at the ``{\ttfamily end for $Y$}''-instruction is $a = \alpha_M(X_i) \geq 0$,
and the invariant at the ``{\ttfamily return $1$}''-instruction is that $M$ is a strict gammoid.
The invariant at the ``{\ttfamily return $0$ and stop}''-instruction is that
$$ a = \left| X_i \right| - \rk_M(X_i) - \sum_{F\in\Fcal(M)\cap \SET{Y_1,Y_2,\ldots,Y_k},\, F\subsetneq X} \alpha_M(F) < 0,$$
and since $\alpha_M(Y)\geq 0$ for all $Y\subsetneq X$, we may conclude that $\alpha_M(X_i) \leq a < 0$, which implies that
$M$ is not a strict gammoid.
Therefore the output of the algorithm is $1$ if $M$ is a strict gammoid, and $0$ if $M$ is not a strict gammoid.
\end{proof}

\begin{corollary}\label{cor:decideTransversalOrStrictGammoid}\PRFR{Mar 7th}
	Given a matroid $M$ via its encoding $\bits(M)$, we can decide whether $M$ is a transversal matroid and whether $M$ is a strict gammoid in
	$O\left(\log(\Nbf(M))2^{2\Nbf(M)}\right)$ time.
\end{corollary}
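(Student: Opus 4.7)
The plan is to reduce both decision problems to the strict gammoid test provided by Algorithm~\ref{alg:strictGammoidTest}, whose run-time has already been analysed as $O\left(\log(\Nbf(M))\cdot 2^{2\Nbf(M)}\right)$. For the strict gammoid recognition this is immediate: one simply invokes Algorithm~\ref{alg:strictGammoidTest} on the input $\bits(M)$, which by Corollary~\ref{cor:MasonAlpha} correctly decides whether $\alpha_M \geq 0$ and hence whether $M$ is a strict gammoid, within the claimed time bound.

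For the transversal matroid recognition the idea is to invoke the strict gammoid test on the dual matroid, justified by Corollary~\ref{cor:transversalstrictdual}, which states that $M$ is a transversal matroid if and only if $M^\ast$ is a strict gammoid. To carry this out algorithmically, I first need to compute $\bits(M^\ast)$ from $\bits(M)$. By Corollary~\ref{cor:dualbase}, $B \subseteq E$ is a base of $M^\ast$ if and only if $E \setminus B$ is a base of $M$, and moreover $\rk_{M^\ast}(E) = \left|E\right| - \rk_M(E)$. Thus building $\bits(M^\ast)$ amounts to: writing down the unary prefix encoding the dual rank (obtainable in $O(\left|E\right|)$ time from the prefix of $\bits(M)$), then iterating through the $\binom{\left|E\right|}{\rk_{M^\ast}(E)}$ subsets of $E$ of size $\rk_{M^\ast}(E)$ in the order dictated by $\kth(\left|E\right|, \rk_{M^\ast}(E))$, and for each such subset $X$ copying the bit of $\bits(M)$ corresponding to whether $E \setminus X$ is a base of $M$.

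Looking up that bit for each $X$ is essentially the same task as Algorithm~\ref{alg:indep} without the subset test, and since by Definition~\ref{def:bitsM} we have $\binom{\left|E\right|}{\rk_M(E)} = \binom{\left|E\right|}{\rk_{M^\ast}(E)}$ and therefore $\Nbf(M) = \Nbf(M^\ast)$, the whole dualization step runs in time $O\left(\left|E\right|\cdot \Nbf(M)\right)$, which is polynomial in $\Nbf(M)$ and hence dominated by the bound $O\left(\log(\Nbf(M))\cdot 2^{2\Nbf(M)}\right)$. Then applying Algorithm~\ref{alg:strictGammoidTest} to $\bits(M^\ast)$ decides whether $M^\ast$ is a strict gammoid in time $O\left(\log(\Nbf(M^\ast))\cdot 2^{2\Nbf(M^\ast)}\right) = O\left(\log(\Nbf(M))\cdot 2^{2\Nbf(M)}\right)$, which by Corollary~\ref{cor:transversalstrictdual} decides whether $M$ is a transversal matroid.

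No step presents a real obstacle; the proof is essentially a matter of noting that duality preserves the encoding length and can be computed in polynomial time relative to $\Nbf(M)$, so that both decision problems fall within the same asymptotic complexity as the single call to Algorithm~\ref{alg:strictGammoidTest}.
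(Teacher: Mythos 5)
Your proof is correct and follows essentially the same route as the paper: run Algorithm~\ref{alg:strictGammoidTest} on $M$ directly, and on $M^\ast$ via the duality between transversal matroids and strict gammoids, after first constructing $\bits(M^\ast)$ from $\bits(M)$. The only difference is cosmetic: the paper obtains $\bits(M^\ast)$ in $O(\Nbf(M))$ time by observing that the $\kth$-enumeration sends complements to reversed positions, so one merely relocates the first zero and reverses the base-bit string, whereas your per-subset lookup is slower (and your stated $O(\left|E\right|\cdot\Nbf(M))$ actually undercounts a naive index search, which costs $O(\left|E\right|\cdot\Nbf(M)^2)$) --- but either way the dualization is polynomial and absorbed by the exponential bound, so the conclusion stands.
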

\begin{proof}\PRFR{Mar 7th}
	We may use Algorithm~\ref{alg:strictGammoidTest} on $M$ to test whether $M$ is a strict gammoid,
	and on $M^\ast$ in order to test whether $M$ is a transversal matroid. The encoding $\bits(M^\ast)$ can be obtained in $O(\Nbf(M))$ time:
	the location of the first zero has to be moved from $\rk_M(E)+1$ to $\left| E \right| - \rk_M(E) + 1$, and the encoding of the bases in $\bits(M)$ has to be brought in reverse order to obtain an encoding of $M^\ast$. Then we can test whether $M^\ast$ is a strict gammoid to obtain the result (Lemma~\ref{lem:dualOfStrictGammoidIsTransversal}).
\end{proof}


\subsection{Special Cases}

\PRFR{Mar 7th}
In this section, we give a quick overview over some special classes of matroids
and gammoids, where there is an easy way to answer the question
whether a matroid, that exhibits the additional properties,
is a gammoid with other special properties --- or whether it is not.

\begin{proposition}[\cite{IP73}, Proposition~4.8 and Corollary~4.9]\label{prop:cornerCases}\PRFR{Feb 15th}
	Let $M=(E,\Ical)$ be a matroid. Then
	\begin{enumerate}\ROMANENUM 
		\item If $\rk_M(E) \leq 2$, then $M$ is a strict gammoid.
		\item If $\rk_M(E) = 3$, then $M$ is a gammoid if and only if $M$ is a strict gammoid.
		\item If $\rk_M(E) = \left| E \right| - 3$, then $M$ is a gammoid if and only if $M$ is a transversal matroid.
		\item If $\rk_M(E) \geq \left| E \right| - 2$, then $M$ is a transversal matroid.
	\end{enumerate}
\end{proposition}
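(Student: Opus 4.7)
The plan splits the four claims into two essentially independent arguments via duality. Since gammoids are closed under duality (Corollary \ref{cor:dualityrespectingrepresentation}) and $M$ is a transversal matroid if and only if $M^\ast$ is a strict gammoid (Corollary \ref{cor:transversalstrictdual}), and since $\rk_{M^\ast}(E) = |E| - \rk_M(E)$, claim (iii) is just (ii) applied to $M^\ast$, and claim (iv) is (i) applied to $M^\ast$. So it suffices to prove (i) and (ii).

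For (i), I would construct a strict standard representation of $M$ by hand. The cases $\rk_M(E) = 0$ and $\rk_M(E) = 1$ are routine: take $D = (E, \emptyset)$ with $T = \emptyset$ in the former, and in the latter pick a non-loop $t \in E$, set $T = \{t\}$, and add an arc $e \to t$ for every other non-loop (leaving loops isolated). For $\rk_M(E) = 2$ fix a base $\{t_1, t_2\}$ of $M$ and let $P_1, \ldots, P_k$ be the parallel classes with $t_i \in P_i$ (so $k \geq 2$). Build $D$ on vertex set $E$ by adding an arc $e \to t_i$ for every $e \in P_i \setminus \{t_i\}$ when $i \in \{1,2\}$, and, for every $j \geq 3$, choosing a representative $e_j^\ast \in P_j$ and adding arcs $e_j^\ast \to t_1$, $e_j^\ast \to t_2$, and $e \to e_j^\ast$ for all other $e \in P_j$. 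A short case analysis then shows $\Gamma(D, T, E) = M$: the funnel through $e_j^\ast$ forces any two elements of $P_j$ to share a vertex in any routing, correctly encoding parallel dependency, while elements drawn from different parallel classes can be routed independently via the two targets.

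For (ii), the non-trivial direction is ``rank-$3$ gammoid implies strict gammoid''. Writing $M = M' \restrict E$ with $M' = (V, \Ical')$ a rank-$3$ strict gammoid (Corollary \ref{cor:gammoidrestrictionsamerank}), I would verify Mason's $\alpha$-criterion (Corollary \ref{cor:MasonAlpha}). The flats of $M$ are $F_0$ (loops), points $F_1, \ldots, F_p$, lines $L_1, \ldots, L_\ell$, and $E$; the recurrence in Definition \ref{def:alphaM} gives $\alpha_M(F_0) = |F_0|$, $\alpha_M(F_i) = |F_i \setminus F_0| - 1$, and $\alpha_M(L_j) = n_j - 2$, where $n_j$ is the number of points of $M$ on $L_j$, all manifestly non-negative, leaving
\[ \alpha_M(E) \;=\; p - 3 + 2\ell - \sum_{j=1}^\ell n_j \;\geq\; 0 \]
to check. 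The plan is to exploit the rank-preserving injection $F \mapsto \cl_{M'}(F)$ of $\Fcal(M)$ into $\Fcal(M')$, which yields $p \leq p'$, $\ell \leq \ell'$, and $n_j \leq n'_j$ on lifted lines; combined with $\alpha_{M'}(V) \geq 0$ and the fact that every line of $M'$ already carries at least two points of $M'$, one hopes to squeeze $\alpha_M(E) \geq 0$ out of the two $\alpha$-recurrences. The main obstacle is the bookkeeping for the ``virtual'' points and lines of $M'$ (those not arising as lifts from $M$): the difference $\alpha_{M'}(V) - \alpha_M(E)$ has both positive contributions (every virtual line adds $\geq 2$ to the incidence count $\sum n'_{L'}$) and negative ones (each virtual point costs $1$), and closing the accounting requires using the rank-$3$ hypothesis to cap the lattice depth so that the slack in $\alpha_{M'}(V) \geq 0$ together with the geometry of points lying on lifted lines in $M'$ suffices to absorb the virtual-point deficit.
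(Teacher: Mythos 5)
The paper does not actually prove this proposition; it explicitly defers to Ingleton and Piff \cite{IP73}, so there is no in-paper argument to measure yours against. Your duality reduction of (iii) and (iv) to (ii) and (i) is correct, and the explicit digraph construction for (i) works (loops in the rank-$2$ case should be left as isolated vertices, as in your rank-$1$ case, but that is cosmetic).

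Part (ii) is where all the content lies, and there your argument is not merely unfinished — the accounting you sketch provably cannot close. Two concrete problems. First, Corollary~\ref{cor:MasonAlpha} requires $\alpha_M(X)\geq 0$ for \emph{all} $X\subseteq E$, not only for flats: let $M'$ be the single-element extension of $M(K_4)$ by a point in general position (the extension with modular cut $\SET{E}$). Every flat of $M'$ has non-negative $\alpha$ (indeed $\alpha_{M'}$ of the full ground set is $7-3-4=0$), yet the non-flat spanning set $X=E(M(K_4))$ has $\alpha_{M'}(X)=6-3-4=-1$. So verifying the flats and the ground set does not establish the criterion. Second, and decisively, the pair consisting of $M=M(K_4)$ and this same $M'$ satisfies every hypothesis your bookkeeping invokes: $M=M'\restrict E$ with $M'$ of rank $3$, the closure map embeds $\Fcal(M)$ rank-preservingly and injectively into $\Fcal(M')$, $p\leq p'$, $\ell\leq\ell'$, $n_j\leq n_j'$ on lifted lines, every line of $M'$ carries at least two points, and $\alpha_{M'}(V)\geq 0$ — and still $\alpha_M(E)=-1$. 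The only hypothesis that fails is that $M'$ be a strict gammoid, but your argument never uses more of that hypothesis than $\alpha_{M'}(V)\geq 0$ together with flat counts, so no refinement of the bookkeeping along these lines can succeed. A correct proof must exploit the strict-gammoid structure of $M'$ much more heavily — for instance $\alpha_{M'}(X)\geq 0$ for \emph{every} $X\subseteq V$, the combinatorial description of $\alpha_{M'}$ on flats via out-neighbourhoods in Theorem~\ref{thm:AlphaCriterion}, or the digraph presentation directly, which is essentially the route taken in \cite{IP73}.
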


\PRFR{Mar 7th}
\noindent We omit the proof here as it does not provide any further guidance for the unconstrained problem of deciding whether
a given matroid is a gammoid or not. The reader interested in a proof of this proposition should read A.W.~Ingleton and M.J.~Piff's paper \cite{IP73}.
Certainly, we should keep in mind the consequence of this proposition: We can expect the most general flavor of gammoids to unfold only
 with matroids $M=(E,\Ical)$ where $4 \leq \rk_M(E) \leq \left| E \right| - 4$. For matroids with
  $\rk_M(E) \in \SET{0,1,2,\left| E \right|-2,\left| E \right|-1,\left| E \right|}$, the answer is always that $M$ is a gammoid. For
  $\rk_M(E)\in\SET{3,\left| E \right|-3}$, we may use Mason's $\alpha$-criterion with respect to $M$, or $M^\ast$, respectively,
  in order to decide whether $M$ is a gammoid (Corollary~\ref{cor:MasonAlpha}) in $O\left(\log(n)2^{2n}\right)$ time (Corollary~\ref{cor:decideTransversalOrStrictGammoid}).

\bigskip 

\PRFR{Mar 7th}
\noindent
 Instead of limiting the class of allowed input instances, we also might consider the related problem of determining
whether a given matroid $M$ belongs to some subclass $\Gcal_P$ of the class of gammoids, where $\Gcal_P$ consists of all gammoids
that have the additional property $P$. It is folklore, that the problem of deciding whether a given matroid $M$ belongs to 
a minor-closed class of matroids $\Mcal_P$, which is characterized by finitely many excluded
minors, has a solution algorithm that runs in polynomial time. First, we present the following theorem, that gives us a hint where
such an excluded minor must appear in any $M\notin\Mcal_P$.

\needspace{4\baselineskip}

\begin{theorem}[Scum Theorem, \cite{Ox11} p.113]\label{thm:scum}\PRFR{Mar 7th}
	Let $M=(E,\Ical)$ be a matroid and let $N=(E',\Ical')$ be a minor of $M$. There is a subset $Z\subseteq E\BS E'$
	such that $M\contract \left( E\BS Z \right)$ has the same rank as $N$, and such that 
	\[ N =  \left( M\contract \left( E\BS Z \right) \right)\restrict E' .\]
	If $N$ has no loop, then we may choose $Z\in\Fcal(M)$.
\end{theorem}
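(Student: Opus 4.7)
The plan is to massage the minor representation $N=(M\contract Y)\restrict X$ (with $X=E'$ and $X\subseteq Y\subseteq E$) into the desired form by ``pushing'' as many deleted elements as possible into the contraction. Write $C=E\BS Y$ (the set contracted) and $D=Y\BS X$ (the set deleted), so that $E'=E\BS(C\cup D)$.

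First I would fix a base $B_C$ of $C$ in $M$ and a base $B_X$ of $X$ in $M\contract Y$. By Lemma~\ref{lem:contractionBchoice}, the union $B_C\cup B_X$ is independent in $M$. Using the rank formula for contraction (Corollary~\ref{cor:rkContract}) one sees that
\[
|B_C|+|B_X| = \rk_M(C)+\rk_N(E') = \rk_M(C)+\bigl(\rk_M(X\cup C)-\rk_M(C)\bigr) = \rk_M(X\cup C),
\]
so $B_C\cup B_X$ is a base of $X\cup C$ in $M$. Next I would extend $B_C\cup B_X$ to a base of $M$ using Lemma~\ref{lem:augmentation}; the newly added elements form a set $F\subseteq E\BS(X\cup C)=D$. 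I then define $Z:=C\cup F\subseteq E\BS E'$.

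The main step is to verify the two equalities $\rk_{M\contract(E\BS Z)}(E\BS Z)=\rk_N(E')$ and $(M\contract(E\BS Z))\restrict E'=N$. Both reduce, via Corollary~\ref{cor:rkContract}, to proving
\[
\rk_M(S\cup C\cup F)=\rk_M(S\cup C)+|F|\quad\text{for every }S\subseteq E'.
\]
I expect this to be the main obstacle, and it is exactly where the careful choice of $F$ pays off. The lower bound follows by applying submodularity (property \emph{(R3)} of Theorem~\ref{thm:rankAxioms}) to $A=S\cup C\cup F$ and $B=X\cup C$, noting that $A\cap B=S\cup C$ (since $F\cap(X\cup C)=\emptyset$, as $F\subseteq D$) and that $\rk_M(X\cup C\cup F)=\rk_M(X\cup C)+|F|$ because $B_X\cup B_C\cup F$ is a base of $M$ contained in $X\cup C\cup F$. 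The upper bound follows from a second application of submodularity, now to $S\cup C$ and $F$ (which are disjoint), together with $\rk_M(F)\leq|F|$. Combining these equalities then yields $\rk_{M\contract(E\BS Z)}(S)=\rk_M(S\cup Z)-\rk_M(Z)=\rk_M(S\cup C)-\rk_M(C)=\rk_N(S)$, so the two matroids agree on every subset of $E'$ and the rank condition is the special case $S=E'$ applied to $E\BS Z$ as a whole (its rank equals that of $B_X$).

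For the final clause, assume $N$ has no loop and replace $Z$ by $\widehat{Z}:=\cl_M(Z)$. I would first argue that $\widehat{Z}\cap E'=\emptyset$: any $e\in\cl_M(Z)\cap E'$ would satisfy $\rk_M(Z\cup\{e\})=\rk_M(Z)$, so Corollary~\ref{cor:rkContract} forces $\rk_{M\contract(E\BS Z)}(\{e\})=0$, making $\{e\}$ a loop of $N$, contradicting the hypothesis. Thus $\widehat{Z}\subseteq E\BS E'$ is a flat of $M$. That the replacement preserves the identities proved above is routine: since $\widehat{Z}=\cl_M(Z)$, one has $\rk_M(\widehat{Z})=\rk_M(Z)$ and $\rk_M(S\cup\widehat{Z})=\rk_M(S\cup Z)$ for every $S\subseteq E'$ (using $S\cup\widehat{Z}\subseteq\cl_M(S\cup Z)$ and Lemma~\ref{lem:clKeepsRank}), so the rank of every subset of $E'$ in $(M\contract(E\BS\widehat{Z}))\restrict E'$ matches that in $N$, completing the proof.
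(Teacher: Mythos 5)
Your proof is correct. Note first that the paper does not actually prove this statement --- it defers entirely to Oxley's book --- so there is no in-text argument to compare against. Your route also differs from the standard textbook proof, which is iterative: writing $N$ as a contraction followed by a deletion, one observes that as long as the rank of the contraction still exceeds $\rk_N(E')$, some deleted element lies outside the closure of the remaining ground set and is therefore a coloop of the current deletion-minor, so it may be moved from the deleted set into the contracted set without changing the minor; one repeats until the ranks agree. You instead build the final contracted set in one shot, extending the base $B_C\cup B_X$ of $X\cup C$ to a base $B$ of $M$ and taking $Z=C\cup F$ with $F=B\BS(B_C\cup B_X)$; the two submodularity estimates then pin down $\rk_M(S\cup C\cup F)=\rk_M(S\cup C)+\left| F \right|$ exactly for every $S\subseteq E'$, which immediately gives both the equality of rank functions and (via $S=\emptyset$ and $\rk_M(E)=\left| B \right|$) the rank condition. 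The closure trick for the loop-free case is also handled correctly, including the check that $\cl_M(Z)$ misses $E'$. The only spot where a word of justification is silently omitted is the inclusion $F\subseteq D$: it holds because $B\cap(X\cup C)$ is an independent subset of $X\cup C$ containing the base $B_C\cup B_X$ of $X\cup C$ and hence equals it, so every element added by the augmentation lies outside $X\cup C$. That is a one-line observation, not a gap.
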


\noindent
For the proof, please refer to J.G.~Oxley's book \cite{Ox11}. It is easy to see that given such a $Z\subseteq E\BS E'$,
every base $Z' \in \Bcal_M(Z)$ also has the property that $N =  \left( M\contract \left( E\BS Z' \right) \right)\restrict E'$
(Lemma~\ref{lem:contractionBchoice}).

\needspace{3\baselineskip}
\begin{algorithm}\label{alg:isMinor}\PRFR{Mar 7th}\index{algorithm!minor}  \textbf{Test for Minor}\\

\noindent
\begin{tabularx}{\textwidth}{rl}
	\textbf{Input}& {\em(1)} A matroid $M=(E,\Ical)$ given by $\bits(M)$.\\
	& {\em(2)} A matroid $N=(E',\Ical')$ given by $\bits(N)$.\\
	\textbf{Output}& $1$, if $N$ is isomorphic to a minor of $M$,\\
	& $0$, otherwise.\\
	& \\
	& \ttfamily if $\rk_N(E',\Ical') > \rk_M(E,\Ical)$ then return $0$ and stop\\
	& \ttfamily for $Z\in \binom{E}{\rk_M(E)-\rk_N(E')}$ do\\*
	& \ttfamily $\quad$ if $Z\notin \Ical$ then next $Z$\\
	& \ttfamily $\quad$ for every $\phi\colon E' \maparrow E\BS Z \text{\rmfamily~injective map}$ do\\*
	& \ttfamily $\quad$ $\quad$ for $X\in \binom{E'}{\rk_N(E')}$ do\\*
	& \ttfamily $\quad$ $\quad$ $\quad$ if not $X\in \Bcal(N) \Leftrightarrow \phi[X]\cup Z \in \Bcal(M)$ then next $\phi$\\
	& \ttfamily $\quad$ $\quad$ $\quad$ next $X$\\*
	& \ttfamily $\quad$ $\quad$ end for $X$ \\
	& \ttfamily $\quad$ $\quad$ return $1$ and stop\\*
	& \ttfamily $\quad$ end for $\phi$ \\
	& \ttfamily $\quad$ next $Z$ \\*
	& \ttfamily end for $Z$\\
	& \ttfamily return $0$.
\end{tabularx}

\PRFR{Mar 7th}
\noindent
Let $n = \Nbf(M) + \Nbf(N)$, $n_M = \Nbf(M)$, and $n_N = \Nbf(N)$ be the respective encoding lengths.
The ``{\ttfamily for $Z$}''-instruction loops through at most $\binom{\left| E \right|}{\rk_M(E) - \rk_N(E')}$ iterations.
For $k,m\in \N$ with $k \leq m$, we may estimate
\( \binom{m}{k-1} \leq k \cdot \binom{m}{k}\), because we obtain a $(k-1)$-elementary subset of an $m$-elementary set $X$ by first
choosing an $k$-elementary subset of $X$ and then choosing one element to drop. This way we obtain all $(k-1)$-elementary subsets
provided that there is some $k$-elementary subset of $X$. Consequently, we may estimate the number of $Z$-iterations by
\begin{align*}
	\binom{\left| E \right|}{\rk_M(E) - \rk_N(E')} & \leq \left| E \right|^{\rk_N(E')} \cdot \binom{\left| E \right|}{\rk_M(E)}
	= O\left( {\left( n_M  \right)}^{n_N+1} \right).
\end{align*}
The test whether $Z\in\Ical$ can be done in $O\left( \left( n_M  \right)^2 \right)$ (Algorithm~\ref{alg:indep}).
The generation of the injective maps $\phi$ for a single instance of $Z$ as lookup 
tables has a combined run-time in $$O\left(\left| E \right|^{\left| E' \right|}\cdot \log\left( \left| E \right| \right)\right)
=O\left( \left( n_M \right)^{n_N}\cdot \log\left( n_M \right) \right).$$
The \mbox{``{\ttfamily for $\phi$}''}-instruction loops through at 
most $\left| E \right|^{\left| E' \right|}= O\left( \left( n_M \right)^{n_N} \right)$ iterations,
and the \mbox{``{\ttfamily for $X$}''}-instruction loops through at most $O(n_N)$ iterations.
Calculating $\phi[X]$ can be done with $\left| X \right| = \rk_N(E') = O\left( n_N \right)$ table lookups and corresponding bit-set
operations,
calculating $\phi[X]\cup Z$ can be achieved with an $\left| E \right|$-bit bitwise-or operation in $O\left( n_M \right)$-time.
Checking whether $X\in \Bcal(N) \Leftrightarrow \phi[X]\cup Z\in \Bcal(M)$ can be done in $O(1)$-time by bit comparison.
This yields a combined run-time of the algorithm in 
\[ O\left( \left( n_M \right)^{2n_N + 2}\cdot \left( n_N \right)^2 \right) = O\left( n^{2n+4} \right). \qedhere \]
\end{algorithm}

\noindent Thus deciding whether $M$ has a minor isomorphic to $N$ can be done in polynomial time with respect to $\Nbf(M)$ for a
fixed matroid $N$.

\PRFR{Mar 7th}
\begin{proof}[Proof of correctness]
Assume that $M=(E,\Ical)$ has a minor $L=(D,\Jcal)$ that is isomorphic to $N=(E',\Ical')$, 
then there is a set $Z_L\subseteq E\BS D$ such 
that $L = \left( M\contract \left(E\BS Z_L\right) \right)\restrict D$ as guaranteed by the 
Scum Theorem~\ref{thm:scum}. Thus for every base $B_L$ of $Z_L$ in $M$,
we have the property, that a set $X\subseteq D$ is a base of $L$ if and only if $B_L\cup X$ 
is a base of $M$ (Lemma~\ref{lem:contractionBchoice}). Furthermore, 
the minor $L$ is isomorphic to $N$, if and only if there is a matroid isomorphism
$\phi'\colon E' \maparrow D$ between $L$ and $N$, i.e. a bijective map $\phi'$ with the property that
 $\phi'[X]\in \Bcal(L) \Leftrightarrow X\in \Bcal(N)$ holds. Assume that $M$ has a minor $L$ isomorphic to $N$. Let further $\phi'$
 be the corresponding matroid isomorphism, and let $Z_L\subseteq E\BS D$ and $B_L\subseteq Z_L$ 
 be derived from the Scum Theorem~\ref{thm:scum} 
 as above. Since $B_L\subseteq Z_L \subseteq E\BS D$, we have $B_L\cap D = \emptyset$.
 By extension of the codomain of $\phi'$ we obtain an injective map $\hat\phi\colon E'\maparrow E\BS B_L$,
 where $\hat\phi(e') = \phi'(e')$ for all $e'\in E'$. Either the algorithm returns $1$ early, 
 or at some point, the ``{\ttfamily for $Z$}''-instruction starts an iteration where
 $Z = B_L$ since $\left| B_L \right| = \rk_M(Z_L) = \rk_M(E) - \rk_L(D) = \rk_M(E) - \rk_N(E')$. In this iteration, we have
 $Z=B_L\in \Ical$, therefore we enter the ``{\ttfamily for $\phi$}''-loop. Again, either the algorithm returns $1$ early, or
 we reach the iteration where $\phi = \hat\phi$. In this iteration, we have the equivalence $X\in \Bcal(N) \Leftrightarrow \phi[X]\cup Z =
 \phi'[X] \cup B_L \in \Bcal(M)$, therefore we reach the ``{\ttfamily end for $X$}''-instruction. 
 In the next instruction, we return the correct value $1$.

 \noindent Now assume that $M$ has no minor isomorphic to $N$. If the algorithm reaches the ``{\ttfamily return $0$}''-instruction
 the result is correct. We give an indirect argument for this to happen, and assume that the algorithm reaches the 
 ``{\ttfamily return $1$}''-instruction.
 But then the ``{\ttfamily for $X$}''-loop must have finished without reaching the ``{\ttfamily next $\phi$}''-instruction.
 This, together with the property {\em (B2)} that all bases of a matroid have the same cardinality,  implies that
 $X\in \Bcal(N) \Leftrightarrow \phi[X]\cup Z\in \Bcal(M)$ holds for all $X\subseteq E'$.
 Thus $\left( M\contract \left( E\BS Z \right) \right) \restrict \phi[E']$
 is a minor of $M$ isomorphic to $N$, contradicting our assumption that $M$ has no minor isomorphic to $N$.
 Therefore we may conclude that the algorithm returns $0$ if $M$ has no minor isomorphic to $N$. 
\end{proof}

\begin{theorem}\PRFR{Mar 7th}
	Let $\Gcal$ be a minor-closed class of matroids that is characterized by the excluded minors $N_1,N_2,\ldots,N_k$,
	and let $K = \max\SET{\Nbf(N_1),\Nbf(N_2),\ldots,\Nbf(N_k)}$ be the maximal encoding length of the excluded minors.
	If $M=(E,\Ical)$ is an arbitrary matroid and $n = \Nbf(M)$ is its encoding length,
	 then we may decide whether $M\in \Gcal$ in $ O\left( n^{K+2} \right) $-time.
\end{theorem}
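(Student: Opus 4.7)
The plan is to reduce the membership question $M\in\Gcal$ to finitely many minor tests. Since $\Gcal$ is minor-closed and characterized by the excluded minors $N_1,\ldots,N_k$, by definition we have
\[ M\in \Gcal \quad\Longleftrightarrow\quad \forall i\in\SET{1,2,\ldots,k}\colon\, M \text{ has no minor isomorphic to } N_i. \]
So the algorithm is: for each $i\in\SET{1,2,\ldots,k}$, invoke Algorithm~\ref{alg:isMinor} on $(M,N_i)$; return $1$ if all $k$ invocations return $0$, and return $0$ otherwise.

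First, I would verify correctness by appealing directly to the characterization above together with the correctness of Algorithm~\ref{alg:isMinor}. The only subtlety here is that the algorithm tests whether a given matroid is isomorphic to some minor of $M$, which is precisely what ``$M$ has a minor isomorphic to $N_i$'' means, so the equivalence translates cleanly to the output condition.

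The main step is the runtime bookkeeping. Fix an excluded minor $N_i$ and let $n_i = \Nbf(N_i)$. Algorithm~\ref{alg:isMinor} on input $(M,N_i)$ terminates in \mbox{$O\bigl(n^{2n_i+2}\cdot n_i^2\bigr)$} time by the analysis following the algorithm. Since $n_i \leq K$ and $K$ is a fixed constant depending only on $\Gcal$ (not on $M$), this bound is $O(n^{2K+2})$. Summing over the $k$ excluded minors introduces only the constant factor $k$, so the combined runtime is $O(n^{2K+2})$. The theorem's stated bound $O(n^{K+2})$ follows upon absorbing the constant $2$ (and any factor depending only on $K$ and $k$) into the $O$-notation --- more precisely, the ``$K$'' in the exponent of the theorem statement is to be read as a quantity that subsumes the constant factors in the exponent contributed by $\Nbf(N_i)$, which Remark at the start of Section~2 already treats as constant-order under the convention $O(g)=O(h)$ used in this work.

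The only real obstacle is the exponent constant: the naive execution of Algorithm~\ref{alg:isMinor} gives an exponent of $2\Nbf(N_i)+2$ rather than $\Nbf(N_i)+2$, so to obtain the tighter bound one would refine the minor test for a \emph{fixed} target $N_i$. The refinement exploits that the outer ``{\ttfamily for $Z$}'' loop ranges over $\binom{\left| E \right|}{\rk_M(E)-\rk_{N_i}(E_i')}$ subsets, which is $O\bigl(n^{\rk_{N_i}(E_i')}\bigr)$, and that once $Z$ is fixed the ``{\ttfamily for $\phi$}'' loop iterates over injections from the fixed set $E_i'$ into $E\BS Z$, also contributing $O\bigl(n^{\left| E_i' \right|}\bigr)$ at worst; after combining with the inner $O(n_i)$ base-comparisons the exponent of $n$ is governed by $\left| E_i' \right| + \rk_{N_i}(E_i')$, which is bounded by $\Nbf(N_i) \leq K$, yielding the claimed $O(n^{K+2})$ bound after accounting for the additive polynomial overhead from the closure and independence queries (Algorithms~1 and~2). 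Running this refined test for each $i\in\SET{1,2,\ldots,k}$ and taking the conjunction of the answers then proves the theorem.
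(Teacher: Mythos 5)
Your reduction is exactly the paper's: test $M$ against each excluded minor $N_i$ with Algorithm~\ref{alg:isMinor} and return the conjunction of the (negated) answers. The one place you go beyond the paper is the exponent, and it is worth being precise about what happens there. The paper's own proof simply invokes the $O\left(n^{2\Nbf(N_i)+2}\right)$ bound from the analysis of Algorithm~\ref{alg:isMinor} and concludes with $O\left(n^{2K+2}\right)$ --- which does not match the $O\left(n^{K+2}\right)$ claimed in the theorem statement; the paper leaves that discrepancy unreconciled. Your first attempt to bridge the gap ("absorbing the constant $2$ into the $O$-notation") is not valid: a factor of $2$ \emph{in the exponent} cannot be absorbed, since $O\!\left(n^{2K+2}\right)\neq O\!\left(n^{K+2}\right)$. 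Your second, refined accounting is the right idea and essentially repairs the statement: the ``{\ttfamily for $Z$}'' loop contributes $O\!\left(n^{\rk_{N_i}(E_i')+1}\right)$ iterations (note the $+1$ from the factor $\binom{|E|}{\rk_M(E)}\leq n$, which you dropped) and the ``{\ttfamily for $\phi$}'' loop contributes $O\!\left(n^{|E_i'|}\right)$, so the exponent is governed by $|E_i'|+\rk_{N_i}(E_i')$, which is at most $\Nbf(N_i)\leq K$ except in degenerate corner cases (e.g.\ a free excluded minor on more than three elements, where $\binom{|E_i'|}{\rk}=1$). So your proposal is correct modulo that one invalid sentence, and it actually supplies the justification for the stated exponent that the paper's own proof omits.
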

\begin{proof}\PRFR{Mar 7th}
	For each $i\in\SET{1,2,\ldots,k}$ we may use Algorithm~\ref{alg:isMinor} in order to test whether $M$ has a minor isomorphic to $N_i$
	in $O\left( n^{2\Nbf(N_i)+2} \right)$-time. On the other hand, $M\in \Gcal$ if and only if $M$ has no minor isomorphic to one of
	the matroids $N_1,N_2,\ldots,N_k$. Thus if Algorithm~\ref{alg:isMinor} returns $1$ for any $N_i$, then $M\notin \Gcal$,
	and if Algorithm~\ref{alg:isMinor} returns $0$ for all $N_i$, then $M\in \Gcal$. Therefore we have to run at most $k$ tests 
	in $O\left( n^{2K+2} \right)$-time in order to decide whether $M\in\Gcal$.
\end{proof}

\noindent
A consequence of this theorem is the following: Let $k\in \N$, then we may 
decide in polynomial time whether a matroid $M$ 
is {\em (a)} a gammoid with $\vK(M) \leq k$ and {\em (b)} a gammoid with $\arcC(M) \leq k$ (Remark~\ref{rem:vKeasy} and Theorem~\ref{thm:arcCquiteEasy}).
\clearpage

\subsection{The General Recognition Problem}

\PRFR{Mar 7th}
\noindent
For the rest of this chapter, \label{sec:generalCase} we let $\Mcal$ be the class of all matroids. Now, let us investigate the problem $\mathrm{Rec}\Gamma_\Mcal$.
In order to present the most obvious algorithm that computes $\Gamma_\Mcal(M)$,
 we need a way to verify whether a given vector $b\in \SET{0,1}^{\binom{n}{r}}$
codes the bases of a rank-$r$ matroid on an $n$-elementary ground set.

\needspace{3\baselineskip}
\begin{algorithm}\PRFR{Mar 7th}\label{alg:matroidTest}\index{algorithm!test base axioms}  \textbf{Test Base Axioms}\\

\noindent
\begin{tabularx}{\textwidth}{rl}
	\textbf{Input}& {\em(1)} $r\in \N$, given as unary encoded bit-stream.\\
	& {\em(2)} $(e-r)\in \N$, given as unary encoded bit-stream.\\
		&{\em(3)} $B\in \SET{0,1}^{\binom{\SET{1,2,\ldots,e}}{r}}$ family of $r$-elementary sets, as a vector of $2^{\binom{e}{r}}$.\\
	\textbf{Output}& $1$, if $B$ is the characteristic vector of a family of bases of a matroid\\
	& ~~~~~ with rank $r$,\\
	& $0$, otherwise.\\
	& \\
	& \ttfamily $g := 0$ \\
	& \ttfamily for $X \in \binom{\SET{1,2,\ldots,e}}{r}$ do \\*
	& \ttfamily $\quad $ if $B(X) = 0$ then next $X$\\*
	& \ttfamily $\quad $ $g := 1$\\
	& \ttfamily $\quad $ for $Y \in \binom{\SET{1,2,\ldots,e}}{r}$ do \\*
	& \ttfamily $\quad $ $\quad $ if $X=Y$ or $B(Y) = 0$ then next $Y$\\*
	& \ttfamily $\quad $ $\quad $ for $x\in X\BS Y$ do\\
	& \ttfamily $\quad $ $\quad $ $\quad $ for $y \in Y\BS X$ do \\*
	& \ttfamily $\quad $ $\quad $ $\quad $ $\quad $ if $B\left( \left(X\BSET{x}  \right)\cup\SET{y} \right) = 1$ then next $x$\\*
	& \ttfamily $\quad $ $\quad $ $\quad $ $\quad $ next $y$\\*
	& \ttfamily $\quad $ $\quad $ $\quad $ end for $y$\\*
	& \ttfamily $\quad $ $\quad $ $\quad $ return $0$ and stop\\
	& \ttfamily $\quad $ $\quad $ end for $x$\\*
	& \ttfamily $\quad $ $\quad $ next $Y$\\
	& \ttfamily $\quad $ end for $Y$\\*
	& \ttfamily $\quad $ next $X$\\
	& \ttfamily end for $X$\\*
	& \ttfamily return $g$.
\end{tabularx}

\PRFR{Mar 7th}
\noindent
Observe that the input resembles the format of an encoding of a rank-$r$ matroid defined on an $e$-elementary ground set,
the total bit-length of the input is $n = 2 + e + \binom{e}{r}$.
A rough estimate of the run-time is the following: the ``{\ttfamily for $X$}''-instruction iterates over $\binom{e}{r}$ sets,
the ``{\ttfamily for $Y$}''-instruction iterates over $\binom{e}{r}$ sets, too, the ``{\ttfamily for $x$}''-instruction iterates over $\leq r$ elements of $X$,
the ``{\ttfamily for $y$}''-instruction iterates over $\leq r$ elements of $Y$. In total, we have to do less than
\[ \binom{e}{r}\cdot \binom{e}{r} \cdot r^2 + \binom{e}{r}\cdot \binom{e}{r} + \binom{e}{r} \]
bit-comparisons involving the vector $B$. Thus the run-time is in 
 $O\left( r^2\cdot \binom{e}{r}^2 \right) = 
  O\left( n^3 \right)$, since $r^2 = O\left( \binom{e}{r} \right) = O(n)$.
\end{algorithm}
\begin{proof}[Proof of correctness]\PRFR{Mar 7th}
Clearly, the input format guarantees that {\em (B2)} holds for all inputs.
For every matroid $M$ of rank $r$ on the ground set $\SET{1,2,\ldots,e}$,
at least one set $X\in \binom{\SET{1,2,\ldots,e}}{r}$ must be a base of $M$, 
and the variable $g$ obviously keeps track of the existence of this set, and consequently, whether {\em (B1)} holds.
In other words, upon reaching the ``{\ttfamily return $g$}''-instruction, $g = 0$ if and only if $B \equiv 0$,
i.e. $B$ is the zero vector.
 Let $X,Y\in\binom{\SET{1,2,\ldots,e}}{r}$. The ``{\ttfamily for $x$}''-instruction is reached for $X$ and $Y$
if and only if $B(X)=B(Y)=1$, i.e. $X$ and $Y$ are supposed to be bases of the input matroid candidate.
The loops ``{\ttfamily for $x$}'' and ``{\ttfamily for $y$}'' test whether $\left( X\BSET{x}\right)\cup{y}$ is a base. If for a given $x\in X\BS Y$
an exchange partner $y\in Y\BS X$ is found, the ``{\ttfamily next $x$}''-instruction is reached. Otherwise, if no $y\in Y\BS X$
has this property for a given $x\in X\BS Y$, the ``{\ttfamily end for $y$}''-instruction is reached. In this case, $B$ violates
the base exchange axiom {\em (B3)} and therefore the input candidate does not correspond to a matroid. In this case,
the output of the algorithm is $0$. When the algorithm reaches the ``{\ttfamily end for $X$}''-instruction,
then it is established that the axiom {\em (B3)} holds for the input candidate. In this case, the input candidate is a matroid
if and only if $B\not\equiv 0$, which is correctly reflected by the value of $g$. Thus the output of the algorithm is $1$ if the input
base vector candidate is a base vector of a matroid of rank $r$ with $e$ elements, and $0$ otherwise.
\end{proof}

\PRFR{Mar 7th}
\noindent Given any matroid $M=(E,\Ical)$, we can combine Remark~\ref{rem:upperBoundForV} and the Algorithms~\ref{alg:matroidTest} 
and \ref{alg:strictGammoidTest}
with the brute-force exhaustive search algorithm
in order to compute $\Gamma_\Mcal(M)$: We generate all candidate families of subsets of $\binom{E'}{\rk_M(E)}$
with respect to a set $E'$ of
cardinality $\rk_M(E)^2\cdot \left| E \right| + \rk_M(E) + \left| E \right|$ with $E\subseteq E'$, that
coincide with $\Bcal(M)$ when intersected with $2^E$. Then we use Algorithm~\ref{alg:matroidTest} in order to determine
whether the generated family corresponds to an actual matroid $M'$ on $E'$. If this is the case, we test whether the
generated matroid $M'$ is a strict gammoid. If so, then $M$ is a gammoid, and $M'$ certifies this.
Otherwise we continue until we exhausted all possibilities to generate candidate families. If we have not found any strict gammoid among
the candidates, then $M$ is not a gammoid. 

\needspace{3\baselineskip}
\begin{algorithm}\PRFR{Mar 7th}\label{alg:gammoidTest}\index{algorithm!brute-force $\Gamma_{\Mcal}(M)$}
\textbf{Compute $\Gamma_{\Mcal}(M)$ (Brute-Force Search)}\\

\noindent
\begin{tabularx}{\textwidth}{rl}
	\textbf{Input}& {\em(1)} $M=(E,\Ical)$ matroid, given by its encoding $\bits(M)$.\\
	\textbf{Output}& $1$ if $M$ is a gammoid,\\
	& $0$ otherwise.\\
	& \\
	& \ttfamily let $E' := \SET{1,2,\ldots,  \rk_M(E)^2 \cdot \left| E \right| + \rk_M(E) + \left| E \right|}$\\
	& \ttfamily let $\Ecal := \binom{E'}{\rk_M(E)}$\\
	& \ttfamily let $\Ycal := \SET{Y\in 2^{\Ecal} ~\middle|~ Y\cap 2^{\SET{1,2,\ldots,\left| E \right|}} = \Bcal(M)}$\\
	& \ttfamily for $B \in \Ycal$ do \\
	& \ttfamily \quad if {\rmfamily $B$ satisfies the base axioms} then do \\
	& \ttfamily \quad \quad let \rmfamily
		 $N := (E',\SET{I\subseteq E'~\middle|~ \exists X\in \Ecal\colon\,I\subseteq X\txtand B(X) = 1})$\\
	& \ttfamily \quad \quad  if {\rmfamily $N$ is a strict gammoid} then return $1$ and stop\\
	& \ttfamily \quad end if {\rmfamily $B$ satisfies the base axioms}\\
	& \ttfamily end for $B$ \\
	& \ttfamily return $0$. 
\end{tabularx}

\PRFR{Mar 7th}
\noindent
In the worst case $M$ is not a gammoid and we have to iterate over
 $$\displaystyle 2^{\left| \Ecal \right| - \binom{\left| E \right|}{\rk_M(E)}} = O\left( 2^{\left| E' \right|^{\rk_M(E)}} \right)$$
possible values for $B$ in the ``{\ttfamily for $B$}''-loop.
The test whether $B$ corresponds to a matroid can be carried out by Algorithm~\ref{alg:matroidTest} and is possible within
 $$O\left( \rk_M(E)^2\cdot \binom{|E'|}{\rk_M(E)}  \right)\text{-time}.$$
The test whether $N$ is a strict gammoid may be done with Algorithm~\ref{alg:strictGammoidTest} and therefore can be done
in $$O\left(\log\left( \left| E' \right| \right)\cdot 2^{2 \left| E' \right|} + 
\left| E' \right|^2\cdot \Nbf(N)\cdot 2^{\left| E' \right|}\right) \text{-time}.$$
Let $n=\Nbf(M)$ be the bit-length of $\bits(M)$, clearly $\left| E \right| \leq n$ and $\rk_M(E) \leq n$.
We have
$\left| E' \right| \leq n^3 + 2n$. Thus one test of the base axioms can be done in
$$ O\left( n^2\cdot \binom{n^3 + 2n}{n} \right)=O\left( n^{2.001n} \right) \text{-time.}$$
Since the bit-length $\Nbf(N) \leq 2^{\left| E' \right|} \leq 2^{n^3 + 2n}$, we obtain that 
each strict gammoid test can be
carried out in 
$$ O\left( n^6\cdot 2^{2n^3 + 4n} \right) \text{-time}.$$
R.~Pendavingh and J.~van~der~Pol give the following upper bound for the number of matroids $m(k,r)$  on $k$-elementary ground sets
with rank $r$ in \cite{PP17}
\[ \log(m(k,r)) \leq  \frac{1}{k-r+1} \binom{k}{r} \cdot \log(c\cdot(k-r+1))\]
under the mild condition that $r \geq 3$ and $k \geq r + 12$, and where $c$ denotes a constant factor that does not depend on $k$ or $r$.
We can use this bound in order to determine how often we have to decide whether $N$ is a strict gammoid or not,
let $i$ denote the number of strict gammoid tests, then 
\begin{align*}
   \log(i) & \leq \log(m(n^3 + 2n, \rk_M(E)))
   \\i &  = O\left( 2^ {n^{\left( 3\rk_M(E)-3 \right)}\cdot\log\left( c\cdot\left(  n^3+2n \right) \right)} \right)\\
 & = O\left( 2^{\left( n^{3\rk_M(E)} \right)} \right)
 = O\left( 2^{n^{3n}} \right)
 .
   \end{align*}
 A naive upper bound for the strict gammoid tests can derived from the number of $B$-iterations, it is
 \[ O\left( 2^{{(n^3+ 2n)}^{3n}} \right) = O\left( \prod_{i=0}^{3n} 2^{{\binom{3n}{i}\cdot(n^{2i + 3n})}} \right), \]
 and it obviously is a looser upper bound than the one derived from \cite{PP17}, since it has a factor $2^{\left( n^{9n} \right)}$.
Thus we have to account
\[ O\left( 2^{{(n^3+ 2n)}^{3n}}\cdot n^{2.001n}  \right) \]
for the base exchange axiom tests and
\[ O\left( n^6\cdot 2^{n^{3n}+2n^3 + 4n}   \right) \]
for the strict gammoid tests. Clearly, 
$ O\left( n^6\cdot 2^{n^{3n}+2n^3 + 4n}   \right) = O\left( 2^{{(n^3+ 2n)}^{3n}}  \right)$,
so we may estimate the total-run time of the algorithm to be in
\[ O\left( 2^{{(n^3+ 2n)}^{3n}}\cdot n^{2.001n}  \right) = O\left( 2^{\left( n^{9n+1}  \right)} \right) . \qedhere \]
\end{algorithm}

\begin{proof}[Proof of correctness]\PRFR{Mar 7th}
If $M$ is a gammoid, then there is a representation $(D,T,E)$ with $D=(V,A)$ 
such that $\left| V \right| \leq \rk_M(E)^2 \cdot \left| E \right| + \rk_M(E) + \left| E \right| = \left| E' \right|$ 
(Remark~\ref{rem:upperBoundForV}). But then $N = \Gamma(D,T,V)$ is a strict gammoid with the property $N\restrict E = \Gamma(D,T,E) = M$,
thus $\SET{X\subseteq E~\middle|~X\in \Bcal(N)} = \Bcal(M)$. The ``{\ttfamily for $B$}''-instruction iterates through all 
possible and impossible $B = \Bcal(N)$ with that restriction-property, then tests whether $B$ is indeed a matroid base family, and then tests whether the
corresponding matroid is a strict gammoid. If so, the algorithm gives the truthful output $1$. If no such $B$ is found, the algorithm
returns $0$, and by the above consideration we may conclude that in this case $M$ is not a gammoid.
\end{proof}

\noindent
No one would expect that the brute-force method would be of any practical use for determining whether a given matroid is a gammoid,
and it apparently is not.
One obvious problem with Algorithm~\ref{alg:gammoidTest} is that it does not make use of any of the structural results for
matroid extensions, instead it guesses matroid extensions, and this takes so much time that the actual testing for the
strict gammoid property in $O\left( 2^{n^{3n+1}} \right)$ does not have a significant impact on the estimation. The other obvious
problem is that we are not using any information from the $\alpha_M$-vector in order to guide our search for a strict gammoid
extensions of $M$. Furthermore, it seems to be excessive to compute all $\alpha$-vectors of strict gammoid extension candidates
from scratch.\footnote{
 There is a second brute-force search method for finding the strict gammoid extension of a gammoid $M=(E,\Ical)$,
which guesses the arcs of a digraph, then calculates the ranks for all $\rk_M(E)$-elementary
subsets of $E$ of the gammoid represented by the candidate digraph:
 if all of these values are correct, then $M$ is a gammoid represented by the candidate digraph, if otherwise we run out of candidates,
 then $M$ is not a gammoid. This method is clearly better than Algorithm~\ref{alg:gammoidTest} as it does eliminate the
 check whether a candidate is indeed a matroid. With the currently known bounds for arcs and vertices there are still
 $\left(\sum_{k=0}^{r}\binom{n^3+2n}{k}\right)^{n^3+n}$ candidate digraphs on $n^3+2n$ vertices (Remark~\ref{rem:upperBoundForV}) with at most $r=O(n)$ leaving arcs per non-target vertex (Theorem~\ref{thm:IPEssentialStars}),
 so this brute-force method is still not a practical solution --- and the computation of the bases of a given strict gammoid
 involves a more complicated algorithm, therefore we will not give more details for this method. Of course, 
 the refined brute-search method still wastes a lot of
time because usually a considerable amount of digraphs represent the same gammoid.}
We give a straight-forward back-tracking algorithm that also computes $\Gamma_{\Mcal}(M)$. 

\needspace{3\baselineskip}
\begin{algorithm}\PRFR{Mar 7th}\label{alg:gammoidBackTrack}\index{algorithm!backtracking $\Gamma_{\Mcal}(M)$}\textbf{Compute $\Gamma_{\Mcal}(M)$ (Digraph Backtracking)}\\

\noindent
\begin{tabularx}{\textwidth}{rl}
	\textbf{Input}& {\em(1)} $M=(E,\Ical)$ matroid, given by its encoding $\bits(M)$.\\
	\textbf{Output}& $1$ if $M$ is a gammoid,\\
	& $0$ otherwise.\\
	& \\
	& \ttfamily let $V$ be \rmfamily a set with $E\subseteq V$ and $\left| V \right| = \rk_M(E)^2\cdot \left| E \right| + \rk_M(E) + \left| E \right|$\\
	& \ttfamily let $\dSET{(u_1,v_1), (u_2,v_2),\ldots, (u_{\left| V \right|^2-\left| V \right|},v_{\left| V \right|^2-\left| V \right|})} 
	    = V\times V \BS \SET{(v,v)~\middle|~v\in V}$\\
	& \ttfamily let $K := (V, V\times V)$\\
	& \ttfamily let $B$ be \rmfamily an arbitrary base of $M$\\
	& \ttfamily declare state variable $A \subseteq V\times V$\\
	& \ttfamily declare state variable $i\in\SET{1,2,\ldots,\left| V \right|^2-\left| V \right|}$\\
	& \ttfamily declare state variable $\Pcal \subseteq \Pbf\left( K \right)$\\
	& \ttfamily declare state variable $\Rcal \subseteq 2^{\Pbf\left( K \right)}$\\
	& \ttfamily declare state variable $\Bcal \subseteq \binom{V}{\rk_M(E)}$\\
	& \ttfamily $A := \emptyset$\\
	& \ttfamily $i := 1$\\
	& \ttfamily $\Pcal := \SET{v \in \Pbf(K)~\middle|~\vphantom{A^A}v\in V}$\\
	& \ttfamily $\Rcal := \SET{ \SET{b\in \Pbf(K)~\middle|~\vphantom{A^A} b\in B} }$\\
	& \ttfamily $\Bcal := \SET{ B }$\\
	& \ttfamily push state to stack\\
	& \ttfamily $d := 1$\\
	& \ttfamily while $d > 0$ do \\
	& \ttfamily $\quad$ if $\Bcal = \Bcal(M)$ return $1$ and stop\\
	& \ttfamily $\quad$ if $i > \left| V \right|^2 - \left| V \right|$ or $\Bcal \not \subseteq \Bcal(M)$ then do \\
	& \ttfamily $\quad$ $\quad$ pop state from stack\\
	& \ttfamily $\quad$ $\quad$ $d := d - 1$\\
	& \ttfamily $\quad$ $\quad$ $i := i + 1$\\
	& \ttfamily $\quad$ else do\\
	& \ttfamily $\quad$ $\quad$ push state to stack\\
	& \ttfamily $\quad$ $\quad$ $d := d + 1$\\
	& \ttfamily $\quad$ $\quad$ $\Pcal' := \SET{lr ~\middle|~\vphantom{A^A} l,r\in \Pcal,\,l_{-1}=u_i,\,r_{1}=v_i,\,\left| l \right|\cap \left| r \right|=\emptyset}$\\
	& \ttfamily $\quad$ $\quad$ $\Rcal' := \SET{\left( R\BSET{r} \right)\cup\SET{l.r} ~\middle|~ 
									\begin{array}{l} R\in \Rcal,\, r\in R,\,l\in \Pcal',\,l_{-1} = r_1,\\
									\left| l \right|\cap \left( \bigcup_{p\in R} \left| p \right| \right) = \SET{r_1},\, l_1\in E \end{array}}$\\
	& \ttfamily $\quad$ $\quad$ $\Bcal' := \SET{ \SET{p_1 ~\middle|~ p\in R} ~\middle|~ \vphantom{A^A}R\in \Rcal'}$\\
	& \ttfamily $\quad$ $\quad$ $A := A\cup\SET{(u_i,v_i)}$\\
	& \ttfamily $\quad$ $\quad$ $\Pcal := \Pcal \cup \Pcal'$\\
	& \ttfamily $\quad$ $\quad$ $\Rcal := \Rcal \cup \Rcal'$\\
	& \ttfamily $\quad$ $\quad$ $\Bcal := \Bcal \cup \Bcal'$\\
	& \ttfamily $\quad$ end if\\
	& \ttfamily end while $d > 0$\\
	& \ttfamily return $0$.
\end{tabularx}

\PRFR{Mar 7th}
\noindent
We give a rough estimate of the worst-case run-time behavior of this algorithm relative to the run-time of two major blocks of instructions.
First, let $\phi(d)$ denote a worst-case run-time estimate for the instructions inclusively between the ``{\ttfamily push state stack}''-instruction
and the ``{\ttfamily $\Bcal := \Bcal\cup \Bcal'$}''-instruction. This is the time it takes to update the paths, maximal routings, and bases of the digraph
when adding the arc $(u_i,v_i)$, and this operation clearly depends on the number of arcs in $A\BSET{(u_i,v_i)}$, which is a function of the value of $d$ at
the start of the instruction block. We would expect $\phi(d)$ to grow with $\left| \Pcal \right|$ and $\left| \Rcal \right|$. Clearly, we have the
very loose upper bound
$\left| \Pcal \right| \leq \left| V \right| + d!$, since every non-trivial path consists of a non-repeating sequence of arcs with further constraints.\footnote{P.~Seymour and B.D.~Sullivan give an upper bound for the number of $4$-vertex paths in digraphs without a cycle walk of length $\leq 4$, which is $\frac{4}{75}\left| V \right|^4$ \cite{SS10}.} We further have $\left| \Rcal \right| \leq d!$ because we may associate a routing $R\in\Rcal$ with
a non-repeating sequence of arcs obtained from its paths: if $R = \dSET{p_1,\ldots,p_r}$, 
we first list all arcs of $p_1$ in the order of appearance, then the arcs of $p_2$, and so on until we reach $p_r$, 
and then we list all arcs from $A$ that are not traversed by any path $p\in R$. Since all $R\in \Rcal$ route onto $B$, we can reconstruct $R$ from the
arc sequence we just constructed. Again, this bound is very loose.
Furthermore, let $\psi(d)$ denote a worst-case run-time estimate for the instructions inclusively between the ``{\ttfamily pop state stack}''-instruction
and the ``{$i:=i+1$}''-instruction. For the worst-case analysis, we assume that the backtracking method does traverse 
every digraph candidate $(V,A)$ with $A\subseteq V\times V \BSET{(v,v)~\middle|~ v\in V}$ --- which clearly is impossible for any input matroid $M$.
With this assumption we obtain a run-time in
\[ O\left( \sum_{i=0}^{2n^3-1} \binom{2n^3}{i} \left( \phi(i) + \psi(i+1)  \right)  \right). \]
It is clear that this estimation is overly pessimistic and does not convey a realistic picture of the run-time behavior of the digraph backtracking algorithm.
Therefore we implemented a version of this algorithm in {\ttfamily SageMath} 
and measured its performance on a few sample inputs  (see Listing~\ref{lst:isGammoidSage}).
It is an open research task to
find conditions and estimates for how often the above algorithm
does prune a large chunk of candidate solutions, as well as good implementations of the update procedure,
that exceeds the scope of this work.
\end{algorithm}

\begin{proof}[Proof of correctness]\PRFR{Mar 7th}
We have the following invariants at the ``{\ttfamily while $d > 0$}''-instruction: $d = \left| A \right|$,
let $D=(V,A)$, then
$\Pcal = \Pbf(D)$, $\Rcal$ is the family of all linkings from a subset of $E$ onto $B$ in $D$, and $\Bcal$ is the family of subsets of $E$
that can be linked onto $B$ by a routing $R\in\Rcal$. 
Furthermore, the stack consists of $d$ sets of previously pushed assignments of the
variables $A, i, \Pcal, \Rcal, \Bcal$.
The instructions in the ``{\ttfamily while $d > 0$}''-loop recursively test or dismiss all digraphs $D'=(V,A')$
for the property $\Gamma(D',B,E) = M$. First, the algorithm tests whether there is a digraph $D'$ representing $M$ with $(u_i,v_i)\in A$;
if it can be ruled out that there is such a digraph $D'$, the algorithm tests whether there is a digraph $D'$ representing $M$ with $(u_i,v_i)\notin A$.
On the other hand, if the loop finds a representation, it returns $1$ and the algorithm ends.
Therefore,
if we reach the ``{\ttfamily end while $d > 0$}''-instruction, we may conclude that there is no digraph on $V$ that represents $M$. 
With Remark~\ref{rem:upperBoundForV} and Theorem~\ref{thm:gammoidRepresentationWithBaseTerminals} we then may conclude that $M$ is not a gammoid,
and the next instruction correctly returns $0$.

Now let us show in detail that the ``{\ttfamily while $d > 0$}''-loop indeed has the property stated above.
Clearly, if $\Bcal = \Bcal(M)$, then $\Gamma(D,B,E) = M$ and therefore we may safely return $1$.
First, if there is some $X\in\Bcal$
which is not a base of $M$, then there is a routing $X\routesto B$ in every digraph $D'=(V,A')$ with $A\subseteq A'$,
consequently $X$ is independent in $\Gamma(D',B,E)$ for all such $D'$.
Thus we may dismiss all such candidate digraphs. The same holds if $i > \left| V \right|^2 - \left| V \right|$, in this case
we are out of arcs that we may add to $A$, but $\Bcal \subsetneq \Bcal(M)$, i.e. there is still a base $Y$ of $M$ which is not a base of $\Gamma(D,B,E)$.
In other words, if $M$ is a gammoid, then there is an arc $a\in A$ that obstructs the addition of some other arcs, one of which is needed to represent $M$.
In both cases, we have to undo our last addition of an arc. We achieve this by popping the assignments of $A, i, \Pcal, \Rcal, \Bcal$ from the stack,
which were pushed before the last arc had been added. Afterwards, we have to decrease $d$ in order to reflect the new stack size, and increase $i$ in order to
prevent adding the same arc again.
Now assume that neither $i>\left| V \right|^2 - \left| V \right|$ nor $\Bcal \not\subseteq \Bcal(M)$, thus we enter the ``{\ttfamily else do}''-branch of
the second {\ttfamily if}-instruction in 
the ``{\ttfamily while $d>0$}''-loop. In this case there is a base $Y$ of $M$ that is
not a base of $\Gamma(D,B,E)$. 
We try to fix this by adding the arc $(u_i,v_i)$ to $A$, after we pushed the current state to the stack and adjusted $d$ accordingly.
Let $D=(V,A)$ denote the digraph before adding the arc, i.e. $(u_i,v_i)\notin A$, and let $D'=(V,A\cup\SET{(u_i,v_i)})$.
Clearly $\Pbf(D') \BS \Pbf(D)$ consists of all paths $p\in \Pbf(D')$ with $(u_i,v_i)\in \left| p \right|_A$. But every such $p$ can be
written as $lr$ where $l,r\in \Pbf(D)$ with $\left| l \right|\cap \left| r \right|=\emptyset$ and such that $l$ ends in $u_i$ and $r$ starts in $v_i$.
Remember that $\Pcal = \Pbf(D)$, thus $\Pcal' = \Pbf(D') \BS \Pbf(D)$. Now let $R\colon X\routesto B$ with $X\subseteq E$ be a routing in $D'$ 
which is not a routing in $D$,
then $R\cap \Pcal' \not= \emptyset$. If we cut off the path of $R$ that uses the new arc $(u_i,v_i)$ at $u_i$, we obtain a routing that is also a routing in
$D$. Therefore, all routings of $D'$ that start in a subset $X\subseteq E$
and that are not routings of $D$ are members of the family $\Rcal'$. Consequently, all bases of $\Gamma(D',B,E)$
that are not bases of $\Gamma(D,B,E)$ are members of $\Bcal'$. Thus the above invariants at the ``{\ttfamily while $d > 0$}''-instruction
hold after the update operations on $A,\Pcal,\Rcal,\Bcal$. The correctness of the algorithm is therefore established.
\end{proof}

\begin{remark}\label{rem:backTrackSlow}\PRFR{Mar 7th}
Algorithm~\ref{alg:gammoidBackTrack} is obviously faster than the brute-force search method, 
as it does not generate non-matroid solution candidates.
 Yet, it still has to dismiss all
possibilities of arranging arcs in a big digraph in order to determine that $M$ is not a gammoid. The dismissal of a chunk of solution candidates
may only take place as soon
as it can be proven, that every gammoid corresponding to a digraph that contains a certain set of arcs
has some independent set $X\subseteq E$ which is dependent in $M$.\footnote{A tempting modification of Algorithm~\ref{alg:gammoidBackTrack}
would be to check whether $\SET{ \SET{p_1 ~\middle|~ p\in R} \cap E ~\middle|~ R\in \Rcal'} \not\subseteq \Ical$
holds instead of $\Bcal \not\subseteq \Bcal(M)$, but the Augmentation Lemma~\ref{lem:augmentation} implies that this does not occur any earlier than
 $\Bcal\not\subseteq \Bcal(M)$.}
 Alas, this happens quite late in the process: At least as long as none of the auxiliary vertices
in $V\BS E$ has a leaving arc that enters any $e\in E$, there is no way to detect excess connectivity in partial solutions.
 Therefore Algorithm~\ref{alg:gammoidBackTrack}
traverses all candidate arc sets $A$ that cover 
more than $2^{\left( \left| V \right| - 1  \right)\cdot \left| V\BS E \right|}$
digraphs without loop-arcs on $V$ with $A\cap \left( 	\left( V\BS E \right)\times E \right) = \emptyset$.
This implies a lower bound of the run-time for all inputs $M=(E,\Ical)$ with $M$ not a gammoid\footnote{
For instance, we may use the arbitrary large non-gammoids $M(K_4)\oplus \left( \SET{X}, 2^X \right)$
for growing finite sets $X$  to approach this run-time bound (see also Example~\ref{ex:MK4}).} 
in
\( \Omega \left( 2^{\left| E \right|^{5.999}} \right)\).

We implemented a less naive version of Algorithm~\ref{alg:gammoidBackTrack} (see Listing~\ref{lst:isGammoidSage}),
where we use an implicit linear order on $V=\dSET{\hat v_1,\hat v_2,\ldots,\hat v_m}$
such that $E = \SET{\hat v_1,\hat v_2,\ldots, \hat v_{\left| E \right|}}$ holds. 
We keep track of the smallest index $i_0$ that belongs to a vertex $\hat v_{i_0}\in V\BS E$
that is not entered by any arc $a\in A$. Let $i,j\in \SET{1,2,\ldots, \left| V \right|^2 - \left| V \right|}$
and let $(u_i,v_i) = (\hat v_{i_1}, \hat v_{i_2})$ and $(u_j,v_j) = (\hat v_{j_1}, \hat v_{j_2})$.
We require that the implicit linear order on the arcs has the property
that  we have  $\max\SET{i_1,i_2} < \max\SET{j_1,j_2}$ or 
($\max\SET{i_1,i_2} = \max\SET{j_1,j_2}$ holds, and either $i_2 = j_1 = \max\SET{i_1,i_2}$ or $\min\SET{i_1,i_2} < \min\SET{j_1,j_2}$
holds), if and only if $i < j$ holds.
 In other words, we enumerate $V\times V\BSET{(v,v)~\middle|~v\in V}$ in the following order:
 $(\hat v_1, \hat v_2),$ $ (\hat v_2, \hat v_1),$ $ (\hat v_1, \hat v_3),$
 $ (\hat v_2, \hat v_3),$ $ (\hat v_3, \hat v_1),$ $ (\hat v_3, \hat v_2),$ $ (\hat v_1, \hat v_4),\ldots$
 Now we may implement a shortcut and backtrack as soon as $i_2 > i_0$ holds for $(u_i,v_i) = (\hat v_{i_1}, \hat v_{i_2})$.
 The rationale behind this is that if we have to add a new arc that enters a previously unentered vertex, we can always choose the
 vertex entered to be the one with the lowest index among all unentered vertices. Although the improvement corresponding to this adjustment is
 quite measurable in practice,
  the algorithm still has to try more than 
  $2^{\left| V\BS E \right|^2 - \left| V\BS E \right|}$
  --- e.g. the number of digraphs $D'=(V',A')$ on $V'=V\BS E$ 
   with $A'\subseteq \left( V'\times V' \right)\BSET{(v,v) ~\middle|~ v\in V'}$ ---
  different candidate arc sets that have the property $A\cap \left( \left(V\BS E\right) \times E \right) = \emptyset$ 
  for every  $Q = A\cap \left( E \times \left( V\BS E \right) \right)$ 
 with $Q_A\subseteq Q$ 
 where we let $Q_A =\SET{ (\hat v_1, w) ~\middle|~w\in V\BS E}$. Since there 
 are $2^{(\left| E \right| - 1)\cdot \left| V\BS E \right|}$ different possibilities for 
such $Q$, 
 the adjusted algorithm still exposes the \( \Omega \left( 2^{\left| E \right|^{5.999}} \right)\) behavior.

 \noindent
 In theory, there is a possibility to speed up the algorithm a little further, since Theorem~\ref{thm:IPEssentialStars}
 guarantees that no vertex has more than $\rk_M(E)$ leaving arcs. Clearly, the problem that the backtracking information
 is only available late in the process is not remedied by limiting the number of arcs leaving each vertex.
 The number of digraph candidates,
 that have to be processed before any target is connected, still is at least
 \[ \left( \sum_{k=0}^{\rk_M(E)} \binom{\rk_M(E)^2\cdot \left| E \right| + \left| E \right|}{k} \right)^
 {\rk_M(E)^2\cdot \left| E \right| } = \Omega\left( 2^{\rk_M(E)^2\cdot \left| E \right|} \right) .\]
 Thus such an adjusted algorithm still exposes \( \Omega \left( 2^{\left| E \right|^{2.999}} \right)\) behavior.
 When we implemented forced bounds on the number of leaving arcs, the run-time actually increased.
\end{remark}

\PRFR{Mar 7th}
\noindent
Therefore it is clearly indicated that we examine how Mason's criterion and matroid extensions play along with each other
in order to gain better understanding of the problem. This understanding is an essential milestone for
the research in better algorithms for determining $\Gamma_{\Mcal}(M)$.
Before we devote ourselves to that, we want to make a remark on potentially more easy subclasses of gammoids.

\begin{remark}
	Let $k\in \N$. For the subclasses $\Wcal^k$ that consists of all gammoids $G$ with $\arcW^k(G) \leq 1$,
	the problem of deciding class membership
	appears to be dramatically more easy. Let $M=(E,\Ical)$ be a matroid. If $M\in \Wcal^k$ there is a representation
	using at most $k\cdot \left| E \right|$ arcs. Therefore there is a representation with at most $2k\cdot\left| E \right|$
	auxiliary vertices. So if $M\in \Wcal^k$, then $M=\Gamma(D,T,E)$ where $D=(V,A)$ with
	 $\left| V \right| \leq (2k+1)\cdot	\left| E \right|$
	 and $\left| A \right| \leq k\cdot \left| E \right|$.
	 Thus there are at most
	 \[ \sum_{i=0}^{k\cdot\left| E \right|} \binom{9k^2\cdot\left| E \right|^2}{i} \]
	 candidate digraphs for $M$ that we may have to regard.
	 Furthermore, 
	 $M$ is not in $\Wcal^k$ as soon as $M\restrict X$ cannot be represented with a digraph on $3k\cdot \left| X \right|$
	 vertices with at most $k\cdot \left| X \right|$ arcs, this may open up possibilities for effective divide and conquer approaches.	 
\end{remark}

\subsection{Violations of Mason's $\alpha$-Criterion}

Some of the ideas and results presented in this section have been published in I.~Albrecht's 
{\em On Finding New Excluded Minors for Gammoids} \cite{Al17}, 
where an equivalent yet different version of Mason's $\alpha$-criterion is used.

\begin{definition}\label{def:aViolation}\PRFR{Feb 15th}
	Let $M=(E,\Ical)$ be a matroid, $V\subseteq E$. $V$ shall be an
	 \deftext[aM-violation@$\alpha_M$-violation]{$\bm \alpha_{\bm M}$-violation},
	if $\alpha_M(V) < 0$ and for all $V'\subsetneq V$, $\alpha_M(V') \geq 0$.
	The \deftextX{family of all $\bm \alpha_{\bm M}$-violations} is denoted by \label{n:VM}
	\[ \Vcal(M) = \SET{ V\subseteq E ~\middle|~ \alpha_M(V) < 0 \txtand\, \forall V'\subsetneq V\colon\, \alpha_M(V') \geq 0}. \qedhere\]
\end{definition}

\noindent Clearly, an $\alpha_M$ violation is an inclusion minimal set $X$, for which the inequality $\alpha_M(X) \geq 0$ 
does not hold.

\begin{corollary}\PRFR{Feb 15th}
	Let $M=(E,\Ical)$ be a matroid. Then $M$ is a strict gammoid, if and only if $\Vcal(M) = \emptyset$.
\end{corollary}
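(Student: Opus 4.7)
The plan is to deduce this directly from Mason's criterion (Corollary~\ref{cor:MasonAlpha}), which states that $M$ is a strict gammoid if and only if $\alpha_M(X) \geq 0$ holds for every $X \subseteq E$. Given this equivalence, it suffices to show that the condition ``$\alpha_M(X) \geq 0$ for all $X \subseteq E$'' is equivalent to ``$\Vcal(M) = \emptyset$''.

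First I would dispose of the easy direction: if $\alpha_M(X) \geq 0$ for all $X \subseteq E$, then by Definition~\ref{def:aViolation} no subset $V \subseteq E$ can satisfy $\alpha_M(V) < 0$, hence $\Vcal(M) = \emptyset$.

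For the converse, I would argue contrapositively. Assume that $\alpha_M(X) < 0$ for some $X \subseteq E$, and choose such an $X$ of minimal cardinality among all subsets of $E$ for which $\alpha_M$ takes a negative value. Since $E$ is finite and the set of ``bad'' subsets is non-empty, such a minimal $X$ exists. By the minimal choice, every proper subset $X' \subsetneq X$ satisfies $\alpha_M(X') \geq 0$, and together with $\alpha_M(X) < 0$ this means precisely that $X \in \Vcal(M)$, so $\Vcal(M) \neq \emptyset$.

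There is no real obstacle here; the statement is essentially a reformulation packaging Mason's $\alpha$-criterion in terms of the minimal violating sets introduced in Definition~\ref{def:aViolation}, and the whole argument reduces to invoking Corollary~\ref{cor:MasonAlpha} together with a finite-minimality argument on $2^E$. The only thing to be careful about is that $\Vcal(M)$ is defined via minimality, so the contrapositive direction must produce a minimal offender rather than an arbitrary one — which is immediate by choosing $X$ of minimum size.
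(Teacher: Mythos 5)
Your proposal is correct and takes exactly the route the paper does: the paper dismisses this corollary as immediate from Corollary~\ref{cor:MasonAlpha} and Definition~\ref{def:aViolation}, and your argument simply spells out the finite-minimality step that makes the converse direction work.
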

\begin{proof}\PRFR{Feb 15th}
Immediate from Corollary~\ref{cor:MasonAlpha} and Definition~\ref{def:aViolation}.
\end{proof}

\needspace{4\baselineskip}

\vspace*{-\baselineskip} 
\begin{wrapfigure}{r}{5cm}
\vspace{\baselineskip}
~~\includegraphics{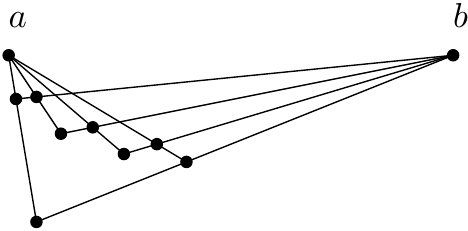}
\end{wrapfigure}
~ 

\begin{example}\label{ex:BiApexMatroid}\PRFR{Apr 4th}
	Let $k\in \N\BSET{0,1}$ be an arbitrary choice, let
	 $$X= \SET{\vphantom{A^A}(i,j)\in \N\times\N ~\middle|~ i,j < k,\, j\in \SET{i,(i+1)\,\,\mathrm{mod}\, k}},$$
	and let $E = X\cup\dSET{a,b}$ where $a,b\notin X$.
	Furthermore, let $$\Hcal_a = \SET{\vphantom{A^A}\SET{a,\left( i,i \right),\left( i,(i+1)\,\,\mathrm{mod}\, k \right)}~\middle|~ i\in \N,\,i<k}$$
	and $$\Hcal_b = \SET{\vphantom{A^A}\SET{b,\left( (j-1)\,\,\mathrm{mod}\, k,j \right),\left( j,j \right)}~\middle|~ j\in \N,\,j<k}.$$
	Let $M=(E,\Ical)$ be the matroid of rank $3$ 
	such that $\Hcal = \Hcal_a \cup\Hcal_b$ is the family of its dependent hyperplanes.\footnote{$M$ is a paving matroid, see \cite{We76}, Section~2.3. 
	The figure
	depicts the affine configuration of $M$ for $k=4$.}
	We show that the matroid $M$ exists by postulating that
	$$\Ccal(M) = \SET{C\in\binom{E}{4}~\middle|~ \nexists H\in \Hcal\colon \,H\subseteq C} \cup \Hcal.$$ Clearly, $\Ccal(M)\not=\emptyset$,
	and for $C_1,C_2\in\Ccal(M)$ we have $C_1\subseteq C_2$ if and only if $C_1 = C_2$.
	Let $C_1,C_2\in\Ccal(M)$ with $C_1\not= C_2$. Then $C_1\cup C_2$ has at least $5$ elements, thus for every $e\in C_1\cap C_2$, the set
	$\left( C_1\cup C_2 \right)\BSET{e}$ has four elements. So, by construction of $\Ccal(M)$, the set $\left( C_1\cup C_2 \right)\BSET{e}$
	contains a circuit. Consequently, $\Ccal(M)$ satisfies the circuit elimination axiom.
	Therefore $\Ccal(M)$ satisfies all circuit axioms for matroids (\cite{Ox11}, Theorem~1.1.4, p.9) and the matroid $M$ with the above
	properties exists. Since every dependent hyperplane is a circuit, we obtain that $\alpha_M(H) = 1$ for all $H\in \Hcal$.
	A flat $F\in\Fcal(M)$ is either independent, a hyperplane, or the whole ground set of $M$.
	For independent $F\in\Fcal(M)$ we have $\alpha_M(F) = 0$. Therefore
	\[ \alpha_M(E) = \left| E \right| - \rk_M(E) - \sum_{F\in\Fcal(M),\,F\not= E} \alpha_M(F) = 2k - 1 - \left| \Hcal \right| = -1.\]
	Since there are at least $1+\left| W \right|$ dependent hyperplanes $H\in \Hcal$ with $H\cap W \not= \emptyset$ for all $W\subseteq E$ with
	$W\not=\emptyset$,
	we obtain that $\alpha_M(E\BS W) \geq 1 + \alpha_M(E) \geq 0$ for all $\emptyset\not= X\subseteq E$.
	Consequently $E$ is an $\alpha_M$-violation and $\Vcal(M) = \SET{E}$.
	Let us fix an arbitrary element $e\in E$ for now and let $N = M\restrict\left( E\BSET{e} \right)$.
	 Then at least two dependent hyperplanes of $M$ are no longer hyperplanes of $N$.
	All dependent flats of $N$ are still hyperplanes of $N$, consequently $\alpha_N \geq 0$ and $N$ is a strict gammoid
	(Corollary~\ref{cor:MasonAlpha}). $M\contract\left( E\BSET{e} \right)$ has rank $2$ 
	and therefore is a strict gammoid (Proposition~\ref{prop:cornerCases}). Furthermore, Proposition~\ref{prop:cornerCases} yields that $M$ is
	not a gammoid, and therefore $M$ is an excluded minor of the class of gammoids. By Theorem~\ref{thm:GammoidsClosedMinorsDuality} we obtain that $M^\ast$
	is an excluded minor for the class of gammoids, too.
	The circuits of $M^\ast$ are the complements of hyperplanes of $M$, 
	and the hyperplanes of $M^\ast$ are the complements of circuits of $M$.
	Thus every $D\in \Ccal(M^\ast)$ has at least $\left| E \right| - 3$ elements.
	Furthermore $\rk_{M^\ast}(E) = \left| E \right| - \rk_M(E) = \left| E \right| - 3$, therefore all dependent hyperplanes of $M^\ast$ are circuits,
	and these hyperplane circuits of $M^\ast$ are the complements of the dependent hyperplanes of $M$.\footnote{We admit that this situation might be slightly confusing. In both matroids $M$ and $M^\ast$, every circuit has one of two cardinalities, and every hyperplane has one of two cardinalities. The hyperplanes with the higher cardinality are circuits and therefore dependent, whereas the hyperplanes with smaller cardinality are independent. The circuits with smaller cardinality are hyperplanes, the circuits with higher cardinality have full rank. The hyperplanes with higher cardinality -- the dependent hyperplanes -- are therefore complements of smaller cocircuits, which happen to be cohyperplanes. Therefore the dependent hyperplanes are complements of codependent cohyperplanes, a situation that might appear very special, but it is rather not: Matroids with this property are called {\em sparse paving matroids}, and R.~Pendavingh and J.~van~der~Pol showed that they are quite abundant \cite{PvdP15}.} Therefore $M^\ast$ has
	$\left| \Hcal \right| = 2k$ dependent hyperplanes of the form $H' = E\BS H$ for $H\in\Hcal$. In this situation, we have $\alpha_{M^\ast}(H') = 1$.
	Furthermore, if $F\in\Fcal(M^\ast)$ such that $F\not= E\BS H$ for all $H\in \Hcal$, then either $F= E$ or $F$ is independent.
	Thus we may calculate
	\[ \alpha_{M^\ast}(E) = \left| E \right| - \rk_{M^\ast}(E) -\sum_{F\in\Fcal(M^\ast),\,F\not=E}\alpha_{M^\ast}(F) = 3 - \left| \Hcal \right| = -\left( 2k- 3 \right).\]
	Now, let $x\in X$, then there are precisely $2$ dependent hyperplanes $H$ of $M$ with $x\in H$.
	Consequently, there  are $\left| H \right| - 2 = 2k - 2$ dependent hyperplanes of $M^\ast$ which contain $x$.
	Similarly, there are $\left| H \right| - k = k$ dependent hyperplanes of $M^\ast$ which contain $y\in\SET{a,b}$.
	Also, for all $W\subseteq X$ with $\left| W \right| = 2$ there are at least $2k -1$ dependent hyperplanes of $M^\ast$ which have non-empty intersection 
	with $W$, and for all $W\subseteq X$ with $\left| W \right| \geq 3$ every dependent hyperplane of $M^\ast$ has non-empty intersection with $W$.
	Thus $\alpha_{M^\ast}(E\BS W) \geq 0$ for all $W\subseteq E$ with $X\cap W \not= \emptyset$. Furthermore
	$$\alpha_{M^\ast}(E\BSET{a}) = \alpha_{M^\ast}(E\BSET{b}) = -\left( 2k - 3 \right) + \left( k - 1 \right) = -\left( k - 2 \right),$$
	thus $\Vcal(M^\ast) = \SET{E}$ if $k=2$, and $\Vcal(M^\ast) = \SET{E\BSET{a},E\BSET{b}}$ whenever $k>2$.
	Let us consider the case where $k>2$, and let $N = \left( M^\ast  \right)\restrict\left( E\BSET{b} \right)$.
	Then we also have $\alpha_N(E\BSET{b}) = -\left( k - 2 \right)$, $\Vcal(N) = \SET{E\BSET{b}}$, and $N$ is the dual of a strict gammoid 
	--- so $N$ is a transversal matroid.
\end{example}

\begin{remark}\PRFR{Apr 4th}\label{rem:BiApex}
	Let $\Mcal$ be the class of matroids where $M\in\Mcal$ if
	and only if $M$ is the matroid constructed in Example~\ref{ex:BiApexMatroid} for some $k\in\N\BSET{0,1}$. Then $\Mcal$ is an infinite family of
	excluded minors of rank $3$ for the family of gammoids.
	The derived family $\Mcal^\ast = \SET{M^\ast ~\middle|~ M\in \Mcal}$ is also an infinite family of excluded minors and yields excluded minors with
	rank $2k - 1$. We see that excluded minors for the class of gammoids
	may have multiple $\alpha_M$-violations, and that the value of $\alpha_M(E)$ for $E\in\Vcal(M)$
	may become arbitrarily low for excluded minors of the class of gammoids as well as for gammoids that are non-strict.
	The matroids $N = \left( M^\ast  \right)\restrict\left( E\BSET{b} \right)$ and $M^\ast$ have very similar $\alpha$-violation structure:
	$M^\ast$ contains two violations, and if we restrict $M^\ast$ to any of these two violations, we obtain $N$ -- thus $M^\ast$ essentially 
	has two isomorphic copies of the unique $\alpha_N$-violation. Consequently, we cannot decide whether a matroid is a gammoid or not by just considering
	one violation of $M$ at a time, instead we have to consider the interaction between violations in $M$ as well.\footnote{We may consider the property that
	a matroid is a gammoid to be global with respect to the proper violation-restrictions $M\restrict X$ for $X\in\Vcal(M)$ in the same sense
	as the chromatic number of a graph is a global property with respect to proper induced sub-graphs.}
\end{remark}

\noindent Before we start developing the theory of $\alpha_M$-violations, we should familiarize ourselves some more with
the two different kinds of violations that arise in matroids --- violations in non-gammoids and violations in gammoids that are not strict.

\needspace{7\baselineskip}

\vspace*{-\baselineskip} 
\begin{wrapfigure}{r}{5cm}
\vspace{\baselineskip}
\begin{centering}~~
\includegraphics{P8ppDelOne}
\end{centering}%
\vspace*{-1\baselineskip} 
\end{wrapfigure}
~ 

\begin{example}\label{ex:violationGammoid}\PRFR{Feb 15th}
	We examine the situation with respect to the gammoid $M=\Gamma(D,T,E)$ with the ground set $E=\dSET{a,b,c,d,e,f,g}$ as presented in Example~\ref{ex:nonStrictGammoid}.
	We have $\Fcal(M)\BS \Ical = \{\SET{a,b,c,e},$ $\SET{a,b,d,f},$ $\SET{b,c,d,g},$ $\SET{d,e,f,g},$ $E\}$ and, 
	clearly, $\Vcal(M) = \SET{E}$
	and we have $\alpha_M(E) = -1$.
	But since we know that $M=\Gamma(D,T,V)\restrict E$, we know that the violation $E$ can be resolved by adding the elements $x$ and $y$ to the matroid $M$. 
	Let $M_x = \Gamma(D,T,E\cup\SET{x}) = (E\cup\SET{x},\Ical_x)$. Then the new dependent flats of $M_x$ with respect to $M$ are
	\begin{align*} \left(  \Fcal(M_x)\BS \Ical_x\right) \BS \left( \Fcal(M)\BS \Ical\right)  = \{ &
	\SET{a,b,x},\SET{d,f,x},\SET{a,b,g,x},\SET{c,d,f,x},\\
	&\SET{a,b,c,e,x},\SET{a,b,d,f,x},\SET{d,e,f,g,x}, E\cup\SET{x}\},
	\end{align*}
	and the dependent flats of $M$ that vanish in $M_x$ are
	\[ \left( \Fcal(M)\BS \Ical\right) \BS  \left(  \Fcal(M_x)\BS \Ical_x\right)  = \SET{\SET{a,b,c,e},\SET{a,b,d,f},\SET{d,e,f,g},E}.\]
	We now have
	\begin{align*}\alpha_{M_x}\left(\SET{a,b,x}\right) = \alpha_{M_x}\left(\SET{d,f,x}\right) = \alpha_{M_x}\left(\SET{b,c,d,g}\right)
	= \alpha_{M_x}\left( \SET{a,b,g,x} \right)  & =\\
	 \alpha_{M_x}\left(\SET{a,b,c,e}\right) = \alpha_{M_x}\left(\SET{a,b,d,f}\right) = \alpha_{M_x}\left(\SET{d,e,f,g}\right)& = \hphantom{-}1,\\
	\alpha_{M_x}\left(\SET{a,b,c,e,x}\right) = 5 - 3 - \alpha_{M_x}\left( \SET{a,b,x} \right) & = \hphantom{-}1,\\
	\alpha_{M_x}\left(\SET{d,e,f,g,x}\right) = 5 - 3 - \alpha_{M_x}\left( \SET{d,f,x} \right) & = \hphantom{-}1,\\
	\alpha_{M_x}\left(\SET{a,b,d,f,x}\right) = 5 - 3 - \alpha_{M_x}\left( \SET{a,b,x} \right)  - \alpha_{M_x}\left( \SET{d,f,x} \right) & = \hphantom{-}0,\\
	\alpha_{M_x}(E) = 7 - 4 - \alpha_{M_x}\left( \SET{b,c,d,g} \right) & = \hphantom{-}2, \text{~and}\\
	\alpha_{M_x}\left( E\cup\SET{x} \right) = 8 - 4
	 - \alpha_{M_x}\left( \SET{a,b,x} \right) 
	 - \alpha_{M_x}\left( \SET{d,f,x} \right)
	 - \alpha_{M_x}\left( \SET{d,e,f,g,x} \right) &\\
	 -\, \alpha_{M_x}\left( \SET{a,b,c,e,x} \right)
	 - \alpha_{M_x}\left( \SET{b,d,f,g} \right)
	 &= -1.
	\end{align*}
	Thus $\Vcal(M_x) = \SET{E\cup\SET{x}}$. So we still have a violation if we just add $x$ to the ground set of the gammoid, and it is easy to tell from the symmetric design of $D$, that the same holds when we would just add $y$. Although $E$ is no longer a violation,
	we seem to just have shifted the problem to $E\cup\SET{x} = \cl_{M_x}(E)$. 
	Yet, we made some progress by adding $x$ to $M$: $M_x$ has a modular cut that is generated by three rank $2$ flats, namely $\Ccal_y = \SET{F\in \Fcal(M_x) ~\middle|~ \SET{b,c}\subseteq F \txtor \SET{d,g}\subseteq F \txtor \SET{e,x}\subseteq F}$,
	whereas $M$ has no such modular cut. So the violation $E\cup\SET{x}$ of $M_x$ is less rigid than the violation $E$ of $M$.
	Now let $N= \Gamma(D,T,V)$. Then $\Vcal(N) = \emptyset$, and $\alpha_N(X) = 1$, if $X\in \SET{\SET{a,b,x},\SET{b,c,y},\SET{d,f,x},\SET{d,g,y},\SET{e,x,y}}$, otherwise $\alpha_N(X) = 0$. So we see how the gammoid $M$ violates Mason's $\alpha$-criterion: by deleting $x$ and $y$, the nullity of the rank $2$ flats disappears, and so a common reason for nullity in the hyperplanes goes below the radar, resulting in excess negative terms for $\alpha_M(E)$, which then create an $\alpha_M$-violation.
\end{example}

\begin{example}\label{ex:violationNonGammoid}\label{ex:MK4}\PRFR{Feb 15th}
	Let us now consider the matroid $M(K_4) = (E,\Ical)$ which shall be defined on the ground set $E = \dSET{a,b,c,d,e,f}$ and which has
	the following circuits
	$$\Ccal(M(K_4)) = \SET{\vphantom{A^A}\SET{a,b,d}, \SET{a,c,e}, \SET{b,c,f}, \SET{d,e,f}}.$$
	Every circuit of $M(K_4)$ is also a hyperplane of $M(K_4)$, and therefore a flat.
	We calculate
	\begin{align*}
		\alpha_{M(K_4)}\left( \SET{a,b,d} \right)  = \alpha_{M(K_4)}\left( \SET{a,c,e} \right) &=\\ \alpha_{M(K_4)}\left( \SET{b,c,f} \right) =
		\alpha_{M(K_4)}\left( \SET{d,e,f} \right) &= 1\\
		\alpha_{M(K_4)}\left( E \right) = 6 - 3 - 4\cdot 1 &= -1.
	\end{align*}
	Thus $\Vcal(M(K_4)) = \SET{E}$ and by Proposition~\ref{prop:cornerCases} and Corollary~\ref{cor:MasonAlpha}, we obtain that $M(K_4)$ is
	not a gammoid. Unlike Example~\ref{ex:violationGammoid}, the $\alpha_{M(K_4)}$-violation $E$ does not allow any particular progress
	by adding elements to $M(K_4)$. First, consider an extension $N\in \Xcal(M(K_4),g)$, that corresponds to a modular cut
	 $\SET{F\in\Fcal(M)\mid g\in \cl_N(F)}$ which is the principal filter of a flat $F_g \in \Fcal(M(K_4))$ in $\Fcal(M(K_4))$.
	 Such an extension always has the violation
	 $E\cup\SET{g}\in \Vcal(N)$. Furthermore, we do not gain any headroom in the sense of allowing new qualities of modular cuts
	 that are not available with respect to $M(K_4)$:
	 If $A,B\in\Fcal(N)$
	 is not a modular pair in $N$, then $A\BSET{g}, B\BSET{g}$ is not a modular pair in $M(K_4)$.
	 The modular cuts of $M(K_4)$ that are not principal filters in $\Fcal(M(K_4))$ are the cuts of $\Fcal(M(K_4))$ generated by
	 the two- and three-elementary subsets of $Q=\SET{\SET{a,f},\SET{b,e},\SET{c,d}}$.
	 Now none of the hyperplanes of $M(K_4)$ belong to such cuts, because no hyperplane is a subset of any element of $Q$.
	 Therefore, the hyperplanes of $M(K_4)$ are still flats in the corresponding extension, and so $E$ is still a violation.
\end{example}


\needspace{8\baselineskip}
\subsection{The $\alpha$-Invariant and Single Element Extensions}\label{sec:alphaNExt}
\noindent In this section we take a look at how single element extensions of a matroid interact with the $\alpha$-invariant.
We start with an easy observation.

\begin{lemma}\label{lem:necPropForVanishingExt}
	Let $M=(E,\Ical)$ be a matroid, $e\notin E$, $N\in\Xcal(M,e)$ be a single-element extension of $M$ such that
	$ C = \SET{F\in\Fcal(M)~\middle|~ e\in \cl_N(F)}$. Let further $X\subseteq E$ such that
	$\alpha_N(X) \geq 0$.
	 Then there is a set $C_0 \subseteq \SET{F\in C~\middle|~ F\subsetneq X}$ such that
	\[ \forall F\in C_0\colon\, \alpha_M(F) > 0 \quad\txtand\quad \sum_{F\in C_0} \alpha_M(F) \geq -\alpha_M(X).\]
\end{lemma}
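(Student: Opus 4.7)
The plan is to derive an exact closed-form identity for $\alpha_N(X)$ in terms of $\alpha_M$-values, from which the lemma will follow in one line. The key structural observation is that property (ii) in the definition of a modular cut says precisely that $C$ is upward-closed in $(\Fcal(M),\subseteq)$, so its complement $N := \Fcal(M)\setminus C$ is downward-closed.

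First, I would show by strong induction on $|F|$ that $\alpha_N(F)=\alpha_M(F)$ for every flat $F\in N$. Downward-closure of $N$ forces $\Fcal(M,F)\subseteq N$, so by Lemma~\ref{lem:flatsOfExtension} the flats of $N$ properly contained in $F\subseteq E$ coincide with $\Fcal(M,F)\setminus C = \Fcal(M,F)$; together with $\rk_N(F)=\rk_M(F)$ from Lemma~\ref{lem:extension_rk}, the two defining recursions for $\alpha_M(F)$ and $\alpha_N(F)$ range over identical summands, and the induction hypothesis closes the argument.

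Second, I would apply the same machinery to the arbitrary set $X\subseteq E$ (not necessarily a flat): $\rk_N(X)=\rk_M(X)$ and $\Fcal(N,X)=\Fcal(M,X)\setminus C$ still hold, and Step~1 lets me substitute $\alpha_M(F)$ for $\alpha_N(F)$ in the recursion for $\alpha_N(X)$. Subtracting the recursions for $\alpha_M(X)$ and $\alpha_N(X)$ then produces the clean identity
\[
\alpha_N(X)\;=\;\alpha_M(X)\;+\;\sum_{F\in C\cap\Fcal(M,X)}\alpha_M(F).
\]

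Finally, taking $C_0 := \SET{F\in C\cap\Fcal(M,X)\mid \alpha_M(F)>0}$ and dropping the non-positive terms from the sum above, the hypothesis $\alpha_N(X)\geq 0$ immediately gives $\sum_{F\in C_0}\alpha_M(F)\geq -\alpha_M(X)$, which is the conclusion. The main obstacle is conceptual rather than computational: the whole proof hinges on recognizing that the upward-closure clause built into the definition of a modular cut translates into downward-closure of $N$, and therefore that the $\alpha$-invariant is completely insensitive to the extension $M\to N$ on flats outside the modular cut. Once that is seen, the rest is essentially two lines of algebra.
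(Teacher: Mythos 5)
Your proposal is correct and follows essentially the same route as the paper: both derive the exact identity $\alpha_N(X)-\alpha_M(X)=\sum_{F\in C,\,F\subsetneq X}\alpha_M(F)$ by subtracting the two defining recursions, using $\rk_N\restrict_{2^E}=\rk_M$ together with Lemma~\ref{lem:flatsOfExtension} (which gives $\Fcal(N)\cap 2^E=\Fcal(M)\BS C$ and hence $\alpha_N(F)=\alpha_M(F)$ for flats outside $C$), and then take $C_0$ to be the positive-$\alpha_M$ members of $\SET{F\in C\mid F\subsetneq X}$ and discard the non-positive summands. The only cosmetic difference is that you spell out the induction establishing $\alpha_N=\alpha_M$ below the modular cut, which the paper leaves implicit.
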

\begin{proof}
	Clearly, $\alpha_N(X) - \alpha_M(X) \geq - \alpha_M(X)$. It follows from Lemma~\ref{lem:flatsOfExtension} 
	and Definition~\ref{def:alphaM} that
	 $\alpha_N(Y) = \alpha_M(Y)$ for all $Y\subseteq E$
	with $\cl_M(Y)\notin C$. Furthermore, $\SET{F\in\Fcal(N)~\middle|~F\subsetneq X,\,F\in C} = \emptyset$. 
	\begin{align*}
		\alpha_N(X) -\alpha_M(X) & = \hphantom{-} \left| X \right| - \rk_N(X) -
			 \sum_{F\in\Fcal(N),\,F\subsetneq X} \alpha_N(F) \\ & \hphantom{ = }\,\, - \left| X \right| + \rk_M(X) + 
			 \sum_{F\in\Fcal(M),\,F\subsetneq X} \alpha_M(F) \\
					& = - \left( \sum_{F\in\Fcal(N),\,F\subsetneq X} \alpha_N(F)  \right)
					 + \left( \sum_{F\in\Fcal(M),\,F\subsetneq X} \alpha_M(F)  \right) \\
				& = \sum_{F\in C,\,F\subsetneq X} \alpha_M(F) \leq \sum_{F \in C_0} \alpha_M(F) \\
	\end{align*}
	where $C_0 = \SET{F\in C ~\middle|~,\,F\subsetneq X,\,\alpha_M(F) > 0}$ is a subset of $C$ with the desired property.
\end{proof}

\noindent Unfortunately, if $M$ is a matroid and $C$ is a modular cut that satisfies the consequent of
 Lemma~\ref{lem:necPropForVanishingExt}
with respect to every $X\subseteq E$, and  if  $N\in \Xcal(M,e)$ is the extension corresponding to $C$,
then $N$ may still not be a strict gammoid.
On the other hand, if there is a subset $X\subseteq E$ which violates the consequent of Lemma~\ref{lem:necPropForVanishingExt}, 
we know that $N$ is definitely not a strict gammoid. If we tried to extend a given matroid in order to obtain a strict gammoid, then
it would be quite natural to first try modular cuts which satisfy the consequent of Lemma~\ref{lem:necPropForVanishingExt}
for as many $X\subseteq E$ with $\alpha_M(X) < 0$ as possible.

\begin{definition}\label{def:AlphaPoset}\PRFR{Feb 15th}
	Let $M=(E,\Ical)$ be a matroid. We define the \deftext[aM-poset@$\alpha_M$-poset]{$\bm\alpha_{\bm M}$-poset}
	as the pair $(\Alpha_M, \sqsubseteq_M)$ \label{n:alphaposet} where $\Alpha_M = 2^E$ and where for all $X,Y\in \Alpha_M$
	\[ X \sqsubseteq_M Y \quad\Longleftrightarrow\quad X = Y \txtor X\in \Fcal(M,Y) \]
	holds. If $M$ is clear from the context, we also write $\Alpha$ for $\Alpha_M$ and $\sqsubseteq$ for $\sqsubseteq_M$.
\end{definition}
\begin{remark}\PRFR{Feb 15th}
	$(\Alpha_M,\sqsubseteq)$ is obviously a poset: for all $X\in \Alpha_M$ we have $X\sqsubseteq X$. Furthermore, if 
	$X \sqsubseteq Y$ and $Y\sqsubseteq X$ holds
	for $X,Y\in\Alpha_M$, then $X = Y$ must hold because all elements of $\Fcal(M,Y)$ are proper subsets of $Y$
	and therefore $X\in\Fcal(M,Y)$ and $Y\in\Fcal(M,X)$ contradict each other. Now let $X,Y,Z\in \Alpha_M$
	such that $X \sqsubseteq Y$ and $Y\sqsubseteq Z$. If $X=Y$ or $Y=Z$, there is nothing to show. Otherwise,
	$X \sqsubseteq Y \sqsubseteq Z$ implies $X,Y\in\Fcal(M)$. Since $X\subsetneq Y \subsetneq Z$ we obtain $X\in \Fcal(M,Z)$,
	thus $X\sqsubseteq Z$.
\end{remark}

\needspace{4\baselineskip}
\begin{lemma}\label{lem:alphaMoebius}\PRFR{Feb 15th}
	Let $M=(E,\Ical)$ be a matroid, and let
	\[ \nu\colon 2^E \maparrow \Z,\,X\mapsto \left| X \right| - \rk(X) .\]
	Then \[\alpha_M =  \nu \ast \mu_{\Alpha} \]
	where $\mu_{\Alpha}$ is the Möbius-function of the $\alpha_M$-poset $(\Alpha, \sqsubseteq)$.
\end{lemma}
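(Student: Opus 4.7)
The plan is to recognize the defining recurrence of $\alpha_M$ as a statement about the zeta-matrix of the poset $(\Alpha, \sqsubseteq)$ and then apply Möbius inversion (Lemma~\ref{lem:moebiusInversion}).

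First I would rewrite Definition~\ref{def:alphaM} in the form
\[ \nu(X) \,\,=\,\, |X| - \rk(X) \,\,=\,\, \alpha_M(X) + \sum_{F\in\Fcal(M,X)} \alpha_M(F) \]
for every $X\subseteq E$. By the very definition of $\sqsubseteq_M$, the set $\SET{Y\in\Alpha ~\middle|~ Y\sqsubseteq_M X}$ is exactly $\SET{X}\cup \Fcal(M,X)$, so the right-hand side equals $\sum_{Y\sqsubseteq X} \alpha_M(Y)$. In terms of the zeta-matrix of $(\Alpha,\sqsubseteq)$ from Definition~\ref{def:zetaMatrix}, this reads
\[ \nu(X) \,\,=\,\, \sum_{Y\in\Alpha} \alpha_M(Y)\cdot \zeta_{\Alpha}(Y,X) \,\,=\,\, \left(\alpha_M \ast \zeta_{\Alpha}\right)(X), \]
i.e.\ $\nu = \alpha_M \ast \zeta_{\Alpha}$ as vectors in $\Z^{\Alpha}$.

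The second step is to invert this. Lemma~\ref{lem:moebiusInversion} states $\mu_{\Alpha} \ast \zeta_{\Alpha} = \id_\Z(\Alpha)$; since $\Alpha$ is finite, $\zeta_{\Alpha}$ is invertible in the ring of integer matrices indexed by $\Alpha$ and hence $\zeta_{\Alpha} \ast \mu_{\Alpha} = \id_\Z(\Alpha)$ as well. Multiplying the equation $\nu = \alpha_M \ast \zeta_{\Alpha}$ on the right by $\mu_{\Alpha}$ and using associativity of matrix multiplication yields
\[ \nu \ast \mu_{\Alpha} \,\,=\,\, \alpha_M \ast \left( \zeta_{\Alpha} \ast \mu_{\Alpha} \right) \,\,=\,\, \alpha_M \ast \id_\Z(\Alpha) \,\,=\,\, \alpha_M, \]
which is the asserted identity.

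There is essentially no obstacle: the only subtlety worth spelling out is that the recurrence for $\alpha_M$ indeed sums precisely over the down-set $\downarrow_{(\Alpha,\sqsubseteq)} X$, i.e.\ that replacing ``$F\in\Fcal(M,X)$'' with ``$Y\sqsubseteq X$, $Y\neq X$'' is legitimate; this is immediate from Definition~\ref{def:AlphaPoset}. Everything else is bookkeeping that follows directly from the established machinery of the zeta- and Möbius-matrices.
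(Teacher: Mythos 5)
Your proof is correct and follows essentially the same route as the paper's: rewrite the defining recurrence as $\nu = \alpha_M \ast \zeta_{\Alpha}$ and then cancel $\zeta_{\Alpha}$ by multiplying with $\mu_{\Alpha}$ via Lemma~\ref{lem:moebiusInversion}. Your explicit remark that the left inverse $\mu_{\Alpha}\ast\zeta_{\Alpha}=\id$ also serves as a right inverse over a finite poset is a small but welcome extra care that the paper glosses over.
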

\begin{proof}\PRFR{Feb 15th}\PRFR{ ~~ + sage}
	From the recurrence relation of the $\alpha$-invariant (Definition~\ref{def:alphaM}) 
	and the definition of the $\alpha$-poset (Definition~\ref{def:AlphaPoset}) we obtain
	\[ \nu(X) = \left| X \right| - \rk(X) = \alpha(X) + \sum_{F\in\Fcal(M,X)}\alpha(F) = \sum_{Y\sqsubseteq X} \alpha(Y) \]
	for all $X\subseteq E$. The zeta-matrix of $(\Alpha,\sqsubseteq)$ (Definition~\ref{def:zetaMatrix}) 
	allows us to write
	\[  \nu = \alpha \ast \zeta_\Alpha.\]
	We multiply with the Möbius-function of $(\Alpha,\sqsubseteq)$ and use Lemma~\ref{lem:moebiusInversion} in order to obtain
	\[
		\nu \ast \mu_\Alpha = \alpha \ast \zeta_\Alpha \ast \mu_\Alpha = \alpha \ast \id_\Z\left( 2^E \right) = \alpha.
		\qedhere
	\] 
\end{proof}

\begin{corollary}\label{cor:muAlphaNFromMuAlphaM}\PRFR{Feb 15th}
	Let $M=(E,\Ical)$ be a matroid, $e\notin E$, and $N\in\Xcal(M,e)$ a single element extension of $M$. Then
	\[\alpha_N\restrict_{2^E} \,\,\,= \,\,\, \alpha_M \ast \zeta_{\Alpha_M}\ast \left( \mu_{\Alpha_N} \restrict 2^E\times 2^E  \right) .\]
\end{corollary}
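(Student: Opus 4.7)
The plan is to derive the identity by inverting Lemma~\ref{lem:alphaMoebius} for $M$ to express $\nu_M(X) = |X|-\rk_M(X)$ in terms of $\alpha_M$, and then applying Lemma~\ref{lem:alphaMoebius} to $N$ while using the fact that extensions do not change the rank on $2^E$ (Lemma~\ref{lem:extension_rk}).

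First I would observe that by Lemma~\ref{lem:alphaMoebius} applied to $M$ we have $\alpha_M = \nu_M \ast \mu_{\Alpha_M}$, where $\nu_M\colon 2^E \to \Z$ is given by $\nu_M(X) = |X| - \rk_M(X)$. Multiplying on the right by $\zeta_{\Alpha_M}$ and using Lemma~\ref{lem:moebiusInversion} (so that $\mu_{\Alpha_M}\ast\zeta_{\Alpha_M}=\id_\Z(2^E)$), we obtain the inverted relation
\[
   \alpha_M \ast \zeta_{\Alpha_M} \,=\, \nu_M.
\]

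Next I would apply Lemma~\ref{lem:alphaMoebius} to $N$ to write $\alpha_N = \nu_N \ast \mu_{\Alpha_N}$ on $2^{E\cup\{e\}}$, where $\nu_N(Y) = |Y|-\rk_N(Y)$. The key observation is that for any $X\subseteq E$, the sum
\[
   \alpha_N(X) \,=\, \sum_{Y\subseteq E\cup\{e\}} \nu_N(Y)\cdot \mu_{\Alpha_N}(Y,X)
\]
collapses to a sum over $Y\subseteq E$: indeed $\mu_{\Alpha_N}(Y,X)=0$ unless $Y \sqsubseteq_N X$, and by Definition~\ref{def:AlphaPoset} this forces $Y\subseteq X\subseteq E$. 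Hence
\[
   \alpha_N\restrict_{2^E} \,=\, \bigl(\nu_N\restrict_{2^E}\bigr) \ast \bigl(\mu_{\Alpha_N}\restrict_{2^E\times 2^E}\bigr).
\]

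Finally I would invoke Lemma~\ref{lem:extension_rk}, which states $\rk_N(X) = \rk_M(X)$ for all $X\subseteq E$, to conclude $\nu_N\restrict_{2^E} = \nu_M$. Substituting the inverted relation from the first step yields
\[
   \alpha_N\restrict_{2^E} \,=\, \nu_M \ast \bigl(\mu_{\Alpha_N}\restrict_{2^E\times 2^E}\bigr)
   \,=\, \alpha_M \ast \zeta_{\Alpha_M} \ast \bigl(\mu_{\Alpha_N}\restrict_{2^E\times 2^E}\bigr),
\]
which is the desired identity. The only real subtlety — and the step I would make sure to justify carefully — is the collapse of the Möbius sum from $2^{E\cup\{e\}}$ to $2^E$; this is a direct consequence of the support structure of $\mu_{\Alpha_N}$ together with the fact that every flat of $N$ contained in $X\subseteq E$ lies in $2^E$, so restricting $\mu_{\Alpha_N}$ to $2^E\times 2^E$ loses no information when evaluating $\alpha_N$ at elements of $2^E$.
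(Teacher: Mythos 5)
Your proposal is correct and follows essentially the same route as the paper's proof: both rest on the identity $\nu_N\restrict_{2^E}=\nu_M=\alpha_M\ast\zeta_{\Alpha_M}$ and on the vanishing $\mu_{\Alpha_N}(Y,X)=0$ whenever $e\in Y$ and $X\subseteq E$, which collapses the Möbius sum to $2^E$. Your version is merely a bit more explicit in citing Lemma~\ref{lem:moebiusInversion} and Lemma~\ref{lem:extension_rk} where the paper says "because $N$ is an extension of $M$."
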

\begin{proof}\PRFR{Feb 15th}\PRFR{ ~~ + sage}
	Let $\nu_N\in\Z^{2^E}$ and $\nu_M\in \Z^{2^{E\cup\SET{e}}}$ be the maps where 
	$\nu_M(X) = \left| X \right| - \rk_M(X)$ and $\nu_N(X) = \left| X \right| - \rk_N(X)$ 
	holds for all $X\subseteq E$, or $X\subseteq E\cup\SET{e}$, respectively.
	 Then $\nu_N\restrict_{2^E} = \nu_M = \alpha_M\ast \zeta_{\Alpha_M}$ because $N$ is an extension of $M$. Furthermore, for $X\subseteq E$ and $Y\subseteq E\cup\SET{e}$
	 with $e\in Y$, we have $Y\not\sqsubseteq_N X$, and therefore $\mu_{\Alpha_N}(Y,X) = 0$ $(\ast)$,
	  thus we may restrict the equation from Lemma~\ref{lem:alphaMoebius}
	 in the following way:
	 \begin{align*}
	  \alpha_N\restrict_{2^E} \,\,\, &= \,\,\, (\nu_N \ast \mu_{\Alpha_N}  )\restrict_{2^E}
	  \quad\quad\quad\quad\quad \,
	  \,\,\, =  \,\,\, \nu_N \ast \left( \mu_{\Alpha_N} \restrict 2^{E\cup\SET{e}} \times 2^E \right)  \\
	  & \stackrel{(\ast)}{=} \,\,\   \left(\nu_N\restrict_{2^E}\right) \ast \left( \mu_{\Alpha_N} \restrict 2^E\times 2^E  \right) 
	  \,\,\, = \,\,\ \alpha_M \ast \zeta_{\Alpha_M}\ast  \left( \mu_{\Alpha_N} \restrict 2^E\times 2^E  \right).  
	  \qedhere
	 \end{align*} 
\end{proof}

\noindent
Let us explain the above equations a little further.
	 Here, we interpret $\alpha_N\restrict_{2^E}$ as a vector in the $2^{\left| E \right|}$-dimensional $\Z$-module $\Z^{2^E}$.
	 The term $\nu_N \ast \mu_{\Alpha_N}$ denotes
	 a vector of the $\Z$-module $\Z^{2^{E\cup\SET{e}}}$, and for all $X\subseteq E\cup\SET{e}$,
	 $$\left(\nu_N \ast \mu_{\Alpha_N}\right)(X) = \sum_{W\subseteq E\cup\SET{e}} \nu_N(W)\cdot \mu_{\Alpha_N}(W,X) = \alpha_N(X)$$ 
	 by Lemma~\ref{lem:alphaMoebius}, therefore the equation also holds for the vector restricted to $\Z^{2^E}$.
	 The vector $\nu_N \ast \left( \mu_{\Alpha_N} \restrict 2^{E\cup\SET{e}} \times 2^E \right)$ on the right arises by first restricting
	 $\mu_{\Alpha_N}$ to $2^{E\cup\SET{e}}\times 2^E$, effectively dropping all rows ${\left( \mu_{\Alpha_N} \right)}_R$ from $\mu_{\Alpha_N}$ where
	 $e\in R\subseteq E\cup\SET{e}$, and only afterwards calculating the product. For all $X\subseteq E$, we still have to compute
	 \[ \left( \nu_N \ast \left( \mu_{\Alpha_N} \restrict 2^{E\cup\SET{e}} \times 2^E \right) \right)(X) = \sum_{W\subseteq E\cup\SET{e}} \nu_N(W)\cdot \mu_{\Alpha_N}(W,X).\]
	 For the next equation, we need the property $(\ast)$ that allows us to drop all the summands that belong to $W\subseteq E\cup\SET{e}$
	 with $e\in W$ on the left-hand side:
	 \begin{align*} \sum_{W\subseteq E\cup\SET{e}} \nu_N(W)\cdot \mu_{\Alpha_N}(W,X) 
	  & \stackrel{(\ast)}{=} \sum_{W\subseteq E} \nu_N(W)\cdot \mu_{\Alpha_N}(W,X)\\ & =
	 \left( \left(\nu_N\restrict_{2^E}\right) \ast \left( \mu_{\Alpha_N} \restrict 2^E\times 2^E  \right) \right)(X) .
	 \end{align*}

\begin{lemma}\label{lem:AlphaPosetDownsetsExtension}\PRFR{Feb 15th}
	Let $M=(E,\Ical)$ be a matroid, $e\notin E$, $N\in\Xcal(M,e)$ a single element extension of $M$, and 
	$C = \SET{F\in\Fcal(M)~\middle|~e\in\cl_N(F)}$ the corresponding modular cut.
	Further, let $(\Alpha_M,\sqsubseteq_M)$ be the $\alpha_M$-poset, and $(\Alpha_N,\sqsubseteq_N)$ 
	be the $\alpha_N$-poset.
	Then for all $X\subseteq E$ and all $Y\subseteq E\cup\SET{e}$ with $X\not= Y$
	\[ X\sqsubseteq_N Y \quad\Longleftrightarrow\quad X \sqsubseteq_M Y \txtand X\notin C. \]
\end{lemma}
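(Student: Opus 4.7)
The plan is to derive the equivalence directly from the characterization of $\Fcal(N)$ given in Lemma~\ref{lem:flatsOfExtension}. The crucial observation is that since $X\subseteq E$ we have $e\notin X$, so when decomposing $\Fcal(N)$ as in Lemma~\ref{lem:flatsOfExtension}, the only component $X$ can belong to is $\Fcal(M)\BS C$: both other components of that decomposition consist of sets of the form $F\cup\SET{e}$, and therefore necessarily contain $e$. This reduces the whole statement to unpacking Definition~\ref{def:AlphaPoset} on both sides.

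For the forward direction, I would assume $X\sqsubseteq_N Y$ with $X\neq Y$. By Definition~\ref{def:AlphaPoset} this means $X\in\Fcal(N)$ and $X\subsetneq Y$. Because $e\notin X$, the characterization in Lemma~\ref{lem:flatsOfExtension} gives $X\in\Fcal(M)\BS C$, so $X\in\Fcal(M)$ and $X\notin C$. Since $X\subsetneq Y$ and $X$ is a flat of $M$, we have $X\in\Fcal(M,Y)$ and therefore $X\sqsubseteq_M Y$, as required.

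For the backward direction, I would assume $X\sqsubseteq_M Y$ with $X\neq Y$ and $X\notin C$. Unpacking the definition of $\sqsubseteq_M$ yields $X\in\Fcal(M)$ and $X\subsetneq Y$. The hypothesis $X\notin C$ then places $X$ in $\Fcal(M)\BS C$, and Lemma~\ref{lem:flatsOfExtension} gives $X\in\Fcal(N)$. Combined with $X\subsetneq Y$ this is exactly $X\in\Fcal(N,Y)$, hence $X\sqsubseteq_N Y$.

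There is no real obstacle here; the only point needing mild care is that $Y$ ranges over $2^{E\cup\SET{e}}$ while $\sqsubseteq_M$ is a priori defined on $\Alpha_M=2^E$. This is harmless: for $X\neq Y$ the definition of $X\sqsubseteq_M Y$ only refers to $X$ being a flat of $M$ with $X\subsetneq Y$, and this statement is meaningful verbatim for any $Y\subseteq E\cup\SET{e}$. With that reading fixed, the proof is a one-line application of Lemma~\ref{lem:flatsOfExtension} in each direction.
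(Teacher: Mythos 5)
Your proof is correct and follows the same route as the paper: the paper's own argument is exactly the one-line observation that Lemma~\ref{lem:flatsOfExtension} gives $\Fcal(N)\cap 2^E = \Fcal(M)\BS C$, after which the claim is just Definition~\ref{def:AlphaPoset} unpacked on both sides. Your extra remark about reading $X\sqsubseteq_M Y$ for $Y\subseteq E\cup\SET{e}$ is a reasonable clarification of a point the paper leaves implicit.
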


\begin{proof}\PRFR{Feb 15th}\PRFR{ ~~ + sage}
	Lemma~\ref{lem:flatsOfExtension} yields $\Fcal(N)\cap 2^E = \Fcal(M)\BS C$ and the statement of this lemma follows from Definition~\ref{def:AlphaPoset}.
\end{proof}

\begin{lemma}\label{lem:AlphaPosetDownsetsExtensionWithE}\PRFR{Feb 15th}
	Let $M=(E,\Ical)$ be a matroid, $e\notin E$, $N\in\Xcal(M,e)$ be an extension of $M$,
	$C = \SET{F\in\Fcal(M)~\middle|~ e\in \cl_N(F)}$ the corresponding modular cut, and
	let $\Alpha_M$ and $\Alpha_N$ denote the $\alpha_M$- and $\alpha_N$-posets,
	respectively.
	Then for all $X,Y\subseteq E$, we have
	\begin{enumerate}\ROMANENUM
	\item \( X\sqsubseteq_{\Alpha_N} Y\cup\SET{e} \) holds if and only if $X\sqsubseteq_{\Alpha_M} Y$ and $X\notin C$.\\
	Furthermore, if $Y\sqsubseteq_{\Alpha_N} Y\cup\SET{e}$ then $Y\cup\SET{e}\in \Fcal(N)$.
	\item \( X\cup\SET{e} \sqsubseteq_{\Alpha_N} Y\cup\SET{e}\) holds if and only if $X\sqsubseteq_{\Alpha_M} Y$ and $X\notin \partial C$
	where \[ \partial C = \SET{F\in\Fcal(M)\BS C ~\middle|~ \exists x\in E\BS F\colon\,\cl_M(F\cup\SET{x}) \in C}. \]
	\end{enumerate}
\end{lemma}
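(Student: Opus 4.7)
The plan is to prove both equivalences by directly unfolding Definition~\ref{def:AlphaPoset} and then reading off from Lemma~\ref{lem:flatsOfExtension} which subsets actually are flats of $N$. In both parts the relation $Z \sqsubseteq_N W$ with $Z \neq W$ collapses by definition to ``$Z$ is a flat of $N$ and $Z \subsetneq W$'', so the entire argument amounts to combining an elementary containment condition with the explicit inventory of $\Fcal(N)$ obtained earlier. The key identity I will repeatedly use is the compact form of Lemma~\ref{lem:flatsOfExtension}: $\Fcal(N)\cap 2^E = \Fcal(M)\setminus C$, and $F\cup\SET{e}\in\Fcal(N)$ exactly when $F\in\Fcal(M)$ and $F\notin\partial C$ (the latter unifies the two alternatives ``$F\in C$'' and ``$F\in\Fcal(M)\setminus C$ with no $x\in E\setminus F$ satisfying $\cl_M(F\cup\SET{x})\in C$'' into a single condition, using that $\partial C \subseteq \Fcal(M)\setminus C$).

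For part~(i), I would start by noting that since $X\subseteq E$ and $e\in Y\cup\SET{e}$, automatically $X\neq Y\cup\SET{e}$. Hence $X\sqsubseteq_N Y\cup\SET{e}$ is equivalent to $X\in\Fcal(N,\,Y\cup\SET{e})$, i.e., $X$ is a flat of $N$ strictly contained in $Y\cup\SET{e}$. The containment simplifies to $X\subseteq Y$ because $e\notin X$, and $X\in\Fcal(N)$ simplifies to $X\in\Fcal(M)\setminus C$. Rewriting ``$X\in\Fcal(M)$ and $X\subseteq Y$'' through Definition~\ref{def:AlphaPoset} gives exactly $X\sqsubseteq_M Y$ (splitting into $X\subsetneq Y$ giving $X\in\Fcal(M,Y)$, and $X=Y$ giving reflexivity), so the main equivalence drops out. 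For the ``furthermore'', the hypothesis $Y\sqsubseteq_N Y\cup\SET{e}$ already forces $Y\in\Fcal(N)$; I would then combine this with the closure analysis that $e\notin\cl_N(Y)$ together with the refined characterization of $\Fcal(N)$ to conclude $Y\cup\SET{e}\in\Fcal(N)$, taking care to reconcile this with the $\partial C$-condition that reappears in part~(ii).

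For part~(ii), the approach is completely analogous. The only case where $X\cup\SET{e}=Y\cup\SET{e}$ is $X=Y$, handled by reflexivity; in all other cases, $X\cup\SET{e}\sqsubseteq_N Y\cup\SET{e}$ means $X\cup\SET{e}\in\Fcal(N)$ and $X\cup\SET{e}\subsetneq Y\cup\SET{e}$. The containment is equivalent to $X\subsetneq Y$, and membership in $\Fcal(N)$ is, by the unified form of Lemma~\ref{lem:flatsOfExtension} stated above, equivalent to $X\in\Fcal(M)$ and $X\notin\partial C$. Translating ``$X\in\Fcal(M)$ and $X\subsetneq Y$'' back through Definition~\ref{def:AlphaPoset} gives $X\sqsubseteq_M Y$, concluding the equivalence.

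The arguments are essentially bookkeeping once Lemma~\ref{lem:flatsOfExtension} is in hand, so the technical obstacle is not conceptual but clerical: one must carefully unify the two cases ``$F\in C$'' and ``$F\notin C$ with no bad witness'' from Lemma~\ref{lem:flatsOfExtension} into the single condition $F\in\Fcal(M)\setminus\partial C$, and one must handle the $X=Y$ edge cases of the poset relation $\sqsubseteq_M$ (where reflexivity applies without $X$ needing to be a flat) consistently on both sides of each equivalence. The ``furthermore'' clause in part~(i) is the most delicate point, because $Y\sqsubseteq_N Y\cup\SET{e}$ only says $Y\in\Fcal(N)$, and deducing $Y\cup\SET{e}\in\Fcal(N)$ requires the extra verification that no $x\in E\setminus Y$ has $\cl_M(Y\cup\SET{x})\in C$ — i.e., that $Y\notin\partial C$ — so this is where I expect to have to do the most careful work.
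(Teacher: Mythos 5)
Your overall route is the same as the paper's, which dispatches this lemma in a single line by citing Lemma~\ref{lem:flatsOfExtension} and Definition~\ref{def:AlphaPoset}, and your unification of the two cases of Lemma~\ref{lem:flatsOfExtension} into ``$F\cup\SET{e}\in\Fcal(N)$ iff $F\in\Fcal(M)$ and $F\notin\partial C$'' is correct and is exactly the right bookkeeping device. However, two of the steps you describe cannot be completed as written, and the culprit in both is the reflexive clause of Definition~\ref{def:AlphaPoset}. The condition ``$X\in\Fcal(M)$ and $X\subseteq Y$'' is \emph{not} equivalent to $X\sqsubseteq_M Y$: when $X=Y$ the relation holds by reflexivity without $X$ being a flat, so the direction of part~(i) that goes from ``$X\sqsubseteq_M Y$ and $X\notin C$'' back to ``$X\in\Fcal(N)$'' fails for $X=Y\notin\Fcal(M)$, and the analogous edge case $X=Y\in\partial C$ breaks part~(ii). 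Read literally for all $X,Y\subseteq E$, the biconditionals only survive if one excludes $X=Y$ or restricts to flats, which is how the lemma is in fact used in the down-set computations of Lemmas~\ref{lem:alphaNXbelowC} and \ref{lem:DPalpha}; your phrase ``$X=Y$ giving reflexivity'' papers over this.

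More seriously, the ``furthermore'' clause of part~(i) --- which you rightly single out as the delicate point and then defer --- is not provable, because it is false. The hypothesis $Y\sqsubseteq_N Y\cup\SET{e}$ yields only $Y\in\Fcal(N)\cap 2^E=\Fcal(M)\BS C$, and this does not imply $Y\notin\partial C$. Concretely, let $M$ be the rank-$2$ uniform matroid on $\SET{a,b,c}$ and let $C$ be the modular cut of all flats containing $a$, so that $e$ is added parallel to $a$. Then $Y=\emptyset$ is a flat of both $M$ and $N$ with $\emptyset\subsetneq\SET{e}$, hence $Y\sqsubseteq_N Y\cup\SET{e}$; but $\cl_M(\SET{a})\in C$ puts $\emptyset\in\partial C$, and indeed $\cl_N(\SET{e})=\SET{a,e}$, so $Y\cup\SET{e}=\SET{e}\notin\Fcal(N)$. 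So the ``extra verification that $Y\notin\partial C$'' you anticipate is not pending careful work --- it is an unprovable claim, and the clause needs $Y\notin\partial C$ as an additional hypothesis (equivalently, it should be absorbed into statement~(ii) with $X=Y$). This reading is consistent with the paper's own later remark, in the proof of Lemma~\ref{lem:alphaNXbelowC}, that $X\cup\SET{e}$ ``may or may not be a flat in $N$'' even though it lies in the relevant down-set.
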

\begin{proof}\PRFR{Feb 15th}
This is clear from Lemma~\ref{lem:flatsOfExtension} and Definition~\ref{def:AlphaPoset}, too.
\end{proof}

\begin{remark}\label{rem:moebiusAMstableBelowC}\PRFR{Feb 15th}
As we have just seen, the $\Alpha_N$-down-sets of subsets of $E$  are the corresponding down-sets 
of the $\alpha_M$-poset $\Alpha_M$ where the upper part, that corresponds to the modular cut $C$ of the 
single element extension $N$ of $M$, has been cut off. 
Since the values of the Möbius-function $\mu_{P}(X,Y)$ for an arbitrary poset $P$
only depend on the $P$-down-sets of elements of the $P$-down-set of $Y$ (Definition~\ref{def:moebiusFunction}), 
we see that for $X\subseteq E$ and $Y\subseteq E$ with $C\cap 2^Y \subseteq \SET{Y}$ 
we have $\mu_{\Alpha_M}(X,Y) = \mu_{\Alpha_N}(X,Y)$ and consequently
$\alpha_M(Y) = \alpha_N(Y)$.
\end{remark}

\begin{corollary}\label{cor:muAlphaAsSumWithDelta}\PRFR{Feb 15th}
Let $M=(E,\Ical)$ be a matroid, $e\notin E$, $N\in\Xcal(M,e)$ a single element extension of $M$, and 
	$C = \SET{F\in\Fcal(M)~\middle|~e\in\cl_N(F)}$ the corresponding modular cut.
	Then for all $X,Y\subseteq E$
	\[ \mu_{\Alpha_N}(X,Y) = \mu_{\Alpha_M}(X,Y) + \sum_{Z\in C,\,X\subseteq Z\subsetneq Y} \mu_{\Alpha_M}(X,Z).\]
\end{corollary}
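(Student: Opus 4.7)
The plan is to reformulate the claimed identity as a matrix equation on $\Z^{2^E\times 2^E}$ and then verify it by a short case analysis that pivots on the upward-closure of the modular cut $C$ in $\Fcal(M)$.

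First I would note that for any $Y\subseteq E$ the $\Alpha_N$-down-set of $Y$ is entirely contained in $2^E$, because $W\sqsubseteq_N Y$ with $Y\subseteq E$ forces $W\subseteq Y\subseteq E$. Therefore $\mu_{\Alpha_N}\restrict 2^E\times 2^E$ is the Möbius function of the induced sub-poset $P'=(2^E,\sqsubseteq_N)$, and by Lemma~\ref{lem:AlphaPosetDownsetsExtension} its zeta-matrix satisfies $\zeta_{P'}(X,Y)=[X=Y]+[X\sqsubseteq_M Y,\,X\neq Y,\,X\notin C]$. On the other hand, introducing the matrix $H\in\Z^{2^E\times 2^E}$ by $H(Z,Y)=[Z\in C]\cdot[Z\subsetneq Y]$, the right-hand side of the corollary is exactly the entry $(\mu_{\Alpha_M}\restrict\cdot(I+H))(X,Y)$; the side condition $X\subseteq Z$ in the sum is automatic because $\mu_{\Alpha_M}(X,Z)$ vanishes whenever $X\not\sqsubseteq_M Z$.

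This reduces the corollary to the matrix identity $(I+H)\,\zeta_{P'}=\zeta_{\Alpha_M}\restrict 2^E\times 2^E$: once that is shown, left-multiplying by $\mu_{\Alpha_M}$ and using Lemma~\ref{lem:moebiusInversion} gives $\mu_{\Alpha_M}\cdot(I+H)=\zeta_{P'}^{-1}=\mu_{\Alpha_N}\restrict$, which is precisely the claimed formula. For the verification I split on whether $X\in C$. If $X\notin C$, the row $H(X,\cdot)$ vanishes, and Lemma~\ref{lem:AlphaPosetDownsetsExtension} gives $\zeta_{P'}(X,Y)=\zeta_{\Alpha_M}(X,Y)$, so the two sides agree immediately.

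The main obstacle is the case $X\in C$. Here $\zeta_{P'}(X,Y)=[X=Y]$, so one must show that $(H\zeta_{P'})(X,Y)=\#\{Z:X\subsetneq Z,\,Z\sqsubseteq_N Y\}$ equals $[X\subsetneq Y]$. When $X\not\subseteq Y$ both sides are zero. When $X\subsetneq Y$, the value $Z=Y$ already contributes $1$ to the count, so it suffices to rule out every other candidate $Z$. Such a $Z$ would be a proper $\sqsubseteq_N$-predecessor of $Y$ strictly containing $X$, hence $Z\in\Fcal(N)$ with $Z\subsetneq Y\subseteq E$; in particular $e\notin Z$, so Lemma~\ref{lem:flatsOfExtension} forces $Z\in\Fcal(M)\setminus C$. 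But $X\in C$, $Z\in\Fcal(M)$, and $X\subsetneq Z$ contradict the upward-closure of the modular cut $C$ in $\Fcal(M)$ (condition (ii) in the definition of a modular cut). This contradiction eliminates the spurious $Z$'s, closes the case analysis, and establishes the matrix identity — from which the corollary follows.
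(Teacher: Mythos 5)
Your proof is correct, and it takes a genuinely different route from the paper's. The paper works on the Möbius side: it asserts an identity between the sums appearing in the defining recursions of $\mu_{\Alpha_M}(X,Y)$ and $\mu_{\Alpha_N}(X,Y)$ (using Lemma~\ref{lem:AlphaPosetDownsetsExtension} together with Remark~\ref{rem:moebiusAMstableBelowC}) and then ``expands and cancels,'' which implicitly hides an induction over the poset. You instead transpose the whole problem to the zeta side: you verify the unitriangular matrix identity $(I+H)\,\zeta_{P'}=\zeta_{\Alpha_M}$ directly and obtain the Möbius statement for free by inversion via Lemma~\ref{lem:moebiusInversion}. Each step of your verification checks out: the observation that the $\Alpha_N$-down-set of any $Y\subseteq E$ stays inside $2^E$ legitimately identifies $\mu_{\Alpha_N}\restrict 2^E\times 2^E$ with $\mu_{P'}$; the case $X\notin C$ is immediate from Lemma~\ref{lem:AlphaPosetDownsetsExtension}; and in the case $X\in C$ your use of $\Fcal(N)\cap 2^E=\Fcal(M)\BS C$ (Lemma~\ref{lem:flatsOfExtension}) plus the upward-closure of the modular cut correctly shows that $Y$ itself is the only $Z$ with $X\subsetneq Z\sqsubseteq_N Y$, matching $\zeta_{\Alpha_M}(X,Y)=[X\subseteq Y]$ for a flat $X\in C$. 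What your approach buys is that the only combinatorial input is the description of the two order relations and the closure property of $C$ — no induction, no appeal to the partial agreement of the two Möbius functions — at the cost of a slightly less direct connection to the recursion that Definition~\ref{def:DeltaAlphaInvariant} and Lemma~\ref{lem:alphaNXThroughDeltaalphaM} subsequently exploit.
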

\begin{proof}\PRFR{Feb 15th}\PRFR{~~ + sage}
	The first equation is a direct consequence of Lemma~\ref{lem:AlphaPosetDownsetsExtension} and Remark~\ref{rem:moebiusAMstableBelowC}:
	\begin{align*} - \sum_{X\sqsubseteq_{M} Z \sqsubset_{M} Y} \mu_{\Alpha_M}(X,Z)  \,\,\,=\,\,\, &
	- \left( \sum_{X\sqsubseteq_{N} Z \sqsubset_{N} Y} \mu_{\Alpha_N}(X,Z) \right)
	\\ & - 
	\left( \sum_{X\sqsubseteq_{M} Z \sqsubset_{M} Y, Z\in C} \mu_{\Alpha_M}(X,Z) \right)
	\end{align*}
	holds for all $X,Y\subseteq E$. Thus we may expand the terms $\mu_{\Alpha_N}(X,Y)$ and $\mu_{\Alpha_M}(X,Y)$ with Definition~\ref{def:alphaM}, and then cancel in the above equation.
\end{proof}

\begin{definition}\label{def:DeltaAlphaInvariant}\PRFR{Feb 15th}
	Let $M=(E,\Ical)$ be a matroid and let $\Mcal(M)$ be the class of all modular cuts of $M$.\label{n:Deltaalphainvariant}
	The \deftext[da-invariant of M@$\Delta\alpha$-invariant of $M$]{$\bm \Delta \bm \alpha$-invariant of $\bm M$}
	shall be defined as
	\[
		\Delta \alpha_M \colon \Mcal(M)\times 2^E \maparrow \Z,\]\[
		(C,X) \mapsto \sum_{Y\subsetneq X} \left( \left( \left| Y \right| - \rk(Y) \right) \cdot \sum_{Z\in C,\,Y\subseteq Z \subsetneq X} \mu_{\Alpha}(Y,Z)\right),
	\]
	where $\mu_\Alpha$ denotes the Möbius-function of the $\alpha_M$-poset.
	If the matroid $M$ is clear from the context, we will denote $\Delta\alpha_M$ simply by $\Delta\alpha$.
\end{definition}

\begin{lemma}\label{lem:alphaNXThroughDeltaalphaM}\PRFR{Feb 15th}
	Let $M=(E,\Ical)$ be a matroid, $e\notin E$, $N\in\Xcal(M,e)$ be an extension of $M$,
	$C = \SET{F\in\Fcal(M)~\middle|~ e\in \cl_N(F)}$ the corresponding modular cut, and
	$X\subseteq E$.
	Then \[ \alpha_N(X) = \alpha_M(X) + \Delta\alpha_M(C,X).\]
\end{lemma}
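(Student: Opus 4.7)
The plan is to derive this identity from Corollary~\ref{cor:muAlphaAsSumWithDelta}, combined with the Möbius-inversion characterisation of $\alpha$ given in Lemma~\ref{lem:alphaMoebius}, using the observation from the proof of Corollary~\ref{cor:muAlphaNFromMuAlphaM} that the extra index $Y$ with $e\in Y$ contributes nothing when $X\subseteq E$. The whole proof is essentially bookkeeping with summations; no nontrivial combinatorial argument is required, since both the key reduction formulae and the definition of $\Delta\alpha_M$ are already in place.

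Concretely, I would argue as follows. By Lemma~\ref{lem:alphaMoebius} applied to $N$, setting $\nu_N(Y)=|Y|-\rk_N(Y)$, one has
\[ \alpha_N(X) \,=\, \sum_{Y\subseteq E\cup\SET{e}} \nu_N(Y)\cdot \mu_{\Alpha_N}(Y,X). \]
Since $X\subseteq E$ does not contain $e$, any $Y$ with $e\in Y$ satisfies $Y\not\sqsubseteq_N X$, so $\mu_{\Alpha_N}(Y,X)=0$ for such $Y$. Hence the summation reduces to $Y\subseteq E$, and Lemma~\ref{lem:extension_rk} gives $\nu_N(Y)=\nu_M(Y)$ there. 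Now substitute the identity from Corollary~\ref{cor:muAlphaAsSumWithDelta},
\[ \mu_{\Alpha_N}(Y,X) \,=\, \mu_{\Alpha_M}(Y,X) \,+\, \sum_{Z\in C,\, Y\subseteq Z \subsetneq X} \mu_{\Alpha_M}(Y,Z), \]
and split the resulting double sum:
\[ \alpha_N(X) \,=\, \sum_{Y\subseteq E}\nu_M(Y)\mu_{\Alpha_M}(Y,X) \,+\, \sum_{Y\subseteq E}\nu_M(Y)\sum_{Z\in C,\, Y\subseteq Z\subsetneq X}\mu_{\Alpha_M}(Y,Z). \]

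The first term on the right equals $\alpha_M(X)$ by Lemma~\ref{lem:alphaMoebius} applied to $M$. For the second term, observe that the inner constraint $Y\subseteq Z\subsetneq X$ automatically forces $Y\subsetneq X$, so all contributions with $Y\not\subsetneq X$ are zero; thus the outer summation range $Y\subseteq E$ may be replaced by $Y\subsetneq X$ without changing the value. The expression then coincides verbatim with $\Delta\alpha_M(C,X)$ as given in Definition~\ref{def:DeltaAlphaInvariant}, yielding the claimed equation $\alpha_N(X)=\alpha_M(X)+\Delta\alpha_M(C,X)$. The only genuinely delicate point, which I would flag explicitly, is justifying that each of the two index-range modifications (from $Y\subseteq E\cup\SET{e}$ to $Y\subseteq E$, and from $Y\subseteq E$ to $Y\subsetneq X$) is zero-preserving; both follow immediately from the support of the relevant Möbius function.
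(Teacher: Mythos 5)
Your proof is correct and follows essentially the same route as the paper's: expand $\alpha_N(X)$ via the Möbius-inversion formula of Lemma~\ref{lem:alphaMoebius}, substitute the expression for $\mu_{\Alpha_N}(Y,X)$ from Corollary~\ref{cor:muAlphaAsSumWithDelta}, and identify the two resulting sums with $\alpha_M(X)$ and $\Delta\alpha_M(C,X)$. The paper merely compresses the two index-range reductions you flag by citing Corollary~\ref{cor:muAlphaNFromMuAlphaM} directly, so your extra care there is fine but not a difference in substance.
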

\begin{proof}\PRFR{Feb 15th}\PRFR{~~ + sage}
	Let $X\subseteq E$, and let $\Alpha_M$ and $\Alpha_N$ denote the $\alpha_M$- and $\alpha_N$-posets, respectively.
	From Corollary~\ref{cor:muAlphaNFromMuAlphaM} and Lemmas~\ref{lem:alphaMoebius} and \ref{lem:moebiusInversion} we obtain the equation
	\[ \alpha_N(X) = \sum_{Y\subseteq X} \left( \left( \left| Y \right| - \rk_M(Y) \right)\cdot \mu_{\Alpha_N}(Y,X)\right) .\]
	Corollary~\ref{cor:muAlphaAsSumWithDelta}  yields
	\[ \mu_{\Alpha_N}(Y,X) = \mu_{\Alpha_M}(Y,X) + \sum_{Z\in C,\,Y\subseteq Z \subsetneq X} \mu_{\Alpha_M}(Y,Z) \]
	and therefore applying the distributive law of $\Z$ together with Definition~\ref{def:DeltaAlphaInvariant} yields 
	the desired equation
	\begin{align*}
		 \alpha_N(X) \,\,\,=\,\,\, & \sum_{Y\subseteq X} 
		 \left( \left( \left| Y \right| - \rk_M(Y) \right)\cdot\left(\mu_{\Alpha_M}(Y,X)
		 + \sum_{Z\in C,\,Y\subseteq Z \subsetneq X} \mu_{\Alpha_M}(Y,Z)\right)  \right) 
		  \\
		  = \,\,\,& \alpha_M(X) + \Delta\alpha_M(C,X). \qedhere
	\end{align*}
\end{proof}

\needspace{6\baselineskip}

\begin{lemma}\label{lem:alphaNXbelowC}\PRFR{Feb 15th}
	Let $M=(E,\Ical)$ be a matroid, $e\notin E$, $N\in\Xcal(M,e)$ be an extension of $M$,
	$C = \SET{F\in\Fcal(M)~\middle|~ e\in \cl_N(F)}$ the corresponding modular cut, and
	$X\subseteq E$ such that 
	 $\rk_M \left( F\cap X' \right) < \rk_M(F)$ for all $F\in C$ and all proper subsets $X'\subsetneq X$.
	Then $$ \alpha_N(X\cup\SET{e}) = \begin{cases}[r] 0 & \quad\text{if~} X\in \Fcal(M) \txtand \cl_M(X)\notin C, \\
												     \alpha_M(X) & \quad\text{if~} X\notin \Fcal(M) \txtand \cl_M(X)\notin C,\\
												    1 + \alpha_M(X) & \quad\text{if~}  \cl_M(X)\in C.\\
												     \end{cases} $$
\end{lemma}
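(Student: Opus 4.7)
The plan is to prove the lemma by induction on $|X|$. In the base case $X = \emptyset$, I would verify each of the three sub-cases directly: Case~1 requires $\cl_M(\emptyset) = \emptyset$ and $\emptyset \notin C$, giving $\rk_N(\SET{e}) = 1$ and $\emptyset \in \Fcal(N)$ with $\alpha_N(\emptyset) = 0$, yielding $\alpha_N(\SET{e}) = 0$. Case~2 has $\cl_M(\emptyset) \neq \emptyset$ and $\cl_M(\emptyset) \notin C$, so $\emptyset \notin \Fcal(N)$, and $\alpha_N(\SET{e}) = 0 = \alpha_M(\emptyset)$. Case~3 has $\cl_M(\emptyset) \in C$, so $e$ becomes a loop in $N$ with $\rk_N(\SET{e}) = 0$, giving $\alpha_N(\SET{e}) = 1 = 1 + \alpha_M(\emptyset)$.

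For the inductive step with $|X| > 0$, I first observe that the premise passes to any $F \subsetneq X$: if $F' \subsetneq F$, then $F' \subsetneq X$, so $\rk_M(G \cap F') < \rk_M(G)$ for all $G \in C$. A key consequence of the premise is that no flat in $\Fcal(M,X)$ lies in $C$ (otherwise taking $X' = F$ would contradict $\rk_M(F \cap F) < \rk_M(F)$). I then apply the recurrence
\[ \alpha_N(X \cup \SET{e}) = |X|+1 - \rk_N(X\cup\SET{e}) - \sum_{F' \in \Fcal(N,\, X\cup\SET{e})} \alpha_N(F'), \]
where $\rk_N(X\cup\SET{e})$ equals $\rk_M(X) + 1$ in Cases~1 and 2 (since $\cl_M(X) \notin C$ implies $e \notin \cl_N(X)$) and equals $\rk_M(X)$ in Case~3.

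Next, I will split $\Fcal(N, X\cup\SET{e})$ via Lemma~\ref{lem:flatsOfExtension} into flats not containing $e$ (sub-case~(a)) and flats of the form $F \cup \SET{e}$ with $F \subsetneq X$ (sub-case~(b)). In sub-case~(a), the flats are exactly $\Fcal(M,X) \cup \Lambda$, where $\Lambda = \SET{X}$ in Case~1 and $\Lambda = \emptyset$ in Cases~2 and 3. For each such flat $F' \subseteq X$, Lemma~\ref{lem:alphaNXThroughDeltaalphaM} combined with the premise yields $\Delta\alpha_M(C, F') = 0$ --- the inner sum in Definition~\ref{def:DeltaAlphaInvariant} is empty since no $Z' \in C$ satisfies $Z' \subsetneq F' \subseteq X$ --- so $\alpha_N(F') = \alpha_M(F')$. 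The sum thus collapses via the recurrence for $\alpha_M(X)$ to $|X| - \rk_M(X)$ in Case~1 and to $|X| - \rk_M(X) - \alpha_M(X)$ in Cases~2 and 3.

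In sub-case~(b), every flat $F \cup \SET{e}$ of $N$ that is properly contained in $X \cup \SET{e}$ arises from some $F \in \Fcal(M,X)$, hence $F = \cl_M(F) \notin C$, placing $F$ in Case~1 of the lemma with $X$ replaced by $F$. Since the premise is inherited by $F$ and $|F| < |X|$, the inductive hypothesis gives $\alpha_N(F \cup \SET{e}) = 0$, so the entire sub-case~(b) contribution vanishes. Plugging back into the recurrence and simplifying delivers $\alpha_N(X\cup\SET{e}) = 0$ in Case~1, $\alpha_M(X)$ in Case~2, and $1 + \alpha_M(X)$ in Case~3. The main obstacle is carefully threading the premise through the enumeration of flats and verifying it is inherited by proper sub-flats; a minor subtlety is that Lemma~\ref{lem:flatsOfExtension} imposes an additional covering condition for $F \cup \SET{e}$ to actually be a flat of $N$, but this is immaterial because the inductive hypothesis furnishes $\alpha_N(F \cup \SET{e}) = 0$ uniformly for every $F \in \Fcal(M,X)$ that could contribute.
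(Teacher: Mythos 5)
Your proof is correct and follows essentially the same route as the paper's: both rest on Lemma~\ref{lem:flatsOfExtension} to enumerate the flats of $N$ below $X\cup\SET{e}$, on the observation that the hypothesis forces $C\cap 2^X\subseteq\SET{\cl_M(X)}$ so that $\alpha_N$ and $\alpha_M$ agree on all flats strictly below $X$, and on the vanishing of $\alpha_N(F\cup\SET{e})$ for proper sub-flats $F$ to kill the second half of the sum. The only difference is organizational — you run a single induction on $\left| X \right|$ covering all three cases at once, where the paper inducts on $\rk_M(X)$ for the flat case and then handles the remaining two cases by direct computation — and this changes nothing of substance.
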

\begin{proof}\PRFR{Feb 15th}\PRFR{~~ + sage}
	Let $(\Alpha_M,\sqsubseteq_M)$ and $(\Alpha_N,\sqsubseteq_N)$ denote the $\alpha_M$- and $\alpha_N$-poset, respectively.
	Let $W\subseteq X$, then $W$ satisfies the premises of this lemma whenever $X$ satisfies the premises.
	Furthermore, if for some $F\in C$ the equality $\rk_M(F\cap X) = \rk_M(F)$ holds,
	then
	\linebreak
	 $F = \cl_M(X)$ and conversely, if $\cl_M(X)\notin C$, then $\rk_M(F\cap X) < \rk_M(F)$ for all $F\in C$.

	Now, we prove the statement for all $X\in\Fcal(M)$ with $\cl_M(X)\notin C$ by induction on $\rk_M(X)$.
	Let $O=\cl_M(\emptyset)$ be the unique rank-$0$ flat of $M$.
	 Then
	 the down-set  $\downarrow_{\Alpha_N}\left( O\cup\SET{e} \right) = \SET{O,O\cup\SET{e}}$.
	Thus, by Definitions~\ref{def:alphaM} and \ref{def:AlphaPoset},
	we have \begin{align*}
		\alpha_N(O\cup \SET{e}) = & \left| O\cup\SET{e} \right| - \rk_N(O\cup \SET{e}) - \alpha_N(O)\\
	=& \left| O \right| + 1 - 1 - \left( \left| O \right| - \rk_N(O) \right) = 0.
	\end{align*}
	Now let $X\in\Fcal(M)$ be a flat with $\rk_M(X) > 0$. Lemma~\ref{lem:AlphaPosetDownsetsExtensionWithE}
	yields that
	$$\downarrow_{\Alpha_N} \left( X\cup\SET{e} \right) = \SET{F,F\cup\SET{e}~\middle|~ F\in\,\, \downarrow_{\Alpha_M} X}.$$
	Note that $X\cup\SET{e}$ may or may not be a flat in $N$, as we have $X\cup\SET{e}\notin\Fcal(N)$ if $X\in\Fcal(M)$ 
	and $X$ is covered by a flat from $C$ --- but $X\cup\SET{e}$ is still an element of the above down-set.
	The assumption, that  $\rk_M \left( F\cap X \right) < \rk_M(F)$  for all $F\in C$, guarantees that all $F\in\Fcal(M,X)$
	are flats of $N$, too.
	Furthermore, we have
	\[ \alpha_N(X\cup\SET{e}) = \left| X\cup\SET{e} \right| - \rk_N(X\cup\SET{e}) - \sum_{F\sqsubset_N X} \alpha_N(F).\]
	Using the induction hypothesis, we obtain
	\begin{align*}
	 \alpha_N(X\cup\SET{e}) = &
	  \left| X\cup\SET{e} \right| - \rk_N(X\cup\SET{e}) - \left( \sum_{F\sqsubset_N X} \alpha_N(F) \right) - \alpha_N(X) 
	 \\
	 = & \left| X \right| - \rk_N(X) -\left(  \sum_{F\sqsubset_N X} \alpha_N(F)  \right)
	 - \left( \left| X \right| - \rk_N(X) - \sum_{F\sqsubset_N X} \alpha_N(F) \right) \\ = & \,\,0. \\
	\end{align*}
	Now let $X\subseteq E$ with $\cl_M(X) \notin C$ and $X\notin \Fcal(M)$.
	Then
	$$\downarrow_{\Alpha_N} \left( X\cup\SET{e} \right) = \SET{F,F\cup\SET{e}~\middle|~ F\in\,\, \downarrow_{\Alpha_M} X}\BSET{X}.$$
	Analogously to the above calculation we obtain
	\begin{align*}
	 \alpha_N(X\cup\SET{e}) = & \left| X\cup\SET{e} \right| - \rk_N(X\cup\SET{e}) - \sum_{F\sqsubset_N X} \alpha_N(F)
	 \\ = &\,\, \alpha_N(X) = \alpha_M(X), \end{align*}
	 where the last equation is due to the fact that  $F\not\subseteq X$ holds for all $F\in C$,
	 which implies that $N\restrict X = M\restrict X$ and therefore
	  $\alpha_N(X) = \alpha_{N\restrict X}(X) = \alpha_{M\restrict X}(X) = \alpha_M(X)$
	 (Definition~\ref{def:alphaM}).	

	 \noindent
	 Now assume that $\cl_M(X)\in C$. 
	 If $X\in\Fcal(M)$, then $e\in\cl_N(X)$, thus $X\notin \Fcal(N)$. Otherwise $X\notin\Fcal(M)$ and therefore $X\notin\Fcal(N)$, too. 
	 In both cases we obtain that
	 $$\SET{F\subseteq E\cup\SET{e}~\middle|~\vphantom{A^A} F \sqsubset_N X\cup\SET{e}}
	  = \SET{F,F\cup\SET{e}~\middle|~ F\in\,\, \downarrow_{\Alpha_M} X} \BSET{\vphantom{A^A}X, X\cup\SET{e}}.$$
	 Furthermore, for all $X'\subsetneq X$ we have $\cl_M(X')\notin C$, 
	 because  $\rk_M \left( F\cap X' \right) < \rk_M(F)$ for all $F\in C$.
	 This implies that if $F\cup\SET{e} \sqsubset_{\Alpha_N} X$ for some $F\in\Fcal(M)$, then $\alpha_N(F\cup\SET{e}) = 0$.
	 Consequently, with Lemma~\ref{lem:flatsOfExtension}, we obtain
	 \[ \sum_{F\sqsubset_N X\cup\SET{e}} \alpha_N(F) = \sum_{F\sqsubset_N X\cup\SET{e},\,e\notin F} \alpha_N(F)
	 = \sum_{F\sqsubset_N X} \alpha_N(F) = \sum_{F\sqsubset_M X} \alpha_M(F).\]
	 Since $e\in\cl_N(X)$, we have $\rk_N(X\cup\SET{e}) = \rk_N(X)$. This yields the desired equation
	 \begin{align*}
	 	 \alpha_N(X\cup\SET{e}) & = \left| X\cup\SET{e} \right| - \rk_N(X\cup\SET{e}) - \sum_{F\sqsubset_N X\cup\SET{e}} \alpha_N(F) \\
	 	 & = 1 + \left| X \right| - \rk_M(X) - \sum_{F\sqsubset_M X} \alpha_M(F) = 1 + \alpha_M(X). \qedhere
	 	\end{align*}
\end{proof}

\noindent In order to determine the values of $\alpha_N(X)$ of the extension $N$ of $M$ by $e$ when $e\in X$ and $e\in\cl_N(X\BSET{e})$,
		we have to keep track of the flats $F$ of $M$ that are proper subsets $X$ with the additional property that $e\in\cl_N(F)$.
\begin{definition}\PRFR{Feb 15th}
	Let $M=(E,\Ical)$ be a matroid and let $C\in\Mcal(M)$ be a modular cut of $M$.
	We define the \deftext[extension poset of $C$ with respect to $M$]{extension poset of $\bm C$ with respect to $\bm M$}
	as the pair \label{n:BetaMC} $\left(\Beta_M^C, \sqsubseteq_M^C \right)$ where
	\( \Beta_M^C = 2^E \) and  where
	\[ X \sqsubseteq_M^C Y \quad \Longleftrightarrow\quad X = Y \txtor \left( X\in C \txtand X\subseteq Y\right)\]
	holds for all $X,Y\subseteq E$. If $M$ is clear from the context, we will denote $\Beta_M^C$ by $\Beta^C$ and
	$\sqsubseteq_M^C$ by $\sqsubseteq^C$, too.
\end{definition}
\begin{remark}\PRFR{Feb 15th}
	Clearly, $\sqsubseteq_M^C$ is reflexive, the anti-symmetry of $\Beta_M^C$ follows from the anti-symmetry of $\subseteq$.
	Let $X \sqsubset_M^C Y \sqsubset_M^C Z$. Then $X,Y\in C$ and $X\subsetneq Y\subsetneq Z$. Therefore $X\sqsubset_M^C Z$ holds,
	and $\Beta_M^C$ is indeed a poset.
\end{remark}

 \needspace{8\baselineskip}
\begin{definition}\PRFR{Feb 15th}
	\label{def:DeltaPAlphaInvariant}
	Let $M=(E,\Ical)$ be a matroid and let $\Mcal(M)$ be the class of all modular cuts of $M$.\label{n:DeltaPalphainvariant}
	The \deftext[da-invariantX of M@$\DeltaP\alpha$-invariant of $M$]{$\bm \DeltaP \bm \alpha$-invariant of $\bm M$}
	shall be defined as
	\[
		\DeltaP \alpha_M \colon \Mcal(M)\times 2^E \maparrow \Z,\]\[
		(C,X) \mapsto \begin{cases}[r]
							- \alpha_M(X) &\quad\text{if~} X\in \Fcal(M) \txtand \cl_M(X)\notin C, \\
							0 & \quad\text{if~} X\notin \Fcal(M) \txtand \cl_M(X)\notin C,\\
							1 - \displaystyle \sum_{F \sqsubset^C X} \DeltaP \alpha_M(C,F) & \quad\text{otherwise,}
					\end{cases}
	\]
	where $\left( \Beta^C, \sqsubseteq^C \right)$ denotes the extension poset of $C$ with respect to $M$.
	If the matroid $M$ is clear from the context, we will denote $\DeltaP\alpha_M$ simply by $\DeltaP\alpha$.
\end{definition}

\begin{lemma}\PRFR{Feb 15th}\label{lem:DPalpha}
	Let $M=(E,\Ical)$ be a matroid, $e\notin E$, $N\in\Xcal(M,e)$ be an extension of $M$,
	$C = \SET{F\in\Fcal(M)~\middle|~ e\in \cl_N(F)}$ the corresponding modular cut.
	Then $$ 
		\alpha_N(X\cup\SET{e}) = \alpha_M(X) + \DeltaP\alpha_M(C,X). $$
\end{lemma}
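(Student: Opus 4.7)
The plan is to proceed by induction on $|X|$, paralleling the argument in Lemma~\ref{lem:alphaNXbelowC} but now tracking the correction arising from flats of $C$ that sit properly inside $X$. The base case $X=\emptyset$ is a direct computation: $\alpha_N(\SET{e})$ equals $1$ if $\cl_M(\emptyset)\in C$ (when $e$ becomes a loop of $N$) and $0$ otherwise, matching $\DeltaP\alpha(C,\emptyset)+\alpha_M(\emptyset)$ in each of the three branches of Definition~\ref{def:DeltaPAlphaInvariant}. This also gives a useful sanity check: when no proper subset of $X$ contains an element of $C$, the sum $\sum_{F\sqsubset^C X}\DeltaP\alpha(C,F)$ is empty, so the three branches of $\DeltaP\alpha$ collapse to $-\alpha_M(X)$, $0$, and $1$ respectively, and Lemma~\ref{lem:alphaNXbelowC} already proves the claim in that case.

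For the inductive step I would expand
\[
\alpha_N(X\cup\SET{e}) \,=\, (|X|+1) - \rk_N(X\cup\SET{e}) - \sum_{G\sqsubset_{\Alpha_N} X\cup\SET{e}}\alpha_N(G),
\]
and split the sum into sub-flats $G$ with $e\notin G$ and those with $e\in G$. Using Lemma~\ref{lem:flatsOfExtension}, the first class consists of the $M$-flats $G\subseteq X$ with $G\notin C$; the second class consists of sets $F\cup\SET{e}$ with $F\subsetneq X$, $F\in\Fcal(M)$, and either $F\in C$ or $F$ has no covering $M$-flat in $C$. Applying Lemma~\ref{lem:alphaNXThroughDeltaalphaM} to the first class gives $\alpha_N(G)=\alpha_M(G)+\Delta\alpha_M(C,G)$, and the inductive hypothesis applied to the second class gives $\alpha_N(F\cup\SET{e})=\alpha_M(F)+\DeltaP\alpha(C,F)$. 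After substitution, the rank contribution splits according to whether $\cl_M(X)\in C$ (contributing $\rk_M(X)$) or not (contributing $\rk_M(X)+1$); invoking the $\alpha_M$-recurrence from Definition~\ref{def:alphaM} reconstructs $\alpha_M(X)$ on the right-hand side, and what remains is a signed sum of $\DeltaP\alpha(C,F)$-values for $F\sqsubset^C X$ together with an indicator term arising from the rank.

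The plan is then to verify that this residual matches $\DeltaP\alpha(C,X)$ in each of the three defining cases. When $X\in\Fcal(M)$ and $\cl_M(X)\notin C$, the sub-flat $G=X$ itself appears on both sides and forces $\alpha_N(X\cup\SET{e})=0$, matching $-\alpha_M(X)$; when $X\notin\Fcal(M)$ and $\cl_M(X)\notin C$, the $G=X$ term drops out and the residual collapses to $\alpha_M(X)$, matching $\DeltaP\alpha(C,X)=0$; when $\cl_M(X)\in C$, the rank drop contributes $+1$ and the $F\sqsubset^C X$ terms recombine via the telescoping recursion of $\DeltaP\alpha$ on the extension poset $\Beta^C$. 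I expect the hardest step to be the $\cl_M(X)\in C$ case with $X\notin\Fcal(M)$, where one must show that all the $\Delta\alpha_M(C,G)$ corrections on the $e\notin G$ side cancel exactly against the $\DeltaP\alpha(C,F)$ contributions on the $e\in G$ side except for the single surviving $+1$ that is consistent with the rank drop. A useful intermediate reduction is to observe that $\DeltaP\alpha(C,\cdot)$ behaves on $C$ itself like the Möbius function of the poset obtained from $(C,\subseteq)$ by adjoining a formal minimum below all of $C$; this observation makes the cancellation algebraic rather than combinatorial and keeps the case analysis manageable.
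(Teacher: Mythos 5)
Your proposal is correct and follows essentially the same route as the paper: after reducing the $\cl_M(X)\notin C$ cases to Lemma~\ref{lem:alphaNXbelowC}, both arguments proceed by induction (yours on $\left| X \right|$, the paper's on the length of a maximal chain of $C$-flats below $X$ — equivalent for applying the hypothesis to proper subsets), decompose the family of $N$-flats below $X\cup\SET{e}$ via Lemma~\ref{lem:flatsOfExtension} into the same classes, and recombine through the recurrence of Definition~\ref{def:alphaM}. The one miscalibration is your anticipated ``hardest step'': because $C$ is an up-set of $\Fcal(M)$, no member of $C$ is properly contained in a flat $G\notin C$, so each correction $\Delta\alpha_M(C,G)$ on the $e\notin G$ side vanishes individually and no cancellation against the $\DeltaP\alpha$ terms is required — those terms simply telescope with the $+1$ coming from the rank drop into $\DeltaP\alpha_M(C,X)$ via its defining recursion.
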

\begin{proof}\PRFR{Feb 15th}\PRFR{~~ + sage}
	Let $X\subseteq E$. The cases where $\cl_M(X)\notin C$ are covered by Lemma~\ref{lem:alphaNXbelowC}.
	Furthermore, if $X$ is $\subseteq$-minimal with the property that $\cl_M(X)\in C$, then $\downarrow_{\Beta_M^C} X = \SET{X}$
	and therefore $\DeltaP\alpha(C,X) = 1 = \alpha_N(X\cup\SET{e}) - \alpha_M(X)$ holds, too, by Lemma~\ref{lem:alphaNXbelowC}.
	For the general case where $\cl_M(X)\in C$, remember that we saw in the proof of Lemma~\ref{lem:alphaMoebius} that the
	equations
	\[ \left| X \right| - \rk_M(X) = \sum_{F\sqsubseteq_M X} \alpha_M(F) \]
	and
	\[ \left| X\cup\SET{e} \right| - \rk_N(X\cup\SET{e}) = \sum_{F\sqsubseteq_{N} X\cup\SET{e}} \alpha_N(F) \]
	hold. Thus we obtain
	\[ (\ast)\quad \left( \sum_{F\sqsubseteq_{M} X} \alpha_M(F)  \right) + 1 = \sum_{F\sqsubseteq_{N} X\cup\SET{e}} \alpha_N(F).\]
	We prove the missing part of the statement by induction on the length $k$ 
	of a maximal chain $C_1 \subsetneq C_2 \subsetneq \ldots \subsetneq C_k \subsetneq X$ with $C_1,\ldots,C_k\in C$.
	The base case with $k=0$ has been established above. 
%
	Using Lemma~\ref{lem:flatsOfExtension} we obtain that
	\( \downarrow_{\Alpha_N} \left( X\cup\SET{e} \right) = Q \disunion R \disunion S \disunion T \)
	where \allowdisplaybreaks
	\begin{align*}
		Q & = \SET{Y ~\middle|~\vphantom{A^A} Y\in \Fcal(M)\BS C,\,Y\subseteq X}, \\
		R & = \SET{Y\cup\SET{e} ~\middle|~\vphantom{A^A} Y\in \Fcal(M)\BS C,\,Y\subsetneq X,\,\forall f\in E\BS Y\colon\,\cl_M(Y\cup\SET{f})\notin C}, \\
		S & = \SET{Y\cup\SET{e} ~\middle|~\vphantom{A^A} Y\in C,\,Y\subsetneq X}, \text{~and}\\
		T & = \SET{X\cup\SET{e}\vphantom{A^A}}.
	\end{align*}
	Clearly, $Q\subseteq\,\,\downarrow_{\Alpha_M} X$, and
	Lemma~\ref{lem:alphaNXThroughDeltaalphaM} and Definition~\ref{def:DeltaAlphaInvariant} yield that \[
		\sum_{F\in Q} \alpha_N(F) = \sum_{F\in Q} \alpha_M(F).
	\]
	Lemma~\ref{lem:alphaNXbelowC} yields that $ \sum_{F\in R} \alpha_N(F) = 0$. All $F\in S$ 
	have $F\BSET{e}\in C$ with $F\BSET{e}\subsetneq X$
	and therefore those sets $F\BSET{e}$ have shorter maximal descending
	chains in $C$ than $X$. 
	The induction hypothesis applied to each summand yields that
	\[ \sum_{F\in S} \alpha_N(F) = \sum_{F\in S} \left( \alpha_M(F\BSET{e}) + \DeltaP\alpha_M(C,F\BSET{e}) \right).\]
	Furthermore, observe that $X\notin Q$ because $\cl_M(X) \in C$ holds, and so we have the equivalence
	\[ F \sqsubset_M X \quad\Longleftrightarrow \quad F\in Q \txtor F\cup\SET{e} \in S \]
	for all $F\subseteq E$: Elements $F$ of the $\Alpha_M$-down-set of $X$ have either
	 $F\in \Fcal(M)\BS C$ or $F\in C$, thus either $F\in Q$ or $F\cup\SET{e} \in S$.
	 Therefore we may cancel the corresponding summands of $\downarrow_{\Alpha_M} X$ and drop the zero summands from $R$
	 in the equation $(\ast)$ and obtain
	 \begin{align*}
	   \alpha_M(X) + 1& =  \alpha_N(X) + \sum_{F\in S} \DeltaP\alpha_M(C,F\BSET{e}) .\\
	 \end{align*}
	 Since all $F\in S$ have $e\in F$, and since 
	 \[ \SET{F\BSET{e} \vphantom{A^A}~\middle|~ F\in S} = \SET{F\in C \vphantom{A^A}~\middle|~ F\subsetneq X} = 
	 \SET{F\subseteq E~\middle|~ \vphantom{A^A}F \sqsubset^C X} \]
	 we obtain the desired equation
	 \[ \alpha_N(X\cup\SET{e}) \,\,\,=\,\,\, \alpha_M(X) + 1 - \sum_{F\sqsubset^C X} \DeltaP\alpha_M(C,F) \,\,\,=\,\,\, \alpha_M(X) + \DeltaP\alpha_M(C,X). \qedhere \]
\end{proof}

\noindent
We implemented and tested the performance of determining the $\alpha_N$-invariant for single element extensions $N\in \Xcal(M,e)$ by means of the formulas
given in Lemmas~\ref{lem:alphaNXbelowC} and \ref{lem:DPalpha}.
For details, please refer to Listing~\ref{lst:measureDeltaAlpha}.

\clearpage

\needspace{8\baselineskip}
\section{Matroid Tableaux}

\PRFR{Mar 29th}
\noindent In this section, we present a general framework for the decision of $\mathrm{Rec}\Gamma_{\Mcal}$ instances\footnote{Remember that in this chapter starting from Section~\ref{sec:generalCase}, $\Mcal$ denotes the class of all matroids.}
by searching the domain of matroids defined on ground sets with bounded cardinality by the means of tableaux and derivations. 


\needspace{5\baselineskip}
\begin{definition}\PRFR{Mar 29th}
	A \deftext{matroid tableau} is a tuple \label{n:mattab} $\Tbf = (G,\Gcal,\Mcal,\Xcal,\simeq)$ where
	\begin{enumerate}\ROMANENUM
		\item $G$ is a matroid, called the \deftextX{goal of $\bm \Tbf$},
		\item $\Gcal$ is a family of matroids, called the \deftextX{gammoids of $\bm \Tbf$},
		\item $\Mcal$ is a family of matroids, called the \deftextX{intermediates of $\bm \Tbf$},
		\item $\Xcal$ is a family of matroids, called the \deftextX{excluded matroids of $\bm \Tbf$}, and where
		\item $\simeq$ is an equivalence relation on $\SET{G'~\middle|~ G'\text{~is a minor of~}G}\cup\Gcal \cup \Mcal \cup \Xcal$, called the \deftextX{equivalence of $\bm \Tbf$}. \qedhere
	\end{enumerate}
\end{definition}

\needspace{5\baselineskip}
\begin{definition}\label{def:validTableau}\PRFR{Mar 29th}
	Let $\Tbf = (G,\Gcal,\Mcal,\Xcal,\simeq)$ be a matroid tableau.
	$\Tbf$ shall be \deftext[valid matroid tableau]{valid},
	\begin{enumerate}\ROMANENUM
	\item  if
	all matroids in $\Gcal$ are indeed gammoids,
	\item if no matroid in $\Mcal$ is a strict gammoid,
	\item if all matroids in $\Xcal$ are indeed matroids which are not gammoids, and
	\item  if for every equivalency classes $[M]_\simeq$ of $\simeq$ we have that either $[M]_\simeq$ is fully contained in the class of gammoids
	or $[M]_\simeq$ does not contain a gammoid. \qedhere
\end{enumerate}
\end{definition}

\begin{definition}\label{def:decisiveTableau}\PRFR{Mar 29th}
	Let $\Tbf = (G,\Gcal,\Mcal,\Xcal,\simeq)$ be a matroid tableau. $\Tbf$ shall be \deftext[decisive matroid tableau]{decisive},
	if $\Tbf$ is valid and 
	if either of the following holds:
	\begin{enumerate}\ROMANENUM 
	\item There is a matroid $M\in \Gcal$ such that $G \simeq M$.
	\item There is 
	a matroid $X\in \Xcal$ that is isomorphic to a minor of $G$.
	\item For every extension $N=(E',\Ical')$ of $G=(E,\Ical)$ 
	with $$\left| E' \right| = \rk_G(E)^2\cdot \left| E \right| + \rk_G(E) + \left| E \right|$$ there is a matroid $M\in\Mcal$ that is
	isomorphic to $N$. \qedhere
\end{enumerate}
\end{definition}

\needspace{3\baselineskip}
\begin{lemma}\label{lem:decisiveTableau}\PRFR{Mar 29th}
	Let $\Tbf = (G,\Gcal,\Mcal,\Xcal,\simeq)$ be a decisive matroid tableau. Then $G$ is a gammoid if and only if there is a matroid $M\in \Gcal$
	such that $G\simeq M$.
\end{lemma}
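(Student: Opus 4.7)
The plan is to split the proof into the two directions of the biconditional.

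For the easy direction, suppose there is some $M\in\Gcal$ with $G\simeq M$. By validity of $\Tbf$, condition (i) of Definition~\ref{def:validTableau} guarantees that $M$ is a gammoid, so the equivalence class $[G]_\simeq = [M]_\simeq$ contains a gammoid. By condition (iv) of validity, this class must consist entirely of gammoids, hence $G$ is a gammoid.

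For the converse, I would assume that $G$ is a gammoid and use decisiveness to rule out conditions (ii) and (iii) of Definition~\ref{def:decisiveTableau}, leaving (i) as the only possibility. To rule out (ii), suppose for contradiction that $G$ has a minor isomorphic to some $X\in\Xcal$. By validity (iii) the matroid $X$ is not a gammoid, but the class of gammoids is closed under minors (Theorem~\ref{thm:GammoidsClosedMinorsDuality}), so every minor of the gammoid $G$ is itself a gammoid---a contradiction.

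To rule out (iii), I would construct an extension $N=(E',\Ical')$ of $G$ that is a strict gammoid and has exactly $|E'| = \rk_G(E)^2 \cdot |E| + \rk_G(E) + |E|$ elements. Corollary~\ref{cor:upperBoundOnV} together with Remark~\ref{rem:upperBoundForV} yields a representation $(D,T,E)$ of $G$ with $D=(V,A)$ satisfying $|V| \leq \rk_G(E)^2 \cdot |E| + \rk_G(E) + |E|$. The strict gammoid $\Gamma(D,T,V)$ is an extension of $G$, and we can pad $V$ with additional auxiliary vertices (adjoined to $T$ as coloops, say, using Corollary~\ref{cor:extWithLoopCoLoop}) until its cardinality is exactly the bound, still obtaining a strict gammoid $N$ extending $G$. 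By condition (iii) of decisiveness, $N$ is then isomorphic to some $M\in\Mcal$. Since being a strict gammoid is preserved under matroid isomorphism, $M$ would have to be a strict gammoid, contradicting validity condition (ii). Hence condition (i) of Definition~\ref{def:decisiveTableau} must hold, giving some $M\in\Gcal$ with $G\simeq M$, as desired.

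The main obstacle is the construction in the second step of ruling out (iii): one must carefully verify that the strict gammoid obtained from the vertex bound can be padded up to the exact required cardinality while remaining both a strict gammoid and a genuine single-element (iterated) extension of $G$, and one must appeal to the isomorphism-invariance of the strict gammoid property to derive the final contradiction. Everything else is a direct application of the definitions and the earlier results on gammoid closure under minors and vertex complexity bounds.
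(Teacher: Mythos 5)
Your proof is correct and follows essentially the same route as the paper's: the forward direction via validity conditions (i) and (iv), and the converse by excluding cases (ii) and (iii) of decisiveness, using closure of gammoids under minors and the vertex bound of Remark~\ref{rem:upperBoundForV} to build a strict gammoid extension of the required size. The only cosmetic differences are that you argue directly rather than by contraposition and pad with coloops where the paper pads with loops; both padding choices preserve the strict gammoid property and the restriction to the ground set of $G$, so nothing is lost.
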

\begin{proof}\PRFR{Mar 29th}
	Assume that such an $M\in \Gcal$ exists. From Definition~\ref{def:validTableau}
	we obtain that $M$ is a gammoid, and that in this case $G\simeq M$ implies that $G$ is a gammoid, too.
	Now assume that no $M\in \Gcal$ has the property $G\simeq M$. Since $\Tbf$ is decisive, either case {\em (ii)} or {\em (iii)} of
	Definition~\ref{def:decisiveTableau} holds. If case {\em (ii)} holds, then $G$ cannot be a gammoid since it has a non-gammoid minor,
	but the class of gammoids is closed under minors (Theorem~\ref{thm:GammoidsClosedMinorsDuality}).
	If case {\em (iii)} holds but not case {\em (ii)}, then no extension of $G=(E,\Ical)$ with $k=\rk_G(E)^2\cdot \left| E \right| $ $+\, \rk_G(E) + \left| E \right|$
	elements is a strict gammoid. Now assume that $G$ is a gammoid, then there is a digraph $D=(V,A)$ with $\left| V \right| \leq k$ vertices, such that
	$G = \Gamma(D,T,E)$
	for some $T\subseteq V$ (Remark~\ref{rem:upperBoundForV}). 
	Let $N' = \Gamma(D,T,V)\oplus (\SET{\left| V \right|,\left| V \right|+1,\ldots,k},\SET{\emptyset})$. 
	Clearly, $N'$ is an extension of $G$ on a ground set with $k$ elements, which is also a strict gammoid, a contradiction to the assumption that $N'$ is 
	isomorphic to some $N\in \Mcal$, since $\Mcal$ is a family which consists of matroids that are not strict gammoids. Therefore we may conclude that in case {\em (iii)} the matroid $G$ is not a gammoid.
\end{proof}

\subsection{Valid Derivations}

\PRFR{Mar 29th}
\noindent A \deftext{derivation} is an operation on a finite number of input tableaux and possible additional parameters with constraints
that produces an output tableau. Furthermore,
a derivation is \deftext[valid derivation]{valid}, if the output tableau is valid for all sets of valid input tableaux and possible additional parameters that
satisfy the constraints. The valid derivations presented here are fairly straight-forward consequences of the concepts presented earlier in this work.

\begin{definition}\PRFR{Mar 29th}
	Let $\Tbf_i = (G_i,\Gcal_i,\Mcal_i,\Xcal_i,\simeq^{(i)})$ be matroid tableaux for $i\in \SET{1,2,\ldots,n}$.
	The \deftext{joint tableau} shall be the matroid tableaux \label{n:jointTableau}
	\[\bigcup_{i=1}^{n} \Tbf_i = (G_1,\Gcal,\Mcal,\Xcal,\simeq)\]
	where \[ \Gcal = \bigcup_{i=1}^n \Gcal_i,\,\,\, \Mcal = \bigcup_{i=1}^n \Mcal_i,\,\,\, \Xcal = \bigcup_{i=1}^n \Xcal_i, \]
	and where $\simeq$ is the smallest equivalence relation such that $M \simeq^{(i)} N$ implies $M \simeq N$ for all $i\in \SET{1,2,\ldots,n}$.
	In other words, $\simeq$ is the equivalence relation on the family of matroids
	 $\SET{G'~\middle|~ G'\text{~is a minor of~}G}\cup\Gcal \cup \Mcal \cup \Xcal$ which is
	generated by the relations $ \simeq^{(1)}, \simeq^{(2)}, \ldots, \simeq^{(n)}$.
\end{definition}

\needspace{2\baselineskip}
\begin{lemma}\PRFR{Mar 29th}
	The derivation of the joint tableau is valid.
\end{lemma}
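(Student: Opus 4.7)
The plan is to verify the four defining conditions of validity from Definition~\ref{def:validTableau} for the joint tableau $\bigcup_{i=1}^n \Tbf_i = (G_1,\Gcal,\Mcal,\Xcal,\simeq)$ under the assumption that each $\Tbf_i$ is valid. Conditions~\emph{(i)}, \emph{(ii)}, \emph{(iii)} are essentially bookkeeping: since $\Gcal$, $\Mcal$, $\Xcal$ are defined as the unions of the corresponding families of the input tableaux, every $M\in \Gcal$ lies in some $\Gcal_i$ and is therefore a gammoid by validity of $\Tbf_i$; likewise every $M\in \Mcal$ is not a strict gammoid, and every $M\in \Xcal$ is a matroid that is not a gammoid. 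I would dispatch these three conditions in a single short paragraph.

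The only substantive step is condition~\emph{(iv)}, which concerns the generated equivalence $\simeq$. First I would unfold the definition of ``equivalence relation generated by $\simeq^{(1)},\ldots,\simeq^{(n)}$'' concretely: for two matroids $M,N$ in the common domain $\SET{G'\mid G'\text{ minor of }G_1}\cup\Gcal\cup\Mcal\cup\Xcal$, we have $M\simeq N$ if and only if there exists a finite chain $M = L_0, L_1,\ldots,L_k = N$ of matroids in that domain and indices $i_1,\ldots,i_k\in\SET{1,\ldots,n}$ with $L_{j-1}\simeq^{(i_j)} L_j$ for every $j\in\SET{1,\ldots,k}$. (This is the standard transitive-symmetric-reflexive closure; each intermediate $L_j$ does lie in the domain of some $\simeq^{(i)}$, hence in the joint domain.)

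Assuming such a chain and that $M$ is a gammoid, I would argue by induction on $j$ that each $L_j$ is a gammoid. For the inductive step, $L_{j-1}\simeq^{(i_j)} L_j$ places both matroids in the equivalence class of $\simeq^{(i_j)}$, and by the validity of $\Tbf_{i_j}$ that class is either contained in the class of gammoids or disjoint from it; since $L_{j-1}$ is a gammoid by the induction hypothesis, it must be the former, so $L_j$ is a gammoid. The symmetric argument shows that if $M$ is not a gammoid, then no $L_j$ is. Consequently, every $\simeq$-equivalence class is either entirely contained in, or entirely disjoint from, the class of gammoids, which gives condition~\emph{(iv)}.

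The main (mild) obstacle is being careful about what the ``generated'' equivalence relation looks like: one must make sure that the chain connecting $M$ to $N$ can always be chosen to stay inside the joint domain, so that each step genuinely takes place inside some $\simeq^{(i_j)}$. This is automatic because the $\simeq^{(i)}$ are already relations on subsets of the joint domain, so any chain witnessing $M\simeq N$ in the generated equivalence can be assumed to consist of elements of the joint domain, and each consecutive pair lies in the domain of the particular $\simeq^{(i_j)}$ used at that step.
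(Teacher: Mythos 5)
Your proposal is correct and follows essentially the same route as the paper's proof: the first three conditions are inherited directly from the unions, and condition \emph{(iv)} is handled by unfolding $M\simeq N$ into a finite chain of $\simeq^{(i_j)}$-steps and propagating the gammoid property along the chain using the validity of each $\Tbf_{i_j}$. Your extra care about keeping the chain inside the joint domain is a harmless refinement of what the paper does implicitly.
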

\begin{proof}\PRFR{Mar 29th}
	Clearly, $\Gcal$, $\Mcal$, and $\Xcal$ inherit their desired properties of Definition~\ref{def:validTableau} from
	the valid input tableaux $\Tbf_i$  where $i\in\SET{1,2,\ldots, n}$. Now let $M \simeq N$ with $M\not= N$.
	Then there are matroids $M_1,M_2,\ldots,M_{k}$ and indexes $i_0,i_1,\ldots,i_k\in \SET{1,2,\ldots,n}$ such that
	there is a chain of $\simeq^{(i)}$-relations
	\[ M \simeq^{(i_0)} M_1 \simeq^{(i_1)} M_2 \simeq^{(i_2)} \cdots \simeq^{(i_{k-1})} M_k \simeq^{(i_k)} N. \]
	The assumption that the input tableaux are valid yields that $M$ is a gammoid if and only if $M_1$ is a gammoid,
	if and only if $M_2$ is a gammoid, and so on. Therefore it follows that $M$ is a gammoid if and only if $N$ is a gammoid,
	thus $\simeq$ has the desired property of Definition~\ref{def:validTableau}. Consequently, $\bigcup_{i=1}^n \Tbf_i$ is a valid tableau.
\end{proof}

\begin{definition}\PRFR{Mar 29th}
	Let $\Tbf = (G,\Gcal,\Mcal,\Xcal,\simeq)$ and $\Tbf' = (G,\Gcal',\Mcal',\Xcal',\simeq')$ be matroid tableaux.
	We say that $\Tbf$ is a \deftext[sub-tableau]{sub-tableau of $\Tbf\bm'$} if $\Gcal \subseteq \Gcal'$, $\Mcal \subseteq \Mcal'$, and
	$\Xcal \subseteq \Xcal'$ holds, and if $M \simeq N$ implies $M \simeq' N$.
\end{definition}

\needspace{2\baselineskip}
\begin{lemma}\PRFR{Mar 29th}
	The derivation of a sub-tableau is valid.
\end{lemma}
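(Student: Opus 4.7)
The plan is to assume $\Tbf' = (G,\Gcal',\Mcal',\Xcal',\simeq')$ is a valid matroid tableau and $\Tbf = (G,\Gcal,\Mcal,\Xcal,\simeq)$ is a sub-tableau of $\Tbf'$, and then verify each of the four clauses of Definition~\ref{def:validTableau} for $\Tbf$ by direct appeal to the corresponding clause for $\Tbf'$.

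First I would dispatch clauses {\em (i)}, {\em (ii)}, and {\em (iii)} all at once: since $\Gcal \subseteq \Gcal'$, every $M\in\Gcal$ lies in $\Gcal'$, and validity of $\Tbf'$ gives that $M$ is a gammoid, which establishes {\em (i)}. The arguments for {\em (ii)} using $\Mcal\subseteq\Mcal'$, and {\em (iii)} using $\Xcal\subseteq\Xcal'$, are identical in form.

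The more substantive clause is {\em (iv)}. I would argue as follows. Let $[M]_\simeq$ be an arbitrary $\simeq$-equivalence class, and let $[M]_{\simeq'}$ denote the $\simeq'$-equivalence class of the same representative. The sub-tableau hypothesis states that $N_1 \simeq N_2$ implies $N_1 \simeq' N_2$; applying this along a $\simeq$-chain witnessing membership in $[M]_\simeq$ shows $[M]_\simeq \subseteq [M]_{\simeq'}$. By validity of $\Tbf'$, the class $[M]_{\simeq'}$ is either entirely contained in the class of gammoids or contains no gammoid at all, and this dichotomy is inherited by its subset $[M]_\simeq$. Since $M$ was arbitrary, clause {\em (iv)} holds for $\Tbf$, and consequently $\Tbf$ is valid.

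I do not expect any real obstacle: the entire argument is a straightforward inheritance of properties along the inclusions $\Gcal\subseteq\Gcal'$, $\Mcal\subseteq\Mcal'$, $\Xcal\subseteq\Xcal'$ together with the containment of equivalence classes $[M]_\simeq \subseteq [M]_{\simeq'}$. The only point requiring any care is remembering that $\simeq$ need not agree with $\simeq'$ as a relation but is merely coarser-or-equal-refining in the sense just described, so one must argue in the direction ``$\simeq$-class sits inside $\simeq'$-class'' rather than the other way around.
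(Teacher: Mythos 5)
Your proof is correct and follows the same route as the paper, which simply states that $\Tbf$ "inherits the properties of Definition~\ref{def:validTableau} from the validity of $\Tbf'$"; you have merely spelled out the inheritance clause by clause, including the key observation for clause {\em (iv)} that each $\simeq$-class is contained in a $\simeq'$-class and so inherits the gammoid/non-gammoid dichotomy.
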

\begin{proof}\PRFR{Mar 29th}
	Clearly $\Tbf$ inherits the properties of Definition~\ref{def:validTableau} from the validity of $\Tbf'$.
\end{proof}

\begin{definition}\PRFR{Mar 29th}
	Let $\Tbf = (G,\Gcal,\Mcal,\Xcal,\simeq)$  be a matroid tableau. We shall call the
	\label{n:expTab}
	 tableau $[\Tbf]_\simeq= (G,\Gcal',\Mcal,\Xcal',\simeq)$   \deftext[expansion tableau]{expansion tableau of $\Tbf$} 
	whenever \[ \Gcal' = \bigcup_{M\in\Gcal} [M]_\simeq \quad\txtand\quad \Xcal' = \bigcup_{M\in\Xcal} [M]_\simeq. \qedhere\]
\end{definition}

\needspace{2\baselineskip}
\begin{lemma}\PRFR{Mar 29th}
	The derivation of the expansion tableau is valid.
\end{lemma}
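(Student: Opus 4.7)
The plan is to verify each of the four clauses of Definition~\ref{def:validTableau} for $[\Tbf]_\simeq = (G,\Gcal',\Mcal,\Xcal',\simeq)$, assuming they hold for $\Tbf = (G,\Gcal,\Mcal,\Xcal,\simeq)$. Note first that $\Gcal' \subseteq \bigcup_{M\in\Gcal} [M]_\simeq$ and $\Xcal'\subseteq \bigcup_{M\in\Xcal} [M]_\simeq$ are by construction contained in the domain of $\simeq$, namely $\SET{G'\mid G'\text{ is a minor of }G}\cup \Gcal \cup \Mcal \cup \Xcal$, so the restriction of $\simeq$ to the new domain $\SET{G'\mid G'\text{ is a minor of }G}\cup \Gcal' \cup \Mcal \cup \Xcal'$ is well-defined.

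The verification of clauses (ii) and (iv) is immediate, since both $\Mcal$ and the equivalence relation $\simeq$ are unchanged by the expansion, and the restriction of an equivalence relation whose classes each lie entirely inside or entirely outside the class of gammoids inherits the same dichotomy.

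The work in clauses (i) and (iii) is to leverage the dichotomy property~(iv) of $\Tbf$. For~(i), take any $M' \in \Gcal'$; then by definition of $\Gcal'$ there exists $M \in \Gcal$ with $M'\simeq M$. By validity of $\Tbf$, clause~(i) gives that $M$ is a gammoid, and clause~(iv) then forces every element of $[M]_\simeq$ to be a gammoid, so in particular $M'$ is a gammoid. The argument for~(iii) is completely symmetric: if $X' \in \Xcal'$, pick $X\in\Xcal$ with $X'\simeq X$; clause~(iii) for $\Tbf$ gives that $X$ is not a gammoid, and clause~(iv) applied to $[X]_\simeq$ yields that no member of this class is a gammoid, hence $X'$ is not a gammoid either.

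I expect no genuine obstacle here; the only point requiring a moment's care is bookkeeping about the domain of $\simeq$, to make sure that the enlarged families $\Gcal'$ and $\Xcal'$ indeed sit inside the original domain so that referring to $[M]_\simeq$ and $[X]_\simeq$ makes sense and so that $\simeq$ continues to satisfy clause~(iv) after restriction. Once that is noted, the proof consists of the four short inheritance arguments sketched above.
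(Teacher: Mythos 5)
Your proof is correct and follows essentially the same route as the paper's: for each $M'\in\Gcal'$ (resp.\ $\Xcal'$) pick $M\in\Gcal$ (resp.\ $\Xcal$) with $M\simeq M'$ and use the dichotomy property of $\simeq$ from Definition~\ref{def:validTableau} to transfer the gammoid/non-gammoid status. The extra remark about the domain of $\simeq$ is harmless but not needed, since $\Gcal'$ and $\Xcal'$ are unions of $\simeq$-classes and hence already lie in the original domain.
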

\begin{proof}\PRFR{Mar 29th}
	If $M'\in \Gcal'$, then there is some $M\in \Gcal$ such that $M\simeq M'$. Since we assume $\Tbf$ to be valid, we may infer that
	$M'$ is a gammoid if and only if $M$ is a gammoid, and the latter is the case since $M\in\Gcal$. Therefore $M'$ is a gammoid.
	An analogous argument yields that if $M'\in \Xcal'$, then $M'$ is not a gammoid.
\end{proof}

\needspace{5\baselineskip}
\begin{definition}\PRFR{Mar 29th}
  Let $\Tbf = (G,\Gcal,\Mcal,\Xcal,\simeq)$  be a matroid tableau. We shall call the
  \label{n:extTab}
	 tableau $[\Tbf]_{\equiv} = (G,\Gcal',\Mcal',\Xcal',\simeq')$ \deftext[extended tableau]{extended tableau of $\bm \Tbf$}
	 whenever $$\Gcal' = \Gcal \cup \SET{M^\ast~\middle|~M\in \Gcal},\,\,\,
	 		  \Xcal' = \Xcal \cup \SET{M^\ast~\middle|~M\in \Xcal},\,\,\, 
	   \Mcal' = \Mcal \cup \Xcal',$$ and when 
	   $\simeq'$ is the smallest equivalence relation that contains the 
	   relations $\simeq$ and $\sim$; where $M\sim N$ if and only if $N$ is
	   isomorphic to $M$ or $M^\ast$.
\end{definition}

\needspace{2\baselineskip}
\begin{lemma}\PRFR{Mar 29th}
	The derivation of the extended tableau is valid.
\end{lemma}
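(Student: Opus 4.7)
The plan is to verify the four conditions of Definition~\ref{def:validTableau} for $[\Tbf]_{\equiv} = (G,\Gcal',\Mcal',\Xcal',\simeq')$, exploiting the fact that the class of gammoids is closed under duality (Corollary~\ref{cor:dualityrespectingrepresentation}, also part of Theorem~\ref{thm:GammoidsClosedMinorsDuality}) and invariant under matroid isomorphism. Throughout, I will use that $\Tbf$ is assumed to be valid, so $\Gcal$ consists of gammoids, $\Mcal$ contains no strict gammoid, $\Xcal$ contains no gammoid, and $\simeq$-classes are homogeneous with respect to being a gammoid.

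First, I would check that every $M'\in\Gcal'$ is a gammoid: if $M'\in\Gcal$ this follows from validity of $\Tbf$, otherwise $M'=M^\ast$ for some $M\in\Gcal$, and the closure of gammoids under duality yields that $M^\ast$ is a gammoid as well. Dually, for every $M'\in\Xcal'$ I would show $M'$ is not a gammoid: if $M'\in\Xcal$ this is validity of $\Tbf$, otherwise $M' = M^\ast$ for some non-gammoid $M\in\Xcal$, and if $M^\ast$ were a gammoid then $(M^\ast)^\ast = M$ would be a gammoid by the same closure property (using Corollary~\ref{cor:doubleast}), a contradiction. For the condition on $\Mcal'$, note that $\Mcal' = \Mcal \cup \Xcal'$; matroids in $\Mcal$ are not strict gammoids by validity of $\Tbf$, and matroids in $\Xcal'$ are not even gammoids so certainly not strict gammoids. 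Hence no matroid in $\Mcal'$ is a strict gammoid.

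The main point is the homogeneity of $\simeq'$-equivalence classes. Here I would argue that the binary relation $\sim$ on matroids (isomorphism to $M$ or to $M^\ast$) respects the property of being a gammoid: isomorphism preserves this property trivially, and duality preserves it by Corollary~\ref{cor:dualityrespectingrepresentation}. Since $\simeq$ also respects this property by validity of $\Tbf$, any chain
\[
 M \;=\; M_0 \;R_1\; M_1 \;R_2\; \cdots \;R_k\; M_k \;=\; N
\]
with each $R_j \in \{\simeq,\sim\}$ shows that $M$ is a gammoid iff $N$ is a gammoid. Because $\simeq'$ is by definition the smallest equivalence relation (on the appropriate enlarged domain $\{G' : G'\text{ is a minor of }G\}\cup\Gcal'\cup\Mcal'\cup\Xcal'$) containing both $\simeq$ and $\sim$, every pair $M \simeq' N$ is witnessed by such a chain, so each $\simeq'$-class is either entirely contained in the class of gammoids or entirely disjoint from it.

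The only subtlety I anticipate is a bookkeeping check that the domains match up: the relation $\simeq$ lives on $\{G' : G'\text{ minor of }G\}\cup\Gcal\cup\Mcal\cup\Xcal$, while $\simeq'$ lives on the larger domain obtained by adjoining duals; but since every new matroid in $\Gcal'\cup\Xcal'$ is either already in $\Gcal\cup\Xcal$ or is the dual of such a matroid, the generating relation $\sim$ naturally links each new element to an old one, and the chain argument above goes through. Combining the four verifications completes the proof that $[\Tbf]_{\equiv}$ is valid whenever $\Tbf$ is.
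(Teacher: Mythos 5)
Your proof is correct and follows essentially the same route as the paper's: invoke closure of the class of gammoids under duality to transfer the required properties from $\Gcal$ and $\Xcal$ to $\Gcal'$ and $\Xcal'$, and observe that $\Mcal'\BS\Mcal\subseteq\Xcal'$ consists of non-gammoids, hence of non-strict-gammoids. Your explicit chain argument for the homogeneity of the $\simeq'$-classes is a welcome addition — the paper's proof leaves condition \emph{(iv)} of Definition~\ref{def:validTableau} implicit here, relying on the same reasoning it spelled out for the joint tableau — so nothing is missing from your version.
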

\begin{proof}\PRFR{Mar 29th}
	By Theorem~\ref{thm:GammoidsClosedMinorsDuality} the class of gammoids is closed under duality, therefore a matroid $M$ is a gammoid if and only if $M^\ast$ is a gammoid. So $\Gcal'$ and $\Xcal'$ inherit their desired properties of Definition~\ref{def:validTableau} from the validity of $\Tbf$.
	If $M\in \Mcal'\BS \Mcal$, then $M\in \Xcal'$, therefore $M$ cannot be a strict gammoid.
\end{proof}

\begin{definition}\label{def:decisionTableau}\PRFR{Mar 29th}
	Let $\Tbf = (G,\Gcal,\Mcal,\Xcal,\simeq)$ be a decisive matroid tableau. The tableau
	\label{n:concTab}
	 $\Tbf! = (G,\Gcal',\Mcal,\Xcal',\simeq)$ shall be the \deftext[conclusion tableau]{conclusion tableau for $\Tbf$} if either
	\begin{enumerate}\ROMANENUM
	\item $\Gcal' = \Gcal \cup \SET{G'~\middle|~ G'\text{~is a minor of~} G}$, $\Xcal' = \Xcal$, and the tableau $\Tbf$ 
	satisfies case {(i)} of Definition~\ref{def:decisiveTableau}; or
	\item $\Gcal' = \Gcal$, $\Xcal' = \Xcal\cup\SET{G}$, and $\Tbf$ satisfies case {(ii)} or {(iii)} of Definition~\ref{def:decisiveTableau}. \qedhere
	\end{enumerate}
\end{definition}

\needspace{2\baselineskip}
\begin{corollary}\PRFR{Mar 29th}
	The derivation of the conclusion tableau is valid.
\end{corollary}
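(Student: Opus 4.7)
The plan is to verify the four validity conditions of Definition~\ref{def:validTableau} for $\Tbf! = (G,\Gcal',\Mcal,\Xcal',\simeq)$, using Lemma~\ref{lem:decisiveTableau} to determine in each branch of Definition~\ref{def:decisionTableau} whether $G$ itself is a gammoid. The observation driving everything is that $\Mcal$ and $\simeq$ are inherited unchanged from $\Tbf$, so conditions (ii) and (iv) of Definition~\ref{def:validTableau} carry over immediately from the assumed validity of $\Tbf$. Likewise, in case (i) of Definition~\ref{def:decisionTableau} the excluded family $\Xcal' = \Xcal$ is unchanged, and in case (ii) the gammoid family $\Gcal' = \Gcal$ is unchanged, so the respective conditions (iii) and (i) transfer directly.

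The remaining work splits into the two cases. In case (i) of Definition~\ref{def:decisionTableau}, the hypothesis is that $\Tbf$ satisfies case (i) of Definition~\ref{def:decisiveTableau}, so there is some $M \in \Gcal$ with $G \simeq M$. Lemma~\ref{lem:decisiveTableau} then yields that $G$ is a gammoid. To verify that every element of $\Gcal' = \Gcal \cup \{G' \mid G' \text{ is a minor of } G\}$ is a gammoid, I would appeal to the validity of $\Tbf$ for the members of $\Gcal$ and invoke Theorem~\ref{thm:GammoidsClosedMinorsDuality}, which states that the class of gammoids is closed under minors, for the newly adjoined minors of $G$.

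In case (ii) of Definition~\ref{def:decisionTableau}, $\Tbf$ satisfies case (ii) or case (iii) of Definition~\ref{def:decisiveTableau}, and in particular does \emph{not} satisfy case (i), as can be read from the contrapositive of Lemma~\ref{lem:decisiveTableau}: $G$ is not a gammoid. (Strictly speaking, the definition as written allows simultaneous satisfaction of the cases, but in that event the conclusion tableau is still valid because both branches would yield gammoids/non-gammoids consistent with the resulting $\Gcal'$ or $\Xcal'$; it is cleaner to note that in the clause under consideration we have $\Gcal' = \Gcal$ so the only new assertion is that $G \in \Xcal'$ is a non-gammoid.) To verify $\Xcal' = \Xcal \cup \{G\}$ consists of non-gammoids, I would use the validity of $\Tbf$ for the elements of $\Xcal$ and Lemma~\ref{lem:decisiveTableau} for $G$ itself.

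There is no real obstacle here: every condition of Definition~\ref{def:validTableau} either is inherited from $\Tbf$ or is an immediate consequence of Lemma~\ref{lem:decisiveTableau} combined with the closure of gammoids under minors (Theorem~\ref{thm:GammoidsClosedMinorsDuality}). The only subtle point worth a sentence is the aforementioned overlap between the cases in Definition~\ref{def:decisiveTableau}, which does not cause an issue because $\Tbf!$ adds members to only one of $\Gcal$ or $\Xcal$ per invocation, and each branch is individually consistent with the gammoid status of $G$.
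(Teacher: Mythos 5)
Your proof is correct and follows the same route as the paper, whose entire proof reads ``Easy consequence of Lemma~\ref{lem:decisiveTableau}''; you simply make explicit the details the paper leaves implicit, including the needed appeal to Theorem~\ref{thm:GammoidsClosedMinorsDuality} for the minors of $G$ adjoined to $\Gcal'$ in case~(i). Your parenthetical about the apparent overlap of the cases of Definition~\ref{def:decisiveTableau} is handled adequately (in a valid tableau the cases are in fact mutually exclusive, since case~(ii) or~(iii) forces $G$ to be a non-gammoid while case~(i) forces it to be a gammoid), so nothing is missing.
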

\begin{proof}\PRFR{Mar 29th}
	Easy consequence of Lemma~\ref{lem:decisiveTableau}.
\end{proof}

\begin{definition}\PRFR{Mar 29th}
	Let $\Tbf = (G,\Gcal,\Mcal,\Xcal,\simeq)$  be a matroid tableau, let $M_1=(E_1,\Ical_1)$ and $M_2=(E_2,\Ical_2)$ be matroids of the
	tableau, i.e.
	\[ \SET{M_1,M_2}\subseteq  \SET{G'~\middle|~ G'\text{~is a minor of~}G}\cup\Gcal \cup \Mcal \cup \Xcal .\]
	Furthermore, let $E_1'$ and $E_2'$ be finite sets,
	$D_1=(V_1,A_1)$ and $D_2=(V_2,A_2)$ be digraphs such that $E_1 \cup E_2' \subseteq V_1$ 
	and $E_1'\cup E_2\subseteq V_2$, and such that
	the induced matroid \( I(D_1,M_1,E_2')\) is isomorphic to \( M_2 \) 
	and the the induced matroid \( I(D_2,M_2,E_1') \) is isomorphic to \(M_1\).
	The tableau
	\label{n:idTab}
	 $$\Tbf(M_1\simeq M_2) = (G,\Gcal,\Mcal,\Xcal,\simeq')$$ is called 
	\deftext[identified tableau]{identified tableau for $\Tbf$ with respect to $\bm M_1$ and $\bm M_2$} if
	the relation
	$\simeq'$ is the smallest equivalence relation, such that $M_1\simeq' M_2$ holds, and such that  $M'\simeq N'$  implies $M'\simeq' N'$.
\end{definition}

\needspace{2\baselineskip}
\begin{lemma}\PRFR{Mar 29th}
	The derivation of an identified tableau is valid.
\end{lemma}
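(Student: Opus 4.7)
The plan is straightforward since the goal $G$ and the families $\Gcal$, $\Mcal$, $\Xcal$ are unchanged from $\Tbf$ to $\Tbf(M_1\simeq M_2)$. Thus conditions \emph{(i)}--\emph{(iii)} of Definition~\ref{def:validTableau} are inherited directly from the validity of $\Tbf$, and only condition \emph{(iv)} concerning the new equivalence relation $\simeq'$ needs attention.

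First I would describe the structure of $\simeq'$ explicitly: since $\simeq'$ is the smallest equivalence relation containing $\simeq$ together with the single pair $(M_1,M_2)$, its equivalence classes are exactly the $\simeq$-classes, except that $[M_1]_\simeq$ and $[M_2]_\simeq$ are merged into one class $[M_1]_\simeq \cup [M_2]_\simeq$. Every other $\simeq$-class is left untouched and therefore inherits the desired dichotomy (all-gammoid or no-gammoid) from the validity of $\Tbf$. Hence we only need to verify that the merged class is either entirely contained in the class of gammoids or contains no gammoid at all.

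The key step, and the only place where the hypothesis on the digraphs $D_1, D_2$ is used, is to show that $M_1$ is a gammoid if and only if $M_2$ is a gammoid. This is an immediate application of Lemma~\ref{lem:digraphInducedGammoidIfTisGammoid}: if $M_1$ is a gammoid, then the induced matroid $I(D_1,M_1,E_2')$ is a gammoid, and since this induced matroid is isomorphic to $M_2$, $M_2$ is a gammoid; the converse direction follows symmetrically from $I(D_2,M_2,E_1') \cong M_1$. Combining this with the validity of $\Tbf$, we see that both $[M_1]_\simeq$ and $[M_2]_\simeq$ have the same gammoid status, so their union $[M_1]_{\simeq'} = [M_2]_{\simeq'}$ is monochromatic in the required sense.

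I do not anticipate any obstacle here; the only subtlety is noting that Lemma~\ref{lem:digraphInducedGammoidIfTisGammoid} is preserved under matroid isomorphism (which is immediate) so that the stated hypotheses $I(D_1,M_1,E_2') \isomorph M_2$ and $I(D_2,M_2,E_1') \isomorph M_1$, rather than equality, are enough to transport the gammoid property in both directions.
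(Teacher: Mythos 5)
Your proof is correct and follows essentially the same route as the paper: the only substantive step is invoking Lemma~\ref{lem:digraphInducedGammoidIfTisGammoid} in both directions to conclude that $M_1$ is a gammoid if and only if $M_2$ is, after which condition \emph{(iv)} for $\simeq'$ (and the unchanged conditions \emph{(i)}--\emph{(iii)}) follows from the validity of $\Tbf$. Your extra remarks about the class structure of $\simeq'$ and invariance under isomorphism are fine but not needed beyond what the paper already records.
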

\begin{proof}\PRFR{Mar 29th}
	From Lemma~\ref{lem:digraphInducedGammoidIfTisGammoid} we obtain that $M_2' = I(D_1,M_1,E_2')$ is a gammoid if $M_1$ is a gammoid,
	and that $M_1' = I(D_2,M_2,E_1')$ is a gammoid if $M_2$ is a gammoid. Therefore $M_1$ is a gammoid if and only if $M_2$ is a gammoid.
	Consequently, $\simeq'$ satisfies the properties of Definition~\ref{def:validTableau}, and thus the identified tableau $\Tbf(M_1 \simeq M_2)$ is valid
	for every valid input tableau $\Tbf$.
\end{proof}

\subsection{Valid Tableaux}

\PRFR{Mar 29th}
\noindent In this section we present a variety of valid tableaux which may be used as inputs for valid derivation operations.
Trivially, if $M$ is a gammoid, then $(M,\SET{M},\emptyset,\emptyset,\langle\,\rangle)$ is a valid tableau,
and if $M$ is not a gammoid, then $(M,\emptyset,\emptyset,\SET{M},\langle\,\rangle)$ is a valid tableau;
 where $\langle .\rangle$ denotes the generated equivalence relation defined on the set of matroids occurring in the respective tableau. Thus exactly one of these two tableaux
is valid. Unfortunately, in order to know which one is valid, we have to decide whether $M$ is a gammoid first ---
in general this is not easier than determining $\Gamma_\Mcal(M)$, but there are special cases which we should not ignore.

\begin{corollary}\label{cor:strictGammoidTableau}\PRFR{Mar 29th}
	Let $M=(E,\Ical)$ be a matroid with $\alpha_M \geq 0$. Then the matroid tableau
	$\Tbf$ is valid, where
	$\Tbf = (M,\Gcal,\Mcal,\Xcal,\simeq)$ with
	\( \Gcal = \SET{M,M^\ast}\),  $\Mcal= \emptyset$, $\Xcal=\emptyset$, and $M\simeq N \Leftrightarrow M=N$.
\end{corollary}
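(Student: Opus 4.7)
The plan is to verify the four validity conditions from Definition~\ref{def:validTableau} directly. Since $\Mcal = \emptyset$ and $\Xcal = \emptyset$, conditions {\em (ii)} and {\em (iii)} hold vacuously, so the real work concerns conditions {\em (i)} and {\em (iv)}.

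For condition {\em (i)}, I would invoke Mason's $\alpha$-criterion (Corollary~\ref{cor:MasonAlpha}): the hypothesis $\alpha_M \geq 0$ yields immediately that $M$ is a strict gammoid, and in particular a gammoid. To see that $M^\ast$ is also a gammoid, I would appeal to Theorem~\ref{thm:GammoidsClosedMinorsDuality} (or equivalently Corollary~\ref{cor:dualityrespectingrepresentation}), which states that the class of gammoids is closed under duality. Hence both matroids in $\Gcal = \SET{M,M^\ast}$ are gammoids.

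For condition {\em (iv)}, note that the equivalence relation $\simeq$ defined by $M'\simeq N' \Leftrightarrow M' = N'$ is just the identity relation. Therefore every equivalence class $[N']_\simeq$ is a singleton $\SET{N'}$, which is trivially either entirely contained in the class of gammoids (if $N'$ is a gammoid) or disjoint from the class of gammoids (otherwise).

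I do not anticipate any obstacle: the argument is a short bookkeeping check that plugs Mason's criterion and closure under duality into the definition of validity. All steps are one-line invocations of results stated earlier in the text.
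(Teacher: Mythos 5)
Your argument is correct and matches the paper's approach: the paper's proof is just the citation of Corollary~\ref{cor:MasonAlpha}, and your write-up simply makes explicit the same ingredients (Mason's $\alpha$-criterion for $M$, closure of gammoids under duality for $M^\ast$, and the vacuous or trivial nature of the remaining conditions).
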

\begin{proof}\PRFR{Mar 29th}
See Corollary~\ref{cor:MasonAlpha}.
\end{proof}

\begin{corollary}\PRFR{Mar 29th}
	Let $M=(E,\Ical)$ be a matroid with $\rk_M(X) = 3$, $X\subseteq E$ with $\alpha_M(X) < 0$. Then the matroid tableau
	$\Tbf$ is valid, where
	$\Tbf = (M,\Gcal,\Mcal,\Xcal,\simeq)$ with
	\( \Gcal = \emptyset\),  $\Mcal= \emptyset$, $\Xcal=\SET{M,M^\ast}$, and $M\simeq N \Leftrightarrow M=N$.
\end{corollary}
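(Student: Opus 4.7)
The plan is to exhibit an explicit rank-$3$ minor of $M$ on which Mason's $\alpha$-criterion fails, and then invoke Proposition~\ref{prop:cornerCases}(ii) to rule out that $M$ is a gammoid. Concretely, I would set $Y = \cl_M(X)$. Since taking closure preserves rank, $\rk_M(Y) = \rk_M(X) = 3$, and $Y$ is a flat of $M$ containing $X$.

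The key technical step is to show that $\alpha_{M\restrict Y}(Z) = \alpha_M(Z)$ for every $Z\subseteq Y$. By Corollary~\ref{cor:rkRestrict} the ranks agree on subsets of $Y$, so by the recurrence in Definition~\ref{def:alphaM} it suffices to verify $\Fcal(M,Z) = \Fcal(M\restrict Y,Z)$ for all $Z\subseteq Y$ and then induct on $\left|Z\right|$. The inclusion $\Fcal(M,Z)\subseteq \Fcal(M\restrict Y,Z)$ is immediate because $\cl_{M\restrict Y}(F) = \cl_M(F)\cap Y = F$ whenever $F\subseteq Y$ is a flat of $M$. For the reverse inclusion, let $F\subsetneq Z$ be a flat of $M\restrict Y$; for $e\in Y\BS F$ the rank strictly grows by assumption, whereas for $e\in E\BS Y$ we exploit that $Y$ is a flat: $\cl_M(F)\subseteq \cl_M(Y)=Y$, so $e\notin\cl_M(F)$ and hence $\rk_M(F\cup\SET{e}) > \rk_M(F)$. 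Thus $F\in\Fcal(M,Z)$, finishing the induction and giving $\alpha_{M\restrict Y}(X) = \alpha_M(X) < 0$.

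Given this, Corollary~\ref{cor:MasonAlpha} tells me that $M\restrict Y$ is not a strict gammoid. Because $\rk_{M\restrict Y}(Y)=3$, Proposition~\ref{prop:cornerCases}(ii) upgrades this to: $M\restrict Y$ is not a gammoid. Since the class of gammoids is closed under minors and duality (Theorem~\ref{thm:GammoidsClosedMinorsDuality}), it follows that neither $M$ nor $M^\ast$ is a gammoid.

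It then remains to verify the four conditions of Definition~\ref{def:validTableau} for $\Tbf$. Conditions (i) and (ii) hold vacuously because $\Gcal$ and $\Mcal$ are empty; condition (iii) is precisely what the previous paragraph establishes for $\Xcal=\SET{M,M^\ast}$; and condition (iv) is trivial because the equivalence relation is the identity, so every equivalence class is a singleton contained in the non-gammoids. The main obstacle is really just the bookkeeping in the second paragraph: verifying that restricting to the flat $Y$ does not introduce spurious flats, so that the $\alpha$-values on subsets of $Y$ survive the restriction. Once this is in place, the rest of the argument is a direct appeal to earlier results.
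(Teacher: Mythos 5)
Your proof is correct and follows the route the paper intends: the paper's own justification is simply the citation of Corollary~\ref{cor:MasonAlpha} and Proposition~\ref{prop:cornerCases}, i.e.\ Mason's $\alpha$-criterion plus the rank-$3$ corner case, combined (implicitly) with closure of the class of gammoids under minors and duality. Your explicit passage to the flat $Y=\cl_M(X)$ together with the verification that $\alpha$-values on subsets of a flat are unchanged under restriction to that flat is precisely the detail the paper leaves unstated, and you carry it out correctly.
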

\begin{proof}\PRFR{Mar 29th}
See Corollary~\ref{cor:MasonAlpha} and Proposition~\ref{prop:cornerCases}.
\end{proof}

\begin{remark}\label{rem:nonStrictGammoidTableau}\PRFR{Mar 29th}
	Let $M=(E,\Ical)$ be a matroid, $X\subseteq E$ with $\alpha_M(X) < 0$. Then the matroid tableau
	$\Tbf$ is valid, where
	$\Tbf = (M,\Gcal,\Mcal,\Xcal,\simeq)$ with
	\( \Gcal = \emptyset\),  $\Mcal= \SET{M}$, $\Xcal=\emptyset$, and $M\simeq N \Leftrightarrow M=N$.
\end{remark}

\needspace{4\baselineskip}
\begin{theorem}[\cite{In77}, Theorem~13; \cite{Brylawski1971}, \cite{Brylawski1975}, \cite{Ingleton1971}]\label{thm:graphicGammoidsAreSeriesParallel}\PRFR{Mar 29th}
	Let $\Fbb_2$ be the two-elementary field, $E,C$ finite sets, and let $\mu \in \Fbb_2^{E\times C}$ be a matrix.
	Then $M(\mu)$ is a gammoid if and only if there is no minor $N$ of $M(\mu)$ which is isomorphic to $M(K_4)$.
	The latter is the case if and only if $M(\mu)$ is isomorphic to the polygon matroid of a series-parallel network.
\end{theorem}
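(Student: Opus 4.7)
The plan is to establish the theorem as a chain of three equivalences: (a) $M(\mu)$ has no $M(K_4)$ minor if and only if $M(\mu)$ is the polygon matroid of a series–parallel network, (b) the existence of an $M(K_4)$ minor forces $M(\mu)$ not to be a gammoid, and (c) every polygon matroid of a series–parallel network is a gammoid. The hard content is (c); the rest is mostly assembly of known facts.

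For (a), I would invoke Tutte's theorem that a binary matroid is graphic unless it contains $F_7$, $F_7^\ast$, $M^\ast(K_5)$ or $M^\ast(K_{3,3})$ as a minor, together with the easy check that each of these four forbidden minors itself contains $M(K_4)$ as a minor. Hence a binary matroid with no $M(K_4)$ minor must be graphic, say $M(\mu)\cong M(G)$. Dirac's classical characterization of series–parallel graphs then states that the $2$-connected blocks of $G$ are series–parallel precisely when $G$ contains no $K_4$ subdivision, equivalently no $M(K_4)$ minor in $M(G)$; gluing this together with disconnected/loop/bridge components gives the desired equivalence for general $G$.

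For (b), Example~\ref{ex:MK4} computes $\alpha_{M(K_4)}(E)=-1$, so by Corollary~\ref{cor:MasonAlpha} (or Proposition~\ref{prop:cornerCases} since $\operatorname{rk}M(K_4)=3$), $M(K_4)$ is not a gammoid; since gammoids are closed under minors by Theorem~\ref{thm:GammoidsClosedMinorsDuality}, neither is any matroid that has $M(K_4)$ as a minor.

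The main work is (c). Proceeding by induction on the series–parallel construction of $G$, the base case (a single edge, loop, or coloop) gives a free or trivial matroid, which is trivially a gammoid. For the inductive step, I would show that both series and parallel extensions preserve the gammoid property. A parallel extension in the graph produces, on the matroid side, a single-element extension whose associated modular cut
\[
C = \SET{F\in\Fcal(M)\mid e\in \cl_N(F)}
\]
consists of all flats containing $\cl_M(\{f\})$, where $f$ is the element being duplicated; this cut has the unique $\subseteq$-minimal element $\cl_M(\{f\})$, so Lemma~\ref{lem:principalExtOfStrictGammoidIsStrict} directly yields that $N$ is a gammoid. A series extension in the graph corresponds dually to a parallel extension in $M^\ast$: applying Corollary~\ref{cor:dualityrespectingrepresentation} and the previous case to the dual gives the result back for $M$. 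Disconnected components are handled by Lemma~\ref{lem:arcCSubAdditive}, which says the class of gammoids is closed under direct sums. Combining these three construction steps, every series–parallel polygon matroid is built from gammoids by operations preserving the class, hence is itself a gammoid.

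The principal obstacle is the verification underlying (c) that parallel and series extensions in the graph translate cleanly to principal single-element (co)extensions whose modular cuts have a unique minimal generator; once this is set up, Lemma~\ref{lem:principalExtOfStrictGammoidIsStrict} does the heavy lifting. A secondary difficulty is the classical input for (a): one needs Tutte's excluded-minor theorem for graphic matroids plus the (routine but case-by-case) verification that each of $F_7,F_7^\ast,M^\ast(K_5),M^\ast(K_{3,3})$ has an $M(K_4)$ minor. These would be cited to \cite{Ox11} rather than reproved here.
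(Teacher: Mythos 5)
The paper does not actually prove this theorem: it states it with citations and immediately remarks that ``for proofs of a sufficient set of implications \ldots refer to \cite{Brylawski1971}, \cite{Brylawski1975}, and \cite{Ingleton1971}'', so there is no in-text argument to compare yours against. Judged on its own, your reconstruction is correct and closes the cycle of implications properly: (b) gives ``gammoid $\Rightarrow$ no $M(K_4)$ minor'' via Example~\ref{ex:MK4}, Proposition~\ref{prop:cornerCases}/Corollary~\ref{cor:MasonAlpha} and minor-closedness (Theorem~\ref{thm:GammoidsClosedMinorsDuality}); (a) gives ``no $M(K_4)$ minor $\Rightarrow$ series--parallel'' from Tutte's excluded minors for graphic matroids (each of $F_7, F_7^\ast, M^\ast(K_5), M^\ast(K_{3,3})$ indeed has an $M(K_4)$ minor, using that $M(K_4)$ is self-dual) together with Dirac/Duffin; and (c) gives ``series--parallel $\Rightarrow$ gammoid'' by induction, which is the only part needing real work. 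Your reduction of (c) to Lemma~\ref{lem:principalExtOfStrictGammoidIsStrict} is sound: a parallel extension at $f$ is exactly the single-element extension whose modular cut is the principal filter of $\cl_M(\SET{f})$, hence has a unique $\subseteq$-minimal element, and a series coextension is the dual of a parallel extension of $M^\ast$, handled by Corollary~\ref{cor:dualityrespectingrepresentation}; blocks and isolated loops/coloops are absorbed by closure under direct sums. The only points worth making explicit in a written-up version are (i) the identification of minors of $M(G)$ with graph minors of $G$ (via $3$-connectivity of $K_4$ and Whitney's theorem) so that ``no $M(K_4)$ minor'' really is ``no $K_4$ minor of $G$'', and (ii) the convention on what ``series--parallel network'' means for disconnected or separable $G$, which you already flag. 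What your route buys, compared with the paper's bare citation, is a proof assembled almost entirely from the thesis's own machinery, at the cost of importing Tutte's and Dirac's classical theorems as black boxes.
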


\noindent For proofs of a sufficient set of implications which establish the equivalency stated, refer to
 \cite{Brylawski1971}, \cite{Brylawski1975}, and \cite{Ingleton1971}.

 \begin{theorem}[\cite{Ox11}, Theorem~6.5.4]\label{thm:binaryMatroids}\PRFR{Mar 29th}
 	Let $M=(E,\Ical)$ be a matroid. Then $M$ is isomorphic to $M(\mu)$ for some matrix $\mu \in \Fbb_2^{E\times C}$
 	if and only if $M$ has no minor isomorphic to the uniform matroid $U_{2,4} = \left( E',\Ical' \right)$,
 	where $E'= \dSET{a,b,c,d}$ and \linebreak $\Ical' = \SET{X\subseteq E'~\middle|~ \left| X \right| \leq 2}$.
 \end{theorem}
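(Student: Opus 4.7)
The plan is to prove the equivalence in two directions, with the forward direction being essentially immediate and the reverse direction constituting the real content (a classical theorem of Tutte).

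For the easy direction, I would first verify that $U_{2,4}$ is not binary: any representing matrix $\mu\in\Fbb_2^{E'\times C}$ would have $M(\mu)$ of rank $2$, so one may take $|C|=2$ and assume two rows form the identity matrix; the remaining two rows would have to be two further distinct nonzero vectors in $\Fbb_2^2$, but only three nonzero vectors exist, giving at most $3$ rows in general position, which contradicts that $|E'|=4$ with all pairs independent. Next, I would invoke the standard fact (proved for instance via pivoting and Lemma~\ref{lem:contractequalspivot}) that the class of $\Fbb_2$-representable matroids is closed under taking minors; combining these two observations shows that no binary matroid may have a minor isomorphic to $U_{2,4}$.

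For the hard direction, I would use the Whitney-style characterization: $M$ is binary if and only if $|C\cap D|$ is even for every $C\in\Ccal(M)$ and every $D\in\Ccal(M^\ast)$. Granting this characterization, suppose $M$ has no $U_{2,4}$-minor; I would argue by contradiction, taking $M$ to be a minor-minimal counterexample, so $M$ itself has a circuit $C$ and a cocircuit $D$ with $|C\cap D|$ odd. By Lemma~\ref{lem:CircuitCocircuitOrthogonality} odd intersection forces $|C\cap D|\geq 3$, and by Lemma~\ref{lem:CircuitCocircuitIntersectInTwo} the ``expected'' intersection size of a circuit with a cocircuit can be controlled to $2$, so the offending $C,D$ are in tension with the standard orthogonality patterns. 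The key step is then to use minor operations to trim $M$ down: for every element $e\notin C\cup D$, both $M\BS e$ and $M\contract e$ retain a circuit-cocircuit pair with odd intersection (choosing one of them based on whether $e$ is a coloop or loop in the remaining structure), contradicting minimality unless $E(M)=C\cup D$. Further contractions inside $C\setminus D$ and deletions inside $D\setminus C$ reduce to the case $E(M)=C\cup D$ with $|C\cap D|=3$, and a careful rank count then forces $M$ to be exactly $U_{2,4}$, yielding the contradiction.

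The main obstacle is the Whitney orthogonality characterization of binary matroids itself, which I would either cite as a standard prerequisite or prove separately: it requires setting up the binary cycle space of $M$, showing that the symmetric differences of circuits span a subspace of $\Fbb_2^E$, and verifying that this subspace has dimension $|E|-\rk(E)$ precisely when every circuit-cocircuit intersection is even. The reduction step from a matroid with an odd circuit-cocircuit intersection to a genuine $U_{2,4}$ minor is also delicate, because each deletion or contraction must be checked to preserve \emph{some} bad pair; the cleanest way to organize this is to fix the pair $(C,D)$ up front and argue that any element outside $C\cup D$ can be removed in at least one way while any element of $C\triangle D$ can be handled by contracting inside $C$ or deleting inside $D$ without decreasing $|C\cap D|$ below an odd value.
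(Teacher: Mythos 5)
The paper does not actually prove this theorem --- it states Tutte's excluded-minor characterization with a pointer to \cite{Ox11}, pp.~193f --- so the only question is whether your sketch would survive being written out. The easy direction is fine, and routing the converse through the parity characterization (``$M$ is binary iff every circuit meets every cocircuit in an even number of elements'') is a legitimate, known strategy, provided that characterization is established independently (its hard half, ``all intersections even $\Rightarrow$ binary'', is itself a page-long argument with fundamental-circuit incidence matrices, which you defer).

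The genuine gap is in your reduction ``odd circuit--cocircuit intersection $\Rightarrow$ $U_{2,4}$-minor''. The step asserting that for every $e\notin C\cup D$ at least one of the deletion and the contraction of $e$ retains a circuit--cocircuit pair with odd intersection, ``contradicting minimality unless $E(M)=C\cup D$'', is false. Test it on $U_{2,4}$ itself with $C=D=\SET{a,b,c}$ (a three-element set that is simultaneously a circuit and a cocircuit, since the hyperplanes of $U_{2,4}$ are the singletons) and $e=d$: both $U_{2,4}\restrict\SET{a,b,c}\cong U_{2,3}$ and $U_{2,4}\contract\SET{a,b,c}\cong U_{1,3}$ are binary, so neither retains any odd pair. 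Indeed, a minor-minimal matroid carrying an odd circuit--cocircuit pair is exactly $U_{2,4}$, and all of its odd pairs have $C\cup D\subsetneq E$; so the conclusion $E(M)=C\cup D$ that your minimality argument is supposed to force is precisely what cannot occur. The endgame shows the same symptom: if one really could trim to the point where a three-element set is both a spanning circuit and a cospanning cocircuit of the whole ground set, then $\rk_M(E)+\rk_{M^\ast}(E)=2+2\neq 3=\left| E \right|$, contradicting Lemma~\ref{lem:rankDual} --- i.e.\ your reduction, if valid, would prove that no matroid has an odd circuit--cocircuit intersection, which is absurd. The actual proof must keep at least one element outside $C\cup D$ alive and identify the four-element configuration there; that analysis is the real content of Tutte's theorem and is missing from the proposal.
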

 \noindent See \cite{Ox11}, pp.193f, for a proof.

\begin{corollary}\PRFR{Mar 29th}
	Let $M=(E,\Ical)$ be a matroid.
	If $M$ has no minor isomorphic to $M(K_4)$ and no minor isomorphic to $U_{2,4}$, then the matroid tableau
	$\Tbf$ is valid, where
	$\Tbf = (M,\Gcal,\Mcal,\Xcal,\simeq)$ with
	\( \Gcal = \SET{M,M^\ast}\),  $\Mcal= \emptyset$, $\Xcal=\emptyset$,  and $M\simeq N \Leftrightarrow M=N$.
\end{corollary}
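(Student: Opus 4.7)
The plan is to unpack Definition~\ref{def:validTableau} and verify each of the four validity conditions in turn. Since $\Mcal = \Xcal = \emptyset$, conditions (ii) and (iii) hold vacuously, and since $\simeq$ is equality, condition (iv) holds trivially (each equivalence class is a singleton). Thus the only real task is to verify condition (i): every matroid in $\Gcal = \SET{M, M^\ast}$ is a gammoid.

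First I would show that $M$ itself is a gammoid. Since $M$ has no minor isomorphic to $U_{2,4}$, Theorem~\ref{thm:binaryMatroids} provides finite sets $E,C$ and a matrix $\mu \in \Fbb_2^{E\times C}$ such that $M$ is isomorphic to $M(\mu)$. Being representable over $\Fbb_2$, $M$ now falls within the scope of Theorem~\ref{thm:graphicGammoidsAreSeriesParallel}. The hypothesis that $M$ has no minor isomorphic to $M(K_4)$ then directly yields that $M$ is a gammoid (equivalently, the polygon matroid of a series-parallel network).

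Next I would deduce that $M^\ast$ is a gammoid. This is immediate from Theorem~\ref{thm:GammoidsClosedMinorsDuality}, which states that the class of gammoids is closed under duality. Combining the two steps, both $M$ and $M^\ast$ are gammoids, so $\Gcal$ consists entirely of gammoids and condition (i) is satisfied.

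There is no real obstacle here: the result is essentially a bookkeeping consequence of stringing together Theorems~\ref{thm:binaryMatroids}, \ref{thm:graphicGammoidsAreSeriesParallel}, and \ref{thm:GammoidsClosedMinorsDuality}. The only minor subtlety to mention is that the invocation of Theorem~\ref{thm:graphicGammoidsAreSeriesParallel} is applied to the $\Fbb_2$-representation $M(\mu)$ obtained from Theorem~\ref{thm:binaryMatroids}, and isomorphism of matroids preserves both the property of being a gammoid and the minor-exclusion hypothesis; this is the one place where a short sentence of justification is warranted in the written proof.
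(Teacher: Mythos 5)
Your proof is correct and follows the same route as the paper, whose entire argument is just the citation of Theorems~\ref{thm:binaryMatroids} and \ref{thm:graphicGammoidsAreSeriesParallel}; your added remarks on the vacuous validity conditions, the duality step via Theorem~\ref{thm:GammoidsClosedMinorsDuality}, and the isomorphism bookkeeping are exactly the details the paper leaves implicit.
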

\begin{proof}\PRFR{Mar 29th}
Direct consequence of Theorems~\ref{thm:graphicGammoidsAreSeriesParallel} and \ref{thm:binaryMatroids}.
\end{proof}

\begin{definition}\PRFR{Mar 29th}
	Let $M=(E,\Ical)$ be a matroid. Then $M$ shall be \deftext{strongly base-orderable}, if for every pair of bases
	$B_1,B_2\in\Bcal(M)$ there is a bijection $\phi\colon B_1\maparrow B_2$ such that
	\[ \left( B_1\BS X  \right) \cup \phi[X] \in \Bcal(M) \]
	holds
	for all $X\subseteq B_1$. This property is also referred to as \deftext{full exchange property}.
\end{definition}

\begin{lemma}[\cite{M72}, Corollary~4.1.4]\label{lem:GammoidsAreStronglyBaseOrderable}\PRFR{Mar 29th}
	Let $M=(E,\Ical)$ be a gammoid. Then $M$ is strongly base-orderable.
\end{lemma}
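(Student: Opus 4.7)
The plan is to prove the statement in two stages: first reduce the claim from arbitrary gammoids to strict gammoids, and then prove the full exchange property for strict gammoids using the linkage machinery already developed. The reduction is essentially free: by Corollary~\ref{cor:gammoidrestrictionsamerank}, every gammoid $M=(E,\Ical)$ arises as $M=M'\restrict E$ for some strict gammoid $M'=(V,\Ical')$ with $\rk_M(E)=\rk_{M'}(V)$. Because these two ranks coincide, the bases of $M$ are precisely those bases of $M'$ that happen to be contained in $E$. Consequently, if $\phi\colon B_1\maparrow B_2$ is a bijection witnessing the full exchange property of $M'$ for two bases $B_1,B_2\subseteq E$, then $\phi[X]\subseteq B_2\subseteq E$ for every $X\subseteq B_1$, so the swapped set $(B_1\BS X)\cup \phi[X]$ is a subset of $E$ and hence a base of $M$. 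Thus it suffices to prove the lemma for strict gammoids.

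For a strict gammoid $M'=\Gamma(D,T,V)$ and two bases $B_1,B_2$, fix linkings $R_1\colon B_1\routesto T$ and $R_2\colon B_2\routesto T$, which exist and are onto $T$ because $\left|B_i\right|=\rk_{M'}(V)=\left|T\right|$. My plan is to translate this pair of linkings into the bipartite-matching world via Lemma~\ref{lem:linkage}, where the combinatorics are cleaner. Each $R_i$ corresponds to a transversal $V\BS B_i$ of the linkage system $\Acal_{D,T}$ together with an injection $\sigma_i\colon V\BS B_i\maparrow V\BS T$ such that $v\in A^{(D,T)}_{\sigma_i(v)}$ for all $v$. Consider the bipartite multigraph on vertex classes $V\BS B_1$ and $V\BS T$ (viewed as two disjoint copies) whose edges are $\sigma_1$ together with the analogous edges from $\sigma_2$ re-indexed along $B_1\cap B_2$. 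Each vertex has degree at most two, so the symmetric difference of the two matchings decomposes into a disjoint union of alternating paths and even cycles. Reading the endpoints of each component off and matching them up canonically produces a bijection $\phi\colon B_1\maparrow B_2$.

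The main obstacle is verifying the full exchange property in this setup: for every $X\subseteq B_1$, the set $(B_1\BS X)\cup \phi[X]$ should again be a base of $M'$. The idea is that on each alternating component, choosing any subset of the $B_1$-endpoints and swapping in the corresponding $B_2$-endpoints preserves the matching condition, because within a component the two partial matchings are interchangeable. Translating back via Lemma~\ref{lem:linkage}, this yields a linking from $(B_1\BS X)\cup \phi[X]$ onto $T$ in $D$, and by the definition of $\Gamma(D,T,V)$ this set is independent; since it has the right cardinality, it is a base. The technical heart is establishing that the alternating-path decomposition is indeed well-defined and that partial swaps of endpoints yield a valid injection into $V\BS T$ rather than a mere multi-valued correspondence; here the finiteness of $V$ and the tree-like shape of each component (path or even cycle) are what make the bookkeeping tractable. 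Once this combinatorial core is in place, the reduction in the first paragraph upgrades the conclusion from strict gammoids to all gammoids.
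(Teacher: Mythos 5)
Your proof is correct, but it takes a genuinely different route from the one in the paper. You reduce to strict gammoids via Corollary~\ref{cor:gammoidrestrictionsamerank}, pass to the dual transversal presentation through the linkage system and Lemma~\ref{lem:linkage}, and then run the classical alternating-path argument on the union of the two matchings $\sigma_1,\sigma_2$. This does work: every index vertex in $V\BS T$ has degree exactly two, so each path component pairs one element of $B_1\BS B_2$ with one element of $B_2\BS B_1$, and on each component the two restricted matchings cover the same indices with domains differing only in the two endpoints, so the swaps on distinct components are independent. The one point you must make explicit when filling in the sketch is that $\phi$ has to be the identity on $B_1\cap B_2$ --- these elements lie in neither $V\BS B_1$ nor $V\BS B_2$ and hence do not appear in your bipartite graph at all --- because any ``canonical'' pairing that moves an element of $B_1\cap B_2$ (e.g.\ shifting along a path) makes $(B_1\BS X)\cup\phi[X]$ too small for suitable $X$. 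The paper's proof is shorter and avoids both the reduction and the alternating paths: by Theorem~\ref{thm:gammoidRepresentationWithBaseTerminals} one represents $M$ as $\Gamma(D,B_1,E)$ with every element of $B_1$ a sink; a single linking $R\colon B_2\routesto B_1$ then defines $\phi$ by sending each terminal vertex to the initial vertex of its path (automatically the identity on $B_1\cap B_2$, since those are sinks), and for $X\subseteq B_1$ the routing consisting of the paths of $R$ starting in $\phi[X]$ together with the trivial paths at the vertices of $B_1\BS X$ witnesses the exchange directly. Your route costs more machinery but exposes the classical fact that transversal matroids are strongly base-orderable and that the property survives duality and full-rank restriction; the paper's buys brevity at the price of first pivoting the representation so that $B_1$ becomes the target set.
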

\begin{proof}\PRFR{Mar 29th}
	Let $B_1,B_2\in \Bcal(M)$ be any two bases of $M$,
	and let $(D,B_1,E)$ be a representation of $M$ (Theorem~\ref{thm:gammoidRepresentationWithBaseTerminals}).
	Since $B_2\in \Ical$ and $\left| B_1 \right| = \left| B_2 \right|$, there is a linking $R\colon B_2\routesto B_1$ in $D$.
	Let $\phi\colon B_1\maparrow B_2$ be the unique bijection with the property that $p_{1} = \phi(p_{-1})$ for all $p\in R$.
	Let $X\subseteq B_1$, then the derived linking $$R_X = \SET{p\in R ~\middle|~ p_1 \in \phi[X]} \cup \SET{b\in B_1~\middle|~b\notin X}$$
	proves that
	$\left( B_1\BS X \right)\cup \phi[X] \in \Bcal(M)$. Thus $M$ is strongly base-orderable.
\end{proof}

\begin{corollary}\PRFR{Mar 29th}
	Let $M=(E,\Ical)$ be a matroid, $B_1,B_2\in \Bcal(M)$ be bases of $M$ such that for every bijection
	$\phi\colon B_1\BS B_2 \maparrow B_2\BS B_1$ there is a set $X\subseteq B_1\BS B_2$
	with the property $ \left( B_1\BS X \right)\cup \phi[X] \notin \Bcal(M)$. Then the matroid tableau
	$\Tbf$ is valid, where
	$\Tbf = (M,\Gcal,\Mcal,\Xcal,\simeq)$ with
	\( \Gcal = \emptyset\),  $\Mcal= \emptyset$, $\Xcal=\SET{M,M^\ast}$, and $M\simeq N \Leftrightarrow M=N$.
\end{corollary}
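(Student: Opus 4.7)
My plan is to verify the four requirements of Definition~\ref{def:validTableau} for the tableau $\Tbf$: the conditions on $\Gcal = \emptyset$, $\Mcal = \emptyset$, and the trivial equivalence (equality) are all vacuous, so the real content is to show that neither $M$ nor $M^\ast$ is a gammoid. Since the class of gammoids is closed under duality (Theorem~\ref{thm:GammoidsClosedMinorsDuality}), I only need to establish that $M$ itself is not a gammoid, and I will do this by contradiction using Lemma~\ref{lem:GammoidsAreStronglyBaseOrderable}.

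Assuming $M$ is a gammoid, I would not merely appeal to the statement of Lemma~\ref{lem:GammoidsAreStronglyBaseOrderable}, but invoke the specific construction in its proof. Starting from a representation $(D,B_1,E)$ furnished by Theorem~\ref{thm:gammoidRepresentationWithBaseTerminals} in which every $b\in B_1$ is a sink of $D$, a linking $R\colon B_2\routesto B_1$ exists and defines a bijection $\phi\colon B_1\maparrow B_2$ by $\phi(p_{-1}) = p_1$. For $b\in B_1\cap B_2$, the vertex $b$ is a sink of $D$, so the unique path of $R$ starting at $b$ is the trivial path $b$. This pins down $\phi(b) = b$ for every $b\in B_1\cap B_2$, so that $\phi$ restricts to a bijection $\bar\phi\colon B_1\BS B_2\maparrow B_2\BS B_1$.

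The full exchange property of $\phi$ will then apply in particular to every $X\subseteq B_1\BS B_2$, and for such $X$ the equality $\phi[X]=\bar\phi[X]$ yields $(B_1\BS X)\cup\bar\phi[X]\in\Bcal(M)$. This directly contradicts the hypothesis of the corollary, allowing me to conclude that $M$ is not a gammoid; the corresponding statement for $M^\ast$ follows by Theorem~\ref{thm:GammoidsClosedMinorsDuality}, so $\Xcal=\SET{M,M^\ast}$ indeed consists of non-gammoids and the tableau $\Tbf$ is valid.

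The main obstacle, as just indicated, is not the contradiction itself but the observation that $\phi$ can be chosen to fix $B_1\cap B_2$ pointwise. This does not follow from the bare definition of strong base-orderability --- a priori a full-exchange bijection $B_1\maparrow B_2$ might shuffle some element of $B_1\cap B_2$ into $B_2\BS B_1$ --- but it is readily forced by the sink property in the chosen digraph representation. An alternative approach would be to perform an iterative swap modification on any full-exchange bijection $B_1\maparrow B_2$ in order to fix $B_1\cap B_2$ step by step, but this would be considerably more laborious than invoking Theorem~\ref{thm:gammoidRepresentationWithBaseTerminals} directly.
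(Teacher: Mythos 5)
Your argument is correct and is essentially the paper's own: the paper proves the corollary by appealing directly to the proof of Lemma~\ref{lem:GammoidsAreStronglyBaseOrderable}, i.e.\ to the specific bijection $\phi$ induced by a linking $B_2\routesto B_1$ in a representation $(D,B_1,E)$ from Theorem~\ref{thm:gammoidRepresentationWithBaseTerminals}. You merely make explicit the one detail the paper leaves implicit — that since every $b\in B_1$ is a sink, the path of $R$ starting at $b\in B_1\cap B_2$ is trivial, so $\phi$ fixes $B_1\cap B_2$ and restricts to a bijection $B_1\BS B_2\maparrow B_2\BS B_1$ — which is exactly the right observation.
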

\begin{proof}\PRFR{Mar 29th}
	Direct consequence of the proof of Lemma~\ref{lem:GammoidsAreStronglyBaseOrderable}.
\end{proof}

\needspace{4\baselineskip}
\vspace*{-\baselineskip} 
\begin{wrapfigure}{r}{4.5cm}
\vspace{\baselineskip}
\begin{centering}~~
\includegraphics{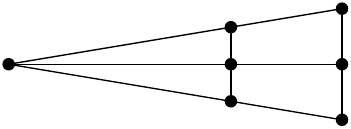}
\end{centering}%
\vspace*{-\baselineskip}
\end{wrapfigure}
~ 

\begin{example}\PRFR{Mar 29th}
	Consider the matroid $P_7$ (\cite{Ox11}, p.644), its affine configuration is depicted on the right.
	It is strongly base-orderable
	but it is not a strict gammoid. 
	Since $P_7$ has rank $3$, it follows that $P_7$ is a strongly base-orderable non-gammoid (Proposition~\ref{prop:cornerCases}).
\end{example}

\begin{example}\PRFR{Mar 29th}
	The Vámos matroid (\cite{Ox11}, p.649) is strongly base-orderable, but not representable over the reals $\R$.
\end{example}

\begin{theorem}[\cite{Ingleton69}, \cite{MNW09}]\label{thm:IngeltonsCondition}\PRFR{Mar 29th}
	Let $M=(E,\Ical)$ be a matroid such that there is a field $\Fbb$ and a matrix $\mu\in\Fbb^{E\times C}$
	with $M= M(\mu)$. Let further $W,X,Y,Z\subseteq E$. Then
	\begin{align*}
		\rk(W) \,+\, &\rk(X) + \rk(W\cup X\cup Y) + \rk(W\cup X\cup Z) + \rk(Y\cup Z)  \\
		& \leq \rk(W\cup X) + \rk(W\cup Y) + \rk(W\cup Z) + \rk(X\cup Y) + \rk(X\cup Z).
	\end{align*}
\end{theorem}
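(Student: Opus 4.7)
The statement is Ingleton's classical inequality, which I would prove by passing to the linear-algebraic side of the representation. Since $M = M(\mu)$, the rank function is determined by $\rk_M(S) = \dim V_S$, where $V_S = \SPAN\{\mu_s : s \in S\} \subseteq \Fbb^{C}$ (Definition~\ref{def:Mmu}). Because $V_{S \cup T} = V_S + V_T$, the inequality becomes an inequality purely about the dimensions of sums and intersections of the four subspaces $A := V_W$, $B := V_X$, $C := V_Y$, $D := V_Z$ of $\Fbb^C$. Explicitly, what has to be shown is
\begin{align*}
\dim A + \dim B \,+\, &\dim(A+B+C) + \dim(A+B+D) + \dim(C+D) \\
& \leq\, \dim(A+B) + \dim(A+C) + \dim(A+D) + \dim(B+C) + \dim(B+D).
\end{align*}

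The backbone of the argument is the modular identity $\dim(U+V) + \dim(U \cap V) = \dim U + \dim V$, which holds for arbitrary subspaces of a vector space (this is the decisive feature of the subspace lattice that distinguishes it from the lattice of flats of a general matroid, where only submodularity of the rank is available, cf.\ axiom $(R3)$ of Theorem~\ref{thm:rankAxioms}). First I would rewrite $\dim A + \dim B - \dim(A+B) = \dim(A \cap B)$ and use this, together with modularity on the pairs $(A+C, B+C)$ and $(A+D, B+D)$, to transform the inequality into the equivalent assertion
\[
\dim(A \cap B) + \dim(C+D) \,\leq\, \dim\bigl((A+C)\cap(B+C)\bigr) + \dim\bigl((A+D)\cap(B+D)\bigr) - \dim C - \dim D + \dim(C+D).
\]
After cancellation (using modularity once more on $(C,D)$) the task reduces to a single subspace inclusion estimate.

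The geometric core of the proof is the elementary inclusion
\[
(A \cap B) + C \,\subseteq\, (A+C) \cap (B+C),\quad\text{and similarly for }D,
\]
together with the cross-inclusion
\[
(A \cap C) + (A \cap D) + (B \cap C) + (B \cap D) \,\subseteq\, (A + B) \cap (C + D).
\]
Combining these with the modular identity, applied first to the pair $\bigl((A+C)\cap(B+C),\,(A+D)\cap(B+D)\bigr)$ and then contracted down via $\dim(U \cap V) \geq \dim U + \dim V - \dim(U + V)$, yields the desired bound on $\dim(A \cap B) + \dim(C+D)$. The hard part is bookkeeping: the inequality contains ten rank terms of four different types, and one has to pick the right sequence of modular identities and containments to make all cancellations work out simultaneously, without using submodularity anywhere, since using the weaker submodular inequality instead of the modular equality would lose precisely the slack that separates representable from general matroids. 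Once the inequality for subspaces is established, translating back via $\rk_M(S) = \dim V_S$ completes the proof.
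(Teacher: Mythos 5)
The paper itself does not prove this theorem; it refers the reader to Ingleton's original article, so your plan has to be measured against the standard argument. Your opening moves are exactly right: pass to the subspaces $A=V_W$, $B=V_X$, $C=V_Y$, $D=V_Z$, use $V_{S\cup T}=V_S+V_T$, and exploit the modular identity $\dim U+\dim V=\dim(U+V)+\dim(U\cap V)$, which is indeed the feature of subspace lattices that general matroids lack. The inclusion $(A\cap B)+C\subseteq(A+C)\cap(B+C)$ (and its $D$-analogue) is also the heart of Ingleton's proof.

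However, your intermediate reformulation is wrong, and the auxiliary inclusion you offer does not close the argument. Applying the modular identity to the pairs $(A,B)$, $(A+C,B+C)$ and $(A+D,B+D)$, and noting $(A+C)+(B+C)=A+B+C$, turns the target inequality into
\[
\dim(A\cap B)+\dim(C+D)\;\leq\;\dim\bigl((A+C)\cap(B+C)\bigr)+\dim\bigl((A+D)\cap(B+D)\bigr).
\]
Your version carries an additional $-\dim C-\dim D+\dim(C+D)=-\dim(C\cap D)$ on the right-hand side; that is a strictly stronger statement and it is false: for $A=B=C=D$ one-dimensional it reads $2\leq 1$. Moreover, the cross-inclusion $(A\cap C)+(A\cap D)+(B\cap C)+(B\cap D)\subseteq(A+B)\cap(C+D)$, while true, involves quantities such as $\dim(A\cap C)$ that occur nowhere in the inequality, and I do not see how it can substitute for the step that is actually needed. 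That step is: from the key inclusion, $\dim\bigl((A+C)\cap(B+C)\bigr)\geq\dim\bigl((A\cap B)+C\bigr)=\dim(A\cap B)+\dim C-\dim(A\cap B\cap C)$, and likewise with $D$; summing the two and using $\dim C+\dim D=\dim(C+D)+\dim(C\cap D)$, it remains to verify $\dim(A\cap B\cap C)+\dim(A\cap B\cap D)\leq\dim(A\cap B)+\dim(C\cap D)$, which follows by one more application of the modular identity from the inclusions $(A\cap B\cap C)+(A\cap B\cap D)\subseteq A\cap B$ and $A\cap B\cap C\cap D\subseteq C\cap D$. With the reformulation corrected and this counting step supplied, your plan becomes the standard proof.
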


\PRFR{Mar 31st}
\noindent
For a proof, see \cite{Ingleton69}. We mention A.W.~Ingleton's theorem here
because it has been used by D.~Mayhew in \cite{Ma16} in order to prove
 that certain matroids are excluded minors of the class of gammoids.
P.~Nelson and J.~van~der~Pol \cite{NvdP17} showed that A.W.~Ingleton's necessary condition for representability over any field
is rather weak: It is quite improbable for matroids on large ground sets that
a matroid which satisfies this condition is indeed representable over any field, because there are double-exponentially many
matroids satisfying the condition with respect to the cardinality of the ground set, 
yet there are only exponentially many representable
matroids with respect to the cardinality of the ground set. Furthermore, L.~Guillé, T.~Chan, and A.~Grant found a unique
minimal subset of $\frac{6^n}{4} - O(5^n)$ inequalities that imply A.W.~Ingleton's condition if satisfied \cite{GCG11}.

\needspace{5\baselineskip}
\begin{corollary}\PRFR{Mar 29th}\label{cor:ingletonTab}
	Let $M=(E,\Ical)$ be a matroid, and let $W,X,Y,Z\subseteq E$
	such that
	\begin{align*}
		\rk(W) \,+\, &\rk(X) + \rk(W\cup X\cup Y) + \rk(W\cup X\cup Z) + \rk(Y\cup Z)  \\
		& > \rk(W\cup X) + \rk(W\cup Y) + \rk(W\cup Z) + \rk(X\cup Y) + \rk(X\cup Z).
	\end{align*}
	Then the matroid tableau
	$\Tbf$ is valid, where
	$\Tbf = (M,\Gcal,\Mcal,\Xcal,\simeq)$ with
	\( \Gcal = \emptyset\),  $\Mcal= \emptyset$, $\Xcal=\SET{M,M^\ast}$, and $M\simeq N \Leftrightarrow M=N$.
\end{corollary}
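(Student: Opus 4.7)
The plan is to verify each of the four conditions of Definition~\ref{def:validTableau}. Conditions (i) and (ii) hold vacuously, as $\Gcal = \emptyset$ and $\Mcal = \emptyset$. Condition (iv) is trivial, since $\simeq$ is the equality relation, so every $\simeq$-equivalence class is a singleton, and any singleton is either contained in the class of gammoids or disjoint from it. Thus the real content of the corollary is condition (iii): I must show that both $M$ and $M^\ast$ fail to be gammoids.

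First, I would invoke the fact that every gammoid is representable over some field -- in particular, over $\R$. This is a classical result of Piff and Welsh, and in fact the paper itself develops (as announced in the abstract) a combinatorial procedure to produce an $\R$-matrix representing any given gammoid. Combining this with Theorem~\ref{thm:IngeltonsCondition}, which asserts that any matroid representable over a field must satisfy Ingleton's rank inequality for every quadruple of subsets of its ground set, I obtain: every gammoid satisfies Ingleton's inequality. By the hypothesis of the corollary, $M$ admits subsets $W, X, Y, Z \subseteq E$ for which Ingleton's inequality is strictly violated; consequently $M$ is not representable over any field, and in particular $M$ is not a gammoid.

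Next, to handle $M^\ast$, I would appeal to Theorem~\ref{thm:GammoidsClosedMinorsDuality}, which states that the class of gammoids is closed under duality. Since $(M^\ast)^\ast = M$ by Corollary~\ref{cor:doubleast}, if $M^\ast$ were a gammoid then its dual $M$ would also be a gammoid, contradicting the previous paragraph. Hence $M^\ast$ is not a gammoid either, so $\Xcal = \SET{M, M^\ast}$ indeed consists of non-gammoids, as required by condition (iii) of Definition~\ref{def:validTableau}.

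The main (and only) substantive step is the appeal to representability of gammoids over $\R$; everything else is an immediate application of definitions or of results already in hand. If one wishes to keep the proof entirely self-contained within the portion of the paper preceding this corollary, one could instead prove Ingleton's inequality directly for gammoids by exhibiting rank functions of gammoids as rank functions of suitable linear systems derived from routings, but this is really the same argument in disguise and is carried out in detail in the matrix-construction chapter.
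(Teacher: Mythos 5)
Your proof is correct and follows exactly the route the paper takes: the paper's own proof is the one-line "Consequence of Theorems~\ref{thm:IngeltonsCondition} and \ref{thm:gammoidOverR}," i.e.\ gammoids are representable over $\R$ and hence satisfy Ingleton's inequality, with the case of $M^\ast$ handled by closure of gammoids under duality. Your more explicit verification of the remaining tableau-validity conditions and of the dual case just spells out what the paper leaves implicit.
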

\begin{proof}\PRFR{Mar 29th}
	Consequence of Theorems~\ref{thm:IngeltonsCondition} and \ref{thm:gammoidOverR}.
\end{proof}

\PRFR{Mar 31st}
\noindent This strict inequality dualizes to
\begin{align*}
	\nu(W') \,+\, & \nu(X') + \nu\left( W'\cap X'\cap Y' \right) + 
	\nu\left(  W'\cap Y'\cap Z' \right) + \nu\left( Y'\cap Z' \right)
	\\
	 < \,& \hphantom{+\,} \nu\left(  W'\cap X' \right)+ \nu\left( W'\cap Y' \right)+ 
	\nu\left( W'\cap Z' \right) + \nu\left( X'\cap Y' \right)+ \nu\left(  X'\cap Z'\right)
\end{align*}
where $\nu(X) = \left| X \right| - \rk(X)$. A.~Cameron showed that $M$ satisfies A.W.~Ingleton's condition
if and only if $M^\ast$ satisfies it (\cite{Ca14Msc}, Lemma~4.5, p.26), therefore the dualized inequality
does not provide
any valid matroid tableaux that cannot be derived using Corollary~\ref{cor:ingletonTab}.

\subsection{Derivation of a Decisive Tableau}

\PRFR{Mar 29th}
\noindent First of all, it is clear that we may derive a decisive matroid tableau for any given matroid $G=(E,\Ical)$
by simply determining all extensions of $G$ with $\rk_G(E)^2\cdot \left| E \right| + \rk_G(E) + \left| E \right|$ 
elements (Remark~\ref{rem:upperBoundForV}). For each such extension $N$, there is a valid tableau, which depends on whether $N$
is a strict gammoid (Corollary~\ref{cor:strictGammoidTableau}) or not (Remark~\ref{rem:nonStrictGammoidTableau}).
Thus we may derive the joint tableau of all valid tableaux of the extensions of $G$. It is clear from Definition~\ref{def:decisiveTableau} 
that this joint tableau is decisive: either case {\em (i)} or case {\em (iii)} holds. Thus we may always decide $\Gamma_\Mcal(G)$ using the
matroid tableau method. Unfortunately, we cannot guarantee that there is no excluded minor $X$ for the class of gammoids, where the only
feasible way to refute, that $X$ is a gammoid, requires to employ the tiresome case {\em (iii)}. Now, let us provide a glimpse of the art
of employing matroid tableaux.

\begin{example}\label{ex:tab}\PRFR{Mar 29th}
	Consider the matroid $G = G_{8,4,1} = (E,\Ical)$ where $E=\SET{1,2,\ldots,8}$ and where $\Ical = \SET{\vphantom{A^A}X\subseteq E ~\middle|~ \left| X \right|\leq 4,\,X\notin \Hcal}$ with
	\[\Hcal = \SET{\vphantom{A^A} \SET{1, 3, 7, 8},
  \SET{1, 5, 6, 8},
   \SET{2, 3, 6, 8},
  \SET{4, 5, 6, 7},
  \SET{2, 4, 7, 8}}.\]
  Clearly, $\alpha_G(H) = 1$ for all $H\in \Hcal$, and consequently $\alpha_G(E) = 4-5 = -1$.
  The dual matroid $G^\ast = (E,\Ical^\ast)$ has a similar structure:
  $\Ical^\ast = \SET{\vphantom{A^A}X\subseteq E ~\middle|~ \left| X \right|\leq 4,\,X\notin \Hcal^\ast}$ with
	\[\Hcal^\ast = \SET{\vphantom{A^A} \SET{1, 2, 3, 8},
  \SET{1, 3, 5, 6},
   \SET{1, 4, 5, 7},
  \SET{2, 3, 4, 7},
  \SET{2, 4, 5, 6}}.\]
  Thus $\alpha_{G^\ast}(H') = 1$ for all $H'\in\Hcal^\ast$, and so $\alpha_{G^\ast}(E) = 4-5 = -1$, too.
  It turns out that neither $G$ nor $G^\ast$ have any minors of rank $3$ which are not strict gammoids.
  Furthermore, both $G$ and $G^\ast$ are strongly base-orderable, and both $G$ and $G^\ast$ have a $U_{2,4}$ minor.
  For the rest of this example, we will refer to `single-element extensions of the same rank' simply by the word `extension'.
   There are
  $11962$ different isomorphism classes of extensions of $G$, and $11495$ different isomorphism classes of
  extensions of $G^\ast$. No extension of $G$ or $G^\ast$ is a strict gammoid or a transversal matroid.
  $8643$ isomorphism classes of $G$-extensions either have non-gammoid rank-$3$ minors, or they are not strongly base-orderable, the same holds for
  $7892$ isomorphism classes of $G^\ast$-extensions. This leaves $3319$ classes of $G$-extensions and $3603$ classes of $G^\ast$-extensions
  which may or may not be classes of gammoids --- so extending and backtracking may not be our best approach here.

  \noindent We have seen before that there is no easy way to decide whether $G$ or $G^\ast$ is a gammoid, therefore we start with the
  valid tableaux $$\Tbf_G = \left( G,\emptyset,\SET{G},\emptyset,\langle \, \rangle \right)
  \txtand \Tbf_{G^\ast} = \left( G^\ast,\emptyset,\SET{G^\ast},\emptyset,\langle \, \rangle \right),$$
  where $\langle .\rangle$ denotes the generated equivalence relation defined on the set of matroids occurring in the respective tableau.
  We may derive the extended joint tableau 
  $$ \Tbf_1 = [\Tbf_G \cup \Tbf_{G^\ast}]_\equiv = \left( G, \emptyset,\SET{G,G^\ast}, \langle G\simeq G^\ast\rangle  \right).$$
  Now observe that although $G$ is deflated, $G^\ast$ is not deflated. We have
  	 \begin{align*}C^\ast_8 & = \SET{\vphantom{A^A}F\in \Fcal\left( G^\ast\restrict \SET{1,2,\ldots,7}  \right)~\middle|~ 8 \in \cl_{G^\ast}(F)}
  	 \\ & = \SET{\vphantom{A^A}F\in \Fcal\left( G^\ast\restrict \SET{1,2,\ldots,7}  \right)~\middle|~\SET{1,2,3}\subseteq F}.
  \end{align*}
  Let $G^\ast_7 = G^\ast\restrict \SET{1,2,\ldots,7}$. We have $\alpha_{G^\ast_7}(\SET{1,2,\ldots,7}) = -1$, thus $G^\ast_7$ is not a 
  strict gammoid, and thus
  \[ \Tbf_{G^\ast_7} = \left(G^\ast_7, \emptyset, \SET{G^\ast_7}, \emptyset, \langle\,\rangle\right) \]
  is a valid tableau. Since $G^\ast_7$ is a deflate of $G^\ast$, each of them is an induced matroid with respect to the other. Therefore
  we may identify $G^\ast$ and $G^\ast_7$ in the joint tableau
  \[ \Tbf_2 = \left( \Tbf_1\cup \Tbf_{G^\ast_7} \right)(G^\ast\simeq G^\ast_7) = \left( G,\emptyset,\SET{G,G^\ast,G^\ast_7},\emptyset,\langle G \simeq G^\ast \simeq G^\ast_7\rangle \right).\]
  Now let $G_7 = \left( G^\ast_7 \right)^\ast$, and we have $\alpha_{G_7} \geq 0$. Thus
  \[ \Tbf_{G_7} = \left( G_7, \SET{G_7},\emptyset, \emptyset, \langle\,\rangle \right) \]
  is a valid tableau. We now may derive the decisive tableau
  \[ \Tbf_3 = [\Tbf_2 \cup \Tbf_{G_7}]_\equiv = \left( G, \SET{G_7}, \SET{G,G^\ast,G^\ast_7,G_7},\emptyset, \langle G\simeq G^\ast \simeq G^\ast_7 \simeq G_7\rangle \right) \]
  where case {\em (i)} of Definition~\ref{def:decisiveTableau} holds. Consequently, $G$ is a gammoid.
\end{example}

\begin{figure}
\includegraphics[width=\textwidth]{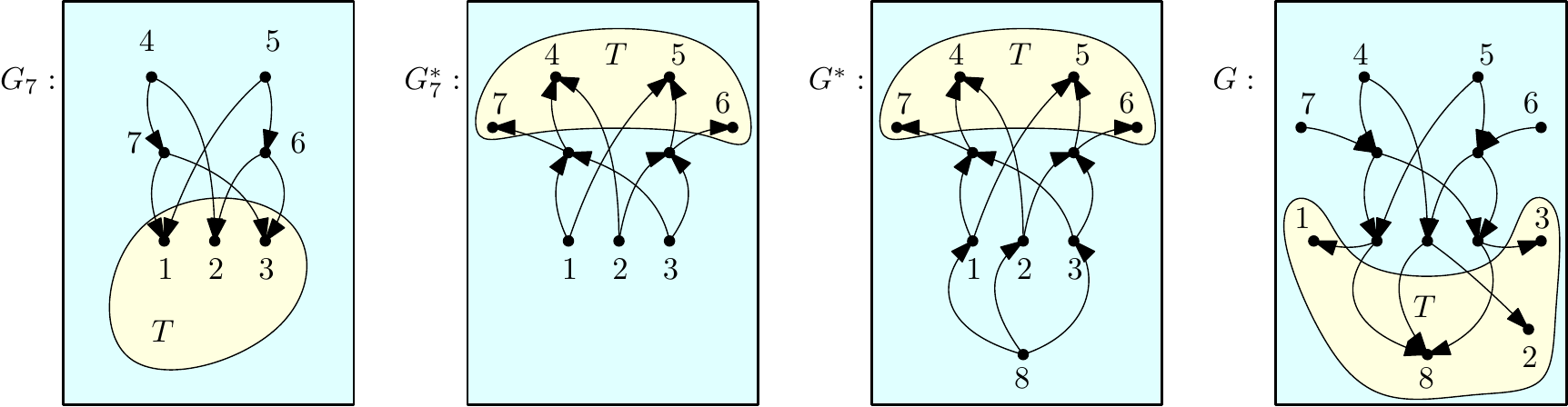}
\caption{\label{fig:tab}Reconstruction of a representation of $G_{8,4,1}$ from the matroid tableaux in Example~\ref{ex:tab}.}
\end{figure}

\PRFR{Mar 31st}
\noindent The representation of $G_{8,4,1}$ given in Figure~\ref{fig:tab} can obviously be reduced by two vertices 
if we move both the vertices $6$ and $7$ one step along their only incident arcs and delete the now superfluous sources. 
So $G_{8,4,1}$ may be represented with $11$ vertices,
and it is still possible that there is a representation of $G_{8,4,1}$ with $10$ vertices. Clearly, $9$ vertices do not suffice since no
single-element extension of $G_{8,4,1}$ is a strict gammoid.

\bigskip 
\PRFR{Mar 31st}
\noindent
Based on our experience, let us provide our best procedure for determining whether a given matroid $G=(E,\Ical)$ is a gammoid.
We start the procedure with the valid initial tableau $\Tbf := (G,\emptyset,\emptyset,\emptyset,\langle\,\rangle)$.

\begin{step}\label{step:start}\PRFR{Mar 31st}
	If $\Tbf$ is decisive, stop.
\end{step}

\begin{step}\PRFR{Mar 31st}
	Choose an intermediate goal
	$M\in \left( \SET{G' ~\middle|~ G'\text{~is a minor of~}G} \cup \Mcal  \right)\BS \left( \Gcal \cup \Xcal \right)$,
	preferably one with $M\simeq G$ which is small both in rank and cardinality.
\end{step}

\begin{step}\PRFR{Mar 31st}
	If $\Tbf_M = (M,\emptyset,\emptyset,\emptyset,\langle\,\rangle)\cup \Tbf$ is decisive,
	then set $\Tbf := \left[\vphantom{A^A}[\Tbf \cup \left( \Tbf_M!  \right)]_\equiv\right]_\simeq$
	 and continue with Step~\ref{step:start}.
\end{step}

\begin{step}\label{step:choice}
	Determine whether $M$ has a minor that is isomorphic to $M(K_4)$. If this is the case,  then 
	$\Tbf_M = \left( M, \emptyset, \emptyset, \SET{M,M^\ast}, \langle\,\rangle  \right)$ is valid, we set
	 $\Tbf := \left[\vphantom{A^A}[\Tbf\cup \Tbf_M]_\equiv \right]_\simeq$ and then
	continue with Step~\ref{step:start}.
\end{step}

\noindent Since $M(K_4) = \left( M(K_4) \right)^\ast$, we have that $M(K_4)$ is neither a minor of $M$ nor of $M^\ast$
when reaching the next step.

\begin{step}\label{step:chosen}\PRFR{Mar 31st}
	Determine whether $M$ has a minor that is isomorphic to $U_{2,4}$. If this is not the case,  then 
	$\Tbf_M = \left( M, \SET{M,M^\ast}, \emptyset, \emptyset,  \langle\,\rangle  \right)$ is valid, we set
	 $\Tbf := \left[\vphantom{A^A}[\Tbf\cup \Tbf_M]_\equiv \right]_\simeq$ and then
	continue with Step~\ref{step:start}.
\end{step}

\noindent Since $U_{2,4} = \left( U_{2,4} \right)^\ast$, we have that $U_{2,4}$ is neither a minor of $M$ nor of $M^\ast$
when reaching the next step.

\begin{step}\PRFR{Mar 31st}
	If $M\in \Mcal$, continue immediately with Step~\ref{step:nonStrict}.
	Determine whether $\alpha_M \geq 0$. If this is the case, then 
$\Tbf_M = \left( M, \SET{M,M^\ast}, \emptyset, \emptyset,  \langle\,\rangle  \right)$ is valid, we set
	 $\Tbf := \left[\vphantom{A^A}[\Tbf\cup \Tbf_M]_\equiv \right]_\simeq$ and
	continue with Step~\ref{step:start}.
\end{step}

\begin{step}\label{step:nonStrict}\PRFR{Mar 31st}
	If $M^\ast\in \Mcal$, continue immediately with Step~\ref{step:baseorderable}.
	Determine whether $\alpha_{M^\ast} \geq 0$. If this is the case, then
$\Tbf_{M^\ast} = \left( M^\ast, \SET{M,M^\ast}, \emptyset, \emptyset,  \langle\,\rangle  \right)$ is valid, we set
	 $\Tbf := \left[\vphantom{A^A}[\Tbf\cup \Tbf_{M^\ast}]_\equiv \right]_\simeq$ and
	continue with Step~\ref{step:start}.
\end{step}

\begin{step}\label{step:baseorderable}\PRFR{Mar 31st}
	Determine whether $M$ is strongly base-orderable. If this is not the case, then 
	$\Tbf_M = \left( M, \emptyset, \emptyset, \SET{M,M^\ast}, \langle\,\rangle  \right)$ is valid, we set
	 $\Tbf := \left[\vphantom{A^A}[\Tbf\cup \Tbf_M]_\equiv \right]_\simeq$ and then
	continue with Step~\ref{step:start}.
\end{step}

\noindent The class of strong base-orderable matroids is closed under duality and minors \cite{Ingleton1971}, therefore $M^\ast$ and
all minors of $M$ and $M^\ast$ are strongly base-orderable upon reaching the next step.

\begin{step}\PRFR{Mar 31st}
	Let $M=(E,\Ical)$. Determine whether there is some $X\in \Ical$ with $\left| X \right| = \rk_M(E) - 3$
	and some $Y\subseteq E\BS X$ such that $\alpha_{M\contract\left( E\BS X \right)}(Y) < 0$. If this is the
	case, then the tableau 
	$\Tbf_M = \left( M, \emptyset, \emptyset, \SET{M,M^\ast}, \langle\,\rangle  \right)$ is valid, we set
	 $\Tbf := \left[\vphantom{A^A}[\Tbf\cup \Tbf_M]_\equiv \right]_\simeq$ and then
	continue with Step~\ref{step:start}.
\end{step}

\begin{step}\PRFR{Mar 31st}
	Let $M^\ast=(E,\Ical^\ast)$. Determine whether there is some $X\in \Ical^\ast$ with \linebreak 
	 $\left| X \right| = \rk_{M^\ast}(E) - 3$
	and some $Y\subseteq E\BS X$ such that $\alpha_{M^\ast\contract\left( E\BS X \right)}(Y) < 0$. If this is the
	case, then the tableau 
	$\Tbf_{M^\ast} = \left( M^\ast, \emptyset, \emptyset, \SET{M,M^\ast}, \langle\,\rangle  \right)$ is valid, we set
	 $\Tbf := \left[\vphantom{A^A}[\Tbf\cup \Tbf_{M^\ast}]_\equiv \right]_\simeq$ and then
	continue with Step~\ref{step:start}.
\end{step}

\PRFR{Mar 31st}
\noindent The next step may be omitted or carried out sloppily\footnote{It clearly would be sloppy to just consider $W$, $X$, $Y$, and $Z$
with $\max\SET{\left| W \right|,\left| X \right|,\left| Y \right|,\left| Z \right|} \leq k$ for some $k\in \Z\BSET{0,1}$, or even
to just check whether $M$ has a Vámos-matroid as a minor.},
 because it may take a considerable amount of time for larger matroids and it does
not seem to be worth the computational effort in practice. 

\begin{step} \PRFR{Mar 31st}
	Let $M=(E,\Ical)$. Determine whether there are $W,X,Y,Z\in \Ical$ such that
	\begin{align*}
		\rk(W) + &\rk(X) + \rk(W\cup X\cup Y) + \rk(W\cup X\cup Z) + \rk(Y\cup Z)  \\
		& > \rk(W\cup X) + \rk(W\cup Y) + \rk(W\cup Z) + \rk(X\cup Y) + \rk(X\cup Z).
	\end{align*}
	If this is the
	case, then the tableau 
	$\Tbf_{M^\ast} = \left( M^\ast, \emptyset, \emptyset, \SET{M,M^\ast}, \langle\,\rangle  \right)$ is valid, we set
	 $\Tbf := \left[\vphantom{A^A}[\Tbf\cup \Tbf_{M^\ast}]_\equiv \right]_\simeq$ and then
	continue with Step~\ref{step:start}.
\end{step}

\begin{step}\PRFR{Mar 31st}
	Determine whether $M$ is deflated. If not, then find a deflate $N$ of $M$ with a ground set of minimal cardinality,
	set $\Tbf := \left[\vphantom{A^A}\left[\left( \Tbf\cup\Tbf_N \right)(M \simeq N)\right]_\equiv \right]_\simeq$
	where $$\Tbf_N = \begin{cases}[r]
		 \left( N,\SET{N,N^\ast},\emptyset,\emptyset,\langle\,\rangle \right)& \quad \text{if~}\alpha_N \geq 0,\\
		  \left( N,\emptyset,\SET{N},\emptyset,\langle\,\rangle \right) & \quad \text{otherwise,}
		  \end{cases}$$ 
		and continue with Step~\ref{step:start}.
\end{step}

\begin{step}\PRFR{Mar 31st}\label{lastStep}
	Determine whether $M^\ast$ is deflated. If not, then find a deflate $N$ of $M^\ast$ with a ground set of minimal cardinality,
	set $\Tbf := \left[\vphantom{A^A}\left[\left( \Tbf\cup\Tbf_N \right)(M^\ast \simeq N)\right]_\equiv \right]_\simeq$
	where $$\Tbf_N = \begin{cases}[r]
		 \left( N,\SET{N,N^\ast},\emptyset,\emptyset,\langle\,\rangle \right)& \quad \text{if~}\alpha_N \geq 0,\\
		  \left( N,\emptyset,\SET{N},\emptyset,\langle\,\rangle \right) & \quad \text{otherwise,}
		  \end{cases}$$ 
		and continue with Step~\ref{step:start}.
\end{step}

\PRFR{Mar 31st}
\noindent This is the point where we may try creative ways of determining whether $M$ is a gammoid or not.
If we are successful, then we augment the tableau $\Tbf$ accordingly, and continue with Step~\ref{step:start}.
In the best case, we might guess a representation of $M$, or find a considerably smaller matroid $M'$ such that $M$ is induced from $M'$
by some digraph $D$. In theory, it is also possible to find a known non-gammoid $X'$ that is induced from $M$ by some digraph $D$, which
then implies that $M$ must be a non-gammoid.  Since the class of strongly base-orderable matroids is closed under matroid 
induction by digraphs,
$X'\notin \SET{M(K_4), P_7}$, because we know since Step~\ref{step:baseorderable} that $M$ is strongly base-orderable.
In practice, we never managed to successfully show that some known excluded minor may be induced from a candidate matroid $M$ 
under examination.
Currently, $P_8^=$ (\cite{Ox11}, p.651) is the only 
excluded minor for the class of gammoids (J.~Bonin, \cite{joeP8}) that we know of, which is strongly base-orderable 
yet neither has rank or co-rank $3$. Furthermore,
$P_8^=$ is isomorphic to its dual $\left(  P_8^=\right)^\ast$ which makes it a rather special matroid. 
Therefore we think it is reasonable to assume that the odds are clearly in favor of $M$ being a gammoid upon reaching the 
next step.\footnote{
Or, more pessimistically, we might not know sufficiently general excluded minors for
the class of gammoids to assess the situation here more realistically.}

\needspace{4\baselineskip}
\begin{step}\label{step:exhaustion}\PRFR{Mar 31st}
	Try to find an extension $N$ of $M$ with at most $\rk_G(E)^2\cdot \left| E \right| + \rk_G(E) + \left| E \right|$ elements
	such that $N$ is not isomorphic to any $M'\in \Gcal \cup \Mcal \cup \Xcal$.
	Set $\Tbf := \left[\vphantom{A^A}\left[\Tbf\cup\Tbf_N\right]_\equiv \right]_\simeq$
	where $$\Tbf_N = \begin{cases}[r]
		 \left( N,\SET{N,N^\ast},\emptyset,\emptyset,\langle\,\rangle \right)& \quad \text{if~}\alpha_N \geq 0,\\
		  \left( N,\emptyset,\SET{N},\emptyset,\langle\,\rangle \right) & \quad \text{otherwise,}
		  \end{cases}$$ 
		and continue with Step~\ref{step:start}.
	If no such extension of $M$ exists, then set $M:= G$ and continue with Step~\ref{step:chosen}.
\end{step}

\PRFR{Mar 31st}
\noindent Clearly, if we continue this process long enough, then Step~\ref{step:exhaustion} ensures 
that the tableau $\Tbf$ will eventually
become decisive for $G$ by exhausting all isomorphism classes of extensions of $G$
with at most $\rk_G(E)^2\cdot \left| E \right| + \rk_G(E) + \left| E \right|$ elements.

\clearpage

\section{Representation over $\R$}

\noindent 
\PRFR{Feb 15th}
There are many ways to arrive at the fact that every gammoid can be represented by a matrix
over a field $\Kbm$ whenever $\Kbm$ has enough elements. Or, to be more precise, for every field $\Fbm$ and 
every gammoid $M$ there
is an extension field $\Kbm$ of $\Fbm$, such that $M$ can be represented by a matrix over $\Kbm$.
For the sake of simplicity, we only consider representations of gammoids over the field of the reals $\R$.
In \cite{Ar06}, F.~Ardila points out that the Lindström Lemma yields an easy method to construct a matrix $\mu\in \R ^{E\times B}$ from the digraph $D=(V,A)$ such that $\Gamma(D,T,E) = M(\mu)$;
the construction is universal in the sense that it works with indeterminates and thus yields a representation over $\Fbm$ whenever these indeterminates can be replaced with elements from $\Fbm$
without zeroing out any nonzero subdeterminants of $\mu$.

\begin{definition}\PRFR{Feb 15th}
	Let $D=(V,A)$ be a digraph and $w\colon A\maparrow \R$.
	Then $w$ shall be called
	 \deftext[indeterminate weighting of D@indeterminate weighting of $D$]{indeterminate weighting of $\bm D$},
	whenever the set $\SET{w(a)\mid a\in A}$ is $\Z$-independent.
\end{definition}

\begin{example}\PRFR{Feb 15th}
	Let $D=(V,A)$ be any digraph, then $\left| A \right| < \infty$. Thus there is a set $X\subseteq \R$
	that is $\Z$-independent with $\left| X \right| = \left| A \right|$ (Lemma~\ref{lem:enoughZindependents}). Then
	any bijection $\sigma\colon A\maparrow X$ induces an indeterminate weighting $w\colon X\maparrow \R$
	with $w(x) = \sigma(x)$, thus indeterminate weightings exist for all digraphs.
\end{example}

\begin{notation}\label{n:prodp}\PRFR{Feb 15th}
	Let $D=(V,A)$ be a digraph and $w\colon A\maparrow \R$ be an indeterminate weighting of $D$. Let $q=(q_i)_{i=1}^n\in \Wbf(D)$,
	we shall write \[ \prod q = \prod_{i=1}^{n-1} w\left( \vphantom{A^A}(q_i,q_{i+1}) \right). \qedhere \]
\end{notation}

 \begin{lemma}[Lindström \cite{Li73}]\label{lem:lindstrom}\PRFR{Feb 15th}
 	Let $D=(V,A)$ be an acyclic digraph, $n\in \N$ a natural number, $S=\dSET{s_1,s_2,\ldots,s_n}\subseteq V$ and $T=\dSET{t_1,t_2,\ldots,t_n}\subseteq V$ be equicardinal subsets of $V$, and let $w\colon A\maparrow \R$ be an indeterminate weighting of $D$. Furthermore,
 	$\mu\in \R^{V\times V}$ shall be the matrix with
 	\[ \mu(u,v) = \sum_{p\in \Pbf(D;u,v)} \prod p .\]
 	Then
 	\[ \det \left( \mu\restrict S\times T \right) = \sum_{L\colon S\routesto T} \left( \sgn(L)  
 	\prod_{p\in L} \left( \prod p \right) \right) \]
 	where $\sgn(L) = \sgn(\sigma)$ for the unique permutation $\sigma\in \mathfrak{S}_n$ with
 	the property that for every $i\in\SET{1,2,\ldots,n}$ there is a path $p\in L$ with $p_1 = s_i$ and 
 	$p_{-1} = t_{\sigma(i)}$.
 	Furthermore, \[  \det \left( \mu\restrict S\times T \right) = 0 \] if and only if
 	there is no linking from $S$ to $T$ in $D$.
 \end{lemma}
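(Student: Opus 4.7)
The plan is to prove the Lindström Lemma by the now-standard sign-reversing involution argument, often attributed to Gessel and Viennot.

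First, I would expand the determinant via Definition~\ref{def:det}:
\[ \det(\mu \restrict S \times T) = \sum_{\sigma \in \mathfrak{S}_n} \sgn(\sigma) \prod_{i=1}^n \mu(s_i, t_{\sigma(i)}) = \sum_{\sigma \in \mathfrak{S}_n} \sgn(\sigma) \sum_{P \in \Pcal_\sigma} \prod_{i=1}^n \prod p_i, \]
where $\Pcal_\sigma$ is the set of path-systems $P = (p_1,\ldots,p_n)$ with $p_i \in \Pbf(D; s_i, t_{\sigma(i)})$. Thus the sum ranges over all pairs $(\sigma, P)$; the linkings from $S$ onto $T$ correspond exactly to those pairs where the paths of $P$ are pair-wise vertex disjoint, and for each linking $L$ the associated permutation is the $\sigma$ appearing in the statement of the lemma. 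The first main step is therefore to cancel all non-disjoint contributions.

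The second step is to define the sign-reversing involution $\iota$ on the non-linking pairs $(\sigma, P)$. Given such $(\sigma, P)$, let $i$ be the smallest index such that $p_i$ shares a vertex with some other $p_k$; let $v$ be the first such shared vertex along $p_i$; and let $j > i$ be the smallest index with $v \in |p_j|$. Writing $p_i = q_i v r_i$ and $p_j = q_j v r_j$, define $\iota(\sigma, P) = (\sigma \circ (i\,j),\, P')$ where $P'$ replaces $p_i$ by $q_i v r_j$ and $p_j$ by $q_j v r_i$. The main obstacle is verifying that this is well-defined as an involution on $\Pcal := \bigcup_\sigma \Pcal_\sigma$; here the acyclicity of $D$ is essential, because it guarantees that the swapped concatenations $q_i v r_j$ and $q_j v r_i$ are again paths (any repeated vertex would produce a cycle walk, which is impossible in an acyclic digraph), and it also ensures that the selection of the distinguished triple $(i, v, j)$ on $\iota(\sigma, P)$ yields the very same $(i, v, j)$, so that $\iota \circ \iota = \id$. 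Since $\iota$ acts on $\sigma$ by a single transposition it reverses the sign, while $\prod_{k} \prod p_k = \prod_{k} \prod p'_k$ because both sides record the same multiset of traversed arcs. Hence all non-linking contributions cancel and the claimed formula for $\det(\mu \restrict S \times T)$ follows.

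Finally, for the vanishing criterion: if there is no linking from $S$ to $T$ then the sum is empty and the determinant is $0$. Conversely, suppose a linking exists. Because $D$ is acyclic, two distinct linkings from $S$ to $T$ traverse distinct multisets of arcs (the arc set of a linking decomposes uniquely into vertex-disjoint paths, as there are no cycles to reassemble), so different linkings $L$ produce distinct monomials $\prod_{p\in L} \prod p$ in the variables $\SET{w(a)\mid a\in A}$. Thus $\det(\mu \restrict S \times T)$ equals $p[w(A)=\xi]$ for some nonzero polynomial $p \in \Z[w(A)]$, and by Definition~\ref{def:Zindependent} together with the $\Z$-independence of $\SET{w(a)\mid a\in A}$, this evaluation is nonzero. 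This completes the plan.
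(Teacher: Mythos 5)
Your proposal is correct and follows essentially the same route as the paper's proof: the Leibniz expansion, the Gessel–Viennot tail-swapping sign-reversing involution on non-disjoint path families (using acyclicity so that the swapped walks remain paths), and the $\Z$-independence of the arc weights to rule out cancellation among the distinct monomials of distinct linkings. No substantive differences to report.
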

\noindent As suggested by F.~Ardila, we present the following bijective proof given by I.M.~Gessel and X.G.~Viennot \cite{GV89p}. 
\begin{proof}\PRFR{Feb 15th} The Leibniz formula (Definition~\ref{def:det}) yields
\begin{align*}
\det \left( \mu\restrict S\times T \right) 
& = \sum_{\sigma\in \mathfrak{S}_n} \sgn(\sigma) \prod_{i=1}^{n} \mu(s_i,t_{\sigma(i)}) \\
& = \sum_{\sigma\in \mathfrak{S}_n} \sgn(\sigma) \prod_{i=1}^{n} \left( 
				\sum_{p\in \Pbf{\left(D;s_i,t_{\sigma(i)}\right)}} \prod p \right) \\
& = \sum_{\sigma\in \mathfrak{S}_n} \sgn(\sigma)\left( \sum_{K\in Q_\sigma} \,\,
			\prod_{p\in K} \left( \prod p\right) \right),
\end{align*}
where $$Q_\sigma = \SET{\left. K \in \binom{\Pbf(D)}{n} \,\,\right|\,\, \forall i\in \SET{1,2,\ldots,n}\colon\,\exists p\in K\colon\,p_1=s_i \txtand p_{-1}=t_{\sigma(i)} }\,$$ consists of all families of paths connecting $s_i$ with $t_{\sigma(i)}$ for all $i\in \SET{1,2,\ldots,n}$.
 Clearly, for $\sigma,\tau\in \mathfrak{S}_n$ with $\sigma\not= \tau$, the sets $Q_\sigma\cap Q_\tau = \emptyset$ are disjoint, therefore the following map with the domain $Q=\bigcup_{\sigma\in \mathfrak{S}_n} Q_\sigma$ is well defined:
 \[ \sgn\colon Q \maparrow \SET{-1,1},\quad K\mapsto \sgn(\sigma) 
 \quad\text{~where~}\sigma\in\mathfrak{S}_n\text{~such that~}K\in Q_\sigma .\]
 Thus we may write
 \begin{align*}
\det \left( \mu\restrict S\times T \right) & = \sum_{K\in Q} \sgn(K) \prod_{p\in K} \left( \prod p \right).
\end{align*}
Furthermore, if $L\colon S\routesto T$ is a linking from $S$ to $T$ in $D$, then $L\in Q_\sigma$ 
where $\sigma\in\mathfrak{S}_n$ is the unique permutation mapping the indexes of the initial vertices of the paths in $L$ 
to the indexes of the terminal vertices of the paths in $L$. Let us denote the routings in $Q$ by
\[ R = \SET{L\in Q\mid L\text{~is a routing}}.\]
We prove the first statement of the lemma by showing that there is a bijection
$\phi\colon Q\BS R \maparrow Q\BS R$, such that for all $K\in Q\BS R$,
$$\prod_{p\in K}\left( \prod p \right) = \prod_{p\in \phi(K)} \left( \prod p \right)$$
and $\sgn(K) = -\sgn(\phi(K))$. We construct such a map $\phi$ now.
Let $$K' = \SET{p\in K~\middle|~\vphantom{A^A} \exists q\in K\BSET{p}\colon\,\left| p \right|\cap \left| q \right|\not=\emptyset}$$
be the set of paths in $K$ that meet a vertex of another path, clearly $\left| K' \right| \geq 2$
since $K$ is not a routing. There is a total order on $K'$:
 let $p,q\in K'$, then $p\leq q$ if and only if $i\leq j$ where $p_1 = s_i$ and $q_1 = s_j$.
Now let $p=(p_i)_{i=1}^{n(p)}\in K'$ be chosen 
such that $p$ is the minimal element with respect to the above order.
Let $j(p)\in \SET{1,2,\ldots,n(p)}$ be the smallest index, such that there is some $q\in K'\BSET{p}$
with $p_{j(p)}\in \left| q \right|$. Now let
 $q=(q_i)_{i=1}^{n(q)}\in\SET{k\in K'\BSET{p}~\middle|~\vphantom{A^A} p_{j(p)}\in \left| q \right|}$
  be the minimal choice with respect to the above order on $K'$, and let $j(q)\in \SET{1,2,\ldots,n(q)}$
  such that $q_{j(q)} = p_{j(p)}$. Now let
  $p' = p_1 p_2\ldots p_{j(p)} q_{j(q) + 1} q_{j(q) + 2} \ldots q_{n(q)}$
  and $q' = q_1 q_2 \ldots q_{j(q)} p_{j(p)+1} p_{j(p)+2}\ldots p_{n(p)}$. Since $D$ is acyclic, all walks are paths in $D$, so $\Wbf(D) = \Pbf(D)$.
   Therefore we may set
  $\phi(K) = \left( K\BSET{p,q}\right)\cup\SET{p',q'} \in Q\BS R$.
  Clearly, $\phi(\phi(K)) = K$, therefore $\phi$ is bijective and self-inverse. Furthermore,
  if $K\in Q_\sigma$, then $\phi(K) \in Q_{\sigma \cdot (x y)}$ for a suitable cycle
   $(x y)\in \mathfrak{S}_n$. Thus $\sgn(\phi(K)) = \sgn(\sigma)\sgn\left( (x y) \right) = - \sgn(\sigma) = -\sgn(K)$. Clearly, $K$ and $\phi(K)$ traverse the same arcs, therefore $\prod_{p\in K} (\prod p) = \prod_{p\in \phi(K)} (\prod p)$.
The bijection $\phi$ implies that the summands $K\in Q\BS R$ add up to zero, thus we have $$\det \left( \mu\restrict S\times T \right)  = \sum_{L\in R} \sgn(L) \prod_{p\in L} \left( \prod p \right).$$
The second statement of the lemma follows from the fact that for two routings $L_1,L_2\in R$,
we have $L_1 = L_2$ if and only if $\bigcup_{p\in L_1} \left| p \right|_A = \bigcup_{p\in L_2} \left| p \right|_A$. For the non-trivial direction: assume we have a set of arcs $L_A$ that are traversed by the paths of a linking, and let $V_A=\SET{u,v~\middle|~(u,v)\in L_A}$.
 Then the initial vertices of that linking are the elements of the set
$S_A = \SET{u\in V_A\mid \forall (v,w)\in L_A\colon\,u\not= w}$. The terminal vertices are the elements of the set
$T_A = \SET{w\in V_A\mid \forall (u,v)\in L_A\colon\,u\not= w}$, and the paths can be reconstructed from
the initial vertices $v\in S_A$ by following the unique arcs $(v,w),(w,x),\ldots \in L_A$ until a vertex $t\in T_A$ is reached. Clearly, for $L\in R$, $\prod_{p\in L} \left( \prod p \right) \not= 0$, and since $w$ is an indeterminate weighting, two summands $L,L'\in R$ can only cancel each other when the corresponding monomials are equal, i.e. $\prod_{p\in L} \left( \prod p \right) = \prod_{p\in L'} \left( \prod p \right)$; but then $L_A = L'_A$ holds, and so $L = L'$. Thus no summand in the determinant formula
 which belongs to a routing from $R$
can be cancelled out by another summand belonging to another routing from $R$. Therefore
$\det\left( \mu\restrict S\times T \right) = 0$ if and only if $R=\emptyset$, i.e. there is no linking from $S$ to $T$ in $D$.
\end{proof}

\begin{corollary}\PRFR{Feb 15th}
	Let $D=(V,A)$ be an acyclic digraph, $T,E\subseteq V$, and $w\colon A\maparrow \R$ be an indeterminate weighting of $D$. Furthermore,
	let
	 	$\mu\in \R^{E\times T}$ be the matrix with
 	\[ \mu(e,t) = \sum_{p\in \Pbf{(D;e,t)}} \left( \prod p \right).\]
 	Then $\Gamma(D,T,E) = M(\mu)$.
\end{corollary}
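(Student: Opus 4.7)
The plan is to reduce the claim to Lemma~\ref{lem:lindstrom} applied to equicardinal subsets of $E$ and $T$, and then match up the linear-algebraic independence with the routing-theoretic independence. Throughout, let $X\subseteq E$, and write $n = \min\{|X|,|T|\}$. I split the argument by whether $|X|\leq |T|$.

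First, the degenerate case $|X|>|T|$. On the matroid side, the rows $\{\mu_x\mid x\in X\}$ lie in the $|T|$-dimensional space $\R^T$, so they cannot be linearly independent; equivalently, no $|X|\times|X|$ minor exists and $X$ is dependent in $M(\mu)$. On the routing side, a routing $X\routesto T$ consists of $|X|$ pair-wise vertex disjoint paths each ending in $T$, so its endpoints would form a set of size $|X|>|T|$ inside $T$, which is impossible; hence $X$ is dependent in $\Gamma(D,T,E)$ as well.

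Now assume $|X|\leq |T|$, so $n=|X|$. By Definitions~\ref{def:Mmu} and \ref{def:idet}, $X$ is independent in $M(\mu)$ iff there exists $T_n\in \binom{T}{n}$ with $\det(\mu\restrict X\times T_n)\neq 0$ (one may take $X_n=X$ since $n=|X|$). Because $D$ is acyclic, every walk in $D$ is a path, and the matrix entries $\mu(e,t)=\sum_{p\in \Pbf(D;e,t)} \prod p$ are exactly the entries to which Lemma~\ref{lem:lindstrom} applies (with $S=X$ and $T$ in the lemma taken to be $T_n$). That lemma, together with the fact that $w$ is an indeterminate weighting (so distinct routings contribute $\Z$-independent monomials and cannot cancel), yields
\[
\det(\mu\restrict X\times T_n)\neq 0 \quad\Longleftrightarrow\quad \text{there is a linking } X\routesto T_n \text{ in } D.
\]

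Combining the two sides, $X$ is independent in $M(\mu)$ iff there exist $T_n\subseteq T$ of size $|X|$ and a linking $X\routesto T_n$ in $D$. But a linking $X\routesto T_n$ with $T_n\subseteq T$ is, by Definition of routing, simply a routing $X\routesto T$ (of maximal size $|X|$); conversely, any routing $R\colon X\routesto T$ provides $T_n=\{p_{-1}\mid p\in R\}\subseteq T$ of cardinality $|X|$ together with a linking $X\routesto T_n$. Hence $X$ is independent in $M(\mu)$ iff $X\in\Ical_{\Gamma(D,T,E)}$, giving $M(\mu)=\Gamma(D,T,E)$.

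There is no real obstacle here beyond bookkeeping: the result is essentially an unpacking of definitions glued to Lindström's determinant formula. The one point that requires care is the acyclicity hypothesis, which is crucial for Lemma~\ref{lem:lindstrom} (it guarantees that the involution used in the bijective proof of Lindström stays inside the set of walks, i.e.\ paths); without it, the determinant identity and hence the corollary would fail, which is consistent with Remark~\ref{rem:weNeedCycles} stating that general gammoid representations cannot avoid cycles.
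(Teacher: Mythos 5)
Your proof is correct and follows exactly the route the paper intends: the paper's own proof consists of the single sentence that the claim is ``straightforward from Definition~\ref{def:Mmu} and the Lindström Lemma~\ref{lem:lindstrom},'' and your argument is precisely the careful unpacking of that — reducing independence in $M(\mu)$ to a nonvanishing $|X|\times|X|$ minor, invoking Lemma~\ref{lem:lindstrom} on the equicardinal pair $(X,T_n)$, and identifying linkings onto subsets of $T$ with routings into $T$. Nothing further is needed.
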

\begin{proof}\PRFR{Feb 15th}
	This is straightforward from the Definition~\ref{def:Mmu} and the Lindström Lemma~\ref{lem:lindstrom}.
\end{proof}

\noindent
\PRFR{Feb 15th}
Clearly, for an arbitrary gammoid $M = \Gamma(D,T,E)$, we cannot assume that $D$ is acyclic (Remark~\ref{rem:weNeedCycles}). 
There are several ways to work around this. Either {\em a)}\footnote{This is what is implied by the rationale given in \cite{Ar06}.} 
we adjust our definition of a routing such that routings with non-path walks are allowed, making the class of routings in $D$ 
infinite whenever there is a cycle in $D$. Then we could use power series to calculate the entries of $\mu$ as well as its sub-determinants, 
where convergence is sufficiently guaranteed if $\prod p \in (0,1)$ holds for every cycle walk $p\in \Wbf(D)$. 
A sufficient condition would be to use a weighting $w$ where $0 < w(a) < 1$ for all $a\in A$.
 The construction of $\phi$ in the proof of the Lindström Lemma would still go through, but for the second 
 statement we would have to choose the indeterminate weights more carefully, since a cycle walk $q\in \Wbf(D)$  
 gives rise to the formal power
series $\sum_{i=0}^\infty \left( \prod q \right)^{i}$ which converges to $\frac{1}{1-\prod q}$. 
Clearly, a similar cardinality-argument as in Lemma~\ref{lem:enoughZindependents} 
guarantees that we can find a sufficient number of carefully chosen indeterminates in $\R$. 
Or {\em b)} we could try to find a construction that removes cycles from $D$, possibly changing the
gammoid represented by the resulting digraph $D'$, then use the Lindström Lemma to obtain a 
matrix $\nu$, and then revert the constructions in order to obtain $\mu$ from $\nu$; which is what we will do now.

\begin{definition}\PRFR{Feb 15th}
	Let $D=(V,A)$ be a digraph, $x,t\notin V$ be distinct new elements, and let $c=(c_i)_{i=1}^n\in \Wbf(D)$ be a cycle walk.
	The \deftext[lifting of c in D@lifting of $c$ in $D$]{lifting of $\bm c$ in $\bm D$ by $\bm(\bm x\bm,\bm t\bm)$} is the digraph
	$D^{(c)}_{(x,t)} = (V\disunion\SET{x,t}, A')$ where
	\[ A' = A \BSET{(c_1,c_2)} \cup \SET{(c_1,t),(x,c_2),(x,t)}. \qedhere\]
\end{definition}

\noindent Observe that the cycle walk $c\in\Wbf(D)$ is not a walk in the lifting of $c$ in $D$ anymore.

\begin{example}\PRFR{Feb 15th}
	Consider $D=(\SET{c_1,c_2,c_3,c_4},\SET{(c_1,c_2),(c_2,c_3),(c_3,c_4),(c_4,c_1)})$. Then
	$c_1c_2c_3c_4c_1\in\Wbf(D)$ is a cycle. The lifting of $c$ in $D$ by $(x,t)$ is then defined to be
	the digraph $D'=(\SET{c_1,c_2,c_3,c_4,x,t},\SET{(c_1,t),(c_2,c_3),(c_3,c_4),(c_4,c_1),(x,c_2),(x,t)})$. \\[5mm]
	\hspace*{4cm}
	\includegraphics{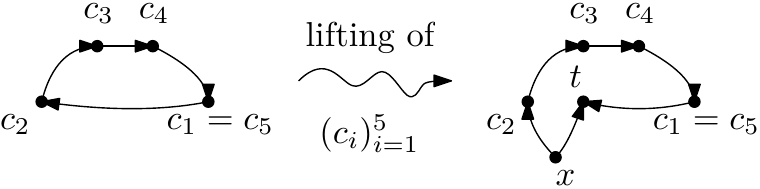} \qedhere
\end{example}

\begin{lemma}\label{lem:liftingNoNewCycles}\PRFR{Feb 15th}
	Let $D=(V,A)$ be a digraph, $x,t\notin V$, and $c=(c_i)_{i=1}^n\in \Wbf(D)$ a cycle walk, and let
	$D'=D^{(c)}_{(x,t)}$ be the lifting of $c$ in $D$ by $(x,t)$.
	If $c'\in \Wbf(D')$ is a cycle walk, then
	$c'\in \Wbf(D)$. In other words, the lifting of cycle walks does not introduce new cycle walks.
\end{lemma}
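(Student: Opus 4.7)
The plan is to observe that the lifting operation introduces $x$ and $t$ in very constrained positions: by inspection of the arc set $A' = A \setminus \{(c_1,c_2)\} \cup \{(c_1,t),(x,c_2),(x,t)\}$, the vertex $t$ has no outgoing arc in $D'$ (it is a sink), and the vertex $x$ has no incoming arc in $D'$ (it is a source). So the first step is to record these two facts directly from the definition of $D'$.

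Next, I would argue that any vertex appearing anywhere in a cycle walk $c' = (c'_i)_{i=1}^m \in \Wbf(D')$ must have both an outgoing and an incoming arc in $D'$ that is used by $c'$. For an internal vertex $c'_j$ with $1 < j < m$, the walk traverses $(c'_{j-1},c'_j)$ and $(c'_j,c'_{j+1})$. For the start-end vertex $c'_1 = c'_m$, the cycle-walk condition implies $m \geq 2$ (so that $c'_2 c'_3 \ldots c'_m$ is a non-empty walk which is a path), and therefore $(c'_1,c'_2) \in A'$ and $(c'_{m-1},c'_m) = (c'_{m-1},c'_1) \in A'$ are both used. From these two observations it follows that neither $x$ nor $t$ can occur in $\left| c' \right|$, since $t$ lacks an outgoing arc and $x$ lacks an incoming arc in $D'$.

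Consequently $\left| c' \right| \subseteq V$, and the arcs traversed by $c'$ belong to $A' \cap (V \times V)$. But by construction
\[ A' \cap (V\times V) = A \setminus \{(c_1,c_2)\} \subseteq A, \]
so every arc of $c'$ lies in $A$, which means $c' \in \Wbf(D)$. Since $c'_1 = c'_m$ and $c'_2 c'_3 \ldots c'_m$ is a path in $D'$ (hence also a sequence of distinct vertices in $D$ connected by arcs of $A$), $c'$ is a cycle walk in $D$, as required.

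I do not expect any real obstacle here; the argument is purely structural, relying only on the degree constraints of the two newly introduced vertices $x$ and $t$. The only subtlety worth handling carefully is the start-end vertex of the cycle walk: one must check that the cycle-walk definition forces it too to have both an incoming and an outgoing arc used by $c'$, so that the source/sink argument rules out $x$ and $t$ from being this vertex as well.
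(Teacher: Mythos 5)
Your proof is correct and follows essentially the same route as the paper's: observe that $x$ is a source and $t$ is a sink in $D'$, so neither can lie on a cycle walk, whence the cycle walk uses only arcs of $A'\cap(V\times V)\subseteq A$. Your extra care with the start-end vertex of the cycle walk is a fine, if minor, elaboration of what the paper leaves implicit.
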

\begin{proof}\PRFR{Feb 15th}
	Let $D'=(V',A')$.
	Clearly, $x$ is a source in $D^{(c)}_{(x,t)}$ and $t$ is a sink in $D^{(c)}_{(x,t)}$. Thus $x,t\notin \left| c' \right|$.
	But then $\left| c' \right|_A \subseteq A'\cap \left( V\times V \right)$ and therefore $c'$ is also a cycle walk in $D$.
\end{proof}

\begin{definition}\label{def:completeLifting}\PRFR{Feb 15th}
	Let $D=(V,A)$ be a digraph. A \deftext[complete lifting of D@complete lifting of $D$]{complete lifting of $\bm D$}
	is an acyclic digraph $D'=(V',A')$ for which there is a suitable $n\in \N$ such that there is a
	set $X=\dSET{x_1,t_1,x_2,t_2,\ldots,x_n,t_n}$ with $X\cap V = \emptyset$,
	a family of digraphs $D^{(i)} = (V^{(i)},A^{(i)})$ for $i\in \SET{0,1,\ldots,n}$
	where $D' = D^{(n)}$, $D^{(0)} = D$, and for all $i\in\SET{1,2,\ldots,n}$
	 $$D^{(i)} = \left( D^{(i-1)}\right)^{(c_i)}_{(x_i,t_i)}$$
	with respect to a cycle walk $c_i\in \Wbf\left( D^{(i-1)}\right)$.
	In this case, we say that the set $$R = \SET{(x_i,t_i)\mid i\in\SET{1,2,\ldots,n}}$$ \deftextX{realizes} 
	the complete lifting $D'$ of $D$.
\end{definition}

\begin{lemma}\label{lem:completelifting}\PRFR{Feb 15th}
	Let $D=(V,A)$ be a digraph. Then $D$ has a complete lifting.
\end{lemma}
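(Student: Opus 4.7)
The plan is to argue by induction on the number of cycle walks of $D$. Since $D$ has finitely many vertices, and since any cycle walk $c = (c_i)_{i=1}^n$ in $D$ has length bounded by $|V|+1$ (because $c_2 c_3 \ldots c_n$ is a path and can visit each vertex at most once), the set of cycle walks of $D$ is finite. This finiteness is what makes the induction well-founded, and it is the cleanest invariant to track as we repeatedly lift.

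In the base case, $D$ has no cycle walk at all, so $D$ is already acyclic; then $D$ itself is a complete lifting of $D$ in the sense of Definition~\ref{def:completeLifting}, with $n=0$ and realizing set $R = \emptyset$. For the inductive step, assume every digraph with strictly fewer cycle walks than $D$ admits a complete lifting, and pick any cycle walk $c = (c_i)_{i=1}^n \in \Wbf(D)$. Choose two fresh elements $x_1, t_1 \notin V$ and form the lifting $D^{(1)} = D^{(c)}_{(x_1,t_1)}$. The arc $(c_1, c_2)$ is absent from the arc set of $D^{(1)}$ by construction, so the walk $c$ is no longer a walk in $D^{(1)}$; in particular it is no longer a cycle walk of $D^{(1)}$. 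On the other hand, Lemma~\ref{lem:liftingNoNewCycles} guarantees that every cycle walk of $D^{(1)}$ is already a cycle walk of $D$. Therefore the set of cycle walks of $D^{(1)}$ is a proper subset of the set of cycle walks of $D$.

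By the induction hypothesis, $D^{(1)}$ has a complete lifting $D'$, witnessed by some chain $D^{(1)} = \tilde D^{(0)}, \tilde D^{(1)}, \ldots, \tilde D^{(m)} = D'$ with realizing set $R' = \SET{(\tilde x_i, \tilde t_i) \mid 1 \leq i \leq m}$. Prepending the lifting step by $(x_1, t_1)$ to this chain, we obtain the sequence $D = D^{(0)}, D^{(1)} = \tilde D^{(0)}, \tilde D^{(1)}, \ldots, \tilde D^{(m)} = D'$, which realizes $D'$ as a complete lifting of $D$ with realizing set $\SET{(x_1, t_1)} \cup R'$ (note that we can relabel the $\tilde x_i, \tilde t_i$ if needed so that all $2(m+1)$ new elements are distinct and disjoint from $V$). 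Since $D'$ is acyclic by the induction hypothesis, this completes the proof.

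The argument is essentially bookkeeping: the only nontrivial ingredient is Lemma~\ref{lem:liftingNoNewCycles}, which ensures the induction measure strictly decreases at each lifting step. The one point that needs a moment of care is verifying that we may always choose the fresh pairs $(x_i, t_i)$ so that the whole realizing set $R$ consists of pairwise distinct vertices disjoint from $V$; this is immediate because at each inductive step we are free to rename the vertices supplied by the induction hypothesis.
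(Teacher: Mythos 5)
Your proof is correct and follows essentially the same route as the paper's: induction on the number of cycle walks, using Lemma~\ref{lem:liftingNoNewCycles} to show each lifting step strictly decreases that number. The extra remarks on finiteness of the set of cycle walks and on relabeling the fresh vertex pairs are harmless bookkeeping the paper leaves implicit.
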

\begin{proof}\PRFR{Feb 15th}
	By induction on the number of cycle walks in $D$. If $D$ has no cycle walk, $D$ is a complete lifting of $D$.
	Now let $c\in \Wbf(D)$ be a cycle walk, and let $x,t\notin V$.
	Let $D' = D^{(c)}_{(x,t)}$. By construction $c\notin \Wbf(D')$,
	thus
	 $D'$ has strictly fewer cycle walks than $D$ (Lemma~\ref{lem:liftingNoNewCycles}), therefore there is a complete lifting $D''$ of $D'$ by induction hypothesis.
	  Since $D'$ is a lifting of $D$, $D''$ is also a complete lifting of $D$.
\end{proof}

\needspace{3\baselineskip}
\begin{lemma}\label{lem:cyclelifting}\PRFR{Feb 15th}
	Let $D=(V,A)$, $E,T\subseteq V$, $c\in \Wbf(D)$ a cycle, $x,t\notin V$, and let $D'=D^{(c)}_{(x,t)}$ be the lifting of $c$ in $D$.
	Then $\Gamma(D,T,E) = \Gamma(D',T\cup\SET{t},E\cup\SET{x})\contract E$.
\end{lemma}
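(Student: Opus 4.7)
My plan is to unfold the definition of contraction on the right-hand side and then give a bijective-style translation between routings in $D$ and routings in $D'$ that use the new source $x$. First, I would observe that $\SET{x}$ is independent in $M' := \Gamma(D',T\cup\SET{t},E\cup\SET{x})$, since the single arc $(x,t)$ yields the trivial linking $\SET{x}\routesto\SET{t}$. Because $(E\cup\SET{x})\BS E=\SET{x}$, the set $\SET{x}$ is automatically the unique base of $(E\cup\SET{x})\BS E$ in $M'$, and by Lemma~\ref{lem:contractionBchoice} a set $X\subseteq E$ is independent in $M'\contract E$ iff $X\cup\SET{x}$ is independent in $M'$. So the statement reduces to showing that for every $X\subseteq E$
\[
 \exists\,R\colon X\routesto T \text{ in } D \quad\Longleftrightarrow\quad \exists\,R'\colon X\cup\SET{x}\routesto T\cup\SET{t}\text{ in }D'.
\]

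For the forward direction, let $R\colon X\routesto T$ in $D$. If no path of $R$ traverses the arc $(c_1,c_2)$, then $R$ is already a routing in $D'$, and adjoining the path $xt$ produces the required routing. Otherwise, let $p=p^{(1)}\cdots c_1 c_2\cdots p^{(-1)}\in R$ be the (unique, by vertex-disjointness) path traversing $(c_1,c_2)$. Split it into $p^{(1)}\cdots c_1 t$ and $x c_2\cdots p^{(-1)}$; both are paths of $D'$ that together visit exactly the vertices $\left|p\right|\cup\SET{x,t}$, hence are vertex-disjoint from each other and from every other path of $R$. Replacing $p$ by this pair yields the desired $R'$.

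For the converse, let $R'$ be such a routing in $D'$ and let $p_x\in R'$ be the (unique) path starting at $x$. The only arcs leaving $x$ are $(x,t)$ and $(x,c_2)$, so there are three cases. If $p_x=xt$, every other path in $R'$ avoids $x$ and $t$ (since $x$ is a source and $t$ a sink of $D'$) and hence lives in the common subdigraph $(V,A\BSET{(c_1,c_2)})\subseteq D$, giving directly a routing $X\routesto T$ in $D$. If $p_x$ begins $xc_2\cdots$ and ends in $T$, and no other path ends at $t$, then $R'\BSET{p_x}$ avoids both $x$ and $t$ (arguing as before, using that the arc $(c_1,t)$ cannot be used if no path ends at $t$), so it is a routing $X\routesto T$ in $D$. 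If $p_x$ begins $xc_2\cdots$ and some $q\in R'$ ends at $t$, then $q$ must use $(c_1,t)$, so $q$ has the form $q^{(1)}\cdots c_1 t$; by vertex-disjointness of $R'$ the concatenation $q^{(1)}\cdots c_1 c_2\cdots p_x^{(-1)}$ is a path of $D$, and replacing $\SET{p_x,q}$ by this concatenation in $R'$ yields a routing $X\routesto T$ in $D$.

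The main obstacle is the bookkeeping in the backward direction, ensuring that in every case the derived family is vertex-disjoint and actually consists of paths (rather than walks that revisit a vertex). The subtlety comes from the fact that in the case where we splice $p_x$ with $q$, we really use vertex-disjointness of the original routing $R'$ to guarantee that $\left|q\right|\BSET{t}$ and $\left|p_x\right|\BSET{x}$ meet at most at the two junction vertices $c_1,c_2$, which in fact sit in different paths and so do not collide. Once this is verified, the three sub-cases cover all possibilities and the equivalence follows.
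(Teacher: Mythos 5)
Your proposal is correct in substance but takes a genuinely different route from the paper. The paper never works on the ground set $E\cup\SET{x}$ directly: it passes to the strict gammoids $\Gamma(D,T,V)$ and $\Gamma(D',T\cup\SET{t},V')$, contracts the latter to $V\cup\SET{t}$ by performing the $x$-$t$-pivot of $D'$ and deleting $x$ (the machinery of Lemma~\ref{lem:contractionStrictGammoid} applied to the one-arc routing $\SET{xt}$), and then observes that the resulting digraph has arc set $\left(A\BSET{(c_1,c_2)}\right)\cup\SET{(c_1,t),(t,c_2)}$ --- i.e.\ it is exactly $D$ with the arc $(c_1,c_2)$ subdivided by $t$. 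The routing correspondence then reduces to inserting or deleting a subdivision vertex on the at most one path through $(c_1,c_2)$, and the lemma follows from Lemma~\ref{lem:contractrestrictcommutes}. You instead stay with $M'=\Gamma(D',T\cup\SET{t},E\cup\SET{x})$, use Lemma~\ref{lem:contractionBchoice} with the base $\SET{x}$ to turn the contraction into ``adjoin $x$'', and translate routings by splitting and splicing at the lifted arc. Your version is more self-contained (no pivot needed) at the price of the case analysis around $x$ and $t$; the paper's version buys a nearly trivial combinatorial core (subdivision does not affect linkability) at the price of invoking two earlier structural lemmas.

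One small hole in your backward direction: your three cases do not cover the possibility that $p_x$ itself both starts at $x$ and ends at $t$, i.e.\ $p_x = x\,c_2\cdots c_1\,t$, which is a legitimate path of $D'$ traversing the lifted cycle. It is excluded from your second case (which assumes $p_x$ ends in $T$) and from your third (where $q\neq p_x$ is needed for the splice to make sense). The fix is immediate --- in that situation $R'\BSET{p_x}$ already avoids $x$, $t$, and all of $\left|c\right|\cap\left|p_x\right|$, so it is a routing $X\routesto T$ in $D$ --- but as written the case analysis is incomplete.
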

\begin{proof}\PRFR{Feb 15th}
	Let $M=\Gamma(D,T,V)$ be the strict gammoid induced by the representation $(D,T,E)$ 
	of the not necessarily strict gammoid $\Gamma(D,T,E)$, and let \linebreak
	$M' = \Gamma(D',T\cup\SET{t},V')$ be the strict gammoid obtained from the lifting of $c$.
	Then $M'' = \left( M' \right)\contract\left( V\cup\SET{t} \right)$ is a strict gammoid that is represented by $(D'',T,V\cup\SET{t})$
	where the digraph
	 $D'' = (V_0\BSET{x},A_0\BS\left( V_0\times\SET{x} \right))$
	is induced from the $x$-$t$-pivot $D_0$ of $D'$, i.e.
	 $D_0 = D'_{x\leftarrow t} = (V_0,A_0)$. This follows from the proof of Lemma~\ref{lem:contractionStrictGammoid}
	 along with the single-arc routing $\SET{xt}\colon \SET{x}\routesto T\cup\SET{t}$ in $D'$.
	Let $A''$ denote the arc set of $D''$. 
	It is easy to see from the involved constructions (Fig.~\ref{fig:liftingcycles}), 
	that $A'' = \left(  A\BSET{(c_1,c_2)} \right) \cup\SET{(c_1, t), (t, c_2)}$.
	 Clearly, a routing $R$ in $D$ can have at most one path $p\in R$ such that $(c_1,c_2)\in \left| p \right|_A$, 
	 and since $t\notin V$, we obtain a routing $R'= \left(R\BSET{p}\right)\cup\SET{q t r}$ 
	 for $q,r\in \Pbf(D)$ such that 
	$p=qr$  with $q_{-1}=c_1$ and $r_1=c_2$.
	 Clearly, $R'$ routes $X$ to $Y$ in $D''$ whenever $R$ routes $X$ to $Y$ in $D$.
	Conversely, let $R'\colon X'\routesto Y'$ be a routing in $D''$ with $t\notin X'$.
	 Then there is at most one $p\in R'$ with $t\in \left| p \right|$.
	  We can invert the construction and let $R''= \left( R'\BSET{p}  \right)\cup\SET{qr}$
	   for the appropriate paths $q,r\in \Pbf(D'')$ with $p=qtr$. 
	Then $R''$ is a routing from $X'$ to $Y'$ in $D'$. Thus we have shown that $M''\restrict V = M$,
	and consequently, with $E\subseteq V$ and Lemma~\ref{lem:contractrestrictcommutes}, it follows that
	\begin{align*}
	\Gamma(D,T,E) = M\restrict E = \left( M''  \right)\restrict E & = \left( \Gamma(D',T\cup\SET{t},V') \contract \left( V\cup\SET{t} \right) \right) \restrict E \\& = \Gamma(D',T\cup\SET{t},E\cup\SET{x})\contract E. \qedhere
\end{align*}
\end{proof}

\begin{figure}[t]
\begin{center}
\includegraphics{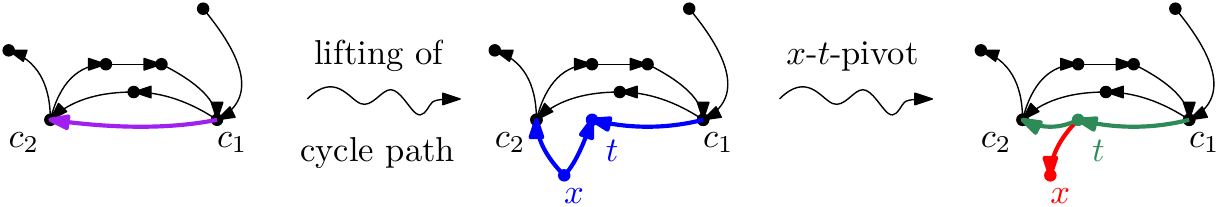}
\end{center}
\caption{\label{fig:liftingcycles}Constructions involved in Lemma~\ref{lem:cyclelifting}.}
\end{figure}

\begin{corollary}\label{cor:acyclicQuasiRepresentationOfGammoids}\PRFR{Feb 15th}
	Let $M=(E,\Ical)$ be a gammoid. Then there is an acyclic digraph $D=(V,A)$ and sets $T,E'\subseteq V$ such that
	$M = \Gamma \left( D,T,E' \right)\contract E$
	and such that $$\left| T \right| = \rk_M(E) + \left| E'\BS E \right|.$$
\end{corollary}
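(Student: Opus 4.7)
The strategy is to start from an efficient representation of $M$ and then iteratively apply the cycle-lifting construction of Lemma~\ref{lem:cyclelifting} until no cycles remain, tracking how the target set and ground set grow.

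First I would invoke Lemma~\ref{lem:rankequalsTcard} to obtain a representation $(D_0,T_0,E)$ of $M$ with $|T_0|=\rk_M(E)$, where $D_0=(V_0,A_0)$. Then I would appeal to Lemma~\ref{lem:completelifting} to fix a complete lifting $D=D^{(n)}$ of $D_0$ realized by a set $R=\{(x_1,t_1),\ldots,(x_n,t_n)\}$ in the sense of Definition~\ref{def:completeLifting}, with intermediate digraphs $D^{(1)},\ldots,D^{(n-1)}$ and corresponding cycle walks $c_i\in\Wbf(D^{(i-1)})$.

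The main step is an induction on $i\in\{0,1,\ldots,n\}$ establishing the equation
\[ M \;=\; \Gamma\!\left(D^{(i)},\, T_0\cup\{t_1,\ldots,t_i\},\, E\cup\{x_1,\ldots,x_i\}\right)\contract E. \]
The base case $i=0$ is the starting representation. For the induction step, I apply Lemma~\ref{lem:cyclelifting} to the cycle $c_{i+1}\in\Wbf(D^{(i)})$ with the pair $(x_{i+1},t_{i+1})$: this yields
\[ \Gamma\!\left(D^{(i)},T_i,E_i\right) \;=\; \Gamma\!\left(D^{(i+1)},T_i\cup\{t_{i+1}\},E_i\cup\{x_{i+1}\}\right)\contract E_i, \]
where $T_i = T_0\cup\{t_1,\ldots,t_i\}$ and $E_i=E\cup\{x_1,\ldots,x_i\}$. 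Contracting both sides further to $E$ and using that iterated contraction collapses to a single contraction when $E\subseteq E_i\subseteq E_{i+1}$ (which is a direct consequence of the rank formula in Corollary~\ref{cor:rkContract}, or alternatively of Lemma~\ref{lem:contractrestrictcommutes}) then gives the induction step.

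Taking $i=n$ yields the desired representation with $T = T_0\cup\{t_1,\ldots,t_n\}$ and $E'=E\cup\{x_1,\ldots,x_n\}$. Since $D$ is acyclic by Definition~\ref{def:completeLifting}, and since the elements $x_i,t_i$ were chosen to be new (in particular distinct from each other and from $V_0$), the cardinality count is immediate: $|E'\setminus E|=n$ and $|T|=|T_0|+n=\rk_M(E)+|E'\setminus E|$. The one subtlety worth watching carefully is the bookkeeping that $x_i,t_i\notin V_0$ and are pairwise distinct so that neither $T_0$ and the $\{t_i\}$ nor $E$ and the $\{x_i\}$ accidentally collide; this is guaranteed by Definition~\ref{def:completeLifting} and can be enforced by renaming if necessary. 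No other obstacle arises, since Lemma~\ref{lem:cyclelifting} does all the real work in each inductive step.
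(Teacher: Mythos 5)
Your proposal is correct and follows essentially the same route as the paper: fix a representation with $\left| T_0 \right| = \rk_M(E)$, take a complete lifting via Lemma~\ref{lem:completelifting}, and induct on the lifting stages using Lemma~\ref{lem:cyclelifting}, finishing with the cardinality count. Your explicit remark that the iterated contractions to $E_i$ and then to $E$ collapse to a single contraction to $E$ is a detail the paper leaves implicit, but it is a valid and welcome clarification rather than a deviation.
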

\begin{proof}\PRFR{Feb 15th}
	Let $M=\Gamma(D',T',E)$ with $\left| T' \right|=\rk_M(E)$.
	Then let $D$ be a complete lifting of $D'$ (Lemma~\ref{lem:completelifting}),
	and let $D^{(0)},D^{(1)},\ldots, D^{(n)}$ be the family of digraphs and $c_1,c_2,\ldots,c_n$ be the cycle walks that correspond to 
	the complete lifting $D$ of $D'$ 
	as required by Definition~\ref{def:completeLifting},
	and let $\dSET{x_1,t_1,\ldots,x_n,t_n}$ denote the new elements such that
	\[ D^{(i)} = \left( D^{(i-1)}  \right)^{(c_i)}_{(x_i,t_i)} \]
	holds for all $i\in\SET{1,2,\ldots,n}$.
	Induction on the index $i$ with Lemma~\ref{lem:cyclelifting} yields that
	\[ \Gamma(D',T,E) = \Gamma(D^{(i)},T\cup\SET{t_1,t_2,\ldots,t_i},E\cup\SET{x_1,x_2,\ldots,x_i})\contract E\]
	holds for all $i\in\SET{1,2,\ldots,n}$.
	Clearly,
	\[ \left| T\cup\SET{t_1,t_2,\ldots,t_n} \right| = \left| T \right| + n = \rk_M(E) + n = \rk_M(E) + \left| \SET{x_1,x_2,\ldots,x_n} \right|. \qedhere \]
\end{proof}

\begin{theorem}\label{thm:gammoidOverR}\PRFR{Feb 15th}
	Let $M=(E,\Ical)$ be a gammoid, $T=\dSET{t_1,t_2,\ldots,t_{\rk_M(E)}}$. Then there is a matrix $\mu\in \R^{E\times T}$ such
	that $M= M(\mu)$.
\end{theorem}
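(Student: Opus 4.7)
The strategy is to combine Corollary~\ref{cor:acyclicQuasiRepresentationOfGammoids}, which reduces the representation problem to the acyclic case at the cost of a contraction, with the Lindström Lemma~\ref{lem:lindstrom}, which handles the acyclic case directly, and then finally to execute the contraction on the matrix side using pivoting as described in Lemma~\ref{lem:contractequalspivot} and Remark~\ref{rem:contraction}.

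First, I would apply Corollary~\ref{cor:acyclicQuasiRepresentationOfGammoids} to obtain an acyclic digraph $D=(V,A)$ together with subsets $T',E'\subseteq V$ such that $M = \Gamma(D,T',E')\contract E$ and $|T'| = \rk_M(E) + |E'\setminus E|$. Next, by Lemma~\ref{lem:enoughZindependents}, I choose an injection $w\colon A\maparrow \R$ whose image is $\Z$-independent, so that $w$ is an indeterminate weighting of $D$. Applying the Lindström Lemma~\ref{lem:lindstrom} (in the corollary form stated immediately after it), the matrix $\nu\in\R^{E'\times T'}$ with $\nu(e,t)=\sum_{p\in\Pbf(D;e,t)}\prod p$ satisfies $M(\nu)=\Gamma(D,T',E')$; the acyclicity of $D$ ensures that all sums are finite and the $\Z$-independence of the weights ensures that nonzero subdeterminants remain nonzero.

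It remains to pass from $\nu$ to a matrix representing $M=M(\nu)\contract E$ of the required dimensions. By Lemma~\ref{lem:augmentation} I pick a base $B\subseteq E'$ of $M(\nu)$ such that $B\cap(E'\setminus E)$ is a base of $E'\setminus E$ in $M(\nu)$. Following Remark~\ref{rem:contraction} (that is, iterating the pivot step of Lemma~\ref{lem:contractequalspivot} once for each $b\in B\cap(E'\setminus E)$, using column operations from Remark~\ref{rem:colops} that leave the represented matroid unchanged), I obtain a matrix $\nu'\in\R^{E'\times T'}$ with $M(\nu')=M(\nu)$, in which the rows indexed by $B$ form (up to column reordering) an identity matrix. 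Restricting $\nu'$ to $E\times C_0$, where $C_0$ consists of those columns whose entries in every row indexed by $b\in B\cap(E'\setminus E)$ vanish, yields a matrix $\mu_0$ with $M(\mu_0)=M$. The cardinality count is forced: by Corollary~\ref{cor:rkContract},
\[ |C_0| = |T'|-|B\cap(E'\setminus E)| = |T'|-\rk_{M(\nu)}(E'\setminus E) = \rk_M(E) = |T|, \]
so after an arbitrary bijective relabelling of $C_0$ with $T$ I obtain the desired $\mu\in\R^{E\times T}$.

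The main technical obstacle is purely bookkeeping: I must verify that the pivoting procedure of Remark~\ref{rem:contraction} really contracts the correct set $E'\setminus E$ and that the column count after deletion of the pivot columns matches $\rk_M(E)$. The rank identity above handles the count, and Lemma~\ref{lem:contractequalspivot} applied inductively over $B\cap(E'\setminus E)$ handles the correctness of the contraction; nothing here depends on $D$ being acyclic, only on the representability result already obtained from Lindström.
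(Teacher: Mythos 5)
Your proof is correct and follows essentially the same route as the paper's: reduce to an acyclic representation via Corollary~\ref{cor:acyclicQuasiRepresentationOfGammoids}, apply the Lindström Lemma to obtain $\nu$, and then pivot in the independent set $E'\setminus E$ and restrict to the unused columns as in Lemma~\ref{lem:contractequalspivot} and Remark~\ref{rem:contraction}. Your explicit verification that $|C_0|=\rk_M(E)$ is a small but welcome addition that the paper leaves implicit.
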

\begin{proof}\PRFR{Feb 15th}
	By Corollary~\ref{cor:acyclicQuasiRepresentationOfGammoids}, there is an acyclic digraph $D=(V,A)$ and
	there are sets $E',T' \subseteq V$,
	such that $M = N\contract E$ where $N=\Gamma(D,T',E')$
	and $\left| T' \right| = \rk_M(E) + \left| E' \BS E \right|$.
	Remember that $E'\BS E$ is independent in $N$,
	and every base $B$ of $M$ induces a base $B\cup \left( E'\BS E \right)$ of $N$.
	The Lindström Lemma~\ref{lem:lindstrom} yields a matrix $\nu\in \R^{E'\times T'}$ such that 
	$N = M(\nu)$. In Lemma~\ref{lem:contractequalspivot} and Remark~\ref{rem:contraction} we have seen that we can pivot in the
	independent set $E'\BS E$
	in $\nu$, which yields a new matrix $\nu'\in \R^{E'\times T'}$. Let $T_0 = \SET{t'\in T' ~\middle|~ \forall e'\in E'\BS E\colon\, \nu'(e',t') = 0 }$ denote the remaining columns of $\nu'$ that have not been used to pivot in an element of $E'\BS E$.
	We set $\mu = \nu'\restrict E\times T_0$. Thus $M(\mu) = M(\nu)\contract E = N\contract E = M$.
\end{proof}

\noindent Let us compare the two methods {\em a)} and {\em b)} mentioned above. In our opinion, 
both methods are connected to aspects of the same underlying phenomenon that cycle paths do not interfere with 
the existence of linkings between given sets of vertices in a digraph.


\needspace{6\baselineskip}

\vspace*{-\baselineskip} 
\begin{wrapfigure}{r}{4cm}
\vspace{1\baselineskip}
\begin{centering}
~~\includegraphics{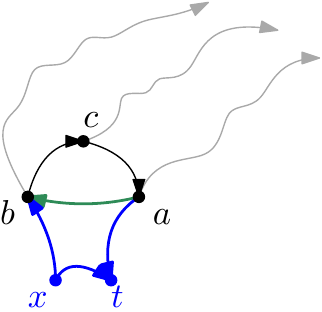}
\end{centering}%
\vspace*{-1\baselineskip} 
\end{wrapfigure}
~ 

\begin{example}\PRFR{Feb 15th}
	Let $D=(V,A)$ be a digraph, such that the only cycle walk in $D$ is $abca\in \Wbf(D)$,
	and let $x,t\notin V$. Now chose an arbitrary target node $t_0\in V$. Let $D'=(V',A')=D^{(abca)}_{(x,t)}$
	be the lifting of $abca$ in $D$, clearly $D'$ is acyclic. Let $w\colon A'\maparrow \R$ be an
	indeterminate weighting of $D'$. We introduce the following
	sums
	\[
		\alpha  = \sum_{p\in P_{\left(a,t_0\right)}} \,\prod p, \quad
		\beta  = \sum_{p\in P_{\left(b,t_0\right)}} \,\prod p, \text{~and} \quad
		\gamma  = \sum_{p\in P_{\left(c,t_0\right)}} \,\prod p, \\
	\]
	where for $u\in\SET{a,b,c}$, $$P_{(u,t_0)} = \SET{p\in \Pbf(D') ~\middle|~ p_1=u,\, p_{-1}=t_0, \txtand \SET{(b,c),(c,a)}\cap \left| p \right|_A = \emptyset},$$
	i.e. $P_{(u,t_0)} $
	consists of the paths from $u$ to $t_0$ not visiting another element from $\SET{a,b,c}$.
	Now let $\mu\in \R^{V'\times \SET{t_0,t}}$ be the matrix obtained from the Lindström Lemma~\ref{lem:lindstrom}
	for the strict gammoid $M = \Gamma(D',\SET{t_0,t},V')$.
	We set $w_b = w(b,c)$, $w_c = w(c,a)$, $w_x = w(x,b)$.
	Clearly, we have 
	\begin{align*}
		\mu(a,t_0) &= \alpha, \\
		\mu(c,t_0) &= \gamma + w_c \cdot \alpha,\\
		\mu(b,t_0) &= \beta + w_b \cdot \gamma + w_b \cdot w_c \cdot \alpha, \txtand \\
		\mu(x,t_0) &= w_x \cdot \beta + w_x \cdot w_b \cdot\gamma + w_x \cdot w_b \cdot w_c \cdot\alpha .\\
	\end{align*}
	Furthermore, we set $w_x' = w(x,t)$ and $w_a' = w(a,t)$. With respect to the new target $t$
	introduced by the lifting of $abca$ in $D$, we have
	 $\mu(x,t) = {w_x'} $, $\mu(a,t) = {w_a'}$, 
	\linebreak
	 $\mu(c,t) = w_c \cdot {w_a'}$, and 
	  $\mu(b,t) = w_b \cdot w_c \cdot {w_a'}$. Let $N=M\contract\left( V'\BSET{t} \right)$
	  and $\nu\in \R^{ \left(V'\BSET{t}\right)\times \SET{t_0}}$ be as in Lemma~\ref{lem:contractequalspivot}
	  with $M(\nu) = N$. Then
	  \begin{align*}
	  	\nu(a,t_0) & = \mu(a,t_0) - \frac{\mu(a,t)}{\mu(x,t)}\mu(x,t_0) \\
	  		       & = \alpha - \frac{{w_a'} \cdot w_x }{{w_x'} } \left(\beta + w_b \cdot\gamma + w_b \cdot w_c \cdot\alpha \right) \\
	  \end{align*}

	 \bSep
	  Now let $w'\colon A\maparrow \R$ be an indeterminate weighting of $D$ where $w'(q) = w(q)$ for all $q\in A\BSET{(a,b)}$.
	  We set $w_a = w'(a,b)$.
	  We calculate
	  $$ \alpha' = \sum_{p\in \Pbf(D;a, t_0)} \prod p =
	  \alpha + w_a  \cdot \beta + w_a \cdot w_b \cdot \gamma.$$
	  If we further assume that $0 < w_a \cdot w_b  \cdot w_c  < 1$, then we have convergence in the following equation
	  \begin{align*}
	   \alpha'' = \sum_{w\in \Wbf(D;a,t_0)} \left( \prod w  \right) & = \sum_{i=0}^\infty (w_a \cdot w_b \cdot w_c )^i \cdot \alpha'
	    = \frac{\alpha + w_a  \cdot \beta + w_a \cdot w_b \cdot \gamma}{1-w_a \cdot w_b \cdot w_c }
	  \end{align*}
	   The second equation holds because $abca$ is the only cycle walk in $D$, therefore all non-path walks from $a$ to $t_0$ must be of the form $(abc)^ip$ for $i\in \N\BSET{0}$ and $p\in \Pbf(D;a,t_0)$; 
	   the summand where $i=0$ corresponds to the paths $\Pbf(D;a,t_0) \subseteq \Wbf(D;a,t_0)$.
	   Therefore $\alpha'' = \mu'(a,t_0)$, where $\mu'$ is the matrix that we would have obtained from $D$ using the Lindström Lemma method, operating with formal power series
		 and a convergent indeterminate weighting as in version {\em a)} above.

	  \bSep
	   We argue that that for a given indeterminate weighting $w$ of $D$ ---
	   which has been chosen such that every formal power series involved in the construction of the Lindström Lemma matrix converges;
	   and such that whenever the power series of a sub-determinant of the matrix converges to zero, then the power series of that determinant is the zero series
	   ---
	   there is an indeterminate weighting $w'$ of $D'$,
	   with $w(q) = w'(q)$ for all $q\in A\cap A'$ 
	   such that $\nu(a,t_0) = \mu'(a,t_0)$.
	   The formal equation $\nu(a,t_0) = \mu'(a,t_0)$
	   may be solved for \[
	   w_{a}' =  \frac{-w_x'\cdot w_a}{w_x\cdot(1 - w_a\cdot w_b\cdot w_c) } = \frac{P'}{Q'} \]
	   yielding the non-trivial polynomial equation $Q'\cdot w_a' - P' = 0$ where $P'$ and $Q'$ are 
	   integer-coefficient polynomials. Therefore we may extend and restrict $w$ to an indeterminate weighting
	    $\tilde w\colon (A\cup A')\BSET{(a,t),(a,b)}\maparrow \R$,
	   and then set $w'(x) = \tilde w(x)$ for $x\in A' \BSET{(a,t)}$, and $w'((a,t)) = w_a'$ as in the equation above, calculated with respect to $\tilde w$.
	 	This yields the desired indeterminate weighting, because $w'((a,t))$ $\Z$-depends on $\tilde w((a,b)) = w((a,b))$ which is
	 	$\Z$-independent of $\tilde w[A\BSET{(a,b)}]$.
\end{example}

\PRFR{Mar 7th}
\noindent
In the paper {\em A parameterized view on matroid optimization problems} \cite{Marx09}, D.~Marx shows that there is a randomized polynomial time algorithm with respect to the size of the ground set of a gammoid, 
that constructs a matrix $\mu$ from $(D,T,E)$ such that $M(\mu) = \Gamma(D,T,E)$.
The method of D.~Marx starts with the construction of the dual $N^\ast$ of the underlying strict gammoid $N=\Gamma(D,T,V)$ 
for a given representation $(D,T,E)$
with $D=(V,A)$ through the linkage system of $D$ to $T$ (Definition~\ref{def:linkageSystem} and Lemma~\ref{lem:linkage}). 
Then a matrix $\nu$ with $M(\nu) = N^\ast$ is constructed with a small probability of failure (see Proposition~\ref{prop:randompolytimeTransversalMatroid} below), 
which in turn is converted into a standard representation (Remark~\ref{rem:stdRep})
of the form $(I_r\,\, A^\top)^\top$ using Gaussian Elimination. Then $(-A \,\, I_{n-r})^\top$ is the desired representation of $M$.
Before we present the main proposition that leads to this result, we need the following lemma.

\begin{lemma}[\cite{Sch80}, Corollary 1]\PRFR{Mar 7th}\label{lem:SchwartzNumberZeros}\PRFR{Mar 7th}
	Let $\Fbm$ be a field, $X = \dSET{x_1,x_2,\ldots,x_n}$,
	 let $p \in \Fbm[X]$ be a polynomial
	with $p \not= 0$.
	Furthermore, let $F\subseteq \Fbm$ be a finite subset of elements of the coefficient field with $\left| F \right| \geq c\cdot \deg(p)$
	for some $c\in \Qbm$ with $c > 0$. Then
	  \[	\left| \SET{\xi \in F^X ~\middle|~ p[X=\xi] = 0} \right|  \leq \frac{\left| F \right|^n}{c}.\]
\end{lemma}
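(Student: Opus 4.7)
The plan is to prove this by induction on $n$, the number of variables, which is the standard Schwartz--Zippel argument. The base case $n=1$ reduces to the classical fact that a nonzero univariate polynomial $p\in\Fbm[x_1]$ has at most $\deg(p)$ roots in $\Fbm$; hence the number of $\xi\in F^{\SET{x_1}}$ with $p[x_1=\xi(x_1)]=0$ is at most $\deg(p)\leq \left| F \right|/c$, which is exactly the desired bound $\left| F \right|^1/c$.

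For the induction step, I would fix some variable, say $x_n$, and write $p$ as a polynomial in $x_n$ with coefficients in the smaller polynomial ring $\Fbm[X']$ where $X'=\dSET{x_1,\ldots,x_{n-1}}$, i.e. $p=\sum_{i=0}^{k} p_i\cdot x_n^{i}$ with $p_k\not=0$ and $k\leq \deg(p)$. Note that $\deg(p_k)\leq \deg(p)-k$. For any specialization $\xi'\in F^{X'}$ of the first $n-1$ variables, I would split the count of zeros of $p$ extending $\xi'$ into two cases:
\begin{itemize}
\item If $p_k[X'=\xi']=0$, then $p[X'=\xi']$ may be identically zero as a polynomial in $x_n$, so I have to pessimistically accept all $\left| F \right|$ values of $x_n$ as potential zeros. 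The number of such $\xi'$ is bounded by applying the induction hypothesis to the nonzero polynomial $p_k\in \Fbm[X']$, giving at most $\left| F \right|^{n-1}/c$ such $\xi'$ (since $\left| F \right|\geq c\cdot \deg(p)\geq c\cdot \deg(p_k)$).
\item If $p_k[X'=\xi']\neq 0$, then $p[X'=\xi']$ is a nonzero univariate polynomial in $x_n$ of degree exactly $k$, which has at most $k\leq \deg(p)$ zeros in $F$. There are at most $\left| F \right|^{n-1}$ such $\xi'$.
\end{itemize}

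Summing, the total number of zeros is at most
\[ \frac{\left| F \right|^{n-1}}{c}\cdot \left| F \right| \;+\; \left| F \right|^{n-1}\cdot \deg(p) \;\leq\; \frac{\left| F \right|^n}{c} \;+\; \left| F \right|^{n-1}\cdot \frac{\left| F \right|}{c} \;=\; \frac{2\left| F \right|^n}{c}. \]
The obstacle I foresee is that this naive split gives a factor of $2$ too much. The standard fix is to apply the induction hypothesis to $p_k$ with a \emph{sharper} constant: since $\deg(p_k)\leq \deg(p)-k$, one has $\left| F \right|\geq c\cdot \deg(p)\geq c\cdot(\deg(p_k)+k)$, so $p_k$ satisfies the hypothesis with the larger constant $c'=\left| F \right|/\deg(p_k)$, yielding at most $\deg(p_k)\cdot\left| F \right|^{n-2}$ zeros. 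Combining the two cases then gives at most $\deg(p_k)\cdot\left| F \right|^{n-1}+k\cdot\left| F \right|^{n-1}\leq \deg(p)\cdot\left| F \right|^{n-1}\leq \left| F \right|^n/c$, which is the required bound. So the main technical point is to track the degree bookkeeping carefully enough that the inductive estimate for the leading coefficient combines additively (rather than multiplicatively) with the univariate estimate.
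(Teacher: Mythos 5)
Your proof is correct and follows the same route the paper indicates: the paper itself only sketches the induction on the number of variables ("fix all but one variable; if the resulting univariate polynomial vanishes identically accept all of $F$, otherwise at most its degree many choices") and defers the details to Schwartz's original article \cite{Sch80}. Your careful degree bookkeeping for the leading coefficient $p_k$ — equivalently, first establishing the sharper bound $\deg(p)\cdot\left| F \right|^{n-1}$ on the number of zeros and then invoking $\deg(p)\leq \left| F \right|/c$ — is precisely what is needed to turn that sketch into a complete argument.
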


\PRFR{Mar 7th}
\noindent For a formal proof, we refer the reader to J.T.~Schwartz's 
{\em Fast Probabilistic Algorithms for Verification of Polynomial Identities} \cite{Sch80}. 
The proof idea is to do induction on the number of variables involved. The base case is the fact that a polynomial
in a single variable of degree $d$ can have at most $d$ different roots. In the induction step, we fix the values of all but one variable,
if the resulting polynomial in a single variable is the zero polynomial, we may choose any value from $F$ for that variable. Otherwise,
there are at most the degree of the resulting polynomial many choices for the last variable such that the polynomial evaluates to zero.

\begin{lemma}[\cite{Marx09}, Lemma 1, \cite{Sch80}, \cite{Zi79}]\label{lem:probOfZero}
	Let $\Fbm$ be a field, \linebreak
	let $X = \dSET{x_1,x_2,\ldots,x_n}$ be a finite set,
	 let $p \in \Fbm[X]$ be a polynomial
	with $p \not= 0$, and let $F\subseteq \Fbm$ be a finite set.
	Let $\xi$ be a random variable sampled from a uniform distribution on the set $F^X$.
	Then the probability that $\xi$ is a zero of $p$ may be estimated by
	\[ \Pr\left(\vphantom{A^A}p[X=\xi] = 0\right) \leq \frac{\deg(p)}{\left| F \right|} .\]
\end{lemma}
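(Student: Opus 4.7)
The plan is to reduce the statement essentially verbatim to Lemma~\ref{lem:SchwartzNumberZeros}, since the only difference between the two is a rescaling from a count of zeros to a probability under the uniform distribution on $F^X$. Observe that $\left| F^X \right| = \left| F \right|^n$, so the probability in question is simply the number of zeros of $p$ in $F^X$ divided by $\left| F \right|^n$; hence it suffices to produce an upper bound on that number of zeros with denominator $\left| F \right|^n \cdot \frac{\left| F \right|}{\deg(p)}$, i.e.\ of the form $\frac{\left| F \right|^n}{c}$ with $c = \frac{\left| F \right|}{\deg(p)}$.

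First I would dispose of the degenerate cases. If $\left| F \right| = 0$, then $F^X = \emptyset$ and there is nothing to sample, so the statement is vacuous. If $\deg(p) \geq \left| F \right|$, then $\frac{\deg(p)}{\left| F \right|} \geq 1$ and the estimate holds trivially since every probability is at most $1$. If $\deg(p) = 0$, then $p$ is a nonzero constant, so $p[X=\xi] \neq 0$ for all $\xi$, and the probability is $0$.

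In the remaining main case, $0 < \deg(p) < \left| F \right|$, I would set $c = \frac{\left| F \right|}{\deg(p)} \in \Qbm$, which satisfies $c > 0$ and $\left| F \right| = c\cdot\deg(p)$, so that the hypothesis of Lemma~\ref{lem:SchwartzNumberZeros} is met. That lemma then yields
\[
 \left| \SET{\xi \in F^X ~\middle|~ p[X=\xi] = 0} \right| \;\leq\; \frac{\left| F \right|^n}{c} \;=\; \frac{\deg(p)\cdot \left| F \right|^n}{\left| F \right|}.
\]
Dividing both sides by the total count $\left| F^X \right| = \left| F \right|^n$ gives the desired bound
\[
 \Pr\left(\vphantom{A^A}p[X=\xi] = 0\right) \;=\; \frac{\left| \SET{\xi \in F^X ~\middle|~ p[X=\xi] = 0} \right|}{\left| F \right|^n} \;\leq\; \frac{\deg(p)}{\left| F \right|}.
\]

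There is no real obstacle here; the only mildly delicate point is the bookkeeping for the choice of $c$ and verifying that the hypothesis $\left| F \right| \geq c\cdot\deg(p)$ is satisfied with equality, together with carving off the edge cases $\deg(p) \geq \left| F \right|$ and $\deg(p) = 0$ so that they do not interfere with the application of Lemma~\ref{lem:SchwartzNumberZeros}.
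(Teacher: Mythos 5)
Your proof is correct and is essentially the paper's own argument: both set $c = \frac{|F|}{\deg(p)}$ in Lemma~\ref{lem:SchwartzNumberZeros} and divide the resulting zero count by $|F|^n = |F^X|$. The only difference is that you explicitly carve off the degenerate cases (notably $\deg(p)=0$, where $c$ would be undefined), which the paper glosses over; this is a minor but legitimate tightening.
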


\begin{proof}\footnote{D.~Marx omits the proof and instead cites \cite{Sch80} and \cite{Zi79}.} \PRFR{Mar 7th}
	In Lemma~\ref{lem:SchwartzNumberZeros} we set $c = \frac{\left| F \right|}{\deg(p)}$ and get
	\[ \frac{\left| \SET{\xi \in F^X ~\middle|~ p[X=\xi] = 0} \right|}{ \left| F^X \right|} \leq \frac{\left| F \right|^n}{c\cdot \left| F \right|^n} = \frac{1}{c} = \frac{\deg(p)}{\left| F \right|}. \qedhere\]
\end{proof}

\begin{proposition}[\cite{Marx09}, Proposition 3.11]\label{prop:randompolytimeTransversalMatroid}\PRFR{Mar 7th}
	Let $E$ be a finite set, $r\in \N$, and $\Acal = (A_i)_{i=1}^r \subseteq E$ be a family of subsets of $E$.
	Then a matrix $\mu\in \R^{E\times \SET{1,2,\ldots,r}}$ with $M(\mu) = M(\Acal)$ can be constructed in randomized polynomial time.
\end{proposition}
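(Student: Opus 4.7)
The plan is to write down the classical symbolic representation of $M(\Acal)$ over a rational function field and then argue that a uniformly random real substitution preserves the matroid structure with high probability. The hard part will be bounding the failure probability while keeping the coefficient field, and hence the encoding size, polynomially bounded in $|E|$.

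First, I would introduce a set of independent indeterminates $X = \SET{x_{e,i} \mid e\in A_i,\,i\in \SET{1,2,\ldots,r}}$ and form the symbolic matrix $\tilde\mu \in \Rbm[X]^{E\times\SET{1,2,\ldots,r}}$ with entries $\tilde\mu(e,i) = x_{e,i}$ if $e\in A_i$ and $\tilde\mu(e,i) = 0$ otherwise. For any equicardinal $S\subseteq E$ and $T\subseteq\SET{1,2,\ldots,r}$, the Leibniz formula of Definition~\ref{def:det} expresses $\det(\tilde\mu\restrict S\times T)$ as a sum over bijections $\sigma\colon S\maparrow T$, and each nonzero summand is a squarefree monomial whose indeterminates encode $\sigma$ itself. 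Distinct bijections thus yield distinct monomials, so no cancellation occurs and $\det(\tilde\mu\restrict S\times T)\not=0$ if and only if there is a bijection $\sigma\colon S\maparrow T$ with $s\in A_{\sigma(s)}$ for every $s\in S$. Consequently, $S$ is a partial transversal of $\Acal$ if and only if $\idet(\tilde\mu\restrict S) = 1$ when viewed over the rational function field $\Rbm(X)$, so $M(\tilde\mu) = M(\Acal)$ symbolically.

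Next, for each $S\subseteq E$ that is independent in $M(\Acal)$, I would fix a witness $T(S)\subseteq\SET{1,2,\ldots,r}$ for which $D_S = \det(\tilde\mu\restrict S\times T(S)) \in \Z[X]$ is a nonzero polynomial; its degree is bounded by $|S|\leq r$. Let $F\subseteq\Rbm$ be a finite set and sample $\xi \in F^X$ uniformly at random; set $\mu = \tilde\mu[X=\xi]$. Since every square subdeterminant of $\tilde\mu\restrict S'\times T'$ for a dependent $S'$ is already identically zero, the equality $M(\mu) = M(\Acal)$ holds whenever $D_S[X=\xi]\not=0$ for every independent $S$.

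Finally, the argument is completed by applying Lemma~\ref{lem:probOfZero} to each $D_S$: the probability that $D_S$ vanishes under $\xi$ is at most $r/|F|$. A union bound over the at most $2^{|E|}$ independent sets bounds the total failure probability by $r\cdot 2^{|E|}/|F|$. Choosing $|F|\geq r\cdot 2^{|E|+k}$ for any desired confidence parameter $k\in \N$ yields failure probability at most $2^{-k}$, and each entry of $\mu$ can be encoded using $O(|E|+k+\log r)$ bits. Since the construction requires only sampling $|X|\leq |E|\cdot r$ field elements and performing $O(|E|\cdot r)$ bookkeeping operations, the whole procedure runs in randomized polynomial time in the size of the input $\Acal$.
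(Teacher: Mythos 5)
Your proof is correct and follows essentially the same route as the paper: a random substitution into the natural bipartite-incidence matrix, a Schwartz--Zippel bound per witness determinant (Lemma~\ref{lem:probOfZero}), and a union bound, with the sample-set size chosen so that entries still have polynomially many bits. The only difference is that the paper union-bounds over the bases of $M(\Acal)$ alone (at most $\binom{\left| E \right|}{\lceil \left| E \right|/2\rceil}$ of them, since independence in $M(\mu)$ already implies independence in $M(\Acal)$ deterministically), whereas you range over all $2^{\left| E \right|}$ independent sets --- a slightly looser but equally valid count.
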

\begin{proof}\PRFR{Mar 7th}
	For all $k\in \N$ with $k>1$, we write $\mathrm{unif}(k)$
	in order to denote an integer that has been randomly sampled from a uniform distribution on $\SET{1,2,\ldots,k}$. Several instances
	of $\mathrm{unif}(k)$ shall denote independently sampled random variables.
	%
	Let $p\in \N$ be an arbitrary parameter.
	We define the random matrix $\mu\in \R^{E\times \SET{1,2,\ldots,r}}$ by\footnote{D.~Marx uses samples 
	from $\mathrm{unif}\left( 2^p\cdot \left| E \right|\cdot 2^{\left| E \right|} \right)$ and uses the argument that there are at most
	$2^{\left| E \right| }$ independent sets. This line of arguments is valid, 
	yet it does not use the fact that if $X$ is independent in $M(\mu)$, then all subsets of $X$ are independent in $M(\mu)$, too; 
	consequently, the probability of failure is overestimated.}
	\[ \mu(e,i) = \begin{cases}[r]
					\mathrm{unif} \left( 2^p\cdot r\cdot Q \right) & \quad\text{if~} e\in A_i, \\
					0 & \quad\text{otherwise,}
	\end{cases} 
	\]
	where $$Q = \binom{\left| E \right|}{\left\lceil \frac{\left| E \right|}{2}\right\rceil}.$$
	Clearly, $Q \leq 2^{\left| E \right|}$ with equality if $\left| E \right| = 1$.
	Observe that sampling $\mathrm{unif}(2^k)$ can be done by sampling $k$ bits from a uniform distribution. Thus $\mu$ can be obtained
	by sampling at most
	 $\left| E \right|\cdot r \cdot \left(p + \lceil \log_2\left( Q + r \right) \rceil \right)$ uniform random bits.
	We show that \mbox{$\Pr\left(  M(\mu) \not= M(\Acal)  \vphantom{A^A}\right)  \leq \frac{1}{2^p}$.}
	Let $X\subseteq E$ be an independent set with respect to  $M(\mu)$. Then $\idet(M\restrict X\times \SET{1,2,\ldots,r}) = 1$,
	so there is an injective map $\phi\colon X\maparrow \SET{1,2,\ldots,r}$ such that $\mu(x,\phi(x)) \not= 0$
	for all $x\in X$. By construction of $\mu$ we obtain that in this case $x\in A_{\phi(x)}$. Therefore $X$ is a partial transversal
	of $\Acal$, and so $X$ is independent in $M(\Acal)$, too.

	\PRFR{Mar 7th}
	\noindent
	Now let $X\subseteq E$ be a base of $M(\Acal)$. Thus $X$ is a maximal partial transversal of $\Acal$ and
	there is an injective map $\phi\colon X\maparrow \SET{1,2,\ldots,r}$ such that $x\in A_{\phi(x)}$ for all $x\in X$.
	Let $X=\dSET{x_1,x_2,\ldots,x_k}$, then we may define the matrix $\nu \in \R[X]^{X\times \phi[X]}$
	where
		\[ \nu(x,i) = \begin{cases}[r]
						x_i & \quad\text{if~} i = \phi(x),\\
						\mu(x,i) & \quad\text{otherwise.}
		\end{cases}\]
	Then $\det(\nu)$ is a polynomial of degree $\left| X \right| = k \leq r$ with leading monomial $x_1x_2\cdots x_k$ in $\R[X]$,
	and if $\xi\in \R^X$ is the vector
	where $\xi(x) = \mu(x,\phi(x))$ for all $x\in X$, we have the equality
	\[ \det(\mu\restrict X\times \phi[X]) = \left( \det(\nu) \right)[X=\xi] .\]
	Remember that each value $\xi(x)$ has been uniformly sampled from a set with cardinality $ 2^p\cdot r \cdot Q$,
	thus Lemma~\ref{lem:probOfZero} yields
	\[ \Pr\left( \vphantom{A^A}\det(\mu\restrict X\times\phi[X]) = 0 \right) \leq \frac{\left| X \right|}{2^p\cdot r \cdot Q} \leq \frac{1}{2^p\cdot Q}.\]
	There are at most $\binom{\left| E \right|}{\rk_{M(\Acal)}(E)}$ different bases in $M(\Acal)$,
	and the family of all subsets of $E$ with cardinality $\left\lceil  \frac{\left| E \right|}{2} \right\rceil$
	is a maximal-cardinality anti-chain in the power set lattice of $E$.
	Therefore, there are at most $Q$ different bases
	in $M(\Acal)$
	needed to detect failure of $M(\Acal)=M(\mu)$.
	Thus we obtain
	\[ \Pr\left( M(\mu) \not= M(\Acal) \vphantom{A^A}\right) \leq \sum_{B\in \Bcal(M(\Acal))} \frac{1}{2^p\cdot Q} \leq \frac{1}{2^p} . \qedhere\]
\end{proof}

\noindent Clearly, if the rank of $M(\Acal)$ is known, we may use the better factor
$Q = \binom{\left| E \right|}{\rk_M(E)}$ in the probabilistic construction of $\mu$
given in the proof of Proposition~\ref{prop:randompolytimeTransversalMatroid}. 
 If we also know the number of bases, we may even use $Q= \left| \Bcal(M(\Acal)) \right|$.

\cleardoublepage

\chapter{Oriented Matroids}\label{ch:OMs}
\remblue{
\remred{TODO} Oriented matroids have been studied by....

\noindent Historically, oriented matroids have been defined by J.~Folkman and J.~Lawrence \cite{FoLa78} and by R.G.~Bland and M.~Las~Vergnas \cite{BlV78}
as structures that arise when the circuits of a matroid are turned into signed subsets of the ground set such that certain properties mimicking those of the
sign patterns associated with the coefficients of non-trivial linear combinations of the zero vector are fulfilled. Both papers show that
 many aspects of ordinary matroid theory carry over to oriented matroids, most notably the concepts of duality and minors are reflected by oriented matroids
 in a way that is compatible with the structure of the underlying matroids. Therefore we are at liberty to merge aspects of the theory of oriented matroids
 into our definition of oriented matroids.
}

\section{Quick Introduction to Oriented Matroids}

\PRFRC
Let us consider a matroid $M(\mu)$ where $\mu\in \R^{E\times \SET{1,2,\ldots,r}}$ is a finite matrix over the reals
with full column rank, i.e. such that $\rk_{M(\mu)}(E) = r$.
Whenever $C\in \Ccal(M(\mu))$ is a circuit, there are coefficients $\alpha\colon C\maparrow \R$ such that
$\alpha(c)\not= 0$ for all $c\in C$ and such that
\[ \sum_{c\in C} \alpha(c) \cdot \mu_c = 0 \]
holds in the vector space $\R^r$. Furthermore, $\alpha$ is uniquely determined by $\SET{\mu_c\mid c\in C}$
up to a homogeneous factor $\lambda \in \R\BSET{0}$, i.e. whenever the
equality $\sum_{c\in C} \beta(c) \cdot \mu_c = 0$ holds for $\beta\colon C\maparrow \R$ with $\beta$ not
constantly zero on $C$, then there is some $\lambda \in \R\BSET{0}$ with
$\alpha(c) = \lambda \beta(c)$ for all $c\in C$. Therefore, the signs of the coefficients are determined 
up to a possible negation of all signs by
the circuit $C$ and the matrix $\mu$. 

\begin{definition}\PRFRC
	Let $E$ be a set. A \deftext[signed subset]{signed subset of $\bm E$} is a map\label{n:signedsubset}
	\[ X\colon E\maparrow \SET{-1,0,1}.\]
	We denote the \deftext[positive elements of a signed subset]{positive elements of $\bm X$} by
	\( X_+ = \SET{x\in E\mid X(x) = 1}, \)\label{n:xplus}
	the \deftext[negative elements of a signed subset]{negative elements of $\bm X$} shall be denoted by
	\( X_- = \SET{x\in E\mid X(x)=-1},\)\label{n:xminus}
	the \deftext[support of a signed subset]{support of $\bm X$} is defined as
	\( X_\pm = \SET{x\in E\mid X(x)\not= 0},\)\label{n:xpm}
	and the \deftext[zero-set of a signed subset]{zero-set of $\bm X$} is 
	denoted by $X_0 = E\BS X_\pm$.\label{n:xzero} The \deftext[negation of a signed subset]{negation of $\bm X$}
	is the signed subset \(-X\)\label{n:minusx} where $-X\colon E\maparrow \SET{-1,0,1},$ $e\mapsto -X(e)$. 
	Let $C\subseteq E$ and $\alpha \colon C\maparrow \R$ be a vector of coefficients. 
	The \deftext[signs of a over E@signs of $\alpha$ over $E$]{signs of $\bm \alpha$ over $\bm E$} shall be denoted by $E_\alpha$, which is defined
	to be the map
	\[ E_\alpha \colon E\maparrow \SET{-1,0,1},\,e\mapsto \begin{cases} \hphantom{-}0 & \quad \text{if }e\notin C \txtor \alpha(e) = 0,\\
																		-1 & \quad \text{if }\alpha(e) < 0,\\
																		\hphantom{-}1 & \quad \text{if }\alpha(e) > 0.
																		\end{cases}\]
	The \deftextX{class of all signed subsets of $\bm E$}\label{n:signedsubsets} shall be denoted by $\sigma E$.
	Let $X,Y\in \sigma E$ be signed subsets of $E$. We say that \deftext[signed subset]{$\bm X$ is a signed subset of $\bm Y$},
	if $X_+\subseteq Y_+$ and $X_- \subseteq Y_-$. We denote this fact by writing $X\subseteq_\sigma Y$.\label{n:subsetsigma}
	Furthermore, we write $X\subsetneq_\sigma Y$ whenever $X\subseteq_\sigma Y$ and $X_\pm \subsetneq Y_\pm$ holds.
	The \deftext{empty signed subset} of $E$ is the map\label{n:emptysigma}
\( \emptyset_{\sigma E} \colon E\maparrow \SET{-1,0,1},\,e\mapsto 0 .\)
\end{definition}

\begin{notation}\PRFR{Apr 5th}
	Let $E$ be a finite set, and let $C\in \sigma E$ such that $C_+ = \dSET{p_1,p_2,\ldots,p_m}$ and $C_- = \dSET{n_1,n_2,\ldots,n_k}$.
	We shall denote $C$ by both
	\[ \SET{p_1,p_2,p_3,\ldots,p_m, -n_1, -n_2,\ldots,-n_k} \]
	and
	\[ \SET{+p_1,+p_2,+p_3,\ldots,+p_m, -n_1, -n_2,\ldots,-n_k},\]
	i.e. we write a list of $C_\pm$ where every element from $C_+$ has either no prefix or a $+$-sign, and where every element of $C_-$
	has a $-$-sign as prefix. The elements of $C_0$ are not listed. As with normal sets, we disregard the order in which the elements of $C_\pm$ are listed.
\end{notation}

\begin{example}\PRFRC
With regard to a fixed representation $\mu\in \R^{E\times\SET{1,2,\ldots,r}}$,
every circuit $C\in \Ccal(M(\mu))$ gives rise to two different signed subsets of $E$:
Let $\alpha\colon C\maparrow \R$ be not constantly zero on $C$ with $\sum_{c\in C}\alpha(c)\cdot\mu_c=0$,
then $E_\alpha$ and $-E_\alpha$ are the signed subsets of $E$ 
that correspond to the signs of non-trivial coefficients $\alpha\colon C\maparrow \R$ with $\sum_{c\in C}\alpha(c)\cdot \mu_c = 0$.
\end{example}

\begin{definition}\PRFRC
	Let $E$ be a finite set, $C,D\in \sigma E$ be signed subsets of $E$. We define the \deftext[separator of signed subsets]{separator of $\bm C$ and $\bm D$}
	to be the set\label{n:sep} \[ \sep(C,D) = \left( C_+ \cap D_- \right) \cup \left( C_- \cap D_+ \right).
	\qedhere \]
\end{definition}

\noindent There is a notion of orthogonality for signed subsets which generalizes the ordinary orthogonality in vector spaces
(see \cite{BlVSWZ99}, p.115; \cite{Ni12}, p.27).

\begin{definition}\label{def:XorthoY}\PRFRC
	Let $E$ be a finite set, $C,D\in\sigma E$ be signed subsets of $E$
	Then $C$ and $D$ shall be called \deftext{orthogonal signed subsets},
	if either
	\begin{enumerate}\ROMANENUM
	\item there are $e,f\in E$, such that
	\[ C(e)\cdot D(e) = - C(f)\cdot D(f) \not= 0 \]
	holds; or
	\item for all $e\in E$, the equation
	\[ C(e)\cdot D(e) = 0 \]
	holds.
\end{enumerate}
	We write $X\bot Y$\label{n:XorthoY} in order to denote that $X$ and $Y$ are orthogonal,
	and $X\!\!\not\!\!\bot\, Y$ to denote that $X$ and $Y$ are not orthogonal. In the latter case $X_\pm\cap Y_\pm \not=\emptyset$ and 
	the common elements of the supports of $X$ and $Y$ 
	all have the same relative sign with respect to $X$ and $Y$, i.e. $X(e) = \alpha\cdot Y(e)$ for all $e\in X_\pm \cap Y_\pm$
	and some $\alpha\in \SET{-1,1}$ that does not depend on the choice of $e$.
\end{definition}

\needspace{3\baselineskip}

\begin{lemma}\label{lem:MinusCOrthoD}\PRFRC
	Let $E$ be a finite set, $C,D\in \sigma E$. Then
	$C\bot D$ if and only if $\left( -C \right)\bot D$ if and only if $C\bot\left( -D \right)$ if and only if $\left( -C \right)\bot \left( -D \right)$.
\end{lemma}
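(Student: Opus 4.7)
The plan is to show that each of the two defining conditions of orthogonality in Definition~\ref{def:XorthoY} is symmetric under replacing either argument by its negation, from which the four-way equivalence follows immediately. The key observation is that by the definition of $-X$ we have $(-X)(e) = -X(e)$ for every $e \in E$, and hence
\[ (-C)(e) \cdot D(e) \,=\, -\bigl(C(e) \cdot D(e)\bigr) \,=\, C(e) \cdot (-D)(e) \,=\, -\bigl((-C)(e) \cdot (-D)(e)\bigr) \cdot (-1). \]
So each of the four products $C(e)\cdot D(e)$, $(-C)(e)\cdot D(e)$, $C(e)\cdot (-D)(e)$, $(-C)(e)\cdot (-D)(e)$ differs from the others only by a sign, and in particular all four vanish simultaneously.

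First I would handle condition~(ii): since $(-C)(e)\cdot D(e) = 0$ if and only if $C(e)\cdot D(e) = 0$, the statement ``$C(e)\cdot D(e) = 0$ for all $e\in E$'' is invariant under negating $C$, negating $D$, or negating both. Hence condition~(ii) holds for one of the four pairs if and only if it holds for all four.

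Next I would handle condition~(i): suppose there exist $e, f \in E$ with $C(e)\cdot D(e) = -C(f)\cdot D(f) \neq 0$. Multiplying both sides by $-1$ yields $(-C)(e)\cdot D(e) = -(-C)(f)\cdot D(f) \neq 0$, which is condition~(i) for the pair $(-C, D)$ witnessed by the same $e, f$. The same argument applies when we negate $D$, or both $C$ and $D$, since in each case the products merely change by an overall sign that preserves both the nonvanishing and the opposite-sign relation. Conversely, applying the same reasoning starting from any of $(-C,D)$, $(C,-D)$, or $(-C,-D)$ recovers condition~(i) for $(C,D)$, using the involution $-(-X) = X$.

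Combining the two invariance statements gives that the disjunction ``(i) or (ii)'' holds for $(C, D)$ iff it holds for $(-C, D)$ iff it holds for $(C, -D)$ iff it holds for $(-C, -D)$, which is the desired four-way equivalence. I expect no real obstacle here; the lemma is a direct sign-chasing verification whose only subtlety is being careful to use both the ``exists $e,f$'' form and the ``for all $e$'' form of the definition so that case~(i) and case~(ii) are treated separately.
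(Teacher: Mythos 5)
Your argument is correct and is essentially the paper's own proof: both rest on $(-X)(e)=-X(e)$ and the observation that conditions (i) and (ii) of Definition~\ref{def:XorthoY} are preserved under sign flips (the paper merely shortens the case work by invoking symmetry of $\bot$ to reduce to the single implication $C\bot D\Rightarrow(-C)\bot D$). One slip: the last term of your displayed chain, $-\bigl((-C)(e)\cdot(-D)(e)\bigr)\cdot(-1)$, simplifies to $C(e)\cdot D(e)$ rather than $-\bigl(C(e)\cdot D(e)\bigr)$, so that final equality is false for nonzero products; drop it, since your actual argument only uses the (correct) facts that negating one argument flips the sign of each product and negating both preserves it.
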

\begin{proof}\PRFRC
	Since $\bot$ is obviously a symmetric relation, it suffices to show that $C\bot D$ implies $\left( -C \right)\bot D$.
	But for every $e\in E$, $\left( -C \right)(e) = -C(e)$, 
	therefore both properties {\em (i)} and {\em (ii)} of Definition~\ref{def:XorthoY} carry over from $C$ to $-C$.
\end{proof}


\begin{lemma}\PRFRC
	Let $E$ be a finite set, $\alpha,\beta \in \R^E$ with $\langle \alpha,\beta \rangle = 0$.
	Then $E_\alpha \bot E_\beta$.
\end{lemma}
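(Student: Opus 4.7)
The plan is straightforward: reduce the orthogonality criterion of Definition~\ref{def:XorthoY} to a statement about signs of the products $\alpha(e)\beta(e)$, and then exploit the fact that a zero sum of reals either is identically zero term-by-term or must contain both strictly positive and strictly negative summands.

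First I would observe that by the definition of $E_\alpha$ and $E_\beta$, for every $e\in E$ we have $E_\alpha(e)\cdot E_\beta(e) = \sgn(\alpha(e)\beta(e))$. In particular, $E_\alpha(e)\cdot E_\beta(e) = 0$ if and only if $\alpha(e)\beta(e) = 0$, while $E_\alpha(e)\cdot E_\beta(e) = +1$ (resp.\ $-1$) if and only if $\alpha(e)\beta(e) > 0$ (resp.\ $< 0$). This reformulates the two cases in Definition~\ref{def:XorthoY} as follows: case {\em(ii)} holds iff $\alpha(e)\beta(e) = 0$ for all $e\in E$; case {\em(i)} holds iff there exist $e,f\in E$ with $\alpha(e)\beta(e) > 0$ and $\alpha(f)\beta(f) < 0$.

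Next I would partition $E$ into the sets $P = \{e\in E \mid \alpha(e)\beta(e) > 0\}$, $N = \{e\in E \mid \alpha(e)\beta(e) < 0\}$, and $Z = \{e\in E \mid \alpha(e)\beta(e) = 0\}$. Using the hypothesis $\langle\alpha,\beta\rangle = 0$ and the fact that the summands over $Z$ vanish, we obtain
\[
\sum_{e\in P} \alpha(e)\beta(e) \;=\; -\sum_{e\in N} \alpha(e)\beta(e).
\]
The left-hand side is a sum of strictly positive terms (or zero if $P=\emptyset$), and the right-hand side is a sum of strictly negative terms, negated (or zero if $N=\emptyset$).

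The main case distinction is then immediate, and I do not anticipate any real obstacle. If $P = \emptyset$, then the displayed equation forces $N = \emptyset$ as well, and hence $\alpha(e)\beta(e) = 0$ for every $e\in E$, which gives case {\em(ii)} of Definition~\ref{def:XorthoY}. Symmetrically, $N = \emptyset$ forces $P = \emptyset$. In the remaining case, both $P$ and $N$ are nonempty, so we can pick $e\in P$ and $f\in N$ which witness case {\em(i)}. In either case $E_\alpha \bot E_\beta$, which completes the proof.
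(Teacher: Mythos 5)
Your proof is correct and uses essentially the same idea as the paper's: the inner product $\langle\alpha,\beta\rangle=0$ splits into the positive-product and negative-product parts, which must balance, so either both are empty (case \emph{(ii)}) or both are nonempty (case \emph{(i)}). The paper organizes this as an indirect argument (assuming \emph{(i)} fails and deriving that all same-sign products vanish), whereas your direct partition into $P$, $N$, $Z$ is a slightly cleaner presentation of the same reasoning.
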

\begin{proof}\PRFRC
	If $\left( E_\alpha \right)_\pm \cap \left( E_\beta \right)_\pm = \emptyset$, then
	{\em (ii)} of Definition~\ref{def:XorthoY} holds, thus $E_\alpha \bot E_\beta$.
	Otherwise, there is some $e\in E$ with $\alpha(e)\cdot \beta(e) \not= 0$.
	Let $$E_e = \SET{e'\in E ~\middle|~ \sgn\left(\alpha(e)\cdot \beta(e)\right) = \sgn\left(\alpha(e')\cdot \beta(e')\right)}.$$
	Since $\langle a,b \rangle=0$, we have
	\[ -\sum_{e'\in E_e} \alpha(e') \cdot \beta(e')\,\,\, =\,\,\, \langle \alpha,\beta\rangle  -\sum_{e'\in E_e} \alpha(e') \cdot \beta(e')
	\,\,\, =\,\,\, \sum_{f\in E\BS E_{e}} \alpha(f)\cdot \beta(f).\]
	We give an indirect argument and assume that {\em (i)} does not hold. Then for all $f\in E\BS{E_e}$,
	we have $\alpha(f)\cdot \beta(f) = 0$. Thus $-\sum_{e'\in E_e} \alpha(e') \cdot \beta(e') = 0$, but the sign of $\alpha(e')\cdot \beta(e')$
	is the same for every $e'\in E_e$. Therefore $\alpha(e')\cdot \beta(e')=0$ for all $e\in E_e$, 
	contradicting the assumption that $\alpha(e)\cdot \beta(e) \not= 0$, so {\em (i)} must hold. Thus $E_\alpha\bot E_\beta$.
\end{proof}

\needspace{6\baselineskip}
\begin{definition}\label{def:OrientedMatroid}\PRFRC
	Let $E$ be a finite set, $\Ccal\subseteq \sigma E$ and $\Ccal^\ast \subseteq \sigma E$.
	The triple $\Ocal = (E,\Ccal,\Ccal^\ast)$\label{n:Ocal} is called \deftext{oriented matroid},
	if the following properties hold:
	\begin{enumerate}\label{n:Cx}
		\item[($\Ccal$1)] $\emptyset_{\sigma E}\notin \Ccal$,
		\item[($\Ccal$2)] for all $C\in\sigma E$, $C\in \Ccal$ if and only if $-C \in \Ccal$,
		\item[($\Ccal$3)] for all $X,Y\in \Ccal$, $X_\pm \subseteq Y_\pm$ implies $X = Y$ or $X = -Y$,
		\item[($\Ccal$4)] for all $X,Y\in \Ccal$ with $X\not= -Y$ and all $e\in X_+\cap Y_-$ and $f\in X_\pm \BS \sep(X,Y)$, there is some $Z\in\Ccal$ such that
				$e\notin Z_\pm$, $Z(f) = X(f)$, $Z_+ \subseteq X_+\cup Y_+$ and $Z_- \subseteq X_-\cup Y_-$;

		\item[($\Ccal^\ast$1)] $\emptyset_{\sigma E}\notin \Ccal^\ast$,
		\item[($\Ccal^\ast$2)] for all $C'\in\sigma E$, $C'\in \Ccal^\ast$ if and only if $-C' \in \Ccal^\ast$,
		\item[($\Ccal^\ast$3)] for all $X',Y'\in \Ccal^\ast$, $X'_\pm \subseteq Y'_\pm$ implies $X' = Y'$ or $X' = -Y'$,
		\item[($\Ccal^\ast$4)] for all $X',Y'\in \Ccal^\ast$ with $X'\not= -Y'$ and all $e\in X'_+\cap Y'_-$ and $f\in X'_\pm \BS \sep(X',Y')$, there some is $Z'\in\Ccal^\ast$ such that
				$e\notin Z'_\pm$, $Z'(f) = X'(f)$, $Z'_+ \subseteq X'_+\cup Y'_+$ and $Z'_- \subseteq X'_-\cup Y'_-$;

		\item[($\Ocal$1)] for all $C\in \Ccal$ and $C'\in\Ccal^\ast$, we have $C\bot C'$,
		\item[($\Ocal$2)] there is a matroid $M=(E,\Ical)$, such that $$\SET{C_\pm ~\middle|~\vphantom{C'} C\in \Ccal} = \Ccal(M) \,\,\txtand\,\,
		\SET{C'_\pm ~\middle|~ C'\in \Ccal^\ast} = \Ccal(M^\ast). $$

	\end{enumerate}
	In this case, the elements $C \in \Ccal$ shall be called \deftext[signed circuits of O@signed circuits of $\Ocal$]{signed circuits of $\bm \Ocal$},
	and $\Ccal$ shall be the \deftextX{family of signed circuits of $\bm \Ocal$}. 
	Likewise, the elements $C' \in \Ccal^\ast$ shall be called \deftext[signed cocircuits of O@signed cocircuits of $\Ocal$]{signed cocircuits of $\bm \Ocal$},
	and $\Ccal^\ast$ shall be the \deftextX{family of signed cocircuits of $\bm \Ocal$}.
	Furthermore, $M(\Ocal)$\label{n:MOcal} shall denote the \deftext[underlying matroid of O@underlying matroid of $\Ocal$]{underlying matroid of $\Ocal$}, whose existence and uniqueness is guaranteed by {\em ($\Ocal$2)}.
\end{definition}

\begin{remark}\label{rem:sufficientConditionOM}\PRFRC
The above definition of an oriented matroid is redundant in the sense that some of the properties follow from other properties easily. For
instance, {\em ($\Ocal$1)} and {\em($\Ocal$2)} imply all other properties from Definition~\ref{def:OrientedMatroid} 
(see \cite{Ox11}, p.401).
 We give a quick overview over the most common cryptomorphic ways to define oriented matroids via signed circuits.
For full disclosure on these
less redundant definitions of oriented matroids, we refer the reader to \cite{BlVSWZ99}, \cite{BlV78}, and \cite{FoLa78}. 

In \cite{BlV78}, R.G.~Bland and M.~Las~Vergnas define oriented matroids to be pairs $(E,\Ccal)$ such that $\Ccal\subseteq \sigma E$
 has the properties {\em ($\Ccal$1)}, {\em ($\Ccal$2)}, {\em ($\Ccal$3)}, and the property
\begin{enumerate}\label{n:Ccal4p}
		\item[($\Ccal$4')] for all $X,Y\in \Ccal$ with $X\not= -Y$ and all $e\in X_+\cap Y_-$, there is some $Z\in\Ccal$ such that
				$e\notin Z_\pm$, $Z_+ \subseteq X_+\cup Y_+$ and $Z_- \subseteq X_-\cup Y_-$.
\end{enumerate}
	In Theorem~2.1 \cite{BlV78}, R.G.~Bland and M.~Las~Vergnas prove that if we assume {\em ($\Ccal$1)}, {\em ($\Ccal$2)}, {\em ($\Ccal$3)}, then 
	the properties {\em ($\Ccal$4)} and {\em ($\Ccal$4')} are equivalent. In Theorem~2.2 \cite{BlV78}, they prove
	that if $(E,\Ccal,\Ccal^\ast)$ is an oriented matroid as in our Definition~\ref{def:OrientedMatroid}, then $\Ccal$ uniquely
	determines $\Ccal^\ast$ and vice versa.
	Therefore, in order to define an oriented matroid on the ground set $E$, it suffices to determine $\Ccal$, and show that 
	{\em ($\Ccal$1)}, {\em ($\Ccal$2)}, {\em ($\Ccal$3)}, and {\em ($\Ccal$4')} hold. Since the underlying matroid $M(\Ocal)$
	is already uniquely determined by the supports of the elements of $\Ccal$, we can reconstruct the supports of $\Ccal^\ast$
	by examining the cocircuits of $M(\Ocal)$.
	In order to find the correct signatures of $D\in\Ccal^\ast$,
	we can set the sign $D(d)$ for an arbitrarily chosen $d\in D_\pm$ to $+1$, or to $-1$ in order to generate the corresponding 
	negation $-D$. If $D_\pm =\SET{d}$, we are done. 
	If $\left| D_\pm \right| > 1$, 
	then Lemma~\ref{lem:CircuitCocircuitIntersectInTwo}
	with respect to the dual matroid $M(\Ocal)^\ast$ 
	indicates that for every $c\in D_\pm\BSET{d}$, there is a circuit $C\in \Ccal$ such that $C_\pm \cap D_\pm = \SET{c,d}$.
	We let $D(c) = D(d)$ if $C(c) \not= C(d)$, and $D(c) = -D(d)$ if $C(c) = C(d)$. Clearly, this is the only possibility which
	yields $D\bot C$.

In \cite{FoLa78}, J.~Folkman and J.~Lawrence independently defined
 their version of oriented matroids to be triples $(E_\sigma,\Ccal,-)$ subject to basically 
the same properties as R.G.~Bland's and M.~Las~Vergnas's version of oriented matroids, but where the latter used explicit signs $+$ and $-$, the former used a fix-point free involution on $E_\sigma$ that maps $+e$ to $-e$ and vice versa. Thus the oriented matroids of
 J.~Folkman and J.~Lawrence correspond to {\em reorientation classes} of oriented matroids in this work.
\end{remark}

\begin{example}\PRFR{Apr 5th}
	Let $E$ be a finite set and $M=(E,2^E)$ be the free matroid on $E$. Then
	$$\Ocal = \left(E,\emptyset,\SET{\vphantom{E^E}\SET{e},\SET{-e}~\middle|~ e\in E}\right)$$ is the only possible oriented matroid with $M(\Ocal) = M$.
\end{example}

\noindent It is straightforward, that applying an $M(\Ocal)$-automorphism to the signed circuits $\Ccal$ and cocircuits $\Ccal^\ast$
of an oriented matroid $\Ocal$ yields another oriented matroid.

\needspace{4\baselineskip}
\begin{definition}\PRFR{Apr 5th}
	Let $\Ocal=(E,\Ccal,\Ccal^\ast)$ be an oriented matroid, and let $\phi\colon E\maparrow E$ be an $M(\Ocal)$-automorphism.
	The \deftext[relabeling of O by p@relabeling of $\Ocal$ by $\phi$]{relabeling of $\bm \Ocal$ by $\bm \phi$}
	shall be the triple \[ \phi[\Ocal] = \left(E,\Ccal_\phi, \Ccal_\phi^\ast\right) \]
	where \(
	\Ccal_\phi = \SET{C\circ \phi \in \sigma E ~\middle|~ C \in \Ccal}
	\) and \( \Ccal_\phi^\ast = \SET{C'\circ \phi \in \sigma E ~\middle|~ C'\in \Ccal^\ast} \).
\end{definition}

\begin{lemma}
	Let $\Ocal$ be an oriented matroid and $\phi$ and $M(\Ocal)$-automorphism. Then $\phi[\Ocal]$ is an oriented matroid.
\end{lemma}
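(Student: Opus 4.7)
The plan is to verify each of the ten axioms in Definition~\ref{def:OrientedMatroid} for $\phi[\Ocal]=(E,\Ccal_\phi,\Ccal_\phi^\ast)$ by transporting them from the corresponding axioms for $\Ocal$. The guiding observation is that for any bijection $\phi\colon E\maparrow E$ and any $C\in \sigma E$, the composition $C\circ\phi$ is again a signed subset and one has the explicit identities
\[ (C\circ\phi)_+ = \phi^{-1}[C_+],\quad (C\circ\phi)_- = \phi^{-1}[C_-],\quad -(C\circ\phi) = (-C)\circ\phi,\]
so that the assignment $C\mapsto C\circ\phi$ is a bijection of $\sigma E$ which commutes with negation and sends supports to their $\phi^{-1}$-images. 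From these identities the axioms ($\Ccal$1) and ($\Ccal^\ast$1) are immediate, since $C\circ\phi=\emptyset_{\sigma E}$ forces $C=\emptyset_{\sigma E}$; the axioms ($\Ccal$2) and ($\Ccal^\ast$2) are immediate from the negation identity; and ($\Ccal$3), ($\Ccal^\ast$3) follow because $\phi^{-1}$ preserves set-inclusion.

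For the elimination axioms ($\Ccal$4) and ($\Ccal^\ast$4), I will take $X=C\circ\phi$ and $Y=D\circ\phi$ in $\Ccal_\phi$ with $X\neq -Y$, together with $e\in X_+\cap Y_-$ and $f\in X_\pm\BS\sep(X,Y)$. Setting $e':=\phi(e)$ and $f':=\phi(f)$, one has $e'\in C_+\cap D_-$ and $f'\in C_\pm\BS \sep(C,D)$, so ($\Ccal$4) applied to $\Ocal$ produces a $Z'\in\Ccal$ meeting the stated containment and sign conditions relative to $C,D,e',f'$. Then $Z:=Z'\circ\phi$ is the required element of $\Ccal_\phi$: the containments $Z_\pm\subseteq X_\pm\cup Y_\pm$ transport via $\phi^{-1}$, and $Z(f)=Z'(\phi(f))=Z'(f')=C(f')=X(f)$, while $e\notin Z_\pm$ because $e'\notin Z'_\pm$. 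The argument for ($\Ccal^\ast$4) is identical.

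For the orthogonality axiom ($\Ocal$1), I note that for $C\in\Ccal$, $C'\in\Ccal^\ast$ and each $e\in E$,
\[ (C\circ\phi)(e)\cdot(C'\circ\phi)(e) = C(\phi(e))\cdot C'(\phi(e)), \]
so the pointwise product family $\{(C\circ\phi)(e)(C'\circ\phi)(e)\}_{e\in E}$ is merely a $\phi$-permutation of $\{C(e')C'(e')\}_{e'\in E}$. Consequently condition (i) of Definition~\ref{def:XorthoY} for $C,C'$ yields (i) for $C\circ\phi,C'\circ\phi$ (by pulling back the witnesses via $\phi^{-1}$), and condition (ii) transports trivially. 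Hence $C\circ\phi\bot C'\circ\phi$, which is exactly ($\Ocal$1) for $\phi[\Ocal]$.

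Finally, for ($\Ocal$2) the plan is to show that the underlying matroid of $\phi[\Ocal]$ is again $M(\Ocal)$. The supports of $\Ccal_\phi$ form the family $\{\phi^{-1}[K]\mid K\in \Ccal(M(\Ocal))\}$; since $\phi$ is an $M(\Ocal)$-automorphism, so is $\phi^{-1}$, and therefore this family equals $\Ccal(M(\Ocal))$. The same argument applies to $\Ccal_\phi^\ast$ using that a matroid automorphism of $M(\Ocal)$ is also one of $M(\Ocal)^\ast$ (immediate from Corollary~\ref{cor:dualbase}), which gives the required matroid witnessing ($\Ocal$2). I expect the only place requiring any care will be keeping track of whether sets move by $\phi$ or by $\phi^{-1}$; once this bookkeeping is set up at the start via the displayed identities above, the entire verification is mechanical and no further obstacle arises.
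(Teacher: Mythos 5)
Your proof is correct and follows essentially the same route as the paper's: establish that $C\mapsto C\circ\phi$ is a negation-commuting bijection of $\sigma E$ that maps supports, separators and orthogonality compatibly, then transport each axiom. You in fact carry out the verification that the paper dismisses as ``straightforward yet tiresome,'' and your identity $(C\circ\phi)_+=\phi^{-1}[C_+]$ is the correct one (the paper's proof writes $\phi[C_+]$ here, which is off by an inverse).
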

\begin{proof}
	For $X\in\sigma E$, we have $(X\circ\phi)_+ = \phi[X_+]$, $(X\circ\phi)_- = \phi[X_-]$,
	$(X\circ\phi)_\pm = \phi[X_\pm]$, and $-(X\circ\phi) = (-X)\circ \phi$. As a consequence, for $C,D\in \sigma E$,
	we have that $C\bot D$ if and only if $C\circ \phi \bot D\circ \phi$ as well as $\sep(C\circ \phi,D\circ \phi) = \phi[\sep(C,D)]$.
	Furthermore, for $Y\subseteq E$,
	we have $Y\in \Ccal(M(\Ocal))$ if and only if $\phi[Y] \in \Ccal(M(\Ocal))$, as well as $Y\in \Ccal(M(\Ocal)^\ast)$
	if and only if $\phi[Y]\in \Ccal(M(\Ocal)^\ast)$.
	With these properties, it is straightforward yet tiresome to verify using the definition of $\phi[\Ocal]$,
	that the axioms for $\Ocal$ carry over to $\phi[\Ocal]$.
\end{proof}

\begin{definition}
	Let $\Ocal=(E,\Ccal,\Ccal^\ast)$ be an oriented matroid. The \deftext[dual oriented matroid]{dual oriented matroid of $\bm \Ocal$}
	is the triple $\Ocal^\ast = (E,\Ccal^\ast,\Ccal)$.\label{n:OcalBot}
\end{definition}

\needspace{3\baselineskip}
\begin{lemma}
	Let $\Ocal$ be an oriented matroid. Then $\Ocal^\ast$ is an oriented matroid, too.
\end{lemma}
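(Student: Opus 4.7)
The plan is to verify all eleven axioms from Definition~\ref{def:OrientedMatroid} for $\Ocal^\ast=(E,\Ccal^\ast,\Ccal)$, and the key observation is that they all follow essentially by symmetry from the corresponding axioms for $\Ocal=(E,\Ccal,\Ccal^\ast)$. Writing out the definition, one sees that the axioms $(\Ccal^\ast1)$--$(\Ccal^\ast4)$ are literally the axioms $(\Ccal1)$--$(\Ccal4)$ with the roles of $\Ccal$ and $\Ccal^\ast$ interchanged. Consequently, the axioms $(\Ccal1)$--$(\Ccal4)$ for the family $\Ccal^\ast$ (viewed as the signed circuits of $\Ocal^\ast$) are precisely $(\Ccal^\ast1)$--$(\Ccal^\ast4)$ for $\Ocal$; and symmetrically, the axioms $(\Ccal^\ast1)$--$(\Ccal^\ast4)$ for the family $\Ccal$ (viewed as the signed cocircuits of $\Ocal^\ast$) are precisely $(\Ccal1)$--$(\Ccal4)$ for $\Ocal$.

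For $(\Ocal1)$ I will note that orthogonality of signed subsets is a symmetric relation: both clauses of Definition~\ref{def:XorthoY} are invariant under swapping the two arguments, since $C(e)\cdot D(e)=D(e)\cdot C(e)$. Hence $C\bot C'$ for all $C\in\Ccal$ and $C'\in\Ccal^\ast$ immediately implies $C'\bot C$ for all $C'\in\Ccal^\ast$ and $C\in\Ccal$, which is exactly $(\Ocal1)$ for $\Ocal^\ast$.

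For $(\Ocal2)$ the candidate underlying matroid is $M(\Ocal^\ast):=M(\Ocal)^\ast$. By $(\Ocal2)$ for $\Ocal$ we have
\[
\SET{C'_\pm\mid C'\in\Ccal^\ast}=\Ccal(M(\Ocal)^\ast)=\Ccal(M(\Ocal^\ast))
\]
and
\[
\SET{C_\pm\mid C\in\Ccal}=\Ccal(M(\Ocal))=\Ccal((M(\Ocal)^\ast)^\ast)=\Ccal(M(\Ocal^\ast)^\ast),
\]
where the middle equality uses Corollary~\ref{cor:doubleast}. Thus $\Ccal^\ast$ signs the circuits and $\Ccal$ signs the cocircuits of the matroid $M(\Ocal)^\ast$, which verifies $(\Ocal2)$ for $\Ocal^\ast$.

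There is no real obstacle here: the lemma is designed to be immediate from the deliberately symmetric shape of Definition~\ref{def:OrientedMatroid}. The only thing worth being a touch careful about is the verbal layout, namely to explicitly record the symmetry of $\bot$ (which is used implicitly throughout oriented matroid theory but was not stated as a lemma in the excerpt) and to invoke the involutivity of matroid duality $\left(M^\ast\right)^\ast=M$ from Corollary~\ref{cor:doubleast} in the $(\Ocal2)$ step, so the reader can see that nothing beyond the given axioms and previously stated results is needed.
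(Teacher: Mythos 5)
Your proof is correct and follows essentially the same route as the paper's: the $(\Ccal i)$/$(\Ccal^\ast i)$ axioms swap roles under dualization, $(\Ocal 1)$ is symmetric, and $M(\Ocal)^\ast$ witnesses $(\Ocal 2)$ for $\Ocal^\ast$. Your version merely spells out the $(\Ocal 2)$ step (via Corollary~\ref{cor:doubleast}) in slightly more detail than the paper does.
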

\begin{proof}
	Observe that the axioms {\em ($\Ccal i$)} for $\Ocal$ are equivalent to the axioms {\em ($\Ccal^\ast i$)} for $\Ocal^\ast$,
	analogously {\em ($\Ccal^\ast i$)} for $\Ocal$ are equivalent to {\em ($\Ccal i$)} for $\Ocal^\ast$; where $i\in \SET{1,2,3,4}$. Furthermore,
	{\em ($\Ocal$1)} is symmetric in itself, so it holds for $\Ocal$ if and only if it holds for $\Ocal^\ast$. Moreover,
	every witness $M(\Ocal)$, that certifies {\em ($\Ocal$2)} for $\Ocal,$ yields a witness $\left( M(\Ocal) \right)^\ast$,
	that certifies {\em ($\Ocal$2)} for $\Ocal^\ast$, and vice versa. Therefore, the triple $\Ocal$ is an oriented matroid
	if and only if the triple $\Ocal^\ast$ is an oriented matroid, so we obtain that $\Ocal^\ast$ is an oriented matroid from the premises of this lemma.
\end{proof}

\needspace{6\baselineskip}
\begin{definition}\label{def:Omu}\PRFRC
	Let $E$ be a finite set and $\mu\in \R^{E\times \SET{1,2,\ldots,r}}$ be a matrix.
	The \deftext[oriented matroid represented by mu@oriented matroid represented by $\mu$]{oriented matroid represented by $\bm \mu$}
	is the uniquely determined oriented matroid \label{n:OcalMu}$\Ocal(\mu) = (E,\Ccal_\mu,\Ccal_\mu^\ast)$ where
	\[ \Ccal_\mu = \SET{C\in \Dcal_\mu ~\middle|~ \forall C'\in\Dcal_\mu \colon \,C'_\pm \subseteq C_\pm \Rightarrow C'_\pm = C_\pm} \]
	and 
	\[ \Dcal_\mu = \SET{E_\alpha \in \sigma E\BSET{\emptyset_{\sigma E}} ~~\middle|~~ \alpha\in \R^E,\,\alpha\not\equiv 0,\,\sum_{e\in E}\alpha(e)\cdot \mu_e = 0 } .\]
\end{definition}

\begin{lemma}\label{lem:OcalMu}\PRFRC
	Let $E$ be a finite set and $\mu\in \R^{E\times \SET{1,2,\ldots,r}}$ be a matrix. Then $\Ocal(\mu)$ is indeed an oriented matroid.
\end{lemma}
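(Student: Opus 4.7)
The plan is to invoke Remark~\ref{rem:sufficientConditionOM}: it suffices to verify that $\Ccal_\mu$ satisfies the signed circuit axioms {\em ($\Ccal$1)--($\Ccal$3)} together with the weaker elimination {\em ($\Ccal$4')}, since the cocircuit family $\Ccal_\mu^\ast$, the axioms {\em ($\Ccal^\ast$1)--($\Ccal^\ast$4)}, {\em ($\Ocal$1)}, and {\em ($\Ocal$2)} are then uniquely forced once one knows that the supports of $\Ccal_\mu$ are exactly $\Ccal(M(\mu))$.

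First I would establish the bridge between linear algebra and the underlying matroid: a non-zero $\alpha\in \R^E$ satisfies $\sum_{e\in E}\alpha(e)\mu_e=0$ if and only if $\supp(\alpha)$ is dependent in $M(\mu)$; and for each $C\in\Ccal(M(\mu))$ such an $\alpha$ with $\supp(\alpha)=C$ is unique up to a non-zero scalar, because $\SET{\mu_e~\middle|~ e\in C\BSET{c}}$ is linearly independent for any $c\in C$. Consequently $\SET{C_\pm~\middle|~C\in\Ccal_\mu}=\Ccal(M(\mu))$, which delivers {\em ($\Ocal$2)} on the circuit side. Axiom {\em ($\Ccal$1)} is then immediate from $\alpha\not\equiv 0$; axiom {\em ($\Ccal$2)} follows because $E_{-\alpha}=-E_\alpha$ and $\pm\alpha$ are dependencies together; axiom {\em ($\Ccal$3)} reduces to the uniqueness up to a scalar, which is positive precisely when $X=Y$ and negative precisely when $X=-Y$.

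The main obstacle is {\em ($\Ccal$4')}. Given $X,Y\in\Ccal_\mu$ with $X\not=-Y$ and $e\in X_+\cap Y_-$, pick $\alpha,\beta\in\R^E$ with $E_\alpha=X$, $E_\beta=Y$ and form
\[ \gamma \,=\, \bigl(-\beta(e)\bigr)\cdot\alpha \,+\, \alpha(e)\cdot\beta, \]
in which both coefficients are strictly positive. Then $\gamma(e)=0$ and $\sum_f \gamma(f)\mu_f=0$; a case analysis on the signs of $f$ with respect to $X$ and $Y$ shows that $\gamma(f)>0$ for every $f\in (X_+\cup Y_+)\BS\sep(X,Y)$, that $\gamma(f)<0$ for every $f\in (X_-\cup Y_-)\BS\sep(X,Y)$, and that $\gamma(f)$ can vanish only when $f\in\sep(X,Y)$, so $\gamma_+\subseteq X_+\cup Y_+$ and $\gamma_-\subseteq X_-\cup Y_-$. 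The combination $\gamma$ is not identically zero: $\gamma\equiv 0$ would make $\alpha$ and $\beta$ positive scalar multiples of one another, forcing $X=Y$ and contradicting $X(e)=+1$, $Y(e)=-1$. Hence $E_\gamma\in\Dcal_\mu$, and since every dependency supports at least one circuit of $M(\mu)$, there is a minimal-support element $Z\in\Ccal_\mu$ with $Z_\pm\subseteq (E_\gamma)_\pm$, $e\notin Z_\pm$, $Z_+\subseteq X_+\cup Y_+$, and $Z_-\subseteq X_-\cup Y_-$, as required.

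To finish, I would define $\Ccal_\mu^\ast$ from the row space of $\mu$: for every $\rho\in\R^{\SET{1,\ldots,r}}$ set $\beta_\rho(e)=(\rho\mu)(e)$ and let $\Ccal_\mu^\ast$ consist of those $E_{\beta_\rho}$ whose support is inclusion-minimal among non-empty supports arising this way. The same argument as above, applied to the row space in place of the right null space, yields {\em ($\Ccal^\ast$1)--($\Ccal^\ast$4)} and gives {\em ($\Ocal$2)} on the cocircuit side. Orthogonality {\em ($\Ocal$1)} then reduces to the numerical identity $\sum_e \alpha(e)\cdot (\rho\mu)(e) = \rho\cdot \sum_e \alpha(e)\mu_e = 0$, which rules out condition {\em (i)} of Definition~\ref{def:XorthoY} failing together with {\em (ii)}: if $\alpha(e)\cdot(\rho\mu)(e)\not=0$ for some $e$, the vanishing sum forces a cancellation of opposite signs at some other index $f$. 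Uniqueness of the full triple $\Ocal(\mu)$ is exactly the content of Remark~\ref{rem:sufficientConditionOM}.
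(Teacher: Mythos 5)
Your overall route is the paper's: reduce via Remark~\ref{rem:sufficientConditionOM} to the circuit axioms \emph{($\Ccal$1)}--\emph{($\Ccal$3)} and \emph{($\Ccal$4')}, and eliminate $e$ with the combination $\gamma=-\beta(e)\cdot\alpha+\alpha(e)\cdot\beta$, which is the paper's $\alpha-\frac{\alpha(f)}{\alpha'(f)}\alpha'$ up to a positive scalar; the sign analysis giving $\gamma_+\subseteq X_+\cup Y_+$ and $\gamma_-\subseteq X_-\cup Y_-$ is correct. (Small slip: $\gamma\equiv 0$ would make $\alpha$ a \emph{negative} multiple of $\beta$, forcing $X=-Y$ rather than $X=Y$; either way the hypothesis is contradicted.)

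The one genuine gap is the final step of \emph{($\Ccal$4')}. From $E_\gamma\in\Dcal_\mu$ you pass to a minimal-support $Z\in\Ccal_\mu$ with $Z_\pm\subseteq\left(E_\gamma\right)_\pm$ and assert $Z_+\subseteq X_+\cup Y_+$ and $Z_-\subseteq X_-\cup Y_-$. That does not follow from support containment alone: the signs of a signed circuit supported on a circuit $C\subseteq\left(E_\gamma\right)_\pm$ are determined (up to global negation) by the linear dependency on $C$, and nothing forces them to agree with the signs of $\gamma$ outside $\sep(X,Y)$. What you actually need is that $E_\gamma$ \emph{conformally} contains a signed circuit, i.e.\ some $Z\in\Ccal_\mu$ with $Z\subseteq_\sigma E_\gamma$. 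The paper closes exactly this point constructively: if $E_\gamma\notin\Ccal_\mu$, pick $Z'\in\Ccal_\mu$ with $Z'_\pm\subsetneq\left(E_\gamma\right)_\pm$ and a coefficient vector $\gamma'$ for it, choose $f\in Z'_\pm$ minimizing $\left|\gamma(f)/\gamma'(f)\right|$, and replace $\gamma$ by $\gamma-\frac{\gamma(f)}{\gamma'(f)}\gamma'$; the minimal-ratio choice prevents any sign flips, so the support strictly shrinks while remaining a signed subset of $E_\gamma$, and iterating terminates in the required conformal signed circuit. Without this (or an equivalent conformal-decomposition lemma) the elimination axiom is not established. The cocircuit and orthogonality material you add at the end is fine but redundant, since the Remark already forces $\Ccal_\mu^\ast$ once the circuit side is done.
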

\begin{proof}\PRFRC
By Remark~\ref{rem:sufficientConditionOM}, it suffices to show that {\em ($\Ccal$1)}, {\em ($\Ccal$2)}, {\em ($\Ccal$3)}, and {\em ($\Ccal$4')} hold for $\Ccal_\mu$
 in Definition~\ref{def:Omu}.
{\em ($\Ccal$1)} is obvious from the construction.
Let $C\in \Ccal_\mu$, then there is a vector $\alpha\in \R^E$ such that
 $\sum_{e\in E}\alpha(e)\cdot \mu_e = 0$, thus there is $\alpha'\in \R^E$ with $\alpha'(e)=-\alpha(e)$
 such that  $$\sum_{e\in E}\alpha'(e)\cdot \mu_e = -\sum_{e\in E}\alpha(e)\cdot \mu_e = 0.$$ Clearly, $E_{\alpha'} = -E_\alpha$ and therefore $\left( E_\alpha \right)_\pm = \left( E_{\alpha'}  \right)_\pm$. Thus $-C\in\Ccal_\mu$, so
 {\em ($\Ccal$2)} holds.

 \bSep\PRFRC
 We give an indirect argument for {\em ($\Ccal$3)}: Let $X,Y\in \Ccal_\mu$ with $X_\pm = Y_\pm$ and
 \linebreak $Y\notin \SET{X,-X}$, such that
 $X_\pm$ is minimal in $\Ccal_\mu$ with respect to set-inclusion $\subseteq$. There is an element 
$f\in \left( X_+\cap Y_+ \right)\cup \left( X_-\cap Y_- \right)$
and an element $f'\in \sep(X,Y)$. Now let $\alpha,\alpha'\in \R^E$ with $\sum_{e\in E}\alpha(e)\cdot\mu_e
=\sum_{e\in E}\alpha'(e)\cdot\mu_e = 0$ such that $E_\alpha = X$ and $E_{\alpha'} = Y$. Let $\beta\in\R^E$ where
 $$\beta(e) = \alpha(e) - \frac{\alpha(f)}{\alpha'(f)}\alpha'(e)$$ for all $e\in E$. Then
 \begin{align*}
 	\sum_{e\in E}\beta(e)\cdot \mu_e & = \sum_{e\in E}\left(\alpha(e) - \frac{\alpha(f)}{\alpha'(f)}\alpha'(e)  \right)\cdot \mu_e \\
 	& = \left( \sum_{e\in E}\alpha(e)\cdot\mu_e\right) - \frac{\alpha(f)}{\alpha'(f)}\left(\sum_{e\in E}\alpha'(e)\cdot\mu_e\right) \\
 	& = 0,
 \end{align*}
 with $\beta(f) = 0$ and $$\beta(f') = \alpha(f') - \frac{\alpha(f)}{\alpha'(f)}\cdot \alpha'(f) \not= 0$$
 since $f'\in \sep(E_\alpha,E_{\alpha'})$, therefore
  $\emptyset \not= \left( E_\beta \right)_\pm \subsetneq \left( E_\alpha \right)_\pm$ which contradicts the minimality 
 of $X_\pm$. Therefore our assumption must be wrong and $Y\in \SET{X,-X}$.

 \bSep\PRFRC
 The proof of {\em ($\Ccal$4')} is similar: Let $X,Y\in\Ccal_\mu$ with $X\not= -Y$, and let $f\in X_+\cap Y_-$. Again
  let $\alpha,\alpha'\in \R^E$ with $\sum_{e\in E}\alpha(e)\cdot\mu_e
=\sum_{e\in E}\alpha'(e)\cdot\mu_e = 0$ such that $E_\alpha = X$ and $E_{\alpha'} = Y$. Define $\beta\in \R^E$ as above,
then again $\sum_{e\in E}\beta(e)\cdot\mu_e = 0$ and $\beta(f) = 0$. Since $X\not= -Y$, and obviously $X\not= Y$,
we obtain that $X_\pm \not= Y_\pm$ from {\em ($\Ccal$3)}. Thus there is an element $g\in \left( X_\pm \cap Y_0  \right) \cup \left( X_0\cap Y_\pm \right)$. Since either $\alpha(g) = 0$ or $\alpha'(g) = 0$, we obtain that
 $$\beta(g) = \alpha(g) -\frac{\alpha(f)}{\alpha'(f)}\alpha'(g) \not= 0.$$
 So $\emptyset_{\sigma E} \not= E_\beta \in \Dcal_\mu$. 
 Furthermore, for all $g\in \left( X_\pm \cap Y_0  \right)$,
 we have $\beta(g) = \alpha(g)$, and thus $E_\beta(g) = X(g)$.
 Also,
 for all $g\in \left(X_0\cap Y_\pm\right)$ we have $$\beta(g) = -\frac{\alpha(f)}{\alpha'(f)}\alpha'(g)$$ and 
 since $\sgn\left(-\frac{\alpha(f)}{\alpha'(f)}\right) = 1$, 
 we have $E_\beta(g) = Y(g)$. Finally, for all $g\in X_0\cap Y_0$, we clearly have $\beta(g) = 0$.
 Thus we found $Z=E_\beta \in \Dcal_\mu$ with $Z(f) = 0$, $Z_+ \subseteq X_+ \cup Y_+$, and $Z_- \subseteq X_- \cup Y_-$.
 We claim that there is some $Z'\in\Ccal_\mu$ with $Z' \subseteq_\sigma Z$, yielding the desired signed circuit for {\em ($\Ccal$4')}.
 We give a constructive argument for this claim.
 Assume that $Z\notin \Ccal_\mu$, then there is an $Z'\in \Ccal_\mu$ such that $Z'_\pm \subsetneq Z_\pm$.
 Let $\gamma\in \R^E$ with $\sum_{e\in E}\gamma(e)\cdot\mu_e = 0$ such that $E_\gamma = Z'$.
 Let $f\in Z'_\pm \cap Z_\pm$ such that $\left| \frac{\beta(f)}{\gamma(f)} \right|$ is minimal, thus for all $f'\in Z'_\pm \cap Z_\pm$ we have
 \[ \left| \beta(f')  \right| \geq \left| \frac{\beta(f)}{\gamma(f)} \gamma(f') \right|.\]
 Therefore if we let $\delta\in \R^E$ such that $\delta(e) = \beta(e) - \frac{\beta(f)}{\gamma(f)}\gamma(e)$,
 we have $\sum_{e\in E}\delta(e)\cdot \mu_e = 0$ and $\emptyset_{\sigma E} \not= E_\delta \subsetneq_\sigma E_\beta$,
 guaranteed by the choice of $f$.
 So we found some $Z'' = E_\delta \in \Dcal_\mu$ with $Z'' \subsetneq_\sigma Z$. 
If $Z''\in \Ccal_\mu$ we are done, otherwise we continue the last construction where $Z''$ takes on the role of $Z$.
Since $E$ is finite and our construction
strictly reduces the cardinality of the support of the signed subset in question, we finally 
construct a signed subset $Z^{(2n)'}$ with minimal possible support, and therefore
we eventually find some $Z^{(2n)'}\in \Ccal_\mu$
with $Z^{(2n)'} \subseteq_\sigma Z$.
\end{proof}

\begin{corollary}\label{cor:MOmuEQMmu}\PRFRC
	Let $E,C$ be finite sets, and $\mu\in \R^{E\times C}$ a real matrix.
	Then $$M(\Ocal(\mu)) = M(\mu),$$ i.e. 
	the underlying matroid of the oriented matroid represented by $\mu$ is the matroid represented by $\mu$.
\end{corollary}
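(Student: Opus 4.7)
The plan is to unpack both sides to their descriptions in terms of minimal linear dependencies among the rows $\{\mu_e : e \in E\}$ and verify that these descriptions agree. Since a matroid is uniquely determined by its family of circuits (Definition~\ref{def:indepAxioms} and the discussion after Definition~\ref{def:matroidLoop}), it suffices to prove the equality of families
\[ \Ccal(M(\mu)) \,=\, \SET{C_\pm ~\middle|~ C\in \Ccal_\mu}. \]
Then {\em ($\Ocal$2)} applied to $\Ocal(\mu)$, together with the uniqueness of the underlying matroid, will yield the claim.

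First I would characterise $\Ccal(M(\mu))$ intrinsically. By Definition~\ref{def:Mmu}, a set $X\subseteq E$ is independent in $M(\mu)$ iff the family $\SET{\mu_x ~\middle|~ x\in X}$ is linearly independent in $\Kbm^C$ (see the proof of Lemma in Definition~\ref{def:Mmu}). Hence $C\subseteq E$ is a circuit of $M(\mu)$ iff $\SET{\mu_c ~\middle|~ c\in C}$ is linearly dependent yet every proper subfamily is linearly independent; equivalently, iff $C$ is the support of some non-zero $\alpha \in \R^E$ with $\sum_{e\in E}\alpha(e)\mu_e = 0$, and no proper non-empty subset of $C$ arises as such a support.

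Next I would compare this to the right-hand side. By Definition~\ref{def:Omu}, the elements of $\Ccal_\mu$ are precisely those $E_\alpha \in \sigma E \setminus \{\emptyset_{\sigma E}\}$ whose signed support $(E_\alpha)_\pm = \supp(\alpha)$ is $\subseteq$-minimal among all such supports coming from non-trivial linear dependencies. So $\SET{C_\pm ~\middle|~ C\in \Ccal_\mu}$ is exactly the family of $\subseteq$-minimal supports of non-trivial linear dependencies in $\R^E$ with respect to $\mu$ --- which, by the previous paragraph, is $\Ccal(M(\mu))$. For the forward inclusion, given $C\in\Ccal(M(\mu))$, any non-zero $\alpha$ witnessing the dependency of $C$ must have $\supp(\alpha) = C$ (otherwise a proper subset would be dependent), and the corresponding $E_\alpha$ lies in $\Ccal_\mu$ by minimality. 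For the reverse inclusion, if $E_\alpha \in \Ccal_\mu$ with $C = (E_\alpha)_\pm$, then $C$ is dependent in $M(\mu)$; any proper dependent subset would produce some $E_\beta \in \Dcal_\mu$ with $(E_\beta)_\pm \subsetneq C$, contradicting the minimality built into $\Ccal_\mu$.

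Finally, since Lemma~\ref{lem:OcalMu} guarantees that $\Ocal(\mu)$ is an oriented matroid, {\em ($\Ocal$2)} provides a uniquely determined underlying matroid $M(\Ocal(\mu))$ whose circuit family is $\SET{C_\pm ~\middle|~ C\in \Ccal_\mu}$. The equality of circuit families established above then forces $M(\Ocal(\mu)) = M(\mu)$. I do not foresee any serious obstacle: the whole argument is a careful translation between ``minimal dependent row-sets'' (the matroid view) and ``minimal supports of sign vectors of non-trivial coefficient vectors'' (the oriented-matroid view), and the bookkeeping in handling the edge case of the zero signed set is routine once one observes that every non-trivial dependency produces a non-empty support.
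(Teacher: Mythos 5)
Your proof is correct and follows the same route as the paper, which simply declares the claim obvious from Definitions~\ref{def:Mmu} and \ref{def:Omu}; you have just written out the standard identification of circuits of $M(\mu)$ with minimal supports of non-trivial linear dependencies among the rows, which is exactly what makes the paper's one-line proof work.
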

\begin{proof}\PRFRC
	Obvious from Definition~\ref{def:Mmu} and Definition~\ref{def:Omu}. 
\end{proof}

\begin{definition}\PRFRC
	Let $M=(E,\Ical)$ be a matroid. Then $M$ shall be called \deftext{orientable matroid},
	if there is an oriented matroid $\Ocal=(E,\Ccal,\Ccal^\ast)$, such that $M = M(\Ocal)$,
	or equivalently
	$\SET{C_\pm \mid C\in \Ccal} = \Ccal(M)$. Furthermore, every oriented matroid $\Ocal$ with this property
	shall be called \deftext[orientation of M@orientation of $M$]{orientation of $\bm M$}.
\end{definition}

\begin{corollary}\label{cor:Rorientable}\PRFRC
	Every matroid that can be represented over $\R$ is orientable.
\end{corollary}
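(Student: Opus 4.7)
The plan is to simply compose the two preceding results. Suppose $M=(E,\Ical)$ is representable over $\R$; by Definition~\ref{def:representableMoverK} there exist a finite set $C$ and a matrix $\mu'\in \R^{E\times C}$ with $M = M(\mu')$.

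First I would reduce to the form required by Definition~\ref{def:Omu}, namely a matrix whose column index set is $\SET{1,2,\ldots,r}$ with $r=\rk_M(E)$. This is routine: the column span of $\mu'$ has dimension $r$, so by selecting $r$ linearly independent columns of $\mu'$ (equivalently, by passing to a standard representation as in Remark~\ref{rem:stdRep}) and renaming the column indices by $\SET{1,2,\ldots,r}$, we obtain a matrix $\mu\in\R^{E\times\SET{1,2,\ldots,r}}$ with $M(\mu)=M(\mu')=M$; the column operations involved preserve linear dependence among the rows (Remark~\ref{rem:colops}).

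Next I would apply Lemma~\ref{lem:OcalMu} to conclude that $\Ocal(\mu)$ is an oriented matroid, and then Corollary~\ref{cor:MOmuEQMmu} to identify its underlying matroid as
\[ M(\Ocal(\mu)) \,=\, M(\mu) \,=\, M. \]
Hence $\Ocal(\mu)$ is an orientation of $M$, and $M$ is therefore orientable.

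There is no real obstacle here; the two cited results do essentially all the work. The only mild bookkeeping point is the mismatch between the arbitrary column index set $C$ permitted by Definition~\ref{def:representableMoverK} and the specific index set $\SET{1,2,\ldots,r}$ demanded by Definition~\ref{def:Omu}, which is resolved by the column-reduction remark above.
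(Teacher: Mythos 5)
Your proposal is correct and follows the paper's own route exactly: the paper's proof is the one-line citation of Lemma~\ref{lem:OcalMu} and Corollary~\ref{cor:MOmuEQMmu}. The extra bookkeeping you supply — reindexing the columns to $\SET{1,2,\ldots,r}$ so that Definition~\ref{def:Omu} literally applies — is a legitimate small gap in the paper's statement that you are right to close, but it does not change the argument.
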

\begin{proof}\PRFRC
	Follows from Lemma~\ref{lem:OcalMu} and Corollary~\ref{cor:MOmuEQMmu}.
\end{proof}


\noindent Thus every gammoid is orientable (Lemma~\ref{lem:gammoidOrientable}).

\begin{definition}\PRFRC
	Let $\Ocal=(E,\Ccal,\Ccal^\ast)$ be an oriented matroid. We say that $\Ocal$ is \deftext[realizable oriented matroid]{realizable},
	if there is a finite set $Q$ and a matrix $\mu\in \R^{E\times Q}$, such that $\Ocal = \Ocal(\mu)$.
	If there is no such matrix, we shall call $\Ocal$ \deftext[non-realizable oriented matroid]{non-realizable}.
\end{definition}

\begin{remark} \PRFRC
Of course, not every oriented matroid arises in this way from a matrix over a linearly ordered field. 
 H.~Miyata, S.~Moriyama, and K.~Fukuda published
a listing of all non-realizable oriented matroids\footnote{See: \url{http://www-imai.is.s.u-tokyo.ac.jp/~hmiyata/oriented_matroids/} \cite{OM2}} of rank $4$ with $\left| E \right| = 8$, and of rank $3$ and rank $6$
 with $\left| E \right| = 9$; the results are based on the oriented matroid database\footnote{See: \url{http://www.om.math.ethz.ch/} \cite{OM1}} 
 by L.~Finschi and K.~Fukuda.
 \end{remark}

\begin{example}[\cite{BlVSWZ99}, p.20]\label{ex:NonRepresentableOrientationOfUniform}\label{ex:UniformNonRealizable}\index{RS(8)@$\mathtt{RS}(8)$}\PRFR{Apr 5th}
 The oriented matroid we want to present now has been named $\mathtt{RS}(8)$. 
 It is an orientation of the rank $4$ uniform matroid with $8$ elements, and therefore clearly an orientation of a gammoid.
 It has $2\cdot \binom{8}{5} = 112$ signed circuits as well as $112$ signed cocircuits.
 Since these signed subsets come in pairs $X$ and $-X$, we only have to list half of them.
 Let $E=\SET{1,2,\ldots,8}$. Then $\mathtt{RS}(8) = (E,\Ccal, \Ccal^\ast)$ where
 \begin{align*}
 	\Ccal = \pm \{ & 
\SET{ \hphantom{-} 1 , - 2 , - 3 , \hphantom{-} 4 , - 5 },\,
\SET{ \hphantom{-} 1 , \hphantom{-} 2 , - 3 , \hphantom{-} 4 , - 6 },\,
\SET{ \hphantom{-} 1 , - 2 , \hphantom{-} 3 , \hphantom{-} 4 , - 7 },\,
 \\ &
\SET{ \hphantom{-} 1 , - 2 , - 3 , \hphantom{-} 4 , - 8 },\,
\SET{ - 1 , \hphantom{-} 2 , - 3 , \hphantom{-} 5 , - 6 },\,
\SET{ - 1 , \hphantom{-} 2 , \hphantom{-} 3 , \hphantom{-} 5 , - 7 },\,
 \\ &
\SET{ - 1 , \hphantom{-} 2 , \hphantom{-} 3 , \hphantom{-} 5 , - 8 },\,
\SET{ \hphantom{-} 1 , - 2 , \hphantom{-} 3 , \hphantom{-} 6 , - 7 },\,
\SET{ - 1 , - 2 , \hphantom{-} 3 , \hphantom{-} 6 , - 8 },\,
 \\ &
\SET{ - 1 , \hphantom{-} 2 , - 3 , \hphantom{-} 7 , - 8 },\,
\SET{ - 1 , \hphantom{-} 2 , - 4 , \hphantom{-} 5 , - 6 },\,
\SET{ \hphantom{-} 1 , - 2 , \hphantom{-} 4 , - 5 , - 7 },\,
 \\ &
\SET{ - 1 , - 2 , \hphantom{-} 4 , \hphantom{-} 5 , - 8 },\,
\SET{ \hphantom{-} 1 , \hphantom{-} 2 , \hphantom{-} 4 , - 6 , - 7 },\,
\SET{ - 1 , - 2 , \hphantom{-} 4 , \hphantom{-} 6 , - 8 },\,
 \\ &
\SET{ - 1 , \hphantom{-} 2 , - 4 , \hphantom{-} 7 , \hphantom{-} 8 },\,
\SET{ - 1 , \hphantom{-} 2 , \hphantom{-} 5 , - 6 , - 7 },\,
\SET{ - 1 , \hphantom{-} 2 , \hphantom{-} 5 , - 6 , - 8 },\,
 \\ &
\SET{ \hphantom{-} 1 , - 2 , - 5 , - 7 , \hphantom{-} 8 },\,
\SET{ \hphantom{-} 1 , \hphantom{-} 2 , - 6 , - 7 , \hphantom{-} 8 },\,
\SET{ - 1 , \hphantom{-} 3 , - 4 , \hphantom{-} 5 , \hphantom{-} 6 },\,
 \\ &
\SET{ \hphantom{-} 1 , - 3 , - 4 , - 5 , \hphantom{-} 7 },\,
\SET{ \hphantom{-} 1 , - 3 , - 4 , - 5 , \hphantom{-} 8 },\,
\SET{ \hphantom{-} 1 , \hphantom{-} 3 , \hphantom{-} 4 , - 6 , - 7 },\,
 \\ &
\SET{ \hphantom{-} 1 , - 3 , \hphantom{-} 4 , - 6 , - 8 },\,
\SET{ - 1 , \hphantom{-} 3 , - 4 , - 7 , \hphantom{-} 8 },\,
\SET{ - 1 , \hphantom{-} 3 , \hphantom{-} 5 , \hphantom{-} 6 , - 7 },\,
 \\ &
\SET{ - 1 , \hphantom{-} 3 , \hphantom{-} 5 , \hphantom{-} 6 , - 8 },\,
\SET{ \hphantom{-} 1 , - 3 , - 5 , \hphantom{-} 7 , - 8 },\,
\SET{ \hphantom{-} 1 , \hphantom{-} 3 , - 6 , - 7 , \hphantom{-} 8 },\,
 \\ &
\SET{ \hphantom{-} 1 , \hphantom{-} 4 , - 5 , - 6 , - 7 },\,
\SET{ - 1 , \hphantom{-} 4 , \hphantom{-} 5 , - 6 , - 8 },\,
\SET{ \hphantom{-} 1 , - 4 , - 5 , - 7 , \hphantom{-} 8 },\,
 \\ &
\SET{ \hphantom{-} 1 , \hphantom{-} 4 , - 6 , - 7 , - 8 },\,
\SET{ - 1 , \hphantom{-} 5 , \hphantom{-} 6 , \hphantom{-} 7 , - 8 },\,
\SET{ - 2 , \hphantom{-} 3 , - 4 , - 5 , \hphantom{-} 6 },\,
 \\ &
\SET{ \hphantom{-} 2 , - 3 , - 4 , - 5 , \hphantom{-} 7 },\,
\SET{ \hphantom{-} 2 , \hphantom{-} 3 , - 4 , - 5 , \hphantom{-} 8 },\,
\SET{ \hphantom{-} 2 , - 3 , \hphantom{-} 4 , - 6 , \hphantom{-} 7 },\,
 \\ &
\SET{ \hphantom{-} 2 , \hphantom{-} 3 , - 4 , - 6 , \hphantom{-} 8 },\,
\SET{ - 2 , \hphantom{-} 3 , - 4 , - 7 , \hphantom{-} 8 },\,
\SET{ - 2 , \hphantom{-} 3 , \hphantom{-} 5 , \hphantom{-} 6 , - 7 },\,
 \\ &
\SET{ - 2 , \hphantom{-} 3 , - 5 , \hphantom{-} 6 , - 8 },\,
\SET{ \hphantom{-} 2 , - 3 , - 5 , \hphantom{-} 7 , - 8 },\,
\SET{ \hphantom{-} 2 , - 3 , - 6 , \hphantom{-} 7 , \hphantom{-} 8 },\,
 \\ &
\SET{ \hphantom{-} 2 , \hphantom{-} 4 , \hphantom{-} 5 , - 6 , - 7 },\,
\SET{ - 2 , \hphantom{-} 4 , - 5 , \hphantom{-} 6 , - 8 },\,
\SET{ \hphantom{-} 2 , - 4 , - 5 , \hphantom{-} 7 , \hphantom{-} 8 },\,
 \\ &
\SET{ \hphantom{-} 2 , - 4 , - 6 , \hphantom{-} 7 , \hphantom{-} 8 },\,
\SET{ - 2 , - 5 , \hphantom{-} 6 , \hphantom{-} 7 , - 8 },\,
\SET{ \hphantom{-} 3 , \hphantom{-} 4 , \hphantom{-} 5 , - 6 , - 7 },\,
 \\ &
\SET{ - 3 , \hphantom{-} 4 , \hphantom{-} 5 , - 6 , - 8 },\,
\SET{ \hphantom{-} 3 , - 4 , - 5 , - 7 , \hphantom{-} 8 },\,
\SET{ \hphantom{-} 3 , - 4 , - 6 , - 7 , \hphantom{-} 8 },\,
 \\ &
\SET{ - 3 , - 5 , \hphantom{-} 6 , \hphantom{-} 7 , - 8 },\,
\SET{ - 4 , - 5 , \hphantom{-} 6 , \hphantom{-} 7 , \hphantom{-} 8 }
 		\}
 \end{align*}
and where $\Ccal^\ast$ is uniquely determined by $\Ccal$.\footnote
{There has not been a complete example of an oriented matroid in this work so far, thus we present $\Ccal^\ast$
in order for the reader to check their understanding of the signed cocircuits of an oriented matroid with a non-trivial example.
 \begin{align*}
 	\Ccal^\ast = \pm \{ &
\SET{ - 1 , \hphantom{-} 2 , \hphantom{-} 3 , \hphantom{-} 4 , - 5 },\,
\SET{ - 1 , - 2 , - 3 , - 4 , - 6 },\,
\SET{ \hphantom{-} 1 , \hphantom{-} 2 , \hphantom{-} 3 , \hphantom{-} 4 , \hphantom{-} 7 },\,
\SET{ - 1 , - 2 , - 3 , \hphantom{-} 4 , \hphantom{-} 8 },\,
 \\ &
\SET{ - 1 , - 2 , - 3 , - 5 , - 6 },\,
\SET{ - 1 , \hphantom{-} 2 , - 3 , - 5 , - 7 },\,
\SET{ - 1 , \hphantom{-} 2 , \hphantom{-} 3 , - 5 , - 8 },\,
\SET{ \hphantom{-} 1 , - 2 , \hphantom{-} 3 , - 6 , \hphantom{-} 7 },\,
 \\ &
\SET{ - 1 , - 2 , - 3 , - 6 , \hphantom{-} 8 },\,
\SET{ \hphantom{-} 1 , \hphantom{-} 2 , \hphantom{-} 3 , \hphantom{-} 7 , - 8 },\,
\SET{ \hphantom{-} 1 , \hphantom{-} 2 , - 4 , \hphantom{-} 5 , \hphantom{-} 6 },\,
\SET{ \hphantom{-} 1 , - 2 , - 4 , \hphantom{-} 5 , \hphantom{-} 7 },\,
 \\ &
\SET{ \hphantom{-} 1 , \hphantom{-} 2 , - 4 , \hphantom{-} 5 , - 8 },\,
\SET{ - 1 , - 2 , - 4 , - 6 , \hphantom{-} 7 },\,
\SET{ - 1 , - 2 , - 4 , - 6 , - 8 },\,
\SET{ \hphantom{-} 1 , \hphantom{-} 2 , - 4 , - 7 , - 8 },\,
 \\ &
\SET{ - 1 , - 2 , - 5 , - 6 , \hphantom{-} 7 },\,
\SET{ \hphantom{-} 1 , \hphantom{-} 2 , \hphantom{-} 5 , \hphantom{-} 6 , \hphantom{-} 8 },\,
\SET{ \hphantom{-} 1 , - 2 , \hphantom{-} 5 , \hphantom{-} 7 , \hphantom{-} 8 },\,
\SET{ \hphantom{-} 1 , \hphantom{-} 2 , \hphantom{-} 6 , - 7 , - 8 },\,
 \\ &
\SET{ - 1 , \hphantom{-} 3 , \hphantom{-} 4 , - 5 , - 6 },\,
\SET{ \hphantom{-} 1 , \hphantom{-} 3 , \hphantom{-} 4 , \hphantom{-} 5 , \hphantom{-} 7 },\,
\SET{ - 1 , \hphantom{-} 3 , \hphantom{-} 4 , - 5 , \hphantom{-} 8 },\,
\SET{ \hphantom{-} 1 , \hphantom{-} 3 , \hphantom{-} 4 , - 6 , \hphantom{-} 7 },\,
 \\ &
\SET{ - 1 , - 3 , \hphantom{-} 4 , \hphantom{-} 6 , \hphantom{-} 8 },\,
\SET{ \hphantom{-} 1 , - 3 , - 4 , - 7 , - 8 },\,
\SET{ - 1 , - 3 , - 5 , - 6 , - 7 },\,
\SET{ - 1 , \hphantom{-} 3 , - 5 , - 6 , - 8 },\,
 \\ &
\SET{ \hphantom{-} 1 , \hphantom{-} 3 , \hphantom{-} 5 , \hphantom{-} 7 , - 8 },\,
\SET{ - 1 , - 3 , \hphantom{-} 6 , - 7 , \hphantom{-} 8 },\,
\SET{ \hphantom{-} 1 , - 4 , \hphantom{-} 5 , \hphantom{-} 6 , \hphantom{-} 7 },\,
\SET{ - 1 , \hphantom{-} 4 , - 5 , \hphantom{-} 6 , \hphantom{-} 8 },\,
 \\ &
\SET{ \hphantom{-} 1 , - 4 , \hphantom{-} 5 , \hphantom{-} 7 , - 8 },\,
\SET{ \hphantom{-} 1 , - 4 , - 6 , - 7 , - 8 },\,
\SET{ \hphantom{-} 1 , \hphantom{-} 5 , \hphantom{-} 6 , \hphantom{-} 7 , \hphantom{-} 8 },\,
\SET{ - 2 , - 3 , - 4 , \hphantom{-} 5 , - 6 },\,
 \\ &
\SET{ \hphantom{-} 2 , \hphantom{-} 3 , \hphantom{-} 4 , - 5 , \hphantom{-} 7 },\,
\SET{ - 2 , - 3 , \hphantom{-} 4 , \hphantom{-} 5 , \hphantom{-} 8 },\,
\SET{ - 2 , \hphantom{-} 3 , - 4 , - 6 , \hphantom{-} 7 },\,
\SET{ \hphantom{-} 2 , - 3 , \hphantom{-} 4 , \hphantom{-} 6 , \hphantom{-} 8 },\,
 \\ &
\SET{ \hphantom{-} 2 , \hphantom{-} 3 , \hphantom{-} 4 , \hphantom{-} 7 , \hphantom{-} 8 },\,
\SET{ - 2 , \hphantom{-} 3 , - 5 , - 6 , \hphantom{-} 7 },\,
\SET{ - 2 , - 3 , \hphantom{-} 5 , - 6 , \hphantom{-} 8 },\,
\SET{ - 2 , - 3 , \hphantom{-} 5 , - 7 , \hphantom{-} 8 },\,
 \\ &
\SET{ \hphantom{-} 2 , - 3 , \hphantom{-} 6 , - 7 , - 8 },\,
\SET{ - 2 , - 4 , \hphantom{-} 5 , - 6 , \hphantom{-} 7 },\,
\SET{ \hphantom{-} 2 , \hphantom{-} 4 , - 5 , \hphantom{-} 6 , \hphantom{-} 8 },\,
\SET{ \hphantom{-} 2 , - 4 , - 5 , - 7 , - 8 },\,
 \\ &
\SET{ \hphantom{-} 2 , \hphantom{-} 4 , \hphantom{-} 6 , \hphantom{-} 7 , \hphantom{-} 8 },\,
\SET{ - 2 , \hphantom{-} 5 , - 6 , \hphantom{-} 7 , \hphantom{-} 8 },\,
\SET{ - 3 , - 4 , \hphantom{-} 5 , \hphantom{-} 6 , - 7 },\,
\SET{ - 3 , \hphantom{-} 4 , \hphantom{-} 5 , \hphantom{-} 6 , \hphantom{-} 8 },\,
 \\ &
\SET{ - 3 , - 4 , - 5 , - 7 , - 8 },\,
\SET{ - 3 , - 4 , \hphantom{-} 6 , - 7 , - 8 },\,
\SET{ \hphantom{-} 3 , - 5 , - 6 , \hphantom{-} 7 , - 8 },\,
\SET{ - 4 , - 5 , - 6 , - 7 , - 8 } \}.
 \end{align*}}
%
	Proposition~1.5.1 in \cite{BlVSWZ99} states that the oriented matroid $\mathtt{RS}(8)$ 
	is a non-realizable orientation of the 
	underlying uniform matroid, thus we may expect gammoids to have non-realizable orientations. 
	For the full proof, we refer the reader to p.~23 in \cite{BlVSWZ99}. 
	The idea of the proof is the following: Assume that $\mathtt{RS}(8)$ is realizable, 
	then there is a matrix $\mu\in \R^{\SET{1,2,\ldots, 8}\times\SET{1,2,3,4}}$ 
	such that $\mu\restrict\SET{1,2,3,4}\times\SET{1,2,3,4}$ is an identity matrix. This leaves us with a variable matrix
	$\mu\restrict \SET{5,6,7,8}\times \SET{1,2,3,4}$, for which we would have to find values that yield the 
	correct signed circuits of $\mathtt{RS}(8)$. The signed circuits of $\mathtt{RS}(8)$ can be translated to
	strict inequalities that $\mu$ must obey. For instance, the signed circuit $\SET{1,-2,-3,4,-5}$ states that
	$\mu_5 = \alpha \mu_1 - \beta \mu_2 - \gamma \mu_3 + \delta \mu_4$ must have a solution with $\alpha,\beta,\gamma,\delta > 0$. 
	In this particularly easy case, we obtain the inequalities $\mu(5,1) > 0$, $\mu(5,2) < 0$, $\mu(5,3) < 0$, $\mu(5,4) > 0$.\footnote{In general, the strict inequalities derived are not linear, as Cramer's rule yields polynomial terms eliminating the coefficients of the non-trivial linear combinations of the zero that correspond to signed circuits of $\Ocal$.}
	 The proof of non-realizability is completed by the observation that the constructed
	system of inequalities has no solutions, therefore there is no matrix $\mu$ with $\Ocal(\mu)=\mathtt{RS}(8)$, and $\mathtt{RS}(8)$ is non-realizable.
\end{example}

\PRFRC
\noindent For every realizable oriented matroid of the form $\Ocal(\mu)$, we obtain another realizable oriented matroid $\Ocal(\mu')$ 
where $\mu'$ is obtained from $\mu$
by multiplying an arbitrary set of rows with $-1$.
 Clearly, for the underlying matroids we have $M(\mu) = M(\mu')$. 
 It is easy to see that the next definition carries this operation over to all oriented matroids (\cite{BlVSWZ99}, p.3).

\begin{definition}\label{def:Cflip}\PRFRC
 	Let $E$ be a set, $X\subseteq E$, and $C\in \sigma E$ a signed subset of $E$.
 	The \deftext[flip of a signed subset]{$\bm X$-flip of $\bm C$} is defined to be
 	the signed subset\label{n:Xflip}
 	\[ C_{-X} \colon E\maparrow \SET{-1, 0, 1},\quad e\mapsto \begin{cases} -C(e)&\quad \text{if~} e\in X,\\
 																			\hphantom{-}C(e)&\quad\text{otherwise.} \end{cases} \]
\end{definition}

\begin{definition}\PRFRC
	Let $\Ocal=(E,\Ccal,\Ccal^\ast)$ be an oriented matroid, and let $X\subseteq E$.
	The \deftext[flip reorientation of O@flip reorientation of $\Ocal$]{$\bm X$-flip reorientation of $\bm \Ocal$}
	is the triple $\Ocal_{-X} = (E,\Ccal_{-X},\Ccal_{-X}^\ast)$ where
	 $$\Ccal_{-X} = \SET{C_{-X}~\middle|~\vphantom{C'} C\in\Ccal} \quad\text{and}\quad
	\Ccal_{-X}^\ast = \SET{C'_{-X}~\middle|~ C'\in\Ccal^\ast}.$$
	Let $\Ocal'$ also be an oriented matroid. We say that 
	$\Ocal'$ is a \deftext[reorientation of O@reorientation of $\Ocal$]{reorientation of $\bm \Ocal$}, if
	there is a subset $X\subseteq E$ with $\Ocal' = \Ocal_{-X}$.
\end{definition}

\begin{lemma}\label{lem:reorientationIsOM}\PRFRC
	Let $\Ocal=(E,\Ccal)$ be an oriented matroid, and let $X\subseteq E$. Then $\Ocal_{-X}$ is an oriented matroid.
\end{lemma}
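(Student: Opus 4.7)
The plan is to verify every axiom of Definition~\ref{def:OrientedMatroid} for the triple $\Ocal_{-X} = (E, \Ccal_{-X}, \Ccal^\ast_{-X})$. The organizing idea is to isolate the structural properties of the $X$-flip operator $\Phi_X \colon \sigma E \maparrow \sigma E$, $C \mapsto C_{-X}$, and then transport each axiom for $\Ocal$ across $\Phi_X$ into the corresponding axiom for $\Ocal_{-X}$. First I would record the elementary facts that $\Phi_X$ is an involution, fixes $\emptyset_{\sigma E}$, commutes with negation (so $\Phi_X(-C) = -\Phi_X(C)$), and preserves supports (so $(\Phi_X(C))_\pm = C_\pm$ and $(\Phi_X(C))_0 = C_0$). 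The key additional identity is that $\Phi_X(C)(e) \cdot \Phi_X(D)(e) = C(e) \cdot D(e)$ for every $e \in E$, because either both signs at $e$ get flipped or neither does; this immediately yields both $\sep(\Phi_X(C), \Phi_X(D)) = \sep(C, D)$ and the equivalence $\Phi_X(C) \bot \Phi_X(D) \Leftrightarrow C \bot D$ (checking both clauses of Definition~\ref{def:XorthoY}).

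From these properties the axioms ($\Ccal$1)--($\Ccal$3) and ($\Ccal^\ast$1)--($\Ccal^\ast$3) follow at once from the corresponding axioms for $\Ocal$, since $\Phi_X$ is a support-preserving involution that commutes with negation. Axiom ($\Ocal$1) follows because orthogonality is preserved by $\Phi_X$, and ($\Ocal$2) is witnessed by the very same underlying matroid $M(\Ocal)$, since the families of supports of signed circuits and signed cocircuits are unchanged by $\Phi_X$.

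The main obstacle will be axiom ($\Ccal$4) (with ($\Ccal^\ast$4) being entirely symmetric). Given $X_1, Y_1 \in \Ccal_{-X}$ with $X_1 \neq -Y_1$, $e \in (X_1)_+ \cap (Y_1)_-$, and $f \in (X_1)_\pm \BS \sep(X_1, Y_1)$, I would set $C := \Phi_X(X_1) \in \Ccal$ and $D := \Phi_X(Y_1) \in \Ccal$ and split into two cases depending on whether $e \in X$ or not. If $e \notin X$, then directly $e \in C_+ \cap D_-$ and $f \in C_\pm \BS \sep(C, D)$, so applying ($\Ccal$4) to $(C, D, e, f)$ yields a $Z \in \Ccal$, and $Z_1 := \Phi_X(Z) \in \Ccal_{-X}$ is the desired circuit. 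If $e \in X$, then instead $e \in C_- \cap D_+$; I would apply ($\Ccal$4) to the pair $(-C, -D)$ (legitimate by ($\Ccal$2)) to obtain some $Z \in \Ccal$ with $Z(f) = -C(f)$, and take $Z_1 := \Phi_X(-Z) = -\Phi_X(Z) \in \Ccal_{-X}$. In both cases $e \notin (Z_1)_\pm$ is immediate, and the two inclusions $(Z_1)_+ \subseteq (X_1)_+ \cup (Y_1)_+$ and $(Z_1)_- \subseteq (X_1)_- \cup (Y_1)_-$ reduce, via the support-preservation of $\Phi_X$, to the corresponding inclusions for $Z$.

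The delicate point, and the only place where some care is really required, is arranging the combination of negation and flip in the second case so that $Z_1(f) = X_1(f)$ holds uniformly, regardless of whether $f \in X$; the double sign-change $-\Phi_X(Z)$ is precisely what matches $X_1(f) = \Phi_X(C)(f)$ in both sub-cases, and once this is set up the remaining containments become a routine bookkeeping exercise. The axiom ($\Ccal^\ast$4) is then handled by the same argument with $\Ccal^\ast$ in place of $\Ccal$, completing the verification.
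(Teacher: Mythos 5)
Your proposal is correct and follows essentially the same route as the paper: both proofs transport the axioms across the support-preserving, negation-commuting involution $C\mapsto C_{-X}$, using the key identity $C_{-X}(e)D_{-X}(e)=C(e)D(e)$. The only cosmetic difference is that you verify $(\Ccal^\ast 1)$--$(\Ccal^\ast 4)$ and $(\Ocal 1)$, $(\Ocal 2)$ directly by symmetry, whereas the paper deduces that $\Ccal^\ast_{-X}$ is the cocircuit family from the uniqueness statement in Remark~\ref{rem:sufficientConditionOM}; your careful two-case treatment of $(\Ccal 4)$ simply fills in a step the paper leaves as ``carries over.''
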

\begin{proof}\PRFRC
	For every $X\subseteq E$ and $C\in \sigma E$, it is clear from Definition~\ref{def:Cflip}, that
	\linebreak $\left( C_{-X} \right)_{-X} = C$,
	therefore the map $\phi_X \colon \sigma E\maparrow \sigma E, C\mapsto C_{-X}$ is an involution on $\sigma E$.
	Since $\left( \emptyset_{\sigma E}  \right)_{-X} =  \emptyset_{\sigma E}$, we obtain that $\emptyset_{\sigma E} \notin \Ccal_{-X}$ from $\emptyset_{\sigma E} \notin \Ccal$, thus {\em ($\Ccal$1)} holds.  Furthermore, for any $C\in \sigma E$
	we have
	$\left( C_{-X}  \right)_+ = \left( C_+ \BS X \right) \cup \left( C_- \cap X \right)$ and
	$\left( C_{-X}  \right)_- = \left( C_- \BS X \right) \cup \left( C_+ \cap X \right)$. In particular we have for $C,D\in \sigma E$ that
	$C = -D$ if and only if $C_{-X} = -D_{-X}$.  We also have $C\bot D$ if and only $C_{-X}\bot D_{-X}$: 
	Case {\em (ii)} of Definition~\ref{def:XorthoY} is oblivious of any sign flips in $C$ and $D$ since $C_0 = \left( C_{-X}  \right)_0$
	and $D_0 = \left( D_{-X} \right)_0$; whereas in case {\em (i)} the passage from $C$ and $D$ to $C_{-X}$ and $D_{-X}$, respectively,
	introduces an even amount of sign flips. 
	  So $C(e)D(e) = C_{-X}(e)D_{-X}(e)$ and $C(f)D(f) = C_{-X}(f)D_{-X}(f)$.
	 Therefore {\em ($\Ccal$2)}, {\em ($\Ccal$3)}, and {\em ($\Ccal$4)} carry over from $\Ccal$ to $\Ccal_{-X}$.
	 Since $\SET{C_\pm ~\middle|~ C\in \Ccal} = \SET{C_\pm~\middle|~ C\in \Ccal_{-X}}$,
	 $\SET{C'_\pm ~\middle|~ C'\in \Ccal^\ast} = \SET{C_\pm'~\middle|~ C'\in \Ccal_{-X}^\ast}$,
	 and for all $C\in\Ccal_{-X}$ and $D\in \Ccal_{-X}^\ast$, we have $C\bot D$; 
	  we obtain that $\Ccal_{-X}^\ast$ is
	 indeed the unique family of signed cocircuits of the oriented matroid on $E$ with the family of signed circuits $\Ccal_{-X}$,
	 thus $\Ocal_{-X}$ is an oriented matroid (Remark~\ref{rem:sufficientConditionOM}).
\end{proof}

\begin{corollary}\PRFRC
	Let $\Ocal=(E,\Ccal,\Ccal^\ast)$ be an oriented matroid, and let $X\subseteq E$. Then $M(\Ocal_{-X}) = M(\Ocal)$.
\end{corollary}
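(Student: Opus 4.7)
The plan is to extract the statement directly from what has already been shown in the proof of Lemma~\ref{lem:reorientationIsOM}. The key observation is that the $X$-flip operation of Definition~\ref{def:Cflip} merely toggles signs of entries lying in $X$, and so it preserves the support of every signed subset: for each $C \in \sigma E$ one has
\[
(C_{-X})_\pm = \bigl((C_+ \setminus X) \cup (C_- \cap X)\bigr) \cup \bigl((C_- \setminus X) \cup (C_+ \cap X)\bigr) = C_+ \cup C_- = C_\pm.
\]
The same computation shows that $(C'_{-X})_\pm = C'_\pm$ for any $C' \in \Ccal^\ast$.

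First I would apply this to the family $\Ccal$ of signed circuits of $\Ocal$ to obtain the set equality
\[
\{\,C_\pm \mid C \in \Ccal_{-X}\,\} \;=\; \{\,(C_{-X})_\pm \mid C \in \Ccal\,\} \;=\; \{\,C_\pm \mid C \in \Ccal\,\},
\]
and analogously for $\Ccal^\ast$ and $\Ccal^\ast_{-X}$. By Lemma~\ref{lem:reorientationIsOM} the triple $\Ocal_{-X}$ is itself an oriented matroid, so axiom $(\Ocal 2)$ applies to it and picks out a unique underlying matroid $M(\Ocal_{-X})$ whose circuit family equals $\{C_\pm \mid C \in \Ccal_{-X}\}$ and whose cocircuit family equals $\{C'_\pm \mid C' \in \Ccal_{-X}^\ast\}$. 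Likewise $M(\Ocal)$ is characterized by $\{C_\pm \mid C \in \Ccal\}$ and $\{C'_\pm \mid C' \in \Ccal^\ast\}$.

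Since these characterizing families coincide, and since a matroid is uniquely determined by its circuits (as used implicitly in $(\Ocal 2)$), I conclude $M(\Ocal_{-X}) = M(\Ocal)$. There is no real obstacle here; the whole content of the corollary is the trivial identity $(C_{-X})_\pm = C_\pm$, which is why this is stated as a corollary rather than a lemma. If anything, the only thing to be a bit careful about is to note explicitly that we are invoking $(\Ocal 2)$ for both $\Ocal$ and $\Ocal_{-X}$, which is legitimate precisely because Lemma~\ref{lem:reorientationIsOM} guarantees that $\Ocal_{-X}$ satisfies the full oriented matroid axioms.
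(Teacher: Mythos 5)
Your proof is correct and follows exactly the paper's argument: the paper's entire proof is the one-line observation that $C_\pm = (C_{-X})_\pm$ for every $C\in\sigma E$, from which the equality of the underlying matroids is immediate via {\em ($\Ocal$2)}. Your write-up simply makes the same reasoning explicit.
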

\begin{proof}\PRFRC
	For $C\in \sigma E$, we have $C_\pm = \left( C_{-X} \right)_\pm$.
\end{proof}

\begin{definition}\PRFRC
	Let $\Ocal=(E,\Ccal,\Ccal^\ast)$ be an oriented matroid.
	The \deftext[reorientation class of O@reorientation class of $\Ocal$]{reorientation class of $\bm \Ocal$}
	is defined to be\label{n:reorientationclass}
	\( [\Ocal] = \SET{\Ocal_{-X}\mid X\subseteq E} .\)
\end{definition}

\begin{example}\label{ex:nonStrictGammoidOrientations}\PRFR{Apr 5th}
	According to L.~Finschi's database \cite{OM1}, the gammoid from Example~\ref{ex:nonStrictGammoid} has exactly one equivalence class
	with respect to relabeling and reorientation of oriented matroids.
	 Thus it is easy to check that it has two reorientation classes, $\Ocal_1 = (E,\Ccal_1,\Ccal_1^\ast)$ and $\Ocal_2 = (E,\Ccal_2,\Ccal_2^\ast)$
	 where
\begin{align*}
 \Ccal_1 = \pm \{ &
\SET{ a , b , - c , e } ,
\SET{ a , b , - d , - f } ,
\SET{ b , - c , - d , g } ,
\SET{ d , e , f , - g } ,
 \\ &
\SET{ - a , \hphantom{-} b , - c , \hphantom{-} f , \hphantom{-} g } ,
\SET{ - a , - c , \hphantom{-} d , \hphantom{-} e , \hphantom{-} f } ,
\SET{ - a , - c , \hphantom{-} d , \hphantom{-} f , \hphantom{-} g } ,
 \\ &
\SET{ \hphantom{-} a , \hphantom{-} b , \hphantom{-} d , \hphantom{-} e , - g } ,
\SET{ \hphantom{-} a , \hphantom{-} b , \hphantom{-} e , - f , - g } ,
\SET{ \hphantom{-} a , \hphantom{-} c , \hphantom{-} d , \hphantom{-} e , - g } ,
 \\ &
\SET{ \hphantom{-} a , \hphantom{-} c , \hphantom{-} e , - f , - g } ,
\SET{ \hphantom{-} b , - c , \hphantom{-} d , \hphantom{-} e , \hphantom{-} f } ,
\SET{ \hphantom{-} b , - c , \hphantom{-} e , \hphantom{-} f , \hphantom{-} g } \}
\end{align*}
and
\begin{align*}
 \Ccal_2 = \pm \{ &
\SET{ - a , b , c , e } ,
\SET{ a , - b , d , - f } ,
\SET{ b , c , - d , - g } ,
\SET{ d , e , - f , g } ,
 \\ &
\SET{ - a , \hphantom{-} b , - c , \hphantom{-} f , \hphantom{-} g } ,
\SET{ - a , \hphantom{-} b , \hphantom{-} d , \hphantom{-} e , \hphantom{-} g } ,
\SET{ - a , \hphantom{-} b , \hphantom{-} e , \hphantom{-} f , \hphantom{-} g } ,
 \\ &
\SET{ - a , - c , \hphantom{-} d , \hphantom{-} e , \hphantom{-} g } ,
\SET{ - a , - c , \hphantom{-} d , \hphantom{-} f , \hphantom{-} g } ,
\SET{ \hphantom{-} a , \hphantom{-} c , \hphantom{-} d , \hphantom{-} e , - f } ,
 \\ &
\SET{ \hphantom{-} a , \hphantom{-} c , \hphantom{-} e , - f , - g } ,
\SET{ \hphantom{-} b , \hphantom{-} c , \hphantom{-} d , \hphantom{-} e , - f } ,
\SET{ \hphantom{-} b , \hphantom{-} c , \hphantom{-} e , - f , - g } \}
\end{align*}
Here, $[\Ocal_2] = [\phi[\Ocal_1]]$ where $\phi = (ac)(fg)$ is a corresponding relabeling.
\end{example}

\begin{lemma}\PRFRC
	Let $E,Y$ be finite sets, $\mu\in \R^{E\times Y}$, and $X\subseteq E$.
	Let $\nu\in \R^{E\times Y}$ be the matrix where for every $e\in E$ and $y\in Y$,
	\[ \nu(e,y) = \begin{cases} -\mu(e,y)& \quad \text{if~} e\in X,\\
								\hphantom{-}\mu(e,y)&\quad\text{otherwise.}
	\end{cases} \]
	Then $\left( \Ocal(\mu) \right)_{-X} = \Ocal(\nu)$.
\end{lemma}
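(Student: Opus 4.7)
The plan is to verify the equality on the level of signed circuits, since by Remark~\ref{rem:sufficientConditionOM} the signed cocircuits of an oriented matroid are uniquely determined by the signed circuits together with the underlying matroid, and the underlying matroids $M(\mu)$ and $M(\nu)$ coincide (multiplying a row by $-1$ preserves all subdeterminant vanishing patterns, hence all independent sets). By Lemma~\ref{lem:reorientationIsOM} and Lemma~\ref{lem:OcalMu} both sides are oriented matroids, so it suffices to match their signed circuit families.

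The key observation is a bijection between the coefficient vectors witnessing signed circuits of $\Ocal(\mu)$ and those witnessing signed circuits of $\Ocal(\nu)$. Given $\alpha \in \R^E$, define $\beta \in \R^E$ by $\beta(e) = -\alpha(e)$ for $e \in X$ and $\beta(e) = \alpha(e)$ otherwise. A direct calculation gives
\[ \sum_{e\in E} \beta(e)\cdot \nu_e \,=\, \sum_{e\in X}(-\alpha(e))\cdot(-\mu_e) + \sum_{e\in E\setminus X}\alpha(e)\cdot \mu_e \,=\, \sum_{e\in E}\alpha(e)\cdot \mu_e, \]
so $\alpha$ yields a non-trivial vanishing linear combination of the rows of $\mu$ if and only if $\beta$ does so for $\nu$. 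The map $\alpha \mapsto \beta$ is an involution on $\R^E$, hence a bijection between the two sets of such coefficient vectors.

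Next I would check the compatibility with signs. By the definition of $E_{\bullet}$, for $e\in X$ we have $\sgn(\beta(e)) = -\sgn(\alpha(e))$, and for $e\notin X$ the signs coincide; comparing with Definition~\ref{def:Cflip} this gives exactly $E_\beta = (E_\alpha)_{-X}$. In particular $(E_\beta)_\pm = (E_\alpha)_\pm$, so the bijection $\alpha \mapsto \beta$ preserves the supports of the associated signed subsets. Hence it restricts to a bijection between the families $\Dcal_\mu$ and $\Dcal_\nu$ of Definition~\ref{def:Omu} that commutes with the $X$-flip operation and with set-inclusion of supports. Taking support-minimal elements on both sides therefore yields $\Ccal_\nu = \SET{C_{-X} \mid C \in \Ccal_\mu} = (\Ccal_\mu)_{-X}$.

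The only mild subtlety -- which I expect to be the sole thing requiring explicit care rather than a routine obstacle -- is to justify that the signed cocircuit families also agree; but as noted above this is immediate from the cryptomorphism described in Remark~\ref{rem:sufficientConditionOM} together with $M(\mu) = M(\nu)$ and the fact that the reorientation $(\cdot)_{-X}$ is applied to $\Ccal^\ast_\mu$ in exactly the same combinatorial way. Combining the two matching families gives $(\Ocal(\mu))_{-X} = \Ocal(\nu)$.
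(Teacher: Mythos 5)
Your proposal is correct and follows essentially the same route as the paper: define $\beta$ by negating $\alpha$ on $X$, observe that $\sum_e \beta(e)\nu_e = \sum_e \alpha(e)\mu_e$ and $E_\beta = (E_\alpha)_{-X}$, and conclude that the signed circuit families correspond under the $X$-flip. Your extra care about support-minimality and about the cocircuit family being forced by the circuit family is sound (the paper leaves both implicit), so nothing is missing.
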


\begin{proof}\PRFRC
	Let $\alpha\in \R^E$. Let $\beta\in \R^E$ be defined such that
	\[ \beta(e) = \begin{cases} -\alpha(e)&\quad \text{if~}e \in X,\\
						\hphantom{-}\alpha(e)& \quad \text{otherwise.}
					\end{cases} \]
	Clearly,
	$(E_\alpha)_{-X} =  E_\beta$ and \( \sum_{e\in E} \beta(e)\cdot \nu(e) = \sum_{e\in E}\alpha(e)\cdot \mu(e)\).
	Thus $\sum_{e\in E}\alpha(e)\cdot \mu(e) = 0$ if and only if $\sum_{e\in E}\beta(e)\cdot \nu(e) = 0$,
	and consequently $E_\alpha \in \Ccal_\mu$ if and only if $E_\beta \in \Ccal_\nu$.
	Therefore $\left( \Ocal(\mu) \right)_{-X} = \Ocal(\nu)$.
\end{proof}

\begin{corollary}\PRFRC
	Let $\Ocal=(E,\Ccal,\Ccal^\ast)$ be an oriented matroid, and let $X\subseteq E$. Then $\Ocal$ is realizable if and only if $\Ocal_{-X}$ is realizable.
\end{corollary}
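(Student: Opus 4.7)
The plan is to deduce this corollary directly from the preceding lemma, which exhibits an explicit realization of $\Ocal_{-X}$ whenever one of $\Ocal$ is given. The only additional ingredient needed is the involutive nature of the flip construction, both at the level of signed subsets and at the level of row-negation on matrices, so that the argument runs symmetrically in both directions.

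First I would prove the forward implication. Suppose $\Ocal$ is realizable, so there exists a finite set $Y$ and a matrix $\mu \in \R^{E\times Y}$ with $\Ocal=\Ocal(\mu)$. Applying the preceding lemma with this $\mu$ and the given subset $X\subseteq E$ produces a matrix $\nu\in\R^{E\times Y}$ satisfying $\Ocal_{-X} = (\Ocal(\mu))_{-X} = \Ocal(\nu)$. This exhibits $\Ocal_{-X}$ as realizable.

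For the converse, I would use the fact that the $X$-flip is an involution on oriented matroids, that is, $(\Ocal_{-X})_{-X} = \Ocal$. This follows immediately from Definition~\ref{def:Cflip}, since negating twice the signs of those entries indexed by $X$ leaves the signed subset unchanged, and hence also leaves $\Ccal$ and $\Ccal^\ast$ pointwise unchanged. Therefore, if $\Ocal_{-X}$ is realizable, say $\Ocal_{-X} = \Ocal(\nu)$ for some $\nu\in\R^{E\times Y}$, then applying the preceding lemma to $\nu$ with the same subset $X$ yields a matrix $\mu\in\R^{E\times Y}$ (obtained by negating the rows of $\nu$ indexed by $X$) with $\Ocal(\mu) = (\Ocal(\nu))_{-X} = (\Ocal_{-X})_{-X} = \Ocal$. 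Hence $\Ocal$ is realizable.

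There is no real obstacle here; the corollary is essentially a repackaging of the preceding lemma combined with the self-inverse nature of the flip operation. The only point that deserves a brief verification in writing is the involutivity $(\Ocal_{-X})_{-X}=\Ocal$, which is an unambiguous computation on the level of signs and could be dispatched in a single sentence referring back to Definition~\ref{def:Cflip}.
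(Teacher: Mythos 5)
Your proof is correct and is exactly the argument the paper intends (the corollary is stated there without proof, as an immediate consequence of the preceding lemma). The only ingredient beyond that lemma is the involutivity $(\Ocal_{-X})_{-X}=\Ocal$, which you justify correctly and which the paper has already observed at the level of signed subsets in the proof of Lemma~\ref{lem:reorientationIsOM}.
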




\begin{definition}\label{def:OMrestriction}\PRFRC
	Let $\Ocal=(E,\Ccal,\Ccal^\ast)$ be a oriented matroid, $R\subseteq E$.
	The \deftext[restriction of O to R@restriction of $\Ocal$ to $R$]{restriction of $\bm \Ocal$ to $\bm R$}
	is the triple\label{n:OrestrictR}
	\( \Ocal \restrict R = \left(R,\Ccal_R,\Ccal_R^\ast\right) \)
	where
	\[ \Ccal_R = \SET{C\in \sigma R ~\middle|~ \exists D\in \Ccal\colon\, D_\pm \subseteq R \text{~s.t.~} D\restrict_R = C}\]
	and
	\[ \Ccal_R^\ast = \SET{C'\in \Dcal_R^\ast ~\middle|~ \nexists D'\in \Dcal_R^\ast\colon\, D'_\pm \subsetneq C'_\pm},\]
	and where
	\[ \Dcal_R^\ast = \SET{C'\in \sigma R \BSET{\emptyset_{\sigma R}} ~\middle|~ \exists D'\in \Ccal^\ast \colon \, D'\restrict_R = C'} .\]
	\label{def:OMcontraction}
	Let $Q\subseteq E$.
	The \deftext[contraction of O to Q@contraction of $\Ocal$ to $Q$]{contraction of $\bm \Ocal$ to $\bm Q$}
	is the triple\label{n:OcontractQ}
	\( \Ocal \contract Q = \left(Q,\Ccal_{'Q},\Ccal^\ast_{'Q}\right) \)
	where 
	\[ \Ccal^\ast_{'Q} = \SET{C'\in \sigma R ~\middle|~ \exists D'\in \Ccal^\ast\colon\, D'_\pm \subseteq R \text{~s.t.~} D'\restrict_R = C'}\]
	and
	\[ \Ccal_{'Q} = \SET{C\in \Dcal_{'Q} ~\middle|~ \nexists D\in \Dcal_{'Q}\colon\, D_\pm \subsetneq C_\pm},\]
	and where
	\[ \Dcal_{'Q} = \SET{C\in \sigma R \BSET{\emptyset_{\sigma R}} ~\middle|~ \exists D\in \Ccal \colon \, D\restrict_R = C} . \qedhere\]
\end{definition}

\begin{lemma}\label{lem:OMminors}\PRFRC
Let $\Ocal=(E,\Ccal,\Ccal^\ast)$ be a oriented matroid, $X\subseteq E$.
Then $\Ocal\restrict X$ and $\Ocal\contract X$ are oriented matroids,
and further \[
	\left( \Ocal^\ast \restrict X \right)^\ast = \Ocal\contract X
	\quad\text{as well as}\quad
	\left( \Ocal^\ast \contract X \right)^\ast = \Ocal\restrict X
\]
holds.
\end{lemma}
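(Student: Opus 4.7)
The plan is to treat the duality equations first as essentially definitional, and then reduce the claim about contractions to the claim about restrictions via that duality. Concretely, I would first show
\[ \left( \Ocal^\ast \restrict X \right)^\ast = \Ocal \contract X \quad\txtand\quad \left( \Ocal^\ast \contract X \right)^\ast = \Ocal \restrict X \]
just by unfolding Definitions~\ref{def:OMrestriction} and \ref{def:OMcontraction}: taking $\Ocal^\ast$ swaps the roles of signed circuits and signed cocircuits, so the restriction operation on $\Ocal^\ast$ produces precisely the family which serves as $\Ccal^\ast_{'X}$ for the contraction of $\Ocal$, and the minimized restriction family becomes $\Ccal_{'X}$. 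Dualizing once more gives the first equation; the second is the same argument with restriction and contraction interchanged. Since this reduction only requires manipulation of the defining formulas, no oriented-matroid axioms enter here.

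Granted the duality equations, it suffices to show that $\Ocal\restrict R$ is an oriented matroid, for then $\Ocal\contract X = (\Ocal^\ast \restrict X)^\ast$ is the dual of an oriented matroid, hence an oriented matroid. For the circuit side $\Ccal_R$ of the restriction, the verification of $(\Ccal 1)$ through $(\Ccal 4)$ is straightforward: the elements of $\Ccal_R$ are exactly the $D\in\Ccal$ whose support already lies in $R$, so negation closure, the support-incomparability axiom, and the elimination axiom transport verbatim from $\Ocal$ (the witness $Z\in\Ccal$ produced by $(\Ccal 4)$ for $\Ocal$ satisfies $Z_\pm \subseteq X_\pm \cup Y_\pm \subseteq R$ and therefore already belongs to $\Ccal_R$). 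Property $(\Ocal 1)$ between $C\in\Ccal_R$ and $C'\in\Ccal_R^\ast$ also reduces painlessly: lifting $C'$ to some $D'\in\Ccal^\ast$ with $D'\restrict_R = C'$, the assumption $C_\pm\subseteq R$ gives $C(e)C'(e) = C(e)D'(e)$ for all $e\in E$, so orthogonality of $C$ and $D'$ guaranteed by $(\Ocal 1)$ for $\Ocal$ is transferred to $C$ and $C'$. Finally, $(\Ocal 2)$ is witnessed by $M(\Ocal)\restrict R$, using the classical description of circuits and cocircuits under matroid restriction.

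The substantive work lies on the cocircuit side $\Ccal_R^\ast$, where the elements are the $\subseteq_\sigma$-minimal non-empty restrictions $D'\restrict_R$ with $D'\in\Ccal^\ast$. Axioms $(\Ccal^\ast 1)$ and $(\Ccal^\ast 2)$ are immediate. For $(\Ccal^\ast 3)$, suppose $X',Y'\in\Ccal_R^\ast$ with $X'_\pm\subseteq Y'_\pm$; by the minimality built into $\Ccal_R^\ast$ we have $X'_\pm=Y'_\pm$, and this common support is a cocircuit of $M(\Ocal)\restrict R$. Using Lemma~\ref{lem:CircuitCocircuitIntersectInTwo} in $M(\Ocal)$ to exhibit, for each pair of elements of this cocircuit, a circuit of $M(\Ocal)\restrict R$ meeting it exactly in those two elements, and then exploiting the already established orthogonality $(\Ocal 1)$ of $\Ccal_R$ and $\Ccal_R^\ast$, one deduces that the two sign patterns $X'$ and $Y'$ on the common support are forced to agree either everywhere or nowhere, i.e.\ $Y'\in\{X',-X'\}$.

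The main obstacle will be $(\Ccal^\ast 4)$, the signed cocircuit elimination axiom inside $\Ccal_R^\ast$. Given $X',Y'\in\Ccal_R^\ast$ with $X'\not=-Y'$, $e\in X'_+\cap Y'_-$ and $f\in X'_\pm\setminus\sep(X',Y')$, I would lift to $D'_1,D'_2\in\Ccal^\ast$ with $D'_i\restrict_R = X', Y'$ respectively. Because $e,f\in R$ and the restriction preserves the signs at elements of $R$, one checks $e\in D'_{1+}\cap D'_{2-}$ and $f\in D'_{1\pm}\setminus \sep(D'_1,D'_2)$; applying $(\Ccal^\ast 4)$ for $\Ocal$ produces some $Z'\in\Ccal^\ast$ with $e\notin Z'_\pm$, $Z'(f)=D'_1(f)=X'(f)$ and $Z'_\pm\subseteq D'_{1\pm}\cup D'_{2\pm}$. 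The delicate point is that $Z'\restrict_R$ need not itself be $\subseteq_\sigma$-minimal in $\Ccal_R^\ast$; one needs to descend to a minimal element below $Z'\restrict_R$ while preserving the element $f$ in its support, and check that the sign constraint $Z'(f)=X'(f)$ survives the descent. The expected argument is to observe that $f$ lies in the support of $Z'\restrict_R$, invoke $(\Ccal^\ast 3)$-type minimality together with another application of Lemma~\ref{lem:CircuitCocircuitIntersectInTwo} to pick a minimal element $W\in\Ccal_R^\ast$ with $W\subseteq_\sigma Z'\restrict_R$ and $f\in W_\pm$, and use orthogonality to pin down the sign of $W$ at $f$. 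Once this minimality lemma is established cleanly, the rest of the proof is routine bookkeeping.
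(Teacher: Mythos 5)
First, note that the paper itself does not prove this lemma; it only cites Propositions~3.3.1 and 3.3.2 of \cite{BlVSWZ99}, so your proposal has to stand on its own. Your architecture is sound: the two duality identities are indeed pure bookkeeping with Definitions~\ref{def:OMrestriction} and \ref{def:OMcontraction} (since $\Ocal^\ast$ merely swaps the two families, $\left( \Ocal^\ast\restrict X \right)^\ast$ and $\Ocal\contract X$ are literally the same triple), and reducing the contraction case to the restriction case via these identities is legitimate. Your verifications of {\em ($\Ccal$1)}--{\em ($\Ccal$4)} for $\Ccal_R$, of {\em ($\Ccal^\ast$1)}--{\em ($\Ccal^\ast$3)}, and of {\em ($\Ocal$1)} and {\em ($\Ocal$2)} (with witness $M(\Ocal)\restrict R$) are all correct.

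The genuine gap is exactly where you place it, in {\em ($\Ccal^\ast$4)}, and the ``expected argument'' you sketch does not close it. After restricting the eliminant $Z'\in\Ccal^\ast$ to $R$ you must descend to a support-minimal $W\in\Ccal_R^\ast$ with $f\in W_\pm$ and $W\subseteq_\sigma Z'\restrict_R$. The existence of a cocircuit of $M(\Ocal)\restrict R$ through $f$ inside $\left( Z'\restrict_R \right)_\pm$ is unproblematic (a closure computation in the underlying matroid), but such a $W$ arises as the restriction of a \emph{different} signed cocircuit of $\Ocal$, and orthogonality does not directly force its signs to conform to those of $Z'$ on $W_\pm$: the circuits of $M(\Ocal)\restrict R$ meeting $W_\pm$ in exactly two elements need not meet $Z'_\pm\cap R$ in exactly those two elements, so the sign comparison you invoke breaks down. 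This conformality statement is precisely the nontrivial content of Proposition~3.3.2 in \cite{BlVSWZ99}. Fortunately you do not need it: you have already shown that $\Ccal_R$ satisfies the signed circuit axioms, and that $\Ccal_R^\ast$ is closed under negation, assigns exactly one pair $\pm W$ to each cocircuit of $M(\Ocal)\restrict R$, and is orthogonal to $\Ccal_R$. By the uniqueness statement recalled in Remark~\ref{rem:sufficientConditionOM} (the cocircuit signature is reconstructed from the circuit signature via Lemma~\ref{lem:CircuitCocircuitIntersectInTwo} and orthogonality), $\Ccal_R^\ast$ must therefore coincide with the cocircuit family of the oriented matroid determined by $\Ccal_R$, and {\em ($\Ccal^\ast$4)} holds for free. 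With that substitution in place of your descent argument, the proof is complete.
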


\noindent For a proof, please refer to Propositions 3.3.1 and 3.3.2 (\cite{BlVSWZ99}, p.110) in {\em Oriented Matroids} by A.~Björner, M.~Las~Vergnas, B.~Sturmfels, N.~White,~and G.~Ziegler.


\clearpage

\section{Colorings}

\PRFR{Mar 7th}
The notion of colorings used in this work originates from {\em Antisymmetric Flows in Matroids} 
by J.~Nešetřil and W.~Hochstättler \cite{HN06}. A recommended source for the properties and bearings of this notion
is R.~Nickel's thesis {\em Flows and Colorings in Oriented Matroids} \cite{Ni12}.

\begin{remark}\PRFR{Mar 7th}
The first appearance of a notion of a coloring of general oriented matroids
can be tracked down to the   paper {\em On $(k,d)$-Colorings and Fractional Nowhere-Zero Flows} 
by L.A.~Goddyn, M.~Tarsi, and C.-Q.~Zhang \cite{GTZ98},
who define the {\em star flow index} of an reorientation class $[\Ocal']$ of oriented matroids
to be \[
	\xi^\ast\left( \left[\Ocal'\right] \right) = \min_{\Ocal \in [\Ocal']} \max_{D\in \Ccal^\ast_\Ocal} \frac{\left| D_\pm \right|}{\left| D_+ \right|}
\]
where $\Ccal^\ast_\Ocal$ denotes the family of signed cocircuits of $\Ocal$. The star flow index is closely related to
the chromatic number of a graph $G$ through a result of J.G.~Minty \cite{Mi62}:
 If $G=(V,E)$ is a graph, and $\mu\in \R^{E\times V}$ is the signed edge-vertex-incidence matrix of an orientation of the edges of $G$, then for $\Ocal' = \left( \Ocal(\mu) \right)^\ast$ we have $\lceil \xi^\ast([\Ocal'])\rceil = \chi(G)$ where $\chi(G)$ is the 
 well-known
 chromatic number of $G$.
 {\em{We would like to point out that this is \textbf{not the chromatic number} of oriented matroids that \textbf{we are concerned with} in this work.}}\footnote{For further details on the differences between the oriented flow number of L.A.~Goddyn, M.~Tarsi, and C.-Q.~Zhang \cite{GTZ98} and
 the chromatic number of J.~Nešetřil and W.~Hochstättler \cite{HN06}, see \cite{Ni12}, pp.~98f.}
 \end{remark}

%
%
%

\begin{definition}\PRFRC
	Let $E$ be a set.
	A \deftext[signed multiset]{signed multi-subset of $\bm E$} -- or shorter: \deftextX{signed multiset} --
	is a map $S\colon E\maparrow \Z$. The \deftextX{family of signed multi-subsets of E}
	shall be denoted by $\Z .E$.\label{n:ZE}
	Since $\SET{-1,0,1}\subseteq \Z$, we shall identify the signed subsets with the corresponding signed multisets,
	i.e. $\sigma E \equiv \SET{F\in \Z. E\mid \forall e\in E\colon F(e) \in \SET{-1,0,1}}$.
	The \deftext[empty signed multiset]{empty signed multi-subset of $\bm E$}\label{n:emptysetZE} is the map
	\[ \emptyset_{\Z. E} \colon E\maparrow \Z,\, e\mapsto 0. \qedhere\]
\end{definition}

\needspace{5\baselineskip}
\begin{definition}[Dual of Definition 1, \cite{HN06}]\PRFRC
	Let $\Ocal=(E,\Ccal,\Ccal^\ast)$ be an oriented matroid. The \deftext[coflow lattice of O@coflow lattice of $\Ocal$]{coflow lattice of $\bm \Ocal$}
	shall consist of all integral linear combinations of cocircuits of $\Ocal$, i.e.
	\[ \Z .\Ccal^\ast = \SET{F\in \Z.E ~~\middle|~~ \exists \alpha\in \Z^{\Ccal^\ast}\colon\,\forall e\in E\colon\, F(e) = \sum_{C'\in\Ccal^\ast} \alpha(C')\cdot C'(e)}. \]
	Each element $F\in \Z.\Ccal^\ast$ shall be called a \deftextX{coflow of $\bm \Ocal$}.
	A \deftext[nowhere-zero coflow of O@nowhere-zero coflow of $\Ocal$]{nowhere-zero coflow of $\bm \Ocal$} is a coflow $F\in \Z.\Ccal^\ast$
	where $F(e)\not= 0$ for all $e\in E$.
\end{definition}

\begin{definition}\label{def:chiO}\PRFR{Mar 7th}
	Let $\Ocal=(E,\Ccal,\Ccal^\ast)$ be an oriented matroid. We define the \deftext[chromatic number of $\Ocal$]{chromatic number of $\bm \Ocal$}
	to be\label{n:chiO} \[ \chi(\Ocal) = \min \SET{\vphantom{A^{A^A}}\max \SET{\vphantom{A^A} \left| F( e ) \right| + 1~\middle|~ e\in E}  ~~ \middle| ~~ 
	F\in \Z.\Ccal^\ast,\,\forall e\in E\colon F( e )\not= 0}. \]
	By convention, we set $\chi(\Ocal) = \infty$ if there is no nowhere-zero coflow in $\Z.\Ccal^\ast$.
\end{definition}

\noindent
The only oriented matroid $\Ocal$ with $\chi(\Ocal) = 1$ is the trivial oriented matroid,\footnote{That is, if we set $\min \SET{\max\SET{\left| F(e) \right| + 1 ~\middle|~ e\in \emptyset} ~\middle|~ F \in \Z.\emptyset } = 1$. The rationale behind this is that a matroid $M=(E,\Ical)$ with $E=\emptyset$ is the graphical matroid of every edge-less graph; and the trivial oriented matroid is an orientation of $M$.}
i.e. the oriented matroid $\Ocal = (E,\Ccal,\Ccal^\ast)$ where $E = \Ccal = \Ccal^\ast =\emptyset$.

\begin{remark}\PRFR{Mar 7th}
	Let $\Ocal=(E,\Ccal,\Ccal^\ast)$ be an oriented matroid. 
	We have $\chi(\Ocal) = \infty$ if and only if there is an element $e\in E$ such that $D(e) = 0$ for all $D\in \Ccal^\ast$:
	Let $\Ccal^\ast = \pm \dSET{D_1, D_2,\ldots, D_k}$, i.e. for each pair $D\in \Ccal^\ast$ we choose precisely one element from $\SET{D,-D}$.
	Then there is no cancellation of non-zero summands in 
		\[ F(e) = \sum_{i=1}^{k} 2^{i-1} \cdot D_i(e)\]
	for $e\in E$, thus $F$ is a nowhere-zero coflow of $\Ocal$ if and only if for every $e\in E$ there is some $D\in \Ccal^\ast$ with $e\in D_\pm$.
	This is the case if and only if $M(\Ocal)$ has no loop.
\end{remark}

\noindent We give  a quick tour justifying why the name {\em chromatic number} is appropriate in this context. For a more detailed introduction, we refer the reader to Chapter~4 in \cite{Ni12}.

\needspace{5\baselineskip}

\vspace*{-\baselineskip} 
\begin{wrapfigure}{r}{3cm}
\vspace{\baselineskip}
\begin{centering}~~
\includegraphics{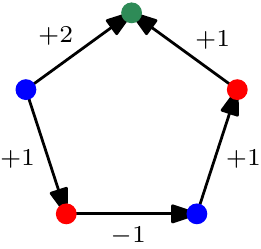}
\end{centering}%
\vspace*{-1\baselineskip} 
\end{wrapfigure}
~ 

\begin{example}\PRFR{Mar 7th}
	Consider the undirected graph $G=(V,E)$ where $V = \SET{1,2,3,4,5}$ and $E = \{\SET{1,2},$ $\SET{1,5},$ $\SET{2,3},$ $\SET{3,4},$ $\SET{4,5}\}$.
	A proper coloring of $G$ is a map $\phi\colon V \maparrow \Z$ such that $\phi(v)\not=\phi(w)$ whenever $\SET{v,w}\in E$.
	The chromatic number of $G$ is defined as $\chi(G) = \min\SET{\left| \phi[V] \right| \vphantom{A^A}~\middle|~ \phi \text{~proper coloring of~}G}$,
	in this case $\chi(G) = 3$ and $\phi(1)=\phi(3)=1$, $\phi(2)=\phi(4)=2$, $\phi(5) = 3$ is a corresponding proper coloring.
	An orientation of $G$ is a digraph $D=(V,A)$ such that for every $\SET{u,v}\in E$ we have the equivalency $(u,v)\in A\Leftrightarrow (v,u)\notin A$,
	and such that $E = \SET{\SET{u,v}~\middle|~ (u,v)\in A}$. 
	Every orientation $D$ of $G$ gives rise to a map $\sigma\colon V\times V\maparrow \SET{-1,0,1}$ 
	where $\sigma(u,v) = +1$ if $(u,v)\in A$, $\sigma(u,v) = -1$ if $(v,u)\in A$, and $\sigma(u,v) = 0$ if $\SET{u,v}\notin E$. A nowhere-zero-coflow on $G$
	with respect to the orientation $D$	is a map $f\colon E\maparrow \Z$, such that $f(e)\not= 0$ for all $e\in E$, and such that for every closed 
	walk $v_1 v_2 \ldots v_k$ in $G$, i.e. $v_1=v_k$, we have $$\sum_{i=1}^{k-1}\left(  \sigma(v_i,v_{i+1}) \cdot f\left( \SET{v_i,v_{i+1}} \right) \right) = 0.$$
	Every coloring $\phi\colon V\maparrow \Z$ induces a coflow $\hat{\phi}\colon E\maparrow \Z$ on $G$ with respect to an orientation $D$ by setting
	$\hat \phi(\SET{u,v}) = \sigma(u,v)\cdot \left( \phi(v) - \phi(u) \right)$. Furthermore, $\hat \phi$ is a nowhere-zero-coflow if and only if $\phi$ is a
	proper coloring of $G$. Conversely, if $f$ is a nowhere-zero-coflow on $G$ with respect to $D$,
	 then we may reconstruct a proper coloring $\tilde f \colon V\maparrow \Z$ from it by choosing a vertex $v$ per component of $G$ and setting $\tilde f(v) = 0$. For every other vertex $x$,
	 let $w_1 w_2 \ldots w_k$ be a walk from the chosen vertex $v=w_1$ of the component containing $x$ to $x=w_k$ in $G$. 
	 We set $$\tilde f(x) = \left( \sum_{i=1}^{k-1} \left( \sigma(w_i,w_{i+1})\cdot f(\SET{w_i,w_{i+1}}) \right) \right)\mathrm{mod}_{\max\SET{\left| f(e) \right|+1~\middle|~\vphantom{A^A} e\in E}.}$$
	 Then $\tilde f(x)$ is a proper coloring of $G$ which uses at most $\max\SET{\vphantom{A^A}\left| f(e) \right|+1~\middle|~ e\in E}$ colors.
	 Furthermore, every graph $G=(V,E)$ gives rise to a cycle matroid $M(G) = (E,\Ical)$ where a set of edges $X\subseteq E$ is independent, if and only if
	 $(V,X)$ does not contain a cycle walk\footnote{Remember that the trivial walk $v$ is not a cycle walk.} (see \cite{We76}, p.10). The cycle matroid associated with the above graph $G$ is
	 $M(G) = \left( E,\SET{X\subseteq E~\middle|~\vphantom{A^A} \left| X \right| \leq 4} \right)$, the uniform matroid of rank $4$ on $E$. The cocircuits of cycle matroids
	 are the $\subseteq$-minimal subsets $D\subseteq E$, such that the graph $G\BS X = (V,E\BS X)$ has more components than $G=(V,E)$. Furthermore, every
	 orientation $D$ of $G$ yields an oriented matroid $\Ocal=(E,\Ccal,\Ccal^\ast)$ with $M(\Ocal) = M(G)$ by the following construction: $C\in \Ccal$ if
	 and only if there is a cycle walk $x_1x_2\ldots x_k$ in $G$ with $C_\pm = \SET{x_1,x_2,\ldots,x_k}$ and
	  $C(\SET{x_i,x_{i+1}}) = \sigma(x_i,x_{i+1})$ for all $i\in\SET{1,2,\ldots,k-1}$. In other words, an edge in the support of a signed circuit
	  $C$ of $\Ocal$ is assigned $+1$ if its orientation agrees with the corresponding arc in the cycle walk, and $-1$ otherwise. Dually, we have $D\in \Ccal^\ast$
	 if and only if there is a minimal edge-cut $X\subseteq E$ and a partition $L,R$ of $V$ such that every edge $e\in X$ has
	 the property $\left| e\cap L  \right| = 1$; with $D_\pm = X$ and
	  $D(\SET{l,r}) = \sigma(l,r)$ for all $l\in L$ and $r\in R$ with $\SET{l,r}\in X$.
	 Let $\Ocal=(E,\Ccal,\Ccal^\ast)$ be the oriented matroid that corresponds to the orientation $D$ of $G$ as above. We have
	  $\Ccal = \pm \SET{ \vphantom{\sum_A^{A^A}}\SET{\vphantom{A^A}\SET{1,2},-\SET{1,5},\SET{2,3},\SET{3,4},\SET{4,5}} }$ and
	  \begin{align*}
	  	\Ccal^\ast = \pm \left\{\vphantom{A^{A^A}}\right. & 
	  	\SET{\vphantom{A^A}\SET{1,2},\SET{1,5}}, \SET{\vphantom{A^A}\SET{1,2},-\SET{2,3}}, 
	  	  \SET{\vphantom{A^A}\SET{1,2},-\SET{3,4}}, \SET{\vphantom{A^A}\SET{1,2},-\SET{4,5}},	  	\\ 
	  	  &
	  	\SET{\vphantom{A^A}\SET{2,3},\SET{1,5}}, 
	  	  \SET{\vphantom{A^A}\SET{2,3},-\SET{3,4}}, \SET{\vphantom{A^A}\SET{2,3},-\SET{4,5}},	  	\\ 
	  	  &
	  	\SET{\vphantom{A^A}\SET{3,4},\SET{1,5}}, \SET{\vphantom{A^A}\SET{3,4},-\SET{4,5}},	\SET{\vphantom{A^A}\SET{4,5},\SET{1,5}}
	  	  \left.\vphantom{A^{A^A}}\right\} .
	  \end{align*}
	 Furthermore, all coflows of $G$ with respect to $D$ are integral linear combinations of the signed cocircuits of the oriented matroid
	 $\Ocal$ corresponding to the orientation $D$ of $G$; the coflow $\hat \phi$ may be written as the a linear combination of cocircuits of $\Ocal$
	 \( \hat \phi = \SET{\vphantom{A^A}\SET{1,2},-\SET{2,3}} + \SET{\vphantom{A^A}\SET{3,4},-\SET{4,5}} + 2\cdot\SET{\vphantom{A^A}\SET{4,5},\SET{1,5}}.\)
	 For all graphs $G=(V,A)$, the equation $\chi(G) = \chi(\Ocal)$ holds, 
	 where $\Ocal$ corresponds to an orientation $D$ of the cycle matroid of $G$. Thus the chromatic number of 
	 oriented matroids is a generalization of the chromatic number of graphs.
\end{example}

\begin{example}\PRFR{Mar 7th}
	Consider the oriented matroids $\Ocal_1$ and $\Ocal_2$ given in Example~\ref{ex:nonStrictGammoidOrientations}.
	$M(\Ocal_1)=M(\Ocal_2)=(E,\Ical)$ is the matroid given in Example~\ref{ex:nonStrictGammoid}.
	For both orientations, the corresponding coflow lattice is the free integer module $\Z^E$. Therefore we have
	$\chi(\Ocal_1) = \chi(\Ocal_2) = 2$.
\end{example}

\PRFR{Mar 7th}
\noindent
Let $\Ocal$ and $\Ocal'$ be two oriented matroids such that $M(\Ocal) = M(\Ocal')$. We would like to mention that it is still an open problem whether
in this case
the equation  $\chi(\Ocal) = \chi(\Ocal')$ holds in general (\cite{Ni12} (Q4), p.69). This question clearly is beyond the scope of this work.
However, if $\Ocal' = \Ocal_{-X}$ is the reorientation of $\Ocal$ with respect to some set $X\subseteq E$, then 
$$\Z.\Ccal^\ast_{-X} = \SET{ F\in \Z^{E} ~\middle|~ \exists F'\in \Z.\Ccal^\ast \colon \, \forall e\in E\colon\, F(e)=(-1)^{\chi_X(e)} F'(e) }$$
where $\chi_X$ is the characteristic function of $X\subseteq E$, i.e. $\chi_X(e) = 1$ if $e\in X$ and $\chi_X(e) = 0$ if $e\notin X$.
Therefore the nowhere-zero coflows of $\Ocal$ are in a $\left| \cdot \right|$-preserving one-to-one correspondence with the nowhere-zero coflows of $\Ocal'$,
thus $\chi(\Ocal) =\chi(\Ocal')$ whenever $\Ocal'$ is a reorientation of $\Ocal$.

\begin{theorem}[\cite{HN06}, Theorem 1]\label{thm:chiOuniform}\PRFR{Mar 7th}
	Let $r\in \N$ and $\Ocal = (E,\Ccal,\Ccal^\ast)$ be an oriented matroid 
	such that $M(\Ocal) = \left( E,\SET{\vphantom{A^A}X\subseteq E~\middle|~\left| X \right| \leq r }\right)$ is the uniform matroid of rank $r$ on $E$.
	Then 
	\[ \chi(\Ocal) = \begin{cases}[r] 2 & \quad\text{if~} n\cdot (n-r)\text{~is even},\\
	                                 3 &  \quad\text{if~} n\cdot (n-r)\text{~is odd}.\\
	\end{cases} \]
\end{theorem}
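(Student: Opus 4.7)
My plan splits into the parity obstruction providing $\chi(\Ocal) \geq 3$ when $n(n-r)$ is odd, and a construction of nowhere-zero coflows achieving $\chi(\Ocal) \leq 3$ in general and $\chi(\Ocal) \leq 2$ when $n(n-r)$ is even. I assume $1 \leq r < n$, so that $\Ccal^\ast \neq \emptyset$ and the chromatic number is finite. For the lower bound, suppose $F \in \Z.\Ccal^\ast$ is a nowhere-zero coflow with $F(e) \in \{\pm 1\}$ and write $F = \sum_i \alpha_i D_i$ with $D_i \in \Ccal^\ast$ and $\alpha_i \in \Z$. Every cocircuit of $U_{r,n}$ has support of size $n-r+1$, so $\sum_{e \in E} D_i(e) \equiv n-r+1 \pmod 2$; summing, $\sum_e F(e) \equiv (\sum_i \alpha_i)(n-r+1) \pmod 2$. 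On the other hand $\sum_e F(e) \equiv n \pmod 2$, since each summand is odd. When $n(n-r)$ is odd, both $n$ and $n-r$ are odd, so $n-r+1$ is even while $n$ is odd, a contradiction. Hence $\chi(\Ocal) \geq 3$ in this case.

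For the upper bound I exploit that in $U_{r,n}$ every $(n-r+1)$-subset of $E$ is a cocircuit. Fix a basis $B$ of $M(\Ocal)$ of size $r$; for each $e \in B$ let $\widetilde D_e \in \Ccal^\ast$ be the fundamental signed cocircuit supported on $(E \setminus B) \cup \{e\}$, normalized so that $\widetilde D_e(e) = +1$. The sign matrix $M_{ef} := \widetilde D_e(f)$, indexed by $e \in B$ and $f \in E \setminus B$, takes values in $\{\pm 1\}$; reorientations of single elements of $E$ (combined with renormalization of the $\widetilde D_e$) act on $M$ by row and column sign flips and preserve $\chi(\Ocal)$ by Lemma~\ref{lem:reorientationIsOM}. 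For a choice $\epsilon \in \{\pm 1\}^B$, the coflow $F_\epsilon = \sum_{e \in B} \epsilon_e \widetilde D_e$ satisfies $F_\epsilon(e) = \epsilon_e$ on $B$ and $F_\epsilon(f) = \sum_{e \in B} \epsilon_e M_{ef}$ on $E \setminus B$. The plan is to choose $\epsilon$ and, if necessary, combine $F_\epsilon$ with further cocircuits supported on other $(n-r+1)$-subsets so that every coordinate lies in $\{\pm 1\}$ when $n(n-r)$ is even and in $\{\pm 1, \pm 2\}$ in general; this is a finite combinatorial optimization on the coflow lattice of $U_{r,n}$.

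The hard part will be carrying out this construction uniformly across all orientations of $U_{r,n}$, including non-realizable ones such as $\mathtt{RS}(8)$ from Example~\ref{ex:NonRepresentableOrientationOfUniform}. For realizable orientations the sign matrix $M$ can be brought to a simple form via column operations on a matrix realization (Remark~\ref{rem:colops}), and the construction is then direct. In the non-realizable case, one must work solely with the cocircuit axioms $(\Ccal^\ast 3)$ and $(\Ccal^\ast 4)$ together with orthogonality $(\Ocal 1)$ of Definition~\ref{def:OrientedMatroid} to deduce sign relations between arbitrary signed cocircuits. I expect the cleanest route is an induction on $n$: passing to a restriction minor $\Ocal \restrict (E \setminus \{e_0\})$ (see Definition~\ref{def:OMrestriction} and Lemma~\ref{lem:OMminors}) -- which is an orientation of $U_{r,n-1}$, flipping the parity of $n(n-r)$ -- to obtain a coflow of the required type by inductive hypothesis, then patching in $e_0$ with a carefully chosen additional signed cocircuit. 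The technical core will be the parity bookkeeping in this inductive step and ensuring that the patched coflow does not exceed $\pm 1$ (respectively $\pm 2$) in absolute value on the remaining coordinates.
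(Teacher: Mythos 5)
Your parity argument for the lower bound is correct and complete: every signed cocircuit of an orientation of $U_{r,n}$ has support of cardinality $n-r+1$, so every coflow $F\in\Z.\Ccal^\ast$ satisfies $\sum_{e\in E}F(e)\equiv (n-r+1)\cdot k \pmod 2$ for some $k\in\Z$, while a nowhere-zero $\{\pm 1\}$-valued coflow would force $\sum_{e\in E}F(e)\equiv n \pmod 2$; when $n(n-r)$ is odd these are incompatible, so $\chi(\Ocal)\geq 3$. Note that the paper itself does not prove this theorem but defers entirely to \cite{HN06}, so the only question is whether your argument stands on its own.

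It does not, because the upper bound -- which is where the entire substance of the theorem lies -- is not proven. You must exhibit, for \emph{every} orientation $\Ocal$ of $U_{r,n}$, including non-realizable ones such as $\mathtt{RS}(8)$, a nowhere-zero coflow with all entries in $\{\pm 1,\pm 2\}$, and one with all entries in $\{\pm 1\}$ when $n(n-r)$ is even. Your candidate $F_\epsilon=\sum_{e\in B}\epsilon_e\widetilde D_e$ already exposes the obstacle: on $f\in E\setminus B$ its value is a sum of $r$ signs, so it has the parity of $r$ and magnitude up to $r$, and nothing you write guarantees that some choice of $\epsilon$, or of further correcting cocircuits, keeps every such coordinate in the required range. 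You explicitly defer the resolution as ``the hard part'' and ``the technical core,'' and the proposed induction on $n$ via single-element restriction is only gestured at: you do not specify which signed cocircuit patches in $e_0$, why a cocircuit with compatible signs exists in an arbitrary (possibly non-realizable) orientation where only the axioms ($\Ccal^\ast$3), ($\Ccal^\ast$4), and ($\Ocal$1) are available, or why the patch does not push other coordinates out of range. As written this is a proof plan, not a proof; the combinatorial sign bookkeeping that works uniformly for all orientations of uniform matroids is precisely the content of Theorem~1 of \cite{HN06} and is missing here.
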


\noindent
See \cite{HN06} for the proof. Theorem~\ref{thm:chiOuniform} is the generalization of the fact that the chromatic number of a cycle graph -- with at least $3$ vertices -- is $2$ if the cycle graph consists of an even number of vertices, and $3$ if the cycle graph consists of an odd number of vertices.

\needspace{6\baselineskip}

\begin{theorem}[\cite{HN08}, Theorem 3]\PRFR{Mar 7th}
	Let $\Ocal= (E,\Ccal,\Ccal^\ast)$ be an oriented matroid such that $M(\Ocal)$ has no loops, no parallel edges, and $\rk_{M(\Ocal)}(E) \geq 3$.
	Then \[ \chi(\Ocal) \leq \rk_{M(\Ocal)}(E) + 1 \]
	where equality holds if and only if $M(\Ocal)$ is isomorphic 
	to the cycle matroid $M(K)$ of the complete graph $K=\left(V,\binom{V}{2}\right)$ with $\left| V \right| =\rk_{M(\Ocal)}(E)+1$ vertices.
\end{theorem}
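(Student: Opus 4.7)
The statement has two parts: the upper bound $\chi(\Ocal) \leq r+1$ where $r = \rk_{M(\Ocal)}(E)$, and the characterization of equality. My plan is to prove the upper bound by induction on $|E|$, and to prove the equality case via minor-monotonicity of $\chi$ combined with a Hadwiger-type structural argument for the underlying matroid.

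For the upper bound, I would proceed by induction on $|E|$. In the base case $|E| = r$, every element is independent and the absence of loops and parallel edges forces $M(\Ocal)$ to be the free matroid of rank $r$, so every singleton $\{e\}$ is a cocircuit. Taking any choice of signed singleton cocircuits and setting $F(e) = 1$ for all $e$ produces a nowhere-zero coflow with $|F(e)| + 1 = 2$, which gives $\chi(\Ocal) = 2 \leq r + 1$. For the induction step with $|E| > r$, I would choose an element $e \in E$ whose deletion $\Ocal \restrict (E \setminus \{e\})$ preserves simplicity after passing to a minor (switching to contraction if deletion introduces a loop, and to a suitable simplification if it introduces parallel edges). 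The induction hypothesis yields a nowhere-zero coflow $F'$ of the minor with $|F'(x)| \leq r$; I would then extend $F'$ to a nowhere-zero coflow $F$ of $\Ocal$ by adding an integer multiple of a carefully chosen signed cocircuit $D \in \Ccal^\ast$ with $e \in D_\pm$. The hypothesis $r \geq 3$ together with simplicity should supply sufficiently many distinct cocircuits through $e$ to guarantee that the extension can be realized without pushing any $|F(x)|$ above $r$.

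For the equality characterization, I would first establish minor-monotonicity of $\chi$: if $\Ocal'$ is a minor of $\Ocal$, then $\chi(\Ocal') \leq \chi(\Ocal)$. This uses Lemma~\ref{lem:OMminors} together with the observation that cocircuits of $\Ocal \restrict X$ lift to cocircuits of $\Ocal$ and cocircuits of $\Ocal \contract Y$ are restrictions of cocircuits of $\Ocal$ supported on $Y$, so a nowhere-zero coflow on $\Ocal$ induces one on each minor. Combined with the classical graph-coloring result $\chi(K_{r+1}) = r + 1$, which transfers to $\chi(\Ocal) = r+1$ for any orientation $\Ocal$ of $M(K_{r+1})$, this gives the `$\Leftarrow$' direction. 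The `$\Rightarrow$' direction I would prove by contrapositive: if $M(\Ocal)$ is not isomorphic to $M(K_{r+1})$, then one can exhibit a minor-based construction that yields a nowhere-zero coflow with max-norm at most $r - 1$, showing $\chi(\Ocal) \leq r$. The main obstacle will be this last step, which requires a genuine matroidal Hadwiger-type theorem: showing that any simple loopless rank-$r$ matroid different from $M(K_{r+1})$ has enough structural slack (a non-trivial $2$-separation, or a missing circuit compared to $M(K_{r+1})$) to support a strictly smaller coflow, which is where the hardest combinatorial work lies.
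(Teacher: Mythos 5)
The paper itself does not prove this theorem: it is quoted from \cite{HN08} and the reader is referred there, so your proposal has to stand on its own. As written it does not, because both halves of your plan leave the decisive step unproven. For the upper bound, the entire difficulty is concentrated in your sentence that simplicity and $r\geq 3$ ``should supply sufficiently many distinct cocircuits through $e$'' to extend $F'$ without pushing any entry above $r$. This is the oriented-matroid analogue of the greedy step ``a vertex of degree at most $r$ has a free colour,'' and it is exactly what is not automatic here: adding an integer multiple of a signed cocircuit $D$ with $e\in D_\pm$ perturbs the coflow on all of $D_\pm$, not just at $e$, and the various cocircuits through $e$ overlap, so you cannot tune $F(e)$ independently while keeping every other entry nonzero and bounded by $r$. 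Making this work is the substance of the Hochst\"attler--Ne\v{s}et\v{r}il proof (which argues via copoints and colines rather than a one-element induction), and nothing in your sketch replaces it.

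For the equality characterization there is an outright error: minor-monotonicity of $\chi$ is false already for graphs, hence for oriented matroids. The cycle $C_3$ is a contraction minor of $C_6$, yet $\chi(C_3)=3>2=\chi(C_6)$; the chromatic number is monotone under deletion but not under contraction, so a Hadwiger-type minor argument of the kind you describe cannot be run. (The ``$\Leftarrow$'' direction is salvageable, but it silently uses that $M(K_{r+1})$ is regular and therefore has a unique reorientation class, so that \emph{every} orientation of it has chromatic number $r+1$.) Finally, you yourself flag the dichotomy ``not isomorphic to $M(K_{r+1})$ implies a coflow of max-norm at most $r-1$'' as the place ``where the hardest combinatorial work lies''; that is an accurate self-assessment, and it means the equality case is asserted rather than proved.
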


\PRFR{Mar 7th}
\noindent
See \cite{HN08} for the proof. If $M(\Ocal)$ has no loops but it has two parallel edges, i.e. some $e,f\in E$ with $e\not=f$ and
 $\rk_{M(\Ocal)}\left( \SET{e,f} \right) = 1$,
then $\SET{e,f}$ is a circuit of $M(\Ocal)$. By Lemma~\ref{lem:CircuitCocircuitOrthogonality} we obtain that
the equality $e\in D_\pm \Leftrightarrow f\in D_\pm$ holds for every $D\in \Ccal^\ast$.
Let $C\in \Ccal$ be the signed circuit with $C_\pm = \SET{e,f}$, and let $D_1,D_2\in \Ccal^\ast$ with $e\in \left( {D_1}_\pm \cap {D_2}_\pm \right)$.
Since $C\bot D_1$ and $C\bot D_2$, we obtain that $D_1(e)D_2(e) = D_1(f) D_2(f)$, i.e. the sign of $e$ uniquely determines the sign of $f$ 
in any cocircuit of $\Ocal$.
 In the proof of Lemma~\ref{lem:CircuitCocircuitOrthogonality} we
established that cocircuits are the complements of hyperplanes, therefore every signed cocircuits $D'$ of the restriction
$\Ocal \restrict \left( E\BSET{f} \right)$
corresponds to a signed cocircuit $D$ of $\Ocal$ 
where $\SET{e,f}\subseteq D_\pm$ if and only if $e\in D'_\pm$. 
Thus a nowhere-zero coflow $\phi'$ of $\Ocal\restrict \left( E\BSET{f} \right)$ 
extends naturally to a nowhere-zero coflow $\phi$ of $\Ocal$ with $\phi'(f) \in \SET{-\phi(e),\phi(e)}$ by taking any integer linear combination of cocircuits 
of the restriction $\Ocal\restrict \left( E\BSET{f} \right)$  with respect to the corresponding cocircuits of $\Ocal$.
Consequently, $\chi(\Ocal) = \rk_{M(\Ocal)}(E) + 1$
if and only if $M(\Ocal)$ is isomorphic to the cycle matroid of a multi-graph on $\rk_{M(\Ocal)}(E) + 1$ vertices that has at least one edge between every pair
of distinct vertices.

\clearpage

\section{Lattice Path Matroids are 3-Colorable}

\PRFR{Mar 7th}
The results presented in this section 
have been presented in the technical report  {\em Lattice Path Matroids are 3-Colorable}
by I.~Albrecht and W.~Hochstättler \cite{Al15}.

\begin{definition}[\cite{GoHoNe15}, Definition~4]\PRFR{Mar 7th}
 Let $M=(E,\Ical)$ be
a matroid. A flat $X\in\Fcal(M)$ is called \deftext[coline of M@coline of $M$]{coline of $\bm M$},
 if $\rk_M(X)=\rk_M(E)-2$.
 A flat $Y\in\Fcal(M)$ is called
\deftext[copoint of M on X@copoint of $M$ on $X$]{copoint of $\bm M$ on $\bm X$},
 if $X\subseteq Y$ and
$\rk_M(Y)=\rk_M(E)-1$. 
If further $\left| Y\backslash X \right|=1$,
we say that $Y$ is a \deftext[simple copoint on X@simple copoint on $X$]{simple copoint on $\bm X$}. 
If otherwise $\left| Y\backslash X \right|>1$, we
say that $Y$ is a \deftext[multiple copoint on X@multiple copoint on $X$]{multiple copoint on $\bm X$}\footnote{In \cite{GoHoNe15} and \cite{Al15}, multiple copoints are called {\em fat copoints}.\index{fat copoint}}.
 A \deftext{quite simple coline}\footnote{In \cite{GoHoNe15} and \cite{Al15}, quite simple colines are called {\em positive colines}.\index{positive coline}} is a coline $X\in \Fcal(M)$,
 such that there are more simple copoints on $X$ than there are multiple copoints on $X$.
\end{definition}

\noindent The following definitions are basically those found in J.E.~Bonin and A.~deMier's paper {\em Lattice path matroids: Structural properties} \cite{Bonin2006701}.
\begin{definition}\PRFR{Mar 7th}
Let $n\in\mathbb{N}$. A \deftext{lattice path} of length $n$ is a tuple\label{n:latticePath}
$(p_{i})_{i=1}^{n}\in\{\mathrm{N},\mathrm{E}\}^{n}$. 
We say
that the \deftext[i-th step of a lattice path@$i$-th step of a lattice path]{$\bm i$-th step} of $(p_{i})_{i=1}^{n}$ is towards the North if $p_{i}=\mathrm{N}$,
and towards the East if $p_{i}=\mathrm{E}$.
\end{definition}

\begin{definition}\PRFR{Mar 7th}
Let $n\in\mathbb{N}$, and let $p = (p_{i})_{i=1}^{n}$ and $q = (q_{i})_{i=1}^{n}$
be lattice paths of length $n$. We say that $p$
is \deftext[p south of q@$p$ south of $q$]{south of $\bm q$}
if for all $k\in\SET{1,2,\ldots,n}$, 
\[
\left|\left\{ i\in \N \BSET {0} \vphantom{A^A}~\middle|~ i\leq k \txtand p_{i}=\mathrm{N}\right\} \right|\leq\left|\left\{ i\in \N \BSET {0} \vphantom{A^A}~\middle|~ i\leq k \txtand q_{i}=\mathrm{N}\right\} \right|.
\]
We say that $p$ and $q$ have \deftext[lattice paths with common endpoints]{common endpoints}, if
 $$\left|\left\{ i\in \N \BSET {0} \vphantom{A^A}~\middle|~ i\leq n \txtand p_{i}=\mathrm{N}\right\} \right| = \left|\left\{ i\in \N \BSET {0} \vphantom{A^A}~\middle|~ i\leq n \txtand q_{i}=\mathrm{N}\right\} \right|$$ holds. 
 We say that the \deftextX{lattice path $\bm p$
is south of  $\bm q$ with common endpoints},
 if $p$ and $q$ have common endpoints and $p$ is south of $q$.
 In this case, we write $p \preceq q$.\label{n:neverabove}
\end{definition}

\begin{definition}\PRFR{Mar 7th}
Let $n\in \N$, and  let $p,q\in \SET{\mathrm{E},\mathrm{N}}^n$ be lattice paths such
that $p\preceq q$. We define the set
of \deftext[lattice paths between p and q@lattice paths between $p$ and $q$]{lattice paths between
 $\bm p$ and $\bm q$}
to be\label{n:LPbetweenPQ}
\[
\mathrm{P}\left[p,q\right]=\left\{ r \in\{\mathrm{N},\mathrm{E}\}^{n}
\vphantom{A^A}~\middle|~
p \preceq r \preceq q\right\} . \qedhere
\]
\end{definition}

\needspace{6\baselineskip}
\begin{definition}\label{def:LPmatroid}\PRFR{Mar 7th}
A matroid $M=(E,\Ical)$ is called \deftext{strong lattice path matroid}, if
its ground set has the property 
$E = \SET{1,2,\ldots,\left| E \right|}$ and if
there are lattice paths $p,q\in \SET{\mathrm{E},\mathrm{N}}^{\left| E \right|}$
with $p\preceq q$,
such that $M = M[p,q],$
where $M[p,q]$ denotes the transversal matroid  presented by
the family
$\Acal_{[p,q]}=(A_i)_{i=1}^{\rk_M(E)} \subseteq E$
with
\[
A_i = \SET{j\in E~\middle|~\exists (r_j)_{j=1}^{\left| E \right|}\in \mathrm{P}[p,q]\colon\,
 r_j = \mathrm{N}\txtand \left| \SET{k\in E\mid k\leq j,\,r_k=\mathrm{N} }\right| = i },
\]
i.e. each $A_i$ consists of those $j\in E$, such that there is a 
lattice path $r$ between $p$ and $q$ such that the $j$-th step of $r$ is towards the North for the $i$-th time in total.
Furthermore,
a matroid $M=(E,\Ical)$ is called \deftext{lattice path matroid}, 
if there is a bijection $\phi \colon E\maparrow \SET{1,2,\ldots,\left| E \right|}$ 
such that $\phi[M] = \left(\phi[E],\SET{\phi[X]\vphantom{A^A}~\middle|~ X\in\Ical}\right)$ is a strong lattice path matroid.
\end{definition}

\vspace*{-\baselineskip} 
\begin{wrapfigure}{r}{5cm}
\vspace{1\baselineskip}
\begin{centering}
~~
\begin{tikzpicture}
\node[inner sep=0pt,circle,minimum size=4pt,fill] at (0,0) {};

\node[inner sep=0pt,circle,minimum size=4pt,fill] at (1,0) {};

\node[inner sep=0pt,circle,minimum size=4pt,fill] at (2,0) {};

\node[inner sep=0pt,circle,minimum size=4pt,fill] at (0,1) {};

\node[inner sep=0pt,circle,minimum size=4pt,fill] at (1,1) {};

\node[inner sep=0pt,circle,minimum size=4pt,fill] at (2,1) {};

\node[inner sep=0pt,circle,minimum size=4pt,fill] at (3,1) {};
\node[inner sep=0pt,circle,minimum size=4pt,fill] at (0,2) {};

\node[inner sep=0pt,circle,minimum size=4pt,fill] at (1,2) {};

\node[inner sep=0pt,circle,minimum size=4pt,fill] at (2,2) {};

\node[inner sep=0pt,circle,minimum size=4pt,fill] at (3,2) {};
\node[inner sep=0pt,circle,minimum size=4pt,fill] at (1,3) {};

\node[inner sep=0pt,circle,minimum size=4pt,fill] at (2,3) {};

\node[inner sep=0pt,circle,minimum size=4pt,fill] at (3,3) {};

\draw[thin] (1,0) -- (1,3);
\draw[thin] (2,0) -- (2,3);
\draw[thin] (0,1) -- (3,1);
\draw[thin] (0,2) -- (3,2);
\draw[very thick] (0,0) -- (2,0) -- (2,1) -- (3,1) -- (3,3);
\node at (3.5,.5) {$p$};
\node at (-.5,.5) {$q$};

\draw[very thick] (0,0) -- (0,2) -- (1,2) -- (1,3) -- (3,3);
\begin{scope}[shift={(-.1,-.15)}]
\node at (.3,.5) {$1$};
\node at (1.3,.5) {$2$};
\node at (2.3,.5) {$3$};
\node at (.3,1.5) {$2$};
\node at (1.3,1.5) {$3$};
\node at (2.3,1.5) {$4$};
\node at (3.3,1.5) {$5$};
\node at (1.3,2.5) {$4$};
\node at (2.3,2.5) {$5$};
\node at (3.3,2.5) {$6$};
\end{scope}
\end{tikzpicture}
\end{centering}%
\vspace*{-1\baselineskip} 
\end{wrapfigure}
~ 

\begin{example}\PRFR{Mar 7th}
Let us consider the two lattice paths $p=(\mathrm{E},\mathrm{E},\mathrm{N},\mathrm{E},\mathrm{N},\mathrm{N})$
and $q=(\mathrm{N},\mathrm{N},\mathrm{E},\mathrm{N},\mathrm{E},\mathrm{E})$. 
We have $p\preceq q$ and the strong lattice path matroid $M[p,q]$ is the transversal matroid $M(\Acal)$ presented
by the family \linebreak $\Acal=(A_i)_{i=1}^3$ of subsets of $\SET{1,2,\ldots,6}$ where $A_{1}=\{1,2,3\}$, $A_{2}=\{2,3,4,5\}$,
and 
$A_{3}=\{4,5,6\}$.%
\end{example}

\begin{theorem}[\cite{Bonin2006701}, Theorem~2.1]\PRFR{Mar 7th}
\label{LPMthm:P} Let $p$, $q$ be lattice
paths of length $n$, such that $p\preceq q$.
Let $\Bcal \subseteq 2^{\SET{1,2,\ldots,n}}$ consist of the bases of the
strong lattice path matroid $M=M[p,q]$ on the ground set $\SET{1,2,\ldots,n}$.
Let
$$\phi \colon \mathrm{P}[p,q] \maparrow \Bcal,\quad (r_i)_{i=1}^n \mapsto \SET{j\in \N ~\middle|~ 1\leq j\leq n,\,r_j = \mathrm{N}} .$$
Then $\phi$ is a bijection between 
the family of lattice paths $\mathrm{P}[p,q]$ between $p$ and $q$ and the family of bases of $M$.
\end{theorem}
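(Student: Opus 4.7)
The plan is to verify that $\phi$ is simultaneously well-defined, injective, and surjective onto $\Bcal$. Well-definedness reduces to showing that for every $r = (r_i)_{i=1}^n \in \mathrm{P}[p,q]$ the set $\phi(r)$ is a base of $M = M[p,q]$. Since $p$ and $q$ have common endpoints, every $r \in \mathrm{P}[p,q]$ has exactly $m := \rk_M(E)$ North steps, so $|\phi(r)| = m$. Enumerating these North steps of $r$ in positional order as $j_1 < j_2 < \cdots < j_m$, the definition of $A_i$ (with $r$ itself as the witness) gives $j_i \in A_i$, so the assignment $j_i \mapsto i$ is an injective system of representatives. Hence $\phi(r)$ is a partial transversal of $\Acal_{[p,q]}$ of cardinality $m$, and by Corollary~\ref{cor:transversalMatroid} together with Corollary~\ref{cor:maximalpartialtransversals}, $\phi(r)$ is a base of $M$. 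Injectivity of $\phi$ is immediate, because $r$ is recovered from $\phi(r)$ by setting $r_j = \mathrm{N}$ if and only if $j \in \phi(r)$.

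Surjectivity is the substantial part, and I would approach it via the \emph{interval} structure of the presentation $\Acal_{[p,q]}$. Let $p^*_i$ and $q^*_i$ denote the positions of the $i$-th North step in $p$ and in $q$, respectively; the hypothesis $p \preceq q$ rewrites as $q^*_i \leq p^*_i$ for all $i$, while $q^*_1 < \cdots < q^*_m$ and $p^*_1 < \cdots < p^*_m$ hold automatically. A short argument establishes $A_i = \{j \in E : q^*_i \leq j \leq p^*_i\}$: membership in $A_i$ forces $q^*_i \leq j \leq p^*_i$ (else the witnessing path could not be sandwiched between $p$ and $q$), while every integer $j$ in this interval is realized by explicitly exhibiting a path $r \in \mathrm{P}[p,q]$ whose $i$-th North step is at position $j$, e.g.\ by modifying $p$ through local $\mathrm{NE} \leftrightarrow \mathrm{EN}$ swaps. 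Now let $B = \{b_1 < b_2 < \cdots < b_m\} \in \Bcal$. By Corollary~\ref{cor:transversalMatroid} some bijection $\tau : \{1,\ldots,m\} \to B$ satisfies $\tau(i) \in A_i$ for all $i$. I claim $\tau$ can be chosen order-preserving, so that $b_i = \tau(i) \in A_i$: if $i < j$ but $\tau(i) > \tau(j)$, the interval structure yields $q^*_i < q^*_j \leq \tau(j) < \tau(i) \leq p^*_i < p^*_j$, so both $\tau(j) \in A_i$ and $\tau(i) \in A_j$, and the transposition at $(i,j)$ produces a valid bijection with strictly fewer inversions. Iterating eliminates all inversions and produces the sorted bijection $\tau(i) = b_i$.

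To finish, define $r \in \{\mathrm{N},\mathrm{E}\}^n$ by $r_j = \mathrm{N}$ iff $j \in B$; then the $i$-th North step of $r$ sits at $b_i$. The inequalities $q^*_i \leq b_i \leq p^*_i$ translate directly into $p \preceq r \preceq q$, so $r \in \mathrm{P}[p,q]$ and $\phi(r) = B$. The main technical hurdle I expect is the surjective half of the interval identification $A_i = [q^*_i, p^*_i] \cap E$: one must exhibit, for each $j$ in this interval, a concrete lattice path in $\mathrm{P}[p,q]$ whose $i$-th North step lands on $j$. Once this lemma is in place, the order-preserving exchange step is routine, and the remainder of the argument is straightforward bookkeeping driven by the translation between prefix North-step counts and the $\preceq$ ordering.
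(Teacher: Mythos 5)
Your proof is correct. The well-definedness and injectivity parts coincide with the paper's: the paper likewise enumerates the North steps $j_1<\cdots<j_m$ of $r$ and uses $r$ itself as the witness that $\phi(r)$ is a transversal of $\Acal_{[p,q]}$, and likewise observes that a lattice path of fixed length is recovered from its set of North-step positions. Where you diverge is surjectivity: the paper disposes of it in one sentence (``it is clear from Definition~\ref{def:LPmatroid} that $\phi$ is surjective''), which really only records that each \emph{individual} element of each $A_i$ is witnessed by \emph{some} path, whereas a base is a transversal and one must produce a \emph{single} path realizing all of its elements simultaneously. Your route supplies exactly the missing content: the identification $A_i=\SET{j~\middle|~q^*_i\leq j\leq p^*_i}$ (whose nontrivial half, realizing each $j$ in the interval, does go through, e.g.\ by placing the $k$-th North step at $q^*_k$ for $k<i$, at $j$ for $k=i$, and at $\max\SET{j+k-i,\,q^*_k}\leq p^*_k$ for $k>i$), the inversion-elimination argument showing the transversal bijection may be taken order-preserving, and the translation of $q^*_i\leq b_i\leq p^*_i$ back into $p\preceq r\preceq q$. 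So your argument is a strictly more complete version of the paper's; the cost is the extra interval lemma and the exchange step, the benefit is that surjectivity is actually proved rather than asserted.
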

\begin{proof}\PRFR{Mar 7th}
	Clearly, $\phi$ is well-defined: let $r=(r_i)_{i=1}^n\in \mathrm{P}[p,q]$, and let $m = \rk_M(\SET{1,2,\ldots,n})$,
	then there are $j_1 < j_2 < \ldots < j_m$ such that $r_i = \mathrm{N}$ if and only if $i\in \SET{j_1,j_2,\ldots,j_m}$.
	Thus the map $$\iota_r\colon \phi(r) \maparrow \SET{1,2,\ldots,m},$$ where
	$\iota_r(i) = k$ for $k$ such that $i = j_k$, witnesses that the set $\phi(r) \subseteq \SET{1,2,\ldots,n}$ is indeed a transversal of $\Acal_{[p,q]}$,
	and therefore a base of $M[p,q]$. It is clear from Definition~\ref{def:LPmatroid} that $\phi$ is surjective.
	It is obvious that if we consider only lattice paths of a fixed given length $n$, then the indexes of the steps towards the North
	 uniquely determine such a lattice path. Thus $\phi$ is also injective.
\end{proof}

\needspace{6\baselineskip}
\begin{proposition}\PRFR{Mar 7th}
\label{LPMprop:X}Let $p=(p_{i})_{i=1}^{n}$, $q=(q_{i})_{i=1}^{n}$
be lattice paths of length $n$ such that $p\preceq q$. Let $j\in E=\SET{1,2,\ldots,n}$ and $M=M[p,q]$. Then
\begin{enumerate} \ROMANENUM
\item $\rk_M\left( \SET{1,2,\ldots,j} \right)=\left|\left\{ i\in \SET{1,2,\ldots,j} \mid q_{i}=\mathrm{N}\right\} \right|$.
\item \begin{minipage}[t]{12cm} The element $j$ is a loop in $M$ if and only if 
\[
\left|\left\{ i\in \SET{1,2,\ldots,j-1} \vphantom{A^A}~\middle|~ p_{i}=\mathrm{N}\right\} \right|=\left|\left\{ i\in \SET{1,2,\ldots,j}\vphantom{A^A}~\middle|~ q_{i}=\mathrm{N}\right\} \right|,
\]
i.e. the $j$-th step is forced to go towards East for all $r\in \mathrm{P}[p,q]$.\end{minipage}
\vtop{%
  \vskip0pt
  \vspace*{.8cm}
  \hbox{%
    \includegraphics[scale=.75]{loop}%
  }%
}
\item For all $k\in E$ with $j<k$, $j$ and $k$ are parallel edges in $M$
if and only if 
\begin{eqnarray*}
\left|\left\{ i\in\SET{1,2,\ldots,j-1}\vphantom{A^A}~\middle|~p_{i} = \mathrm{N}\right\} \right| & = & \left|\left\{ i\in\SET{1,2,\ldots,k-1}\vphantom{A^A}~\middle|~ p_{i}=\mathrm{N}\right\} \right|
\\ & =&
\left|\left\{ i\in\SET{1,2,\ldots,j}\vphantom{A^A}~\middle|~q_{i} = \mathrm{N}\right\} \right| -1
\\& = & \left|\left\{ i\in\SET{1,2,\ldots,k}\vphantom{A^A}~\middle|~q_{i}=\mathrm{N}\right\} \right| -1, \end{eqnarray*}
\begin{minipage}[t]{8cm}
i.e. the $j$-th and $k$-th steps of any $r\in \mathrm{P}[p,q]$ are in a common corridor towards
the East that is one step wide towards the North.\end{minipage}~~~~~~\vtop{%
  \vskip0pt
  \vspace*{-1.0cm}
  \hbox{%
  \includegraphics[scale=.75]{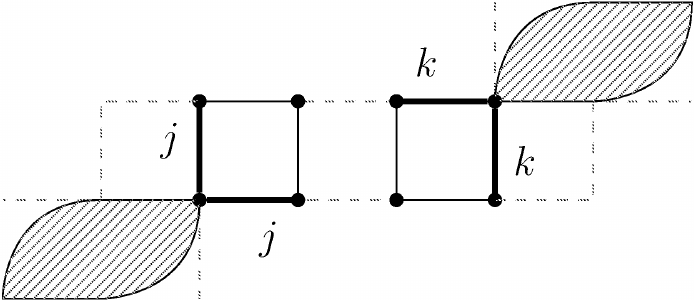}}%
}
\end{enumerate}
\end{proposition}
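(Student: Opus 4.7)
The unifying tool will be Theorem~\ref{LPMthm:P}, which identifies the bases of $M=M[p,q]$ with the lattice paths in $\mathrm{P}[p,q]$: a base $B$ corresponds to the path $r=(r_i)_{i=1}^n$ with $r_i=\mathrm{N}$ exactly when $i\in B$. Throughout, I will write $P_j=\left|\{i\leq j : p_i=\mathrm{N}\}\right|$ and $Q_j=\left|\{i\leq j : q_i=\mathrm{N}\}\right|$ for brevity. The key preliminary observation is that for any subset $X\subseteq E$, the rank formula together with Lemma~\ref{lem:augmentation} (augmentation) yields $\rk_M(X)=\max\{\left|B\cap X\right| : B\in\Bcal(M)\}$: a maximum-size independent subset of $X$ extends to some base, and conversely every $B\cap X$ is independent by {\em(I2)}.

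For {\em(i)}, apply this rank description to $X=\{1,\dots,j\}$ and use Theorem~\ref{LPMthm:P} to translate the maximum into $\rk_M(\{1,\dots,j\})=\max_{r\in\mathrm{P}[p,q]} \left|\{i\leq j : r_i=\mathrm{N}\}\right|$. Since every $r\in\mathrm{P}[p,q]$ satisfies $r\preceq q$, this number is bounded above by $Q_j$, and the bound is attained by $r=q$ itself (as $q\in\mathrm{P}[p,q]$, since $p\preceq q$ and trivially $q\preceq q$). Hence $\rk_M(\{1,\dots,j\})=Q_j$.

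For {\em(ii)}, the element $j$ is a loop iff $j$ lies in no base, iff no $r\in\mathrm{P}[p,q]$ has $r_j=\mathrm{N}$. The plan is to show that $r_j=\mathrm{N}$ is realizable for some $r\in\mathrm{P}[p,q]$ if and only if $P_{j-1}<Q_j$. One direction is easy: if such an $r$ exists then the number of $\mathrm{N}$-steps of $r$ up to position $j$ is at most $Q_j$ and at least $P_{j-1}+1$, forcing $P_{j-1}+1\leq Q_j$. For the converse, construct such an $r$ explicitly by following $p$ on positions $1,\dots,j-1$, taking step $\mathrm{N}$ at position $j$, and then completing to a path in $\mathrm{P}[p,q]$ by greedily tracking $p$ and $q$; the inequality $P_{j-1}+1\leq Q_j$ guarantees that no constraint of $\preceq q$ is violated by the extra $\mathrm{N}$-step, and the subsequent greedy completion works because both $P$ and $Q$ are non-decreasing with $P\leq Q$. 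Combining directions with the trivial inequality $P_{j-1}\leq Q_j$ (which holds since $P_{j-1}\leq P_j\leq Q_j$), $j$ is a loop iff $P_{j-1}=Q_j$, as required.

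For {\em(iii)}, parallel means $\{j,k\}$ is a circuit, which by {\em(ii)} is equivalent to the three conditions: $j$ is not a loop, $k$ is not a loop, and no base contains both $j$ and $k$, i.e.\ no $r\in\mathrm{P}[p,q]$ satisfies $r_j=r_k=\mathrm{N}$. My plan is to translate these into the combinatorial conditions in the statement. The non-loop conditions give $P_{j-1}\leq Q_j-1$ and $P_{k-1}\leq Q_k-1$. To rule out simultaneous $\mathrm{N}$-steps at $j$ and $k$, observe that if some $r\in\mathrm{P}[p,q]$ had $r_j=r_k=\mathrm{N}$, then the count of $\mathrm{N}$-steps of $r$ up to position $k$ would be at least $P_{j-1}+1+(P_{k-1}-P_{j-1})+1=P_{k-1}+2$; combined with the ceiling $\leq Q_k$ this forces $P_{k-1}+2\leq Q_k$, i.e.\ $P_{k-1}\leq Q_k-2$. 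Conversely, if $P_{k-1}\leq Q_k-2$ then using the same greedy construction as in {\em(ii)} (follow $p$ up to $j-1$, take an extra $\mathrm{N}$-step at $j$, continue tracking $p$ to $k-1$, take another $\mathrm{N}$-step at $k$, then complete), one builds such an $r$. Hence the circuit condition is exactly $P_{j-1}\leq Q_j-1$, $P_{k-1}\leq Q_k-1$, and $P_{k-1}\geq Q_k-1$. The last two combine to $P_{k-1}=Q_k-1$; for the first, note $P_{j-1}\leq P_{k-1}=Q_k-1\leq Q_j-1$ gives an upper bound, while a matching lower bound comes from the non-loop condition at $j$; the chain of equalities in the statement then follows by noting that $P_{j-1}\leq P_{k-1}$ and $Q_j\leq Q_k$ force equality throughout once $P_{j-1}=Q_j-1$ and $P_{k-1}=Q_k-1$ together with $P_{j-1}=P_{k-1}$ are established. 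The main obstacle is the careful greedy construction that realizes the implied paths without violating $p\preceq r\preceq q$, and the bookkeeping needed to collapse the inequalities into the four-fold equality stated in the proposition.
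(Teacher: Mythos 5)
Parts \emph{(i)} and \emph{(ii)} of your argument are correct and follow the same route as the paper: rank via maximal intersection with a base, the base--path bijection of Theorem~\ref{LPMthm:P}, and the ``follow $p$, insert one extra North step, go East until you rejoin $p$'' construction (which the paper itself only sketches). Writing $P_m=\left|\{i\leq m : p_i=\mathrm{N}\}\right|$ and $Q_m=\left|\{i\leq m : q_i=\mathrm{N}\}\right|$ as you do, your part \emph{(iii)}, however, contains a genuine error. You claim that if some $r\in\mathrm{P}[p,q]$ has $r_j=r_k=\mathrm{N}$, then the $\mathrm{N}$-count of $r$ up to position $k$ is at least $P_{j-1}+1+(P_{k-1}-P_{j-1})+1=P_{k-1}+2$. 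The middle term asserts that $r$ has at least $P_{k-1}-P_{j-1}$ North steps strictly between $j$ and $k$; but $p\preceq r$ only bounds \emph{cumulative} counts, so $r$ may have banked its surplus before position $j$ and take fewer North steps than $p$ inside that window. The correct bounds are $c_k\geq P_{k-1}+1$ and $c_k\geq c_{j-1}+2\geq P_{j-1}+2$, so non-existence of such an $r$ (given the non-loop conditions) is equivalent to $P_{j-1}+2>Q_k$, not to $P_{k-1}+2>Q_k$. A concrete counterexample to your criterion: $n=4$, $p=(\mathrm{E},\mathrm{N},\mathrm{E},\mathrm{N})$, $q=(\mathrm{N},\mathrm{N},\mathrm{E},\mathrm{E})$, $j=1$, $k=3$. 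Here $P_0=0=Q_1-1$ and $P_2=1=Q_3-1$, so neither element is a loop and your condition $P_{k-1}\geq Q_k-1$ holds, yet $r=(\mathrm{N},\mathrm{E},\mathrm{N},\mathrm{E})$ lies in $\mathrm{P}[p,q]$ and uses both steps, so $1$ and $3$ are \emph{not} parallel (and indeed $P_{j-1}\neq P_{k-1}$).

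This is not merely a local slip: your three established conditions ($P_{j-1}\leq Q_j-1$, $P_{k-1}\leq Q_k-1$, $P_{k-1}\geq Q_k-1$) do not imply $P_{j-1}=P_{k-1}$ or $P_{j-1}=Q_j-1$, which is why your final paragraph has to appeal to ``a matching lower bound from the non-loop condition at $j$'' --- but that condition is an upper bound on $P_{j-1}$, so no such lower bound is available. The fix is to replace your condition by the correct one, $P_{j-1}\geq Q_k-1$; combined with $P_{j-1}\leq P_{k-1}\leq Q_k-1$ and $P_{j-1}\leq Q_j-1\leq Q_k-1$ this squeezes all four quantities into the stated chain of equalities. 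This is essentially what the paper does: it takes the $\preceq$-minimal path through $j$, whose count after step $j$ is $P_{j-1}+1$, and argues that parallelism forces $Q_k=P_{j-1}+1$ directly.
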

\begin{proof}\PRFR{Mar 7th}
For every $r\in \mathrm{P}[p,q]$, we have $r \preceq q$,
therefore $r$ is south of $q$, thus for
\linebreak all $k\in E$,
$\left| \SET{j\in \SET{1,2,\ldots,k} \vphantom{A^A}~\middle|~r_k = \mathrm{N}} \right| \leq \left| \SET{j\in \SET{1,2,\ldots,k}\vphantom{A^A}~\middle|~ q_k = \mathrm{N}} \right|$.
Consequently, 
\linebreak
$\left\{ i\in \SET{1,2,\ldots,j} \vphantom{A^A}~\middle|~ q_{i}=\mathrm{N}\right\}$ is a maximal independent subset of $\SET{1,2,\ldots,j}$
and so statement {\em (i)} holds.
%
An element  $j\in E$ is a loop in $M$, if and only if $\mathrm{rk}_M(\{j\})=0$,
which is the case if and only $\SET{j}$ is not independent in $M$.
This is the case if and only if for all bases $B$ of $M$, $j\notin B$ holds, 
because every independent set is a subset of a base (Lemma~\ref{lem:augmentation}).
The latter holds
 if and only if for all $(r_{i})_{i=1}^{n}\in\mathrm{P}[p,q]$ the $j$-th step is towards the East, i.e. $r_{j}=\mathrm{E}$.
 This, in turn, is the case
if and only if $\left|\left\{ i\in\SET{1,2,\ldots,j-1}\vphantom{A^A}~\middle|~p_{i}=\mathrm{N}\right\} \right|=\left|\left\{ i\in\SET{1,2,\ldots,j}\vphantom{A^A}~\middle|~q_{i}=\mathrm{N}\right\} \right|$.
Thus statement {\em (ii)} holds, too.
Let $j,k\in E$ with $j< k$. It is easy to see that if $j$ and $k$ are in a common corridor, then every lattice path $r=(r_i)_{i=1}^{n}$ of length $n$
with $r_j = r_k = \mathrm{N}$ cannot be between $p$ and $q$, i.e. $p\preceq r \preceq q$ cannot hold: a lattice path $r$ with $r_j = r_k = \mathrm{N}$
is either below $p$ at $j-1$ or above $q$ at $k$. 
Thus $\SET{j,k}$ cannot be independent in $M$. By {\em (i)}, neither $j$ nor $k$ can be a loop in $M$, thus $j$ and $k$ must be parallel edges in $M$.
Conversely, let $j < k$ be parallel edges in $M$. 
Then $j$ is not a loop in $M$, so there is a path $r^1 = (r_{i}^{1})_{i=1}^{n}\in\mathrm{P}[p,q]$
with $r_{j}^{1}=\mathrm{N}$ which is minimal with regard to $\preceq$,
and then
\[
\left|\left\{ i\in \SET{1,2,\ldots,j-1}\vphantom{A^A}~\middle|~ r_{i}^{1}=\mathrm{N}\right\} \right|=\left|\left\{ i\in\SET{1,2,\ldots,j-1}\vphantom{A^A}~\middle|~p_{i}=\mathrm{N}\right\} \right|.
\]
Since $j$ and $k$ are parallel edges, $\SET{j,k}\not\subseteq B$ for all bases $B$ of $M$.
Therefore there is no $r=(r_{i})_{i=1}^{n}\in \mathrm{P}[p,q]$ such that $r_i = r_k = \mathrm{N}$.
This yields the equation
\begin{align*}
\left|\left\{ i\in\SET{1,2,\ldots,k}\vphantom{A^A}~\middle|~q_{i} = \mathrm{N}\right\} \right|  \,\,=\,\, & \left| \left\{ i\in\SET{1,2,\ldots,j}\vphantom{A^A}~\middle|~ r_{i}^{1}=\mathrm{N}\right\}\right|
\\   = \,\,& \left|\left\{ i\in\SET{1,2,\ldots,j-1}\mid r_{i}^{1}=\mathrm{N}\right\} \right|+1.
\end{align*}
Since $k$ is not a loop in $M$, it follows that \[
\left|\left\{ i\in\SET{1,2,\ldots,j-1}\vphantom{A^A}~\middle|~ p_{i}=\mathrm{N}\right\} \right|=\left|\left\{ i\in\SET{1,2,\ldots,j}\vphantom{A^A}~\middle|~ q_{i}=\mathrm{N}\right\} \right|-1.
\] Thus {\em (iii)} holds.
\end{proof}

\begin{lemma}\PRFR{Mar 7th}
\label{LPMlem:A}Let $p=(p_{i})_{i=1}^{n}$ and $q=(q_{i})_{i=1}^{n}$ be
lattice paths of length $n$, such that $p\preceq q$, and such that $M=M[p,q]$ is
a strong lattice path matroid on $E=\SET{1,2,\ldots,n}$ which has no loops.
Let $j\in E$ such that $q_j = \mathrm{N}$. Then 
\[
\SET{1,2,\ldots,j-1}=\cl_M\left(\SET{1,2,\ldots,j-1}\right).
\]
Furthermore, for all $k\in E$ with $k \geq j$, 
\[
\rk_M\left( \SET{1,2,\ldots,j-1}\cup\{k\} \right)= \rk_M\left( \SET{1,2,\ldots,j-1} \right)+1.
\]
 \end{lemma}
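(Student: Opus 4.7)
The plan is to observe that the second assertion actually implies the first: if for every $k \in E$ with $k \geq j$ we have $\rk_M(\{1,\ldots,j-1\} \cup \{k\}) = \rk_M(\{1,\ldots,j-1\}) + 1$, then $\{1,\ldots,j-1\}$ contains every element whose addition does not increase the rank, so it equals its own closure. Hence the work reduces to proving the second assertion. Let $r = \rk_M(\{1,\ldots,j-1\})$, which by Proposition~\ref{LPMprop:X}(i) equals $|\{i \leq j-1 : q_i = \mathrm{N}\}|$. By Theorem~\ref{LPMthm:P} it suffices, for each $k \geq j$, to exhibit a lattice path $\rho \in \mathrm{P}[p,q]$ such that $\rho_i = q_i$ for all $i \leq j-1$ and $\rho_k = \mathrm{N}$; the associated base $B = \{i : \rho_i = \mathrm{N}\}$ then satisfies $|B \cap \{1,\ldots,j-1\}| = r$ and contains $k$, which together with $(\mathrm{R2}')$ forces the desired rank equality.

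I would distinguish two cases. If $q_k = \mathrm{N}$, simply take $\rho = q$. If $q_k = \mathrm{E}$, let $\ell$ be the largest index in $\{j,\ldots,k-1\}$ with $q_\ell = \mathrm{N}$; this exists because $q_j = \mathrm{N}$. Define $\rho$ from $q$ by swapping the step at $\ell$ with the one at $k$, i.e.\ $\rho_\ell = \mathrm{E}$, $\rho_k = \mathrm{N}$, and $\rho_i = q_i$ otherwise. The condition $\rho \preceq q$ is straightforward: the two paths agree outside of $[\ell, k-1]$, and on that interval $\rho$ has exactly one fewer $\mathrm{N}$.

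The main obstacle is verifying $p \preceq \rho$, and this is precisely the point at which the hypothesis that $M$ has no loops enters. By choice of $\ell$, the only index in $\{\ell,\ldots,k-1\}$ at which $q$ takes a $\mathrm{N}$ step is $\ell$ itself, and $q_k = \mathrm{E}$. Consequently $|\{i \leq m : \rho_i = \mathrm{N}\}| = |\{i \leq m : q_i = \mathrm{N}\}| - 1$ for $\ell \leq m \leq k-1$, and $|\{i \leq m : \rho_i = \mathrm{N}\}| = |\{i \leq m : q_i = \mathrm{N}\}|$ for $m < \ell$ or $m \geq k$. For the latter ranges, $p \preceq \rho$ follows immediately from $p \preceq q$. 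For each $m$ with $\ell \leq m \leq k-1$, one needs the \emph{strict} inequality $|\{i \leq m : p_i = \mathrm{N}\}| < |\{i \leq m : q_i = \mathrm{N}\}|$. Here I would apply Proposition~\ref{LPMprop:X}(ii) to the element $m+1 \in \{\ell+1,\ldots,k\}$: since $q_{m+1} = \mathrm{E}$ in each such case (for $m+1 \leq k-1$ by the choice of $\ell$, and for $m+1 = k$ by hypothesis), one has $|\{i \leq m+1 : q_i = \mathrm{N}\}| = |\{i \leq m : q_i = \mathrm{N}\}|$. The no-loops condition forbids equality of this quantity with $|\{i \leq m : p_i = \mathrm{N}\}|$, and since $p \preceq q$ already gives $\leq$, the inequality is strict. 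This yields $p \preceq \rho$, completing the construction and thereby both assertions of the lemma.
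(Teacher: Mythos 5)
Your proof is correct, and it follows the same overall strategy as the paper: reduce the closure statement to the rank statement, compute $\rk_M(\SET{1,\ldots,j-1})$ via Proposition~\ref{LPMprop:X}~(i), and then exhibit a lattice path in $\mathrm{P}[p,q]$ that agrees with $q$ on the first $j-1$ steps and takes step $k$ towards the North, whose associated base certifies the rank increase. Where you differ is in how the witness path is produced and how the no-loops hypothesis enters. The paper first invokes the absence of loops to conclude that $k$ lies in some base, hence some path $r\in\mathrm{P}[p,q]$ has $r_k=\mathrm{N}$, and then splices: follow $q$ for $j-1$ steps, go East until meeting $r$, and continue along $r$; the verification that this spliced path stays in $\mathrm{P}[p,q]$ is left largely to geometric intuition. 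You instead modify $q$ directly by transposing the North step at $\ell$ (the last North step of $q$ before $k$) with the East step at $k$, and you verify $p\preceq\rho$ numerically by feeding the no-loops hypothesis through the counting criterion of Proposition~\ref{LPMprop:X}~(ii) at each index $m+1\in\SET{\ell+1,\ldots,k}$. Your version is more explicit and self-contained in checking membership in $\mathrm{P}[p,q]$ (the one place where the paper's argument is informal), at the cost of a case split and a slightly longer computation; both are valid, and the resulting base is in general a different one from the paper's.
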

 
\vspace*{-\baselineskip} 
\begin{wrapfigure}{r}{5cm}
\vspace{1.6\baselineskip}
\begin{centering}
~~
\begin{tikzpicture}[scale=0.8]\selectcolormodel{cmyk}
\node[inner sep=0pt,circle,minimum size=4pt,fill] (j) at (0,0) {};
\node[inner sep=0pt,circle,minimum size=4pt,fill] at (1,0) {};
\node[inner sep=0pt,circle,minimum size=4pt,fill] at (0,1) {};
\node[inner sep=0pt,circle,minimum size=4pt,fill] at (1,1) {};
\begin{scope}[shift={(1,2)}]
\draw[thick]   (-1,-1) .. controls (-1,-1) and (-1,1) .. (0,1) -- (3,1);
\draw  (0,1) -- (3,1) .. controls (3,-1) and (2,-1) .. (2,-1)  -- (-1,-1) .. controls (-1,-1) and (-1,1) .. (0,1);
\fill[pattern color = lightgray, pattern = north east lines] (0,1) -- (3,1) .. controls (3,-1) and (2,-1) .. (2,-1)  -- (-1,-1) .. controls (-1,-1) and (-1,1) .. (0,1);
\fill[color=white] (0,-.5) -- (2,-.5) -- (2, .5) -- (0, .5) -- (0,-.5);
\draw[dashed] (0,-.5) -- (2,-.5) -- (2, .5) -- (0, .5) -- (0,-.5);
\draw[ultra thick] (0,-1) .. controls (0,-0.5) and (1,-0.5).. (1,-0.5) -- (1,.5) .. controls (1,.5) and (2,0.5) .. (3,1);
\node at (1.3,0) {$k$}; 
\end{scope}

\node[inner sep=0pt,circle,minimum size=4pt,fill] at (2,1.5) {};
\node[inner sep=0pt,circle,minimum size=4pt,fill] at (2,2.5) {};


\begin{scope}[shift={(-1,0)}]
\draw[ultra thick]   (-1,-1) .. controls (-1,-1) and (-1,0) .. (0,0) -- (1,0);
\draw  (0,0) -- (3,0) .. controls (3,-1) and (2,-1) .. (2,-1)  -- (-1,-1) .. controls (-1,-1) and (-1,0) .. (0,0);
\fill[pattern color = lightgray, pattern = north east lines] (0,0) -- (3,0) .. controls (3,-1) and (2,-1) .. (2,-1)  -- (-1,-1) .. controls (-1,-1) and (-1,0) .. (0,0);
\draw[thick] (-1,-1) .. controls (0,0) and (2,-1).. (2,0);
\end{scope}

\draw[thick] (j)--(0,1);
\draw[ultra thick] (j)--(1,0);

\draw[ultra thick] (1,0)--(1,1);


\draw[dashed] (0,0)--(0,1)--(1,1)--(1,0)--(0,0);
\begin{scope}[shift={(2,0)}]
\draw[dashed,color=gray] (0,0)--(0,1)--(1,1)--(1,0)--(0,0);
\end{scope}

\node at (.3,.5) {$j$}; 
\node at (-1.5,.2) {$q$};
\node at (-1,-.8) {$r$};
\end{tikzpicture}
\end{centering}%
\end{wrapfigure}
~ 

\begin{proof}\PRFR{Mar 7th}
By Proposition~\ref{LPMprop:X}~{\em (i)}, we have
 $$\mathrm{rk}_M(\SET{1,2,\ldots,j-1})=\left|\left\{ i\in\SET{1,2,\ldots,j-1}\vphantom{A^A}~\middle|~ q_{i}=\mathrm{N}\right\} \right|.$$
Now fix some $k\in E$ with $k\geq j$. Since $M$ has no loop, 
 there is a base $B$ of $M$ with $k\in B$ and thus a lattice path $r=(r_{i})_{i=1}^{n}\in\mathrm{P}[p,q]$
with $r_{k}=\mathrm{N}$ (Theorem~\ref{LPMthm:P}).
We can construct a lattice path $s=(s_{i})_{i=1}^{n}\in\mathrm{P}[p,q]$
that follows $q$ for the first $j-1$ steps, then goes towards the
East until it meets $r$, and then goes on as $r$ does. 
The base $B_s = \SET{i\in E\mid s_i = \mathrm{N}}$
 that corresponds to the constructed path yields
\begin{align*}
\mathrm{rk}_M(\SET{1,2,\ldots,j-1}\cup\{k\}) & \geq \left|\left(\SET{1,2,\ldots,j-1}\vphantom{A^A}\cup\{k\}\right)\cap B_s\right|
\\ & = 1+\left|\left\{ i\in\SET{1,2,\ldots,j-1}\vphantom{A^A}~\middle|~ q_{i}=\mathrm{N}\right\} \right|
\\ & = 1+\mathrm{rk}_M(\SET{1,2,\ldots,j-1}).
\end{align*}
Since $\rk_M$ is unit increasing (Theorem \ref{thm:rankAxioms}, {\em (R2')}),
adding a single element to a set can increase the rank by at most one,
 thus the inequality in the above formula is indeed an equality.
This implies that $k\notin\mathrm{cl}_M(\SET{1,2,\ldots, j-1})$ (Lemma \ref{lem:clKeepsRank}).
Since $k$ was arbitrarily chosen with $k \geq j$,
we obtain
 $\SET{1,2,\ldots, j-1}=\mathrm{cl}_M(\SET{1,2,\ldots, j-1})$.\end{proof}

\begin{theorem}\label{thm:WesternColine}\PRFR{Mar 7th}
\label{LPMthm:B}Let $p=(p_{i})_{i=1}^{n}$, $q=(q_{i})_{i=1}^{n}$ be
lattice paths, such that $p\preceq q$ and such that $M=M[p,q]=(E,\Ical)$ 
has no loop and no parallel edges, and such that $\rk_M(E) \geq 2$.
Let $j_{1}=\max\SET{i\in E\mid q_i = \mathrm{N}}$ and $j_{2}=\max\SET{i\in E\mid i\not= j_1 \txtand q_i = \mathrm{N}}.$
Then the following holds
\begin{enumerate}\ROMANENUM
\item $\SET{1,2,\ldots, j_{2}-1}$ is a coline of $M$, we shall call it the \deftext[Western coline]{Western coline of $\bm M$}.
\item $\SET{1,2,\ldots,j_{1}-1}$ is a copoint on the Western coline of $M$, which is a multiple copoint whenever $j_{1}-j_{2}\ge 2$. 
\item For every $k \geq j_1$ the set $\SET{1,2,\ldots,j_{2}-1}\cup\{k\}$ is a simple copoint on the Western coline of $M$.
\end{enumerate}
\end{theorem}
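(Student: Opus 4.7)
The plan for parts (i) and (ii) is to chain together the tools already in hand. Setting $r := \rk_M(E)$, Proposition~\ref{LPMprop:X}(i) with $j=n$ gives $r = |\{i : q_i = N\}|$, and with $j = j_2-1$ gives $\rk_M(\{1,\ldots,j_2-1\}) = r - 2$, since excluding the positions $\geq j_2$ removes exactly the two Ns of $q$ at $j_2$ and $j_1$. Because $q_{j_2} = N$, Lemma~\ref{LPMlem:A} identifies $W := \{1,\ldots,j_2-1\}$ as a flat, hence a coline. Part (ii) is the same game with $j = j_1$: the rank is $r-1$, the set is closed, and the set-difference $\{1,\ldots,j_1-1\}\setminus W = \{j_2,\ldots,j_1-1\}$ has $j_1 - j_2$ elements, so the copoint is multiple exactly when $j_1 - j_2 \geq 2$.

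For part (iii), Lemma~\ref{LPMlem:A} again gives $\rk_M(W \cup \{k\}) = r-1$ whenever $k \geq j_1$, so what is left is to show $W \cup \{k\}$ is already closed; equivalently, $\rk_M(W \cup \{k,y\}) = r$ for every $y \in E \setminus (W \cup \{k\})$. I would split on whether $y$ sits in the middle strip $\{j_2,\ldots,j_1-1\}$ or in the tail $\{j_1,\ldots,n\}\setminus\{k\}$. The middle strip comes for free from (ii): $W \cup \{y\}$ and $\{1,\ldots,j_1-1\}$ both have rank $r-1$, the latter is closed, so they share the same closure; since $k \geq j_1$ lies outside $\{1,\ldots,j_1-1\}$, one gets $\rk_M(W \cup \{k,y\}) = r$.

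The tail case is the main obstacle. Writing $a = \min(k,y)$ and $b = \max(k,y)$, I would exhibit the lattice path $s$ that copies $q$ on $\{1,\ldots,j_2-1\}$, takes E-steps on all of $\{j_2,\ldots,n\}\setminus\{a,b\}$, and takes N-steps exactly at $a$ and $b$. By Theorem~\ref{LPMthm:P}, as soon as $s \in P[p,q]$, its $N$-positions form a base of $M$ contained in $W \cup \{k,y\}$ and we are done. The relation $s \preceq q$ is a direct zone comparison. The hard direction is $p \preceq s$: the path $s$ carries cumulative $N$-count $r-2$ on $[j_2, a-1]$ and $r-1$ on $[a, b-1]$, so the construction would fail the moment $p$ overshoots either bound.

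The key to clearing this obstacle is to see how tightly the hypotheses constrain $p$. Let $m_0 < m_1$ denote the positions of $p$'s $(r-1)$-th and $r$-th Ns. The plan is to force $m_1 \geq b$ and $m_0 \geq a$, which makes $p \preceq s$ mechanical. The first bound comes from no loops at position $b$: if $m_1 < b$ then $p(b-1) = r = q(b)$, and Proposition~\ref{LPMprop:X}(ii) would make $b$ a loop. For the second, Proposition~\ref{LPMprop:X}(iii) characterizes parallel pairs inside $\{j_1,\ldots,n\}$ as exactly those pairs sitting in the plateau $(m_0, m_1]$; the no-parallels hypothesis therefore shrinks $(m_0, m_1]\cap\{j_1,\ldots,n\}$ to at most one element, and combined with the no-loop condition at $j_1+1$ this pins down $m_1 = m_0 + 1$ with $m_0 \geq j_1$. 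Then $m_0 = m_1 - 1 \geq b - 1 \geq a$, using $b > a$ from $k \neq y$. With these two bounds in hand, the verification of $p \preceq s$ across the three zones is routine.
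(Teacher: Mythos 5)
Your proof is correct, and while parts (i) and (ii) follow the same route as the paper (Lemma~\ref{LPMlem:A} plus a rank count, and $|X\setminus W|=j_1-j_2$ for the multiplicity), your argument for part (iii) is genuinely different. The paper first invokes the no-loop/no-parallel hypotheses only to assert that $\{k,k'\}$ is independent, hence that \emph{some} lattice path $r\in\mathrm{P}[p,q]$ has North steps at both $k$ and $k'$, and then builds $s$ by following $q$ for $j_2-1$ steps, going East until it meets $r$, and continuing as $r$. You instead write down the candidate path $s$ explicitly (the $r-2$ northernmost prefix steps of $q$ together with North steps at $a$ and $b$) and verify $p\preceq s\preceq q$ directly, by converting the no-loop and no-parallel conditions via Proposition~\ref{LPMprop:X}(ii),(iii) into the quantitative facts $m_1\ge b$ and $m_1=m_0+1$ with $m_0\ge j_1$ for the positions of $p$'s last two North steps. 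This buys two things. First, it closes a small gap in the merge construction: for an \emph{arbitrary} $r$ with $r_k=r_{k'}=\mathrm{N}$, the merged path only inherits the \emph{last two} North steps of $r$, which need not be $k$ and $k'$ (e.g.\ $r$ may have a North step east of $k'$, or its $(r-2)$-th North step may lie at or beyond $k$); your direct verification shows the intended path is always in $\mathrm{P}[p,q]$ without having to choose $r$ carefully. Second, you explicitly dispose of the elements $y\in\SET{j_2,\ldots,j_1-1}$ via part (ii), whereas the paper only checks $k'\ge j_1$ and leaves the middle strip implicit (it does follow, since $\cl(W\cup\SET{y})=\SET{1,\ldots,j_1-1}$ and $k\notin\cl(W\cup\SET{y})$, but that step deserves the sentence you give it).
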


\begin{proof}\PRFR{Mar 7th}
Lemma~\ref{LPMlem:A} provides that the set $W = \SET{1,2,\ldots,j_{2}-1}$ as well as the set $X = \SET{1,2,\ldots,j_{1}-1}$ 
is a flat of $M$. By construction of $j_1$ and $j_2$ we have that $\rk(W) = \rk(E) -2$ and $\rk(X) = \rk(E) - 1$.
Thus $W$ is a coline of $M$ --- so {\em (i)} holds --- and $X$ is a copoint of $M$,
which follows from  and the construction of $j_{2}$
and $j_{1}$. Since $\left| X\BS W \right| = \left| \SET{j_2, j_2+1,\dots, j_1-1} \right| = j_1 - j_2$
we obtain statement {\em (ii)}.
Let $k\geq j_1$, and let $X_k = \SET{1,2,\ldots,j_{2}-1}\cup\{k\}$.
 Lemma~\ref{LPMlem:A} yields that $\mathrm{rk}(X_k)=\mathrm{rk}(E)-1$, thus $\cl(X_k)$ is a copoint on the Western coline $W$.
 It remains to show that $\cl(X_k) = X_k$, which implies that $X_k$ is indeed a simple copoint on $W$.
 We prove this fact by showing that for all $k' \geq j_1$, $\rk(X_k\cup\SET{k'}) = \rk(E)$
  by constructing a lattice path. Without loss of generality we may assume that $k < k'$.
  Since $M$ has no loops and no parallel edges, there is a lattice path $r=(r_{i})_{i=1}^{n}\in\mathrm{P}[p,q]$
  with $r_k = r_{k'} = \mathrm{N}$.
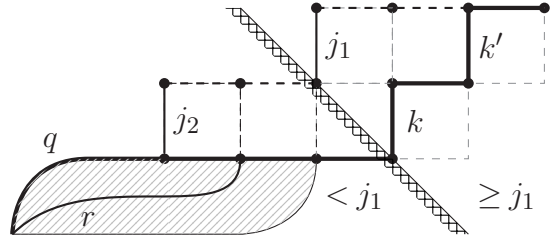
\begin{figure}[t]
\begin{center}
\begin{tikzpicture}
\node[inner sep=0pt,circle,minimum size=4pt,fill] (j) at (0,0) {};
\node[inner sep=0pt,circle,minimum size=4pt,fill] at (1,0) {};
\node[inner sep=0pt,circle,minimum size=4pt,fill] at (0,1) {};
\node[inner sep=0pt,circle,minimum size=4pt,fill] at (1,1) {};
\node[inner sep=0pt,circle,minimum size=4pt,fill] at (2,1) {};
\node[inner sep=0pt,circle,minimum size=4pt,fill] at (3,0) {};
\node[inner sep=0pt,circle,minimum size=4pt,fill] at (3,1) {};
\node[inner sep=0pt,circle,minimum size=4pt,fill] at (4,1) {};

\node[inner sep=0pt,circle,minimum size=4pt,fill] at (2,0) {};
\node[inner sep=0pt,circle,minimum size=4pt,fill] at (2,2) {};
\node[inner sep=0pt,circle,minimum size=4pt,fill] at (5,2) {};
\node[inner sep=0pt,circle,minimum size=4pt,fill] at (4,2) {};
\node[inner sep=0pt,circle,minimum size=4pt,fill] at (3,2) {};


\begin{scope}[shift={(-1,0)}]
\draw[ultra thick]   (-1,-1) .. controls (-1,-1) and (-1,0) .. (0,0) -- (1,0);
\draw  (0,0) -- (3,0) .. controls (3,-1) and (2,-1) .. (2,-1)  -- (-1,-1) .. controls (-1,-1) and (-1,0) .. (0,0);
\fill[pattern color = lightgray, pattern = north east lines] (0,0) -- (3,0) .. controls (3,-1) and (2,-1) .. (2,-1)  -- (-1,-1) .. controls (-1,-1) and (-1,0) .. (0,0);
\draw[thick] (-1,-1) .. controls (0,0) and (2,-1).. (2,0);
\end{scope}

\draw[thick] (j)--(0,1);

\begin{scope}[shift={(2,0)}]
\draw[dashed,color=gray] (0,0)--(0,1)--(1,1)--(1,0)--(0,0);
\end{scope}
\begin{scope}[shift={(2,1)}]
\draw[dashed,color=gray] (0,0)--(0,1)--(1,1)--(1,0)--(0,0);
\end{scope}
\begin{scope}[shift={(3,0)}]
\draw[dashed,color=gray] (0,0)--(0,1)--(1,1)--(1,0)--(0,0);
\end{scope}
\begin{scope}[shift={(3,1)}]
\draw[dashed,color=gray] (0,0)--(0,1)--(1,1)--(1,0)--(0,0);
\end{scope}
\begin{scope}[shift={(4,1)}]
\draw[dashed,color=gray] (0,0)--(0,1)--(1,1)--(1,0)--(0,0);
\end{scope}

\draw[dashed] (0,0)--(0,1)--(1,1)--(1,0)--(0,0);
\begin{scope}[shift={(1,0)}]
\draw[dashed] (0,0)--(0,1)--(1,1)--(1,0)--(0,0);
\end{scope}

\node at (.3,.5) {$j_2$}; 
\node at (2.3,1.5) {$j_1$}; 

\node at (4.3,1.5) {$k'$}; 

\node at (3.3,.5) {$k$};
\draw [thick,dashed] (2,1) -- (0,1);
\draw [thick] (2,1)--(2,2);
\draw [thick,dashed] (2,2) --(5,2);

\draw [ultra thick] (0,0) -- (3,0) -- (3,1) -- (4,1) -- (4,2) -- (5,2);

\node at (-1.5,.2) {$q$};
\node at (-1,-.8) {$r$};
\draw (1,2) -- (4,-1);
\fill[pattern color=black,pattern = grid] (1,2) -- (4,-1) -- (3.8,-1) -- (.8,2);
\node at (2.5,-.5) {$<j_1$};
\node at (4.5,-.5) {$\ge j_1$};
\end{tikzpicture}
\end{center}\label{fig:WesternColine}
\caption{Construction of the lattice path $s$ in the proof of Theorem~\ref{thm:WesternColine}.}
\end{figure}
There is a lattice path $s=(s_{i})_{i=1}^{n}\in\mathrm{P}[p,q]$  that follows $q$ for
the first $j_{2}-1$ steps, then goes towards the East until it meets
$r$, and then goes on as $r$ does. The constructed
path $s$ yields that 
\begin{align*}
  \mathrm{rk}(X_k\cup\{k'\}) & \geq\left|\left(W\cup\{k,k'\}\right)\cap \SET{i\in E\mid s_i=\mathrm{N}} \right|
  \\ & =2+\left|W\cap \SET{i\in E\mid q_i =\mathrm{N}}\right|
\\& 
  =2+\mathrm{rk}(W) = 1 + \rk(X_k) = 1+ \rk(X_{k'}),
\end{align*}
where $X_k' = W\cup\SET{k'}$. Thus $k'\notin \cl(X_k)$ and $k\notin \cl(X_k')$. This
 completes the proof of statement {\em (iii)}.
\end{proof}

\begin{theorem}\PRFR{Mar 7th}\label{thm:simplelpmqsc}
Let $M=(E,\Ical)$ be a strong lattice path matroid with $\rk_M(E) \geq 2$ such that 
$\left| E \right| = n$ and such that $M$ has neither a loop nor a pair of parallel edges.
Then either the Western
coline is quite simple, or the element $n\in E$ is a coloop, and in the latter case
there is either another
coloop or $\rk_M(E) \ge3$.\end{theorem}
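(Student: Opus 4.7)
The plan begins by classifying the copoints of $M$ that contain the Western coline $W = \SET{1,\ldots,j_2-1}$. First I would observe that any two distinct copoints $Y_1, Y_2$ on $W$ must intersect exactly in $W$: their intersection is a flat (Lemma~\ref{lem:flatsintersectinflat}) containing $W$, and if it strictly contained $W$ it would have rank $\rk(E)-1 = \rk(Y_1)$, forcing $Y_1 = Y_1\cap Y_2 = Y_2$. Hence the copoints on $W$ partition $E\BS W = \SET{j_2,\ldots,n}$. Combined with Theorem~\ref{thm:WesternColine} this identifies all of them: the single copoint $X = \SET{1,\ldots,j_1-1}$ absorbs every element of $\SET{j_2,\ldots,j_1-1}$ (and is multiple iff $j_1-j_2\ge 2$, simple iff $j_1-j_2 = 1$), while for each $k\in \SET{j_1,\ldots,n}$ the set $W\cup\SET{k}$ is a distinct simple copoint.

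A direct count then yields the dichotomy. If $j_1 - j_2 = 1$ every copoint on $W$ is simple and $W$ is quite simple. If $j_1-j_2\ge 2$ but $j_1 < n$, there are $n-j_1+1\ge 2$ simple copoints against the one multiple copoint $X$, and $W$ is again quite simple. Only the case $j_1 = n$ together with $j_1 - j_2 \ge 2$ gives exactly one simple and one multiple copoint, so there $W$ fails to be quite simple. In this remaining case $q_n = \mathrm{N}$, and because $p$ and $q$ have common endpoints, every $r\in \mathrm{P}[p,q]$ satisfies $\left|\SET{i\le n-1 \mid r_i = \mathrm{N}}\right|\le \left|\SET{i\le n-1 \mid q_i = \mathrm{N}}\right| = \rk(E)-1$ while $\left|\SET{i\le n \mid r_i = \mathrm{N}}\right| = \rk(E)$, forcing $r_n = \mathrm{N}$. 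By Theorem~\ref{LPMthm:P} the element $n$ lies in every base of $M$, hence $n$ is a coloop.

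The main obstacle is the remaining clause: when $n$ is a coloop via the above route, I must show either $\rk(E)\ge 3$ or $M$ has a second coloop. Writing $N_r(k) = \left|\SET{i\le k \mid r_i = \mathrm{N}}\right|$, I would argue by contradiction, assuming $\rk(E) = 2$ and that $n$ is the only coloop. Then $q$ has its two $\mathrm{N}$-steps at $j_2$ and $n$; the no-loop hypothesis together with Proposition~\ref{LPMprop:X}(ii) gives $p_n = \mathrm{N}$, and $p$'s remaining $\mathrm{N}$-step lies at some $k_0\le n-1$. The relation $p\preceq q$ together with $N_q(j_2-1) = 0$ forces $k_0 \ge j_2$. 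If $k_0 = j_2$, then $p$ and $q$ agree in $\mathrm{N}$-count on $\SET{1,\ldots,j_2-1}$ and both have $p_{j_2} = q_{j_2} = \mathrm{N}$, and an argument identical to the one that made $n$ a coloop makes $j_2$ a second coloop — contradicting uniqueness. So $j_2 < k_0 < n$; in that situation $N_p(j_2-1) = N_p(k_0-1) = 0$ and $N_q(j_2)-1 = N_q(k_0)-1 = 0$, so Proposition~\ref{LPMprop:X}(iii) declares $j_2$ and $k_0$ parallel edges in $M$, contradicting the no-parallel-edges hypothesis. This closes the argument.
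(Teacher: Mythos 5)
Your proof is correct and follows essentially the same route as the paper: split on whether $j_1<n$ or $j_1=n$, count the copoints on the Western coline via Theorem~\ref{LPMthm:B}, and in the coloop case derive a contradiction from rank $2$ plus a unique coloop via Proposition~\ref{LPMprop:X}. The only difference is that you supply two details the paper leaves as bare assertions — the partition argument showing that the copoints listed in Theorem~\ref{LPMthm:B} are \emph{all} copoints on $W$ (needed to justify ``at most a single multiple copoint''), and the explicit lattice-path case analysis on $k_0$ producing either a second coloop or a parallel pair — both of which check out.
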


\begin{proof}\PRFR{Mar 7th}
If $j_{1}\leq n-1$ as defined in Theorem~\ref{LPMthm:B}, $W=\SET{1,2,\ldots,j_{2}-1}$
has at most a single multiple copoint and at least two simple copoints, therefore
it is quite simple. Otherwise $j_{1}=n$ is a coloop. If there is another
coloop $e_{1}$, then $\SET{1,2,\ldots,n-1}\backslash\{e_{1}\}$ is a quite simple coline
with two simple copoints. If $n$ is the only coloop, the rank of $M$ is $2$, and
there is no other coloop, then this would imply that there are parallel edges --- a contradiction to the assumption that $M$ is a simple matroid.
\end{proof}

\begin{corollary}\label{cor:LPMqscoline}\PRFR{Mar 7th}
Every simple lattice path matroid $M=(E,\Ical)$ with $\rk_M(E)\geq 2$ has a quite simple
coline.\end{corollary}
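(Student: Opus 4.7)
My plan is to reduce to strong lattice path matroids using the defining isomorphism from Definition~\ref{def:LPmatroid} and then to induct on $|E|$, invoking Theorem~\ref{thm:simplelpmqsc} at each step and handling the single case it leaves open through a direct-sum argument. Since matroid isomorphism preserves rank, the family of flats, copoints, colines, the simple-or-multiple dichotomy, and hence the property of being a quite simple coline, I may assume $M=(E,\Ical)$ is a strong lattice path matroid on $E=\SET{1,2,\ldots,n}$ given by $M=M[p,q]$ with $p\preceq q$. Applying Theorem~\ref{thm:simplelpmqsc} directly, either the Western coline is quite simple (and we are done), or $n$ is a coloop of $M$ and either there is a second coloop $e_1$ or $\rk_M(E)\geq 3$. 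If there is a second coloop, then the proof of Theorem~\ref{thm:simplelpmqsc} already exhibits $\SET{1,2,\ldots,n-1}\BSET{e_1}$ as a quite simple coline with two simple copoints.

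The remaining case is that $n$ is the unique coloop of $M$ and $\rk_M(E)\geq 3$, which I would handle by induction on $|E|$. The base case $\rk_M(E)=2$ is covered by the previous paragraph, since there the alternative ``$\rk_M(E)\geq 3$'' is unavailable and a second coloop must exist. For the inductive step I first observe that, because $n$ is a coloop, every lattice path $r\in \mathrm{P}[p,q]$ must satisfy $r_n=\mathrm{N}$; in particular the final steps of both $p$ and $q$ are towards the North, so their length-$(n-1)$ truncations $p',q'$ still satisfy $p'\preceq q'$. The induced strong lattice path matroid $M[p',q']$ on $\SET{1,2,\ldots,n-1}$ coincides with the deletion $M\restrict(E\BSET{n})$, inherits simplicity from $M$ (any loop or parallel pair would persist in $M$), and has rank $\rk_M(E)-1\geq 2$, so the induction hypothesis yields a quite simple coline $W'$ of $M\restrict(E\BSET{n})$.

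It remains to lift $W'$ to a quite simple coline of $M$, which is where I expect the main bookkeeping to occur. Because $n$ is a coloop, $M$ decomposes as the direct sum $M = \bigl(M\restrict(E\BSET{n})\bigr)\oplus\bigl(\SET{n},2^{\SET{n}}\bigr)$, and both rank and the lattice of flats factor additively across this decomposition. I would therefore show that $W := W'\cup\SET{n}$ is a flat of rank $\rk_M(E)-2$, hence a coline of $M$, and that every copoint $X$ on $W$ in $M$ necessarily contains $n$ and is of the form $X = X'\cup\SET{n}$ for a unique copoint $X'$ on $W'$ in $M\restrict(E\BSET{n})$. Because this correspondence satisfies $\left|X\BS W\right| = \left|X'\BS W'\right|$, it preserves the distinction between simple and multiple copoints one-to-one, and the quite-simple property transfers from $W'$ to $W$. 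The main obstacle of this plan is the careful verification of the direct-sum correspondence for colines and copoints; everything else follows either from Theorem~\ref{thm:simplelpmqsc} or from standard behaviour of deletion and direct sums.
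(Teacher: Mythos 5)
Your proof is correct and takes essentially the same approach as the paper: reduce to a strong lattice path matroid, apply Theorem~\ref{thm:simplelpmqsc}, and in the remaining case (unique coloop $n$, rank $\geq 3$) remove $n$ and lift the resulting quite simple coline back by adjoining $n$. The only cosmetic difference is that the paper applies Theorem~\ref{thm:simplelpmqsc} once more directly to the coloop-free matroid $M\restrict(E\BSET{n})$ (whose Western coline is then quite simple) instead of invoking an induction hypothesis on $\left|E\right|$ — which is what your induction unwinds to after a single step anyway — and your explicit direct-sum verification of the coline/copoint correspondence fills in a detail the paper only asserts.
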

\begin{proof}
Without loss of generality, we may assume that $M$ is a strong lattice path matroid on $E=\SET{1,2,\ldots,n}$,
and we may use $j_1$ and $j_2$ as defined in Theorem~\ref{LPMthm:B}.
From Theorem~\ref{thm:simplelpmqsc}, we obtain the following:
If $j_1 < n$, the Western coline is quite simple. Otherwise, if $j_1=n$, then $n$ is a coloop.
If there is another coloop $e_{1}$, then $\SET{1,2,\ldots,n-1}\backslash\{e_{1}\}$ is a quite simple coline.
If there is no other coloop, then we have $\rk_M(E) \geq 3$,
 and the contraction $M' = M\contract E\BSET{n}$ is a strong lattice path matroid
without loops, without parallel edges, and
without coloops, such that $\rk_{M'}(E\BSET{n})=\rk_M(E) - 1 \geq 2$. Thus the corresponding $j_1' < n -1$ 
and the Western coline $W'$ of $M'$ is quite simple in $M'$ (Theorem~\ref{thm:simplelpmqsc}).
But then $\tilde W = W'\cup\SET{n}$ is a coline of $M$, and $\tilde X$ is a copoint on $\tilde W$ with respect to $M$
if and only if $X' = \tilde X \BSET{n}$ is a copoint on $W'$ with respect to $M'$. 
Since $\left| \tilde W \BS \tilde X \right| = \left| W' \BS X' \right|$, we obtain that $\tilde W$ is a quite simple coline of $M$.
\end{proof}

\needspace{4\baselineskip}
\begin{definition}[\cite{GoHoNe15}, Definition~2] Let $\mathcal{O}$ be an oriented
matroid. We say that $\mathcal{O}$ is \deftext[generalized series-parallel oriented matroid]{generalized series-parallel},
if every non-trivial minor $\Ocal'$ of $\mathcal{O}$ with a simple underlying matroid $M(\Ocal')$ has a $\{0,\pm1\}$-valued
coflow which has exactly one or two nonzero-entries.\end{definition}

\begin{lemma}[\cite{GoHoNe15}, Lemma~5]\label{lem:quiteSimpleColineQGSP}\PRFR{Mar 7th}
If an orientable matroid $M$ has a quite simple
coline, then every orientation $\mathcal{O}$ of $M$ has a $\{0,\pm1\}$-valued
coflow which has exactly one or two nonzero-entries.
\end{lemma}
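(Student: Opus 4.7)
The plan is to localize the problem at the quite simple coline via a minor construction, reducing to a rank-$2$ oriented matroid, and then to exploit a pigeonhole/cyclic-ordering argument to produce the desired coflow.

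First, I would fix a basis $B_X$ of the quite simple coline $X$ inside $M$ and set $Y = E \setminus X$. Then $M' = M\contract Y$ (in the notation of Definition~\ref{def:Mcontraction}, where contraction yields a matroid on the complement) is a rank-$2$ matroid without loops, whose rank-$1$ flats biject with the copoints of $M$ on $X$. Under this correspondence the simple copoints become singleton parallel classes of $M'$ and the multiple copoints become parallel classes of cardinality at least $2$; the hypothesis that $X$ is quite simple reads as $s > m$, where $s$ is the number of singleton and $m$ the number of multiple parallel classes of $M'$. I would then form the induced oriented minor $\mathcal{O}' = \mathcal{O} \contract Y$ and note, using Lemma~\ref{lem:OMminors} together with Definition~\ref{def:OMcontraction}, that every signed cocircuit of $\mathcal{O}'$ extends by zero on $X$ to a signed cocircuit of $\mathcal{O}$, so any $\{0,\pm 1\}$-valued coflow of $\mathcal{O}'$ with $k$ nonzero entries extends to one of $\mathcal{O}$ with the same $k$ nonzero entries.

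Next I would use the fact that every loopless rank-$2$ oriented matroid is realizable and can be depicted by placing each parallel class as a line through the origin of $\mathbb{R}^2$, with the parallel classes cyclically ordered by their angle in $[0,\pi)$. For each parallel class $P$ there are exactly two signed cocircuits of $\mathcal{O}'$, corresponding to the two unit normals of the line $P$, and they assign a sign in $\{\pm 1\}$ to every element outside $P$ according to which side of that line the element lies on. The key local computation is that if $P$ and $Q$ are two parallel classes adjacent in the cyclic order, then there is a choice of signs $\varepsilon_P,\varepsilon_Q\in\{\pm 1\}$ such that $\varepsilon_P C_P^{\ast} + \varepsilon_Q C_Q^{\ast}$ vanishes on every element outside $P\cup Q$. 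I would verify this by observing that, because $P$ and $Q$ are adjacent in the cyclic order, every other parallel class $R$ lies in the same pair of opposite sectors cut out by $P$ and $Q$; once the normals are chosen to point into these common sectors, $C_P^{\ast}$ and $C_Q^{\ast}$ take opposite signs on each element of $R$, so their contributions cancel, while on elements of $P\cup Q$ the combination takes values in $\{\pm 1\}$.

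The last step is the combinatorial pigeonhole that drives the proof: since $s>m$, any cyclic arrangement of $s$ singleton and $m$ multiple parallel classes must contain two adjacent singleton classes, for otherwise each singleton would be immediately followed in the cyclic order by a distinct multiple class, forcing $m\geq s$. Choosing $P$ and $Q$ to be two such adjacent singleton classes, the coflow $\varepsilon_P C_P^{\ast}+\varepsilon_Q C_Q^{\ast}$ has support exactly $P\cup Q$, which consists of exactly two elements, and takes values in $\{\pm 1\}$ there. Extending by zero on $X$ yields the required $\{0,\pm 1\}$-valued coflow of $\mathcal{O}$ with exactly two nonzero entries. The degenerate case of a coloop in $M'$ (only one parallel class relevant, forcing the singleton cocircuit to be a coflow with a single nonzero entry) accounts for the ``exactly one'' alternative in the statement.

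The main obstacle I expect is the clean articulation of the ``adjacent parallel classes'' argument without overtly leaning on the geometric realization. One should either cite the well-known fact that rank-$2$ oriented matroids are canonically realizable (so the cyclic ordering is intrinsic), or give a purely combinatorial argument using the signed cocircuit elimination axiom~{\em ($\Ccal^\ast$4')} to justify the cancellation on classes outside $P\cup Q$. A careful bookkeeping of signs, in particular normalizing the orientations of $C_P^{\ast}$ and $C_Q^{\ast}$ so that the cancellation is visible, is the subtle bookkeeping part of the proof.
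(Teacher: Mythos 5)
Your argument is correct and is essentially the proof of Lemma~5 in \cite{GoHoNe15}, which this thesis cites rather than reproduces: contract the quite simple coline $X$ away to obtain a loopless rank-$2$ oriented matroid $\mathcal{O}'$ (always realizable, hence carrying a cyclic order on its parallel classes), use the pigeonhole $s>m$ to find two adjacent singleton classes, and take the appropriately signed difference of their cocircuits, which cancels on all other classes and extends by zero to a $\{0,\pm1\}$-valued coflow of $\mathcal{O}$ with exactly two nonzero entries. The only blemish is the closing remark about a coloop in $M'$: since the contraction is loopless of rank $2$ it always has at least two parallel classes, and $s>m$ then forces $s\geq 2$, so your main construction covers every case and the ``exactly one'' alternative of the statement is never actually required.
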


\noindent For a proof, see \cite{GoHoNe15}.

\begin{remark}\label{rem:GPSloRank}\PRFR{Mar 7th}
  A simple matroid of rank $1$ has only one element, no circuit and a single cocircuit consisting of the sole element of the matroid;
  so every rank-$1$ oriented matroid is generalized series-parallel.
  Observe that every simple matroid $M=(E,\Ical)$ with $\rk_M(E) = 2$ is a lattice path matroid,
  as it is isomorphic to the
  strong lattice path matroid $M[p,q]$ where $p=(p_i)_{i=1}^{\left| E \right|}$ 
  with \[ p_i = \begin{cases} \mathrm{E} &\quad \text{if~} i < \left| E \right| - 2, \\
                              \mathrm{N} &\quad \text{otherwise,} \end{cases}\]
  and where $q=(q_i)_{i=1}^{\left| E \right|}$ 
  with \[ q_i = \begin{cases} \mathrm{N} &\quad \text{if~} i \leq 2,\\
                              \mathrm{E} &\quad \text{otherwise.} \end{cases}\]
  Therefore Lemma~\ref{lem:quiteSimpleColineQGSP} and Corollary~\ref{cor:LPMqscoline} yield that $\Ocal$ has a $\{0,\pm1$\}-valued
coflow which has exactly one or two nonzero-entries.
Consequently, every oriented matroid $\Ocal =(E,\Ccal,\Ccal^\ast)$ with
  $\rk_{M(\Ocal)}(E) \leq 2$ is generalized series-parallel.    
\end{remark}

\begin{theorem}[\cite{Bonin2006701}, Theorem~3.1]\label{thm:LPMclosedUnderStuff}\PRFR{Mar 7th}
The class of lattice path matroids
is closed under minors, duals and direct sums.\end{theorem}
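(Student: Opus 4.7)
My plan is to verify the three closure properties one at a time, and (if it helps) to reduce minors to deletion via duality.

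For direct sums, given lattice path matroids $M_i = M[p_i, q_i]$ with $i\in\{1,2\}$ on disjoint ground sets, say $p_i, q_i$ have $a_i$ east-steps and $b_i$ north-steps, I would form the concatenations $p = p_1 p_2$ and $q = q_1 q_2$. Both new paths have length $n_1 + n_2$ and the same endpoint $(a_1+a_2, b_1+b_2)$, and $p\preceq q$ holds coordinate-wise because it holds on each block. The decisive observation is that at step $n_1$ both $p$ and $q$ visit the same lattice point $(a_1,b_1)$, so every $r \in \mathrm{P}[p,q]$ is forced to visit $(a_1,b_1)$ after its first $n_1$ steps. Hence $r$ splits uniquely as $r = r_1 r_2$ with $r_i \in \mathrm{P}[p_i,q_i]$. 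Combined with the bijection in Theorem~\ref{LPMthm:P}, this gives an explicit bijection between bases of $M[p,q]$ and pairs $(B_1, B_2)$ with $B_i \in \Bcal(M_i)$, matching the base family of $M_1 \oplus M_2$ by Definition~\ref{def:directSum}.

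For duality I would exploit the symmetry between the two step directions. Given $M[p,q]$ on $E=\{1,\dots,n\}$, define $\tilde p$ and $\tilde q$ by swapping $\mathrm{N}$ and $\mathrm{E}$ throughout each path; this is geometrically the reflection across the line $y=x$. Then $\tilde q \preceq \tilde p$ still have common endpoints, and reflection induces a bijection $\mathrm{P}[p,q]\to \mathrm{P}[\tilde q, \tilde p]$, $r\mapsto \tilde r$. Under Theorem~\ref{LPMthm:P}, the base $\phi(r)=\{i : r_i=\mathrm{N}\}$ of $M[p,q]$ corresponds to the complement $E\setminus \phi(r) = \{i : \tilde r_i = \mathrm{N}\} = \phi(\tilde r)$, a base of $M[\tilde q, \tilde p]$. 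By Corollary~\ref{cor:dualbase}, this identifies $M[p,q]^\ast = M[\tilde q, \tilde p]$, so the class is closed under duals.

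For minors, closure under duality reduces the task to closure under single-element deletion (since $M\contract X = (M^\ast \restrict X)^\ast$ by Lemma~\ref{lem:restrictcontractdual}, and the class is also closed under isomorphism so the labelling convention $E=\{1,\ldots,n\}$ can be restored after reindexing). To delete $e\in E$ from $M[p,q]$, I would split on the step types at position~$e$. If $p_e = q_e$, simply remove the common $e$-th step from both paths; the resulting $p'\preceq q'$ of length $n-1$ satisfy $\mathrm{P}[p',q']$ being exactly the set of truncated paths from $\mathrm{P}[p,q]$, and the bijection of Theorem~\ref{LPMthm:P} specializes to show $M[p',q'] = M[p,q]\restrict (E\BSET{e})$. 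If $p_e\neq q_e$, then necessarily $p_e=\mathrm{E}$, $q_e=\mathrm{N}$; here one replaces one of the two paths with a path that agrees with the other at position~$e$ (e.g.\ push $p$ up or $q$ down across the $e$-th step in a minimal way), and checks using Proposition~\ref{LPMprop:X} and Theorem~\ref{LPMthm:P} that the resulting region still encodes the same transversal presentation minus element~$e$.

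The hard part will clearly be the delicate case $p_e\neq q_e$ in the deletion step: here the natural idea of ``just shorten both paths'' fails, and one must reconfigure one of the paths so that the presentation family $\Acal_{[p',q']}$ (in the sense of Definition~\ref{def:LPmatroid}) exactly reproduces $\Acal_{[p,q]}$ with the column $e$ removed, without introducing spurious independences or dependences. I expect to handle this by a localized surgery on the paths in a window around position~$e$, verifying directly that the rank function restricted to $E\BSET{e}$ is preserved via the formula from Proposition~\ref{LPMprop:X}(i) and a compatible rebijection between $\mathrm{P}[p,q]$-paths avoiding step $e$ and $\mathrm{P}[p',q']$-paths of length $n-1$.
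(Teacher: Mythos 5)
Your direct-sum and duality arguments are correct and follow exactly the route the paper sketches: concatenation of the bounding paths (with the observation that every intermediate path is pinched through the common point after $n_1$ steps), and reflection of the bounding region across the line $y=x$, which complements base sets via Theorem~\ref{LPMthm:P} and Corollary~\ref{cor:dualbase}. Reducing contraction to deletion through Lemma~\ref{lem:restrictcontractdual} is also fine.

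The gap is in the deletion step, and it is not confined to the case $p_e\neq q_e$ that you flagged as hard: the case $p_e=q_e$, which you treat as routine, already fails as stated. Take $p=\mathrm{E}\mathrm{N}\mathrm{N}\mathrm{E}$, $q=\mathrm{N}\mathrm{N}\mathrm{E}\mathrm{E}$ and $e=2$, so $p_2=q_2=\mathrm{N}$. Then $\mathrm{P}[p,q]=\SET{\mathrm{E}\mathrm{N}\mathrm{N}\mathrm{E},\ \mathrm{N}\mathrm{E}\mathrm{N}\mathrm{E},\ \mathrm{N}\mathrm{N}\mathrm{E}\mathrm{E}}$, so $M[p,q]$ has bases $\SET{2,3},\SET{1,3},\SET{1,2}$, and the restriction to $\SET{1,3,4}$ still has rank $2$ because $\SET{1,3}$ survives. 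Deleting the common second step from both paths, however, gives $p'=\mathrm{E}\mathrm{N}\mathrm{E}$ and $q'=\mathrm{N}\mathrm{E}\mathrm{E}$, and $M[p',q']$ has rank $1$. The point is that $p_e=q_e=\mathrm{N}$ does not force $r_e=\mathrm{N}$ for every $r\in\mathrm{P}[p,q]$ (here $r=\mathrm{N}\mathrm{E}\mathrm{N}\mathrm{E}$ has $r_2=\mathrm{E}$), so the set of truncated paths is not $\mathrm{P}[p',q']$ and the naive truncation changes the matroid.

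The correct construction, which is what the paper's sketch (following Bonin and de~Mier) describes, does not split on whether $p_e=q_e$. One takes the $\preceq$-minimal and $\preceq$-maximal paths among all $r\in\mathrm{P}[p,q]$ with $r_e=\mathrm{E}$ (or, if $e$ is a coloop so that no such $r$ exists, among all of $\mathrm{P}[p,q]$), omits their $e$-th steps to obtain $p'$ and $q'$, and then proves that the omissions of all such $r$ form exactly the interval $\mathrm{P}[p',q']$; these extremal paths may differ from $p$ and $q$ at one position besides $e$, which is precisely the ``surgery'' you anticipated but did not carry out. In the example above this yields $p'=q'=\mathrm{N}\mathrm{N}\mathrm{E}$, the omission of $\mathrm{N}\mathrm{E}\mathrm{N}\mathrm{E}$, which is the correct deletion. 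Both halves of your case analysis therefore need to be replaced by this single construction together with the interval-of-omissions verification.
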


\PRFR{Mar 7th}
\noindent For a proof, see \cite{Bonin2006701}, pp. 5ff; furthermore see Figure~\ref{fig:LPMC} for
examples of the constructions involved.
The main observation is that a base $B$ of a strong lattice path matroid $M$ corresponds to
a lattice path $r$ for which the $k$th step is to the North if and only if $k\in B$. So the base $E\BS B$
of $M^\ast$ corresponds to a lattice path $r^\ast$ for which the $k$th step is to the East if and only if $k\in B$,
but this lattice path $r^\ast$ is the path $r$ mirrored at South-West-to-North-East line through the origin. So the dual of
a lattice path matroid $M$ is the lattice path matroid $M^\ast$ where the upper bound lattice path
of $M^\ast$ is the SW-NE-mirror image of the lower bound lattice path of $M$,
and the lower bound lattice path of $M^\ast$ is the SW-NE-mirror image of the upper bound lattice path of $M$.
The direct sum of lattice path matroids has lower and upper bound lattice paths that correspond to the concatenation of the lower bound lattice paths -- and upper bound lattice paths,
respectively -- of the summand lattice path matroids.
In order to obtain the restriction $N=M \restrict E\BSET{e}$ of a lattice path matroid $M=(E,\Ical)$ with $e\in E$,
we have to take all lattice paths between the lower and upper bound lattice path of $M$, and remove the step corresponding to $e$.
Thus the lower and upper bound lattice paths of $N$ arise from the step-$e$-omissions of lattice paths 
that may differ from the upper and lower bound lattice paths of $M$ only at the step corresponding to $e$ and at most one other step.

\begin{figure}[t]
\begin{center}
\includegraphics[scale=.9]{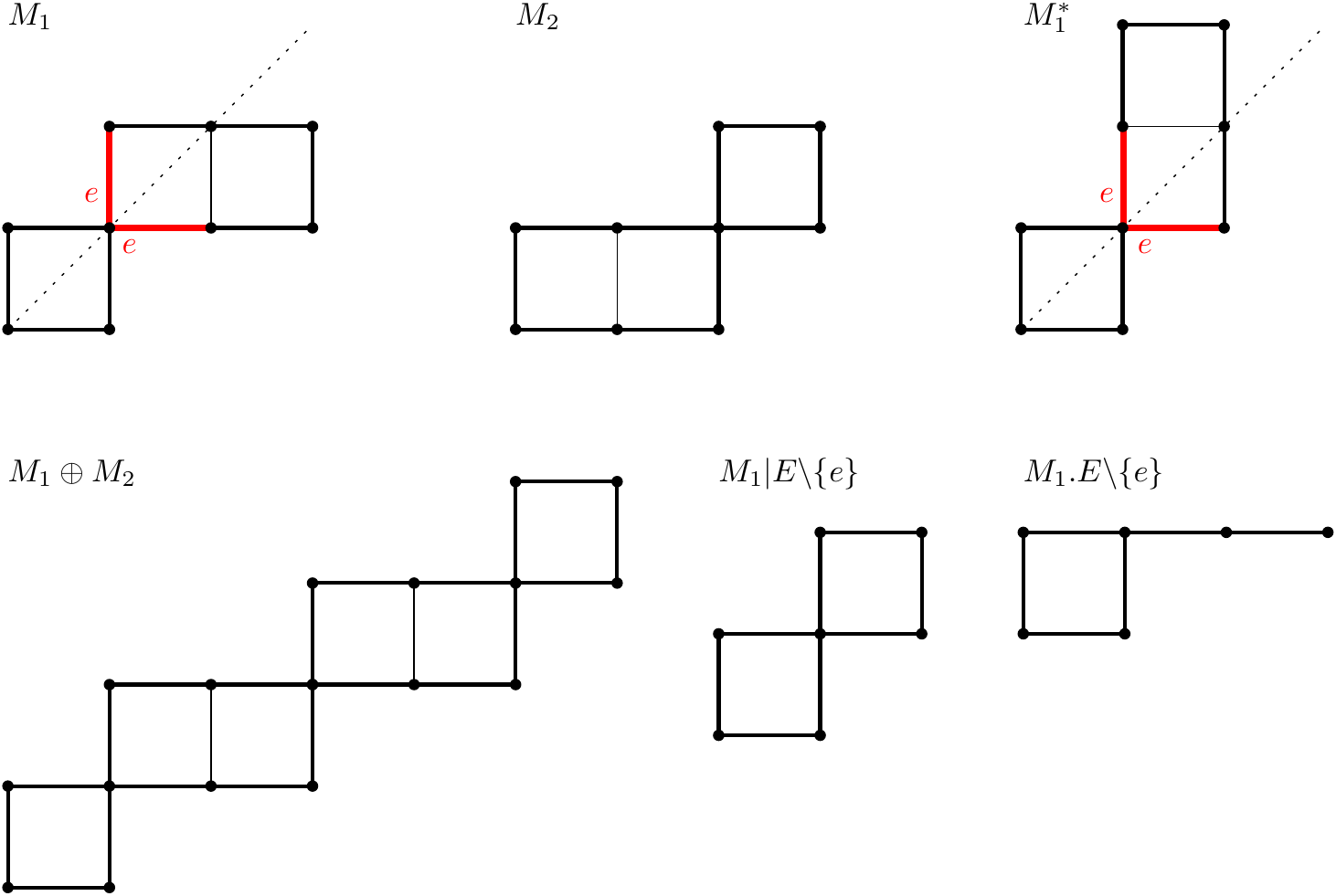}
\end{center}
\caption{\label{fig:LPMC}Constructions on Lattice Path Matroids}
\end{figure}

\needspace{6\baselineskip}

\begin{corollary}\label{cor:LPMGSP}\PRFR{Mar 7th}
All orientations of lattice path matroids are generalized series-parallel.
\end{corollary}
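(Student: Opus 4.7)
The plan is to combine the structural results about lattice path matroids with the coflow lemma for quite simple colines. Let $\Ocal$ be an orientation of a lattice path matroid $M=(E,\Ical)$. To verify that $\Ocal$ is generalized series-parallel, I need to exhibit, for every non-trivial minor $\Ocal'$ of $\Ocal$ with $M(\Ocal')$ simple, a $\{0,\pm1\}$-valued coflow of $\Ocal'$ with exactly one or two nonzero entries.

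First I would observe that if $\Ocal'$ is a minor of $\Ocal$ obtained by a sequence of restrictions and contractions, then by Lemma~\ref{lem:OMminors} the triple $\Ocal'$ is an oriented matroid, and its underlying matroid $M(\Ocal')$ is the corresponding minor of $M(\Ocal)=M$. Since the class of lattice path matroids is closed under taking minors by Theorem~\ref{thm:LPMclosedUnderStuff}, $M(\Ocal')$ is itself a lattice path matroid, and by assumption it is simple. So $\Ocal'$ is an orientation of a simple lattice path matroid.

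Next I split on the rank of $M(\Ocal')$. If $\rk_{M(\Ocal')}(E') \leq 2$, where $E'$ is the ground set of $\Ocal'$, then Remark~\ref{rem:GPSloRank} immediately provides a $\{0,\pm1\}$-valued coflow of $\Ocal'$ with exactly one or two nonzero entries, and we are done. If instead $\rk_{M(\Ocal')}(E') \geq 2$, Corollary~\ref{cor:LPMqscoline} applied to the simple lattice path matroid $M(\Ocal')$ yields a quite simple coline of $M(\Ocal')$. Because $M(\Ocal')$ is orientable (it is, after all, the underlying matroid of the oriented matroid $\Ocal'$), Lemma~\ref{lem:quiteSimpleColineQGSP} then delivers the required $\{0,\pm1\}$-valued coflow of $\Ocal'$ with exactly one or two nonzero entries.

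Since these two cases exhaust every possibility for a non-trivial simple minor $\Ocal'$ of $\Ocal$, the oriented matroid $\Ocal$ is generalized series-parallel. The main conceptual obstacle is essentially bookkeeping: one must be careful that taking minors in the oriented-matroid sense and then passing to the underlying matroid yields the same result as first passing to the underlying matroid and then taking minors, but this is precisely the content of Lemma~\ref{lem:OMminors} together with its compatibility with the matroid-level minor operations, so no serious difficulty arises. Everything else reduces to invoking the correct combination of Theorem~\ref{thm:LPMclosedUnderStuff}, Corollary~\ref{cor:LPMqscoline}, Lemma~\ref{lem:quiteSimpleColineQGSP}, and Remark~\ref{rem:GPSloRank}.
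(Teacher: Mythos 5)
Your argument is correct and is exactly the paper's proof: the paper's own justification is simply the citation list ``Lemma~\ref{lem:quiteSimpleColineQGSP}, Remark~\ref{rem:GPSloRank}, Theorem~\ref{thm:LPMclosedUnderStuff} and Corollary~\ref{cor:LPMqscoline}'', and you have filled in precisely the intended case split (rank at most $2$ via the remark, rank at least $2$ via the quite simple coline) together with the minor-compatibility bookkeeping from Lemma~\ref{lem:OMminors}. No gaps.
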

\begin{proof}
Lemma~\ref{lem:quiteSimpleColineQGSP}, Remark~\ref{rem:GPSloRank}, Theorem~\ref{thm:LPMclosedUnderStuff} and Corollary~\ref{cor:LPMqscoline}.
\end{proof}

\needspace{6\baselineskip}
\begin{theorem}[\cite{GoHoNe15}, Theorem 3]\PRFR{Mar 7th}
\label{thm:GPS3col} Let $\Ocal=(E,\Ccal,\Ccal^\ast)$ be a generalized series-parallel oriented
matroid such that $M(\Ocal)$ has no loops.
Then there is a nowhere-zero coflow $F\in \Z.\Ccal^\ast$ such that $\left| F(e) \right| < 3$ for all $e\in E$.
Thus $\chi(\Ocal) \leq 3$.
\end{theorem}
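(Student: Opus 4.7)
The plan is to induct on $|E|$. The base case $|E|=0$ is trivial: the empty coflow is vacuously nowhere-zero, and in any event the statement $\chi(\Ocal)\le 3$ is vacuously true. For the inductive step, I first argue that the class of GSP oriented matroids is closed under taking minors (this is essentially the definition, since every simple minor of a minor is a simple minor of the original), so I may freely pass to smaller oriented matroids of the same type. I then reduce to the case that $M(\Ocal)$ is simple.

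First I dispose of parallel edges. If $\{e,f\}$ is a circuit of $M(\Ocal)$, then by Lemma~\ref{lem:CircuitCocircuitOrthogonality} every signed cocircuit $D$ of $\Ocal$ satisfies $e\in D_\pm$ iff $f\in D_\pm$, and the relative sign of $D(f)$ with respect to $D(e)$ is forced by orthogonality with the signed circuit supported on $\{e,f\}$. Hence every coflow on $\Ocal\restrict(E\setminus\{f\})$ extends canonically to a coflow $F$ on $\Ocal$ with $F(f)=\pm F(e)$; applying the induction hypothesis to $\Ocal\restrict(E\setminus\{f\})$ (which is GSP and still loop-free, since deletion cannot create loops) produces a nowhere-zero coflow $F'$ with $|F'(x)|\le 2$, and the extension preserves both properties at $f$.

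Now assume $M(\Ocal)$ is simple. By the defining property of GSP, there is a $\{0,\pm1\}$-valued coflow $G\in\Z.\Ccal^\ast$ whose support $S$ has size $1$ or $2$. If $|S|=1$, say $S=\{e\}$, then $\{e\}$ is the support of a cocircuit and $e$ is a coloop of $M(\Ocal)$. Apply the induction hypothesis to $\Ocal\restrict(E\setminus\{e\})$ to obtain a nowhere-zero coflow $F'$ with values in $\{-2,-1,1,2\}$. Every cocircuit of the restriction lifts to a cocircuit of $\Ocal$; because $e$ is a coloop, the lift can be chosen so that the resulting coflow $\tilde F$ satisfies $\tilde F(e)=0$ (lifting through cocircuits of $\Ocal$ that avoid $e$). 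Then $F:=\tilde F+G$ has $F(e)=\pm1$, $F(x)=F'(x)$ for $x\ne e$, and so $F$ is nowhere-zero with $|F(x)|\le 2$. If $|S|=2$, say $S=\{e,f\}$, apply the induction hypothesis to $\Ocal\restrict(E\setminus\{e\})$; lift $F'$ to $\tilde F$ with $\tilde F(e)=0$; and set $F:=\tilde F+\varepsilon G$ for a sign $\varepsilon\in\{-1,+1\}$. Then $F(e)=\varepsilon G(e)=\pm1\ne 0$, while $F(f)=F'(f)+\varepsilon G(f)$ with $F'(f)\in\{\pm1,\pm2\}$ and $G(f)\in\{\pm1\}$. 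A short case analysis shows that one of the two choices of $\varepsilon$ yields $F(f)\in\{\pm1,\pm2\}$ (in particular $F(f)\ne 0$ and $|F(f)|\le 2$), completing the construction.

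The main obstacle is the lifting step: justifying that a coflow $F'$ on $\Ocal\restrict(E\setminus\{e\})$ can be lifted to a coflow $\tilde F$ on $\Ocal$ with the prescribed value $\tilde F(e)=0$. This is the crux because an arbitrary choice of lifts of the cocircuits involved in an expression $F'=\sum_i\alpha_i D'_i$ need not give $\tilde F(e)=0$, and one must use the specific structure (coloop in subcase $|S|=1$; a non-loop, non-coloop element in subcase $|S|=2$) to argue that a convenient lift exists. Once this lifting lemma is in place, the rest is bookkeeping: verifying that the addition of $\varepsilon G$ keeps values bounded by $2$, which reduces to the finite case analysis above, and that the upper bound $|F(x)|<3$ translates into $\chi(\Ocal)\le 3$ via Definition~\ref{def:chiO}.
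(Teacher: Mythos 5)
The paper itself does not prove this theorem (it defers to \cite{GoHoNe15}), so I am judging your argument on its own terms. Your overall strategy — induction on $\left|E\right|$, reduction of parallel elements via the forced sign relation from Lemma~\ref{lem:CircuitCocircuitOrthogonality}, and then using the $\SET{0,\pm1}$-valued small-support coflow $G$ together with a sign choice $\varepsilon$ — is the right one, and your parallel-element step, your coloop case, and your final case analysis on $F'(f)+\varepsilon G(f)$ are all correct. But the step you yourself flag as the crux is a genuine gap, and as stated it is false: a coflow of the \emph{deletion} $\Ocal\restrict(E\BSET{e})$ need not lift to a coflow of $\Ocal$ vanishing at $e$. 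Concretely, take $\Ocal=\Ocal(\mu)$ for $U_{2,3}$ on $\SET{e,f,g}$ with $\mu_e=(1,0)$, $\mu_f=(0,1)$, $\mu_g=(1,1)$. The signed cocircuits are $\pm\SET{+e,-f}$, $\pm\SET{+e,+g}$, $\pm\SET{+f,+g}$, so the coflow lattice is $\SET{(a,b,a+b)\mid a,b\in\Z}$ and the only coflows vanishing at $e$ are the multiples of $(0,1,1)$. Yet $\Ocal\restrict\SET{f,g}$ is an orientation of the free matroid, whose coflow lattice is all of $\Z^2$; its nowhere-zero coflow $(1,-1)$ admits no lift to $\Ocal$ with value $0$ at $e$. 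Since $U_{2,3}$ is simple of rank $2$ (hence generalized series-parallel by Remark~\ref{rem:GPSloRank}) and every element lies in the support of a two-element $\SET{0,\pm1}$-coflow, this situation can arise inside your induction, and you have no control over which $F'$ the induction hypothesis returns.

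The fix is to replace deletion by contraction in the simple case with $\left|S\right|=2$: pass to $\Ocal\contract(E\BSET{e})$, i.e.\ contract the single element $e$. Because $M(\Ocal)$ is simple and $e$ is not a loop, contracting $e$ creates no loops, and generalized series-parallelness is inherited by minors, so the induction hypothesis applies. The lift is then automatic rather than a lemma to be proved: by Definition~\ref{def:OMcontraction} the signed cocircuits of the contraction are exactly the restrictions of the signed cocircuits of $\Ocal$ whose support avoids $e$, so writing $F'=\sum_i\alpha_i D_i'$ and replacing each $D_i'$ by the corresponding $D_i\in\Ccal^\ast$ yields a coflow $\tilde F$ of $\Ocal$ with $\tilde F(e)=0$ and $\tilde F(x)=F'(x)$ for $x\neq e$. (For a coloop, deletion and contraction coincide and every cocircuit other than $\pm\SET{e}$ avoids $e$, which is why your $\left|S\right|=1$ case happens to survive.) With this substitution your $\varepsilon$-adjustment and the conclusion $\chi(\Ocal)\leq 3$ go through unchanged.
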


\noindent For a proof, see \cite{GoHoNe15}.

\begin{corollary}\PRFR{Mar 7th}
  Let $\Ocal$ be an oriented matroid such that $M(\Ocal)$ is a lattice path matroid without loops.
  Then \( \chi(\Ocal) \leq 3 \).
\end{corollary}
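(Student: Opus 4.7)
The plan is to chain the two preceding results: Corollary \ref{cor:LPMGSP} establishes that every orientation of a lattice path matroid is generalized series-parallel, and Theorem \ref{thm:GPS3col} states that any generalized series-parallel oriented matroid whose underlying matroid has no loops admits a nowhere-zero coflow with entries in $\SET{-2,-1,1,2}$, which by Definition \ref{def:chiO} yields chromatic number at most $3$.

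Concretely, I would proceed as follows. Given the oriented matroid $\Ocal$ with $M(\Ocal)$ a loopless lattice path matroid, I first invoke Corollary \ref{cor:LPMGSP} to conclude that $\Ocal$ is generalized series-parallel. The hypothesis of that corollary requires only that the underlying matroid be a lattice path matroid, which holds by assumption; no condition on loops is used at that stage. Then I apply Theorem \ref{thm:GPS3col} to $\Ocal$, whose hypothesis is precisely that $\Ocal$ is generalized series-parallel and that $M(\Ocal)$ is loopless. The conclusion $\chi(\Ocal) \leq 3$ follows immediately.

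There is no real obstacle here, as the statement is genuinely a two-line corollary assembled from previously proved results. The only point that deserves explicit mention is why the loopless hypothesis cannot be dropped: if $M(\Ocal)$ had a loop $e$, then every signed cocircuit $D \in \Ccal^\ast$ would satisfy $D(e) = 0$ (since loops lie in no cocircuit of the underlying matroid), hence every coflow $F \in \Z.\Ccal^\ast$ would have $F(e) = 0$, making a nowhere-zero coflow impossible and forcing $\chi(\Ocal) = \infty$ by the convention in Definition \ref{def:chiO}. This is exactly why Theorem \ref{thm:GPS3col} requires the loopless hypothesis, and transporting it into the present corollary is the only care to take.
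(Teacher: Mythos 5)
Your proof is correct and is exactly the paper's argument: combine Corollary~\ref{cor:LPMGSP} (every orientation of a lattice path matroid is generalized series-parallel) with Theorem~\ref{thm:GPS3col} (loopless generalized series-parallel oriented matroids have $\chi \leq 3$). Your extra remark on why looplessness is needed is accurate but not part of the paper's two-reference proof.
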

\begin{proof}
Theorem~\ref{thm:GPS3col} and Corollary~\ref{cor:LPMGSP}.
\end{proof}
\clearpage

\needspace{6\baselineskip}

\section{Oriented Gammoids}

In this section, we examine the class of oriented matroids whose underlying matroids are gammoids.

\begin{lemma}\PRFR{Mar 7th}\label{lem:gammoidOrientable}
	Let $M=(E,\Ical)$ be a gammoid. Then $M$ is orientable.
\end{lemma}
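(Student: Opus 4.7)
The plan is to obtain this as an immediate corollary of two results already established earlier in the text, so the proof will be very short. The key observation is that orientability of a matroid is guaranteed as soon as we can exhibit a real matrix representing it, and gammoids admit such representations.

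First, I would invoke Theorem~\ref{thm:gammoidOverR}, which states that for any gammoid $M=(E,\Ical)$ there exists a finite set $T$ and a matrix $\mu \in \R^{E\times T}$ such that $M = M(\mu)$. No work is needed here, it is a direct citation.

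Next, I would apply Corollary~\ref{cor:Rorientable}, which says that every matroid representable over $\R$ is orientable; equivalently, the oriented matroid $\Ocal(\mu)$ constructed in Definition~\ref{def:Omu} has $M(\Ocal(\mu)) = M(\mu) = M$ by Lemma~\ref{lem:OcalMu} and Corollary~\ref{cor:MOmuEQMmu}. This witnesses orientability of $M$ and closes the proof.

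There is no real obstacle here: all of the heavy lifting has been done in the construction of an $\R$-representation of a gammoid (which required the Lindström Lemma together with the cycle-lifting trick from Corollary~\ref{cor:acyclicQuasiRepresentationOfGammoids}), and in showing that any real matrix induces an oriented matroid with the expected underlying matroid. The proof itself will simply chain these two facts together in one sentence.
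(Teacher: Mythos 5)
Your proposal is correct and matches the paper's own proof exactly: cite Theorem~\ref{thm:gammoidOverR} for a real matrix $\mu$ with $M=M(\mu)$, then observe that $\Ocal(\mu)$ is an orientation of $M$ via Corollary~\ref{cor:MOmuEQMmu} (equivalently, Corollary~\ref{cor:Rorientable}). Nothing further is needed.
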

\begin{proof}\PRFR{Mar 7th}
	By Theorem~\ref{thm:gammoidOverR} there is a set $T$ with $\left| T \right| = \rk_M(E)$ and there is a matrix $\mu\in \R^{E\times T}$,
	such that $M=M(\mu)$. Then the oriented matroid $\Ocal(\mu)$ is an orientation of $M(\mu)=M$ (Corollary~\ref{cor:MOmuEQMmu}).
\end{proof}

\PRFR{Mar 7th}
\noindent Given a gammoid $M=\Gamma(D,T,E)$, the oriented matroid, whose existence is guaranteed by the previous lemma,
 depends on the actual values of the
 indeterminate weighting $w\colon A\maparrow \R$ of $D$, and obtaining the signatures of the circuits from the matrix
 $\mu$ requires some computational effort. The same applies to the integer-valued representation obtained 
 from the probabilistic method
 described in Proposition~\ref{prop:randompolytimeTransversalMatroid}, where it is possible to use E.H.~Bareiss's variant of 
 Gaussian Elimination \cite{Ba68} that works without division, and which has polynomial computational complexity and a 
 polynomial bound on the absolute of the intermediate values that may occur during the calculation.
 We further point out that Example~\ref{ex:UniformNonRealizable} indicates that there are orientations of gammoids
 which cannot be represented by a real matrix.

\begin{lemma}\label{lem:CRAMERsrule}\PRFR{Mar 7th}
	Let $E$ and $T$ be finite sets, and let $\mu\in \R^{E\times T}$ be a matrix, 
	and $M = M(\mu)$ be the matroid represented by $\mu$ over $\R$. 
	Further, let $\Ocal = (E,\Ccal,\Ccal^\ast) = \Ocal(\mu)$
	be the oriented matroid obtained from $\mu$,
	let $C\in \Ccal(M)$ be a circuit of $M$ and let $c\in C$ be an arbitrary element of that circuit.
	Let $T_0\subseteq T$ such that $\idet(\mu \restrict (C\BSET{c})\times T_0) = 1$.
	Consider the signed subset $C_c \in \sigma E$ with
	 \[ C_c(e) = \begin{cases}[r]
	 	0 & \quad \text{if~} e\notin C,\\
	 	-1 & \quad \text{if~} e = c,\\
	 	\sgn\left( \frac{\det (\nu_e)}{\det(\mu \restrict (C\BSET{c})\times T_0)}\right) & \quad \text{otherwise}
	 \end{cases} \]
	 where \[ \nu_e \colon C\BSET{c}\times T_0 \maparrow \R,\quad (x,t)\mapsto \begin{cases}
	 		\mu(c,t) & \quad \text{if~} x = e,\\
	 		\mu(x,t) & \quad \text{otherwise.}
	 \end{cases} 
	 \]
	 Then $C_c \in \Ccal$.
\end{lemma}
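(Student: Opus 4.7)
The plan is to exhibit $C_c$ as the sign vector of the essentially unique linear relation among the rows $\{\mu_e\}_{e\in C}$, normalized so that the coefficient of $\mu_c$ equals $-1$, and then to identify this sign vector with a signed circuit of $\Ocal$ via Cramer's rule.

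First I would note that since $C$ is a circuit of $M(\mu)$ whereas $C\BSET{c}$ is independent, the kernel of the linear map $\R^{C}\to \R^{T}$, $(\lambda_e)_{e\in C}\mapsto \sum_{e\in C}\lambda_e\mu_e$ is one-dimensional. Hence there is a vector $\alpha\in\R^{C}$, unique up to a nonzero scalar, satisfying $\sum_{e\in C}\alpha_e\mu_e=0$; moreover every $\alpha_e$ is nonzero, for otherwise a proper subset of $C$ would be linearly dependent, contradicting minimality of $C$. Imposing the normalization $\alpha_c=-1$ then determines $\alpha$ uniquely, and reduces the task to solving the linear identity $\sum_{e\in C\BSET{c}}\alpha_e\mu_e=\mu_c$ of row vectors in $\R^{T}$. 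Because this identity holds on all of $T$, it holds after restricting to the column set $T_0$; and by hypothesis $\mu\restrict(C\BSET{c})\times T_0$ is a square matrix of full rank, so the restricted system has a unique solution, which must therefore be $\alpha$.

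The main technical step is to apply Cramer's rule to this restricted square system. Writing $A=\mu\restrict(C\BSET{c})\times T_0$ and $b=\mu(c,\cdot)\restrict T_0$, the system reads $\alpha^{\top}A=b$, equivalently $A^{\top}\alpha=b^{\top}$. Cramer's rule then yields $\alpha_e=\det((A^{\top})^{(e)})/\det(A^{\top})$, where $(A^{\top})^{(e)}$ denotes $A^{\top}$ with its $e$-th column replaced by $b^{\top}$. Transposing and using invariance of $\det$ under transposition, the numerator becomes $\det(\nu_e)$ and the denominator becomes $\det(\mu\restrict(C\BSET{c})\times T_0)$: replacing the $e$-column of $A^{\top}$ by $b^{\top}$ corresponds exactly to replacing the $e$-row of $A$ by the row $(\mu(c,t))_{t\in T_0}$, which is the definition of $\nu_e$. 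Hence the sign pattern of $\alpha$, extended by zero on $E\BS C$, agrees term-by-term with $C_c$.

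Finally I would verify $C_c\in\Ccal$. By axiom $(\Ocal 2)$ applied to the circuit $C\in\Ccal(M(\mu))$, there exists $D\in\Ccal$ with $D_\pm=C$; by Definition~\ref{def:Omu} such a $D$ arises as $E_\beta$ for some $\beta\in\R^{E}$ supported on $C$ with $\sum_e\beta(e)\mu_e=0$. One-dimensionality of the kernel forces $\beta=\lambda\alpha$ for some $\lambda\in\R\BSET{0}$, so $D=E_\beta$ equals $E_\alpha=C_c$ if $\lambda>0$ and $-C_c$ if $\lambda<0$; in either case, axiom $(\Ccal 2)$ delivers $C_c\in\Ccal$. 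The only mild obstacle is bookkeeping: the matrix $\nu_e$ in the statement arises by replacing a \emph{row}, whereas the textbook Cramer's rule replaces a \emph{column}; this is handled transparently by transposition, provided one uses the same implicit linear orderings on rows and columns (Definition~\ref{def:det}) in numerator and denominator so that the ratio is genuinely independent of those orderings.
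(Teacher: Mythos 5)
Your proposal is correct and follows essentially the same route as the paper: apply Cramer's rule to the square system obtained by restricting $\sum_{e\in C\BSET{c}}\alpha_e\mu_e=\mu_c$ to the columns $T_0$, and read off $C_c$ as the sign vector of the resulting non-trivial linear dependence among $\{\mu_e\}_{e\in C}$. You merely spell out details the paper leaves implicit (one-dimensionality of the kernel, nonvanishing of all coefficients, the row-versus-column transposition, and the support-minimality check that places $E_\alpha$ in $\Ccal_\mu$ rather than just in $\Dcal_\mu$).
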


\begin{proof}\PRFR{Mar 7th}
	By \textsc{Cramer}'s rule we obtain that
	\[ \mu_c = \sum_{e\in C\BSET{c}} \frac{\det (\nu_e)}{\det(\mu \restrict (C\BSET{c})\times T_0)} \cdot \mu_e. \]
	Therefore, 
	\[ -\mu_c + \sum_{e\in C\BSET{c}} \frac{\det (\nu_e)}{\det(\mu \restrict (C\BSET{c})\times T_0)} \cdot \mu_e = 0 \]
	is a non-trivial linear combination of the zero vector. Clearly $C_c$ consists of the signs of the corresponding coefficients and therefore $C_c\in \Ccal$ is an orientation of $C$ with respect to $\Ocal(\mu)$. 
\end{proof}


 \subsection{Heavy Arc Orientations}

\PRFR{Mar 7th}
 In this section, we develop a notion of orientations of gammoids which stem from indeterminate weightings with
 a special property, that allows us to determine the signed circuits of the orientation without carrying out any computations in $\R$.
 Instead, we only have to inspect a given representation $(D,T,E)$
 with respect to the given linear order on $A$ and the given signs of the arc weighting.

 \begin{definition}\label{def:heavyArcSignature}\PRFR{Mar 7th}
 	Let $D=(V,A)$ be a digraph, let $\sigma\colon A\maparrow \SET{-1,1}$ be a map and let $\ll$ be a binary relation on $A$.
 	We shall call $(\sigma, \ll)$ \label{n:sigmaLL} a \deftext[heavy arc signature of a digraph]{heavy arc signature of $\bm D$}, 
 	if $\ll$ is a linear order on $A$.
 \end{definition}

 \begin{definition}\label{def:routingOrder}\PRFR{Mar 7th}
 	Let $D=(V,A)$ be a digraph and $(\sigma, \ll)$ be a heavy arc signature of $D$.
 	The \deftext[induced routing order]{$\bm( \bm \sigma \bm, \bm \ll \bm)$-induced routing order of $\bm D$}
 	shall be the linear order $\lll$ on the family of routings of $D$, where $Q \lll R$ holds\label{n:llless}
 	if and only if the $\ll$-maximal element $x$ of the symmetric difference $Q_A \bigtriangleup R_A$ has
 	the property $x\in R_A$, where $Q_A = \bigcup_{p\in Q} \left| p \right|_A$ and $R_A = \bigcup_{p\in R} \left| p \right|_A$.
 \end{definition}

 \begin{remark}\PRFR{Mar 7th}
 	Clearly, $\llless$ is a linear order on all routings in $D$, because every routing $R$ in $D$ is uniquely determined by its set of
 	traversed arcs $R_A$ (Lemmas~\ref{lem:linkage} and \ref{lem:routage}).
 \end{remark}

\begin{definition}\PRFR{Mar 7th}
	Let $D=(V,A)$ be a digraph, and let $(\sigma,\ll)$ be a heavy arc signature of $D$.
	Let $R\colon X\routesto Y$ be a routing in $D$ where $X=\dSET{x_1,x_2,\ldots,x_n}$
	and $Y=\dSET{y_1,y_2,\ldots,y_m}$ are implicitly ordered.
	The \deftext[sign of a routing]{sign of $\bm R$ with respect to $\bm ( \bm \sigma\bm,\bm \ll \bm)$} shall be\label{n:sgnsigma}
	\[ \sgn_\sigma (R) = \sgn(\phi) \cdot \left( \prod_{p\in R,\,a\in \left| p \right|_A} \sigma (a)  \right) \]
	where $\phi\colon \SET{1,2,\ldots,n}\maparrow \SET{1,2,\ldots,m}$ is the unique map such that 
	for all $i \in \SET{1,2,\ldots,n}$ there is a path $p\in R$
	with $p_1 = x_i$ and $p_{-1} = y_{\phi(x)}$; and where
	\[ \sgn(\phi) = {(-1)}^{\left| \SET{\vphantom{A^A}(i,j)~\middle|~i,j\in \SET{1,2,\ldots,n}\colon\, i < j \txtand \phi(i) > \phi(j)}\right|} . \qedhere\]
\end{definition}

\begin{definition}\label{def:Csigmac}\PRFR{Mar 7th}
	Let $D=(V,A)$ be a digraph such that $V=\dSET{v_1,v_2,\ldots,v_n}$ is implicitly ordered,
	 $(\sigma,\ll)$ be a heavy arc signature of $D$, and let $T,E\subseteq V$
	be subsets that inherit the implicit order of $V$.
	Furthermore, let  $M=\Gamma(D,T,E)$ be the corresponding gammoid, and let $C\in \Ccal(M)$
	be a circuit of $M$ 
	 such that $C = \dSET{c_1,c_2,\ldots,c_m}$ inherits its implicit order from $V$;
	 and let $i\in\SET{1,2,\ldots,m}$. 
%
	The \deftext[heavy arc circuit signature]{signature of $\bm C$ with respect to $\bm M$, $\bm i$, and $\bm (\bm \sigma \bm, \bm \ll \bm)$}
	shall be the signed subset $C_{(\sigma,\ll)}^{(i)}$ of $E$ where
	\[ C_{(\sigma,\ll)}^{(i)}(e) = \begin{cases}[r]
							0 &\quad \text{if~}e\notin C,\\
							- \sgn_{\sigma}(R_{i}) &\quad \text{if~}e = c_i,\\
							(-1)^{i-j+1} \cdot \sgn_{\sigma}(R_{j})   &\quad \text{if~}e = c_j\not= c_i,
						\end{cases} \]
	and where for all $k\in \SET{1,2,\ldots,m}$
	\[ R_k = \max_{\llless} \SET{R \mid R\colon C\BSET{c_k}\routesto T\text{~in~}D} \]
	denotes the unique $\llless$-maximal routing from $C\BSET{c_k}$ to $T$ in $D$.
\end{definition}

\begin{remark}\PRFR{Mar 7th}
	The factors $(-1)^{i-j+1}$ in Definition~\ref{def:Csigmac} do not appear explicitly in Lemma~\ref{lem:CRAMERsrule},
	where
	$\nu_e$ is obtained from the restriction $\mu\restrict (C\BSET{c})\times T_0$ 
	by replacing the values in row $e$ with the values of $\mu_c$.
	We have to account for the number of row transpositions that are needed to turn $\nu_e$ into the restriction $\mu\restrict(C\BSET{e})\times T_0$, which depends on the position of $e=c_j$ relative to $c=c_i$ with respect to the implicit order of $V$.
\end{remark}

\begin{definition}\label{def:heavyArcWeighting}\PRFR{Mar 7th}
	Let $D=(V,A)$ be a digraph and $(\sigma, \ll)$ a heavy arc signature of $D$, and let $w\colon A\maparrow \R$ be an indeterminate weighting of $D$.
	We say that $w$ is a \deftext[heavy arc weighting]{$\bm( \bm\sigma\bm,\bm\ll\bm)$-weighting of $\bm D$} if, for all $a\in A$,
	the inequality $\left| w(a) \right| \geq 1$,
	the strict inequality
	\[\sum_{L\subseteq \SET{x\in A~\middle|~ x \ll a,\,x\not=a}} \left( \prod_{x\in L} \left| w(x)  \right| \right)  < \left| w(a) \right|,  \]
	and the equality 
	\( \sgn(w(a)) = \sigma(a) \)
	hold.
\end{definition}

\needspace{4\baselineskip}
\begin{lemma}\label{lem:ExistenceOfHeavyArcWeighting}\PRFR{Mar 7th}
	Let $D=(V,A)$ be a digraph and $(\sigma, \ll)$ be a heavy arc signature of $D$.
	There is a $(\sigma,\ll)$-weighting of $D$.
\end{lemma}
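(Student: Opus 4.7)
The plan is to construct $w\colon A\maparrow \R$ inductively by enumerating the arcs in $\ll$-ascending order. Let $A = \dSET{a_1, a_2, \ldots, a_n}$ with $a_1 \ll a_2 \ll \cdots \ll a_n$. At each step $i$ we will choose $w(a_i) \in \R$ such that all three demands of Definition~\ref{def:heavyArcWeighting} hold up to index $i$, while simultaneously maintaining the $\Z$-independence required for $w$ to be an indeterminate weighting.

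First I would observe that the subset sum condition admits a clean closed-form bound: since
\[ \sum_{L \subseteq \SET{a_1,\ldots,a_{i-1}}} \prod_{x \in L} \left|w(x)\right| \,\,=\,\, \prod_{j=1}^{i-1} \left(1 + \left|w(a_j)\right|\right), \]
the second requirement of Definition~\ref{def:heavyArcWeighting} at stage $i$ reduces to the single strict inequality
\[ \left|w(a_i)\right| \,\,>\,\, \prod_{j=1}^{i-1} \left(1 + \left|w(a_j)\right|\right). \]
Denote this lower bound by $B_i \in \R$; note $B_i \geq 1$. Our freedom in choosing $w(a_i)$ is therefore the set $F_i = \sigma(a_i)\cdot (B_i, \infty)$, which is uncountable.

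Next I would invoke the $\Z$-independence argument from the proof of Lemma~\ref{lem:enoughZindependents}. Assuming by induction that $X_{i-1} = \dSET{w(a_1),\ldots,w(a_{i-1})}$ has been chosen so that it is $\Z$-independent, the set of real numbers $r$ for which $X_{i-1} \cup \SET{r}$ fails to be $\Z$-independent is precisely the union of the root sets of the countably many nonzero univariate polynomials obtained by substituting the values $w(a_1),\ldots,w(a_{i-1})$ into polynomials from $\Z[x_1,\ldots,x_{i-1},x]$. Each such polynomial has only finitely many roots, so the bad set is countable, while $F_i$ is uncountable. Therefore we may pick $w(a_i) \in F_i$ such that $X_i = X_{i-1} \cup \SET{w(a_i)}$ remains $\Z$-independent. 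By construction $\sgn(w(a_i)) = \sigma(a_i)$, $|w(a_i)| \geq B_i \geq 1$, and the strict inequality required by Definition~\ref{def:heavyArcWeighting} holds.

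The base case $i = 1$ is handled identically, with $B_1 = 1$ and $X_0 = \emptyset$ trivially $\Z$-independent. After $n$ steps we obtain a map $w\colon A\maparrow \R$ that is an indeterminate weighting of $D$ and satisfies all three clauses of Definition~\ref{def:heavyArcWeighting} simultaneously, hence is a $(\sigma,\ll)$-weighting. I do not anticipate any serious obstacle; the only subtle point is verifying that the closed-form product indeed expresses the subset sum, and that the heavy growth condition can be reconciled with $\Z$-independence — both are immediate given the enumerative and cardinality arguments above.
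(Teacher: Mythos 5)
Your proof is correct, and the closed-form identity $\sum_{L\subseteq S}\prod_{x\in L}\left|w(x)\right| = \prod_{x\in S}\bigl(1+\left|w(x)\right|\bigr)$ is a nice simplification of the lower bound. The overall skeleton (induction along $\ll$, with the required weight at stage $i$ bounded below by a quantity built from the earlier weights) matches the paper, but the mechanism by which you secure $\Z$-independence is genuinely different. You re-run the cardinality argument of Lemma~\ref{lem:enoughZindependents} at every step: the admissible set $\sigma(a_i)\cdot(B_i,\infty)$ is uncountable, the set of values destroying $\Z$-independence is a countable union of finite root sets, so a good choice exists. The paper instead starts from \emph{any} indeterminate weighting $w$ (whose existence is already known) and observes a closure property: replacing each $w(a)$ by $\tau(a)\cdot\left|w(a)\right| + \tau(a)\cdot\zeta(a)$ for signs $\tau(a)\in\SET{-1,1}$ and integers $\zeta(a)$ yields another indeterminate weighting, because these are $\Z$-affine images with unit leading coefficient, so any integer polynomial relation among the new values pulls back to one among the old. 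It then defines $\zeta$ by the analogous recurrence and is done in one shot. Your version is more self-contained and elementary (no need to trust the closure property), at the cost of repeating the transcendence-style argument $\left|A\right|$ times and having to note, strictly speaking, that the chosen value is also distinct from the previously chosen ones so that $X_i$ really has $i$ elements (this excludes only finitely many further points, so it is harmless). The paper's version isolates a reusable fact about integer perturbations of indeterminate weightings, which it in fact exploits implicitly elsewhere. Both arguments are sound.
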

\begin{proof}\PRFR{Mar 7th}
	Let $w\colon A\maparrow \R$ be an indeterminate weighting of $D$, which exists due to Lemma~\ref{lem:enoughZindependents}.
	It is clear from Definition~\ref{def:Zindependent} that for every $\zeta \in \Z^A$ and every $\tau \in \SET{-1,1}^A$, the map
	$w_{\zeta,\tau}\colon A\maparrow \R$, which has $$w_{\zeta,\tau}(a) = \tau(a)\cdot \frac{w(a)}{\sgn(w(a))} + \tau(a) \cdot \zeta(a)$$ for all $a\in A$, is an indeterminate weighting of $D$, too.
	Now, let $\zeta\in \Z^A$, such that for all $a\in A$ we have the following recurrence relation
	\[ \zeta(a) =  \left\lceil  \sum_{L\subseteq \SET{x\in A ~\middle|~ x \ll a,\,x\not=a}} \left( \prod_{x\in L} \left(\vphantom{A^1} \left| w(x) \right| + \zeta(x) \right) \right) \right\rceil.\]
	The map $\zeta$ is well-defined by this recurrence relation because $\left| A \right| < \infty$ and therefore there is a $\ll$-minimal element $a_0$ in $A$, and we have $\zeta(a_0) = \prod_{x\in \emptyset} \left(\vphantom{A^1} \left| w(x) \right| + \zeta(x) \right) = 1$.
	Then $w_{\zeta,\sigma}$ is a $(\sigma,\ll)$-weighting of $D$. Clearly,
	\begin{align*}
		  \sgn\left( w_{\zeta,\sigma}(a)  \right) & = \sgn\left( \sigma(a) \cdot \frac{w(a)}{\sgn(w(a))} + \sigma(a) \cdot \zeta(a) \right) 
		  \\ & 
		  = \sgn\left( \vphantom{a^1}\sigma(a) \right)\cdot\sgn\left( \frac{w(a)}{\sgn(w(a))} +  \zeta(a) \right)\\& 
		  = \sigma(a) \cdot 1 = \sigma(a)
		  \end{align*}
	holds for all $a\in A$. Furthermore, we have
	\begin{align*}
		\left| w_{\zeta,\sigma}(a) \right| & = \left| \sigma(a) \cdot \frac{w(a)}{\sgn(w(a))} + \sigma(a) \cdot \zeta(a) \right| \\
		& > \left| \zeta(a) \right| 
		 = \left\lceil  \sum_{L\subseteq \SET{x\in A ~\middle|~ x \ll a,\,x\not=a}} \left( \prod_{x\in L} \left(\vphantom{A^1} \left| w(x) \right| + \zeta(x) \right) \right) \right\rceil \\
		& \geq  \sum_{L\subseteq \SET{x\in A~\middle|~ x \ll a,\,x\not=a}} \left( \prod_{x\in L} \left|  w_{\zeta,\sigma}(x) \right| \right). \qedhere
	\end{align*}
\end{proof}

\needspace{5\baselineskip}

\begin{lemma}\label{lem:lllMaximalRoutingsHaveCommonEnd}\PRFR{Mar 7th}
	Let $D=(V,A)$ be a digraph, $(\sigma,\ll)$ be a heavy arc weighting of $D$, $E,T\subseteq V$,
	$C\in \Ccal(\Gamma(D,T,E))$ be a circuit in the corresponding gammoid, and let $c,d\in C$.
	Furthermore, let $R_c \colon C\BSET{c} \routesto T$ and $R_d \colon C\BSET{d} \routesto T$ be 
	the $\lll$-maximal routings in $D$.
	Then \[ \SET{p_{-1} ~\middle|~ p\in R_c} = \SET{p_{-1} ~\middle|~ p\in R_d} \]
	holds.
\end{lemma}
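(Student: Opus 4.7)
The plan is to reduce the claim to a statement about the $\lll$-maximal linking from a canonical barrier to $T$, which is visibly independent of $c$. I would first pass to the strict gammoid $N = \Gamma(D,T,V)$, whose rank function agrees with that of $M = \Gamma(D,T,E)$ on subsets of $E$. Since $C$ is a circuit of $M$, we have $\rk_N(C) = |C|-1$, so the flat $F := \cl_N(C)$ has rank $|C|-1$ in $N$. By Lemma~\ref{lem:flatofstrictgammoidisstrictgammoid} the barrier $B := \delta_D(F,T)$ is then a minimum $F$-$T$-separator of cardinality exactly $|C|-1$. Consequently every routing from $C\BSET{c}$ onto $T$ consists of $|C|-1$ vertex-disjoint paths crossing $B$, so each vertex of $B$ is visited by exactly one path.

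For each $c\in C$ I would decompose the $\lll$-maximal routing $R_c$ at the barrier by splitting every path $p \in R_c$ at its \emph{last} vertex $b(p) \in B$, obtaining two sub-linkings with disjoint arc sets: $L_1^{(c)}\colon C\BSET{c}\routesto B$ collecting the initial segments, and $L_2^{(c)}\colon B\routesto T$ collecting the tails. The choice of the last $B$-vertex, together with the separator property, forces the interior of every tail to lie in $V\BS F$ --- otherwise a sub-path from an interior $F$-vertex to $T$ would have to cross $B$ a second time. The heart of the argument is then to lift $\lll$-maximality across this decomposition: the arc sets of any admissible pair $(L_1,L_2)$ live on opposite sides of $B$ and are disjoint, any $L_1$ internal to $F$ recombines with any $L_2$ whose interior avoids $F$ into a valid vertex-disjoint routing, and the $\ll$-lexicographic maximum of a union of disjoint arc sets factors into the independent lex-maxima of each part. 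Hence $L_2^{(c)}$ is the $\lll$-maximal linking from $B$ to $T$ in the part of $D$ outside $F$, the admissible family of which depends only on $B$, $T$, $(\sigma,\ll)$, and $D$ outside $F$, and therefore not on $c$. This gives $L_2^{(c)} = L_2^{(d)}$ for all $c,d\in C$, and since $\SET{p_{-1} \mid p \in R_c} = \SET{p_{-1} \mid p \in L_2^{(c)}}$, the terminal set is independent of $c$.

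The main obstacle will be making this factorization fully rigorous: one must verify that the $\lll$-maximal $R_c$ actually admits a decomposition in which $L_1^{(c)}$ truly stays inside $F$ and avoids $B$ in its interior, so that recombination with any admissible $L_2$ produces no vertex collisions between their interiors. A natural way to handle this is to argue via essential routings and to invoke the fact that $B$ is itself a base of the flat $F$ in the strict gammoid $N\restrict F$: a linking from the base $C\BSET{c}$ of $F$ to the base $B$ of $F$ always exists entirely within $D\restrict F$, providing a baseline against which no $\lll$-improvement can be gained by detours through $V\BS F$ or by passing through $B$-vertices before $b(p)$. Establishing this internal-linking rigidity is the technical heart of the proof.
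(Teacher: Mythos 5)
Your overall strategy is sound and, once repaired, yields a valid proof, but it is not the paper's argument, and the step you yourself flag as ``the technical heart'' is left with a proposed repair that would not work. The paper argues more directly: it fixes an arbitrary minimum-cardinality $C$-$T$-separator $S$ (of size $\left| C \right|-1$), notes via Corollary~\ref{cor:Menger} that each of $R_c$ and $R_d$ meets every vertex of $S$ exactly once, splits each path at its $S$-vertex, and then performs a tail exchange: if the two tail systems differed, say $R_c^S \lll R_d^S$, then grafting the tails of $R_d$ onto the initial segments of $R_c$ would produce a routing $\lll$-above $R_c$, a contradiction. Your factorization through the barrier $B=\delta_D(\cl_N(C),T)$ is a legitimate alternative route, and your observation that the tails beyond the last $B$-vertex avoid $F$ is correct.

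The gap is the symmetric claim about the initial segments, and the mechanism you propose for closing it is the wrong one. You suggest showing that ``no $\lll$-improvement can be gained by detours through $V\BS F$''; but $\lll$-maximality is decided by the single $\ll$-largest arc in a symmetric difference, so a detour out of $F$ that picks up a $\ll$-heavy arc could perfectly well be $\lll$-better than any internal linking --- an optimality argument cannot exclude it, and the existence of an internal linking of $C\BSET{c}$ onto the base $B$ of $F$ provides no such exclusion either. What saves you is that these detours are structurally impossible, not suboptimal: by Definition~\ref{def:rightmost-separator}, every vertex of $F$ possessing an out-arc into $V\BS F$ already lies in $B=\delta_D(F,T)$, so a path starting in $C\BSET{c}\subseteq F$ cannot leave $F$ before reaching a vertex of $B$; since each path of $R_c$ meets $B$ exactly once (because $\left| B \right| = \left| R_c \right|$ and $B$ is a minimum separator, Corollary~\ref{cor:Menger}), its initial segment lies entirely in $F$ with all vertices before the split point in $F\BS B$, and the path cannot re-enter $F$ afterwards without meeting $B$ a second time. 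With that observation the recombination of any admissible head system with any admissible tail system is automatically vertex-disjoint, the factorization of $\lll$-maximality you describe goes through, and the two tail linkings coincide as the unique $\lll$-maximum of a class that does not depend on $c$. Note that the same disjointness issue is the delicate point in the paper's grafting step as well; choosing the barrier rather than an arbitrary minimum separator is precisely what makes it transparent in your version.
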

\begin{proof}\PRFR{Mar 7th}
	Let $S$ be a $C$-$T$-separator of minimal cardinality in $D$, i.e. a $C$-$T$-separator with
	$\left| S \right| = \left| C \right| -1$. Since $R_c$ and $R_d$ are both $C$-$T$-connectors with maximal cardinality,
	we obtain that for every $s\in S$ there is $p_c^s \in R_c$ and a $p_d^s \in R_d$ such that $s\in \left| p_c^s \right|$ and
	$s\in \left| p_d^s \right|$ (Corollary~\ref{cor:Menger}), thus there are paths $l_c^s,l_d^s,r_c^s,r_d^s\in \Pbf(D)$ such that
	$p_c^s = l_c^s . r_c^s$ and
	\linebreak
	 $p_d^s = l_d^s . r_d^s$ with $\left( {r_c^s} \right)_1 = \left( {r_d^s} \right)_1 = s$. Now let $R_c^S = \SET{r_c^s ~\middle|~ s\in S}$
	and $R_d^S = \SET{r_d^s ~\middle|~ s\in S}$, clearly both $R_c^S$ and $R_d^S$ are routings from $S$ to $T$ in $D$.
	Assume that $R_c^S \not= R_d^S$, then we have $R_c^S \lll R_d^S$ --- without loss of generality, by possibly
	switching names for $c$ and $d$. Then $Q = \SET{l_c^s . r_d^s ~\middle|~ s\in S}$ is a routing from $C\BSET{c}$ to $T$ in $D$.
	But for the symmetric differences we have the equality 
	\[ \left( \bigcup_{p\in Q} \left| p \right|_A \right) \bigtriangleup \left( \bigcup_{p\in R_c} \left| p \right|_A \right) 
	= \left( \bigcup_{p\in R_d^S} \left| p \right|_A \right) \bigtriangleup \left( \bigcup_{p\in R_c^S} \left| p \right|_A \right),
	\]
	which implies $R_c \lll Q$, a contradiction to the assumption that $R_c$ is the $\lll$-maximal routing from $C\BSET{c}$ to $T$.
	Thus $R_c^S = R_d^S$ and the claim of the lemma follows.
\end{proof}

\PRFR{Mar 7th}
\noindent Now we have amassed all ingredients that we need in order to show that every heavy arc signature of $D$ corresponds to an orientation of a gammoid
whenever $D$ is an acyclic digraph. Thus heavy arc signatures yield orientations of cascade matroids.

\needspace{4\baselineskip}
\begin{lemma}\label{lem:acyclicOrientation}\PRFR{Mar 7th}
	Let $D=(V,A)$ be an acyclic digraph where $V$ is implicitly ordered, $(\sigma,\ll)$ be a heavy arc signature of $D$, and $T,E\subseteq V$.
	Then there is a unique oriented matroid $\Ocal=(E,\Ccal,\Ccal^\ast)$ where
	 \[ \Ccal = \SET{ \pm C_{(\sigma,\ll)}^{(1)} ~\middle|~ C\in \Ccal(\Gamma(D,T,E))}.\]
\end{lemma}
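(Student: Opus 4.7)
\bigskip
\noindent\textbf{Proof plan.} The uniqueness half of the statement is for free: by Remark~\ref{rem:sufficientConditionOM}, once a collection $\Ccal$ satisfies the circuit axioms $(\Ccal 1)$--$(\Ccal 3)$ and $(\Ccal 4')$ together with the matroid-support condition, the companion family $\Ccal^\ast$ is determined, and hence so is $\Ocal$. So the real content is existence, namely producing an oriented matroid whose signed circuits are exactly the $\pm C_{(\sigma,\ll)}^{(1)}$. My plan is to realize this oriented matroid as $\Ocal(\mu)$ for a carefully chosen matrix $\mu$, and then read off its signed circuits via Cramer's rule.

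\medskip
\noindent First, I would invoke Lemma~\ref{lem:ExistenceOfHeavyArcWeighting} to fix a $(\sigma,\ll)$-weighting $w\colon A \maparrow \R$ of $D$. Because $D$ is acyclic, the Lindström Lemma (Lemma~\ref{lem:lindstrom}) applied to $w$ gives a matrix $\mu\in \R^{E\times T}$ with $\mu(e,t)=\sum_{p\in \Pbf(D;e,t)} \prod p$, and $M(\mu)=\Gamma(D,T,E)$. By Lemma~\ref{lem:OcalMu} and Corollary~\ref{cor:MOmuEQMmu}, $\Ocal(\mu)$ is then an orientation of $\Gamma(D,T,E)$. It remains to identify its signed circuits with the family described in the statement; once that is done, the defining axioms of an oriented matroid are inherited for free.

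\medskip
\noindent The core step is the following sign calculation. Fix a circuit $C=\dSET{c_1,\ldots,c_m}\in \Ccal(\Gamma(D,T,E))$ (ordered by the implicit order on $V$). Pick $T_0 \subseteq T$ with $\idet(\mu\restrict (C\BSET{c_1})\times T_0)=1$, which exists because $C\BSET{c_1}$ is independent. By Lindström's Lemma, every subdeterminant occurring in the Cramer-rule formula of Lemma~\ref{lem:CRAMERsrule} has the form
\[
\det\bigl(\mu\restrict S \times T_0\bigr) \;=\; \sum_{L\colon S\routesto T_0} \sgn(L)\, \prod_{p\in L}\prod p
\]
for appropriate $S\subseteq E$. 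The key observation is that, since $w$ is a $(\sigma,\ll)$-weighting, the monomial $\prod_{p\in L}\prod p$ associated with the $\llless$-maximal routing $L^\ast$ has strictly larger absolute value than the sum of the absolute values of all other monomials appearing; this is precisely what the inequality in Definition~\ref{def:heavyArcWeighting} is designed to guarantee, since the arcs traversed by the other routings differ from those of $L^\ast$ only in arcs that are $\ll$-smaller than the $\ll$-maximal arc distinguishing $L^\ast$ from its competitor. Hence the sign of the entire sum is $\sgn(L^\ast)\cdot\prod_{p\in L^\ast,\,a\in |p|_A}\sigma(a)$, which is exactly $\sgn_\sigma(L^\ast)$. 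Applying this to $\det(\mu\restrict (C\BSET{c_1})\times T_0)$ yields $\sgn_\sigma(R_1)$, and applying it to each $\det(\nu_e)$ in Lemma~\ref{lem:CRAMERsrule}, together with the row-swap sign correction $(-1)^{1-j+1}$ needed to turn $\nu_{c_j}$ into $\mu\restrict(C\BSET{c_j})\times T_0$, yields $(-1)^{j}\sgn_\sigma(R_j)$. A brief arithmetic check then shows that the sign of the ratio is exactly $C_{(\sigma,\ll)}^{(1)}(c_j)$ as defined in Definition~\ref{def:Csigmac}.

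\medskip
\noindent There is one consistency issue that must be addressed on the way: the choice of $T_0$ in Lemma~\ref{lem:CRAMERsrule} is not canonical, and different choices of $T_0$ could a priori shuffle signs. This is where Lemma~\ref{lem:lllMaximalRoutingsHaveCommonEnd} enters: it tells us that for every $c_k\in C$ the $\llless$-maximal routing $R_k$ from $C\BSET{c_k}$ onto $T$ lands in the same target set $T_0 := \SET{p_{-1}\mid p\in R_1}\subseteq T$. Restricting to this common $T_0$ therefore simultaneously makes all relevant $(C\BSET{c_k})\times T_0$-subdeterminants admit the above sign analysis, so the formulas for the different $c_j$ are compatible and indeed yield the single signed subset $C_{(\sigma,\ll)}^{(1)}$. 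With the signed circuits of $\Ocal(\mu)$ thus identified as $\pm C_{(\sigma,\ll)}^{(1)}$ for $C\in \Ccal(\Gamma(D,T,E))$, existence is established, and uniqueness completes the argument. The hard part will be the careful bookkeeping of the sign $(-1)^{i-j+1}$ against the Cramer-rule row-swap contribution; everything else is a direct application of the tools already in place.
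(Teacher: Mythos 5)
Your proposal follows essentially the same route as the paper's proof: fix a $(\sigma,\ll)$-weighting, take the Lindström matrix $\mu$, let $\Ocal(\mu)$ be the orientation, use Lemma~\ref{lem:lllMaximalRoutingsHaveCommonEnd} to get a common target set $T_0$, and then argue via Cramer's rule that each subdeterminant's sign is $\sgn_\sigma$ of the $\llless$-maximal routing because the heavy-arc inequality makes that routing's monomial dominate, with the same $(-1)^{j}$ row-permutation correction. The argument is correct and matches the paper's proof in all essential steps.
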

\begin{proof}\PRFR{Mar 7th}
	Let $M=\Gamma(D,T,E)$, and let $w\colon A\maparrow \R$ be a $(\sigma,\ll)$-weighting of $D$ which exists due to Lemma~\ref{lem:ExistenceOfHeavyArcWeighting}.
	Furthermore, let $\mu\in \R^{E\times T}$ be the matrix defined as in the Lindström Lemma~\ref{lem:lindstrom}, with respect to the
	$(\sigma,\ll)$-weighting $w$ and the implicit order on $V$. Theorem~\ref{thm:gammoidOverR} along with its proof yields that we have $M = M(\mu)$.
	Let $\Ocal = \Ocal(\mu) = (E,\Ccal_\mu,\Ccal_\mu^\ast)$ be the oriented matroid that arises from $\mu$, so $M(\Ocal) = M(\mu)$ holds (Corollary~\ref{cor:MOmuEQMmu}). We show that $\Ccal_\mu = \Ccal$. It suffices to prove
	 that for all $C\in \Ccal(M)$, all $D \in \Ccal_\mu$ with $D_\pm = C$, and all $D'\in \Ccal$
	with $D_\pm = C$ we have $D \in \SET{D',-D'}$. 
	Now, let $C\in \Ccal(M)$ and let $C=\dSET{c_1,c_2,\ldots,c_k}$ implicitly ordered respecting the implicit order of $V$.
	The claim follows if  $D(c_1)D(c_j) = D'(c_1)D'(c_j)$ holds for all $j\in \SET{2,3,\ldots,k}$.
	Let $T_0 \subseteq T$ be the target vertices onto which the $\lll$-maximal 
	and $\left| \cdot \right|$-maximal $C$-$T$-connectors link in $D$ 
	(Lemma~\ref{lem:lllMaximalRoutingsHaveCommonEnd}).
	From Lemma~\ref{lem:CRAMERsrule} we obtain that
	\begin{align*}
		D(c_1)D(c_j) & = -1\cdot \sgn\left( \frac{\det (\nu_j)}{\det(\mu \restrict (C\BSET{c_1})\times T_0)}\right)\\
		& = -\sgn(\det(\nu_j))\cdot \sgn(\mu \restrict (C\BSET{c_1})\times T_0)) 
	\end{align*}
	where \[ \nu_j \colon C\BSET{c_1}\times T_0 \maparrow \R,\quad (x,t)\mapsto \begin{cases}[r]
	 		\mu(c_1,t) & \quad \text{if~} x = c_j,\\
	 		\mu(x,t) & \quad \text{otherwise.}
	 \end{cases} 
	 \]
	 Observe that $\nu_j$ arises from the restriction $\mu\restrict C\BSET{c_j}\times T_0$ by a 
	 row-permutation, which has at most one non-trivial cycle,
	 and this cycle then has the length $j-1$, therefore $$\det(\nu_j) = (-1)^{j-2} \det \left( \mu\restrict C\BSET{c_j}\times T_0 \right)$$ holds,
	 so
	 \begin{align*}
		D(c_1)D(c_j) & =  (-1)^{1-j}\sgn\left( \det \left( \mu\restrict C\BSET{c_j}\times T_0 \right) \right)\cdot \sgn(\mu \restrict (C\BSET{c_1})\times T_0)).
	\end{align*}
	 We further have
	 \begin{align*}
	 	D'(c_1)D'(c_j) & = (-1)^{j+1}\cdot \sgn_{\sigma}(R_1)\cdot \sgn_{\sigma}(R_j)
	 \end{align*}
	 where for all $i\in \SET{1,2,\ldots,k}$
	\[ R_i = \max_{\llless} \SET{R \mid R\colon C\BSET{c_i}\routesto T\text{~in~}D} \]
	denotes the unique $\llless$-maximal routing from $C\BSET{c_i}$ to $T$ in $D$.
	By the Lindström Lemma~\ref{lem:lindstrom} we obtain that for all $i\in\SET{1,2,\ldots,k}$
	the equation
	\begin{align*}
		\det\left( \mu\restrict C\BSET{c_i}\times T_0 \right) & =  \sum_{R\colon C\BSET{c_i}\routesto T_0} \left( \sgn(R)  
 		\prod_{p\in R} \left( \prod_{a\in \left| p \right|_A} w(a) \right) \right) 
 	\end{align*}
 	holds,
	where $\sgn(R)$ is the sign of the permutation implicitly given by the start and end vertices of the paths in $R$, 
	both with respect to the implicit order on $V$.
	Since $w$ is a $(\sigma,\ll)$-weighting, we have
	\[ \left| \sum_{R\colon C\BSET{c_i}\routesto T_0,\,R\not= R_i} \left( \sgn(R)  
 		\prod_{p\in R} \left( \prod_{a\in \left| p \right|_A} w(a) \right) \right) \right| < \left| w(a_i)  \right| \]
 	where $a_i \in \bigcup_{p\in R_i} \left| p \right|_A$ is the $\ll$-maximal arc in the $\lll$-maximal routing $R_i$ from
 	$C\BSET{c_i}$ to $T_0$ in $D$. Therefore the sign of $\det\left( \mu\restrict C\BSET{c_i}\times T_0 \right)$ is determined
 	by the sign of the summand that contains $w(a_i)$ as a factor, which is the summand that corresponds to $R = R_i$.
	Therefore
	\begin{align*}
		\sgn \left( \det\left( \mu\restrict C\BSET{c_i}\times T_0 \right) \right) & = 
		 \sgn \left( \sgn(R_i)\prod_{p\in R_i,\,a\in \left| p \right|_A} w(a) \right) \\
		 & = \sgn(R_i) \prod_{p\in R_i,\,a\in \left| p \right|_A} \sgn(w(a)) \\
		 & = \sgn(R_i) \prod_{p\in R_i,\,a\in \left| p \right|_A} \sigma(a) \\
		 & = \sgn_\sigma (R_i).
	\end{align*}
	So we obtain
	\begin{align*}
		D(c_1)D(c_j) & =  (-1)^{1-j} \sgn_\sigma(R_1) \cdot \sgn_{\sigma}(R_j) \\
		             & =   (-1)^{j+1}\cdot \sgn_{\sigma}(R_1)\cdot \sgn_{\sigma}(R_j) = D'(c_1)D'(c_j). \qedhere
	\end{align*}
\end{proof}

\noindent Unfortunately, we cannot omit the assumption that $D$ is an acyclic digraph.

\needspace{6\baselineskip}

\vspace*{-\baselineskip} 
\begin{wrapfigure}{r}{6cm}
\vspace{\baselineskip}
\begin{centering}~~
\includegraphics{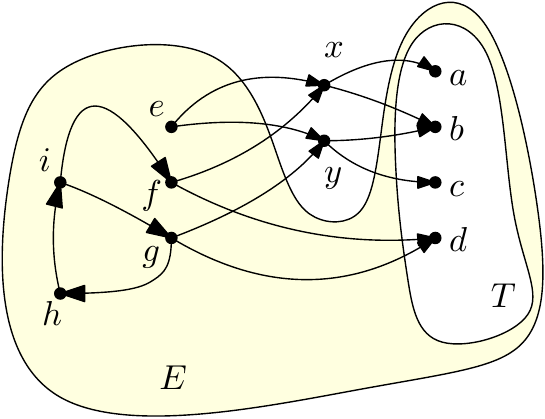}
\end{centering}%
\vspace*{-1\baselineskip} 
\end{wrapfigure}
~ 

\begin{example}
	We consider the digraph \linebreak $D=(V,A)$ with the implicitly ordered vertex set $V=\dSET{a,b,c,d,e,f,g,h,i,x,y}$, 
	and $A$ as depicted on the right. Let $T=\SET{a,b,c,d}$.
	Clearly, $\Wbf(D)$ contains the cycle walk $ghig$. Let $(\sigma,\ll)$ be the heavy arc signature of $D$ where
	$\sigma(a) = 1$ for all $a\in A$, and where $a_1 \ll a_2$ if the tuple $a_1$ is less than the tuple $a_2$ with respect to the
	lexicographic order on $V\times V$ derived from the implicit order of the vertex set.
	Let $C_1 = \SET{f,g,i}$, $C_2 = \SET{d,e,f,i}$, $C_{f} = \SET{d,e,g,i}$. Clearly $C_1,C_2,C_{f}\in \Ccal(\Gamma(D,T,E))$.
	The following routings are $\lll$-maximal among all routings in $D$ with the same set of initial vertices and with targets in $T$.
	\begin{align*}
		R_{\SET{f,g}} & = \SET{fxb, gyc}  & \sgn_{\sigma}\left( R_{\SET{f,g}} \right) & = +1\\
		R_{\SET{f,i}} & = \SET{fxb, igyc} & \sgn_{\sigma}\left( R_{\SET{f,i}} \right) & = +1\\
		R_{\SET{g,i}} & = \SET{gyc, ifxb} & \sgn_{\sigma}\left( R_{\SET{g,i}} \right) & = -1\\
		R_{\SET{d,e,f}} & = \SET{d,eyc,fxb} & \sgn_{\sigma}\left( R_{\SET{d,e,f}} \right) & = -1\\
		R_{\SET{d,e,i}} & = \SET{d,exb,igyc} & \sgn_{\sigma}\left( R_{\SET{d,e,i}} \right) & = +1\\
		R_{\SET{d,f,i}} & = \SET{d,fxb,igyc} & \sgn_{\sigma}\left( R_{\SET{d,f,i}} \right) & = +1\\
		R_{\SET{e,f,i}} & = \SET{exb,fd,igyc} & \sgn_{\sigma}\left( R_{\SET{e,f,i}} \right) & = -1\\
		R_{\SET{d,e,g}} & = \SET{d,eyc,ghifxb} & \sgn_{\sigma}\left( R_{\SET{d,e,g}} \right) & = -1\\
		R_{\SET{d,g,i}} & = \SET{d,gyc,ifxb} & \sgn_{\sigma}\left( R_{\SET{d,g,i}} \right) & = -1\\
		R_{\SET{e,g,i}} & = \SET{exb,gyc,ifd} & \sgn_{\sigma}\left( R_{\SET{e,g,i}} \right) & = +1\\
	\end{align*}
	Now let us calculate the signatures of $C_1$, $C_2$, and $C_f$ according to Definition~\ref{def:Csigmac}.
	We obtain \[	
		\left( C_{1} \right)^{(1)}_{(\sigma,\ll)} = \SET{f,g,-i}
		,\,
		\left( C_{2} \right)^{(1)}_{(\sigma,\ll)} = \SET{d,e,-f,-i}
		, \txtand 
		\left( C_{f} \right)^{(1)}_{(\sigma,\ll)} = \SET{-d,-e,-g,-i}.
		\]
	This clearly violates axiom {\em ($\Ccal$4)}: if we eliminate $f$ from $\left( C_{1} \right)^{(1)}_{(\sigma,\ll)}$ 
	and $\left( C_{2} \right)^{(1)}_{(\sigma,\ll)}$, then the resulting signed circuit must have opposite signs for $d$ and $i$,
	but $d$ and $i$ have the same sign with respect to $\left( C_{f} \right)^{(1)}_{(\sigma,\ll)}$. Therefore we see that
	the assumption, that $D$ is
	acyclic, cannot be dropped from Lemma~\ref{lem:acyclicOrientation}.
\end{example}

\noindent We can still use the construction involved in Lemma~\ref{lem:acyclicOrientation} for every representation $(D,T,E)$,
but we first have to construct a complete lifting of $D$ (Lemma~\ref{lem:completelifting}).
We then may use Lemma~\ref{lem:acyclicOrientation}
together with a heavy arc orientation of the lifted digraph in order to obtain an orientation of the lifted representation,
and then use the contraction formula from Lemma~\ref{lem:cyclelifting} in order to obtain the orientation of $(D,T,E)$ (Lemma \ref{lem:OMminors}).
Thus we have found a purely combinatorial way to determine an orientation of a gammoid from its representation,
and the proof of Lemma~\ref{lem:acyclicOrientation} yields that every orientation obtained in this way is realizable.

\cleardoublepage

\chapter{Conclusions and Open Problems}

\noindent In this section, we demonstrate the significance of this work by summing up and contextualizing the main new results and concepts presented
in this work.
We introduced the notion of {\em duality respecting representations} of gammoids and proved that every gammoid has such a representation.
For a long time, it has been a well-known fact that the class of gammoids is closed under duality, and the classical proofs of this
property employ the important insight that strict gammoids are precisely the duals of transversal matroids. By shifting our focus away from
strict gammoids, we were able to reveal that a gammoid and its dual are tightly related to each other: they are represented by special pairs of
opposite digraphs\footnote{We would like to call such directed graphs dual to each other. This may be justified by observing that for
lattices and partial orders, which may be considered special binary relations, the concept of duality merely swaps the
first and the second component of that relation. Since directed graphs may be considered binary relations as well, 
it is quite natural to call the opposite digraph dual. Unfortunately, there are other notions of duality with respect 
to directed graphs that have equally good justifications.},
and these special pairs are easily obtained from any representation. This discovery lead us to the concept of 
a {\em standard representation},
which allowed us to define complexity measures for gammoids as minimal complexity measures of the digraphs that may appear in a standard
representation of a gammoid. We defined the {\em arc-complexity} and {\em vertex-complexity} of gammoids and showed that
the derived classes of gammoids with bounded arc- or vertex-complexity are closed under duality and minors, and that these classes are characterized by finitely many excluded minors. But in general, these classes are not closed under direct sums. In order to derive subclasses of gammoids that are closed under direct sums, we defined the {\em $f$-width} of gammoids. We were able to show 
that the subclasses of gammoids
with bounded $f$-width are closed under direct sums for super-additive functions $f$.

\medskip
\noindent
Regarding our investigation into the problem of deciding whether a given matroid is a gammoid,
the starting point
was Mason's $\alpha$-criterion for strict gammoids. Naturally we were more interested in the situation where the matroid
under consideration is not a strict gammoid, and therefore we defined the concept of an {\em $\alpha$-violation} that
captures minimal situations in a matroid that are not ``strictly gammoidal''. Unfortunately, we showed that it is not possible
to classify $\alpha$-violations into violations that correspond to gammoids and violations that correspond to non-gammoids --- we saw
that there are non-gammoids that have two copies of a violation, that may occur in a gammoid as its unique violation.
We condensed our gathered experience with the recognition problem of gammoids into the notion of a {\em matroid tableau} and
the corresponding \ref{lastStep}-step directions for the derivation of a decisive matroid tableau.

\medskip
\noindent
We introduced the concept of {\em lifting} cycles in digraphs of representations of gammoids, in order to find
acyclic representations of gammoids that have the original gammoid as a contraction minor. 
This concept may be used to
avoid technicalities with the non-acyclic generalizations of the Lindström Lemma, but it may generally be applied in
situations where the presence of cycles in digraphs complicates matters. One such situation arises when we try to use {\em heavy arc signatures}
in order to orient a gammoid. We provided a way to determine orientations of gammoids without having to carry out actual calculations in
$\Qbm$ or $\Rbm$ as long as the corresponding digraph of the representation of the gammoid has no cycle walk. We also gave an
example that this condition may not be dropped. Apart from that, we were able to show that the class of {\em lattice path matroids} is generalized series-parallel, and therefore $3$-colorable.

\bigskip
\noindent In the following sections, we give some starting points for further research in the field of gammoids.

\section{Other Complexity Measures}

\noindent Let $\mu$ be a measure that assigns every digraph $D=(V,A)$ a value $\mu(D) \in \R$. Analogously to the definitions of
the arc-complexity and vertex-complexity of a gammoid, we may define the $\mu$-complexity of a gammoid $M$ to be
\[ \hat{\mu}(M) = \min\SET{\vphantom{A^A} {\mu(D)} ~\middle|~ \left( D, T, E \right)\text{~is a standard representation of~}M}. \]
If $\mu(D) = \mu(D^\opp)$ for all digraphs $D$, then the $\mu$-complexity has the property that $\hat\mu(M) = \hat\mu(M^\ast)$.
Obviously, all complexity measures for directed graphs have this property as soon as they are obtained from measures for undirected graphs by ignoring
the orientation of the arcs. This yields a variety of new research questions about the properties of
subclasses of gammoids with bounded $\mu$-complexity: for which measures $\mu$ are the the classes consisting
of gammoids $M$ with $\hat\mu(M) \leq k$ closed under minors, under duality, and under direct-sums? If such a class is closed under minors, then what are the excluded minors for that class? Which of these excluded minors are gammoids? Is the class characterized by finitely many excluded minors?
Interesting choices of $\mu$ include arboricity, star-arboricity, thickness, degeneracy, girth, tree number, DAG-width, and many more.
Furthermore, we should consider the same questions with respect to the $f$-$\mu$-width which may be defined as
\[ \hat\mu_f(M) = \max\SET{\frac{\hat\mu\left( \left( M\contract Y \right)\restrict X \right))}{f\left( \left| X \right|
	 \right) }
		 ~\middle|~ X\subseteq Y\subseteq E} \]
where $M=(E,\Ical)$.

\section{Arc Complexity of Uniform Matroids}

\noindent The following is the most fundamental open problem that we encountered in the course of this work.
It is most promising to be answered positively in the next few years
-- possibly by utilizing some results from the theory of digraphs -- but the solution of this problem is unfortunately out of
reach to the author within the schedule of this work. 
We are not able to show the following conjecture, but we are convinced that it is true.

\begin{conjecture}\label{conj:uniformArcs}
	Let $r\in \N$, $U=(E,\Ical)$ be a uniform matroid of rank $r$ on the ground set $E$, i.e.
	$\Ical = \SET{\vphantom{A^A}X\subseteq E~\middle|~ \left| X \right| \leq r}$.
	Then $\arcC(U) = r\cdot \left( \left| E \right| -r \right)$.
\end{conjecture}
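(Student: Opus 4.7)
The plan is to establish the matching lower bound $\arcC(U) \geq r(|E|-r)$, since the reverse inequality is already Lemma~\ref{lem:uniformArcs}. By Lemma~\ref{lem:arcCkVDualityAndMinors} the arc-complexity is invariant under duality, and the bound $r(|E|-r)$ is symmetric in $r$ and $|E|-r$, so I may assume without loss of generality that $|E|-r \geq r$. Under this assumption, for any $r$-subset $X\subseteq E\setminus T$ and any bijection $\psi\colon X\to T$, the set $X$ is a base of $U$ and any representation $(D,T,E)$ of $U$ admits a linking $X\routesto T$ in $D$ realising $\psi$. This strong vertex-disjoint linking property, which holds at every cardinality, is the engine of the lower bound.

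I would argue as follows. Let $(D,T,E)$ with $D=(V,A)$ be a minimum-arc standard representation of $U$, and set $N := \Gamma(D,T,V)$. Since targets are sinks, we have $|A| = \sum_{u\in V\setminus T} d^+(u)$, and in particular $|A| \geq \sum_{e\in E\setminus T} d^+(e)$. The key claim is that for every source $e\in E\setminus T$ one has $d^+(e) \geq r$. Combined with $|E\setminus T| = |E|-r$, this immediately yields $|A| \geq r(|E|-r)$. To prove the claim, I would invoke Lemma~\ref{lem:alphaTransversalFromD} applied to $N$: the map $\phi\colon V\setminus T\to \Fcal(N)$, $u\mapsto \cl_N(N^+(u))$ is a transversal of the $\alpha$-system of $N$. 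Since rank is at most cardinality, $d^+(e) = |N^+(e)| \geq \rk_N(N^+(e)) = \rk_N(\phi(e))$, so it suffices to show $\rk_N(\phi(e)) \geq r$ for every source $e$. Equivalently, there must exist $r$ pairwise vertex-disjoint paths from $N^+(e)$ to $T$ in $D$.

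To deduce this rank bound, I would fix a source $e$ and an $r$-subset $X\subseteq E\setminus T$ with $e\in X$. For any bijection $\psi\colon X\to T$, the resulting linking in $D$ consists of $r$ vertex-disjoint paths; the $r-1$ paths originating in $X\setminus\{e\}$ are disjoint from the path starting at $e$, whose first arc enters some $v_\psi\in N^+(e)$ and whose continuation in $D-e$ reaches $\psi(e)\in T$ while avoiding the other $r-1$ paths. By varying $\psi$ over all $r!$ bijections and applying an augmentation argument in the spirit of Theorem~\ref{thm:augmentationCons}, coupled with Menger's Theorem~\ref{thm:MengerGoering} applied to the subdigraph $D-e$, I would extract $r$ pairwise vertex-disjoint paths from $N^+(e)$ to $T$, as desired.

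The main obstacle I anticipate is precisely this last extraction step: the first neighbours $v_\psi$ associated with different bijections may coincide, and the continuation paths may share vertices of $V\setminus E$ in complicated ways, particularly because representations of gammoids may intrinsically require cycle walks (Remark~\ref{rem:weNeedCycles}). The cleanest remedy I foresee is to recast the claim as a min-cut statement: show that every $N^+(e)$-$T$ separator in $D-e$ has cardinality at least $r$, by observing that any separator of size $r'<r$ would, together with $\{e\}$, separate $e$ from $T$ with only $r'+1\leq r$ vertices, which can then be leveraged against the linking of a carefully chosen $r$-element base of $U$ containing $e$ to derive a contradiction. An alternative route would be to pass to an acyclic representation via the lifting construction (Lemma~\ref{lem:completelifting}) before applying Menger, at the cost of tracking how the number of arcs changes under lifting; this tradeoff is the technical heart of the problem.
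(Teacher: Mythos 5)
First, note that the statement you are proving is stated in the paper as a \emph{conjecture} (Conjecture~\ref{conj:uniformArcs}), and the author explicitly writes that they are unable to prove it. The paper establishes only the upper bound $\arcC(U)\leq r\cdot(\left|E\right|-r)$ (Lemma~\ref{lem:uniformArcs}) and the matching lower bound for \emph{strict} representations $(D,T,E)$ with $D=(E,A)$ (Lemma~\ref{lem:uniformStrictStdRep}); the latter proof works precisely because in a strict representation every circuit of $\Gamma(D,T,E)=U$ has $r+1$ elements, so Theorem~\ref{thm:IPEssentialStars} forces every non-target vertex to have out-degree at least $r$. The entire open content of the conjecture is whether auxiliary vertices in $V\setminus E$ can be used to share arcs and beat this count. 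So there is no paper proof to compare against, and your argument must stand on its own.

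It does not: the gap is exactly your key claim that in a minimum-arc standard representation every source $e\in E\setminus T$ satisfies $d^+(e)\geq r$, which you reduce to $\rk_N(S_e)\geq r$ for $N=\Gamma(D,T,V)$ and $S_e$ the set of out-neighbours of $e$. Nothing in the paper, and nothing in your argument, delivers this. A routing of a base $X\ni e$ onto $T$ contains exactly \emph{one} path whose second vertex lies in $S_e$; the other $r-1$ paths start at elements of $X\setminus\SET{e}$ and need never meet $S_e$ at all. Consequently your proposed min-cut remedy fails: an $S_e$-$T$-separator $S'$ in $D$ of size $r'<r$ only implies $\rk_N(\SET{e})\leq r'$ via paths through $S_e$, which is no contradiction since $\rk_N(\SET{e})=1$, and it does not obstruct the disjoint routing of any base containing $e$, because the remaining $r-1$ paths of that routing are free to avoid $S'$. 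Varying the bijection $\psi$ does not help either, since all the resulting tails may reuse the same vertex of $S_e$. Theorem~\ref{thm:IPEssentialStars} (which applies after deleting inessential arcs) only tells you that $S_e\cup\SET{e}$ is a circuit of the strict gammoid $N$ on $V$ --- and circuits of $N$ involving auxiliary vertices can be as small as two elements (e.g.\ $e$ parallel to a single auxiliary out-neighbour $x\notin E$) without contradicting the uniformity of $N\restrict E$. Your reduction via duality and the decomposition $\left|A\right|\geq\sum_{e\in E\setminus T}d^+(e)$ are fine, but they leave the conjecture exactly where the paper left it.
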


\noindent There is the following reformulation with respect to directed graphs.

\begin{conjecture}
	Let $D=(V,A)$ be a digraph, and let $X,Y\subseteq V$ with $X\cap Y =\emptyset$.
	If for every $X'\subseteq X$ and every $Y'\subseteq Y$ with $\left| X' \right| = \left| Y' \right|$
	there is a routing $R\colon X'\routesto Y'$ in $D$ with 
	\[ Y \cap \bigcup_{p\in R} \left| p \right| = Y',\] then
	 $$\left| A  \right|\geq \left| X \right|\cdot \left| Y \right|.$$
\end{conjecture}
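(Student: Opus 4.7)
The plan is to proceed by induction on $|V|$, after first reducing $D$ to a canonical form. Without loss of generality we may assume that every vertex lies on some path from $X$ to $Y$, that no arc enters $X$, and that no arc leaves $Y$; such arcs cannot participate in essential routings, so their removal preserves the routing condition and can only decrease $|A|$. Using Menger's Theorem~\ref{thm:MengerGoering}, the routing condition is equivalent to the statement that for every $X' \subseteq X$ and $Y' \subseteq Y$ with $|X'|=|Y'|=k$, the minimum cardinality of an $X'$-$Y'$-separator in $D$ (with the additional restriction that $X'$-$Y'$-paths avoid $Y \setminus Y'$) is at least $k$.

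For the base case $V = X \cup Y$, we argue directly. Since $Y$ consists of sinks (after the reduction), every in-arc of $y \in Y$ originates in $X$. For each singleton pair $\{x\}$-$\{y\}$, the condition guarantees a path from $x$ to $y$; the last arc of such a path is of the form $(x',y)$ for some $x' \in X$, which provides the needed in-arc. Combining this with the higher-cardinality routing conditions via a Hall-type/matroid argument on the bipartite structure between $X$ and $Y$ should yield $|A| \geq |X|\cdot|Y|$ in the base case.

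For the inductive step, select an auxiliary vertex $v \in V \setminus (X \cup Y)$ and perform a ``shortcut'' replacement: let $D' = (V \setminus \{v\}, A')$, where $A'$ consists of the arcs of $A$ not incident to $v$ together with an arc $(u,w)$ for each in-neighbour $u$ and each out-neighbour $w$ of $v$. Any path through $v$ is replaced by the corresponding shortcut arc, so $D'$ again satisfies the routing condition on $X$ and $Y$. The arc count changes by at most $d^-(v) \cdot d^+(v) - d^-(v) - d^+(v)$, which is non-positive provided $\min(d^-(v), d^+(v)) \leq 1$. In that favourable case, combining the bound $|A| \geq |A'| + d^-(v) + d^+(v) - d^-(v)\cdot d^+(v)$ with the induction hypothesis $|A'| \geq |X|\cdot|Y|$ finishes the argument.

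The hard part will be the remaining case, in which every auxiliary vertex $v$ has both $d^-(v) \geq 2$ and $d^+(v) \geq 2$; here the shortcut replacement may increase the arc count, so the induction fails. Overcoming this obstacle is the real content of Conjecture~\ref{conj:uniformArcs}, and I expect one of two routes to succeed. The first is structural: refine Theorem~\ref{thm:IPEssentialStars} so that in a minimum representation, the in- and out-neighbourhoods of auxiliary vertices carry a circuit-like structure within an auxiliary matroid (in the spirit of essential arcs in Lemma~\ref{lem:essentialArcsC}), forcing $d^-(v) + d^+(v)$ to be large enough to absorb the shortcut penalty. The second is analytic: replace the discrete induction by a fractional/LP-style counting argument that constructs an injection-by-weight from $X \times Y$ into $A$ --- for instance, by averaging over the $\binom{|X|}{|Y|}$ routings $X' \routesto Y$ guaranteed by the condition and apportioning a uniform ``credit'' of $1/|Y|$ per routing to its constituent arcs. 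Either approach requires a genuinely new idea beyond the pivot-based techniques used in Lemmas~\ref{lem:MasonsFundamental}--\ref{lem:uniformStrictStdRep}, which is presumably why the conjecture remains open.
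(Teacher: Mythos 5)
Your proposal is not a proof, and you say as much yourself: the induction collapses exactly in the case where every auxiliary vertex $v$ has $d^-(v)\ge 2$ and $d^+(v)\ge 2$, and neither of the two "routes" you gesture at (a strengthened Theorem~\ref{thm:IPEssentialStars}, or a fractional charging scheme) is carried out. That missing case is the entire difficulty, so the argument as written establishes nothing beyond the trivial reductions. Two smaller remarks on the parts you did write: the base case is actually easier than you make it --- after deleting arcs into $X$ and out of $Y$, a path from $x$ to $y$ with $V=X\cup Y$ can only be the single arc $(x,y)$, so the singleton instances of the hypothesis already force all $|X|\cdot|Y|$ arcs to be present and no Hall-type argument is needed; and the shortcut step is sound where it applies, including the preservation of the condition $Y\cap\bigcup_{p\in R}\left|p\right|=Y'$, since paths in a routing are vertex-disjoint and the replacement only shrinks visited-vertex sets.

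For context, the statement is a conjecture in the paper, and the paper's own argument is only a \emph{relative} proof: it observes that the routing hypothesis says precisely that every $Q\subseteq X\cup Y$ with $\left|Q\right|\le\left|Y\right|$ can be routed to $Y$, so that $\Gamma(D,Y,X\cup Y)$ is the uniform matroid of rank $\left|Y\right|$ on $\left|X\right|+\left|Y\right|$ elements, and then invokes Conjecture~\ref{conj:uniformArcs} (that representing $U_{r,n}$ requires $r(n-r)$ arcs). You assert informally that your leftover hard case "is the real content of Conjecture~\ref{conj:uniformArcs}," but you never make the matroid-theoretic reduction precise in either direction. If you want a result of the same strength as the paper's, the concrete fix is to prove that equivalence: show the hypothesis forces $\Gamma(D,Y,X\cup Y)=U_{|Y|,\,|X|+|Y|}$ (a short argument with trivial paths on $Q\cap Y$ and the guaranteed routing of $Q\setminus Y$ into $Y\setminus Q$), and then cite the uniform-matroid conjecture, rather than attempting a direct induction that stalls.
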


\begin{proof}[Relative proof]
	Let $M=\Gamma(D,Y,X\cup Y)$, and let $Q\subseteq X\cup Y$ with $\left| Q \right| \leq \left| Y \right|$.
	Clearly \linebreak $\left| Q \right| = \left| Q_X \right| + \left| Q_Y \right|$
	where $Q_X = Q\BS Y$ and $Q_Y = Q\cap Y$. Thus $\left| Q \right|\leq \left| Y \right|$ implies
	\linebreak
	$\left| Q_X \right| \leq \left| Y\BS Q_Y \right|$. Therefore there is a subset $Q_Y' \subseteq Y\BS Q$ with $\left| Q_Y' \right| = \left| Q_X \right|$. By hypothesis we obtain a routing $R\colon Q_X\routesto Q_Y'$ in $D$
	which avoids $Q_Y \subseteq Y\BS Q_Y'$, thus $R\cup\SET{q~\middle|~q\in Q_Y}$ is a routing from $Q$ to $Y$ in $D$.
	Consequently, $Q$ is independent in $M$.
	We showed that every independent subset of $X\cup Y$ with at most $\left| Y \right|$ elements is independent in $M$,
	and it is clear that no subset of $X\cup Y$ with more than $\left| Y \right|$ elements is independent in $M$,
	therefore $M$ is a uniform matroid of rank $\left| Y \right|$ with
	$\left| X \cup Y \right| = \left| X \right| + \left| Y \right|$ 
	elements. The statement of this conjecture then follows from Conjecture~\ref{conj:uniformArcs}.
\end{proof}

\noindent Closely related is the following conjecture which would be implied by the previous two conjectures.

\begin{conjecture}
	For all $k\in \N$ there is a gammoid $G=(E,\Ical)$ such that $$\arcC(G) \geq k\cdot \left| E \right|.$$
\end{conjecture}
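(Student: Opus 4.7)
The plan is to use uniform matroids of balanced rank as the candidate gammoids. Specifically, I would consider $U_{r,n}$ with $r=\lfloor n/2\rfloor$, which is a strict gammoid (represented by the complete bipartite digraph from $E\setminus T$ to $T$ for any chosen base $T$), and which is self-dual up to isomorphism when $n$ is even. By Lemma~\ref{lem:uniformStrictStdRep}, every \emph{strict} standard representation of $U_{r,n}$ already needs $r(n-r)=\Theta(n^{2})$ arcs, so $\arcC_{\text{strict}}(U_{r,n})/\lvert E\rvert\to\infty$. The task would be to show that this estimate essentially survives the transition from strict to general standard representations, i.e.\ that auxiliary vertices cannot absorb more than a constant factor of the arc count.

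First I would attempt a charging argument on the number of auxiliary vertices. Given a standard representation $(D,T,E)$ of $U_{r,n}$ with $D=(V,A)$ in which every arc is essential (cf.\ Definition~\ref{def:ACD}), I would try to contract auxiliary vertices one at a time using the pivot operation from Lemma~\ref{lem:contractequalspivot} -- or rather its combinatorial analogue via the digraph pivot from Definition~\ref{def:rspivot} -- and track how many arcs can disappear per removal. Theorem~\ref{thm:IPEssentialStars} forces each non-target vertex of a minimal strict representation to have its out-neighbourhood equal to a circuit-less-one; combined with the fact that every circuit of $U_{r,n}$ has size $r+1$, every such vertex would have to carry at least $r$ leaving arcs. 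Pushing this bound through the auxiliary-vertex elimination should give an arc lower bound of the form $c\cdot r(n-r)$ for some absolute constant $c>0$, which is sufficient for the conjecture since $c\cdot r(n-r)/n \to \infty$.

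Alternatively, if a direct combinatorial argument proves elusive, I would take the dual route via Corollary~\ref{cor:arcEstimatesDTE}: since $U_{r,n}^{\ast}=U_{n-r,n}$ and $\arcC$ is invariant under duality (Lemma~\ref{lem:arcCkVDualityAndMinors}), one may assume $r\le n/2$ and try to use Theorem~\ref{thm:upperBoundSizeOfV} together with the number-of-essential-arcs formula in Corollary~\ref{cor:arcEstimatesDTE} as a two-sided sandwich; any standard representation can then be replaced by one of bounded vertex count in which the essential-arc formula directly applies. A third fallback is to look for non-uniform witnesses: projective-geometry--like gammoids, or bicircular matroids with many ``long'' cycles in the presenting digraph, might be easier to analyse since their circuit structure is more rigid and Theorem~\ref{thm:IPEssentialStars} would then force many arcs even in non-strict representations.

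The hard part will be controlling what auxiliary vertices can do. The pivot operation, as Example~\ref{ex:pivotBreaksDualityRespecting} shows, does not respect duality, so the arc-count lower bound for $U_{r,n}$ cannot simply be transferred from the strict case to the general case by pivoting; cycle walks in $D$ (which Remark~\ref{rem:weNeedCycles} shows are genuinely necessary) complicate any direct arc-counting argument. In effect, a full resolution of the stated conjecture is tied to at least a partial resolution of Conjecture~\ref{conj:uniformArcs}, and my expectation is that the proof will go through only once one has a robust combinatorial lemma saying: every auxiliary vertex in an essential-arc standard representation of $U_{r,n}$ can reduce the arc total by at most $O(1)$ per vertex compared with the strict bound $r(n-r)$. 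Proving such a lemma -- presumably by a Menger-type argument combined with induction on $\lvert V\setminus E\rvert$ -- is where the main work will lie.
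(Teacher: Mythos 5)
The statement you are attacking is stated in the paper as a conjecture and is \emph{not} proved there; the paper's only content is the remark that it would follow from Conjecture~\ref{conj:uniformArcs}, i.e.\ from $\arcC(U_{r,n})=r\cdot(n-r)$, via the balanced uniform matroids $U_{\lfloor n/2\rfloor,n}$. Your reduction to that family is therefore exactly the derivation the paper intends. However, your proposal does not close the one gap that matters, and the charging argument you sketch to close it fails for a concrete reason. Lemma~\ref{lem:uniformStrictStdRep} only bounds the arcs of a \emph{strict} representation, where $D=(E,A)$ and every circuit of the represented matroid has $r+1$ elements. Once auxiliary vertices are present, Theorem~\ref{thm:IPEssentialStars} says that the out-neighbourhood of a non-target vertex $u$, together with $u$, is a circuit of the strict gammoid $\Gamma(D,T,V)$ on the \emph{whole} vertex set $V$ — not of $U_{r,n}=\Gamma(D,T,E)$. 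An auxiliary vertex sitting behind a small separator lies on a short circuit of $\Gamma(D,T,V)$ even though every circuit of the restriction to $E$ has $r+1$ elements, so no per-vertex out-degree bound of $r$ follows, and the asserted $c\cdot r(n-r)$ lower bound for general standard representations is unsupported. (A further subtlety: the theorem requires every arc to be essential for $(D,T,V)$, which is a stronger hypothesis than essentiality for $(D,T,E)$.) You acknowledge that the argument hinges on a lemma controlling how much auxiliary vertices can absorb, but that lemma \emph{is} the open content of Conjecture~\ref{conj:uniformArcs}; as written, the proposal restates the open problem rather than resolving it.

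Two smaller points. First, the paper explicitly notes that this weaker conjecture ``might be easier to prove'' than the uniform-matroid bound, which suggests the intended escape route is the opposite of yours: exhibit \emph{some} witness family whose superlinear arc complexity can be certified without determining $\arcC(U_{r,n})$, rather than funnelling everything through the uniform case. Second, your fallback candidates go the wrong way: the paper proves $\arcC(M)\leq 2\cdot\left| E \right|$ for every bicircular matroid $M=(E,\Ical)$, so bicircular matroids can never witness $k>2$; and rank-$3$ projective planes are not gammoids at all (Proposition~\ref{prop:cornerCases} together with Mason's $\alpha$-criterion), so ``projective-geometry-like'' witnesses would need to be specified much more carefully before they could be analysed.
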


\noindent This conjecture might be easier to proof, and it still would imply that the classes $\Wcal^k$ of gammoids $G$ 
with $\arcW^k(G) \leq 1$ for $k\in \N$ contain an infinite sequence of strictly bigger subclasses of gammoids.

\section{$\alpha$-Violations}

\noindent
In Example~\ref{ex:BiApexMatroid} and Remark~\ref{rem:BiApex} we saw that $\alpha$-violations,
which may be resolved into a strict gammoid
by extension, may overlap in a common matroid. It is possible that this common matroid may not be resolved into a strict gammoid,
although each restriction that encompasses only a single violation may be extended to a strict gammoid. It is an interesting
 open research problem to
investigate in what ways $\alpha$-violations may overlap, and to determine under which circumstances overlapping obstructs
the simultaneous resolution of the respective $\alpha$-violations into strict gammoid extensions of the matroid exhibiting the overlapping
$\alpha$-violations.

\section{Excluded Minors}

\noindent Since every matroid of rank $\leq 2$ is a gammoid, and every gammoid of rank $3$ is a strict gammoid, we may
use the formulas from Section~\ref{sec:alphaNExt} in order to compute all small\footnote{In this case: up to $10$ elements.} excluded minors of rank $3$ for the class of gammoids,
as well as the number of isomorphism classes of small gammoids of rank $3$ with $n$-elementary ground sets \cite{OEIS}. For $n\leq 10$, this takes less than $4$ hours on modern hardware. The following
statements are up to isomorphy:
The smallest excluded minor of rank $3$ is $M(K_4)$ with $6$ elements, the second smallest excluded minor is $P_7$ with $7$ elements.
There are $3$ excluded minors with $8$ elements, $11$ excluded minors with $9$ elements, and $96$ excluded minors with $10$ elements.
The difficult part in obtaining excluded minors with rank and corank greater than $3$ is to prove, that the considered matroid is not a 
gammoid, and apart from $P_8^=$, we do not know any excluded minor with rank and corank greater than $3$, that is representable over $\Rbm$
and strongly base-orderable. Therefore we ask: Are there other $\Rbm$-representable and strongly base-orderable excluded minors for the class of gammoids with rank and corank greater than $3$? Are there infinitely many such excluded minors?

\noindent
Furthermore, we do not know the excluded minors for $\Wcal^k$, the classes of gammoids $G$ with $\arcW^k(G) \leq 1$.
Is $\Wcal^k$ characterized by finitely many excluded minors? Moreover, let $\Wcal_f$ be the class of gammoids $G$
with $\arcW_f(G) \leq 1$ for a super-additive function $f$. What is the smallest growing behavior of $f$, such that $\Wcal_f$
has infinitely many excluded minors? And, conversely, what is the biggest growing behavior of $f$, such that $\Wcal_f$ has finitely many excluded minors?

\section{Complexity Class of Recognition Problems}

\noindent V.~Chandru, C.R.~Coullard, and D.K.~Wagner showed in \cite{CHANDRU198575} that the problem of
deciding,
whether a given matroid $M$ is a bicircular matroid, is NP-hard.
The proof involves deciding, whether the frame matroid constructed from a given gain-network
is a bicircular matroid or not, in order to answer an instance of the {\em Subset Product Problem}.
Clearly, there are frame matroids which are not gammoids, for instance all Dowling geometries of rank $\geq 3$ (\cite{Ox11}, p.663)
as well as  all graphical matroids with an $M(K_4)$ minor.
On the other hand, every bicircular matroid is a gammoid,
and thus it is possible that the additional information, that a given frame matroid is a gammoid, helps to decide
whether the given matroid is bicircular within polynomial time.
If this is the case, then ruling out that a given matroid is a gammoid must be NP-hard --- which is the most likely scenario
considering D.~Mayhew's result  that every gammoid is a minor of an excluded minor for the class of gammoids \cite{Ma16}.

\noindent Closely related to the open problem of the complexity class of recognizing gammoids are the open problems
regarding the complexity classes of finding a representation, finding a representation with
the minimal number of arcs when this number is already known, and determining the minimal number of arcs needed to represent a gammoid;
and all of the above for each of the subclasses $\Wcal^k$ for $k\in\N$, too.

\section{Coloring}

Every gammoid, that is also a binary matroid, is the polygon matroid of a series-parallel network \cite{In77},
therefore graphic gammoids are $3$-colorable.
The following conjecture  motivated our studies of gammoids in the first place.

\begin{conjecture}[\cite{GoHoNe15}, Conjecture~14]\label{conj:winfried}
	Every simple gammoid of rank $2$ or greater has a quite simple coline.
\end{conjecture}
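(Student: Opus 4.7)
The plan is to reduce Conjecture~\ref{conj:winfried} to a statement about rank-$2$ contractions, in analogy with the way Theorem~\ref{thm:WesternColine} and Corollary~\ref{cor:LPMqscoline} work. A coline $X$ of a simple gammoid $M=(E,\Ical)$ of rank $r$ has the form $X=\cl_M(I)$ for some independent set $I\subseteq E$ with $|I|=r-2$, and by Corollary~\ref{cor:rkContract} the copoints on $X$ correspond bijectively to the rank-$1$ flats (parallel classes) of the contraction $N_I := M\contract(E\BS I)$. Under this correspondence, simple copoints on $X$ are exactly the singleton parallel classes of $N_I$, while multiple copoints correspond to parallel classes of size $\geq 2$. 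Since the class of gammoids is closed under minors (Theorem~\ref{thm:GammoidsClosedMinorsDuality}) and every rank-$2$ matroid is a strict gammoid (Proposition~\ref{prop:cornerCases}), the conjecture is equivalent to: for every simple gammoid $M$ of rank $r\geq 2$ there exists an independent $I$ of size $r-2$ such that the rank-$2$ gammoid $N_I$ has strictly more singleton parallel classes than multi-element parallel classes.

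The case $r=2$ is immediate: take $I=\emptyset$, so every rank-$1$ flat of $N_\emptyset = M$ is a singleton (since $M$ is simple), while there are no multi-element parallel classes, and $|E|\geq 2$ implies the count inequality. For $r\geq 3$ I would proceed by induction on $r$ combined with an exchange argument on the choice of $I$. Fix a duality respecting standard representation $(D,T,E)$ of $M$ (Remark~\ref{rem:standardRepresentation}) with $D=(V,A)$, and start from an arbitrary independent $I_0$ of size $r-2$ which is contained in a base of $M$. If $N_{I_0}$ is already quite simple we are done; otherwise I would attempt an exchange step, replacing an element of $I_0$ by an element of $E\BS\cl_M(I_0)$ lying in a multi-element parallel class of $N_{I_0}$, and show that a suitable potential function strictly decreases. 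A natural candidate for such a potential is the pair $\bigl(\sum_{P}(|P|-1),\,-s(I)\bigr)$ in the lexicographic order, where the sum ranges over multi-element parallel classes $P$ of $N_I$ and $s(I)$ counts singleton classes; this would force termination at the desired independent set.

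The main obstacle will be to control how the parallel structure of $N_I$ transforms under a single exchange. Two elements $e,f\in E\BS(I\cup\cl_M(I))$ lie in a common parallel class of $N_I$ if and only if for every base $B_{I}$ of $\cl_M(I)$ in $M$ the set $B_I\cup\{e,f\}$ is dependent in $M$; in routing terms this translates to a ``bottleneck'' condition expressible via the linkage system $\Acal_{D,T}$ (Definition~\ref{def:linkageSystem}) after contracting the paths certifying $I$. The delicate part is showing that when every exchange from $I_0$ creates as many new multi-element classes as it destroys, the matroid $M$ must already have an element belonging to no multi-element class of any $N_I$, contradicting simplicity.

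If the direct exchange argument stalls, I would fall back on a two-pronged strategy: combine Mason's $\alpha$-criterion (Corollary~\ref{cor:MasonAlpha}) applied to the strict-gammoid extension of $M$ provided by Corollary~\ref{cor:gammoidrestrictionsamerank} with a global counting argument on hyperplanes containing a candidate coline, analogous to the bookkeeping used in the proof of Theorem~\ref{thm:AlphaCriterion}. The $\alpha$-count of a candidate coline $X$ measures exactly the ``excess'' nullity contributed by its copoints, and nonnegativity of $\alpha$ on the ambient strict gammoid should translate into an averaging inequality forcing some $X$ to carry more simple copoints than multiple ones. Verifying that this averaging is strict under the simplicity assumption on $M$---and not merely non-strict, which would leave the conjecture open at the boundary---is the step I expect to require the most care, and it is plausible that additional input from the heavy arc orientation framework of Lemma~\ref{lem:acyclicOrientation} will be needed in order to break ties between candidate colines canonically.
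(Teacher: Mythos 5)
This statement is an open conjecture, not a theorem of the paper: it is quoted from \cite{GoHoNe15} (Conjecture~14), and the paper explicitly states in its concluding chapter that it was \emph{not} able to resolve it. So there is no proof in the paper to compare against, and what you have written is not a proof either --- it is a research programme in which every decisive step is left as an acknowledged open problem (``I would attempt an exchange step\ldots'', ``the delicate part is showing\ldots'', ``is the step I expect to require the most care''). The only part you actually establish is the trivial base case $r=2$ and the (standard, essentially correct) dictionary between copoints on a coline $X=\cl_M(I)$ and parallel classes of the simplification of $M\contract(E\BS I)$ --- note you must simplify first, since the elements of $X\BS I$ become loops of the contraction, so ``singleton parallel class'' is not literally right as stated.

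Two of your proposed routes contain concrete flaws beyond mere incompleteness. First, in the exchange argument you want to derive a contradiction from ``the matroid $M$ must already have an element belonging to no multi-element class of any $N_I$, contradicting simplicity.'' That is not a contradiction: simplicity of $M$ constrains only the loops and parallel pairs of $M$ itself (its rank-$0$ and rank-$1$ flats), and says nothing about the parallel structure of its rank-$2$ contractions; indeed the whole difficulty of the conjecture is that contractions of simple gammoids can have large parallel classes. Moreover, no argument is given that your lexicographic potential decreases under an exchange, and it is not even clear the exchange is always available. Second, the $\alpha$-criterion fallback conflates gammoids with strict gammoids: Corollary~\ref{cor:gammoidrestrictionsamerank} gives a strict gammoid $M'$ of the same rank with $M=M'\restrict E$, but a quite simple coline of $M'$ need not restrict to a quite simple coline of $M$ --- distinct copoints of $M'$ can intersect $E$ in the same set, and simple copoints of $M'$ can lose their distinguishing element entirely --- so nonnegativity of $\alpha_{M'}$ does not by itself yield the claimed ``averaging inequality'' for $M$. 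As it stands the proposal neither proves the conjecture nor identifies a reduction that is known to suffice.
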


\noindent
If the conjecture holds, then all gammoids are generalized series-parallel matroids and therefore $3$-colorable.
Although we were not able to resolve this conjecture at this point, we are convinced that the newly developed theory in this work
will prove helpful for future approaches. A related open problem is the question, whether every gammoid is generalized series-parallel,
which may still be the case even if the Conjecture~\ref{conj:winfried} is wrong: although no coline of the non-gammoid $P_7$ 
is quite simple, all of its 
orientations are still generalized series-parallel.

\noindent
We provided a method of obtaining an orientation of a gammoid from its representation by combinatorial means, but all orientations obtained
in this way are representable. It has been known for long that there are oriented matroids whose orientations are not representable.
One example of a non-representable orientation is the orientation $\mathtt{RS}(8)$ of the uniform matroid $U_{4,8}$ --- which is a strict gammoid.
Is there a way to obtain some or all non-representable orientations of gammoids in a purely combinatorial way from their representations, possibly by generalizing the notion of {\em heavy arc signatures}? And finally, is there a way to deal with cycle walks in the digraph of a given representation of a gammoid other than first lifting all the cycle walks, then orienting using an extended heavy arc signature, and then contracting the oriented extension?

\vspace*{3cm}
\begin{flushright}
\textit{Who questions much, shall learn much, and retain much.  ~~~~~ \\
\hfill{~} --- Sir Francis Bacon.}
\end{flushright}


\cleardoublepage
\stepcounter{chapter}
\setcounter{section}{0}
\chapter*{Listings}
\addcontentsline{toc}{chapter}{Listings}
\fancyhead[RE]{Listings}

\section{Digraph Backtracking Algorithm}\label{lst:isGammoidSage}

\PRFR{Mar 7th}
The routine ``{\ttfamily isGammoid}'' performs a backtracking search in the domain of all digraphs with a fixed number of vertices
in order to determine whether a given input matroid $M$ is a
gammoid (Algorithm~\ref{alg:gammoidBackTrack}) --
or, optionally, whether $M$ is a gammoid representable with certain upper bounds on the number of arcs and vertices occurring. It
has been tested with {\ttfamily SageMath} version 8 running on macOS 10.13.3. We present some runtime measurements of various inputs
in order to convey a sense of how slow this algorithm actually is. 
We measured the performance using three matroids, one is a non-gammoid, one is a strict gammoid, and one is a non-strict gammoid, each with
different upper bounds for the number 
of vertices allowed in a representing digraph candidate. We stopped the each measurement once a time-limit of $48$ hours was reached. 
Here are the results:

\begin{tabularx}{\textwidth}{XX}
(Example~\ref{ex:MK4}, $M(K_4)$) &
\ttfamily sage: time(isGammoid(MK4()))\\*
& \ttfamily CPU time total: 27.4 ms\\
($M(K_4)$, $\left| V \right|=\left| E \right|+1$)&\ttfamily sage: time(isGammoid(MK4(),7))\\*
& \ttfamily CPU time total: 1.95 s\\
($M(K_4)$, $\left| V \right|=\left| E \right|+2$)&\ttfamily sage: time(isGammoid(MK4(),8))\\*
& \ttfamily CPU time total: 8min 3s\\
($M(K_4)$, $\left| V \right|=\left| E \right|+3$)&\ttfamily sage: time(isGammoid(MK4(),9))\\*
& \ttfamily CPU time total: > 48h\\
(Example~\ref{ex:nonStrictGammoid}, $\Gamma(D,T,V)$, $\left| V \right|=9$)& \ttfamily sage: time(isGammoid(strictG,9))\\*
& \ttfamily CPU time total: 46min 57s\\
(Example~\ref{ex:nonStrictGammoid}, $\Gamma(D,T,V)$, $\left| V \right|=10$)&\ttfamily sage: time(isGammoid(strictG,10)) \\*
& \ttfamily CPU time total: > 48h \\
(Example~\ref{ex:nonStrictGammoid}, $\Gamma(D,T,E)$, $\left| V \right|=9$)&\ttfamily sage: time(isGammoid(G),9) \\*
& \ttfamily CPU time total: 4h 28min 52s\\
(Example~\ref{ex:nonStrictGammoid}, $\Gamma(D,T,E)$, $\left| V \right|=10$)&\ttfamily sage: time(isGammoid(G,10)) \\*
& \ttfamily CPU time total: > 48h \\
\end{tabularx}

\PRFR{Mar 7th}
\noindent
Those times suggest that the digraph backtracking method is not suitable for deciding the value of $\Gamma_\Mcal(M)$ 
for $M$ defined on ground sets larger than a few elements within a reasonable time frame. The generic bound derived from Remark~\ref{rem:upperBoundForV}
is useless in practice, for instance, the matroid defined in Example~\ref{ex:nonStrictGammoid} has an upper bound of at most $123$ vertices in
a representing digraph.
 We might achieve some  slight improvement in performance  by utilizing a tree
 structure to store the set of essential paths and the family of maximal essential routings, but this measure would not have any influence on the rapid growth of 
 number of digraph 
 candidates that have to be traversed by the backtracking method (Remark~\ref{rem:backTrackSlow}).
 \vspace*{1\baselineskip}

\noindent
\lstinputlisting[language=Python,backgroundcolor=\color{black!5},frame=tlb,basicstyle=\footnotesize]{backtracklisting.spyx}

\section{Calculating $\alpha_N$ for $N\in \Xcal(M,e)$}\label{lst:measureDeltaAlpha}

\begin{figure}[H]
\begin{center}
\includegraphics[width=.7\textwidth]{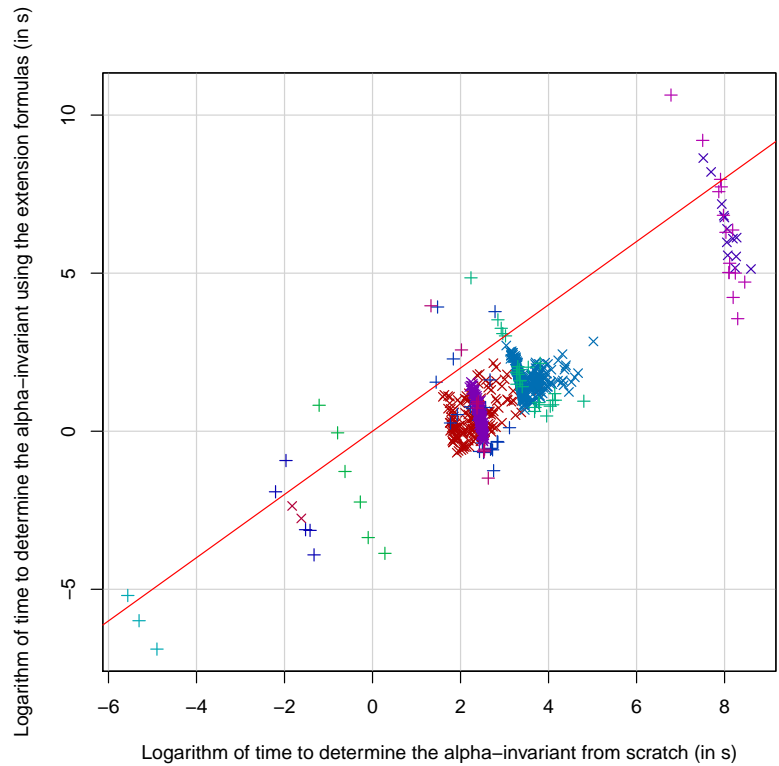}
\end{center}
\caption{\label{fig:Scat1}Scatter plot of runtime measurements for Listing~\ref{lst:measureDeltaAlpha}. Each $+$-mark corresponds to a principal extension, each $\times$-mark to a non-principal extension. The red line indicates equal runtime. The logarithms are dyadic.} 
\end{figure}

In this section, compare the  performance of determining the $\alpha_N$-vector of a single element extension $N\in \Xcal(M,e)$
with the same rank as $M$
obtained through the formulas 
derived in Section~\ref{sec:alphaNExt} with the performance of determining the $\alpha_N$-vector from scratch. The
performance has been measured with {\ttfamily SageMath} version 8 running on macOS 10.13.3. The program code is listed at the end of this section,
the code has been compiled with the built-in Cython compiler of {\ttfamily SageMath} prior to the measurements. 
For a fixed initial matroid $M=(E,\Ical)$, 
we measured one representative $N$ of each isomorphism class of the single element extensions of $M$ that have the same rank as $N$.
The runtime for each representative has been measured with $3$ repetitions per method. 
In total we performed runtime  measurements on $822$ different single-element extensions,
the median of the extension-formula-runtime to from-scratch-runtime ratio is approximately $0.2635$,
at least $95\%$ of the measured ratios are smaller than $0.5972$. 
Therefore we expect the method using the formulas from Section~\ref{sec:alphaNExt} to be almost four times faster on average, 
and to be at least $1\frac 1 2$ times faster in the usual case, than computation of $\alpha_N$ from scratch (Figure~\ref{fig:Scat1} on p.\pageref{fig:Scat1}).

\noindent
We give an overview over the measurements with respect to the specific initial matroids in the following table, where $M_0$ is the initial matroid, $k$ is the number of non-isomorphic same-rank single element extensions of $M_0$, $k_1$ is the number of non-isomorphic same-rank single element extensions of $M_0$ that have a principal modular cut, $r_{.5}$ is the median extension-formula-runtime to from-scratch-runtime ratio, and $r_{.95}$ is the $95$th percentile extension-formula-runtime to from-scratch-runtime ratio.
The scatter plot depicts the dyadic logarithm of the run-time in seconds using the extension-formula (vertical axis) versus the dyadic logarithm of the run-time in seconds calculating from scratch (horizontal axis).
The blue $+$-marks correspond to single element extensions of $M_0$ with principal modular cuts, and the black $\times$-marks correspond
to single element extensions of $M_0$ that have no principal modular cuts. The red line indicates the locations that correspond to equal run-time for both methods.
%
\begin{tabularx}{\textwidth}{p{3cm} |l|l|l|l|l}
$M_0$ & $k$ & $k_1$ & $ r_{.5}$ & $ r_{.95}$ &  \\ 
\hline 
\hline
Ex.~\ref{ex:nonStrictGammoid},
 $\Gamma(D,T,E)$ & $177$ & $22$ & $.24$ & $.65$ & 
 \begin{minipage}[c]{7cm} \includegraphics[trim={1cm 1cm 1cm 1cm},width=7cm]{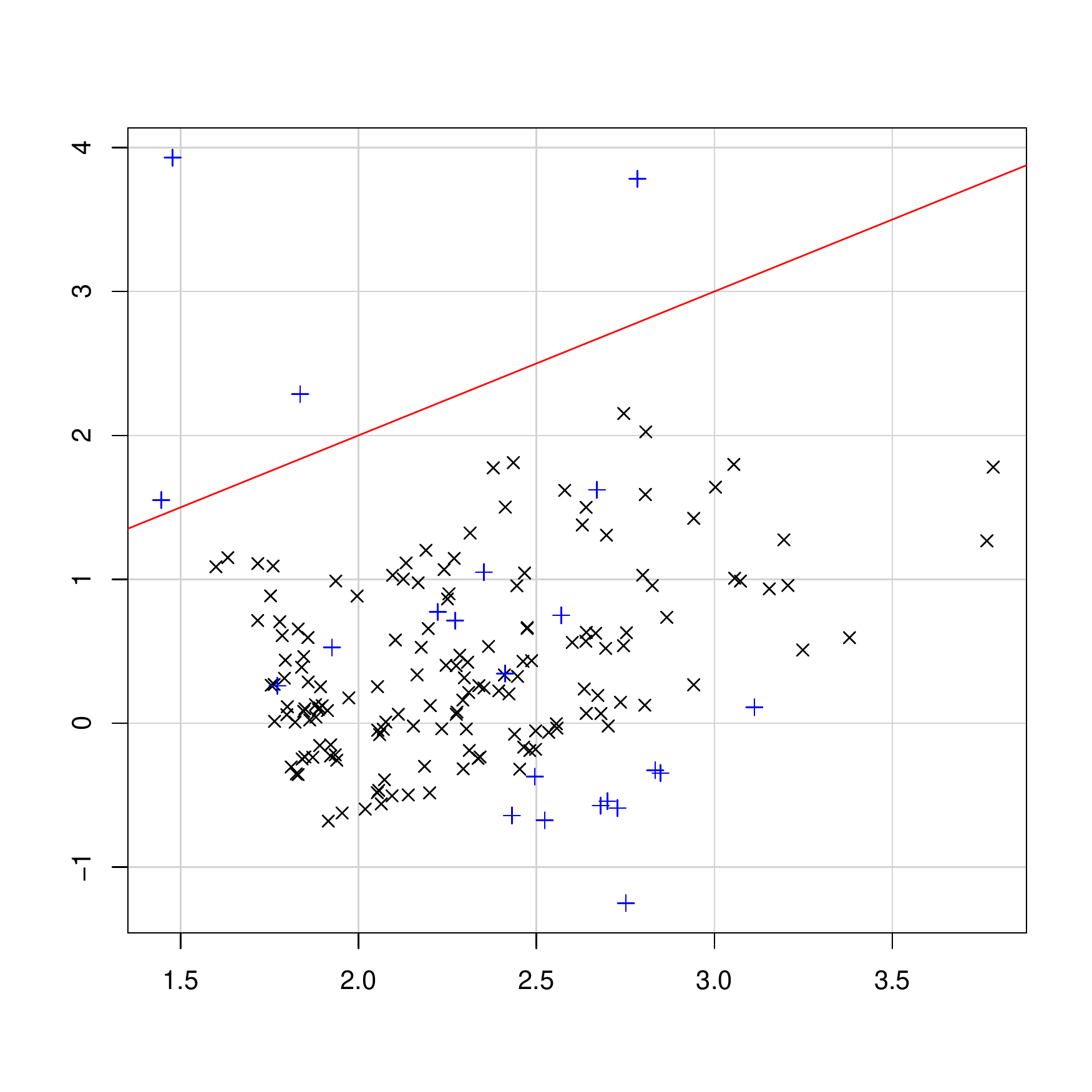} \end{minipage}
 \\
\hline
Ex.~\ref{ex:nonStrictGammoid},
 $\Gamma(D,T,V)$ & $367$ & $26$ & $.26$ & $.55$ & 
 \begin{minipage}[c]{7cm} \includegraphics[trim={1cm 1cm 1cm 1cm},width=7cm]{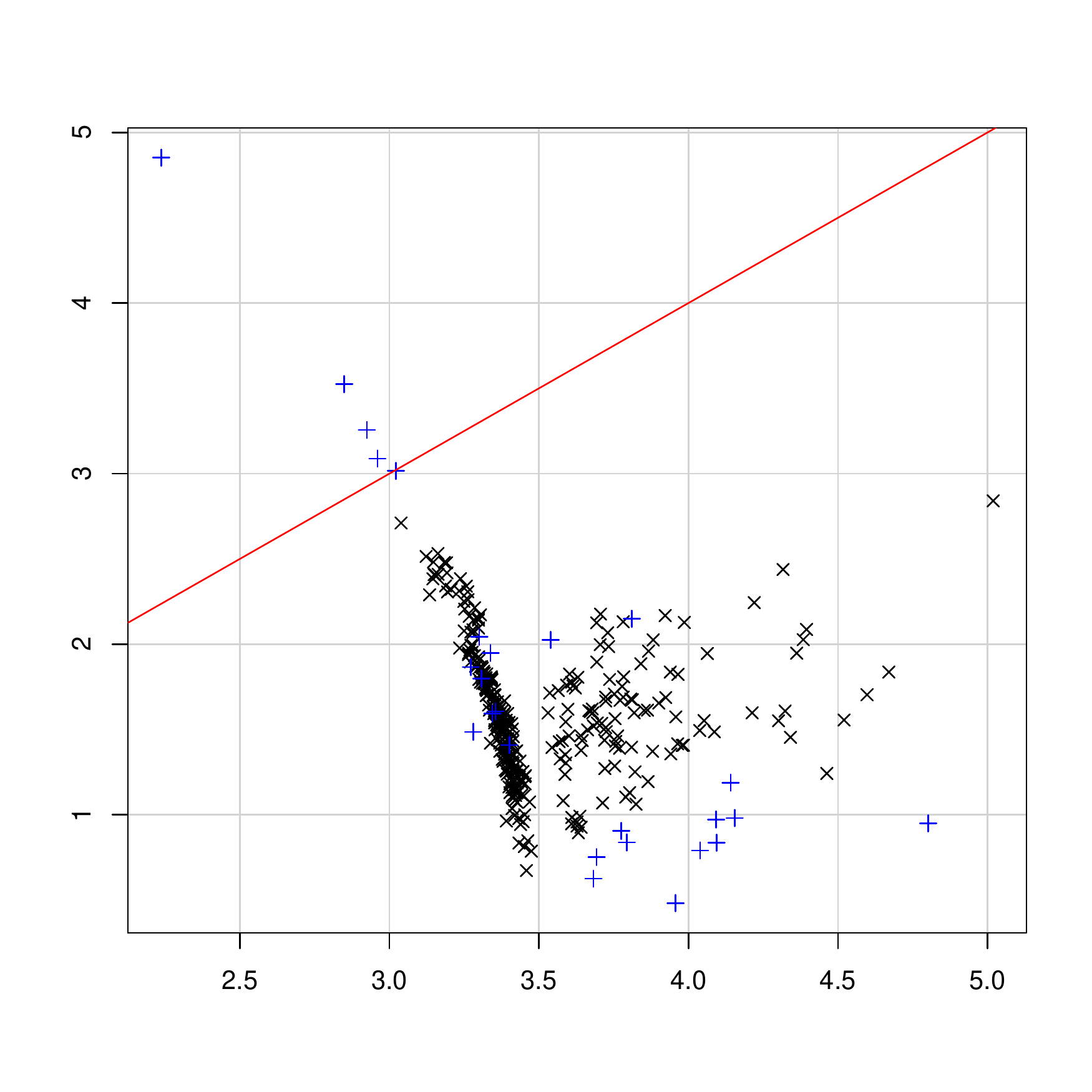} \end{minipage}
 \\
\hline
$\left(E,2^E\right)$ with $\left| E \right| = 5$ & $6$ & $6$ & $.45$ & $3.49$ & 
 \begin{minipage}[c]{7cm} \includegraphics[trim={1cm 1cm 1cm 1cm},width=7cm]{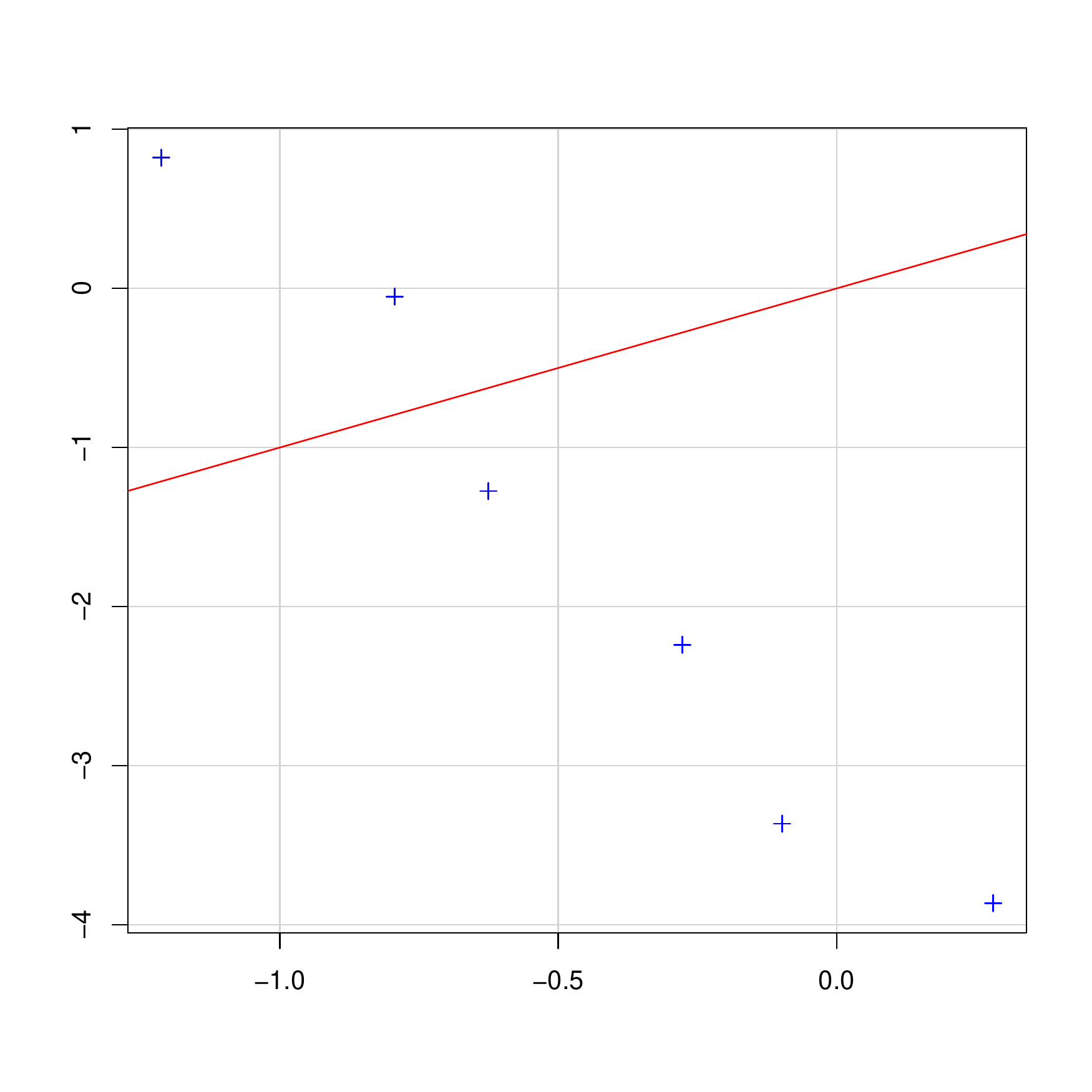} \end{minipage}
 \\
\hline
$M(K_4)$,
Ex.~\ref{ex:violationNonGammoid} & $7$ & $5$ & $.45$ & $1.81$ & 
 \begin{minipage}[c]{7cm} \includegraphics[trim={1cm 1cm 1cm 1cm},width=7cm]{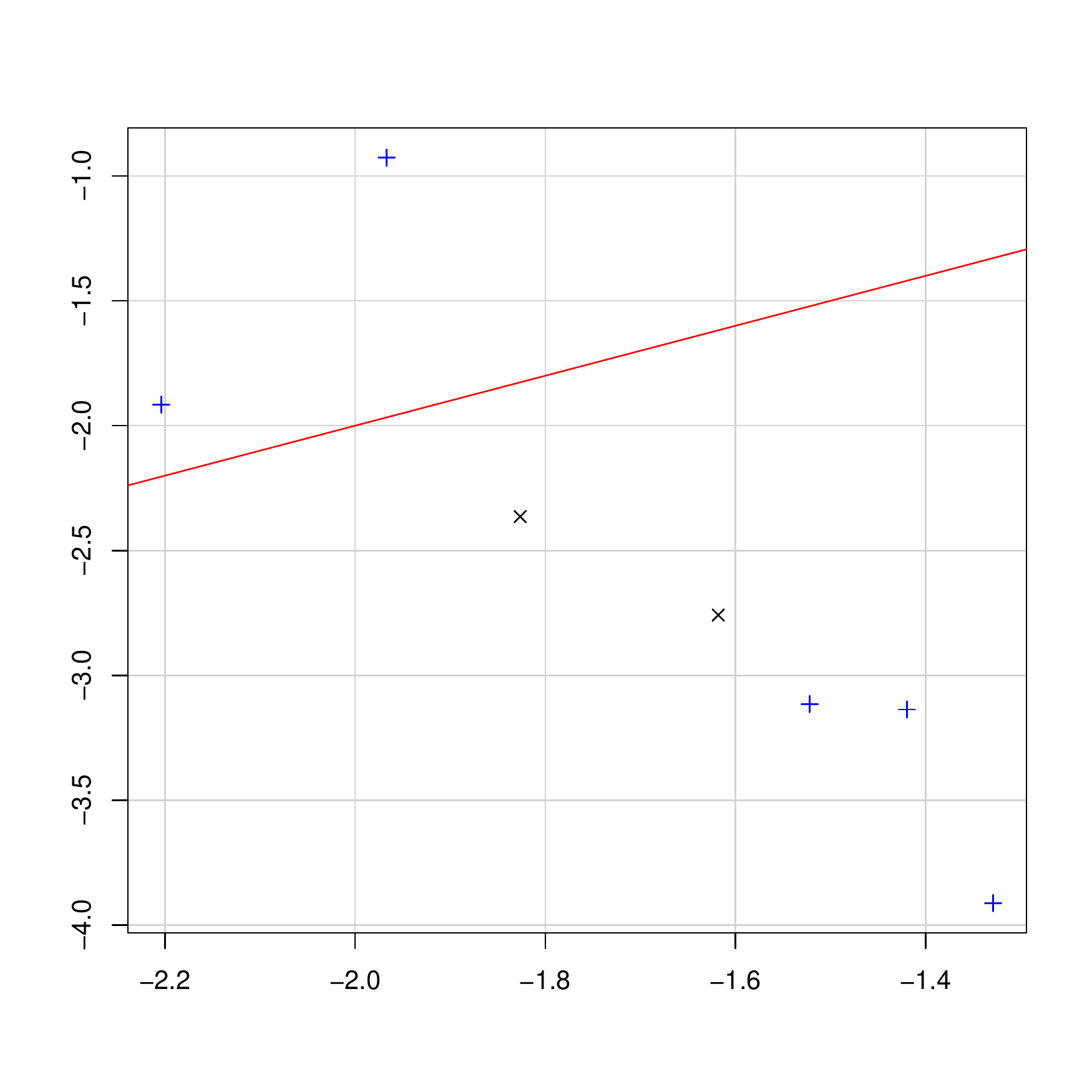} \end{minipage}
 \\
\hline
$\!\!\!\!\!M(K_4)\oplus M(K_4)$,
Ex.~\ref{ex:violationNonGammoid} & $28$ & $15$ & $.26$ & $2.87$ & 
 \begin{minipage}[c]{7cm} \includegraphics[trim={1cm 1cm 1cm 1cm},width=7cm]{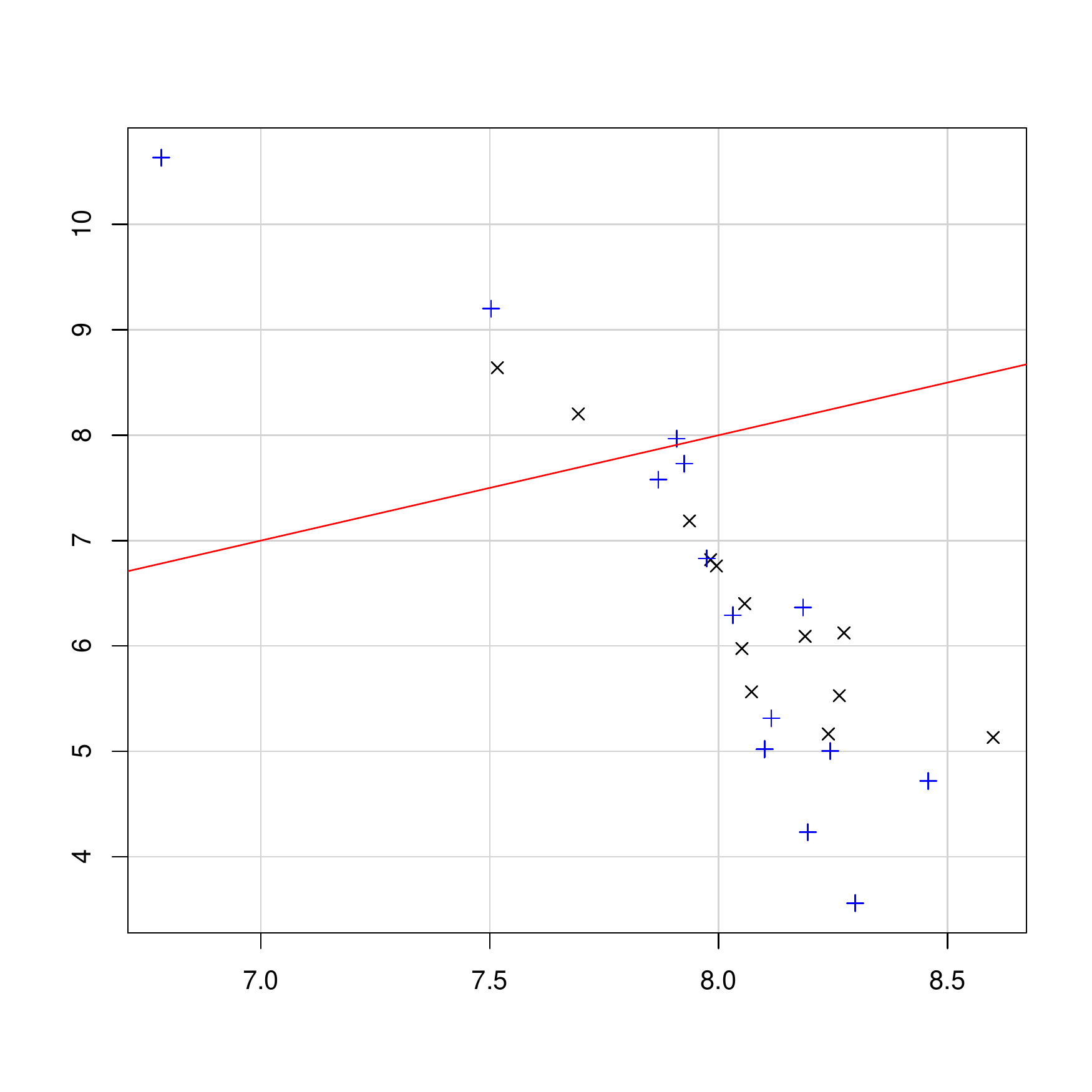} \end{minipage}
 \\
\hline
$U_{4,2}$,
\cite{Ox11},~p.639 & $3$ & $3$ & $.62$ & $1.22$ & 
 \begin{minipage}[c]{7cm} \includegraphics[trim={1cm 1cm 1cm 1cm},width=7cm]{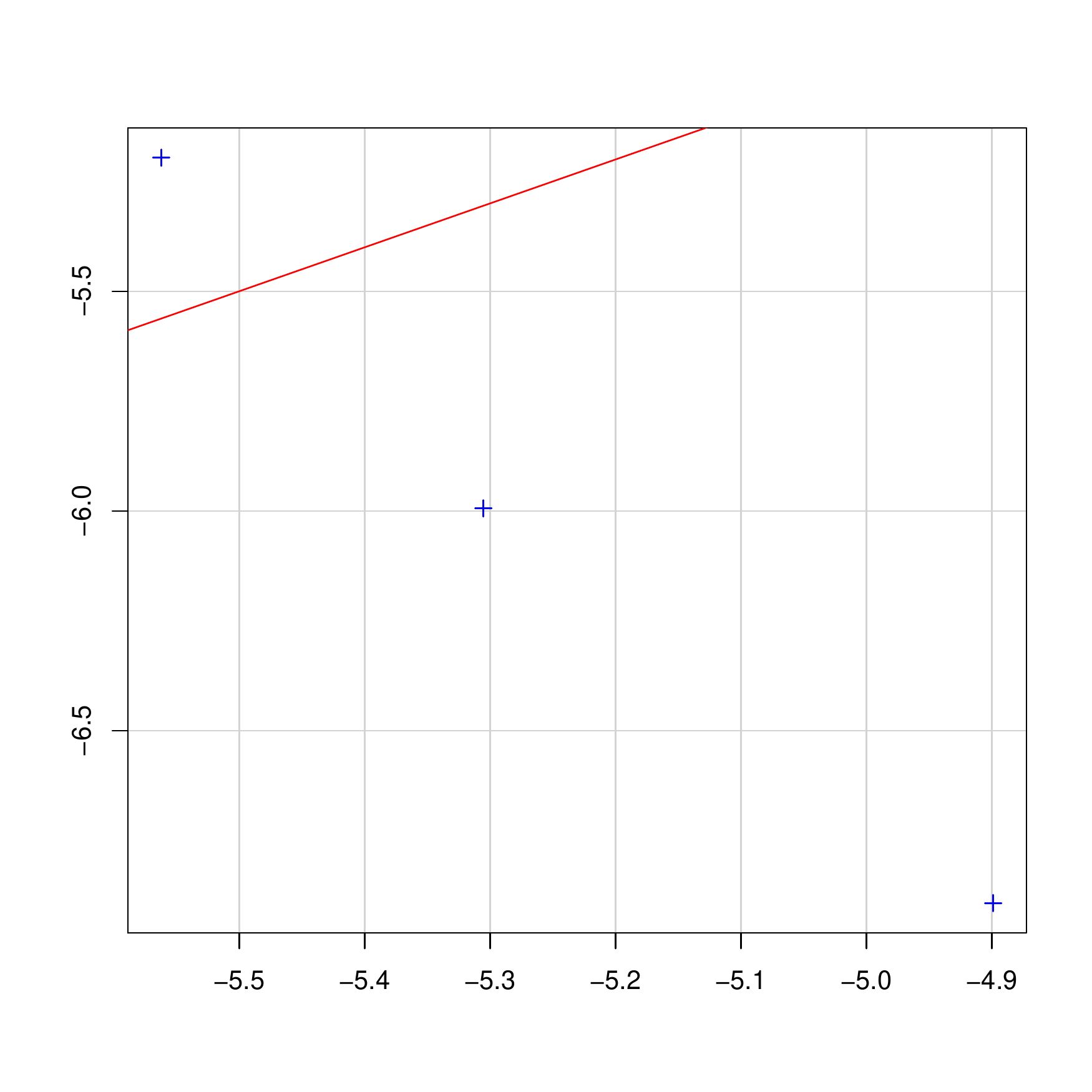} \end{minipage}
 \\
\hline
$P_8^=$,
\cite{Ox11},~p.651 & $234$ & $9$ & $.27$ & $.52$ & 
 \begin{minipage}[c]{7cm} \includegraphics[trim={1cm 1cm 1cm 1cm},width=7cm]{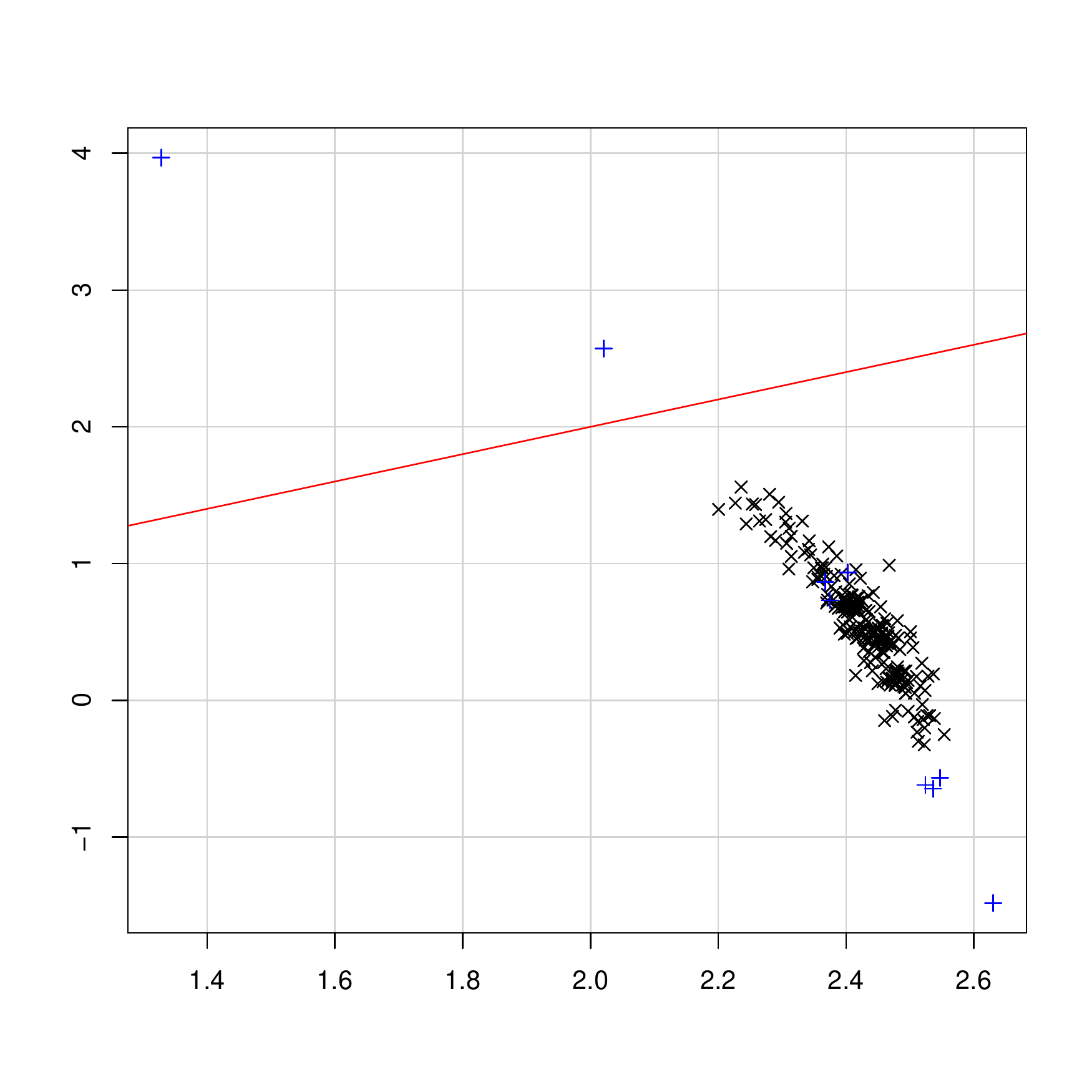} \end{minipage}
 \\
\end{tabularx}

\vspace*{1\baselineskip}

\noindent
\lstinputlisting[language=Python,backgroundcolor=\color{black!5},frame=tlb,basicstyle=\footnotesize]{calc_alpha_lst.spyx}

\begin{spacing}{0.9}


\bibliographystyle{alpha}

\cleardoublepage
\fancyhead[RE]{References}
\fancyhead[LO]{References}

\bibliography{References/references} 



\end{spacing}

\cleardoublepage

\newcommand{\hrf}{~\hrulefill}
\stepcounter{chapter}
\setcounter{section}{0}
\chapter*{Index of Symbols and Notation}
\fancyhead[RE]{Index of Symbols and Notation}
\fancyhead[LO]{Index of Symbols and Notation}
\addcontentsline{toc}{chapter}{Index of Symbols and Notation}
\begin{tabularx}{\textwidth}{p{4.4cm}X}
$\dSET{\ldots}$ \hrf & set consisting of distinct elements given in list, p.\pageref{n:dset}.\\
$\subseteq_\sigma$ \hrf & denotes the signed subset relation, p.\pageref{n:signedsubset}.\\
$\emptyset_{\sigma E}$ \hrf & denotes the empty signed subset of $E$, p.\pageref{n:emptysigma}.\\
$\emptyset_{\Z . E}$ \hrf & denotes the empty signed multi-subset of $E$, p.\pageref{n:emptysetZE}.\\
$\DclD{\bullet}{D}$ \hrf & outer-extension operator in $D$, p.\pageref{n:DclD}.\\
$\partial_D\bullet$ \hrf & outer-margin operator in $D$, p.\pageref{n:partD}.\\
$\prod p$ \hrf & product of the weights of all traversed arcs of $p$, p.\pageref{n:prodp}.\\ 
$\llless $ \hrf& $(\sigma,\ll)$-induced order on the routings of $D$, p.\pageref{n:llless}.\\
$\alpha_M\colon 2^E \maparrow \Z$ \hrf & $\alpha$-invariant of $M$, p.\pageref{n:alphaM}.\\
$\left(\Alpha_M, \sqsubseteq_M\right)$ \hrf & $\alpha_M$-poset, p.\pageref{n:alphaposet}.\\
$\left(\Beta_M^C, \sqsubseteq_M^C \right)$ \hrf & extension poset of $C \in \Mcal(M)$, p.\pageref{n:BetaMC}.\\
$\Delta\alpha_M$ \hrf& $\Delta\alpha$-invariant of $M$, p.\pageref{n:Deltaalphainvariant}.\\
$\DeltaP\alpha_M$ \hrf& $\DeltaP\alpha$-invariant of $M$, p.\pageref{n:DeltaPalphainvariant}.\\
$\delta_D(X,T)$ \hrf & barrier between $X$ and $T$ in $D$, p.\pageref{n:barrier}.\\
$\Gamma(D,T,E)$ \hrf & gammoid represented by $(D,T,E)$, p.\pageref{n:GTDE}.\\
$\Gamma(D,T,V)$ \hrf & strict gammoid, p.\pageref{n:GDTV}.\\
$\Gamma_\Mcal$ \hrf& class map for recognizing gammoids in $\Mcal$, p.\pageref{n:GammaM}.\\
$\mu \in K^{R\times C}$ \hrf & $R\times C$-matrix over $K$, p.\pageref{n:matrix}.\\
$\mu^\top \in K^{C\times R}$ \hrf & transpose of $\mu$, p.\pageref{n:transposed}.\\
$\mu^\top_c$ \hrf & $c$-th column of $\mu$, p.\pageref{n:matrix}.\\
$\mu_{(P,\leq)}$ \hrf & Möbius-function of the poset $(P,\leq)$, p.\pageref{n:moebius}.\\
$\mu_r$ \hrf & $r$-th row of $\mu$, p.\pageref{n:matrix}.\\
$\mu\restrict R_0$ \hrf & restriction of $\mu$ to the rows $R_0$, p.\pageref{n:matrestrict}.\\
$\mu\restrict R_0\times C_0$ \hrf & restriction of $\mu$ to the rows $R_0$ and columns $C_0$, p.\pageref{n:matrestrict}.\\
$(\sigma,\ll)$ \hrf & heavy arc signature of $D=(V,A)$, p.\pageref{n:sigmaLL}.\\
$\sigma E$ \hrf & class of signed subsets of $E$, p.\pageref{n:signedsubset}.\\
$\zeta_{(P,\leq)}$ \hrf & zeta-matrix of the poset $(P,\leq)$, p.\pageref{n:zeta}.\\
$\bigcap \Acal $ \hrf & denotes the intersection $\bigcap_{A\in\Acal} A$, p.\pageref{n:bigcap}.\\
$\bigcup \Acal $ \hrf & denotes the union $\bigcup_{A\in\Acal} A$, p.\pageref{n:bigcup}.\\
$\Acal=(A_i)_{i\in I}\subseteq E $ \hrf & family of subsets of $E$ indexed by $I$, p.\pageref{n:Afam}.\\
$\Acal_{\Delta} = (A_i)_{i\in B} \subseteq A$ \hrf & arc system of $(A\disunion B,\Delta)$, p.\pageref{n:ArcSystem}.\\
$\Acal_{D,T} = (A^{(D,T)}_i)_{i\in V\BS T}$ \hrf& linkage system of $D$ to $T$, p.\pageref{n:ADT}.\\
$\mathrm{AC}(D) = (V_{D},A_{D})$ \hrf & denotes the arc-cut digraph for $D$, p.\pageref{n:arcCutDigraph}.\\
$\mathrm{AC}(D,T,E)$ \hrf & denotes the arc-cut matroid for $(D,T,E)$, p.\pageref{n:ACDTE}.\\
$\Acal_M = (A_i)_{i\in I} \subseteq E$ \hrf& denotes the $\alpha$-system of $M=(E,\Ical)$, p.\pageref{n:alphaSystem}.\\
{\em (B1)} \hrf & (axiom) a base of $M$ exists, p.\pageref{n:Bx}.\\
{\em (B2)} \hrf & (axiom) equicardinality of bases, p.\pageref{n:Bx}.\\
{\em (B3)} \hrf & (axiom) base exchange, p.\pageref{n:Bx}.\\
{\em (B3')} \hrf & (axiom) strong base exchange, p.\pageref{n:B3p}.\\
$(B,\rho)$ \hrf & denotes an $M$-black box, p.\pageref{n:Brho}.\\
$\bits(M) = (\bits(M,i))_{i=1}^{N}$ \hrf & binary encoding of $M$, p.\pageref{n:bMenc}.\\
$\Bcal(M)$ \hrf & family of all bases of $M=(E,\Ical)$, p.\pageref{n:BcalM}.\\
$\Bcal_M(F)$ \hrf & family of all bases of $F$ in $M$, p.\pageref{n:BcalMF}.\\
{\em ($\Ccal$1)} \hrf & (o.m. axiom) $\emptyset_{\sigma E}$ is not a circuit, p.\pageref{n:Cx}.\\
{\em ($\Ccal$2)} \hrf & (o.m. axiom) circuits closed under negation, p.\pageref{n:Cx}.\\
{\em ($\Ccal$3)} \hrf & (o.m. axiom) incomparability of circuits, p.\pageref{n:Cx}.\\
{\em ($\Ccal$4)} \hrf & (o.m. axiom) strong circuit elimination, p.\pageref{n:Cx}.\\
{\em ($\Ccal$4')} \hrf & (o.m. axiom) weak circuit elimination, p.\pageref{n:Ccal4p}.\\
{\em ($\Ccal^\ast$1)} \hrf & (o.m. axiom) $\emptyset_{\sigma E}$ is not a cocircuit, p.\pageref{n:Cx}.\\
{\em ($\Ccal^\ast$2)} \hrf & (o.m. axiom) cocircuits closed under negation, p.\pageref{n:Cx}.\\
{\em ($\Ccal^\ast$3)} \hrf & (o.m. axiom) incomparability of cocircuits, p.\pageref{n:Cx}.\\
{\em ($\Ccal^\ast$4)} \hrf & (o.m. axiom) strong cocircuit elimination, p.\pageref{n:Cx}.\\
$\arcC(M)$ \hrf & arc-complexity of the gammoid $M$, p.\pageref{n:ArcCompl}.\\
$C\bot D$\hrf& orthogonality of signed subsets $C,D\in \sigma E$, p.\pageref{n:XorthoY}.\\
$\cl_M $ \hrf & closure operator of $M$, p.\pageref{n:clM}.\\
$\Ccal(M)$ \hrf & circuit set of the matroid $M$, p.\pageref{n:CM}.\\
$\vK(M)$ \hrf & vertex-complexity of the gammoid $M$, p.\pageref{n:VertexCompl}.\\
$C_{-X} \in \sigma E$ \hrf & the $X$-flip of $C\in\sigma E$, p.\pageref{n:Xflip}.\\
$D=(A\disunion B, \Delta)$ \hrf & directed bipartite graph for $\Delta$ from $A$ to $B$, p.\pageref{n:DABD}.\\
$D=(V,A)$ \hrf & directed graph, p.\pageref{n:DVA}.\\
$D^{\opp}=(V,A^\opp)$ \hrf & opposite digraph, p.\pageref{n:Dopp}.\\
$D_{r\leftarrow s} = (V,A_{r\leftarrow s})$ \hrf & $r$-$s$-pivot of the digraph $D=(V,A)$, p.\pageref{n:digraphpivot}.\\
$\Fcal(\alpha)$ \hrf & family of $\alpha$-flats for $\alpha\colon 2^E\maparrow \Z$, p.\pageref{n:FcalAlpha}.\\
$\Fcal(M)$ \hrf & family of all flats of $M$, p.\pageref{n:FM}.\\
$\Fcal(M,X)$ \hrf & flats of $M$ that are proper subsets of $X$, p.\pageref{n:FcalMX}.\\
$f[X]$ \hrf & set of images of $x\in X$ under $f$, p.\pageref{n:fsquareX}.\\
$f\restrict_{X'}$ \hrf & restriction of the map $f\colon X\maparrow Y$ to $X'\subseteq X$, p.\pageref{n:frestrictX'}.\\
$\Ical_\alpha$ \hrf & zero-family of $\alpha\colon 2^E \maparrow \Z$, p.\pageref{n:Ialpha}.\\
{\em (I1)} \hrf & (axiom) $\emptyset$ is independent, p.\pageref{n:Is}.\\
{\em (I2)} \hrf & (axiom) independence carries over to subsets, p.\pageref{n:Is}.\\
{\em (I3)} \hrf & (axiom) augmentation of independent sets, p.\pageref{n:Is}.\\
$\idet \mu$ \hrf & determinant-indicator of $\mu$, p.\pageref{n:idet}.\\
$I(D,T,E)$ \hrf & matroid on $E$ induced by $D$ from $T=(T_0,\Tcal)$, p.\pageref{n:IDTE}.\\
$K^{m\times n}$ \hrf & class of all $m\times n$-matrices over $K$, p.\pageref{n:matrix}.\\
$K^{R\times C}$ \hrf & class of all $R\times C$-matrices over $K$, p.\pageref{n:matrix}.\\
$\kth(n,r)$ \hrf & bijection that enumerates all $r$-element subsets of $\SET{1,2,\ldots,n}$, p.\pageref{n:kth}.\\
$M(\alpha) = (E,\Ical_\alpha)$\hrf & matroid corresponding to the matroid invariant $\alpha$, p.\pageref{n:MAlpha}.\\
$M(\mu)$ \hrf & matroid on $E$ represented by $\mu\in \K^{E\times C}$, p.\pageref{n:matMmu}\\
$M(\Acal) = (E,\Ical_\Acal)$\hrf & transversal matroid presented by $\Acal$, p.\pageref{n:MAEIA}.\\
$M\contract C$ \hrf & contraction of $M$ to $C$, p.\pageref{n:MC}.\\
$M(\Delta,M_0)$ \hrf & matroid induced by $\Delta\subseteq D\times E$ from $M_0$, p.\pageref{n:MDM0}.\\
$M=(E,\Ical)$ \hrf & (independence) matroid, p.\pageref{n:EI}.\\
$M^\ast=(E,\Ical^\ast)$ \hrf & dual matroid of $M$, p.\pageref{n:Mdual}.\\
$\Mcal(M)$ \hrf & class of all modular cuts of $M$, p.\pageref{n:MM}.\\
$M(K_4)$ \hrf & polygon matroid of the complete graph on $4$ vertices, p.\pageref{ex:MK4}.\\
$M(\Ocal)$ \hrf & underlying matroid of the oriented matroid $\Ocal$, p.\pageref{n:MOcal}.\\
$M\restrict R$ \hrf & restriction of $M$ to $R$, p.\pageref{n:MR}.\\
$\Nbf(M)$ \hrf & encoding length of $M$, p.\pageref{n:encM}.\\
$\N^X$ \hrf & multi-sets over $X$, p.\pageref{n:multiset}.\\
$\N^{(X)}$ \hrf & finite multi-sets over $X$, p.\pageref{n:fin-multiset}.\\
{\em ($\Ocal$1)} \hrf & (o.m. axiom) orthogonality, p.\pageref{n:Cx}.\\
{\em ($\Ocal$2)} \hrf & (o.m. axiom) underlying matroid, p.\pageref{n:Cx}.\\
$\Ocal=(E,\Ccal,\Ccal^\ast)$ \hrf & oriented matroid, p.\pageref{n:Ocal}.\\
$\Ocal^\ast=(E,\Ccal^\ast,\Ccal)$ \hrf & dual oriented matroid of $\Ocal$, p.\pageref{n:OcalBot}.\\
$[\Ocal]$ \hrf & reorientation class of $\Ocal$, p.\pageref{n:reorientationclass}.\\
$\Ocal(\mu)= (E,\Ccal_\mu,\Ccal_\mu^\ast)$ \hrf & oriented matroid represented by $\mu\in \R^{E\times C}$, p.\pageref{n:OcalMu}.\\
$\Ocal\restrict R$ \hrf & restriction of $\Ocal$ to $R$, p.\pageref{n:OrestrictR}.\\
$\Ocal\contract Q$ \hrf & contraction of $\Ocal$ to $Q$, p.\pageref{n:OcontractQ}.\\
$(p_{i})_{i=1}^{n}\in\{\mathrm{N},\mathrm{E}\}^{n}$ \hrf & lattice path on an $\SET{\mathrm{N},\mathrm{E}}$-grid, p.\pageref{n:latticePath}.\\
 $\left| p \right|$ \hrf & set of vertices visited by $p$,  p.\pageref{n:walk}.\\
 $\left| p \right|_A$ \hrf & set of arcs traversed by $p$,  p.\pageref{n:walk}.\\
 $\Pbf(D)$ \hrf & set of paths in $D$, p.\pageref{n:simplePath}.\\
 $\Pbf(D; u,v)$ \hrf & set of paths from $u$ to $v$ in $D$, p.\pageref{n:SPathUV}.\\
 $p\preceq q$ \hrf &  $p$ is never above $q$ with common endpoints, p.\pageref{n:neverabove}.\\
 $\mathrm{P}\left[p,q\right]$ \hrf & lattice walks between $p$ and $q$, p.\pageref{n:LPbetweenPQ}.\\
 $\downarrow_{(P,\leq)} y$ \hrf & the down-set of $y\in P$ with respect to the poset $(P,\leq)$, p.\pageref{n:Pdownset}.\\
{\em (R1')} \hrf & (axiom) $\rk(\emptyset) = 0$, p.\pageref{n:Rxp}.\\
{\em (R2')} \hrf & (axiom) $\rk$ is unit-increasing, p.\pageref{n:Rxp}.\\
{\em (R3')} \hrf & (axiom) if two points are dependent, so is their line, p.\pageref{n:Rxp}.\\
{\em (R2'')} \hrf & unit-increment propagates to subsets, p.\pageref{n:R2pp}.\\
{\em (R1)} \hrf & (axiom) $\rk$ is non-negative and subcardinal, p.\pageref{n:Rx}.\\
{\em (R2)} \hrf & (axiom) $\rk$ is non-decreasing, p.\pageref{n:Rx}.\\
{\em (R3)} \hrf & (axiom) $\rk$ is submodular, p.\pageref{n:Rx}.\\
{\em (R4)} \hrf & there is a unique maximal superset of same rank, p.\pageref{n:R4}.\\
{\em (R5)} \hrf & there are rank-cardinality independent subsets, p.\pageref{n:R5}.\\
$\mathrm{Rec}\Gamma_\Mcal$ \hrf & gammoid recognition problem for $\Mcal$, p.\pageref{n:RecGM}.\\
$\rk_M $ \hrf & rank function of $M$, p.\pageref{n:rkM}.\\
$\R[X]$ \hrf & polynomials over commutative $\R$ with variables $X$, p.\pageref{n:polynomring}.\\ 
$R\colon X\routesto Y \subseteq \Wbf(D)$ \hrf & routing from $X$ to $Y$ in $D$, p.\pageref{n:routing}.\\
$\sep(C,D)$ \hrf & separator of signed subsets $C,D\in \sigma E$, p.\pageref{n:sep}.\\
$\sgn_\sigma (R)$ \hrf & sign of routing w.r.t. $(\sigma,\ll)$, p.\pageref{n:sgnsigma}.\\
$\Tbf = (G,\Gcal,\Mcal,\Xcal,\simeq)$ \hrf & matroid tableau, p.\pageref{n:mattab}.\\
$\bigcup_{i=1}^n \Tbf_i$ \hrf & joint matroid tableau, p.\pageref{n:jointTableau}.\\
$[\Tbf]_\simeq$ \hrf & expansion matroid tableau, p.\pageref{n:expTab}.\\
$[\Tbf]_\equiv$ \hrf & extended matroid tableau, p.\pageref{n:extTab}.\\
$\Tbf!$ \hrf & conclusion matroid tableau, p.\pageref{n:concTab}.\\
$\Tbf(M_1\simeq M_2)$ \hrf & identified matroid tableau, p.\pageref{n:idTab}.\\
$w=(w_i)_{i=1}^n \in
    V^{n}$ \hrf& walk in $D=(V,A)$, p.\pageref{n:walk}.\\
$\Wbf(D)$ \hrf & set of walks in $D$, p.\pageref{n:PbfD}.\\
$\Wbf(D; u,v)$ \hrf & set of walks from $u$ to $v$ in $D$, p.\pageref{n:PathUV}.\\
$\arcW_f(M)$ \hrf & $f$-width of the gammoid $M$, p.\pageref{n:arcWfM}.\\
 $(w_1w_2\ldots w_n)^i$ \hrf & shorthand for $i$-iterations of $w_1w_2\ldots w_n\in V^n$, p.\pageref{n:walk}.\\ 
 $w.q$ \hrf & concatenated walk $w_{1}w_{2}\ldots w_{n}q_{2}q_{3}\ldots q_{m}$,  p.\pageref{n:pdotq}.\\
$X\colon E\maparrow \SET{-1,0,1}$\hrf & signed subset of $E$, p.\pageref{n:signedsubset}.\\
$X_+$\hrf & positive elements of $X$, p.\pageref{n:xplus}.\\
$X_-$\hrf & negative elements of $X$, p.\pageref{n:xminus}.\\
$X_\pm$\hrf & support of $X$, p.\pageref{n:xpm}.\\
$X_0$\hrf & zero-set of $X$, p.\pageref{n:xzero}.\\
$-X$\hrf & negation of $X$, p.\pageref{n:minusx}.\\
$\Xcal(M,e)$ \hrf & class of single-element extensions of $M$ by $e$, p.\pageref{n:XMe}.\\
$\Vcal(M)$\hrf & family of all $\alpha_M$-violations, p.\pageref{n:VM}.\\
$\Z .E$ \hrf & family of signed multisets $S\colon E\maparrow \Z$, p.\pageref{n:ZE}.\\
\end{tabularx}


%
%

\cleardoublepage
\fancyhead[RE]{Index}
\fancyhead[LO]{Index}

\printthesisindex 

\cleardoublepage
\thispagestyle{empty}
~~

\end{document}